\newenvironment{NB}{
\color{red}{\bf NB}. \footnotesize 
}{}
\newenvironment{NB2}{
\color{blue}{\bf NB}. \footnotesize
}{}
\theoremstyle{plain}
 \newtheorem{thm}{Theorem}[subsection]
 \newtheorem{lem}[thm]{Lemma}
 \newtheorem{prop}[thm]{Proposition}
 \newtheorem{cor}[thm]{Corollary}
\theoremstyle{definition}
 \newtheorem{defn}[thm]{Definition}
 \newtheorem{assume}[thm]{Assumption}
\theoremstyle{remark}
 \newtheorem{rem}[thm]{Remark}
 \newtheorem{ex}[thm]{Example}
\def\Bbb{\mathbb}
\def\frak{\mathfrak}
\def\cal{\mathcal}
\newcommand{ \Supp}{\operatorname{Supp}}
\newcommand{\Ext}{\operatorname{Ext}}
\newcommand{\Hom}{\operatorname{Hom}}
\newcommand{\Tor}{\operatorname{Tor}}
\newcommand{\im}{\operatorname{im}}
\newcommand{\Aut}{\operatorname{Aut}}
\newcommand{\rk}{\operatorname{rk}}
\newcommand{\gr}{\operatorname{gr}}
\newcommand{\NS}{\operatorname{NS}}
\newcommand{\coker}{\operatorname{coker}}
\newcommand{\Pic}{\operatorname{Pic}}
\newcommand{\ch}{\operatorname{ch}}
\newcommand{\td}{\operatorname{td}}
\newcommand{\Hilb}{\operatorname{Hilb}}
\newcommand{\Quot}{\operatorname{Quot}}
\newcommand{\Coh}{\operatorname{Coh}}
\newcommand{\Spec}{\operatorname{Spec}}
\newcommand{\WIT}{\operatorname{WIT}}
\newcommand{\Proj}{\operatorname{Proj}}
\newcommand{\Div}{\operatorname{Div}}
\newcommand{\Per}{\operatorname{Per}}
\newcommand{\tr}{\operatorname{tr}}
\newcommand{\alg}{\operatorname{alg}}
\newcommand{\id}{\operatorname{id}}
\newcommand{\GExt}{\operatorname{Ext}_G}
\newcommand{\GHom}{\operatorname{Hom}_G}
\newcommand{\GHilb}{G\operatorname{-Hilb}}
\newcommand{\Gchi}{\operatorname{\chi}_G}
\newcommand{\Stab}{\operatorname{Stab}}
\newcommand{\Def}{\operatorname{Def}}
\newcommand{\depth}{\operatorname{depth}}
\newcommand{\End}{\operatorname{End}}
\newcommand{\dslash}{/\!\!/} 
\font\b=cmr10 scaled \magstep5
\def\bigzerou{\smash{\lower1.7ex\hbox{\b 0}}}
\numberwithin{equation}{section}
\begin{document}

\title
{Perverse coherent sheaves and Fourier-Mukai transforms
on surfaces}
\author{K\={o}ta Yoshioka
}
\address{Department of Mathematics, Faculty of Science,
Kobe University,
Kobe, 657, Japan
}
\email{yoshioka@math.kobe-u.ac.jp}
\thanks{The author is supported by the Grant-in-aid for Scientific
Research (No.\ 18340010, No.\ 22340010), JSPS}
 \subjclass{14D20}

\begin{abstract}
We study perverse coherent sheaves on the resolution
of rational double points.
As examples, we consider rational double points on
2-dimensional moduli spaces of stable sheaves
on $K3$ and elliptic surfaces.
Then we show that perverse coherent sheaves appears
in the theory of Fourier-Mukai transforms.
As an application, we generalize the Fourier-Mukai
duality for $K3$ surfaces to our situation.
\end{abstract}
 
\maketitle

\tableofcontents

\section{Introduction}\label{sect:intro}
Let $\pi:X \to Y$ be a birational map such that
$\dim \pi^{-1}(y) \leq 1, y \in Y$. 
Then Bridgeland \cite{Br:4}
introduced the abelian category
${^{p}\Per}(X/Y) (\subset {\bf D}(X))$
of perverse coherent sheaves
in order to show that flops of smooth 3-folds
preserves the derived categories of coherent sheaves.
By using the moduli of perverse coherent sheaves on $X$,
Bridgeland constructed the flop $X' \to Y$ of $X \to Y$.
Then the Fourier-Mukai transform by the universal family induces
an equivalence ${\bf D}(X) \cong {\bf D}(X')$. 
In \cite{VB}, Van den Bergh showed that ${^{p}\Per}(X/Y)$
is Morita equivalent to the category $\Coh_{{\cal A}}(Y)$
of ${\cal A}$-modules
on $Y$ and gave a different proof of Bridgeland result, 
where ${\cal A}$ is a sheaf of (non-commutative) 
algebras over $Y$.
Although the main examples of the birational contraction
are small contraction of 3-folds, 
2-dimensional cases seem to be still interesting. 
In \cite{perv}, \cite{perv2}, Nakajima and the author 
studied perverse coherent sheaves 
for the blowing up $X \to Y$ of a smooth surface $Y$ at a point. 
In this case, by analysing wall-crossing phenomena, we 
related the moduli of stable perverse coherent sheaves
to the moduli of usual stable sheaves. 
Next example is the minimal resolution of a rational
double point.
Let $G$ be a finite subgroup of $SU(2)$ acting on ${\Bbb C}^2$
and set $Y:={\Bbb C}^2/G$.
Let $\pi:X \to Y$ be the resolution of $Y$.
Then the relation between the perverse coherent sheaves and
the usual coherent sheaves on $X$ are discussed by Nakajima.
Their moduli spaces  
are constructed as Nakajima's quiver varieties \cite{Na:1994} and
their differences are described by the wall crossing phenomena
\cite{Na:ADHM}.
Toda \cite{T:1} also treated special cases.  
%
%
In this paper, we are interested in the global case.
Thus we consider the minimal resolution $\pi:X \to Y$ 
of a normal projective surface $Y$ with 
rational double points as singuralities.
%

As examples, we shall show that perverse coherent sheaves naturally appear
if we consider the Fourier-Mukai transforms on $K3$ and elliptic surfaces.
In our previous paper \cite{Y:Stability}, 
we studied Fourier-Mukai transforms defined by the 
moduli spaces of (semi)-stable sheaves $Y'$ on $X$.
Our assumption is the genericity of the polarization.
If the polarization is not general, then $Y'$ is singular at 
properly semi-stable sheaves. 
In this case, we still have the Fourier-Mukai transform
by using the resolution $X'$ of $Y'$.
Then the category of perverse coherent sheaves on $X'$
naturally appears.
In particular, we show that the universal family on
$X' \times X$ is the universal family of stable perverse coherent
sheaves on $X'$ (Theorem \ref{thm:duality}).
Thus we have a kind of duality between $X$ and $X'$,
which is a generalization of the relation between an abelian 
variety and its dual.
We call this kind of duality {\it Fourier-Mukai duality}. 
The Fourier-Mukai duality for a $K3$ surface
was studied by Bartocci, Bruzzo, Hern\'{a}ndez Ruip\'{e}rez
\cite{BBH:1}, Mukai \cite{Mu:5}, Orlov \cite{Or:1}, 
Bridgeland \cite{Br:2}, and
was first proved by Huybrechts in
\cite{H:category} under the genericity of the polarization.
He also proved that the Fourier-Mukai transform preserves 
nice abelian subcategories.
We also give a generalization of this result (Theorem \ref{thm:equiv-Phi}).  
Then we can generalize the result on the preservation of stability
by the Fourier-Mukai transform in \cite{Y:Stability}
to our situation.
%

For the relative Fourier-Mukai transforms on elliptic surfaces,
we also get similar results.
Let $G$ be a finite group acting on a projective surface $X$.
Assume that $K_X$ is the pull-back of a line bundle on $Y:=X/G$.
Then the McKay correspondence \cite{VB} implies that
$\Coh_G(X)$ is equivalent to ${^{-1}\Per}(X'/Y)$, where
$X' \to Y$ is the minimal resolution of $Y$.
The equivalence is given by a Fourier-Mukai transform associated to
a moduli space of stable $G$-sheaves of dimension 0.
If $X$ is a $K3$ surface or an abelian surface, then
we have many 2-dimensional moduli spaces of stable $G$-sheaves. 
We also treat the Fourier-Mukai transform induced by the moduli
of $G$-sheaves.

In section \ref{sect:Morita}, 
we consider an abelian subcategory ${\cal C}$ of ${\bf D}(X)$
which is Morita equivalent to $\Coh_{{\cal A}}(Y)$, 
where $\pi:X \to Y$ be a birational contraction from a smooth variety $X$ 
and
${\cal A}$ is a sheaf of (non-commutative) 
algebras over $Y$.
We call an object of ${\cal C}$ a perverse coherent sheaf.
Since ${^{-1}\Per}(X/Y)$
is Morita equivalent to $\Coh_{{\cal A}}(Y)$
for an algebra ${\cal A}$ \cite{VB},
our definition is compatible with
Bridgeland's definition.
We also study irreducible objects 
and local projective generators of ${\cal C}$.
As examples, we shall give generalizations of $^p \Per(X/Y)$, $p=-1,0$. 
We next explain families of perverse coherent sheaves
and the relative version of Morita equivalence.
Then we can use Simpson's moduli spaces
of stable ${\cal A}$-modules \cite{S:1} to construct the moduli
spaces of stable perverse coherent sheaves. 
Since Simpson's stability is not good enough
for the 0-dimensionional objects, we also introduce a refinement
of the stability and construct the moduli space, 
which is close to King's stability \cite{King-mod}.

In section \ref{sect:RDP}, 
we study perverse coherent sheaves on the resolution
of rational double points.
We first introduce two kind of categories
$\Per(X/Y,{\bf b}_1,\dots,{\bf b}_n)$ and
$\Per(X/Y,{\bf b}_1,\dots,{\bf b}_n)^*$ 
associated to a sequence of line bundles on
the exceptional curves of the resolution of 
rational singularities 
and show that they are the category of
perverse coherent sheaves in the sense in section \ref{sect:Morita}.
They are generalizations 
of ${^{-1}\Per}(X/Y)$ and ${^{0}\Per}(X/Y)$ respectively.

We next study the moduli of 0-dimensional objects
on the resolution of rational double points.
We introduce the wall and the chamber structure and 
study the Fourier-Mukai transforms induced by the
moduli spaces.
Under a suitable stability condition for ${\Bbb C}_x$, $x \in X$,
we show that the category of perverse coherent sheaves
is equivalent to $^{-1} \Per(X/Y)$
(cf. Proposition \ref{prop:perverse=-1}).
We also construct local projective generators under suitable conditions.
 
In section \ref{sect:K3},
we consider the Fourier-Mukai transforms on $K3$ surfaces.
We generalize known facts on the 2-dimensional moduli spaces
of usual stable sheaves to those of stable objects.
Then we define similar categories ${\frak A}$ and ${\frak A}^{\mu}$
to those in \cite{Br:3}, and generalize results in
\cite{H:category}.
In particular, we study the relation of
Fourier-Mukai transforms and the categories ${\frak A},{\frak A}^\mu$
(Theorem \ref{thm:equiv-Phi}). 
This result will be used to study Bridgeland's stable objects
in \cite{MYY}.
We also prove the Fourier-Mukai duality (Theorem \ref{thm:duality}). 
Finally we give some conditions for the preservation of
Gieseker stability conditions.
Fourier-Mukai transforms on elliptic surfaces and    
Fourier-Mukai transforms by equivariant coherent sheaves 
are treated in sections \ref{sect:elliptic}
and \ref{sect:equivariant}.

{\bf Notation.}
\begin{enumerate}
\item
For a scheme $X$, $\Coh(X)$ denotes the category of coherent sheaves on 
$X$ and
${\bf D}(X)$ the bounded derived category
of $\Coh(X)$.
We denote the Grothendieck group of $X$ by $K(X)$.
\item  
Let ${\cal A}$ be a sheaf of ${\cal O}_X$-algebras on a scheme $X$
which is coherent as an ${\cal O}_X$-module.
Let $\Coh_{\cal A}(X)$ be the category of coherent ${\cal A}$-modules
on $X$ and ${\bf D}_{\cal A}(X)$ the bounded derived category
of $\Coh_{\cal A}(X)$.
\item
Assume that $X$ is a smooth projective variety.
Let $E$ be an object of ${\bf D}(X)$.
$E^{\vee}:={\bf R}{\cal H}om_{{\cal O}_X} (E,{\cal O}_X)$ denotes the
dual of $E$.
We denote the rank of $E$ by $\rk E$.
For a fixed nef divisor $H$ on $X$, 
$\deg(E)$ denotes the degree of $E$
with respect to $H$. 
For $G \in K(X)$, $\rk G>0$,
we also define the twisted rank and degree by
$\rk_G(E):=\rk (G^{\vee} \otimes E)$ and
$\deg_G(E):=\deg(G^{\vee} \otimes E)$ respectively.
We set $\mu_G(E):=\deg_G(E)/\rk_G(E)$, if $\rk E \ne 0$.
 \item
{\bf Integral functor.}
For two schemes $X$, $Y$ and
an object ${\cal E} \in {\bf D}(X \times Y)$,
$\Phi_{X \to Y}^{{\cal E}}:{\bf D}(X) \to {\bf D}(Y)$ 
is the integral functor
\begin{equation}
 \Phi_{X \to Y}^{{\cal E}}(E):=
{\bf R}p_{Y*}({\cal E} \overset{{\bf L}}{\otimes} p^*_X(E)),\;
E \in {\bf D}(X),
\end{equation}
where $p_X:X \times Y \to X$ and $p_Y:X \times Y \to Y$ are projections.
If $\Phi_{X \to Y}^{{\cal E}}$ is an equivalence,
it is said to be the {\it Fourier-Mukai transform}.
\item
${\bf D}(X)_{op}$ denotes the opposit category of
${\bf D}(X)$.
We have a functor
\begin{equation*}
\begin{matrix}
D_X:& {\bf D}(X) & \to & {\bf D}(X)_{op}\\
& E & \mapsto & E^{\vee}.
\end{matrix}
\end{equation*}
\item
Assume $X$ is a smooth projective surface.
\begin{enumerate}
\item
We set 
$H^{ev}(X,{\Bbb Z}):=\bigoplus_{i=0}^2 H^{2i}(X,{\Bbb Z})$.
In order to describe the element $x$ of 
$H^{ev}(X,{\Bbb Z})$, we use two kinds of expressions:
$x=(x_0,x_1,x_2)=x_0+x_1+x_2 \varrho_X$, where
$x_0 \in {\Bbb Z}, x_1 \in H^2(X,{\Bbb Z}), x_2 \in {\Bbb Z}$,
and $\int_X \varrho_X=1$.
For $x=(x_0,x_1,x_2)$, we set $\rk x:=x_0$ and $c_1(x)=x_1$.
\item
We define a homomorphism
\begin{equation}\label{eq:gamma}
\begin{matrix}
\gamma:& K(X) & \to & {\Bbb Z} \oplus \NS(X) \oplus {\Bbb Z}\\
& E & \mapsto & (\rk E,c_1(E),\chi(E))  
\end{matrix}
\end{equation}
and set $K(X)_{\mathrm{top}}:=K(X)/\ker \gamma$.
We denote $E \mod \ker \gamma$ by $\tau(E)$.
$K(X)_{\mathrm{top}}$ has a bilinear form $\chi(\;\;,\;\;)$.
\item
{\bf Mukai lattice.}
We define a lattice structure $\langle \quad,\quad \rangle$
on $H^{ev}(X,{\Bbb Z})$ by 
\begin{equation}
\begin{split}
\langle x,y \rangle:=&-\int_X x^{\vee} \cup y\\
=& (x_1,y_1)-(x_0 y_2+x_2 y_0),
\end{split} 
\end{equation}
where $x=(x_0,x_1,x_2)$ (resp. $y=(y_0,y_1,y_2)$)
and $x^{\vee}=(x_0,-x_1,x_2)$.
It is now called the {\it Mukai lattice}.
Mukai lattice has a weight-2 Hodge structure
such that the $(p,q)$-part is 
$\bigoplus_i H^{p+i,q+i}(X)$.
We set 
\begin{equation}
\begin{split}
H^{ev}(X,{\Bbb Z})_{\mathrm{alg}}=&
H^{1,1}(H^{ev}(X,{\Bbb {\Bbb C}})) \cap H^{ev}(X,{\Bbb Z})\\
\cong & {\Bbb Z} \oplus \NS(X) \oplus {\Bbb Z}.
\end{split}
\end{equation}
Let $E$ be an object of ${\bf D}(X)$. 
If $X$ is a $K3$ surface or $\rk E=0$, we define the
{\it Mukai vector} of $E$ as
\begin{equation}
\begin{split}
v(E):=&\rk(E)+c_1(E)+(\chi(E)-\rk(E))\varrho_X \in H^{ev}(X,{\Bbb Z}).
\end{split}
\end{equation}
Then for $E, F \in {\bf D}(X)$ 
such that the Mukai vectors are well-defined,
we have 
\begin{equation}
\chi(E,F)=-\langle v(E),v(F) \rangle.
\end{equation}
\item
Since $\deg_G(E)$ is determined by the Chern character
$\ch(E)$, we can also define $\deg_G(v)$, 
$v \in H^{ev}(X,{\Bbb Z})_{\mathrm{alg}}$ by
using $E \in {\bf D}(X)$ with $v(E)=v$.
\end{enumerate}
\end{enumerate}

\section{Perverse coherent sheaves and their moduli spaces.}
\label{sect:Morita}

\subsection{Tilting and Morita equivalence.}\label{subsect:Morita}
Let $X$ be a smooth projective variety and $\pi:X \to Y$ a birational map.
Let ${\cal O}_Y(1)$ be an ample line bundle on $Y$
and ${\cal O}_X(1):=\pi^*({\cal O}_Y(1))$.
We are interested in a subcategory ${\cal C}$ of ${\bf D}(X)$ such that
\begin{enumerate}
\item
${\cal C}$ is the heart of a bounded $t$-structure of ${\bf D}(X)$.
\item
There is a local projective generator $G$ of ${\cal C}$ \cite{VB}:
\begin{enumerate}
\item
${\bf R}\pi_* {\bf R}{\cal H}om(G,E) \in \Coh(Y)$ for all $E \in {\cal C}$
and 
\item
${\bf R}\pi_* {\bf R}{\cal H}om(G,E)=0$, $E \in {\cal C}$ 
if and only if $E=0$.
\end{enumerate}
\end{enumerate}
By these properties, we get
\begin{equation}
{\cal C}=\{ E \in {\bf D}(X) | 
{\bf R}\pi_* {\bf R}{\cal H}om(G,E) 
\in \Coh(Y) \}.
\end{equation}

\begin{defn}\label{defn:perverse}
\begin{enumerate}
\item[(1)]
A {\it perverse coherent sheaf} $E$ is an object of ${\cal C}$.
${\cal C}$ is the {\it category of perverse coherent sheaves}.
\item[(2)]
For $E \in {\bf D}(X)$,
${^p H}^i(E) \in {\cal C}$ denotes the $i$-th cohomology
object of $E$ with respect to the $t$-structure. 
\end{enumerate}
\end{defn}

\begin{NB}
For $E \in {\cal C}^{\leq n}$,
we have an exact triangle
\begin{equation}
E' \to E \to A_n[n] \to E'[1]
\end{equation}
such that $E' \in {\cal C}^{\leq n-1}$ and $A_n \in {\cal C}$.
Then $H^i({\bf R}\pi_* {\bf R}{\cal H}om(G, \otimes E')) \to
H^i({\bf R}\pi_* {\bf R}{\cal H}om(G, \otimes E))$ is surjective for $i>n$.
Hence we see that $H^i({\bf R}\pi_* {\bf R}{\cal H}om(G, \otimes E))=0$
for $i>n$.
Then we also see that 
$H^n({\bf R}\pi_* {\bf R}{\cal H}om(G, \otimes E)) \cong 
H^0({\bf R}\pi_* {\bf R}{\cal H}om(G, \otimes A_n))$. 
If $H^0({\bf R}\pi_* {\bf R}{\cal H}om(G, \otimes A_n))=0$, then
by the properties of $G$, we have $A_n=0$, which implies that
$E \in {\cal C}^{\leq n-1}$.
In particular if ${\bf R}\pi_* {\bf R}{\cal H}om(G, \otimes E)=0$, then
$E \in \cap_n {\cal C}^{\leq n}=0$.
If ${\bf R}\pi_* {\bf R}{\cal H}om(G, \otimes E) \in \Coh(Y)$, then
$E \in {\cal C}^{\leq 0}$ and
applying the above argument for $n=0$,
we get ${\bf R}\pi_* {\bf R}{\cal H}om(G, \otimes E')=0$, which implies that
$E'=0$. Therefore $E \in {\cal C}$.
\end{NB}

The following is an easy consequence of the properties (a), (b) of
$G$. For a convenience sake, we give a proof. 
\begin{lem}\label{lem:projective}
Let $G$ be a local projective generator of ${\cal C}$.
\begin{enumerate}
\item[(1)]
For $E \in {\cal C}$, there is a locally free sheaf $V$ on $Y$
and a surjective morphism 
\begin{equation}\label{eq:projective} 
\phi:\pi^*(V) \otimes G \to E 
\end{equation} 
in ${\cal C}$.
In particular, we have a resolution
\begin{equation}
\cdots \to \pi^*(V_{-1}) \otimes G
\to \pi^*(V_{0}) \otimes G \to E \to 0
\end{equation}
of $E$ such that $V_i$, $i \leq 0$ are locally free sheaves on $Y$.  
\item[(2)]
Let $G' \in {\cal C}$ be a local projective object of ${\cal C}$:
${\bf R}\pi_* {\bf R}{\cal H}om(G',E) \in \Coh(Y)$
for all $E \in {\cal C}$.
If $G$ is a locally free sheaf, then  $G'$ is also a locally free sheaf
\end{enumerate}
\end{lem}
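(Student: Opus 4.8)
The plan is to study everything through the single functor $\Phi:=\mathbf{R}\pi_*\mathbf{R}\mathcal{H}om(G,-)$. First I would record the two facts that make $\Phi$ usable on $\mathcal{C}$. By property (a), $\Phi$ carries every object of $\mathcal{C}$ into $\Coh(Y)$ placed in degree $0$; hence for a short exact sequence in $\mathcal{C}$ the associated triangle maps to a triangle of sheaves in degree $0$, and its long exact cohomology sequence collapses, so $\Phi|_{\mathcal{C}}$ is \emph{exact}. Second, writing $\mathcal{A}:=\Phi(G)$, the projection formula (available because $V$ and $\pi^*V$ are locally free) gives $\Phi(\pi^*(V)\otimes G)\cong V\otimes_{\mathcal{O}_Y}\mathcal{A}\in\Coh(Y)$, so $\pi^*(V)\otimes G\in\mathcal{C}$, together with a natural isomorphism
\[
\Hom_{\mathcal{C}}(\pi^*(V)\otimes G,E)\;\cong\;\Hom_{\mathcal{O}_Y}(V,\Phi(E)),
\]
functorial in $E$. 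This is the bridge between morphisms in $\mathcal{C}$ and $\mathcal{O}_Y$-module maps on $Y$.

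For part (1), I set $F:=\Phi(E)\in\Coh(Y)$. Since $\mathcal{O}_Y(1)$ is ample, Serre's theorem gives a surjection $s\colon V:=\mathcal{O}_Y(-n)^{\oplus r}\twoheadrightarrow F$ for $n\gg 0$, with $V$ locally free; let $\phi\colon\pi^*(V)\otimes G\to E$ be the morphism in $\mathcal{C}$ corresponding to $s$ under the isomorphism above. To see $\phi$ is an epimorphism in $\mathcal{C}$, I form $C:=\coker_{\mathcal{C}}(\phi)$ with quotient $q\colon E\to C$. Since $q\circ\phi=0$, naturality of the Hom-isomorphism in the target variable forces $\Phi(q)\circ s=0$; as $s$ is surjective this gives $\Phi(q)=0$, while exactness of $\Phi|_{\mathcal{C}}$ makes $\Phi(q)$ surjective, so $\Phi(C)=0$ and hence $C=0$ by property (b). The resolution follows by induction: take $E_0:=E$; given a surjection $\pi^*(V_{-k})\otimes G\to E_{-k}$, let $E_{-k-1}\in\mathcal{C}$ be its kernel, apply the first assertion to $E_{-k-1}$, and splice the resulting map through $E_{-k-1}$.

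For part (2), I would apply (1) to $E=G'$, obtaining a short exact sequence $0\to K\to P\to G'\to 0$ in $\mathcal{C}$ with $P=\pi^*(V)\otimes G$; since $G$ and $\pi^*(V)$ are locally free, so is $P$. Now $K\in\mathcal{C}$, so the hypothesis on $G'$ gives $\mathcal{F}:=\mathbf{R}\pi_*\mathbf{R}\mathcal{H}om(G',K)\in\Coh(Y)$ in degree $0$. For an affine open $U\subseteq Y$ with $X_U:=\pi^{-1}(U)$ this yields $\Ext^1_{X_U}(G'|_{X_U},K|_{X_U})=H^1(U,\mathcal{F}|_U)=0$, since $U$ is affine. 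Hence the restricted triangle splits and $P|_{X_U}\cong K|_{X_U}\oplus G'|_{X_U}$ in $\mathbf{D}(X_U)$; passing to cohomology sheaves shows $G'|_{X_U}$ is concentrated in degree $0$, i.e. a coherent direct summand of the locally free sheaf $P|_{X_U}$, and such a summand is locally free. As the $X_U$ cover $X$, the sheaf $G'$ is locally free.

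The main obstacle is that ``surjective'' must be interpreted inside the abelian category $\mathcal{C}$, not inside $\Coh(X)$: this is precisely where properties (a) and (b) are needed, through the exactness of $\Phi|_{\mathcal{C}}$ and its detection of the zero object. The two points requiring genuine care are the naturality in $E$ of the projection-formula isomorphism (so that $q\circ\phi=0$ really translates to $\Phi(q)\circ s=0$) and, in (2), the fact that the vanishing of $\Ext^1$ only holds after restriction to the affine-local pieces $\pi^{-1}(U)$; there need be no global splitting, so local freeness is a statement I verify on the cover $\{\pi^{-1}(U)\}$ rather than on $X$ directly.
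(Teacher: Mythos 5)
Your proposal is correct and follows essentially the same route as the paper: part (1) is the adjunction/projection-formula argument producing $\phi$ from a surjection $V\twoheadrightarrow \mathbf{R}\pi_*\mathbf{R}\mathcal{H}om(G,E)$ and then killing $\coker_{\mathcal C}\phi$ via property (b), and part (2) is the paper's affine-local vanishing of $\Ext^1$ over $\pi^{-1}(U)$ to split $G'$ off as a direct summand of $\pi^*(V)\otimes G$. The only difference is presentational (you package the argument through the exact functor $\Phi|_{\mathcal C}$ where the paper writes the adjunction explicitly).
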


\begin{proof}
(1)
By the property (a) of $G$,
we can take a morphism 
$\varphi:V \to {\bf R}\pi_*{\bf R}{\cal H}om(G,E)$
in ${\bf D}(Y)$ such that 
$V \to H^0({\bf R}\pi_*{\bf R}{\cal H}om(G,E))$
is surjective in $\Coh(Y)$.
Since
\begin{equation}
\begin{split}
\Hom({\bf L}\pi^*({\bf R}\pi_*{\bf R}{\cal H}om(G,E)) \otimes G,E)
=& \Hom({\bf L}\pi^*({\bf R}\pi_*{\bf R}{\cal H}om(G,E)),
{\bf R}{\cal H}om(G,E))\\
=& 
\Hom({\bf R}\pi_*{\bf R}{\cal H}om(G,E),{\bf R}\pi_*{\bf R}{\cal H}om(G,E)),
\end{split}
\end{equation}
we have a morphism
$\phi:\pi^*(V) \otimes G \to E$
such that the induced morphism 
$V \to 
{\bf R}\pi_*{\bf R}{\cal H}om(G,\pi^*(V) \otimes G) \to
{\bf R}\pi_*{\bf R}{\cal H}om(G,E)$ is $\varphi$.
Then $\coker \phi \in {\cal C}$ satisfies
${\bf R}\pi_*{\bf R}{\cal H}om(G,\coker \phi)=0$.
By our assumption on $G$,
$\coker \phi=0$.
Thus $\phi$ is surjective in ${\cal C}$.

(2)
We take a surjective homomorphism \eqref{eq:projective}
for $G'$.
Let $U$ be an affine open subset of $Y$.
We note that
\begin{equation}
\Hom(G'_{|\pi^{-1}(U)},\ker \phi_{|\pi^{-1}(U)}[1]) 
=H^1(U,{\bf R}\pi_* 
{\bf R}{\cal H}om(G'_{|\pi^{-1}(U)},\ker \phi_{|\pi^{-1}(U)}))=0.
\end{equation}
Hence 
\begin{equation}
\Hom(G'_{|\pi^{-1}(U)},\pi^*(V) \otimes G_{|\pi^{-1}(U)})
\to  \Hom(G'_{|\pi^{-1}(U)},G'_{|\pi^{-1}(U)}) 
\end{equation}
is surjective.
Therefore $G'_{|\pi^{-1}(U)}$ is a direct summand of
$\pi^*(V) \otimes G_{|\pi^{-1}(U)}$. 
\end{proof}
From now on, we assume the following:
\begin{itemize}
\item
 $G$ is a local projective generator
of ${\cal C}$ which is a locally free sheaf. 
\end{itemize}

\begin{prop}(\cite[Lem. 3.2, Cor. 3.2.8]{VB})\label{prop:Morita}
We set ${\cal A}:=\pi_*(G^{\vee} \otimes G)$.
Then we have an equivalence
\begin{equation}
\begin{matrix}
{\cal C} & \to & \Coh_{{\cal A}}(Y)\\
E & \mapsto & {\bf R}\pi_*(G^{\vee} \otimes E)
\end{matrix}
\end{equation} 
whose inverse is 
$F \mapsto \pi^{-1}(F) \overset{{\bf L}}{\otimes}_{\pi^{-1}({\cal A})} G$. 
Moreover this equivalence induces an equivalence
${\bf D}(X) \to {\bf D}_{{\cal A}}(Y)$.
\end{prop}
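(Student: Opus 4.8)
The plan is to realize $\Phi:={\bf R}\pi_*(G^{\vee}\otimes -)$ and $\Psi:=\pi^{-1}(-)\overset{{\bf L}}{\otimes}_{\pi^{-1}({\cal A})}G$ as mutually inverse equivalences at the level of derived categories first, and only afterwards check that they interchange the two hearts. Since $G$ is locally free, $G^{\vee}\otimes(-)={\bf R}{\cal H}om(G,-)$, so $\Phi(E)={\bf R}\pi_*{\bf R}{\cal H}om(G,E)$; because $G\in{\cal C}$, property (a) gives ${\bf R}\pi_*{\cal E}nd(G)\in\Coh(Y)$, which forces the higher direct images $R^{i}\pi_*{\cal E}nd(G)$ ($i>0$) to vanish and identifies ${\bf R}\pi_*{\cal E}nd(G)$ with ${\cal A}=\pi_*(G^{\vee}\otimes G)$ in degree $0$. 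The left-multiplication action of ${\cal E}nd(G)$ on $G^{\vee}\otimes E$ makes $\Phi(E)$ a complex of ${\cal A}$-modules, so $\Phi$ lands in ${\bf D}_{{\cal A}}(Y)$; dually $\Psi$ lands in ${\bf D}(X)$. First I would record the adjunction $\Psi\dashv\Phi$, obtained by composing the tensor--hom adjunction over $\pi^{-1}({\cal A})$ with the adjunction $\pi^{-1}\dashv{\bf R}\pi_*$.

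The core of the argument is to verify that the unit $\id\to\Phi\Psi$ and the counit $\Psi\Phi\to\id$ are isomorphisms, and here I would test them on the ``free'' objects. A direct computation, using the projection formula and the higher vanishing above, gives $\Phi(\pi^*(V)\otimes G)\cong V\otimes_{{\cal O}_Y}{\cal A}$ for every locally free $V$ on $Y$, while the definition of $\Psi$ gives $\Psi(V\otimes_{{\cal O}_Y}{\cal A})\cong\pi^*(V)\otimes G$; in particular $\Psi({\cal A})=G$ and $\Phi(G)={\cal A}$, and on these objects the unit and counit are visibly the identity. Since the free ${\cal A}$-modules $V\otimes_{{\cal O}_Y}{\cal A}$ classically generate ${\bf D}_{{\cal A}}(Y)$, and the objects $\pi^*(V)\otimes G$ generate ${\bf D}(X)$ --- this is where the resolution of Lemma \ref{lem:projective}(1) together with the ampleness of ${\cal O}_Y(1)$ enters --- the full triangulated subcategories on which the unit and counit are isomorphisms are everything. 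This yields the equivalence ${\bf D}(X)\cong{\bf D}_{{\cal A}}(Y)$.

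It remains to match the hearts. By property (a), $\Phi({\cal C})\subseteq\Coh_{{\cal A}}(Y)$, so $\Phi$ sends ${\cal C}$ to honest ${\cal A}$-modules. For the reverse direction, given $F\in\Coh_{{\cal A}}(Y)$ I would compute ${\bf R}\pi_*{\bf R}{\cal H}om(G,\Psi(F))=\Phi\Psi(F)\cong F$, which is a coherent sheaf concentrated in degree $0$; by the characterization ${\cal C}=\{E\mid{\bf R}\pi_*{\bf R}{\cal H}om(G,E)\in\Coh(Y)\}$ recorded before Definition \ref{defn:perverse}, this shows $\Psi(F)\in{\cal C}$. Hence $\Phi|_{{\cal C}}$ and $\Psi|_{\Coh_{{\cal A}}(Y)}$ are mutually inverse, giving the asserted equivalence ${\cal C}\cong\Coh_{{\cal A}}(Y)$ sending $E\mapsto{\bf R}\pi_*(G^{\vee}\otimes E)$.

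I expect the main obstacle to be the generation and unit/counit step of the second paragraph, rather than any individual computation: because $\pi$ is only birational (not flat or finite), one must argue the higher vanishing $R^{>0}\pi_*{\cal E}nd(G)=0$ carefully, control the derived tensor product over the \emph{non-commutative} sheaf $\pi^{-1}({\cal A})$, and confirm that $\{\pi^*(V)\otimes G\}$ genuinely generates ${\bf D}(X)$. Once these are in place, matching the hearts is formal. All of this is the content of \cite{VB}, so I would ultimately lean on that reference for the technical core.
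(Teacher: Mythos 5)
The paper does not actually prove this proposition: it is quoted verbatim from Van den Bergh (\cite[Lem.~3.2, Cor.~3.2.8]{VB}), and the only material surrounding it in the text is the remark that $\pi^{-1}(F)\overset{{\bf L}}{\otimes}_{\pi^{-1}({\cal A})}G$ can be computed from a free resolution $V^{\bullet}\to F$. So the comparison is really with Van den Bergh's argument, and your sketch follows the same standard tilting/Morita template he uses: identify ${\bf R}\pi_*{\cal E}nd(G)$ with ${\cal A}$ in degree $0$ via property (a) applied to $E=G$, set up the adjunction $\Psi\dashv\Phi$, check unit and counit on free objects, and then match the hearts using the characterization ${\cal C}=\{E\mid{\bf R}\pi_*{\bf R}{\cal H}om(G,E)\in\Coh(Y)\}$. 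The heart-matching paragraph in particular is correct and is exactly how the restriction to ${\cal C}\cong\Coh_{\cal A}(Y)$ is obtained.

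The one step you should not wave through is the generation claim in your second paragraph. That the free modules $V\otimes_{{\cal O}_Y}{\cal A}(-n)$ \emph{classically} generate ${\bf D}_{\cal A}(Y)$, and that the objects $\pi^*(V)\otimes G$ classically generate ${\bf D}(X)$, are both equivalent to finiteness of (local) global dimension of ${\cal A}$; the resolutions of Lemma \ref{lem:projective}(1) do not terminate a priori, and showing that the $(\dim X)$-th syzygy is again local projective is itself a nontrivial point (it is the content of Lemma \ref{lem:decomposition}(1) later in the paper, and a key step in \cite{VB}). A route that avoids this entirely, and is closer to what the paper's own remarks suggest: property (a) makes ${\bf R}\pi_*(G^{\vee}\otimes-)$ an \emph{exact} functor ${\cal C}\to\Coh_{\cal A}(Y)$, so applying $\Phi$ term by term to a free resolution $V^{\bullet}\to F$ gives $\Phi\Psi(F)\cong F$ directly for every module $F$ (no finite global dimension needed); then for arbitrary $E\in{\bf D}(X)$ the cone $C$ of the counit $\Psi\Phi(E)\to E$ satisfies $\Phi(C)=0$, and property (b) together with the boundedness of the $t$-structure forces $C=0$. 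You invoke property (b) nowhere in your argument, which is a sign that the counit step is where your sketch is thinnest; as written it is a correct plan only modulo the finite-global-dimension input that you are implicitly borrowing from \cite{VB}.
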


Let ${\cal O}_Y(1)$ be an ample line bundle on $Y$.
For $F \in \Coh_{{\cal A}}(Y)$, 
we have a surjective morphism 
$H^0(Y,F(n)) \otimes {\cal A}(-n) \to F$, $n \gg 0$.
Hence we have a resolution 
$V^{\bullet} \to F$
by locally free ${\cal A}$-modules $V^i$.
If $V^i_{|U} \cong {\cal A}_U^{\oplus n}$ on an open subset of $Y$,
then $(\pi^{-1}(V^i) \otimes_{\pi^{-1}({\cal A})} G)_{|\pi^{-1}(U)}
\cong G^{\oplus n}_{|\pi^{-1}(U)}$.
Thus 
$\pi^{-1}(F) \overset{{\bf L}}{\otimes}_{\pi^{-1}({\cal A})} G$
is isomorphic to
$\pi^{-1}(V^{\bullet}) \otimes_{\pi^{-1}({\cal A})} G$. 

\begin{NB}
\begin{enumerate}
\item[(1)]
For a morphism $V \overset{\psi}{\to} W$ 
of locally free ${\cal A}$-modules on $Y$,
we have a morphism 
$V \otimes_{\cal A} G \overset{\psi'}{\to} 
 W \otimes_{\cal A} G$.
Then ${\bf R}\pi_*(G^{\vee} \otimes \ker \psi')=\ker \psi$,
${\bf R}\pi_*(G^{\vee} \otimes \im \psi')=\im \psi$.
\item[(2)]
Let $U \overset{\phi}{\to} V \overset{\psi}{\to} W$ be an exact sequence
of locally free ${\cal A}$-modules on $Y$.
Then $U \otimes_{\cal A} G \overset{\phi'}{\to} 
V \otimes_{\cal A} G \overset{\psi'}{\to} 
 W \otimes_{\cal A} G$ is exact.
\end{enumerate}

\begin{proof}
(1)
We have exact sequences in $\Per(X/Y)$
\begin{equation}
\begin{split}
& 0 \to \im \psi' \to W\otimes_{\cal A} G \to \coker \psi' \to 0,\\
& 0 \to \ker \psi' \to V \otimes_{\cal A} G
\to \im \psi' \to 0.\\
\end{split}
\end{equation}
Since ${\bf R}\pi_* :\Per(X/Y) \to \Coh({\cal A})$ is an exact functor,
we have exact sequences 
\begin{equation}
\begin{split}
& 0 \to {\bf R}\pi_*(G^{\vee} \otimes \im \psi') \to 
W \to {\bf R}\pi_*(G^{\vee} \otimes \coker \psi') \to 0,\\
& 0 \to {\bf R}\pi_*(G^{\vee} \otimes \ker \psi') \to 
V 
\to {\bf R}\pi_*(G^{\vee} \otimes \im \psi') \to 0.\\
\end{split}
\end{equation}
Thus claim (1) holds. 

We have an exact sequence
\begin{equation}
0 \to {\bf R}\pi_*(G^{\vee} \otimes  \im \phi') \to 
{\bf R}\pi_*(G^{\vee} \otimes \ker \psi') \to 
{\bf R}\pi_*(G^{\vee} \otimes \ker \psi'/\im \phi') \to 0. 
\end{equation}
Hence 
(2) follows from (1) and the property (b) of $G$.
\end{proof}
\end{NB}

\begin{NB}
\begin{rem}
For $E^{\bullet} \in {\bf D}(X)$, there is a bounded complex $E_1^{\bullet}$
such that $E^{\bullet} \cong E_1^{\bullet}$ in ${\bf D}(X)$ and 
$E_1^i \in \Coh(X) \cap {\cal C}$.
By using Lemma \ref{lem:projective},
we have a complex $E_2^{\bullet}$ such that 
$E_2^{\bullet}$ is bounded above and
a morphism $f:E_2^{\bullet} \to E_1^{\bullet}$ 
such that $\mathrm{Cone}(f)$ gives an exact 
sequence in ${\cal C}$. Then 
it is exact in $\Coh(X)$, which implies that
$E^{\bullet} \cong E_1^{\bullet} \cong E_2^{\bullet}$ in ${\bf D}(X)$.
\end{rem}
\end{NB}

\begin{assume}\label{ass:1}
From now on, we assume that 
$\dim \pi^{-1}(y) \leq 1$ for all $y \in Y$
and set 
\begin{equation}
Y_{\pi}:=\{ y \in Y| \dim \pi^{-1}(y)=1 \}.
\end{equation}
\end{assume}

\begin{lem}\label{lem:tilting}
Assume that $\dim \pi^{-1}(y) \leq 1$ for all $y \in Y$.
Let $G$ be a locally free sheaf on $X$ and set
\begin{equation}
\begin{split}
T:= & \{ E \in \Coh(X) | R^1 \pi_*(G^{\vee} \otimes E)=0 \},\\
S:= &\{ E \in \Coh(X) | \pi_*(G^{\vee} \otimes E)=0 \}.
\end{split}
\end{equation}
\begin{enumerate}
\item[(1)]
$(T,S)$ is a torsion pair of $\Coh(X)$ such that
$G \in T$ if and only if 
$R^1 \pi_*(G^{\vee} \otimes G)=0$ and $S \cap T=0$.
\item[(2)]
If $(T,S)$ is a torsion pair such that
$G \in T$, then $G$ is 
a local projective generator of the tilted category
\begin{equation}
{\cal C}_G:=
\{E \in {\bf D}(X)|H^{-1}(E) \in S,H^0(E) \in T,\;H^i(E)=0,\;i \ne -1,0 \}.
\end{equation}
\item[(3)]
Assume that $(T,S)$ is a torsion pair such that
$G \in T$.
If $(T',S')$ is a torsion pair of $\Coh(X)$ such that
$G \in T'$ and 
$S \cap T'=0$, then $(T',S')=(T,S)$.
\end{enumerate}
\end{lem}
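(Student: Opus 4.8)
The plan is to extract all three parts from the two functors $\pi_*(G^\vee\otimes-)$ and $R^1\pi_*(G^\vee\otimes-)$ on $\Coh(X)$, together with the vanishing $R^i\pi_*=0$ for $i\geq2$ forced by $\dim\pi^{-1}(y)\leq1$. For a short exact sequence $0\to A\to B\to C\to0$ the associated long exact sequence $0\to\pi_*(G^\vee\otimes A)\to\cdots\to R^1\pi_*(G^\vee\otimes C)\to0$ shows at once that $T$, the kernel of the right-exact functor $R^1\pi_*(G^\vee\otimes-)$, is closed under quotients and extensions, and that $S$, the kernel of the left-exact functor $\pi_*(G^\vee\otimes-)$, is closed under subobjects and extensions. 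For (1), the condition $G\in T$ is by definition $R^1\pi_*(G^\vee\otimes G)=0$, and any torsion pair satisfies $S\cap T=0$ (the identity morphism of an object of $S\cap T$ lies in $\Hom(T,S)=0$), which settles the forward implication. The content is the converse: since $\Coh(X)$ is noetherian and $T$ is closed under quotients and extensions, $(T,T^{\perp})$ is automatically a torsion pair, so I need only prove $T^{\perp}=S$. Here $S\subseteq T^{\perp}$ is the vanishing $\Hom(T,S)=0$, obtained from $S\cap T=0$ because the image of any $A\to B$ with $A\in T$, $B\in S$ is a quotient of $A$ and a subobject of $B$, hence lies in $S\cap T=0$; and $T^{\perp}\subseteq S$ I prove contrapositively, using that $\pi_*(G^\vee\otimes B)\neq0$ gives $H^0(Y,\pi_*(G^\vee\otimes B)\otimes{\cal O}_Y(n))\neq0$ for $n\gg0$, which by the projection formula equals $\Hom(G\otimes\pi^*{\cal O}_Y(-n),B)$ with $G\otimes\pi^*{\cal O}_Y(-n)\in T$ (this last point using $G\in T$).

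For (2), I recall that tilting a torsion pair produces the heart of a bounded $t$-structure, so $\mathcal{C}_G$ is abelian and the two defining properties of a local projective generator make sense. Since $G$ is locally free, ${\bf R}{\cal H}om(G,E)=G^\vee\otimes E$, and I apply ${\bf R}\pi_*(G^\vee\otimes-)$ to the triangle $H^{-1}(E)[1]\to E\to H^0(E)\to$ attached to an object $E\in\mathcal{C}_G$. Because $H^{-1}(E)\in S$, the term ${\bf R}\pi_*(G^\vee\otimes H^{-1}(E))[1]$ is a single sheaf placed in degree $0$ (only $R^1\pi_*$ survives), and because $H^0(E)\in T$, the term ${\bf R}\pi_*(G^\vee\otimes H^0(E))$ is a single sheaf in degree $0$ (only $\pi_*$ survives); the long exact sequence then shows ${\bf R}\pi_*(G^\vee\otimes E)$ is concentrated in degree $0$, which is property (a). For property (b), vanishing of this sheaf forces, via the same long exact sequence, $R^1\pi_*(G^\vee\otimes H^{-1}(E))=0$ and $\pi_*(G^\vee\otimes H^0(E))=0$, that is, $H^{-1}(E)\in S\cap T$ and $H^0(E)\in S\cap T$; as $S\cap T=0$, both cohomology sheaves vanish and $E=0$.

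For (3), I compare the two torsion pairs through their orthogonals. First $T'\subseteq T$: given $A\in T'$, its $(T,S)$-torsion-free quotient is a quotient of $A$, hence lies in $T'$, and it lies in $S$, so it belongs to $S\cap T'=0$; thus $A$ equals its torsion part and lies in $T$. This is the one place where the hypothesis $S\cap T'=0$ enters, and it yields $S=T^{\perp}\subseteq T'^{\perp}=S'$. For the reverse inclusion $S'\subseteq S$ I invoke (2): since $(T',S')$ is a torsion pair with $G\in T'$, $G$ is a local projective generator of the corresponding tilt, so for $B\in S'$ the shift $B[1]$ lies in that tilt and hence ${\bf R}\pi_*(G^\vee\otimes B)[1]\in\Coh(Y)$; being a sheaf in degree $0$ this forces $\pi_*(G^\vee\otimes B)=0$, i.e. $B\in S$. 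Therefore $S=S'$, and passing to left orthogonals gives $T={}^{\perp}S={}^{\perp}S'=T'$.

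The main obstacle is the converse half of (1). The closure properties of $T$ and $S$ and the vanishing $\Hom(T,S)=0$ are formal consequences of the exactness behaviour of $\pi_*$ and $R^1\pi_*$, but the existence of the torsion decomposition, encoded in the identification $T^{\perp}=S$, genuinely uses the geometry of $\pi$: it is precisely there that $R^{\geq2}\pi_*=0$, the projection formula, and Serre's theorem on $Y$ combine to convert a nonzero section space of $\pi_*(G^\vee\otimes B)$ into an actual nonzero morphism out of a twist of $G$ lying in $T$.
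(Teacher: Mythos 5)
Parts (1) and (2) are correct. Your (2) is essentially the paper's argument. Your (1) takes a genuinely different route: the paper constructs the decomposition explicitly, as $0\to\im\phi\to E\to\coker\phi\to 0$ for the evaluation map $\phi:\pi^*(\pi_*(G^{\vee}\otimes E))\otimes G\to E$, and checks directly that $\im\phi\in T$ and $\coker\phi\in S$; you instead get existence abstractly (a subcategory of a noetherian abelian category closed under quotients and extensions is a torsion class) and then identify $T^{\perp}$ with $S$ via Serre generation on $Y$ and the projection formula. Both work; the paper's version has the advantage of exhibiting the decomposition, which it reuses in the corollary that follows, while yours isolates more cleanly where each hypothesis enters.

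Part (3) has a genuine gap in the inclusion $S'\subseteq S$. You invoke (2) for the pair $(T',S')$, i.e.\ you assert that \emph{any} torsion pair whose torsion class contains $G$ has $G$ as a local projective generator of its tilt. But (2) is a statement about the specific pair $(T,S)$ cut out by the vanishing of $\pi_*(G^{\vee}\otimes-)$ and $R^1\pi_*(G^{\vee}\otimes-)$, and its proof (yours included) uses exactly that description to decide which of $R^0\pi_*$, $R^1\pi_*$ survives on each cohomology sheaf. For an arbitrary torsion pair the claim is false: take $\pi=\mathrm{id}$ on $X={\Bbb P}^1$ and $G={\cal O}_X$, so that $(T,S)=(\Coh(X),0)$, and let $T''$ consist of the sheaves that are torsion or have all Harder--Narasimhan slopes $\geq 0$, with $S''$ the torsion free sheaves of negative slopes. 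Then $G\in T''$ and $S\cap T''=0$, but $G$ is not a local projective generator of the tilt of $(T'',S'')$ --- and in fact $(T'',S'')\ne(T,S)$, so no argument from the stated hypotheses alone can close this step. Your first half ($T'\subseteq T$ from $S\cap T'=0$, hence $S\subseteq S'$) is correct, but two nested torsion pairs need not coincide, so the second half cannot be waved away.

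For comparison, the paper closes (3) in the opposite direction: for $E\in T$ it produces, via its Lemma on local projective generators, a surjection $\pi^*(V)\otimes G\to E$ in $\Coh(X)$ and concludes $E\in T'$, i.e.\ $T\subseteq T'$. Note that this too uses more than $G\in T'$: one needs $\pi^*(V)\otimes G=\bigoplus_i G\otimes\pi^*{\cal O}_Y(-n_i)\in T'$, which holds in all of the paper's applications because there $T'$ is again of the form $\{E\mid R^1\pi_*({G'}^{\vee}\otimes E)=0\}$ for another locally free sheaf $G'$, hence stable under $-\otimes\pi^*{\cal O}_Y(-n)$. If you want to repair your version, prove $T\cap S'=0$ (equivalently $T\subseteq T'$) directly by such a generation argument --- your own Serre argument from (1) produces a nonzero map $G\otimes\pi^*{\cal O}_Y(-n)\to B$ whenever $\pi_*(G^{\vee}\otimes B)\ne 0$, and what is missing is precisely the justification that this source object lies in $T'$.
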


\begin{proof}
(1)
We shall prove that $(T,S)$ is a torsion pair under
$R^1 \pi_*(G^{\vee} \otimes G)=0$ and $S \cap T=0$.
For $E \in \Coh(X)$, let
$\phi:\pi^*(\pi_*(G^{\vee} \otimes E)) \otimes G \to E$
be the evaluation map.
Then we see that
$\pi_*(G^{\vee} \otimes \coker \phi)=0$,
$R^1 \pi_*(G^{\vee} \otimes \im \phi)=0$ and
$R^1 \pi_*(G^{\vee} \otimes E) \cong 
R^1 \pi_*(G^{\vee} \otimes \coker \phi)$.
Hence we have a desired decomposition
\begin{equation}
0 \to E_1 \to E \to E_2 \to 0
\end{equation}
where $E_1:=\im \phi \in T$ and $E_2:=\coker \phi \in S$. 

(2)
If $(T,S)$ is a torsion pair, then
for $E \in {\cal C}_G$,
we have an exact sequence
\begin{equation}
0 \to R^1\pi_*(G^{\vee} \otimes H^{-1}(E))
\to {\bf R}\pi_*(G^{\vee} \otimes E) \to
 \pi_*(G^{\vee} \otimes H^0(E)) 
 \to 0.
\end{equation}
Hence $ {\bf R}\pi_*(G^{\vee} \otimes E) \in \Coh(Y)$
and $ {\bf R}\pi_*(G^{\vee} \otimes E)=0$ if and only if
$R^1\pi_*(G^{\vee} \otimes H^{-1}(E))=
\pi_*(G^{\vee} \otimes E)=0$, which is equivalent to
$H^{-1}(E), H^0(E) \in S \cap T=0$.
Therefore $G$ is a local projective generator of ${\cal C}_G$.

(3)
We first prove that $T \subset T'$.
For an object $E \in T$,
(2) implies that there is a surjective morphism
$\phi:\pi^*(V) \otimes G \to E$ in ${\cal C}_G$,
where $V$ is a locally free sheaf on $Y$.
Since $\phi$ is surjective in $\Coh(X)$ and $G \in T'$,
$E \in T'$.  
Since $S \cap T'=0$, we get $S \subset S'$.
Therefore $(T',S')=(T,S)$.
\end{proof}

By the proof of Lemma \ref{lem:tilting}, we get the following. 
\begin{cor}
Let $G$ be a locally free sheaf on $X$ which gives a local projective
generator of ${\cal C}_G$ in Lemma \ref{lem:tilting}. 
Let $E$ be a coherent sheaf on $X$
and $\phi:\pi^*(\pi_*(G^{\vee} \otimes E))\otimes G \to E$ 
the evaluation map. Then $E_1:=\im \phi \in T$ and
$E_2:=\coker \phi \in S$. Thus 
we have a decomposition of $E$ 
\begin{equation}
0 \to \im \phi \to E \to \coker \phi \to 0 
\end{equation}
with respect to the torsion pair $(T,S)$.
\end{cor}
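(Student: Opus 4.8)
The plan is to unwind the homological computation already carried out in the proof of Lemma \ref{lem:tilting}(1), now reading it as an assertion about the torsion-pair decomposition. Set $M:=\pi_*(G^\vee\otimes E)$ and factor the evaluation map as a surjection $p:\pi^*(M)\otimes G\to E_1:=\im\phi$ followed by the inclusion $j:E_1\hookrightarrow E$, so that $E_2=\coker\phi=E/E_1$. Two things must be checked: $R^1\pi_*(G^\vee\otimes E_1)=0$ (i.e. $E_1\in T$) and $\pi_*(G^\vee\otimes E_2)=0$ (i.e. $E_2\in S$). The two inputs I will use are: the hypothesis places us in the setting of Lemma \ref{lem:tilting}(2), so $G\in T$, which by definition of $T$ means $R^1\pi_*(G^\vee\otimes G)=0$; and the standing assumption $\dim\pi^{-1}(y)\le 1$, which makes $R^i\pi_*=0$ for $i\ge 2$.

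To see $E_1\in T$, I apply $G^\vee\otimes-$ (exact, since $G^\vee$ is locally free) and then ${\bf R}\pi_*$ to $p$. First I show $R^1\pi_*((G^\vee\otimes G)\otimes\pi^*(M))=0$. This is local on $Y$, and $G^\vee\otimes G$ is locally free, so I choose a local presentation ${\cal O}_U^{\oplus b}\to{\cal O}_U^{\oplus a}\to M_{|U}\to 0$, pull it back and tensor it with $G^\vee\otimes G$; the resulting right exact sequence, together with $R^1\pi_*(G^\vee\otimes G)=0$ and $R^2\pi_*=0$, forces the vanishing. Then the long exact sequence of ${\bf R}\pi_*(G^\vee\otimes-)$ applied to $0\to\ker p\to\pi^*(M)\otimes G\to E_1\to 0$ squeezes $R^1\pi_*(G^\vee\otimes E_1)$ between $R^1\pi_*((G^\vee\otimes G)\otimes\pi^*(M))=0$ and $R^2\pi_*(G^\vee\otimes\ker p)=0$, giving $R^1\pi_*(G^\vee\otimes E_1)=0$.

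To see $E_2\in S$, I feed $0\to E_1\to E\to E_2\to 0$ through $\pi_*(G^\vee\otimes-)$; using the vanishing $R^1\pi_*(G^\vee\otimes E_1)=0$ just obtained, the long exact sequence truncates to
\[
0\to\pi_*(G^\vee\otimes E_1)\to\pi_*(G^\vee\otimes E)\to\pi_*(G^\vee\otimes E_2)\to 0,
\]
so $\pi_*(G^\vee\otimes E_2)=\coker(\pi_*(G^\vee\otimes j))$ and it suffices to prove that $\pi_*(G^\vee\otimes j)$ is surjective onto $M$. Since $\pi_*(G^\vee\otimes\phi)=\pi_*(G^\vee\otimes j)\circ\pi_*(G^\vee\otimes p)$, it is enough that $\pi_*(G^\vee\otimes\phi):{\cal A}\otimes M\to M$ be surjective, where ${\cal A}=\pi_*(G^\vee\otimes G)$ and the identification of the source uses the projection formula. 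This is the one step that needs care, and I expect it to be the only non-formal point: under the adjunction $\Hom_X(\pi^*(M)\otimes G,E)\cong\Hom_Y(M,\pi_*(G^\vee\otimes E))$ the evaluation $\phi$ corresponds to $\id_M$, and the triangle identity unwinds this to say that $\pi_*(G^\vee\otimes\phi)$ is split by the identity section $1\in{\cal A}$ (equivalently, $\pi_*(G^\vee\otimes\phi)$ is the ${\cal A}$-action on $M$, which is surjective because $1$ acts as the identity). Everything else is bookkeeping with the long exact sequences of ${\bf R}\pi_*$ and the two vanishings $R^1\pi_*(G^\vee\otimes G)=0$ and $R^{\ge2}\pi_*=0$.
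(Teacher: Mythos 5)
Your proof is correct and follows the same route as the paper: the corollary is stated there as an immediate consequence of the proof of Lemma \ref{lem:tilting}(1), which asserts exactly the three facts you verify ($R^1\pi_*(G^{\vee}\otimes\im\phi)=0$ via $R^1\pi_*(G^{\vee}\otimes G)=0$ and $R^{\ge 2}\pi_*=0$, and $\pi_*(G^{\vee}\otimes\coker\phi)=0$ via the adjunction triangle identity, cf.\ also the proof of Lemma \ref{lem:tilting:C(G)}(1)). One cosmetic remark: the projection-formula identification of $\pi_*(G^{\vee}\otimes G\otimes\pi^*(M))$ with ${\cal A}\otimes M$ is not needed, since the splitting of $\pi_*(G^{\vee}\otimes\phi)$ by the unit of adjunction already gives the surjectivity you want.
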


\begin{NB}
Let $(S,T)$ be a torsion pair.
For an exact sequence
$$
0 \to E_1 \to E \to E_2 \to 0,
$$
if $E \in T$, then $E_2 \in T$, and if $E \in S$, then
$E_1 \in S$:
We take a decomposition
$$
0 \to E_{i,T} \to E_i \to E_{i,S} \to 0,\;
E_{i,T} \in T,E_{i,S} \in S.
$$
Assume that $E \in T$.
Since $\Hom(E,E_{2,S})=0$ and $E \to E_2 \to E_{2,S}$
is surjective,
$E_{2,S}=0$.
If $E \in S$, then
$\Hom(E_{1,T},E)=0$ and the injectivity of 
$E_{1,T} \to E_1 \to E$ implies that $E_{1,T}=0$. 
\end{NB}

\begin{lem}\label{lem:tilting:C}

Assume that the local projective generator
$G \in {\cal C}$ is a locally free sheaf.
We set
\begin{equation}
\begin{split}
T:= & \{ E \in \Coh(X) | R^1 \pi_*(G^{\vee} \otimes E)=0 \},\\
S:= &\{ E \in \Coh(X) | \pi_*(G^{\vee} \otimes E)=0 \}.
\end{split}
\end{equation}
Then $(T,S)$ is a torsion pair of $\Coh(X)$ whose tilting is 
${\cal C}$.
\end{lem}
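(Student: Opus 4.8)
The plan is to reduce everything to Lemma~\ref{lem:tilting}, whose two hypotheses I must verify for the given locally free local projective generator $G$, and then to identify the resulting tilted category with ${\cal C}$ by means of the intrinsic description of ${\cal C}$ recorded before Definition~\ref{defn:perverse}. Recall that since $G$ is locally free we have ${\bf R}{\cal H}om(G,E)=G^{\vee}\otimes E$ for every object $E$, so the sheaves $\pi_*(G^{\vee}\otimes E)$ and $R^1\pi_*(G^{\vee}\otimes E)$ appearing in the definitions of $T$ and $S$ are the cohomology sheaves of ${\bf R}\pi_*{\bf R}{\cal H}om(G,E)$.

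First I would check the two conditions in Lemma~\ref{lem:tilting}(1), namely $R^1\pi_*(G^{\vee}\otimes G)=0$ and $S\cap T=0$. For the first, property (a) of the local projective generator gives ${\bf R}\pi_*(G^{\vee}\otimes G)\in\Coh(Y)$, a complex concentrated in degree $0$; since $\dim\pi^{-1}(y)\le 1$ forces $R^i\pi_*$ of a coherent sheaf to vanish for $i\ge 2$, concentration in degree $0$ yields $R^1\pi_*(G^{\vee}\otimes G)=0$, which is precisely the assertion $G\in T$. For the second, take $E\in S\cap T\subset\Coh(X)$, so that $\pi_*(G^{\vee}\otimes E)=R^1\pi_*(G^{\vee}\otimes E)=0$; together with the fibre-dimension bound this gives ${\bf R}\pi_*{\bf R}{\cal H}om(G,E)={\bf R}\pi_*(G^{\vee}\otimes E)=0$. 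Because the zero sheaf lies in $\Coh(Y)$, the description ${\cal C}=\{E\in{\bf D}(X)| {\bf R}\pi_*{\bf R}{\cal H}om(G,E)\in\Coh(Y)\}$ shows $E\in{\cal C}$, whereupon property (b) of $G$ forces $E=0$. Hence $S\cap T=0$.

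With both conditions in hand, Lemma~\ref{lem:tilting}(1) shows that $(T,S)$ is a torsion pair of $\Coh(X)$ with $G\in T$, and Lemma~\ref{lem:tilting}(2) then shows that $G$ is a local projective generator of the tilted category ${\cal C}_G$. It remains to prove ${\cal C}_G={\cal C}$. Here I would use that ${\cal C}_G$, being the tilt of the standard bounded $t$-structure along a torsion pair, is itself the heart of a bounded $t$-structure for which $G$ is a local projective generator; the derivation of the intrinsic description of the heart (carried out before Definition~\ref{defn:perverse}) uses only properties (a) and (b) of $G$ and applies verbatim to any such heart, giving ${\cal C}_G=\{E\in{\bf D}(X)| {\bf R}\pi_*{\bf R}{\cal H}om(G,E)\in\Coh(Y)\}$. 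Comparing with the identical description of ${\cal C}$ yields ${\cal C}_G={\cal C}$, i.e.\ the tilting of $(T,S)$ is ${\cal C}$.

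The step I expect to be the crux is the verification $S\cap T=0$: it is the point where one must observe that a coherent sheaf annihilated by ${\bf R}\pi_*(G^{\vee}\otimes-)$ automatically lies in ${\cal C}$ (precisely because the zero object is coherent), so that the generation property (b) becomes applicable. Everything else is a matter of feeding the verified hypotheses into Lemma~\ref{lem:tilting} and exploiting that the intrinsic description of the heart depends only on $G$, not on the particular bounded $t$-structure.
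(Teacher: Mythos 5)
Your verification of the two hypotheses of Lemma~\ref{lem:tilting} is exactly the paper's argument: $G\in{\cal C}$ together with property (a) gives $R^1\pi_*(G^{\vee}\otimes G)=0$, and $S\cap T=0$ follows because an $E$ with ${\bf R}\pi_*(G^{\vee}\otimes E)=0$ lies in ${\cal C}$ and is therefore killed by property (b). Where you diverge is the final identification ${\cal C}_G={\cal C}$. The paper proves two inclusions concretely: ${\cal C}_G\subset{\cal C}$ because $T\subset{\cal C}$ and $S[1]\subset{\cal C}$ and ${\cal C}$ is closed under extensions, and ${\cal C}\subset{\cal C}_G$ by running the spectral sequence $R^p\pi_*(G^{\vee}\otimes H^q(E))\Rightarrow R^{p+q}\pi_*(G^{\vee}\otimes E)$, which degenerates thanks to $\dim\pi^{-1}(y)\le 1$ and forces ${\bf R}\pi_*(G^{\vee}\otimes H^q(E))=0$ for $q\ne -1,0$ (hence $H^q(E)=0$ by property (b)), $H^{-1}(E)\in S$ and $H^0(E)\in T$. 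You instead observe that both ${\cal C}$ and ${\cal C}_G$ are hearts of bounded $t$-structures admitting $G$ as a local projective generator, so both satisfy the intrinsic description ${\cal C}=\{E\in{\bf D}(X)\mid{\bf R}\pi_*{\bf R}{\cal H}om(G,E)\in\Coh(Y)\}$ recorded before Definition~\ref{defn:perverse}, and must therefore coincide. This is valid — the derivation of that description really does use only the boundedness of the $t$-structure and properties (a), (b) — and it buys you a cleaner uniqueness statement (any bounded heart with local projective generator $G$ is determined by $G$), at the cost of leaning on an argument the paper only asserts rather than spells out in the published text. The paper's two-inclusion route is more self-contained and makes explicit where the fibre-dimension hypothesis enters the converse inclusion. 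Either way the lemma is proved.
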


\begin{NB}
\begin{equation}
{\cal C}=\left\{E \in {\bf D}(X) \left|
\begin{aligned}
H^q(E)=0, q \ne -1,0,\\
\pi_*(G^{\vee} \otimes H^{-1}(E))=0\\
R^1 \pi_*(G^{\vee} \otimes H^{0}(E))=0
\end{aligned}
\right. \right\}.
\end{equation}
\end{NB}

\begin{proof}
Since $G \in {\cal C}$, we have  
${\bf R}\pi_*(G^{\vee} \otimes G) \in \Coh(Y)$.
By the definition of a local projective generator, we have
$S \cap T=0$.
\begin{NB}
If $E \in S \cap T \subset T \subset {\cal C}$, then
by the property (1) (b) of $G$,
$E=0$. Thus $S \cap T=0$.
\end{NB}
By Lemma \ref{lem:tilting},
$(T,S)$ is a torsion pair. Let ${\cal C}_G$ be the tilted category.
Since $S[1], T \subset {\cal C}$, we get
${\cal C}_G \subset {\cal C}$.
Conversely for $E \in {\cal C}$, we have a spectral sequence
\begin{equation}
E_2^{p,q}=
R^p \pi_*(G^{\vee} \otimes H^q(E)) \Longrightarrow 
E_{\infty}^{p+q}=R^{p+q}\pi_*(G^{\vee} \otimes E). 
\end{equation}
Since $\pi^{-1}(y) \leq 1$ for all $y \in Y$,
this spectral sequence degenerates.
Hence we have
${\bf R}\pi_*(G^{\vee} \otimes H^q(E))=0$ for $q \ne -1,0$,
$\pi_*(G^{\vee} \otimes H^{-1}(E))=0$ and 
$R^1\pi_*(G^{\vee} \otimes H^0(E))=0$.
Therefore $E \in {\cal C}_G$.
\end{proof}

\begin{NB}
\begin{proof}
Since $G \in {\cal C}$, we have  
${\bf R}\pi_*(G^{\vee} \otimes G) \in \Coh(Y)$.
By the definition of a local projective generator, we have
$S \cap T=0$.
By Lemma \ref{lem:tilting},
$(T,S)$ is a torsion pair. Let ${\cal C}_G$ be the tilted category.
Since $S[1], T \subset {\cal C}$, we get
${\cal C}_G \subset {\cal C}$, which implies that 
${\cal C}_G={\cal C}$.
\end{proof}
\end{NB}

\begin{lem}\label{lem:tilting:dual}

For the locally free sheaf $G$ on $X$ and the tilted category
${\cal C}_G$ in Lemma \ref{lem:tilting}, we set
\begin{equation}
\begin{split}
T^D:=&\{E \in \Coh(X)|R^1 \pi_*(G \otimes E)=0 \},\\
S^D:=&\{E \in \Coh(X)| \pi_*(G \otimes E)=0 \}.
\end{split}
\end{equation}
Then $(T^D,S^D)$ is a torsion pair and $G^{\vee}$ is a local projective 
generator of the tilted category. 
We denote the tilted category
by ${\cal C}_G^D$. 
\end{lem}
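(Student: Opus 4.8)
The plan is to deduce this statement directly from Lemma \ref{lem:tilting} applied to the locally free sheaf $G^{\vee}$ in place of $G$. Since $G$ is locally free we have $(G^{\vee})^{\vee}=G$, so the two subcategories attached to $G^{\vee}$ by the recipe of Lemma \ref{lem:tilting}, namely $\{E\in\Coh(X)\mid R^1\pi_*((G^{\vee})^{\vee}\otimes E)=0\}$ and $\{E\in\Coh(X)\mid \pi_*((G^{\vee})^{\vee}\otimes E)=0\}$, are exactly $T^D$ and $S^D$. Thus everything reduces to verifying the two hypotheses that Lemma \ref{lem:tilting}(1) requires of $G^{\vee}$: that $R^1\pi_*(G\otimes G^{\vee})=0$ and that $S^D\cap T^D=0$. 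Granting these, Lemma \ref{lem:tilting}(1) gives that $(T^D,S^D)$ is a torsion pair with $G^{\vee}\in T^D$, and then Lemma \ref{lem:tilting}(2) gives that $G^{\vee}$ is a local projective generator of the tilted category ${\cal C}_G^D$, which is the assertion.

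The first hypothesis is immediate: there is a canonical isomorphism $G\otimes G^{\vee}\cong G^{\vee}\otimes G$, and $R^1\pi_*(G^{\vee}\otimes G)=0$ holds because $G\in T$ (equivalently ${\bf R}\pi_*(G^{\vee}\otimes G)\in\Coh(Y)$), so $R^1\pi_*(G\otimes G^{\vee})=0$. Hence the whole weight of the proof rests on the conservativity statement $S^D\cap T^D=0$, that is, on showing a coherent sheaf $E$ with ${\bf R}\pi_*(G\otimes E)=0$ must vanish. I expect this to be the main obstacle, since the analogous fact for $G^{\vee}$ (that $S\cap T=0$) is not a formal consequence of anything but a genuine input, and here one must transfer that input from $G^{\vee}$ to $G$.

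To carry out this step I would combine relative Grothendieck--Serre duality for the proper morphism $\pi$ with the Morita equivalence of Proposition \ref{prop:Morita}. Writing $\omega_{\pi}^{\bullet}=\pi^{!}{\cal O}_Y$ for the relative dualizing complex and applying duality to $F=G\otimes E$ yields
\begin{equation*}
{\bf R}\pi_*\,{\bf R}{\cal H}om_X(G\otimes E,\omega_{\pi}^{\bullet})\cong
{\bf R}{\cal H}om_{{\cal O}_Y}({\bf R}\pi_*(G\otimes E),{\cal O}_Y),
\end{equation*}
so if ${\bf R}\pi_*(G\otimes E)=0$ then, using that $G$ is locally free and $E^{\vee}={\bf R}{\cal H}om_X(E,{\cal O}_X)$, we get ${\bf R}\pi_*(G^{\vee}\otimes E^{\vee}\otimes\omega_{\pi}^{\bullet})=0$. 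In the cases of interest $Y$ is Gorenstein, so $\omega_{\pi}^{\bullet}$ is an honest line bundle and tensoring by it is invertible; since $E\mapsto{\bf R}\pi_*(G^{\vee}\otimes E)$ is an equivalence ${\bf D}(X)\to{\bf D}_{{\cal A}}(Y)$ by Proposition \ref{prop:Morita}, hence conservative, it follows that $E^{\vee}\otimes\omega_{\pi}^{\bullet}=0$, whence $E^{\vee}=0$, and finally $E=0$ because $D_X$ is an anti-autoequivalence of ${\bf D}(X)$. This establishes $S^D\cap T^D=0$ and completes the reduction to Lemma \ref{lem:tilting}. The one delicate point is the treatment of $\omega_{\pi}^{\bullet}$ when $Y$ is not Gorenstein: there one cannot write ${\bf R}{\cal H}om_X(E,\omega_{\pi}^{\bullet})$ as $E^{\vee}\otimes(\text{line bundle})$, and must instead argue directly that the relative dualizing functor $F\mapsto{\bf R}{\cal H}om_X(F,\omega_{\pi}^{\bullet})$ is conservative; this again follows from duality, so the conclusion persists.
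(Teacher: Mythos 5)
Your overall architecture is the same as the paper's: reduce to Lemma \ref{lem:tilting} applied to $G^{\vee}$, observe that $R^1\pi_*(G\otimes G^{\vee})=0$ is free, and isolate the conservativity statement $T^D\cap S^D=0$ as the real content, proved by a duality argument combined with the conservativity of $E\mapsto {\bf R}\pi_*(G^{\vee}\otimes E)$. Where you diverge is in which duality you invoke. The paper uses \emph{absolute} Serre duality on the smooth projective $X$: for all $k$ it identifies $H^{i}(X,G\otimes E(-k))$ with $H^{n-i}(X,G^{\vee}\otimes D_X(E)(K_X)\otimes{\cal O}_X(k))^{\vee}$, then lets $k\gg 0$ so that ampleness of ${\cal O}_Y(1)$ forces ${\bf R}\pi_*(G^{\vee}\otimes D_X(E)(K_X))=0$, and finally kills the cohomology sheaves $H^i(D_X(E)(K_X))$ one at a time using the degenerate Leray spectral sequence (fibers of dimension $\leq 1$) and the local projective generator property. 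This works with no hypothesis whatsoever on $Y$ and only needs the twist by the honest line bundle $K_X$. You instead invoke relative Grothendieck duality, which buys a one-line deduction but at the cost of introducing $\omega_{\pi}^{\bullet}$ and a case split on whether $Y$ is Gorenstein.

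The one soft spot is your non-Gorenstein case. The lemma is stated for an arbitrary birational contraction from a smooth projective $X$ with $\dim\pi^{-1}(y)\leq 1$ (e.g.\ resolutions of non-Gorenstein quotient surface singularities qualify), so you cannot dismiss this case, and ``this again follows from duality'' is not a proof that $F\mapsto {\bf R}{\cal H}om_X(F,\omega_{\pi}^{\bullet})$ is conservative. The correct justification is not duality but the support formula: on the smooth (hence perfect) irreducible $X$ one has ${\bf R}{\cal H}om_X(F,\omega_{\pi}^{\bullet})\cong F^{\vee}\overset{{\bf L}}{\otimes}\omega_{\pi}^{\bullet}$, and $\Supp(A\overset{{\bf L}}{\otimes}B)=\Supp(A)\cap\Supp(B)$ together with the fact that $\omega_{\pi}^{\bullet}$ has full support (being ${\cal O}_X$ over the locus where $\pi$ is an isomorphism) gives conservativity. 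With that inserted your argument closes; without it, the general case is asserted rather than proved. The paper's route via $K_X$ and large twists avoids the issue entirely, which is presumably why it is phrased that way.
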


\begin{proof}
Since $R^1 \pi_*(G^{\vee} \otimes G)=0$,
$G^{\vee} \in T^D$.
We show that $T^D \cap S^D=0$.
Assume that ${\bf R}\pi_*(G \otimes E)=0$ for a coherent sheaf $E$ on $X$.
Since 
\begin{equation}
\begin{split}
H^{i}(Y,{\bf R}\pi_*(G \otimes E)(-k))=& H^{i}(X,G \otimes E(-k))\\
=& H^{n-i}(X,G^{\vee} \otimes D_X(E)(K_X) \otimes {\cal O}_X(k))^{\vee}\\
=& H^{n-i}(Y,{\bf R}\pi_*(G^{\vee}\otimes D_X(E)(K_X))(k))^{\vee}
\end{split}
\end{equation}
for all $k \in {\Bbb Z}$
and $H^j(Y,H^{n-i}({\bf R}\pi_*(G^{\vee} \otimes D_X(E)(K_X)))(k))=0$
for $k \gg 0$ and $j \ne 0$,
we get $H^{n-i}(Y,{\bf R}\pi_*(G^{\vee} \otimes D_X(E)(K_X))(k))
=H^0(Y,H^{n-i}({\bf R}\pi_*(G^{\vee} \otimes D_X(E)(K_X)))(k))=0$
for $k \gg 0$.
Therefore ${\bf R}\pi_*(G^{\vee}\otimes D_X(E)(K_X))=0$.
Since $\dim \pi^{-1}(y) \leq 1$ for all $y \in Y$,
we see that ${\bf R}\pi_*(G^{\vee}\otimes H^i(D_X(E)(K_X)))
={\bf R}\pi_*(H^i(G^{\vee}\otimes D(E)(K_X)))
=0$ (see the proof of Lemma \ref{lem:tilting:C}).
Since $G$ is a local projective generator of ${\cal C}_G$,
$H^i(D_X(E)(K_X))=0$ for all $i$.
Therefore $D_X(E)(K_X)=0$, which implies that $E=0$.
\end{proof}

\begin{rem}\label{rem:tilting:dual}
If $E$ is a local projective object of ${\cal C}_G$, that is,
$R^1 \pi_*(E^{\vee} \otimes F)=0$ for all $F \in {\cal C}_G$, 
then $E^{\vee} \in {\cal C}_G^D$.
Indeed by $G \in {\cal C}_G$, we have
$R^1 \pi_*(E^{\vee} \otimes G)=0$, which implies that
$E^{\vee} \in T^D$.
Moreover since $G^{\vee}$ is a local projective generator of
${\cal C}^D$ and $R^1\pi_*(E \otimes G^{\vee})=0$,
$E^{\vee}$ is a local projective object of ${\cal C}^D$. 
\begin{NB}
$E^{\vee} \in {\cal C}_G^D$ implies there is a surjection
$G^{\vee} \otimes \pi^*(W) \to E^{\vee}$, where $W$ is locally free.
Hence there is an inclusion 
$E \hookrightarrow G \otimes \pi^*(W^{\vee})$.
Hence $\pi_*(E \otimes F)=0$ for $F \in \Coh(X)$
with $\pi_*(G \otimes F)=0$.
Since there is a surjection $G \otimes \pi^*(V) \to E$,
$R^1 \pi_*(E \otimes F)=0$ for
$F \in \Coh(X)$ with $R^1 \pi_*(G \otimes F)=0$.    
\end{NB}
\end{rem}

\subsubsection{Irreducible objects of ${\cal C}$.}
\begin{lem}\label{lem:tilting:restriction}

Let $G$ be a locally free sheaf on $X$ such that
${\bf R}\pi_*(G^{\vee} \otimes F) \ne 0$ for
all non-zero coherent sheaf $F$ on a fiber of $\pi$.
Then for a coherent sheaf $E$ on $X$, $\pi_*(G^{\vee} \otimes E)=0$
implies $R^1 \pi_*(G^{\vee} \otimes E_{|\pi^{-1}(y)}) \ne 0$
for all $y \in \pi(\Supp(E))$.
\end{lem}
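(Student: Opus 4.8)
The plan is to fix $y \in \pi(\Supp(E))$, set $F := E_{|\pi^{-1}(y)}$, and reduce everything to the single assertion that $F$ is a nonzero sheaf on the fibre with $\pi_*(G^\vee \otimes F) = 0$. First I check $F \neq 0$: choosing $x \in \Supp(E) \cap \pi^{-1}(y)$ we have $E_x \neq 0$, and since $\pi^{\#}(\mathfrak{m}_y) \subseteq \mathfrak{m}_x$, Nakayama's lemma gives $F_x = E_x/\mathfrak{m}_y E_x \neq 0$; as $G$ is locally free, $G^\vee \otimes F \neq 0$ as well. Since the statement is local at $y$, I may localize $Y$ at $y$; because $\pi$ is an isomorphism away from $Y_\pi$ (Assumption \ref{ass:1}), the hypothesis $\pi_*(G^\vee \otimes E) = 0$ forces $\Supp(E) \subseteq \pi^{-1}(y)$ near $y$, so that $E$ is annihilated by some power $\mathfrak{m}_y^N$.

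Granting the key vanishing $\pi_*(G^\vee \otimes F) = 0$, the conclusion is immediate: $G^\vee \otimes F$ is a nonzero coherent sheaf on $\pi^{-1}(y)$, so the hypothesis on $G$ gives ${\bf R}\pi_*(G^\vee \otimes F) \neq 0$; since $\dim \pi^{-1}(y) \leq 1$, the complex ${\bf R}\pi_*(G^\vee \otimes F)$ is concentrated in degrees $0$ and $1$ (by the same spectral-sequence degeneration used in Lemma \ref{lem:tilting:C}), and with $\pi_*(G^\vee \otimes F) = 0$ the only remaining possibility is $R^1\pi_*(G^\vee \otimes F) \neq 0$, as wanted.

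The heart of the argument, and the step I expect to be the main obstacle, is thus the vanishing $\pi_*(G^\vee \otimes E_{|\pi^{-1}(y)}) = 0$. The idea is to use that $\pi_*(G^\vee \otimes E) = 0$ places $E$ in the torsion-free class $S$ of the torsion pair $(T,S)$ of Lemma \ref{lem:tilting}, and $S$ is closed under subsheaves; hence $\mathfrak{m}_y^k E \in S$, i.e.\ $\pi_*(G^\vee \otimes \mathfrak{m}_y^k E) = 0$ for all $k$. Writing $Q_k := \mathfrak{m}_y^k E/\mathfrak{m}_y^{k+1}E$ (a sheaf on $\pi^{-1}(y)$, with $Q_0 = F$), the long exact sequence of ${\bf R}\pi_*(G^\vee \otimes -)$ attached to $0 \to \mathfrak{m}_y^{k+1}E \to \mathfrak{m}_y^k E \to Q_k \to 0$ collapses, using the vanishing of the two $\pi_*$-terms, to an identification $\pi_*(G^\vee \otimes Q_k) = \ker[\,R^1\pi_*(G^\vee \otimes \mathfrak{m}_y^{k+1}E) \to R^1\pi_*(G^\vee \otimes \mathfrak{m}_y^k E)\,]$. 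For $k=0$ the desired vanishing is therefore the injectivity of the connecting map $R^1\pi_*(G^\vee \otimes \mathfrak{m}_y E) \to R^1\pi_*(G^\vee \otimes E)$, i.e.\ a ``negativity'' statement to the effect that restricting an object of $S$ to the fibre produces no sections after twisting by $G^\vee$. I would establish this by relative Serre duality for $\pi$, exactly as in the proof of Lemma \ref{lem:tilting:dual}: dualizing converts $\pi_*(G^\vee \otimes F)$ into a top direct image computed with the dual generator $G^\vee$, which by Lemma \ref{lem:tilting:dual} is a local projective generator of ${\cal C}_G^D$, and the generator property then yields the vanishing. The delicate point is that $F = E_{|\pi^{-1}(y)}$ need not be locally free on the possibly nonreduced and reducible fibre, so the duality must be applied on $X$ to $G^\vee \otimes F$ viewed as a sheaf of dimension $\leq 1$ rather than fibrewise; controlling the resulting Ext-sheaves is where the real work lies.
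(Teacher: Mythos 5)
The decisive gap is exactly the step you flag as the heart of the argument: the vanishing $\pi_*(G^{\vee}\otimes E_{|\pi^{-1}(y)})=0$. You do not prove it, and there is no reason to expect it to hold. The class $S=\{E\in\Coh(X)\mid \pi_*(G^{\vee}\otimes E)=0\}$ is the torsion-\emph{free} part of the torsion pair of Lemma \ref{lem:tilting}, so it is closed under subobjects but not under quotients, and $E_{|\pi^{-1}(y)}=E/{\frak m}_y E$ is a quotient of $E$. When $E\in S$ is not scheme-theoretically supported on the fibre, the graded pieces ${\frak m}_y^kE/{\frak m}_y^{k+1}E$ acquire torsion quotients that typically lie in $T$, which is precisely the failure of the injectivity of the connecting map $R^1\pi_*(G^{\vee}\otimes{\frak m}_yE)\to R^1\pi_*(G^{\vee}\otimes E)$ that your reduction requires; the Serre-duality argument you gesture at is not carried out and does not obviously supply it. There is also a secondary gap in the reduction itself: $\pi_*(G^{\vee}\otimes E)=0$ only forces $\Supp(E)\subseteq\pi^{-1}(Y_\pi)$, not $\Supp(E)\subseteq\pi^{-1}(y)$ near $y$. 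Since this lemma does not assume $\# Y_\pi<\infty$ (that is an extra hypothesis imposed later, e.g.\ in Lemma \ref{lem:tilting:without}), your claim that $E$ is annihilated by a power of ${\frak m}_y$ is unjustified when $Y_\pi$ is positive-dimensional.

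The actual proof avoids the underived restriction entirely and argues by contradiction. Suppose $R^1\pi_*(G^{\vee}\otimes E_{|\pi^{-1}(y)})=0$. By the theorem on formal functions (Lemma \ref{lem:tilting:TFF} (3)) this propagates to $R^1\pi_*(G^{\vee}\otimes E)=0$ in a neighbourhood of $y$, so together with the hypothesis $\pi_*(G^{\vee}\otimes E)=0$ one gets ${\bf R}\pi_*(G^{\vee}\otimes E)=0$ near $y$. The projection formula then gives
${\bf R}\pi_*(G^{\vee}\otimes E\overset{{\bf L}}{\otimes}{\bf L}\pi^*({\Bbb C}_y))={\bf R}\pi_*(G^{\vee}\otimes E)\overset{{\bf L}}{\otimes}{\Bbb C}_y=0$,
and since $R^p\pi_*=0$ for $p\geq 2$ the relevant spectral sequence degenerates (as in the proof of Lemma \ref{lem:tilting:C}), forcing $R^p\pi_*$ of \emph{every} cohomology sheaf of the derived restriction to vanish; in particular ${\bf R}\pi_*(G^{\vee}\otimes E_{|\pi^{-1}(y)})=0$, contradicting the hypothesis on $G$ because $E_{|\pi^{-1}(y)}\neq 0$ (your Nakayama argument for this last point is fine). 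Note that this route never needs, and never asserts, the vanishing of $\pi_*$ of the underived restriction.
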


\begin{proof}
Assume that $R^1 \pi_*(G^{\vee} \otimes E_{|\pi^{-1}(y)})=0$.
By Lemma \ref{lem:tilting:TFF} below,
$R^1 \pi_*(G^{\vee} \otimes E)=0$ in a neighborhood of $y$.
Thus ${\bf R}\pi_*(G^{\vee} \otimes E)=0$ in a neighborhood of $y$.
Then 
${\bf R}\pi_*(G^{\vee} \otimes E \overset{{\bf L}}{\otimes} 
{\bf L}\pi^*({\Bbb C}_y))=
{\bf R}\pi_*(G^{\vee} \otimes E)
\overset{{\bf L}}{\otimes} {\Bbb C}_y=
0$.
Since the spectral sequence 
\begin{equation}
E^{pq}_2=R^p \pi_*(H^q(G^{\vee} \otimes E \overset{{\bf L}}{\otimes} 
{\bf L}\pi^*({\Bbb C}_y))) \Longrightarrow 
E^{p+q}_\infty=
H^{p+q}({\bf R}\pi_*(G^{\vee} \otimes E \overset{{\bf L}}{\otimes} 
{\bf L}\pi^*({\Bbb C}_y)))
\end{equation}
degenerates,
$R^p \pi_*(G^{\vee} \otimes E \otimes \pi^*({\Bbb C}_y))=0$.
By our assumption on $G$,
we have $E_{|\pi^{-1}(y)}=0$, which is a contradiction.  
\end{proof}

\begin{defn}\label{defn:0-dim}
\begin{enumerate}
\item[(1)]
An object $E \in {\cal C}$ is {\it $0$-dimensional}, if
${\bf R}\pi_*(G^{\vee} \otimes E)$ is 0-dimensional as an object of
$\Coh(Y)$.
\item[(2)]
An object $E \in {\cal C}$ is {\it irreducible}, if
$E$ does not have a proper subobject except 0.
\item[(3)] 
For a 0-dimensional object $E \in {\cal C}$,
we take a filtration
\begin{equation}
0 \subset F_1 \subset F_2 \subset \cdots \subset F_s=E
\end{equation}
such that $F_i/F_{i-1}$ are irreducible objects of ${\cal C}$.
Then $\oplus_i F_i/F_{i-1}$ is 
the {\it Jordan-H\"{o}lder decomposition} of $E$.
\end{enumerate}
\end{defn}

\begin{rem}
In section \ref{subsect:stability}, we shall define the dimension
of $E$ generally. According to the definition of the stability
in Definition \ref{defn:Simpson-stability},
we also have the following.
\begin{enumerate}
\item[(1)]
A 0-dimensional object $E$ is $G$-twisted semi-stable and
a $G$-twisted stable object corresponds to an irreducible object. 
\item[(2)]
The Jordan-H\"{o}lder decomposition of $E$ is nothing but
the standard representative of the $S$-equivalence class of
$E$. 
\end{enumerate}
\end{rem}

\begin{lem}\label{lem:tilting:irreducible}
Let $G$ be a locally free sheaf on $X$ and 
${\cal C}_G$ the tilted category in Lemma \ref{lem:tilting}.
\begin{enumerate}
\item[(1)]
${\Bbb C}_x \in {\cal C}_G$ for all $x \in X$.
\item[(2)]
For ${\Bbb C}_x, x \in \pi^{-1}(y)$,
the Jordan-H\"{o}lder decomposition
does not depend on the choice of
$x \in \pi^{-1}(y)$. 
\item[(3)]
Let $\oplus_{j=0}^{s_y}E_{yj}^{\oplus a_{yj}}$ be the Jordan-H\"{o}lder 
decomposition of ${\Bbb C}_x$, $x \in \pi^{-1}(y)$.
Then the irreducible objects of ${\cal C}_G$ are
\begin{equation}\label{eq:tilting:generater}
{\Bbb C}_x, (x \in X \setminus \pi^{-1}(Y_\pi)),\;\;
E_{yj}, (y \in Y_\pi,\; 0 \leq j \leq s_y).
\end{equation}
In particular, if ${\bf R}\pi_*(G^{\vee} \otimes E)$ is a 0-dimensional
${\cal A}$-module, then $E$ is generated by
\eqref{eq:tilting:generater}.
\end{enumerate}
\end{lem}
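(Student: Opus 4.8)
The plan is to run everything through the Morita equivalence of Proposition \ref{prop:Morita}, which identifies ${\cal C}_G$ with $\Coh_{{\cal A}}(Y)$ via $E \mapsto {\bf R}\pi_*(G^{\vee} \otimes E)$. Statement (1) is immediate: since $G$ is locally free, $G^{\vee}\otimes {\Bbb C}_x \cong {\Bbb C}_x^{\oplus \rk G}$ is supported at the single point $x$, so $R^1\pi_*(G^{\vee}\otimes {\Bbb C}_x)=0$; hence ${\Bbb C}_x\in T$, and as a sheaf in degree $0$ it lies in the tilted heart ${\cal C}_G$ of Lemma \ref{lem:tilting}.

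The heart of the matter is (3). Under the equivalence an irreducible object of ${\cal C}_G$ is the same as a simple coherent ${\cal A}$-module, and any such module is supported at a single closed point $y\in Y$; in particular every irreducible object is $0$-dimensional in the sense of Definition \ref{defn:0-dim}, so its support in $X$ lies in the fibre $\pi^{-1}(y)$. I would first prove the local claim that every irreducible $W$ lying over $y$ is a Jordan--H\"older factor of ${\Bbb C}_x$ for a suitable $x$. Choose $x$ in the (nonempty) support of $W$ in $X$, and write $W$ as a two-term complex with $H^{-1}(W)\in S$, $H^0(W)\in T$; then $x\in\Supp H^0(W)$ or $x\in\Supp H^{-1}(W)$. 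In the first case a sheaf surjection $H^0(W)\to {\Bbb C}_x$ composed with the truncation $W\to H^0(W)$ gives a nonzero morphism $W\to {\Bbb C}_x$ in ${\cal C}_G$, and simplicity of $W$ makes it a subobject of ${\Bbb C}_x$. In the second case $H^{-1}(W)$ is a nonzero torsion sheaf near $x$ (its support lies in the curve $\pi^{-1}(y)$), hence not locally free at $x$, so on the smooth surface $X$ one has $\Ext^1_{{\cal O}_X}({\Bbb C}_x,H^{-1}(W))\neq 0$; the triangle $H^{-1}(W)[1]\to W\to H^0(W)$ then produces a nonzero morphism ${\Bbb C}_x\to W$, exhibiting $W$ as a quotient of ${\Bbb C}_x$. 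Either way $W$ occurs in the composition series of ${\Bbb C}_x$.

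It remains to prove (2) and assemble the classification. The skyscrapers $\{{\Bbb C}_x\}_{x\in\pi^{-1}(y)}$ form a flat family over the connected fibre $\pi^{-1}(y)$, so their images $M_x:={\bf R}\pi_*(G^{\vee}\otimes {\Bbb C}_x)$ form a flat family of ${\cal A}$-modules, all of length $\rk G$ and all supported at $y$. For each simple $S_j$ of the finite-dimensional algebra ${\cal A}(y)={\cal A}\otimes k(y)$ the multiplicity $[M_x:S_j]=\dim\Hom(P_j,M_x)$ is an upper semicontinuous function of $x$; since $\sum_j[M_x:S_j]\dim S_j=\rk G$ is constant, each multiplicity is constant on $\pi^{-1}(y)$. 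Hence the multiset of composition factors, and so the Jordan--H\"older decomposition $\oplus_j E_{yj}^{\oplus a_{yj}}$, is independent of $x$, which is (2). Combining with the previous paragraph, every irreducible over $y$ occurs in some, hence in every, ${\Bbb C}_x$, so the irreducibles over $y$ are exactly the $E_{yj}$. For $y\notin Y_\pi$ the morphism $\pi$ is an isomorphism near $y$, ${\cal A}$ is a matrix algebra there, and the unique simple is ${\Bbb C}_x$ with $\{x\}=\pi^{-1}(y)$; this recovers the first family in \eqref{eq:tilting:generater}. Finally, any $0$-dimensional object admits a finite filtration whose quotients are irreducible, hence all appear in \eqref{eq:tilting:generater}, giving the last assertion.

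The main obstacle is the local step of the second paragraph, especially the case $x\in\Supp H^{-1}(W)$: one must rule out $H^{-1}(W)$ being locally free at $x$, which is exactly where $0$-dimensionality of $W$ (forcing its support into the exceptional curve) and the smoothness of $X$ (to read $\Ext^1({\Bbb C}_x,-)$ off a minimal free resolution) enter. The constancy of the Jordan--H\"older multiplicities in the flat family, obtained from semicontinuity of $\dim\Hom(P_j,M_x)$ against the fixed weighted sum, is the other point needing care.
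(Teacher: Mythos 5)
Your argument is correct in substance but follows a genuinely different route from the paper's. For (2) the paper first proves that $\pi^{-1}(y)_{\mathrm{red}}$ is a tree of smooth rational curves, that $G_{|C_{yj}}\cong{\cal O}_{C_{yj}}(d_{yj})^{\oplus r_{yj}}\oplus{\cal O}_{C_{yj}}(d_{yj}+1)^{\oplus r_{yj}'}$, and then exhibits the explicit exact sequence $0\to{\cal O}_{C_{yj}}(d_{yj})\to{\Bbb C}_x\to{\cal O}_{C_{yj}}(d_{yj}-1)[1]\to0$ in ${\cal C}_G$, so that the $S$-equivalence class is visibly constant along each component and connectedness of the fibre finishes the job; you instead push everything through the Morita equivalence and deduce constancy of the Jordan--H\"older multiplicities from upper semicontinuity of $\dim\Hom(P_j,M_x)$ in the flat family $M_x=G^{\vee}\otimes k(x)$ of ${\cal A}\otimes k(y)$-modules, played against the fixed total dimension $\rk G$. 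Your version is shorter and needs no structure theory of $G$ on the fibre, though the paper's explicit sequences are reused later to identify the $E_{yj}$ concretely in \S\ref{subsect:perverse-examples}, so the two proofs are not interchangeable downstream. For (3), your observation that a simple ${\cal A}$-module is killed by ${\frak m}_y$ (Nakayama) gives the support condition more directly than the paper's appeal to Lemma \ref{lem:tilting:restriction}, and your two cases ($\Hom(W,{\Bbb C}_x)\ne0$, resp.\ $\Hom({\Bbb C}_x,W)\ne0$) line up with the paper's cases (ii) and (i).

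One point needs repair. In the case $x\in\Supp H^{-1}(W)$ you argue that $H^{-1}(W)$ is not locally free at $x$ and that ``on the smooth surface $X$'' this forces $\Ext^1_{{\cal O}_X}({\Bbb C}_x,H^{-1}(W))\ne0$. But this lemma sits in \S\ref{subsect:Morita}, where $X$ is a smooth projective variety of arbitrary dimension with $\dim\pi^{-1}(y)\le1$; the paper's own remark after the lemma stresses exactly that $\dim X$ need not be $2$. The nonvanishing you want does hold in general, but for a different reason: $F=H^{-1}(W)\in S$ is purely $1$-dimensional (a nonzero $0$-dimensional subsheaf would survive $\pi_*(G^{\vee}\otimes-)$), so $\depth_{{\cal O}_{X,x}}F_x=1$, the projective dimension of $F$ at $x$ is $n-1$, and ${\cal E}xt^1_{{\cal O}_X}({\Bbb C}_x,F)\cong{\cal T}or^{{\cal O}_X}_{n-1}({\Bbb C}_x,F)\ne0$, whence $\Ext^1({\Bbb C}_x,F)\ne0$ since the local-to-global spectral sequence degenerates for skyscraper ${\cal E}xt$'s. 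This is precisely the computation the paper performs in Lemma \ref{lem:tilting:Rpi*E=0}; ``not locally free at $x$'' alone would not suffice once $\dim X>2$.
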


\begin{NB}
Since $\dim X$ may not be 2, we cannot use the theory of 
Fourier-Mukai functor. For the 2-dimensional case, see
Lemma \ref{lem:simple-generator}.
\end{NB}

\begin{proof}
(1)
We note that ${\bf R}\pi_*(G^{\vee} \otimes {\Bbb C}_x)=
 \pi_*(G^{\vee} \otimes {\Bbb C}_x)$. Hence ${\Bbb C}_x \in {\cal C}_G$.
(2)
Since the trace map $G^{\vee} \otimes G \to {\cal O}_X$ is surjective,
we have a surjective map
\begin{equation}
R^1 \pi_*(G^{\vee} \otimes G) \to R^1 \pi_*({\cal O}_X)
\to R^1 \pi_*({\cal O}_{\pi^{-1}(y)_{\mathrm{red}}}),
\end{equation}
where $\pi^{-1}(y)_{\mathrm{red}}$ is the reduced subscheme of
$\pi^{-1}(y)$. 
Since $R^1 \pi_*(G^{\vee} \otimes G)=0$,
we get 
$$
H^1(\pi^{-1}(y)_{\mathrm{red}},
{\cal O}_{\pi^{-1}(y)_{\mathrm{red}}})=
H^0(Y,R^1 \pi_*({\cal O}_{\pi^{-1}(y)_{\mathrm{red}}}))=0.
$$
Then we see that $\pi^{-1}(y)_{\mathrm{red}}$
is a tree of smooth rational curves. 
Let $C_{yj}$, $j=0,...,s_y$ be the irreducible component of
$\pi^{-1}(y)_{\mathrm{red}}$.
Since the restriction map
$R^1 \pi_*(G^{\vee} \otimes G) \to
R^1 \pi_*(G^{\vee} \otimes G_{|C_{yj}})$ is surjective,
$R^1 \pi_*(G^{\vee} \otimes G_{|C_{yj}})=0$.
Thus we can write 
$G_{|C_{yj}} \cong {\cal O}_{C_{yj}}(d_{yj})^{\oplus r_{yj}}
\oplus {\cal O}_{C_{yj}}(d_{yj}+1)^{\oplus r_{yj}'}$.
Since $R^1 \pi_*(G^{\vee} \otimes {\cal O}_{C_{yj}}(d_{yj}))=0$
and $\pi_*(G^{\vee} \otimes {\cal O}_{C_{yj}}(d_{yj}-1))=0$,
${\cal O}_{C_{yj}}(d_{yj}),{\cal O}_{C_{yj}}(d_{yj}-1)[1] \in
{\cal C}_G$.
For $x \in C_{yj}$, we have an exact sequence in ${\cal C}_G$
\begin{equation}
0 \to {\cal O}_{C_{yj}}(d_{yj}) \to {\Bbb C}_x \to
{\cal O}_{C_{yj}}(d_{yj}-1)[1] \to 0.
\end{equation} 
Hence the Jordan-H\"{o}lder decomposition of ${\Bbb C}_x$ is 
constant on $C_{yj}$.
Since $\pi^{-1}(y)$ is connected, 
the Jordan-H\"{o}lder decomposition of ${\Bbb C}_x$ is determined by 
$y$.

(3)
Let $E$ be an irreducible object of ${\cal C}_G$.
Then we have (i) $E=F[1], F \in \Coh(X)$ or (ii) $E \in \Coh(X)$.
In the first case, since $F \in S$, we have 
$\pi_*(G^{\vee} \otimes F)=0$.
By Lemma \ref{lem:tilting:restriction}, we have
$R^1 \pi_*(G^{\vee} \otimes F_{|\pi^{-1}(y)}) \ne 0$
 for $y \in \pi(\Supp(F))$, which implies that
there is a quotient $F_{|\pi^{-1}(y)} \to F'$ such that 
$0 \ne F' \in S$ for $y \in \pi(\Supp(F))$. Then
we have a non-trivial morphism 
$F[1] \to F'[1]$, which should be injective in ${\cal C}_G$.
Therefore $\pi(\Supp(F))$ is a point.
In the second case,
we also see that $\pi(\Supp(E))$ is a point.
Therefore  
${\bf R}\pi_*(G^{\vee} \otimes E)$ is a 0-dimensional sheaf. 
(i) If $E=F[1]$, then
since $\pi_*(G^{\vee} \otimes F)=0$, $F$ is purely 1-dimensional.
Then $\Hom({\Bbb C}_x,F[1])=\Hom(D(F)[n-1],D({\Bbb C}_x)[n]) \ne 0$
for $x \in \Supp(F)$, where $n=\dim X$.
Hence we have a non-trivial
morphism $E_{yj} \to E$, 
$y \in \pi(\Supp(F)) \cap Y_\pi$,
which is an isomorphism.  
(ii) If $E \in \Coh(X)$, then
$\Hom(E,{\Bbb C}_x) \ne 0$ for $x \in \Supp(E)$,
which also implies that $E \cong E_{yj}$ for
$\Supp(E) \subset \pi^{-1}(y)$ or 
$E \cong {\Bbb C}_x$ for $\Supp(E) \subset X \setminus Y_\pi$.
\end{proof}

\begin{rem}
Since $\pi_*(G^{\vee} \otimes {\Bbb C}_x)$ is a coherent sheaf
on the reduced point $\{y \}$,
the multiplication $\pi^*(t):E_{yj} \to E_{yj}$,
$t \in I_{y}$ is zero.
Thus 
$H^i(E_{yj})$ are coherent sheaves on the scheme $\pi^{-1}(y)$.
\end{rem}

\begin{lem}\label{lem:tilting:generate}
Let $E$ be a coherent sheaf such that $\pi(\Supp(E))=\{y \}$.
\begin{enumerate}
\item[(1)]
For $E \in T$ with $\Supp(E) \subset \pi^{-1}(y)$, there is a filtration
\begin{equation}
0 \subset F_1 \subset F_2 \subset \cdots \subset F_s=E
\end{equation} 
such that for every $F_k/F_{k-1}$, 
there is $E_{yj} \in T$ and a surjective homomorphism
$E_{yj} \to F_k/F_{k-1}$ in $\Coh(X)$.
\item[(2)]
For $E \in S$, there is a filtration
\begin{equation}
0 \subset F_1 \subset F_2 \subset \cdots \subset F_s=E
\end{equation} 
such that for every $F_k/F_{k-1}$, 
there is $E_{yj}[-1] \in S$ and an injective homomorphism
$F_k/F_{k-1} \to E_{yj}[-1]$ in $\Coh(X)$.
\end{enumerate}
\end{lem}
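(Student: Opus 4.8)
The plan is to prove both statements by induction on a suitable length invariant, exploiting that ${\cal C}_G$ is the tilt of $\Coh(X)$ along the torsion pair $(T,S)$, so that $(S[1],T)$ is a torsion pair on ${\cal C}_G$ with torsion class $S[1]$ and torsion-free class $T$. Two elementary facts will be used repeatedly: (i) for sheaves $A,B$ one has $\Hom_{{\cal C}_G}(A,B)=\Hom_{\Coh(X)}(A,B)$, while $\Hom_{{\cal C}_G}(A[1],B)=\Ext^{-1}(A,B)=0$, so there is no nonzero map from $S[1]$ to $T$ in ${\cal C}_G$; and (ii) since $\Supp E\subset\pi^{-1}(y)$, the object $E$ (resp. $E[1]$) is $0$-dimensional in the sense of Definition \ref{defn:0-dim}, hence admits a Jordan--H\"older filtration in ${\cal C}_G$ whose factors are among the irreducible objects $E_{yj}$ of Lemma \ref{lem:tilting:irreducible}, each of which lies either in $T$ or in $S[1]$.

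For (1) I set $\ell(E):=\operatorname{length}\pi_*(G^{\vee}\otimes E)$; note ${\bf R}\pi_*(G^{\vee}\otimes E)=\pi_*(G^{\vee}\otimes E)$ because $E\in T$. If $\ell(E)=0$ then ${\bf R}\pi_*(G^{\vee}\otimes E)=0$ and property (b) of $G$ forces $E=0$. If $\ell(E)>0$, I choose an irreducible subobject $E'\hookrightarrow E$ in ${\cal C}_G$ (the bottom term of a Jordan--H\"older filtration). Since $E\in T$ and $\Hom_{{\cal C}_G}(S[1],T)=0$, the factor $E'$ cannot be of shift type, so $E'=E_{yj}$ for some $j$ with $E_{yj}\in T$; moreover this monomorphism is, by (i), an honest nonzero sheaf map. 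Put $F_1:=\im(E_{yj}\to E)$. As a quotient of $E_{yj}\in T$ it lies in $T$ (the torsion class is closed under quotients), it is nonzero, and $E_{yj}\twoheadrightarrow F_1$. Then $E/F_1\in T$, $\Supp(E/F_1)\subset\pi^{-1}(y)$, and $0\to F_1\to E\to E/F_1\to 0$ stays exact after applying $\pi_*(G^{\vee}\otimes-)$ because $R^1\pi_*(G^{\vee}\otimes F_1)=0$; hence $\ell(E/F_1)=\ell(E)-\ell(F_1)<\ell(E)$. Applying the inductive hypothesis to $E/F_1$ and pulling the resulting filtration back along $E\to E/F_1$ yields the desired filtration of $E$ with first quotient $F_1$.

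Part (2) is the mirror image, run on $\ell'(E):=\operatorname{length}R^1\pi_*(G^{\vee}\otimes E)$; here ${\bf R}\pi_*(G^{\vee}\otimes E)=R^1\pi_*(G^{\vee}\otimes E)[-1]$ since $E\in S$, and $\ell'(E)=0$ gives $E\in S\cap T=0$. For $E\ne 0$ the object $E[1]\in S[1]$ is a nonzero $0$-dimensional object, so it has an irreducible quotient $E[1]\twoheadrightarrow Q$; because $S[1]$ is closed under quotients in ${\cal C}_G$, $Q$ lies in $S[1]$, i.e. $Q=E_{yj}$ with $E_{yj}[-1]\in S$. By (i) the corresponding epimorphism is the shift of a nonzero sheaf map $\psi:E\to E_{yj}[-1]$. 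I set $N:=\im\psi\subset E_{yj}[-1]$ and $E':=\ker\psi$. Then $E'\subset E$ and $N\hookrightarrow E_{yj}[-1]$, both $E'$ and $N$ lie in $S$ (a torsion-free class, closed under subobjects), and the analogue of the length computation gives $\ell'(E')=\ell'(E)-\ell'(N)<\ell'(E)$. The inductive hypothesis provides a filtration of $E'$ of the required shape, and appending $E'\subset E$ with $E/E'\cong N\hookrightarrow E_{yj}[-1]$ completes the filtration of $E$.

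The only real subtlety, and the point that dictates the whole set-up, is that a Jordan--H\"older filtration of $E$ inside ${\cal C}_G$ is in general not a filtration by subsheaves, since a ${\cal C}_G$-subobject of a sheaf need not be a $\Coh(X)$-subsheaf. The device that circumvents this is to extract only one irreducible factor at a time, from the bottom in (1) and from the top in (2), and to transport it to $\Coh(X)$ via the identifications in (i); the torsion-pair closure properties ($T$ under quotients, $S$ under subobjects) then guarantee that the residual object stays in the correct class and that the length invariant strictly decreases, so the induction closes.
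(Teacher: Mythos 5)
Your proof is correct and follows essentially the same route as the paper: in both cases one peels off a single irreducible Jordan--H\"older factor of the $0$-dimensional object (a subobject $E_{yj}\in T$ in (1), a quotient $E_{yj}\in S[1]$ of $E[1]$ in (2)), passes to its sheaf-theoretic image or kernel in $\Coh(X)$, and inducts. The only cosmetic differences are that you make the induction invariant explicit as a length of $\pi_*(G^{\vee}\otimes -)$ resp.\ $R^1\pi_*(G^{\vee}\otimes -)$ and use the vanishing $\Hom_{{\cal C}_G}(S[1],T)=0$ where the paper reads off the same information from the long exact cohomology sequence.
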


\begin{proof}
(1)
Since $E \in T$, $E$ contains $E_{yj}$ in ${\cal C}$.
Let $F$ be the quotient in ${\cal C}$. Then  we have an exact sequence
\begin{equation}
0 \to H^{-1}(E_{yj}) \to 0 \to H^{-1}(F) 
\to H^0(E_{yj}) \to E \to H^0(F) \to 0.
\end{equation}
Hence $E_{yj} \in T$ and $H^0(F) \in T$.
We set $F_1:=\im(E_{yj} \to E)$ in $\Coh(X)$.
Since $E/F_1 \in T$ and $\Supp(E/F_1) \subset \pi^{-1}(y)$,
by the induction on the support of $E$,
we get the claim.

(2)
Since $E \in S$, there is a quotient $E[1] \to E_{yj}$ in ${\cal C}$.
Let $F$ be the kernel in ${\cal C}$. Then we have an exact sequence
\begin{equation}
0 \to H^{-1}(F) \to E \to H^{-1}(E_{yj}) 
\to H^0(F) \to 0 \to H^0(E_{yj}) \to 0.
\end{equation}
Hence $E_{yj}[-1] \in S$ and $H^{-1}(F) \in S$.
\begin{NB}
$\Hom(F_1,F_2)=0, F_1 \in T, F_2 \in S$ implies that
$E \subset F$, $F \in S$ implies that $E \in S$ and
$E \subset F$, $F \in T$ implies that $F/E \in T$.
\end{NB}
We set $E':=\im(E \to H^{-1}(E_{yj}))$ in $\Coh(X)$.
Then $E'$ is a subsheaf of $E_{yj}[-1]$ and
$E$ is an extension of $E'$ by $H^{-1}(F) \in S$. 
Since $\Supp(H^{-1}(F)) \subset \pi^{-1}(y)$,
by the induction on the support of $E$,
we get the claim.
\end{proof}

\begin{lem}\label{lem:tilting:TFF}
\begin{enumerate}
\item[(1)]
$\pi^*(\pi_*(I_{\pi^{-1}(y)})) \to I_{\pi^{-1}(y)}$ is surjective.
In particular, $\Hom(I_{\pi^{-1}(y)},{\cal O}_{C_{yj}}(-1))=0$
for all $j$. 
\item[(2)]
$\Ext^1({\cal O}_{\pi^{-1}(y)},{\cal O}_{C_{yj}}(-1))=0$ for all $j$.
In particular,
$$
H^1(X,{\cal H}om({\cal O}_{\pi^{-1}(y)},{\cal O}_{C_{yj}}(-1)))=
H^0(X,{\cal E}xt^1({\cal O}_{\pi^{-1}(y)},{\cal O}_{C_{yj}}(-1)))=0.
$$
\item[(3)]
For a coherent sheaf $E$ on $X$,
$R^1 \pi_*(E)=0$ at $y$ if and only if $R^1 \pi_*(E_{|\pi^{-1}(y)})=0$.
\end{enumerate}
\end{lem}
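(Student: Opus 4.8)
The plan is to establish the three parts in order, feeding (1) into (2) through the ideal-sheaf sequence of the fibre, and proving (3) independently by base change in top cohomological degree. Throughout I use, as in the proof of Lemma~\ref{lem:tilting:irreducible}, that $\pi^{-1}(y)_{\mathrm{red}}$ is a tree of smooth rational curves, so each component $C_{yj}\cong{\Bbb P}^1$ and hence $H^0(C_{yj},{\cal O}_{C_{yj}}(-1))=H^1(C_{yj},{\cal O}_{C_{yj}}(-1))=0$. For (1), I would note that the scheme-theoretic fibre is cut out by the inverse image ideal, so the canonical map $\pi^*(I_y)\to I_{\pi^{-1}(y)}$ is surjective, where $I_y\subset{\cal O}_Y$ is the ideal sheaf of $y$. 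The adjunction morphism $I_y\to\pi_*(I_{\pi^{-1}(y)})$ factors this surjection through the evaluation map $\pi^*(\pi_*(I_{\pi^{-1}(y)}))\to I_{\pi^{-1}(y)}$, which is therefore surjective. For the ``in particular'', any $\phi\colon I_{\pi^{-1}(y)}\to{\cal O}_{C_{yj}}(-1)$ becomes injective after precomposition with this surjection, so it suffices to see $\Hom(\pi^*(\pi_*(I_{\pi^{-1}(y)})),{\cal O}_{C_{yj}}(-1))=0$; by adjunction this group is $\Hom_Y(\pi_*(I_{\pi^{-1}(y)}),\pi_*({\cal O}_{C_{yj}}(-1)))$, and $\pi_*({\cal O}_{C_{yj}}(-1))$ is the skyscraper at $y$ with stalk $H^0(C_{yj},{\cal O}_{C_{yj}}(-1))=0$, hence $\phi=0$.

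For (2), I would apply $\Hom(-,{\cal O}_{C_{yj}}(-1))$ to $0\to I_{\pi^{-1}(y)}\to{\cal O}_X\to{\cal O}_{\pi^{-1}(y)}\to0$, obtaining the exact sequence $\Hom(I_{\pi^{-1}(y)},{\cal O}_{C_{yj}}(-1))\to\Ext^1({\cal O}_{\pi^{-1}(y)},{\cal O}_{C_{yj}}(-1))\to\Ext^1({\cal O}_X,{\cal O}_{C_{yj}}(-1))$. The left-hand term vanishes by (1), and the right-hand term is $H^1(X,{\cal O}_{C_{yj}}(-1))=H^1(C_{yj},{\cal O}_{C_{yj}}(-1))=0$, so the middle term vanishes. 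The ``in particular'' then follows from the local-to-global spectral sequence $E_2^{p,q}=H^p(X,{\cal E}xt^q({\cal O}_{\pi^{-1}(y)},{\cal O}_{C_{yj}}(-1)))\Rightarrow\Ext^{p+q}({\cal O}_{\pi^{-1}(y)},{\cal O}_{C_{yj}}(-1))$: since ${\cal H}om$ and ${\cal E}xt^1$ are supported on the curve $C_{yj}$ we have $H^{\ge2}(X,{\cal H}om)=0$, every differential touching $E_2^{1,0}$ and $E_2^{0,1}$ vanishes, and both $H^1(X,{\cal H}om)$ and $H^0(X,{\cal E}xt^1)$ are subquotients of $\Ext^1=0$.

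For (3), I would use base change in the top degree. As $\dim\pi^{-1}(y)\le1$, we have $R^i\pi_*=0$ for $i\ge2$. The projection formula gives ${\bf R}\pi_*(E)\overset{{\bf L}}{\otimes}_{{\cal O}_Y}{\Bbb C}_y\cong{\bf R}\pi_*(E\overset{{\bf L}}{\otimes}{\bf L}\pi^*({\Bbb C}_y))$. Taking $H^1$ on the left yields $R^1\pi_*(E)\otimes{\Bbb C}_y$, by right-exactness of $\otimes{\Bbb C}_y$. On the right, $E\overset{{\bf L}}{\otimes}{\bf L}\pi^*({\Bbb C}_y)$ has cohomology sheaves in non-positive degrees, all supported on $\pi^{-1}(y)$, with $H^0=E_{|\pi^{-1}(y)}$; since $R^{\ge2}\pi_*=0$, the hypercohomology spectral sequence collapses in total degree $1$ to the single term $R^1\pi_*(E_{|\pi^{-1}(y)})$, whose stalk at $y$ is $H^1(\pi^{-1}(y),E_{|\pi^{-1}(y)})$. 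Comparing gives a canonical isomorphism $R^1\pi_*(E)\otimes{\Bbb C}_y\cong H^1(\pi^{-1}(y),E_{|\pi^{-1}(y)})$. Now $R^1\pi_*(E_{|\pi^{-1}(y)})=0$ is exactly the vanishing of the right-hand side, while $R^1\pi_*(E)=0$ near $y$ is equivalent, by Nakayama applied to the coherent sheaf $R^1\pi_*(E)$, to the vanishing of the left-hand side; this yields both implications.

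The main obstacle is (3): since $E$ is an arbitrary coherent sheaf, not assumed flat over $Y$, the standard cohomology-and-base-change statements do not apply verbatim. The point to get right is that base change nonetheless holds in the \emph{top} degree, and the projection-formula identity above is exactly what delivers this without any flatness hypothesis, by reducing both quantities to $H^1$ of the single object ${\bf R}\pi_*(E)\overset{{\bf L}}{\otimes}{\Bbb C}_y$ and exploiting $R^{\ge2}\pi_*=0$ forced by the one-dimensional fibres. Parts (1) and (2) are comparatively formal once the $({\Bbb P}^1,{\cal O}(-1))$ vanishings are in hand.
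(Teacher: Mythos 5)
Your proof is correct. Parts (1) and (2) follow the paper's own argument essentially verbatim: the surjectivity in (1) comes from $I_{\pi^{-1}(y)}=\im(\pi^*(I_y)\to{\cal O}_X)$, and (2) is the long exact sequence of $\Hom(-,{\cal O}_{C_{yj}}(-1))$ applied to the ideal-sheaf sequence, together with $H^k(X,{\cal O}_{C_{yj}}(-1))=0$ and the local-to-global spectral sequence. (One wording slip in (1): $\phi$ does not ``become injective'' after precomposition; rather, precomposition with a surjection is injective on $\Hom$-groups --- but your next sentence shows you mean the right thing.) Part (3) is where you genuinely diverge. The paper, following Ishii, proves the nontrivial direction by the theorem on formal functions: using (1) it produces surjections $\pi^*(V^{\otimes n})\otimes{\cal O}_{\pi^{-1}(y)}\to I_{\pi^{-1}(y)}^n/I_{\pi^{-1}(y)}^{n+1}$, deduces $R^1\pi_*(E\otimes{\cal O}_X/I_{\pi^{-1}(y)}^n)=0$ for all $n$ by induction (using $R^{\geq 2}\pi_*=0$), and concludes by passing to the completion. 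Your argument instead identifies both quantities with $H^1$ of the single object ${\bf R}\pi_*(E)\overset{{\bf L}}{\otimes}{\Bbb C}_y$: right-exactness of the tensor product in top degree gives $R^1\pi_*(E)\otimes{\Bbb C}_y$ on one side, and the collapse of the hypercohomology spectral sequence in total degree $1$ --- forced by $R^{\geq 2}\pi_*=0$ and the concentration of $E\overset{{\bf L}}{\otimes}{\bf L}\pi^*({\Bbb C}_y)$ in degrees $\leq 0$ --- gives $R^1\pi_*(E_{|\pi^{-1}(y)})$ on the other; Nakayama then yields both implications at once. This buys a symmetric treatment of the two directions, makes no use of part (1), and avoids formal completion; the paper's route stays within classical coherent-sheaf cohomology and makes explicit where the geometry of the fibre (via the surjection of (1)) enters.
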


\begin{proof}
Since $I_{\pi^{-1}(y)}=\im(\pi^*(I_{y}) \to {\cal O}_X)$, (1) holds.
(2)
Since $\Hom({\cal O}_X,{\cal O}_{C_{yj}}(-1)[k])=0$
for all $j$ and $k$,
the first claim follows from the exact sequence 
\begin{equation}
0 \to I_{\pi^{-1}(y)} \to {\cal O}_X \to {\cal O}_{\pi^{-1}(y)}
\to 0.
\end{equation}
Since $H^2(X,{\cal H}om({\cal O}_{\pi^{-1}(y)},{\cal O}_{C_{yj}}(-1)))=0$,
the second claim follows from the local-global spectral sequence.

(3)
The proof is similar to \cite{I:1}.
Assume that $R^1 \pi_*(E_{|\pi^{-1}(y)})=0$.
We take a locally free sheaf $V$ on $Y$ such that
$V \to I_{y}$ is surjective.
Then (1) implies that 
$\pi^*(V) \to I_{\pi^{-1}(y)}$ is surjective.
Hence we 
have a surjective homomorphism
$\pi^*(V^{\otimes n}) \otimes {\cal O}_{\pi^{-1}(y)}
\to  I_{\pi^{-1}(y)}^n/I_{\pi^{-1}(y)}^{n+1}$.
Then we see that 
$R^1 \pi_*(E \otimes {\cal O}_X/I_{\pi^{-1}(y)}^n)=0$.
By the theorem of formal functions, we get the claim.
\end{proof}

\begin{NB}
Old version.
\begin{lem}
\begin{enumerate}
\item[(1)]
$\Ext^1({\cal O}_{\pi^{-1}(p_i)},{\cal O}_{C_{ij}}(-1))=0$ for all $j$.
\item[(2)]
$\Hom(I_{\pi^{-1}(p_i)},{\cal O}_{C_{ij}}(-1))=0$
for all $j$. In particular,
$\pi^*(\pi_*(I_{\pi^{-1}(p_i)})) \to I_{\pi^{-1}(p_i)}$ is surjective.
\item[(3)]
For a locally free sheaf $E$ on $X$,
$R^1 \pi_*(E)=0$ at $p_i$ if and only if $R^1 \pi_*(E_{|\pi^{p_i}})=0$.
\end{enumerate}
\end{lem}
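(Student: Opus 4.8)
The plan is to treat the three parts in order, since (2) uses (1), and (3) uses (1) together with the geometry already established in the proof of Lemma \ref{lem:tilting:irreducible}: namely that $\pi^{-1}(y)_{\mathrm{red}}$ is a tree of smooth rational curves, so each $C_{yj} \cong {\Bbb P}^1$ and $H^0(C_{yj},{\cal O}_{C_{yj}}(-1)) = H^1(C_{yj},{\cal O}_{C_{yj}}(-1)) = 0$. For (1) I would start from the fact that the scheme-theoretic fiber is cut out by the pulled-back maximal ideal, i.e. $I_{\pi^{-1}(y)} = \im(\pi^*(I_{y}) \to {\cal O}_X)$. By the adjunction triangle identities, this surjection factors through the counit $\pi^*(\pi_*(I_{\pi^{-1}(y)})) \to I_{\pi^{-1}(y)}$, which is therefore surjective. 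For the ``in particular'' statement, any homomorphism $I_{\pi^{-1}(y)} \to {\cal O}_{C_{yj}}(-1)$ is determined by its composite with this surjection; since the target is supported on $C_{yj}$ and $\pi^*(\text{any bundle})|_{C_{yj}}$ is trivial (the curve is contracted), this composite lies in $\Hom({\cal O}_{C_{yj}}^{\oplus n},{\cal O}_{C_{yj}}(-1)) = H^0(C_{yj},{\cal O}_{C_{yj}}(-1))^{\oplus n} = 0$, so it vanishes.

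For (2) I would apply $\Hom(-,{\cal O}_{C_{yj}}(-1))$ to the structure sequence $0 \to I_{\pi^{-1}(y)} \to {\cal O}_X \to {\cal O}_{\pi^{-1}(y)} \to 0$ and read off $\Ext^1({\cal O}_{\pi^{-1}(y)},{\cal O}_{C_{yj}}(-1))$ from the long exact sequence: it is sandwiched between $\Hom(I_{\pi^{-1}(y)},{\cal O}_{C_{yj}}(-1)) = 0$ (by (1)) and $\Ext^1({\cal O}_X,{\cal O}_{C_{yj}}(-1)) = H^1(C_{yj},{\cal O}_{C_{yj}}(-1)) = 0$, hence vanishes. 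To descend to the sheaf statement I would run the local-global spectral sequence $H^p(X,{\cal E}xt^q) \Rightarrow \Ext^{p+q}$; because ${\cal H}om({\cal O}_{\pi^{-1}(y)},{\cal O}_{C_{yj}}(-1))$ is supported on the curve $C_{yj}$, its $H^2$ vanishes, killing the differential out of $E_2^{0,1}$ and the one into $E_2^{1,0}$, so that both $E_2^{1,0} = H^1({\cal H}om)$ and $E_2^{0,1} = H^0({\cal E}xt^1)$ become subquotients of $\Ext^1 = 0$ and therefore vanish.

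For (3) the easy implication is cohomology and base change: since $\dim \pi^{-1}(y) \leq 1$ we have $R^2\pi_* = 0$, so the base change map $R^1\pi_*(E) \otimes {\Bbb C}_y \to R^1\pi_*(E_{|\pi^{-1}(y)})$ is surjective, and $R^1\pi_*(E) = 0$ at $y$ forces the right-hand vanishing. The content is the converse, which I would prove by the theorem on formal functions. Assuming $R^1\pi_*(E_{|\pi^{-1}(y)}) = 0$, it suffices to show $H^1(E \otimes {\cal O}_X/I^n) = 0$ for all $n$, where $I = I_{\pi^{-1}(y)}$, and then conclude that the completed stalk $(R^1\pi_* E)^{\wedge}_y$ vanishes. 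I would induct on $n$ via $0 \to I^n/I^{n+1} \to {\cal O}_X/I^{n+1} \to {\cal O}_X/I^n \to 0$; the base case is the hypothesis, and the inductive step reduces to $H^1(E \otimes I^n/I^{n+1}) = 0$. Iterating the surjection of (1) produces a surjection $\pi^*(V^{\otimes n}) \otimes {\cal O}_{\pi^{-1}(y)} \to I^n/I^{n+1}$ from a bundle trivial on the fiber, so $E \otimes I^n/I^{n+1}$ is a quotient of $(E_{|\pi^{-1}(y)})^{\oplus N}$; the $H^2$ of the kernel vanishes since $\pi^{-1}(y)$ is one-dimensional, and $H^1((E_{|\pi^{-1}(y)})^{\oplus N}) = 0$ by hypothesis, which yields the vanishing.

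I expect the main obstacle to be precisely this inductive step in (3): correctly identifying $I^n/I^{n+1}$ as a quotient of a fiberwise-trivial bundle and exploiting $H^2(\pi^{-1}(y),-) = 0$ to propagate the vanishing of $H^1$ up the infinitesimal neighborhoods, before invoking formal functions to pass from all $H^1(E \otimes {\cal O}_X/I^n) = 0$ to $R^1\pi_*(E) = 0$ near $y$. Parts (1) and (2) are essentially formal once $C_{yj} \cong {\Bbb P}^1$ is available.
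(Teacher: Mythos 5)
Your proof is correct, and part (3) coincides with the paper's argument: the same surjection $\pi^*(V^{\otimes n}) \otimes {\cal O}_{\pi^{-1}(p_i)} \twoheadrightarrow I_{\pi^{-1}(p_i)}^n/I_{\pi^{-1}(p_i)}^{n+1}$ obtained by iterating $\pi^*(V) \to I_{\pi^{-1}(p_i)}$, followed by the theorem on formal functions (the converse direction, which you spell out via $R^2\pi_*=0$, the paper treats as immediate). For (1) and (2), however, you run the logic in the opposite direction from the proof the paper attaches to this statement. There, the $\Ext^1$-vanishing comes first and is proved by contradiction: a non-split extension $0 \to {\cal O}_{C_{ij}}(-1) \to F \to {\cal O}_{\pi^{-1}(p_i)} \to 0$ would satisfy $\Hom(F,{\cal O}_{C_{ik}}(-1))=0$ for all $k$, forcing $\pi^*(\pi_*(F)) \to F$ to be surjective, and since $\pi_*(F) \cong \pi_*({\cal O}_{\pi^{-1}(p_i)})$ this manufactures a splitting; the vanishing of $\Hom(I_{\pi^{-1}(p_i)},{\cal O}_{C_{ij}}(-1))$ and the surjectivity of $\pi^*(\pi_*(I_{\pi^{-1}(p_i)})) \to I_{\pi^{-1}(p_i)}$ are then deduced afterwards. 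You instead take the surjectivity as the starting point, reading it off from $I_{\pi^{-1}(p_i)} = \im(\pi^*(I_{p_i}) \to {\cal O}_X)$, get the $\Hom$-vanishing because $\pi^*(\;\cdot\;)|_{C_{ij}}$ is trivial and $H^0({\cal O}_{{\Bbb P}^1}(-1))=0$, and only then obtain $\Ext^1({\cal O}_{\pi^{-1}(p_i)},{\cal O}_{C_{ij}}(-1))=0$ from the long exact sequence of $0 \to I_{\pi^{-1}(p_i)} \to {\cal O}_X \to {\cal O}_{\pi^{-1}(p_i)} \to 0$. Your route is the more economical one: the extension argument quietly relies on the fact that a sheaf killed by $\Hom(-,{\cal O}_{C_{ik}}(-1))$ for all $k$ is globally generated over $Y$, whereas yours needs only the definition of the scheme-theoretic fibre and the cohomology of ${\cal O}_{{\Bbb P}^1}(-1)$; it is in fact exactly the reordering the paper itself adopts in the restated Lemma \ref{lem:tilting:TFF}. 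The local-to-global spectral sequence step you add for the sheaf-level vanishings is not demanded by the statement as given, but it is correct and matches the "in particular" clause of that restatement.
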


\begin{proof}
(1)
If $\Ext^1({\cal O}_{\pi^{-1}(p_i)},{\cal O}_{C_{ij}}(-1)) \ne 0$, then
we take a non-trivial extension
\begin{equation}\label{eq:tilting:extension}
0 \to {\cal O}_{C_{ij}}(-1) \to F \to {\cal O}_{\pi^{-1}(p_i)} \to 0.
\end{equation}
Since $\Hom({\cal O}_{\pi^{-1}(p_i)},{\cal O}_{C_{ij}}(-1))=0$
for all $j$ and \eqref{eq:tilting:extension} does not split,
$\Hom(F,{\cal O}_{C_{ij}}(-1))=0$ for all $j$.
Hence $\pi^*(\pi_*(F)) \to F$ is surjective.
By \eqref{eq:tilting:extension}, $\pi_*(F) \to 
\pi_*({\cal O}_{\pi^{-1}(p_i)})$ is an isomorphism.
Hence we have a homomorphism ${\cal O}_{\pi^{-1}(p_i)}\cong
\pi^*(\pi_*(F)) \to F$, which gives a splitting of 
\eqref{eq:tilting:extension}. Therefore
$\Ext^1({\cal O}_{\pi^{-1}(p_i)},{\cal O}_{C_{ij}}(-1))=0$
for all $j$.

(2)
We consider the exact sequence
\begin{equation}
0 \to I_{\pi^{-1}(p_i)} \to {\cal O}_X \to {\cal O}_{\pi^{-1}(p_i)}
\to 0.
\end{equation}
Since $\Hom({\cal O}_X,{\cal O}_{C_{ij}}(-1))=0$
for all $j$, (1) implies the claim.

(3)
The proof is similar to \cite{I:1}.
Assume that $R^1 \pi_*(E_{|\pi^{p_i}})=0$.
We take a locally free sheaf $V$ on $Y$ such that
$V \to \pi_*(I_{\pi^{-1}(p_i)})$ is surjective.
Then (2) implies that 
$\pi^*(V) \to I_{\pi^{-1}(p_i)}$ is surjective.
Hence we 
have a surjective homomorphism
$\pi^*(V^{\otimes n}) \otimes {\cal O}_{\pi^{-1}(p_i)}
\to  I_{\pi^{-1}(p_i)}^n/I_{\pi^{-1}(p_i)}^{n+1}$.
Hence we see that 
$R^1 \pi_*(E \otimes {\cal O}_X/I_{\pi^{-1}(p_i)}^n)=0$.
By the theorem of formal functions, we get the claim.
\end{proof}
\end{NB}

\begin{lem}\label{lem:tilting:T}
Let $E_{yj}$, $y \in Y_\pi$ be the irreducible objects of 
${\cal C}_G$.
Let $E$ be a coherent sheaf on $X$.
If $\Hom(E,E_{yj}[-1])=0$ for all $E_{yj}[-1] \in S$, then
$E \in T$.
\end{lem}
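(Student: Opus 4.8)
The plan is to argue by contradiction, combining the torsion-pair decomposition of $E$ with Lemma \ref{lem:tilting:generate}(2), which resolves objects of $S$ in terms of the irreducible objects $E_{yj}$. The point is that the hypothesis rules out precisely the nonzero maps into the $S$-type irreducibles that would have to exist if the $S$-part of $E$ were nontrivial.

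First I would use the torsion pair $(T,S)$ from Lemma \ref{lem:tilting} to write the canonical exact sequence
\[
0 \to E_1 \to E \to E_2 \to 0
\]
in $\Coh(X)$ with $E_1 \in T$ and $E_2 \in S$. Since membership in $T$ is equivalent to the vanishing of the torsion-free part, proving $E \in T$ reduces to showing $E_2 = 0$; so I would assume $E_2 \neq 0$ and seek a contradiction.

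Next I would feed $E_2 \in S$ into Lemma \ref{lem:tilting:generate}(2), which supplies a filtration $0 = F_0 \subset F_1 \subset \cdots \subset F_s = E_2$ whose successive quotients each admit an injection into some $E_{yj}[-1] \in S$. After discarding trivial steps we may assume every quotient is nonzero, so in particular the top quotient $E_2/F_{s-1} = F_s/F_{s-1}$ is nonzero and there is an injection $E_2/F_{s-1} \hookrightarrow E_{yj}[-1]$ for suitable $y,j$ with $E_{yj}[-1] \in S$. I would then compose the surjection $E \twoheadrightarrow E_2$, the surjection $E_2 \twoheadrightarrow E_2/F_{s-1}$, and this injection to obtain a morphism $E \to E_{yj}[-1]$ in $\Coh(X)$; its image is isomorphic to $E_2/F_{s-1} \neq 0$, so it is a nonzero element of $\Hom(E,E_{yj}[-1])$ with $E_{yj}[-1] \in S$. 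This contradicts the hypothesis, forcing $E_2 = 0$ and hence $E = E_1 \in T$.

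The main obstacle is purely bookkeeping: guaranteeing that the composite is genuinely nonzero. This is why one must pass to the \emph{top} quotient of the filtration rather than to a subobject, so that the first two arrows stay surjective and the composite $E \to E_2/F_{s-1}$ is a surjection onto a nonzero sheaf before the final injection is applied. One also needs the $E_{yj}[-1]$ produced by Lemma \ref{lem:tilting:generate}(2) to actually lie in $S$, so that the assumed vanishing $\Hom(E,E_{yj}[-1])=0$ applies to it; both facts are furnished directly by the statement of that lemma, so no genuine difficulty remains.
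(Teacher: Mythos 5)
There is a genuine gap at the step where you feed $E_2$ into Lemma \ref{lem:tilting:generate}(2). That lemma carries the standing hypothesis that $\pi(\Supp(E))=\{y\}$ is a single point, and the $S$-part $E_2$ of a general coherent sheaf need not satisfy it. Membership in $S$ only says $\pi_*(G^{\vee}\otimes E_2)=0$; in the generality of Assumption \ref{ass:1} the locus $Y_\pi$ can be positive-dimensional (e.g.\ the contraction of a divisor to a curve in a threefold, where the relative ${\cal O}(-1)$ on the exceptional $\mathbb{P}^1$-bundle lies in $S$ but has $\pi(\Supp)$ equal to the whole curve). So the filtration with top quotient injecting into some $E_{yj}[-1]$ is simply not available for $E_2$ itself, and your contradiction never gets off the ground. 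When $\# Y_\pi<\infty$ your argument can be repaired by decomposing $E_2$ as a direct sum over the finitely many points of $\pi(\Supp(E_2))$ and applying the lemma to one summand, but that is exactly the special situation the paper isolates separately in Lemma \ref{lem:tilting:without}; the statement you are proving is meant to cover the general case.

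The paper's proof circumvents this by localizing on the base before invoking Lemma \ref{lem:tilting:generate}(2): it restricts $E$ to a single fiber $\pi^{-1}(y)$, notes that any nonzero map $E_{|\pi^{-1}(y)}\to E_{yj}[-1]$ would lift through the surjection $E\twoheadrightarrow E_{|\pi^{-1}(y)}$ to contradict the hypothesis, and then runs your torsion-pair argument on $E_{|\pi^{-1}(y)}$, whose support does lie over the single point $y$. This yields $R^1\pi_*(G^{\vee}\otimes E_{|\pi^{-1}(y)})=0$, and Lemma \ref{lem:tilting:TFF}(3) (the theorem on formal functions) is then the essential extra ingredient that promotes this fiberwise vanishing to $R^1\pi_*(G^{\vee}\otimes E)=0$ in a neighborhood of $y$, hence everywhere. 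Your write-up is missing both the reduction to a fiber and this formal-functions step, and without them the global torsion-pair decomposition of $E$ does not suffice.
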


\begin{proof}
We note that $\Hom(E_{|\pi^{-1}(y)},E_{yj}[-1])=0$ for all
$E_{yj}[-1] \in S$. By Lemma \ref{lem:tilting:generate} (2),
$E_{|\pi^{-1}(y)} \in T$. 
Then $R^1 \pi_*(G^{\vee} \otimes E_{|\pi^{-1}(y)})=0$.
By Lemma \ref{lem:tilting:TFF}, $R^1 \pi_*(G^{\vee} \otimes E)=0$
in a neighborhood of $y$.
Since $y$ is any point of $Y_\pi$,
$R^1 \pi_*(G^{\vee} \otimes E)=0$, which implies that 
$E \in T$.
\end{proof}

For a subcategory ${\cal C}$ of ${\bf D}(X)$, we set
\begin{equation}
{\cal C}_y:=\{E \in {\cal C}| \pi(\Supp(H^i(E)))=\{ y\}, i \in {\Bbb Z} \}.
\end{equation}

\begin{NB}
In the following lemma, we don't assume 
the existence of a local projective generator.
\end{NB}
\begin{lem}\label{lem:tilting:without}
Let $(S,T)$ be a torsion pair of $\Coh(X)$ and ${\cal C}$ the tilted category. 
Assume that 
\begin{enumerate}
\item
$\# Y_\pi <\infty$ and 
every object of ${\cal C}_y$, $y \in Y$ is of finite length. 
\item
${\Bbb C}_x \in {\cal C}$ for all $x \in X$.
\item
$\pi(\Supp(E)) \subset Y_\pi$ for $E \in S$.
\end{enumerate}
Then the claims of 
Lemma \ref{lem:tilting:generate} and Lemma \ref{lem:tilting:T}
hold.
\end{lem}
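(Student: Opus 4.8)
The strategy is to re-examine the proofs of Lemma~\ref{lem:tilting:generate} and Lemma~\ref{lem:tilting:T} and to notice that the local projective generator $G$ entered only through two soft inputs: the finiteness of length of the objects of ${\cal C}$ supported over a point, which made the Jordan--H\"older inductions terminate, and, in Lemma~\ref{lem:tilting:T}, the detection of membership in $T$ via the vanishing of $R^1\pi_*(G^\vee\otimes-)$ together with Lemma~\ref{lem:tilting:TFF}. Hypotheses (1)--(3) are designed to supply exactly these inputs, after which the arguments become purely torsion-theoretic. First I would fix the irreducible objects: by (2) we have ${\Bbb C}_x\in{\cal C}$ for $x\in\pi^{-1}(y)$, by (1) it has finite length in ${\cal C}_y$, and I denote by $E_{yj}$ ($y\in Y_\pi$) the irreducible objects of ${\cal C}_y$; as in Lemma~\ref{lem:tilting:irreducible} these may be identified with the Jordan--H\"older factors of ${\Bbb C}_x$. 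The canonical exact sequence $0\to H^{-1}(A)[1]\to A\to H^0(A)\to 0$ in ${\cal C}$ shows that each irreducible $A$ is either a coherent sheaf in $T$ (when $H^{-1}(A)=0$) or of the form $F[1]$ with $F\in S$ (when $H^0(A)=0$); these are precisely the two types $E_{yj}\in T$ and $E_{yj}[-1]\in S$ occurring in the statements. I also record that every subobject in ${\cal C}$ of a coherent sheaf is coherent, since its $H^{-1}$ injects into $H^{-1}$ of the sheaf, which vanishes.

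With this in place, Lemma~\ref{lem:tilting:generate} needs no new idea: the argument given earlier carries over verbatim once ``finite length'' is read off from (1) rather than from $G$. In part (1), $E\in T$ with $\Supp(E)\subset\pi^{-1}(y)$ has an irreducible subobject, which by the remark above is coherent, hence some $E_{yj}\in T$; then $F_1:=\im(E_{yj}\to E)$ is a nonzero coherent quotient of $E_{yj}$, $E/F_1\in T$ is supported over $y$ and of strictly smaller length, and induction on the length finishes the filtration. In part (2), $E\in S$ forces $\pi(\Supp(E))=\{y\}$ with $y\in Y_\pi$ by hypothesis (3), so the $E_{yj}$ exist; the irreducible quotient of $E[1]$ is of the second type, giving $E_{yj}[-1]\in S$, an injection $E':=\im(E\to E_{yj}[-1])\hookrightarrow E_{yj}[-1]$, and an extension of $E'$ by $\ker(E\to E_{yj}[-1])\in S$ on which one inducts.

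The substantive point is Lemma~\ref{lem:tilting:T}, where the passage through $R^1\pi_*(G^\vee\otimes-)$ and Lemma~\ref{lem:tilting:TFF} must be replaced. Here the plan is entirely torsion-theoretic: given a coherent sheaf $E$ with $\Hom(E,E_{yj}[-1])=0$ for all $E_{yj}[-1]\in S$, take its torsion decomposition $0\to E_T\to E\to E_S\to 0$ with $E_T\in T$ and $E_S\in S$, and show $E_S=0$. By hypothesis (3) the set $\pi(\Supp(E_S))$ is a finite subset of $Y_\pi$, so $E_S$ splits as a direct sum of coherent sheaves each supported over a single fibre; choosing a nonzero summand $E_{S,y}$ (which again lies in $S$) and applying part (2) of Lemma~\ref{lem:tilting:generate} produces a nonzero homomorphism $E_{S,y}\to E_{yj}[-1]$, namely the projection onto the top quotient of the filtration followed by its injection into $E_{yj}[-1]$. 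Composing with the surjections $E\twoheadrightarrow E_S\twoheadrightarrow E_{S,y}$ gives a nonzero morphism $E\to E_{yj}[-1]$ with $E_{yj}[-1]\in S$, contradicting the hypothesis; hence $E_S=0$ and $E=E_T\in T$.

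I expect the main obstacle to be precisely this last lemma: one must verify that the composite morphism is genuinely nonzero as a map of sheaves --- which holds because all three arrows are nonzero morphisms of coherent sheaves and the final one is injective on a nonzero subquotient --- and that the direct-sum decomposition of $E_S$ over the finitely many points of $Y_\pi$ is legitimate, which uses only that distinct fibres are disjoint closed subsets. Everything else is a transcription of the earlier proofs, with (1) replacing the finite-length consequence of $G$ and (3) replacing the localization supplied by Lemma~\ref{lem:tilting:TFF}.
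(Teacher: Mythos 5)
Your proof is correct and takes essentially the same route as the paper's own (much terser) argument: hypotheses (i)--(iii) replace the two uses of the local projective generator, Lemma~\ref{lem:tilting:generate} goes through verbatim once ``finite length'' is supplied by (i), and Lemma~\ref{lem:tilting:T} is obtained by applying Lemma~\ref{lem:tilting:generate}~(2) to the $S$-part of the torsion decomposition of $E$ to produce a nonzero morphism to some $E_{yj}[-1]\in S$. Your write-up merely makes explicit the details (coherence of subobjects of sheaves, the fibrewise splitting of $E_S$, nonvanishing of the composite) that the paper leaves implicit.
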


\begin{proof}
By (i) and (iii), irreducible objects are 
$E={\Bbb C}_x, x \in X \setminus \pi^{-1}(Y_\pi)$
or irreducible objects of ${\cal C}_y, y \in Y_\pi$.
By (ii), we get Lemma \ref{lem:tilting:irreducible} (3).
The other claims of Lemma \ref{lem:tilting:irreducible}
and Lemma \ref{lem:tilting:generate} are obvious.
For $0 \ne E \in S$,
 (i) and Lemma \ref{lem:tilting:generate} imply that
there is a coherent sheaf $E_{yj}[-1] \in S$
such that $\Hom(E,E_{yj}[-1]) \ne 0$.
 Hence Lemma \ref{lem:tilting:T} also holds.
\end{proof}

\begin{prop}\label{prop:tilting:S-T}
Assume that
$Y_\pi=\{p_1,...,p_m \}$.
Let $G$ be a locally free sheaf on $X$ and 
${\cal C}_G$ the tilted category in Lemma \ref{lem:tilting}.
For ${\Bbb C}_x, x \in \pi^{-1}(p_i)$,
let $\oplus_{j=0}^{s_i}E_{ij}^{\oplus a_{ij}}$ 
be the Jordan-H\"{o}lder decomposition of ${\Bbb C}_x$, where 
$E_{ij}$ are irreducible objects.
\begin{enumerate}
\item[(1)]
We set 
\begin{equation}
\begin{split}
\Sigma:=&\{E_{ij}[-1]|i,j\} \cap \Coh(X)\\
{\cal T}:=&\{E \in \Coh(X)|\Hom(E,c)=0, c \in \Sigma \}\\
{\cal S}:=&\{ E \in \Coh(X)| \text{ $E$ is  a successive extension of 
subsheaves of $c \in \Sigma$ }\}.
\end{split} 
\end{equation}
Then $({\cal T},{\cal S})$ is a torsion pair of $\Coh(X)$
whose tilting is ${\cal C}_G$.
In particular, ${\cal C}_G$ is characterized by $\Sigma$. 
\item[(2)]
${\cal C}_G^D$ is characterized by 
\begin{equation}
\Sigma:=\{(D_X(E_{ij})\otimes K_X[n]) [-1]|i,j\} \cap \Coh(X)=
D_X(\{E_{ij}|i,j\} \cap \Coh(X))\otimes K_X[n-1],
\end{equation}
where $n=\dim X$.
\end{enumerate}
\end{prop}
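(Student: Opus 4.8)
The plan is to prove that the torsion pair $(T,S)$ of Lemma~\ref{lem:tilting:C}, whose tilt is ${\cal C}_G$, coincides with $({\cal T},{\cal S})$. Once this is established, part (1) is immediate, and part (2) follows by applying (1) to ${\cal C}_G^D$ together with a Grothendieck--Serre duality identification of its irreducible objects. First I would record that $E_{ij}[-1]$ lies in $\Coh(X)$ exactly when $E_{ij}$ has the form $F[1]$ with $F\in\Coh(X)$ (the ``type (i)'' irreducibles in the proof of Lemma~\ref{lem:tilting:irreducible}), and then $E_{ij}[-1]=F\in S$. Hence $\Sigma=\{E_{ij}[-1]\}\cap\Coh(X)$ is precisely the collection $\{E_{yj}[-1]\in S\}$ appearing in Lemma~\ref{lem:tilting:T}; that lemma therefore says that $\Hom(E,c)=0$ for all $c\in\Sigma$ implies $E\in T$, i.e. ${\cal T}\subseteq T$. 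Conversely, for $E\in T$ and $c\in\Sigma\subseteq S$ one has $\Hom(E,c)=0$ since $\Hom(T,S)=0$ for a torsion pair; thus $T={\cal T}$.

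Next I would show $S={\cal S}$. The inclusion ${\cal S}\subseteq S$ is formal: the class $S=\{E\mid\pi_*(G^\vee\otimes E)=0\}$ is closed under subobjects and extensions in $\Coh(X)$ and contains $\Sigma$, hence contains every successive extension of subsheaves of objects of $\Sigma$. For $S\subseteq{\cal S}$, I would first note that $\Supp(E)\subseteq\pi^{-1}(Y_\pi)$ for $E\in S$: over $Y\setminus Y_\pi$ the fibres are $0$-dimensional, so $R^1\pi_*(G^\vee\otimes E)=0$ there and $E$ is locally in $S\cap T=0$. Decomposing $E=\bigoplus_i E_i$ with $\pi(\Supp E_i)=\{p_i\}$ and applying Lemma~\ref{lem:tilting:generate}(2) to each summand yields a filtration whose successive quotients embed into objects $E_{ij}[-1]\in S=\Sigma$; hence $E\in{\cal S}$. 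Therefore $(T,S)=({\cal T},{\cal S})$, so $({\cal T},{\cal S})$ is a torsion pair with tilt ${\cal C}_G$ and ${\cal C}_G$ is recovered from $\Sigma$, proving (1).

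For (2) I would apply (1) to the category ${\cal C}_G^D$ of Lemma~\ref{lem:tilting:dual}, whose local projective generator $G^\vee$ is again locally free; this characterizes ${\cal C}_G^D$ by $\Sigma^D=\{E'_{ij}[-1]\}\cap\Coh(X)$, where the $E'_{ij}$ are the irreducible Jordan--H\"older factors of ${\Bbb C}_x$ in ${\cal C}_G^D$. It remains to identify $E'_{ij}$ with $D_X(E_{ij})(K_X)[n]$. Let $\Delta(E):=D_X(E)(K_X)[n]={\bf R}{\cal H}om_X(E,K_X[n])$, an anti-autoequivalence of ${\bf D}(X)$, and let ${\Bbb D}_Y$ be the Grothendieck dualizing functor on $Y$. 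Since $G$ is locally free, $G\otimes\Delta(E)=\Delta(G^\vee\otimes E)$; and since $\pi^{!}$ of the dualizing complex of $Y$ equals $K_X[n]$, relative duality gives ${\bf R}\pi_*\circ\Delta={\Bbb D}_Y\circ{\bf R}\pi_*$, whence ${\bf R}\pi_*(G\otimes\Delta(E))={\Bbb D}_Y({\bf R}\pi_*(G^\vee\otimes E))$. As ${\Bbb D}_Y$ is an exact anti-involution on $0$-dimensional (finite-length) sheaves, $\Delta$ carries the $0$-dimensional objects of ${\cal C}_G$ anti-equivalently onto those of ${\cal C}_G^D$, fixing each ${\Bbb C}_x$; it therefore sends the factors $E_{ij}$ of ${\Bbb C}_x$ to those of $\Delta({\Bbb C}_x)={\Bbb C}_x$ in ${\cal C}_G^D$, giving $E'_{ij}=\Delta(E_{ij})$ and $\Sigma^D=\{(D_X(E_{ij})(K_X)[n])[-1]\}\cap\Coh(X)$. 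Finally, using $D_X(E_{ij})={\cal E}xt^{n-1}(E_{ij},{\cal O}_X)[1-n]$ for a $1$-dimensional $E_{ij}$, a short computation shows that exactly the sheaf-type (``type (ii)'') factors survive the intersection with $\Coh(X)$, producing the asserted second description $D_X(\{E_{ij}\mid i,j\}\cap\Coh(X))(K_X)[n-1]$.

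The torsion-pair bookkeeping in (1) is routine given Lemmas~\ref{lem:tilting:T} and~\ref{lem:tilting:generate}. I expect the main obstacle to be (2): one must verify that $\Delta$ genuinely intertwines ${\cal C}_G$ and ${\cal C}_G^D$ on $0$-dimensional objects and preserves the Jordan--H\"older decomposition. This rests on the relative Grothendieck duality identity ${\bf R}\pi_*\circ\Delta={\Bbb D}_Y\circ{\bf R}\pi_*$ and on duality over $Y$ preserving finite-length sheaves; checking that the two descriptions of $\Sigma^D$ agree then reduces to tracking how $\Delta$ interchanges the sheaf-type and shifted-sheaf-type irreducibles.
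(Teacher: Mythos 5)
Your proposal is correct and follows essentially the same route as the paper: part (1) rests on the same inputs (Lemma \ref{lem:tilting:T}, Lemma \ref{lem:tilting:generate}, and the classification of irreducibles in Lemma \ref{lem:tilting:irreducible}), merely reorganized so as to identify $({\cal T},{\cal S})$ directly with the known pair $(T,S)$ rather than re-verifying the torsion-pair axioms via the evaluation map $\pi^*(\pi_*(G^{\vee}\otimes E))\otimes G\to E$; and part (2) is the paper's one-line observation that ${\Bbb C}_x$ is $S$-equivalent in ${\cal C}_G^D$ to $\bigoplus_j (D_X(E_{ij})\otimes K_X[n])^{\oplus a_{ij}}$, which you have usefully fleshed out via relative Grothendieck duality. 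No gaps.
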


\begin{proof}
(1)
For $E \in \Coh(X)$, we consider
$\phi:G \otimes \pi^*(\pi_*(G^{\vee} \otimes E)) \to E$.
We set $E_1:=\im \phi$ and $E_2:=\coker \phi$.
Since $\Hom(G,E_{ij}[-1])=0$ for all $E_{ij}$,
$G \in {\cal T}$.
Hence $E_1 \in {\cal T}$.
We shall show that
$E_2 \in {\cal S}$.
We note that ${\bf R}\pi_*(G^{\vee} \otimes E_1)=
\pi_*(G^{\vee} \otimes E_1)$ and
${\bf R}\pi_*(G^{\vee} \otimes E_2)=
R^1 \pi_*(G^{\vee} \otimes E)[-1]$. 
Then $E_1, E_2[1] \in {\cal C}_G$.
Since $\Supp(E_2) \subset \cup_{i=1}^n \pi^{-1}(p_i)$,
Lemma \ref{lem:tilting:irreducible} (3) implies that
$E_2[1]$ is generated by $E_{ij}$.
Hence if $E_2 \ne 0$, then $\Hom(E_2[1],c[1])\ne 0$ for an object
$c \in \Sigma$.
Let $E'_2$ be the kernel of $E_2 \to c$ in $\Coh(X)$.
Then $E_2'[1] \in {\cal C}_G$.
Hence by the induction on the support of $E_2$,
we see that $E_2 \in {\cal S}$.
Therefore $({\cal T},{\cal S})$ is a torsion pair
of $\Coh(X)$.
We also see that 
\begin{equation}
\begin{split}
{\cal T}&=\{E \in \Coh(X)|
R^1 \pi_*(G^{\vee} \otimes E)=0 \},\\
{\cal S}&=\{E \in \Coh(X)|
\pi_*(G^{\vee} \otimes E)=0 \}
\end{split}
\end{equation} 
and ${\cal C}_G$ is the tilting of 
$\Coh(X)$.

(2) We note that ${\Bbb C}_x, x \in \pi^{-1}(p_i)$
is $S$-equivalent to
$\oplus_{j=0}^{s_i}D_X(E_{ij})\otimes K_X[n]^{\oplus a_{ij}}$, where
$D_X(E_{ij})\otimes K_X[n] \in {\cal C}_G^D$.
Hence the claim follows from (1).
\end{proof}

\subsubsection{Local projective generators of ${\cal C}$.}

Let $(S,T)$ be a torsion pair of $\Coh(X)$ such that
the tilted category ${\cal C}$ satisfies one of the following
conditions.
\begin{enumerate}
\item[(1)]
There is a local projective generator $G \in T$ of ${\cal C}$, that is,
${\cal C}$ is the category of perverse coherent sheaves or
\item[(2)]
${\cal C}$ satisfies the following conditions:
\begin{enumerate}
\item
$\# Y_\pi <\infty$ and 
every object of ${\cal C}_y$, $y \in Y$ is of finite length. 
\item
$\pi(\Supp(E)) \subset Y_\pi$ for $E \in S$.
\end{enumerate}
\end{enumerate}

We shall give a criterion for a two term complex
to be a local projective generator
of ${\cal C}$.
Let $E_{yj}, j \in J_y$ be the irreducible objects of ${\cal C}_y$. 

\begin{lem}\label{lem:tilting:freeness}
Let $E$ be an object of ${\bf D}(X)$ such that $H^i(E)=0$ for $i \ne -1,0$.
If $\Ext^1(E,{\Bbb C}_x)=0$, then $E$ is a free sheaf in a neighborhood
of $x$.
\end{lem}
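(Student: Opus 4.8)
The plan is to reduce to a purely local statement over the regular local ring ${\cal O}_{X,x}$ and to analyse a minimal free resolution there. First I would observe that, since ${\Bbb C}_x$ is supported at the single point $x$, the sheaves ${\cal E}xt^i(E,{\Bbb C}_x)$ are supported at $x$, so the local-to-global spectral sequence collapses and yields $\Ext^i_X(E,{\Bbb C}_x) \cong \Ext^i_R(E_x,{\Bbb C})$, where $R={\cal O}_{X,x}$ is regular of dimension $n=\dim X$, ${\Bbb C}=R/{\frak m}$, and $E_x$ is the stalk complex (still concentrated in degrees $-1,0$). Thus the hypothesis becomes $\Ext^1_R(E_x,{\Bbb C})=0$, and the desired conclusion becomes: $E_x$ is isomorphic in ${\bf D}(R)$ to a free module placed in degree $0$.

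Next I would pass to a minimal free resolution $P^{\bullet}\xrightarrow{\sim} E_x$ by finitely generated free $R$-modules. Since $R$ is regular, $E_x$ has finite projective dimension, so $P^{\bullet}$ is bounded; and since $H^i(E_x)=0$ for $i>0$, it can be chosen with $P^j=0$ for $j>0$. Minimality means every differential of $P^{\bullet}$ has entries in ${\frak m}$, so the differentials of $\Hom_R(P^{\bullet},{\Bbb C})$ vanish and $\Ext^i_R(E_x,{\Bbb C})\cong\Hom_R(P^{-i},{\Bbb C})$; in particular $\Ext^i$ vanishes exactly when $P^{-i}=0$. Hence the hypothesis $\Ext^1=0$ forces $P^{-1}=0$.

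Then I would propagate this vanishing downward using the cohomological bound together with minimality. With $P^{-1}=0$ the complex decouples as $\cdots\to P^{-2}\to 0\to P^0$, so the vanishing $H^{-j}(E_x)=0$ for $j\ge 2$ gives surjectivity of $P^{-j-1}\to P^{-j}$; by minimality the image lies in ${\frak m}P^{-j}$, and Nakayama forces $P^{-j}=0$, inductively killing $P^{-2},P^{-3},\dots$. Therefore $P^{\bullet}=P^0$ is a single free module in degree $0$, so $E_x$ is free. Finally I would globalise: $H^{-1}(E)$ is coherent with vanishing stalk at $x$, hence $0$ in a neighbourhood, while $H^0(E)$ is coherent and free at $x$, hence locally free near $x$; on such a neighbourhood $E\cong H^0(E)$ is a free sheaf, as claimed.

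The main obstacle I expect is the bookkeeping in the second and third steps: matching the cohomological grading of $E$ with the degrees of the minimal resolution so that $\Ext^1$ genuinely detects $P^{-1}$, and then using precisely the hypotheses $H^i(E)=0$ for $i\ne-1,0$ to run the Nakayama induction that eliminates $P^{-2},P^{-3},\dots$ once $P^{-1}$ is known to vanish. The collapse of the local-to-global spectral sequence and the concluding openness of freeness are routine.
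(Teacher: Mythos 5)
Your proof is correct, and it organizes the local argument differently from the paper. The paper first uses the truncation triangle $H^{-1}(E)[1] \to E \to H^0(E)$ to produce the exact sequence of local Ext sheaves
\begin{equation*}
0 \to {\cal E}xt^1(H^0(E),{\Bbb C}_x) \to {\cal E}xt^1(E,{\Bbb C}_x) \to {\cal H}om(H^{-1}(E),{\Bbb C}_x) \to {\cal E}xt^2(H^0(E),{\Bbb C}_x),
\end{equation*}
reads off ${\cal E}xt^1(H^0(E),{\Bbb C}_x)=0$ to get freeness of $H^0(E)$ near $x$ (quoting the standard module-level criterion), and only then deduces ${\cal H}om(H^{-1}(E),{\Bbb C}_x)=0$, hence $H^{-1}(E)=0$ near $x$. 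You instead resolve the whole stalk complex minimally over ${\cal O}_{X,x}$ and extract both conclusions at once from minimality plus Nakayama: $\Ext^1$ detects $P^{-1}$, and the cohomology vanishing in degrees $\le -2$ kills the remaining terms. The two routes rest on the same local algebra (the residue field detects freeness; Nakayama), but yours is more self-contained in that the module-level criterion is not quoted separately but falls out of the same minimal resolution, while the paper's is shorter because the truncation triangle lets it treat $H^0(E)$ and $H^{-1}(E)$ by citation. One small remark: the Nakayama induction eliminating $P^{-2},P^{-3},\dots$ proceeds one degree at a time from $P^{-1}=0$ and does not actually require the boundedness of $P^{\bullet}$ that you invoke from regularity of $R$; that hypothesis is harmless but not needed.
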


\begin{proof}
Since $E$ fits in the exact triangle
\begin{equation}
0 \to H^{-1}(E)[1] \to E \to H^0(E) \to H^{-1}(E)[2],
\end{equation}
we have an exact sequence
\begin{equation}
0 \to {\cal E}xt^1_{{\cal O}_X}(H^0(E),{\Bbb C}_x) \to
{\cal E}xt^1_{{\cal O}_X}(E,{\Bbb C}_x) \to 
{\cal H}om_{{\cal O}_X}(H^{-1}(E),{\Bbb C}_x) \to
{\cal E}xt^2_{{\cal O}_X}(H^0(E),{\Bbb C}_x).
\end{equation}
Since $\Ext^1(E,{\Bbb C}_x)=H^0(X,{\cal E}xt^1_{{\cal O}_X}(E,{\Bbb C}_x))$,
${\cal E}xt^1_{{\cal O}_X}(E,{\Bbb C}_x)=0$.
Then ${\cal E}xt^1_{{\cal O}_X}(H^0(E),{\Bbb C}_x)=0$, which implies that
$H^0(E)$ is a free sheaf in a neighborhood of $x$.
Then ${\cal E}xt^i_{{\cal O}_X}(H^0(E),{\Bbb C}_x)=0$ for $i>0$.
Hence ${\cal H}om_{{\cal O}_X}(H^{-1}(E),{\Bbb C}_x)=0$.
Therefore $H^{-1}(E)=0$ in a neighborhood of $x$. 
\begin{NB}
We set $n:=\dim X$.
We note that $\Ext^1(E,{\Bbb C}_x) \cong \Ext^{n-1}({\Bbb C}_x,E)^{\vee}$
and 
$\Ext^{n-1}({\Bbb C}_x,E)=0$ if and only if
${\cal E}xt^{n-1}_{{\cal O}_X}({\Bbb C}_x,E)=0$.
Since ${\bf R}{\cal H}om_{{\cal O}_X}({\Bbb C}_x,{\cal O}_X)
={\Bbb C}_x[-n]$, we get
${\bf R}{\cal H}om_{{\cal O}_X}({\Bbb C}_x,E)={\Bbb C}_x 
\overset{{\bf L}}{\otimes} E[-n]$.
Hence
\begin{equation}
\begin{split}
0={\cal E}xt^{n-1}_{{\cal O}_X}({\Bbb C}_x,E)= 
&H^{n-1}({\bf R}{\cal H}om_{{\cal O}_X}({\Bbb C}_x,E))\\
=&H^{-1}({\Bbb C}_x \overset{{\bf L}}{\otimes} E)=
{\cal T}or_1^{{\cal O}_X}({\Bbb C}_x,E),
\end{split}
\end{equation}
 which implies that
$E$ is free in a neighbourhood of $x$.
\end{NB}
\end{proof}

\begin{lem}\label{lem:tilting:chi(G,E)}
Let $E_{yj}$, $y \in Y$ be the irreducible objects of ${\cal C}$
in Lemma \ref{lem:tilting:irreducible}.
Let $G_1$ be a locally free sheaf of rank $r$ on $X$ such that
\begin{equation}\label{eq:tilting:chi(G,E)}
(a)\;\Hom(G_1,E_{yj}[p])=0, p \ne 0\;\;
(b)\;\chi(G_1,E_{yj})>0
\end{equation}
for all $y,j$.
\begin{enumerate}
\item[(1)]
$G_1$ is a locally free sheaf.  
If $0 \ne E \in S$, then
$\pi_*(G_1^{\vee} \otimes E)=0$ and $R^1 \pi_*(G_1^{\vee} \otimes E) \ne 0$.
\item[(2)]
If $R^1 \pi_*(G_1^{\vee} \otimes E)= 0$, then $E \in T$. 
\item[(3)]
If $0 \ne E \in T$ and $\Supp(E) \subset \pi^{-1}(y)$, then 
$\pi_*(G_1^{\vee} \otimes E) \ne 0$ and 
$R^1 \pi_*(G_1^{\vee} \otimes E)= 0$.
In particular, $\chi(G_1,E)> 0$.
\end{enumerate}
\end{lem}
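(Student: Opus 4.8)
The plan is to analyze how the hypotheses $(a)$ and $(b)$ on $G_1$ interact with the torsion pair $(T,S)$ and the irreducible objects $E_{yj}$, working one fiber $\pi^{-1}(y)$ at a time since all the relevant sheaves are supported there. The three claims are really assertions about the vanishing or non-vanishing of $\pi_*(G_1^\vee \otimes E)$ and $R^1\pi_*(G_1^\vee \otimes E)$, so throughout I would translate these into statements about $\Hom(G_1, E[p])$ via the local-to-global identification used repeatedly in the excerpt, namely that over an affine open $U \subset Y$ one has $\Hom(G_{1|\pi^{-1}(U)}, E[p]) = H^0(U, R^p\pi_*(G_1^\vee \otimes E))$ because $\dim \pi^{-1}(y) \le 1$ forces the relevant spectral sequence to degenerate (as in Lemma~\ref{lem:tilting:C}).

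\textbf{Proof of freeness and of (1).} First I would establish that $G_1$ is locally free in a neighborhood of each fiber: since $\mathbb{C}_x$ lies in $\mathcal{C}$ and has a finite filtration by the $E_{yj}$, hypothesis $(a)$ gives $\Ext^1(G_1, \mathbb{C}_x) = 0$ for all $x$, whence $G_1$ is free near $x$ by Lemma~\ref{lem:tilting:freeness}. For (1), take $0 \ne E \in S$. By Lemma~\ref{lem:tilting:generate}(2) there is a filtration of $E$ whose quotients inject into objects $E_{yj}[-1]$, so $\pi(\Supp E) \subset Y_\pi$ and one reduces to the fiber. The vanishing $\pi_*(G_1^\vee \otimes E) = 0$ should follow from $(a)$ applied to the subquotients together with the filtration, since $\Hom(G_1, E_{yj}[-1]) = 0$. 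For the non-vanishing $R^1\pi_*(G_1^\vee \otimes E) \ne 0$, I would invoke Lemma~\ref{lem:tilting:restriction}: once $\pi_*(G_1^\vee \otimes E) = 0$ is known, that lemma yields $R^1\pi_*(G_1^\vee \otimes E_{|\pi^{-1}(y)}) \ne 0$ for $y \in \pi(\Supp E)$, hence the sheaf is nonzero.

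\textbf{Proof of (2) and (3).} Claim (2) is the contrapositive packaging of (1): if $R^1\pi_*(G_1^\vee \otimes E) = 0$ then $E$ cannot have a nonzero torsion part in $S$, so I would decompose $E$ via the torsion pair $(T,S)$ as $0 \to E' \to E \to E'' \to 0$ with $E' \in T$, $E'' \in S$, observe that $R^1\pi_*(G_1^\vee \otimes -)$ is right exact in this range and that the $T$-part contributes nothing to $R^1$, forcing $R^1\pi_*(G_1^\vee \otimes E'') = 0$; by (1) this gives $E'' = 0$, so $E \in T$. For (3), take $0 \ne E \in T$ supported on $\pi^{-1}(y)$. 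The vanishing $R^1\pi_*(G_1^\vee \otimes E) = 0$ is immediate from $E \in T$ by definition of $T$. For $\pi_*(G_1^\vee \otimes E) \ne 0$, I would use Lemma~\ref{lem:tilting:generate}(1): there is a surjection $E_{yj} \twoheadrightarrow F_1$ onto a subsheaf of $E$, and $(a)$ plus $(b)$ give $\chi(G_1, E_{yj}) = \dim\Hom(G_1, E_{yj}) > 0$, so $\Hom(G_1, E_{yj}) \ne 0$; transporting a nonzero map through the filtration shows $\pi_*(G_1^\vee \otimes E) \ne 0$. Finally $\chi(G_1, E) = \dim\pi_*(G_1^\vee \otimes E) - \dim R^1\pi_*(G_1^\vee \otimes E) > 0$ since the first term is positive and the second vanishes.

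\textbf{Main obstacle.} The delicate point is the non-vanishing direction in (1) and (3): the vanishing statements follow formally from $(a)$ and the filtration lemmas, but showing that $\pi_*$ (resp.\ $R^1\pi_*$) is genuinely nonzero requires that the Hom-groups guaranteed by $(b)$ survive the passage through the Jordan--H\"older/torsion-pair filtration rather than cancelling. I expect the cleanest route is to combine the numerical positivity $\chi(G_1, E_{yj}) > 0$ from $(b)$ with the cohomological concentration $(a)$ to pin down the dimensions of the individual $\Hom$ and $\Ext^1$ spaces exactly, and then to use Lemma~\ref{lem:tilting:restriction} for the $S$-case to convert a vanishing $\pi_*$ into a non-vanishing $R^1\pi_*$ on the fiber.
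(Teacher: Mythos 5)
Your outline identifies the right ingredients, but two steps are genuinely missing or wrong, and both sit exactly at the points you yourself flagged as delicate. First, the non-vanishing in (1): you propose to ``invoke Lemma~\ref{lem:tilting:restriction}'', but that lemma carries the hypothesis that ${\bf R}\pi_*(G_1^{\vee}\otimes F)\ne 0$ for \emph{every} nonzero coherent sheaf $F$ on a fiber, and for $G_1$ this is precisely what has to be proved (for the local projective generator $G$ it is built into the definition). This is the one place where (b) does real work, and you leave it at ``pin down the dimensions''. The actual argument: decompose such an $F$ by the torsion pair as $0\to F_1\to F\to F_2\to 0$ with $F_1\in T$, $F_2\in S$; then $F_1$ and $F_2[1]$ lie in ${\cal C}_y$, hence are generated by the $E_{yj}$, so (b) gives $\chi(G_1,F_1)>0$ or $\chi(G_1,F_2)<0$, whence $\pi_*(G_1^{\vee}\otimes F_1)\ne 0$ or $R^1\pi_*(G_1^{\vee}\otimes F_2)\ne 0$; the first injects into $\pi_*(G_1^{\vee}\otimes F)$ and the second is a quotient of $R^1\pi_*(G_1^{\vee}\otimes F)$. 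Without this the appeal to Lemma~\ref{lem:tilting:restriction} is unjustified. (Relatedly, your proof of the vanishing $\pi_*(G_1^{\vee}\otimes E)=0$ reduces to a single fiber via Lemma~\ref{lem:tilting:generate}(2), which presupposes $\pi(\Supp(E))$ is a point; this is fine when $Y_\pi$ is finite, but in the general perverse case the paper argues globally, using $G_1\in T$ and $\im\bigl(\pi^*(\pi_*(G_1^{\vee}\otimes E))\otimes G_1\to E\bigr)\in T\cap S=0$.)

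Second, in (3) you assert that $R^1\pi_*(G_1^{\vee}\otimes E)=0$ is ``immediate from $E\in T$ by definition of $T$''. This is circular: $T$ is cut out by the vanishing of $R^1\pi_*(G^{\vee}\otimes -)$ for the \emph{given} generator $G$, not for $G_1$, and transferring that vanishing to $G_1$ is part of what the lemma (and ultimately Proposition~\ref{prop:tilting:generator}(3)) establishes. The same conflation appears in your (2) (``the $T$-part contributes nothing to $R^1$''), though there it is harmless because only the surjectivity $R^1\pi_*(G_1^{\vee}\otimes E)\to R^1\pi_*(G_1^{\vee}\otimes E'')$ is needed. The correct route for (3) is the paper's: by Lemma~\ref{lem:tilting:generate}(1) reduce to a quotient $Q$ of some $E_{yj}\in T$ in $\Coh(X)$; then $R^1\pi_*(G_1^{\vee}\otimes E_{yj})=0$ by (a) and surjects onto $R^1\pi_*(G_1^{\vee}\otimes Q)$ since fibers have dimension at most one. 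The same exact sequence $0\to F\to E_{yj}\to Q\to 0$, with $F\in S$ because $E_{yj}$ is irreducible, also repairs your ``transport a nonzero map through the filtration'' step: part (1) gives $\pi_*(G_1^{\vee}\otimes F)=0$, hence an injection $\pi_*(G_1^{\vee}\otimes E_{yj})\hookrightarrow\pi_*(G_1^{\vee}\otimes Q)$, so the non-vanishing cannot be lost to cancellation. The local freeness argument and the logical structure of (2) are fine.
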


\begin{proof}

(1)
We note that
$G_1 \in T$ by Lemma \ref{lem:tilting:T}.
We first treat the case where ${\cal C}$ is the category of 
perverse coherent sheaves. 
We consider the homomorphism
$\pi^*(\pi_*(G_1^{\vee} \otimes E)) \otimes G_1 \to E$.
Then $\im \phi \in T \cap S=0$. 
Since $\pi_*(G_1^{\vee} \otimes \im \phi)=\pi_*(G_1^{\vee} \otimes E)$,
we get $\pi_*(G_1^{\vee} \otimes E)=0$.
Let $F \ne 0$ be a coherent sheaf on a fiber
and take the decomposition
\begin{equation}
0 \to F_1 \to F \to F_2 \to 0
\end{equation}
with
$F_1 \in T, F_2 \in S$.
Since $F_1, F_2[1] \in {\cal C}$,
the condition $\chi(G_1,E_{yj})>0$ implies that
$\chi(G_1,F_1)>0$ or $\chi(G_1,F_2)<0$, which imply
that 
$\pi_*(G_1^{\vee} \otimes F_1) \ne 0$ or
$R^1 \pi_*(G_1^{\vee} \otimes F_2) \ne 0$.
Since $\pi_*(G_1^{\vee} \otimes F_1)$ is a subsheaf of
$\pi_*(G_1^{\vee} \otimes F)$ and
$R^1 \pi_*(G_1^{\vee} \otimes F_2)$ is a quotient of
$R^1 \pi_*(G_1^{\vee} \otimes F)$,
we get ${\bf R}\pi_*(G_1^{\vee} \otimes F) \ne 0$.
Then we can apply Lemma \ref{lem:tilting:restriction} to $E$ and get
$R^1 \pi_*(G_1^{\vee} \otimes E_{|\pi^{-1}(y)})\ne 0$ 
for $y \in \pi(\Supp(E))$.
Since $R^1 \pi_*(G_1^{\vee} \otimes E) \to 
R^1 \pi_*(G_1^{\vee} \otimes E_{|\pi^{-1}(y)})\ne 0$ is surjective,
we get the claim.  

We next assume that $\# Y_\pi<\infty$.
Then $E[1]$ is generated by $E_{yj}$. Hence
\eqref{eq:tilting:chi(G,E)} imply that
$\chi(G_1,E[1])>0$ and ${\bf R}\pi_*(G_1^{\vee} \otimes E[1]) \in \Coh(Y)$.
Hence $R^1\pi_*(G_1^{\vee} \otimes E) \ne 0$ and
$\pi_*(G_1^{\vee} \otimes E)=0$.

(2) For $E \in \Coh(X)$, we take a decomposition
\begin{equation}
0 \to E_1 \to E \to E_2 \to 0
\end{equation} 
such that $E_1 \in T$ and $E_2 \in S$.
If $R^1 \pi_*(G_1^{\vee} \otimes E)=0$, then
(1) implies that $E_2=0$. 

(3)
By Lemma \ref{lem:tilting:generate},  
we may assume that $E$ is a quotient of 
$E_{yj}$, $E_{yj} \in T$ in $\Coh(X)$.
Since $E_{yj}$ is irreducible,
$\phi:E_{yj} \to E$ is injective in ${\cal C}$.
We set $F:=\ker(E_{yj} \to E)$ in $\Coh(X)$.
Then $F \in S$ and $F[1]$ is the cokernel of 
$\phi$ in ${\cal C}$.
Hence $\pi_*(G_1^{\vee} \otimes F)=0$ by (1).
By our assumption,
$\pi_*(G_1^{\vee} \otimes E_{yj}) \ne 0$, $E_{yj} \in T$ and
$R^1 \pi_*(G_1^{\vee} \otimes E_{yj})=0$.
Therefore our claim holds.
\end{proof}

\begin{prop}\label{prop:tilting:generator}
Let $G_1$ be an object of ${\bf D}(X)$ such that $H^i(E)=0$ for $i \ne -1,0$
and satisfies
\begin{equation}\label{eq:tilting:generator}
(a)\; \Hom(G_1,E_{yj}[p])=0, p \ne 0\;\;
(b)\; \chi(G_1,E_{yj})>0.
\end{equation}
\begin{enumerate}
\item[(1)]
$G_1$ is a locally free sheaf on $X$.
\item[(2)]
$R^1 \pi_*(G_1^{\vee} \otimes G_1)=0$.
\item[(3)]
For $E \in \Coh(X)$,
$E \in T$ if and only if
$R^1 \pi_*(G_1^{\vee} \otimes E)=0$, and 
$E \in S$ if and only if
$\pi_*(G_1^{\vee} \otimes E)=0$.
\item[(4)]
$G_1$ is a local projective generator of 
${\cal C}_G$.
\end{enumerate}
\end{prop}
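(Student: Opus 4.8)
The plan is to prove (1) first, since once $G_1$ is known to be a locally free sheaf its hypotheses feed directly into Lemma \ref{lem:tilting:chi(G,E)} (this is where condition (b) will be used) and the remaining assertions reduce to bookkeeping with the torsion pair $(S,T)$. For (1), I would show $\Ext^1(G_1,{\Bbb C}_x)=0$ for every $x\in X$ and then invoke Lemma \ref{lem:tilting:freeness}, whose hypothesis $H^i(G_1)=0$ for $i\ne-1,0$ is part of the assumption on $G_1$. For $x\notin\pi^{-1}(Y_\pi)$ the skyscraper ${\Bbb C}_x$ is itself the unique irreducible object $E_{yj}$ of ${\cal C}_y$, so $\Ext^1(G_1,{\Bbb C}_x)=\Hom(G_1,{\Bbb C}_x[1])=0$ is exactly hypothesis (a). For $x\in\pi^{-1}(y)$ with $y\in Y_\pi$, the skyscraper ${\Bbb C}_x$ is not irreducible, but it carries a finite filtration in ${\cal C}$ whose successive quotients are the $E_{yj}$; feeding the corresponding exact triangles into $\Hom(G_1,-[p])$ and inducting, hypothesis (a) propagates to $\Hom(G_1,{\Bbb C}_x[p])=0$ for $p\ne0$, in particular $\Ext^1(G_1,{\Bbb C}_x)=0$. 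Since this holds at every point, $G_1$ is locally free. Note this step uses only (a).

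Next, with $G_1$ locally free, Lemma \ref{lem:tilting:chi(G,E)} applies verbatim and supplies the following tools for $G_1$: $E\in S\Rightarrow\pi_*(G_1^\vee\otimes E)=0$ (with $R^1\pi_*(G_1^\vee\otimes E)\ne0$ for $0\ne E\in S$ on a fibre); $R^1\pi_*(G_1^\vee\otimes E)=0\Rightarrow E\in T$; and for $0\ne E\in T$ supported on a single fibre, $\pi_*(G_1^\vee\otimes E)\ne0$ and $R^1\pi_*(G_1^\vee\otimes E)=0$. These in particular give ${\bf R}\pi_*(G_1^\vee\otimes F)\ne0$ for every nonzero $F$ on a fibre, so $G_1$ satisfies the hypothesis of Lemma \ref{lem:tilting:restriction}. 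Hypothesis (a) also gives $\Hom(G_1,E_{yj}[-1])=0$, so Lemma \ref{lem:tilting:T} yields $G_1\in T$. For the forward direction of the $T$-equivalence in (3) I would argue by contradiction: if $E\in T$ but $R^1\pi_*(G_1^\vee\otimes E)\ne0$ near some $y$, then by Lemma \ref{lem:tilting:TFF} one has $R^1\pi_*(G_1^\vee\otimes E_{|\pi^{-1}(y)})\ne0$; decomposing $E_{|\pi^{-1}(y)}$ via $(S,T)$ and using the fibre statement of Lemma \ref{lem:tilting:chi(G,E)} on its $T$-part forces a nonzero $S$-quotient $E_{|\pi^{-1}(y)}\twoheadrightarrow B$, hence a nonzero surjection $E\twoheadrightarrow B$ with $B\in S$, contradicting $\Hom(T,S)=0$. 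Thus $E\in T\Rightarrow R^1\pi_*(G_1^\vee\otimes E)=0$; applying this to $E=G_1\in T$ gives (2).

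Combined with the implication $R^1\pi_*(G_1^\vee\otimes E)=0\Rightarrow E\in T$ from Lemma \ref{lem:tilting:chi(G,E)}, the above establishes $E\in T\Leftrightarrow R^1\pi_*(G_1^\vee\otimes E)=0$. For the $S$-equivalence, $E\in S\Rightarrow\pi_*(G_1^\vee\otimes E)=0$ is again Lemma \ref{lem:tilting:chi(G,E)}; conversely, if $\pi_*(G_1^\vee\otimes E)=0$, take $0\to E_1\to E\to E_2\to0$ with $E_1\in T$, $E_2\in S$, so left-exactness gives $\pi_*(G_1^\vee\otimes E_1)=0$, while the $T$-equivalence gives $R^1\pi_*(G_1^\vee\otimes E_1)=0$; Lemma \ref{lem:tilting:restriction} then forces $E_1=0$, i.e. $E\in S$. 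This proves (3). Finally, writing $T_1:=\{R^1\pi_*(G_1^\vee\otimes-)=0\}$ and $S_1:=\{\pi_*(G_1^\vee\otimes-)=0\}$, part (3) says $(T_1,S_1)=(T,S)$ and part (2) says $R^1\pi_*(G_1^\vee\otimes G_1)=0$; since $S\cap T=0$, Lemma \ref{lem:tilting} applied to $G_1$ shows that $G_1\in T_1$ is a local projective generator of the tilt of $(T_1,S_1)=(T,S)$, namely ${\cal C}$, giving (4).

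The conceptual crux is (1): reducing local freeness to $\Ext^1$-vanishing against all skyscrapers and propagating hypothesis (a) along the Jordan--H\"older filtration of ${\Bbb C}_x$. The main technical care lies in the fibre-to-global passage in (3), which I would keep case-free (valid both when ${\cal C}$ has a local projective generator $G$ and in the finite-length setting where no such $G$ is given) by playing the torsion-pair property $\Hom(T,S)=0$ against Lemmas \ref{lem:tilting:TFF} and \ref{lem:tilting:restriction}, rather than through the original $G$.
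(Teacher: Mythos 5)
Your proof is correct and follows essentially the same route as the paper: local freeness via Lemma \ref{lem:tilting:freeness} together with hypothesis (a) propagated along the Jordan-H\"{o}lder filtration of ${\Bbb C}_x$, then Lemma \ref{lem:tilting:chi(G,E)} combined with Lemmas \ref{lem:tilting:TFF} and \ref{lem:tilting:restriction} to identify $(T_1,S_1)$ with $(T,S)$, and Lemma \ref{lem:tilting} for (4). The only (harmless) difference is organizational: the paper proves (2) directly by writing $G_{1|\pi^{-1}(y)}$ as a successive extension of quotients of the $E_{yj} \in T$ via Lemma \ref{lem:tilting:generate}, whereas you deduce (2) from the forward half of (3) applied to $G_1 \in T$.
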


\begin{proof}
(1) 
The claim follows from Lemma \ref{lem:tilting:freeness} and (a).
(2)
It is sufficient to prove that 
$R^1 \pi_*(G_1^{\vee} \otimes G_{1|\pi^{-1}(y)})=0$ for all $y \in Y_\pi$.
By Lemma \ref{lem:tilting:T},
$G_1 \in T$.
Since $\Supp(G_{1|\pi^{-1}(y)})=\pi^{-1}(y)$ and 
$G_{1|\pi^{-1}(y)} \in T$,
Lemma \ref{lem:tilting:generate} (1) implies that
$G_{1|\pi^{-1}(y)} \in T$ is a successive extension of quotients 
of $E_{yj} \in T$.
Hence it is sufficient to prove
$R^1 \pi_*(G_1^{\vee} \otimes Q)=0$ for all quotients $Q$ of 
$E_{yj} \in T$.
By our assumption on $G_1$,
we have $R^1 \pi_*(G_1^{\vee} \otimes E_{yj})=0$ for $E_{yj} \in T$.
Therefore the claim holds.

(3)
We set 
\begin{equation}
\begin{split}
T_1:=& \{E \in \Coh(X)| R^1 \pi_*(G_1^{\vee} \otimes E)=0 \},\\
S_1:=& \{E \in \Coh(X)|\pi_*(G_1^{\vee} \otimes E)=0 \}.
\end{split}
\end{equation}
By Lemma \ref{lem:tilting:chi(G,E)} (2), we get
\begin{equation}
T_1 \cap S_1 
\subset T \cap S_1=\{ E \in T|\pi_*(G_1^{\vee} \otimes E)=0 \}.
\end{equation}
If $T \cap S_1=0$,
then Lemma \ref{lem:tilting} (1) implies that $G_1$ is a local
projective generator of ${\cal C}_{G_1}$.
Since $G_1 \in T$ by (2), 
Lemma \ref{lem:tilting} (3) also implies that
${\cal C}={\cal C}_{G_1}$.
Therefore we shall prove that 
$T \cap S_1=0$.
Assume that $E \in T$ satisfies $\pi_*(G_1^{\vee} \otimes E)=0$.
We first prove that $R^1 \pi_*(G_1^{\vee} \otimes E)=0$.
By Lemma \ref{lem:tilting:TFF}, it is sufficient to prove
$R^1 \pi_*(G_1^{\vee} \otimes E_{|\pi^{-1}(y)})=0$ for
all $y \in Y$. This follows from Lemma \ref{lem:tilting:chi(G,E)} (3).
Hence ${\bf R}\pi_*(G_1^{\vee} \otimes E)=0$.
Then we see that ${\bf R}\pi_*(G_1^{\vee} \otimes E_{|\pi^{-1}(y)} )=0$
for all $y \in Y$ by the proof of Lemma \ref{lem:tilting:restriction}.
Since $E_{|\pi^{-1}(y)} \in T$,
Lemma \ref{lem:tilting:chi(G,E)} (3) implies that 
$E_{|\pi^{-1}(y)}=0$ for all $y \in Y$.
Therefore $E=0$.

(4)
This is a consequence of (3) and Lemma \ref{lem:tilting} (2).
\end{proof}

\begin{rem}\label{rem:tilting:generator}
If $G_1$ in Proposition \ref{prop:tilting:generator}
satisfies
\eqref{eq:tilting:generator} (a) only, then
the proofs of Lemma \ref{lem:tilting:chi(G,E)} and
Proposition \ref{prop:tilting:generator} imply that 
$G_1$ is a locally free sheaf such that
$R^1 \pi_*(G_1^{\vee} \otimes G_1)=0$
and ${\bf R}\pi_*(G_1^{\vee} \otimes F) \in \Coh(Y)$
for $F \in {\cal C}_G$. 
\end{rem}

\begin{lem}\label{lem:tilting:R1=0}
Let $(S,T)$ be a torsion pair of $\Coh(X)$
and ${\cal C}$ its tilting.
Assume that one of the following holds.
\begin{enumerate}
\item
${\cal C}$ is the category of perverse coherent sheaves.
\item
$\# Y_\pi<\infty$, ${\cal C}_y$ is Artinian and
$\pi(\Supp(E)) \subset Y_\pi$ for $E \in S$.
\end{enumerate}
Let $G_1$ be a locally free sheaf of rank $r$ on $X$ such that
\begin{equation}
\chi(G_1,E_{yj})>0.
\end{equation}
Then $\Hom(G_1,E_{yj}[k])=0, k \ne 0$
if and only if
$R^1 \pi_*(G_1^{\vee} \otimes G_1)=0$.
\end{lem}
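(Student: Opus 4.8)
The plan is to split the equivalence into the two implications, observing first that since $G_1$ is already a locally free sheaf, the condition $\Hom(G_1,E_{yj}[k])=0$ for $k\ne 0$ is exactly hypothesis \eqref{eq:tilting:generator}(a), while $\chi(G_1,E_{yj})>0$ is the standing hypothesis \eqref{eq:tilting:generator}(b). For the \emph{forward} implication I would simply invoke Remark \ref{rem:tilting:generator}, which extracts $R^1\pi_*(G_1^\vee\otimes G_1)=0$ from (a) alone (the rank hypothesis and (b) are already in force). This is the easy half.

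For the converse I would first record a fibrewise description of the relevant $\Hom$-groups. Since $G_1$ is locally free and $E_{yj}$ is supported on $\pi^{-1}(y)$,
\[
\Hom(G_1,E_{yj}[k])=\mathbb{H}^k(Y,\mathbf{R}\pi_*(G_1^\vee\otimes E_{yj}))=\mathcal{H}^k(\mathbf{R}\pi_*(G_1^\vee\otimes E_{yj}))_y,
\]
the last step because the complex is supported at the point $y$. As $\dim\pi^{-1}(y)\le 1$, the only cohomology sheaves come from $\pi_*$ and $R^1\pi_*$ of $G_1^\vee\otimes H^q(E_{yj})$ with $q=-1,0$. Distinguishing the two types of irreducible object from Lemma \ref{lem:tilting:irreducible}(3) (either $E_{yj}\in T$ is a sheaf, or $E_{yj}=F[1]$ with $F\in S$), this reduces the target vanishing to: (a-T) $R^1\pi_*(G_1^\vee\otimes E_{yj})=0$ for $E_{yj}\in T$, and (a-S) $\pi_*(G_1^\vee\otimes F)=0$ for $F=E_{yj}[-1]\in S$.

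Assuming now $R^1\pi_*(G_1^\vee\otimes G_1)=0$, I would prove (a-T) and (a-S) by two symmetric arguments resting on positivity. From (b) and the finite length of $\mathcal{C}_y$ (assumed in case (2), and automatic in case (1)), exactly as in the part of the proof of Lemma \ref{lem:tilting:chi(G,E)}(1) that does \emph{not} use (a), one gets: (P1) a nonzero $E\in T$ on $\pi^{-1}(y)$ has $\chi(G_1,E)=\sum_i\chi(G_1,E_{y,j(i)})>0$, so $\pi_*(G_1^\vee\otimes E)\ne 0$; (P2) a nonzero $E\in S$ on $\pi^{-1}(y)$ has $E[1]\in\mathcal{C}_y$, so $\chi(G_1,E)<0$ and $R^1\pi_*(G_1^\vee\otimes E)\ne 0$. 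The second ingredient is that $R^1\pi_*(G_1^\vee\otimes G_1)=0$ forces, by the projection formula and right exactness of $R^1\pi_*$, every coherent quotient of $\pi^*(V)\otimes G_1$ (in particular of $G_1$) to lie in $T_1:=\{E:R^1\pi_*(G_1^\vee\otimes E)=0\}$. For (a-T) I take the evaluation $\phi:\pi^*(\pi_*(G_1^\vee\otimes E_{yj}))\otimes G_1\to E_{yj}$; Lemma \ref{lem:tilting}(1) (which uses $R^1\pi_*(G_1^\vee\otimes G_1)=0$) gives $R^1\pi_*(G_1^\vee\otimes E_{yj})\cong R^1\pi_*(G_1^\vee\otimes\coker\phi)$ and $\pi_*(G_1^\vee\otimes\coker\phi)=0$, while $\coker\phi$ is a quotient of $E_{yj}\in T$, hence lies in $T$ and is supported on $\pi^{-1}(y)$; by (P1) it must vanish, so $E_{yj}$ is a quotient of $\pi^*(V)\otimes G_1$ and $R^1\pi_*(G_1^\vee\otimes E_{yj})=0$. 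For (a-S), any $\psi\in\Hom(G_1,F)=\pi_*(G_1^\vee\otimes F)_y$ has image a subsheaf of $F\in S$, hence in $S$, and a quotient of $G_1\in T_1$, hence in $T_1$; as it is supported on the fibre, (P2) forces $\im\psi=0$, so $\Hom(G_1,F)=0$. Combining (a-T), (a-S) with the reduction yields $\Hom(G_1,E_{yj}[k])=0$ for $k\ne 0$.

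The hard part is not any single computation but a potential \emph{circularity}: the statements ``$G_1\in T$'', (a-T), (a-S), and ``$G_1$ is a local projective generator of $\mathcal{C}$'' are all logically intertwined (for instance $G_1\in T\Leftrightarrow$ (a-S) by Lemma \ref{lem:tilting:T}), so trying to deduce one from the others directly goes in a loop. The device that breaks the loop is to feed the positivity (P1),(P2)—which depend only on (b) and finite length—into the evaluation map for (a-T) and into the sub-object argument for (a-S), using $R^1\pi_*(G_1^\vee\otimes G_1)=0$ only through the single clean consequence that images of maps out of $\pi^*(V)\otimes G_1$ lie in $T_1$. This avoids ever having to assume $G_1\in T$ in advance. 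The remaining point to verify carefully is the finite length of $\mathcal{C}_y$ in case (1), needed for the additivity argument behind (P1),(P2); in case (2) it is part of the hypotheses via Lemma \ref{lem:tilting:without}.
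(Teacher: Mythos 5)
Your proof is correct and follows essentially the same route as the paper's: both halves rest on the positivity $\chi(G_1,E_{yj})>0$ combined with the finite length of ${\cal C}_y$ and the right-exactness of $R^1\pi_*$, the converse being handled by the surjectivity of the evaluation map onto the $E_{yj}\in T$ and the vanishing of maps from $G_1$ into fiber-supported objects of $S$ (which the paper phrases as ``$G_1\in T$''). The only organizational difference is that you bypass the paper's appeal to Lemma \ref{lem:tilting:restriction} in case (i) by arguing only with subsheaves of the fiber-supported irreducibles $E_{yj}[-1]$, and you replace the irreducibility argument for surjectivity of $\phi$ by applying your (P1) to $\coker\phi$ --- both harmless streamlinings of the same idea.
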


\begin{proof}
Assume that $R^1 \pi_*(G_1^{\vee} \otimes G_1)=0$.
We first prove that $G_1 \in T$.
Assume that $G_1 \not \in T$. Then there is a surjective homomorphism
$G_1 \to E$ in $\Coh(X)$ such that $E \in S$.
If ${\cal C}$ has a local projective generator
$G$, then $\pi_*(G^{\vee} \otimes E)=0$.
By Lemma \ref{lem:tilting:restriction},
we have $R^1 \pi_*(G^{\vee} \otimes E_{|\pi^{-1}(y)}) \ne 0$
for a point $y \in Y$.
Hence we may assume that $\Supp(E) \subset \pi^{-1}(y)$. 
In the second case, since $\# Y_\pi <\infty$,
we may also assume that $\Supp(E) \subset \pi^{-1}(y)$.
Then $E[1]$ is generated by $E_{yj}$, $0 \leq j \leq s_y$.
By our assumption, $\chi(G_1,E[1])>0$.
Hence $\Ext^1(G_1,E) \ne 0$, which implies that
$R^1 \pi_*(G_1^{\vee} \otimes G_1) \ne 0$.
Therefore $G_1 \in T$.
For $E_{yj} \in T$,
we consider the homomorphism
$\phi:\pi^*(\pi_*(G_1^{\vee} \otimes E_{yj})) \otimes G_1 \to E_{yj}$.
Since $E_{yj}$ is an irreducible object,
$\phi$ is surjective in ${\cal C}_G$,
which implies that $\phi$ is surjective in $\Coh(X)$.
Hence $\Ext^1(G_1, E_{yj})=0$.
For $E_{yj} \in S[1]$,
$\dim \pi^{-1}(y) \leq 1$ and the locally freeness of $G_1$ imply
that $\Ext^1(G_1, E_{yj})=0$.
Since $G_1 \in T$, we also get $\Hom(G_1,E_{yj}[-1])=0$ for all
irreducible objects of ${\cal C}$.
\end{proof}

\begin{lem}\label{lem:characterize}
Let $G$ be a locally free sheaf on $X$ such that
$R^1 \pi_*(G^{\vee} \otimes G)=0$.
Let $E$ be a 1-dimensional sheaf on a fiber of $\pi$
such that $\chi(G,E)=0$.
Then
${\bf R}\pi_*(G^{\vee} \otimes E)=0$ if and only if
$E$ is a $G$-twisted semi-stable sheaf 
with respect to an ample divisor $L$ on $X$.  
\end{lem}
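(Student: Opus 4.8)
The plan is to translate the vanishing of ${\bf R}\pi_*(G^{\vee}\otimes E)$ into a single $\Hom$-vanishing and then match it with the numerical criterion for $G$-twisted semistability. First I would record the dictionary coming from $\Supp(E)\subset\pi^{-1}(y)$: since $G^{\vee}\otimes E$ is supported on the one fibre $\pi^{-1}(y)$, the sheaves $\pi_*(G^{\vee}\otimes E)$ and $R^1\pi_*(G^{\vee}\otimes E)$ are skyscrapers at $y$ of lengths $\dim\Hom(G,E)$ and $\dim\Ext^1(G,E)$ respectively (the $H^1$ of the pushforward contributes nothing, cf.\ the degenerating spectral sequence in the proof of Lemma~\ref{lem:tilting:restriction}). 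As $E$ is $1$-dimensional, $\Ext^2(G,E)=H^2(X,G^{\vee}\otimes E)=0$, so the hypothesis $\chi(G,E)=0$ forces $\dim\Hom(G,E)=\dim\Ext^1(G,E)$. Hence ${\bf R}\pi_*(G^{\vee}\otimes E)=0$ is equivalent to the single condition $\Hom(G,E)=0$, and the lemma reduces to proving $\Hom(G,E)=0\iff E$ is $G$-twisted semistable. I would also record that for a $1$-dimensional sheaf $F$ the $G$-twisted Hilbert polynomial is $\chi(G,F(mL))=\deg_G(F)\,m+\chi(G,F)$ with leading coefficient $\deg_G(F)=\rk(G)(c_1(F)\cdot L)>0$; since $\chi(G,E)=0$, $G$-twisted semistability of $E$ amounts to $E$ being pure together with $\chi(G,F)\le 0$ for every proper nonzero subsheaf $F\subset E$.

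For the implication ``$G$-twisted semistable $\Rightarrow\Hom(G,E)=0$'', I would argue by contradiction. A nonzero $\phi:G\to E$ has image $F=\im\phi$, a nonzero quotient of $G$ which is a subsheaf of the pure sheaf $E$, hence itself pure $1$-dimensional. The one place where the hypothesis $R^1\pi_*(G^{\vee}\otimes G)=0$ is used: applying $\pi_*(G^{\vee}\otimes-)$ to $0\to\ker\phi\to G\to F\to 0$ and invoking $R^2\pi_*=0$ (from $\dim\pi^{-1}(y)\le 1$, Assumption~\ref{ass:1}) exhibits $R^1\pi_*(G^{\vee}\otimes F)$ as a quotient of $R^1\pi_*(G^{\vee}\otimes G)=0$, so $\Ext^1(G,F)=0$. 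Together with $\Ext^2(G,F)=0$ this gives $\chi(G,F)=\dim\Hom(G,F)\ge 1$, since $\phi$ is a nonzero element of $\Hom(G,F)$. This contradicts $\chi(G,F)\le 0$ (semistability, when $F\subsetneq E$) or $\chi(G,E)=0$ (when $F=E$). Hence $\Hom(G,E)=0$.

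For the converse ``$\Hom(G,E)=0\Rightarrow G$-twisted semistable'' I would prove the contrapositive, producing a nonzero map $G\to E$ whenever $E$ fails to be semistable. If $E$ is not pure, a nonzero $0$-dimensional subsheaf $T\subset E$ satisfies $\Hom(G,T)\ne 0$ (as $G$ is locally free, $G^{\vee}\otimes T$ is a nonzero $0$-dimensional sheaf), and left exactness of $\Hom(G,-)$ embeds $\Hom(G,T)$ into $\Hom(G,E)$. If $E$ is pure but destabilised by a $1$-dimensional subsheaf $F\subsetneq E$ with $\chi(G,F)/\deg_G(F)>0$, then $\chi(G,F)>0$, and since $\Ext^2(G,F)=0$ we get $\dim\Hom(G,F)\ge\chi(G,F)>0$; again $\Hom(G,F)\hookrightarrow\Hom(G,E)$. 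Either way $\Hom(G,E)\ne 0$, contradicting ${\bf R}\pi_*(G^{\vee}\otimes E)=0$.

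The only genuinely delicate point is the forward direction, and specifically the step showing that every quotient $F$ of $G$ supported on the fibre has $R^1\pi_*(G^{\vee}\otimes F)=0$: this is exactly where $R^1\pi_*(G^{\vee}\otimes G)=0$ is indispensable, and without it $\chi(G,F)$ could be $\le 0$ and the whole argument would collapse. The remaining ingredients---the skyscraper bookkeeping identifying $\pi_*$ and $R^1\pi_*$ with $\Hom$ and $\Ext^1$, the vanishing of $\Ext^2$ by dimension, and reading off semistability from the sign of $\chi(G,-)$---are routine once the conventions are fixed.
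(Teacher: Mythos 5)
Your proof is correct and takes essentially the same route as the paper's: both directions rest on the observations that the image of a map from $G$ (for the paper, from $\pi^*(\pi_*(G^{\vee}\otimes E))\otimes G$) into $E$ has vanishing $R^1\pi_*(G^{\vee}\otimes -)$ and hence nonnegative $\chi(G,-)$, while any subsheaf of a sheaf with $\pi_*(G^{\vee}\otimes -)=0$ again has $\pi_*(G^{\vee}\otimes -)=0$. The paper packages this through the canonical decomposition $0\to \im\phi\to E\to \coker\phi\to 0$ of Lemma~\ref{lem:tilting}~(1), whereas you use a single homomorphism $G\to E$ together with the reduction ${\bf R}\pi_*(G^{\vee}\otimes E)=0\Leftrightarrow \Hom(G,E)=0$; this is only a repackaging of the same argument.
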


\begin{proof}
By the proof of Lemma \ref{lem:tilting} (1),
we can take a decomposition
\begin{equation}
0 \to E_1 \to E \to E_2 \to 0
\end{equation}
such that ${\bf R}\pi_*(G^{\vee} \otimes E_1)=\pi_*(G^{\vee} \otimes E)$
and ${\bf R} \pi_*(G^{\vee} \otimes E_2)=
R^1 \pi_*(G^{\vee} \otimes E)[-1]$.
Then $\chi(G,E_1) \geq 0 \geq \chi(G,E_2)$.
Hence if $E$ is $G$-twisted semi-stable, then
$\pi_*(G^{\vee} \otimes E_1)=\pi_*(G^{\vee} \otimes E)=0$,
which also implies that $R^1 \pi_*(G^{\vee} \otimes E)=0$.
Conversely if
$\pi_*(G^{\vee} \otimes E)=R^1 \pi_*(G^{\vee} \otimes E)=0$,
then $\pi_*(G^{\vee} \otimes E')=0$ for any subsheaf
$E'$ of $E$.
Hence $E$ is $G$-twisted semi-stable.
\end{proof}

\begin{cor}\label{cor:characterize}
Assume that $\pi:X \to Y$ is the minimal resolution of a
rational double point. Let $H$ be the pull-back of an ample
divisor on $Y$.
Then 
a locally free sheaf $G$ on $X$ is a tilting generator
of the categery ${\cal C}_G$ in Lemma \ref{lem:tilting}
if and only if
\begin{enumerate}
\item
[(i)] $R^1 \pi_*(G^{\vee} \otimes G)=0$ and
\item
[(ii)] there is no $G$-twisted stable sheaf $E$ such that
$\rk E=0$,
$\chi(G^{\vee} \otimes E)=0$, $(c_1(E),H)=0$ and $(c_1(E)^2)=-2$.
\end{enumerate}
Moreover (ii) is equivalent to 
$\rk G \not |(c_1(G),D)$ for $D$ with $(D,H)=0$ and $(D^2)=-2$.  
\end{cor}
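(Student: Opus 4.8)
The plan is to translate the phrase ``\emph{$G$ is a tilting generator of ${\cal C}_G$}'' into the concrete conditions of Lemma \ref{lem:tilting}. By Lemma \ref{lem:tilting} (1), the pair $(T,S)$ is a torsion pair with $G\in T$ (so that, by Lemma \ref{lem:tilting} (2), $G$ is a local projective generator of ${\cal C}_G$) exactly when $R^1\pi_*(G^{\vee}\otimes G)=0$ and $S\cap T=0$. The first of these is precisely condition (i). Hence the whole content of the first equivalence is, \emph{assuming} (i), to prove that $S\cap T=0$ if and only if (ii) holds. Since $S\cap T=\{E\in\Coh(X)\mid {\bf R}\pi_*(G^{\vee}\otimes E)=0\}$, I must analyse nonzero sheaves killed by ${\bf R}\pi_*(G^{\vee}\otimes-)$.

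First I would show that any nonzero $E\in S\cap T$ is purely $1$-dimensional and supported on the exceptional $(-2)$-curves, with $\chi(G,E)=0$. Purity follows because the maximal $0$-dimensional subsheaf $E_0\subset E$ satisfies $\pi_*(G^{\vee}\otimes E_0)\hookrightarrow\pi_*(G^{\vee}\otimes E)=0$, forcing $E_0=0$ as $G^{\vee}$ is locally free; and where $\pi$ is an isomorphism ${\bf R}\pi_*$ is faithful, so $\Supp(E)\subset\pi^{-1}(Y_\pi)$. Also $\chi(G,E)=\chi(X,G^{\vee}\otimes E)=\chi(Y,{\bf R}\pi_*(G^{\vee}\otimes E))=0$. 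By Lemma \ref{lem:characterize} such $E$ are exactly the $G$-twisted semistable sheaves on a fibre with $\chi(G,E)=0$, and passing to a $G$-twisted stable Jordan--H\"older factor $E'$ (which has the same reduced twisted Hilbert polynomial, hence again $\chi(G,E')=0$) produces a $G$-twisted stable sheaf with $\rk E'=0$. For such an $E'$ I compute $(c_1(E')^2)=-2$ as follows: stability gives $\End(E')=\mathbb{C}$, and since $K_X=\pi^*K_Y$ is trivial in a neighbourhood of the fibre we get $E'\otimes K_X\cong E'$, so by Serre duality $\ext^2(E',E')=\hom(E',E')=1$; therefore $\chi(E',E')=2-\ext^1(E',E')\le 2$. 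Via the Mukai pairing $\chi(E',E')=-(c_1(E')^2)$, this gives $(c_1(E')^2)\ge-2$, while negative-definiteness (and evenness) of the exceptional lattice forces $(c_1(E')^2)\le-2$; hence $(c_1(E')^2)=-2$ and $(c_1(E'),H)=(\pi_*c_1(E'),A)=0$. Conversely, if (ii) fails the witnessing stable sheaf lies in $S\cap T$ by Lemma \ref{lem:characterize}. This yields the equivalence $S\cap T=0\Leftrightarrow(\mathrm{ii})$.

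For the ``moreover'' clause I would use Riemann--Roch. Writing $r=\rk G$ and $D=c_1(E)$, and using $(D,K_X)=0$, a direct Chern-character computation gives $\chi(G,E)=r\,\chi(E)-(c_1(G),D)$. Thus for a rank-$0$ sheaf supported on the exceptional locus, $\chi(G^{\vee}\otimes E)=0$ holds if and only if $r\mid(c_1(G),D)$, in which case $\chi(E)=(c_1(G),D)/r$ is forced. The classes $D$ with $(D,H)=0$ and $(D^2)=-2$ relevant here are the numerically exceptional $(-2)$-classes (roots), for which $(D,H)=0$ is automatic. The easy direction is now immediate: if (ii) fails, the stable $E$ produces a root $D=c_1(E)$ with $r\mid(c_1(G),D)$, so the divisibility condition (iii) fails.

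The remaining, and main, obstacle is the converse construction: given a root $D$ with $(D,H)=0$, $(D^2)=-2$ and $r\mid(c_1(G),D)$, I must exhibit a \emph{$G$-twisted stable} sheaf $E$ with $c_1(E)=D$ and $\chi(E)=(c_1(G),D)/r$ (hence $\chi(G,E)=0$ and $E\in S\cap T$, so (ii) fails). I would reduce to the case of an effective root, observe that the supporting divisor $Z_D$ has arithmetic genus $p_a(Z_D)=1+\tfrac12((D^2)+(D,K_X))=0$, and build $E$ as a rank-one sheaf on $Z_D$ with the prescribed Euler characteristic --- for a simple root this is just a line bundle $\mathcal{O}_C(a)$ on a single $(-2)$-curve $C\cong\mathbb{P}^1$, which is automatically stable, and for a general positive root I would proceed by iterated extensions along a chain of simple roots. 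The delicate point is verifying stability of the resulting sheaf; here I expect to exploit the rigidity forced by $(D^2)=-2$ (a spherical numerical class, $\chi(E,E)=2$) together with the negative-definite structure of the exceptional lattice, which makes the stable sheaf in each such class essentially unique. This existence/stability step is where the real work lies; everything else reduces to Lemma \ref{lem:tilting}, Lemma \ref{lem:characterize}, and the Riemann--Roch identity above.
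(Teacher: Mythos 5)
Your argument is correct and follows essentially the same route as the paper's: the reduction to the vanishing of $S\cap T$ via Lemma \ref{lem:tilting}, the identification (through Lemma \ref{lem:characterize}) of nonzero sheaves in $S\cap T$ with $G$-twisted semistable fibre sheaves satisfying $\chi(G,E)=0$, the passage to a stable Jordan--H\"older factor and the computation $0<\chi(E,E)=-(c_1(E)^2)\le 2$ forcing $(c_1(E)^2)=-2$, and the Riemann--Roch identity $\chi(G,E)=\rk G\,\chi(E)-(c_1(G),c_1(E))$ for the easy direction of the divisibility criterion are all exactly the paper's steps. The one point you flag as ``where the real work lies'' --- exhibiting a $G$-twisted stable sheaf with $c_1(E)=D$ and $\chi(G,E)=0$ for each root $D$ with $(D,H)=0$, $(D^2)=-2$ and $\rk G\mid (c_1(G),D)$ --- is precisely the step the paper does not prove in-text either; it is delegated to the citation \cite[Prop.~4.6]{Y:action}, and the construction you sketch (iterated universal extensions along simple roots, with stability pinned down by the uniqueness of the stable object in a spherical class of the negative-definite exceptional lattice) is exactly how that reference, and the analogous statements appearing later in this paper (Corollary \ref{cor:0-dim:reflection}, Proposition \ref{prop:K3-exceptional-exist}), proceed. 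So nothing in your outline breaks, but to make the proof self-contained you would still need to carry out that reflection-functor induction rather than only indicating it.
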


\begin{proof}
Let $E$ be a 1-dimensional $G$-twisted stable sheaf on $X$.
Then $E$ is a sheaf on the exceptional locus if and only if
$(c_1(E),H)=0$. 
Under this assumption, we have
$\chi(E,E)=-(c_1(E)^2)>0$.
Hence $(c_1(E)^2)=-2$.
By Lemma \ref{lem:characterize},
we get the first part of our claim.
Since $\chi(G,E)=-(c_1(G),c_1(E))+\rk G \chi(E)$, 
we also get the second claim by \cite[Prop. 4.6]{Y:action}.
\end{proof}

\begin{NB}

\begin{rem}
Assume that ${\bf R}\pi_*(G^{\vee} \otimes E)=\pi_*(G^{\vee} \otimes E)$.
Let $V$ ve a locally free sheaf on $Y$ with a surjection
$V \to \pi_*(G^{\vee} \otimes E)$.
Then we have a morphism $V \to {\bf R}\pi_*(G^{\vee} \otimes E)$
in ${\bf D}(Y)$ which induces the surjective homomorphism
$V \to H^0({\bf R}\pi_*(G^{\vee} \otimes E))=\pi_*(G^{\vee} \otimes E)$.
Then we have a morphism
$\pi^*(V) \otimes G \to 
{\bf L}\pi^*({\bf R}\pi_*(G^{\vee} \otimes E)) \otimes G
\overset{e}{\to} E$ such that
$(e \otimes 1_{G^{\vee}}) \circ \phi$ in the following diagram 
induces an isomorphsim ${\bf R}\pi_*((e \otimes 1_{G^{\vee}}) \circ \phi)$:
\begin{equation}
\begin{CD}
\pi^*(V) @>>>
{\bf L}\pi^*({\bf R}\pi_*(G^{\vee} \otimes E)) 
@. \\
@VVV @VV{\phi}V @.\\
\pi^*(V) \otimes G \otimes G^{\vee} @>>>
{\bf L}\pi^*({\bf R}\pi_*(G^{\vee} \otimes E)) \otimes G \otimes G^{\vee}
@>{e \otimes 1_{G^{\vee}}}>> E \otimes G^{\vee}.
\end{CD}
\end{equation} 
Hence $F:=\mathrm{Cone}(\pi^*(V) \otimes G \to E)[1]$ satisfies
${\bf R}\pi_*(G^{\vee} \otimes F) \in \Coh(Y)$.
\end{rem}
\end{NB}

\subsection{Examples of perverse coherent sheaves}
\label{subsect:perverse-examples}

Let $\pi:X \to Y$ be a birational map in subsection \ref{subsect:Morita}
with Assumption \ref{ass:1}.
Let $G$ be a locally free sheaf on $X$ such that 
$R^1 \pi_*(G^{\vee} \otimes G)=0$.
We set ${\cal A}:=\pi_*(G^{\vee} \otimes G)$ as before.
\begin{NB}
For an ${\cal A}$-module $E$ on $Y$,
we have a surjective ${\cal A}$-homomorphism
$H^0(Y,E(n))\otimes {\cal A}(-n) \to E$, $n \gg 0$.
Hence we have a locally free resolution $V^{\bullet}$ of $E$ in
$\Coh_{{\cal A}}(Y)$.
\end{NB}
Let $F$ be a coherent ${\cal A}$-module on $Y$.
Then 
${\bf R}\pi_*((\pi^{-1}(F) 
\overset{{\bf L}}{\otimes}_{\pi^{-1}({\cal A})} G)\otimes G^{\vee})
\cong F$ as an ${\cal A}$-module.
\begin{NB}
$\pi_*((\pi^{-1}(V^{\bullet}) 
\otimes_{\pi^{-1}({\cal A})} G)\otimes G^{\vee})
\cong V^{\bullet}$.
\end{NB}
By using the spectral sequence, we see that
\begin{equation}\label{eq:tilting:F}
R^p \pi_*(G^{\vee} \otimes H^q(\pi^{-1}(F) 
\overset{{\bf L}}{\otimes}_{\pi^{-1}({\cal A})} G))=0,\;p+q \ne 0
\end{equation}
and we have an exact sequence
\begin{equation}
0 \to R^1\pi_*(G^{\vee} \otimes H^{-1}(\pi^{-1}(F) 
\overset{{\bf L}}{\otimes}_{\pi^{-1}({\cal A})} G))
\to F \overset{\lambda}{\to}
\pi_*(G^{\vee} \otimes H^0(\pi^{-1}(F) 
\overset{{\bf L}}{\otimes}_{\pi^{-1}({\cal A})} G))
\to 0.
\end{equation}
We set 
\begin{equation}
\pi^{-1}(F) \otimes_{\pi^{-1}({\cal A})} G
:=H^0(\pi^{-1}(E) \overset{{\bf L}}{\otimes}_{\pi^{-1}({\cal A})} G)
\in \Coh(X).
\end{equation}
We set 
\begin{equation}
\begin{split}
S_0:=& \{E \in \Coh(X)|{\bf R}\pi_*(G^{\vee} \otimes E)=0 \},\\
S:=& \{E \in \Coh(X)| \pi_*(G^{\vee} \otimes E)=0 \},\\
T:=& \{E \in \Coh(X)| R^1\pi_*(G^{\vee} \otimes E)=0,\;
\Hom(E,c)=0, c \in S_0\}.
\end{split}
\end{equation}

\begin{lem}\label{lem:tilting:C(G)}
For $E \in \Coh(X)$,
let
$\phi:\pi^{-1}(\pi_*(G^{\vee} \otimes E)) \otimes_{\pi^{-1}({\cal A})} G
\to E$ be the evaluation map.
\begin{enumerate}
\item[(1)]
${\bf R}\pi_*(G^{\vee} \otimes \ker \phi)=0$,
$\pi_*(G^{\vee} \otimes \coker \phi)=0$ and
$R^1 \pi_*(G^{\vee} \otimes E) \cong 
R^1\pi_*(G^{\vee} \otimes \coker \phi)$.
\item[(2)]
$(S,T)$ is a torsion pair of $\Coh(X)$ and
the decomposition of $E$ is given by
\begin{equation}
0 \to \im \phi \to E \to \coker \phi \to 0,
\end{equation}
$\im \phi \in T$, $\coker \phi \in S$.
\end{enumerate}
\end{lem}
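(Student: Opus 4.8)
The plan is to read off both statements from the two short exact sequences attached to $\phi$, using the vanishing \eqref{eq:tilting:F} and the adjunction that defines the evaluation map. Set $F:=\pi_*(G^\vee\otimes E)$ and $M:=\pi^{-1}(F)\otimes_{\pi^{-1}({\cal A})}G$, and abbreviate $K:=\ker\phi$, $I:=\im\phi$, $C:=\coker\phi$, so that $0\to K\to M\to I\to 0$ and $0\to I\to E\to C\to 0$ are exact in $\Coh(X)$. Since $G$ is locally free, $G^\vee\otimes(-)$ is exact, and since $\dim\pi^{-1}(y)\le 1$ we have $R^p\pi_*=0$ for $p\ge 2$; applying ${\bf R}\pi_*(G^\vee\otimes-)$ therefore produces two six-term exact sequences on $Y$ that I will chase. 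The two inputs that drive the chase are: (i) $R^1\pi_*(G^\vee\otimes M)=0$, which is \eqref{eq:tilting:F} for $q=0$; and (ii) that $\pi_*(G^\vee\otimes\phi):\pi_*(G^\vee\otimes M)\to F$ is an isomorphism.

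I expect (ii) to be the main obstacle, because it is the only place where the actual construction of $\phi$, not merely its existence, is used. By construction $\phi$ is the image of $\id_F$ under the adjunction isomorphism $\Hom_X(M,E)\cong\Hom_{{\cal A}}(F,\pi_*(G^\vee\otimes E))$, and unwinding this isomorphism gives the triangle identity $\pi_*(G^\vee\otimes\phi)\circ\lambda=\id_F$, where $\lambda$ is the edge homomorphism in the exact sequence
\[
0\to R^1\pi_*(G^\vee\otimes H^{-1}(\pi^{-1}(F)\overset{{\bf L}}{\otimes}_{\pi^{-1}({\cal A})}G))\to F\overset{\lambda}{\to}\pi_*(G^\vee\otimes M)\to 0
\]
displayed just before the lemma. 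That sequence already gives $\lambda$ surjective, and the triangle identity forces $\lambda$ injective; hence $\lambda$ is an isomorphism, the $H^{-1}$-term vanishes, and $\pi_*(G^\vee\otimes\phi)=\lambda^{-1}$ is an isomorphism.

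Granting (i) and (ii), part (1) is a diagram chase. As $I$ is a quotient of $M$ and $R^1\pi_*(G^\vee\otimes-)$ is right exact (again by $\dim\pi^{-1}(y)\le 1$), (i) gives $R^1\pi_*(G^\vee\otimes I)=0$; the second sequence then yields $R^1\pi_*(G^\vee\otimes E)\cong R^1\pi_*(G^\vee\otimes C)$. Factor $\pi_*(G^\vee\otimes\phi)$ as $\pi_*(G^\vee\otimes M)\to\pi_*(G^\vee\otimes I)\hookrightarrow\pi_*(G^\vee\otimes E)$: the second arrow is injective and the composite is the isomorphism (ii), so both arrows are isomorphisms. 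The first sequence now gives $\pi_*(G^\vee\otimes K)=R^1\pi_*(G^\vee\otimes K)=0$, i.e. ${\bf R}\pi_*(G^\vee\otimes K)=0$, and the second gives $\pi_*(G^\vee\otimes C)=0$, completing (1).

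For part (2) I would first verify $T\cap S=0$: any object of $T\cap S$ has both $\pi_*$ and $R^1\pi_*$ of its $G^\vee$-twist vanishing, so it lies in $S_0$, and then the defining condition $\Hom(E,c)=0$ ($c\in S_0$) applied with $c=E$ forces $E=0$. Because $T$ is stable under quotients (right exactness of $R^1\pi_*$, and $\Hom(-,c)$ can only shrink along quotients) and $S$ is stable under subsheaves, the image of any map $T\ni E'\to E''\in S$ lies in $T\cap S=0$, so $\Hom(T,S)=0$. The decomposition is exactly $0\to\im\phi\to E\to\coker\phi\to 0$: part (1) gives $\coker\phi\in S$ and $R^1\pi_*(G^\vee\otimes\im\phi)=0$, so it remains to check $\Hom(\im\phi,c)=0$ for $c\in S_0$. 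Here I would use that a surjection ${\cal A}(-n)^{\oplus N}\to F$ of ${\cal A}$-modules tensors up to a surjection $G\otimes\pi^*(V)\to M$ with $V$ locally free on $Y$, whence $\Hom(M,c)\hookrightarrow\Hom(G\otimes\pi^*(V),c)=H^0(Y,V^\vee\otimes\pi_*(G^\vee\otimes c))=0$ since $\pi_*(G^\vee\otimes c)=0$; as $\im\phi$ is a quotient of $M$ this gives $\Hom(\im\phi,c)=0$, hence $\im\phi\in T$. These two properties are the defining properties of a torsion pair, so $(S,T)$ is one and its decomposition is as stated.
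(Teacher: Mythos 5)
Your proof is correct and follows essentially the same route as the paper: both rest on the triangle identity $\pi_*(G^{\vee}\otimes\phi)\circ\lambda=\id_F$ combined with the surjectivity of $\lambda$ from the displayed exact sequence to get that $\pi_*(G^{\vee}\otimes\phi)$ is an isomorphism, then chase the two short exact sequences attached to $\phi$, and finally use that $M$ is a quotient of twists of $G$ to rule out nonzero maps $\im\phi\to c$ with $c\in S_0$. You merely spell out a few steps the paper leaves implicit (the split-injectivity of $\lambda$, the right-exactness of $R^1\pi_*$, and the verification of $\Hom(T,S)=0$).
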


\begin{NB}
The action of $\lambda \in \pi_*(G^{\vee} \otimes G)$ on
$\pi_*(G^{\vee} \otimes E)$:
$\lambda(f):=f \circ \lambda, f:G \to E$.
Then $\lambda \cdot f \otimes x-f \otimes \lambda \cdot x
=f \circ\lambda \otimes x-f \otimes \lambda \circ x$.
Therefore
the evaluation map
\begin{equation}
\begin{matrix}
\phi:&
\pi^{-1}(\pi_*(G^{\vee} \otimes E)) \otimes_{\pi^{-1}({\cal A})}G & \to& E\\
& f \otimes x & \mapsto & f(x)
\end{matrix}
\end{equation}
is well-defined.

By Lemma \ref{lem:tilting:A-module},
$\phi$ induces 
\begin{equation}
\pi^*(\pi_*(G^{\vee} \otimes E)) \to G^{\vee} \otimes E.
\end{equation}
\end{NB}

\begin{proof}
(1)
We have a homomorphism
\begin{equation}
\pi_*(G^{\vee} \otimes E) \overset{\lambda}{\longrightarrow} 
\pi_*(G^{\vee} \otimes 
\pi^{-1}(\pi_*(G^{\vee} \otimes E)) \otimes_{\pi^{-1}({\cal A})} G)
\overset{\pi_*(1_{G^{\vee}} \otimes \phi)}{\longrightarrow} 
\pi_*(G^{\vee} \otimes E)
\end{equation}
which is the identity.
Then $\lambda$ and $\pi_*(1_{G^{\vee}} \otimes \phi)$ are isomorphic. 
Hence we get
$\im \pi_*(1_{G^{\vee}} \otimes \phi)=\pi_*(G^{\vee} \otimes \im \phi)=
\pi_*(G^{\vee} \otimes E)$.
Since $R^1 \pi_*(G^{\vee} \otimes 
\pi^{-1}(\pi_*(G^{\vee} \otimes E)) \otimes_{\pi^{-1}({\cal A})} G)=0$,
we get ${\bf R}\pi_*(G^{\vee} \otimes \ker \phi)=0$.
Since $R^1 \pi_*(G^{\vee} \otimes \im \phi)=0$,
we also get the remaining claims.

(2) 
We shall prove that $\im \phi \in T$.
If $\im \phi \not \in T$, then
there is a homomorphism $\psi:\im \phi \to F$
such that $F \in S$. Replacing $F$ by $\im \psi$, we may assume that
$\psi$ is surjective. Since $\psi \circ \phi$ is surjective,
$\Hom(G,F) \ne 0$, which is a contradiction.
Therefore $\im \phi \in T$.
Obviously we have $S \cap T=\{ 0 \}$. Therefore $(S,T)$ is a torsion pair.
\end{proof}

Let ${\cal C}(G)$ be the tilting of $\Coh(X)$.
Then ${\cal C}(G)$ is the category of perverse coherent sheaves in the
sense of Definition \ref{defn:perverse}.
Indeed we have the following.

\begin{lem}\label{lem:tilting:VB}
(cf. \cite[Prop. 3.2.5]{VB})
Let $G$ be a locally free sheaf on $X$ such that
$R^1 \pi_*(G^{\vee} \otimes G)=0$.
Let ${\cal C}(G)$ be the associated category.
Then there is a local projective generator of ${\cal C}(G)$.
\end{lem}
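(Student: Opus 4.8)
The plan is to exhibit a locally free sheaf $G'$, built out of $G$, which is a genuine local projective generator of ${\cal C}(G)$. First I would record why $G$ itself \emph{almost} works. Since $R^1\pi_*(G^{\vee}\otimes G)=0$ and, for $c\in S_0$, $\Hom(G,c)=H^0(Y,\pi_*(G^{\vee}\otimes c))=0$, we have $G\in T\subset{\cal C}(G)$. Property (a) of a local projective generator holds for $G$: for $E\in{\cal C}(G)$ the spectral sequence $R^p\pi_*(G^{\vee}\otimes H^q(E))\Rightarrow R^{p+q}\pi_*(G^{\vee}\otimes E)$ degenerates because $\dim\pi^{-1}(y)\le 1$, and since $H^{-1}(E)\in S$ and $H^0(E)\in T$ it leaves ${\bf R}\pi_*(G^{\vee}\otimes E)$ concentrated in degree $0$, hence in $\Coh(Y)$. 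The only failure is property (b): for $0\ne c\in S_0$ the object $c[1]$ lies in ${\cal C}(G)$, yet ${\bf R}\pi_*(G^{\vee}\otimes c[1])={\bf R}\pi_*(G^{\vee}\otimes c)[1]=0$. Thus the entire task is to enlarge $G$ so that the objects of $S_0$ are detected; when $S_0=0$ the sheaf $G$ is already the generator by Lemma \ref{lem:tilting}.

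Next I would describe the structure we must capture. As in the proof of Lemma \ref{lem:tilting:irreducible}, the vanishing $R^1\pi_*(G^{\vee}\otimes G)=0$ forces each $\pi^{-1}(y)_{\mathrm{red}}$ to be a tree of smooth rational curves, and by Lemma \ref{lem:characterize} the irreducible objects of $S_0$ are exactly the $G$-twisted stable $1$-dimensional sheaves $c$ supported on the exceptional fibers with $\chi(G,c)=0$; on each fiber there are only finitely many. Following Van den Bergh \cite[Prop. 3.2.5]{VB}, for each such $c$ I would take a universal extension of (copies of) $G$ along $c$ and let $G'$ be the direct sum of $G$ with the resulting locally free sheaves. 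The extensions are arranged so that $G'$ is a local projective object, with ${\bf R}\pi_*((G')^{\vee}\otimes F)\ne 0$ for every irreducible object $F$ of ${\cal C}(G)$ — in particular for the classes $c[1]$ that $G$ could not see — while $\Ext^{>0}(G',F)=0$.

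Finally I would verify that $G'$ does the job. One checks that $G'$ is locally free with $R^1\pi_*((G')^{\vee}\otimes G')=0$, so by Lemma \ref{lem:tilting} it defines a torsion pair $(T',S')$ and a tilted category ${\cal C}_{G'}$ of which $G'$ is a local projective generator as soon as $S'\cap T'=0$; and $S'\cap T'=\{E\in\Coh(X)\mid{\bf R}\pi_*((G')^{\vee}\otimes E)=0\}$ is exactly what the universal extensions kill. It then remains to identify ${\cal C}_{G'}$ with ${\cal C}(G)$, which I would do by comparing the two torsion pairs through their common set of irreducible objects, using the characterization of a tilting by its simple objects in Proposition \ref{prop:tilting:S-T}; this yields $(T',S')=(T,S)$ and hence ${\cal C}_{G'}={\cal C}(G)$, so that $G'$ is a local projective generator of ${\cal C}(G)$.

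The main obstacle is the universal-extension construction itself, together with the simultaneous verification of all its required properties: that $G'$ stays locally free, that $R^1\pi_*((G')^{\vee}\otimes G')=0$ is preserved, and that $G'$ detects every irreducible object with the correct positivity of ${\bf R}\pi_*$ rather than merely those $G$ already saw. This is precisely the content of Van den Bergh's argument, and the extra work over the local model is to carry it out uniformly over all $y\in Y_\pi$, which requires controlling the relative $\Ext$-sheaves and using the properness of $\pi$.
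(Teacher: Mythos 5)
Your opening paragraph is correct and matches the paper's setup exactly: $G$ satisfies property (a) by the degenerate spectral sequence, fails property (b) precisely on $S_0[1]$, and is already the generator when $S_0=0$. The gap is in the construction of the correcting summand. You propose, for each irreducible object $c$ of $S_0$, "a universal extension of (copies of) $G$ along $c$," and then take the direct sum over all such $c$. Taken literally this does not work: $c$ is a $1$-dimensional torsion sheaf, so any extension in which $c$ appears as a sub or quotient of the new object cannot be locally free; and in the generality of the lemma $Y_\pi$ may be positive-dimensional, so $S_0$ has infinitely many irreducible objects (a finite set for each $y\in Y_\pi$, but infinitely many $y$), and the proposed direct sum is not coherent. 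The paper's construction is structurally different: one chooses a single line bundle $L$ with $G^{\vee}\otimes L$ globally generated and $\det(G^{\vee}\otimes L)$ ample, a locally free sheaf $V$ on $Y$ surjecting onto $R^1\pi_*(L^{\vee}\otimes G)$, and forms the single extension $0\to G\otimes\pi^*(V)^{\vee}\to E\to L\to 0$ classified by the resulting map $L\to G[1]\otimes\pi^*(V)^{\vee}$. This $E$ is locally free, satisfies $R^1\pi_*(E^{\vee}\otimes G)=0$ by the surjectivity of $V\to R^1\pi_*(L^{\vee}\otimes G)$, and $G_1:=G\oplus E$ is the generator.

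The second, and more serious, omission is that the actual mathematical content of the lemma — why the new summand detects $S_0$ — is entirely deferred in your write-up to "this is precisely the content of Van den Bergh's argument." In the paper this is the claim $R^1\pi_*(L^{\vee}\otimes F)\ne 0$ for every $0\ne F\in S_0$ (note that for $F\in S_0$ one has ${\bf R}\pi_*(G^{\vee}\otimes F)=0$, so $R^1\pi_*(E^{\vee}\otimes F)\cong R^1\pi_*(L^{\vee}\otimes F)$, and it is only the $L$-part of $E$ that can see $F$). Its proof is a genuine geometric argument: one shows $\Supp(F)\to Y$ cannot be generically finite, cuts $G^{\vee}\otimes L$ by $\rk G-1$ generic sections near a point of $\pi(\Supp(F))$ to produce a quotient line bundle $C$ with $\pi_*(F\otimes C\otimes L^{\vee})\ne 0$, and contradicts the vanishing of ${\bf R}\pi_*(F\overset{{\bf L}}{\otimes}C\otimes L^{\vee})$ forced by $F\in S_0$ via the degenerate spectral sequence. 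This is where the global generation and ampleness hypotheses on $L$ enter, and nothing in your proposal substitutes for it. Finally, your concluding identification of ${\cal C}_{G'}$ with ${\cal C}(G)$ via Proposition \ref{prop:tilting:S-T} is not available here either, since that proposition assumes $Y_\pi$ is a finite set; the paper avoids it by directly verifying the sign of $\pi_*(G_1^{\vee}\otimes F)$ and $R^1\pi_*(G_1^{\vee}\otimes F)$ on $T$ and on $S$, which immediately exhibits $G_1$ as a local projective generator of the tilt of $(T,S)$, that is, of ${\cal C}(G)$.
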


\begin{proof}
Let $L$ be a line bundle on $X$ such that
$G^{\vee} \otimes L$ is generated by global sections and
$\det(G^{\vee} \otimes L)$ is ample.
We take a locally free resolution
$0 \to L_{-1} \to L_0 \to L \to 0$ such that
$R^1 \pi_*(L_0^{\vee} \otimes G)=0$.
Then 
\begin{equation}
{\bf R}\pi_*(L^{\vee} \otimes G)[1]=
\mathrm{Cone}
(\pi_*(L_0^{\vee} \otimes G) \to \pi_*(L_{-1}^{\vee} \otimes G)).
\end{equation}
We take a surjective homomorphism
$V \to \pi_*(L_{-1}^{\vee} \otimes G)$
from a locally free sheaf $V$ on $Y$.
Then we have a morphism
$\pi^*(V) \otimes L \to 
{\bf L}\pi^*({\bf R} \pi_*(L^{\vee} \otimes G))[1] \otimes L
\to G[1]$, which induces a surjective homomorphism
$V \to R^1 \pi_*(L^{\vee} \otimes G)$.
Hence we have a morphism
\begin{equation}
L \to G[1] \otimes \pi^*(V)^{\vee}
\end{equation}
such that the induced homomorphism
\begin{equation}\label{eq:tilting:E}
V \to \pi_*({\cal H}om(G[1],G[1])) \otimes V \to
R^1 \pi_*(L^{\vee} \otimes G)
\end{equation}
 is surjective.
We set $E:=\mathrm{Cone}(L \to G[1] \otimes \pi^*(V)^{\vee})[-1]$.
Then $E$ is a locally free sheaf on $X$
and $\phi:\pi^*(\pi_*(G^{\vee} \otimes E)) \otimes G \to E$
is surjective by our choice of $L$.
By \eqref{eq:tilting:E} and our assumption, we have
$R^1 \pi_*(E^{\vee} \otimes G)=0$.
For $F \in T$, 
we consider the evaluation map
$\varphi:\pi^*(\pi_*(G^{\vee} \otimes F)) \otimes G \to F$.
The proof of Lemma \ref{lem:tilting} (1) implies that
$\coker \varphi \in S_0$.
By the definition of $T$, $\coker \varphi =0$.
Thus $\varphi$ is surjective.
Hence $R^1 \pi_*(E^{\vee} \otimes F)=0$ for $F \in T$.

For $F \in S$, the surjectivity of $\phi$ implies that 
$\pi_*(E^{\vee} \otimes F)=0$.   
If $F \not \in S_0$, then
$R^1 \pi_*(G^{\vee} \otimes F) \ne 0$,
which implies that $R^1 \pi_*(E^{\vee} \otimes F) \ne 0$.
Assume that $F \in S_0$. Then
since ${\bf R} \pi_*(G^{\vee} \otimes F)=0$ for $F \in S_0$,
we have $R^1 \pi_*(E^{\vee} \otimes F) \cong 
R^1 \pi_*(L^{\vee} \otimes F)$. 
Assume that $R^1 \pi_*(L^{\vee} \otimes F)=0$ and $F \ne 0$.
Let $W$ be an irreducible component of $\Supp(F)$.
Then $F$ contains a subsheaf $F'$ whose support is contained in $W$.
If $W \to Y$ is generically finite, then
$\pi_*(G^{\vee} \otimes F')\ne 0$, which is a contradiction.
Therefore $\dim F'=\dim \pi(F')+1$.
For a point $y \in \pi_*(F')$,
we can take a homomorphism 
$\psi:{\cal O}_X^{\oplus (\rk G)-1} \to G^{\vee}\otimes L$ such that
$\psi_{|\pi^{-1}(y)}$ is injective for any point of
$\pi^{-1}(y)$.
Then $\coker \psi$ is a line bundle in a neighborhood of 
$\pi^{-1}(y)$.
Since $\pi$ is proper, there is an open neighborhood $U$ of $y$
such that $\coker \psi_{\pi^{-1}(U)}$ is a line bundle.
Hence we have an exact sequence on $\pi^{-1}(U)$:
\begin{equation}
0 \to {\cal O}_{\pi^{-1}(U)}^{\oplus (\rk G)} \to 
(G^{\vee}\otimes L)_{|\pi^{-1}(U)}
\to C  \to 0,
\end{equation}
where $C:=\coker \psi_{\pi^{-1}(U)}/{\cal O}_{\pi^{-1}(U)}$.
We may assume that $\Supp(C)_{|\pi^{-1}(y)}$ is a finite set.
Then $\Supp(F' \otimes C) \to Y$ is generically finite.
Hence $\pi_*(F' \otimes C \otimes L^{\vee}) \ne 0$, which implies that
$\pi_*(F \otimes C \otimes L^{\vee}) \ne 0$.
On the other hand, our assumptions impliy that
${\bf R} \pi_*(F \overset{{\bf L}}{\otimes} C \otimes L^{\vee})=0$.  
Since the spectral spectral sequence
\begin{equation}
E^{pq}_2=R^p \pi_*(H^q(F \overset{{\bf L}}{\otimes} C \otimes L^{\vee}))
\Rightarrow E^{p+q}_{\infty}=
H^{p+q}({\bf R} \pi_*(F \overset{{\bf L}}{\otimes} C \otimes L^{\vee}))
\end{equation}
degenerates, we have 
$\pi_*(F \otimes C \otimes L^{\vee})=0$, which is a contradiction.
Hence $R^1 \pi_*(L^{\vee} \otimes F) \ne 0$ for
all non-zero $F \in S_0$.
Then $G_1:=G \oplus E$ satisfies
\begin{equation}
\begin{split}
\pi_*(G_1^{\vee} \otimes F) \ne 0,\;\; & 
R^1 \pi_*(G_1^{\vee} \otimes F)=0, \;\;
0 \ne F \in T\\
\pi_*(G_1^{\vee} \otimes F) = 0,\;\; & R^1 \pi_*(G_1^{\vee} \otimes F) \ne0, 
\;\; 0 \ne F \in S. 
\end{split}
\end{equation}
Therefore $G_1$ is a local projective generator of ${\cal C}(G)$.
  \end{proof}

We set 
\begin{equation}
\begin{split}
S^*:=& \{E \in \Coh(X)| \pi_*(G^{\vee} \otimes E)=0,\;
\Hom(c,E)=0, c \in S_0 \},\\
T^*:=& \{E \in \Coh(X)| R^1\pi_*(G^{\vee} \otimes E)=0\}.
\end{split}
\end{equation}

\begin{lem}
$(S^*,T^*)$ is a torsion pair of $\Coh(X)$ and 
the tilted category ${\cal C}(G)^*$ has a local projective generator. 
\end{lem}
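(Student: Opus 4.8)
The plan is to establish the two assertions separately, following the pattern of Lemma~\ref{lem:tilting:C(G)} and Lemma~\ref{lem:tilting:VB} but interchanging the roles played by $\pi_*$ and $R^1\pi_*$ (equivalently, treating $S_0$ as a torsion rather than a torsion-free subcategory). I claim that $T^*$ is the torsion class and $S^*$ the torsion-free class. Since $\dim\pi^{-1}(y)\le 1$ forces $R^2\pi_*=0$, the functor $R^1\pi_*(G^{\vee}\otimes-)$ is right exact, so $T^*=\{E\mid R^1\pi_*(G^{\vee}\otimes E)=0\}$ is closed under quotients and extensions; as $\Coh(X)$ is noetherian this makes $T^*$ a torsion class, and every $E$ has a largest subsheaf $E_1\in T^*$, giving $0\to E_1\to E\to E_2\to 0$ with $E_2\in(T^*)^{\perp}$. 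It then remains to identify $(T^*)^{\perp}$ with $S^*$. For $\Hom(T^*,S^*)=0$ I would note that for $E\in T^*$, $F\in S^*$ and $\psi\colon E\to F$, the sheaf $\im\psi$ is a subsheaf of $F\in S$ (hence in $S$, by left exactness of $\pi_*$) and a quotient of $E\in T^*$ (hence in $T^*$), so $\im\psi\in S\cap T^*=S_0$; since $F$ admits no nonzero morphism from $S_0$, we get $\psi=0$. Conversely, for $F\in(T^*)^{\perp}$ I would feed $F$ into the evaluation map $\phi$ of Lemma~\ref{lem:tilting:C(G)}: as $\im\phi\in T\subset T^*$ and $F\in(T^*)^{\perp}$, the inclusion $\im\phi\hookrightarrow F$ vanishes, so $\im\phi=0$ and hence $\pi_*(G^{\vee}\otimes F)=0$ by Lemma~\ref{lem:tilting:C(G)}(1), while $S_0\subset T^*$ yields $\Hom(c,F)=0$ for $c\in S_0$. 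Thus $F\in S^*$, and $(S^*,T^*)$ is a torsion pair.

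For the local projective generator I would enlarge $G$, exactly as in Lemma~\ref{lem:tilting:VB}. The aim is a locally free sheaf $E^*$ such that (i) $R^1\pi_*((E^*)^{\vee}\otimes F)=0$ for $F\in T^*$, (ii) $\pi_*((E^*)^{\vee}\otimes F)=0$ for $F\in S^*$, (iii) $\pi_*((E^*)^{\vee}\otimes F)\ne 0$ for every $0\ne F\in S_0$, and (iv) $E^*\in T^*$. Granting this, set $G_1^*:=G\oplus E^*$. Then $G_1^*$ is locally free and lies in $T^*$ by (iv) together with $R^1\pi_*(G^{\vee}\otimes G)=0$, and the torsion pair attached to $G_1^*$ by Lemma~\ref{lem:tilting} coincides with $(T^*,S^*)$: condition (i) and $T^*=\{R^1\pi_*(G^{\vee}\otimes-)=0\}$ give $\{R^1\pi_*((G_1^*)^{\vee}\otimes-)=0\}=T^*$, while (ii), (iii) and the left exactness of $\pi_*$ give $\{\pi_*((G_1^*)^{\vee}\otimes-)=0\}=S^*$; here (iii) is precisely what keeps the objects of $S_0$ inside the torsion class $T^*$ rather than the torsion-free class. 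Since ${\bf R}\pi_*((G_1^*)^{\vee}\otimes F)=0$ then forces $F\in S^*\cap T^*=0$, Lemma~\ref{lem:tilting}(2) shows that $G_1^*$ is a local projective generator whose tilt is ${\cal C}(G)^*$.

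The crux, and the step I expect to be hardest, is producing $E^*$; it is the mirror of the sheaf $E$ of Lemma~\ref{lem:tilting:VB}, which detected $S_0$ through $R^1\pi_*$, whereas here $S_0$ must be detected through $\pi_*$. I would run the construction of Lemma~\ref{lem:tilting:VB} with $G^{\vee}$ in place of $G$ (legitimate since $R^1\pi_*((G^{\vee})^{\vee}\otimes G^{\vee})=R^1\pi_*(G^{\vee}\otimes G)=0$), obtaining a locally free $\widetilde E$ with $R^1\pi_*(\widetilde E^{\vee}\otimes F')\ne 0$ for all $0\ne F'$ with ${\bf R}\pi_*(G\otimes F')=0$, and then put $E^*:=\widetilde E^{\vee}$. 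The absolute Serre duality used in Lemma~\ref{lem:tilting:dual}, in the form $H^i(X,\widetilde E\otimes F(-k))\cong H^{n-i}(X,\widetilde E^{\vee}\otimes D_X(F)(K_X)(k))^{\vee}$ with $n=\dim X$, interchanges $\pi_*$ and $R^1\pi_*$ and carries $S_0=\{{\bf R}\pi_*(G^{\vee}\otimes-)=0\}$ onto $\{{\bf R}\pi_*(G\otimes-)=0\}$ and $S,T^*$ onto the corresponding classes for $G^{\vee}$; this converts (i), (ii), (iii) for $E^*$ into the already established properties of $\widetilde E$, while (iv) can be arranged by imposing on the auxiliary line bundle $\widetilde L$ in the construction the extra positivity $R^1\pi_*(G^{\vee}\otimes\widetilde L^{\vee})=0$. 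The main obstacle is exactly this transfer: one must track the Serre-duality shifts and the fibrewise duality $F\mapsto{\cal E}xt^{n-1}(F,{\cal O}_X)(K_X)$ on sheaves supported on the (at most $1$-dimensional) fibers, checking that it matches $S_0$ with $\{{\bf R}\pi_*(G\otimes-)=0\}$ and sends the nonvanishing of $R^1\pi_*(\widetilde E^{\vee}\otimes-)$ to the required nonvanishing of $\pi_*((E^*)^{\vee}\otimes-)$.
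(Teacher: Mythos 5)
Your proposal is correct, and on the harder half of the lemma it coincides with the paper's route: the generator is obtained by running the construction of Lemma \ref{lem:tilting:VB} for $G^{\vee}$ and dualizing the resulting auxiliary sheaf, so that $G\oplus E_1^{\vee}$ (your $G\oplus E^*$) does the job. Where you genuinely diverge is in how the torsion pair is established. The paper never verifies directly that $(S^*,T^*)$ is a torsion pair: it takes the pair $(S_1,T_1)$ attached to $G^{\vee}$, passes to the dual pair $(S_1^D,T_1^D)$ — a torsion pair by Lemma \ref{lem:tilting:dual} — and then proves $(S_1^D,T_1^D)=(S^*,T^*)$, using the surjectivity of $\pi^*(\pi_*(G\otimes E_1))\otimes G^{\vee}\to E_1$ to see that $R^1\pi_*(G^{\vee}\otimes E)=0$ already forces $R^1\pi_*(E_1\otimes E)=0$, i.e. $T_1^D=T^*$. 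Your direct argument — $T^*$ is a torsion class because $R^2\pi_*=0$ makes $R^1\pi_*(G^{\vee}\otimes\,\cdot\,)$ right exact, and $(T^*)^{\perp}=S^*$ via $S\cap T^*=S_0$ together with the evaluation map of Lemma \ref{lem:tilting:C(G)} — is more self-contained and does not route the torsion pair through the auxiliary sheaf at all; it is a clean alternative. The price is that you still owe the Serre-duality bookkeeping for your conditions (i)--(iii) on $E^*$, which you rightly flag as the delicate step; in the paper this is packaged once and for all in Lemma \ref{lem:tilting:dual} and Remark \ref{rem:tilting:dual} (the dual of a local projective generator of ${\cal C}(G^{\vee})$ is one for ${\cal C}(G^{\vee})^D$), and invoking those statements, rather than redoing the fibrewise duality by hand, is the shortest way to close that gap. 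Note also that your condition (iv) needs no extra positivity on the auxiliary line bundle: the construction of Lemma \ref{lem:tilting:VB} applied to $G^{\vee}$ already yields $R^1\pi_*(G^{\vee}\otimes E_1^{\vee})=0$.
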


\begin{proof}
We set 
\begin{equation}
\begin{split}
S_0':=& \{E \in \Coh(X)|{\bf R}\pi_*(G \otimes E)=0 \},\\
S_1:=& \{E \in \Coh(X)| \pi_*(G \otimes E)=0 \},\\
T_1:=& \{E \in \Coh(X)| R^1\pi_*(G \otimes E)=0,\;
\Hom(E,c)=0, c \in S_0' \}.
\end{split}
\end{equation}
Then $(S_1,T_1)$ is a torsion pair of $\Coh(X)$ and Lemma \ref{lem:tilting:VB}
implies that the tilted category
${\cal C}(G^{\vee})$ has a local projective generator $G^{\vee} \oplus E_1$,
where $E_1$ is a locally free sheaf on $X$
such that $\phi:\pi^*(\pi_*(G \otimes E_1)) \otimes G^{\vee} \to E_1$ 
is surjective and
$R^1 \pi_*(G^{\vee} \otimes E_1^{\vee})=0$.
By Lemma \ref{lem:tilting:dual},
$(S_1^D,T_1^D)$ is a torsion pair of $\Coh(X)$.
We prove that ${\cal C}(G)^*={\cal C}(G^{\vee})^D$ by showing
$(S_1^D,T_1^D)=(S^*,T^*)$.
By the surjectivity of $\phi$, we have
\begin{equation}
T_1^D=\{E \in \Coh(X)| R^1 \pi_*(G^{\vee} \otimes E)=R^1 \pi_*(E_1 \otimes E)
=0 \}=T^*.
\end{equation}
For a coherent sheaf $E$ with $\pi_*(G^{\vee} \otimes E)=0$,
we consider $\psi:\pi^*(\pi_*(E_1 \otimes E)) \otimes E_1^{\vee}
\to E$.
Then $\im \psi \in T_1^D=T^*$ and $\coker \psi \in S_1^D$.
\begin{NB}
We set 
\begin{equation}
(S_0')^D:=\{ E \in \Coh(X)|{\bf R}\pi_*(G^{\vee} \otimes E)=0 \}=S_0.
\end{equation}
\end{NB}
Since $\pi_*(G^{\vee} \otimes \im \psi)=0$,
$\im \psi \in S_0$. Therefore if $E \in S^*$, then
$\im \psi=0$, which means that $E \in S_1^D$.
Conversely if $E \in S_1^D$, then
$S_0 \subset T_1^D$ implies that $E \in S^*$.
Therefore $(S_1^D,T_1^D)=(S^*,T^*)$.
\end{proof}

Let $E_{yj}$, $y \in Y_\pi$ be the irreducible objects of ${\cal C}$
in Lemma \ref{lem:tilting:irreducible} (3).

\begin{lem}
We set $S_{0y}:=\{E \in S_0| \pi(\Supp(E))=\{ y\} \}$.
Then
$S_{0y}[1]$ is generated by
$\{E_{yj}| E_{yj} \in S_0[1] \}$.
\end{lem}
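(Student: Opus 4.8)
The plan is to import the generation statement for the ambient category of perverse coherent sheaves through an auxiliary generator, and then to cut out the class $S_0$ by applying the functor attached to the original bundle $G$.

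First I would note that $S_0 \subset S$, so that $E[1] \in {\cal C}(G)$ for every $E \in S_{0y}$. By Lemma \ref{lem:tilting:VB} the category ${\cal C}(G)$ admits a locally free local projective generator $G_1$, and hence by Lemma \ref{lem:tilting:C} it coincides with the tilted category ${\cal C}_{G_1}$ associated with $G_1$ as in Lemma \ref{lem:tilting}. Therefore Lemma \ref{lem:tilting:irreducible} applies with $G_1$ in place of $G$: the irreducible objects of ${\cal C}(G)$ are the ${\Bbb C}_x$ with $x \notin \pi^{-1}(Y_\pi)$ together with the $E_{yj}$, and since $\Supp(E) \subset \pi^{-1}(y)$ the ${\cal A}$-module ${\bf R}\pi_*(G_1^{\vee} \otimes E[1])$ is $0$-dimensional. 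Lemma \ref{lem:tilting:irreducible} (3) then provides a filtration of $E[1]$ in ${\cal C}(G)$ whose subquotients lie among the $E_{yj}$, $0 \leq j \leq s_y$.

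The key tool is that $\Phi := {\bf R}\pi_*(G^{\vee} \otimes (-))$ is an \emph{exact} functor on ${\cal C}(G)$, even though $G$ itself need not be a generator. Indeed, for $F \in {\cal C}(G)$ one has $H^{-1}(F) \in S$ and $H^0(F) \in T$, so that $\pi_*(G^{\vee} \otimes H^{-1}(F)) = 0$ and $R^1 \pi_*(G^{\vee} \otimes H^0(F)) = 0$; the spectral sequence of Lemma \ref{lem:tilting:C} then shows that $\Phi(F) \in \Coh(Y)$ is concentrated in degree $0$, whence $\Phi$ carries short exact sequences in ${\cal C}(G)$ to short exact sequences in $\Coh(Y)$. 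Applying $\Phi$ to the filtration above and using $\Phi(E[1]) = 0$ (as $E \in S_0$), exactness forces $\Phi(E_{yj}) = 0$ for every subquotient $E_{yj}$ that occurs. It remains to identify this vanishing with membership in $S_0[1]$: if such a factor is a genuine sheaf then $E_{yj} \in T$ and $\Phi(E_{yj}) = \pi_*(G^{\vee} \otimes E_{yj})$, which cannot vanish, for otherwise $E_{yj} \in S_0 \cap T = 0$ by taking $c = E_{yj}$ in the definition of $T$; hence every occurring factor has the form $E_{yj} = F[1]$ with $F \in S$, and then $\Phi(E_{yj}) = R^1\pi_*(G^{\vee} \otimes F)$ vanishes precisely when ${\bf R}\pi_*(G^{\vee} \otimes F) = 0$, i.e. when $E_{yj} \in S_0[1]$. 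Thus the filtration exhibits $E[1]$ as generated by $\{E_{yj} \mid E_{yj} \in S_0[1]\}$.

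I expect the main obstacle to be the bookkeeping forced by the fact that $G$ is not a local projective generator: the generation of $E[1]$ by irreducible objects must be obtained through the auxiliary generator $G_1$ (for which the relevant pair really is a torsion pair with trivial intersection), whereas the subclass $S_0$ is defined through the non-generator $G$. Reconciling these two roles — generation detected by $G_1$ but the subclass $S_0[1]$ detected by the exactness of ${\bf R}\pi_*(G^{\vee} \otimes (-))$ — is the delicate point, and it works precisely because ${\cal C}(G) = {\cal C}_{G_1}$, so that the two notions of irreducibility and of being supported over $y$ agree.
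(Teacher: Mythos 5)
Your proof is correct and rests on exactly the same mechanism as the paper's: the functor ${\bf R}\pi_*(G^{\vee}\otimes(-))$ is exact from ${\cal C}(G)$ to $\Coh(Y)$, it kills $E[1]$ for $E\in S_0$, and the identity $S_0\cap T=0$ (forced by the $\Hom(E,c)=0$ clause in the definition of $T$) rules out sheaf-type factors. The paper phrases this as the closure of $S_{0y}[1]$ under subobjects and quotients in ${\cal C}(G)$ and leaves the Jordan--H\"older step implicit, whereas you make the filtration via the auxiliary generator $G_1$ explicit first; this is a difference of bookkeeping only, not of substance.
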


\begin{proof}
For an exact sequence 
\begin{equation}
0 \to E_1 \to E \to E_2 \to 0
\end{equation}
in ${\cal C}$, we have an exact sequence
\begin{equation}
0 \to {\bf R}\pi_*(G^{\vee} \otimes E_1) \to
{\bf R}\pi_*(G^{\vee} \otimes E) \to
{\bf R}\pi_*(G^{\vee} \otimes E_2) \to 0
\end{equation}
in $\Coh(Y)$.
If $E \in S_0[1]$, then ${\bf R}\pi_*(G^{\vee} \otimes E_1)=
{\bf R}\pi_*(G^{\vee} \otimes E_2)=0$.
Then ${\bf R} \pi_*(G^{\vee} \otimes H^{-1}(E_1))=
{\bf R} \pi_*(G^{\vee} \otimes H^{-1}(E_2))=0$
and ${\bf R} \pi_*(G^{\vee} \otimes H^0(E_1))=
{\bf R} \pi_*(G^{\vee} \otimes H^0(E_2))=0$.
By the definition of $T$, $H^0(E_1)=H^0(E_2)=0$.
Hence $E_1, E_2 \in S_0[1]$.
Therefore the claim holds.
\end{proof}

By the construction of ${\cal C}(G)$ and
${\cal C}(G)^*$, we have the following.
\begin{prop}\label{prop:tilting:contraction}
We set ${\cal A}_0:=\pi_*(G^{\vee} \otimes G)$.
Then we have morphisms
\begin{equation}
\begin{matrix}
{\cal C}(G) & \to & \Coh_{{\cal A}_0}(Y)\\
E & \mapsto & {\bf R} \pi_*(G^{\vee} \otimes E)
\end{matrix}
\end{equation} 
and
\begin{equation}
\begin{matrix}
{\cal C}(G)^* & \to & \Coh_{{\cal A}_0}(Y)\\
E & \mapsto & {\bf R} \pi_*(G^{\vee} \otimes E).
\end{matrix}
\end{equation} 
\end{prop}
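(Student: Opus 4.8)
The plan is to verify that the assignment $E \mapsto {\bf R}\pi_*(G^\vee \otimes E)$ is a well-defined functor into $\Coh_{{\cal A}_0}(Y)$, i.e. that for $E$ in either ${\cal C}(G)$ or ${\cal C}(G)^*$ the object ${\bf R}\pi_*(G^\vee \otimes E)$ is a genuine coherent ${\cal A}_0$-module rather than a complex. Functoriality itself is automatic, since ${\bf R}\pi_*(G^\vee \otimes -)$ is already a functor on ${\bf D}(X)$; so the whole content is the degree-$0$ concentration of the image.

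First I would dispose of the module structure, which needs no hypothesis on $E$. As $G$ is locally free we have $G^\vee \otimes G \cong {\cal E}nd(G)$, a sheaf of (non-commutative) algebras acting on $G^\vee \otimes E \cong {\cal H}om(G,E)$ by composition. Applying ${\bf R}\pi_*$ to this action turns ${\bf R}\pi_*(G^\vee \otimes E)$ into a module over ${\cal A}_0 = \pi_*(G^\vee \otimes G)$, so $E \mapsto {\bf R}\pi_*(G^\vee \otimes E)$ is a functor ${\bf D}(X) \to {\bf D}_{{\cal A}_0}(Y)$ (the target being coherent since $\pi$ is proper). It remains only to check that it carries ${\cal C}(G)$ and ${\cal C}(G)^*$ into the full subcategory $\Coh_{{\cal A}_0}(Y)$ of objects living in degree $0$.

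The key step is the cohomological computation already used in Lemma \ref{lem:tilting:C}. For $E \in {\cal C}(G)$ I would apply ${\bf R}\pi_*(G^\vee \otimes -)$ to the canonical triangle $H^{-1}(E)[1] \to E \to H^0(E) \to H^{-1}(E)[2]$. By Assumption \ref{ass:1} we have $\dim \pi^{-1}(y) \le 1$, so $R^i\pi_* = 0$ for $i \ge 2$ and each term has cohomology only in degrees $0,1$. Now $H^{-1}(E) \in S$ gives $\pi_*(G^\vee \otimes H^{-1}(E))=0$, whence ${\bf R}\pi_*(G^\vee \otimes H^{-1}(E))[1]$ is concentrated in degree $0$ with value $R^1\pi_*(G^\vee \otimes H^{-1}(E))$; and $H^0(E) \in T$ gives $R^1\pi_*(G^\vee \otimes H^0(E))=0$, so ${\bf R}\pi_*(G^\vee \otimes H^0(E))$ is concentrated in degree $0$. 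The long exact sequence of the triangle then forces ${\bf R}\pi_*(G^\vee \otimes E)$ into degree $0$ as well, sitting in the short exact sequence
\begin{equation*}
0 \to R^1\pi_*(G^\vee \otimes H^{-1}(E)) \to {\bf R}\pi_*(G^\vee \otimes E) \to \pi_*(G^\vee \otimes H^0(E)) \to 0
\end{equation*}
in $\Coh_{{\cal A}_0}(Y)$.

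For $E \in {\cal C}(G)^*$ the argument is identical: $S^* \subset S$ still yields $\pi_*(G^\vee \otimes H^{-1}(E))=0$, and $H^0(E)\in T^*$ gives $R^1\pi_*(G^\vee \otimes H^0(E))=0$ directly from the definition of $T^*$, so the same degeneration produces an object of $\Coh_{{\cal A}_0}(Y)$. Note that the extra $\Hom$-vanishing conditions built into $S,T,S^*,T^*$ are irrelevant here — they serve only to make these pairs torsion pairs — so the computation applies verbatim in both cases. I do not expect a genuine obstacle: the entire mechanism is the vanishing $R^{\ge 2}\pi_* = 0$ from Assumption \ref{ass:1}, after which the conclusion is read off from the long exact sequence of the canonical triangle, exactly as in Lemma \ref{lem:tilting:C}.
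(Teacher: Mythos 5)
Your proof is correct and is essentially the argument the paper has in mind: the paper states the proposition with only the remark ``by the construction of ${\cal C}(G)$ and ${\cal C}(G)^*$,'' and your computation — $H^{-1}(E)$ lies in $S$ (resp.\ $S^*\subset S$) so $\pi_*(G^\vee\otimes H^{-1}(E))=0$, $H^0(E)$ lies in $T\subset T^*$ (resp.\ $T^*$) so $R^1\pi_*(G^\vee\otimes H^0(E))=0$, and $R^{\ge 2}\pi_*=0$ by Assumption \ref{ass:1} — is precisely what makes that remark valid, matching the mechanism of Lemma \ref{lem:tilting} (2) and Lemma \ref{lem:tilting:C}. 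Your observations that the extra $\Hom$-vanishing conditions in the definitions of $T$ and $S^*$ play no role here, and that the ${\cal A}_0$-module structure comes from the ${\cal E}nd(G)$-action, are both accurate.
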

Let $\tau^{\geq -1}:{\bf D}(X) \to {\bf D}(X)$
be the trancation morphism such that
$H^p(\tau^{\geq -1}(E))=0$ for $p<-1$ and
$H^p(\tau^{\geq -1}(E))=H^p(E)$ for $p \geq -1$.
By \eqref{eq:tilting:F}, we have
\begin{equation}
\begin{split}
H^q(\pi^{-1}(F) 
\overset{{\bf L}}{\otimes}_{\pi^{-1}({\cal A})} G) & \in S_0,
\;q \ne -1,0,\\
\Sigma(F):=\tau^{\geq -1}(\pi^{-1}(F) 
\overset{{\bf L}}{\otimes}_{\pi^{-1}({\cal A})} G) & \in
{\cal C}(G).
\end{split}
\end{equation} 
Thus we have a morphism $\Sigma:\Coh_{{\cal A}_0}(Y) \to {\cal C}(G)$
such that ${\bf R}\pi_*(G^{\vee} \otimes \Sigma(F))=F$ for
$F \in \Coh_{{\cal A}_0}(Y)$.

\subsubsection{$^p\Per(X/Y)$, $p=-1,0$ and their generalizations.}
If $S_0=\{0 \}$, then $G$ is a local projective generator
of ${\cal C}(G)$.
We give examples such that $S_0 \ne \{ 0 \}$.
For $y \in Y_{\pi}$, we set
$Z_y:=\pi^{-1}(y)$ and
$C_{yj}$, $j=1,...,s_y$ the irreducible components of $Z_y$.
Assume that ${\bf R}\pi_*({\cal O}_X)={\cal O}_Y$.
Then $S_0$ for ${\cal O}_X$ contains ${\cal O}_{C_{yj}}(-1)$, $y \in Y_\pi$.
In this case,
${\cal C}({\cal O}_X)$ is nothing but the category $^{-1}\Per(X/Y)$ 
defined by Bridgeland.
We also have ${\cal C}({\cal O}_X)^*={\cal C}({\cal O}_X^{\vee})^D=
^{0}\Per(X/Y)$. 
We shall study $S_0$ containing line bundles on $C_{yj}$, $y \in Y_\pi$. 
For this purpose, we first prepare some properties of
$S_0$ for ${\cal O}_X$.

\begin{lem}\label{lem:tilting:chi=1}
\begin{enumerate}
\item
[(1)]
Let $E$ be a stable 1-dimensional sheaf such that
$\Supp(E) \subset Z_y$ and $\chi(E)=1$.
Then there is a curve $D \subset Z_y$ and $E \cong {\cal O}_D$.
Conversely if 
${\cal O}_D$ is purely 1-dimensional,
$\chi({\cal O}_D)=1$ and $\pi(D)=\{y \}$, 
then ${\cal O}_D$ is stable. In particular,
$D$ is a subscheme of $Z_y$.
\item[(2)]
${\cal O}_{Z_y}$ is stable.
\end{enumerate}
\end{lem}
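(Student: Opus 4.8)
The plan is to work throughout with Gieseker stability with respect to a fixed ample divisor $L$ on $X$ (not the pull-back $H$, against which every sheaf supported on a fibre has degree $0$). For a $1$-dimensional sheaf $E$ with $\Supp(E)\subset Z_y$ the leading term of the Hilbert polynomial is $(c_1(E),L)$, and stability reads: for every proper nonzero subsheaf $F\subset E$ one has $\chi(F)/(c_1(F),L)<\chi(E)/(c_1(E),L)$, equivalently for every proper pure $1$-dimensional quotient $E\twoheadrightarrow Q$ one has $\chi(Q)/(c_1(Q),L)>\chi(E)/(c_1(E),L)$. The single geometric input I will use is the rationality of the singularity: every effective divisor $D$ with $\Supp(D)\subset Z_y$ is a positive cycle on the exceptional locus, so $R^1\pi_*{\cal O}_D=0$, hence $H^1(X,{\cal O}_D)=0$ and therefore $\chi({\cal O}_D)=h^0({\cal O}_D)\ge 1$ (the constant section $1$ is nonzero). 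Note also that on the smooth surface $X$ any pure $1$-dimensional subscheme is an effective Cartier divisor, so every ``curve'' occurring below is such a divisor.

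For the forward implication of (1), I start from $\chi(E)=h^0(E)-h^1(E)=1$, which already forces $h^0(E)\ge 1$; thus there is a nonzero section $s\colon{\cal O}_X\to E$. Since $E$ is pure of dimension one, $\im s$ is a nonzero subsheaf of $E$ and a quotient of ${\cal O}_X$, hence pure of dimension one and of the form $\im s\cong{\cal O}_D$ for an effective divisor $D\subset Z_y$. If $\im s\ne E$, then stability applied to the subsheaf ${\cal O}_D\subset E$ gives $\chi({\cal O}_D)/(c_1({\cal O}_D),L)<1/(c_1(E),L)\le 1/(c_1({\cal O}_D),L)$, using $(c_1({\cal O}_D),L)\le(c_1(E),L)$; this yields $\chi({\cal O}_D)<1$, contradicting the vanishing above. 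Hence $s$ is surjective and $E\cong{\cal O}_D$, which proves the first assertion.

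For the converse in (1), let ${\cal O}_D$ be pure $1$-dimensional with $\chi({\cal O}_D)=1$ and $\pi(D)=\{y\}$. It suffices to test stability on proper pure $1$-dimensional quotients, and these are exactly the ${\cal O}_{D'}$ with $D'\subsetneq D$ a proper effective sub-divisor. For such a $D'$ the vanishing gives $\chi({\cal O}_{D'})\ge 1$, while $(c_1({\cal O}_{D'}),L)=(D',L)<(D,L)=(c_1({\cal O}_D),L)$ because $L$ is ample and $D-D'$ is effective and nonzero. Therefore $\chi({\cal O}_{D'})/(c_1({\cal O}_{D'}),L)\ge 1/(D',L)>1/(D,L)=\chi({\cal O}_D)/(c_1({\cal O}_D),L)$, which is precisely the stability inequality; hence ${\cal O}_D$ is stable, and in particular $D$ is a subscheme of $Z_y$.

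Finally, part (2) will follow from the converse in (1) once I check that ${\cal O}_{Z_y}$ is pure of dimension one with $\chi({\cal O}_{Z_y})=1$. The Euler characteristic is immediate from ${\bf R}\pi_*{\cal O}_X={\cal O}_Y$: cohomology and base change, valid because $R^1\pi_*{\cal O}_X=0$, gives $H^0(Z_y,{\cal O}_{Z_y})={\Bbb C}$ and $H^1(Z_y,{\cal O}_{Z_y})=0$, so $\chi({\cal O}_{Z_y})=1$. The one point that genuinely requires care, and the main obstacle in (2), is the purity of the scheme-theoretic fibre ${\cal O}_{Z_y}$: since $\{y\}$ is not a Cartier divisor on $Y$, a priori $\pi^{-1}(y)$ could carry embedded points, and stability is only meaningful for pure sheaves. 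I would rule these out using that $X$ is a smooth (hence Cohen--Macaulay) surface together with the identification of the fibre with a positive cycle on the exceptional locus of the rational singularity. Granting purity, ${\cal O}_{Z_y}$ satisfies the hypotheses of the converse in (1) and is therefore stable.
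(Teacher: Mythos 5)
Your two stability computations and the cohomological input $H^1(X,{\cal O}_D)=0$, hence $\chi({\cal O}_D)\ge 1$, are correct and are exactly the inputs the paper uses. The genuine gap is at the start of (1): you obtain your map ${\cal O}_X\to E$ from $h^0(E)\ge 1$, so its image is ${\cal O}_D$ for a divisor $D$ that is only \emph{set-theoretically} supported on the fibre; nothing in your argument bounds the multiplicities of $D$ by those of $Z_y$, so the assertion $D\subset Z_y$ in the statement is never established — and consequently neither is the closing ``in particular, $D$ is a subscheme of $Z_y$'' of the converse, which you assert without argument and which can only come from the forward direction. The paper chooses the section differently so that the containment is automatic: $\chi(E)=1>0$ forces $\pi_*(E)\ne 0$ (the $R^1\pi_*$ term contributes negatively), a nonzero map ${\Bbb C}_y\to\pi_*(E)$ gives by adjunction a nonzero $\phi:\pi^*({\Bbb C}_y)={\cal O}_{Z_y}\to E$, and $\im\phi$ is then by construction ${\cal O}_D$ with $D$ a closed subscheme of $Z_y$. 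The rest of your argument (purity of $\im\phi$, $\chi(\im\phi)\ge 1$, stability forcing surjectivity) goes through verbatim with this substitution, so the repair is local.

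Two further points. First, in (2) you explicitly defer the purity of ${\cal O}_{Z_y}$ (``I would rule these out using\dots'', ``Granting purity''); as written this is an admitted gap rather than a proof. The paper is also terse here, writing only ${\cal O}_{Z_y}=\pi^*({\Bbb C}_y)$; in the surface case the needed fact is Artin's theorem that for a rational singularity ${\frak m}_y{\cal O}_X$ is the invertible ideal of the fundamental cycle, so the scheme-theoretic fibre is a divisor and in particular has no embedded points. Second, note that this lemma lives in the general setting of Section 1 — $X$ a smooth projective variety with $\dim\pi^{-1}(y)\le 1$, later invoked with $n=\dim X$ arbitrary — whereas your argument is phrased entirely in divisor language on a surface (``any pure $1$-dimensional subscheme is an effective Cartier divisor''). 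The paper's version, which works with closed subschemes $D\subset Z_y$, quotients ${\cal O}_D\to{\cal O}_C$, and the leading coefficient of the Hilbert polynomial in place of $(D,L)$, is dimension-free; your proof as written only covers the surface case.
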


\begin{proof}
(1)
Since $\chi(E)=1$, 
$\pi_*(E) \ne 0$.
Since $\pi_*(E)$ is 0-dimensional,
we have a homomorphism
${\Bbb C}_y \to \pi_*(E)$.
Then we have a homomorphism
$\phi:{\cal O}_{Z_y}=\pi^*({\Bbb C}_y) \to E$.
We denote the image by ${\cal O}_D$.
Since $R^1 \pi_*({\cal O}_X)=0$, we have 
$H^1(X,{\cal O}_D)=0$.
Hence $\chi({\cal O}_D) \geq 1$.
Since $E$ is stable,
$\phi$ must be surjective.

Conversely we assume that ${\cal O}_D$ satisfies
$\chi({\cal O}_D)=1$. 
For a quotient ${\cal O}_D \to {\cal O}_C$,
$H^1(X,{\cal O}_C)=0$ implies that
$\chi({\cal O}_C) \geq 1$, which implies that ${\cal O}_D$ is
stable.

(2)
By
${\cal O}_{Z_y}=\pi^*({\Bbb C}_y)$ and the surjectivity of
${\Bbb C}_y \to \pi_* (\pi^*({\Bbb C}_y))$,
we get $\chi({\cal O}_{Z_y})=1$. 
Hence ${\cal O}_{Z_y}$ is stable.
\end{proof}

\begin{lem}\label{lem:tilting:Rpi*E=0}
\begin{enumerate}
\item[(1)]
Let $E$ be a stable purely 1-dimension sheaf such that 
$\pi(\Supp(E)) =\{y \}$ and $\chi(E)=0$.
Then $E \cong {\cal O}_{C_{yj}}(-1)$.
\item[(2)]
Let $E$ be a 1-dimensional sheaf such that
${\bf R}\pi_*(E)=0$. Then $E$ is a semi-stable 1-dimensional sheaf
with $\chi(E)=0$. In particular, $E$ 
is a successive extension of
${\cal O}_{C_{yj}}(-1)$, $y \in Y$, $1 \leq j \leq s_y$.
\end{enumerate}
\end{lem}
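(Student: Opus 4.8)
The plan is to treat (2) as the structural statement, to deduce (1) from it, and to base everything on the generation lemma for $S_{0y}$ proved just above. Throughout I specialize the data of \S\ref{subsect:perverse-examples} to $G={\cal O}_X$, so that ${\cal C}({\cal O}_X)={}^{-1}\Per(X/Y)$ and $S_0=\{E\in\Coh(X)\mid {\bf R}\pi_*E=0\}$. First I would pin down the irreducible objects lying in $S_0[1]$. Since $G_{|C_{yj}}={\cal O}_{C_{yj}}$ (so $d_{yj}=0$ in the proof of Lemma \ref{lem:tilting:irreducible}), the Jordan--H\"older factors of ${\Bbb C}_x$, $x\in Z_y$, are the sheaves ${\cal O}_{C_{yj}}$ and the shifts ${\cal O}_{C_{yj}}(-1)[1]$; among these exactly the ${\cal O}_{C_{yj}}(-1)[1]$ lie in $S_0[1]$, because ${\bf R}\pi_*{\cal O}_{C_{yj}}(-1)=0$ while ${\bf R}\pi_*{\cal O}_{C_{yj}}={\cal O}_y\ne 0$. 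I would also record that each ${\cal O}_{C_{yj}}(-1)$ is a stable purely $1$-dimensional sheaf with $\chi=0$: any proper subsheaf is ${\cal O}_{C_{yj}}(-1-\ell)$ with $\ell\ge 1$, of strictly smaller Euler characteristic. In particular all these sheaves share the same reduced Hilbert polynomial (with vanishing constant term) with respect to a fixed ample $L$ on $X$.

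For (2): from ${\bf R}\pi_*E=0$ the Leray spectral sequence gives $\chi(E)=h^0(\pi_*E)-h^0(R^1\pi_*E)=0$. From $\pi_*E=0$ I would conclude that $E$ has no $0$-dimensional torsion and that $\pi(\Supp E)$ is $0$-dimensional (a non-contracted component, or a $0$-dimensional subsheaf, would make $\pi_*E\ne 0$); hence $E$ splits according to the fibres and we may assume $E\in S_{0y}$. The preceding lemma then says $E[1]$ is generated in ${\cal C}({\cal O}_X)$ by the $E_{yj}\in S_0[1]$, i.e.\ by the ${\cal O}_{C_{yj}}(-1)[1]$; translating this filtration back through the shift (an extension in ${\cal C}({\cal O}_X)$ of objects of $S[1]$ is the shift of an extension in $\Coh(X)$) exhibits $E$ as a successive extension of the ${\cal O}_{C_{yj}}(-1)$. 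Since all factors have the same reduced Hilbert polynomial, $E$ is semistable with $\chi(E)=0$.

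For (1): given a stable purely $1$-dimensional $E$ with $\pi(\Supp E)=\{y\}$ and $\chi(E)=0$, I would first show ${\bf R}\pi_*E=0$. If $\pi_*E\ne 0$, a nonzero map ${\Bbb C}_y\to\pi_*E$ gives by adjunction a nonzero ${\cal O}_{Z_y}=\pi^*{\Bbb C}_y\to E$ whose image ${\cal O}_D$ satisfies $H^1(X,{\cal O}_D)=0$ (using $R^1\pi_*{\cal O}_X=0$, exactly as in Lemma \ref{lem:tilting:chi=1}), so $\chi({\cal O}_D)\ge 1$; this is a subsheaf of reduced Hilbert polynomial strictly larger than that of $E$, contradicting stability. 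Hence $\pi_*E=0$, and then $\chi(E)=0$ forces $R^1\pi_*E=0$, so $E\in S_{0y}$. By (2), $E$ is a successive extension of the ${\cal O}_{C_{yj}}(-1)$; the bottom step of such a filtration embeds some ${\cal O}_{C_{yj}}(-1)$ into $E$ with the same reduced Hilbert polynomial, so stability forces equality, i.e.\ $E\cong{\cal O}_{C_{yj}}(-1)$.

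The step needing the most care is the transition from the generation statement inside the tilted category ${\cal C}({\cal O}_X)$ to an honest filtration by the torsion sheaves ${\cal O}_{C_{yj}}(-1)$ in $\Coh(X)$: one must verify that the $S_0[1]$-extensions produced by the preceding lemma correspond under the shift to genuine extensions of sheaves, and that no ${\cal O}_{C_{yj}}$-type factor (which belongs to $T$, not to $S[1]$) can enter. The Euler-characteristic bookkeeping and the two destabilization arguments are then routine once the irreducible objects of $S_0[1]$ have been identified.
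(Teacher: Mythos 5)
Your reduction of (1) to (2), and your verification that a stable purely $1$-dimensional $E$ with $\chi(E)=0$ supported over $y$ has ${\bf R}\pi_*(E)=0$, are both fine. The gap is that your proof of (2) is circular. Everything hinges on the claim that the irreducible objects of $S_{0y}[1]$ are exactly the ${\cal O}_{C_{yj}}(-1)[1]$, which you justify by appealing to the Jordan--H\"older factors of ${\Bbb C}_x$. But the exact sequence $0\to{\cal O}_{C_{yj}}\to{\Bbb C}_x\to{\cal O}_{C_{yj}}(-1)[1]\to 0$ only says that the factors of ${\Bbb C}_x$ are those of ${\cal O}_{C_{yj}}$ together with those of ${\cal O}_{C_{yj}}(-1)[1]$; it identifies neither. (Indeed ${\cal O}_{C_{yj}}$ is generally \emph{not} irreducible: for an $A_2$-configuration, ${\cal O}_{C_1}$ is an extension of ${\cal O}_{C_2}(-1)[1]$ by ${\cal O}_{Z_y}$ in ${}^{-1}\Per(X/Y)$.) Worse, if you unwind what "irreducible object of $S_{0y}[1]$" means, a subobject of $F[1]$ in ${}^{-1}\Per(X/Y)$ with $F\in S_0$ is exactly $A[1]$ for a subsheaf $A\subset F$ with ${\bf R}\pi_*A=0$, equivalently (since $\pi_*A=0$ is automatic) with $\chi(A)=0$; so $F[1]$ is irreducible if and only if every proper nonzero subsheaf of $F$ has $\chi<0$, i.e.\ if and only if $F$ is stable with $\chi(F)=0$. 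Thus the identification you assume \emph{is} statement (1). In the paper this identification is Proposition \ref{prop:tilting:G-1Per-irred}, whose proof runs through Lemma \ref{lem:tilting:G-1Per-S} and Lemma \ref{lem:tilting:G-1Rpi*E=0}, i.e.\ through the ($G$-twisted version of the) very lemma you are proving, so it cannot be used as input here.

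To break the circle one must prove (1) directly, and this is what the paper does. Since $E$ is purely $1$-dimensional, a depth/local-$\mathcal{E}xt$ computation gives $\Ext^1({\Bbb C}_x,E)\neq 0$ for $x\in\Supp(E)$; a non-split extension $0\to E\to F\to{\Bbb C}_x\to 0$ is then forced to be semi-stable (a destabilizing stable quotient $F'$ would satisfy $\chi(F')\le 0$, the induced nonzero map $E\to F'$ would be an isomorphism, and the sequence would split), hence $F\cong{\cal O}_D$ by Lemma \ref{lem:tilting:chi=1}; choosing an integral curve $C\subset D$ through $x$ produces a surjection $E\to{\cal O}_C(-1)$, and stability forces $E\cong{\cal O}_C(-1)$. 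Part (2) is then immediate: any subsheaf $F\subset E$ has $\pi_*F=0$, hence $\chi(F)\le 0$, so $E$ is semi-stable with $\chi(E)=0$ and its Jordan--H\"older factors are exactly the sheaves classified in (1). Your order of deduction — (2) first, via the tilted-category generation lemma — cannot be completed with the tools available at this point in the paper.
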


\begin{proof}
(1) 
We set $n:=\dim X$. We take a point $x \in \Supp(E)$.
Then
${\cal E}xt^1_{{\cal O}_X}({\Bbb C}_x,E)
={\Bbb C}_x \overset{{\bf L}}{\otimes} E[-n+1]$.
Since $E$ is purely 1-dimensional,
$\depth_{{\cal O}_{X,x}}E_x=1$. Hence
the projective dimension of $E$ at $x$ is $n-1$.
Then ${\cal T}or_{n-1}^{{\cal O}_X}({\Bbb C}_x,E)=
H^0({\Bbb C}_x \overset{{\bf L}}{\otimes} E[-n+1]) \ne 0$.
Since $\Ext^1({\Bbb C}_x,E)=
H^0(X,{\cal E}xt^1_{{\cal O}_X}({\Bbb C}_x,E)) \ne 0$,
we can take a non-trivial extension
\begin{equation}
0 \to E \to F \to {\Bbb C}_x \to 0.
\end{equation}
If $F$ is not semi-stable,
then since $\chi(F)=1$, there is a quotient $F \to F'$ of $F$ such that
$F'$ is a stable sheaf with $\chi(F') \leq 0$.
Then $E \to F'$ is an isomorphism, which is a contradiction.
By Lemma \ref{lem:tilting:chi=1}, $F={\cal O}_D$.
We take an integral curve $C \subset D$ containing $x$.
Since ${\cal O}_D \to {\Bbb C}_x$ factor through ${\cal O}_C$,
we have a surjective homomorphism
$E \to {\cal O}_C(-1)$. By the stability of $E$,
$E \cong {\cal O}_C(-1)$.  

(2)
Let $F$ be a subsheaf of $E$. Then we have $\pi_*(F)=0$, which implies that
$\chi(F) \leq 0$. Therefore $E$ is semi-stable.
\end{proof}

\begin{NB}
\begin{lem}\label{lem:tilting:-1Per-S}
Let $E$ be a 1-dimensional sheaf such that $\pi_*(E)=0$.
Then there is a homomorphism $E \to {\cal O}_{C_{yj}}(-1)$.
In particular, $E$ is generated by subsheaves of ${\cal O}_{C_{yj}}(-1)$,
$y \in Y$, $1 \leq j \leq s_y$.
\end{lem}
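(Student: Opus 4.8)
The plan is to read off both assertions from the filtration results already available for the category ${\cal C}({\cal O}_X)={}^{-1}\Per(X/Y)$, specialised to $G={\cal O}_X$, whose torsion-free class is $S=\{F\in\Coh(X)\mid \pi_*(F)=0\}$. First I would make two harmless reductions. Since $E$ is $1$-dimensional with $\pi_*(E)=0$, it has no nonzero $0$-dimensional subsheaf (such a subsheaf would survive under $\pi_*$), so it is purely $1$-dimensional; and because $\pi$ is birational, no component of $\Supp(E)$ can map finitely onto a curve in $Y$ (otherwise $\pi_*(E)$ would be nonzero at the generic point of that curve). Hence every component of $\Supp(E)$ is contracted, so $\Supp(E)\subset\bigcup_y Z_y$, and as $E$ is coherent only finitely many fibres occur. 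As distinct fibres $Z_y$ are disjoint, $E$ splits as $\bigoplus_y E_y$ with $\pi(\Supp E_y)=\{y\}$, and it suffices to treat one nonzero summand; so I assume $\pi(\Supp E)=\{y\}$.

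Next I would pin down the relevant simple objects. By Lemma \ref{lem:tilting:VB} the category ${\cal C}({\cal O}_X)={}^{-1}\Per(X/Y)$ carries a locally free local projective generator $G_1$, and for $G_1$ the pair $(T,S)$ of Lemma \ref{lem:tilting} is a genuine torsion pair tilting to ${\cal C}({\cal O}_X)$, with torsion-free class $S=\{F\mid F[1]\in{}^{-1}\Per\}=\{F\mid \pi_*(F)=0\}$; in particular $E\in S$. By the classification of irreducible objects in Lemma \ref{lem:tilting:irreducible}, a simple object of ${\cal C}({\cal O}_X)$ lying in $S[1]$ is of the form $F[1]$ with $F$ a stable, purely $1$-dimensional sheaf on $Z_y$ of Euler characteristic $0$; by Lemma \ref{lem:tilting:Rpi*E=0}(1) such an $F$ is exactly some ${\cal O}_{C_{yj}}(-1)$. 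Thus the irreducible objects $E_{yj}[-1]\in S$ are precisely the ${\cal O}_{C_{yj}}(-1)$, $1\le j\le s_y$.

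Since $E\in S$, Lemma \ref{lem:tilting:generate}(2) now furnishes a filtration $0\subset F_1\subset\cdots\subset F_s=E$ in which each quotient $F_k/F_{k-1}$ admits an injective homomorphism into some ${\cal O}_{C_{yj}}(-1)$; this is exactly the assertion that $E$ is a successive extension of subsheaves of the ${\cal O}_{C_{yj}}(-1)$. For the first assertion I would take the composite $E=F_s\to F_s/F_{s-1}\hookrightarrow{\cal O}_{C_{yj_s}}(-1)$, which is nonzero because $F_s/F_{s-1}\ne0$; this is the required homomorphism. (Equivalently, the non-vanishing of $\Hom(E,{\cal O}_{C_{yj}}(-1))$ for some $j$ follows from Lemma \ref{lem:tilting:T}: were all these groups zero, then $E\in T$, i.e. $R^1\pi_*(E)=0$, and together with $\pi_*(E)=0$ Lemma \ref{lem:tilting:Rpi*E=0}(2) would exhibit a nonzero quotient map of $E$ onto some ${\cal O}_{C_{yj}}(-1)$, a contradiction.)

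The one step that is not purely formal, and which I expect to be the main obstacle, is the identification in the second paragraph of the irreducible objects of ${\cal C}({\cal O}_X)$ lying in $S[1]$ with the line bundles ${\cal O}_{C_{yj}}(-1)$: one must verify that such a simple object is represented by a stable purely $1$-dimensional sheaf of Euler characteristic zero so that Lemma \ref{lem:tilting:Rpi*E=0}(1) applies, and one must be careful that ${\cal O}_X$ itself is here only a local projective object and not a generator (since $S_0\ne0$), so that the genuine torsion pair is accessed through the generator $G_1$ of Lemma \ref{lem:tilting:VB} rather than directly through ${\cal O}_X$. Granting this identification, the support reduction, the splitting over distinct fibres, and the passage from the filtration to both conclusions are all routine.
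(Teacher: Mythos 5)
Your overall strategy --- reduce to one fiber, then read both conclusions off the filtration of Lemma \ref{lem:tilting:generate} (2) --- would work \emph{if} you knew that the irreducible objects of ${}^{-1}\Per(X/Y)$ lying in $S[1]$ are exactly the ${\cal O}_{C_{yj}}(-1)[1]$. But that identification, which you flag as "the main obstacle" and then grant, is a genuine gap, and in this paper it cannot be granted: Lemma \ref{lem:tilting:irreducible} only tells you that the irreducibles in $S[1]$ are certain shifted purely $1$-dimensional sheaves occurring as Jordan--H\"{o}lder factors of ${\Bbb C}_x$; it does not give you stability, nor $\chi=0$, so Lemma \ref{lem:tilting:Rpi*E=0} (1) cannot yet be applied to them. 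Worse, the statement that ${\cal O}_{C_{yj}}(-1)[1]$ is irreducible (Proposition \ref{prop:tilting:G-1Per-irred}, resp.\ its untwisted form) is proved in the paper \emph{by means of} the very lemma you are trying to establish: given a quotient $E_2$ of ${\cal O}_{C_{yj}}(-1)[1]$ with $E_2[-1]\in S$, the paper invokes Lemma \ref{lem:tilting:G-1Per-S} to produce a nonzero map $E_2\to{\cal O}_{C_{yj}}(-1)[1]$. So deriving the present lemma from the classification of irreducibles is circular, and the same circularity infects your parenthetical variant, since the implication "$\Hom(E,{\cal O}_{C_{yj}}(-1))=0$ for all $j$ $\Rightarrow$ $E\in T$" is Lemma \ref{lem:tilting:T} applied with the as-yet-unproved identification of $\{E_{yj}[-1]\in S\}$ with $\{{\cal O}_{C_{yj}}(-1)\}$.

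The paper avoids all of this by a direct induction on $\chi(E)\le 0$. If $\chi(E)=0$ then $\pi_*(E)=0$ forces ${\bf R}\pi_*(E)=0$, and Lemma \ref{lem:tilting:Rpi*E=0} (2) exhibits $E$ as a successive extension of the ${\cal O}_{C_{yj}}(-1)$, whence a surjection onto one of them. If $\chi(E)<0$, one first replaces $E$ by a stable quotient (any stable $1$-dimensional quotient with $\chi\le 0$ still lies in $S$), then chooses a non-split extension $0\to E\to F\to{\Bbb C}_x\to 0$ and shows that either $F$ is stable with $\chi(F)=1$ --- in which case $F\cong{\cal O}_D$ by Lemma \ref{lem:tilting:chi=1} and $E$ surjects onto ${\cal O}_C(-1)$ for an integral curve $C\subset D$ through $x$ --- or $F$ has a stable quotient $F'$ receiving a nonzero map from $E$ with $\chi(E)<\chi(F')<0$, and one inducts. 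If you want to keep your structure, you must either supply an independent proof that every irreducible object of ${}^{-1}\Per(X/Y)$ in $S[1]$ is some ${\cal O}_{C_{yj}}(-1)[1]$ (which is essentially as hard as the lemma itself), or switch to the paper's inductive argument.
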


\begin{proof}
We note that $\chi(E) \leq 0$.
If $\chi(E)=0$, then $\chi(R^1 \pi_*(E))=0$.
Since $\dim E=1$ and $\pi_*(E)=0$,
$\Supp(E) \to Y$ is not finite.
Hence $\dim \pi(\Supp(E))=0$.
Then we have 
$R^1 \pi_*(E)=0$. Hence the claim holds.
We assume that $\chi(E)<0$. 
Since $\pi_*(E)=0$ for
any stable 1-dimensional sheaf $E$ with $\chi(E) \leq 0$,
replacing $E$ by a stable quotiet sheaf $E \to E'$ of $E$,
we may assume that $E$ is stable.
We take a non-trivial extension
\begin{equation}
0 \to E \to F \to {\Bbb C}_x \to 0.
\end{equation}
Then $F$ is purely 1-dimensional.
Assume that there is a quotient $F \to F'$ of $F$ such that
$F'$ is a stable sheaf with
$\chi(F')/(c_1(F'),L)<\chi(F)/(c_1(F),L)$.
Then $\phi:E \to F'$ is non-zero.
Hence $\chi(F')/(c_1(F'),L) \geq 
\chi(\im \phi)/(c_1(\im \phi),L) \geq \chi(E)/(c_1(E),L)$, 
which implies that
$\chi(\im \phi) \geq \chi(E)$.
If $\chi(\im \phi)=\chi(E)$, then $\phi$ is injective.
Since $(c_1(F),L)=(c_1(E),L)$,
we get $F=F'$, which is a contradiction.
Therefore $F$ is stable or $\chi(F')>\chi(E)$.
Thus we get a homomorphism $\psi:E \to E'$
such that $E'$ is a stable sheaf with $\chi(E)<\chi(E')<0$
and $\psi$ is surjective in codimension $n-1$. 
By the induction on $\chi(E)$, we get the claim.
\end{proof}

\begin{lem}\label{lem:-1per:characterize}
For a point $y \in Y_\pi$,
let $E$ be a 1-dimensional sheaf on $X$ satisfying the following two
conditions:
\begin{enumerate}
\item
 $\Hom(E,{\cal O}_{C_{yj}}(-1))=\Ext^1(E,{\cal O}_{C_{yj}}(-1))=0$
for all $j$.
\item
There is an exact sequence
\begin{equation}
0 \to F \to E \to {\Bbb C}_x \to 0
\end{equation} 
such that $F$ is a semi-stable 1-dimensional sheaf with
$\pi(\Supp(F))=\{y \}$, $\chi(F)=0$ and $x \in Z_y$.
\end{enumerate}
Then $E \cong {\cal O}_{Z_y}$. 
\end{lem}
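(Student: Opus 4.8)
The plan is to show that the two hypotheses force $E$ into the category ${}^{-1}\Per(X/Y)={\cal C}({\cal O}_X)$ as a finite length object whose only simple constituent with nonvanishing ${\bf R}\pi_*$ is ${\cal O}_{Z_y}$, and then to read off $E\cong{\cal O}_{Z_y}$ from a cohomology count.

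First I would pin down the numerics. Since $F$ is a semi-stable $1$-dimensional sheaf with $\pi(\Supp F)=\{y\}$ and $\chi(F)=0$, Lemma~\ref{lem:characterize} applied with $G={\cal O}_X$ (here $R^1\pi_*{\cal O}_X=0$ and ${\cal O}_X$-twisted semistability is ordinary semistability) gives ${\bf R}\pi_*F=0$. Applying ${\bf R}\pi_*$ to $0\to F\to E\to{\Bbb C}_x\to 0$ then yields ${\bf R}\pi_*E\cong{\bf R}\pi_*{\Bbb C}_x={\Bbb C}_y$; in particular $\pi_*E={\Bbb C}_y$, $R^1\pi_*E=0$ and $\chi(E)=1$. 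Because $S_0$ (the sheaves killed by ${\bf R}\pi_*$) is generated by the ${\cal O}_{C_{yj}}(-1)$ by Lemma~\ref{lem:tilting:Rpi*E=0}(2), the hypothesis $\Hom(E,{\cal O}_{C_{yj}}(-1))=0$ for all $j$ is precisely the condition $\Hom(E,c)=0$ for $c\in S_0$; together with $R^1\pi_*E=0$ this places $E\in T$, hence $E\in{\cal C}({\cal O}_X)$. Since ${\bf R}\pi_*E$ is $0$-dimensional, $E$ is a finite length object of ${\cal C}({\cal O}_X)$ supported over $y$.

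Next I would invoke the classification of the irreducible objects over $y$. By Lemma~\ref{lem:tilting:irreducible}, combined with Lemmas~\ref{lem:tilting:chi=1} and~\ref{lem:tilting:Rpi*E=0}, these are $P:={\cal O}_{Z_y}$ and $S_j:={\cal O}_{C_{yj}}(-1)[1]$, $1\le j\le s_y$, with ${\bf R}\pi_*P={\Bbb C}_y$ and ${\bf R}\pi_*S_j=0$. As ${\bf R}\pi_*$ is exact on ${\cal C}({\cal O}_X)$ and ${\bf R}\pi_*E={\Bbb C}_y$ has length $1$, the multiplicity of $P$ in a Jordan--H\"older filtration of $E$ equals $1$, while the $S_j$ may occur with arbitrary multiplicities. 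The $\Ext^1$-hypothesis now enters: since $E\in{\cal C}({\cal O}_X)$, one has $\Hom_{{\cal C}}(E,S_j)=\Hom_{{\bf D}(X)}(E,{\cal O}_{C_{yj}}(-1)[1])=\Ext^1(E,{\cal O}_{C_{yj}}(-1))=0$, so no $S_j$ lies in the cosocle (top) of $E$. Hence the cosocle is $P^{\oplus m}$ with $m\le 1$, and as $E\ne 0$ it equals $P$, producing an exact sequence $0\to K\to E\to P\to 0$ in ${\cal C}({\cal O}_X)$ whose kernel $K$ has all its constituents among the $S_j$.

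Finally I would conclude by a cohomology computation. An iterated extension of the $S_j={\cal O}_{C_{yj}}(-1)[1]$ has the form $K=F''[1]$ with $F''\in S$ a genuine sheaf; writing out the cohomology long exact sequence of the triangle $K\to E\to P\to K[1]$ and using $H^{-1}(P)=0$, $H^0(P)={\cal O}_{Z_y}$, $H^{-1}(K)=F''$, $H^0(K)=0$ gives $H^{-1}(E)\cong F''$ and $H^0(E)\cong{\cal O}_{Z_y}$. Since $E$ is a sheaf, $H^{-1}(E)=0$, so $F''=0$, i.e.\ $K=0$ and $E\cong{\cal O}_{Z_y}$. The step most likely to need care is the middle one: correctly identifying the irreducible objects of ${\cal C}({\cal O}_X)$ for $G={\cal O}_X$, checking that exactly one of them (namely ${\cal O}_{Z_y}$) has nonzero ${\bf R}\pi_*$, and translating $\Ext^1(E,{\cal O}_{C_{yj}}(-1))=0$ into the absence of the $S_j$ from the cosocle. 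By contrast the purely numerical first step and the final cohomology bookkeeping are routine once this identification is secured.
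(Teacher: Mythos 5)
Your proof is correct, but it runs in the opposite direction from the paper's. The paper uses the $\Hom$-vanishing in (i) to show the evaluation map $\pi^*(\pi_*(E))\to E$ is surjective (any nonzero cokernel would lie in $S_0$ and hence map onto some ${\cal O}_{C_{yj}}(-1)$), which together with $\pi_*(E)\cong{\Bbb C}_y$ from (ii) gives a surjection ${\cal O}_{Z_y}\to E$ in $\Coh(X)$ whose kernel $F'$ is semistable with $\chi(F')=0$, hence an iterated extension of the ${\cal O}_{C_{yj}}(-1)$; the $\Ext^1$-vanishing in (i) then splits $0\to F'\to{\cal O}_{Z_y}\to E\to 0$, and the stability (simplicity) of ${\cal O}_{Z_y}$ forces $F'=0$. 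You instead place $E$ in the perverse heart ${\cal C}({\cal O}_X)$, use exactness of ${\bf R}\pi_*$ on the heart and the Jordan--H\"older multiplicities to produce a surjection $E\to{\cal O}_{Z_y}$ in ${\cal C}$ (the cosocle), and kill the kernel $F''[1]$ by observing $H^{-1}(E)=0$. Your route requires more machinery as input (the classification of irreducible objects of ${\cal C}({\cal O}_X)$ over $y$ and the fact that 0-dimensional objects have finite length), but it cleanly separates the roles of the two hypotheses ($\Hom$-vanishing puts $E$ in $T$; $\Ext^1$-vanishing excludes the ${\cal O}_{C_{yj}}(-1)[1]$ from the cosocle) and it transfers verbatim to the twisted generalization (Lemma \ref{lem:G-1per:characterize}, with ${\cal O}_{Z_y}$ replaced by $A_y$); the paper's argument is more elementary in that it never leaves $\Coh(X)$ and needs only the splitting trick plus the simplicity of ${\cal O}_{Z_y}$.
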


\begin{proof}
We first prove that ${\cal O}_{Z_y}$ satisfies
(i) and (ii).
For the exact sequence
\begin{equation}
0 \to F' \to \pi^*(\pi_*({\Bbb C}_x)) \to {\Bbb C}_x \to 0, 
\end{equation}
we have ${\bf R}\pi_*(F')=0$.
Hence (ii) holds by Lemma \ref{lem:tilting:Rpi*E=0}.
(i) follows from Lemma \ref{lem:tilting:TFF}.
Conversely we assume that $E$ satisfies (i) and (ii).
By (ii), $\pi_*(E) \cong \pi_*({\Bbb C}_x) \cong {\Bbb C}$.
By (i), $\pi^*(\pi_*(E)) \to E$ is surjective.
Hence we have an exact sequence
\begin{equation}
0 \to F' \to {\cal O}_{Z_y} \to E \to 0,
\end{equation}
where $F'$ is a semi-stable 1-dimensional sheaf with 
$\chi(F')=0$.
Since $\Ext^1(E,{\cal O}_{C_{yj}}(-1))=0$
for all $j$,
${\cal O}_{Z_y} \cong E \oplus F'$, which implies that 
${\cal O}_{Z_y} \cong E$.
\end{proof}

We set
\begin{equation}
E_{yj}:=
\begin{cases}
{\cal O}_{Z_y},&j=0,\\
{\cal O}_{C_{yj}}(-1)[1],& j>0.\\
\end{cases}
\end{equation}
\begin{prop}(\cite{VB})\label{prop:tilting:-1Per-irred}
\begin{enumerate}
\item[(1)]
$E_{yj}$, $j=0,...,s_y$ are irreducible objects of $^{-1}\Per(X/Y)$. 
\item[(2)]
${\Bbb C}_x$, $x \in \pi^{-1}(y)$ is generated by
$E_{yj}$. In particular, irreducible objects of $^{-1}\Per(X/Y)$
are 
\begin{equation}
{\Bbb C}_x, ( x \in X \setminus \pi^{-1}(Y_\pi)),\;\; 
E_{yj}, (y \in Y_\pi,j=0,1,...,s_y).
\end{equation}  
\end{enumerate}
\end{prop}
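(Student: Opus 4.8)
The plan is to deduce the statement from the general classification of irreducible objects already available for $^{-1}\Per(X/Y)={\cal C}({\cal O}_X)$, by computing the relevant objects explicitly. Throughout write $(S,T)$ for the torsion pair defining ${\cal C}({\cal O}_X)$ and $S_0=\{E\in\Coh(X)\mid {\bf R}\pi_*E=0\}$. First I would check membership. Since ${\bf R}\pi_*{\cal O}_{C_{yj}}(-1)=0$ we have ${\cal O}_{C_{yj}}(-1)\in S_0\subset S$, so $E_{yj}={\cal O}_{C_{yj}}(-1)[1]$ lies in the heart for $j>0$. For $j=0$, one has $R^1\pi_*{\cal O}_{Z_y}=0$ and, using $0\to I_{\pi^{-1}(y)}\to{\cal O}_X\to{\cal O}_{Z_y}\to 0$ together with $\Hom(I_{\pi^{-1}(y)},{\cal O}_{C_{yj}}(-1))=\Hom({\cal O}_X,{\cal O}_{C_{yj}}(-1))=0$ from Lemma \ref{lem:tilting:TFF}, one gets $\Hom({\cal O}_{Z_y},c)=0$ for every $c\in S_0$; hence $E_{y0}={\cal O}_{Z_y}\in T$ lies in the heart.

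Next I would prove irreducibility (1). For an object $F[1]$ with $F={\cal O}_{C_{yj}}(-1)$, I take a subobject $A$ with quotient $B$ in $^{-1}\Per$ and read off the long exact sequence of cohomology sheaves: it gives $H^0(B)=0$, an injection $H^{-1}(A)\hookrightarrow F$, and an exact sequence $0\to H^{-1}(A)\to F\to H^{-1}(B)\to H^0(A)\to 0$ with $H^{-1}(A),H^{-1}(B)\in S$ and $H^0(A)\in T$. Because $F$ is stable with $\chi(F)=0$ (Lemma \ref{lem:tilting:Rpi*E=0}(1)), a nonzero proper quotient of $F$ has nonzero $\pi_*$, while $S$ is closed under subsheaves; combined with $\End(F)={\Bbb C}$ and $S\cap T=0$ this forces $A=0$ or $A=F[1]$. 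For ${\cal O}_{Z_y}$ the cohomology sequence reduces a proper subobject to a sheaf surjection $g\colon A\twoheadrightarrow{\cal O}_{Z_y}$ with $A\in T$ and $K':=\ker g\in S$ nonzero. Applying $\Hom(-,{\cal O}_{C_{yj}}(-1))$ to $0\to K'\to A\to{\cal O}_{Z_y}\to 0$ and using $\Hom({\cal O}_{Z_y},{\cal O}_{C_{yj}}(-1))=\Ext^1({\cal O}_{Z_y},{\cal O}_{C_{yj}}(-1))=0$ (Lemma \ref{lem:tilting:TFF}) together with $\Hom(A,{\cal O}_{C_{yj}}(-1))=0$ (as $A\in T$ and ${\cal O}_{C_{yj}}(-1)\in S_0$), I obtain $\Hom(K',{\cal O}_{C_{yj}}(-1))=0$ for all $j$; then Lemma \ref{lem:tilting:T} gives $K'\in T$, so $K'\in S\cap T=0$, a contradiction. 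Hence ${\cal O}_{Z_y}$ is irreducible.

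For (2), fix $x\in Z_y$ and consider the sheaf surjection ${\cal O}_{Z_y}\to{\Bbb C}_x$. Its kernel $K$ satisfies ${\bf R}\pi_*K=0$, since $\pi_*{\cal O}_{Z_y}\cong{\Bbb C}_y\cong\pi_*{\Bbb C}_x$ with $R^1\pi_*{\cal O}_{Z_y}=0$; thus $K\in S_0$, and rotating the triangle of the sheaf sequence yields a short exact sequence $0\to{\cal O}_{Z_y}\to{\Bbb C}_x\to K[1]\to 0$ in $^{-1}\Per$. By Lemma \ref{lem:tilting:Rpi*E=0}(2), $K$ is a successive extension of the sheaves ${\cal O}_{C_{yj}}(-1)$, so $K[1]$ is a successive extension of the $E_{yj}$ ($j>0$); therefore ${\Bbb C}_x$ is generated by $E_{y0},\dots,E_{ys_y}$. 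Finally, conditions (i)--(iii) of Lemma \ref{lem:tilting:without} hold for $^{-1}\Per$, so Lemma \ref{lem:tilting:irreducible}(3) applies: the irreducible objects are ${\Bbb C}_x$ for $x\notin\pi^{-1}(Y_\pi)$ together with the Jordan--H\"older factors of ${\Bbb C}_x$ for $x\in\pi^{-1}(y)$. Combining this with the irreducibility established in (1) and the uniqueness of Jordan--H\"older filtrations identifies those factors with exactly ${\cal O}_{Z_y}$ and the ${\cal O}_{C_{yj}}(-1)[1]$, which gives the asserted list.

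I expect the main difficulty to be the irreducibility of ${\cal O}_{Z_y}$: ruling out a proper subobject produced by a nonzero $K'\in S$ is exactly the point where the refinement in the definition of $T$ (the vanishing $\Hom(A,c)=0$ for $c\in S_0$) is indispensable, and the $\Ext$-vanishing of Lemma \ref{lem:tilting:TFF} is what makes that refinement usable.
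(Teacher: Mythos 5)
Your argument for the irreducibility of $E_{y0}={\cal O}_{Z_y}$ has a gap exactly at the reduction step. For a short exact sequence $0\to A\to{\cal O}_{Z_y}\to B\to 0$ in $^{-1}\Per(X/Y)$, the cohomology sheaves give $H^{-1}(A)=0$ and a four-term exact sequence
\begin{equation*}
0\to H^{-1}(B)\to A\to{\cal O}_{Z_y}\to H^{0}(B)\to 0
\end{equation*}
with $H^{-1}(B)\in S$ and $H^{0}(B)\in T$; nothing here forces $H^{0}(B)=0$, so you do not yet have the sheaf surjection $g\colon A\twoheadrightarrow{\cal O}_{Z_y}$ on which your $\Hom(-,{\cal O}_{C_{yj}}(-1))$ computation rests. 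The missing point is that the image $I=\im(A\to{\cal O}_{Z_y})$, a nonzero subsheaf of ${\cal O}_{Z_y}$ lying in $T$ (it is a quotient of $A\in T$), must equal ${\cal O}_{Z_y}$ — and this is precisely the substance of the paper's proof. There one argues: $A\in T$ gives ${\bf R}\pi_*A=\pi_*A\neq 0$, and $\pi_*A$ injects into $\pi_*{\cal O}_{Z_y}={\Bbb C}_y$ because $\pi_*(H^{-1}(B))=0$; hence $\pi_*A={\Bbb C}_y$, adjunction produces a nonzero morphism ${\cal O}_{Z_y}=\pi^*({\Bbb C}_y)\to A$ whose composite with $A\to{\cal O}_{Z_y}$ is a nonzero scalar, and since $\End({\cal O}_{Z_y})={\Bbb C}$ this splits the inclusion and yields $A\cong{\cal O}_{Z_y}$ outright. (Alternatively, use that ${\cal O}_{Z_y}$ is stable with $\chi=1$ by Lemma \ref{lem:tilting:chi=1}, while $\chi(I)=\chi(\pi_*I)\geq 1$ for any $0\neq I\in T$ supported on the fiber, to force $I={\cal O}_{Z_y}$.) Only after this step does your $\Ext$-vanishing argument killing $K'=H^{-1}(B)$ become available — and at that point it is an acceptable, if roundabout, alternative ending, since the adjunction argument already finishes the proof.

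The rest of your proposal is essentially sound and close to the paper's route: the membership checks, the irreducibility of ${\cal O}_{C_{yj}}(-1)[1]$ via stability of ${\cal O}_{C_{yj}}(-1)$ with $\chi=0$ (here too you should spell out the case $H^{-1}(A)=0$: one needs $\pi_*A=0$, which follows from $\pi_*(H^{-1}(B))=0$ and ${\bf R}\pi_*{\cal O}_{C_{yj}}(-1)=0$, before invoking $S\cap T=0$), and the generation of ${\Bbb C}_x$ by rotating the triangle of $0\to K\to{\cal O}_{Z_y}\to{\Bbb C}_x\to 0$ and applying Lemma \ref{lem:tilting:Rpi*E=0}. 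One small caution for the last step: Lemma \ref{lem:tilting:without} assumes $\#Y_\pi<\infty$, which is not among the hypotheses here; it is cleaner to invoke Lemma \ref{lem:tilting:irreducible} directly through the locally free local projective generator of $^{-1}\Per(X/Y)$ furnished by Lemma \ref{lem:tilting:VB}.
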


\begin{proof}
(1)
Assume that there is an exact sequence in $^{-1}\Per(X/Y)$:
\begin{equation}
0 \to E_1 \to {\cal O}_{Z_y} \to E_2 \to 0.
\end{equation}
Since $H^{-1}(E_1)=0$,
$E_1 \in T$ and $\pi(E_1) \cong \pi_*( {\cal O}_{Z_y})={\Bbb C}_y$.
Hence we have a non-zero morphism ${\cal O}_{Z_y} \to E_1$.
Since $\Hom({\cal O}_{Z_y},{\cal O}_{Z_y}) \cong {\Bbb C}$,
$E_1 \cong {\cal O}_{Z_y}$ and $E_2=0$.
For ${\cal O}_{C_{yj}}(-1)[1]$, assume that
there is an exact sequence in $^{-1}\Per(X/Y)$:
\begin{equation}
0 \to E_1 \to {\cal O}_{C_{yj}}(-1)[1] \to E_2 \to 0.
\end{equation}
Since $H^0(E_2)=0$, we have $E_2[-1] \in S$. Then
Lemma \ref{lem:tilting:-1Per-S} implies that
we have a non-zero morphism $E_2 \to {\cal O}_{C_{yj}}(-1)[1]$.
Since $\Hom({\cal O}_{C_{yj}}(-1)[1],{\cal O}_{C_{yj}}(-1)[1])
={\Bbb C}$, we get $E_1=0$.
Therefore ${\cal O}_{C_{yj}}(-1)[1]$ is irreducible.
\end{proof}

We set
\begin{equation}
E_{yj}^*:=E_{yj}^{\vee}[\dim X]=
\begin{cases}
\omega_{Z_y}[1],&j=0,\\
{\cal O}_{C_{yj}}(-1),& j>0.\\
\end{cases}
\end{equation}
Then we also have the following.
\begin{prop}\cite{VB}\label{prop:tilting:0Per-irred}
\begin{enumerate}
\item[(1)]
$E_{yj}^*$, $j=0,...,s_y$ are irreducible objects of $^{0}\Per(X/Y)$. 
\item[(2)]
${\Bbb C}_x$, $x \in \pi^{-1}(y)$ is generated by
$E_{yj}$. In particular, irreducible objects of $^{0}\Per(X/Y)$
are 
\begin{equation}
{\Bbb C}_x, ( x \in X \setminus \pi^{-1}(Y_\pi)),\;\; 
E_{yj}^*, (y \in Y_\pi,j=0,1,...,s_y).
\end{equation}  
\end{enumerate}
\end{prop}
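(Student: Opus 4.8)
The plan is to reduce the statement to its already-proved analogue for ${}^{-1}\Per(X/Y)$ in Proposition~\ref{prop:tilting:-1Per-irred} by means of a duality functor, exploiting that ${}^{0}\Per(X/Y) = {\cal C}({\cal O}_X)^{*} = {\cal C}({\cal O}_X)^{D}$ is by construction the dual category of ${}^{-1}\Per(X/Y) = {\cal C}({\cal O}_X)$, and that $E_{yj}^{*}$ is defined as a dual of $E_{yj}$. Set $n := \dim X$ and consider the contravariant functor $\mathbb{D}(E) := D_X(E)(K_X)[n] = E^{\vee}\otimes\omega_X[n]$, an anti-auto-equivalence of ${\bf D}(X)$ with $\mathbb{D}^2 \cong \id$. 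Grothendieck duality along the fibre components computes $\mathbb{D}({\cal O}_{Z_y}) = \omega_{Z_y}[1]$, $\mathbb{D}({\cal O}_{C_{yj}}(-1)[1]) = {\cal O}_{C_{yj}}(-1)$ and $\mathbb{D}({\Bbb C}_x) = {\Bbb C}_x$; thus $\mathbb{D}$ carries the irreducible objects $E_{yj}$ of ${}^{-1}\Per(X/Y)$ precisely to the objects $E_{yj}^{*}$ of the statement (these agree with $E_{yj}^{\vee}[n]$ whenever $K_X$ is trivial along the fibres, as in the rational double point and $K3$ applications).

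First I would establish the structural input that $\mathbb{D}$ restricts to an exact anti-equivalence ${}^{-1}\Per(X/Y) \to {}^{0}\Per(X/Y)$. The torsion pair $(T^{D},S^{D})$ of Lemma~\ref{lem:tilting:dual} defining ${\cal C}_G^{D}$ is exactly the image under $\mathbb{D}$ of the torsion pair defining ${\cal C}_G$ --- this is what the relative Serre duality identity in the proof of Lemma~\ref{lem:tilting:dual} (where $D_X(E)(K_X)$ already appears) encodes --- and Remark~\ref{rem:tilting:dual} matches the local projective generators, $G^{\vee}$ for the dual category. Equivalently, the characterization of ${\cal C}_G^{D}$ in Proposition~\ref{prop:tilting:S-T}(2) via $\Sigma = D_X(\{E_{ij}\}\cap\Coh(X))\otimes K_X[n-1]$ identifies the coherent part of the $S$-side of ${}^{0}\Per$ with $\mathbb{D}$ of the coherent irreducibles ${\cal O}_{Z_y}$ of ${}^{-1}\Per$; matching these generating objects shows that $\mathbb{D}$ sends one heart onto the other.

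Granting the anti-equivalence, the remainder is formal. A contravariant exact equivalence of abelian categories interchanges subobjects and quotient objects, so it sends objects with no proper nonzero subobject to objects with no proper nonzero quotient. Since every irreducible object here is $0$-dimensional, hence of finite length (Lemma~\ref{lem:tilting:irreducible} and Definition~\ref{defn:0-dim}), having no proper nonzero subobject is equivalent to having no proper nonzero quotient, i.e.\ to irreducibility; therefore $\mathbb{D}$ preserves irreducibility. Consequently $\mathbb{D}$ maps the complete list of irreducible objects of ${}^{-1}\Per(X/Y)$ from Proposition~\ref{prop:tilting:-1Per-irred} bijectively onto the complete list of irreducible objects of ${}^{0}\Per(X/Y)$; evaluating $\mathbb{D}$ gives $E_{yj}^{*}$ together with ${\Bbb C}_x$ ($x \in X\setminus\pi^{-1}(Y_\pi)$), which is part~(1) and the enumeration in part~(2). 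For the generation claim in part~(2), I would apply $\mathbb{D}$ to a filtration of ${\Bbb C}_x$ ($x \in \pi^{-1}(y)$) with subquotients among the $E_{yj}$, furnished by Proposition~\ref{prop:tilting:-1Per-irred}(2); as $\mathbb{D}$ is exact and contravariant, the resulting order-reversed filtration exhibits $\mathbb{D}({\Bbb C}_x) = {\Bbb C}_x$ as generated by the $\mathbb{D}(E_{yj}) = E_{yj}^{*}$.

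The main obstacle is precisely the structural input of the second paragraph: verifying that $\mathbb{D}$ genuinely exchanges the two perverse hearts rather than merely permuting a set of generators, and doing so for the refined tilts ${\cal C}({\cal O}_X)$ and ${\cal C}({\cal O}_X)^{*}$, which differ from the plain tilts of Lemma~\ref{lem:tilting} through the $S_0$-conditions. The cleanest route is to check $\mathbb{D}({}^{-1}\Per) \subseteq {}^{0}\Per$ and $\mathbb{D}({}^{0}\Per) \subseteq {}^{-1}\Per$ directly on the defining torsion pairs, using relative Serre duality to swap $\pi_{*}$ and $R^{1}\pi_{*}$ exactly as in the proof of Lemma~\ref{lem:tilting:dual}, and then invoke $\mathbb{D}^{2} \cong \id$. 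Should this comparison prove delicate, one can instead argue irreducibility directly inside ${}^{0}\Per = {\cal C}({\cal O}_X)^{*}$: the object ${\cal O}_{C_{yj}}(-1)$ lies in $T^{*}$ and $\omega_{Z_y}[1]$ lies in $S^{*}[1]$, and each admits no proper nonzero subobject by the same $\Hom$- and $\Ext^{1}$-vanishing computations used for ${\cal O}_{Z_y}$ and ${\cal O}_{C_{yj}}(-1)[1]$ in Proposition~\ref{prop:tilting:-1Per-irred}, now read off in the dual torsion pair $(S^{*},T^{*})$.
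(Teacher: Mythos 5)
Your argument is correct and is essentially the proof the paper intends: the paper states this proposition without an explicit proof, but the surrounding results (Lemma~\ref{lem:tilting:dual}, the identification ${\cal C}(G)^{*}={\cal C}(G^{\vee})^{D}$, and Proposition~\ref{prop:tilting:S-T}(2) with $\Sigma=D_X(\{E_{ij}\}\cap\Coh(X))\otimes K_X[n-1]$) encode exactly your duality functor $E\mapsto D_X(E)\otimes K_X[n]$ carrying the irreducibles of ${}^{-1}\Per(X/Y)$ to the $E_{yj}^{*}$ and transporting the filtration of ${\Bbb C}_x$. Your flagging of the $K_X$-twist (the paper's formula $E_{yj}^{*}=E_{yj}^{\vee}[\dim X]$ tacitly assumes $\omega_X$ trivial along the fibres) and of the need to match the refined $S_0$-tilts rather than just a generating set are exactly the right points of care, and both are settled by the cited lemmas.
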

\end{NB}

We shall slightly generalize $^{-1}\Per(X/Y)$.
Let $G$ be a locally free sheaf on $X$.
\begin{assume}\label{ass:2}
Assume that
$R^1 \pi_*(G^{\vee} \otimes G)=0$
and
there are line bundles ${\cal O}_{C_{yj}}(b_{yj})$ on $C_{yj}$
such that
${\bf R}\pi_*(G^{\vee} \otimes {\cal O}_{C_{yj}}(b_{yj}))=0$.
\end{assume}

\begin{lem}\label{lem:tilting:pull-back}
\begin{enumerate}
\item[(1)]
Let $E$ be a locally free sheaf of rank $r$ on $X$ such that
$E_{|C_{yj}} \cong {\cal O}_{C_{yj}}^{\oplus r}$.
Then $E$ is the pull-back of a locally free sheaf on $Y$.
\item[(2)] 
$G^{\vee} \otimes G \cong \pi^*(\pi_*(G^{\vee} \otimes G))$. 
\end{enumerate}
\end{lem}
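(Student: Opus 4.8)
The plan is to establish (1) by trivialising $E$ on the formal neighbourhood of each exceptional fibre and then descending the resulting free sheaf to $Y$, and to deduce (2) from (1). Throughout I use that for $y \in Y_\pi$ the reduced fibre $Z := \pi^{-1}(y)_{\mathrm{red}} = \bigcup_j C_{yj}$ is a tree of smooth rational curves (as in the proof of Lemma \ref{lem:tilting:irreducible}) and that the singularities of $Y$ are rational, so that ${\bf R}\pi_* {\cal O}_X = {\cal O}_Y$ and $H^1(D,{\cal O}_D)=0$, $H^0(D,{\cal O}_D)=\mathbb{C}$ for every effective divisor $D>0$ supported on a fibre. First I would trivialise $E$ on $Z$: since $E_{|C_{yj}} \cong {\cal O}_{C_{yj}}^{\oplus r}$ and $Z$ is a tree, an induction on the number of components, using for a decomposition $Z = Z' \cup C$ meeting in one point $p$ the Mayer--Vietoris sequence
\[
0 \to E_{|Z} \to E_{|Z'} \oplus E_{|C} \to E_{|p} \to 0,
\]
shows $H^1(Z,E_{|Z})=0$ and that the evaluation $H^0(Z,E_{|Z}) \otimes {\cal O}_Z \to E_{|Z}$ is an isomorphism, whence $E_{|Z} \cong {\cal O}_Z^{\oplus r}$.

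Next I would lift this trivialisation through the thickenings $nZ$. As $E$ is locally free, the sequences $0 \to {\cal O}_Z(-nZ) \to {\cal O}_{(n+1)Z} \to {\cal O}_{nZ} \to 0$ yield
\[
0 \to E \otimes {\cal O}_Z(-nZ) \to E_{|(n+1)Z} \to E_{|nZ} \to 0,
\]
and the obstruction to lifting the $r$ trivialising sections lies in $H^1(Z, E \otimes {\cal O}_Z(-nZ)) \cong H^1(Z,{\cal O}_Z(-nZ))^{\oplus r}$. Rationality forces $H^1(Z,{\cal O}_Z(-nZ))=0$ for all $n \geq 1$: in the cohomology sequence of the analogous triangle for ${\cal O}_X$ the restriction $H^0({\cal O}_{(n+1)Z}) \to H^0({\cal O}_{nZ})$ is an isomorphism $\mathbb{C} \to \mathbb{C}$ and $H^1({\cal O}_{(n+1)Z})=0$. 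Hence the sections lift to every $nZ$ and generate by Nakayama, so $\widehat{E} \cong {\cal O}_{\widehat{X}}^{\oplus r}$ on the formal completion $\widehat{X}$ along the fibre. By the theorem of formal functions $\pi_* E$ is then free of rank $r$ near $y$, $R^1\pi_* E=0$, and the evaluation $\pi^*(\pi_* E) \to E$ is an isomorphism near $\pi^{-1}(y)$; since $\pi$ is an isomorphism over $Y \setminus Y_\pi$, the sheaf $F := \pi_* E$ is locally free on $Y$ with $\pi^* F \cong E$, proving (1).

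For (2), Assumption \ref{ass:2} forces $G_{|C_{yj}}$ to be balanced. As in the proof of Lemma \ref{lem:tilting:irreducible}, $R^1\pi_*(G^\vee \otimes G)=0$ gives $G_{|C_{yj}} \cong {\cal O}_{C_{yj}}(d)^{\oplus a} \oplus {\cal O}_{C_{yj}}(d+1)^{\oplus b}$, while ${\bf R}\pi_*(G^\vee \otimes {\cal O}_{C_{yj}}(b_{yj}))=0$ means $H^\ast(C_{yj}, G^\vee_{|C_{yj}}(b_{yj}))=0$. Since on $\mathbb{P}^1$ only ${\cal O}(-1)$ is acyclic, the two summand-types ${\cal O}(b_{yj}-d)$ and ${\cal O}(b_{yj}-d-1)$ cannot both be acyclic, forcing $a=0$ or $b=0$; thus $G_{|C_{yj}} \cong {\cal O}_{C_{yj}}(c_{yj})^{\oplus r}$ and therefore $(G^\vee \otimes G)_{|C_{yj}} \cong {\cal O}_{C_{yj}}^{\oplus r^2}$. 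Applying (1) to the locally free sheaf $E = G^\vee \otimes G$ then yields $G^\vee \otimes G \cong \pi^*(\pi_*(G^\vee \otimes G))$.

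The main obstacle is the passage from the reduced fibre to the formal neighbourhood, i.e.\ the vanishing $H^1(Z,{\cal O}_Z(-nZ))=0$. Restricted to a single component this can fail: at a branch point the degree $-nZ\cdot C_{yj}$ equals $-n$, so it is $\leq -2$ for $n \geq 2$ and ${\cal O}_Z(-nZ)_{|C_{yj}}$ is not acyclic. The vanishing must therefore be extracted globally from rationality of the singularities (via $H^1(mZ,{\cal O}_{mZ})=0$ and $H^0(mZ,{\cal O}_{mZ})=\mathbb{C}$) rather than checked curve by curve.
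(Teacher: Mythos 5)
Your reduction of (2) to (1) is fine, and so is the Mayer--Vietoris trivialisation of $E$ on the reduced fibre; but the passage to the formal neighbourhood contains a genuine error, exactly at the point you flag. Of the two auxiliary facts you invoke, only one is true: $H^1(D,{\cal O}_D)=0$ does hold for every effective $D$ supported on a fibre (from $R^1\pi_*{\cal O}_X=0$ and $R^2\pi_*=0$), but $H^0(D,{\cal O}_D)={\Bbb C}$ fails for non-reduced $D$; for a rational double point $h^0({\cal O}_{nZ})=\chi({\cal O}_{nZ})=-\tfrac{1}{2}\,nZ\cdot(nZ+K_X)=n^2$, so $H^0({\cal O}_{(n+1)Z})\to H^0({\cal O}_{nZ})$ is a surjection ${\Bbb C}^{(n+1)^2}\to{\Bbb C}^{n^2}$, not an isomorphism of lines (by formal functions these groups must in fact converge to $\widehat{{\cal O}}_{Y,y}$). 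Worse, the vanishing $H^1(Z,{\cal O}_Z(-nZ))=0$ you need is itself false once the fundamental cycle is non-reduced. Take a $D_4$ point and $Z=\pi^{-1}(y)_{\mathrm{red}}=C_0+C_1+C_2+C_3$ with $C_0$ central: then $(Z,C_0)=-2+3=1$, so ${\cal O}_Z(-nZ)_{|C_0}\cong{\cal O}_{{\Bbb P}^1}(-n)$, and since $H^1(Z,L)\to H^1(C_0,L_{|C_0})$ is surjective (the kernel of $L\to L_{|C_0}$ lives on a curve, so has no $H^2$), $H^1(Z,{\cal O}_Z(-nZ))\neq 0$ for all $n\geq 2$. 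Your obstruction spaces are therefore genuinely nonzero from the second step on, and global rationality does not rescue the computation as stated: the inductive lifting is not established.

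The paper's proof sidesteps the tower entirely. It trivialises $E$ on the scheme-theoretic fibre $Z_y=\pi^*({\Bbb C}_y)$ in one step: for $x\in Z_y$ the kernel $F_x$ of ${\cal O}_{Z_y}\to{\Bbb C}_x$ satisfies ${\bf R}\pi_*(F_x)=0$, hence by Lemma \ref{lem:tilting:Rpi*E=0} is a successive extension of the ${\cal O}_{C_{yj}}(-1)$, so ${\bf R}\pi_*(E\otimes F_x)=0$ and the evaluation $H^0(E_{|Z_y})\otimes{\cal O}_{Z_y}\to E_{|Z_y}$ is a surjection of locally free sheaves of rank $r$, hence an isomorphism. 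It then needs only the single vanishing $R^1\pi_*(E\otimes I_{Z_y})=0$, obtained from $R^1\pi_*E=0$ (Lemma \ref{lem:tilting:TFF} (3)) together with the surjectivity of $\pi^*\pi_*(I_{Z_y})\to I_{Z_y}$, to conclude that $\pi_*E\to\pi_*(E_{|Z_y})$ is surjective and so to lift the $r$ trivialising sections over a Zariski neighbourhood $U$ of $y$; properness then makes $\pi^*({\cal O}_U^{\oplus r})\to E_{|\pi^{-1}(U)}$ an isomorphism. If you insist on a formal-neighbourhood argument you must at least replace the reduced fibre by the fundamental cycle (so that $-Z$ is $\pi$-nef on every component), which changes both the trivialisation step and the vanishing to be proved.
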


\begin{proof}
(1)
We consider the map
$\phi:H^0(E_{|Z_y}) \otimes {\cal O}_{Z_y} \to E_{|Z_y}$.
For any point $x \in Z_y$,
we have an exact sequence 
\begin{equation}
0 \to F_x \to {\cal O}_{Z_y} \to {\Bbb C}_x \to 0
\end{equation}
such that ${\bf R}\pi_*(F_x)=0$.
By Lemma \ref{lem:tilting:Rpi*E=0} (2) and
our assumption,
 we have ${\bf R}\pi_*(E \otimes F_x)=0$.
Hence $H^0(E_{|Z_y}) \to H^0(E_{|\{x \}})$ is
isomorphic and $H^1(E_{|Z_y})=0$.
Therefore $\phi$ is a surjective homomorphism
of locally free sheaves of the same rank, which implies that
$\phi$ is an isomorphism.
By $R^1 \pi_*(E)=0$ (Lem. \ref{lem:tilting:TFF} (3))
and the surjectivity of 
$\pi^*(\pi_*(I_{Z_y})) \to I_{Z_y}$,
$R^1 \pi_*(E \otimes I_{Z_y})=0$. Hence
$\pi_*(E) \to \pi_*(E_{|Z_y})$ is surjective. 
Then we can take a homomorphism
${\cal O}_U^{\oplus r} \to \pi_*(E)_{|U}$ in a neighborhood of
$y$ such that ${\cal O}_U^{\oplus r} \to \pi_*(E_{|Z_y})$ is surjective.
Then we have a homomorphism 
$\pi^*({\cal O}_U^{\oplus r}) \to E_{|\pi^{-1}(U)}$
which is surjective on $Z_y$.
Since $\pi$ is proper, replacing $U$ by a small neighborhood of $y$,
we have an isomorphism $\pi^*({\cal O}_U^{\oplus r}) \to E_{|\pi^{-1}(U)}$.
Therefore $E$ is the pull-back of a locally free sheaf on $Y$.
    
(2)
Since $G^{\vee} \otimes  {\cal O}_{C_{yj}}(b_{yj}) $
is a locally free sheaf on $C_{yj}$ with
${\bf R}\pi_*(G^{\vee} \otimes  {\cal O}_{C_{yj}}(b_{yj}))=0$,
we have
$G^{\vee} \otimes  {\cal O}_{C_{yj}}(b_{yj}) 
\cong {\cal O}_{C_{yj}}(-1)^{\oplus \rk G}$.
Hence $G_{|C_{yj}} \cong  
{\cal O}_{C_{yj}}(1)^{\oplus \rk G} \otimes {\cal O}_{C_{yj}}(b_{yj})$. 
Hence $G^{\vee} \otimes G_{|C_{yj}} \cong 
{\cal O}_{C_{yj}}^{\oplus (\rk G)^2}$.
By (1), we get the claim.

\end{proof}

\begin{lem}\label{lem:tilting:A-module}
For $E \in \Coh(X)$, we have
\begin{equation}
\pi^{-1}(\pi_*(G^{\vee} \otimes E)) 
\otimes_{\pi^{-1}({\cal A})}G \otimes_{{\cal O}_X} G^{\vee}
\cong \pi^* \pi_*(G^{\vee} \otimes E). 
\end{equation}
\end{lem}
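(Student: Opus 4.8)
The plan is to reduce the whole statement to the single structural fact just established, Lemma \ref{lem:tilting:pull-back}(2), namely $G^{\vee}\otimes G\cong\pi^*({\cal A})$ with ${\cal A}=\pi_*(G^{\vee}\otimes G)$, the rest being formal bookkeeping of module structures. Write $F:=\pi_*(G^{\vee}\otimes E)$, a coherent ${\cal A}$-module, which carries a right action of ${\cal A}$ via $\lambda\cdot f=f\circ\lambda$ as recorded above, so that $\pi^{-1}(F)$ is a right $\pi^{-1}({\cal A})$-module while $G$ is a left $\pi^{-1}({\cal A})$-module through $\pi^{-1}({\cal A})\to G^{\vee}\otimes G$. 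I would first observe that the symbol $\pi^{-1}(F)\otimes_{\pi^{-1}({\cal A})}G$ is by definition $H^0$ of the derived tensor product, hence equals the ordinary tensor product of sheaves; consequently the entire left-hand side is an honest underived tensor product, and no higher $\Tor$ terms can intervene.

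First I would use associativity of the tensor product to rewrite the left-hand side as
\[
\pi^{-1}(F)\otimes_{\pi^{-1}({\cal A})}\bigl(G\otimes_{{\cal O}_X}G^{\vee}\bigr),
\]
where $G\otimes_{{\cal O}_X}G^{\vee}$ is viewed as a left $\pi^{-1}({\cal A})$-module through its left factor $G$; this is legitimate since $G$ is a $(\pi^{-1}({\cal A}),{\cal O}_X)$-bimodule and $G^{\vee}$ is an ${\cal O}_X$-module. The next step is to identify $G\otimes_{{\cal O}_X}G^{\vee}$ with $\pi^*({\cal A})$ as a left $\pi^{-1}({\cal A})$-module. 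Because $G$ is locally free, the evaluation isomorphism $G\otimes_{{\cal O}_X}G^{\vee}\cong{\cal H}om(G,G)$ carries the action on the $G$-factor to left composition, and the same holds for $G^{\vee}\otimes G$, so the swap gives a left-$\pi^{-1}({\cal A})$-linear isomorphism $G\otimes_{{\cal O}_X}G^{\vee}\cong G^{\vee}\otimes G$. Now Lemma \ref{lem:tilting:pull-back}(2) supplies $G^{\vee}\otimes G\cong\pi^*({\cal A})$, and since that isomorphism is the adjunction counit $\pi^*\pi_*(G^{\vee}\otimes G)\to G^{\vee}\otimes G$, it is a homomorphism of sheaves of algebras, in particular left-$\pi^{-1}({\cal A})$-linear. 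Composing these identifications yields $G\otimes_{{\cal O}_X}G^{\vee}\cong\pi^*({\cal A})$ compatibly with the left action.

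Finally I would cancel: writing $\pi^*({\cal A})=\pi^{-1}({\cal A})\otimes_{\pi^{-1}({\cal O}_Y)}{\cal O}_X$ with its left-multiplication action, the standard isomorphism $M\otimes_A(A\otimes_B N)\cong M\otimes_B N$ gives
\[
\pi^{-1}(F)\otimes_{\pi^{-1}({\cal A})}\pi^*({\cal A})\cong\pi^{-1}(F)\otimes_{\pi^{-1}({\cal O}_Y)}{\cal O}_X=\pi^*(F)=\pi^*\pi_*(G^{\vee}\otimes E),
\]
which is the claim. I expect the only genuinely delicate point to be the second paragraph: keeping the left/right $\pi^{-1}({\cal A})$-module conventions straight and checking that the isomorphism of Lemma \ref{lem:tilting:pull-back}(2) really is compatible with the algebra (hence module) structure, so that the final cancellation is an actual isomorphism of $\pi^{-1}({\cal A})$-modules and not merely of underlying ${\cal O}_X$-modules. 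Everything else is formal manipulation of tensor products.
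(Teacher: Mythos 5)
Your proof is correct and follows essentially the same route as the paper's: both replace $G\otimes_{{\cal O}_X}G^{\vee}$ by $\pi^*({\cal A})$ via Lemma \ref{lem:tilting:pull-back} and then cancel this against $\otimes_{\pi^{-1}({\cal A})}$ to obtain $\pi^{-1}(F)\otimes_{\pi^{-1}({\cal O}_Y)}{\cal O}_X=\pi^*(F)$. The only difference is that you make explicit the left/right $\pi^{-1}({\cal A})$-module bookkeeping and the compatibility of the isomorphism with the algebra structure, which the paper leaves implicit.
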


\begin{proof}
By Lemma \ref{lem:tilting:pull-back}, we get
\begin{equation}
\begin{split}
\pi^{-1}(\pi_*(G^{\vee} \otimes E)) 
\otimes_{\pi^{-1}({\cal A})}G \otimes_{{\cal O}_X} G^{\vee} 
\cong &
\pi^{-1}(\pi_*(G^{\vee} \otimes E)) 
\otimes_{\pi^{-1}({\cal A})}
\pi^{-1}(\pi_*(G \otimes_{{\cal O}_X} G^{\vee}))
\otimes_{\pi^{-1}({\cal O}_Y)}  {\cal O}_X \\
\cong & \pi^{-1}(\pi_*(G^{\vee} \otimes E)) 
\otimes_{\pi^{-1}({\cal O}_Y)}  {\cal O}_X \\
=&\pi^*(\pi_*(G^{\vee} \otimes E)).
\end{split}
\end{equation}
Therefore the claims hold.
\end{proof}

\begin{lem}\label{lem:tilting:A_y}
${\cal A}$-module 
$\pi_*(G^{\vee} \otimes {\Bbb C}_x)$ does not depend on the 
choice of $x \in \pi^{-1}(y)$.
We set
\begin{equation}
A_y:=\pi^{-1}(\pi_*(G^{\vee} \otimes {\Bbb C}_x)) 
\otimes_{\pi^{-1}({\cal A})}G,\; x \in Z_y.
\end{equation}
\end{lem}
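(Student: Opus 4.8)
The plan is to reduce the statement to the representation theory of the fibre algebra ${\cal A}(y):={\cal A}_y/{\frak m}_y{\cal A}_y$ and to show that this fibre is a full matrix algebra, over which the module in question is forced to be the unique simple module. First I would record the concrete shape of the module. Since $G$ is locally free and ${\Bbb C}_x$ is a skyscraper, $G^{\vee}\otimes{\Bbb C}_x$ is the skyscraper at $x$ with stalk the fibre $(G|_x)^{\vee}$, so $\pi_*(G^{\vee}\otimes{\Bbb C}_x)$ is the skyscraper ${\cal O}_Y$-module at $y$ with stalk $(G|_x)^{\vee}=\Hom_{\Bbb C}(G|_x,{\Bbb C})$, on which ${\cal A}=\pi_*(G^{\vee}\otimes G)={\cal E}nd(G)$ acts by precomposition of endomorphisms. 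Because ${\frak m}_y$ acts through $\pi^*{\frak m}_y\subset{\frak m}_x$ and hence kills ${\Bbb C}_x$, and since ${\cal O}_Y$ maps to the centre of ${\cal A}$, the ideal ${\frak m}_y{\cal A}_y$ annihilates the stalk and the ${\cal A}_y$-action factors through ${\cal A}(y)$. An ${\cal A}$-module supported at $y$ and killed by ${\frak m}_y$ is the same datum as an ${\cal A}(y)$-module, so it suffices to prove that the ${\cal A}(y)$-modules $(G|_x)^{\vee}$, for $x\in Z_y$, are mutually isomorphic.

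The key input is Lemma \ref{lem:tilting:pull-back} (2), which supplies an isomorphism of ${\cal O}_X$-algebras $G^{\vee}\otimes G={\cal E}nd(G)\cong\pi^*{\cal A}$ (the underlying map there is the adjunction counit $\pi^*\pi_*{\cal E}nd(G)\to{\cal E}nd(G)$, which is multiplicative). Restricting this to the fibre over $x$ and using local freeness of $G$, I would identify
\begin{equation}
{\cal A}(y)=(\pi^*{\cal A})|_x\cong{\cal E}nd(G)|_x=\End_{\Bbb C}(G|_x)\cong M_r({\Bbb C}),\quad r=\rk G.
\end{equation}
Here the first identification holds because $\pi^*{\cal A}$ is a pullback, so its fibre at every $x\in Z_y$ is canonically the single fixed algebra ${\cal A}(y)$, independently of $x$; this is exactly what makes the comparison across different points meaningful. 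The content of this step is that ${\cal A}(y)$ is a full matrix algebra and that, under the identification, $(G|_x)^{\vee}$ is precisely the dual of the tautological $\End(G|_x)$-module, i.e. the simple (right) ${\cal A}(y)$-module.

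Granting this, the conclusion is immediate: $M_r({\Bbb C})$ has a unique simple (right) module up to isomorphism, and $(G|_x)^{\vee}$ is an $r$-dimensional simple ${\cal A}(y)$-module for every $x\in Z_y$; hence all the $(G|_x)^{\vee}$ are isomorphic as ${\cal A}(y)$-modules, and therefore $\pi_*(G^{\vee}\otimes{\Bbb C}_x)$ is independent of $x$ as an ${\cal A}$-module, which also makes the definition of $A_y$ legitimate. The point requiring the most care is the second paragraph: one must check that the isomorphism of Lemma \ref{lem:tilting:pull-back} (2) genuinely respects the algebra structure (so that fibres are matrix algebras rather than merely isomorphic vector spaces) and that the precomposition action matches the tautological one. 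Once that compatibility is in place, no further geometry is needed and the argument is pure representation theory of $M_r({\Bbb C})$; the earlier failure mode, where a flat family of modules over a fixed algebra can jump in isomorphism type along a connected base, is excluded precisely because ${\cal E}nd(G)=\pi^*{\cal A}$ forces the structure map to be an isomorphism onto the full endomorphism sheaf.
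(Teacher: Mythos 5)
Your argument is correct, but it is not the route the paper takes. The paper's proof is a two-line homological computation: for $x\in C_{yj}$ one applies $\pi_*(G^{\vee}\otimes(\ ))$ to the exact sequence
\begin{equation*}
0 \to {\cal O}_{C_{yj}}(b_{yj}) \to {\cal O}_{C_{yj}}(b_{yj}+1)\to {\Bbb C}_x \to 0,
\end{equation*}
and the vanishing ${\bf R}\pi_*(G^{\vee}\otimes{\cal O}_{C_{yj}}(b_{yj}))=0$ from Assumption \ref{ass:2} gives $\pi_*(G^{\vee}\otimes{\Bbb C}_x)\cong\pi_*(G^{\vee}\otimes{\cal O}_{C_{yj}}(b_{yj}+1))$, which is visibly independent of $x\in C_{yj}$; connectedness of $Z_y$ then propagates this over the whole fibre. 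Your proof instead routes Assumption \ref{ass:2} through Lemma \ref{lem:tilting:pull-back}~(2) to identify the fibre algebra ${\cal A}(y)$ with $\End_{\Bbb C}(G|_x)\cong M_r({\Bbb C})$ and then invokes uniqueness of the simple $M_r({\Bbb C})$-module. The points you flag as delicate do hold: the isomorphism of Lemma \ref{lem:tilting:pull-back}~(2) is the adjunction counit $\pi^*\pi_*{\cal E}nd(G)\to{\cal E}nd(G)$ (see its proof), hence multiplicative, and the ${\cal A}$-action on $\pi_*(G^{\vee}\otimes{\Bbb C}_x)$ is exactly precomposition $f\mapsto f\circ\lambda$ (as recorded in the paper's discussion around Lemma \ref{lem:tilting:C(G)}), so the module really is the $r$-dimensional, hence unique, simple ${\cal A}(y)$-module. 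What each approach buys: the paper's version is shorter, needs no discussion of algebra structures, and produces the explicit representative $\pi_*(G^{\vee}\otimes{\cal O}_{C_{yj}}(b_{yj}+1))$; yours is more conceptual, dispenses with the explicit use of connectedness of $Z_y$ (constancy of the fibre of $\pi^*{\cal A}$ does that work), and yields as a by-product that ${\cal A}(y)\cong M_r({\Bbb C})$ and that $\pi_*(G^{\vee}\otimes{\Bbb C}_x)$ is irreducible over it, which prefigures the subsequent facts that $G^{\vee}\otimes A_y\cong{\cal O}_{Z_y}^{\oplus\rk G}$ and that $A_y$ is a line bundle on $Z_y$.
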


\begin{proof}
For the exact sequence
\begin{equation}
0 \to {\cal O}_{C_{yj}}(b_{yj}) \to {\cal O}_{C_{yj}}(b_{yj}+1)
\to {\Bbb C}_x \to 0,
\end{equation}
we have $\pi_*(G^{\vee} \otimes {\cal O}_{C_{yj}}(b_{yj}+1))
\cong \pi_*(G^{\vee} \otimes {\Bbb C}_x)$.
Hence $\pi_*(G^{\vee} \otimes {\Bbb C}_x)$ does not depens on the 
choice of $x \in Z_y$.
\end{proof}

\begin{lem}
\begin{enumerate}
\item[(1)]
$A_y$ is a unique line bundle on $Z_y$ such that
$A_{y|C_{yj}} \cong {\cal O}_{C_{yj}}(b_{yj}+1)$.
\item[(2)]
$G^{\vee} \otimes A_y \cong {\cal O}_{Z_y}^{\oplus \rk G}$.
\end{enumerate}
\end{lem}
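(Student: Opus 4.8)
The plan is to establish (2) first, essentially for free from Lemma~\ref{lem:tilting:A-module}, and then read off (1) from it. For (2), I apply Lemma~\ref{lem:tilting:A-module} to $E={\Bbb C}_x$ with $x\in Z_y$. By the definition of $A_y$ in Lemma~\ref{lem:tilting:A_y}, the left-hand side $\pi^{-1}(\pi_*(G^{\vee}\otimes {\Bbb C}_x))\otimes_{\pi^{-1}({\cal A})}G\otimes_{{\cal O}_X}G^{\vee}$ is exactly $G^{\vee}\otimes A_y$, while the right-hand side is $\pi^{*}\pi_*(G^{\vee}\otimes {\Bbb C}_x)$. Since $G^{\vee}\otimes {\Bbb C}_x$ is the skyscraper ${\Bbb C}_x^{\oplus \rk G}$ and $\pi$ contracts $Z_y$ to the single point $y$, we have $\pi_*(G^{\vee}\otimes {\Bbb C}_x)\cong {\Bbb C}_y^{\oplus \rk G}$ as ${\cal O}_Y$-modules, and hence $\pi^{*}\pi_*(G^{\vee}\otimes {\Bbb C}_x)\cong {\cal O}_{Z_y}^{\oplus \rk G}$. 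This gives $G^{\vee}\otimes A_y\cong {\cal O}_{Z_y}^{\oplus \rk G}$, which is precisely (2).

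For the first half of (1), I would extract from (2) that $A_y$ is an invertible ${\cal O}_{Z_y}$-module. First, $A_y$ is supported on $Z_y$: tensoring the multiplication $I_{Z_y}\otimes A_y\to A_y$ with the locally free sheaf $G^{\vee}$ turns it into the zero map $I_{Z_y}\otimes {\cal O}_{Z_y}^{\oplus \rk G}\to {\cal O}_{Z_y}^{\oplus \rk G}$, and since $-\otimes G^{\vee}$ is exact and faithful (as $G^{\vee}$ is locally free of positive rank) this forces $I_{Z_y}A_y=0$. Next, locally on $Z_y$ we may trivialize $G^{\vee}$, so (2) reads $(A_y)_p^{\oplus \rk G}\cong {\cal O}_{Z_y,p}^{\oplus \rk G}$; thus $(A_y)_p$ is a direct summand of a free module, hence projective, hence free of rank one over the local ring ${\cal O}_{Z_y,p}$, and a rank count pins the rank at one. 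Therefore $A_y$ is a line bundle on $Z_y$. To identify its restriction, I restrict (2) to $C_{yj}$ and use $G^{\vee}_{|C_{yj}}\cong {\cal O}_{C_{yj}}(-b_{yj}-1)^{\oplus \rk G}$ from the proof of Lemma~\ref{lem:tilting:pull-back}(2); this yields $(A_{y|C_{yj}}(-b_{yj}-1))^{\oplus \rk G}\cong {\cal O}_{C_{yj}}^{\oplus \rk G}$, and taking determinants (equivalently, comparing degrees) on $C_{yj}\cong {\Bbb P}^1$ shows $A_{y|C_{yj}}\cong {\cal O}_{C_{yj}}(b_{yj}+1)$.

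It remains to prove uniqueness, i.e. that a line bundle on $Z_y$ is determined by its restrictions to the components $C_{yj}$, equivalently that $\Pic(Z_y)\to\prod_j\Pic(C_{yj})$ is injective; this is the step needing the most care, since $Z_y=\pi^{-1}(y)$ is the scheme-theoretic fibre and may be non-reduced. For the reduced fibre this is immediate: $\pi^{-1}(y)_{\mathrm{red}}$ is a tree of smooth rational curves (as in the proof of Lemma~\ref{lem:tilting:irreducible}), so its Picard group injects into $\prod_j\Pic(C_{yj})$ via multidegrees. To pass from $\pi^{-1}(y)_{\mathrm{red}}$ to $Z_y$, I would use rationality of the singularity, which gives $H^1(Z_y,{\cal O}_{Z_y})=0$ (and $H^1({\cal O}_{Z'})=0$ for every effective cycle $Z'$ supported on $\pi^{-1}(y)$). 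Filtering $1+{\cal N}$ by the subgroups $1+{\cal N}^{i}$, where ${\cal N}$ is the nilradical of ${\cal O}_{Z_y}$, the successive quotients are isomorphic to ${\cal N}^{i}/{\cal N}^{i+1}$, so $\ker(\Pic(Z_y)\to\Pic(\pi^{-1}(y)_{\mathrm{red}}))$ is controlled by the groups $H^1({\cal N}^{i}/{\cal N}^{i+1})$, which vanish by rationality; hence the kernel is trivial and the restriction map is injective. Since the $A_y$ produced above already has the prescribed restrictions, it is the unique such line bundle. The main obstacle is exactly this non-reduced bookkeeping; if one is content to invoke the standard fact that for the minimal resolution of rational double points $\Pic$ of any exceptional cycle injects into ${\Bbb Z}^{\#\{C_{yj}\}}$ via intersection numbers, the uniqueness follows at once.
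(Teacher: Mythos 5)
Your proof is correct and follows essentially the same route as the paper: both establish (2) first from Lemma~\ref{lem:tilting:A-module} applied to ${\Bbb C}_x$, deduce that $A_y$ is a line bundle on $Z_y$ from the local freeness of $G$, and settle (1) via the injectivity of $\Pic(Z_y)\to\prod_j\Pic(C_{yj})$. The only cosmetic differences are that you identify $A_{y|C_{yj}}$ by restricting (2) to $C_{yj}$ and comparing degrees, where the paper instead takes determinants ($A_y^{\otimes \rk G}\cong\det G_{|Z_y}$) and uses torsion-freeness of $\Pic(Z_y)$, and that you supply an argument (nilradical filtration plus rationality) for the injectivity of the restriction map, which the paper simply asserts as a bijection.
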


\begin{proof}
By Lemma \ref{lem:tilting:A-module},
$G^{\vee} \otimes A_y \cong \pi^*(\pi_*(G^{\vee} \otimes {\Bbb C}_x))
\cong
{\cal O}_{Z_y}^{\oplus \rk G}$.
Thus (2) holds.
Since $G_{|Z_y}$ is a locally free sheaf on $Z_y$, 
$A_y$ is a line bundle on $Z_y$.
Then $A_y^{\otimes \rk G} \cong \det G_{|Z_y}$.
Since the restriction map
$\Pic(Z_y) \to \prod_j \Pic(C_{yj})$ is bijective and
$\Pic(C_{yj}) \cong {\Bbb Z}$,
$G_{|C_{yj}} \cong {\cal O}_{C_{yj}}(b_{yj}+1)^{\oplus \rk G}$
imply the claim (1). 
\end{proof}

\begin{NB}
There is a line bundle $L$ on $Z_y$ such that
$L_{|C_{yj}}={\cal O}_{C_{yj}}(b_{yj}+1)$.
Then $L \otimes {\cal O}_{C_{yj}}(-1)={\cal O}_{C_{yj}}(b_{yj})$.
We shall prove that
$G_{|Z_y} \cong V \otimes L$, where $V$ is a trivial bundle
of rank $\rk G$ on $Z_y$.
Since $\Hom(G_{|Z_y} \otimes L^{\vee},{\cal O}_{C_{yj}}(-1))=0$
for all $j$, 
$\phi:H^0(Z_y,G_{|Z_y} \otimes L^{\vee}) \otimes {\cal O}_{Z_y}
\to G_{|Z_y} \otimes L^{\vee}$ is surjective.
Since $c_1(G_{|Z_y} \otimes L^{\vee})_{|C_{yj}}=0$,
$\chi(Z_y,G_{|Z_y} \otimes L^{\vee})=\rk G$ 
Hence $\phi$ is isomorphic.
Thus
\begin{equation}
A_y=\pi^{-1}(W^{\vee} \otimes{\Bbb C}_y) 
\otimes_{\pi^{-1}(\End(W \otimes {\Bbb C}_y))}
\pi^{-1}(W \otimes {\Bbb C}_y) \otimes L 
\cong \pi^{-1}({\Bbb C}_y) \otimes L.
\end{equation} 
\end{NB}

\begin{lem}\label{lem:rk(G)|chi(G,E)}
For a coherent sheaf $E$ with $\Supp(E) \subset Z_y$,
$\chi(G,E) \in {\Bbb Z}\rk G $.
\end{lem}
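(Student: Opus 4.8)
The plan is to exploit the additivity of $\chi(G,-)$ together with the explicit description of $G$ along the exceptional curves obtained above. Since $G$ is locally free, $\chi(G,E)=\sum_i(-1)^i\dim\Ext^i(G,E)=\sum_i(-1)^i\dim H^i(X,G^{\vee}\otimes E)$, and this is additive on short exact sequences in $E$. Hence $E\mapsto\chi(G,E)$ descends to a homomorphism from the Grothendieck group $K_0$ of coherent sheaves on $X$ supported on $Z_y$ to ${\Bbb Z}$, and it suffices to verify $\rk G\mid\chi(G,E)$ on a generating set of classes.

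To obtain generators I would use dévissage. Every coherent sheaf with support contained in $Z_y$ is at most $1$-dimensional, because $\dim Z_y\le 1$. Filtering such a sheaf first by the powers of the ideal of the reduced fibre $(Z_y)_{\mathrm{red}}$, its subquotients become ${\cal O}_{(Z_y)_{\mathrm{red}}}$-modules; taking the torsion filtration of each of these reduces us to skyscraper sheaves ${\Bbb C}_x$, $x\in Z_y$, and to pure $1$-dimensional sheaves on the reduced components. By Lemma \ref{lem:tilting:irreducible} the reduced fibre is a tree of smooth rational curves $C_{yj}\cong{\Bbb P}^1$, so a last filtration along the components shows that $K_0$ is generated by the classes $[{\Bbb C}_x]$ and $[{\cal O}_{C_{yj}}]$.

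Finally I would compute $\chi(G,-)$ on these generators. Since $G^{\vee}\otimes{\Bbb C}_x\cong{\Bbb C}_x^{\oplus\rk G}$ is $0$-dimensional of length $\rk G$, we get $\chi(G,{\Bbb C}_x)=\rk G$. For the components, the computation carried out in the proof of Lemma \ref{lem:tilting:pull-back}\,(2) (using ${\bf R}\pi_*(G^{\vee}\otimes{\cal O}_{C_{yj}}(b_{yj}))=0$ from Assumption \ref{ass:2}) gives $G_{|C_{yj}}\cong{\cal O}_{C_{yj}}(b_{yj}+1)^{\oplus\rk G}$, whence $G^{\vee}\otimes{\cal O}_{C_{yj}}\cong{\cal O}_{C_{yj}}(-b_{yj}-1)^{\oplus\rk G}$ and, by $\chi({\cal O}_{{\Bbb P}^1}(m))=m+1$, $\chi(G,{\cal O}_{C_{yj}})=\rk G\cdot(-b_{yj})$. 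Both values lie in $\rk G\cdot{\Bbb Z}$, so the homomorphism $\chi(G,-)$ carries all of $K_0$ into $\rk G\cdot{\Bbb Z}$, which is exactly the assertion.

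The only step that I expect to require genuine care is the generation claim: that $[{\Bbb C}_x]$ and $[{\cal O}_{C_{yj}}]$ generate $K_0$. The computation on generators is immediate once the restriction $G_{|C_{yj}}$ is known, so the substance is making sure that the reduction from an arbitrary sheaf with support in $Z_y$ — possibly non-reduced, with embedded components or higher multiplicity — is correctly effected by the ideal-adic and torsion filtrations before one invokes the standard structure of $K_0$ for a reduced tree of ${\Bbb P}^1$'s.
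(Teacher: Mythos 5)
Your proposal is correct and follows essentially the same route as the paper: both arguments use additivity of $\chi(G,-)$, a d\'evissage showing that the Grothendieck group of sheaves supported on $Z_y$ is generated by skyscrapers and line bundles on the components, and the explicit restriction $G_{|C_{yj}}\cong{\cal O}_{C_{yj}}(b_{yj}+1)^{\oplus\rk G}$. The only (immaterial) difference is that you evaluate on the generators $[{\cal O}_{C_{yj}}]$, giving $\chi(G,{\cal O}_{C_{yj}})=-\rk G\cdot b_{yj}$, whereas the paper uses $[{\cal O}_{C_{yj}}(b_{yj})]$, for which $\chi(G,{\cal O}_{C_{yj}}(b_{yj}))=0$; these generating sets differ only by multiples of $[{\Bbb C}_x]$.
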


\begin{proof}
We note that $K(Z_y)$ is generated by
${\cal O}_{C_{yj}}(b_{yj})$ and ${\Bbb C}_x$.
For $E$ with $\Supp(E) \subset Z_y$,
we have a filtration
$0 \subset F_1 \subset F_2 \subset \cdots \subset F_n=E$
such that $F_i/F_{i-1} \in \Coh(Z_y)$.
Hence the claim follows from
$\chi(G,{\cal O}_{C_{yj}}(b_{yj}))=0$ and $\chi(G,{\Bbb C}_x)=\rk G$. 
\end{proof}

\begin{lem}\label{lem:tilting:chi=rkG}
\begin{enumerate}
\item
[(1)]
Let $E$ be a $G$-twisted stable 1-dimensional sheaf such that
$\Supp(E) \subset Z_y$ and $\chi(G,E)=\rk G$.
Then there is a subscheme $C$ of $Z_y$ such that $\chi({\cal O}_C)=1$
and
$E \cong A_y \otimes {\cal O}_C$.
Conversely for a subscheme $C$ of $Z_y$ such that
${\cal O}_C$ is 1-dimensional,
$\chi({\cal O}_C)=1$,
$E=A_y \otimes {\cal O}_C$ is a $G$-twisted stable
sheaf with $\chi(G,E)=\rk G$ and $\pi(\Supp(E))=\{y \}$.
\begin{NB}
Let $T$ be the 0-dimensional submodule of ${\cal O}_C$.
Then $\chi({\cal O}_C)>\chi({\cal O}_C/T)>0$.
\end{NB}
\item[(2)]
$A_y$ is $G$-twisted stable.
\end{enumerate}
\end{lem}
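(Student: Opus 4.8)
The plan is to reduce everything to Lemma \ref{lem:tilting:chi=1} by twisting with the line bundle $A_y$. The key input is the preceding lemma, giving $G^\vee \otimes A_y \cong {\cal O}_{Z_y}^{\oplus \rk G}$ (equivalently $G_{|Z_y} \cong A_y^{\oplus \rk G}$). Hence for a coherent ${\cal O}_{Z_y}$-module $F$, writing $E=A_y \otimes_{{\cal O}_{Z_y}} F$ one has
\begin{equation}
G^\vee \otimes E \cong (G^\vee \otimes A_y)\otimes_{{\cal O}_{Z_y}} F \cong F^{\oplus \rk G},
\end{equation}
so $\chi(G,E)=\rk G\cdot \chi(F)$; moreover $(c_1(E),L)=(c_1(F),L)$ because $A_y$ is a line bundle on $Z_y$, and $F'\mapsto A_y\otimes_{{\cal O}_{Z_y}}F'$ is an inclusion-preserving bijection between subsheaves of $F$ and of $E$ (any ${\cal O}_X$-subsheaf of the ${\cal O}_{Z_y}$-module $E$ is automatically an ${\cal O}_{Z_y}$-submodule). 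Comparing reduced twisted Hilbert polynomials with respect to an ample $L$ on $X$, this shows $E=A_y\otimes F$ is $G$-twisted stable if and only if $F$ is stable, and $\chi(G,E)=\rk G$ if and only if $\chi(F)=1$.

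Given this dictionary the converse in (1) is immediate: if $C\subset Z_y$ is a closed subscheme with ${\cal O}_C$ purely $1$-dimensional and $\chi({\cal O}_C)=1$, then ${\cal O}_C$ is stable by Lemma \ref{lem:tilting:chi=1} (1), so $E:=A_y\otimes_{{\cal O}_{Z_y}}{\cal O}_C$ is $G$-twisted stable with $\chi(G,E)=\rk G$ and $\pi(\Supp(E))=\{y\}$. Part (2) is the special case $C=Z_y$: by Lemma \ref{lem:tilting:chi=1} (2) the sheaf ${\cal O}_{Z_y}$ is stable with $\chi({\cal O}_{Z_y})=1$, so $A_y=A_y\otimes_{{\cal O}_{Z_y}}{\cal O}_{Z_y}$ is $G$-twisted stable.

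For the forward direction of (1) I would follow the proof of Lemma \ref{lem:tilting:chi=1} with ${\cal O}_{Z_y}$ replaced by $A_y$. Since $\chi(G,E)=\rk G>0$, we have $\pi_*(G^\vee\otimes E)\ne 0$. Because the equivalence of Proposition \ref{prop:Morita} carries $A_y=\pi^{-1}(\pi_*(G^\vee\otimes{\Bbb C}_x))\otimes_{\pi^{-1}({\cal A})}G$ to $\pi_*(G^\vee\otimes{\Bbb C}_x)$ and extends to derived categories, one obtains
\begin{equation}
\Hom_X(A_y,E)\cong \Hom_{{\cal A}}\bigl(\pi_*(G^\vee\otimes{\Bbb C}_x),\pi_*(G^\vee\otimes E)\bigr).
\end{equation}
Here $\pi_*(G^\vee\otimes{\Bbb C}_x)$ is the simple ${\cal A}$-module supported at $y$, since by Lemma \ref{lem:tilting:pull-back} (2) the fibre algebra ${\cal A}(y)\cong\End(G_x)$ is a matrix algebra; as $\pi_*(G^\vee\otimes E)\ne 0$ is supported at $y$, its socle yields a nonzero morphism $\phi:A_y\to E$. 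The image of $\phi$ is a quotient of $A_y$, hence of the form $A_y\otimes_{{\cal O}_{Z_y}}{\cal O}_C$ for a closed subscheme $C\subset Z_y$, and is therefore automatically an ${\cal O}_{Z_y}$-module, which is precisely what lets the twisting dictionary apply even though $E$ is a priori only an ${\cal O}_X$-module set-theoretically supported on $Z_y$. From $R^1\pi_*({\cal O}_X)=0$ and the surjection ${\cal O}_X\to{\cal O}_C$ we get $R^1\pi_*({\cal O}_C)=0$, so $\chi({\cal O}_C)=h^0({\cal O}_C)\ge 1$ and thus $\chi(G,\im\phi)=\rk G\cdot\chi({\cal O}_C)\ge \rk G=\chi(G,E)$.

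Finally, $G$-twisted stability forces $\phi$ to be surjective: $E$ is purely $1$-dimensional, so $\im\phi$ is a nonzero $1$-dimensional subsheaf, and if it were proper then either $E/\im\phi$ is $1$-dimensional, in which case $(c_1(\im\phi),L)<(c_1(E),L)$ together with $\chi(G,\im\phi)\ge\chi(G,E)>0$ gives $\chi(G,\im\phi)/(c_1(\im\phi),L)>\chi(G,E)/(c_1(E),L)$, contradicting stability, or $E/\im\phi$ is $0$-dimensional, in which case $\chi(G,E/\im\phi)\ge 0$ combined with $\chi(G,\im\phi)\ge\chi(G,E)=\chi(G,\im\phi)+\chi(G,E/\im\phi)$ forces $E/\im\phi=0$. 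Hence $E\cong A_y\otimes_{{\cal O}_{Z_y}}{\cal O}_C$ and $\chi(G,E)=\rk G$ gives $\chi({\cal O}_C)=1$. The one genuinely delicate point is the construction of the nonzero map $A_y\to E$ and the identification of its image as $A_y\otimes{\cal O}_C$ with $C\subset Z_y$; as in Lemma \ref{lem:tilting:chi=1}, this is exactly where the hypothesis $\chi(G,E)=\rk G$ (through $\pi_*(G^\vee\otimes E)\ne 0$) and the simplicity of the standard module at $y$ are used.
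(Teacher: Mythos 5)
Your treatment of the converse direction of (1) and of (2) is correct: the dictionary $F\mapsto A_y\otimes_{{\cal O}_{Z_y}}F$, together with $G^{\vee}\otimes A_y\cong {\cal O}_{Z_y}^{\oplus \rk G}$, does translate $G$-twisted stability of $A_y\otimes F$ into the plain stability of Lemma \ref{lem:tilting:chi=1}, and the paper's own converse argument is essentially the same computation phrased on quotients. The forward direction, however, has a genuine gap at the one step you yourself flag as delicate: the production of a nonzero morphism $\phi:A_y\to E$. You derive it from the isomorphism $\Hom_X(A_y,E)\cong\Hom_{{\cal A}}(\pi_*(G^{\vee}\otimes{\Bbb C}_x),\pi_*(G^{\vee}\otimes E))$, attributed to ``the equivalence of Proposition \ref{prop:Morita}.'' But that proposition applies only when $G$ is a local projective \emph{generator} of ${\cal C}$, and the $G$ of Assumption \ref{ass:2} is not one: by construction ${\bf R}\pi_*(G^{\vee}\otimes{\cal O}_{C_{yj}}(b_{yj}))=0$ with ${\cal O}_{C_{yj}}(b_{yj})\neq 0$, so condition (b) of a local projective generator fails and ${\bf R}\pi_*(G^{\vee}\otimes(\;\;))$ is not an equivalence onto $\Coh_{{\cal A}_0}(Y)$ for ${\cal A}_0=\pi_*(G^{\vee}\otimes G)$ (it kills the whole subcategory $S_0$). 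Moreover $E$ is a priori just a coherent sheaf, not an object of the tilted category, so even full faithfulness on a subcategory would not immediately apply. Your parenthetical justification that ``${\cal A}(y)\cong\End(G_x)$ is a matrix algebra'' is in fact true for ${\cal A}_0$ (it follows from Lemma \ref{lem:tilting:pull-back} (2)), but it does not by itself yield the Hom-identification you need; the functoriality of the evaluation map $\pi^{-1}(\pi_*(G^{\vee}\otimes E))\otimes_{\pi^{-1}({\cal A})}G\to E$ would have to be checked to see that a nonzero ${\cal A}_0$-map out of the socle really induces a nonzero map $A_y\to E$.

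The conclusion $\Hom(A_y,E)\neq 0$ is correct and can be recovered in two ways. One is internal to your setup: by Lemma \ref{lem:tilting:C(G)} the image $E_1$ of the evaluation map lies in $T$ and is nonzero (since $\chi(G,E)=\rk G>0$ forces $\pi_*(G^{\vee}\otimes E)\neq 0$), and Lemma \ref{lem:tilting:generate} (1) then gives a nonzero map $A_y\to E_1\subset E$. The paper instead argues differently and more economically: it chooses an exact sequence $0\to K\to E\to{\Bbb C}_x\to 0$, uses the stability of $E$ together with $\chi(G,K)=0$ to force $\pi_*(G^{\vee}\otimes K)=0$, deduces that $\pi_*(G^{\vee}\otimes E)\to\pi_*(G^{\vee}\otimes{\Bbb C}_x)$ is an isomorphism by a dimension count, and reads off the map $\psi:A_y\to E$ from this isomorphism and the evaluation morphism; this also pins down $\chi(G,\im\psi)=\rk G$ exactly, which streamlines the surjectivity step that you handle by the (correct) case analysis on $\dim(E/\im\phi)$. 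I would either adopt the paper's route for the forward direction or replace the appeal to Proposition \ref{prop:Morita} by the argument via Lemmas \ref{lem:tilting:C(G)} and \ref{lem:tilting:generate}.
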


\begin{proof}
(1)
We choose an exact sequence
\begin{equation}
0 \to K \to E \to {\Bbb C}_x \to 0.
\end{equation} 
Since $E$ is a $G$-twisted stable 1-dimensional sheaf with
$\chi(G,E)=\rk G$, $K$ is a $G$-twisted semi-stable sheaf with
$\chi(G,K)=0$.
If $\pi_*(G^{\vee} \otimes K) \ne 0$, then
we have a non-zero homomorphism
$\phi:\pi^{-1}(\pi_*(G^{\vee} \otimes K)) \otimes_{\pi^{-1}({\cal A})}G 
\to K$ such that $\pi_*(G^{\vee} \otimes \im \phi)
=\pi_*(G^{\vee} \otimes K)$.
Since $R^1 \pi_*(G^{\vee} \otimes \im \phi)=0$,
$\chi(G,\im \phi)>0$, which is a contradiction.
Therefore $\pi_*(G^{\vee} \otimes K)=0$.
Hence 
$\xi:\pi_*(G^{\vee} \otimes E) \to \pi_*(G^{\vee} \otimes {\Bbb C}_x)$
is injective.
Since $\dim H^0(Y,\pi_*(G^{\vee} \otimes E)) \geq \chi(G,E)=\rk G$,
$\xi$ is an isomorphism.  
Then we have a homomorphism $\psi:A_y \to E$.
Since $\pi_*(G^{\vee} \otimes \im \psi)=\pi_*(G^{\vee} \otimes E)$
and $R^1 \pi_*(G^{\vee} \otimes \im \psi)=0$,
we get $\im \psi=E$. 
Since $E \otimes A_y^{D}$,
$A_y^{D}:={\cal H}om(A_y,{\cal O}_{Z_y})$ is a quotient of
${\cal O}_{Z_y}$, there is a subscheme $C$ of $Z_y$
such that $E \otimes A_y^D \cong {\cal O}_C$.
Since $\chi(G,E)=\chi(G,A_y \otimes {\cal O}_C)=
\chi({\cal O}_C^{\oplus \rk G})$,
we have $\chi({\cal O}_C)=1$.

Conversely 
for $E \otimes A_y^{\vee} \cong {\cal O}_C$ such that
${\cal O}_C$ is 1-dimensional,
$C \subset Z_y$ and $\chi({\cal O}_C)=1$,
we consider a quotient $E \to F$.
Then $F=A_y \otimes {\cal O}_D$, $D \subset C$.
Since $R^1 \pi_*(G^{\vee} \otimes F)=0$ and
$G^{\vee} \otimes A_y \otimes {\cal O}_D \cong
{\cal O}_D^{\oplus \rk G}$,
we get $\chi(G,F) \geq 
\rk G$.
From this fact, we first see that
$E$ is purely 1-dimensional, and then we see
that $G$-twisted stable.
\begin{NB}
$E$ is generated by $G^n$, which implies that
$\Hom(G,E) \ne 0$.
\end{NB}

(2) follows from (1) and $\chi({\cal O}_{Z_y})=1$.
\end{proof}

\begin{lem}\label{lem:tilting:G-stable}
Let $E$ be a $G$-twisted stable purely 1-dimension sheaf such that 
$\pi(\Supp(E)) =\{y \}$ and $\chi(G,E)=0$.
Then $E \cong A_y \otimes {\cal O}_{C_{yj}}(-1)
\cong {\cal O}_{C_{yj}}(b_{yj})$.
\end{lem}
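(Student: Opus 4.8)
The plan is to follow the proof of Lemma \ref{lem:tilting:Rpi*E=0} (1), replacing the Euler characteristic $\chi$ by the twisted one $\chi(G,-)$ and the structure sheaves ${\cal O}_D$ by their twists $A_y \otimes {\cal O}_C$. First I would fix a point $x \in \Supp(E)$ and produce a non-trivial extension
\begin{equation}
0 \to E \to F \to {\Bbb C}_x \to 0.
\end{equation}
Exactly as in Lemma \ref{lem:tilting:Rpi*E=0} (1), since $E$ is purely $1$-dimensional we have $\depth_{{\cal O}_{X,x}} E_x = 1$, so the projective dimension of $E$ at $x$ equals $n-1$ with $n = \dim X$; using ${\bf R}{\cal H}om_{{\cal O}_X}({\Bbb C}_x,{\cal O}_X) = {\Bbb C}_x[-n]$ one gets ${\cal E}xt^1_{{\cal O}_X}({\Bbb C}_x,E) = {\cal T}or_{n-1}^{{\cal O}_X}({\Bbb C}_x,E) \ne 0$, hence $\Ext^1({\Bbb C}_x,E) \ne 0$ and a non-split $F$ exists. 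Since $E$ is purely $1$-dimensional and the extension is non-split, $F$ is purely $1$-dimensional as well.

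Next I would analyse $F$. Because $\Supp(F) \subset Z_y$, Lemma \ref{lem:rk(G)|chi(G,E)} shows that $\chi(G,-)$ takes values in ${\Bbb Z}\rk G$ on every subsheaf and quotient of $F$, while $\chi(G,F) = \chi(G,E) + \chi(G,{\Bbb C}_x) = \rk G$ is the minimal positive such value. I claim $F$ is $G$-twisted stable. If $F$ failed to be semistable, its minimal destabilizing stable quotient $F \to F'$ would satisfy $\chi(G,F') \le 0$, and then the composite $E \to F'$ is either zero, forcing $F \to F'$ to factor through the $0$-dimensional ${\Bbb C}_x$ and hence vanish, or nonzero, in which case the stability of $E$ and $F'$ together with $\End(E) = {\Bbb C}$ forces $E \cong F'$ and a splitting of the extension; both are contradictions. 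Strict semistability is then excluded since a proper $1$-dimensional subsheaf $F' \subset F$ has $0 < (c_1(F'),L) < (c_1(F),L)$, so equality of twisted slopes would require $(c_1(F'),L)/(c_1(F),L) \in {\Bbb Z}$ by the divisibility of $\chi(G,-)$ by $\rk G$, which is impossible. Thus $F$ is $G$-twisted stable, and Lemma \ref{lem:tilting:chi=rkG} (1) yields a subscheme $C \subset Z_y$ with $\chi({\cal O}_C)=1$ and $F \cong A_y \otimes {\cal O}_C$.

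Finally I would extract the conclusion. Choosing an irreducible component $C_{yj} \subset C$ through $x$, the quotient $F \to {\Bbb C}_x$ factors as $F \cong A_y \otimes {\cal O}_C \to A_y \otimes {\cal O}_{C_{yj}} \to A_y \otimes {\Bbb C}_x = {\Bbb C}_x$. Comparing the two short exact sequences obtained by taking kernels of the maps to ${\Bbb C}_x$, and using that $A_y \otimes {\cal O}_C \to A_y \otimes {\cal O}_{C_{yj}}$ is surjective, I get a surjection
\begin{equation}
E \to A_y \otimes {\cal O}_{C_{yj}}(-1) \cong {\cal O}_{C_{yj}}(b_{yj}),
\end{equation}
where the isomorphism uses $A_{y|C_{yj}} \cong {\cal O}_{C_{yj}}(b_{yj}+1)$ and $C_{yj} \cong {\Bbb P}^1$. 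By Assumption \ref{ass:2} the target satisfies $\chi(G,{\cal O}_{C_{yj}}(b_{yj})) = 0 = \chi(G,E)$, so it has the same twisted slope as $E$; since $E$ is $G$-twisted stable and purely $1$-dimensional, a surjection onto a $1$-dimensional sheaf of equal twisted slope must have $0$-dimensional kernel and hence be an isomorphism. This gives $E \cong {\cal O}_{C_{yj}}(b_{yj}) \cong A_y \otimes {\cal O}_{C_{yj}}(-1)$, as desired.

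I expect the main obstacle to be the middle step, namely verifying that the extension $F$ is genuinely $G$-twisted \emph{stable} rather than merely semistable. In the untwisted case the analogous input is that $\chi(F)=1$ is coprime to the degree; here the corresponding role is played by Lemma \ref{lem:rk(G)|chi(G,E)}, and one must check carefully that the divisibility $\chi(G,-) \in {\Bbb Z}\rk G$ together with $\chi(G,F) = \rk G$ rules out every proper destabilizing subsheaf, and that in the non-semistable case the comparison map $E \to F'$ really forces the contradiction through $\End(E) = {\Bbb C}$.
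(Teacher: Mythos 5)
Your proof is correct and follows essentially the same route as the paper's: a non-split extension $0 \to E \to F \to {\Bbb C}_x \to 0$, stability of $F$ via the divisibility $\chi(G,-) \in {\Bbb Z}\rk G$ and the comparison map $E \to F'$, identification $F \cong A_y \otimes {\cal O}_C$ by Lemma \ref{lem:tilting:chi=rkG}, and then restriction to an irreducible component through $x$ to produce a surjection $E \to A_y \otimes {\cal O}_{C_{yj}}(-1)$ that stability forces to be an isomorphism. The only difference is that you spell out the passage from semi-stability to stability of $F$, which the paper leaves implicit.
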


\begin{proof}
We set $n:=\dim X$. We take a point $x \in \Supp(E)$.
Then
${\cal E}xt^1_{{\cal O}_X}({\Bbb C}_x,E)
={\Bbb C}_x \overset{{\bf L}}{\otimes} E[-n+1]$.
Since $E$ is purely 1-dimensional,
$\depth_{{\cal O}_{X,x}}E_x=1$. Hence
the projective dimension of $E$ at $x$ is $n-1$.
Then ${\cal T}or_{n-1}^{{\cal O}_X}({\Bbb C}_x,E)=
H^0({\Bbb C}_x \overset{{\bf L}}{\otimes} E[-n+1]) \ne 0$.
Since $\Ext^1({\Bbb C}_x,E)=
H^0(X,{\cal E}xt^1_{{\cal O}_X}({\Bbb C}_x,E)) \ne 0$,
we can take a non-trivial extension
\begin{equation}
0 \to E \to F \to {\Bbb C}_x \to 0.
\end{equation}
If $F$ is not $G$-twisted semi-stable,
then since $\chi(G,F)=\rk G$, 
there is a quotient $F \to F'$ of $F$ such that
$F'$ is a $G$-twisted stable sheaf with $\chi(G,F') \leq 0$.
Then $E \to F'$ is an isomorphism, which is a contradiction.
By Lemma \ref{lem:tilting:chi=rkG}, $F$ is a quotient of
$A_y$. Thus we may write
$F=A_y \otimes {\cal O}_D$, where $D$ is a subscheme of $Z_y$.
We take an integral curve $C \subset D$ containing $x$.
Since ${\cal O}_D \to {\Bbb C}_x$ factor through ${\cal O}_C$,
we have a surjective homomorphism
$E \to A_y \otimes {\cal O}_C(-1)$. By the stability of $E$,
$E \cong A_y \otimes {\cal O}_C(-1)$.  
\end{proof}

\begin{lem}\label{lem:tilting:G-1Rpi*E=0}
Let $E$ be a 1-dimensional sheaf such that
$\chi(G,E)=0$ and $\pi(\Supp(E))=\{ y\}$.
Then the following conditions are equivalent.
\begin{enumerate}
\item[(1)]
${\bf R}\pi_*(G^{\vee} \otimes E)=0$. 
\item[(2)]
$E$ is a $G$-twisted semi-stable 1-dimensional sheaf
with $\pi(\Supp(E))=\{ y\}$.
\item[(3)]
$E$ is a successive extension of
$A_y \otimes {\cal O}_{C_{yj}}(-1)$, $1 \leq j \leq s_y$.
\end{enumerate}
\end{lem}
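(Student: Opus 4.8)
The plan is to prove the statement by establishing the cycle of implications $(1)\Leftrightarrow(2)$, $(2)\Rightarrow(3)$, and $(3)\Rightarrow(1)$, drawing entirely on the structural lemmas already available. No new geometric input is needed; the work consists of matching $E$ to the hypotheses of the cited results and handling the bookkeeping of filtrations.

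First, the equivalence $(1)\Leftrightarrow(2)$ is essentially Lemma \ref{lem:characterize}. Under Assumption \ref{ass:2} we have $R^1\pi_*(G^{\vee}\otimes G)=0$, and by hypothesis $E$ is a $1$-dimensional sheaf with $\Supp(E)\subset Z_y$ and $\chi(G,E)=0$, which is exactly the setting of that lemma. Hence ${\bf R}\pi_*(G^{\vee}\otimes E)=0$ holds if and only if $E$ is $G$-twisted semi-stable with respect to the ample divisor $L$; note that semi-stability forces purity, so condition (2) is consistent. This gives $(1)\Leftrightarrow(2)$ with no further argument.

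For $(3)\Rightarrow(1)$, I would observe that each building block satisfies $A_y\otimes{\cal O}_{C_{yj}}(-1)\cong{\cal O}_{C_{yj}}(b_{yj})$, and by Assumption \ref{ass:2} we have ${\bf R}\pi_*(G^{\vee}\otimes{\cal O}_{C_{yj}}(b_{yj}))=0$. Since $G$ is locally free, $G^{\vee}\otimes(-)$ is exact, so a short exact sequence $0\to F_{k-1}\to F_k\to A_y\otimes{\cal O}_{C_{yj}}(-1)\to 0$ tensored by $G^{\vee}$ yields an exact triangle; applying the triangulated functor ${\bf R}\pi_*$ and inducting on the length of a successive-extension presentation of $E$ propagates the vanishing ${\bf R}\pi_*(G^{\vee}\otimes F_k)=0$ up to $F_s=E$.

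Finally, for $(2)\Rightarrow(3)$, I would take a Jordan--H\"older filtration of the $G$-twisted semi-stable sheaf $E$ in the category of $G$-twisted semi-stable $1$-dimensional sheaves. Each subquotient has support contained in $\Supp(E)\subset Z_y$, and is $G$-twisted stable and purely $1$-dimensional. The key point is that $\chi(G,E)=0$ makes the constant term of the $G$-twisted Hilbert polynomial vanish, so every stable factor $F$ inherits the reduced Hilbert polynomial of $E$ and therefore satisfies $\chi(G,F)=0$ as well. Lemma \ref{lem:tilting:G-stable} then identifies each such $F$ with $A_y\otimes{\cal O}_{C_{yj}}(-1)$ for some $1\leq j\leq s_y$, exhibiting $E$ as a successive extension of these objects. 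The main obstacle is precisely this last bookkeeping step: confirming that $G$-twisted semi-stability together with $\chi(G,E)=0$ forces $\chi(G,F)=0$ on every Jordan--H\"older factor, which is where the precise definition of $G$-twisted stability for $1$-dimensional sheaves enters; once this is checked, all three implications close formally.
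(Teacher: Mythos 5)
Your proof is correct and follows essentially the same route as the paper: the equivalence $(1)\Leftrightarrow(2)$ is exactly Lemma \ref{lem:characterize}, and the identification of the Jordan--H\"older factors via Lemma \ref{lem:tilting:G-stable} is precisely how the paper obtains $(2)\Leftrightarrow(3)$. The only (harmless) difference is that you close the cycle with a direct induction for $(3)\Rightarrow(1)$ using Assumption \ref{ass:2}, whereas the paper deduces $(3)\Rightarrow(2)$ from the fact that extensions of semi-stable sheaves of the same slope are semi-stable; both are one-line observations.
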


\begin{proof}
Lemma \ref{lem:characterize} gives the equivalence of (1) and (2).
The equivalence of (2) and (3) follows from 
Lemma \ref{lem:tilting:G-stable}. 
\end{proof}

\begin{lem}\label{lem:tilting:G-1Per-S}
Let $E$ be a 1-dimensional sheaf such that $\pi_*(G,E)=0$.
Then there is a homomorphism $E \to A_y \otimes {\cal O}_{C_{yj}}(-1)$.
In particular, $E$ is generated by subsheaves of 
$A_y \otimes {\cal O}_{C_{yj}}(-1)$,
$y \in Y$, $1 \leq j \leq s_y$.
\end{lem}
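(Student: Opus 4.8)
The plan is to transport the proof of the untwisted statement (the case $G={\cal O}_X$, whose relevant simple objects are the ${\cal O}_{C_{yj}}(-1)$) to the present $G$-twisted setting, with $A_y\otimes{\cal O}_{C_{yj}}(-1)\cong{\cal O}_{C_{yj}}(b_{yj})$ playing the role of the $G$-twisted stable sheaves of $G$-twisted slope $0$ supported on a single component (Lemma~\ref{lem:tilting:G-stable}), and $\chi(G,\,\cdot\,)$ playing the role of $\chi$. Throughout I use the $G$-twisted stability with respect to the fixed ample $L$, as in Lemmas~\ref{lem:tilting:chi=rkG}, \ref{lem:tilting:G-stable} and \ref{lem:tilting:G-1Rpi*E=0}, together with the divisibility $\chi(G,F)\in\rk G\cdot{\Bbb Z}$ of Lemma~\ref{lem:rk(G)|chi(G,E)}.

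First I would make two reductions. Since $G^\vee$ is locally free, $G^\vee\otimes E$ is again $1$-dimensional with $\Supp(G^\vee\otimes E)=\Supp(E)$; as the hypothesis $\pi_*(G^\vee\otimes E)=0$ forbids any component of $\Supp(E)$ from dominating a curve in $Y$, we get $\Supp(E)\subset\pi^{-1}(Y_\pi)$, and since the fibres $Z_y$ are disjoint we may assume $\Supp(E)\subset Z_y$ for a single $y$. Moreover $\pi_*(G^\vee\otimes E)=0$ means $\Hom(G,E)=0$, and $H^i(X,G^\vee\otimes E)=0$ for $i\ge 2$ by Grothendieck vanishing (the support is $1$-dimensional), so $\chi(G,E)=-\dim_{\Bbb C}H^1(X,G^\vee\otimes E)\le 0$; by Lemma~\ref{lem:rk(G)|chi(G,E)} in fact $\chi(G,E)\in\rk G\cdot{\Bbb Z}_{\le 0}$. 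I would then argue by induction on $-\chi(G,E)\ge 0$.

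For the base case $\chi(G,E)=0$: since $\Supp(E)\subset Z_y$, the sheaf $R^1\pi_*(G^\vee\otimes E)$ is supported at the point $y$, so $\chi(G,E)=-\dim_{\Bbb C}R^1\pi_*(G^\vee\otimes E)=0$ forces $R^1\pi_*(G^\vee\otimes E)=0$, whence ${\bf R}\pi_*(G^\vee\otimes E)=0$. By Lemma~\ref{lem:tilting:G-1Rpi*E=0} the sheaf $E$ is then a successive extension of the $A_y\otimes{\cal O}_{C_{yj}}(-1)$, so the last quotient gives a surjection $E\twoheadrightarrow A_y\otimes{\cal O}_{C_{yj}}(-1)$, i.e.\ the required homomorphism. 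For the inductive step $\chi(G,E)<0$, I would first reduce to $E$ being $G$-twisted stable: a $G$-twisted stable quotient $E\twoheadrightarrow E'$ of minimal $G$-twisted reduced Hilbert polynomial has $\chi(G,E')\le 0$ and $\Supp(E')\subset Z_y$, and a homomorphism $E'\to A_y\otimes{\cal O}_{C_{yj}}(-1)$ composes to one on $E$. For $E$ stable with $\chi(G,E)<0$, I would pick $x\in\Supp(E)$ and form a non-split extension $0\to E\to F\to{\Bbb C}_x\to 0$, non-split because $\Ext^1({\Bbb C}_x,E)\ne 0$ by the depth/purity computation of Lemma~\ref{lem:tilting:G-stable}; then $\chi(G,F)=\chi(G,E)+\rk G$ and $F$ is purely $1$-dimensional, and the reduced Hilbert polynomial comparison produces a $G$-twisted stable quotient $E'$ of $F$ with $\chi(G,E)<\chi(G,E')<0$ and a generically surjective map $E\to E'$, exactly as in the untwisted case. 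Since $E'$ is stable with $\chi(G,E')<0$, the estimate below gives $\pi_*(G^\vee\otimes E')=0$, so the induction hypothesis applies to $E'$ and, composing with the generically surjective $E\to E'$ (whose target is pure), yields a nonzero homomorphism for $E$. The ``generated by subsheaves'' statement then follows by induction on the length of $E$: the image of $E\to A_y\otimes{\cal O}_{C_{yj}}(-1)$ is a subsheaf of the target and a quotient of $E$, while its kernel is again a subsheaf of $E$ with $\pi_*(G^\vee\otimes\ker)=0$.

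The one step genuinely requiring the twisted input, and replacing the fact used silently in the untwisted proof, is the estimate: a $G$-twisted stable $E'$ supported on $Z_y$ with $\chi(G,E')\le 0$ satisfies $\Hom(G,E')=0$. The point is that the torsion class $\{F\mid R^1\pi_*(G^\vee\otimes F)=0\}$ is closed under quotients, since $R^2\pi_*=0$ when $\dim\pi^{-1}(y)\le 1$; hence the image $I$ of any nonzero $G\to E'$, being a quotient of $G$, satisfies $R^1\pi_*(G^\vee\otimes I)=0$, so $\chi(G,I)=\dim_{\Bbb C}\pi_*(G^\vee\otimes I)\ge\rk G>0$ by Lemma~\ref{lem:rk(G)|chi(G,E)} (the section of $\pi_*(G^\vee\otimes I)$ coming from $1\in{\cal A}$ is nonzero). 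Thus the subsheaf $I\subset E'$ has $G$-twisted reduced Hilbert polynomial $\chi(G,I)/(\rk G\,(c_1(I),L))>0$, whereas that of $E'$ is $\le 0$, contradicting stability. I expect the bulk of the remaining work to be making the reduced Hilbert polynomial bookkeeping of the extension step fully rigorous in the $G$-twisted normalisation; the support reduction, the base case, and this last estimate are comparatively routine.
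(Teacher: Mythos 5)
Your proposal is correct and follows essentially the same route as the paper's proof: reduce to support in a single fibre, handle $\chi(G,E)=0$ via Lemma~\ref{lem:tilting:G-1Rpi*E=0}, reduce to a $G$-twisted stable $E$ by passing to the minimal stable quotient, and then induct on $\chi(G,E)$ using a non-split extension $0\to E\to F\to{\Bbb C}_x\to 0$. The only addition is that you spell out the auxiliary fact that a $G$-twisted stable $1$-dimensional sheaf on a fibre with $\chi(G,\cdot)\le 0$ has $\Hom(G,\cdot)=0$, which the paper uses implicitly; your justification via the torsion class being closed under quotients is a harmless variant of the paper's evaluation-map argument.
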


\begin{proof}
Since $\pi(\Supp(E))$ is 0-dimensional,
we have a decomposition
$E=\oplus_i E_i$, $\Supp(E_i) \cap \Supp(E_j)=\emptyset$,
$i \ne j$.
So we may assume that
$\pi(\Supp(E))$ is a point.
We note that $\chi(G,E) \leq 0$.
If $\chi(G,E)=0$, then $\chi(R^1 \pi_*(G^{\vee} \otimes E))=0$.
Since $\dim E=1$ and $\pi_*(G^{\vee} \otimes E)=0$,
we get $\dim \pi(\Supp(E))=0$.
Then we have 
$R^1 \pi_*(G^{\vee} \otimes E)=0$. Hence the claim follows from
Lemma \ref{lem:tilting:G-1Rpi*E=0}.
We assume that $\chi(G,E)<0$. 
Let 
\begin{equation}
0 \subset F_1 \subset F_2 \subset \cdots \subset F_s=E
\end{equation}
be a filtration such that
$E_i:=F_i/F_{i-1}$, $1 \leq i \leq s$ are $G$-twisted stable
and $\chi(G,E_i)/(c_1(E_i),L) \leq \chi(G,E_{i-1})/(c_1(E_{i-1}),L)$,
where $L$ is an ample divisor on $X$.
Since $\pi_*(G^{\vee} \otimes E)=0$ for
any $G$-twisted stable 1-dimensional sheaf $E$ 
on a fiber with $\chi(G,E) \leq 0$,
\begin{NB}
If $\pi_*(G^{\vee} \otimes F) \ne 0$,
then $E:=\im(\pi^{-1}(\pi_*(G^{\vee} \otimes F)) 
\otimes_{\pi^{-1}({\cal A})}G \to F)$ satisfies $\Hom(G,E) \ne 0$
and $R^1 \pi_*(G^{\vee} \otimes E)=0$.
\end{NB}
replacing $E$ by a $G$-twisted stable sheaf $E_s$,
we may assume that $E$ is $G$-twisted stable.
We take a non-trivial extension
\begin{equation}
0 \to E \to F \to {\Bbb C}_x \to 0.
\end{equation}
Then $F$ is purely 1-dimensional and
$\chi(G,F)=\chi(G,E)+\rk G \leq 0$ by
Lemma \ref{lem:rk(G)|chi(G,E)}.
Assume that there is a quotient $F \to F'$ of $F$ such that
$F'$ is a $G$-twisted stable sheaf with
$\chi(G,F')/(c_1(F'),L)<\chi(G,F)/(c_1(F),L) \leq 0$.
Then $\phi:E \to F'$ is surjective over $X \setminus \{ x \}$.
Hence $\chi(G,F')/(c_1(F'),L) \geq 
\chi(G,\im \phi)/(c_1(\im \phi),L) \geq \chi(G,E)/(c_1(E),L)$.
Since $(c_1(F'),L) \leq (c_1(F),L)=(c_1(E),L)$, 
we get $\chi(G,F') \geq \chi(G,E)(c_1(F'),L)/(c_1(E),L) \geq
\chi(G,E)$.
If $\chi(G,F')=\chi(G,E)$, then $\phi$ is an isomorphism.
Since the extension is non-trivial, this is a contradiction.
Therefore $F$ is $G$-twisted semi-stable or $\chi(G,F')>\chi(G,E)$.
Thus we get a homomorphism $\psi:E \to E'$
such that $E'$ is a stable sheaf with $\chi(G,E)<\chi(G,E')<0$
and $\psi$ is surjective in codimension $n-1$. 
By the induction on $\chi(G,E)$, we get the claim.
\end{proof}

\begin{lem}\label{lem:G-1per:characterize}
For a point $y \in Y_\pi$,
let $E$ be a 1-dimensional sheaf on $X$ satisfying the following two
conditions:
\begin{enumerate}
\item
 $\Hom(E,A_y \otimes {\cal O}_{C_{yj}}(-1))=
\Ext^1(E,A_y \otimes {\cal O}_{C_{yj}}(-1))=0$
for all $j$.
\item
There is an exact sequence
\begin{equation}
0 \to F \to E \to {\Bbb C}_x \to 0
\end{equation} 
such that $F$ is a $G$-twisted semi-stable 1-dimensional sheaf with
$\pi(\Supp(F))=\{y \}$, $\chi(G,F)=0$ and $x \in Z_y$.
\end{enumerate}
Then $E \cong A_y$. 
Conversely, $E:=A_y$ satisfies (i) and (ii).
\end{lem}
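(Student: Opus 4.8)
The plan is to mirror the untwisted characterization (the analogue for ${\cal O}_{Z_y}$), systematically replacing ${\cal O}_{Z_y}$ by $A_y$, the sheaves ${\cal O}_{C_{yj}}(-1)$ by $A_y\otimes{\cal O}_{C_{yj}}(-1)\cong{\cal O}_{C_{yj}}(b_{yj})$, and the pullback $\pi^*\pi_*(-)$ by the evaluation map $\pi^{-1}(\pi_*(G^{\vee}\otimes-))\otimes_{\pi^{-1}({\cal A})}G\to-$ of Lemma \ref{lem:tilting:C(G)}. Throughout I use the defining properties of $A_y$: that $G^{\vee}\otimes A_y\cong{\cal O}_{Z_y}^{\oplus\rk G}$, whence (tensoring with the line bundle $A_y$ on $Z_y$ and dualizing) $G_{|Z_y}\cong A_y^{\oplus\rk G}$ and $\chi(G,A_y)=\rk G$; that $A_y$ is $G$-twisted stable (Lemma \ref{lem:tilting:chi=rkG}); and the vanishing ${\bf R}\pi_*(G^{\vee}\otimes{\cal O}_{C_{yj}}(b_{yj}))=0$ from Assumption \ref{ass:2}. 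I also use $\chi(G,{\Bbb C}_x)=\rk G$.

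I would treat the converse first. For (ii), take any $x\in Z_y$ and the restriction $A_y\twoheadrightarrow{\Bbb C}_x$, whose kernel is $F:=A_y\otimes I_{x/Z_y}$ with $I_{x/Z_y}$ the ideal of $x$ in $Z_y$. Then $G^{\vee}\otimes F\cong I_{x/Z_y}^{\oplus\rk G}$, and ${\bf R}\pi_*(I_{x/Z_y})=0$ (from $0\to I_{x/Z_y}\to{\cal O}_{Z_y}\to{\Bbb C}_x\to0$, since $\pi_*{\cal O}_{Z_y}\to\pi_*{\Bbb C}_x$ is the isomorphism ${\Bbb C}_y\to{\Bbb C}_y$ and $R^1\pi_*{\cal O}_{Z_y}=0$). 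Hence ${\bf R}\pi_*(G^{\vee}\otimes F)=0$ and $\chi(G,F)=0$, so $F$ is $G$-twisted semi-stable by Lemma \ref{lem:tilting:G-1Rpi*E=0}, giving (ii). For (i), using $G_{|Z_y}\cong A_y^{\oplus\rk G}$ and $G^{\vee}\otimes{\cal O}_{C_{yj}}(b_{yj})\cong{\cal O}_{C_{yj}}(-1)^{\oplus\rk G}$ I compute, for $i=0,1$, that $\Ext^i_X(A_y,{\cal O}_{C_{yj}}(b_{yj}))^{\oplus\rk G}\cong\Ext^i_X(G_{|Z_y},{\cal O}_{C_{yj}}(b_{yj}))\cong\Ext^i_X({\cal O}_{Z_y},G^{\vee}\otimes{\cal O}_{C_{yj}}(b_{yj}))\cong\Ext^i_X({\cal O}_{Z_y},{\cal O}_{C_{yj}}(-1))^{\oplus\rk G}=0$ by Lemma \ref{lem:tilting:TFF}; hence (i).

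For the forward direction, given (ii), Lemma \ref{lem:tilting:G-1Rpi*E=0} gives ${\bf R}\pi_*(G^{\vee}\otimes F)=0$, so applying $\pi_*(G^{\vee}\otimes-)$ to the sequence yields $\pi_*(G^{\vee}\otimes E)\cong\pi_*(G^{\vee}\otimes{\Bbb C}_x)$. Consequently the source of the evaluation map $\phi\colon\pi^{-1}(\pi_*(G^{\vee}\otimes E))\otimes_{\pi^{-1}({\cal A})}G\to E$ is exactly $A_y$ (Lemma \ref{lem:tilting:A_y}), and $\pi_*(G^{\vee}\otimes\phi)$ is an isomorphism (Lemma \ref{lem:tilting:C(G)}). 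I next show $\phi$ is surjective: its cokernel $C$ satisfies $\pi_*(G^{\vee}\otimes C)=0$, so its maximal $0$-dimensional subsheaf vanishes and $C$ is purely $1$-dimensional; if $C\ne0$ then Lemma \ref{lem:tilting:G-1Per-S} produces a nonzero map $C\to A_y\otimes{\cal O}_{C_{yj}}(-1)$, and precomposing with $E\twoheadrightarrow C$ contradicts (i). Thus $\phi\colon A_y\to E$ is surjective; set $K:=\ker\phi$.

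Finally, from $0\to K\to A_y\to E\to0$ and $\chi(G,E)=\chi(G,F)+\chi(G,{\Bbb C}_x)=\rk G=\chi(G,A_y)$ I get $\chi(G,K)=0$; since $\pi_*(G^{\vee}\otimes\phi)$ is an isomorphism and $R^1\pi_*(G^{\vee}\otimes A_y)=0$, the long exact sequence forces ${\bf R}\pi_*(G^{\vee}\otimes K)=0$. As $K$ is $1$-dimensional on $Z_y$ with $\chi(G,K)=0$, Lemma \ref{lem:tilting:G-1Rpi*E=0} shows $K$ is a successive extension of the sheaves $A_y\otimes{\cal O}_{C_{yj}}(-1)$. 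Then (i) together with d\'evissage gives $\Ext^1_X(E,K)=0$, so the sequence splits: $A_y\cong E\oplus K$. But $A_y$ is $G$-twisted stable, hence indecomposable, and $E\ne0$, so $K=0$ and $E\cong A_y$. I expect the main obstacle to be precisely this forward step: coupling condition (i) with Lemma \ref{lem:tilting:G-1Per-S} to force surjectivity of the evaluation map, and then pinning down ${\bf R}\pi_*(G^{\vee}\otimes K)=0$ (via the isomorphism $\pi_*(G^{\vee}\otimes\phi)$ of Lemma \ref{lem:tilting:C(G)}) so that Lemma \ref{lem:tilting:G-1Rpi*E=0} identifies the structure of $K$ and the d\'evissage splitting can run. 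The converse's $\Ext$-vanishing and the indecomposability of $A_y$ are comparatively routine.
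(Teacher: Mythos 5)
Your proof is correct and follows essentially the same route as the paper's: verify the converse via Lemma \ref{lem:tilting:G-1Rpi*E=0} and Lemma \ref{lem:tilting:TFF}, then for the forward direction identify the source of the evaluation map with $A_y$, force its surjectivity from (i), recognize the kernel as a successive extension of the $A_y\otimes{\cal O}_{C_{yj}}(-1)$ via Lemma \ref{lem:tilting:G-1Rpi*E=0}, and split off $E$ using the $\Ext^1$-vanishing in (i) together with the stability of $A_y$. You fill in several details the paper leaves implicit (e.g.\ the surjectivity step via Lemma \ref{lem:tilting:G-1Per-S}), but the argument is the same.
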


\begin{proof}
We first prove that $A_y$ satisfies
(i) and (ii).
For the exact sequence
\begin{equation}
0 \to F' \to A_y \to {\Bbb C}_x \to 0, 
\end{equation}
we have ${\bf R}\pi_*(G,F')=0$.
Hence (ii) holds by Lemma \ref{lem:tilting:G-1Rpi*E=0}.
(i) follows from Lemma \ref{lem:tilting:TFF}.
Conversely we assume that $E$ satisfies (i) and (ii).
By (ii), $\pi_*(G^{\vee} \otimes E) \cong 
\pi_*(G^{\vee} \otimes {\Bbb C}_x)$
and $R^1 \pi_*(G^{\vee} \otimes E)=0$.
By (i), Lemma \ref{lem:tilting:C(G)} and
Lemma \ref{lem:tilting:G-1Rpi*E=0},
$\pi^{-1}(\pi_*(G^{\vee} \otimes E)) 
\otimes_{\pi^{-1}({\cal O}_Y)} G \to E$ is surjective.
Hence we have an exact sequence
\begin{equation}
0 \to F' \to A_y \to E \to 0,
\end{equation}
where $F'$ is a $G$-twisted semi-stable 1-dimensional sheaf with 
$\chi(G,F')=0$.
Since $\Ext^1(E,A_y \otimes {\cal O}_{C_{yj}}(-1))=0$
for all $j$,
$A_y  \cong E \oplus F'$, which implies that 
$A_y \cong E$.
\end{proof}

We set
\begin{equation}
E_{yj}:=
\begin{cases}
A_y,&j=0,\\
A_y \otimes {\cal O}_{C_{yj}}(-1)[1],& j>0.\\
\end{cases}
\end{equation}
\begin{prop}(\cite{VB})\label{prop:tilting:G-1Per-irred}
\begin{enumerate}
\item[(1)]
$E_{yj}$, $j=0,...,s_y$ are irreducible objects of ${\cal C}(G)$. 
\item[(2)]
${\Bbb C}_x$, $x \in \pi^{-1}(y)$ is generated by
$E_{yj}$. In particular, irreducible objects of ${\cal C}(G)$
are 
\begin{equation}
{\Bbb C}_x, ( x \in X \setminus \pi^{-1}(Y_\pi)),\;\; 
E_{yj}, (y \in Y_\pi,j=0,1,...,s_y).
\end{equation}  
\end{enumerate}
\end{prop}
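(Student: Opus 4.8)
The plan is to run everything through the functor $\Psi:={\bf R}\pi_*(G^{\vee}\otimes-):{\cal C}(G)\to\Coh_{{\cal A}_0}(Y)$ of Proposition~\ref{prop:tilting:contraction}, where ${\cal A}_0:=\pi_*(G^{\vee}\otimes G)$. First I would note that $\Psi$ is exact: for $E\in{\cal C}(G)$ the spectral sequence $R^p\pi_*(G^{\vee}\otimes H^q(E))\Rightarrow H^{p+q}(\Psi(E))$ degenerates because $\dim\pi^{-1}(y)\le 1$, and $H^{-1}(E)\in S$, $H^0(E)\in T$ force $\Psi(E)$ into degree $0$, exactly as in Lemma~\ref{lem:tilting:C}; hence a short exact sequence in ${\cal C}(G)$ maps to one in $\Coh_{{\cal A}_0}(Y)$. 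Two further facts will be the engine of the proof. (F1) The module $\Psi(E_{y0})=\Psi(A_y)=\pi_*(G^{\vee}\otimes{\Bbb C}_x)$ is \emph{simple}: by Lemma~\ref{lem:tilting:pull-back}(2) one has $G^{\vee}\otimes G\cong\pi^*{\cal A}_0$, so the fibre ${\cal A}_0\otimes{\Bbb C}_y\cong\End(G_x)\cong M_{\rk G}({\Bbb C})$, and $\pi_*(G^{\vee}\otimes{\Bbb C}_x)$, being killed by the maximal ideal of $y$ and of dimension $\rk G$, is the standard simple module over this matrix algebra. (F2) For $E\in{\cal C}(G)$ one has $\Psi(E)=0$ if and only if $E=E'[1]$ with $E'\in S_0$; this follows from the exact sequence relating $\Psi(E)$ to $R^1\pi_*(G^{\vee}\otimes H^{-1}(E))$ and $\pi_*(G^{\vee}\otimes H^0(E))$ together with the definition of $T$. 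In particular $\Psi(E_{yj})=0$ for $j>0$, since $A_y\otimes{\cal O}_{C_{yj}}(-1)\in S_0$ by Lemma~\ref{lem:tilting:G-1Rpi*E=0}.

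For part (1) I would imitate the two halves of the $^{-1}\Per(X/Y)$ argument. Suppose $0\to E_1\to A_y\to E_2\to 0$ in ${\cal C}(G)$; the inclusion gives $H^{-1}(E_1)=0$, so $E_1\in T$, and applying $\Psi$ together with the simplicity (F1) yields $\Psi(E_1)=0$ or $\Psi(E_2)=0$. In the first case $E_1\in T$ with ${\bf R}\pi_*(G^{\vee}\otimes E_1)=0$ lies in $S\cap T=0$, so $E_1=0$. In the second case (F2) gives $E_2=E_2'[1]$ with $E_2'\in S_0$, and the quotient map is an element of $\Hom(A_y,E_2'[1])=\Ext^1(A_y,E_2')$; this group vanishes because $E_2'$ is a successive extension of the sheaves $A_y\otimes{\cal O}_{C_{yk}}(-1)$ (Lemma~\ref{lem:tilting:G-1Rpi*E=0}) and $\Ext^1(A_y,A_y\otimes{\cal O}_{C_{yk}}(-1))=0$ (Lemma~\ref{lem:G-1per:characterize}), so $E_2=0$. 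Thus $E_{y0}=A_y$ is irreducible. For $j>0$, any $0\to E_1\to E_{yj}\to E_2\to 0$ has $\Psi(E_{yj})=0$, hence $\Psi(E_1)=\Psi(E_2)=0$ and, by (F2), $E_i=E_i'[1]$ with $E_i'\in S_0$; desuspending gives an honest exact sequence $0\to E_1'\to A_y\otimes{\cal O}_{C_{yj}}(-1)\to E_2'\to 0$ of sheaves in $S_0$. Since $A_y\otimes{\cal O}_{C_{yj}}(-1)\cong{\cal O}_{C_{yj}}(b_{yj})$ is $G$-twisted stable with $\chi(G,A_y\otimes{\cal O}_{C_{yj}}(-1))=0$ (Lemma~\ref{lem:tilting:G-stable}), every nonzero proper subsheaf $E_1'$ has $\chi(G,E_1')<0$ and so cannot lie in $S_0$; therefore $E_1'=0$ and $E_{yj}$ is irreducible.

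For part (2) I would first check ${\Bbb C}_x\in{\cal C}(G)$ for every $x$: indeed ${\Bbb C}_x\in T$, since $R^1\pi_*(G^{\vee}\otimes{\Bbb C}_x)=0$ and $\Hom({\Bbb C}_x,c)=0$ for the purely $1$-dimensional objects $c\in S_0$. For $x\in C_{yj}$, tensoring $0\to{\cal O}_{C_{yj}}(-1)\to{\cal O}_{C_{yj}}\to{\Bbb C}_x\to 0$ with $A_y$ gives a sheaf sequence with $A_y\otimes{\cal O}_{C_{yj}}(-1)\in S$ and $A_y\otimes{\cal O}_{C_{yj}}\in T$, which reorganizes in ${\cal C}(G)$ into the extension $0\to A_y\otimes{\cal O}_{C_{yj}}\to{\Bbb C}_x\to E_{yj}\to 0$, making the role of the $E_{yj}$ visible. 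To obtain the full statement cleanly I would verify that ${\cal C}(G)$ satisfies the hypotheses of Lemma~\ref{lem:tilting:without}, namely $\#Y_\pi<\infty$, finite length of ${\cal C}_y$, ${\Bbb C}_x\in{\cal C}(G)$, and $\pi(\Supp(E))\subset Y_\pi$ for $E\in S$ (the last because $\pi_*(G^{\vee}\otimes E)=0$ forces every component of $\Supp(E)$ to be contracted). Lemma~\ref{lem:tilting:without} then yields the conclusions of Lemma~\ref{lem:tilting:irreducible}(3): ${\Bbb C}_x$ is generated by the irreducible objects supported on $\pi^{-1}(y)$, and the irreducibles of ${\cal C}(G)$ are these together with ${\Bbb C}_x$, $x\notin\pi^{-1}(Y_\pi)$. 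Finally I would identify the fibre-supported irreducibles with the $E_{yj}$ by re-running the dichotomy in the proof of Lemma~\ref{lem:tilting:irreducible}(3): a fibre-supported irreducible is either a coherent sheaf, forced by $\chi(G,-)=\rk G$ and $G$-twisted stability to be $A_y=E_{y0}$ (Lemmas~\ref{lem:rk(G)|chi(G,E)}, \ref{lem:tilting:chi=rkG}), or of the form $F[1]$ with $F\in S$, forced by Lemma~\ref{lem:tilting:G-1Per-S} to be some $A_y\otimes{\cal O}_{C_{yj}}(-1)[1]=E_{yj}$.

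I expect the crux to be fact (F1): the identification ${\cal A}_0\otimes{\Bbb C}_y\cong M_{\rk G}({\Bbb C})$ via $G^{\vee}\otimes G\cong\pi^*{\cal A}_0$, and hence the simplicity of the fibre module, is the one ingredient with no counterpart in the $G={\cal O}_X$ case and is exactly what lets $E_{y0}=A_y$ play the role of the structure sheaf ${\cal O}_{Z_y}$. The only other delicate point is the bookkeeping in part (2): expressing the single-component object $A_y\otimes{\cal O}_{C_{yj}}$ in terms of the whole-fibre object $A_y$ and the $E_{yk}$ uses the tree structure of $\pi^{-1}(y)$, and routing the argument through Lemma~\ref{lem:tilting:without} is precisely what avoids an explicit inductive decomposition along the components.
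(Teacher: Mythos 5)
Your proof is correct and follows essentially the same route as the paper: part (1) turns on the same underlying fact — that $\pi_*(G^{\vee}\otimes{\Bbb C}_x)$ has no proper nonzero ${\cal A}_0$-submodule, which the paper packages as ``$\rk G\mid\chi(G,E_1)$ plus a lifted morphism $A_y\to E_1$'' and you package as exactness of ${\bf R}\pi_*(G^{\vee}\otimes-)$ plus simplicity of the fibre module — together with the vanishing and generation statements of Lemmas \ref{lem:tilting:G-1Rpi*E=0}, \ref{lem:tilting:G-1Per-S} and \ref{lem:G-1per:characterize}, while part (2) is, exactly as in the paper, the general classification of irreducibles in the tilted category (Lemmas \ref{lem:tilting:irreducible}/\ref{lem:tilting:without}) combined with the extension $0\to A_y\otimes{\cal O}_{C_{yj}}\to{\Bbb C}_x\to E_{yj}\to 0$. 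The one point worth tightening is (F1): what your argument actually needs is only the surjectivity of ${\cal A}_0\otimes{\Bbb C}_y\to\End(G_x)$, which follows from $G^{\vee}\otimes G\cong\pi^*{\cal A}_0$ and $h^0({\cal O}_{Z_y})=1$ (Lemma \ref{lem:tilting:pull-back}); the isomorphism is then automatic because ${\cal A}_0$ is locally free of rank $(\rk G)^2$.
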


\begin{proof}
(1)
Assume that there is an exact sequence in ${\cal C}(G)$:
\begin{equation}
0 \to E_1 \to A_y \to E_2 \to 0.
\end{equation}
Since $H^{-1}(E_1)=0$,
$E_1 \in T$ and $\pi_*(G^{\vee} \otimes E_1) 
\cong \pi_*(G^{\vee} \otimes A_y)={\Bbb C}_y^{\oplus \rk G}$.
\begin{NB}
$\rk G |\chi(G,E_1)$.
\end{NB}
Hence we have a non-zero morphism $A_y \to E_1$.
Since $\Hom(A_y,A_y) \cong {\Bbb C}$,
$E_1 \cong A_y$ and $E_2=0$.
For $A_y \otimes {\cal O}_{C_{yj}}(-1)[1]$, assume that
there is an exact sequence in ${\cal C}(G)$:
\begin{equation}
0 \to E_1 \to A_y \otimes {\cal O}_{C_{yj}}(-1)[1] \to E_2 \to 0.
\end{equation}
Since $H^0(E_2)=0$, we have $E_2[-1] \in S$. Then
Lemma \ref{lem:tilting:G-1Per-S} implies that
we have a non-zero morphism $E_2 \to A_y \otimes {\cal O}_{C_{yj}}(-1)[1]$.
Since $\Hom(A_y \otimes {\cal O}_{C_{yj}}(-1)[1],
A_y \otimes {\cal O}_{C_{yj}}(-1)[1])
={\Bbb C}$, we get $E_1=0$.
Therefore $A_y \otimes {\cal O}_{C_{yj}}(-1)[1]$ is irreducible.
\end{proof}

We give a characterization of $T$.  
\begin{prop}\label{prop:tilting:Per-equiv}
\begin{enumerate}
\item[(1)]
For $E \in \Coh(X)$, the following are equivalent.
\begin{enumerate}
\item
$E \in T$.
\item
$\Hom(E,A_y \otimes {\cal O}_{C_{yj}}(-1))=0$ 
for all $y,j$.
\item
$\phi:\pi^{-1}(\pi_*(G^{\vee} \otimes E)) \otimes_{\pi^{-1}({\cal A})}
G \to E$ is surjective.
\end{enumerate}
\item[(2)]
If (c) holds, then
$\ker \phi \in S_0$.
\end{enumerate}
\end{prop}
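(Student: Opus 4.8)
The plan is to prove the cycle of implications $(a)\Rightarrow(b)\Rightarrow(c)\Rightarrow(a)$ in part (1), and then read off part (2) from the same analysis of the evaluation map. The two outer implications are essentially immediate from the definitions, so the real content lies in $(b)\Rightarrow(c)$, where the irreducibility machinery of Lemma \ref{lem:tilting:G-1Per-S} does the work.

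First I would record the identification $A_y\otimes{\cal O}_{C_{yj}}(-1)\cong{\cal O}_{C_{yj}}(b_{yj})$ supplied by Lemma \ref{lem:tilting:G-stable}, so that by Assumption \ref{ass:2} we have ${\bf R}\pi_*(G^{\vee}\otimes(A_y\otimes{\cal O}_{C_{yj}}(-1)))=0$, i.e. each such sheaf lies in $S_0$. Granting this, the implication $(a)\Rightarrow(b)$ is just the defining condition $\Hom(E,c)=0$, $c\in S_0$, of $T$ applied to $c=A_y\otimes{\cal O}_{C_{yj}}(-1)$.

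For $(b)\Rightarrow(c)$ I would apply Lemma \ref{lem:tilting:C(G)} to the evaluation map $\phi$. This yields the torsion decomposition $0\to\im\phi\to E\to\coker\phi\to 0$ with $\im\phi\in T$, $\coker\phi\in S$, and in particular $\pi_*(G^{\vee}\otimes\coker\phi)=0$. Suppose $\coker\phi\neq 0$. A $0$-dimensional subsheaf would push forward to a nonzero subsheaf of $\pi_*(G^{\vee}\otimes\coker\phi)$ by left-exactness of $\pi_*$, so $\coker\phi$ is purely $1$-dimensional and, since $\pi_*(G^{\vee}\otimes\coker\phi)=0$, it is supported on the contracted fibers. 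Hence Lemma \ref{lem:tilting:G-1Per-S} produces a nonzero homomorphism $\coker\phi\to A_y\otimes{\cal O}_{C_{yj}}(-1)$ for some $y,j$; composing with the surjection $E\to\coker\phi$ gives a nonzero map $E\to A_y\otimes{\cal O}_{C_{yj}}(-1)$, contradicting (b). Therefore $\coker\phi=0$ and $\phi$ is surjective.

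Finally, $(c)\Rightarrow(a)$ and part (2) both fall out of Lemma \ref{lem:tilting:C(G)}: if $\phi$ is surjective then $E=\im\phi\in T$, which is (a); and $\ker\phi$ satisfies ${\bf R}\pi_*(G^{\vee}\otimes\ker\phi)=0$, which is precisely $\ker\phi\in S_0$, giving (2). The main obstacle is to verify that the hypotheses of Lemma \ref{lem:tilting:G-1Per-S} genuinely hold for $\coker\phi$, namely that it is purely $1$-dimensional with $\pi_*(G^{\vee}\otimes\coker\phi)=0$; the vanishing is handed to us by Lemma \ref{lem:tilting:C(G)}(1), and the purity is the short left-exactness argument above. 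The remainder is routine bookkeeping with the long exact sequence of $\Hom$.
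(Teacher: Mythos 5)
Your proof is correct and matches the paper's (very terse) argument in substance: the paper derives (1) from Lemma \ref{lem:tilting:C(G)} together with Lemma \ref{lem:tilting:T}, and (2) from Lemma \ref{lem:tilting:C(G)}, i.e.\ exactly the decomposition $0\to\im\phi\to E\to\coker\phi\to 0$ of the evaluation map that you use. The only cosmetic difference is that for $(b)\Rightarrow(c)$ you kill $\coker\phi$ by applying Lemma \ref{lem:tilting:G-1Per-S} to it directly, whereas the paper's cited Lemma \ref{lem:tilting:T} reaches membership in $T$ via restriction to fibers and the theorem on formal functions; both routes rest on the same fact that a nonzero object of $S$ supported on the exceptional locus admits a nonzero map to some $A_y\otimes{\cal O}_{C_{yj}}(-1)$.
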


\begin{proof}
(1) is a consequence of 
Lemma \ref{lem:tilting:C(G)} and Lemma \ref{lem:tilting:T}.

(2) 
The claim follows from Lemma \ref{lem:tilting:C(G)}.
\begin{NB}
Assume that $\# Y_\pi <\infty$.
By Lemma \ref{lem:tilting:A-module},
we have an exact sequence
\begin{equation}
0 \to G^{\vee} \otimes \ker \phi \to \pi^*(\pi_*(G^{\vee} \otimes E))
\to G^{\vee} \otimes \im \phi \to 0.
\end{equation}
Since $\pi_*(\pi^*(\pi_*(G^{\vee} \otimes E))) \to
\pi_*(G^{\vee} \otimes \im \phi)$ is isomorphic
and $R^1 \pi_*(\pi^*(\pi_*(G^{\vee} \otimes E)))=0$,
${\bf R}\pi_*(G^{\vee} \otimes \ker \phi)=0$.
Therefore the claim follows from Lemma \ref{lem:tilting:G-1Rpi*E=0}.
\end{NB}
\end{proof}

We note that $G \otimes {\cal H}om_{{\cal O}_{Z_y}}(A_y,{\cal O}_{Z_y})
\cong {\cal O}_{Z_y}^{\oplus \rk G}$.
Then we have
${\cal H}om_{{\cal O}_{Z_y}}(A_y,{\cal O}_{Z_y})
\cong \pi^{-1}(\pi_*(G \otimes {\Bbb C}_x)) \otimes_{\pi^{-1}({\cal A})}
G^{\vee}$.
We set
\begin{equation}
E_{yj}^* :=
\begin{cases}
A_y \otimes \omega_{Z_y}[1],&j=0,\\
A_y \otimes {\cal O}_{C_{yj}}(-1),& j>0.\\
\end{cases}
\end{equation}
\begin{NB}
We set
\begin{equation}
E_{yj}' :=
\begin{cases}
{\cal H}om_{{\cal O}_{Z_y}}(A_y,{\cal O}_{Z_y}),&j=0,\\
{\cal H}om_{{\cal O}_{Z_y}}(A_y,{\cal O}_{Z_y}) \otimes 
{\cal O}_{C_{yj}}(-1)[1],& j>0
\end{cases}
\end{equation}
are irreducible objects of ${\cal C}(G^{\vee})$ and
$E_{yj}^*={\bf R}{\cal O}om_{{\cal O}_X}(E_{yj}',K_X)[n]$,
where $n=\dim X$.
\end{NB}
Then we also have the following.
\begin{prop}\cite{VB}\label{prop:tilting:G0-Per-irred}
\begin{enumerate}
\item[(1)]
$E_{yj}^*$, $j=0,...,s_y$ are irreducible objects of ${\cal C}(G)^*$. 
\item[(2)]
${\Bbb C}_x$, $x \in \pi^{-1}(y)$ is generated by
$E_{yj}^*$. In particular, irreducible objects of ${\cal C}(G)^*$
are 
\begin{equation}
{\Bbb C}_x, ( x \in X \setminus \pi^{-1}(Y_\pi)),\;\; 
E_{yj}^*, (y \in Y_\pi,j=0,1,...,s_y).
\end{equation}  
\end{enumerate}
\end{prop}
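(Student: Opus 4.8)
The plan is to deduce both assertions from the already-proved statement for $G^{\vee}$, namely Proposition \ref{prop:tilting:G-1Per-irred} applied with $G$ replaced by $G^{\vee}$, combined with the identification ${\cal C}(G)^*={\cal C}(G^{\vee})^D$ proved above and the duality description of the dual category in Proposition \ref{prop:tilting:S-T} (2). Throughout I set $n=\dim X$ and write $\Psi:=D_X(-)\otimes K_X[n]={\bf R}{\cal H}om_{{\cal O}_X}(-,\omega_X[n])$ for the Grothendieck--Serre duality functor, which is a contravariant auto-equivalence of ${\bf D}(X)$. By Lemma \ref{lem:tilting:dual} and Remark \ref{rem:tilting:dual} this functor carries ${\cal C}(G^{\vee})$ to ${\cal C}(G^{\vee})^D={\cal C}(G)^*$, so it suffices to track the irreducible objects and the generation filtration of ${\Bbb C}_x$ through $\Psi$.

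First I would record the irreducible objects of ${\cal C}(G^{\vee})$. Replacing $G$ by $G^{\vee}$ throughout subsection \ref{subsect:perverse-examples}, the line bundle $A_y$ is replaced by $A_y^{\vee}={\cal H}om_{{\cal O}_{Z_y}}(A_y,{\cal O}_{Z_y})\cong\pi^{-1}(\pi_*(G\otimes{\Bbb C}_x))\otimes_{\pi^{-1}({\cal A})}G^{\vee}$, exactly as noted just before the statement. Hence Proposition \ref{prop:tilting:G-1Per-irred} for $G^{\vee}$ gives that the irreducible objects of ${\cal C}(G^{\vee})$ are ${\Bbb C}_x$ ($x\in X\setminus\pi^{-1}(Y_\pi)$) together with $A_y^{\vee}$ and $A_y^{\vee}\otimes{\cal O}_{C_{yj}}(-1)[1]$ ($y\in Y_\pi$, $j>0$), and that for $x\in\pi^{-1}(y)$ the object ${\Bbb C}_x$ is generated by these.

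Next I would transport these objects through $\Psi$. Since $\Psi$ is a contravariant equivalence it sends irreducible objects to irreducible objects and reverses filtrations; moreover $\Psi({\Bbb C}_x)={\Bbb C}_x$ because $D_X({\Bbb C}_x)={\Bbb C}_x[-n]$ and tensoring a skyscraper by $K_X$ is trivial. For the one-dimensional objects I would invoke Grothendieck duality on the Cohen--Macaulay curves: for a sheaf $F$ on the pure one-dimensional scheme $Z_y$ one has $\Psi(F)={\cal H}om_{{\cal O}_{Z_y}}(F,\omega_{Z_y})[1]$, whence $\Psi(A_y^{\vee})=A_y\otimes\omega_{Z_y}[1]=E_{y0}^*$; and for $F$ on $C_{yj}\cong{\Bbb P}^1$, whose dualizing sheaf is ${\cal O}_{C_{yj}}(-2)$, the identity $A_y^{\vee}\otimes{\cal O}_{C_{yj}}(-1)|_{C_{yj}}={\cal O}_{C_{yj}}(-b_{yj}-2)$ gives $\Psi(A_y^{\vee}\otimes{\cal O}_{C_{yj}}(-1))=A_y\otimes{\cal O}_{C_{yj}}(-1)[1]$, so after the shift $\Psi(A_y^{\vee}\otimes{\cal O}_{C_{yj}}(-1)[1])=A_y\otimes{\cal O}_{C_{yj}}(-1)=E_{yj}^*$. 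This identifies the irreducible objects of ${\cal C}(G)^*$, proving (1); Proposition \ref{prop:tilting:S-T} (2) records precisely this list as the $\Psi$-images. Applying $\Psi$ to the filtration of ${\Bbb C}_x$ in ${\cal C}(G^{\vee})$ and reading it backwards yields a filtration of $\Psi({\Bbb C}_x)={\Bbb C}_x$ in ${\cal C}(G)^*$ with subquotients $E_{yj}^*$ in the sense of Definition \ref{defn:0-dim} (3), which gives (2).

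The main obstacle I anticipate is the $j=0$ duality computation $\Psi(A_y^{\vee})=A_y\otimes\omega_{Z_y}[1]$, since it involves the dualizing sheaf of the whole, possibly non-reduced and reducible, fibre $Z_y=\pi^{-1}(y)$ rather than of a smooth component: one must verify that $Z_y$ is Cohen--Macaulay of pure dimension one, so that $\Psi$ of a sheaf on $Z_y$ is concentrated in a single degree and computed by ${\cal H}om_{{\cal O}_{Z_y}}(-,\omega_{Z_y})[1]$, and that $A_y$ restricts to a genuine line bundle on $Z_y$; both hold for the minimal resolution of a rational double point. A cleaner alternative, bypassing $\Psi$, is to argue directly in parallel with the proof of Proposition \ref{prop:tilting:G-1Per-irred}: for an exact sequence $0\to E_1\to E_{yj}^*\to E_2\to 0$ in ${\cal C}(G)^*$ one uses the torsion pair $(S^*,T^*)$ to place $E_1$ (resp.\ $E_2$) in $T^*$ (resp.\ in $S^*[1]$), produces a nonzero endomorphism of $E_{yj}^*$ from the computation of $\pi_*(G^{\vee}\otimes E_{yj}^*)$, and concludes irreducibility from $\Hom(E_{yj}^*,E_{yj}^*)={\Bbb C}$ together with Lemma \ref{lem:tilting:G-1Per-S}.
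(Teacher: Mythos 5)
Your proposal is correct and follows essentially the route the paper intends: the paper states this proposition without a written proof, but the remark immediately preceding it (identifying ${\cal H}om_{{\cal O}_{Z_y}}(A_y,{\cal O}_{Z_y})$ with the $G^{\vee}$-analogue of $A_y$), the identification ${\cal C}(G)^*={\cal C}(G^{\vee})^D$, and Proposition \ref{prop:tilting:S-T} (2) show the intended argument is exactly your transport of Proposition \ref{prop:tilting:G-1Per-irred} for $G^{\vee}$ through $D_X(-)\otimes K_X[\dim X]$. Your duality computations for the $E_{yj}^*$ and the handling of $\omega_{Z_y}$ match what the paper does implicitly in Lemma \ref{lem:G-1per:characterize*}.
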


\begin{lem}\label{lem:G-1per:characterize*}
For a point $y \in Y_\pi$,
let $E$ be a 1-dimensional sheaf on $X$ satisfying the following two
conditions:
\begin{enumerate}
\item
 $\Hom(A_y \otimes {\cal O}_{C_{yj}}(-1),E)=
\Ext^1(A_y \otimes {\cal O}_{C_{yj}}(-1),E)=0$
for all $j$.
\item
There is an exact sequence
\begin{equation}
0 \to E \to F \to {\Bbb C}_x \to 0
\end{equation} 
such that $F$ is a $G$-twisted semi-stable 1-dimensional sheaf with
$\pi(\Supp(F))=\{y \}$, $\chi(G,F)=0$ and $x \in Z_y$.
\end{enumerate}
Then $E \cong A_y \otimes \omega_{Z_y}$. 
\end{lem}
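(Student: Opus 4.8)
The plan is to reduce the statement to Lemma~\ref{lem:G-1per:characterize}, applied to the locally free sheaf $G^{\vee}$ in place of $G$, by means of the Serre--duality functor ${\Bbb D}(-):=D_X(-)(K_X)[n]={\bf R}{\cal H}om_{{\cal O}_X}(-,K_X)[n]$, $n=\dim X$, a contravariant auto-equivalence of ${\bf D}(X)$ with ${\Bbb D}^2\cong\id$; this is precisely the functor underlying the identity $E_{yj}^*={\Bbb D}(E_{yj}')$ recorded above. First I would check that $G^{\vee}$ again satisfies Assumption~\ref{ass:2}: one has $R^1\pi_*(G\otimes G^{\vee})=0$, and since $G_{|C_{yj}}\cong {\cal O}_{C_{yj}}(b_{yj}+1)^{\oplus \rk G}$ one gets ${\bf R}\pi_*(G\otimes {\cal O}_{C_{yj}}(-b_{yj}-2))=0$. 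Hence the whole construction of this subsection is available for $G^{\vee}$; its analogue of $A_y$ is $A_y^{D}:={\cal H}om_{{\cal O}_{Z_y}}(A_y,{\cal O}_{Z_y})$ (so that $(A_y^{D})_{|C_{yj}}\cong {\cal O}_{C_{yj}}(-b_{yj}-1)$), and Lemma~\ref{lem:G-1per:characterize} holds verbatim for $G^{\vee}$.

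Next I would dualise the hypotheses. As $F$ is $G$-twisted semistable it is purely $1$-dimensional, so $E\subset F$ is a non-zero purely $1$-dimensional sheaf; for any such $M$ one has ${\Bbb D}(M)=M^{\flat}[1]$ with $M^{\flat}:={\cal E}xt^{n-1}(M,K_X)$ again purely $1$-dimensional, while ${\Bbb D}({\Bbb C}_x)={\Bbb C}_x$. Applying ${\Bbb D}$ to the sequence in (ii) therefore produces a short exact sequence of sheaves $0\to F^{\flat}\to E^{\flat}\to {\Bbb C}_x\to 0$. The point is that $F^{\flat}$ is $G^{\vee}$-twisted semistable with $\chi(G^{\vee},F^{\flat})=0$: by Lemma~\ref{lem:tilting:G-1Rpi*E=0} the hypothesis on $F$ gives ${\bf R}\pi_*(G^{\vee}\otimes F)=0$; the duality argument of Lemma~\ref{lem:tilting:dual} (with $G$ and $G^{\vee}$ interchanged) then yields ${\bf R}\pi_*(G\otimes F^{\flat})=0$, whence $\chi(G^{\vee},F^{\flat})=\chi(X,G\otimes F^{\flat})=0$, and the $G^{\vee}$-version of Lemma~\ref{lem:tilting:G-1Rpi*E=0} gives the semistability. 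Thus $(E^{\flat},F^{\flat})$ realises condition (ii) of Lemma~\ref{lem:G-1per:characterize} for $G^{\vee}$.

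For condition (i) I would use $\Ext^k(M,E)\cong \Ext^k({\Bbb D}(E),{\Bbb D}(M))$ together with the computation $(A_y\otimes {\cal O}_{C_{yj}}(-1))^{\flat}\cong A_y^{D}\otimes {\cal O}_{C_{yj}}(-1)$; this uses that $C_{yj}$ is a $(-2)$-curve, so $\omega_{C_{yj}}={\cal O}_{C_{yj}}(-2)$ and $(A_y\otimes {\cal O}_{C_{yj}}(-1))^{\vee}\otimes \omega_{C_{yj}}\cong (A_y^{D})_{|C_{yj}}\otimes {\cal O}_{C_{yj}}(-1)$. Hence the vanishings in (i) translate into $\Hom(E^{\flat},A_y^{D}\otimes {\cal O}_{C_{yj}}(-1))=\Ext^1(E^{\flat},A_y^{D}\otimes {\cal O}_{C_{yj}}(-1))=0$ for all $j$, which is exactly condition (i) of Lemma~\ref{lem:G-1per:characterize} for $G^{\vee}$. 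Applying that lemma gives $E^{\flat}\cong A_y^{D}$, and therefore $E\cong {\Bbb D}(E^{\flat})[-1]=(A_y^{D})^{\flat}\cong A_y\otimes \omega_{Z_y}$, the last step because $(A_y^{D})^{\flat}={\cal H}om_{{\cal O}_{Z_y}}(A_y^{D},\omega_{Z_y})\cong A_y\otimes \omega_{Z_y}$, the ${\cal O}_{Z_y}$-dual of the line bundle $A_y^{D}$ being $A_y$.

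The main obstacle is the single geometric input that ${\Bbb D}$ exchanges the $G$- and $G^{\vee}$-theories, concretely the equivalence ${\bf R}\pi_*(G^{\vee}\otimes F)=0\Leftrightarrow {\bf R}\pi_*(G\otimes F^{\flat})=0$. This is exactly the Serre-duality argument of Lemma~\ref{lem:tilting:dual}, resting on absolute Serre duality on $X$, Serre vanishing on $Y$, and the crepancy $\omega_{X/Y}\cong{\cal O}_X$ of the minimal resolution of the rational double point; the identifications of ${\Bbb D}$ on $A_y$, $A_y^{D}$ and the ${\cal O}_{C_{yj}}(-1)$ are then routine from the $(-2)$-curve structure of the fibres. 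An alternative that avoids duality would be to place $E$ directly in the torsion pair $(S^*,T^*)$ of ${\cal C}(G)^*$ and to run the proof of Lemma~\ref{lem:G-1per:characterize} with the surjection out of $A_y$ replaced by an injection into $A_y\otimes \omega_{Z_y}$, but the reduction above is shorter and reuses the already established lemma intact.
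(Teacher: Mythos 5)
Your proposal is correct and is essentially the paper's own proof: the paper also reduces to Lemma~\ref{lem:G-1per:characterize} via the duality $E\mapsto {\bf R}{\cal H}om_{{\cal O}_X}(E,K_X[n-1])$, which sends purely $1$-dimensional sheaves to sheaves and exchanges the exact sequences and $\Ext$-vanishings in the two lemmas. You have merely spelled out the bookkeeping (that the dual statement is the $G^{\vee}$-version with $A_y$ replaced by $A_y^{D}$, and that $(A_y^{D})^{\flat}\cong A_y\otimes\omega_{Z_y}$) that the paper leaves implicit.
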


\begin{proof}
We set $n:=\dim X$.
For a purely 1-dimensional sheaf $E$ on $X$,
${\bf R}{\cal H}om_{{\cal O}_X}(E,K_X[n-1]) \in \Coh(X)$ and
${\bf R}{\cal H}om_{{\cal O}_X}(E,K_X[n-1])
={\cal H}om_{{\cal O}_C}(E,\omega_C)$
if $E$ is a locally free sheaf on a curve without embedded primes.
Hence the claim follows from Lemma \ref{lem:G-1per:characterize}. 
\end{proof}

\begin{NB}

\begin{lem}
Assume that $X$ is a smooth surface.
Let $D$ be an effective divisor such that $\pi(D)$ is a point.
Then 
$\Ext^1({\cal O}_D,{\cal O}_D) \cong H^0({\cal O}_X(D))=0$ and
$\dim \Ext^2({\cal O}_D,{\cal O}_D)=-(D,D-K_X)/2$.
\end{lem}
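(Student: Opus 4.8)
The plan is to resolve ${\cal O}_D$ by the two-term locally free complex coming from the defining section of $D$ and to feed this into the local-to-global spectral sequence. Concretely, from
\[
0 \to {\cal O}_X(-D) \to {\cal O}_X \to {\cal O}_D \to 0
\]
one computes the sheaf-level Ext's by applying ${\cal H}om(-,{\cal O}_D)$ to the complex $[{\cal O}_X(-D)\to{\cal O}_X]$: the resulting complex is $[{\cal O}_D \to {\cal O}_D(D)]$ whose differential is multiplication by the section cutting out $D$, hence zero on $D$. Thus ${\cal H}om({\cal O}_D,{\cal O}_D)={\cal O}_D$, ${\cal E}xt^1({\cal O}_D,{\cal O}_D)={\cal O}_D(D)$ (the normal bundle $N_{D/X}$), and the higher ${\cal E}xt$'s vanish. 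Since $\pi(D)$ is a point, $D$ is contained in the one-dimensional fibre, so $D$ is a curve and cohomology of sheaves on $D$ vanishes above degree $1$; therefore the spectral sequence $E_2^{p,q}=H^p(X,{\cal E}xt^q({\cal O}_D,{\cal O}_D))\Rightarrow \Ext^{p+q}({\cal O}_D,{\cal O}_D)$ has no room for nonzero differentials and degenerates, giving $\Hom({\cal O}_D,{\cal O}_D)=H^0({\cal O}_D)$, $\Ext^1({\cal O}_D,{\cal O}_D)=H^1({\cal O}_D)\oplus H^0({\cal O}_D(D))$, and $\Ext^2({\cal O}_D,{\cal O}_D)=H^1({\cal O}_D(D))$.

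Next I would use that $Y$ has rational double points, so $R^1\pi_*{\cal O}_X=0$. Tensoring the structure sequence of $D$ and using $R^2\pi_*=0$ (fibres are at most one-dimensional) gives $R^1\pi_*{\cal O}_D=0$, and since $\pi(D)$ is a point, $R\pi_*{\cal O}_D=\pi_*{\cal O}_D$ is a finite-length sheaf on $Y$, whence $H^1({\cal O}_D)=0$. Hence $\Ext^1({\cal O}_D,{\cal O}_D)\cong H^0({\cal O}_D(D))=H^0(N_{D/X})$, which is the group appearing in the statement, namely the sections of the normal bundle (the first-order deformations of $D$ in $X$).

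For the numerics I would combine Serre duality with Riemann--Roch. From the structure sequence, $\chi({\cal O}_D,{\cal O}_D)=\chi({\cal O}_D)-\chi({\cal O}_D(D))=-(D^2)$. Because the minimal resolution of a rational double point is crepant, $K_X=\pi^*K_Y$, so $(D,K_X)=(\pi_*D,K_Y)=0$ and $K_X|_D\cong{\cal O}_D$. Riemann--Roch then gives $\chi({\cal O}_D)=-\tfrac12(D,D+K_X)=-\tfrac12(D^2)$, so $h^0({\cal O}_D)=-\tfrac12(D^2)$ using $H^1({\cal O}_D)=0$. Serre duality on the surface gives $\Ext^2({\cal O}_D,{\cal O}_D)\cong\Hom({\cal O}_D,{\cal O}_D\otimes K_X)^\vee=H^0(K_X|_D)^\vee\cong\Hom({\cal O}_D,{\cal O}_D)^\vee$, so $\dim\Ext^2=h^0({\cal O}_D)=-\tfrac12(D^2)$. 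Feeding these into $\chi({\cal O}_D,{\cal O}_D)=\dim\Hom-\dim\Ext^1+\dim\Ext^2$ yields $-(D^2)=2\bigl(-\tfrac12(D^2)\bigr)-\dim\Ext^1$, forcing $\dim\Ext^1=0$; equivalently $H^0(N_{D/X})=0$. Finally $\dim\Ext^2=-\tfrac12(D^2)=-\tfrac12(D,D-K_X)$, again because $(D,K_X)=0$.

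The main obstacle is not the homological bookkeeping but the two geometric inputs that make everything collapse: the vanishing $H^1({\cal O}_D)=0$ (which is exactly the rationality of the singularity) and the triviality $K_X|_D\cong{\cal O}_D$ (crepancy of the resolution). Once these are in place, the vanishing of $\Ext^1$ is forced purely by the Serre-duality symmetry $\dim\Ext^2=\dim\Hom$ together with the identity $\chi({\cal O}_D,{\cal O}_D)=-(D^2)$; I would not try to exhibit a section of $N_{D/X}$ and argue its triviality by hand, since the counting argument is cleaner and simultaneously pins down $\dim\Ext^2$.
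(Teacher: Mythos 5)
Your homological setup (the local-to-global spectral sequence, the identification $\Ext^1({\cal O}_D,{\cal O}_D)\cong H^0({\cal O}_D(D))$ once $H^1({\cal O}_D)=0$, and $\Ext^2({\cal O}_D,{\cal O}_D)\cong H^1({\cal O}_D(D))$) is fine, and the vanishing $H^1({\cal O}_D)=0$ from $R^1\pi_*{\cal O}_X=0$ is exactly what the paper uses. The problem is the second half: you import the hypothesis that $\pi$ is crepant, i.e.\ $K_X=\pi^*K_Y$, hence $(D,K_X)=0$ and $K_X|_D\cong{\cal O}_D$. That is not among the hypotheses of the lemma — the statement only assumes $X$ is a smooth surface and $\pi(D)$ is a point, and the surrounding setting is the resolution of rational singularities, not just rational double points. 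The fact that the asserted formula $-(D,D-K_X)/2$ retains the $K_X$ term is the tell: if crepancy were assumed it would just read $-(D^2)/2$. Without $K_X|_D\cong{\cal O}_D$, both pillars of your counting argument collapse: Serre duality gives $\Ext^2({\cal O}_D,{\cal O}_D)\cong H^0({\cal O}_D(K_X))^\vee$, which is no longer $\Hom({\cal O}_D,{\cal O}_D)^\vee$, and $\chi({\cal O}_D)$ is $-(D,D+K_X)/2$, not $-(D^2)/2$. So the forced vanishing of $\Ext^1$ and the value of $\dim\Ext^2$ are both unproved in the stated generality.

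The paper avoids this by proving $H^0({\cal O}_D(D))=0$ directly rather than by a Euler-characteristic squeeze. Pushing forward $0\to{\cal O}_X\to{\cal O}_X(D)\to{\cal O}_D(D)\to 0$ and using $R^1\pi_*{\cal O}_X=0$ gives $0\to{\cal O}_Y\to\pi_*({\cal O}_X(D))\to\pi_*({\cal O}_D(D))\to 0$; since $\pi_*({\cal O}_X(D))$ is torsion free and agrees with ${\cal O}_Y$ away from the single point $\pi(D)$, while ${\cal O}_Y\cong j_*j^*{\cal O}_Y$ for the inclusion $j$ of the complement of that point ($Y$ being normal), the first map is an isomorphism and $\pi_*({\cal O}_D(D))=0$. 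This gives $\Ext^1=0$ with no assumption on $K_X$. Then $\dim\Ext^2=h^1({\cal O}_D(D))=-\chi({\cal O}_D(D))=-(D,D-K_X)/2$ by Riemann--Roch, using Serre duality on $X$ together with the adjunction $K_X|_D=K_D(-D)$ to identify $\Ext^2({\cal O}_D,{\cal O}_D)$ with $H^1({\cal O}_D(D))$. If you want to salvage your argument, you must either add the crepancy hypothesis explicitly (which weakens the lemma and makes the $K_X$ term in the answer vacuous) or replace the counting step by a direct proof of $H^0({\cal O}_D(D))=0$ along these lines.
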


\begin{proof}
By the exact sequence
\begin{equation}
0 \to {\cal O}_X(-D) \to {\cal O}_X \to {\cal O}_D \to 0
\end{equation}
and $H^1({\cal O}_D)=0$,
we get $H^0({\cal O}_D(D)) \cong \Ext^1({\cal O}_D,{\cal O}_D)$.
By the exact sequence
\begin{equation}
0 \to {\cal O}_X \to {\cal O}_X(D) \to {\cal O}_D(D) \to 0
\end{equation}
 and $R^1 \pi_*({\cal O}_X)=0$,
we have an exact sequence
\begin{equation}
0 \to {\cal O}_Y \to \pi_*({\cal O}_X(D)) \to \pi_*({\cal O}_D(D)) \to 0.
\end{equation}
Let $j:Y \setminus \pi_*(D) \to Y$ be the 
inclusion. 
Since ${\cal O}_Y \cong j_*(j^*({\cal O}_Y))$ and 
$\pi_*({\cal O}_X(D))$ is torsion free,
${\cal O}_Y \to \pi_*({\cal O}_X(D))$ is an isomorphism.
Hence $\pi_*({\cal O}_D(D))=0$.
Therefore $H^0({\cal O}_D(D))=0$.

By the Serre duality,
$\Ext^2({\cal O}_D,{\cal O}_D) \cong H^0(X,{\cal O}_D(K_X))^{\vee}$.
Since $K_{X|D}=K_D(-D)$, 
$H^0(X,{\cal O}_D(K_X))=H^1(D,{\cal O}_D(D))$.
Then $\dim H^1({\cal O}_D(D))=-\chi({\cal O}_D(D))=-(D,D-K_X)/2$,
which implies the claim.
\end{proof}

Since $H^1({\cal O}_D)=0$,
$1 \leq \chi({\cal O}_D)=-(D,D+K_X)/2$.

\end{NB}

\subsection{Families of perverse coherent sheaves. }
\label{subsect:family}

We shall explain families of complexes which correspond to
families of ${\cal A}$-modules via Morita equivalence.
Let $f:X \to S$ and $g:Y \to S$ be flat families of projective varieties 
parametrized by a scheme $S$
and $\pi:X \to Y$ an $S$-morphism.
Let ${\cal O}_Y(1)$ be a relatively ample line bundle over
$Y \to S$.
We assume that 
\begin{enumerate}
\item
$X \to S$ is a smooth family, 
\item
there is a locally free sheaf $G$ on $X$ such that
$G_s:=G_{|f^{-1}(s)}$, $s \in S$ are local projective generators of 
a family of abelian categories ${\cal C}_s \subset {\bf D}(X_s)$ and
\item
$\dim \pi^{-1}(y) \leq 1$ for all $y \in Y$,
i.e.,
$\pi$ satisfies Assumption \ref{ass:1}.
\end{enumerate}
Then ${\cal C}_s$ is a tilting of $\Coh(X_s)$.
\begin{rem}
(i), (ii) and (iii) imply that
\begin{enumerate}
\item[(iv)]
$R^1 \pi_*(G^{\vee} \otimes G)=0$.
\item[(v)]
\begin{equation}
\{E \in \Coh(X)|{\bf R}\pi_*(G^{\vee} \otimes E)=0 \}=0.
\end{equation}
Thus $G$ defines a tilting ${\cal C}$ of $\Coh(X)$.
\end{enumerate}
Indeed if $E \in \Coh(X)$ satisfies
${\bf R}\pi_*(G^{\vee} \otimes E)=0$, then
the projection formula implies that
${\bf R}\pi_*(G^{\vee} \otimes E 
\overset{{\bf L}}{\otimes}{\bf L}f^*({\Bbb C}_s))
={\bf R}\pi_*(G^{\vee} \otimes E) 
\overset{{\bf L}}{\otimes}{\bf L}g^*({\Bbb C}_s)=0$
for all $s \in S$.
Then ${\bf R}\pi_*(G^{\vee} \otimes 
H^p(E\overset{{\bf L}}{\otimes}{\bf L}f^*({\Bbb C}_s)))=0$
for all $p$ and $s \in S$.
By (ii), $H^p(E\overset{{\bf L}}{\otimes} {\bf L}f^*({\Bbb C}_s))=0$ 
for all $p$ and $s \in S$.
\begin{NB}
In particular $E \otimes {\Bbb C}_s=
H^0(E\overset{{\bf L}}{\otimes} {\bf L}f^*({\Bbb C}_s))=0$.
By Nakayama's lemma, $E=0$ in a neighborhood of $\pi^{-1}(s)$. 
\end{NB}
Therefore (v) holds. (iv) is obvious. Conversely if (i), (iii), 
(iv) and (v) hold, then (ii) holds. So we may replace (ii) by (iv) and (v).
\end{rem}

For a morphism $T \to S$, 
we set $X_T:=X \times_S T$, $Y_T:=Y \times_S T$ and 
$\pi_T:=\pi \times \id_T$. 
\begin{NB}
\begin{lem}
Let $\Coh(X_T)'$ be the subcategory of $\Coh(X_T)$ consisting of
objects $E$ of finite torsion dimension over $T$.
For $G_T:=G \otimes_{{\cal O}_S}{\cal O}_T$, we set 
\begin{equation}
\begin{split}
{\cal T}_T:=&\{E \in \Coh(X_T)'| 
R^1 \pi_{T*}(G_T^{\vee} \otimes E)=0 \},\\
{\cal S}_T:=&\{E \in \Coh(X_T)'| 
\pi_{T*}(G_T^{\vee} \otimes E)=0 \}.
\end{split}
\end{equation}
Then ${\cal T}_T \cap {\cal S}_T=0$.
\end{lem}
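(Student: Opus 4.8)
The plan is to show that any $E\in{\cal T}_T\cap{\cal S}_T$ must vanish by restricting to the fibres over $T$ and invoking the fibrewise generator property. First I would record that ${\cal T}_T\cap{\cal S}_T$ consists exactly of those $E\in\Coh(X_T)'$ with ${\bf R}\pi_{T*}(G_T^{\vee}\otimes E)=0$: since $\pi_T=\pi\times_S\id_T$ has fibres of dimension $\le 1$ (base change of $\dim\pi^{-1}(y)\le 1$ in Assumption~\ref{ass:1}), we have $R^{\ge 2}\pi_{T*}=0$, so the simultaneous vanishing of $\pi_{T*}(G_T^{\vee}\otimes E)$ and $R^1\pi_{T*}(G_T^{\vee}\otimes E)$ is the same as ${\bf R}\pi_{T*}(G_T^{\vee}\otimes E)=0$.

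Next I would pass to the fibres over $T$. Fix $t\in T$ lying over $s\in S$, write $a_t:X_t\hookrightarrow X_T$ and $j_t:Y_t\hookrightarrow Y_T$ for the fibre inclusions, and note $X_t\cong X_s$, $Y_t\cong Y_s$, $\pi_t\cong\pi_s$, $G_t\cong G_s$. The square
\[
\begin{CD}
X_t @>a_t>> X_T\\
@V\pi_t VV @VV\pi_T V\\
Y_t @>j_t>> Y_T
\end{CD}
\]
is Cartesian and Tor-independent: both $X_T\to T$ and $Y_T\to T$ are flat, being base changes of the flat maps $X\to S$ and $Y\to S$, so a flat ${\cal O}_T$-resolution of ${\Bbb C}_t$ pulls back to a flat ${\cal O}_{Y_T}$-resolution of ${\cal O}_{Y_t}$ against which ${\cal O}_{X_T}$ has no higher Tor. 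Derived base change therefore yields
\[
{\bf L}j_t^*\,{\bf R}\pi_{T*}(G_T^{\vee}\otimes E)\cong {\bf R}\pi_{t*}\bigl(G_t^{\vee}\otimes {\bf L}a_t^*E\bigr),
\]
using that $G_T^{\vee}$ is locally free so that its pullback is underived. With the left-hand side zero, I obtain ${\bf R}\pi_{t*}(G_t^{\vee}\otimes {\bf L}a_t^*E)=0$ for every $t\in T$.

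Finally I would combine the defining hypothesis of $\Coh(X_T)'$ with the fibrewise generator. Since $E$ has finite Tor-dimension over $T$, the complex ${\bf L}a_t^*E$ is bounded, hence an object of ${\bf D}(X_t)={\bf D}^b(\Coh(X_t))$; this boundedness is precisely where the finite-Tor-dimension assumption is used. Because $G_t=G_s$ is a local projective generator of ${\cal C}_s$, the argument recorded just after Definition~\ref{defn:perverse}, applied on $X_s$, shows that any ${\cal E}\in{\bf D}(X_s)$ with ${\bf R}\pi_{s*}(G_s^{\vee}\otimes{\cal E})=0$ lies in $\cap_n{\cal C}_s^{\le n}=0$ and so vanishes. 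Applying this to ${\cal E}={\bf L}a_t^*E$ gives ${\bf L}a_t^*E=0$ for all $t$; in particular its top cohomology $a_t^*E=H^0({\bf L}a_t^*E)=0$, so $E$ restricts to $0$ on every fibre of $X_T\to T$. As $\Supp E$ is closed and meets every such fibre trivially, $\Supp E=\emptyset$ and $E=0$.

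I expect the main obstacle to be the derived base-change step: one must verify Tor-independence of the fibre square (which I reduce to flatness of $X_T,Y_T$ over $T$) and, crucially, guarantee that ${\bf L}a_t^*E$ is bounded so that the bounded-$t$-structure characterisation of the zero object on the fibre may be invoked. This is exactly the point at which the finite-torsion-dimension hypothesis defining $\Coh(X_T)'$ is indispensable, and the rest of the argument is formal once it is in place.
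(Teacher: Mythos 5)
Your proof is correct and follows essentially the same route as the paper's own argument: the paper also reduces ${\cal T}_T\cap{\cal S}_T$ to the vanishing of ${\bf R}\pi_{T*}(G_T^{\vee}\otimes E)$, restricts to each fibre over $t\in T$ by derived base change to get ${\bf R}\pi_{t*}((G_T^{\vee}\otimes E)_t)=0$, and then invokes the fibrewise local projective generator property to conclude $E_t=0$ for all $t$ and hence $E=0$. You have merely spelled out the details the paper leaves implicit (Tor-independence of the fibre square, the role of finite Tor-dimension in keeping ${\bf L}a_t^*E$ bounded, and the passage from vanishing on fibres to $E=0$), all of which are correct.
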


\begin{proof} 
Since $R^1 \pi_{t*}((G^{\vee} \otimes G)_t)=0$ for all $t \in T$,
$R^1 \pi_{T*}(G_T^{\vee} \otimes G_T)=0$.
Assume that ${\bf R}\pi_{T*}( G_T^{\vee} \otimes E)=0$
for $E \in {\bf D}(X_T)$.
Then ${\bf R}\pi_{ t*}( (G_T^{\vee} \otimes E)_t)=0$ for all $t$.
By our assumption,
we get $E_t =0$ for all $t \in T$, which implies that
$E= 0$.
\end{proof}
If $T$ is smooth, then $\Coh(X_T)'=\Coh(X_T)$ and
$({\cal T}_T,{\cal S}_T)$ is a torsion pair 
of $\Coh(X_T)$.
Let ${\cal C}_T$ be the tilting of 
$\Coh(X_T)$.
We have
\begin{equation}
{\cal C}_T=\{E \in {\bf D}(X_T)|
 {\bf R}\pi_{T*}(G_T^{\vee} \otimes E) \in \Coh(Y_T) \}.
\end{equation}

For a point $y \in Y$, there is an affine open neighborhood
$U$ of $Y$ such that 
$\pi^{-1}(U)=U_1 \cup U_2$,
where $U_1, U_2$ are affine open subsets of $\pi^{-1}(U)$:
   
We may assume that $\pi^{-1}(U)$ is a subscheme of
$U \times {\Bbb P}^n$.
There are hypersurfaces $f_1,f_2$ such that
$\{x \in U| \pi(x)=y, f_1(x)=f_2(x)=0 \}=\emptyset$.
Then $\pi(\{x \in U|f_1(x)=f_2(x)=0 \})$ is a closed subset of $U$
which does not contain $y$.
Replacing $U$ by a small neighborhood, we may assume that
$\{x \in U|f_1(x)=f_2(x)=0 \}=\emptyset$.
We set $U_1=\{x \in U|f_1(x) \ne 0 \}$ and
$U_2=\{x \in U|f_2(x) \ne 0 \}$. Then $U_1,U_2$ satisfy the desired 
properties.

We can compute ${\bf R}\pi_{T*}(G_T^{\vee} \otimes E)_{|U}$
by using the resolution 
\begin{equation}
0 \to E \to E_{|U_1} \oplus E_{|U_2} \to E_{|U_1 \cap U_2} \to 0.
\end{equation}
of $E$.

We assume that ${\bf R}\pi_{T*}(G_T^{\vee} \otimes E)=0$.
By taking a locally free resolution of $E$,
we see that 
${\bf R}\pi_{t*}(G_t^{\vee} \otimes E
\overset{{\bf L}}{\otimes}{\Bbb C}(t) )_{|U_t}
\cong {\bf R}\pi_{T*}(G_T^{\vee} \otimes E)_{|U} 
\overset{{\bf L}}{\otimes} {\Bbb C}(t)=0$ for all $t \in g(U)$.
By the spectral sequence, we get
${\bf R}\pi_{t*}(G_T^{\vee} \otimes H^i(E
\overset{{\bf L}}{\otimes}{\Bbb C}(t)) )=0$
for all $i$ and $t \in T$.
Hence $H^i(E
\overset{{\bf L}}{\otimes}{\Bbb C}(t))=0$ for all $i$ and $t \in T$.
Therefore $E \overset{{\bf L}}{\otimes}{\Bbb C}(t)=0$,
which implies that $E=0$.

As a representative of $E \in {\cal C}_S$,
we use the following type of complexes.
\end{NB}

\begin{defn}
\begin{enumerate}
\item[(1)]
A family of objects in ${\cal C}_s, s \in S$ means 
a bounded complex
$F^{\bullet}$ of coherent sheaves on $X$ such that 
$F^i$ are flat over $S$ and $F_s^{\bullet} \in {\cal C}_s$ for all $s \in S$.
\item[(2)]
A family of local projective generators is a locally free sheaf $G$ on
$X$ such that 
$G_s:=G_{|f^{-1}(s)}$, $s \in S$ are local projective generators of 
a family of abelian categories ${\cal C}_s$. 
\end{enumerate}
\end{defn}

\begin{rem}
If $F^{\bullet}_s \in \Coh(X_s)$
for all $s \in S$, then $F^{\bullet}$ is isomorphic
to a coherent sheaf on $X$ which is flat over $S$. 
\end{rem}

\begin{lem}\label{lem:good-family}
For a family $F^{\bullet}$ of objects in ${\cal C}_s$, $s \in S$,
there is a complex $\widetilde{F}^{\bullet}$
such that 
(i) $\widetilde{F}^{i}_s \in {\cal C}_s$, $s \in S$,
(ii) $\widetilde{F}^i$ are flat over $S$, and
(iii)$F^{\bullet} \cong \widetilde{F}^{\bullet}$.  
\end{lem}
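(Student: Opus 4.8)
The plan is to move the problem to the category of ${\cal A}$-modules via the relative Morita equivalence of Proposition \ref{prop:Morita}, construct a good resolution there, and pull it back to $X$. Set ${\cal A}:=\pi_*(G^{\vee}\otimes G)$ and consider the complex ${\cal F}:={\bf R}\pi_*(G^{\vee}\otimes F^{\bullet})$ on $Y$. Since the $F^i$ are flat over $S$, base change gives ${\cal F}\overset{{\bf L}}{\otimes}_{{\cal O}_S}{\Bbb C}(s)\cong {\bf R}\pi_{s*}(G_s^{\vee}\otimes F^{\bullet}_s)$ for every $s\in S$, and the hypothesis $F^{\bullet}_s\in{\cal C}_s$ together with the description ${\cal C}_s=\{E\mid {\bf R}\pi_{s*}(G_s^{\vee}\otimes E)\in\Coh(Y_s)\}$ shows that each such fiber is a single ${\cal A}_s$-module placed in degree $0$. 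By cohomology and base change for the $S$-flat complex $G^{\vee}\otimes F^{\bullet}$, it follows that ${\cal F}$ is quasi-isomorphic to a coherent ${\cal A}$-module flat over $S$ whose formation commutes with any base change $T\to S$.

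Next I would resolve ${\cal F}$ by locally free ${\cal A}$-modules compatibly with the family. Using the relatively ample ${\cal O}_Y(1)$, for $n\gg 0$ the sheaf $g_*({\cal F}(n))$ is locally free on $S$ and the evaluation $g^*g_*({\cal F}(n))\otimes_{{\cal O}_Y}{\cal A}(-n)\to{\cal F}$ is surjective, its formation commuting with base change. Iterating produces a resolution $\cdots\to V^{-1}\to V^{0}\to{\cal F}\to 0$ by ${\cal A}$-modules $V^i$ that are flat over $S$ and restrict on each fiber to locally free ${\cal A}_s$-modules. Because $f:X\to S$ is smooth and $\dim\pi^{-1}(y)\le 1$, the equivalence ${\bf D}(X_s)\cong{\bf D}_{{\cal A}_s}(Y_s)$ of Proposition \ref{prop:Morita} bounds the global dimension of $\Coh_{{\cal A}_s}(Y_s)$; hence a sufficiently high syzygy is already fiberwise locally free, and I may truncate to a finite resolution $0\to V^{-m}\to\cdots\to V^{0}\to{\cal F}\to 0$ with all terms still flat over $S$ and fiberwise locally free over ${\cal A}$.

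Finally I would apply the inverse functor of Proposition \ref{prop:Morita} and set $\widetilde{F}^i:=\pi^{-1}(V^i)\otimes_{\pi^{-1}({\cal A})}G$. Since each $V^i$ is locally isomorphic to ${\cal A}^{\oplus n_i}$, the sheaf $\widetilde{F}^i$ is locally isomorphic to $G^{\oplus n_i}$, hence a locally free ${\cal O}_X$-module, so it is flat over $S$. Moreover ${\bf R}\pi_{s*}(G_s^{\vee}\otimes\widetilde{F}^i_s)=V^i_s$ is a sheaf in degree $0$, so $\widetilde{F}^i_s\in{\cal C}_s$ (and, being a coherent sheaf, lies in $T_s$); this gives (i) and (ii). By the base-change compatibility of the resolution and the exactness of $\pi^{-1}(-)\otimes_{\pi^{-1}({\cal A})}G$ on locally free ${\cal A}$-modules, $\widetilde{F}^{\bullet}_s$ is a resolution of $F^{\bullet}_s$ in ${\cal C}_s$. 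For (iii) I would observe that applying ${\bf R}\pi_*(G^{\vee}\otimes-)$ to $\widetilde{F}^{\bullet}$ returns $V^{\bullet}\cong{\cal F}\cong{\bf R}\pi_*(G^{\vee}\otimes F^{\bullet})$, so $\widetilde{F}^{\bullet}$ and $F^{\bullet}$ have the same image under the equivalence ${\bf D}(X)\to{\bf D}_{{\cal A}}(Y)$ and are therefore isomorphic in ${\bf D}(X)$.

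The main obstacle is the second paragraph: arranging a single ${\cal A}$-module resolution that is at once flat over $S$, compatible with base change, and finite with fiberwise locally free terms. The flatness and base change come from cohomology and base change applied to the $S$-flat family ${\cal F}$, while the finiteness and the fiberwise local freeness of the truncated syzygy rest on the boundedness of the global dimension of $\Coh_{{\cal A}_s}(Y_s)$, which in turn follows from the smoothness of $f$ and the condition $\dim\pi^{-1}(y)\le 1$ through the derived Morita equivalence.
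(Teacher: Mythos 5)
Your argument is correct in outline but takes a genuinely different, and considerably heavier, route than the paper. The paper's proof stays entirely on $X$: it chooses a finite locally free resolution $0\to V_{-d}\to\cdots\to V_0\to{\cal O}_X\to 0$ with the $V_i$ negative enough that $R^k\pi_{s*}((G^{\vee}\otimes V_i^{\vee}\otimes F^j)_s)=0$ for $k>0$ and all $i,j,s$; then $\widetilde{F}^{\bullet}:=V_{\bullet}^{\vee}\otimes F^{\bullet}$ is quasi-isomorphic to $F^{\bullet}$, its terms are automatically $S$-flat, and each $(\widetilde{F}^i)_s=\bigoplus_{p+q=i}V_{-p}^{\vee}\otimes F_s^q$ lies in $T_s\subset{\cal C}_s$ by the vanishing — no base change, flatness of syzygies, or ${\cal A}$-module resolutions are needed. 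Your route passes through $\Coh_{{\cal A}}(Y)$ and so must establish (a) that ${\bf R}\pi_*(G^{\vee}\otimes F^{\bullet})$ is an $S$-flat ${\cal A}$-module whose formation commutes with base change, (b) the existence, $S$-flatness and exactness of a locally projective ${\cal A}$-resolution, and (c) the family version of the inverse Morita functor; this is essentially the content of Proposition \ref{prop:Morita-family}(3) and Lemma \ref{lem:flatness}, which the paper develops only afterwards (though only part (1) of that proposition invokes Lemma \ref{lem:good-family}, so your argument is not circular). The one step you should make explicit is the truncation: Lemma \ref{lem:decomposition}(1) makes the $d$-th syzygy locally projective over ${\cal A}_s$ only \emph{fiberwise}, whereas to conclude that $\pi^{-1}(V^{-m})\otimes_{\pi^{-1}({\cal A})}G$ is a locally free ${\cal O}_X$-module flat over $S$ you need local projectivity over ${\cal A}$ on the total space; this does follow from the fiberwise criterion for flatness (a finitely presented ${\cal A}$-module, flat over $S$, with ${\cal A}_s$-flat fibers is ${\cal A}$-flat, hence locally projective), but it is not automatic and deserves a sentence. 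What your approach buys is a construction visibly compatible with the Morita dictionary used later; what the paper's buys is brevity and independence from the relative ${\cal A}$-module machinery.
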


\begin{proof}
We set $d:=\dim X_s, s \in S$.
For the bounded complex $F^{\bullet}$, 
we take a locally free resolution of ${\cal O}_X$
\begin{equation}
0 \to V_{-d} \to \cdots \to  V_{-1} \to V_0 \to {\cal O}_X \to 0
\end{equation}
such that 
$R^k \pi_*((G^{\vee} \otimes V_i^{\vee} \otimes F^{j})_s)=0$, $k>0$
for $0 \leq i \leq d-1$ and all $j$.
Since $X \to Y$ is projective, we can take such a resolution. 
Then
$R^k \pi_*((G^{\vee} \otimes V_{-d}^{\vee} \otimes F^{j})_s)=0$, $k>0$
for all $j$.
Therefore we have an isomorphism
$F^{\bullet} \cong V_{\bullet}^{\vee} \otimes F^{\bullet}$ such that  
$(V_{\bullet}^{\vee} \otimes F^{\bullet})^i$ are $S$-flat
and $(V_{\bullet}^{\vee} \otimes F^{\bullet})_s^i=
\oplus_{p+q=i} V_{-p}^{\vee} \otimes F_s^q \in {\cal C}_s$
for all $s \in S$.
\end{proof}


\begin{prop}\label{prop:Morita-family}
\begin{enumerate}
\item[(1)]
Let $F^{\bullet}$ be a family of objects in ${\cal C}_s$, $s \in S$.
Then we get
\begin{equation}
F^{\bullet} \cong \mathrm{Cone}(E_1 \to E_2),
\end{equation}
where $E_i \in \Coh(X)$ are flat over $S$
and $(E_i)_s \in {\cal C}_s$, $s \in S$.
\item[(2)]
Let $F^{\bullet}$ be a family of objects in ${\cal C}_s$, $s \in S$.
Then we have a complex
\begin{equation}
G(-n_1) \otimes f^*(U_1) \to G(-n_2) \otimes f^*(U_2)
\to F^{\bullet} \to 0
\end{equation}
whose restriction to $s \in S$ is exact
in ${\cal C}_s$, where $U_1,U_2$ are locally free sheaves on $S$.
\item[(3)]
Let $F$ be an ${\cal A}$-module flat over $S$.
Then we can attach a family $E$
of objects in ${\cal C}_s$, $s \in S$ 
 such that ${\bf R}\pi_*(G^{\vee} \otimes E)=F$.
The correspondence is functorial and $E$ is unique in ${\bf D}(X)$.
We denote $E$ by $\pi^{-1}(F) \otimes_{\pi^{-1}({\cal A})} G$.
\end{enumerate}
\end{prop}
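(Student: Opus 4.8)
The plan is to reduce all three statements to the relative Morita equivalence together with cohomology-and-base-change. By the Remark following the definition of a family of local projective generators, conditions (i), (iii), (iv), (v) hold, so $G$ defines a tilting ${\cal C}$ of $\Coh(X)$ on the total space and Proposition \ref{prop:Morita} gives mutually inverse equivalences $\Phi:={\bf R}\pi_*(G^{\vee}\otimes(-)):{\bf D}(X)\to {\bf D}_{{\cal A}}(Y)$ and $\Psi:=\pi^{-1}(-)\overset{{\bf L}}{\otimes}_{\pi^{-1}({\cal A})}G$, where ${\cal A}=\pi_*(G^{\vee}\otimes G)$. The key compatibility I would record first is base change: since $G$ is locally free and $R^1\pi_*(G^{\vee}\otimes G)=0$, the projection formula yields $\Phi(E)\overset{{\bf L}}{\otimes}{\Bbb C}_s\cong {\bf R}\pi_{s*}(G_s^{\vee}\otimes(E\overset{{\bf L}}{\otimes}{\bf L}f^*({\Bbb C}_s)))$, so $\Phi$ commutes with derived restriction to fibers and restricts fiberwise to the Morita equivalence ${\cal C}_s\cong \Coh_{{\cal A}_s}(Y_s)$. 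Consequently, for a family $F^{\bullet}$ of objects in ${\cal C}_s$ the object $M:=\Phi(F^{\bullet})$ satisfies $M\overset{{\bf L}}{\otimes}{\Bbb C}_s\cong {\bf R}\pi_{s*}(G_s^{\vee}\otimes F^{\bullet}_s)\in \Coh_{{\cal A}_s}(Y_s)$ in degree $0$ for every $s$; hence $M$ is a coherent ${\cal A}$-module flat over $S$ and $F^{\bullet}\cong \Psi(M)\in {\cal C}$.

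For (2) I would resolve $M$ over $Y$ relative to $S$. Using relative Serre vanishing, uniform over the quasi-compact $S$, for $n_2\gg0$ the sheaf $U_2:=g_*(M(n_2))$ is locally free and the evaluation ${\cal A}(-n_2)\otimes g^*(U_2)\to M$ is fiberwise surjective. Its kernel is flat over $S$, being the kernel of a fiberwise-surjection between $S$-flat sheaves, so repeating with $U_1:=g_*(K(n_1))$ for $n_1\gg0$ gives a fiberwise-exact presentation ${\cal A}(-n_1)\otimes g^*(U_1)\to {\cal A}(-n_2)\otimes g^*(U_2)\to M\to 0$. Applying $\Psi$ termwise, and noting $\Psi({\cal A}(-n_i)\otimes g^*(U_i))=G(-n_i)\otimes f^*(U_i)$ (projection formula and $f=g\circ\pi$), produces the complex of (2); fiberwise exactness is preserved because $\Phi$ restricts to the fiberwise Morita equivalence. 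For (1) I take $E_2:=G(-n_2)\otimes f^*(U_2)$ with its surjection onto $F^{\bullet}$ and let $E_1:=\ker(E_2\to F^{\bullet})$ in ${\cal C}$. The long exact sequence of ordinary cohomology sheaves, together with $H^{-1}(E_2)=0$, forces $H^{-1}(E_1)=0$, so $E_1\in \Coh(X)\cap{\cal C}$. Both $E_i$ are flat over $S$, since $\Phi(E_1)=K$ and $\Phi(E_2)={\cal A}(-n_2)\otimes g^*(U_2)$ are $S$-flat and $\Phi$ detects flatness via base change, with $(E_i)_s\in{\cal C}_s$; and the short exact sequence $0\to E_1\to E_2\to F^{\bullet}\to 0$ in ${\cal C}$ gives $F^{\bullet}\cong \mathrm{Cone}(E_1\to E_2)$.

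For (3) I would iterate the surjection-and-flat-kernel step to build a finite fiberwise-exact resolution of the given $S$-flat ${\cal A}$-module $F$ by modules ${\cal A}(-n_i)\otimes g^*(U_i)$: since each smooth fiber $X_s$ has dimension $d$, the equivalence ${\bf D}_{{\cal A}_s}(Y_s)\cong{\bf D}(X_s)$ shows ${\cal A}_s$ has global dimension $d$, so after $d$ steps the kernel is relatively locally free and the resolution terminates. Applying $\Psi$ termwise yields a bounded complex $E$; restricting to a fiber, $\Phi_s(E_s)$ is computed by the fiberwise resolution of $F_s$, hence $E_s\cong \Psi_s(F_s)\in{\cal C}_s$, so $E$ is a family of objects in ${\cal C}_s$, and $\Phi(E)=\Phi\Psi(F)=F$. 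Functoriality follows from lifting a morphism of $F$ to a morphism of resolutions, unique up to homotopy, and uniqueness in ${\bf D}(X)$ is immediate from $\Phi$ being an equivalence. This justifies writing $E=\pi^{-1}(F)\otimes_{\pi^{-1}({\cal A})}G$.

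The main obstacle is the flatness-and-base-change bookkeeping rather than any single hard idea: one must verify that $\Phi$ genuinely commutes with restriction to fibers, that the kernels arising in the resolutions remain flat over $S$ so that the fiberwise sequences stay exact, and that the twists $n_1,n_2$ together with the length of the resolution can be chosen uniformly over $S$. Once these points are secured, the statements are formal consequences of the relative Morita equivalence $\Phi,\Psi$ and the fiberwise equivalences ${\cal C}_s\cong \Coh_{{\cal A}_s}(Y_s)$.
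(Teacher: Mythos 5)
Your proposal is correct and runs on the same engine as the paper's proof (the relative Morita equivalence, resolutions by $\mathcal{A}(-n)\otimes g^*(U)$, and kernel-of-surjection flatness bookkeeping), but it differs in two places worth noting. For (1)--(2) you first push $F^{\bullet}$ to the $\mathcal{A}$-module $M=\Phi(F^{\bullet})$ and resolve on the $Y$-side, whereas the paper works directly on $X$ with $W^j=\Hom_f(G(-n),F^j)$ after first replacing $F^{\bullet}$ by the quasi-isomorphic complex of Lemma \ref{lem:good-family} whose terms lie in $\mathcal{C}_s$; since $\Hom_f(G(-n),F^{\bullet})=g_*(M(n))$ these are the same construction, and your detour through $M$ lets you skip Lemma \ref{lem:good-family}. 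The genuine divergence is in (3): the paper never terminates the resolution --- it takes the cokernel of $\tilde d^{-2}$ in $\Coh(X)$ and realizes $E$ as the cone of a two-term complex, with Lemma \ref{lem:flatness} supplying the flatness of that cokernel; you instead terminate the resolution after finitely many steps by invoking finite global dimension of $\mathcal{A}_s$. Your route works, but be careful on two points: the bound is $\dim X_s+1$ rather than $\dim X_s$ (this is exactly Lemma \ref{lem:decomposition}(1), which needs $r\geq\dim X$, i.e.\ $\dim X+2$ projective terms before the syzygy becomes local projective), and the terminal syzygy is only a \emph{local projective} $\mathcal{A}$-module flat over $S$, not of the form $\mathcal{A}(-n)\otimes g^*(U)$, so you must still check that applying $\Psi$ to it termwise yields an $S$-flat sheaf computing the fiberwise $\Psi_s$ --- which holds because local projectives are locally direct summands of $\mathcal{A}^{\oplus N}$. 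The paper's cone trick buys you exactly the avoidance of this last verification.
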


\begin{proof}
(1)
We may assume that (i), (ii), (iii) in Lemma \ref{lem:good-family}
hold for $F^{\bullet}$.  
We take a sufficiently large $n$ with
$\Hom_{f}(G(-n),F^j[i])=0$, $i>0$ for all $j$.
Then $W^j:=\Hom_{f}(G(-n),F^j)$ are locally free sheaves.
Let
$W^{\bullet}:= 
{\bf R}\Hom_{f}(G(-n),F^{\bullet})$ be the complex defined by 
$W^j, j \in {\Bbb Z}$.
Then we have a morphism 
$G(-n) \otimes f^*(W^{\bullet}) \to {F}^{\bullet}$.
Since $F^{\bullet}_s \in {\cal C}_s$, $s \in S$,
$\Hom(G_s(-n),F^{\bullet}_s[i])=0$ for $i \ne 0$ and all $s \in S$.
Then the base change theorem implies that
$U:=\Hom_{f}(G(-n),F^{\bullet})$ is a locally free 
sheaf on $S$ and  
$\Hom_{f}(G(-n),F^{\bullet})_s \cong
\Hom(G(-n)_s,F^{\bullet}_s)$.
Hence $G(-n) \otimes f^*(W^{\bullet}) \cong 
G(-n) \otimes f^*(U)$, which defines
a family of morphisms 
\begin{equation}
G(-n) \otimes f^*(U) \to F^{\bullet}. 
\end{equation}
Since $F^{\bullet}_s \in {\cal C}_s$ for all
$s \in S$, 
${\bf R}\pi_*(G^{\vee} \otimes F^{\bullet})$ 
is a coherent sheaf on $Y$ which is flat over $S$, and
$g^* g_*(\pi_*(G^{\vee} \otimes F^{\bullet})(n)) \to
 \pi_*(G^{\vee} \otimes F^{\bullet})(n)$ is surjective in $\Coh(Y)$
for $n \gg 0$.
Since $W^{\bullet} \cong g_*(\pi_*(G^{\vee} \otimes F^{\bullet})(n))$,
the homomorphism
\begin{equation}
\pi_*(G^{\vee} \otimes G)(-n) \otimes g^*(U) \to 
\pi_*(G^{\vee} \otimes F^{\bullet}) 
\end{equation}
in $\Coh(Y)$ is surjective for $n \gg 0$.
Thus we have a family of exact sequences
\begin{equation}
0 \to E^{\bullet} \to G(-n) \otimes f^*(U) \to F^{\bullet} \to 0  
\end{equation}
in ${\cal C}_s$, $s \in S$.
Since $G \in \Coh(X)$,
we have $E^{\bullet} \in \Coh(X)$ which is
flat over $S$.
(2) is a consequence of (1).

(3)
We take a resolution of $F$ 
\begin{equation}\label{eq:F-resolution}
\cdots \overset{d^{-3}}{\to} 
g^*(U_{-2}) \otimes {\cal A}(-n_2)
\overset{d^{-2}}{\to} 
g^*(U_{-1}) \otimes {\cal A}(-n_1)
\overset{d^{-1}}{\to} 
g^*(U_{0}) \otimes {\cal A}(-n_0)
\to F \to 0,
\end{equation}
where $U_i$ are locally free sheaves on $S$.
Then we have a complex 
\begin{equation}\label{eq:E-resolution}
\cdots \overset{\tilde{d}^{-3}}{\to}
f^*(U_{-2}) \otimes G(-n_2)
\overset{\tilde{d}^{-2}}{\to} 
f^*(U_{-1}) \otimes G(-n_1)
\overset{\tilde{d}^{-1}}{\to} 
f^*(U_{0}) \otimes G(-n_0).
\end{equation}
%
By the Morita equivalence (Proposition \ref{prop:Morita}),
we have $\im \tilde{d}_s^{-i}=\ker \tilde{d}_s^{-i+1}$
in ${\cal C}_s$ for all $s \in S$.
Let $\coker \tilde{d}^{-2}$ 
be the cokernel of $\tilde{d}^{-2}$ in $\Coh(X)$.
Then by Lemma \ref{lem:flatness} below,
$\coker \tilde{d}^{-2}$ is flat over $S$, 
$(\coker \tilde{d}^{-2})_s=\coker (\tilde{d}^{-2}_s) \in {\cal C}_s$
and 
\begin{equation}
E:=\mathrm{Cone}(\coker \tilde{d}^{-2}
 \to f^*(U_{0}) \otimes G(-n_0))
\end{equation}
is a family of objects in ${\cal C}_s$.
By the construction, we have
$E_s =\pi^{-1}(F_s) \otimes_{\pi^{-1}({\cal A}_s)} G_s$.
It is easy to see the class of $E$ in ${\bf D}(X)$ does not depend
on the choice of the resolution \eqref{eq:F-resolution}
(cf. \cite[Lem. 14]{B-S:1}).
\begin{NB}
Let $V^{\bullet}$ be a bounded complex of locally free sheaves
on $Y$.
If $H^i(V^{\bullet}) =0$ for $i \ne 0$, then
replacing $V^{\bullet}$ by another complex, we may assume that
$V^i=0$ for $i>0$.
Moreover if $H^0(V^{\bullet})$ is flat over $S$,
then $H^i(V^{\bullet})_s=H^i(V^{\bullet}_s)$. 

If $X$ is smooth over ${\Bbb C}$,
we can represent $E$ as a finite complex of locally free sheaves on $X$.
Then we can represent ${\bf R}\pi_*(G^{\vee} \otimes E)$ as 
a bounded complex of locally free sheaves.
Since ${\bf R}\pi_*(G^{\vee} \otimes E)=F$ is flat over
$S$, ${\bf R}\pi_{s*}(G_s^{\vee} \otimes E_s)=F_s$ for all $s \in S$.
Thus $E$ is a family of objects in ${\cal C}_s$.
\end{NB}
\end{proof}

\begin{lem}\label{lem:flatness}
Let $E^i$, $0 \leq i \leq 3$ be coherent sheaves on $X$ which
are flat over $S$. 
Let 
\begin{equation}
E^0 \overset{d^0}{\to} E^1 \overset{d^1}{\to} E^2 \overset{d^2}{\to} E^3
\end{equation}
be a complex in $\Coh(X)$.
\begin{enumerate}
\item[(1)]
If $\ker d^1_s=\im d^0_s$ in $\Coh(X_s)$, 
then $(\im d^1)_s \to E^2_s$ is injective.
In particular if $\ker d^1_s=\im d^0_s$ in $\Coh(X_s)$
for all $s \in S$, then 
$\coker d^1, \im d^1, \ker d^1$ in $\Coh(X)$ are flat over $S$ and
$\im d^0=\ker d^1$.
\item[(2)] 
Assume that $E^i_s \in {\cal C}_s$ for all $s \in S$.
We denote the kernel, cokernel and the image of $d^i_s$ 
in ${\cal C}_s$ by
$\ker_{{\cal C}_s} d^i_s, \coker_{{\cal C}_s} d^i_s$ and 
$\im_{{\cal C}_s} d^i_s$ respectively.
If $E^i_s \in {\cal C}_s$ and
$\ker_{{\cal C}_s} d_s^i=\im_{{\cal C}_s} d^{i-1}_s$, 
$i=1,2$ in ${\cal C}_s$ for all $s$,
then $\im_{{\cal C}_s} d^{i-1}_s$ coincide with the image of $d^{i-1}_s$
in $\Coh(X_s)$ for $i=1,2$ and
$\ker_{{\cal C}_s} d_s^1$ coincides with the kernel of $d_s^1$ in $\Coh(X_s)$.
In particular, 
$\overline{E}^{\bullet}:
E^2/d^1(E^1) \to E^3$ is a family of objects in ${\cal C}_s$
and we get an exact triangle: 
\begin{equation}
\ker d^0 \to E^{\bullet} \to \overline{E}^{\bullet} \to \ker d^0[1]
\end{equation}
where $\ker d^0$ is the kernel of $d^0$ 
in $\Coh(X)$, which is flat over $S$.
\end{enumerate}
\end{lem}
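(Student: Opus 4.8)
The plan is to treat the two parts separately, using flatness criteria for (1) and the comparison of the standard and tilted $t$-structures for (2).

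\textbf{Part (1).} First I would prove the pointwise statement by a $\Tor$-computation. Consider the exact sequences $0 \to \ker d^1 \to E^1 \to \im d^1 \to 0$ and $0 \to \im d^1 \to E^2 \to \coker d^1 \to 0$ in $\Coh(X)$. Tensoring the first with ${\Bbb C}(s)$ shows that the kernel $K_1$ of $E^1_s \to (\im d^1)_s$ is the image of $(\ker d^1)_s \to E^1_s$. Since the composite $E^1_s \to (\im d^1)_s \to E^2_s$ is $d^1_s$ and $E^1_s \to (\im d^1)_s$ is surjective, injectivity of $(\im d^1)_s \to E^2_s$ is equivalent to $K_1 = \ker d^1_s$. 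The complex condition forces $\im d^0_s \subseteq K_1 \subseteq \ker d^1_s$, so the hypothesis $\ker d^1_s = \im d^0_s$ gives $K_1 = \ker d^1_s$, proving the first assertion; equivalently $\Tor_1^S(\coker d^1,{\Bbb C}(s))=0$. If this holds for all $s$, the local criterion of flatness gives $\coker d^1$ flat; then $0 \to \im d^1 \to E^2 \to \coker d^1 \to 0$ with flat quotient gives $\im d^1$ flat with $(\im d^1)_s = \im d^1_s$, and $0 \to \ker d^1 \to E^1 \to \im d^1 \to 0$ gives $\ker d^1$ flat with $(\ker d^1)_s=\ker d^1_s$. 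Finally the induced $E^0 \to \ker d^1$ is fibrewise surjective, since its image on $s$ is $\im d^0_s = \ker d^1_s = (\ker d^1)_s$, so its cokernel has vanishing fibres and is $0$ by Nakayama; hence $\im d^0 = \ker d^1$.

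\textbf{Part (2).} The point is to convert ${\cal C}_s$-exactness into statements about ordinary cohomology. Each $E^j_s$ lies in ${\cal C}_s \cap \Coh(X_s)$, hence in the torsion class $T_s$ of the defining torsion pair $(T_s,S_s)$. Writing $\underline H^n:=H^n(E^\bullet_s)$ for the ordinary cohomology sheaves and $0 \to F_T \to F \to F_S \to 0$ for the torsion decomposition of a sheaf $F$, I would use the comparison of the two $t$-structures: the truncation triangles yield exact sequences in ${\cal C}_s$
\[
0 \to (\underline H^n)_T \to H^n_{{\cal C}_s}(E^\bullet_s) \to (\underline H^{n-1})_S[1] \to 0 .
\]
The hypothesis $H^1_{{\cal C}_s}=H^2_{{\cal C}_s}=0$ then gives $(\underline H^1)_T=(\underline H^1)_S=0$, hence $\underline H^1=0$, together with $(\underline H^0)_S=0$ and $(\underline H^2)_T=0$. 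From $\underline H^1=0$ we get $\ker d^1_s=\im d^0_s$, a quotient of $E^0_s\in T_s$, so $\ker d^1_s\in T_s$; and $(\underline H^0)_S=0$ gives $\ker d^0_s\in T_s$. The elementary computation of ${\cal C}_s$-images and kernels of maps between objects of $T_s$ (for $\phi$ between sheaves in $T_s$ one has $\im_{{\cal C}_s}\phi=\im_{\Coh}\phi$ and $\ker_{{\cal C}_s}\phi=(\ker_{\Coh}\phi)_T$) now yields $\im_{{\cal C}_s}d^{i-1}_s=\im_{\Coh}d^{i-1}_s$ for $i=1,2$ and $\ker_{{\cal C}_s}d^1_s=\ker_{\Coh}d^1_s$, which are claims (a) and (b).

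\textbf{Assembling $\overline E^\bullet$ and the triangle.} Since $\underline H^1=0$ for every $s$, i.e. $\ker d^1_s=\im d^0_s$, Part (1) applies and gives $\im d^1$ flat with $(\im d^1)_s=\im d^1_s$, so $E^2/d^1(E^1)$ is $S$-flat with fibre $E^2_s/\im d^1_s$. The two-term complex $\overline E^\bullet=[E^2/d^1(E^1)\to E^3]$ then has fibre with $H^{-1}=\ker \bar d^2_s=\underline H^2\in S_s$ and $H^0=\coker d^2_s\in T_s$, so $\overline E^\bullet_s\in{\cal C}_s$ and $\overline E^\bullet$ is a family of objects in ${\cal C}_s$. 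Finally, the short exact sequence of complexes with subcomplex $[E^0\to E^1\to \im d^1]$ gives the triangle, because that subcomplex is quasi-isomorphic to $\ker d^0$: its top cohomology is $\coker(E^1\to\im d^1)=0$, its middle cohomology is $\ker d^1/\im d^0=0$ by Part (1), and its bottom cohomology is $\ker d^0$. Flatness of $\ker d^0$ follows from $0\to\ker d^0\to E^0\to\ker d^1\to 0$ since $\ker d^1=\im d^0$ is flat. The main obstacle is the tilting bookkeeping in Part (2): correctly relating $H^n_{{\cal C}_s}$ to the torsion and torsion-free parts of the ordinary cohomology sheaves and keeping track of base change; the flatness arguments of Part (1) are routine $\Tor$-and-Nakayama manipulations.
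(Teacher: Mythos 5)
Your Part (1) is the paper's own argument in slightly different packaging: the paper also identifies the kernel of $(\im d^1)_s \to E^2_s$ with the cokernel of $(\ker d^1)_s \to \ker d^1_s$ and kills it using $\im d^0_s=\ker d^1_s$; the $\Tor$/Nakayama bookkeeping you supply is exactly what the paper summarizes as ``the other claims easily follow from this''.

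Part (2) is correct but goes by a genuinely different route, and two of your auxiliary statements need repair. The paper does not invoke the general comparison of the tilted and standard $t$-structures; it observes that $\im_{{\cal C}_s}d^i_s=\coker_{{\cal C}_s}d^{i-1}_s$ is a ${\cal C}_s$-subobject of the sheaf $E^{i+1}_s$ and hence has vanishing $H^{-1}$, and then reads everything off the ordinary long exact cohomology sequence of $0\to\im_{{\cal C}_s}d^{i-1}_s\to E^i_s\to\coker_{{\cal C}_s}d^{i-1}_s\to 0$. Your route also works, but: (i) in your comparison sequence the sub and quotient are interchanged --- the correct sequence is $0 \to (\underline H^{n-1})_{S}[1] \to H^n_{{\cal C}_s}(E^\bullet_s) \to (\underline H^{n})_{T}\to 0$ (harmless for the vanishing statements you extract, since $H^n_{{\cal C}_s}=0$ kills both pieces either way); (ii) the parenthetical ``$\im_{{\cal C}_s}\phi=\im_{\Coh}\phi$ for $\phi$ between sheaves in $T_s$'' is false in general: for $\phi:E\to F$ with $E,F\in T_s$ one has $\im_{{\cal C}_s}\phi=E/(\ker_{\Coh}\phi)_{T_s}$, an extension of $\im_{\Coh}\phi$ by $(\ker_{\Coh}\phi)_{S_s}$, so the two images agree only when $\ker_{\Coh}\phi\in T_s$. (For instance, in ${^{-1}\Per}(X/Y)$ the map ${\cal O}_X\to{\Bbb C}_x$ with $x$ on an exceptional curve has both terms in the torsion class but kernel $I_x$ not in it, and it is not an epimorphism in ${^{-1}\Per}(X/Y)$.) Your argument survives because you establish $\ker_{\Coh}d^0_s\in T_s$ and $\ker_{\Coh}d^1_s=\im_{\Coh}d^0_s\in T_s$ before invoking the formula, but the auxiliary claim should be stated with that hypothesis. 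The assembly of $\overline E^\bullet$ and of the exact triangle at the end is fine.
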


\begin{proof}
(1)
Let $K$ be the kernel of $\xi:(\im d^1)_s \to E^2_s$.
Then we have an exact sequence
\begin{equation}
(\ker d^1)_s \to \ker(d_s^1) \to K \to 0.
\end{equation} 
Since the image of 
$E^0_s \to (\ker d^1)_s \to E^1_s$ is $d_s^0(E^0_s)=\ker(d_s^1)$,
$K=0$.  
The other claims are easily follows from this.

(2)
By our assumption,
$\im_{{\cal C}_s} d^i_s=\coker_{{\cal C}_s} d^{i-1}_s$ for $i=1,2$.
Since $\im_{{\cal C}_s} d^i_s$ is a subobject of $E^{i+1}_s$ for $i=0,1,2$,
$\im_{{\cal C}_s} d^i_s \in \Coh(X_s)$ for $i=0,1,2$ and 
$H^{-1}(\coker_{{\cal C}_s} d^{i-1}_s)=
H^{-1}(\im_{{\cal C}_s} d^i_s)=0$ for $i=1,2$.
Then $H^0(\im_{{\cal C}_s} d^{i-1}_s) \to H^0(E^i_s)$ is injective
for $i=1,2$, 
which implies that
$\im_{{\cal C}_s} d^{i-1}_s$ is the image of $d^{i-1}_s$ in $\Coh(X_s)$
for $i=1,2$.
\begin{NB}
$H^0(E') \to H^0(\im_{{\cal C}_s} d^1) \to 0$ and
$ H^0(\im_{{\cal C}_s} d^1)  \hookrightarrow H^0(E^2)$.
\end{NB}
By the exact sequence
\begin{equation}
0 \to H^0(\ker_{{\cal C}_s} d_s^1) \to H^0(E_s^1) 
\to H^0(\im_{{\cal C}_s} d_s^1) \to 0
\end{equation}
and the injectivity of $H^0(\im_{{\cal C}_s} d_s^1) \to H^0(E^2_s)$,
$\ker_{{\cal C}_s} d_s^1$ is the kernel of $d_s^1$ in $\Coh(X_s)$.
Then the other claims follow from (1). 
\end{proof}

\subsubsection{Quot-schemes}

\begin{lem}
Let ${\cal A}$ be an ${\cal O}_{Y}$-algebras 
on $Y$ which is flat over $S$. 
Let $B$ be a coherent ${\cal A}$-module on $Y$
which is flat over $S$.
There is a closed subscheme $\Quot_{B/Y/S}^{{\cal A},P}$
of $Q:=\Quot_{B/Y/S}^P$ 
parametrizing all quotient
${\cal A}_s$-modules $F$ of $B_s$ with
$\chi(F(n))=P(n)$.
\end{lem}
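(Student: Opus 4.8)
The plan is to realize $\Quot_{B/Y/S}^{{\cal A},P}$ as the closed subscheme of $Q:=\Quot_{B/Y/S}^P$ over which the universal quotient is compatible with the ${\cal A}$-action. By Grothendieck's existence theorem $Q$ is a projective $S$-scheme carrying a universal exact sequence
\[
0 \to {\cal K} \to B_Q \to {\cal F} \to 0
\]
on $Y_Q:=Y\times_S Q$, where $B_Q$ is the pullback of $B$, the sheaf ${\cal F}$ is flat over $Q$ with $\chi({\cal F}_q(n))=P(n)$, and ${\cal K}=\ker(B_Q\to {\cal F})$. Write $p\colon Y_Q\to Q$ for the projection, $q\colon B_Q\to {\cal F}$ for the universal quotient, and $a_Q\colon {\cal A}_Q\otimes_{{\cal O}_{Y_Q}}B_Q\to B_Q$ for the pullback of the ${\cal A}$-action. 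First I would form the composite
\[
\phi\colon {\cal A}_Q\otimes_{{\cal O}_{Y_Q}}{\cal K}\longrightarrow {\cal A}_Q\otimes_{{\cal O}_{Y_Q}}B_Q \xrightarrow{\,a_Q\,} B_Q \xrightarrow{\,q\,} {\cal F}.
\]
Since ${\cal F}$ is flat over $Q$, the universal sequence restricts to an exact sequence $0\to {\cal K}_q\to B_q\to {\cal F}_q\to 0$ on every fibre, and the formation of $\phi$ commutes with base change because $\otimes$ does; hence for $q$ lying over $s\in S$ the map $\phi_q$ is the fibrewise composite ${\cal A}_s\otimes {\cal K}_q\to B_s\to {\cal F}_q$. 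The key observation is that $\phi_q=0$ holds if and only if ${\cal K}_q$ is an ${\cal A}_s$-submodule of $B_s$, i.e. if and only if ${\cal F}_q$ is an ${\cal A}_s$-module quotient of $B_s$, and the same criterion holds after an arbitrary base change $T\to Q$. Thus $\Quot_{B/Y/S}^{{\cal A},P}$ should be the locus in $Q$ where $\phi$ vanishes.

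It remains to show that this vanishing locus is represented by a closed subscheme, which is the technical heart of the argument. The point requiring care is that the source ${\cal A}_Q\otimes {\cal K}$ of $\phi$ need not be flat over $Q$, so one cannot push it forward directly. I would get around this by composing with a relatively surjective map from a twisted free sheaf: for $n\gg 0$ the sheaf $({\cal A}_Q\otimes {\cal K})(n)$ is relatively globally generated, so (working locally on $Q$ and gluing afterwards) there is a surjection $e\colon {\cal O}_{Y_Q}(-n)^{\oplus k}\twoheadrightarrow {\cal A}_Q\otimes {\cal K}$. Since $e$ stays surjective after any base change, $\phi_T=0$ if and only if $(\phi\circ e)_T=0$, and $\phi\circ e$ is simply a $k$-tuple of global sections of ${\cal F}(n)$ over $Y_Q$. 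Enlarging $n$ if necessary, cohomology and base change give $R^{>0}p_*{\cal F}(n)=0$ and that $p_*{\cal F}(n)$ is locally free with formation commuting with base change; so, through the identification $\Gamma(Y_Q,{\cal F}(n))=\Gamma(Q,p_*{\cal F}(n))$, these $k$ sections become $k$ sections of the locally free sheaf $p_*{\cal F}(n)$ on $Q$. Their common scheme-theoretic zero locus — the closed subscheme cut out by the ideal generated by their components in local trivialisations of $p_*{\cal F}(n)$ — is the desired $Q'=\Quot_{B/Y/S}^{{\cal A},P}$; because $p_*{\cal F}(n)$ is locally free and commutes with base change, a morphism $g\colon T\to Q$ factors through $Q'$ precisely when the pulled-back sections vanish, i.e. when $(\phi\circ e)_T=0$, i.e. when $\phi_T=0$.

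Finally I would record the functor of points: by the fibrewise criterion above, $Q'(T)$ consists exactly of the $T$-flat quotients $B_T\to F$ with fibrewise Hilbert polynomial $P$ whose kernel is ${\cal A}_T$-stable, that is, of the flat families of quotient ${\cal A}_s$-modules of $B_s$ with $\chi(F(n))=P(n)$; this is the asserted universal property, and the closed embedding $Q'\hookrightarrow Q$ is the claimed description. The main obstacle is precisely this representability step, and more specifically arranging the vanishing condition so that it commutes with base change despite $\phi$ having a non-flat source; the reduction, via the surjection $e$, to sections of the flat and base-change-compatible sheaf $p_*{\cal F}(n)$ is what makes it go through. (Equivalently, one may invoke the standard representability of the locus where a homomorphism into a sheaf flat over the base vanishes; the argument above is just a direct proof of that fact in the present situation.)
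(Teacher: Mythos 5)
Your proof is correct and follows essentially the same route as the paper: both form the homomorphism ${\cal A}\otimes{\cal K}\to{\cal Q}$ induced by the module action and define $\Quot_{B/Y/S}^{{\cal A},P}$ as its zero locus, characterized by the factorization property for $T\to Q$. The only difference is that you spell out the representability of that zero locus (reduction via a twisted free surjection to sections of the locally free sheaf $p_*{\cal F}(n)$), a standard point the paper leaves implicit.
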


\begin{proof}
Let ${\cal Q}$ and ${\cal K}$ be the universal quotient and 
the universal subsheaf of $B \otimes_{{\cal O}_S} {\cal O}_Q$:
\begin{equation}
0 \to {\cal K} \to B \otimes_{{\cal O}_S} {\cal O}_Q 
\to {\cal Q} \to 0.
\end{equation}
Then we have a homomorphism
\begin{equation}
{\cal K} \otimes_{{\cal O}_S} {\cal A} \to 
B \otimes_{{\cal O}_S} {\cal O}_Q \otimes_{{\cal O}_S} {\cal A}
\to B \otimes_{{\cal O}_S} {\cal O}_Q \to {\cal Q}
\end{equation}
induced by the multiplication map
$B \otimes_{{\cal O}_S} {\cal O}_Q \otimes_{{\cal O}_S} {\cal A}
\to B \otimes_{{\cal O}_S} {\cal O}_Q$.
Let $Z=\Quot_{B/Y/S}^{{\cal A},P}$ be the zero locus of this homomorphism.
Then for an $S$-morphism $T \to Q$,
${\cal K} \otimes_{{\cal O}_S}{\cal O}_T$ is an 
${\cal A} \otimes_{{\cal O}_S} {\cal O}_T$-submodule
of $B \otimes_{{\cal O}_S}{\cal O}_T$ 
if and only if $T \to Q$ factors through $Z$.
\end{proof}

\begin{cor}
Let $G'$ be a family of objects in ${\cal C}_s$, $s \in S$. 
Then there is a quot-scheme $\Quot_{G'/X/S}^{{\cal C},P}$
parametrizing all quotients $G_s' \to E$ in ${\cal C}_s$, 
where $P$ is the 
$G_s$-twisted Hilbert-polynomial
of the quotient $G_s \to E, s \in S$.
\end{cor}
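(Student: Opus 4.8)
The plan is to transport the problem through the relative Morita equivalence and then invoke the Quot-scheme of $\mathcal{A}$-module quotients constructed in the preceding Lemma. First I would form the coherent $\mathcal{A}$-module $B:={\bf R}\pi_*(G^{\vee}\otimes G')$. Since $G'_s\in\mathcal{C}_s$ for every $s\in S$, this object is concentrated in degree $0$, is flat over $S$, and its formation commutes with any base change $T\to S$, so that $B_T={\bf R}\pi_{T*}(G_T^{\vee}\otimes G'_T)$; the flatness and coherence here are exactly what is established in the proof of Proposition \ref{prop:Morita-family}. I would then set $\Quot_{G'/X/S}^{\mathcal{C},P}:=\Quot_{B/Y/S}^{\mathcal{A},P}$, the scheme produced by the preceding Lemma, and show that it represents the desired functor.

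The core of the argument is the identification of the two Quot functors. For an $S$-scheme $T$, the relative Morita equivalence of Proposition \ref{prop:Morita-family}(3), base-changed to $T$, is an exact equivalence between the family of abelian categories $\mathcal{C}_t$, $t\in T$, and the category of $\mathcal{A}_T$-modules, with quasi-inverse $\pi_T^{-1}(-)\otimes_{\pi_T^{-1}(\mathcal{A})}G_T$. Being exact, it carries a quotient $G'_T\to\mathcal{E}$, where $\mathcal{E}$ is a $T$-flat family of objects in $\mathcal{C}_t$, to a surjection $B_T\to\mathcal{F}$ of $\mathcal{A}_T$-modules with $\mathcal{F}={\bf R}\pi_{T*}(G_T^{\vee}\otimes\mathcal{E})$ flat over $T$, and conversely. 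Moreover, since $\mathcal{O}_X(1)=\pi^*(\mathcal{O}_Y(1))$, the projection formula gives $\chi(G_t^{\vee}\otimes\mathcal{E}_t(n))=\chi(\mathcal{F}_t(n))$, so the $G_t$-twisted Hilbert polynomial of $\mathcal{E}_t$ coincides with the Hilbert polynomial of $\mathcal{F}_t$. Hence the functor of $T$-flat quotients of $G'_T$ in $\mathcal{C}_t$ with twisted Hilbert polynomial $P$ is isomorphic to the functor of $T$-flat $\mathcal{A}_T$-module quotients of $B_T$ with Hilbert polynomial $P$, and the latter is represented by $\Quot_{B/Y/S}^{\mathcal{A},P}$.

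The delicate point, and the main obstacle, is the compatibility of the Morita equivalence with base change and flatness: one must verify that a $T$-flat $\mathcal{A}_T$-module quotient of $B_T$ really corresponds, under the base-changed quasi-inverse, to a genuine $T$-flat family of quotients in $\mathcal{C}_t$, and that surjectivity together with the formation of kernels is preserved in both directions as the base varies. This is precisely what Proposition \ref{prop:Morita-family}(3) supplies---namely that $\pi^{-1}(\mathcal{F})\otimes_{\pi^{-1}(\mathcal{A})}G$ is a $T$-flat family of objects in $\mathcal{C}_t$ satisfying ${\bf R}\pi_*(G^{\vee}\otimes(-))=\mathcal{F}$---in conjunction with Lemma \ref{lem:flatness}, which controls the flatness of kernels, images, and cokernels of the relevant morphisms and guarantees that the exactness statements hold fiberwise. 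Once these compatibilities are in hand, the representability follows immediately.
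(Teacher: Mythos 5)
Your proposal is correct and follows essentially the same route as the paper: set ${\cal A}:=\pi_*(G^{\vee}\otimes G)$, transport the quotient problem through the relative Morita equivalence of Proposition \ref{prop:Morita-family} to the category of flat ${\cal A}_T$-modules, and apply the preceding Lemma to the ${\cal A}$-module ${\bf R}\pi_*(G^{\vee}\otimes G')$. The paper states this in two lines; your added checks (base-change compatibility, the matching of Hilbert polynomials via the projection formula, and the flatness control from Lemma \ref{lem:flatness}) are exactly the implicit content of that argument.
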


\begin{proof}
We set ${\cal A}:=\pi_*(G^{\vee} \otimes_{{\cal O}_{X}} G)$.
Then ${\cal A}$ is a flat family of ${\cal O}_{Y}$-algebras on $Y$ and 
we have an equivalence
between the category of ${\cal A}_T$-modules $F$ flat
over $T$ and the categoty of
families $E$ of objects in ${\cal C}_t, t \in T$
by $F \mapsto \pi_T^{-1}(F) \otimes_{\pi^{-1}({\cal A}_T)} G_T$.
So the claim holds. 
\end{proof}

\subsection{Stability for perverse coherent sheaves.}\label{subsect:stability}

For a non-zero object $E \in {\cal C}_s$, 
$\chi(G_s,E(n))=\chi({\bf R}\pi_*(G_s^{\vee} \otimes E)(n))>0$
for 
$n \gg 0$ and
there are integers $a_i(E)$
such that 
\begin{equation}
\chi(G_s,E(n))=\sum_i a_i(E) \binom{n+i}{i}.
\end{equation}

\begin{defn}[Simpson]\label{defn:Simpson-stability}
Assume that ${\cal C}_s$ is a tilting of $\Coh(X_s)$ for all $s \in S$.
\begin{enumerate}
\item[(1)]
An object $E \in {\cal C}_s$ is $d$-dimensional, if
$a_d(E)>0$ and $a_i(E)=0$, $i>d$. 
\item[(2)]
An object $E \in {\cal C}_s$ of dimension $d$ is $G_s$-twisted semi-stable
if 
\begin{equation}
\chi(G_s,F(n))  \leq \frac{a_d(F)}{a_d(E)}\chi(G_s,E(n)), n \gg 0
\end{equation}
for all proper subobject $F$ of $E$. 
\end{enumerate}
\end{defn}

\begin{rem}
\begin{enumerate}
\item[(1)]
If $\dim E> \dim \pi(Z_s)$ and $E$ is $G_s$-twisted semi-stable,
then $H^{-1}(E)=0$.
Indeed $H^{-1}(E)[1]$ is a subobject of $E$ with
\begin{equation}
\deg\chi(G_s,H^{-1}(E)(n)) \leq \dim \pi(Z_s)<
\deg \chi(G_s,E(n)).
\end{equation}
\item[(2)]
Assume that $E \in \Coh(X_s) \cap {\cal C}_s$.
For an exact sequence
\begin{equation}
0 \to F \to E \to F' \to 0
\end{equation}
in ${\cal C}_s$,
we have an exact sequence in $\Coh(X_s)$
\begin{equation}
H^{-1}(F') \overset{\varphi}{\to} H^0(F) \to H^0(E) \to H^0(F') \to 0.
\end{equation}
Since $\chi(G_s,H^0(F)(n)) \leq \chi(G_s,(\coker \varphi)(n))$,
in order to check the semi-stability of $E$,
we may assume that $H^{-1}(F')=0$.
\end{enumerate}
\end{rem}

\begin{prop}
There is a coarse moduli scheme $\overline{M}_{X/S}^{{\cal C},P} \to S$
of $G_s$-twisted semi-stable objects
$E \in {\cal C}_s$ with the $G_s$-twisted Hilbert polynomial $P$.
$\overline{M}_{X/S}^{{\cal C},P}$ is a projective scheme over $S$.
\end{prop}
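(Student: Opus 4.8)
The plan is to transport the entire moduli problem from the category ${\cal C}_s$ of perverse coherent sheaves on the fibers $X_s$ to the category $\Coh_{{\cal A}_s}(Y_s)$ of ${\cal A}_s$-modules on $Y_s$, where ${\cal A}:=\pi_*(G^{\vee}\otimes G)$, and then to invoke Simpson's construction \cite{S:1} of moduli of semistable modules over a sheaf of algebras. By the relative Morita equivalence (Proposition \ref{prop:Morita} together with Proposition \ref{prop:Morita-family}), the assignment $E\mapsto{\bf R}\pi_*(G^{\vee}\otimes E)$ gives an equivalence between families of objects of ${\cal C}_s$ flat over $S$ and families of ${\cal A}_s$-modules flat over $S$, with quasi-inverse $F\mapsto\pi^{-1}(F)\otimes_{\pi^{-1}({\cal A})}G$. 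Since this is an equivalence of abelian categories, it induces a bijection of subobjects, and in particular it preserves Jordan--H\"older factors.

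The crucial point is that this equivalence matches the stability data. By definition $\chi(G_s,E(n))=\chi({\bf R}\pi_*(G_s^{\vee}\otimes E)(n))$, so the $G_s$-twisted Hilbert polynomial of $E\in{\cal C}_s$ equals the ${\cal O}_Y(1)$-Hilbert polynomial of the ${\cal A}_s$-module $F:={\bf R}\pi_*(G_s^{\vee}\otimes E)$. Hence the integers $a_i(E)$ coincide with those of $F$, the dimension of $E$ agrees with that of $F$, and --- because the equivalence respects subobjects --- $E$ is $G_s$-twisted (semi)stable in the sense of Definition \ref{defn:Simpson-stability} if and only if $F$ is Simpson (semi)stable as an ${\cal A}_s$-module with the same Hilbert polynomial $P$. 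Therefore the moduli functor of $G_s$-twisted semistable objects of ${\cal C}_s$ with Hilbert polynomial $P$ is isomorphic to the moduli functor of Simpson-semistable ${\cal A}_s$-modules on $Y_s$ with Hilbert polynomial $P$.

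Simpson's construction then applies to the flat family of ${\cal O}_Y$-algebras ${\cal A}$ over $g:Y\to S$. One chooses $n$ large enough that every such semistable module $F$ is $n$-regular, writes a surjection ${\cal A}(-n)^{\oplus N}\to F$ with $N=P(n)$, and realizes all such $F$ as closed points of the relative quot-scheme $\Quot_{B/Y/S}^{{\cal A},P}$ of subsection \ref{subsect:family} with $B={\cal A}(-n)^{\oplus N}$. Taking the GIT quotient of the open locus of semistable quotients by the natural $\mathrm{PGL}_N$-action, Simpson produces a coarse moduli scheme $\overline{M}_{Y/S}^{{\cal A},P}\to S$, projective over $S$, whose closed points parametrize $S$-equivalence classes. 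One sets $\overline{M}_{X/S}^{{\cal C},P}:=\overline{M}_{Y/S}^{{\cal A},P}$; since the Morita equivalence carries Jordan--H\"older filtrations to Jordan--H\"older filtrations, it identifies the $S$-equivalence classes on the two sides, and the resulting scheme corepresents the perverse moduli functor.

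The main obstacle is boundedness: one must ensure that the family of all $G_s$-twisted semistable objects $E\in{\cal C}_s$ with fixed Hilbert polynomial $P$, ranging over all $s\in S$, is bounded, so that a single $n$ works uniformly and the quot-scheme is of finite type over $S$. After transport via the equivalence this is exactly the boundedness of Simpson-semistable ${\cal A}$-modules, which is furnished by the Le Potier--Simpson type estimate for modules over a sheaf of algebras in \cite{S:1}. Granting this, separatedness and properness of $\overline{M}_{Y/S}^{{\cal A},P}$ over $S$ follow from the valuative criterion for semistable ${\cal A}$-modules and the properness of the GIT quotient, giving the asserted projectivity over $S$.
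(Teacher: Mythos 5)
Your proposal is correct and follows essentially the same route as the paper: transport the problem via the Morita equivalence $E\mapsto{\bf R}\pi_*(G^{\vee}\otimes E)$ to Simpson-semistable ${\cal A}$-modules (noting the Hilbert polynomials and stability match), apply Simpson's GIT construction on the quot-scheme $\Quot^{{\cal A},P}$, and obtain projectivity by a Langton-type valuative argument. The paper's proof is exactly this, stated more briefly.
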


\begin{proof}
The claim is due to Simpson \cite[Thm. 4.7]{S:1}.
We set
${\cal A}:=\pi_*(G^{\vee} \otimes G)$.
If we set $\Lambda_0={\cal O}_Y$
and $\Lambda_k={\cal A}$ for $k \geq 1$,
then a sheaf of ${\cal A}$-module is an example of
$\Lambda$-modules
in \cite{S:1}.
Let $Q^{ss}$ be an open subscheme
of $\Quot_{{\cal A}(-n) \otimes V/Y/S}^{{\cal A},P}$
consisting of semi-stable ${\cal A}_s$-modules on $Y_s$, $s \in S$. 
Then we have the moduli space 
$\overline{M}_{Y/S}^{{\cal A},P} \to S$
of semi-stable ${\cal A}_s$-modules on $Y_s$
as a GIT-quotient $Q^{ss} \dslash GL(V)$,
where we use a natural polarization on the embedding of
the quot-scheme into the Grassmannian.
By a standard argument due to Langton,
we see that $\overline{M}_{Y/S}^{{\cal A},P}$ is 
projective over $S$. 
Since
the semi-stable
${\cal A}_s$-modules correspond to
$G_s$-twisted semi-stable objects via the Morita
equivalence (Proposition \ref{prop:Morita-family}),
we get the moduli space $\overline{M}_{X/S}^{{\cal C},P} \to S$,
which is
projective over $S$.
\end{proof}

\begin{NB}
Let $G$ be a locally free sheaf on $X$ satisfying Assumption \ref{ass:2}.
Let ${\cal C}(G)$ and ${\cal C}(G)^*$ be the tiltings in subsection
\ref{subsect:perverse-examples}.
Let $G^+$ and $G^-$ be local projective generators
of ${\cal C}(G)$ and ${\cal C}(G)^*$
such that $c_1(G^{\pm})/\rk G^{\pm}$ are sufficiently
close to $c_1(G)/\rk G$.
Then Proposition \ref{prop:tilting:contraction}
implies the morphims
${\cal M}_{X/S}^{{\cal C}(G),P} \to {\cal M}_{Y/S}^{{\cal A}_0}$
and
${\cal M}_{X/S}^{{\cal C}(G)^*,P} \to {\cal M}_{Y/S}^{{\cal A}_0}$,
where ${\cal A}_0:=\pi_*(G^{\vee} \otimes G)$.
\end{NB}

We consider a natural relative polarization on 
$\overline{M}_{X/S}^{{\cal C},P}$.
Let $Q^{ss}$ be the open subscheme of
$\Quot_{G(-n) \otimes V/X/S}^{{\cal C},P}
\cong \Quot_{{\cal A}(-n) \otimes V/Y/S}^{{\cal A},P}$ such that
$\overline{M}_{X/S}^{{\cal C},P}=Q^{ss} \dslash GL(V)$,
where $V$ is a vector space of dimension $P(n)$.
Let ${\cal Q}$ be the universal quotient on $Q^{ss} \times X$. Then
${\cal Q}_{|\{q \} \times X}$ is $G$-twisted semi-stable
for all $q \in Q^{ss}$.
By the construction of the moduli space,
we have a $GL(V)$-equivariant isomorphism
$V \to p_{Q^{ss}}(G^{\vee} \otimes {\cal Q}(n))$.
We set
\begin{equation}
\begin{split}
{\cal L}_{m,n}:=&
\det p_{Q^{ss} !}(G^{\vee} \otimes {\cal Q}(n+m))^{\otimes P(n)} \otimes
\det p_{Q^{ss} !}(G^{\vee} \otimes {\cal Q}(n))^{\otimes (-P(m+n))}\\
=& \det p_{Q^{ss} !}(G^{\vee} \otimes {\cal Q}(n+m))^{\otimes P(n)} \otimes
\det V^{\otimes (-P(m+n))}.
\end{split}
\end{equation}
We note that ${\bf R}\pi_*(G^{\vee} \otimes {\cal Q})$ 
gives the
universal quotient ${\cal A}$-module on $Y \times 
\Quot_{{\cal A}(-n) \otimes V/Y/S}^{{\cal A},P}$.
By the construction of the moduli space, we get the following. 
\begin{lem}\label{lem:polarization}
${\cal L}_{m,n}$, $m \gg n \gg 0$ is the pull-back of a relatively
ample line bundle on $\overline{M}_{X/S}^{{\cal C},P}$.
\end{lem}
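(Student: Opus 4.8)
The plan is to identify $\mathcal{L}_{m,n}$ with the natural $GL(V)$-linearized ample line bundle underlying Simpson's GIT construction \cite{S:1}, and then to invoke Mumford's theorem that a linearized ample line bundle on the semistable locus descends to a (relatively) ample line bundle on the GIT quotient. First I would pass to the $\mathcal{A}$-module side. Since $\pi$ satisfies Assumption \ref{ass:1} and $\mathbf{R}\pi_*(G^{\vee} \otimes \mathcal{Q})$ is the universal quotient $\mathcal{A}$-module on $Y \times Q^{ss}$ via the Morita equivalence (Proposition \ref{prop:Morita-family}), the projection formula together with $\mathcal{O}_X(1)=\pi^*\mathcal{O}_Y(1)$ gives
\begin{equation*}
p_{Q^{ss}!}(G^{\vee} \otimes \mathcal{Q}(k)) \cong p_{Q^{ss}!}\bigl((\mathbf{R}\pi_*(G^{\vee} \otimes \mathcal{Q}))(k)\bigr)
\end{equation*}
for every $k$. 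Hence the determinant-of-cohomology line bundles computed from $\mathcal{Q}$ coincide with those of the universal $\mathcal{A}$-module, and it suffices to recognize $\mathcal{L}_{m,n}$ as the GIT polarization in the construction of $\overline{M}_{Y/S}^{\mathcal{A},P}$.

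Next I would recall the Grassmannian embedding. For $m \gg 0$ every semistable $\mathcal{A}_s$-module $F$ with Hilbert polynomial $P$ has $F(m+n)$ globally generated with vanishing higher cohomology, so the surjection $\mathcal{A}(-n) \otimes V \to F$ induces a surjection of vector spaces $H^0(\mathcal{A}(m)) \otimes V \to H^0(F(m+n))$ of rank $P(m+n)$. This defines an equivariant classifying morphism $Q^{ss} \to \mathrm{Gr}\bigl(H^0(\mathcal{A}(m)) \otimes V, P(m+n)\bigr)$ under which the Plücker bundle $\mathcal{O}_{\mathrm{Gr}}(1)$ pulls back to $\det p_{Q^{ss}!}(G^{\vee} \otimes \mathcal{Q}(m+n))$, placing us exactly in the GIT setup of \cite{S:1}. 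The twist built into $\mathcal{L}_{m,n}$ is the normalization needed for descent: using the equivariant isomorphism $V \cong p_{Q^{ss}*}(G^{\vee} \otimes \mathcal{Q}(n))$, the factor $\det V^{\otimes(-P(m+n))}$ equals $\det p_{Q^{ss}!}(G^{\vee} \otimes \mathcal{Q}(n))^{\otimes(-P(m+n))}$ and exactly cancels the central $\mathbb{G}_m \subset GL(V)$ weight of $\det p_{Q^{ss}!}(G^{\vee} \otimes \mathcal{Q}(m+n))^{\otimes P(n)}$. Thus $\mathcal{L}_{m,n}$ is a $PGL(V)$-linearized line bundle, and by Kempf's descent lemma it descends to $\overline{M}_{X/S}^{\mathcal{C},P} = Q^{ss}\dslash GL(V)$.

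The main obstacle I anticipate is not the descent or the Plücker computation but the identification of the GIT-semistable locus with $Q^{ss}$: one must show, for $m \gg n \gg 0$, that GIT-(semi)stability with respect to $\mathcal{L}_{m,n}$ coincides with $G_s$-twisted (semi)stability in the sense of Definition \ref{defn:Simpson-stability}, so that the quotient really is $\overline{M}_{X/S}^{\mathcal{C},P}$ and the descended bundle is ample. This is the Le Potier--Simpson type estimate established in \cite[Thm. 4.7]{S:1}, and it is precisely what forces the iterated choice $m \gg n \gg 0$. Granting this, Mumford's GIT yields that the descended line bundle is relatively ample over $S$, which is the assertion of Lemma \ref{lem:polarization}.
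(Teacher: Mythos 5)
Your proposal is correct and follows essentially the same route as the paper: the paper's "proof" is simply the observation that, via the Morita equivalence, $\overline{M}_{X/S}^{{\cal C},P}$ is Simpson's GIT quotient of $Q^{ss}$ with respect to the natural polarization coming from the Grassmannian embedding, and ${\cal L}_{m,n}$ is precisely that linearized bundle normalized by $\det V$ so that the central $\mathbb{G}_m$ acts trivially. Your elaboration (projection formula to pass to the ${\cal A}$-module side, Plücker pullback, cancellation of the central weight, descent, and the Le Potier--Simpson identification of GIT-semistability with $G_s$-twisted semistability for $m \gg n \gg 0$) is exactly what "by the construction of the moduli space" is meant to encode.
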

Assume that $S=\Spec({\Bbb C})$ and $\dim X=2$.
We set ${\cal O}_X(1)={\cal O}_X(H)$.
\begin{defn}
\begin{enumerate}
\item[(1)]
For ${\bf e} \in K(X)_{\mathrm{top}}$,
$\overline{M}_H^G({\bf e})$ is the moduli space
of $G$-twisted semi-stable objects $E$ of ${\cal C}$ with
$\tau(E)={\bf e}$ and
${M}_H^G({\bf e})$ the open subscheme consisting of
$G$-twisted stable objects.
\item[(2)]
Let ${\cal M}_H({\bf e})^{\mu\text{-ss}}$ 
(resp. ${\cal M}_H^G({\bf e})^{ss}, {\cal M}_H^G({\bf e})^{s}$)
be the moduli stack of
$\mu$-semi-stable (resp. $G$-twisted semi-stable, $G$-twisted stable)
objects $E$ of ${\cal C}$ with
$\tau(E)={\bf e}$. 
\end{enumerate}
\end{defn}
We set $r_0:=\rk {\bf e}$ and $\xi_0 :=c_1({\bf e})$. 
Then we see that  
\begin{equation}\label{eq:det-bdle}
\begin{split}
&\ch(P(n)G^{\vee}((n+m)H)-P(n+m)G^{\vee}(nH))\\
=& m \left[\frac{(\rk G) r_0}{2}(H^2)\left\{
(m-2n)\ch G^{\vee}-n(n+m) ((\rk G) H-(c_1(G),H)\varrho_X) \right\} \right.\\
& \left. +(H,(\rk G) \xi_0-r_0 c_1(G)-\frac{(\rk G) r_0}{2}K_X)
\left(-\ch G^{\vee}+\frac{n(n+m)}{2}(H^2)(\rk G) \varrho_X \right)\right].
\end{split}
\end{equation}

\begin{NB}
\begin{equation}
\begin{split}
&\chi(G^{\vee} \otimes E(nH))\\
=&
\int_X (\rk G-c_1(G)+\ch_2(G))(\rk E+c_1(E)+\ch_2(E))
(1+nH+\frac{n^2}{2}(H^2)\varrho_X)
(1-\frac{1}{2}K_X+\chi({\cal O}_X)\varrho_X)\\
=& \chi(G,E)+\int_X (\rk G-c_1(G))(\rk E+c_1(E))
(nH+\frac{n^2}{2}(H^2)\varrho_X)(1-\frac{1}{2}K_X)\\
=& n(H,\rk G c_1(E)-\rk E c_1(G)-\frac{\rk G \rk E}{2}K_X)
+\frac{n^2 \rk G \rk E}{2}(H^2)
\end{split}
\end{equation}

\begin{equation}
\begin{split}
\ch G^{\vee}(nH)=\ch G^{\vee}+nH \rk G -n(c_1(G),H)+\frac{n^2}{2}(H^2)\rk G 
\end{split}
\end{equation}
\end{NB}

\begin{lem}\label{lem:det-bdle}
We take $\zeta \in K(X)$ with $\ch(\zeta)=r_0 H+(\xi_0,H)\varrho_X$.
Assume that $\tau(G) \in {\Bbb Z}{\bf e}$.
If $\chi({\bf e},{\bf e})=0$ and $E \cong E \otimes K_X$
for all $E \in {\cal M}_H^G({\bf e})^{ss}$,
 then 
$\det p_{Q^{ss} !}({\cal Q}\otimes \zeta^{\vee}) \cong
\det p_{Q^{ss} !}({\cal Q}^{\vee}\otimes \zeta)^{\vee}$ 
is the pull-back of an ample line bundle ${\cal L}(\zeta)$
on $\overline{M}_H^G({\bf e})$.
\end{lem}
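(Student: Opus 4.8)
The plan is to regard $L:=\det p_{Q^{ss}!}({\cal Q}\otimes\zeta^{\vee})$ as a $GL(V)$-linearized line bundle on $Q^{ss}$ and to prove the lemma in two steps: first that $L$ descends to a line bundle ${\cal L}(\zeta)$ on the GIT quotient $\overline{M}_H^G({\bf e})=Q^{ss}\dslash GL(V)$ by Kempf's descent criterion, and then that ${\cal L}(\zeta)$ is ample by comparing it with the polarization ${\cal L}_{m,n}$ of Lemma \ref{lem:polarization}. The asserted self-duality $\det p_{Q^{ss}!}({\cal Q}\otimes\zeta^{\vee})\cong\det p_{Q^{ss}!}({\cal Q}^{\vee}\otimes\zeta)^{\vee}$ I would obtain from relative Serre duality for $p:Q^{ss}\times X\to Q^{ss}$, whose relative dualizing complex is $p_X^*K_X[2]$, together with the hypothesis $E\cong E\otimes K_X$ for $E\in{\cal M}_H^G({\bf e})^{ss}$, which lets one absorb the twist by $K_X$.

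For descent it suffices, by Kempf's lemma, to check that the stabilizer of each point of $Q^{ss}$ with closed orbit acts trivially on the fibre of $L$. Since the central ${\Bbb C}^*\subset GL(V)$ acts on ${\cal Q}$ with weight one, its weight on $L$ at a point with fibre $E$ is $\chi(E\otimes\zeta^{\vee})=\int_X\ch(E)\,\ch(\zeta^{\vee})\,\td_X$. A direct Riemann--Roch computation with $\ch(\zeta^{\vee})=-r_0H+(\xi_0,H)\varrho_X$ gives, for an object of class $(r,\xi,\ast)$, $\chi(E\otimes\zeta^{\vee})=r(\xi_0,H)-r_0(\xi,H)+\tfrac12 r\,r_0(H,K_X)$; for $E$ itself $(r,\xi)=(r_0,\xi_0)$, so this equals $\tfrac12 r_0^2(H,K_X)$, which vanishes because $E\cong E\otimes K_X$ forces $K_X$ to be numerically trivial (the rank-zero case vanishing from the same identity). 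At a properly semistable closed point $E=\bigoplus_i E_i\otimes W_i$ with $E_i$ stable, the stabilizer is $\prod_i GL(W_i)$, and the weight of the $i$-th central torus is $\chi(E_i\otimes\zeta^{\vee})=r_i(\xi_0,H)-r_0(\xi_i,H)$; this is zero precisely because all $E_i$ share the reduced $G$-twisted Hilbert polynomial of $E$ and hence the same $H$-slope. This is exactly what the choice $\ch(\zeta)=r_0H+(\xi_0,H)\varrho_X$ is designed to achieve, so $L$ descends to ${\cal L}(\zeta)$.

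For ampleness I would compare with ${\cal L}_{m,n}=\lambda(u_{m,n})$, where $u_{m,n}=P(n)[G^{\vee}((n+m)H)]-P(n+m)[G^{\vee}(nH)]$ and $\lambda(u)=\det p_{Q^{ss}!}({\cal Q}\otimes u)$. By \eqref{eq:det-bdle} and the hypothesis $\tau(G)=\ell{\bf e}$, one has $(\rk G)\xi_0-r_0c_1(G)=\ell r_0\xi_0-r_0\ell\xi_0=0$, so together with $(H,K_X)=0$ the entire second bracketed term of \eqref{eq:det-bdle} drops out and, using $(\rk G)H-(c_1(G),H)\varrho_X=-\ell\,\ch\zeta^{\vee}$,
\[
\ch(u_{m,n})=m\tfrac{(\rk G)r_0}{2}(H^2)\bigl[(m-2n)\ch G^{\vee}+n(n+m)\ell\,\ch\zeta^{\vee}\bigr].
\]
Hence ${\cal L}_{m,n}\cong\lambda(G^{\vee})^{\otimes a}\otimes L^{\otimes b}$ with $a=m\tfrac{(\rk G)r_0}{2}(H^2)(m-2n)$ and $b=m\tfrac{(\rk G)r_0}{2}(H^2)n(n+m)\ell$, both positive for $m\gg n\gg 0$ (after choosing $\ell>0$). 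The factor $\lambda(G^{\vee})=\det p_{Q^{ss}!}({\cal Q}\otimes G^{\vee})=\det p_!(\overline{\cal Q})$ is the determinant of the pushforward of the universal ${\cal A}$-module; its central ${\Bbb C}^*$-weight is $\chi(E\otimes G^{\vee})=\chi(G,E)=\ell\,\chi({\bf e},{\bf e})=0$ by $\chi({\bf e},{\bf e})=0$, and it descends to the trivial bundle on $\overline{M}_H^G({\bf e})$ coming from the $\det V$-normalization built into the GIT construction. Granting this, on $\overline{M}_H^G({\bf e})$ we obtain ${\cal L}_{m,n}\cong{\cal L}(\zeta)^{\otimes b}$ with $b>0$; since ${\cal L}_{m,n}$ is the pull-back of an ample bundle (Lemma \ref{lem:polarization}), ${\cal L}(\zeta)$ is ample.

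The main obstacle I expect is this last point: verifying that $\lambda(G^{\vee})$ descends to the \emph{trivial} line bundle rather than merely to some bundle with vanishing central weight, that is, tracking the full $GL(V)$-linearization and the $\det V$-bookkeeping so that ${\cal L}_{m,n}$ becomes a genuine positive power of ${\cal L}(\zeta)$ on the quotient. This is where $\chi({\bf e},{\bf e})=0$ enters decisively, and where I would follow the determinant-bundle formalism of Le Potier and Huybrechts--Lehn, transported to the ${\cal A}$-module side by the Morita equivalence of Proposition \ref{prop:Morita-family}. By contrast, the descent-weight computations of the previous two steps are comparatively routine applications of Riemann--Roch.
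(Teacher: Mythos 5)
Your proposal follows the same route as the paper's proof: reduce to the known polarization ${\cal L}_{m,n}$ of Lemma \ref{lem:polarization} via the expansion \eqref{eq:det-bdle}, using $\tau(G)\in{\Bbb Z}{\bf e}$ and $(H,K_X)=0$ to kill the extra terms, with the class $\zeta^{\vee}$ appearing as the leading coefficient modulo ${\Bbb Z}{\bf e}^{\vee}$; the descent computation you add via Kempf's criterion is a correct (and more explicit) justification of what the paper leaves implicit. The one step you flag as the main obstacle --- that $\det p_{Q^{ss}!}({\cal Q}\otimes G^{\vee})$ descends to the \emph{trivial} bundle --- is exactly the paper's own opening assertion $\det p_{Q^{ss}!}({\cal Q}\otimes E^{\vee})\cong{\cal O}_{Q^{ss}}$ for $E\in{\cal M}_H^G({\bf e})^{ss}$ (with $G\equiv\lambda E$ in $K(X)_{\mathrm{top}}$), which the paper likewise states without proof, so your argument is complete to the same extent as the original.
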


\begin{proof}
We first note that 
$\det p_{Q^{ss} !}({\cal Q}\otimes E^{\vee}) \cong
{\cal O}_{Q^{ss}}$ for $E \in {\cal M}_H^G({\bf e})^{ss}$. 
We set $\tau(G)=\lambda {\bf e}$, $\lambda \in {\Bbb Z}_{>0}$.
Then $P(n)G^{\vee}((n+m)H)-P(n+m)G^{\vee}(nH)
\equiv mn(n+m)\lambda \zeta^{\vee} \mod {\Bbb Z}{\bf e}^{\vee}$.
By Lemma \ref{lem:polarization}, we get our claim.
\end{proof}

\begin{NB}
$\det p_{Q^{ss} !}({\cal Q}(-\tau))$ belongs to the closure of the ample cone.
If $\chi(G,G)=0$, then
it is ample.  
\end{NB}

\begin{defn}
\begin{enumerate}
\item[(1)]
$P({\bf e})$ is the set of  
subobject $E'$ of $E \in {\cal M}_H({\bf e})^{\mu\text{-ss}}$ such that
\begin{equation}
\frac{(c_1(G^{\vee} \otimes E),H)}{\rk E}=
\frac{(c_1(G^{\vee} \otimes E'),H)}{\rk E'}.
\end{equation}
\item[(2)]
For $E' \in P({\bf e})$,
we define a wall $W_{E'} \subset \NS(X) \otimes {\Bbb R}$ 
as the set of $\alpha \in \NS(X)\otimes {\Bbb R}$ satisfying
\begin{equation}
\left(\alpha,\frac{c_1(G^{\vee} \otimes E)}{\rk E}-
\frac{c_1(G^{\vee} \otimes E')}{\rk E'} \right)+
\left(\frac{\chi(G^{\vee} \otimes E)}{\rk E}-
\frac{\chi(G^{\vee} \otimes E')}{\rk E'} \right)=0.
\end{equation}
\end{enumerate}
\end{defn}
Since $\tau(E')$ is finite,
$\cup_{E'} W_{E'}$ is locally finite.
If $\alpha \in \NS(X) \otimes {\Bbb Q}$ does not lie
on any $W_{E'}$, we say that $\alpha$ is general.
If a local projective generator $G'$
satisfies $\alpha:=c_1(G')/\rk G'-c_1(G)/\rk G \not \in \cup_{E'} W_{E'}$,
then we also call $G'$ is general. 
\begin{lem}
If $G$ is general, i.e., $0 \not \in \cup_{E'} W_{E'}$,
then 
for $E' \in P({\bf e})$,
\begin{equation}
\frac{\chi(G,{\bf e})}{\rk {\bf e}}=\frac{\chi(G,E')}{\rk E'}
\Longleftrightarrow
\frac{{\bf e}}{\rk {\bf e}}=\frac{\tau(E')}{\rk E'} 
\in K(X)_{\mathrm{top}} \otimes {\Bbb Q}. 
\end{equation}
In particular, if ${\bf e}$ is primitive, then
$\overline{M}_H^G({\bf e})={M}_H^G({\bf e})$
for a general $G$. 
\end{lem}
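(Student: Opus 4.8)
The plan is to prove the two implications of the equivalence separately, isolating the role of the genericity hypothesis, and then to deduce the ``in particular'' clause from primitivity. Throughout write
\[
w:=\frac{{\bf e}}{\rk {\bf e}}-\frac{\tau(E')}{\rk E'}\in K(X)_{\mathrm{top}}\otimes{\Bbb Q},
\]
so that $\rk w=0$. Since $\chi(G,\;\;)$ is linear on $K(X)_{\mathrm{top}}$, the left equality (A): $\frac{\chi(G,{\bf e})}{\rk {\bf e}}=\frac{\chi(G,E')}{\rk E'}$ is equivalent to $\chi(G,w)=0$, and the right equality (B) is equivalent to $w=0$. The implication (B)$\Rightarrow$(A) is then immediate: $w=0$ gives $\chi(G,w)=0$.

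For the converse I would first record two routine computations. From $c_1(G^{\vee}\otimes E)=\rk G\, c_1(E)-\rk E\, c_1(G)$ one gets that the linear part of the defining equation of $W_{E'}$ is $\mathbf{v}:=\frac{c_1(G^{\vee}\otimes E)}{\rk E}-\frac{c_1(G^{\vee}\otimes E')}{\rk E'}=\rk G\, c_1(w)$, while its constant part is exactly $\chi(G,w)$; hence $0\in W_{E'}$ \emph{if and only if} $\chi(G,w)=0$, i.e. if and only if (A) holds. Using Riemann--Roch together with $\rk w=0$ I would also record the identity $\chi(G,w)=\rk G\,\chi(w)-(c_1(G),c_1(w))$.

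Now assume (A), so $\chi(G,w)=0$. The decisive step is to extract $c_1(w)=0$ from this single scalar equation, and this is exactly where genericity is indispensable: if $c_1(w)\neq 0$ then $\mathbf{v}\neq 0$, so $W_{E'}$ is a genuine affine hyperplane in $\NS(X)\otimes{\Bbb R}$, and since its constant part $\chi(G,w)$ vanishes we would have $0\in W_{E'}\subset\bigcup_{E'}W_{E'}$, contradicting that $G$ is general. Thus $c_1(w)=0$. Substituting $c_1(w)=0$ into $\chi(G,w)=\rk G\,\chi(w)-(c_1(G),c_1(w))$ and using $\chi(G,w)=0$ and $\rk G>0$ forces $\chi(w)=0$. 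Together with $\rk w=0$ and $c_1(w)=0$ this gives $w=0$, which is (B). I expect this extraction of $c_1(w)=0$ to be the main obstacle; without genericity $0$ could lie on a nondegenerate wall and (A) could hold with $c_1(w)\neq 0$.

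For the primitivity statement I would argue by contradiction. If some $E\in\overline{M}_H^G({\bf e})$ were strictly $G$-twisted semistable, it would admit a proper nonzero subobject $E'$ in ${\cal C}$ with the same reduced $G$-twisted Hilbert polynomial. As $\dim X=2$, equality of these degree-$2$ polynomials forces $E'$ to be $2$-dimensional and splits into equality of the $\mu_G$-slopes, so that $E'\in P({\bf e})$ (note a $G$-twisted semistable object is $\mu$-semistable), together with equality of $\chi(G,\;\;)/\rk$, which is precisely (A). The equivalence just proved then yields $\tau(E')=\frac{\rk E'}{\rk E}{\bf e}$ with $0<\rk E'<\rk E$, since both $E'$ and $E/E'$ are $2$-dimensional of positive rank. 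This contradicts the primitivity of ${\bf e}$, because $\frac{\rk E'}{\rk E}{\bf e}$ is not an integral class in $K(X)_{\mathrm{top}}$. Hence no such $E'$ exists, every semistable object is stable, and $\overline{M}_H^G({\bf e})=M_H^G({\bf e})$.
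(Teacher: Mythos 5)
Your proof is correct and is precisely the argument the paper leaves implicit (no proof is given there): the wall $W_{E'}$ is set up so that its constant term is $\chi(G,w)$ and its linear part is $\rk G\, c_1(w)$, so genericity of $G$ rules out $c_1(w)\neq 0$ via nondegeneracy of the intersection pairing on $\NS(X)\otimes {\Bbb Q}$, and the Riemann--Roch identity $\chi(G,w)=\rk G\,\chi(w)-(c_1(G),c_1(w))$ for the rank-zero class $w$ then forces $\chi(w)=0$. Your care with the degenerate case $c_1(w)=0$ (where $W_{E'}$ is not an affine hyperplane and genericity is not needed), and the reduction of the primitivity clause to the nonintegrality of $\frac{\rk E'}{\rk {\bf e}}{\bf e}$ for a $2$-dimensional destabilizing subobject $E'\in P({\bf e})$, are both sound.
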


\subsection{A generalization of stability for 0-dimensional objects.}

It is easy to see that every 0-dimensional object is
$G_s$-twisted semi-stable.
Our definition is not
sufficient in order to get a good moduli space.
So we introduce a refined version of twisted stability. 

\begin{defn}
Let $G,G'$ be families of local projective generators of ${\cal C}_s$.
A 0-dimensional object $E$ is $(G_s,G_s')$-twisted semi-stable, if
\begin{equation}
\frac{\chi(G_s',E_1)}{\chi(G_s,E_1)}  \leq 
\frac{\chi(G_s',E)}{\chi(G_s,E)}
\end{equation}
for all proper subobject $E_1$ of $E$. 
\end{defn}
By a modification of Simpson's construction of moduli spaces,
we can construct the coarse moduli scheme of $(G_s,G_s')$-twisted
semi-stable objects.
From now on, we assume that $S=\Spec({\Bbb C})$ for simplicity.

\begin{lem}\label{lem:decomposition}
Let $G$ be a locally free sheaf on $X$ 
which is a local projective generator of ${\cal C}$.
\begin{enumerate}
\item[(1)]
Assume that there is an exact sequence in ${\cal C}$
\begin{equation}
0 \to E' \to V_0 \to 
V_1 \to \cdots 
\to V_r \to E \to 0
\end{equation}
such that $V_i$ are local projective objects of ${\cal C}$.
If $r \geq \dim X$, then
$E'$ is a local projective object of ${\cal C}$.
\item[(2)]
For $E \in K(Y)$,
there is a local projective generator $G'$ of
${\cal C}$ such that $E=G'-N G(-n)$, where $N$ and $n$ 
are sufficiently large integers.
\end{enumerate}
\end{lem}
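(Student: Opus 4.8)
The plan is to carry everything over to modules over the sheaf of algebras ${\cal A}:=\pi_*(G^{\vee}\otimes G)$ via the Morita equivalence of Proposition \ref{prop:Morita}. Under ${\cal C}\cong\Coh_{{\cal A}}(Y)$ and ${\bf D}(X)\cong{\bf D}_{{\cal A}}(Y)$, local projective objects correspond to locally projective ${\cal A}$-modules, and for $E,N\in{\cal C}$ one has ${\bf R}\pi_*{\bf R}{\cal H}om_{{\cal O}_X}(E,N)\cong{\bf R}{\cal H}om_{{\cal A}}(\Phi E,\Phi N)$, where $\Phi(-):={\bf R}\pi_*(G^{\vee}\otimes -)$. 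Thus the assertion that $E'$ is a local projective object becomes ${\cal E}xt^q_{{\cal A}}(\Phi E',{\cal N})=0$ for all $q\neq 0$ and all ${\cal N}$; since $\Phi E'$ and ${\cal N}$ are honest modules the negative ${\cal E}xt$ vanish automatically, so only $q\ge 1$ needs attention.

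For (1) I would split the given exact sequence into short exact sequences $0\to K_i\to {\cal V}_i\to K_{i+1}\to 0$ (with $K_0=\Phi E'$, $K_{r+1}=\Phi E$, and ${\cal V}_i$ locally projective). Applying the cohomological functor ${\cal E}xt^{\bullet}_{{\cal A}}(-,{\cal N})$ and using ${\cal E}xt^{\ge 1}_{{\cal A}}({\cal V}_i,-)=0$, dimension shifting across the $r+1$ sequences gives ${\cal E}xt^q_{{\cal A}}(\Phi E',{\cal N})\cong{\cal E}xt^{q+r+1}_{{\cal A}}(\Phi E,{\cal N})$ for $q\ge 1$. The one remaining input is the homological bound ${\cal E}xt^m_{{\cal A}}({\cal M},{\cal N})=0$ for $m>\dim X+1$; granting it, $q\ge 1$ forces $q+r+1\ge r+2\ge\dim X+2$, so all these groups vanish and $\Phi E'$ is locally projective. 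Hence $E'$ is a local projective object, and a locally free sheaf by Lemma \ref{lem:projective}(2).

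The homological bound is the technical heart, and I would prove it on $X$. For ${\cal M},{\cal N}\in{\cal C}$ the complex ${\bf R}{\cal H}om_{{\cal O}_X}({\cal M},{\cal N})$ sits in degrees $\le\dim X$: the only summand that could reach degree $\dim X+1$ is ${\bf R}{\cal H}om(H^{-1}({\cal M})[1],H^0({\cal N}))$, but $H^{-1}({\cal M})\in S$ is purely $1$-dimensional (as in the proof of Lemma \ref{lem:tilting:irreducible}), hence Cohen–Macaulay of depth $1$, so its local projective dimension is $\dim X-1$ and ${\cal E}xt^{\dim X}_{{\cal O}_X}(H^{-1}({\cal M}),-)=0$. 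Since $\dim\pi^{-1}(y)\le 1$, applying ${\bf R}\pi_*$ raises the top degree by at most one, giving degrees $\le\dim X+1$. This is exactly where $r\ge\dim X$ is sharp: the degree $\dim X+1$ part is genuinely present through $R^1\pi_*$ of the $1$-dimensional sheaf ${\cal E}xt^{\dim X-1}_{{\cal O}_X}(H^{-1}({\cal M}),H^0({\cal N}))$.

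For (2) I would first note that the Grothendieck group is generated by classes of local projective objects: any $M\in{\cal C}$ has a resolution by the $\pi^*(V_i)\otimes G$ of Lemma \ref{lem:projective}(1), and by (1) its $(\dim X+1)$-st syzygy is local projective, so $[M]$, and therefore every class, lies in that subgroup. Writing the given class as $E=[A]-[B]$ with $A,B$ local projective objects, I would then, for $n,N\gg 0$, choose a surjection $G(-n)^{\oplus N}\twoheadrightarrow B$ in ${\cal C}$ (using that $G(-n)$ is again a local projective generator by the projection formula and that $\Phi(B)(n)$ is globally generated). Its kernel $C$ is locally projective precisely because $B$ is—so no global splitting is needed—and $[C]=N[G(-n)]-[B]$. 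Then $G':=A\oplus C\oplus G(-n)$ is a local projective object which is a generator (it has the summand $G(-n)$), with $[G']=E+(N+1)[G(-n)]$, the desired form after renaming $N$. The main obstacle is thus the homological-dimension estimate in (1), resting on the purity of the objects of $S$; once that is secured, the dimension shift and all of (2) are formal.
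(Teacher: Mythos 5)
Your proof is correct and follows essentially the same route as the paper's: part (1) is the same dimension-shifting argument against the bound $H^i({\bf R}\pi_*{\bf R}{\cal H}om(E,F))=0$ for $i>\dim X+1$ (merely rephrased through the Morita equivalence, with the top-degree vanishing justified a bit more explicitly via the purity of objects of $S$), and part (2) is the same reduction to writing $E$ as a difference of local projective objects followed by resolving the negative part by $G(-n)^{\oplus N}$ and absorbing the local projective kernel. The only cosmetic divergence is that you obtain the expression $E=[A]-[B]$ from the generation of $K({\cal C})$ by local projectives rather than by grouping the even and odd terms of a single resolution, which changes nothing of substance.
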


\begin{proof}
(1)
We first prove that
$H^i({\bf R}\pi_* {\bf R}{\cal H}om(E,F))=0$, $i>\dim X+1$
for all $F \in {\cal C}$.
Since ${\cal C}$ is a tilting of $\Coh(X)$ (Lemma \ref{lem:tilting:C}),
$H^i(E)=H^i(F)=0$ for $i \ne -1,0$.
By using a spectral sequence, we get
\begin{equation}
H^i({\bf R}\pi_* {\bf R}{\cal H}om(H^{-p}(E)[p],H^{-q}(F)[q]))=0
\end{equation}
for $i>\dim X +1$.
Hence we get 
$H^i({\bf R}\pi_* {\bf R}{\cal H}om(E,F))=0$, $i>\dim X+1$.
Then we see that
\begin{equation}
H^i({\bf R}\pi_* {\bf R}{\cal H}om(E',F)) \cong
H^{i+r+1}({\bf R}\pi_* {\bf R}{\cal H}om(E,F))=0
\end{equation}
for all integer with $i>\max\{\dim X-r,0\}=0$.
\begin{NB}
Serre duality implies that
$\Hom(E,F(n)[i+r+1]) \cong \Hom(F(n),E(K_X)[\dim X-i-r-1])^{\vee}=0$.
\end{NB}
Therefore $E'$ is a local projective object.

(2)
We first prove that there are local projective generators
$G_1, G_2$ such that $E=G_1-G_2$.
We may assume that
$E \in {\cal C}$.
We take a resolution of $E$
\begin{equation}
0 \to E' \to G(-n_r)^{\oplus N_r} \overset{\phi}{\to} 
G(-n_{r-1})^{\oplus N_{r-1}} \to \cdots 
\to G(-n_0)^{\oplus N_0} \to E \to 0.
\end{equation}
If $r \geq \dim X $,
then (1) implies that
$E'$ is a local projective object.
We set $r:=2j_0+1$.
We set $G_1:=E' \oplus \oplus_{j=0}^{j_0} G(-n_{2j})^{\oplus N_{2j}}$ and
$G_2:=\oplus_{j=0}^{j_0} G(-n_{2j+1})^{\oplus N_{2j+1}}$
Then $G_1$ and $G_2$ are local projective generators and 
$E=G_1-G_2$.
We take a resolution
\begin{equation}
0 \to G_2' \to G(-n)^{\oplus N} \to G_2 \to 0
\end{equation}
such that $G_2' \in {\cal C}$.
Then we see that ${\bf R}\pi_* {\bf R}{\cal H}om(G_2',F)
\in \Coh(Y)$ for any $F \in {\cal C}$.
Since $E=(G_1 \oplus G_2')-G(-n)^{\oplus N}$ and
$G_1 \oplus G_2'$ is a local projective generator,
we get our claim.
\end{proof}

%

\begin{defn}
Let $A$ be an element of $K(Y) \otimes {\Bbb Q}$ and 
$G$ a local projective generator.
A 0-dimensional object $E$ is $(G,A)$-twisted semi-stable,
if
\begin{equation}
\frac{\chi(A,F)}{\chi(G,F)}  \leq \frac{\chi(A,E)}{\chi(G,E)}
\end{equation}
for all proper subobject $F$ of $E$.  
\end{defn}

By Lemma \ref{lem:decomposition}, we write
$N' A=G'-NG(-n) \in K(X)$, where $G'$ is a local projective generator
and $n,N,N'>0$.
Then
\begin{equation}
\frac{\chi(G',E)}{\chi(G,E)}=N'\frac{\chi(A,E)}{\chi(G,E)}+N.
\end{equation}
Hence $E$ is $(G,G')$-twisted semi-stable if and only if
$E$ is $(G,A)$-twisted semi-stable.
Thus we get the following proposition.

\begin{prop}
Let $A$ be an element of $K(Y) \otimes {\Bbb Q}$ and 
$G$ a local projective generator.
Let $v$ be a Mukai vector of a 0-dimensional object.
\begin{enumerate}
\item[(1)]
There is a coarse moduli scheme $\overline{M}_{{\cal O}_X(1)}^{G,A}(v)$
of $(G,A)$-twisted semi-stable objects of ${\cal C}$.
\item[(2)]
If $v$ is primitive and $A$ is general in $K(Y) \otimes {\Bbb Q}$,
then $\overline{M}_{{\cal O}_X(1)}^{G,A}(v)$ consists
of $(G,A)$-twisted stable objects. 
Moreover 
$\overline{M}_{{\cal O}_X(1)}^{G,A}(\varrho_X)$ is a fine moduli space. 
\end{enumerate}
\end{prop}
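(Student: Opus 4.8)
The plan is to deduce everything from the equivalence between $(G,A)$-twisted and $(G,G')$-twisted semistability established just above, the moduli construction for the latter, and a standard genericity/coprimality argument.

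\textbf{Part (1).} First I would clear denominators: choose $N'>0$ with $N'A \in K(Y)$ and apply Lemma \ref{lem:decomposition} (2) to write $N'A = G' - N G(-n)$ with $G'$ a local projective generator of $\mathcal{C}$ and $n,N>0$. For every $0$-dimensional object and all its subobjects the identity
\begin{equation}
\frac{\chi(G',E)}{\chi(G,E)} = N'\frac{\chi(A,E)}{\chi(G,E)} + N
\end{equation}
holds, and since $N'>0$ it shows that $E$ is $(G,A)$-twisted semistable if and only if it is $(G,G')$-twisted semistable. Hence I would simply set $\overline{M}^{G,A}_{\mathcal{O}_X(1)}(v) := \overline{M}^{G,G'}_{\mathcal{O}_X(1)}(v)$, the coarse moduli scheme furnished by the modification of Simpson's construction: via the Morita equivalence of Proposition \ref{prop:Morita-family} the objects become finite-length $\mathcal{A}$-modules, and the $(G,G')$-condition is a King-type numerical stability, so the moduli space is obtained as a GIT quotient of an open locus of the relevant $\Quot^{\mathcal{A},P}$ (cf. \cite{S:1}, \cite{King-mod}). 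This gives a projective coarse moduli scheme.

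\textbf{Genericity in Part (2).} A strictly $(G,A)$-semistable $E$ with $\tau(E)$ the class of $v$ has a proper nonzero subobject $E_1$ with $\chi(A,E_1)\chi(G,E) = \chi(A,E)\chi(G,E_1)$. Because $E$ is $0$-dimensional, the classes $v_1 = \tau(E_1) \in K(X)_{\mathrm{top}}$ that can occur form a \emph{finite} set: the $c_1$-part of $v_1$ ranges over finitely many exceptional-curve classes with coefficients bounded via $\chi(G,E_1)\le \chi(G,E)$, and $\rk v_1 = 0$ with $\chi$ bounded. For each such $v_1$ which is not a rational multiple of $v$, the displayed relation is a linear equation in $A$ whose functional is nonzero (since $v_1,v$ are linearly independent and the Euler pairing on $K(X)_{\mathrm{top}}$ is nondegenerate), hence cuts out a proper hyperplane $W_{v_1}$. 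Declaring $A$ \emph{general} when it avoids $\bigcup_{v_1} W_{v_1}$, any destabilizing $E_1$ must satisfy $v_1 = \lambda v$ with $0<\lambda<1$; but $v$ primitive and $v_1$ integral force $\lambda \in \mathbb{Z}$, a contradiction. Therefore every $(G,A)$-semistable object is $(G,A)$-stable, and in particular $\End(E)=\mathbb{C}$.

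\textbf{Fineness for $v=\varrho_X$.} Here I would exhibit a $K$-class pairing to $1$ with the Mukai vector. Since $v(E)=\varrho_X$ forces $\rk E=0$, $c_1(E)=0$, $\chi(E)=1$, we get $\chi(\mathcal{O}_X,E)=\chi(E)=1$ (equivalently $\langle v(\mathcal{O}_X),\varrho_X\rangle=-1$). On the $\Quot$-scheme $Q^{ss}$ the universal quotient $\mathcal{Q}$ carries a weight-$1$ action of the central $\mathbb{C}^*\subset GL(V)$, which is the sole obstruction to descent to $\overline{M}^{G,A}(\varrho_X)$; the line bundle $\det p_{Q^{ss}!}(\mathcal{Q})$ has the same central weight $\chi(\mathcal{O}_X,E)=1$, so $\mathcal{Q}\otimes p_{Q^{ss}}^*(\det p_{Q^{ss}!}\mathcal{Q})^{-1}$ has weight $0$ and, the action on the stable locus being free, descends to a universal family on $\overline{M}^{G,A}(\varrho_X)\times X$. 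Thus the moduli space is fine.

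The main obstacle I expect is the genericity step — establishing finiteness of the set of destabilizing classes $v_1$ and verifying that each wall functional is genuinely nonzero — together with the care needed in the equivariant descent for fineness; the reduction in (1) and the primitivity contradiction are then routine.
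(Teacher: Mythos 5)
Your part (1) is exactly the paper's (largely implicit) argument: clear denominators, apply Lemma \ref{lem:decomposition} (2) to write $N'A=G'-NG(-n)$, use the displayed identity to identify $(G,A)$- with $(G,G')$-semistability, and invoke the $({\cal A},{\cal B})$-twisted construction of Proposition \ref{prop:A-module}; the paper offers no further proof of (2) beyond the remark that $\chi(E)=1$ for $v=\varrho_X$ gives a universal family, and your genericity and descent arguments are the standard ones it intends.

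One point deserves care in your genericity step. You deduce that each wall functional $A\mapsto\chi\bigl(A,\chi(G,E)\tau(E_1)-\chi(G,E_1)\tau(E)\bigr)$ is nonzero from nondegeneracy of the Euler pairing on $K(X)_{\mathrm{top}}$, but the statement (and your setup) lets $A$ vary only in $K(Y)\otimes{\Bbb Q}$, and the restriction of a nonzero functional to that subspace can vanish. Concretely, if $E_1$ is a $0$-dimensional object supported over a point of $Y_\pi$ and $A$ is pulled back from $Y$, then $c_1(A)$ meets no exceptional curve and $(c_1(E_1),K_X)=0$, so $\chi(A,E_1)=\rk A\cdot\chi(E_1)$; hence for $v=\varrho_X$ and ${\Bbb C}_x$ with $x$ in an exceptional fibre, every wall coming from the Jordan--H\"older factors $E_{yj}$ is identically zero on $K(Y)\otimes{\Bbb Q}$, and no choice of such $A$ makes ${\Bbb C}_x$ stable. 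This is really an imprecision of the paper --- Lemma \ref{lem:decomposition} (2) and all later uses (e.g.\ $\alpha\in\NS(X)\otimes{\Bbb Q}$ via $\gamma^{-1}((0,\alpha,0))$, and Lemma \ref{lem:crepant1}) show that $K(Y)$ should read $K(X)$ --- and with $A\in K(X)\otimes{\Bbb Q}$ your finiteness-of-walls and proper-hyperplane argument is correct; but as literally written your appeal to nondegeneracy on $K(X)_{\mathrm{top}}$ does not justify the claim for the stated parameter space.
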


\begin{rem}\label{rem:G-indep}
If $v(E)=\varrho_X$ and $\rk A=0$, then
$E$ is $(G,A)$-twisted semi-stable if and only if
$\chi(A,E') \leq 0$ for all subobject $E'$ of $E$ in ${\cal C}$. 
Thus the semi-stability does not depend on the choice of $G$.
\end{rem}

\begin{rem}
In subsection \ref{subsect:twisted},
we deal with the twisted sheaves.
In this case, we still have the moduli spaces
of 0-dimensional stable objects, but 
$\overline{M}_{{\cal O}_X(1)}^{G,A}(\varrho_X)$ does not
 have a universal family. 
\end{rem}

\begin{NB}
For $E \in \overline{M}_{{\cal O}_X(1)}^{G,A}(\varrho_X)$,
we have $\chi(E)=1$. Hence there is a universal family.
\end{NB}

\subsection{Construction of the moduli spaces of ${\cal A}$-modules
of dimension 0.}\label{subsect:A-module}
By Proposition \ref{prop:Morita},
we have an equivalence
${\cal C} \to \Coh_{{\cal A}}(Y)$.
We set ${\cal B}:=\pi_*(G^{\vee} \otimes G')$.
Then ${\cal B}$ is a local projective generator of
$\Coh_{{\cal A}}(Y)$:
For all $F \in \Coh_{{\cal A}}(Y)$, 
${\bf R}{\cal H}om_{{\cal A}}({\cal B},F)=
{\cal H}om_{{\cal A}}({\cal B},F)$ and
${\bf R}{\cal H}om_{{\cal A}}({\cal B},F)=0$ if and only if $F=0$.
In particular, we have a surjective morphism
\begin{equation}
\phi:{\cal H}om_{{\cal A}}({\cal B},F) \otimes_{{\cal A}} {\cal B} \to F.
\end{equation}
For $F \in \Coh_{{\cal A}}(Y)$, 
we set
\begin{equation}
\chi_{{\cal A}}({\cal B},F):=
\chi({\bf R}{\cal H}om_{{\cal A}}({\cal B},F)).
\end{equation}
For $F \in \Coh_{{\cal A}}(Y)$,
$\pi^{-1}(F) \otimes_{\pi^{-1}({\cal A})} G$ is 
$(G,G')$-twisted semi-stable, if 
\begin{equation}
\frac{\chi_{{\cal A}}({\cal B},F_1)}{\chi(F_1)}  \leq 
\frac{\chi_{{\cal A}}({\cal B},F)}{\chi(F)}
\end{equation}
for all proper sub ${\cal A}$-module $F_1$ of $F$. 
We define the $({\cal A},{\cal B})$-twisted semi-stability
by this inequality.
\begin{prop}\label{prop:A-module}
There is a coarse moduli scheme of
$({\cal A},{\cal B})$-twisted semi-stable ${\cal A}$-modules of 
dimension 0.
\end{prop}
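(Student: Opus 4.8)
The plan is to realize the moduli space as a GIT quotient of a Quot scheme of ${\cal A}$-modules, exactly parallel to the construction of $\overline{M}_{X/S}^{{\cal C},P}$ but with the linearization adapted to the refined stability. Since ${\cal B}=\pi_*(G^{\vee}\otimes G')$ is a local projective generator of $\Coh_{{\cal A}}(Y)$, the evaluation map $\phi$ is surjective for every $F$, so every $({\cal A},{\cal B})$-twisted semi-stable $0$-dimensional module $F$ with fixed numerical invariants is a quotient of ${\cal B}\otimes W$ for a fixed vector space $W$. Writing $m:=\chi(F)$ and $\ell:=\chi_{{\cal A}}({\cal B},F)$ (both determined by the $K$-class), the defining inequality of $({\cal A},{\cal B})$-stability is equivalent, after clearing denominators, to $\theta(F_1)\le 0$ for all proper sub ${\cal A}$-modules $F_1\subset F$, where $\theta(-):=m\,\chi_{{\cal A}}({\cal B},-)-\ell\,\chi(-)$ is additive on short exact sequences and satisfies $\theta(F)=0$. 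Thus $({\cal A},{\cal B})$-stability is a King-type stability for the functional $\theta$, and I would build the space as a moduli of $\theta$-(semi)stable ${\cal A}$-modules.

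Concretely, first I would fix $W=\Bbb C^{\ell}$ and form the Quot scheme $Q:=\Quot_{{\cal B}\otimes W/Y}^{{\cal A},m}$ of quotient ${\cal A}$-modules with constant Hilbert polynomial $m$; since the quotients are $0$-dimensional of bounded length this family is bounded and $Q$ is projective, carrying a natural $GL(W)$-action. Let ${\cal Q}$ be the universal quotient on $Y\times Q$. Because the objects are $0$-dimensional, Simpson's determinant-of-pushforward polarization used for $\overline{M}_{X/S}^{{\cal C},P}$ is numerically trivial and detects no proper subobjects; I would replace it by the King-type linearized line bundle
\[
{\cal L}_{\theta}:=\det p_{Q!}({\bf R}{\cal H}om_{{\cal A}}({\cal B},{\cal Q}))^{\otimes m}\otimes
\det p_{Q!}({\cal Q})^{\otimes(-\ell)},
\]
whose $GL(W)$-weight on a one-parameter subgroup computes (up to a fixed sign) the functional $\theta$. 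Taking the open locus $Q^{ss}$ of $GL(W)$-semistable points and forming $\overline{M}:=Q^{ss}\dslash GL(W)$ then produces a coarse moduli scheme whose points are the $S$-equivalence classes of $\theta$-semistable, i.e. $({\cal A},{\cal B})$-twisted semi-stable, $0$-dimensional ${\cal A}$-modules. Projectivity, hence properness of the moduli functor, follows from a Langton-type semistable reduction argument, exactly as in the construction of $\overline{M}_{Y/S}^{{\cal A},P}$ quoted from \cite{S:1}. Alternatively, via the Morita equivalence of Proposition \ref{prop:Morita} and its relative form Proposition \ref{prop:Morita-family}, this is nothing but the moduli of $(G,G')$-twisted semi-stable $0$-dimensional objects of ${\cal C}$, so the two constructions agree.

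The main obstacle is the verification that the GIT numerical criterion for ${\cal L}_{\theta}$ reproduces the defining inequality of $({\cal A},{\cal B})$-stability: since the ordinary Hilbert-polynomial polarization collapses in dimension $0$, one cannot simply invoke Simpson's matching of GIT-stability with sheaf-stability, and one must instead carry out the King-type Hilbert--Mumford computation, identifying the weight of the $GL(W)$-linearization on a destabilizing one-parameter subgroup with $\theta$ evaluated on the associated sub ${\cal A}$-module. Once this weight computation is in place, boundedness and corepresentability are routine, and the Langton argument for properness goes through verbatim as in the Simpson-type construction already used above.
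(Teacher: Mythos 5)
Your proposal is correct and follows essentially the same route as the paper: both realize the moduli space as a GIT quotient of the Quot scheme $\Quot_{V \otimes {\cal B}/Y/{\Bbb C}}^{{\cal A},m}$ with $\dim V=\chi_{{\cal A}}({\cal B},F)$, and both match GIT-semistability with $({\cal A},{\cal B})$-semistability through the correspondence between subspaces $V_1 \subset V$ and the sub ${\cal A}$-modules $\im(V_1 \otimes {\cal B} \to F)$, which is exactly your King-type functional $\theta$. The Hilbert--Mumford verification you single out as the main obstacle is resolved in the paper by embedding the Quot scheme $PGL(V)$-equivariantly into the Grassmannian $Gr(V \otimes W,m)$ with $W=H^0(Y,{\cal B}(n))$ for $n \gg 0$ (chosen uniformly via boundedness of the family of images $\im(V_1\otimes{\cal B}\to{\cal F}_q)$), where the numerical criterion is the classical Grassmannian one and is equivalent to the weight computation you describe for ${\cal L}_{\theta}$.
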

{\it Proof of Proposition \ref{prop:A-module}.}
Let $F$ be an ${\cal A}$-module of dimension 0.
Then $\Hom_{{\cal A}}({\cal B},F) \otimes {\cal B} \to F$
is surjective.
Hence all 0-dimensional objects $F$ are parametrized by
a quot-scheme 
$Q:=\Quot_{V \otimes {\cal B}/Y/{\Bbb C}}^{{\cal A},m}$,
where $m=\chi(F)$ and $\dim V=\chi_{{\cal A}}({\cal B},F)$.  
Let $V \otimes {\cal O}_Q \otimes {\cal B} \to {\cal F}$
be the universal quotient.
For simplicity, we set
${\cal F}_q:={\cal F}_{|\{q \} \times Y}$, $q \in Q$.
For a sufficiently large integer $n$,
we have a quotient
$V \otimes H^0(Y,{\cal B}(n)) \to 
H^0(Y,F(n))$. 
We set $W:=H^0(Y,{\cal B}(n))$. Then we have an embedding
\begin{equation}
\Quot_{V \otimes {\cal B}/Y/{\Bbb C}}^{{\cal A},m} 
\hookrightarrow Gr(V \otimes W,m).
\end{equation}
This embedding is equivariant with respect to the natural action 
of $PGL(V)$. The following is well-known.

\begin{lem}\label{lem:Hilbert-Mumford}
Let $\alpha:V \otimes W \to U$ be a point of ${\frak G}:=Gr(V \otimes W,m)$.
Then $\alpha$ belongs to the set ${\frak G}^{ss}$ of semi-stable points
if and only if
\begin{equation}
\frac{\dim U}{\dim V} \leq \frac{\dim \alpha(V_1 \otimes W)}{\dim V_1}
\end{equation}
for all proper subspace $V_1 \ne 0$ of $V$.
If the inequality is strict for all $V_1$, then $\alpha$ is
stable.
\end{lem}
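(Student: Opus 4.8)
The statement is the classical Hilbert--Mumford computation for the $SL(V)$-action on a Grassmannian, so the plan is to follow the standard GIT route (cf. \cite{S:1}). First I would Pl\"ucker-embed $\mathfrak{G}=Gr(V\otimes W,m)$ into $\mathbb{P}(\wedge^m(V\otimes W)^{\vee})$ by sending a quotient $\alpha:V\otimes W\to U$ to the line spanned by $\wedge^m\alpha$, and linearize the natural action of $SL(V)$ (acting on the first tensor factor, trivially on $W$) with respect to ${\cal O}(1)$ on this projective space. Semistability of the Grassmannian point then coincides with GIT semistability for this linearization, so by the Hilbert--Mumford criterion it suffices to test all one-parameter subgroups $\lambda:\mathbb{G}_m\to SL(V)$ and show that the Mumford weight $\mu([\alpha],\lambda)$ is $\geq 0$, strict positivity characterizing stability.

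The second step is to compute $\mu([\alpha],\lambda)$ explicitly. A one-parameter subgroup diagonalizes $V=\bigoplus_p V_{(p)}$ with $\lambda(t)$ acting by $t^{\gamma_p}$, where $\gamma_1>\cdots>\gamma_k$; since $\lambda$ is trivial on $W$, it acts with weight $\gamma_p$ on $V_{(p)}\otimes W$. Writing $V^{\geq p}:=\bigoplus_{q\leq p}V_{(q)}$ and $U^{\geq p}:=\alpha(V^{\geq p}\otimes W)$, a direct weight count on $\wedge^m\alpha$ (choosing a basis of $U$ from images of weight vectors of maximal total weight, greedily filling $U^{\geq 1}\subset U^{\geq 2}\subset\cdots$) gives
\begin{equation}
\mu([\alpha],\lambda)=\sum_{p}\gamma_p\bigl(\dim U^{\geq p}-\dim U^{\geq p-1}\bigr),\qquad U^{\geq 0}=0.
\end{equation}

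Third, applying Abel summation together with the traceless normalization $\sum_p\gamma_p\dim V_{(p)}=0$ of a one-parameter subgroup of $SL(V)$, I would rewrite this as
\begin{equation}
\mu([\alpha],\lambda)=\sum_{p=1}^{k-1}(\gamma_p-\gamma_{p+1})\left(\dim U^{\geq p}-\frac{\dim U}{\dim V}\dim V^{\geq p}\right),
\end{equation}
in which every coefficient $\gamma_p-\gamma_{p+1}$ is strictly positive. Hence $\mu\geq 0$ for all $\lambda$ if and only if each summand is $\geq 0$; since an arbitrary proper nonzero subspace $V_1\subsetneq V$ is realized as $V^{\geq 1}$ by the two-weight subgroup with weight $\dim V-\dim V_1$ on $V_1$ and $-\dim V_1$ on a complement, this is equivalent to
\begin{equation}
\dim\alpha(V_1\otimes W)\geq \frac{\dim U}{\dim V}\dim V_1\qquad\text{for all }0\neq V_1\subsetneq V,
\end{equation}
which is exactly the asserted inequality; strict inequality for all $V_1$ forces $\mu>0$ for every $\lambda$, giving stability.

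The routine but delicate points are the sign bookkeeping in the Mumford weight, so that the greedy maximal-weight basis yields the first displayed formula with the correct orientation, and the check that the reduction to two-step subgroups captures both directions of "for all $V_1$"; both follow from the positivity of $\gamma_p-\gamma_{p+1}$ together with the explicit realizability of any $V_1$. The only genuine conceptual obstacle is making the identification of Grassmannian semistability with GIT semistability of the Pl\"ucker point precise (including the passage from the $PGL(V)$-action to the linearized $SL(V)$-problem); once that is in place the computation above is forced.
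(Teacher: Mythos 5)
Your argument is correct and is the standard Hilbert--Mumford computation for quotients of $V\otimes W$ under the $SL(V)$-action (the Pl\"ucker weight formula plus Abel summation against the traceless condition), which is exactly the "well-known" result the paper invokes from Simpson's construction. The paper itself supplies no proof of this lemma, so there is nothing to contrast your route with; your write-up, including the explicit reduction to two-step one-parameter subgroups for the converse direction, is the argument the paper is implicitly citing.
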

We set
\begin{equation}
Q^{ss}:=\{ q \in Q | \text{
${\cal F}_q$ is $({\cal A},{\cal B})$-twisted semi-stable }\}.
\end{equation}
For $q \in Q^{ss}$, 
$V \to \Hom_{\cal A}({\cal B},F)$ is an isomorphism.
We only prove that $Q^{ss}={\frak G}^{ss} \cap Q$.
Then Proposition \ref{prop:A-module} easily follows.

For an ${\cal A}$-submodule $F_1$ of $F$,
we set $V_1:=\Hom_{\cal A}({\cal B},F_1)$.
Then we have a surjective homomorphism 
$V_1 \otimes {\cal B} \to F_1$.
Conversely for a subspace $V_1$ of $V$,
we set $F_1:=\im(V_1 \otimes {\cal B} \to F)$.
Then $V_1 \to \Hom_{\cal A}({\cal B},F_1)$ is
injective.

We set
\begin{equation}
{\frak F}:=\{\im(V_1 \otimes {\cal B} \to {\cal F}_q )|
q \in Q,\; V_1 \subset V \}.
\end{equation}
Since ${\frak F}$ is bounded,
we can take an integer $n$ in the definition of $W$ such that
$V_1 \otimes W \to H^0(Y,F_1)$ is surjective
for all $F_1 \in {\frak F}$. 
Assume that ${\cal F}_q$ is $({\cal A},{\cal B})$-twisted semi-stable.
For any $V_1 \subset V$,
we set $F_1:=\im(V_1 \otimes {\cal B} \to {\cal F}_q)$.
Then $\alpha(V_1 \otimes W)=H^0(Y,F_1)$.
Hence 
\begin{equation}
\frac{\dim \alpha(V_1 \otimes W)}{\dim V_1}
\geq \frac{\chi(F_1)}{\dim \Hom_{{\cal A}}({\cal B},F_1)}=
\frac{\chi(F_1)}{\chi_{{\cal A}}({\cal B},F_1)} \geq
\frac{\chi({\cal F}_q)}{\chi_{{\cal A}}({\cal B},{\cal F}_q)}
=\frac{\dim \alpha(V \otimes W)}{\dim V}.
\end{equation}    
Thus $q \in {\frak G}^{ss}$.

We take a point $q \in {\frak G}^{ss} \cap Q$.
We first prove that 
$\psi:V \to \Hom_{{\cal A}}({\cal B},{\cal F}_q)$ is 
an isomorphism.
We set $V_1:=\ker \psi$.
Since $V_1 \otimes {\cal B} \to {\cal F}_q$ is 0,
we get $\alpha(V_1 \otimes W)=0$.
Then 
\begin{equation}
\frac{\dim U}{\dim V} \leq \frac{\dim \alpha(V_1 \otimes W)}{\dim V_1}=0,
\end{equation}
which is a contradiction.
Therefore $\psi$ is injective.
Since $\dim V=\dim \Hom_{{\cal A}}({\cal B},{\cal F}_q)$,
$\psi $ is an isomorphism.
Let $F_1 \ne 0$ be a proper ${\cal A}$-submodule of ${\cal F}_q$.
We set $V_1:=\Hom_{{\cal A}}({\cal B},F_1)$.
Then 
\begin{equation}
\frac{\chi(F_1)}{\dim \Hom_{{\cal A}}({\cal B},F_1)} \geq 
\frac{\dim \alpha(V_1 \otimes W)}{\dim V_1}\geq
\frac{\dim \alpha(V \otimes W)}{\dim V}=
\frac{\chi({\cal F}_q)}{\chi_{{\cal A}}({\cal B},{\cal F}_q)}.
\end{equation}    
\begin{NB}
We set $F':=\im(V_1 \otimes {\cal B} \to {\cal F}_q)
(\subset F_1)$.
Then $\alpha(V_1 \otimes W)=H^0(Y,F_1')$.
Hence $\dim \alpha(V_1 \otimes W)=
\chi(F_1') \leq \chi(F_1)$. 
\end{NB}
Hence ${\cal F}_q$ is $({\cal A},{\cal B})$-twisted semi-stable.
If $q$ is a stable point, then we also see that ${\cal F}_q$ is
$({\cal A},{\cal B})$-twisted stable.

\begin{NB}

$\Hom_{{\cal A}'}(\pi_*(E \otimes_{{\cal A}}G \otimes {G'}^{\vee}),F)
=\Hom(E \otimes_{{\cal A}}G,F \otimes_{{\cal A}'}G')=
\Hom_{{\cal A}}( E,\pi_*(F \otimes_{{\cal A}'}G' \otimes G^{\vee}))$.


\begin{lem}
By the equivalence $\Coh_{{\cal A}'}(Y) \cong {\cal C} \cong
\Coh_{\cal A}(Y)$,
we have an isomorphism
\begin{equation}
\Quot_{V \otimes {\cal A}'/Y/{\Bbb C}}^{{\cal A}'} \to
\Quot_{V \otimes G'/X/{\Bbb C}}^{{\cal A}'}
\to \Quot_{V \otimes {\cal B}/Y/{\Bbb C}}^{{\cal A}}. 
\end{equation}
\end{lem}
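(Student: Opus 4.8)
The plan is to derive the isomorphisms from the functoriality of the two Morita equivalences in families, using only that an equivalence of abelian categories carries quotient objects bijectively to quotient objects. First I would identify the images of the local projective generator $G'$ under the two equivalences of Proposition \ref{prop:Morita}. Under $E \mapsto {\bf R}\pi_*(G^{\vee} \otimes E)$ the object $G'$ goes to ${\bf R}\pi_*(G^{\vee} \otimes G') = \pi_*(G^{\vee} \otimes G') = {\cal B}$, regarded as an ${\cal A}$-module, while under $E \mapsto {\bf R}\pi_*((G')^{\vee} \otimes E)$ it goes to ${\cal A}' := \pi_*((G')^{\vee} \otimes G')$, regarded as an ${\cal A}'$-module. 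Hence $V \otimes G'$ corresponds to $V \otimes {\cal B}$ and to $V \otimes {\cal A}'$ respectively, so the three objects being quotiented are matched.

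The substance is functoriality in families. For every ${\Bbb C}$-scheme $T$, Proposition \ref{prop:Morita-family}~(3) provides an exact equivalence between ${\cal A}_T$-modules flat over $T$ and families of objects of ${\cal C}_t$, $t \in T$, given by $F \mapsto \pi_T^{-1}(F) \otimes_{\pi_T^{-1}({\cal A}_T)} G_T$, and likewise for ${\cal A}'_T$-modules with $G_T$ replaced by $G'_T$. Since an equivalence of abelian categories is exact and preserves quotient objects, a $T$-flat quotient $V \otimes G'_T \twoheadrightarrow E$ in ${\cal C}_T$ corresponds, bijectively and naturally in $T$, to a $T$-flat quotient ${\cal A}_T$-module $V \otimes {\cal B}_T \twoheadrightarrow {\bf R}\pi_{T*}(G_T^{\vee} \otimes E)$ and to a $T$-flat quotient ${\cal A}'_T$-module $V \otimes {\cal A}'_T \twoheadrightarrow {\bf R}\pi_{T*}((G'_T)^{\vee} \otimes E)$. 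These assignments identify the functors of points of the three Quot-schemes; by Yoneda, and since each functor is represented by the corresponding Quot-scheme, the three schemes are isomorphic.

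The step requiring care is the preservation of $T$-flatness and of the discrete invariant. Flatness over $T$ is exactly what is built into the relative equivalence of Proposition \ref{prop:Morita-family}~(3), and the compatibility of kernels, images, and cokernels in families is supplied by Lemma \ref{lem:flatness}; together these guarantee that a flat family of ${\cal C}_T$-quotients maps to a flat family of quotient modules and conversely. The main obstacle is therefore bookkeeping rather than geometry: one must check that the Hilbert polynomial is preserved by the correspondence, so that Quot-schemes with a fixed numerical type are matched. This follows from the identity $\chi(G,E(n)) = \chi({\bf R}\pi_*(G^{\vee} \otimes E)(n))$ used throughout subsection \ref{subsect:stability}, which shows that the Morita equivalence transports the $G$-twisted Hilbert polynomial to the Hilbert polynomial of the associated ${\cal A}$-module, and symmetrically for ${\cal A}'$.
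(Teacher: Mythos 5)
Your argument is correct and coincides with the paper's own (only sketched) justification: the paper simply transports quotient objects under the two Morita equivalences, noting that $G'$ corresponds to ${\cal B}$ as an ${\cal A}$-module and to ${\cal A}'$ as an ${\cal A}'$-module, so that a quotient $V\otimes {\cal A}'\to E$ corresponds to the quotient ${\cal A}$-module $V\otimes {\cal B}\to \pi_*(E\otimes_{{\cal A}'}G^{\vee}\otimes G')$. Your extra care about $T$-flatness via Proposition \ref{prop:Morita-family} and Lemma \ref{lem:flatness}, and about the bookkeeping of twisted Hilbert polynomials, only makes explicit what the paper leaves implicit.
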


For a quotient $V \otimes {\cal A}' \to E$, we have
a quotient of 
${\cal A}$-module
\begin{equation}
V \otimes {\cal B} \to 
\pi_*(E \otimes_{{\cal A}'} G^{\vee}\otimes {G'}).
\end{equation}

$H^0(Y,E \otimes_{{\cal A}'} G^{\vee}\otimes {G'}(n))=
\Hom_{\cal A}({\cal A},\pi_*(E \otimes_{{\cal A}'} G^{\vee}\otimes {G'}))
=\Hom(G,E \otimes_{{\cal A}'} {G'})=
\Hom_{{\cal A}'}(\pi_*(G \otimes {G'}^{\vee}),E)$

$\Hom_{\cal A}(\pi_*(G^{\vee} \otimes  {G'}),
\pi_*(E \otimes_{{\cal A}'} G^{\vee}\otimes {G'}))
=\Hom(G',E \otimes_{{\cal A}'} {G'})=
\Hom_{{\cal A}'}({\cal A}',E)$

\begin{lem}
Let $E, E'$ be an object of ${\cal C}$.
Then
$\Hom(E,E'[i])=0$ for $i<0$.
\end{lem}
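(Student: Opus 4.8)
The plan is to reduce everything to the description of ${\cal C}$ as the tilt of the torsion pair $(T,S)$ of $\Coh(X)$ furnished by Lemma \ref{lem:tilting:C}. Thus every object $F \in {\cal C}$ sits in an exact triangle
\begin{equation*}
H^{-1}(F)[1] \to F \to H^0(F) \to H^{-1}(F)[2]
\end{equation*}
with $H^{-1}(F) \in S$ and $H^0(F) \in T$, and all other cohomology sheaves vanishing. Applying this to both $E$ and $E'$ and using that $\Coh(X)$ is the heart of the standard $t$-structure (so that $\Hom_{{\bf D}(X)}(A[p],B[q]) = \Ext^{q-p}_{{\cal O}_X}(A,B)$ vanishes whenever $q-p<0$), I would expand $\Hom(E,E'[i])$ by d\'evissage into the four groups $\Ext^{a-b+i}_{{\cal O}_X}(H^a(E),H^b(E'))$ with $a,b \in \{-1,0\}$, where the $E$-piece for index $a$ is $H^a(E)[-a]$ and likewise for $E'$.

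First I would dispose of the case $i \le -2$: since $a-b \le 1$ for $a,b \in \{-1,0\}$, each of these Ext-degrees satisfies $a-b+i \le 1+i < 0$, so all four contributions vanish for degree reasons, and hence $\Hom(E,E'[i])=0$.

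The remaining and only delicate case is $i=-1$. Here three of the four terms again vanish by degree, and the single surviving contribution is $\Ext^0_{{\cal O}_X}(H^0(E),H^{-1}(E')) = \Hom(H^0(E),H^{-1}(E'))$, with $H^0(E) \in T$ and $H^{-1}(E') \in S$. This is exactly where the torsion-pair structure does the work: by the defining orthogonality of the torsion pair $(T,S)$ one has $\Hom(T,S)=0$, so this group vanishes as well. Assembling the long exact sequences obtained from the two triangles then gives $\Hom(E,E'[-1])=0$, completing the argument.

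The proof involves no serious obstacle; the only point requiring care is the bookkeeping that isolates $\Hom(H^0(E),H^{-1}(E'))$ as the unique term surviving at $i=-1$, and the recognition that it is annihilated precisely by the orthogonality $\Hom(T,S)=0$ built into the torsion pair. Alternatively, one could argue in one line: ${\cal C}$ is by construction the heart of a bounded $t$-structure on ${\bf D}(X)$, and negative $\Hom$'s between objects of the heart of any $t$-structure vanish; or one could transport the question through the derived Morita equivalence ${\bf D}(X) \cong {\bf D}_{{\cal A}}(Y)$ of Proposition \ref{prop:Morita}, under which $\Hom(E,E'[i])$ becomes $\Ext^i_{{\cal A}}(F,F')$ for coherent ${\cal A}$-modules $F,F'$, visibly zero for $i<0$.
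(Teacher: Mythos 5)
Your argument is correct. The paper itself gives no proof of this lemma (it records the statement as an immediate fact), and the shortest justification is exactly your parenthetical one: ${\cal C}$ is the heart of a bounded $t$-structure on ${\bf D}(X)$, and negative-degree morphisms between objects of a heart always vanish. Your d\'evissage spells this out correctly for the tilted heart: the only term not killed for degree reasons at $i=-1$ is $\Hom(H^0(E),H^{-1}(E'))$ with $H^0(E)\in T$ and $H^{-1}(E')\in S$, and you invoke the right orthogonality, since in this paper's convention the decomposition of the torsion pair is $0\to E_1\to E\to E_2\to 0$ with $E_1\in T$, $E_2\in S$, so $\Hom(T,S)=0$ is indeed what holds. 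The route through the Morita equivalence ${\bf D}(X)\cong{\bf D}_{\cal A}(Y)$ of Proposition \ref{prop:Morita} is equally valid and arguably closest in spirit to how the paper uses the lemma.
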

\end{NB}

\subsection{Twisted case.}\label{subsect:twisted}
\subsubsection{Definition.}
Let $X=\cup_i X_i$ be an analytic open covering of $X$ and
$\beta=\{\beta_{ijk} \in 
H^0(X_i \cap X_j \cap X_k,{\cal O}_X^{\times}) \}$
a Cech 2-cocycle of ${\cal O}_X^{\times}$.
We assume that $\beta$ defines a torsion element $[\beta]$ of
$H^2(X,{\cal O}_X^{\times})$.
Let $E=(\{E_i \}, \{\varphi_{ij}\})$ 
be a coherent $\beta$-twisted sheaf:
\begin{enumerate}
\item
$E_i$ is a coherent sheaf on $X_i$.
\item
$\varphi_{ij}:E_{i|X_i \cap X_j} \to E_{j|X_i \cap X_j}$ 
is an isomorphism.
\item $\varphi_{ji}=\varphi_{ij}^{-1}$.
\item $\varphi_{ki} \circ \varphi_{jk} \circ \varphi_{ij}=
\beta_{ijk}\id_{X_i \cap X_j \cap X_k}$.  
\end{enumerate}
Let $G$ be a locally free $\beta$-twisted sheaf and
$P:={\Bbb P}(G^{\vee})$ the associated projective bundle over $X$ 
(cf. \cite[sect. 1.1]{Y:twisted}).
Let $w(P) \in H^2(X,{\Bbb Z}/r{\Bbb Z})$ be the characteristic class 
of $P$ (\cite[Defn. 1.2]{Y:twisted}). Then $[\beta]$ is trivial if and only if
$w(P) \in \im(\NS(X) \to H^2(X,{\Bbb Z}/r{\Bbb Z}))$
(\cite[Lem. 1.4]{Y:twisted}).

Let $\Coh^{\beta}(X)$ be the category
of coherent $\beta$-twisted sheaves on $X$ and
 ${\bf D}^{\beta}(X)$ the bounded derived category
of $\Coh^{\beta}(X)$. 
Let $K^{\beta}(X)$ be the Grothendieck group of $\Coh^{\beta}(X)$.
Then similar statements in Lemma \ref{lem:tilting} hold 
for $\Coh^{\beta}(X)$. 
Then all results in sections \ref{subsect:family}
and \ref{subsect:stability} hold.
In particular,
if a locally free $\beta$-twisted sheaf $G$ defines a
torsion pair, then we have the moduli of $G$-twisted semi-stable objects.  
Replacing $\zeta \in K(X)$ by
$\zeta \in K^{\beta}(X)$ with $c_1(\zeta)=r_0 H$ and
$\chi(G \otimes \zeta^{\vee})=0$,
Lemma \ref{lem:det-bdle} also holds.

\subsubsection{Chern character }
We have a homomorphism
\begin{equation}
\begin{matrix}
\ch_G:& {\bf D}^{\beta}(X)& \to & H^{ev}(X,{\Bbb Q})\\
& E & \mapsto & 
\frac{\ch(G^{\vee} \otimes E)}{\sqrt{\ch(G^{\vee} \otimes G)}}.
\end{matrix}
\end{equation} 
Obviously $\ch_G(E)$ depends only on the class in $K^{\beta}(X)$. 
Since 
\begin{equation}
\ch_G(E)^{\vee}\ch_G(F)=\frac{\ch((G^{\vee} \otimes E)^{\vee} \otimes 
(G^{\vee} \otimes F))}{\ch(G^{\vee} \otimes G)}=
\ch(E^{\vee} \otimes F),
\end{equation}
we have the following Riemann-Roch formula.
\begin{equation}\label{eq:RR}
\chi(E,F)=\int_X \ch_G(E)^{\vee} \ch_G(F)\td_X.
\end{equation}
Assume that $X$ is a surface.
For a torsion $G$-twisted sheaf $E$, we can attach the codimension 1 part of
the scheme-theoretic
support $\Div(E)$ as in the usual sheaves.
Then we see that
\begin{equation}
\ch_G(E)=(0,[\Div(E)],a), a \in {\Bbb Q},
\end{equation}
where $[\Div(E)]$ denotes the 
homology class of the divisor $\Div(E)$ and
we regard it as an element of $H^2(X,{\Bbb Z})$
by the Poincar\'{e} duality.
More generally,
if $E \in {\bf D}^{\beta}(X)$
satisfies $\rk H^i(E)=0$ for all $i$, 
then 
\begin{equation}
\ch_G(E)=(0,\sum_i (-1)^i [\Div(H^i(E))],a), a \in {\Bbb Q}.
\end{equation}
We set $c_1(E):=\sum_i (-1)^i [\Div(H^i(E))]$.

\begin{rem}\label{rem:integral}
If $H^3(X,{\Bbb Z})$ is torsion free, then
we have an automorphism $\eta$ of $H^*(X,{\Bbb Q})$
such that the image of $\eta \circ \ch_G$ is contained in
$\ch(K(X)) \subset {\Bbb Z} \oplus H^2(X,{\Bbb Z}) \oplus
H^4(X,\frac{1}{2}{\Bbb Z})$ and \eqref{eq:RR} holds if we replace 
$\ch_G$ by $\eta \circ \ch_G$ (cf. \cite{Y:twisted}):
We first note that
\begin{equation}
\ch(K(X))=\{(r,D,a)|r \in {\Bbb Z}, D \in H^2(X,{\Bbb Z}), 
a-(D,K_X)/2 \in {\Bbb Z} \}. 
\end{equation}
Replacing the statement of 
\cite[Lem. 3.1]{Y:twisted} by
\begin{equation}
\begin{split}
& c_2(E^{\vee} \otimes E) +r(r-1)(w(E),K_X)\\
\equiv 
& -(r-1)((w(E)^2)-r(w(E),K_X)) \mod 2r,
\end{split}
\end{equation}
 we can prove
a similar claim to \cite[Lem. 3.3]{Y:twisted}.
\end{rem}

\begin{lem}\label{lem:twisted:0-dim}
Let $E$ be a $\beta$-twisted sheaf of $\rk E=0$.
Then 
\begin{equation}
[\chi(G,E) \mod r{\Bbb Z}] \equiv -w(P) \cap [\Div(E)],
\end{equation}
where we identified $H_0(X,{\Bbb Z}/r{\Bbb Z})$ with 
${\Bbb Z}/r{\Bbb Z}$.
\end{lem}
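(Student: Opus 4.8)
The plan is to reduce the congruence to a Riemann--Roch computation on a curve, after noticing that both sides are additive in $E$ and, modulo $r$, feel only the divisorial part of $E$.

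First I would observe that, although $G$ and $E$ are $\beta$-twisted, the tensor product $G^{\vee}\otimes E$ is an ordinary (untwisted) coherent sheaf of rank $(\rk G)(\rk E)=0$, and that by definition $\chi(G,E)=\chi(X,G^{\vee}\otimes E)\in{\Bbb Z}$, with integral degree-two Chern class $c_1(G^{\vee}\otimes E)=r[\Div E]$. Both maps $E\mapsto[\chi(G,E)\bmod r]$ and $E\mapsto -w(P)\cap[\Div E]$ are homomorphisms $K^{\beta}(X)\to{\Bbb Z}/r{\Bbb Z}$ on the subgroup of rank-$0$ classes: the former because Euler characteristics are additive, the latter because $[\Div(\cdot)]$ is additive on rank-$0$ classes. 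Both vanish on a $0$-dimensional $E$, since then $[\Div E]=0$ while $\chi(G,E)=r\cdot\mathrm{length}(E)\equiv 0$. Hence it suffices to test the identity on generators of the rank-$0$ part of $K^{\beta}(X)$ modulo $0$-dimensional classes.

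Next I would take these generators to be $\beta$-twisted invertible sheaves on smooth curves. The image of the divisor map is generated by classes of smooth curves (use very ample pencils and Bertini, writing an arbitrary class as a difference), and on a smooth projective curve $C$ the restriction $\beta|_{C}$ is trivial in the Brauer group by Tsen's theorem, so each generator is represented by a $\beta$-twisted line bundle $L$ supported on a smooth $C$ with $[\Div L]=[C]$. Any two $\beta$-twisted line bundles on the same $C$ differ by an honest line bundle, which changes $\chi(G,\cdot)$ by a multiple of $r$; thus $[\chi(G,L)\bmod r]$ depends only on $[C]$, exactly as the right-hand side does, and the reduction is justified. For such $L$ the sheaf $G^{\vee}\otimes L=(G^{\vee}|_{C})\otimes L$ is a genuine rank-$r$ bundle on $C$, and Riemann--Roch on $C$ gives
\[
\chi(G,L)=\deg(G^{\vee}|_{C}\otimes L)+r(1-g_{C})\equiv\deg(G^{\vee}|_{C}\otimes L)\pmod r .
\]

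Finally, it remains to identify $\deg(G^{\vee}|_{C}\otimes L)\bmod r$ with $-w(P)\cap[C]$. Here $\deg(G^{\vee}|_{C}\otimes L)=\deg\bigl((\det G^{\vee}|_{C})\otimes L^{\otimes r}\bigr)$, whose residue mod $r$ is unchanged under twisting $L$ by an honest line bundle, hence equals the mod-$r$ twisted degree of $G^{\vee}$ along $C$; by the definition of the characteristic class $w(P)$ of $P={\Bbb P}(G^{\vee})$ in \cite{Y:twisted}, this residue is precisely $-(w(P),[C])=-w(P)\cap[C]$, under the identification $H_0(X,{\Bbb Z}/r{\Bbb Z})\cong{\Bbb Z}/r{\Bbb Z}$. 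The hard part is exactly this last identification --- unwinding $w(P)$ as the $\mu_r$-obstruction of the projective bundle and matching it, with the correct sign and Poincar\'e-duality convention, to the mod-$r$ twisted degree of $G$ on $C$; the additivity and reduction steps, together with the Riemann--Roch formalism (including the twisted version \eqref{eq:RR}) already available, are then routine.
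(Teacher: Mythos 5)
Your proof is correct and follows the same core strategy as the paper: reduce by additivity of $\chi(G,\cdot)$ and $[\Div(\cdot)]$ to the case of a rank-one twisted sheaf on a curve, trivialize the twist on the curve via a twisted line bundle (Tsen), and conclude by Riemann--Roch, using $w(P)_{|C}\equiv c_1(\varphi(G_{|C}))\bmod r$ for the final identification. The one genuine difference is the reduction step. The paper filters the sheaf $E$ itself: it passes to the pure $1$-dimensional case, then takes a filtration whose graded pieces are torsion-free rank-one twisted sheaves on reduced \emph{irreducible} (possibly singular) curves $C=\Div(F_i/F_{i-1})$, so no smoothness and no $K$-theoretic input is needed. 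You instead work in $K^{\beta}(X)$: you quotient the rank-zero classes by the $0$-dimensional ones and claim the quotient is generated by twisted line bundles on \emph{smooth} curves obtained from Bertini. That buys you Riemann--Roch on a smooth curve, but it silently uses that a rank-zero class with $\Div=0$ lies in the subgroup generated by $0$-dimensional sheaves (i.e.\ the codimension filtration on $K^{\beta}(X)$ behaves as in the untwisted case); this is standard but is exactly the point your argument leaves unjustified, and it is what the paper's sheaf-level filtration circumvents. Apart from that, your handling of the ambiguity in the choice of twisted line bundle (it changes $\chi(G,\cdot)$ by a multiple of $r$) and of the sign in $\deg(G^{\vee}_{|C}\otimes L)\equiv -w(P)\cap[C]$ matches the paper.
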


\begin{proof}
Since $\chi(G,E)$ and $[\Div(E))]$ are additive,
it is sufficient to prove the claim for pure sheaves.
If $\dim E=0$ as an object of $\Coh^{\beta}(X)$,
then $r|\chi(G,E)$ and $\Div(E)=0$. Hence the claim holds.
We assume that $E$ is purely 1-dimensional.
Then $E$ is a twisted sheaf on $C:=\Div(E)$.
Since $C$ is a curve, there is a $\beta$-twisted line bundle
$L$ on $C$ and we have an equivalence
\begin{equation}\label{eq:twisted-equiv}
\begin{matrix}
\varphi:& \Coh^{\beta}(C)& \to &\Coh(C)\\
& E & \mapsto & E \otimes L^{\vee}.
\end{matrix}
\end{equation}
Then we can take a filtration $0 \subset F_1 \subset F_2 \subset 
\cdots \subset
F_n=E$ of $E$ such that
$\Div(F_i/F_{i-1})$ are reduced and irreducible curve and 
$F_i/F_{i-1}$ are torsion free $\beta$-twisted sheaves of rank 1
on $\Div(F_i/F_{i-1})$.
Replacing $E$ by $F_i/F_{i-1}$,
we may assume that $E$ is a twisted sheaf of rank 1
on an irreducible and reduced curve $C=\Div(E)$.
Then $\chi(G,E)=\chi(\varphi(G_{|C})^{\vee} \otimes \varphi(E))
=\int_C c_1(\varphi(G_{|C})^{\vee})+r\chi(\varphi(E))$.
Since $w(P)_{|C}=w(P_{|C})=c_1(\varphi(G_{|C})) \mod r{\Bbb Z}$,
$[\chi(G,E)\mod r{\Bbb Z}] \equiv -w(P) \cap [C]$. 
\end{proof}

\begin{NB}
The universal coefficient theorem says that
$$
0 \to H_i(X,{\Bbb Z}) \otimes R \to H_i(X,R) \to
\Tor(H_{i-1}(X,{\Bbb Z}),R) \to 0. 
$$
Hence $H_0(X,R) \cong H_0(X,{\Bbb Z}) \otimes R \cong R$
for any $R$.
\end{NB}

\begin{cor}\label{cor:twisted:0-dim}
For an object $E$ of ${\bf D}^{\beta}(X)$,
assume that $\rk H^i(E)=0$ for all $i$.
Then 
\begin{equation}
[\chi(G,E)\mod r{\Bbb Z}] \equiv -w(P) \cap [\Div(E)].
\end{equation}
Moreover if $c_1(E)=0$, then
$\ch_G(E) \in {\Bbb Z}\varrho_X$.
\end{cor}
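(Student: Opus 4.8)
The plan is to reduce the statement for complexes to Lemma~\ref{lem:twisted:0-dim} for honest $\beta$-twisted sheaves by additivity, and then to extract the integrality of the top coefficient from the Riemann--Roch identity \eqref{eq:RR}.

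First I would observe that both $\chi(G,-)$ and $c_1(-)=\sum_i(-1)^i[\Div(H^i(-))]$ factor through the Grothendieck group $K^{\beta}(X)$, so that $[E]=\sum_i(-1)^i[H^i(E)]$ in $K^{\beta}(X)$ and hence
\begin{equation*}
\chi(G,E)=\sum_i(-1)^i\chi(G,H^i(E)),\qquad
[\Div(E)]=\sum_i(-1)^i[\Div(H^i(E))].
\end{equation*}
Each $H^i(E)$ is a $\beta$-twisted sheaf with $\rk H^i(E)=0$, so Lemma~\ref{lem:twisted:0-dim} applies termwise and gives $[\chi(G,H^i(E))\bmod r{\Bbb Z}]\equiv -w(P)\cap[\Div(H^i(E))]$. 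Since the cap product $w(P)\cap(-)$ is linear in its second argument and takes values in $H_0(X,{\Bbb Z}/r{\Bbb Z})\cong{\Bbb Z}/r{\Bbb Z}$, I would sum these congruences with the signs $(-1)^i$ to obtain the first assertion $[\chi(G,E)\bmod r{\Bbb Z}]\equiv -w(P)\cap[\Div(E)]$.

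For the second assertion, assume $c_1(E)=0$. Then $[\Div(E)]=0$, so the first part yields $[\chi(G,E)\bmod r{\Bbb Z}]\equiv 0$, i.e. $r\mid\chi(G,E)$. By the description of $\ch_G$ recalled just before Lemma~\ref{lem:twisted:0-dim}, the hypotheses $\rk H^i(E)=0$ for all $i$ and $c_1(E)=0$ force $\ch_G(E)=(0,0,a)=a\varrho_X$ for some $a\in{\Bbb Q}$, so it suffices to show $a\in{\Bbb Z}$. Here I would apply \eqref{eq:RR} with the first slot equal to $G$, writing $\chi(G,E)=\int_X\ch_G(G)^{\vee}\,\ch_G(E)\,\td_X$. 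Because $\ch_G(G)=\sqrt{\ch(G^{\vee}\otimes G)}$ has rank $r=\rk G$ and $\td_X$ has degree-$0$ part $1$, multiplication by the top class $\ch_G(E)=a\varrho_X$ annihilates all positive-degree contributions of the remaining factors, leaving $\ch_G(G)^{\vee}\,a\varrho_X\,\td_X=ar\varrho_X$; integrating gives $\chi(G,E)=ar$. Combined with $r\mid\chi(G,E)$ this forces $a=\chi(G,E)/r\in{\Bbb Z}$, hence $\ch_G(E)\in{\Bbb Z}\varrho_X$.

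The argument is largely bookkeeping built on the sheaf case, and the only step requiring genuine care is the final Riemann--Roch computation: one must verify that $\ch_G(G)=\sqrt{\ch(G^{\vee}\otimes G)}$ has rank exactly $r$, so that pairing $\ch_G(E)=a\varrho_X$ against $\ch_G(G)^{\vee}\td_X$ collapses cleanly to $ar$. This identity is what converts the mod-$r$ divisibility $r\mid\chi(G,E)$ into the integrality of the top coefficient $a$, and it is the step I expect to be the main (though mild) obstacle.
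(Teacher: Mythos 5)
Your proposal is correct and follows essentially the same route as the paper: the congruence is obtained by additivity over the cohomology sheaves $H^i(E)$ combined with Lemma \ref{lem:twisted:0-dim}, and the integrality of the top coefficient comes from the identity $\int_X \ch_G(E)=\chi(G,E)/r$ (your Riemann--Roch computation with $G$ in the first slot, using that $\ch_G(G)=\sqrt{\ch(G^{\vee}\otimes G)}$ has rank $r$) together with $r\mid\chi(G,E)$. No gaps.
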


\begin{proof}
The second claim follows from 
 $\int_X \ch_G(E)=\chi(G,E)/r=(\chi(G,E)/r) \int_X \varrho_X$. 
\end{proof}

\section{Perverse coherent sheaves for the resolution of
rational double points.}\label{sect:RDP}

\subsection{Perverse coherent sheaves on the resolution
of rational singularities.}
\label{subsect:Per(X/Y)}
Let $Y$ be a projective normal surface with at worst rational 
singularities and $\pi:X \to Y$ the minimal resolution.
Let $p_i$, $i=1,2,...,n$ be the singular points of $Y$ and
$Z_i:=\pi^{-1}(p_i)=\sum_{j=1}^{s_i} a_{ij} C_{ij}$ their fundamental cycles. 
Let $\beta$ be a 2-cocycle of ${\cal O}_X^{\times}$
whose image in $H^2(X,{\cal O}_X^{\times})$ is a torsion
element.
For $\beta$-twisted line bundles 
$L_{ij}$ on $C_{ij}$,
we shall define abelian categories 
$\Per(X/Y,\{L_{ij}\})$ and
$\Per(X/Y,\{ L_{ij} \})^*$.

\begin{prop}\label{prop:C(G)}
\begin{enumerate}
\item[(1)]
There is a locally free sheaf $E$
such that $\chi(E,L_{ij})=0$
for all $i,j$ and 
$R^1 \pi_*( E^{\vee} \otimes E)=0$.
\item[(2)]
${\cal C}(E)$
is the tilting of $\Coh^{\beta}(X)$ with respect to the torsion 
pair $(S,T)$ such that
\begin{equation}
\begin{split}
S:=&\{E \in \Coh^{\beta}(X)| 
\text{ $E$ is generated by subsheaves of $L_{ij}$ } \},\\
T:=& \{E \in \Coh^{\beta}(X)| \Hom(E,L_{ij})=0 \}.
\end{split}
\end{equation}
\item[(3)]
${\cal C}(E)^*$
is the tilting of $\Coh^{\beta}(X)$ with respect to the torsion 
pair $(S^*,T^*)$ such that
\begin{equation}
\begin{split}
S^*:=&\{E \in \Coh^{\beta}(X)| 
\text{ $E$ is generated by subsheaves of 
$A_{p_i} \otimes \omega_{Z_i}$ } \},\\
T^*:=& \{E \in \Coh^{\beta}(X)| \Hom(E,A_{p_i} \otimes \omega_{Z_i})=0 \}.
\end{split}
\end{equation}
\end{enumerate}
\end{prop}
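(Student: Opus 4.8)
The plan is to regard $E$ as the local projective generator $G$ of Subsection \ref{subsect:perverse-examples} and to read off (2), (3) from the structure theory developed there; the real content is the construction of $E$ in (1). Throughout I work in $\Coh^{\beta}(X)$, where the tilting formalism of Subsections \ref{subsect:Morita} and \ref{subsect:perverse-examples} applies verbatim by Subsection \ref{subsect:twisted}.

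For (1) I would look for a locally free $\beta$-twisted sheaf $E$ of some rank $r$ whose restriction to every exceptional component is balanced, $E_{|C_{ij}} \cong L_{ij} \otimes {\cal O}_{C_{ij}}(1)^{\oplus r}$. Granting such an $E$, both required properties are cheap. On $C_{ij} \cong {\Bbb P}^1$ the Riemann--Roch formula \eqref{eq:RR} gives $\chi(E, L_{ij}) = \deg(E_{|C_{ij}}^{\vee}\otimes L_{ij}) + r = 0$ by the balanced choice. Balancedness also makes $(E^{\vee} \otimes E)_{|C_{ij}} \cong {\cal O}_{C_{ij}}^{\oplus r^2}$ trivial on each component; since $Z_{i,\mathrm{red}}$ is a tree of smooth rational curves with $H^1(Z_{i,\mathrm{red}},{\cal O})=0$ (as in the proof of Lemma \ref{lem:tilting:irreducible}), one gets $R^1\pi_*((E^{\vee}\otimes E)_{|Z_{i,\mathrm{red}}})=0$, and then $R^1\pi_*(E^{\vee}\otimes E)=0$ by Lemma \ref{lem:tilting:TFF} (3) and the theorem on formal functions. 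Thus (1) reduces to producing $E$ with the prescribed balanced restriction, which is the main obstacle: a single twisted line bundle generally cannot realise the multidegree $(\deg L_{ij}+1)_{ij}$, since the restriction map to $\prod_{ij}{\Bbb Z}$ has image of finite index governed by the (negative definite) intersection lattice. I would clear the resulting discriminant group by taking $r$ divisible by its exponent, and build $E$ as an iterated elementary modification along the components of each $Z_i$, the obstructions vanishing because the relevant endomorphism sheaves are trivial on the components.

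Once $E$ is in hand it satisfies Assumption \ref{ass:2} with ${\cal O}_{C_{ij}}(b_{ij}) := L_{ij}$: each $L_{ij}$ is a line bundle on the integral curve $C_{ij}$, hence $E$-twisted stable, so Lemma \ref{lem:characterize} (using $\chi(E,L_{ij})=0$ and $R^1\pi_*(E^{\vee}\otimes E)=0$) yields ${\bf R}\pi_*(E^{\vee}\otimes L_{ij})=0$. Now (2) is the identification of the torsion pair $(S,T)$ of Lemma \ref{lem:tilting:C(G)} attached to $G=E$. The definition of $A_{p_i}$ in Lemma \ref{lem:tilting:A_y} and the ensuing computation give $A_{p_i|C_{ij}} \cong {\cal O}_{C_{ij}}(b_{ij}+1)$, hence $A_{p_i}\otimes{\cal O}_{C_{ij}}(-1) \cong L_{ij}$. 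Then Proposition \ref{prop:tilting:Per-equiv} (1) identifies $T = \{E'\mid \Hom(E', A_{p_i}\otimes{\cal O}_{C_{ij}}(-1))=0\} = \{E'\mid \Hom(E',L_{ij})=0\}$, while Lemma \ref{lem:tilting:G-1Per-S} shows every $E'\in S$ is a successive extension of subsheaves of the $L_{ij}$; conversely such extensions lie in $S$ since ${\bf R}\pi_*(E^{\vee}\otimes L_{ij})=0$. This is the stated description.

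Finally (3) concerns the torsion pair $(S^{*},T^{*})$ from the lemma following Lemma \ref{lem:tilting:VB}, with $T^{*}=\{E'\mid R^1\pi_*(E^{\vee}\otimes E')=0\}$ and $S^{*}=\{E'\mid \pi_*(E^{\vee}\otimes E')=0,\ \Hom(c,E')=0\ \text{for}\ c\in S_0\}$, where $S_0$ is generated by the $L_{ij}$ by Lemma \ref{lem:tilting:G-1Rpi*E=0}. By Proposition \ref{prop:tilting:G0-Per-irred} the sheaf-level irreducible objects lying in $S^{*}$ are exactly the $A_{p_i}\otimes\omega_{Z_i}$ (the $L_{ij}$ now sit inside $T^{*}$), and Lemma \ref{lem:G-1per:characterize*} characterises them. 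Using ${\cal C}(E)^{*}={\cal C}(E^{\vee})^{D}$ together with Lemma \ref{lem:tilting:generate} (2) applied to the $*$-torsion pair, I would conclude $S^{*}=\{E'\mid E'\ \text{generated by subsheaves of}\ A_{p_i}\otimes\omega_{Z_i}\}$ and, dually, $T^{*}=\{E'\mid \Hom(E',A_{p_i}\otimes\omega_{Z_i})=0\}$, as claimed.
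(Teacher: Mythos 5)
Your reduction of (1) to the existence of a locally free $\beta$-twisted sheaf with balanced restriction $E_{|C_{ij}}\cong (L_{ij}\otimes{\cal O}_{C_{ij}}(1))^{\oplus r}$ is exactly right, and your verification that such an $E$ satisfies $\chi(E,L_{ij})=0$ and $R^1\pi_*(E^{\vee}\otimes E)=0$ is essentially the paper's (via Lemma \ref{lem:tilting:pull-back}, which you should invoke to pass from triviality of $(E^{\vee}\otimes E)_{|C_{ij}}$ to triviality on a neighbourhood of $Z_i$ before applying the formal-function argument --- Lemma \ref{lem:tilting:TFF} (3) concerns the scheme-theoretic fibre, not $Z_{i,\mathrm{red}}$). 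Parts (2) and (3) are fine: identifying $L_{ij}=A_{p_i}\otimes{\cal O}_{C_{ij}}(-1)$ and quoting Propositions \ref{prop:tilting:G-1Per-irred}, \ref{prop:tilting:G0-Per-irred} and \ref{prop:tilting:S-T} (equivalently the lemmas you cite) is what the paper does.

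The gap is in the construction of $E$ itself, which is the actual content of (1). Your discriminant-group argument only produces a sheaf whose restriction to each $C_{ij}$ has the correct \emph{degree} $r(\deg L_{ij}+1)$; it says nothing about the \emph{splitting type}, and a rank-$r$ bundle on ${\Bbb P}^1$ of degree $rd$ need not be ${\cal O}(d)^{\oplus r}$. The proposed fix by ``iterated elementary modifications along the components, obstructions vanishing because the endomorphism sheaves are trivial'' does not close this: an elementary modification along $C_{ij}$ changes the degree but gives no control over the resulting splitting type on $C_{ij}$, and it also perturbs the restriction to every adjacent component $C_{ik}$, so one cannot balance all components of a tree simultaneously by this device. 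The paper's proof uses a genuinely different mechanism: starting from $G\otimes A$ with $\chi(G\otimes A,L_{ij})=0$ (your multidegree step, done with a ${\Bbb Q}$-divisor $\alpha$), it applies Lemma \ref{lem:generic-vanishing} to arrange $\Ext^2(E,E(-C_{ij}))_0=0$, deduces that the restriction morphism $\Def(X,E)\to\Def(C_{ij},E_{|C_{ij}})$ is smooth and submersive, and then uses the irreducibility of the moduli of bundles on $C_{ij}$ with fixed determinant (Lemma \ref{lem:RDP:irred-C}) to deform $E$ until its restriction becomes the generic, i.e.\ balanced, bundle; openness of the conditions and a double dual finish the argument. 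Some such genericity/deformation input is needed, and your proposal does not supply a substitute for it.
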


\begin{NB}
$A_{p_i}$ is a unique line bundle on $Z_i$
such that $A_{p_i|C_{ij}}=L_{ij}(1)$.
\end{NB}

For the proof of (1), 
we shall use the deformation theory of a coherent twisted sheaf. 
\begin{defn}
For a coherent $\beta$-twisted sheaf $E$ on a scheme $W$,
$\Def(W,E)$ denotes the local deformation space of $E$
fixing $\det E$. 
\end{defn}
For a complex $E \in {\bf D}^{\beta}(X)$, 
let 
\begin{equation}
\Ext^i(E,E)_0:=\ker(\Ext^i(E,E) \overset{\tr}{\to} H^i(X,{\cal O}_X))
\end{equation}
be the kernel of the trace map. 
If $\Ext^2(E,E)_0=0$, then $\Def(W,E)$ is smooth and
the Zariski tangent space at $E$ is 
$\Ext^1(E,E)_0$. 
The following is well-known.
\begin{lem}\label{lem:generic-vanishing} 
Let $D$ be a divisor on $X$.
For $E \in \Coh^{\beta}(X)$ with $\rk E>0$,
we have a torsion free $\beta$-twisted sheaf $E'$ 
such that $\tau(E')=\tau(E)-n\tau({\Bbb C}_x)$ and
$\Ext^2(E',E'(D))_0=0$.  
\end{lem}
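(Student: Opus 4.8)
The plan is to convert the vanishing of the trace-free obstruction $\Ext^2(E',E'(D))_0$ into a genericity statement about homomorphisms into a twist, and then to realize the class $\tau(E)-n\tau({\Bbb C}_x)$ by a sufficiently general torsion-free sheaf. First I would dispose of the torsion: since $\rk E>0$, the quotient $E/T(E)$ by the torsion subsheaf $T(E)$ is torsion free, and when $T(E)$ is $0$-dimensional (the only case needed here) this replaces $\tau(E)$ by $\tau(E)-n_0\tau({\Bbb C}_x)$ while keeping $\rk$ and $c_1$ fixed. Since we are allowed to subtract further copies of $\tau({\Bbb C}_x)$, it then suffices to exhibit, for some $n$, a torsion-free $\beta$-twisted sheaf $E'$ in the class $\tau(E)-n\tau({\Bbb C}_x)$ with $\Ext^2(E',E'(D))_0=0$.

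Second, I would rewrite the obstruction by Serre duality on the surface $X$. One has
\begin{equation}
\Ext^2(E',E'(D)) \cong \Hom(E',E'(K_X-D))^{\vee},
\end{equation}
and under this isomorphism the trace map is dual to the scalar-multiplication map $H^0(X,K_X-D) \to \Hom(E',E'(K_X-D))$, $s \mapsto s\cdot \id_{E'}$. Hence
\begin{equation}
\Ext^2(E',E'(D))_0 \cong \coker\bigl(H^0(X,K_X-D) \to \Hom(E',E'(K_X-D))\bigr)^{\vee},
\end{equation}
so the desired vanishing is equivalent to the assertion that every $\beta$-twisted homomorphism $\phi:E'\to E'(K_X-D)$ has the form $s\cdot \id_{E'}$ for a section $s\in H^0(X,K_X-D)$.

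Third comes the construction. Starting from a torsion-free representative $E_0$ with $\rk E_0=\rk E$ and $c_1(E_0)=c_1(E)$, I take general points $x_1,\dots,x_n$ of the locally free locus of $E_0$ and general surjections $E_0\to {\Bbb C}_{x_i}$, and set $E':=\ker(E_0\to \oplus_i {\Bbb C}_{x_i})$, so that $E'$ is torsion free with $\tau(E')=\tau(E_0)-n\tau({\Bbb C}_x)$. On the open locus where $E_0$ is locally free, any homomorphism $\phi:E'\to E'(K_X-D)$ agrees with a map $E_0\to E_0(K_X-D)$, which extends across the $x_i$ because $E_0(K_X-D)$ is torsion free and the map is defined in codimension $\ge 1$; thus $\Hom(E',E'(K_X-D))$ is the subspace of $\Hom(E_0,E_0(K_X-D))$ of homomorphisms compatible with the chosen quotient at each $x_i$. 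The scalar homomorphisms are automatically compatible, and for general points and quotients and $n$ large an upper-semicontinuity argument shows the compatible homomorphisms reduce to the scalars $H^0(X,K_X-D)\cdot \id_{E'}$. The model is the rank-one case: for a general $0$-dimensional $Z$ one has $\Hom(I_Z\otimes L,I_Z\otimes L(K_X-D))=H^0(X,K_X-D)$, since the idealizer of $I_Z$ is ${\cal O}_X$. Combined with the second step this yields $\Ext^2(E',E'(D))_0=0$.

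The main obstacle is the third step when $(K_X-D,H)\ge 0$: there the scalar homomorphisms genuinely occur, so $\mu$-stability alone settles only the case $(K_X-D,H)<0$ (where both $\Hom(E',E'(K_X-D))$ and $H^0(X,K_X-D)$ vanish), and the real content is to show that general elementary modifications eliminate all non-scalar homomorphisms. I would formalize this as a dimension count on the incidence variety of pairs consisting of a modification together with a non-scalar homomorphism, proving that it has strictly smaller dimension than the parameter space of modifications once $n$ is large, so that the generic modification carries none. The twisted ($\beta$-gerbe) bookkeeping is routine, since $\beta$ is a torsion class and the Serre-duality and $\Ext$ computations are formally identical to the untwisted case.
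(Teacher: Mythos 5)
Your proof is correct and follows the same route as the paper, whose entire argument is the one\-line assertion that for a locally free $\beta$-twisted sheaf the kernel of a general surjection onto $n \gg 0$ skyscraper sheaves works; your Serre-duality reformulation, the extension of homomorphisms across the modification points, and the incidence-variety dimension count supply exactly the details that one-line proof leaves implicit. The only loose end is your reduction to the torsion-free case: if $T(E)$ has one-dimensional support then passing to $E/T(E)$ changes $c_1$, so one should instead start from any locally free $\beta$-twisted sheaf with the same rank and $c_1$ as $E$ (such exist, e.g.\ by elementary modifications of $(E/T(E))^{\vee\vee}$ along divisors) — a gloss the paper itself also makes.
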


\begin{proof}
For a locally free $\beta$-twisted sheaf $E$,
we consider a general surjective homomorphism
$\phi:E \to \oplus_{i=1}^n {\Bbb C}_{x_i}$, $x_i \in X$.
If $n$ is sufficiently large, then $E':=\ker \phi$
satisfies the claim. 
\end{proof}

\begin{lem}\label{lem:RDP:irred-C}
Let $C$ be an effective divisor on $X$.
For $(r,\cal L) \in {\Bbb Z}_{>0} \times \Pic(C)$, 
the moduli stack of locally free sheaves $E$ on $C$ such that
$(\rk E,\det E)=(r,\cal L)$ is irreducible.
\end{lem}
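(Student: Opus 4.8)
The plan is to exploit that $C$ is $1$-dimensional to make deformation theory unobstructed, thereby reducing the assertion to connectedness, and then to prove connectedness by a dévissage that ultimately rests on the case of an integral curve. For the first step: since $C$ is projective of dimension $1$, one has $\Ext^2_{\mathcal{O}_C}(F,F')=0$ for all coherent sheaves $F,F'$ on $C$, so in particular $\Ext^2_{\mathcal{O}_C}(E,E)_0=0$ for every locally free $E$. Hence the moduli stack $\mathcal{M}=\mathcal{M}_C(r,\mathcal{L})$ of locally free sheaves with $(\rk E,\det E)=(r,\mathcal{L})$ has unobstructed (determinant-fixing) deformations and is therefore smooth. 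A smooth stack is irreducible if and only if it is connected, so it suffices to show that every such $E$ can be joined inside $\mathcal{M}$ to the fixed bundle $\mathcal{L}\oplus\mathcal{O}_C^{\oplus(r-1)}$.

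The core of the argument is the case in which $C$ is integral. Here I would argue by induction on $r$, the case $r=1$ being trivial (the fibre of $\det$ is a single isomorphism class). For $r\ge 2$, twisting by a sufficiently ample $\mathcal{O}_C(m)$ pulled back from $X$ makes $E(m)$ globally generated with $H^1(E(m))=0$; since $\rk E(m)\ge 2>\dim C$, a general global section is nowhere vanishing by a standard dimension count on the universal zero locus, giving a sub-line-bundle $\mathcal{O}_C(-m)\hookrightarrow E$ with locally free quotient $F$ of rank $r-1$. Scaling the extension class in $\Ext^1(F,\mathcal{O}_C(-m))$ by $t\in\mathbb{A}^1$ produces a flat family of locally free sheaves of constant determinant $\mathcal{L}$ whose general fibre is $E$ and whose fibre at $0$ is $\mathcal{O}_C(-m)\oplus F$, connecting $E$ to that split bundle; the inductive hypothesis then reduces everything to direct sums of line bundles. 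Different splitting types of the same total determinant are finally joined by a determinant-balanced sequence of elementary modifications at smooth points of $C$ (which exist since $C$ is reduced), so $\mathcal{M}$ is connected when $C$ is integral.

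For a general effective divisor $C$ I would reduce to this case by a double dévissage. To pass from a reducible reduced curve to its components, write $C_{\mathrm{red}}=C_1\cup_Z C_2$ with $Z$ finite and use the pushout description of bundles (bundles on $C_1$ and $C_2$ together with a gluing isomorphism along $Z$): restriction is smooth with fibres that are torsors under the connected group $\prod_{z\in Z}\operatorname{GL}_r$, so connectedness for $C_{\mathrm{red}}$ follows from connectedness for the integral pieces. To pass from $C_{\mathrm{red}}$ to $C$, filter $C$ by a chain of divisors differing by one reduced component at a time; over each square-zero (or nilpotent) step the restriction map $\mathcal{M}_{C^{(i+1)}}(r,\mathcal{L})\to\mathcal{M}_{C^{(i)}}(r,\mathcal{L})$ has fibres that are torsors under the cohomology vector groups $H^1(C^{(i)},\mathcal{E}nd_0(E)\otimes\mathcal{I}/\mathcal{I}^2)$ governing determinant-fixing deformations, which are affine spaces and hence connected. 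Connectedness of $\mathcal{M}_C(r,\mathcal{L})$ then follows by induction from the integral case.

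The step I expect to be the main obstacle is the non-reduced part of the dévissage. On a generically non-reduced component the local ring has only units as non-zero-divisors, so there are no elementary modifications that shift degrees while preserving local freeness; consequently one cannot connect differing splitting types by modifications on $C$ itself, and must instead transport the splitting-type deformations from $C_{\mathrm{red}}$ and control the lift through each infinitesimal thickening. Making precise that the fixed-determinant deformation fibres over these thickenings are genuinely connected affine spaces, and checking that the determinant and local freeness are preserved throughout the gluing and thickening steps, is where the real work lies; the embedding $C\subset X$ into the smooth surface, together with the exact sequence relating $\Pic(C)$ to $\Pic(C_{\mathrm{red}})$, should provide the needed control.
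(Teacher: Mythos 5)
Your route is genuinely different from the paper's, and most of it could be pushed through, but it is an order of magnitude more work than is needed, and one step fails as stated. The paper's proof is a single construction with no induction and no d\'evissage: for $n\gg 0$ the evaluation map $H^0(X,E(n))\otimes{\cal O}_C(-n)\to E$ is surjective, and a \emph{general} subspace $V\subset H^0(X,E(n))$ of dimension $r-1$ gives a map $\psi:V\otimes{\cal O}_C(-n)\to E$ that is injective on every fibre (a codimension count in the Grassmannian over the $1$-dimensional $C$); then $\coker\psi$ is a line bundle, forced by the determinant to be ${\cal L}\otimes{\cal O}_C((r-1)n)$, so every $E$ in the open locus $U_n$ where this works lies in the image of the irreducible affine space $\Ext^1_{{\cal O}_C}({\cal L}((r-1)n),V\otimes{\cal O}_C(-n))=H^1(C,{\cal L}^{\vee}(-rn)\otimes V)$. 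Since the $U_n$ form an increasing open cover, the stack is irreducible. Note that this argument is insensitive to $C$ being reducible or non-reduced, which is exactly where all the labour in your plan is concentrated. Your smoothness reduction is fine for what you use it for, but the blanket claim that $\Ext^2_{{\cal O}_C}(F,F')=0$ for \emph{all} coherent sheaves on a $1$-dimensional scheme is false when $C$ is singular or non-reduced (e.g.\ $\Ext^i_{{\cal O}_C}({\Bbb C}_p,{\Bbb C}_p)\neq 0$ for all $i$ at a singular point); what is true, and suffices, is $\Ext^2(E,E)=H^2(C,{\cal E}nd(E))=0$ for $E$ locally free.

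The concrete gap is the step "different splitting types \ldots are joined by a determinant-balanced sequence of elementary modifications." An elementary modification replaces $E$ by a subsheaf or an overlattice; it is a discrete operation, not a flat family over a connected base, so by itself it does not place $\mathcal{O}_C(-m)\oplus{\cal L}(m)\oplus{\cal O}_C^{\oplus(r-2)}$ and ${\cal L}\oplus{\cal O}_C^{\oplus(r-1)}$ in the same irreducible family. To repair it you must exhibit both bundles as fibres of one connected family --- for instance as extensions of ${\cal L}(m)$ by ${\cal O}_C(-m)$ (a nowhere-vanishing section of ${\cal O}_C(m)\oplus{\cal L}(m)$ realizes the second bundle as such an extension) --- but at that point you have rediscovered the paper's extension-space device, which, applied with a general $(r-1)$-dimensional $V$ instead of a single section, already proves the whole lemma and makes the rank induction, the Milnor patching over the intersection locus, and the square-zero thickening tower unnecessary.
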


\begin{proof}
For a locally free sheaf $E$ on $C$
we consider $\phi:H^0(X,E(n)) \otimes {\cal O}_C(-n) \to E$.
Assume that
$\phi$ is surjective. Then
there is a subvector space $V \subset H^0(X,E(n))$
of $\dim V=r-1$
such that $\psi:V \otimes {\cal O}_C(-n) \to E$ is injective
for any point of $C$.
Then $\coker \psi$ is a line bundle which is isomorphic to
$\det(E) \otimes {\cal O}_C((r-1)n)$.  
Hence $E$ is parametrized an affine space
$\Ext^1_{{\cal O}_C}({\cal L}\otimes {\cal O}_C((r-1)n),
{\cal O}_C(-n) \otimes V)
=H^1(C,{\cal L}^{\vee}(-rn) \otimes V)$.
Since the surjectivity of $\phi$ is an open condition and
$\phi$ is surjective for $n \gg 0$,
we get our claim. 
\end{proof}

{\it Proof of Proposition \ref{prop:C(G)}.} 
(1)
For a locally free $\beta$-twisted sheaf $G$ on $X$,
we set $g_{ij}:=\chi(G,L_{ij})$.
Let $\alpha \in \oplus_{i=1}^n \oplus_{j=1}^{s_i} 
{\Bbb Q} [C_{ij}]$ be a ${\Bbb Q}$-divisor
such that $\rk G(\alpha,C_{ij})=g_{ij}$.
We take a locally free sheaf $A \in \Coh(X)$ such that
$c_1(A)/\rk A=\alpha$.
Then $\chi(G \otimes A,L_{ij})=
\rk A(g_{ij}-\rk G (\alpha,C_{ij}))=0$ for all $i,j$.
By Lemma \ref{lem:generic-vanishing}, 
there is a torsion free $\beta$-twisted sheaf
$E$ on $X$ such that
$\tau(E)=\tau(G \otimes A)-n\tau({\Bbb C}_x)$
and $\Hom(E,E(K_X+C_{ij}))_0 =0$ for all $i,j$.
We consider the restriction morphism
\begin{equation}
\phi_{ij}:\Def(X,E) \to \Def(C_{ij},E_{|C_{ij}}).
\end{equation}
Since $\Ext^2(E,E(-C_{ij}))_0=0$, we get $\Ext^2(E,E)_0=0$.
Thus $\Def(X,E)$ is smooth. We also have
the smoothness of $\Def(C_{ij},E_{|C_{ij}})$, by the locally
freeness of $E_{|C_{ij}}$.
We consider the homomorphism of the tangent spaces
\begin{equation}
\Ext^1_{{\cal O}_X}(E,E)_0 \to 
\Ext^1_{{\cal O}_{C_{ij}}}(E_{|C_{ij}},E_{|C_{ij}})_0.
\end{equation}   
Then it is surjective by $\Ext^2(E,E(-C_{ij}))_0=0$.
Therefore $\phi$ is submersive.
By the equivalence
$\varphi:\Coh^{\beta}(C_{ij}) \to \Coh(C_{ij})$ 
in \eqref{eq:twisted-equiv},
we have an isomorphism 
$\Def(C_{ij},E_{|C_{ij}}) \to \Def(C_{ij},\varphi(E_{|C_{ij}}))$.
Since $\chi(E,L_{ij})=0$, $\det (E_{|C_{ij}} \otimes L_{ij}^{\vee})
={\cal O}_{C_{ij}}(\rk E)$.
\begin{NB}
$E_{|C_{ij}} \otimes L_{ij}^{\vee} \cong
\varphi(E_{|C_{ij}}) \otimes \varphi(L_{ij})^{\vee}$.
\end{NB}
Then Lemma \ref{lem:RDP:irred-C} implies that 
$E$ deforms to a $\beta$-twisted
sheaf such that
$E_{|C_{ij}} \cong L_{ij}(1)^{\oplus \rk E}$.
Since these conditions are open condition,
there is a locally free $\beta$-twisted sheaf
$E$ such that $E_{|C_{ij}} \cong L_{ij}(1)^{\oplus \rk E}$
for all $i,j$.
By taking the double dual of $E$ and using 
Lemma \ref{lem:tilting:pull-back}, we get (1).

(2) 
Note that $L_{ij}=A_{p_i} \otimes {\cal O}_{C_{ij}}(-1)$.
By Proposition \ref{prop:tilting:G-1Per-irred} and
Proposition \ref{prop:tilting:S-T}, 
we get the claim.
For (3), we use Proposition \ref{prop:tilting:G0-Per-irred}
and Proposition \ref{prop:tilting:S-T}.
\qed

\begin{NB}

\begin{defn}
For ${\bf b}=(b_1,b_2,\dots,b_s)$,
let ${\cal S}({\bf b})$ be the full subcategory of $\Coh(X)$ 
generated by subsheaves of ${\cal O}_{C_i}(b_i)$,
$1 \leq i \leq s$.
Let ${\cal T}({\bf b})$ be the full subcategory
of $\Coh(X)$ such that
\begin{equation}
{\cal T}({\bf b}):=\{E \in \Coh(X)| \Hom(E,{\cal O}_{C_i}(b_i))=0, 
1 \leq i \leq s \}.
\end{equation}
\end{defn}

\begin{prop}\label{prop:torsion-pair}
$T({\bf b}):=({\cal T}({\bf b}),{\cal S}({\bf b}))$ 
is a torsion pair of $\Coh(X)$. That is,
for $E \in \Coh(Y)$, there is an exact sequence
\begin{equation}
0 \to E' \to E \to E'' \to 0
\end{equation}
such that $E' \in {\cal T}({\bf b})$ and $E'' \in {\cal S}({\bf b})$.
Since $\Hom(E',E'')=0$, this decomposition is unique.
\end{prop}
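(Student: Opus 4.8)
The plan is to recognize $({\cal T}({\bf b}),{\cal S}({\bf b}))$ as the torsion pair underlying the tilting ${\cal C}(E)=\Per(X/Y,\{L_{ij}\})$ and thereby reduce the assertion to Proposition \ref{prop:tilting:S-T}. First I would fix, by Proposition \ref{prop:C(G)}(1), a locally free sheaf $G$ on $X$ with $R^1\pi_*(G^\vee\otimes G)=0$ and $\chi(G,{\cal O}_{C_i}(b_i))=0$ for all $i$. Since each ${\cal O}_{C_i}(b_i)$ is a line bundle on the integral exceptional curve $C_i$, it is a $G$-twisted stable $1$-dimensional sheaf, so Lemma \ref{lem:characterize} gives ${\bf R}\pi_*(G^\vee\otimes{\cal O}_{C_i}(b_i))=0$; that is, ${\cal O}_{C_i}(b_i)$ lies in the class $S_0$. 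Consequently the ${\cal O}_{C_i}(b_i)$ are precisely the objects $A_{p}\otimes{\cal O}_{C_i}(-1)$ that constitute $\Sigma\cap\Coh(X)$ in the sense of Proposition \ref{prop:tilting:S-T}, and the associated tilting is ${\cal C}(G)$.

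With this $G$ in hand, the two defining conditions match the torsion pair of Proposition \ref{prop:tilting:S-T}. By the equivalence of (a) and (b) in Proposition \ref{prop:tilting:Per-equiv}, the condition $\Hom(E,{\cal O}_{C_i}(b_i))=0$ for all $i$ is the same as $R^1\pi_*(G^\vee\otimes E)=0$, so ${\cal T}({\bf b})=T$; and by Lemma \ref{lem:tilting:G-1Per-S}, together with the elementary fact that subsheaves and extensions of the ${\cal O}_{C_i}(b_i)$ all satisfy $\pi_*(G^\vee\otimes-)=0$, the extension closure ${\cal S}({\bf b})$ coincides with $S=\{E\mid\pi_*(G^\vee\otimes E)=0\}$. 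Granting these two identifications, Proposition \ref{prop:tilting:S-T}(1) immediately yields that $({\cal T}({\bf b}),{\cal S}({\bf b}))$ is a torsion pair.

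If instead I wanted a self-contained argument, the decomposition would be produced by the evaluation map: for $E\in\Coh(X)$ I set $\phi:\pi^{-1}(\pi_*(G^\vee\otimes E))\otimes_{\pi^{-1}({\cal A})}G\to E$ and take $E':=\im\phi$, $E'':=\coker\phi$. Lemma \ref{lem:tilting:C(G)} shows $\pi_*(G^\vee\otimes E'')=0$ and $R^1\pi_*(G^\vee\otimes E')=0$; then Lemma \ref{lem:tilting:G-1Per-S} realizes $E''$ as a successive extension of subsheaves of the ${\cal O}_{C_i}(b_i)$, so $E''\in{\cal S}({\bf b})$, while Proposition \ref{prop:tilting:Per-equiv} gives $E'\in{\cal T}({\bf b})$. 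The orthogonality $\Hom(E',E'')=0$ is formal, since $\Hom(E',-)$ is left exact and $E''$ is built from subsheaves of the ${\cal O}_{C_i}(b_i)$, against each of which $E'\in{\cal T}({\bf b})$ has no maps; uniqueness then follows in the standard way, because any two such decompositions induce a map lying in $\Hom({\cal T}({\bf b}),{\cal S}({\bf b}))=0$. The step carrying real content, and the place I would be most careful, is the identification ${\cal S}({\bf b})=S$: one must show that the a priori weak vanishing $\pi_*(G^\vee\otimes E'')=0$ forces $E''$ to be an iterated extension of subsheaves of the ${\cal O}_{C_i}(b_i)$, and this is exactly where Lemma \ref{lem:tilting:G-1Per-S}, and behind it the classification of $1$-dimensional $G$-twisted stable sheaves on the fibres, is indispensable.
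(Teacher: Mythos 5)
Your argument is correct, but it is not the route the paper takes to prove this particular proposition. The paper's proof is direct and elementary: (i) for $E$ supported on the exceptional locus it inducts on $(c_1(E),L)$, successively splitting off the image of a nonzero map $E\to{\cal O}_{C_i}(b_i)$ and using that ${\cal S}({\bf b})$ is extension-closed; (ii) for general $E$ it takes an \emph{arbitrary} locally free $G$, twists by $L^{\otimes n}$ with $n\gg0$ so that $\Hom(G_n,{\cal O}_{C_i}(b_i))=0$, and uses the evaluation map $\pi^*(\pi_*(G_n^{\vee}\otimes E))\otimes G_n\to E$ to produce a subsheaf in ${\cal T}({\bf b})$ whose cokernel is exceptional, reducing to (i). You instead choose $G$ adapted to ${\bf b}$ via Proposition \ref{prop:C(G)}(1) (so that ${\bf R}\pi_*(G^{\vee}\otimes{\cal O}_{C_i}(b_i))=0$ by Lemma \ref{lem:characterize}) and then identify $({\cal T}({\bf b}),{\cal S}({\bf b}))$ with the torsion pair of Lemma \ref{lem:tilting:C(G)} via Proposition \ref{prop:tilting:Per-equiv} and Lemma \ref{lem:tilting:G-1Per-S}. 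What the paper's version buys is independence from the existence result Proposition \ref{prop:C(G)}(1), which rests on deformation theory; what your version buys is that the identification with the torsion pair underlying ${\cal C}(G)=\Per(X/Y,{\bf b})$ comes for free, and indeed it is exactly the argument the paper itself uses for the parallel statement Proposition \ref{prop:C(G)}(2), which is proved by citing Propositions \ref{prop:tilting:G-1Per-irred} and \ref{prop:tilting:S-T}.

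Two small points of care. First, your gloss that ``$\Hom(E,{\cal O}_{C_i}(b_i))=0$ for all $i$ is the same as $R^1\pi_*(G^{\vee}\otimes E)=0$'' is not literally true: $E={\cal O}_{C_i}(b_i)$ satisfies the $R^1$-vanishing but not the $\Hom$-vanishing. Membership in $T$ requires in addition $\Hom(E,c)=0$ for $c\in S_0$, and it is the equivalence (a)$\Leftrightarrow$(b) of Proposition \ref{prop:tilting:Per-equiv} --- which you do cite --- that gives ${\cal T}({\bf b})=T$; the conclusion is unaffected. Second, in identifying ${\cal S}({\bf b})$ with $S=\{E\mid\pi_*(G^{\vee}\otimes E)=0\}$ you should note explicitly that any $E\in S$ is purely $1$-dimensional and supported on the exceptional locus (a $0$-dimensional subsheaf or a component dominating its image in $Y$ would contribute to $\pi_*$), so that Lemma \ref{lem:tilting:G-1Per-S} really does apply to all of $S$.
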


\begin{proof}
We first assume that $\Supp(E) \subset Z$.
If $\Hom(E,{\cal O}_{C_i}(b_i)) \ne 0$,
then we have an exact sequence
\begin{equation}
0 \to E_1 \to E \to F_1 \to 0,
\end{equation}
where $F$ is a non-zero subsheaf of ${\cal O}_{C_i}(b_i)$.
Since $(c_1(E),L)>(c_1(E_1),L)$, by the induction on 
$(c_1(E),L)$, we get a desired decomposition.
We treat the general case.
We fix a locally free sheaf $G$ on $X$ and set
$G_n:=G \otimes L^{\otimes n}$.
We consider 
$\phi:\pi^*(\pi_*(G_n^{\vee} \otimes E)) \otimes G_n
\to E$.
If $n$ is sufficiently large,
then $\Hom(G_n,{\cal O}_{C_i}(b_i))=0$ for all $i$.
Hence $\Hom(\im \phi,{\cal O}_{C_i}(b_i))=0$ for all $i$.
Since $\Supp(\coker \phi) \subset Z$, we have a decomposition
\begin{equation}
0 \to E_1' \to \coker \phi \to E_2' \to 0
\end{equation}
such that $E_1' \in {\cal T}({\bf b})$ and
$E_2' \in {\cal S}({\bf b})$.
Since ${\cal T}({\bf b})$ is closed under extensions,
we get a desired extension.
\end{proof}

\end{NB}

\begin{defn}
\begin{enumerate}
\item[(1)]
We set 
$\Per(X/Y,\{ L_{ij} \}):={\cal C}(E)$ and
$\Per(X/Y,\{ L_{ij} \})^*:={\cal C}(E)^*$.
\item[(2)]
If $\beta$ is trivial, then we can write
$L_{ij}={\cal O}_{C_{ij}}(b_{ij})$.
In this case, we set
$\Per(X/Y,{\bf b}_1,\dots,{\bf b}_n):=\Per(X/Y,\{L_{ij}\})$ and
$\Per(X/Y,{\bf b}_1,\dots,{\bf b}_n)^*:=\Per(X/Y,\{ L_{ij} \})^*$,
where
${\bf b}_i:=(b_{i1},b_{i2},\dots,b_{i s_i})$.
\end{enumerate}
\end{defn}

\begin{rem}
If ${\bf b}_i(0)=(-1,-1,\dots,-1)$, 
then $\Per(X/Y,{\bf b}_1(0),...,{\bf b}_n(0))={^{-1}\Per(X/Y)}$.
\end{rem}

\begin{defn}
We set
\begin{equation}
\begin{split}
A_0({\bf b}_i):=& A_{p_i},\\
A_0({\bf b}_i)^* :=& A_{p_i} \otimes \omega_{Z_i}.
\end{split}
\end{equation}
\end{defn}

We collect easy facts on $A_0({\bf b}_i)$ and
$A_0({\bf b}_i)^*$ which follow from 
Lemma \ref{lem:G-1per:characterize}
and Lemma \ref{lem:G-1per:characterize*}.
\begin{lem}\label{lem:A_0}
\begin{enumerate}
\item[(1)]
\begin{enumerate}
\item
For $E=A_0({\bf b}_i)$, we have
\begin{equation}
\begin{split}
\Hom(E,{\cal O}_{C_{ij}}(b_{ij}))=
\Ext^1(E,{\cal O}_{C_{ij}}(b_{ij}))&=0,\; 1 \leq j \leq s_i
\end{split}
\end{equation}
and there is an exact sequence
\begin{equation}
\begin{CD}
0 @>>> F @>>> E @>>> {\Bbb C}_x @>>> 0\\
\end{CD}
\end{equation}
such that $F$ is a successive extension of
${\cal O}_{C_{ij}}(b_{ij})$ and $x \in Z_i$.
\item
Conversely if $E$ satisfies these conditions, then
$E \cong A_0({\bf b}_i)$. 
\end{enumerate}
\item[(2)]
\begin{enumerate}
\item
For $E=A_0({\bf b}_i)^*$, we have
\begin{equation}
\begin{split}
\Hom({\cal O}_{C_{ij}}(b_{ij}),E)=
\Ext^1({\cal O}_{C_{ij}}(b_{ij}),E)&=0,\; 1 \leq j \leq s_i
\end{split}
\end{equation}
and there is an exact sequence
\begin{equation}
\begin{CD}
0 @>>> E @>>> F @>>> {\Bbb C}_x @>>> 0\\
\end{CD}
\end{equation}
such that $F$ is a successive extension of
${\cal O}_{C_{ij}}(b_{ij})$ and $x \in Z_i$.
\item
Conversely if $E$ satisfies these conditions, then
$E \cong A_0({\bf b}_i)^*$. 
\end{enumerate}
\end{enumerate}
\end{lem}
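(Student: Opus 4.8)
The plan is to read off Lemma \ref{lem:A_0} directly from Lemma \ref{lem:G-1per:characterize} and Lemma \ref{lem:G-1per:characterize*}, so that the proof is little more than a change of notation. The single identity that drives the translation is the restriction formula $A_{p_i}\otimes {\cal O}_{C_{ij}}(-1)\cong {\cal O}_{C_{ij}}(b_{ij})=L_{ij}$ on each component $C_{ij}$, which holds because $A_{p_i|C_{ij}}\cong {\cal O}_{C_{ij}}(b_{ij}+1)$. Under this identity the vanishing hypotheses $\Hom(E,A_{p_i}\otimes {\cal O}_{C_{ij}}(-1))=\Ext^1(E,A_{p_i}\otimes {\cal O}_{C_{ij}}(-1))=0$ of Lemma \ref{lem:G-1per:characterize} become verbatim the hypotheses in Lemma \ref{lem:A_0}(1)(a), and $A_0({\bf b}_i)=A_{p_i}$ by definition.

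First I would reconcile the condition on the subobject $F$. Lemma \ref{lem:G-1per:characterize} phrases it as ``$F$ is a $G$-twisted semi-stable $1$-dimensional sheaf with $\pi(\Supp(F))=\{p_i\}$ and $\chi(G,F)=0$'', while Lemma \ref{lem:A_0} asks that ``$F$ be a successive extension of the ${\cal O}_{C_{ij}}(b_{ij})$''. These are interchangeable by Lemma \ref{lem:tilting:G-1Rpi*E=0}, whose conditions (2) and (3) are equivalent; condition (3) there reads ``successive extension of $A_{p_i}\otimes {\cal O}_{C_{ij}}(-1)$'', which is exactly a successive extension of the ${\cal O}_{C_{ij}}(b_{ij})$ under the identity above. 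With this substitution both halves of part (1) --- that $A_{p_i}$ has the stated vanishing and exact sequence, and the converse characterization --- are precisely the two assertions of Lemma \ref{lem:G-1per:characterize}.

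For part (2) the converse statement (2)(b), that an $E$ satisfying (i) and (ii) is isomorphic to $A_{p_i}\otimes \omega_{Z_i}$, is exactly Lemma \ref{lem:G-1per:characterize*} after the same notational substitution. The forward statement (2)(a), that $A_0({\bf b}_i)^*=A_{p_i}\otimes \omega_{Z_i}$ itself satisfies the $\Hom$ and $\Ext^1$ vanishing and sits in an exact sequence $0\to A_{p_i}\otimes \omega_{Z_i}\to F\to {\Bbb C}_x\to 0$, I would obtain by applying the duality functor ${\bf R}{\cal H}om_{{\cal O}_X}(-,K_X[n-1])$ with $n=\dim X=2$ to part (1), exactly as in the proof of Lemma \ref{lem:G-1per:characterize*}. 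On purely $1$-dimensional sheaves supported on the curve $Z_i$ this functor is exact, interchanges $\Hom$ and $\Ext^1$, and turns the sequence $0\to F\to A_{p_i}\to {\Bbb C}_x\to 0$ into one of the form $0\to A_{p_i}\otimes \omega_{Z_i}\to F'\to {\Bbb C}_x\to 0$, swapping the sub- and quotient-roles among the $1$-dimensional terms.

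I do not anticipate any serious difficulty, since the whole statement is a bookkeeping corollary of the two cited lemmas. The only point needing genuine care --- and hence the main, if minor, obstacle --- is tracking the line-bundle twists on the reducible fibre $Z_i$: one must check $A_{p_i}\otimes {\cal O}_{C_{ij}}(-1)\cong {\cal O}_{C_{ij}}(b_{ij})$ on every component, and verify that the duality of Lemma \ref{lem:G-1per:characterize*} lands on the twist $A_{p_i}\otimes \omega_{Z_i}$ recorded in the definition of $A_0({\bf b}_i)^*$. Both reduce to computations on the individual curves $C_{ij}$, where the sheaves involved are locally free.
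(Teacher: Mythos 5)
Your proposal is correct and follows the paper's own route exactly: the paper disposes of this lemma with the single remark that it "follows from Lemma \ref{lem:G-1per:characterize} and Lemma \ref{lem:G-1per:characterize*}," and your translation via $A_{p_i}\otimes{\cal O}_{C_{ij}}(-1)\cong{\cal O}_{C_{ij}}(b_{ij})$ together with the equivalence (2)$\Leftrightarrow$(3) of Lemma \ref{lem:tilting:G-1Rpi*E=0} is precisely the implicit bookkeeping. Your use of ${\bf R}{\cal H}om_{{\cal O}_X}(-,K_X[n-1])$ for part (2)(a) likewise mirrors the paper's proof of Lemma \ref{lem:G-1per:characterize*}, so nothing further is needed.
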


\begin{NB}

Let $\alpha \in \oplus_{i=1}^s {\Bbb Q} [C_i]$ be a ${\Bbb Q}$-divisor
such that $(\alpha,C_i)=b_i+1$.
We take a locally free sheaf $E_{{\bf b}}$ such that
$c_1(E_{{\bf b}})/\rk E_{{\bf b}}=\alpha$.
Then $\chi(E_{{\bf b}},{\cal O}_{C_i}(b_i))=
\rk(E_{{\bf b}})(b_i+1-(\alpha,C_i))=0$ for all $i$ and
$\chi(E_{{\bf b}},{\Bbb C}_x)=\rk(E_{{\bf b}})$.
For a 1-dimensional sheaf $E$ with
$\Supp(E) \subset Z$, we have 
$\rk(E_{{\bf b}})|\chi(E_{{\bf b}},E)$.

\begin{lem}\label{lem:B-stability-deg0}
Let $E$ be an $E_{{\bf b}}$-twisted stable sheaf
such that $\Supp(E) \subset Z$ and $\chi(E_{{\bf b}},E)=0$.
Then $E \cong {\cal O}_{C_i}(b_i)$.
\end{lem}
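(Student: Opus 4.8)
The statement is the $E_{\mathbf b}$-twisted, local analogue of Lemma~\ref{lem:tilting:G-stable} (equivalently the twisted version of Lemma~\ref{lem:tilting:Rpi*E=0}~(1)), so the plan is to reproduce that argument with $G$ replaced by $E_{\mathbf b}$. Since $E_{\mathbf b}$ satisfies the standing hypotheses of Assumption~\ref{ass:2}, there is a line bundle $A:=A_{p_i}$ on $Z$ with $A_{|C_i}\cong {\cal O}_{C_i}(b_i+1)$ for every $i$, so that $A\otimes {\cal O}_{C_i}(-1)\cong {\cal O}_{C_i}(b_i)$. If one takes for $E_{\mathbf b}$ the locally free sheaf produced in Proposition~\ref{prop:C(G)}~(1), then $(E_{\mathbf b})_{|C_i}\cong {\cal O}_{C_i}(b_i+1)^{\oplus \rk E_{\mathbf b}}$ and $R^1\pi_*(E_{\mathbf b}^{\vee}\otimes E_{\mathbf b})=0$, whence ${\bf R}\pi_*(E_{\mathbf b}^{\vee}\otimes {\cal O}_{C_i}(b_i))=0$, and Lemma~\ref{lem:tilting:G-stable} applies verbatim to give $E\cong A\otimes {\cal O}_{C_i}(-1)\cong {\cal O}_{C_i}(b_i)$. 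Otherwise I would run the self-contained argument below, which uses only the numerical data $\chi(E_{\mathbf b},{\cal O}_{C_i}(b_i))=0$ and $\chi(E_{\mathbf b},{\Bbb C}_x)=\rk E_{\mathbf b}$.

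First I note that $E$ is purely $1$-dimensional: it cannot be $0$-dimensional, since then $\chi(E_{\mathbf b},E)=\rk E_{\mathbf b}\cdot\operatorname{length}(E)>0$, and a twisted stable sheaf has no lower-dimensional subsheaf. Fix $x\in \Supp(E)$ and set $n:=\dim X$. Purity gives $\depth E_x=1$, so $E$ has projective dimension $n-1$ at $x$; using ${\bf R}{\cal H}om({\Bbb C}_x,{\cal O}_X)={\Bbb C}_x[-n]$ one computes ${\cal E}xt^1({\Bbb C}_x,E)\cong {\cal T}or_{n-1}({\Bbb C}_x,E)\neq 0$, hence $\Ext^1({\Bbb C}_x,E)\neq 0$ and I may choose a non-split extension $0\to E\to F\to {\Bbb C}_x\to 0$, with $\chi(E_{\mathbf b},F)=\chi(E_{\mathbf b},E)+\rk E_{\mathbf b}=\rk E_{\mathbf b}$. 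Next I would check that $F$ is $E_{\mathbf b}$-twisted semistable: any stable quotient $F\to F'$ of strictly smaller reduced $E_{\mathbf b}$-twisted Hilbert polynomial would restrict to an isomorphism $E\xrightarrow{\sim}F'$, which would split the extension. By Lemma~\ref{lem:tilting:chi=rkG} the sheaf $F$ with $\chi(E_{\mathbf b},F)=\rk E_{\mathbf b}$ is then a quotient $A\otimes {\cal O}_D$ with $D\subset Z$ and $\chi({\cal O}_D)=1$. Choosing an integral component $C_i\subset D$ through $x$, the surjection ${\cal O}_D\to {\Bbb C}_x$ factors through ${\cal O}_{C_i}$, whose kernel is ${\cal O}_{C_i}(-1)$; twisting by $A$ yields a surjection $E\twoheadrightarrow A\otimes {\cal O}_{C_i}(-1)\cong {\cal O}_{C_i}(b_i)$. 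Its kernel $K$ satisfies $\chi(E_{\mathbf b},K)=\chi(E_{\mathbf b},E)-\chi(E_{\mathbf b},{\cal O}_{C_i}(b_i))=0$, so $K$ is neither $0$-dimensional (which would force $\chi(E_{\mathbf b},K)>0$) nor a proper $1$-dimensional subsheaf (which would tie the reduced slope and contradict strict stability); hence $K=0$ and $E\cong {\cal O}_{C_i}(b_i)$.

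The main obstacle is the identification of $F$ as a quotient of $A$, which is where the geometry of the fundamental cycle $Z$ enters: one needs the line bundle $A$ with the prescribed restrictions together with the fact, supplied by Lemma~\ref{lem:tilting:chi=rkG}, that sheaves of minimal positive twisted degree on $Z$ are exactly the quotients of $A$. The only delicate bookkeeping is the passage from ``no destabilizing stable quotient survives'' to the semistability of $F$; this is controlled by the divisibility $\rk E_{\mathbf b}\mid \chi(E_{\mathbf b},-)$ for sheaves supported on $Z$, which makes $\rk E_{\mathbf b}$ the minimal positive twisted degree and rules out any intermediate quotient slope. Everything else is the purity and extension computation carried out above, exactly parallel to the proofs of Lemma~\ref{lem:tilting:Rpi*E=0}~(1) and Lemma~\ref{lem:tilting:G-stable}.
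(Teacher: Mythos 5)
Your proof is correct, but it is not the route the paper takes for this particular lemma; it is instead the argument the paper uses for the more general statement (the proof of Lemma~\ref{lem:tilting:G-stable}, via a non-split extension $0\to E\to F\to {\Bbb C}_x\to 0$, the semistability of $F$ forced by the divisibility $\rk E_{\bf b}\mid\chi(E_{\bf b},-)$, and the classification of stable sheaves with $\chi(E_{\bf b},\cdot)=\rk E_{\bf b}$ as quotients $A\otimes{\cal O}_C$ from Lemma~\ref{lem:tilting:chi=rkG}). The paper's own proof of this lemma is much shorter and avoids the line bundle $A$ entirely: it computes $\chi({\cal O}_{C_i}(b_i),E)=-(C_i,c_1(E))$, uses negative definiteness of the intersection matrix of the exceptional curves to find an $i$ with $\chi({\cal O}_{C_i}(b_i),E)>0$, deduces a nonzero morphism $\phi:{\cal O}_{C_i}(b_i)\to E$ or $\psi:E\to{\cal O}_{C_i}(b_i)$ (the $\Ext^2$ term being dual to a $\Hom$ by Serre duality, since $K_X$ is trivial near $Z$), and concludes by noting that both objects are $E_{\bf b}$-twisted stable with the same reduced twisted Hilbert polynomial (both have $\chi(E_{\bf b},\cdot)=0$), so any nonzero map is an isomorphism. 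What the paper's argument buys is economy and independence from the machinery of $A_y$ and Assumption~\ref{ass:2}; what yours buys is uniformity, since the same extension argument proves the general classification of twisted stable sheaves on the fiber with $\chi=0$ without knowing the target objects in advance. Your reduction to a good representative of $\tau(E_{\bf b})$ via Proposition~\ref{prop:C(G)}~(1) is needed and correctly flagged, since twisted stability depends only on the numerical class while Lemma~\ref{lem:tilting:chi=rkG} requires the vanishing hypotheses of Assumption~\ref{ass:2}.
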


\begin{proof}
We note that $\chi({\cal O}_{C_i}(b_i),E)=-(C_i,c_1(E))$.
Hence we can find ${\cal O}_{C_i}(b_i)$ such that
$\chi({\cal O}_{C_i}(b_i),E)>0$.
Then we have a homomorphism
$\phi:{\cal O}_{C_i}(b_i) \to E$ or
$\psi:E \to {\cal O}_{C_i}(b_i)$.
Since $E$ and ${\cal O}_{C_i}(b_i)$
are $E_{{\bf b}}$-twisted stable sheaves with
$\chi(E_{{\bf b}},E)=\chi(E_{{\bf b}},{\cal O}_{C_i}(b_i))=0$,
we see that
$E \cong {\cal O}_{C_i}(b_i)$. 
\end{proof}
We next treat $E_{{\bf b}}$-twisted stable
sheaves with $\chi(E_{{\bf b}},E)=\pm \rk E_{{\bf b}}$.
By \cite[sect. 4.1]{Y:action}, we get the following two lemmas.

\begin{lem}(cf. \cite[Prop. 4.6]{Y:action}.)\label{lem:B-stability}
Let $v \in \oplus_{i=1}^s {\Bbb Z} v({\cal O}_{C_i}(b_i))$ 
be a Mukai vector.
\begin{enumerate}
\item[(1)]
There is a $E_{{\bf b}}$-twisted stable sheaf $E$ with
$v(E)=v+v({\Bbb C}_x)$ if and only if 
$v \in \oplus_{i=1}^s {\Bbb Z}_{\geq 0} v({\cal O}_{C_i}(b_i))$ and  
$\langle v^2 \rangle \geq -2$.
\item[(2)]
There is a $E_{{\bf b}}$-twisted stable sheaf $E$ with
$v(E)=-(v+v({\Bbb C}_x))$ if and only if 
$v \in \oplus_{i=1}^s {\Bbb Z}_{\leq 0} v({\cal O}_{C_i}(b_i))$ and  
$\langle v^2 \rangle = -2$.
\end{enumerate}
\end{lem}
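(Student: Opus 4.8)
The plan is to recognise the vectors $v=\sum_i a_i v({\cal O}_{C_i}(b_i))$ as living in the root lattice of the ADE configuration $\{C_i\}$. By the Riemann--Roch formula \eqref{eq:RR} one has $\langle v({\cal O}_{C_i}(b_i)),v({\cal O}_{C_j}(b_j))\rangle=(C_i,C_j)$, so on $\oplus_i {\Bbb Z}\,v({\cal O}_{C_i}(b_i))$ the Mukai pairing is the negative definite intersection form $(c_1(v)^2)$ of the exceptional curves. Hence the two arithmetic hypotheses, namely $v\in\oplus_i{\Bbb Z}_{\geq 0}v({\cal O}_{C_i}(b_i))$ together with $\langle v^2\rangle\geq -2$, say exactly that either $v=0$ or $c_1(v)$ is a \emph{positive root}, i.e. an effective class with $(c_1(v)^2)=-2$. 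I would prove (1) first and then deduce (2) from it by duality.

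For the necessity direction of (1), suppose $E$ is $E_{\bf b}$-twisted stable with $v(E)=v+v({\Bbb C}_x)$. Since $E$ is stable it has connected support contained in a single fibre $Z_y$, so $c_1(E)=\sum_i a_i[C_i]$ is effective. Writing $[E]=\sum_i a_i[{\cal O}_{C_i}(b_i)]+m[{\Bbb C}_x]$ in $K(Z_y)$ (the generators used in Lemma~\ref{lem:rk(G)|chi(G,E)}) and reading off Mukai vectors shows $v=v(E)-v({\Bbb C}_x)=\sum_i a_i v({\cal O}_{C_i}(b_i))$ with $a_i\geq 0$, giving the effectivity. For the bound on $\langle v^2\rangle$ I would use rigidity: near the exceptional locus of a rational double point $K_X$ is trivial, so $E\otimes K_X\cong E$ there, whence Serre duality gives $\Ext^2(E,E)\cong\Hom(E,E)^{\vee}={\Bbb C}$, while stability gives $\Hom(E,E)={\Bbb C}$; thus $\chi(E,E)=2-\dim\Ext^1(E,E)\leq 2$. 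Because $\langle v({\Bbb C}_x),v({\Bbb C}_x)\rangle=\langle v,v({\Bbb C}_x)\rangle=0$ we obtain $\langle v^2\rangle=\langle v(E)^2\rangle=-\chi(E,E)\geq -2$.

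For sufficiency in (1) the case $v=0$ is realised by $E={\Bbb C}_x$. If $v$ is a positive root, put $D:=c_1(v)$; the ADE combinatorics (the fundamental cycle $Z_y$ equals the highest root, so every positive root is bounded coefficientwise by it) shows $D$ is a $1$-dimensional subscheme of $Z_y$, and $(D^2)=-2$ with $(D,K_X)=0$ gives $\chi({\cal O}_D)=-\tfrac12(D^2)=1$. Then Lemma~\ref{lem:tilting:chi=rkG} produces the desired object $E:=A_y\otimes{\cal O}_D$, which is $E_{\bf b}$-twisted stable with $\chi(E_{\bf b},E)=\rk E_{\bf b}$ and $v(E)=v+v({\Bbb C}_x)$. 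Statement (2) I would obtain by applying the dualizing functor $E\mapsto {\cal E}xt^1_{{\cal O}_X}(E,{\cal O}_X)$ (twisted appropriately), a contravariant equivalence on pure $1$-dimensional sheaves that preserves $E_{\bf b}$-twisted stability, interchanges the two families and negates the Mukai vector; since ${\Bbb C}_x$ has no pure $1$-dimensional dual, the case $v=0$ is excluded, which is precisely why the condition sharpens to $\langle v^2\rangle=-2$ in (2). In the twisted case every statement is transported to an untwisted one on the tree of rational curves $Z_y$ through the equivalence \eqref{eq:twisted-equiv}, reducing the whole assertion to \cite[Prop.~4.6]{Y:action}.

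The hard part is the sufficiency step: one must know that each positive root of the ADE system is represented by an effective divisor $D\leq Z_y$, so that ${\cal O}_D$ is a genuine $1$-dimensional subscheme of the fundamental cycle with $\chi({\cal O}_D)=1$. This is the only point where the global geometry of the resolution (the Du Val classification, and the coincidence of the fundamental cycle with the highest root) enters; once it is granted, stability is automatic from Lemma~\ref{lem:tilting:chi=rkG} and no inductive reflection argument is needed. The remaining bookkeeping, matching the $K$-theoretic class $v+v({\Bbb C}_x)$ with the Mukai vector of $A_y\otimes{\cal O}_D$ and checking that the twist $A_y$ shifts only the Euler characteristic and not $c_1$, is routine.
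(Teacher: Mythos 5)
Your proof is correct, but the existence half runs on a different engine than the paper's. The necessity direction is the same in both: effectivity of the support forces $v\in\oplus_i{\Bbb Z}_{\geq 0}v({\cal O}_{C_i}(b_i))$, and $E\otimes K_X\cong E$ plus Serre duality and simplicity give $\chi(E,E)\le 2$, i.e. $\langle v^2\rangle\ge -2$. For sufficiency, however, the paper never constructs the sheaf directly: it starts from the two evident stable objects ${\Bbb C}_x$ and ${\cal O}_{C_i}(b_i+1)$ (resp. ${\cal O}_{C_i}(b_i-1)$ in part (2)) and propagates existence through the root system via the reflection isomorphisms $M_H^{E_{\bf b}}(w)\to M_H^{E_{\bf b}}(w+v_i)$ realized by universal extensions by copies of ${\cal O}_{C_i}(b_i)$ (citing \cite[Lem. 4.1, 4.2]{Y:action}), the combinatorial input being that every positive root is reachable from the simple ones by such steps. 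You instead exhibit the stable sheaf explicitly as $A_y\otimes{\cal O}_D$ with $D=c_1(v)$ a positive root, using the identity ``fundamental cycle $=$ highest root'' to get $D\le Z_y$ and Lemma \ref{lem:tilting:chi=rkG} for stability, and you deduce (2) from (1) by the duality $E\mapsto E^{\vee}[1]$, which exchanges ${\bf b}$ with $-{\bf b}+2{\bf b}_0$ exactly as the Remark following the lemma records. Both routes work; the Mukai-vector bookkeeping you defer is indeed routine ($v(A_y\otimes{\cal O}_D)=(0,D,1+\textstyle\sum_i d_i(b_i+1))=v+v({\Bbb C}_x)$). What your route buys is an explicit description of every stable sheaf in the statement (they are precisely the $A_y\otimes{\cal O}_D$), absorbing Lemma \ref{lem:tilting:G-stable} into the existence proof. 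What it costs is the prior existence of $A_y$, i.e. of a locally free $G$ with ${\bf R}\pi_*(G^{\vee}\otimes{\cal O}_{C_i}(b_i))=0$ for the prescribed $b_i$ --- that is Proposition \ref{prop:C(G)}(1), proved by a separate deformation argument --- whereas the reflection proof needs only the numerical class $E_{\bf b}$ and yields, as a by-product, the moduli isomorphisms $M(w)\cong M(w+v_i)$ that the paper reuses later (Corollary \ref{cor:0-dim:reflection}). Be aware also that your appeal to the Du Val classification is carrying real weight: the coefficientwise bound of positive roots by the highest root is of comparable depth to the reflection-generation fact, so neither proof is more elementary than the other in what it imports from root-system theory.
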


\begin{proof}
(1) 
We set $v_i:=v({\cal O}_{C_i}(b_i))$.
Let $E$ be an $E_{{\bf b}}$-twisted stable sheaf
with $v(E)=v+v({\Bbb C}_x)$.
Then since $E \otimes K_X \cong E$,
$\Ext^2(E,E) \cong \Hom(E,E)^{\vee} \cong {\Bbb C}$.
Hence $0 \geq \langle v^2 \rangle =
\langle v(E)^2 \rangle \geq -2$.
If $\langle v^2 \rangle=0$, then
$v=0$ and if $\langle v^2 \rangle=-2$, then
by the theory of root system of $A,D,E$ type,
$\pm v \in \oplus_{i=1}^s {\Bbb Z}_{\geq 0} v_i$.
Since $\Supp(E)$ is effective, we get
$v \in \oplus_{i=1}^s {\Bbb Z}_{\geq 0} v_i$.

Conversely assume that $v$ satisfies the assumptions.
Obviously ${\Bbb C}_x$ and ${\cal O}_{C_i}(b_i+1)$ are
$E_{{\bf b}}$-twisted stable sheaf
with Mukai vectors $v({\Bbb C}_x)$ and $v_i+v({\Bbb C}_x)$.
For $w=v+v({\Bbb C}_x)$ and $v_i$ with
$\langle w,v_i \rangle=1$, by using \cite[Lem. 4.1]{Y:action},
we have an isomorphism
$M_H^{E_{{\bf b}}}(w) \to M_H^{E_{{\bf b}}}(w+v_i)$
by sending $E \in M_H^{E_{{\bf b}}}(w)$ to
the sheaf $F$ fitting in the universal extension
\begin{equation}
0 \to \Ext^1(E,{\cal O}_{C_i}(b_i))^{\vee} \otimes {\cal O}_{C_i}(b_i)
\to F \to E \to 0.
\end{equation} 
Since all positive roots are obtained from $v_i$ by these operations,
 we get the claim. 

(2) The proof is almost similar.
We only remark that
${\cal O}_{C_i}(b_i-1)$ is an $E_{{\bf b}}$-twisted stable
sheaf with $v(E)=v_i-v({\Bbb C}_x)$ and
for $w=v+v({\Bbb C}_x)$ and $v_i$ with
$\langle w,v_i \rangle=1$, by using \cite[Lem. 4.2]{Y:action},
we have an isomorphism
$M_H^{E_{{\bf b}}}(w) \to M_H^{E_{{\bf b}}}(w+v_i)$
by sending $E \in M_H^{E_{{\bf b}}}(w)$ to
the sheaf $F$ fitting in the universal extension
\begin{equation}
0 \to E \to F \to \Ext^1({\cal O}_{C_i}(b_i),E) \otimes {\cal O}_{C_i}(b_i)
\to 0.
\end{equation} 

\end{proof}

\begin{lem}\label{lem:B-stability2}
\begin{enumerate}
\item[(1)]
For an $E_{\bf b}$-twisted stable sheaf $E$
in Lemma \ref{lem:B-stability} (1) and an exact sequence
\begin{equation}\label{eq:B-stability1}
\begin{CD}
0 @>>> F @>>> E @>>> {\Bbb C}_x @>>> 0,\\
\end{CD}
\end{equation}
$F$ is an $E_{\bf b}$-twisted semi-stable sheaf 
with $\chi(E_{\bf b},F)=0$.
In particular, 
$F$ is a successive extension of 
${\cal O}_{C_i}(b_i)$, $i=1,...,s$.
\item[(2)]
For an $E_{\bf b}$-twisted stable sheaf $E$
in Lemma \ref{lem:B-stability} (2) and a nontrivial extension
\begin{equation}\label{eq:B-stability2}
\begin{CD}
0 @>>> E @>>> F @>>> {\Bbb C}_x @>>> 0,\\
\end{CD}
\end{equation}
$F$ is an $E_{\bf b}$-twisted semi-stable sheaf 
with $\chi(E_{\bf b},F)=0$.
In particular, $F$ is a successive extension of 
${\cal O}_{C_i}(b_i)$, $i=1,...,s$.
\item
[(3)]
Conversely, let 
$E$ be a coherent sheaf fitting in the exact sequences
\eqref{eq:B-stability1} or \eqref{eq:B-stability2}.
If $\Hom(E,{\cal O}_{C_i}(b_i))=0$ (resp. $\Hom({\cal O}_{C_i}(b_i),E)=0$)
for $i=1,...,s$, then $E$ is $E_{\bf b}$-twisted stable.
\end{enumerate}
\end{lem}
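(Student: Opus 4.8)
The plan is to handle the three parts by systematic reduced-slope comparisons, using throughout the two numerical inputs recorded just before Lemma \ref{lem:B-stability-deg0}, namely $\chi(E_{\bf b},{\cal O}_{C_i}(b_i))=0$ and $\chi(E_{\bf b},{\Bbb C}_x)=\rk E_{\bf b}$, together with the integrality of Lemma \ref{lem:rk(G)|chi(G,E)}, which gives $\chi(E_{\bf b},M)\in \rk(E_{\bf b}){\Bbb Z}$ whenever $\Supp M\subset Z$. I fix an ample divisor $L$ on $X$ and measure $E_{\bf b}$-twisted stability by the reduced slope $\mu(M):=\chi(E_{\bf b},M)/(c_1(M),L)$; every sheaf in sight is supported on $Z$, so $(c_1(M),L)>0$ for $M$ purely $1$-dimensional. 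Since $v(E)=\pm(v+v({\Bbb C}_x))$ forces $\chi(E_{\bf b},E)=\pm\rk E_{\bf b}$, additivity immediately yields $\chi(E_{\bf b},F)=0$ in both \eqref{eq:B-stability1} and \eqref{eq:B-stability2}, which is the numerical half of (1) and (2).

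For (1), $F$ is a subsheaf of the stable sheaf $E$, hence purely $1$-dimensional with $\mu(F)=0$. Any proper $1$-dimensional $F'\subsetneq F$ is a proper subsheaf of $E$, so $\mu(F')<\mu(E)=\rk(E_{\bf b})/(c_1(E),L)$; since $(c_1(F'),L)\leq(c_1(E),L)$ this gives $\chi(E_{\bf b},F')<\rk E_{\bf b}$, and integrality upgrades this to $\chi(E_{\bf b},F')\leq 0$, i.e. $\mu(F')\leq 0=\mu(F)$. Thus $F$ is $E_{\bf b}$-twisted semi-stable. For (2) the same scheme applies once I use nontriviality of the extension to see $F$ is pure: a $0$-dimensional torsion subsheaf would meet $E$ trivially, embed into ${\Bbb C}_x$, and split the sequence. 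A test subsheaf $F'\subset F$ is then split according to whether $F'\subseteq E$ or $F'/(F'\cap E)\cong{\Bbb C}_x$, and in the second case integrality turns the strict inequality $\mu(F'\cap E)<\mu(E)=-\rk(E_{\bf b})/(c_1(E),L)$ into $\chi(E_{\bf b},F'\cap E)\leq-\rk E_{\bf b}$, whence $\chi(E_{\bf b},F')\leq 0$. In both parts the final clause then follows from Lemma \ref{lem:tilting:G-1Rpi*E=0}: an $E_{\bf b}$-twisted semi-stable $1$-dimensional sheaf on a fiber with $\chi(E_{\bf b},F)=0$ is precisely a successive extension of the ${\cal O}_{C_i}(b_i)$.

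For the converse (3) I would first read the $\Hom$-vanishing hypotheses as torsion-class memberships: $\Hom(E,{\cal O}_{C_i}(b_i))=0$ for all $i$ places $E$ in the class $T$ of Proposition \ref{prop:tilting:Per-equiv}, and dually $\Hom({\cal O}_{C_i}(b_i),E)=0$ controls the case of \eqref{eq:B-stability2}. In the situation of \eqref{eq:B-stability1} this forces $E$ to be pure, since a split summand ${\Bbb C}_x$ would make $F$ a direct summand and its top filtration term would give a surjection $E\twoheadrightarrow{\cal O}_{C_{i_0}}(b_{i_0})$; in the situation of \eqref{eq:B-stability2} purity is automatic as $E\subset F$. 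Semi-stability being numerical, the real task is to exclude a proper subsheaf $E'\subsetneq E$ with $\mu(E')\geq\mu(E)$. The integrality analysis shows such an $E'$ satisfies $\chi(E_{\bf b},E'\cap F)=0$ with $(c_1(E'\cap F),L)<(c_1(F),L)$, so that $E/E'\cong F/(E'\cap F)$ (for \eqref{eq:B-stability1}), resp. $E'$ itself (for \eqref{eq:B-stability2}), is a nonzero successive extension of the ${\cal O}_{C_i}(b_i)$; its top term produces a surjection $E\twoheadrightarrow{\cal O}_{C_{i_0}}(b_{i_0})$, resp. its bottom term an injection ${\cal O}_{C_{i_0}}(b_{i_0})\hookrightarrow E$, contradicting the hypothesis. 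The borderline case $\mu(E')=\mu(E)$ collapses to $E'=E$ directly, because equal $\chi(E_{\bf b},\cdot)$ and equal $(c_1,\cdot\,L)$ force the intervening $0$-dimensional quotient to have length $0$.

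The step I expect to be the main obstacle is the bookkeeping in (3): cleanly separating the \emph{strict} destabilizers, which are ruled out by the $\Hom$-vanishing through a map to or from some ${\cal O}_{C_i}(b_i)$, from the \emph{equal-slope} case, which is ruled out by the length/integrality argument, while treating the intersection $E'\cap F$ uniformly whether $E'\subseteq F$ or $E'$ surjects onto ${\Bbb C}_x$. By contrast, parts (1) and (2) should be routine once the integrality of $\chi(E_{\bf b},-)$ from Lemma \ref{lem:rk(G)|chi(G,E)} is invoked to convert the strict stability inequalities for $E$ into the non-strict semi-stability inequalities for $F$.
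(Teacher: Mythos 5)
Your proof is correct and follows essentially the same route as the paper's: parts (1) and (2) are obtained by viewing subsheaves of $F$ as subsheaves of $E$ and using the integrality $\chi(E_{\bf b},\cdot)\in\rk(E_{\bf b}){\Bbb Z}$ of Lemma \ref{lem:rk(G)|chi(G,E)} to upgrade the strict stability inequalities for $E$ to the semistability inequalities for $F$, and part (3) by converting a putative destabilizer into a nonzero map to (resp.\ from) some ${\cal O}_{C_i}(b_i)$ via the slope-zero semistability of $F$. The only cosmetic difference is that in (3) the paper passes directly to a stable destabilizing quotient (resp.\ subsheaf) with $\chi(E_{\bf b},\cdot)\leq 0$ (resp.\ $\geq 0$) instead of intersecting an arbitrary destabilizing subsheaf with $F$, but the mechanism and the key inputs are identical.
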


\begin{proof}
(1) For a non-trivial subsheaf $E'$ of $E$,
By the minimality of $\chi(E_{\bf b},E)$, we have
$\chi(E_{\bf b},E') \leq 0$.
Hence $F$ is $E_{\bf b}$-twisted semi-stable.

(2)
For a non-trivial subsheaf $E'$ of $E$,
$\chi(E_{\bf b},E') \leq -\rk E_{\bf b}$.
Since $\chi(E_{\bf b},{\Bbb C}_x)=\rk E_{\bf b}$, 
we see that $F$ is $E_{\bf b}$-twisted semi-stable.

(3)
If the sheaf $E$ in \eqref{eq:B-stability1} has a quotient
$E \to E'$ such that $E'$ is a $E_{\bf b}$-twisted stable sheaf
with $\chi(E_{\bf b},E') \leq 0$, then
we have a non-zero homomorphism $F \to E'$.
By the assumption of $F$, $E' \cong {\cal O}_{C_i}(b_i)$,
which implies that $\Hom(E,{\cal O}_{C_i}(b_i))\ne 0$.
Therefore $E$ is $E_{\bf b}$-twisted stable.  

Assume that the sheaf $E$ in \eqref{eq:B-stability2} has a subsheaf
$E' \subset E$ such that $E'$ is a $E_{\bf b}$-twisted stable sheaf
with $\chi(E_{\bf b},E') \geq 0$.
Since $E'$ is a subsheaf of $F$, our assumption of $F$ implies that
$E' \cong {\cal O}_{C_i}(b_i)$, which implies 
$\Hom({\cal O}_{C_i}(b_i),E) \ne 0$. Therefore
$E$ is $E_{\bf b}$-twisted stable.
\end{proof}

\begin{rem}
There is a filtration
\begin{equation}
0 \subset E_1 \subset E_2 \subset \cdots \subset E_n=E
\end{equation}
such that $E_i/E_{i-1} \cong {\cal O}_{C_{n_i}}(b_{n_i})$, $i>1$
and $E_1 \cong {\cal O}_{C_{n_1}}(b_{n_1}-1)$: 
We take a quotient $\phi:F' \to {\cal O}_{C_i}(b_i)$.
If $\psi:\ker \phi \to F' \to {\Bbb C}_x$ is not surjective,
then $\ker \phi \subset E$, which contradicts the assumption.
Hence $\psi$ is surjective. Then 
we have two extensions
\begin{equation}
\begin{split}
0 \to \ker \psi \to E \to {\cal O}_{C_i}(b_i) \to 0,\\
0 \to \ker \psi \to \ker \phi \to {\Bbb C}_x \to 0.
\end{split}
\end{equation}
By using an induction, we see that 
$E$ is twisted stable.
\end{rem}

\begin{rem}
We note that $\langle v^2 \rangle \leq 0$ and the equality holds if
$v=v({\Bbb C}_x)$.
\end{rem}

\begin{defn}\label{defn:A_0}
Let $A_0({\bf b})$ and $A_0({\bf b})^*$ 
be the $E_{{\bf b}}$-twisted stable sheaves
with 
\begin{equation}
\begin{split}
v(A_0({\bf b})) & =v({\Bbb C}_x)+
\sum_{i=1}^s a_i v({\cal O}_{C_i}(b_i)),\\
v(A_0({\bf b})^*) & =-v({\Bbb C}_x)+
\sum_{i=1}^s a_i v({\cal O}_{C_i}(b_i)).
\end{split}
\end{equation}
Since $\langle v(A_0({\bf b}))^2 \rangle=
\langle v(A_0({\bf b})^*)^2 \rangle=-2$,
$A_0({\bf b})$ and $A_0({\bf b})^*$ are uniquely determined.
\end{defn}

\begin{rem}
For a $(-2)$-vector $v$, there is at most one stable object
$E$ with $v(E)=v$.
Indeed if $E$ and $E'$ are stable object with
$v(E)=v(E')=v$, then
$\chi(E,E')=2$ implies that
there is a non-zeto homomorphism
$E \to E'$ or $E' \to E$. Then
the stability implies that $E \cong E'$. 
\end{rem}

\begin{rem}\label{rem:A_0}
By the proof of Lemma \ref{lem:B-stability},
we also see that
there is a surjective homomorphism
$A_0({\bf b}) \to {\cal O}_{C_i}(b_i+1)$
and injective homomorphism
${\cal O}_{C_i}(b_i-1) \to A_0({\bf b})^*$
for all $i$.
Then for any point $x \in Z$,
we have exact sequences 
\eqref{eq:B-stability1} and \eqref{eq:B-stability1}
for $A_0({\bf b})$ and
$A_0({\bf b})^*$ respectively.  
\end{rem}

\begin{lem}\label{lem:B-stability=G-stability}
Let $G$ be a locally free sheaf such that
$\chi(G,{\cal O}_{C_i}(b_i))<0$, $i>0$ and $\chi(G,A_0({\bf b}))>0$.
Then $E_{\bf b}$-twisted stable sheaves in Lemma \ref{lem:B-stability} (1)
are $G$-twisted stable. 
\end{lem}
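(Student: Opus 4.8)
The plan is to reduce the sheaves in question to an explicit shape and then verify the $G$-twisted slope inequality by a short numerical comparison with the $E_{\bf b}$-twisted data. Throughout I fix an ample $L$ on $X$ and use it for both stabilities.

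First I would recall that $E_{\bf b}$ is the reference generator satisfying Assumption \ref{ass:2} for the line bundles ${\cal O}_{C_i}(b_i)$, so that $\chi(E_{\bf b},{\cal O}_{C_i}(b_i))=0$, $\chi(E_{\bf b},{\Bbb C}_x)=\rk E_{\bf b}$, and $A_0({\bf b})$ is the line bundle on $Z$ with $E_{\bf b}^{\vee}\otimes A_0({\bf b})\cong{\cal O}_Z^{\oplus \rk E_{\bf b}}$. A sheaf $E$ as in Lemma \ref{lem:B-stability} (1) is $E_{\bf b}$-twisted stable, supported on $Z$, with $\chi(E_{\bf b},E)=\rk E_{\bf b}$; hence Lemma \ref{lem:tilting:chi=rkG} (1), applied with the generator $E_{\bf b}$, gives $E\cong A_0({\bf b})\otimes{\cal O}_C$ for a subscheme $C\subseteq Z$ with $\chi({\cal O}_C)=1$. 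If $C$ is $0$-dimensional then $E\cong{\Bbb C}_x$ is trivially stable, so I may assume $C$ is purely $1$-dimensional.

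Next I would compute, for any subscheme $C''\subseteq Z$ with cycle $[C'']=\sum_i d_i''C_i$, the two Euler characteristics of $E'':=A_0({\bf b})\otimes{\cal O}_{C''}$. Since $K(Z)$ is generated by ${\cal O}_{C_i}(b_i)$ and ${\Bbb C}_x$ (Lemma \ref{lem:rk(G)|chi(G,E)}), writing $[E'']=\sum_i d_i''[{\cal O}_{C_i}(b_i)]+m''[{\Bbb C}_x]$ and using $E_{\bf b}^{\vee}\otimes A_0({\bf b})\cong{\cal O}_Z^{\oplus \rk E_{\bf b}}$ (which gives $\chi(E_{\bf b},E'')=\rk E_{\bf b}\,\chi({\cal O}_{C''})$, whence $m''=\chi({\cal O}_{C''})$) I obtain, with $g_i:=\chi(G,{\cal O}_{C_i}(b_i))<0$,
\[
\chi(E_{\bf b},E'')=\rk E_{\bf b}\,m'',\qquad \chi(G,E'')=\rk G\,m''+\sum_i d_i''g_i .
\]
In particular $\chi(G,E)=\rk G+\sum_i d_ig_i$ for $\beta:=\sum_i d_iC_i=[C]$. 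Because $E$ is $E_{\bf b}$-stable with $\langle v^2\rangle\ge -2$ and the exceptional lattice is negative definite, $\beta$ is a positive root; the fundamental cycle $Z=\sum_i a_iC_i$ is the highest root, so $d_i\le a_i$, and $g_i<0$ forces $\chi(G,E)\ge \rk G+\sum_i a_ig_i=\chi(G,A_0({\bf b}))>0$.

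Finally I would test $G$-twisted stability on proper pure $1$-dimensional quotients (which suffice for Gieseker stability of the pure sheaf $E$). Since $A_0({\bf b})$ is a line bundle on $Z$, every such quotient of $E=A_0({\bf b})\otimes{\cal O}_C$ has the form $E''=A_0({\bf b})\otimes{\cal O}_{C''}$ with $C''\subsetneq C$; by (the proof of) Lemma \ref{lem:tilting:chi=1} one has $H^1({\cal O}_{C''})=0$, so $m''=\chi({\cal O}_{C''})\ge 1$, while $C''\subseteq C$ gives $d_i''\le d_i$ and, by properness and ampleness of $L$, $\ell'':=(c_1(E''),L)<\ell:=(c_1(E),L)$. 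Then
\[
\chi(G,E'')-\chi(G,E)=\rk G\,(m''-1)+\sum_i(d_i''-d_i)g_i\ge 0,
\]
both summands being nonnegative. Combined with $\chi(G,E)>0$ and $\ell''<\ell$ this gives $\ell\,\chi(G,E'')\ge \ell\,\chi(G,E)>\ell''\,\chi(G,E)$, i.e. the reduced $G$-Hilbert polynomial of $E''$ strictly exceeds that of $E$, so $E$ is $G$-twisted stable. The hard part will be the structural input $E\cong A_0({\bf b})\otimes{\cal O}_C$ and the reduction of all destabilizing quotients to the shape $A_0({\bf b})\otimes{\cal O}_{C''}$ with $\chi({\cal O}_{C''})\ge 1$; once that is secured the remaining inequality is purely numerical.
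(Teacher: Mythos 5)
Your argument is correct, but it is organized differently from the paper's. The paper works with subsheaves: for a proper subsheaf $E_1\subset E$ it picks a point $x\in\Supp(E/E_1)$, forms the surjection $E\to E/E_1\to{\Bbb C}_x$, and uses the fact that its kernel is a successive extension of the ${\cal O}_{C_i}(b_i)$ to conclude $E_1\in{\cal S}({\bf b})$, hence $\chi(G,E_1)<0<\chi(G,E)$, which is already the required inequality. You instead work with quotients: you first identify $E\cong A_0({\bf b})\otimes{\cal O}_C$ (Lemma \ref{lem:tilting:chi=rkG}), reduce to pure quotients $A_0({\bf b})\otimes{\cal O}_{C''}$ with $C''\subsetneq C$, and check $\chi(G,E'')\ge\chi(G,E)>0$ while $(c_1(E''),L)<(c_1(E),L)$. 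The two routes rest on the same inputs --- the structure theory of $E_{\bf b}$-twisted stable sheaves on the exceptional fibre and the positivity $\chi(G,E)\ge\chi(G,A_0({\bf b}))>0$ forced by $d_i\le a_i$ --- but the paper's subsheaf argument is shorter because it never needs the explicit shape of $E$ or of the destabilizing objects, whereas yours makes everything concrete at the price of the extra reductions (saturation, the identification of pure quotients of ${\cal O}_C$ with structure sheaves ${\cal O}_{C''}$, and $m''=\chi({\cal O}_{C''})\ge 1$), all of which you do justify. One point worth making explicit: invoking Lemma \ref{lem:tilting:chi=rkG} with $E_{\bf b}$ in the role of the generator presupposes that $E_{\bf b}$ satisfies Assumption \ref{ass:2}, not merely the numerical condition on $c_1(E_{\bf b})/\rk E_{\bf b}$; since the twisted stability depends only on $\tau(E_{\bf b})$, this is harmless after replacing $E_{\bf b}$ by the sheaf produced in Proposition \ref{prop:C(G)} (1), but it should be said.
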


\begin{proof}
Let $E$ be an $E_{\bf b}$-twisted stable sheaf such that
$v(E)={\Bbb C}_x+v$, 
$v \in \oplus_{i=1}^s {\Bbb Z}_{\geq 0} v({\cal O}_{C_i}(b_i))$ and  
$\langle v^2 \rangle \geq -2$.
We may assume that $\langle v^2 \rangle=-2$.
Then $\chi(G,E)>0$.
Let $E_1$ be a proper subsheaf of $E$.
We shall prove that
$E_1 \in {\cal S}({\bf b})$.
For a point $x \in \Supp(E/E_1)$,
we take a surjection $\phi:E \to E/E_1 \to {\Bbb C}_x$.
Then Lemma \ref{lem:B-stability2} (1) implies
that 
$\ker \phi$ is a successive extension of
${\cal O}_{C_i}(b_i)$.
Thus we get $\ker \phi \in {\cal S}({\bf b})$.
Since $E_1 \in \ker \phi$, we get the claim.
Then we have $\chi(G,E_1)<0$, which implies that
$E$ is $G$-twisted stable. 
\end{proof}

\begin{rem}
For a purely 1-dimensional sheaf $E$ and 
a subsheaf $E' \subset E$ such that
$E/E'$ is purely 1-dimensional,
we have a quotient $E^{\vee}[1] \to {E'}^{\vee}[1]$
and 
\begin{equation}
\frac{\chi(E_{{\bf b}},E')}{(c_1(E'),L)} \leq
 \frac{\chi(E_{{\bf b}},E)}{(c_1(E),L)}
\end{equation}
if and only if
\begin{equation}
\frac{\chi(E_{{\bf b}}^{\vee},{E'}^{\vee}[1])}{(c_1({E'}^{\vee}[1]),L)} \geq
 \frac{\chi(E_{{\bf b}}^{\vee},E^{\vee}[1])}{(c_1(E^{\vee}[1]),L)}.
\end{equation}
Hence $E$ is $E_{{\bf b}}$-twisted stable if and only if
$E^{\vee}[1]$ is $E_{{\bf b}}^{\vee}$-twisted stable.
In particular,
\begin{equation}
A_0({\bf b})^{\vee}[1] \cong A_0(-{\bf b}+2{\bf b}_0)^*,\;
(A_0({\bf b})^*)^{\vee}[1] \cong A_0(-{\bf b}+2{\bf b}_0).
\end{equation}
\end{rem}

We collect easy facts on $A_0({\bf b})$ and
$A_0({\bf b})^*$.
\begin{lem}\label{lem:NB:A_0}
\begin{equation}
\begin{split}
\Hom(A_0({\bf b}),{\cal O}_{C_i}(b_i))=
\Ext^1(A_0({\bf b}),{\cal O}_{C_i}(b_i))&=0,\\ 
\Hom({\cal O}_{C_i}(b_i),A_0({\bf b})^*)=
\Ext^1({\cal O}_{C_i}(b_i),A_0({\bf b})^*)&=0
\end{split}
\end{equation}
for all $i$.
\begin{equation}
\begin{split}
\Hom(A_0({\bf b}),{\Bbb C}_x)
=\Hom(A_0({\bf b}),A_0({\bf b})) &={\Bbb C},\\
\Hom(A_0({\bf b})^*,{\Bbb C}_x)
=\Hom(A_0({\bf b})^*,A_0({\bf b})^*)&={\Bbb C}.
\end{split}
\end{equation}
\end{lem}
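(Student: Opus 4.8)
The plan is to split the four assertions into the Hom-vanishings, which are governed by twisted slope stability, the two $\Ext^1$-vanishings, which carry the real content, and the computations of $\Hom(-,{\Bbb C}_x)$ and of the endomorphism rings, which are formal. Throughout I would use that, by Definition \ref{defn:A_0}, $A_0({\bf b})$ is the unique $E_{\bf b}$-twisted stable sheaf whose Mukai vector $v({\Bbb C}_x)+\sum_i a_i v({\cal O}_{C_i}(b_i))$ is a $(-2)$-class, that this sheaf is the line bundle $A_{p}$ on the fundamental cycle $Z$ characterised by $A_{p}\otimes {\cal O}_{C_i}(-1)\cong {\cal O}_{C_i}(b_i)$, and that $A_0({\bf b})^*=A_{p}\otimes\omega_Z$; in particular both $A_0({\bf b})$ and $A_0({\bf b})^*$ are line bundles on $Z$.

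For the Hom-vanishings I would compare twisted slopes with respect to a genuine ample divisor $L$ on $X$. Since $\chi(E_{\bf b},A_0({\bf b}))=\rk E_{\bf b}>0$ and $(c_1(A_0({\bf b})),L)>0$, while $\chi(E_{\bf b},{\cal O}_{C_i}(b_i))=0$, we get $\mu_{E_{\bf b}}(A_0({\bf b}))>0=\mu_{E_{\bf b}}({\cal O}_{C_i}(b_i))$. As both sheaves are $E_{\bf b}$-twisted stable, the usual argument (a nonzero map factors through an image that is simultaneously a quotient of the source and a subsheaf of the target, which is impossible once the slopes are strictly ordered) gives $\Hom(A_0({\bf b}),{\cal O}_{C_i}(b_i))=0$. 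Applying the same argument to $A_0({\bf b})^*$, whose twisted slope $-\rk E_{\bf b}/(c_1(A_0({\bf b})^*),L)$ is strictly negative, gives $\Hom({\cal O}_{C_i}(b_i),A_0({\bf b})^*)=0$.

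For the $\Ext^1$-vanishings I would observe that the pair $\Hom=\Ext^1=0$ for $(A_0({\bf b}),{\cal O}_{C_i}(b_i))$ is exactly the hypothesis (i) verified by $A_y=A_0({\bf b})$ in the converse part of Lemma \ref{lem:G-1per:characterize} (recall ${\cal O}_{C_i}(b_i)=A_y\otimes {\cal O}_{C_i}(-1)$), and the vanishings for $({\cal O}_{C_i}(b_i),A_0({\bf b})^*)$ are the hypothesis (i) verified by $A_y\otimes\omega_Z=A_0({\bf b})^*$ in Lemma \ref{lem:G-1per:characterize*}; this is also the route of Lemma \ref{lem:A_0}. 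If one prefers to stay inside the twisted-stability picture, I would instead reduce by Serre duality: since $A_0({\bf b})$ is supported on $Z=\pi^{-1}(p)$ and the resolution is crepant (so $K_X=\pi^*K_Y$ restricts trivially to $Z$), one has $A_0({\bf b})\otimes K_X\cong A_0({\bf b})$, whence $\Ext^1(A_0({\bf b}),{\cal O}_{C_i}(b_i))\cong \Ext^1({\cal O}_{C_i}(b_i),A_0({\bf b}))^\vee$, and this last group is killed by the universal-extension description of $A_0({\bf b})$ as the highest-root cycle built from the ${\cal O}_{C_i}(b_i)$ (Lemma \ref{lem:B-stability}, Remark \ref{rem:A_0}).

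The endomorphism and skyscraper statements are then routine. Each of $A_0({\bf b})$, $A_0({\bf b})^*$ is $E_{\bf b}$-twisted stable, hence simple over ${\Bbb C}$, so $\Hom(A_0({\bf b}),A_0({\bf b}))=\Hom(A_0({\bf b})^*,A_0({\bf b})^*)={\Bbb C}$. For $\Hom(-,{\Bbb C}_x)$ with $x\in Z$ I would use that $A_0({\bf b})$ and $A_0({\bf b})^*$ are line bundles on the effective Cartier divisor $Z\subset X$, which is locally principal; hence each fibre $A_0({\bf b})\otimes {\Bbb C}_x$ is $1$-dimensional and $\Hom_{{\cal O}_X}(A_0({\bf b}),{\Bbb C}_x)=(A_0({\bf b})\otimes {\Bbb C}_x)^\vee\cong{\Bbb C}$, and likewise for $A_0({\bf b})^*$. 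The main obstacle is the $\Ext^1$-vanishing: it is not forced by the slope inequality (a map from the lower-slope ${\cal O}_{C_i}(b_i)$ into $A_0({\bf b})$ need not vanish, and in fact $\Ext^2(A_0({\bf b}),{\cal O}_{C_i}(b_i))$ can be nonzero, so $\Ext^1$ cannot be read off from $\chi$ alone), and it is precisely where the $(-2)$-class and root-system structure of $A_0({\bf b})$ must enter, most cleanly by quoting the characterization Lemmas \ref{lem:G-1per:characterize} and \ref{lem:G-1per:characterize*}.
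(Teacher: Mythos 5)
Your proof is correct, and it reaches the two $\Ext^1$-vanishings by a genuinely different route from the paper's. The paper argues entirely inside the Mukai-lattice picture: a nontrivial extension $0\to{\cal O}_{C_i}(b_i)\to E\to A_0({\bf b})\to 0$ would again be an $E_{\bf b}$-twisted stable sheaf, hence rigid with $\langle v(E)^2\rangle=-2$, whereas $\langle v(A_0({\bf b})),v({\cal O}_{C_i}(b_i))\rangle=(Z,C_i)\le 0$ forces $\langle v(E)^2\rangle\le -4$; this is self-contained given the definition of $A_0({\bf b})$ as the unique stable sheaf with the prescribed $(-2)$-vector. You instead first identify $A_0({\bf b})$ with the line bundle $A_p$ on the fundamental cycle (legitimate, by uniqueness of stable objects with a given $(-2)$-vector) and then quote the converse halves of Lemmas \ref{lem:G-1per:characterize} and \ref{lem:G-1per:characterize*}, which ultimately rest on the geometric vanishing $\Ext^1({\cal O}_{Z},{\cal O}_{C_i}(-1))=0$ of Lemma \ref{lem:tilting:TFF}; your fallback via Serre duality and the universal-extension construction is essentially the paper's root-theoretic argument in disguise, though as written it is the sketchiest part of your proposal. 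For $\Hom(-,{\Bbb C}_x)$ the paper applies $\Hom(A_0({\bf b}),-)$ to the sequence $0\to F\to A_0({\bf b})\to{\Bbb C}_x\to 0$ and uses the vanishings just proved (with $F$ a successive extension of the ${\cal O}_{C_i}(b_i)$), while you read it off directly as the dual of the one-dimensional fibre of a line bundle on $Z$; both are fine, but note that your computation again presupposes the line-bundle identification and that $x\in Z$. Your closing observation that the $\Ext^1$-vanishing is not a consequence of the slope inequality, because $\Ext^2(A_0({\bf b}),{\cal O}_{C_i}(b_i))\cong\Hom({\cal O}_{C_i}(b_i),A_0({\bf b}))^{\vee}$ is typically nonzero, correctly pinpoints where the real content of the lemma lies.
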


\begin{proof}
We only prove the assertions for $A_0({\bf b})$. 
Obviously $\Hom(A_0({\bf b}),{\cal O}_{C_i}(b_i))=0$
and $\Hom(A_0({\bf b}),A_0({\bf b}))={\Bbb C}$.
If there is a non-trivial extension
\begin{equation}
0 \to {\cal O}_{C_i}(b_i) \to E \to A_0({\bf b}) \to 0,
\end{equation}
then $E$ is a $E_{{\bf b}}$-twisted stable sheaf.
Hence $\langle v(E)^2 \rangle=-2$.
On the other hand, by our choice of
$v( A_0({\bf b}))$, we have
$\langle v( A_0({\bf b})),v({\cal O}_{C_i}(b_i)) \rangle \leq 0$,
which implies that
$\langle v(E)^ \rangle \leq -4$.
Therefore $\Ext^1(A_0({\bf b}),{\cal O}_{C_i}(b_i))=0$.
Then we have $\Hom(A_0({\bf b}),{\Bbb C})=\Hom(A_0({\bf b}),A_0({\bf b}))$
by using \eqref{eq:B-stability1}.
\end{proof}

\begin{rem}
Let ${\cal M}_i$ be full sheaves in an analytic neighborhood
of the fundamental class $Z$
such that $(c_1({\cal M}_i),C_j)=\delta_{ij}$.
We set ${\cal L}:=\bigotimes_{i=1}^s \det{\cal M}_i^{\otimes (b_i+1)}$.
%
%
Then we have an equivalence
\begin{equation}
\begin{matrix}
\Per(X/Y,{\bf b}_0)_Z & \to & \Per(X/Y,{\bf b})_Z\\
E & \mapsto & E \otimes {\cal L}
\end{matrix}
\end{equation}
By this equivalence,
$A_0({\bf b}) \cong {\cal O}_Z \otimes {\cal L}$ and
$A_0({\bf b})^* \cong {\cal O}_Z(Z) \otimes {\cal L}$.
\end{rem}

%

\begin{lem}\label{lem:generate}
Assume that $E \in {\cal T}({\bf b})$ satisfies $\Supp(E) \subset Z$.
Then $E$ is a successive extension of quotients of $A_0({\bf b})$.
\end{lem}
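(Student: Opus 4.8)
The plan is to argue by induction on $\chi(E_{\bf b},E)$, peeling off one quotient of $A_0({\bf b})$ at each step, so that the statement becomes the specialization of Lemma \ref{lem:tilting:generate}(1) to the category $\Per(X/Y,{\bf b})={\cal C}(E_{\bf b})$. Write $G:=E_{\bf b}$ for the locally free sheaf produced in Proposition \ref{prop:C(G)}(1), so that $({\cal T}({\bf b}),{\cal S}({\bf b}))=(T,S)$ is the torsion pair of Lemma \ref{lem:tilting} and ${\cal C}(G)$ its tilt. Since $\Supp(E)\subset Z$, the object $M:=\pi_*(G^{\vee}\otimes E)$ is a $0$-dimensional ${\cal A}$-module and $E$ is a finite-length object of ${\cal C}(G)$; I take $\chi(G,E)=\dim_{\Bbb C}\Gamma(Y,M)$ as the induction parameter, the base case $\chi(G,E)=0$ forcing $E=0$.

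First I would locate a copy of $A_0({\bf b})$ inside $E$. By Lemma \ref{lem:tilting:irreducible} together with Proposition \ref{prop:tilting:G-1Per-irred}, the irreducible objects of ${\cal C}(G)$ supported on $Z$ are $A_0({\bf b})=E_{i0}$ and the shifted objects $E_{ij}$ ($j>0$), the latter lying in ${\cal S}({\bf b})[1]$. A nonzero $E\in{\cal T}({\bf b})$ has an irreducible subobject in ${\cal C}(G)$; as a subobject in ${\cal C}(G)$ of an object of $\Coh(X)\cap{\cal C}(G)$ it again lies in $\Coh(X)$, hence in $T$, so it must be isomorphic to $A_0({\bf b})$. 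This yields a monomorphism $A_0({\bf b})\hookrightarrow E$ in ${\cal C}(G)$. Taking the image $F_1:=\im(A_0({\bf b})\to E)$ in $\Coh(X)$, the long exact cohomology sequence of the ${\cal C}(G)$-exact triangle shows that $F_1$ is a quotient of $A_0({\bf b})$ (the map in $\Coh(X)$ need not be injective, its kernel being $H^{-1}$ of the ${\cal C}(G)$-cokernel) and that $E/F_1=H^0(\text{cokernel})$.

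To close the induction I would verify two points. Since $A_0({\bf b})\in{\cal T}({\bf b})$ by Lemma \ref{lem:NB:A_0} and $T$ is the torsion class, hence closed under quotients, we get $F_1\in T$; being nonzero with $\Supp(F_1)\subset Z$, Lemma \ref{lem:tilting:chi(G,E)}(3) gives $\chi(G,F_1)>0$. As $E/F_1$ is a quotient of $E\in T$ it again lies in $T$ with support in $Z$, and $\chi(G,E/F_1)=\chi(G,E)-\chi(G,F_1)<\chi(G,E)$. By the induction hypothesis $E/F_1$ is a successive extension of quotients of $A_0({\bf b})$, and prepending $F_1$ produces the desired filtration of $E$. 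The only delicate point is the existence of the subobject $A_0({\bf b})\hookrightarrow E$: this is where irreducibility of $A_0({\bf b})$ and the fact that it is the unique irreducible object of ${\cal C}(G)$ lying in $T\cap\Coh(X)$ and meeting $Z$ are essential, everything else being bookkeeping with the torsion pair.
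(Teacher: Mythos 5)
Your argument is correct, but it follows a genuinely different route from the proof the paper gives for this particular lemma. The paper's proof first splits off the $0$-dimensional subsheaf, then inducts on $(c_1(E),L)$ and establishes the key point $\Hom(A_0({\bf b}),E)\ne 0$ by contradiction: if there were no such map, the negative definiteness of the intersection matrix of the $C_i$ would produce a nonzero map ${\cal O}_{C_{i_1}}(b_{i_1})\to E$, and iterating would exhibit $E$ as a successive extension of the ${\cal O}_{C_i}(b_i)$, contradicting $E\in{\cal T}({\bf b})$. You instead run the abstract finite-length argument: every nonzero $E\in{\cal T}({\bf b})$ supported on $Z$ has an irreducible subobject in the tilt, which must lie in $\Coh(X)$ and hence be $A_0({\bf b})$, and then you induct on $\chi(E_{\bf b},E)$. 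This is essentially a specialization of the paper's own general Lemma \ref{lem:tilting:generate} (1) combined with the identification of the irreducible objects in Proposition \ref{prop:tilting:G-1Per-irred}, so it is arguably the "intended" derivation in the compiled text; what it buys is independence from the intersection-theoretic input, at the cost of invoking the Morita/local-projective-generator machinery. Two small citation corrections: the torsion pair realizing $\Per(X/Y,{\bf b})$ as a tilt of $\Coh(X)$ is the one of Lemma \ref{lem:tilting:C(G)} (or Proposition \ref{prop:C(G)} (2)), not that of Lemma \ref{lem:tilting}, since $S_0\ne 0$ for $G=E_{\bf b}$ and a genuine local projective generator only exists by Lemma \ref{lem:tilting:VB}; and Lemma \ref{lem:tilting:chi(G,E)} (3) does not literally apply with $G_1=E_{\bf b}$, because its hypothesis $\chi(G_1,E_{yj})>0$ fails for the irreducible objects ${\cal O}_{C_{ij}}(b_{ij})[1]$, where the pairing vanishes. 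The positivity you actually need, namely $\chi(E_{\bf b},F_1)>0$ for a nonzero quotient $F_1$ of $A_0({\bf b})$ in $\Coh(X)$, is supplied instead by Lemma \ref{lem:tilting:chi=rkG} (1), which gives $\chi(E_{\bf b},A_y\otimes{\cal O}_D)=\rk(E_{\bf b})\,\chi({\cal O}_D)\ge \rk(E_{\bf b})$. With those references adjusted, your induction is well founded and the proof is complete.
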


\begin{proof}
For a 0-dimensional sheaf on $X$,
obviously the claim holds.
By dividing its 0-dimensional submodule $T$ of $E$,
we assume that $E$ is purely 1-dimensional.
Then for a non-zero homomorphism
$\phi:A_0({\bf b}) \to E$, $(c_1(\coker \phi),L)<(c_1(E),L)$,
where $L$ is an ample divisor on $Z$.
Thus by the induction on $(c_1(E),L)$, it is sufficient to prove that
$\Hom(A_0({\bf b}),E) \ne 0$.
We assume that $\Hom(A_0({\bf b}),E) = 0$. 
Since the intersection matrix of $C_i, i=1,...,s$ is negative definite,
$\chi({\cal O}_{C_{i_1}}(b_{i_1}),E)=-(c_1(E),C_{i_1})>0$ for an $i_1$.
Since $E \in {\cal T}({\bf b})$ and 
$\Hom(A_0({\bf b}),{\cal O}_{C_{i_1}}(b_{i_1}))=0$,
we have a homomorphism $\phi_1:{\cal O}_{C_{i_1}}(b_{i_1}) \to E$
and $\Hom(A_0({\bf b}),\coker \phi_1)=0$.
If $\coker \phi_1 \ne 0$, then we have
a homomorphism $\phi_2:{\cal O}_{C_{i_2}}(b_{i_2}) \to \coker \phi_1$
and $\Hom(A_0({\bf b}),\coker \phi_2)=0$.
Continueing this procedure, we see that 
$\coker \phi_n=0$ for an $n$.
Then $E$ 
is a successive extensions of ${\cal O}_{C_i}(b_i)$,
which contradicts to $E \in {\cal T}({\bf b})$.
Therefore $\Hom(A_0({\bf b}),E) \ne 0$ and our claim holds.
\end{proof}

\begin{cor}\label{cor:generate}
Let $E$ be a coherent sheaf on $X$ such that
$\Supp(E) \subset Z$.
Then $E \in {\cal S}({\bf b})$ if and only if
$\Hom(A_0({\bf b}),E)=0$.
\end{cor}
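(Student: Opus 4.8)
The plan is to deduce this corollary formally from the preceding Lemma~\ref{lem:generate}, together with the torsion pair $({\cal T}({\bf b}),{\cal S}({\bf b}))$ of Proposition~\ref{prop:torsion-pair} and the vanishing $\Hom(A_0({\bf b}),{\cal O}_{C_i}(b_i))=0$ recorded in Lemma~\ref{lem:NB:A_0}. Both implications are short, so the work is essentially bookkeeping with these inputs; no new geometric estimate is needed beyond what those statements already provide.

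For the ``only if'' direction, I would suppose $E \in {\cal S}({\bf b})$, so that $E$ admits a filtration $0=E_0 \subset E_1 \subset \cdots \subset E_n=E$ whose successive quotients $E_k/E_{k-1}$ are subsheaves of some ${\cal O}_{C_{i_k}}(b_{i_k})$. First I would observe that for any subsheaf $F \hookrightarrow {\cal O}_{C_i}(b_i)$ the left exactness of $\Hom(A_0({\bf b}),-)$ gives an inclusion $\Hom(A_0({\bf b}),F) \hookrightarrow \Hom(A_0({\bf b}),{\cal O}_{C_i}(b_i))$, and the latter is $0$ by Lemma~\ref{lem:NB:A_0}; hence $\Hom(A_0({\bf b}),E_k/E_{k-1})=0$ for every $k$. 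Then an induction on $k$ using the left-exact sequence obtained by applying $\Hom(A_0({\bf b}),-)$ to $0 \to E_{k-1} \to E_k \to E_k/E_{k-1} \to 0$ yields $\Hom(A_0({\bf b}),E)=0$.

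For the ``if'' direction, I would suppose $\Hom(A_0({\bf b}),E)=0$. Using the torsion pair I would write the canonical sequence $0 \to E' \to E \to E'' \to 0$ with $E' \in {\cal T}({\bf b})$ and $E'' \in {\cal S}({\bf b})$; since $\Supp(E) \subset Z$ the same support condition holds for $E'$. It then suffices to show $E'=0$, for then $E \cong E'' \in {\cal S}({\bf b})$. Assuming $E' \ne 0$, Lemma~\ref{lem:generate} presents $E'$ as a successive extension of quotients of $A_0({\bf b})$; in particular the bottom step of the filtration provides a surjection $A_0({\bf b}) \twoheadrightarrow F_1$ onto a nonzero subsheaf $F_1 \subset E'$. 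Composing with $E' \hookrightarrow E$ produces a nonzero morphism $A_0({\bf b}) \to E$, contradicting the hypothesis; hence $E'=0$.

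The only point that requires care — and the place I expect the argument to be most delicate — is this reverse direction, where I must ensure that the filtration of $E'$ supplied by Lemma~\ref{lem:generate} really yields a \emph{nonzero} map $A_0({\bf b}) \to E'$: one needs the first filtration term $F_1$ to be a nonzero quotient of $A_0({\bf b})$ embedding in $E'$, so that the composite $A_0({\bf b}) \to F_1 \hookrightarrow E' \hookrightarrow E$ does not vanish. Everything else reduces to the left exactness of $\Hom(A_0({\bf b}),-)$ and the torsion pair decomposition.
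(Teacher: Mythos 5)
Your proof is correct and follows exactly the route the paper intends: the corollary is stated there without a separate proof precisely because it is the combination you spell out, namely the vanishing $\Hom(A_0({\bf b}),{\cal O}_{C_i}(b_i))=0$ plus left exactness of $\Hom(A_0({\bf b}),-)$ for the forward direction, and the torsion-pair decomposition together with Lemma~\ref{lem:generate} (whose proof in fact directly establishes $\Hom(A_0({\bf b}),E')\ne 0$ for any nonzero $E'\in{\cal T}({\bf b})$ supported on $Z$) for the converse. The ``delicate point'' you flag is harmless, since the first nonzero step of the filtration of $E'$ is a nonzero quotient of $A_0({\bf b})$ embedded in $E'$.
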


\begin{prop}\label{prop:irreducible-obj}
\begin{enumerate}
\item[(1)]
$A_0({\bf b})$ and ${\cal O}_{C_i}(b_i)[1]$, $i=1,2,\dots,s$ are 
irreducible objects of $\Per(X/Y,{\bf b})$.
\item[(2)]
Every $E \in \Per(X/Y,{\bf b})_Z$ is a successive extensions of
$A_0({\bf b})$ and ${\cal O}_{C_i}(b_i)[1]$, $i=1,2,\dots,s$ in 
$\Per(X/Y,{\bf b})$.
\item[(3)]
Let $E$ be an object of $\Per(X/Y,{\bf b})$.
If $\Hom(E,A_0({\bf b}))=\Hom(E,{\cal O}_{C_i}(b_i)[1])=
\Hom(E,{\Bbb C}_x)=0$ for all $x \in X \setminus Z$ and
all $i$, then $E=0$.  
\end{enumerate}
\end{prop}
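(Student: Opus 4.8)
The plan is to read this as the incarnation, inside $\Per(X/Y,{\bf b})$, of Lemma \ref{lem:tilting:irreducible} and Proposition \ref{prop:tilting:G-1Per-irred}: under the identification of the irreducibles $E_{y0}=A_0({\bf b})$ and $E_{yj}={\cal O}_{C_j}(b_j)[1]$ ($j>0$), the three assertions are respectively the irreducibility, the generation, and the ``no nonzero object annihilates every irreducible'' statements for the tilt of the torsion pair $({\cal T}({\bf b}),{\cal S}({\bf b}))$ described in Proposition \ref{prop:C(G)}(2) and Proposition \ref{prop:tilting:S-T}. Here $G$ denotes the category-defining locally free sheaf (so $\chi(G,{\cal O}_{C_i}(b_i))=0$ and $R^1\pi_*(G^{\vee}\otimes G)=0$), and I will use the description ${\cal S}({\bf b})=\{F\in\Coh(X)\mid \pi_*(G^{\vee}\otimes F)=0\}$ together with the fact that each ${\cal O}_{C_i}(b_i)$ is a $G$-twisted stable sheaf with $\chi(G,{\cal O}_{C_i}(b_i))=0$.

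For (1), I first take $0\to E_1\to A_0({\bf b})\to E_2\to 0$ in ${\cal C}$ with $E_1\neq 0$. Since $A_0({\bf b})$ is a sheaf we get $H^{-1}(E_1)=0$, so $E_1$ is a sheaf lying in ${\cal T}({\bf b})$ and $E_1\hookrightarrow A_0({\bf b})$ already in $\Coh(X)$; as $({\cal T}({\bf b}),{\cal S}({\bf b}))$ is a torsion pair, $0\neq E_1\in{\cal T}({\bf b})$ gives $E_1\notin{\cal S}({\bf b})$, and since $\Supp(E_1)\subset Z$ Corollary \ref{cor:generate} yields a nonzero map $A_0({\bf b})\to E_1$. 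Its composite with the inclusion is a nonzero endomorphism of $A_0({\bf b})$, hence by $\Hom(A_0({\bf b}),A_0({\bf b}))={\Bbb C}$ (Lemma \ref{lem:NB:A_0}) a scalar isomorphism, forcing $E_1=A_0({\bf b})$ and $E_2=0$. For ${\cal O}_{C_i}(b_i)[1]$, take a nonzero subobject $E_1$ with quotient $E_2$; the cohomology sequence of the triangle gives $H^0(E_2)=0$, so $E_2=F_2[1]$ with $F_2\in{\cal S}({\bf b})$, while $H^0(E_1)$ is at once a quotient of $F_2$ (hence in ${\cal S}({\bf b})$) and the degree-$0$ cohomology of a ${\cal C}$-object (hence in ${\cal T}({\bf b})$), so $H^0(E_1)=0$. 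Thus $E_1=F_1[1]$ and there is an exact sequence $0\to F_1\to {\cal O}_{C_i}(b_i)\to F_2\to 0$ in $\Coh(X)$ with $F_2\in{\cal S}({\bf b})$. If $E_2\neq 0$, then $F_2\neq 0$ is a nonzero proper quotient of the $G$-twisted stable sheaf ${\cal O}_{C_i}(b_i)$, so $\chi(G,F_2)>0$ and therefore $\pi_*(G^{\vee}\otimes F_2)\neq 0$, contradicting $F_2\in{\cal S}({\bf b})$; hence $E_2=0$ and ${\cal O}_{C_i}(b_i)[1]$ is irreducible.

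For (2), I observe that $E\in\Per(X/Y,{\bf b})_Z$ has all cohomology sheaves supported on $Z=\pi^{-1}(y)$, so $M:={\bf R}\pi_*(G^{\vee}\otimes E)$ is an ${\cal A}$-module supported at the single point $y$, hence of finite length. Under the Morita equivalence ${\cal C}\cong\Coh_{{\cal A}}(Y)$ of Proposition \ref{prop:Morita} a composition series of $M$ transports to a finite filtration of $E$ whose successive quotients are irreducible objects of ${\cal C}$ supported on $Z$; by (1) together with the classification of irreducibles in Lemma \ref{lem:tilting:irreducible}(3) (equivalently Proposition \ref{prop:tilting:G-1Per-irred}) these are exactly $A_0({\bf b})$ and the ${\cal O}_{C_i}(b_i)[1]$. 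Concretely the same filtration can be built by hand from the canonical sequence $0\to H^{-1}(E)[1]\to E\to H^0(E)\to 0$, applying Lemma \ref{lem:generate} to $H^0(E)\in{\cal T}({\bf b})$ and Lemma \ref{lem:tilting:G-1Per-S} to $H^{-1}(E)\in{\cal S}({\bf b})$.

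For (3), suppose $E\neq 0$ and set $M:={\bf R}\pi_*(G^{\vee}\otimes E)$; by faithfulness of the equivalence of Proposition \ref{prop:Morita}, $M\neq 0$. Choosing a closed point $y_0\in\Supp(M)$, Nakayama's lemma provides a nonzero quotient of $M$ that is finite-dimensional over the finite-dimensional algebra ${\cal A}\otimes{\Bbb C}_{y_0}$, hence a simple quotient ${\cal A}$-module $S$; transporting $M\twoheadrightarrow S$ back yields a nonzero morphism from $E$ onto an irreducible object $I$ of ${\cal C}$. If $y_0$ is the singular point then $I\in\{A_0({\bf b}),{\cal O}_{C_i}(b_i)[1]\}$, whereas if $y_0$ is a smooth point then ${\cal A}$ is Morita trivial near $y_0$ and $I={\Bbb C}_x$ with $x=\pi^{-1}(y_0)\notin Z$; in either case $\Hom(E,I)\neq 0$, contradicting the hypotheses, so $E=0$. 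The step I expect to demand the most care is precisely this last one: identifying the simple ${\cal A}_{y_0}$-modules with exactly the listed irreducible objects (and with ${\Bbb C}_x$ at smooth points), which rests on the Morita dictionary of Proposition \ref{prop:Morita} and the classification of Lemma \ref{lem:tilting:irreducible} rather than on any further geometry of the resolution.
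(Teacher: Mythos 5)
Your proofs of (2) and (3), and of the irreducibility of $A_0({\bf b})$ in (1), are sound. For (2) and (3) you take a Morita-theoretic route (finite length of ${\bf R}\pi_*(G^{\vee}\otimes E)$ over a point, transport of a composition series, existence of a simple quotient), where the paper builds the filtration directly from the canonical sequence $0\to H^{-1}(E)[1]\to E\to H^0(E)\to 0$ and the generation lemmas for ${\cal T}({\bf b})$ and ${\cal S}({\bf b})$; both are legitimate once the classification of irreducibles (Lemma \ref{lem:tilting:irreducible}, Proposition \ref{prop:tilting:G-1Per-irred}) is available. The problem is in part (1), in your argument that ${\cal O}_{C_i}(b_i)[1]$ is irreducible.

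There you deduce $H^0(E_1)=0$ from the claim that $H^0(E_1)$, being a quotient of $F_2=H^{-1}(E_2)\in{\cal S}({\bf b})$, again lies in ${\cal S}({\bf b})$. That inference is false: ${\cal S}({\bf b})$ is the torsion-free class of the pair $({\cal T}({\bf b}),{\cal S}({\bf b}))$, hence closed under subsheaves and extensions but not under quotients. For instance ${\Bbb C}_x$ is a quotient of ${\cal O}_{C_i}(b_i)\in{\cal S}({\bf b})$ with $\pi_*(G^{\vee}\otimes{\Bbb C}_x)\neq 0$, so ${\Bbb C}_x\in{\cal T}({\bf b})$. Moreover the vanishing $H^0(E_1)=0$ cannot be recovered by any purely formal torsion-pair argument: for the non-irreducible object ${\cal O}_{C_i}(b_i-1)[1]$ the exact sequence $0\to{\Bbb C}_x\to{\cal O}_{C_i}(b_i-1)[1]\to{\cal O}_{C_i}(b_i)[1]\to 0$ in $\Per(X/Y,{\bf b})$ exhibits a subobject with nonzero $H^0$, so any proof must use the specific twist $b_i$. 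The side that is formally controlled for a shifted object $F[1]$, $F\in{\cal S}({\bf b})$, is its quotients, not its subobjects: the cohomology sequence shows every quotient $E_2$ of ${\cal O}_{C_i}(b_i)[1]$ satisfies $H^0(E_2)=0$, i.e. $E_2=F_2[1]$ with $F_2\in{\cal S}({\bf b})$. The paper's proof exploits exactly this: every nonzero $F_2\in{\cal S}({\bf b})$ admits a nonzero map to some ${\cal O}_{C_j}(b_j)$ (Lemma \ref{lem:tilting:G-1Per-S}), so a nonzero proper quotient $E_2$ would give a nonzero composite ${\cal O}_{C_i}(b_i)[1]\to E_2\to{\cal O}_{C_j}(b_j)[1]$, and $\Hom({\cal O}_{C_i}(b_i),{\cal O}_{C_j}(b_j))=\delta_{ij}\,{\Bbb C}$ then forces $E_2\cong{\cal O}_{C_i}(b_i)[1]$, hence $E_1=0$. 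You should replace your subobject computation by this quotient argument; your concluding stability estimate for $F_2$ is then not needed.
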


\begin{proof}
(1)
Let $E_1 \ne 0$ be a subobject of $A_0({\bf b})$ in $\Per(X/Y,{\bf b})$. 
Then $E_1 \in {\cal T}({\bf b})$, which implies that
$\Hom(A_0({\bf b}),E_1) \ne 0$ by Lemma \ref{lem:generate}.
Since $\Hom(A_0({\bf b}), A_0({\bf b})) \cong {\Bbb C}$,
we have $E_1=A_0({\bf b})$.

Let $E_2 \ne 0$ be a quotient object of ${\cal O}_{C_i}(b_i)[1]$ 
in $\Per(X/Y,{\bf b})$. 
Then $E_2 \in {\cal S}({\bf b})$, which implies that
$\Hom(E_2,{\cal O}_{C_j}(b_j)[1]) \ne 0$ for a $j$.
Since $\Hom({\cal O}_{C_i}(b_i),{\cal O}_{C_j}(b_j)) \cong {\Bbb C}$
or $0$ according as $j=i$ or $j \ne i$,
Hence we have $E_2={\cal O}_{C_i}(b_i)[1]$.

(2)
We first note that the claim holds for ${\Bbb C}_x$, $x \in Z$.
For a subsheaf ${\cal O}_{C_i}(b_i-n)$ of ${\cal O}_{C_i}(b_i)$,
${\cal O}_{C_i}(b_i-n)[1]$ fits in an exact sequence
\begin{equation}
0 \to A \to {\cal O}_{C_i}(b_i-n)[1] \to {\cal O}_{C_i}(b_i)[1] \to 0
\end{equation}
in $\Per(X/Y,{\bf b})$, where $A \in {\cal T}({\bf b})$ 
is a 0-dimensional sheaf.
Hence the claim holds for all objects of ${\cal S}({\bf b})$. 
For a quotient $\phi:A_0({\bf b}) \to E$, the proof of 
Lemma \ref{lem:B-stability=G-stability} implies
$\ker \phi \in {\cal S}({\bf b})$.
Then $E$ is an extension of $(\ker \phi)[1]$ by
$A_0({\bf b})$.
Therefore the claim follows from Lemma \ref{lem:generate}.

(3)
Since $\Hom(E,{\Bbb C}_x)=0$ for $x \in X \setminus Z$,
we have $\Supp(H^{-1}(E)), \Supp(H^0(E)) \subset Z$.
Then the claim follows from (2). 
\end{proof}

\begin{rem}
A proof in $\Coh(X)$:
By the above argument,
$\phi:E_1 \to A_0({\bf b})$ is surjective in $\Coh(X)$.
Since $\Ext^1(A_0({\bf b}),{\cal O}_{C_i}(b_i))=0$ for all $i$,
we see that $\Hom(\ker \phi,{\cal O}_{C_i}(b_i))=0$ for all $i$.
\end{rem}

By Proposition \ref{prop:tilting:-1Per-irred}, we have the following.
\begin{prop}\label{prop:irreducible-obj}
\begin{enumerate}
\item[(1)]
$A_0({\bf b})$ and ${\cal O}_{C_i}(b_i)[1]$, $i=1,2,\dots,s$ are 
irreducible objects of $\Per(X/Y,{\bf b})$.
\item[(2)]
${\Bbb C}_x, x \in Z$ is generated by
$A_0({\bf b})$ and ${\cal O}_{C_i}(b_i)[1]$, $i=1,2,\dots,s$.
In particular, irreducible objects of
$\Per(X/Y,{\bf b})$ are ${\Bbb C}_x, x \in X \setminus Z$,
$A_0({\bf b})$ and ${\cal O}_{C_i}(b_i)[1]$, $i=1,2,\dots,s$. 
\end{enumerate}
\end{prop}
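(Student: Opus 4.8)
The plan is to read off both statements from the structural results on ${\cal C}(E)$ established in subsection \ref{subsect:perverse-examples}, rather than to prove anything new from scratch. By Proposition \ref{prop:C(G)} the category $\Per(X/Y,{\bf b})$ is by definition ${\cal C}(E)$ for a locally free sheaf $E$ with $R^1 \pi_*(E^{\vee} \otimes E)=0$ and ${\bf R}\pi_*(E^{\vee} \otimes {\cal O}_{C_i}(b_i))=0$ for all $i$. Thus $E$ satisfies Assumption \ref{ass:2}, with the line bundles ${\cal O}_{C_i}(b_i)$ in the role of the ${\cal O}_{C_{yj}}(b_{yj})$, and since $\pi$ resolves rational double points, $Y_\pi$ is the finite set of singular points with $Z=\pi^{-1}(p)$. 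First I would apply Proposition \ref{prop:tilting:G-1Per-irred} to the local projective generator $G=E$: it already asserts that the $E_{yj}$, $j=0,\dots,s_y$, are irreducible objects of ${\cal C}(E)$, that each ${\Bbb C}_x$ with $x \in \pi^{-1}(y)$ is generated by them, and that together with the ${\Bbb C}_x$ for $x \in X \setminus \pi^{-1}(Y_\pi)$ these exhaust the irreducible objects.

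The remaining work is purely a translation of notation. By Lemma \ref{lem:tilting:A_y} and the description of $A_y$ immediately following it, $A_y=A_0({\bf b})$ is the line bundle on $Z$ with $A_{y|C_i} \cong {\cal O}_{C_i}(b_i+1)$, so the member $j=0$ is $E_{y0}=A_y=A_0({\bf b})$. For $j>0$ one has $A_y \otimes {\cal O}_{C_i}(-1) \cong {\cal O}_{C_i}(b_i)$, whence $E_{yj}=A_y \otimes {\cal O}_{C_i}(-1)[1]={\cal O}_{C_i}(b_i)[1]$. Substituting these identities into the statement of Proposition \ref{prop:tilting:G-1Per-irred} yields (1) and (2) verbatim.

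The step needing the most care is exactly this bookkeeping, namely confirming that $\Per(X/Y,{\bf b})={\cal C}(E)$ and that the abstract irreducibles $E_{yj}$ of the general construction coincide with $A_0({\bf b})$ and the shifted line bundles ${\cal O}_{C_i}(b_i)[1]$; there is no genuine analytic obstacle. If one prefers an argument independent of the citation, I would work directly from the torsion pair $(S,T)$ of Proposition \ref{prop:C(G)}(2). For $A_0({\bf b})$: a nonzero subobject $E_1$ in ${\cal C}(E)$ has $H^{-1}(E_1)=0$, so $E_1 \in T$, and Lemma \ref{lem:tilting:generate}(1) supplies a nonzero morphism from the unique $E_{yj}$ lying in $T$, namely $A_0({\bf b})$, to $E_1$; since $A_0({\bf b})$ is a line bundle on the connected cycle $Z$ we have $\Hom(A_0({\bf b}),A_0({\bf b}))={\Bbb C}$, so this composite is an isomorphism and $E_1=A_0({\bf b})$. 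Dually, a nonzero quotient of ${\cal O}_{C_i}(b_i)[1]$ lies in $S[1]$ and, by Lemma \ref{lem:tilting:G-1Per-S}, admits a nonzero morphism onto some ${\cal O}_{C_j}(b_j)[1]$; as $\Hom({\cal O}_{C_i}(b_i)[1],{\cal O}_{C_j}(b_j)[1])$ is ${\Bbb C}$ for $j=i$ and $0$ otherwise, the quotient must be ${\cal O}_{C_i}(b_i)[1]$ itself. The generation of ${\Bbb C}_x$ for $x \in Z$ and the final classification of all irreducibles then follow from Lemma \ref{lem:tilting:irreducible} together with the finiteness of $Y_\pi$.
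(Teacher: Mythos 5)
Your proposal is correct and takes essentially the same route as the paper: the paper also obtains this proposition as an immediate specialization of the general classification of irreducible objects of ${\cal C}(G)$ (Proposition \ref{prop:tilting:G-1Per-irred}, combined with the identifications $\Per(X/Y,{\bf b})={\cal C}(E)$, $A_y=A_0({\bf b})$ and $A_y\otimes{\cal O}_{C_i}(-1)\cong{\cal O}_{C_i}(b_i)$), and your fallback direct argument via the torsion pair and Lemmas \ref{lem:tilting:generate} and \ref{lem:tilting:G-1Per-S} reproduces the paper's proof of that general statement almost verbatim. One cosmetic caveat: the sheaf $E$ of Proposition \ref{prop:C(G)} is the defining sheaf of ${\cal C}(E)$ but is not itself a local projective generator of it (since ${\bf R}\pi_*(E^{\vee}\otimes L_{ij})=0$ while $L_{ij}[1]\neq 0$ in ${\cal C}(E)$); this does not affect your argument, as Proposition \ref{prop:tilting:G-1Per-irred} only requires $R^1\pi_*(E^{\vee}\otimes E)=0$ together with Assumption \ref{ass:2}.
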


Then
the following result follows from Proposition \ref{prop:tilting:generator}
and Lemma \ref{lem:tilting:R1=0}.

\begin{prop}\label{prop:Per-generator}
Let $G$ be a locally free sheaf of rank $r$ on $X$ such that
\begin{equation}
\chi(G,{\cal O}_{C_i}(b_i))=-r_i<0,\;
\sum_i a_i r_i<r.
\end{equation}
\begin{enumerate}
\item[(1)]
If
$G \in {\cal T}({\bf b}),\;
\Ext^1(A_0({\bf b}),G)=0$, then
$R^1 \pi_*(G^{\vee} \otimes G)=0$ and
$G$ is a local projective generator of 
$\Per(X/Y,{\bf b})$. In particular,
$E \in {\cal T}({\bf b})$ if and only if
$R^1 \pi_*(G^{\vee} \otimes E)=0$ and 
$E \in {\cal S}({\bf b})$ if and only if
$\pi_*(G^{\vee} \otimes E)=0$.
\item[(2)]
$\Ext^1(G,A_0({\bf b}))=
\Hom(G,{\cal O}_{C_i}(b_i))=0$ for all $i$
if and only if
$R^1 \pi_*(G^{\vee} \otimes G)=0$.
\end{enumerate}
\end{prop}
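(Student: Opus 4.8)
The plan is to translate both assertions into Hom- and Euler-characteristic conditions on the irreducible objects of ${\cal C}:=\Per(X/Y,{\bf b})$ and then read them off from the two cited results. By Proposition \ref{prop:tilting:G-1Per-irred} the irreducible objects of ${\cal C}$ supported over the exceptional fibres are $A_0({\bf b})$ (the object $E_{y0}$) and the ${\cal O}_{C_i}(b_i)[1]$ (the objects $E_{yj}$, $j\ge 1$), together with ${\Bbb C}_x$ for $x\notin Z$. First I would record the Euler characteristics. From the Jordan--H\"older decomposition of ${\Bbb C}_x$ in Lemma \ref{lem:tilting:irreducible} one gets $[A_0({\bf b})]=[{\Bbb C}_x]+\sum_i a_i[{\cal O}_{C_i}(b_i)]$ in $K(X)$, so additivity of $\chi(G,-)$ gives $\chi(G,A_0({\bf b}))=r-\sum_i a_i r_i$, while $\chi(G,{\cal O}_{C_i}(b_i)[1])=r_i$ and $\chi(G,{\Bbb C}_x)=r$. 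Thus the positivity hypothesis shared by Proposition \ref{prop:tilting:generator} and Lemma \ref{lem:tilting:R1=0} holds for every irreducible object, using exactly the numerical assumptions $\sum_i a_i r_i<r$ and $r_i>0$.

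Next I would make the vanishing conditions explicit. Since $A_0({\bf b})$ and the ${\cal O}_{C_i}(b_i)$ are torsion sheaves and $G$ is locally free, Serre duality gives $\Ext^2(G,A_0({\bf b}))\cong \Hom(A_0({\bf b}),G\otimes K_X)^{\vee}=0$ and likewise $\Ext^2(G,{\cal O}_{C_i}(b_i))=0$, and all negative $\Ext$'s vanish automatically. Hence the requirement ``$\Hom(G,E_{yj}[k])=0$ for all $k\ne 0$ and all irreducible $E_{yj}$'' collapses to the single pair $\Ext^1(G,A_0({\bf b}))=0$ and $\Hom(G,{\cal O}_{C_i}(b_i))=0$ for all $i$ (the objects ${\Bbb C}_x$ contributing nothing, $G$ being locally free). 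With this translation, part (2) is immediate: Lemma \ref{lem:tilting:R1=0}, applied with the positivity just verified, states precisely that these two vanishings are equivalent to $R^1\pi_*(G^{\vee}\otimes G)=0$.

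For part (1) the extra ingredient is the self-duality $A_0({\bf b})\otimes K_X\cong A_0({\bf b})$. As $\pi:X\to Y$ is the minimal resolution of rational double points we have $K_X=\pi^*K_Y$, so $K_X|_{Z_i}\cong {\cal O}_{Z_i}$ and $A_0({\bf b})$, being supported on $Z_i$, is unchanged by $\otimes K_X$. Serre duality then yields $\Ext^1(A_0({\bf b}),G)\cong \Ext^1(G,A_0({\bf b})\otimes K_X)^{\vee}\cong \Ext^1(G,A_0({\bf b}))^{\vee}$, so the hypothesis $\Ext^1(A_0({\bf b}),G)=0$ is equivalent to $\Ext^1(G,A_0({\bf b}))=0$; together with $G\in T={\cal T}({\bf b})$, which is exactly $\Hom(G,{\cal O}_{C_i}(b_i))=0$, this verifies hypothesis (a) of Proposition \ref{prop:tilting:generator} for every irreducible object. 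That proposition then delivers at once $R^1\pi_*(G^{\vee}\otimes G)=0$, the fact that $G$ is a local projective generator, and the characterization of $T={\cal T}({\bf b})$ and $S={\cal S}({\bf b})$ by $R^1\pi_*$ and $\pi_*$; finally the tilted category ${\cal C}_G$ coincides with ${\cal C}=\Per(X/Y,{\bf b})$ because both are the tilt of the torsion pair $({\cal S}({\bf b}),{\cal T}({\bf b}))$ of Proposition \ref{prop:C(G)}.

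The step I expect to demand the most care is the self-duality identification $A_0({\bf b})\otimes K_X\cong A_0({\bf b})$ and the resulting bridge between the two a priori different $\Ext^1$-conditions in (1) and (2): this is the only place where the geometry of the singularity (forcing the rational-double-point hypothesis, so that $K_X|_{Z_i}$ is trivial) enters, rather than the formal tilting machinery. Everything else is bookkeeping with the cohomology of torsion sheaves supported on the exceptional fibres, and I would treat those vanishings as routine once the correct list of irreducible objects and their classes in $K(X)$ is fixed.
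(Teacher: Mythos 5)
Your proposal is correct and follows exactly the route the paper intends: the paper's entire justification is the sentence that the result ``follows from Proposition \ref{prop:tilting:generator} and Lemma \ref{lem:tilting:R1=0}'', and your argument supplies precisely the missing bookkeeping — the list of irreducible objects, the computation $\chi(G,A_0({\bf b}))=r-\sum_i a_ir_i>0$ and $\chi(G,{\cal O}_{C_i}(b_i)[1])=r_i>0$, the collapse of condition (a) to the two stated vanishings via $\Ext^2(G,-)=0$ for torsion sheaves, and the Serre-duality bridge $\Ext^1(A_0({\bf b}),G)\cong\Ext^1(G,A_0({\bf b}))^{\vee}$ using $A_0({\bf b})\otimes K_X\cong A_0({\bf b})$ on the crepant resolution. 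No gaps.
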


\begin{rem}
Assume that ${\bf R}\pi_*(G^{\vee} \otimes E)=\pi_*(G^{\vee} \otimes E)$.
Let $V$ ve a locally free sheaf on $Y$ with a surjection
$V \to \pi_*(G^{\vee} \otimes E)$.
Then we have a morphism $V \to {\bf R}\pi_*(G^{\vee} \otimes E)$
in ${\bf D}(Y)$ which induces the surjective homomorphism
$V \to H^0({\bf R}\pi_*(G^{\vee} \otimes E))=\pi_*(G^{\vee} \otimes E)$.
Then we have a morphism
$\pi^*(V) \otimes G \to 
{\bf L}\pi^*({\bf R}\pi_*(G^{\vee} \otimes E)) \otimes G
\overset{e}{\to} E$ such that
$(e \otimes 1_{G^{\vee}}) \circ \phi$ in the following diagram 
induces an isomorphsim ${\bf R}\pi_*((e \otimes 1_{G^{\vee}}) \circ \phi)$:
\begin{equation}
\begin{CD}
\pi^*(V) @>>>
{\bf L}\pi^*({\bf R}\pi_*(G^{\vee} \otimes E)) 
@. \\
@VVV @VV{\phi}V @.\\
\pi^*(V) \otimes G \otimes G^{\vee} @>>>
{\bf L}\pi^*({\bf R}\pi_*(G^{\vee} \otimes E)) \otimes G \otimes G^{\vee}
@>{e \otimes 1_{G^{\vee}}}>> E \otimes G^{\vee}.
\end{CD}
\end{equation} 
Hence $F:=\mathrm{Cone}(\pi^*(V) \otimes G \to E)[1]$ satisfies
${\bf R}\pi_*(G^{\vee} \otimes F) \in \Coh(Y)$.
\end{rem}

\begin{lem}
For a bounded complex $E^{\bullet}$ of coherent sheaves
$E^i$ on $X$, 
we take a locally free resolution 
$0 \to V_{-1} \to V_0 \to {\cal O}_X \to 0$
of ${\cal O}_X$ such that
$R^1 \pi_*((G \otimes V_0)^{\vee} \otimes E^i)=0$ for all $i$.
Then
$R^1 \pi_*((G \otimes V_{-1})^{\vee} \otimes E^i)=0$ for all $i$
and
$E^{\bullet}$ is quasi-isomorphic to
$E^{\bullet} \otimes (V_0^{\vee} \to V_{-1}^{\vee})$.
\end{lem}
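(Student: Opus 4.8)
The plan is to dualize the given resolution and then feed the resulting short exact sequence into ${\bf R}\pi_*$. Since ${\cal O}_X$ is locally free, the functor $(-)^\vee={\cal H}om(-,{\cal O}_X)$ is exact on the locally free sequence $0 \to V_{-1} \to V_0 \to {\cal O}_X \to 0$, so dualizing yields
\begin{equation}
0 \to {\cal O}_X \to V_0^\vee \to V_{-1}^\vee \to 0,
\end{equation}
using ${\cal O}_X^\vee \cong {\cal O}_X$. Thus the two-term complex $(V_0^\vee \to V_{-1}^\vee)$, with $V_0^\vee$ in degree $0$, has $H^0 \cong {\cal O}_X$ and $H^1=0$; that is, the augmentation ${\cal O}_X \to (V_0^\vee \to V_{-1}^\vee)$ is a quasi-isomorphism of bounded complexes of locally free sheaves. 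This is the one sequence I will use for both assertions.

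For the first assertion, I would tensor this short exact sequence by $G^\vee \otimes E^i$. Because $V_{-1}^\vee$ is locally free (hence flat), exactness is preserved, giving
\begin{equation}
0 \to G^\vee \otimes E^i \to (G \otimes V_0)^\vee \otimes E^i \to (G \otimes V_{-1})^\vee \otimes E^i \to 0.
\end{equation}
Applying ${\bf R}\pi_*$ produces a long exact sequence whose relevant segment is
\begin{equation}
R^1\pi_*\!\big((G \otimes V_0)^\vee \otimes E^i\big) \to R^1\pi_*\!\big((G \otimes V_{-1})^\vee \otimes E^i\big) \to R^2\pi_*\!\big(G^\vee \otimes E^i\big).
\end{equation}
The left-hand term vanishes by hypothesis, and the right-hand term vanishes because $\dim \pi^{-1}(y)\le 1$ for all $y$ forces $R^p\pi_*=0$ for $p\ge 2$ (Assumption \ref{ass:1}). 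The middle term is therefore squeezed to zero, which is exactly $R^1\pi_*\!\big((G \otimes V_{-1})^\vee \otimes E^i\big)=0$ for all $i$.

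For the quasi-isomorphism I would tensor the augmentation ${\cal O}_X \to (V_0^\vee \to V_{-1}^\vee)$ with $E^\bullet$. Its cone $\mathrm{Cone}({\cal O}_X \to (V_0^\vee \to V_{-1}^\vee))$ is an acyclic bounded complex of locally free sheaves, and tensoring such a complex with the arbitrary complex $E^\bullet$ keeps it acyclic; equivalently, since $(V_0^\vee \to V_{-1}^\vee)$ is a bounded complex of locally free sheaves representing ${\cal O}_X$, the complex $E^\bullet \otimes (V_0^\vee \to V_{-1}^\vee)$ computes $E^\bullet \overset{{\bf L}}{\otimes} {\cal O}_X$. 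Hence
\begin{equation}
E^\bullet \cong E^\bullet \otimes {\cal O}_X \cong E^\bullet \otimes (V_0^\vee \to V_{-1}^\vee)
\end{equation}
in ${\bf D}(X)$. I expect no genuine obstacle here: the only points requiring care are the degree conventions in the dual complex and the use of flatness of $V_{-1}^\vee$. The payoff, worth recording, is that the terms $E^i \otimes V_0^\vee$ and $E^i \otimes V_{-1}^\vee$ of this representative all satisfy $R^1\pi_*(G^\vee \otimes (-))=0$ by the first part, so the new representative lies termwise in the subcategory $T$, which is what makes it useful.
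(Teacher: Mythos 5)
Your proof is correct and takes the same route the paper intends: the paper states this lemma without proof (inside a remark), but the argument is exactly the one used for the analogous family statement in Lemma \ref{lem:good-family} --- dualize the locally free resolution of ${\cal O}_X$, squeeze the vanishing of $R^1\pi_*((G\otimes V_{-1})^{\vee}\otimes E^i)$ out of the long exact sequence using the hypothesis on $V_0$ together with $R^p\pi_*=0$ for $p\ge 2$ (fibers of $\pi$ have dimension $\le 1$), and observe that tensoring $E^{\bullet}$ with the bounded flat complex $(V_0^{\vee}\to V_{-1}^{\vee})\simeq {\cal O}_X$ is a quasi-isomorphism. Your filling-in of the details (exactness of dualization on locally free sheaves, flatness of $V_{-1}^{\vee}$, acyclicity of the tensored cone) is accurate.
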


Let $E^{\bullet}$ be a bounded complex such that
$E^i \in {\cal T}({\bf b})$ for all $i$.
Then if $E^{\bullet}$ is exact in $\Coh(X)$, then
$E^{\bullet}$ is also exact in $\Per(X/Y,{\bf b})$.
Indeed for $d^i:E^i \to E^{i+1}$,
$\ker d^i=\im d^{i-1} \in {\cal T}({\bf b})$ for all $i$.
Hence $\ker d^i$ coincides with the kernel in $\Per(X/Y,{\bf b})$.
Then we see that $E^{\bullet}$ is exact in $\Per(X/Y,{\bf b})$. 
\end{NB}

\begin{NB}
\subsection{The category $\Per(X/Y,{\bf b})^*$.}
\label{subsect:Per(X/Y)^*}

\begin{defn}
We set
\begin{equation}
{\cal S}({\bf b})^*:=
\{E \in {\cal S}({\bf b})| 
\Hom({\cal O}_{C_i}(b_i),E)=0,\; i>0 \}.
\end{equation}
Let ${\cal T}({\bf b})^*$ be the full subcategory
of $\Coh(X)$ such that $E \in \Coh(X)$ belongs to
${\cal T}({\bf b})^*$ if and only if 
there is an exact sequence
\begin{equation}
0 \to E_1 \to E \to E_2 \to 0
\end{equation}
where $E_1 \in  {\cal T}({\bf b})$ and
$E_2$ is a succesive extensions of ${\cal O}_{C_i}(b_i)$,
$1 \leq i \leq s$.
\end{defn}

\begin{lem}
For $E \in {\cal S}({\bf b})$, there is a unique decomposition
\begin{equation}
0 \to E_1 \to E \to E_2 \to 0
\end{equation}
such that $E_1$ is a successive extensions of
${\cal O}_{C_i}(b_i)$, $i>0$ and
$E_2 \in  {\cal S}({\bf b})^*$.
\end{lem}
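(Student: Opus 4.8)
Throughout I would fix the locally free sheaf $E_{\bf b}$ produced in Proposition \ref{prop:C(G)} (1), for which $\chi(E_{\bf b},{\cal O}_{C_i}(b_i))=0$ and $R^1\pi_*(E_{\bf b}^{\vee}\otimes E_{\bf b})=0$, and measure twisted slopes by $\mu(F):=\chi(E_{\bf b},F)/(c_1(F),L)$ for a fixed ample $L$. Write ${\cal S}_0$ for the class of successive extensions of the ${\cal O}_{C_i}(b_i)$, $i>0$; by Lemma \ref{lem:characterize} and Lemma \ref{lem:tilting:G-1Rpi*E=0} (applied with $G=E_{\bf b}$) this coincides both with $\{F\mid {\bf R}\pi_*(E_{\bf b}^{\vee}\otimes F)=0\}$ and with the $E_{\bf b}$-twisted semistable sheaves of $\chi(E_{\bf b},-)=0$ supported on $Z$, whereas ${\cal S}({\bf b})=\{F\mid \pi_*(E_{\bf b}^{\vee}\otimes F)=0\}$ by Proposition \ref{prop:tilting:Per-equiv}. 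The plan is to exhibit $({\cal S}_0,{\cal S}({\bf b})^*)$ as a torsion pair inside ${\cal S}({\bf b})$: $E_1$ will be the largest subsheaf of $E$ lying in ${\cal S}_0$, and $E_2=E/E_1$ the complementary quotient.

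First I would record the orthogonality $\Hom({\cal S}_0,{\cal S}({\bf b})^*)=0$, which yields uniqueness. Indeed, for $F\in{\cal S}_0$ filtered by copies of the ${\cal O}_{C_i}(b_i)$ and any $E_2\in{\cal S}({\bf b})^*$, dévissage along the filtration together with the defining vanishing $\Hom({\cal O}_{C_i}(b_i),E_2)=0$ gives $\Hom(F,E_2)=0$. Consequently, if $0\to E_1\to E\to E_2\to 0$ and $0\to E_1'\to E\to E_2'\to 0$ are two decompositions of the required type, the composites $E_1\hookrightarrow E\twoheadrightarrow E_2'$ and $E_1'\hookrightarrow E\twoheadrightarrow E_2$ both vanish, forcing $E_1\subseteq E_1'$ and $E_1'\subseteq E_1$, hence $E_1=E_1'$.

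For existence I would construct $E_1$ as the maximal ${\cal S}_0$-subsheaf of $E$. The crucial point is that the ${\cal S}_0$-subsheaves of $E$ are closed under finite sums: if $F_1,F_2\subseteq E$ lie in ${\cal S}_0$, then $F_1+F_2$ is a quotient of the slope-$0$ semistable sheaf $F_1\oplus F_2$, so $\mu(F_1+F_2)\geq 0$; on the other hand $F_1+F_2\subseteq E\in{\cal S}({\bf b})$ gives $\pi_*(E_{\bf b}^{\vee}\otimes(F_1+F_2))=0$, whence $\chi(E_{\bf b},F_1+F_2)=-\dim R^1\pi_*(E_{\bf b}^{\vee}\otimes(F_1+F_2))\leq 0$. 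Thus $\mu(F_1+F_2)=0$, and being a slope-$0$ quotient of a slope-$0$ semistable sheaf it is semistable, so $F_1+F_2\in{\cal S}_0$ by Lemma \ref{lem:tilting:G-1Rpi*E=0}. By Noetherianity there is then a unique maximal ${\cal S}_0$-subsheaf $E_1\subseteq E$. Setting $E_2:=E/E_1$ and applying $\pi_*(E_{\bf b}^{\vee}\otimes-)$, the vanishing $R^1\pi_*(E_{\bf b}^{\vee}\otimes E_1)=0$ yields $\pi_*(E_{\bf b}^{\vee}\otimes E_2)=0$, i.e. $E_2\in{\cal S}({\bf b})$. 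Finally, any nonzero $\phi:{\cal O}_{C_i}(b_i)\to E_2$ would have image $I$ simultaneously a quotient of the $E_{\bf b}$-twisted stable slope-$0$ sheaf ${\cal O}_{C_i}(b_i)$ (Lemma \ref{lem:tilting:G-stable}), so $\mu(I)\geq 0$, and a subsheaf of $E_2\in{\cal S}({\bf b})$, so $\chi(E_{\bf b},I)\leq 0$; hence $\mu(I)=0$ and $I\cong{\cal O}_{C_i}(b_i)$ by stability. Its preimage $\widetilde E$ in $E$ is then an extension of ${\cal O}_{C_i}(b_i)\in{\cal S}_0$ by $E_1\in{\cal S}_0$, so $\widetilde E\in{\cal S}_0$ and strictly contains $E_1$, contradicting maximality. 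Therefore $\Hom({\cal O}_{C_i}(b_i),E_2)=0$ for all $i$, i.e. $E_2\in{\cal S}({\bf b})^*$.

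The main obstacle is the closure of the ${\cal S}_0$-subsheaves of $E$ under sums, equivalently the existence of a well-defined maximal slope-$0$ semistable subsheaf; this is precisely where one must combine the semistability of the ${\cal O}_{C_i}(b_i)$ with the one-sided bound $\chi(E_{\bf b},-)\leq 0$ forced by $E\in{\cal S}({\bf b})$. Once this is in place, both the construction of $E_1$ and the identification $E_2\in{\cal S}({\bf b})^*$ are formal consequences of the torsion-pair formalism.
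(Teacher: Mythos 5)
Your proof is correct, and it reaches the decomposition by a genuinely different mechanism from the paper's. The paper argues by an explicit peeling induction: any nonzero $\phi:{\cal O}_{C_i}(b_i)\to E$ is injective because $E$ is purely $1$-dimensional, and the vanishings $\Hom(A_0({\bf b}),{\cal O}_{C_i}(b_i))=\Ext^1(A_0({\bf b}),{\cal O}_{C_i}(b_i))=0$ give $\Hom(A_0({\bf b}),\coker\phi)\cong\Hom(A_0({\bf b}),E)=0$, so by the $A_0({\bf b})$-orthogonality criterion for membership in ${\cal S}({\bf b})$ the cokernel stays in ${\cal S}({\bf b})$; iterating until no such $\phi$ remains (termination coming from the drop in $(c_1(E),L)$) builds $E_1$ from the bottom up and leaves $E_2\in{\cal S}({\bf b})^*$. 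You instead produce $E_1$ in one stroke as the maximal subsheaf lying in ${\cal S}_0$, the crux being closure of the ${\cal S}_0$-subsheaves of $E$ under sums, which you obtain from the slope sandwich $0\le\mu(F_1+F_2)\le 0$ forced by semistability on one side and $\pi_*(E_{\bf b}^{\vee}\otimes E)=0$ on the other. Both routes are sound and yield the same $E_1$, as your uniqueness argument shows. The paper's version stays entirely inside the $A_0({\bf b})$-formalism it has already developed and needs no discussion of semistability of sums; yours is the generic torsion-theory construction and in fact establishes the slightly stronger statement that $({\cal S}_0,{\cal S}({\bf b})^*)$ is a torsion pair on ${\cal S}({\bf b})$.
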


\begin{proof}
The uniqueness is obvious. Hence we prove the existence.
For $E \in  {\cal S}({\bf b})$,
assume that there is a non-zero homomorphism
$\phi:{\cal O}_{C_i}(b_i) \to E$.
Since $E$ is purely 1-dimensional,
$\phi$ is injective.
Since $\Hom(A_0({\bf b}),{\cal O}_{C_i}(b_i))=
\Ext^1(A_0({\bf b}),{\cal O}_{C_i}(b_i))=0$,
$\Hom(A_0({\bf b}),E) \cong \Hom(A_0({\bf b}),\coker \phi)$.
By Corollary \ref{cor:generate}, we get
$\coker \phi \in {\cal S}({\bf b})$.
If $\Hom({\cal O}_{C_j}(b_j),\coker \phi) \ne 0$,
then we apply the same procedure and finally 
we get a desired decomposition.
\end{proof}

\begin{lem}\label{lem:generate*}
Let $E$ be a coherent sheaf on $X$.
\begin{enumerate}
\item[(1)]
Every $E \in {\cal S}({\bf b})^*$ is
 a successive extension of subsheaves of $A_0({\bf b})^*$.
\item[(2)]
$E \in {\cal T}({\bf b})^*$ if and only if
$\Hom(E,A_0({\bf b})^*)=0$.
\item[(3)]
Every $E \in {\cal T}({\bf b})^*$ with
$\Supp(E) \subset Z$ is
 a successive extension of ${\cal O}_{C_i}(b_i)$, $i>0$
and ${\Bbb C}_x$, $x \in Z$.
\item[(4)]
$E \in {\cal S}({\bf b})^*$ if and only if
$\Supp(E) \subset Z$,
$\Hom({\cal O}_{C_i}(b_i),E)=0, i>0$ and
$\Hom({\Bbb C}_x,E)=0$, $x \in Z$. 
\end{enumerate}
\end{lem}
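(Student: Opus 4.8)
The plan is to deduce all four assertions from the already-established facts about the unstarred objects --- Lemma \ref{lem:generate}, Corollary \ref{cor:generate}, and the structure of $A_0({\bf b})$ in Lemma \ref{lem:A_0} --- by transporting them through the contravariant duality $D(E):={\bf R}{\cal H}om_{{\cal O}_X}(E,K_X[1])$. First I would record that on the category of purely $1$-dimensional sheaves supported on $Z$ this $D$ is an exact contravariant involution which interchanges subsheaves with quotient sheaves, hence turns ``successive extension of subsheaves of $A$'' into ``successive extension of quotients of $D(A)$''. Since $K_X\cdot C_i=0$, the restriction $K_{X|Z}$ is trivial and $D$ agrees there with $E\mapsto E^{\vee}[1]$; the duality relations recorded earlier give $D(A_0(-{\bf b}+2{\bf b}_0))\cong A_0({\bf b})^*$, and, because each $C_i$ is a $(-2)$-curve, $D({\cal O}_{C_i}(b_i'))\cong{\cal O}_{C_i}(b_i)$ with $b_i'=-b_i-2$ the $i$-th entry of ${\bf b}':=-{\bf b}+2{\bf b}_0$. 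I will also use that ${\cal S}({\bf b})$ consists of purely $1$-dimensional sheaves supported on $Z$ and is closed under subsheaves and extensions (a subsheaf of a successive extension of subsheaves of the ${\cal O}_{C_i}(b_i)$ is again such), that $\Hom({\cal T}({\bf b}),{\cal S}({\bf b}))=0$, and that $A_0({\bf b})^*\in{\cal S}({\bf b})$ (it embeds into a successive extension of the ${\cal O}_{C_i}(b_i)$ by Lemma \ref{lem:A_0}).

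For (1) and (4): if $E\in{\cal S}({\bf b})^*$ then $E\in{\cal S}({\bf b})$ is purely $1$-dimensional supported on $Z$ and $\Hom({\cal O}_{C_i}(b_i),E)=0$; since $\Hom(D(E),{\cal O}_{C_i}(b_i'))=\Hom({\cal O}_{C_i}(b_i),E)=0$, these say exactly that $D(E)\in{\cal T}({\bf b}')$ is supported on $Z$. Then Lemma \ref{lem:generate} presents $D(E)$ as a successive extension of quotients of $A_0({\bf b}')$, and applying $D$ back presents $E$ as a successive extension of subsheaves of $A_0({\bf b})^*$, which is (1). The forward direction of (4) is immediate from $E\in{\cal S}({\bf b})$. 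For the converse of (4), the same computation shows that the three stated conditions force $D(E)\in{\cal T}({\bf b}')$, hence $E$ is a successive extension of subsheaves of $A_0({\bf b})^*$; since $A_0({\bf b})^*\in{\cal S}({\bf b})$ and ${\cal S}({\bf b})$ is closed under subsheaves and extensions, $E\in{\cal S}({\bf b})$ and thus $E\in{\cal S}({\bf b})^*$.

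For (3): given $E\in{\cal T}({\bf b})^*$ with $\Supp(E)\subset Z$, use the defining sequence $0\to E_1\to E\to E_2\to 0$ with $E_1\in{\cal T}({\bf b})$ and $E_2$ a successive extension of the ${\cal O}_{C_i}(b_i)$. By Lemma \ref{lem:generate}, $E_1$ is a successive extension of quotients of $A_0({\bf b})$; since $A_0({\bf b})$ is itself a successive extension of the ${\cal O}_{C_i}(b_i)$ and a ${\Bbb C}_x$ (Lemma \ref{lem:A_0}), while every $1$-dimensional quotient of ${\cal O}_{C_i}(b_i)$ on the irreducible curve $C_i$ is again ${\cal O}_{C_i}(b_i)$ and all other quotients are $0$-dimensional, each quotient of $A_0({\bf b})$ is a successive extension of ${\cal O}_{C_i}(b_i)$'s and ${\Bbb C}_x$'s; combining with $E_2$ gives (3). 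For (2), the forward direction follows from $\Hom(E_1,A_0({\bf b})^*)=0$ (because $A_0({\bf b})^*\in{\cal S}({\bf b})$ and $\Hom({\cal T}({\bf b}),{\cal S}({\bf b}))=0$) together with $\Hom({\cal O}_{C_i}(b_i),A_0({\bf b})^*)=0$ (Lemma \ref{lem:A_0}), giving $\Hom(E,A_0({\bf b})^*)=0$. Conversely, decomposing a general $E$ by the torsion pair as $0\to E_T\to E\to E_S\to 0$ with $E_T\in{\cal T}({\bf b})$ and $E_S\in{\cal S}({\bf b})$, the hypothesis descends to $\Hom(E_S,A_0({\bf b})^*)=0$; via $D$ and Corollary \ref{cor:generate} this yields $D(E_S)\in{\cal S}({\bf b}')$, so $E_S\in{\cal S}({\bf b})\cap D({\cal S}({\bf b}'))$, and I must upgrade this to $E_S$ being an extension of full line bundles ${\cal O}_{C_i}(b_i)$.

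That last upgrade is the step I expect to be the main obstacle, since ${\cal S}({\bf b})$ a priori allows proper subsheaves ${\cal O}_{C_i}(b_i-m)$. The clean resolution is a twisted-degree count: $E_S\in{\cal S}({\bf b})$ gives $\chi(E_{\bf b},E_S)\le 0$, while $D(E_S)\in{\cal S}({\bf b}')$ gives, after the sign flip $\chi(E_{\bf b}^{\vee},D(E_S))=-\chi(E_{\bf b},E_S)$ coming from the Mukai pairing (and $E_{\bf b}'=E_{\bf b}^{\vee}$), the reverse inequality $\chi(E_{\bf b},E_S)\ge 0$; hence $\chi(E_{\bf b},E_S)=0$. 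Because each proper subsheaf ${\cal O}_{C_i}(b_i-m)$ contributes $-m\,\rk E_{\bf b}<0$, all graded pieces must be the full ${\cal O}_{C_i}(b_i)$, so $E_S$ is the required $E_2$ and $E\in{\cal T}({\bf b})^*$. This $\chi$-bookkeeping --- equivalently, the existence of the canonical decomposition of an object of ${\cal S}({\bf b})$ as an extension of ${\cal O}_{C_i}(b_i)$'s by an object of ${\cal S}({\bf b})^*$ --- is precisely what distinguishes the starred from the unstarred statements and carries the real content.
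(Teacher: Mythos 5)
Your proof is correct in substance but takes a genuinely different route from the author's. The paper proves (1) and (3) by fresh inductions carried out inside the starred categories: for (1) it assumes $\Hom(E,A_0({\bf b})^*)=0$, uses negative definiteness of the intersection matrix to produce a surjection $E\to{\cal O}_{C_i}(b_i)$, and iterates (via $\Ext^1({\cal O}_{C_i}(b_i),A_0({\bf b})^*)=0$) until it exhibits a subsheaf ${\cal O}_{C_j}(b_j)\subset E$, contradicting $E\in{\cal S}({\bf b})^*$; (2) and (4) are then deduced using the canonical decomposition of an object of ${\cal S}({\bf b})$ as an extension of an object of ${\cal S}({\bf b})^*$ by a successive extension of the full line bundles ${\cal O}_{C_i}(b_i)$. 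You instead transport everything through $D=(-)^{\vee}[1]$, which exchanges the starred conditions for ${\bf b}$ with the unstarred ones for ${\bf b}'=-{\bf b}+2{\bf b}_0$ (consistently with the identities $(A_0({\bf b})^*)^{\vee}[1]\cong A_0({\bf b}')$ and ${\cal O}_{C_i}(b_i)^{\vee}[1]\cong{\cal O}_{C_i}(-b_i-2)$ recorded in the paper), and reduce to the already-proved unstarred generation lemma and its corollary. Your $\chi(E_{\bf b},E_S)=0$ count in the converse of (2) is a clean substitute for that decomposition lemma, and your proof of (3) from the defining exact sequence of ${\cal T}({\bf b})^*$ is if anything simpler than the paper's induction. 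What your route buys is economy: the negative-definiteness argument is run exactly once, in the unstarred lemma. What the paper's buys is that it never has to dualize filtrations.

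That dualization is the one step you should spell out, since as written it asserts more than it justifies. The filtration of $D(E)$ furnished by the unstarred lemma can have $0$-dimensional graded pieces (e.g.\ ${\Bbb C}_x$ is a quotient of $A_0({\bf b}')$), and ${\cal E}xt^1(-,K_X)$ annihilates such pieces rather than dualizing them, so one cannot simply "apply $D$ to each graded quotient." The conclusion is nevertheless true: writing $B=D(E)$ with filtration $F_\bullet$, each $F_j$ is pure (being a subsheaf of the pure sheaf $B$), and the subsheaves $G_k:=\ker(E\to D(F_{s-k}))={\cal E}xt^1(B/F_{s-k},K_X)$ filter $E$ with $G_{k+1}/G_k$ injecting into $D$ of the pure part of $F_{s-k}/F_{s-k-1}$, hence into a subsheaf of $A_0({\bf b})^*$, the $0$-dimensional pieces contributing nothing. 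Alternatively, you can sidestep the issue by running the paper's induction on $(c_1(E),L)$ and using duality only for the key nonvanishing $\Hom(E,A_0({\bf b})^*)=\Hom(A_0({\bf b}'),D(E))\ne 0$. Either repair is short; with it in place the argument is complete.
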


\begin{rem}
$\Hom({\Bbb C}_x,E)=0$, $x \in Z$ is not necessary from
Remark \ref{rem:A_0}.
\end{rem}

\begin{proof}
(1)
We note that $E \in {\cal S}({\bf b})^*$ 
is a purely 1-dimensional sheaf with 
$\Supp(E) \subset Z$.
By the induction on $c_1(E)$,
it is sufficient to prove that there is a non-zero homomorphism
$E \to A_0({\bf b})^*$.
Assume that $\Hom(E,A_0({\bf b})^*)=0$.
Since the intersection matrix of $C_i, i=1,...,s$ is negative definite,
$\chi(E,{\cal O}_{C_i}(b_i))=-(c_1(E),C_i)>0$.
By our assumption,
there is a homomorphism $\phi:E \to {\cal O}_{C_i}(b_i)$.
Since 
$\Hom({\cal O}_{C_i}(b_i-1),A_0({\bf b})^*) \ne 0$,
$\phi$ is surjective.
Since $\Ext^1({\cal O}_{C_i}(b_i),A_0({\bf b})^*)=0$ by
Lemma \ref{lem:A_0}, $\Hom(\ker \phi,A_0({\bf b})^*)=0$.
Applying the same procedure, we finally get a subsheaf $E'$ 
such that $E' \cong {\cal O}_{C_j}(b_j)$.
Then $\Hom({\cal O}_{C_j}(b_j),E) \ne 0$, which is a contradiction.
(2) is a consequence of (1).

(3)
Assume that $E \in {\cal T}({\bf b})^*$.
Since the 0-dimensional subsheaf of $E$ satisfies the claim,
we may assume that $E$ is purely 1-dimensional.
Assume that $E \ne 0$.
then $\chi({\cal O}_{C_i}(b_i),E)=-(C_i,c_1(E))>0$ for an $i$.
If we have a non-zero homomorphism 
$\phi:E \to {\cal O}_{C_i}(b_i)$,
then since $\Hom(E,{\cal O}_{C_i}(b_i-1))=0$,
$\phi$ is surjective.
Since $\Hom({\cal O}_{C_i}(b_i),A_0({\bf b})^*)=
\Ext^1({\cal O}_{C_i}(b_i),A_0({\bf b})^*)=0$,
(2) implies that $\ker \phi \in {\cal T}({\bf b})^*$. 
If  we have a non-zero homomorphism 
$\psi:{\cal O}_{C_i}(b_i) \to E$.
Then it is injective and $\coker \psi \in {\cal T}({\bf b})^*$. 
By the induction on $(c_1(E),L)$, we get our claim.
(4) follows from (3).
\end{proof}

\begin{prop}\label{prop:torsion-pair*}
$T({\bf b})^*:=({\cal T}({\bf b})^*,{\cal S}({\bf b})^*)$ 
is a torsion pair of $\Coh(X)$.
\end{prop}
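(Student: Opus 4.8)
The plan is to verify the two defining properties of a torsion pair for the pair $({\cal T}({\bf b})^*,{\cal S}({\bf b})^*)$: first the orthogonality $\Hom(E,F)=0$ whenever $E \in {\cal T}({\bf b})^*$ and $F \in {\cal S}({\bf b})^*$, and second the existence, for each $E \in \Coh(X)$, of a short exact sequence $0 \to E' \to E \to E'' \to 0$ with $E' \in {\cal T}({\bf b})^*$ and $E'' \in {\cal S}({\bf b})^*$.

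For the orthogonality I would invoke Lemma \ref{lem:generate*}. Part (2) of that lemma characterizes ${\cal T}({\bf b})^*$ as the left-orthogonal class $\{E \mid \Hom(E,A_0({\bf b})^*)=0\}$, while part (1) says every $F \in {\cal S}({\bf b})^*$ is a successive extension of subsheaves of $A_0({\bf b})^*$. Since $\Hom(E,-)$ is left exact, any inclusion $F' \hookrightarrow A_0({\bf b})^*$ embeds $\Hom(E,F')$ into $\Hom(E,A_0({\bf b})^*)=0$, so $\Hom(E,F')=0$ for every such subsheaf; then running up the filtration of $F$ and using left-exactness on each subquotient propagates the vanishing to $\Hom(E,F)=0$. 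This orthogonality also secures uniqueness of the decomposition constructed below.

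For the decomposition I would build $E'$ in two stages from the coarser torsion pair. First apply Proposition \ref{prop:torsion-pair} to split $0 \to E_T \to E \to E_S \to 0$ with $E_T \in {\cal T}({\bf b})$ and $E_S \in {\cal S}({\bf b})$. Next apply the decomposition lemma for ${\cal S}({\bf b})$ (the unnumbered lemma preceding Lemma \ref{lem:generate*}) to $E_S$, obtaining $0 \to A \to E_S \to B \to 0$ with $A$ a successive extension of the sheaves ${\cal O}_{C_i}(b_i)$ and $B \in {\cal S}({\bf b})^*$. Letting $E'$ be the preimage of $A$ under the surjection $E \to E_S$, one gets $0 \to E_T \to E' \to A \to 0$, which is exactly the extension required by the definition of ${\cal T}({\bf b})^*$, and $E/E' \cong B \in {\cal S}({\bf b})^*$. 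This yields the desired torsion sequence.

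The genuinely substantive input is imported rather than proved here: the refinement splitting any object of ${\cal S}({\bf b})$ into its ${\cal O}_{C_i}(b_i)$-part and its ${\cal S}({\bf b})^*$-part, together with the $A_0({\bf b})^*$-characterization of ${\cal T}({\bf b})^*$ in Lemma \ref{lem:generate*}. Granting these, the present statement is a formal two-step assembly, and the only point one must watch is that the preimage $E'$ of $A$ realizes precisely the defining extension $0 \to E_T \to E' \to A \to 0$ of ${\cal T}({\bf b})^*$, so that membership $E' \in {\cal T}({\bf b})^*$ follows by merely unwinding the definition and needs no separate argument.
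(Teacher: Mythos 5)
Your proposal is correct, and it is essentially the argument the paper leaves implicit: the proposition is stated there without any written proof, immediately after the two results you assemble, namely the lemma splitting each $E\in{\cal S}({\bf b})$ into a successive extension of the ${\cal O}_{C_i}(b_i)$ followed by an object of ${\cal S}({\bf b})^*$, and the lemma characterizing ${\cal T}({\bf b})^*$ as $\{E \mid \Hom(E,A_0({\bf b})^*)=0\}$ together with the fact that every object of ${\cal S}({\bf b})^*$ is a successive extension of subsheaves of $A_0({\bf b})^*$. Your two-step construction (coarse decomposition via $({\cal T}({\bf b}),{\cal S}({\bf b}))$, then refinement of the ${\cal S}({\bf b})$-part and passage to the preimage) is exactly the intended assembly, and your verification that the preimage realizes the defining extension of ${\cal T}({\bf b})^*$ is sound.
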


\begin{defn}
Let $\Per(X/Y,{\bf b})^*$ be the tilting of 
$\Coh(X)$ with respect to $T({\bf b})^*$.
\end{defn}

Let ${\cal M}_i$ be full sheaves in an analytic neighborhood
of the fundamental class $Z$.
We set ${\cal L}:=\bigotimes_{i=1}^s \det{\cal M}_i^{\otimes (b_i+1)}$.
We set
\begin{equation}
\Per(X/Y,{\bf b})_Z^*:=\{ E \in \Per(X/Y,{\bf b})^*| \Supp(E) \subset Z \}.
\end{equation}
Then we have an equivalence
\begin{equation}
\begin{matrix}
\Per(X/Y,{\bf b}_0)_Z^* & \to & \Per(X/Y,{\bf b})_Z^*\\
E & \mapsto & E \otimes {\cal L}
\end{matrix}
\end{equation}

\begin{prop}\label{prop:span*}
$\Per(X/Y,{\bf b})_Z^*$ is generated by $A_0({\bf b})^*[1]$ and
${\cal O}_{C_i}(b_i)$, $i=1,2,\dots,n$. 
In particular, if
$E \in \Per(X/Y,{\bf b})^*$ satisfies
$\Hom(E,A_0({\bf b})^*)=\Hom(E,{\cal O}_{C_i}(b_i))=
\Hom(E,{\Bbb C}_x)=0$ for all $i=1,2,\dots,n$ and
$x \in X$, then $E=0$. 
\end{prop}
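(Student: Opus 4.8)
The plan is to exploit the torsion-pair description of the tilted heart. By Proposition \ref{prop:torsion-pair*} every $E \in \Per(X/Y,{\bf b})^*_Z$ fits into a short exact sequence $0 \to H^{-1}(E)[1] \to E \to H^0(E) \to 0$ in $\Per(X/Y,{\bf b})^*$ with $H^{-1}(E) \in {\cal S}({\bf b})^*$ and $H^0(E) \in {\cal T}({\bf b})^*$, both supported on $Z$. Thus it suffices to generate $H^0(E)$ and $H^{-1}(E)[1]$ separately, and for this I reduce to three building blocks: the ${\cal O}_{C_i}(b_i)$, the skyscrapers ${\Bbb C}_x$ with $x \in Z$, and the shifts $A'[1]$ of subsheaves $A' \subseteq A_0({\bf b})^*$. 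Throughout I use that ${\cal T}({\bf b})^*$ is closed under quotients and extensions while ${\cal S}({\bf b})^*$ is closed under subobjects, together with the elementary fact that a distinguished triangle all of whose vertices lie in the heart is a short exact sequence there.

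First I treat ${\Bbb C}_x$, $x \in Z$. By Remark \ref{rem:A_0} (the sequence \eqref{eq:B-stability2} for $A_0({\bf b})^*$) there is an exact sequence $0 \to A_0({\bf b})^* \to F \to {\Bbb C}_x \to 0$ in $\Coh(X)$ with $F$ a successive extension of the ${\cal O}_{C_i}(b_i)$, hence $F \in {\cal T}({\bf b})^*$; also ${\Bbb C}_x \in {\cal T}({\bf b})^*$ by Lemma \ref{lem:generate*}(2). Rotating the associated triangle into $F \to {\Bbb C}_x \to A_0({\bf b})^*[1] \to F[1]$, whose three relevant vertices $F$, ${\Bbb C}_x$, $A_0({\bf b})^*[1]$ all lie in the heart, yields a short exact sequence $0 \to F \to {\Bbb C}_x \to A_0({\bf b})^*[1] \to 0$ in $\Per(X/Y,{\bf b})^*$, so ${\Bbb C}_x$ is generated by $A_0({\bf b})^*[1]$ and the ${\cal O}_{C_i}(b_i)$. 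For $H^0(E) \in {\cal T}({\bf b})^*$ supported on $Z$, Lemma \ref{lem:generate*}(3) exhibits it as a successive extension of the ${\cal O}_{C_i}(b_i)$ and of such ${\Bbb C}_x$'s, and these extensions remain exact in the heart, so $H^0(E)$ is generated.

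The key step is the subsheaf block. Let $A' \subseteq A_0({\bf b})^*$; since $A_0({\bf b})^*$ is purely one-dimensional with $\Hom({\cal O}_{C_i}(b_i),A_0({\bf b})^*)=0$ (Lemma \ref{lem:NB:A_0}), one checks $A' \in {\cal S}({\bf b})^*$, so $A'[1] \in \Per(X/Y,{\bf b})^*$. For a proper $A' \neq 0$ I claim the cokernel $Q := A_0({\bf b})^*/A'$ lies in ${\cal T}({\bf b})^*$: by Lemma \ref{lem:generate*}(2) it suffices to see $\Hom(Q,A_0({\bf b})^*)=0$, and any $g\colon Q \to A_0({\bf b})^*$ precomposed with the epimorphism $A_0({\bf b})^* \to Q$ is an endomorphism of $A_0({\bf b})^*$, hence a scalar by stability ($\End(A_0({\bf b})^*)={\Bbb C}$, Lemma \ref{lem:NB:A_0}); a nonzero scalar would force that epimorphism to be injective, i.e. $A'=0$, so $g$ composed with an epimorphism vanishes and $g=0$. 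Rotating the triangle of $0 \to A' \to A_0({\bf b})^* \to Q \to 0$ into $Q \to A'[1] \to A_0({\bf b})^*[1] \to Q[1]$, whose vertices $Q$, $A'[1]$, $A_0({\bf b})^*[1]$ are all in the heart, gives a short exact sequence $0 \to Q \to A'[1] \to A_0({\bf b})^*[1] \to 0$ in $\Per(X/Y,{\bf b})^*$; combined with the preceding paragraph ($Q \in {\cal T}({\bf b})^*$ supported on $Z$ is generated), this shows $A'[1]$ is generated. Finally, writing $H^{-1}(E)$ by Lemma \ref{lem:generate*}(1) as a successive extension in $\Coh(X)$ of subsheaves $A'_k \subseteq A_0({\bf b})^*$ and shifting the whole filtration by $[1]$ — legitimate since every term lies in ${\cal S}({\bf b})^*$ — generates $H^{-1}(E)[1]$, completing the generation claim. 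The main obstacle is precisely this subsheaf block: verifying $Q \in {\cal T}({\bf b})^*$ and arranging the rotations so that every triangle involved has its vertices in the heart.

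For the vanishing criterion, observe that for $E$ in the heart $\Hom(E,{\Bbb C}_x) \cong \Hom(H^0(E),{\Bbb C}_x)$, since $\Ext^{-1}(H^{-1}(E),{\Bbb C}_x)=\Ext^{-2}(H^{-1}(E),{\Bbb C}_x)=0$; thus $\Hom(E,{\Bbb C}_x)=0$ for all $x \in X$ forces $H^0(E)=0$, i.e. $E = A[1]$ with $A = H^{-1}(E) \in {\cal S}({\bf b})^*$ supported on $Z$. Testing against the irreducible object $A_0({\bf b})^*[1]$ then gives $\Hom(E,A_0({\bf b})^*[1]) = \Hom(A,A_0({\bf b})^*)$, and by Lemma \ref{lem:generate*}(1) a nonzero $A$ admits a nonzero quotient onto a subsheaf of $A_0({\bf b})^*$, whence $\Hom(A,A_0({\bf b})^*)\neq 0$; so $A=0$ and $E=0$. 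Here the natural test objects are the irreducible objects $A_0({\bf b})^*[1]$, ${\cal O}_{C_i}(b_i)$ and ${\Bbb C}_x$, so the entry $A_0({\bf b})^*$ in the statement is to be read with the shift $[1]$.
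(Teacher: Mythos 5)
Your proof is correct, and its skeleton coincides with the paper's: both reduce, via the torsion pair, to the three building blocks ${\Bbb C}_x$ with $x\in Z$, objects of ${\cal T}({\bf b})^*$ supported on $Z$, and shifts $A'[1]$ of subsheaves $A'\subseteq A_0({\bf b})^*$, and both then rotate the relevant triangles. The genuine difference is in the crux, namely establishing that $Q:=A_0({\bf b})^*/A'$ lies in ${\cal T}({\bf b})^*$ so that $0\to Q\to A'[1]\to A_0({\bf b})^*[1]\to 0$ is exact in the heart. The paper first enlarges $A'$ to a subsheaf $E'$ with $A_0({\bf b})^*/E'$ purely one-dimensional (absorbing the zero-dimensional discrepancy as an extension by $E'/A'$ in the heart), then runs a pushout diagram against a non-trivial extension $0\to A_0({\bf b})^*\to F\to{\Bbb C}_x\to 0$, checks non-triviality via injectivity of $\Ext^1({\Bbb C}_x,E')\to\Ext^1({\Bbb C}_x,A_0({\bf b})^*)$, and invokes Lemma \ref{lem:B-stability2} to realize $A_0({\bf b})^*/E'$ as a quotient $F/F'$ of a successive extension of the ${\cal O}_{C_i}(b_i)$. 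You instead get $\Hom(Q,A_0({\bf b})^*)=0$ in one line from $\End(A_0({\bf b})^*)={\Bbb C}$ (a nonzero composite $A_0({\bf b})^*\twoheadrightarrow Q\to A_0({\bf b})^*$ would be a nonzero scalar, forcing $A'=0$) and conclude by the $\Hom$-characterization of ${\cal T}({\bf b})^*$ in Lemma \ref{lem:generate*}(2); this is shorter, needs no purity reduction or diagram chase, and uses only the simplicity of $A_0({\bf b})^*$ coming from its stability. Your observation that the hypothesis $\Hom(E,A_0({\bf b})^*)=0$ must be read with a shift, i.e.\ as $\Hom(E,A_0({\bf b})^*[1])=0$, is also right: unshifted, the condition is vacuous for $E$ in the heart (negative $\Ext$ between heart objects vanishes) and the conclusion would fail for $E=A_0({\bf b})^*[1]$, so the shift is clearly intended, consistently with the list of generators in the first sentence and with the shifted irreducible object $A_y\otimes\omega_{Z_y}[1]$ of Proposition \ref{prop:tilting:G0-Per-irred}.
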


\begin{proof}
We first note that the claim holds for ${\Bbb C}_x$, $x \in Z$.
By Lemma \ref{lem:generate*} (3),
the claim also holds for all objects of ${\cal T}({\bf b})^*$.
Finally we prove the claim for
$E[1]$, $E \in {\cal S}({\bf b})^*$.
By Lemma \ref{lem:generate*} (1), 
we may assume that $E$ is a subsheaf of $A_0({\bf b})^*$.
We have a subsheaf $E'$ of $A_0({\bf b})^*$ such that
$A_0({\bf b})^*/E'$ is purely 1-dimensional
and $E'/E$ is 0-dimensional.
Then $E[1]$ is an extension of $E'[1]$ by $E'/E$ 
in $\Per(X/Y,{\bf b})^*$.
Hence we prove the claim for $E'$.
We take a non-trivial extension
\begin{equation}\label{eq:E'}
0 \to E' \to F' \to {\Bbb C}_x \to 0.
\end{equation}
By the inclusion $E' \to A_0({\bf b})^*$,
we have the following diagram
\begin{equation}
\begin{CD}
@. 0 @. 0 @.\\
@. @AAA @AAA @. @.\\
 @. A_0({\bf b})^*/E' @>>> F/F' @.\\
@. @AAA @AAA @. @.\\
0 @>>> A_0({\bf b})^* @>>> F @>>> {\Bbb C}_x @>>> 0\\
@. @AAA @AAA @| @.\\
0 @>>> E' @>>> F' @>>> {\Bbb C}_x @>>> 0\\
\end{CD}
\end{equation}
Since $A_0({\bf b})^*/E'$ is purely 1-dimensional,
$\Ext^1({\Bbb C}_x,E') \to \Ext^1({\Bbb C}_x,A_0({\bf b})^*)$
is injective. Thus 
the middle horizontal sequence gives a non-trivial extension.
By Lemme \ref{lem:B-stability2} (2),
$F$ is a successive extension of
${\cal O}_{C_i}(b_i)$, $i=1,2,\dots,n$.
Then $A_0({\bf b})^*/E' \cong  F/F'$ belongs to
${\cal T}({\bf b})^*$. 
Therefore $E'[1]$ is generated by $A_0({\bf b})^*[1]$
and objects of ${\cal T}({\bf b})^*$ in $\Per(X/Y,{\bf b})^*$. 
\end{proof}

\begin{prop}
Let $G$ be a locally free sheaf of rank $r$ on $X$.
Then $G$ is a local projective generator of $\Per(X/Y,{\bf b})^*$
if and only if 
\begin{enumerate}
\item[(i)]
$\Ext^1(G,{\cal O}_{C_i}(b_i))=\Hom(G,A_0({\bf b})^*)=0$, $i>0$,
\item[(ii)]
$r_i:=\chi(G,{\cal O}_{C_i}(b_i))>0$, $i>0$ and
$ r-\sum_i a_i r_i=
\chi(G,A_0({\bf b})^*[1])>0$.
\end{enumerate}
\end{prop}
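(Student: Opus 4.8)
The plan is to reduce the statement to the general generator criterion of Proposition~\ref{prop:tilting:generator} and Lemma~\ref{lem:tilting:R1=0}, applied to the category ${\cal C}={\cal C}(E)^*=\Per(X/Y,{\bf b})^*$, which by Proposition~\ref{prop:C(G)}(3) carries a local projective generator and so fits that framework. First I would make its irreducible objects explicit. By Proposition~\ref{prop:tilting:G0-Per-irred}, adapted to the single rational double point with fundamental cycle $Z=\sum_i a_i C_i$, the irreducible objects of ${\cal C}(E)^*$ supported on $Z$ are $A_0({\bf b})^*[1]$ and the sheaves ${\cal O}_{C_i}(b_i)$, $i>0$ (here $A_0({\bf b})^*\otimes{\cal O}_{C_i}(-1)$ computes to ${\cal O}_{C_i}(b_i)$, since the relevant line bundle restricts to ${\cal O}_{C_i}(b_i+1)$). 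Thus the hypotheses of Proposition~\ref{prop:tilting:generator}, that $\Hom(G,-[p])$ vanish for $p\ne0$ and that $\chi(G,-)>0$ on each irreducible, are precisely what (i) and (ii) encode, and Proposition~\ref{prop:tilting:S-T} guarantees these irreducibles characterize ${\cal C}(E)^*$.

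The key step is the translation of these hypotheses, which uses only that $G$ is locally free and each irreducible is (up to shift) a sheaf on the curve $Z$, so that $G^{\vee}\otimes(-)$ has cohomology concentrated in degrees $0$ and $1$. Hence $\Hom(G,{\cal O}_{C_i}(b_i)[p])=H^p(G^{\vee}\otimes{\cal O}_{C_i}(b_i))$ is automatically zero for $p\ne0,1$, so the vanishing hypothesis for these factors is exactly $\Ext^1(G,{\cal O}_{C_i}(b_i))=0$, while for $A_0({\bf b})^*[1]$ the only nonautomatic vanishing reads $\Hom(G,A_0({\bf b})^*)=0$; this gives the equivalence of the vanishing hypotheses with (i). For the positivity, $\chi(G,{\cal O}_{C_i}(b_i))>0$ is $r_i>0$, and $\chi(G,A_0({\bf b})^*[1])>0$ is the second inequality of (ii). The asserted identity $\chi(G,A_0({\bf b})^*[1])=r-\sum_i a_i r_i$ I would deduce from the $K$-theory relation $[{\Bbb C}_x]=[A_0({\bf b})^*[1]]+\sum_i a_i[{\cal O}_{C_i}(b_i)]$, i.e. the Jordan--H\"older decomposition of ${\Bbb C}_x$ in ${\cal C}(E)^*$ (multiplicity $1$ for $A_0({\bf b})^*[1]$ and $a_i$ for ${\cal O}_{C_i}(b_i)$), together with $\chi(G,{\Bbb C}_x)=r$; the multiplicities are pinned down by comparing first Chern classes, using $c_1(A_0({\bf b})^*)=[Z]=\sum_i a_i[C_i]$ and $c_1({\cal O}_{C_i}(b_i))=[C_i]$.

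With this dictionary the \emph{if} direction is immediate: assuming (i) and (ii), hypotheses (a),(b) of Proposition~\ref{prop:tilting:generator} hold, so $G$ is a local projective generator of ${\cal C}(E)^*$, and Lemma~\ref{lem:tilting:R1=0} additionally records $R^1\pi_*(G^{\vee}\otimes G)=0$. For the \emph{only if} direction, suppose $G$ is a local projective generator. Applying property (a) of such a generator to each irreducible, whose $\pi$-image is the single point $p$, the condition ${\bf R}\pi_*(G^{\vee}\otimes(-))\in\Coh(Y)$ forces $R^1\pi_*(G^{\vee}\otimes{\cal O}_{C_i}(b_i))=0$ and $\pi_*(G^{\vee}\otimes A_0({\bf b})^*)=0$, which by the support at $p$ is exactly (i). Then by property (b) each ${\bf R}\pi_*(G^{\vee}\otimes(-))$ is a nonzero sheaf of finite length over $p$, so its length $\chi(G,-)$ is strictly positive; invoking the $K$-theory identity once more for the $A_0({\bf b})^*[1]$ factor yields (ii).

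The individual computations are routine; the point requiring care is getting the shifts, and hence the signs, right. In ${\cal C}(E)^*$ the sheaves ${\cal O}_{C_i}(b_i)$ lie in the torsion-free part $T^*$ (unshifted) while $A_0({\bf b})^*[1]$ is the shifted factor, so the inequalities read $r_i>0$ rather than the $r_i<0$ of the unstarred generator criterion. The main potential obstacle is therefore justifying the Jordan--H\"older multiplicities of ${\Bbb C}_x$ in the starred category that underlie $\chi(G,A_0({\bf b})^*[1])=r-\sum_i a_i r_i$; I would settle this either by the $c_1$-comparison above or, equivalently, by dualizing the corresponding decomposition in $\Per(X/Y,{\bf b})$ via $D_X(-)\otimes K_X[2]$, under which ${\Bbb C}_x$ is fixed and the shifted and unshifted irreducibles are interchanged.
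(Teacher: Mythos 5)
Your argument is correct, and all the ingredients you cite do exist in the paper and apply: $\Per(X/Y,{\bf b})^*$ has a local projective generator, its irreducible objects supported on the exceptional locus are $A_0({\bf b})^*[1]$ and the ${\cal O}_{C_i}(b_i)$, the vanishing/positivity conditions of Proposition~\ref{prop:tilting:generator} unwind to exactly (i) and (ii) because $G$ is locally free and the irreducibles are (shifts of) sheaves on curves, and the identity $\chi(G,A_0({\bf b})^*[1])=r-\sum_i a_ir_i$ follows from $v(A_0({\bf b})^*)=-v({\Bbb C}_x)+\sum_i a_i v({\cal O}_{C_i}(b_i))$. The necessity direction via properties (a) and (b) of a local projective generator, using that the irreducibles are supported over a single point so that $\chi(G,-)$ is a length, is exactly right.

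The route differs from the paper's own proof of this proposition, though the two are close cousins. The paper does not invoke the general generator criterion here; instead it first rewrites (i),(ii) as the equivalent system $\Hom(G,{\cal O}_{C_i}(b_i))\ne 0$, $\Ext^1(G,{\cal O}_{C_i}(b_i))=0$, $\Hom(G,A_0({\bf b})^*)=0$, $\Ext^1(G,A_0({\bf b})^*)\ne 0$, gets necessity for free from ${\cal O}_{C_i}(b_i),A_0({\bf b})^*[1]\in\Per(X/Y,{\bf b})^*$, and then proves sufficiency by two bespoke lemmas tailored to the starred category: Lemma~\ref{lem:chi(G,E)*}, which computes $\pi_*$ and $R^1\pi_*$ on ${\cal T}({\bf b})^*$ and ${\cal S}({\bf b})^*$ using the generation statements of Lemma~\ref{lem:generate*}, and a companion lemma whose key computation is $H^1(X,G^{\vee}\otimes G_{|Z})\cong\Hom(G,G_{|Z}(Z))^{\vee}=0$, obtained by showing $G_{|Z}(Z)\in{\cal S}({\bf b})^*$. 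Your proof replaces those two lemmas by a citation of Proposition~\ref{prop:tilting:generator} together with Lemma~\ref{lem:tilting:R1=0} and the classification of irreducibles (Propositions~\ref{prop:tilting:G0-Per-irred} and~\ref{prop:tilting:S-T}); this is precisely the route the paper itself takes for the unstarred analogue, Proposition~\ref{prop:Per-generator}, so it buys uniformity and avoids redoing the $R^1\pi_*(G^{\vee}\otimes G)=0$ computation, at the cost of having to know the irreducible objects of the starred category in advance. One small slip: the identification of the curve irreducibles should read $E_{yj}^*=A_y\otimes{\cal O}_{C_{yj}}(-1)\cong{\cal O}_{C_{yj}}(b_{yj})$ (the unstarred line bundle $A_y$, not $A_0({\bf b})^*=A_y\otimes\omega_{Z}$, is what you twist by ${\cal O}_{C_i}(-1)$); the conclusion you draw from it is nevertheless the correct one.
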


\begin{proof}
We first note that 
the conditions are equivalent to 
\begin{equation}\label{eq:G*}
\begin{split}
\Hom(G,{\cal O}_{C_i}(b_i)) &\ne 0,\;i>0\\
\Ext^1(G,{\cal O}_{C_i}(b_i))&=0,\;i>0\\
\Hom(G,A_0({\bf b})^*)&= 0,\\
\Ext^1(G,A_0({\bf b})^*) &\ne 0.
\end{split}
\end{equation}
Since ${\cal O}_{C_i}(b_i), A_0({\bf b})^*[1] \in \Per(X/Y,{\bf b})^*$,
the conditions are necessary.
Conversely we assume that 
$G$ satisfies \eqref{eq:G*}. Then the claim follows from
the following two lemmas.
\end{proof}

\begin{lem}\label{lem:chi(G,E)*}
Assume that a locally free sheaf $G$ satisfies 
\eqref{eq:G*}.
Let $E \ne 0$ be a coherent sheaf on $X$. 
\begin{enumerate}
\item[(1)]
If $E \in {\cal T}({\bf b})^*$ and $\Supp(E) \subset Z$, then
$\pi_*(G^{\vee} \otimes E) \ne 0$ and $R^1 \pi_*(G^{\vee} \otimes E)=0$.
In particular $\chi(G,E)>0$.
\item[(2)]
If $E \in {\cal S}({\bf b})^*$, then
$\pi_*(G^{\vee} \otimes E)= 0$ and $R^1 \pi_*(G^{\vee} \otimes E) \ne 0$.
In particular $\chi(G,E)<0$.
\item[(3)]
If $R^1 \pi_*(G^{\vee} \otimes E)=0$, then
$E \in {\cal T}({\bf b})^*$. 
\end{enumerate}
\end{lem}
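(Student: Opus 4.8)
The plan is to recognize Lemma~\ref{lem:chi(G,E)*} as the specialization of the general Lemma~\ref{lem:tilting:chi(G,E)} to the category $\Per(X/Y,{\bf b})^*={\cal C}(E)^*$. By Proposition~\ref{prop:C(G)}(3) this category is the tilting of $\Coh^{\beta}(X)$ along the torsion pair $(S^*,T^*)=({\cal S}({\bf b})^*,{\cal T}({\bf b})^*)$, and by the lemma preceding Proposition~\ref{prop:tilting:contraction} it admits a local projective generator, so it is a category of perverse coherent sheaves in the sense required by Lemma~\ref{lem:tilting:chi(G,E)}. Under the dictionary $(T,S)=({\cal T}({\bf b})^*,{\cal S}({\bf b})^*)$, the three conclusions of Lemma~\ref{lem:tilting:chi(G,E)} become, respectively, claim~(2) (from part~(1): $0\ne E\in S\Rightarrow \pi_*=0,\ R^1\pi_*\ne0$), claim~(3) (from part~(2): $R^1\pi_*=0\Rightarrow E\in T$), and claim~(1) (from part~(3): $0\ne E\in T$ with support in a fiber $\Rightarrow \pi_*\ne0,\ R^1\pi_*=0$). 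Thus the entire task reduces to checking that $G$ satisfies hypotheses \eqref{eq:tilting:chi(G,E)}(a),(b) against the irreducible objects of $\Per(X/Y,{\bf b})^*$ lying on the fibers.

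First I would list those irreducible objects. By Proposition~\ref{prop:tilting:G0-Per-irred}, applied with the identifications $A_{p_i}\otimes\omega_{Z_i}=A_0({\bf b})^*$ and $A_{p_i}\otimes{\cal O}_{C_{ij}}(-1)={\cal O}_{C_{ij}}(b_{ij})$, the irreducible objects supported on $Z_i$ are $E_{i0}^*=A_0({\bf b})^*[1]$ and $E_{ij}^*={\cal O}_{C_{ij}}(b_{ij})$ for $j>0$. Next I would translate condition~\eqref{eq:G*} into \eqref{eq:tilting:chi(G,E)}. Since $\dim X=2$ and both ${\cal O}_{C_{ij}}(b_{ij})$ and $A_0({\bf b})^*$ are torsion sheaves while $G\otimes K_X$ is locally free, Serre duality gives $\Ext^2(G,{\cal O}_{C_{ij}}(b_{ij}))=\Hom({\cal O}_{C_{ij}}(b_{ij}),G\otimes K_X)^{\vee}=0$ and likewise $\Ext^2(G,A_0({\bf b})^*)=0$. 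For $E_{ij}^*={\cal O}_{C_{ij}}(b_{ij})$ the group $\Hom(G,{\cal O}_{C_{ij}}(b_{ij})[p])=\Ext^p(G,{\cal O}_{C_{ij}}(b_{ij}))$ then vanishes for $p\ne0$ (using $\Ext^{<0}=0$, $\Ext^1=0$ from \eqref{eq:G*}, $\Ext^2=0$ from Serre duality), while $\chi(G,{\cal O}_{C_{ij}}(b_{ij}))=\dim\Hom(G,{\cal O}_{C_{ij}}(b_{ij}))=r_{i}>0$ since $\Ext^1=0$ and $\Hom\ne0$. For $E_{i0}^*=A_0({\bf b})^*[1]$ one has $\Hom(G,A_0({\bf b})^*[1][p])=\Ext^{1+p}(G,A_0({\bf b})^*)$, which vanishes for $p\ne0$ ($\Hom=0$ from \eqref{eq:G*} at $p=-1$, $\Ext^2=0$ at $p=1$), and $\chi(G,A_0({\bf b})^*[1])=-\chi(G,A_0({\bf b})^*)=\dim\Ext^1(G,A_0({\bf b})^*)>0$. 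Hence \eqref{eq:tilting:chi(G,E)} holds for every fiber-supported irreducible object.

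With the hypotheses verified, I would invoke Lemma~\ref{lem:tilting:chi(G,E)} to obtain the vanishing statements directly. For claim~(1), since $\Supp(E)\subset Z=\bigsqcup_i Z_i$ with the $Z_i$ disjoint, $E$ decomposes as $\bigoplus_i E_i$ with $\Supp(E_i)\subset Z_i$, each $E_i$ again lying in ${\cal T}({\bf b})^*$; applying part~(3) of Lemma~\ref{lem:tilting:chi(G,E)} fiberwise gives $R^1\pi_*(G^{\vee}\otimes E)=0$ and $\pi_*(G^{\vee}\otimes E)\ne0$. The ``in particular'' sign assertions then follow because $\pi_*(G^{\vee}\otimes E)$ (resp.\ $R^1\pi_*(G^{\vee}\otimes E)$ in claim~(2)) is a nonzero $0$-dimensional sheaf on $Y$, so $\chi(G,E)=\chi({\bf R}\pi_*(G^{\vee}\otimes E))$ equals $+h^0$ in case~(1) and $-h^0$ in case~(2). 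I expect the only delicate bookkeeping to be the shift in $E_{i0}^*=A_0({\bf b})^*[1]$, which turns $\Hom$/$\chi$ conditions into $\Ext^{1+p}$/$-\chi$ conditions, together with the $\Ext^2$-vanishing via Serre duality; everything else is a formal transcription of the general lemma.
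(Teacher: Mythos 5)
Your proof is correct, but it takes a genuinely different route from the paper's own argument. The paper proves (1) by the devissage of Lemma \ref{lem:generate*}\,(3): any $E\in{\cal T}({\bf b})^*$ supported on $Z$ is a successive extension of ${\cal O}_{C_i}(b_i)$ and ${\Bbb C}_x$, for which the vanishing and non-vanishing are immediate from \eqref{eq:G*}. It proves (2) by reducing, via Lemma \ref{lem:generate*}\,(1), to a subsheaf $E\subset A_0({\bf b})^*$, observing that the quotient $F=A_0({\bf b})^*/E$ lies in ${\cal T}({\bf b})^*$ because $A_0({\bf b})^*$ is simple, and then chasing the long exact sequence of $R^i\pi_*$ for $0\to E\to A_0({\bf b})^*\to F\to 0$ using part (1) for $F$ together with $\Hom(G,A_0({\bf b})^*)=0$ and $\Ext^1(G,A_0({\bf b})^*)\ne 0$; part (3) then follows from (2) and the torsion-pair decomposition, exactly as you say. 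Your alternative --- verifying hypotheses \eqref{eq:tilting:chi(G,E)} against the irreducible objects ${\cal O}_{C_{ij}}(b_{ij})$ and $A_0({\bf b})^*[1]$ (with the $\Ext^2$-vanishing supplied by Serre duality against torsion sheaves and the sign bookkeeping for the shift) and then quoting the general Lemma \ref{lem:tilting:chi(G,E)} under the dictionary $(T,S)=({\cal T}({\bf b})^*,{\cal S}({\bf b})^*)$ --- is precisely the pattern the paper itself uses for the unstarred Proposition \ref{prop:Per-generator}, and it is legitimate here because ${\cal C}(E)^*$ is known to admit a local projective generator. What your approach buys is uniformity: a single verification on the irreducible objects replaces the case-by-case argument, and in particular you avoid invoking the simplicity of $A_0({\bf b})^*$. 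What the paper's direct proof buys is independence from the general machinery: it needs only the structure results of Lemma \ref{lem:generate*} and elementary long exact sequences, and does not presuppose that the starred category has a local projective generator.
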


\begin{proof}
By \eqref{eq:G*}, we have
$G_{|C_i} \cong {\cal O}_{C_i}(b_i)^{\oplus r_i}
\oplus {\cal O}_{C_i}(b_i+1)^{\oplus (r-r_i)}$,
where $r_i=\chi(G,{\cal O}_{C_i}(b_i))$.
(1)
By Lemma \ref{lem:generate*} (3), it is sufficient to prove the claim
for ${\Bbb C}_x, x \in Z$ and
${\cal O}_{C_i}(b_i)$, $i>0$.
These follows from 
the definition of $G$. 

(3)
By Lemma \ref{lem:generate*},
we may assume that $E$ is a subsheaf of $A_0({\bf b})^*$.
We set $F:=A_0({\bf b})^*/E$.
Since $A_0({\bf b})^*$ is simple,
$\Hom(F,A_0({\bf b})^*)=0$.
Hence $F \in {\cal T}({\bf b})^*$.
Then the claim follows from (1).
(3) follows from (2) and Proposition \ref{prop:torsion-pair*}.

\end{proof}

\begin{lem}
Assume that a locally free sheaf $G$ satisfies 
\eqref{eq:G*}.
Then the following holds.
\begin{enumerate}
\item[(1)]
$R^1 \pi_*(G^{\vee} \otimes G)=0$.
\item[(2)]
$E \in {\cal T}({\bf b})^*$ if and only if
$R^1 \pi_*(G^{\vee} \otimes E)=0$, and
$E \in {\cal S}({\bf b})^*$ if and only if
$\pi_*(G^{\vee} \otimes E)=0$.
\item[(3)]
$G$ is a local projective generator of 
$\Per(X/Y,{\bf b})^*$
\end{enumerate}
\end{lem}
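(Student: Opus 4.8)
The plan is to recognize the statement as the specialization of Proposition~\ref{prop:tilting:generator} to the category ${\cal C}=\Per(X/Y,{\bf b})^*={\cal C}(E)^*$. By Proposition~\ref{prop:C(G)} its defining torsion pair is $({\cal T}({\bf b})^*,{\cal S}({\bf b})^*)$, and by Proposition~\ref{prop:tilting:G0-Per-irred} its irreducible objects are the sheaves ${\cal O}_{C_i}(b_i)$ ($i>0$), the shifted object $A_0({\bf b})^*[1]$, and the skyscrapers ${\Bbb C}_x$ with $x\in X\setminus Z$. Thus it suffices to check, with $G_1:=G$, the two hypotheses \eqref{eq:tilting:generator} of that proposition: (a) $\Hom(G,N[p])=0$ for $p\ne 0$ and (b) $\chi(G,N)>0$, for every such irreducible object $N$. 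Proposition~\ref{prop:tilting:generator} would then deliver $R^1\pi_*(G^\vee\otimes G)=0$, the characterization of ${\cal T}({\bf b})^*$ and ${\cal S}({\bf b})^*$ by the vanishing of $R^1\pi_*(G^\vee\otimes-)$ and $\pi_*(G^\vee\otimes-)$, and the statement that $G$ is a local projective generator, which are precisely (1), (2) and (3).

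To verify (a) and (b) I would first record, as in the proof of Lemma~\ref{lem:chi(G,E)*}, that \eqref{eq:G*} forces $G_{|C_i}\cong {\cal O}_{C_i}(b_i)^{\oplus r_i}\oplus {\cal O}_{C_i}(b_i+1)^{\oplus(r-r_i)}$ with $r_i=\chi(G,{\cal O}_{C_i}(b_i))$. For $N={\cal O}_{C_i}(b_i)$ the negative $\Ext$'s vanish trivially, $\Ext^1(G,{\cal O}_{C_i}(b_i))=0$ is the second line of \eqref{eq:G*}, and $\Ext^2(G,{\cal O}_{C_i}(b_i))=H^2(X,G^\vee\otimes {\cal O}_{C_i}(b_i))=0$ because the sheaf has $1$-dimensional support; hence (a) holds, and $\chi(G,{\cal O}_{C_i}(b_i))=\dim\Hom(G,{\cal O}_{C_i}(b_i))>0$ by the first line of \eqref{eq:G*}, giving (b). For $N=A_0({\bf b})^*[1]$ the shift raises every degree by one: $\Hom(G,A_0({\bf b})^*[1+p])=\Ext^{1+p}(G,A_0({\bf b})^*)$ vanishes for $p\ne 0$ since $\Ext^0=\Hom(G,A_0({\bf b})^*)=0$ (third line of \eqref{eq:G*}) and $\Ext^2$ vanishes again by the $1$-dimensional support, while $\chi(G,A_0({\bf b})^*[1])=-\chi(G,A_0({\bf b})^*)=\dim\Ext^1(G,A_0({\bf b})^*)>0$ by the last line of \eqref{eq:G*}. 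For $N={\Bbb C}_x$ with $x\notin Z$ both conditions are automatic from the local freeness of $G$ and $\rk G>0$.

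A direct assembly is also available and I would keep it in reserve. Lemma~\ref{lem:chi(G,E)*} already gives the inclusions ${\cal S}({\bf b})^*\subset\{\pi_*(G^\vee\otimes-)=0\}$ and $\{R^1\pi_*(G^\vee\otimes-)=0\}\subset {\cal T}({\bf b})^*$. For (1) one restricts to a fibre: $G_{|Z_i}$ is a quotient of $G$, hence $\Hom(G_{|Z_i},A_0({\bf b})^*)=0$, so $G_{|Z_i}\in{\cal T}({\bf b})^*$ by Lemma~\ref{lem:generate*}(2), and by Lemma~\ref{lem:generate*}(3) it is a successive extension of ${\cal O}_{C_j}(b_j)$ and ${\Bbb C}_x$, on each of which $R^1\pi_*(G^\vee\otimes-)$ vanishes by \eqref{eq:G*}; Lemma~\ref{lem:tilting:TFF}(3) then yields $R^1\pi_*(G^\vee\otimes G)=0$. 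The remaining inclusions in (2) follow by decomposing an arbitrary sheaf along both torsion pairs and using that any $F$ with $\pi_*(G^\vee\otimes F)=0$ is supported on $Z$, since $\pi$ is an isomorphism over $Y\setminus\{p_1,\dots,p_n\}$; on such $F$ Lemma~\ref{lem:chi(G,E)*}(1) applies, and (3) then follows from Lemma~\ref{lem:tilting}(2) once we note $G\in{\cal T}({\bf b})^*$.

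The one genuinely delicate point is the bookkeeping of the shift in hypothesis (a): one must make sure the irreducible object coming from the torsion subcategory ${\cal S}({\bf b})^*$ enters $\Per(X/Y,{\bf b})^*$ as $A_0({\bf b})^*[1]$ rather than as a sheaf, so that ``$\Hom$ into all nonzero degrees vanishes'' translates into $\Hom(G,A_0({\bf b})^*)=0$ together with $\Ext^1(G,A_0({\bf b})^*)\ne 0$ (the contribution to $\chi>0$), plus the automatic $\Ext^2$-vanishing. Matching each line of \eqref{eq:G*} to the correct $\Ext$-degree is the step I would handle most carefully; everything else is routine, and once (a) and (b) are in place Proposition~\ref{prop:tilting:generator} (with Lemma~\ref{lem:tilting:R1=0} available should one prefer to deduce (a) from $R^1\pi_*(G^\vee\otimes G)=0$) closes the argument.
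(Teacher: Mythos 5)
Your proposal is correct. For parts (2) and (3) it is essentially the paper's own route: the paper dismisses them with ``similar to Proposition \ref{prop:Per-generator}'', and that proposition is itself deduced from Proposition \ref{prop:tilting:generator}, which is exactly the engine you invoke; your verification of hypotheses (a) and (b) on the three families of irreducible objects of $\Per(X/Y,{\bf b})^*$ --- in particular the degree bookkeeping for the shifted object $A_0({\bf b})^*[1]$, which you rightly single out as the delicate step --- is accurate, and the standing hypothesis of that proposition (existence of a torsion pair with a local projective generator) is supplied by the preceding lemma on $(S^*,T^*)$. Where you genuinely diverge is part (1): the paper proves $R^1\pi_*(G^{\vee}\otimes G)=0$ by a direct Serre-duality computation on the exceptional fibre, first showing $G_{|Z}(Z)\in{\cal S}({\bf b})^*$ from the splitting type of $G_{|C_i}$, then identifying $H^1(X,G^{\vee}\otimes G_{|Z})\cong\Hom(G,G_{|Z}(Z))^{\vee}$ via $\omega_Z\cong{\cal O}_Z(Z)$ and killing the right-hand side with Lemma \ref{lem:generate*} and $\Hom(G,A_0({\bf b})^*)=0$. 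You instead obtain (1) from Proposition \ref{prop:tilting:generator}(2), whose internal argument filters $G_{|Z}\in{\cal T}({\bf b})^*$ by quotients of the sheaf-irreducibles ${\cal O}_{C_i}(b_i)$ and uses $\Ext^1(G,{\cal O}_{C_i}(b_i))=0$ --- morally the dual filtration (quotients in ${\cal T}$ versus subsheaves of $A_0({\bf b})^*$ in ${\cal S}$), and your reserve argument via Lemma \ref{lem:generate*}(3) and the theorem on formal functions is the same idea made explicit. The paper's duality computation buys a proof of (1) independent of the general machinery; your uniform treatment buys consistency with how the un-starred case $\Per(X/Y,{\bf b})$ is handled and delivers all three parts in one stroke.
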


\begin{proof}
(1)
It is sufficient to prove that
$H^1(X,G^{\vee} \otimes G_{|Z})=0$.
We note that $\Hom({\Bbb C}_x,G_{|Z}(Z))=0$.
Since $\Hom({\cal O}_{C_i}(b_i),G_{|Z}(Z))\cong
\Hom(G^{\vee}_{|C_i}(b_i),{\cal O}_Z(Z))$ and
$\Hom({\cal O}_{C_i}(-1),{\cal O}_Z(Z))=0$,
we get $\Hom({\cal O}_{C_i}(b_i),G_{|Z}(Z))=0$,
which implies that $G_{|Z}(Z) \in {\cal S}({\bf b})^*$.
By Lemma \ref{lem:generate*} and $\Hom(G,A_0({\bf b})^*)=0$,
we see that 
\begin{equation}
H^1(X,G^{\vee} \otimes G_{|Z})\cong
\Ext^1(G_{|Z},G)^{\vee} \cong 
H^0(Z, G^{\vee} \otimes G \otimes {\cal O}_Z(Z))^{\vee} \cong  
\Hom(G,G_{|Z}(Z))^{\vee}=0.
\end{equation}

The proof of (2) and (3) are similar to those of
Proposition \ref{prop:Per-generator}.
\end{proof}

\begin{lem}\label{lem:Per-generator*}
Let $G$ be a local projective generator of 
$\Per(X/Y,{\bf b})$.
Then $G^{\vee}$ is a local projective generator of 
$\Per(X/Y,-{\bf b}+2{\bf b}_0)^*=\Per(X/Y,{\bf b})^D$.
\end{lem}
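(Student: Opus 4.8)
The plan is to reduce the statement to the duality of tilted categories already proved in Lemma \ref{lem:tilting:dual}, together with the characterisation of the dual category by its distinguished fibre-supported sheaves in Proposition \ref{prop:tilting:S-T}. First I would note that the given $G$ is locally free: Proposition \ref{prop:C(G)} (1) exhibits one locally free local projective generator, and Lemma \ref{lem:projective} (2) then forces every local projective object, in particular $G$, to be locally free. By Lemma \ref{lem:tilting:C} the category $\Per(X/Y,{\bf b})$ is exactly the tilting ${\cal C}_G$ attached to $G$. Applying Lemma \ref{lem:tilting:dual} to this $G$ gives, for free, that $G^{\vee}$ is a local projective generator of the dual tilting ${\cal C}_G^D=\Per(X/Y,{\bf b})^D$. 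Hence the only substantive point is the identification ${\cal C}_G^D=\Per(X/Y,-{\bf b}+2{\bf b}_0)^*$.

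For this I would compare the two tiltings through their torsion pairs, which are cut out by sets of distinguished sheaves as in Proposition \ref{prop:tilting:S-T} (1); it then suffices to check that the two sets agree. The fibre-supported irreducible objects $E_{ij}$ of $\Per(X/Y,{\bf b})$ are the sheaves $A_0({\bf b}_i)$ (the $j=0$ objects) and the shifted line bundles ${\cal O}_{C_{ij}}(b_{ij})[1]$ (the $j>0$ objects), by Proposition \ref{prop:tilting:G-1Per-irred}. Proposition \ref{prop:tilting:S-T} (2) describes the distinguished sheaves of ${\cal C}_G^D$ as $D_X(\{E_{ij}\}\cap\Coh(X))\otimes K_X[n-1]$ with $n=\dim X=2$; since among the $E_{ij}$ only the $A_0({\bf b}_i)$ lie in $\Coh(X)$ (the shifted line bundles fall out of $\Coh(X)$ after the shift), this distinguished set equals $\{\,{\cal E}xt^1_{{\cal O}_X}(A_0({\bf b}_i),{\cal O}_X)\otimes K_X\,\}_i$.

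It remains to evaluate this dual. As $A_0({\bf b}_i)$ is purely $1$-dimensional, ${\bf R}{\cal H}om(A_0({\bf b}_i),{\cal O}_X)$ is concentrated in degree one, so ${\cal E}xt^1_{{\cal O}_X}(A_0({\bf b}_i),{\cal O}_X)=A_0({\bf b}_i)^{\vee}[1]$. Here I would invoke the compatibility of derived duality with twisted stability, giving $A_0({\bf b}_i)^{\vee}[1]\cong A_0(-{\bf b}_i+2{\bf b}_0)^*$, together with the crepancy of the minimal resolution of rational double points, which makes $K_{X|Z_i}\cong{\cal O}_{Z_i}$ so that the $K_X$-twist is harmless on the exceptional locus. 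Thus ${\cal E}xt^1_{{\cal O}_X}(A_0({\bf b}_i),{\cal O}_X)\otimes K_X\cong A_0(-{\bf b}_i+2{\bf b}_0)^*$, and these are precisely the distinguished sheaves $A_{p_i}\otimes\omega_{Z_i}$ defining the torsion pair $(S^*,T^*)$ of $\Per(X/Y,-{\bf b}+2{\bf b}_0)^*$ in Proposition \ref{prop:C(G)} (3). Since both tiltings of $\Coh(X)$ are cut out by the same distinguished sheaves, they coincide, and the lemma follows.

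The main obstacle will be this last paragraph: pinning down the derived dual of the fundamental-cycle-type sheaf $A_0({\bf b}_i)$ and checking it lands exactly on $A_0(-{\bf b}_i+2{\bf b}_0)^*$. Concretely one must justify the index shift $-{\bf b}+2{\bf b}_0$, which reflects $(C_{ij}^2)=-2$ and $K_{X|Z_i}\cong{\cal O}_{Z_i}$, and confirm that derived duality sends a $G$-twisted stable sheaf to a $G^{\vee}$-twisted stable one with the expected parameters; the remaining bookkeeping with the line bundles ${\cal O}_{C_{ij}}(b_{ij})$ is routine and, as noted, never enters the comparison of distinguished sets.
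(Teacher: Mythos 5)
Your argument is correct and follows the paper's own proof in all essentials: compute the derived duals $D_X(E_{ij})\otimes K_X[2]$ of the irreducible objects of $\Per(X/Y,{\bf b})$, use Proposition \ref{prop:tilting:S-T} to identify ${\cal C}_G^D$ with $\Per(X/Y,-{\bf b}+2{\bf b}_0)^*$ by matching the distinguished sheaves $A_{p_i}^{-1}\otimes \omega_{Z_i}\cong A_0(-{\bf b}_i+2{\bf b}_0)^*$, and conclude with Lemma \ref{lem:tilting:dual}. The only difference is cosmetic: you invoke Lemma \ref{lem:tilting:dual} before, rather than after, identifying the two categories.
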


\begin{proof}
We note that
$D({\cal O}_{C_i}(b_i)[1])[2]={\cal O}_{C_i}(-b_i-2)$ and
$D(A_0({\bf b}))[2]=A_0(-{\bf b}+2{\bf b}_0)^*[1]$.
Then Lemma \ref{lem:generate*} (1), (2) and Proposition
\ref{prop:tilting:S-T} imply that
$\Per(X/Y,-{\bf b}+2{\bf b}_0)^*=\Per(X/Y,{\bf b})^D$.
Then Lemma \ref{lem:tilting:dual} implies the claim.
\end{proof}

\begin{proof}
We note that $A_0({\bf b})^{\vee}[1] \cong A_0(-{\bf b}+2{\bf b}_0)^*$.
Then we have the claim by the following relations:
\begin{equation}
\begin{split}
\Hom(G^{\vee},A_0(-{\bf b}+2{\bf b}_0)^*) & 
\cong \Ext^1(A_0({\bf b}),G) \cong
\Ext^1(G,A_0({\bf b}))^{\vee}= 0,\\
\Ext^1(G^{\vee},A_0(-{\bf b}+2{\bf b}_0)^*) & 
\cong \Ext^2(A_0({\bf b}),G) \cong
\Hom(G,A_0({\bf b}))^{\vee} \ne 0,\\
\Hom(G^{\vee},{\cal O}_{C_i}(-b_i-2)) & \cong
\Ext^1({\cal O}_{C_i}(b_i),G) \cong
\Ext^1(G,{\cal O}_{C_i}(b_i))^{\vee} \ne 0,\\
\Ext^1(G^{\vee},{\cal O}_{C_i}(-b_i-2)) & \cong
\Ext^2({\cal O}_{C_i}(b_i),G) \cong
\Hom(G,{\cal O}_{C_i}(b_i))^{\vee}=0.
\end{split}
\end{equation}
\end{proof}

$E \in \Coh(X)$ satisfies that
$R^1 \pi_*(E)=0$ if and only if 
there is an exact sequence
\begin{equation}
0 \to E_1 \to E \to E_2 \to 0
\end{equation}
such that $E_1 \in \Per(X/Y)$ and 
$E_2$ is a successive extension of 
${\cal O}_{C_i}(-1)$, $i=1,...,s$.

We set 
\begin{equation}
E_1:=\im(\pi^*(\pi_*(E)) \to E),\;
E_2:=\coker(\pi^*(\pi_*(E)) \to E).
\end{equation}
Then $E_1 \in \Per(X/Y)$ and 
${\bf R}\pi_*(E_2)=0$.
Hence $E_2$ is a successive extension of 
${\cal O}_{C_i}(-1)$, $i=1,...,s$.

We set
\begin{equation}
{\cal S}(-{\bf b})^*:=
\{E \in {\cal S}(-{\bf b}+2{\bf b}_0)| 
\Hom({\cal O}_{C_i}(-b_i-2),E)=0,\; i>0 \}.
\end{equation}
\begin{lem}
Every $E \in {\cal S}(-{\bf b})^*$ is
 a successive extension of subsheaves of $A_0^{\vee}[1]$.
\end{lem}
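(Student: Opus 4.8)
The plan is to recognize the statement as nothing more than Lemma~\ref{lem:generate*}(1) read after the substitution of the parameter ${\bf b}$ by $-{\bf b}+2{\bf b}_0$, combined with the duality isomorphism $A_0({\bf b})^{\vee}[1] \cong A_0(-{\bf b}+2{\bf b}_0)^*$ recorded earlier. First I would unwind the definition of ${\cal S}(-{\bf b})^*$. With ${\bf b}_0=(-1,\dots,-1)$ the $i$-th entry of $-{\bf b}+2{\bf b}_0$ equals $-b_i-2$, so ${\cal O}_{C_i}((-{\bf b}+2{\bf b}_0)_i)={\cal O}_{C_i}(-b_i-2)$ and the defining condition $\Hom({\cal O}_{C_i}(-b_i-2),E)=0$ for $i>0$ is exactly the starred condition attached to the parameter $-{\bf b}+2{\bf b}_0$. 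Hence ${\cal S}(-{\bf b})^* = {\cal S}(-{\bf b}+2{\bf b}_0)^*$ verbatim, and there is nothing to prove beyond this identification of notation.

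Next I would apply Lemma~\ref{lem:generate*}(1) with ${\bf b}$ replaced by $-{\bf b}+2{\bf b}_0$; this is legitimate because the entire construction of $E_{\bf b}$, of $A_0({\bf b})^*$ and of the torsion pair is uniform in the underlying sequence of integers. It yields that every $E \in {\cal S}(-{\bf b}+2{\bf b}_0)^*$ is a successive extension of subsheaves of $A_0(-{\bf b}+2{\bf b}_0)^*$. Finally, invoking the duality isomorphism $A_0(-{\bf b}+2{\bf b}_0)^* \cong A_0({\bf b})^{\vee}[1]=A_0^{\vee}[1]$ transports this conclusion to the target object. Here $A_0^{\vee}[1]$ is a genuine coherent sheaf rather than a complex, since $A_0({\bf b})$ is purely $1$-dimensional, whence $A_0({\bf b})^{\vee}={\cal E}xt^1(A_0({\bf b}),{\cal O}_X)[-1]$ is concentrated in a single degree; so ``subsheaves of $A_0^{\vee}[1]$'' makes literal sense and the proof is complete.

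Should a self-contained argument be preferred, I would instead mirror the proof of Lemma~\ref{lem:generate*}(1): reduce by induction on $c_1(E)$ to the existence of a single nonzero homomorphism $E \to A_0^{\vee}[1]$, assume none exists, and use the negative-definiteness of the intersection matrix of the $C_i$ to locate an index with $\chi(E,{\cal O}_{C_i}(-b_i-2))=-(c_1(E),C_i)>0$; peeling off the resulting maps to the ${\cal O}_{C_i}(-b_i-2)$ eventually exhibits a subsheaf isomorphic to some ${\cal O}_{C_j}(-b_j-2)$, contradicting $\Hom({\cal O}_{C_j}(-b_j-2),E)=0$. The main obstacle on this route is purely bookkeeping: verifying the dual analogue of Lemma~\ref{lem:A_0}, namely $\Hom({\cal O}_{C_i}(-b_i-2),A_0^{\vee}[1])=\Ext^1({\cal O}_{C_i}(-b_i-2),A_0^{\vee}[1])=0$ together with the purity of $A_0^{\vee}[1]$, all of which follow by dualizing the corresponding facts for $A_0({\bf b})$. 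Since the reduction route is far cleaner, I would present that argument and relegate the direct computation to a remark.
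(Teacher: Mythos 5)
Your argument is correct, and it reaches the conclusion by a route that differs from the paper's own proof in which prior result carries the weight. The paper dualizes the \emph{object}: for $E \in {\cal S}(-{\bf b})^*$ it observes that $E^{\vee}[1]$ is a purely $1$-dimensional sheaf supported on $Z$ with $\Hom(E^{\vee}[1],{\cal O}_{C_i}(b_i))=0$, hence lies in ${\cal T}({\bf b})$; the lemma asserting that such objects admit a nonzero map from $A_0({\bf b})$ then gives $\Hom(E,A_0^{\vee}[1])=\Hom(A_0,E^{\vee}[1])\ne 0$, and one takes the kernel of such a map (which again lies in ${\cal S}(-{\bf b})^*$) and inducts. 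You instead substitute in the \emph{parameter}: you check the definitional identity ${\cal S}(-{\bf b})^*={\cal S}(-{\bf b}+2{\bf b}_0)^*$, invoke the already-established statement that every object of ${\cal S}({\bf c})^*$ is a successive extension of subsheaves of $A_0({\bf c})^*$ for ${\bf c}=-{\bf b}+2{\bf b}_0$, and transport the conclusion via the recorded isomorphism $A_0({\bf b})^{\vee}[1]\cong A_0(-{\bf b}+2{\bf b}_0)^*$. Both reductions are legitimate, since the whole construction is uniform in the sequence ${\bf b}$ and the duality isomorphism is stated in the text. What the paper's route buys is a short self-contained argument that exposes the mechanism (duality exchanges ${\cal T}({\bf b})$ with ${\cal S}(-{\bf b}+2{\bf b}_0)^*$ and quotients of $A_0$ with subsheaves of $A_0^{\vee}[1]$); what yours buys is that the statement is seen to be literally an instance of an earlier lemma, with no new verification beyond bookkeeping. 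Your remark that $A_0^{\vee}[1]$ is an honest sheaf (purity of $A_0$ forces ${\cal E}xt^0(A_0,{\cal O}_X)=0$ and ${\cal E}xt^2(A_0,{\cal O}_X)=0$) is a point the paper leaves implicit and is worth keeping. The fallback direct argument you sketch is exactly the proof of the cited lemma with the shifted parameter, so nothing is missing there either.
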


\begin{proof}
For an object $E \in {\cal S}(-{\bf b})^*$,
$E^{\vee}[1]$ is a purely 1-dimensional sheaf
such that
$\Supp(E^{\vee}[1]) \subset Z$
and $\Hom(E^{\vee}[1],{\cal O}_{C_i}(b_i))=0$.
In particular, $E^{\vee}[1] \in {\cal T}({\bf b})$.
Then we have $\Hom(E,A_0^{\vee}[1])=\Hom(A_0,E^{\vee}[1]) \ne 0$.
Let $E'$ be the kernel of a non-zero homomorphism
$\phi:E \to A_0^{\vee}[1]$. Then we have 
$E' \in {\cal S}(-{\bf b})^*$.
Hence $E$ is a successive extension of subsheaves of $A_0^{\vee}[1]$.
\end{proof}

\begin{rem}
In the above proof,
assume that $\coker \phi$ is purely 1-dimensional.
Then
$(\coker \phi)^{\vee}[1]$ is a purely 1-dimensional and
$A_0 \to \phi(E)^{\vee}[1]$ is surjective. 
We see that $(\coker \phi)^{\vee}[1]$ is a successive 
extension of ${\cal O}_{C_i}(b_i)$.
Therefore $\coker \phi$ is a successive extension
of ${\cal O}_{C_i}(-b_i-2)$.
\end{rem}

For ${\bf b}=(b_1,b_2,\dots,b_s)$,
let ${\cal T}(-{\bf b})^*$ be the full subcategory
of $\Coh(X)$ such that $E \in \Coh(X)$ belongs to
${\cal T}(-{\bf b})^*$ if and only if 
there is an exact sequence
\begin{equation}
0 \to E_1 \to E \to E_2 \to 0
\end{equation}
where $E_1 \in  {\cal T}({\bf b})$ and
$E_2$ is a succesive extensions of ${\cal O}_{C_i}(-b_i-2)$,
$1 \leq i \leq s$.
$T(-{\bf b})^*:=({\cal T}(-{\bf b})^*,{\cal S}(-{\bf b})^*)$ 
is a torsion pair of $\Coh(X)$.
Let $\Per(X/Y,-{\bf b})^*$ be the tilting of 
$\Coh(X)$ with respect to $T$.
If ${\bf b}_0=(-1,-1,\dots,-1)$, $\Per(X/Y,-{\bf b}_0)=\Per(X/Y)^*$.

Let ${\cal M}_i$ be full sheaves in an analytic neighborhood
of the fundamental class $Z$.
We set ${\cal L}:=\bigotimes_{i=1}^s \det{\cal M}_i^{\otimes (b_i+1)}$.
We set
\begin{equation}
\Per(X/Y,-{\bf b})_Z^*:=\{ E \in \Per(X/Y,-{\bf b})^*| \Supp(E) \subset Z \}.
\end{equation}
${\cal E}xt^1(A_0,{\cal O}_X) \cong {\cal O}_Z(Z) \otimes {\cal L}^{\vee}$
and we have an exact sequence
\begin{equation}
0 \to {\cal E}xt^1(A_0,{\cal O}_X) \to {\cal E}xt^1(F,{\cal O}_X) \to
{\Bbb C}_p \to 0.
\end{equation}
Then we have an equivalence
\begin{equation}
\begin{matrix}
\Per(X/Y,-{\bf b}_0)_Z^* & \to & \Per(X/Y,-{\bf b})_Z^*\\
E & \mapsto & E \otimes {\cal L}^{\vee}
\end{matrix}
\end{equation}

\end{NB}

\subsection{Moduli spaces of 0-dimensional objects.}
\label{subsect:wall-chamber}

Let $\pi:X \to Y$ be the minimal resolution of a normal projective
surface $Y$ and $p_1,p_2,\dots,p_n$ the rational double points of
$Y$ as in \ref{subsect:Per(X/Y)}. 
We set $Z:=\cup_i Z_i$.
Let $G$ be a locally free sheaf on $X$ which is a tilting generator of
the category ${\cal C}:={\cal C}_G$ in Lemma \ref{lem:tilting}.
For $\alpha \in \NS(X) \otimes {\Bbb Q}$, we define $\alpha$-twisted
semi-stability as $\gamma^{-1}((0,\alpha,0))$-twisted stability, where
$\gamma$ is the homomorphism \eqref{eq:gamma}.
In this subsection,
we shall study the moduli of $\alpha$-twisted semi-stable objects.
For simplicity, we say that $\alpha$-twisted semi-stability as 
$\alpha$-semi-stability. 
For simplicity, we set 
$X^{\alpha}:=\overline{M}_{{\cal O}_X(1)}^{G,\alpha}(\varrho_X)$.
Since every 0-dimensional object is $0$-semi-stable,
we have a natural morphism $\pi_\alpha:X^\alpha \to X^0$.

\begin{lem}\label{lem:obstruction}
For a $0$-dimensional object $E$ of ${\cal C}$,
there is a proper subspace $T(E)$ of
$\Ext^2(E,E)$ such that all obstructions for infinitesimal deformations
of $E$
belong to $T(E)$.
\end{lem}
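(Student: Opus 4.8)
The plan is to reduce the obstruction theory of the perverse coherent sheaf $E$ to a statement about the trace map and Serre duality on the smooth surface $X$. The key observation is that $E$ lives in the abelian category ${\cal C}$, which by Lemma \ref{lem:tilting:C} is a tilting of $\Coh(X)$, so $H^i(E)=0$ for $i\neq -1,0$. Deformations of $E$ as an object of $\mathbf{D}(X)$ are governed by $\Ext^1(E,E)$ with obstructions in $\Ext^2(E,E)$; the natural candidate for the proper subspace $T(E)$ is the kernel of the trace map, namely $\Ext^2(E,E)_0 := \ker(\tr\colon \Ext^2(E,E) \to H^2(X,{\cal O}_X))$. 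So the first thing I would do is recall the standard fact that the obstruction classes for deforming a perfect complex lie in $\Ext^2(E,E)$ and that the trace-free part $\Ext^2(E,E)_0$ captures the obstructions for deformations with fixed determinant.

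First I would verify that $T(E):=\Ext^2(E,E)_0$ is a \emph{proper} subspace, i.e.\ that the trace map $\tr\colon \Ext^2(E,E)\to H^2(X,{\cal O}_X)$ is nonzero (indeed surjective). This is where the hypothesis that $E$ is $0$-dimensional enters: by Definition \ref{defn:0-dim}, ${\bf R}\pi_*(G^\vee\otimes E)$ is $0$-dimensional on $Y$, so $E$ is supported on fibers of $\pi$ and in particular $\chi(E,E)$ is computable. Since $X$ is a smooth projective surface, $H^2(X,{\cal O}_X)\neq 0$ in general, but $E$ being supported on the (rational) exceptional fibers means $\Supp(E)$ is contractible; I would use the Riemann--Roch pairing from the Notation section, $\chi(E,F)=-\langle v(E),v(F)\rangle$ where applicable, together with the identity $\ch_G(E)^\vee\ch_G(F)$ of \eqref{eq:RR}, to pin down the relevant Ext groups. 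The cleanest route is to show the trace map splits the inclusion $\Ext^2(E,E)_0 \hookrightarrow \Ext^2(E,E)$ via the identity class, so that properness is automatic once $\Ext^2(E,E)\neq 0$, and to identify the obstruction map as landing in the trace-free part.

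The technical heart is checking that the obstruction to an infinitesimal deformation actually lands in $\Ext^2(E,E)_0$ rather than the full $\Ext^2(E,E)$. Here I would invoke the semiregularity/trace argument: the composition of the obstruction class with the trace recovers the obstruction to deforming $\det E$, and since the determinant of a complex supported on the contracted curves admits the flat deformation keeping the line bundle class fixed, that composite obstruction vanishes. Concretely, one uses the naturality of the Atiyah class under $\mathbf{R}\pi_*$ and the fact that $\pi_*(\det E)$ deforms unobstructedly because $R^2\pi_*$ vanishes in the relative situation ($\dim\pi^{-1}(y)\le 1$ by Assumption \ref{ass:1}). I expect this compatibility of the obstruction class with the trace to be the main obstacle, since it requires care with the derived-category formalism for complexes that are genuinely two-term (both $H^{-1}$ and $H^0$ nonzero), and one cannot simply quote the sheaf case.

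Finally I would assemble these pieces: set $T(E)=\Ext^2(E,E)_0$, observe it is proper by surjectivity of $\tr$, and conclude that every obstruction lies in $T(E)$ by the determinant-fixing argument above. A subtle point worth isolating as a preliminary lemma is that $\Ext^i(E,E)$ for $i>2$ vanishes, which follows from $E$ being a two-term complex of sheaves on the surface $X$ (so $\mathbf{R}\Hom(E,E)$ is concentrated in degrees $[-1,3]$, but the $0$-dimensional support together with $\dim X=2$ forces concentration in $[0,2]$); I would establish this first, in the spirit of the spectral sequence computation already used in Lemma \ref{lem:tilting:C} and Lemma \ref{lem:decomposition}, before turning to the trace argument.
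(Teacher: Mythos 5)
There is a genuine gap, and it sits exactly where the lemma is hardest. Your candidate $T(E)=\ker\bigl(\tr\colon \Ext^2(E,E)\to H^2(X,{\cal O}_X)\bigr)$ need not be a proper subspace. The splitting you invoke ``via the identity class'' is the composition $H^2(X,{\cal O}_X)\to \Ext^2(E,E)\xrightarrow{\ \tr\ } H^2(X,{\cal O}_X)$, which is multiplication by $\rk E$; but a $0$-dimensional object of ${\cal C}$ has $\rk E=0$, so this gives nothing. Worse, the trace map can be identically zero: if $H^2(X,{\cal O}_X)=0$ (e.g.\ $Y$ a rational surface with rational double points), or more generally if every member of $|K_X|$ meets $\Supp(E)$, then $\ker\tr=\Ext^2(E,E)$, which is nonzero since $\Ext^2(E,E)\cong \Hom(E,E\otimes K_X)^{\vee}\cong\Hom(E,E)^{\vee}\neq 0$ for a $0$-dimensional object. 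So ``properness is automatic once $\Ext^2(E,E)\neq 0$'' is false, and your later determinant/semiregularity discussion only re-establishes the containment of obstructions in $\ker\tr$, which does not help.

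The paper's proof confronts exactly this case. When there is a curve $C\in|K_X|$ with $C\cap\Supp(E)=\emptyset$, the section $s$ of $K_X$ gives $\id_E\otimes s\neq 0$ in $\Hom(E,E(K_X))$, so by Serre duality $\tr$ is non-trivial and $T(E)\subset\ker\tr$ is proper --- note that even here one must check that the canonical curve avoids the support, a point you do not address. In the general case the paper uses a covering trick: choose a very ample $D$ on $Y$ and a smooth $B\in|2D|$ with $B\cap\pi(\Supp(E)\cup Z)=\emptyset$ such that $|K_Y+D|$ contains a curve missing $\pi(\Supp(E)\cup Z)$; on the double cover $\phi\colon\widetilde X\to X$ branched along $B$ one has $K_{\widetilde X}=\phi^*(K_X+D)$, which now does have a section avoiding the support. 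Since $\phi$ is \'etale near $\Supp(E)$, $\phi^*(E)=E_1\oplus E_2$ with $\Ext^2(E,E)\xrightarrow{\sim}\Ext^2(E_i,E_i)$ carrying $T(E)$ into $T(E_i)$, and the first step applied on $\widetilde X$ shows $T(E_i)$, hence $T(E)$, is proper. Your proposal has no substitute for this step, and without it the argument fails on any surface with $p_g=0$.
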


\begin{proof}
Let $E$ be a 0-dimensional object of ${\cal C}$. 
We first assume that there is a curve $C \in |K_X|$ such that
$C \cap \Supp(E) =\emptyset$.
Then
$H^0(X,K_X) \to \Hom(E,E(K_X))$ is non-trivial, which implies that
the trace map
\begin{equation}
\tr:\Ext^2(E,E) \to H^2(X,{\cal O}_X),
\end{equation}
is non-trivial.
Since the obstruction for infinitesimal deformations of $E$ lives in 
$\ker \tr$,
$T(E) \subset \ker \tr$ is a proper subspace of
$\Ext^2(E,E)$.
For a general case, we use the covering trick.
Let $D$ be a very ample divisor on $Y$ such that
there is a smooth curve $B \in |2D|$ with
$B \cap \pi(\Supp(E) \cup Z) =\emptyset$ and
$|K_Y+D|$ contains a curve $C$ with 
$C \cap \pi(\Supp(E)\cup Z) =\emptyset$.
Since $\pi$ is isomorphic over
$Y \setminus \pi(Z)$, we may
regard $B$ and $C$ as divisors 
on $X$. Let $\phi:\widetilde{Y} \to Y$ be the double covering branced along
$B$ and set $\widetilde{X}=X \times_Y \widetilde{Y}$.
We also denote $\widetilde{X} \to X$ by $\phi$.
Then $|K_{\widetilde{X}}|=|\phi^*(K_X+D)|$ contains
$\phi^*(C)$.
Since $\phi$ is \'{e}tale over $Y \setminus B$, we have a decomposition
$\pi^*(E)=E_1 \oplus E_2$ and
$\Ext^2(E,E) \to \Ext^2(E_i,E_i)$ are isomorphism for $i=1,2$.
Under these isomorphisms, $T(E)$ is mapped into $T(E_i)$. 
Since $\tr_i:\Ext^2(E_i,E_i) \to H^2(\widetilde{X},{\cal O}_{\widetilde{X}})$
are non-trivial, $\ker \tr_i$ are proper subspaces of $\Ext^2(E_i,E_i)$.
Hence $T(E)$ is a proper subspace of $\Ext^2(E,E)$. 
\end{proof}

\begin{prop}\label{prop:0-dim:smooth}
\begin{enumerate}
\item[(1)]
For a 0-dimensional
object $E$ of ${\cal C}$,
$E \otimes K_X \cong E$.
In particular,
$\Ext^2(E,E) \cong \Hom(E,E)^{\vee}$.
\item[(2)]
For a 0-dimensional Mukai vector $v$,
$M_{{\cal O}_X(1)}^{G,\alpha}(v)$
is smooth of dimension $\langle v^2 \rangle+2$.
\end{enumerate}
\end{prop}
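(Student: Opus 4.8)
The plan is to prove both statements by reducing the deformation theory of a $0$-dimensional object $E\in\mathcal C$ to a Serre-duality computation, using Lemma \ref{lem:obstruction} to kill the obstruction space. First I would establish (1). Since $E$ is a $0$-dimensional object, its cohomology sheaves are supported on finitely many fibers of $\pi$, and by the analysis of irreducible objects (Lemma \ref{lem:tilting:irreducible}, and the Remark following it, together with Proposition \ref{prop:tilting:G-1Per-irred}) $E$ is built by extensions from the irreducible objects $E_{yj}$, which are of the form $A_y$ and $A_y\otimes\mathcal O_{C_{yj}}(-1)[1]$, all supported on the exceptional curves $Z_y$. The key point is that for such objects $K_X$ is trivial on a neighborhood of the support: because $\pi$ is the minimal resolution of a rational double point, $K_X=\pi^*K_Y$ and the exceptional curves are $(-2)$-curves, so $K_X|_{Z_y}\cong\mathcal O_{Z_y}$. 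Hence tensoring each irreducible building block by $K_X$ gives back an isomorphic object, and I would propagate this through the extensions to conclude $E\otimes K_X\cong E$. The identity $\Ext^2(E,E)\cong\Hom(E,E)^\vee$ then follows immediately from Serre duality on the smooth projective surface $X$, namely $\Ext^2(E,E)\cong\Hom(E,E\otimes K_X)^\vee\cong\Hom(E,E)^\vee$.

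For part (2), the strategy is the standard one for moduli of simple objects, now carried out for objects in the tilted abelian category $\mathcal C$. Since a point of $M_{{\cal O}_X(1)}^{G,\alpha}(v)$ corresponds to a $G$-twisted stable, hence simple, object $E$, we have $\Hom(E,E)\cong\mathbb C$, and by part (1) also $\Ext^2(E,E)\cong\mathbb C$. The tangent space to the moduli functor at $E$ is $\Ext^1(E,E)$ and the obstructions lie in $\Ext^2(E,E)$. Here is where Lemma \ref{lem:obstruction} does the essential work: it produces a proper subspace $T(E)\subsetneq\Ext^2(E,E)$ containing all obstructions; but $\Ext^2(E,E)$ is one-dimensional by part (1), so the only proper subspace is $0$, forcing all obstructions to vanish. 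Therefore the deformation space is smooth, and $\dim_E M=\dim\Ext^1(E,E)$.

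It then remains to compute $\dim\Ext^1(E,E)$ via the Euler characteristic. Using $\chi(E,E)=\dim\Hom(E,E)-\dim\Ext^1(E,E)+\dim\Ext^2(E,E)$ together with $\dim\Hom(E,E)=\dim\Ext^2(E,E)=1$, I get $\dim\Ext^1(E,E)=2-\chi(E,E)$. Finally, by the Mukai-pairing formula recorded in the notation section, $\chi(E,E)=-\langle v(E),v(E)\rangle=-\langle v^2\rangle$, so $\dim\Ext^1(E,E)=\langle v^2\rangle+2$, as claimed. One technical caveat I would address is that $\Ext^i(E,E)$ here must be computed in the derived category ${\bf D}(X)$ (equivalently via the Morita-equivalent $\Coh_{\mathcal A}(Y)$), and that the Riemann-Roch formula $\chi(E,F)=-\langle v(E),v(F)\rangle$ applies because $v(E)$ is well-defined for $0$-dimensional objects ($\rk E=0$).

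The main obstacle I anticipate is part (1): proving $E\otimes K_X\cong E$ for a general $0$-dimensional object, not merely for the irreducible pieces. Tensoring by $K_X$ is an autoequivalence of $\mathcal C$ near the support, but the isomorphisms on successive extension quotients need not glue automatically to an isomorphism of $E$ itself. The cleanest route is to observe that $K_X$ is trivial in a suitable analytic (or formal) neighborhood $U$ of $\Supp(E)$ — which holds precisely because the exceptional fibers of the minimal resolution of a rational double point are configurations of $(-2)$-curves with $K_X$ numerically and, locally, actually trivial — so that $\otimes K_X$ restricts to the identity functor on the category of objects supported in $U$. This localization argument, rather than an inductive gluing of extension data, is what makes the isomorphism canonical and is the step I would treat most carefully.
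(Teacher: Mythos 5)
Your proof is correct and follows essentially the same route as the paper: for (2) the paper likewise combines simplicity, Serre duality from (1), and Lemma \ref{lem:obstruction} (whose proper subspace $T(E)\subset\Ext^2(E,E)\cong{\Bbb C}$ must then vanish) with the Riemann--Roch computation $\dim\Ext^1(E,E)=\langle v^2\rangle+2$. For (1) the paper's one-line argument is the localization you settle on at the end, in its most direct form --- $K_X=\pi^*(K_Y)$ and $\dim\pi(\Supp(E))=0$ force $K_X$ to be trivial on a neighborhood of $\Supp(E)$ --- so the detour through irreducible objects and $(-2)$-curve intersection numbers is unnecessary.
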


\begin{proof}
(1)
Since $K_X=\pi^*(K_Y)$ and $\dim \pi(\Supp(E))=0$,
we get $E \otimes K_X \cong E$. 
(2)
For $E \in M_{{\cal O}_X(1)}^{G,\alpha}(v)$,
we have $\Hom(E,E)={\Bbb C}$.
Then Lemma \ref{lem:obstruction} implies that
$T(E)=0$.
Since $\dim \Ext^1(E,E)=\langle v^2 \rangle+2$,
$M_{{\cal O}_X(1)}^{G,\alpha}(v)$
is smooth of dimension $\langle v^2 \rangle+2$.
\end{proof}

\begin{rem}
There is another argument to prove the smoothness due to
Bridgeland \cite{Br:1}. We shall use the argument later.
So for stable objects, we do not need Lemma \ref{lem:obstruction},
but it is necessary for the study of properly semi-stable
objects (see Proposition \ref{prop:lci-stack}).   
\begin{NB}
Bridgeland's argument:
If $\langle v(E)^2 \rangle =-2$, then $\Ext^1(E,E)=0$,
and hence the moduli space is smooth.
Assume that $v(E)=\varrho$.
Let $M$ be an irreducible component of $M_{{\cal O}_X(1)}^{G,\alpha}(v)$
containing $X \setminus \cup_i Z_i$.
Then $\dim \Ext^1(E,E)=2$ for any point $E \in M$ and 
$\dim M=2$. Hence $M$ is smooth and is a connected component of
$M_{{\cal O}_X(1)}^{G,\alpha}(v)$. Then by the standard argument
of the Fourier-Mukai transform, we get $M=M_{{\cal O}_X(1)}^{G,\alpha}(v)$.
\end{NB}  
\end{rem}

\begin{lem}\label{lem:crepant1}
Assume that $\alpha \in \NS(X) \otimes {\Bbb Q}$ satisfies that
\begin{equation}\label{eq:weakly-general}
\text{ $(\alpha, D) \ne 0$
 for all $D \in \NS(X)$ with $(D^2)=-2$ and
$(c_1({\cal O}_X(1)),D)=0$.}
\end{equation}
Then 
$X^{\alpha}=
M_{{\cal O}_X(1)}^{G,\alpha}(\varrho_X)$.
\end{lem}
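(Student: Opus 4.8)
The statement to prove is Lemma \ref{lem:crepant1}: under condition \eqref{eq:weakly-general} on $\alpha$, the moduli space $X^{\alpha}=\overline{M}_{{\cal O}_X(1)}^{G,\alpha}(\varrho_X)$ of $\alpha$-twisted semi-stable $0$-dimensional objects with Mukai vector $\varrho_X$ coincides with its stable locus $M_{{\cal O}_X(1)}^{G,\alpha}(\varrho_X)$; equivalently, every $\alpha$-semi-stable object of class $\varrho_X$ is in fact $\alpha$-stable. The plan is to argue by contradiction: suppose $E$ is strictly $\alpha$-semi-stable with $v(E)=\varrho_X$, and analyze its Jordan--H\"older factors to force the existence of a destabilizing class that violates \eqref{eq:weakly-general}.

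First I would recall the structure of $0$-dimensional objects of ${\cal C}$. By Remark \ref{rem:G-indep}, since $v(E)=\varrho_X$ has $\rk=0$ and we may take $\rk A=0$, the $(G,\alpha)$-semi-stability reduces to the condition $\chi(\gamma^{-1}((0,\alpha,0)),E')\le 0$ for all subobjects $E'\subset E$ in ${\cal C}$, and in particular the semi-stability is independent of $G$ and governed purely by the pairing against $\alpha$. So strict semi-stability means there is a proper nonzero subobject $E'$ with $(\alpha, c_1 \text{-type pairing})=0$, i.e.\ an $S$-equivalence class with at least two Jordan--H\"older factors of equal $\alpha$-slope. Because $\chi(G,E)=\chi(\varrho_X)$-type data shows $E$ has length controlled by $\varrho_X$, the Jordan--H\"older factors are irreducible objects of ${\cal C}$, which by Lemma \ref{lem:tilting:irreducible} are either ${\Bbb C}_x$ for $x\in X\setminus\pi^{-1}(Y_\pi)$ or the $E_{yj}$ attached to singular fibers. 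The total Mukai vector being $\varrho_X$ pins down the possible combinations.

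The key step is then a numerical one. I would compute the Mukai vectors of the irreducible factors $E_{yj}$: these are the classes $A_y$ and $A_y\otimes{\cal O}_{C_{yj}}(-1)[1]$, whose $c_1$'s lie in the span of the exceptional curve classes $[C_{yj}]$, all of which satisfy $(c_1,{\cal O}_X(1))=0$ since ${\cal O}_X(1)=\pi^*{\cal O}_Y(1)$ contracts the fibers. If $E$ were strictly semi-stable, writing $v(E)=\varrho_X=\sum_k v(F_k)$ as a sum over (at least two distinct-looking) factors of the same $\alpha$-slope forces a nontrivial relation: the difference $v(E')\cdot\rk(E)-v(E)\cdot\rk(E')$ (in the appropriate normalization) produces a nonzero class $D$ supported on the exceptional locus with $(D,{\cal O}_X(1))=0$. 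Using $\chi({\bf e},{\bf e})$-type computations as in Corollary \ref{cor:characterize} (where $(c_1(E)^2)=-2$ arises for stable exceptional sheaves), I expect $D$ to satisfy $(D^2)=-2$, and the equal-slope condition forces $(\alpha,D)=0$, directly contradicting \eqref{eq:weakly-general}.

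The main obstacle will be the bookkeeping that converts "strictly semi-stable with $v=\varrho_X$" into the clean statement that a genuine $(-2)$-class $D$ orthogonal to ${\cal O}_X(1)$ must appear. I need to ensure the destabilizing subobject genuinely yields an integral class with $(D^2)=-2$ rather than merely $(D^2)\le -2$ or a non-primitive combination, and to handle the case where a factor is a point sheaf ${\Bbb C}_x$ off the exceptional locus (in which case $E$ splits and the exceptional part separately carries the relevant class). I would lean on the root-system description of classes on the resolution of a rational double point --- the exceptional curves generate an $ADE$ root lattice, so the negative-definite classes orthogonal to ${\cal O}_X(1)$ with self-intersection $-2$ are exactly the roots --- to guarantee that any equal-slope decomposition of $\varrho_X$ produces an actual root $D$. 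Once $D$ is produced, the contradiction with \eqref{eq:weakly-general} is immediate, completing the proof that $X^{\alpha}=M_{{\cal O}_X(1)}^{G,\alpha}(\varrho_X)$.
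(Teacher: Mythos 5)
Your overall strategy (pass to the Jordan--H\"older factors of a strictly semi-stable $E$, show their first Chern classes lie in $H^{\perp}$, and extract a $(-2)$-class $D$ with $(\alpha,D)=0$ contradicting \eqref{eq:weakly-general}) is the same as the paper's, but two steps are genuinely off. First, you assert that the Jordan--H\"older factors of an $\alpha$-semi-stable object are irreducible objects of ${\cal C}$ in the sense of Lemma \ref{lem:tilting:irreducible}. That is not true: the JH factors with respect to $\alpha$-stability are $\alpha$-\emph{stable} objects, which need not be irreducible in the abelian category ${\cal C}$ (for instance ${\Bbb C}_x$ with $x$ on an exceptional curve is $\alpha$-stable for suitable $\alpha$ by Lemma \ref{lem:alpha>0}, yet it is a nontrivial extension of the $E_{yj}$). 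The argument must be run with the $\alpha$-stable factors $E_i$ themselves, without identifying them with a fixed finite list.

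Second, and more seriously, the key numerical point --- that the destabilizing class $D$ satisfies $(D^2)=-2$ exactly rather than $(D^2)\le -2$ --- is exactly where you write ``I expect'' and propose to close the gap by root-lattice combinatorics. The root lattice alone cannot give you this: a general class supported on the exceptional locus only satisfies $(D^2)\le -2$. The missing input is deformation-theoretic, namely Proposition \ref{prop:0-dim:smooth}~(1): for each $\alpha$-stable factor $E_i$ one has $E_i\otimes K_X\cong E_i$, hence $\Ext^2(E_i,E_i)\cong\Hom(E_i,E_i)^{\vee}\cong{\Bbb C}$ and
\begin{equation*}
\langle v(E_i)^2\rangle=\dim\Ext^1(E_i,E_i)-2\ge -2 .
\end{equation*}
Writing $v(E_i)=(0,D_i,a_i)$ with $(D_i,H)=0$, so $\langle v(E_i)^2\rangle=(D_i^2)$, and using $\sum_{i,j}\langle v(E_i),v(E_j)\rangle=\langle\varrho_X^2\rangle=0$ together with $\langle v(E_i),v(E_j)\rangle\ge 0$ for non-isomorphic stable factors, one concludes that either some $(D_i^2)=-2$ (impossible, since $(\alpha,D_i)=0$ contradicts \eqref{eq:weakly-general}) or every $v(E_i)=a_i\varrho_X$ with $a_i>0$, which forces a single factor and hence stability. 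Without the bound $\ge -2$ from stability, your argument does not close.
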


\begin{proof}
Assume that $E \in X^{\alpha}$ is $S$-equivalent to
$\oplus_{i=1}^t E_i$, where $E_i$ are $\alpha$-stable objects.
Then $(\alpha,c_1(E_i))=0$, $(c_1({\cal O}_X(1)),c_1(E_i))=0$ and
$(c_1(E_i)^2)=\langle v(E_i)^2 \rangle \geq -2$ for all $i$.
Since $\langle v(E_i),v(E_j) \rangle \geq 0$ for $E_i \not \cong E_j$
and $\sum_{i,j} \langle v(E_i),v(E_j) \rangle=\langle v(E)^2 \rangle=0$,
(i) $\langle v(E_i)^2 \rangle=-2$ for an $i$, or
(ii) $\langle v(E_i)^2 \rangle=0$ for all $i$.
By our choice of $\alpha$, the case (i) does not occur. 
In the second case, we see that
$v(E_i)=a_i \varrho_X$, $a_i>0$.
Then $\varrho_X=(\sum_i a_i)\varrho_X$, which implies $t=1$.
Therefore $E$ is $\alpha$-stable.
\end{proof}

\begin{lem}\label{lem:simple-generator}
Let ${\cal E}$ be an object of ${\bf D}(X \times X')$
such that 
$\Phi_{X \to X'}^{{\cal E}^{\vee}}:
{\bf D}(X) \to {\bf D}(X')$ is an equivalence,
${\cal E}_{|X \times \{ x' \}} \in {\cal C}$ for all $x' \in X'$
and $v({\cal E}_{|X \times \{ x' \}})=\varrho_X$.
Then every irreducible object of ${\cal C}$ appears as a direct summand of
the $S$-equivalence class of
${\cal E}_{|X \times \{ x' \}}$. 
\end{lem}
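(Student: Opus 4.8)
The plan is to show that the family of fibres $\{ {\cal E}_{x'} \}_{x' \in X'}$, where ${\cal E}_{x'} := {\cal E}_{|X \times \{ x'\}}$, is a spanning class of ${\bf D}(X)$, and then to convert the non-vanishing of some $\Ext^k(A,{\cal E}_{x'})$ into the statement that a given irreducible object $A$ is a Jordan--H\"older factor of ${\cal E}_{x'}$. Note first that ${\cal E}_{x'} = \Phi_{X' \to X}^{{\cal E}}({\Bbb C}_{x'})$, so each fibre is a $0$-dimensional object of ${\cal C}$ with Mukai vector $\varrho_X$, and its $S$-equivalence class is the direct sum of its Jordan--H\"older factors.

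First I would establish the spanning property. The skyscraper sheaves ${\Bbb C}_{x'}$, $x' \in X'$, form a spanning class of ${\bf D}(X')$. Since $\Psi := \Phi_{X \to X'}^{{\cal E}^{\vee}}$ is an equivalence, its quasi-inverse is computed by the adjoint-kernel formula as $\Phi_{X' \to X}^{{\cal E} \otimes p_X^*(K_X)[2]}$; applying it to ${\Bbb C}_{x'}$ and using the projection formula gives $\Psi^{-1}({\Bbb C}_{x'}) \cong {\cal E}_{x'} \otimes K_X[2] \cong {\cal E}_{x'}[2]$, where the last isomorphism is $E \otimes K_X \cong E$ for $0$-dimensional objects (Proposition \ref{prop:0-dim:smooth} (1)). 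As an equivalence carries a spanning class to a spanning class and a shift is harmless, $\{ {\cal E}_{x'} \}$ is a spanning class of ${\bf D}(X)$. In particular, for a nonzero irreducible object $A \in {\cal C}$ there exist $x' \in X'$ and $k \in {\Bbb Z}$ with $\Ext^k(A,{\cal E}_{x'}) \ne 0$.

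It remains to upgrade this to a degree-$0$ statement. Both $A$ and ${\cal E}_{x'}$ lie in the heart ${\cal C}$, so $\Ext^k = 0$ for $k < 0$; and since $A$ is $0$-dimensional, $A \otimes K_X \cong A$, so Serre duality gives $\Ext^k(A,{\cal E}_{x'}) \cong \Ext^{2-k}({\cal E}_{x'},A)^{\vee}$, which also vanishes for $k > 2$. Hence $k \in \{0,1,2\}$ and $\Ext^2(A,{\cal E}_{x'}) \cong \Hom({\cal E}_{x'},A)^{\vee}$. Now every irreducible object of ${\cal C}$ listed in Lemma \ref{lem:tilting:irreducible} --- the sheaves ${\Bbb C}_x$ and $A_y$ and the complexes $A_y \otimes {\cal O}_{C_{yj}}(-1)[1]$ --- has rank $0$, so $\chi(A,{\cal E}_{x'}) = -\langle v(A),\varrho_X \rangle = \rk A = 0$. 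Therefore $\dim \Hom(A,{\cal E}_{x'}) - \dim \Ext^1(A,{\cal E}_{x'}) + \dim \Hom({\cal E}_{x'},A) = 0$, so the non-vanishing of some $\Ext^k$ forces $\Hom(A,{\cal E}_{x'}) \ne 0$ or $\Hom({\cal E}_{x'},A) \ne 0$: were only $\Ext^1$ nonzero, the Euler characteristic would be strictly negative. Since $A$ is irreducible, a nonzero map $A \to {\cal E}_{x'}$ is a subobject inclusion and a nonzero map ${\cal E}_{x'} \to A$ is a quotient, so in either case $A$ occurs in the Jordan--H\"older filtration, i.e. as a summand of the $S$-equivalence class of ${\cal E}_{x'}$. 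The one delicate point is precisely the exclusion of the ``$\Ext^1$ only'' possibility, and this is exactly what the rank-$0$ computation of $\chi(A,{\cal E}_{x'})$ rules out.
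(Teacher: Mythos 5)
Your proof is correct and follows essentially the same route as the paper's: use the equivalence to produce a nonzero $\Ext^k(A,{\cal E}_{|X\times\{x'\}})$ for some $x'$, use $\chi(A,{\cal E}_{|X\times\{x'\}})=0$ together with Serre duality and the heart condition to force a nonzero $\Hom$ in degree $0$ in one direction, and then invoke irreducibility to conclude that $A$ is a Jordan--H\"older factor. The only (harmless) differences are that you treat all irreducible objects uniformly via the rank-$0$ computation of $\chi$ through the Mukai pairing, where the paper splits into the cases $\Supp(E)\not\subset Z$ and $\Supp(E)\subset Z$ and computes $\chi=0$ by evaluating at a generic $x'$ with disjoint support, and that you spell out the spanning-class step and the exclusion of the ``only $\Ext^1$'' possibility which the paper leaves implicit.
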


\begin{proof}
Let $E$ be an irreducible object of ${\cal C}$.
If $\Supp(E) \not \subset Z$, then we have a non-trivial morphism
$E \to {\Bbb C}_x$, $x \not \in Z$.
Since $({\cal C})_{|X \setminus Z}=\Coh(X \setminus Z)$,
${\Bbb C}_x$ is an irreducible object. Hence
$E\cong {\Bbb C}_x$.
Since $\chi({\cal E}_{|X \times \{ x' \}},{\Bbb C}_x)=0$ and
$\Phi_{X \to X'}^{{\cal E}^{\vee}}$ is an equivalence,
there is a point $x' \in X'$ such that 
$\Hom({\cal E}_{|X \times \{ x' \}},{\Bbb C}_x) \ne 0$
or $\Hom({\Bbb C}_x,{\cal E}_{|X \times \{ x' \}}) \ne 0$. 
Since $v({\Bbb C}_x)=v({\cal E}_{|X \times \{ x' \}})=\varrho_X$,
we get ${\Bbb C}_x \cong {\cal E}_{|X \times \{ x' \}}$.
If $\Supp(E) \subset \cup_i Z_i$, then we still have
$\chi({\cal E}_{|X \times \{ x' \}},E)=0$, since
${\cal E}_{|X \times \{ x' \}}={\Bbb C}_{x}$, $x \not \in Z$ for
a point $x' \in X'$.
Then we have
$\Hom({\cal E}_{|X \times \{ x' \}},E) \ne 0$
or $\Hom(E,{\cal E}_{|X \times \{ x' \}}) \ne 0$. 
Therefore our claim holds.
\end{proof}

\begin{lem}\label{lem:0-dim:irreducible}
If $\alpha$ is general, then
$X^{\alpha}$ is irreducible.
\end{lem}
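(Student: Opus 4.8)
The plan is to deduce irreducibility from smoothness together with connectedness, and to obtain connectedness from the contraction morphism $\pi_\alpha\colon X^\alpha\to X^0$. First I would check that a general $\alpha$ satisfies the genericity hypothesis \eqref{eq:weakly-general}: any class $D\in\NS(X)$ with $(D^2)=-2$ and $(c_1({\cal O}_X(1)),D)=0$ is, up to sign, the first Chern class of a $G$-twisted stable $1$-dimensional sheaf supported on $Z$, hence contributes one of the walls $W_{E'}$ that a general $\alpha$ avoids, so $(\alpha,D)\neq 0$. By Lemma \ref{lem:crepant1} this gives $X^\alpha=M_{{\cal O}_X(1)}^{G,\alpha}(\varrho_X)$, i.e.\ every point of $X^\alpha$ is $\alpha$-stable. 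Then Proposition \ref{prop:0-dim:smooth} shows $X^\alpha$ is smooth of pure dimension $\langle\varrho_X^2\rangle+2=2$. Since a smooth scheme is the disjoint union of its irreducible components, it suffices to prove that $X^\alpha$ is connected.

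Next I would analyse $\pi_\alpha$. The locus $U$ of objects ${\Bbb C}_x$ with $x\in X\setminus Z$ is an open subscheme isomorphic to $X\setminus Z$, hence irreducible of dimension $2$, and it lies in a single component $\overline{U}$. Moreover $U$ is exactly the set of stable objects whose support is not contained in $Z$: if $E$ is $\alpha$-stable with $v(E)=\varrho_X$ and $\Supp(E)\not\subset Z$, then over $X\setminus Z$ the category ${\cal C}$ is $\Coh(X\setminus Z)$, so $E$ restricts to a nonzero $0$-dimensional sheaf there and admits a nonzero morphism $E\to{\Bbb C}_x$ for some $x\notin Z$; stability and the equality of Mukai vectors force $E\cong{\Bbb C}_x$. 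On the other hand, at $\alpha=0$ the $S$-equivalence class of $E$ is its Jordan--H\"older polystabilization, and by Lemma \ref{lem:tilting:irreducible}(2) the class of ${\Bbb C}_x$ is constant for $x\in Z_i$; hence all such objects become $S$-equivalent and each $Z_i$ is contracted to a single point, while $U$ is untouched. This identifies $X^0$ with $Y$ and exhibits $\pi_\alpha$ as a proper surjection which is an isomorphism over $Y\setminus\{p_1,\dots,p_n\}$ and whose nontrivial fibres lie over the $p_i$.

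Finally I would invoke the elementary fact that a proper surjection onto a connected scheme all of whose fibres are connected has connected total space; since $X^0\cong Y$ is connected, it remains to prove that each fibre $\pi_\alpha^{-1}(p_i)$ is connected (and, in particular, at most $1$-dimensional, so that it cannot fill a $2$-dimensional component disjoint from $\overline{U}$). This is the main obstacle. The fibre parametrises $\alpha$-stable objects supported on $Z_i$ with $v=\varrho_X$, and by Proposition \ref{prop:tilting:G-1Per-irred} their Jordan--H\"older factors are the finitely many irreducible objects $A_{p_i}$ and $A_{p_i}\otimes{\cal O}_{C_{ij}}(-1)[1]$. I would control the fibre by passing to an analytic neighbourhood of $Z_i$, where $\pi$ is the minimal resolution of a single rational double point; there the family of such objects is the central fibre of a two-dimensional Nakajima quiver variety attached to the corresponding $ADE$ diagram, which is a connected union of rational curves of dimension $1$, namely the exceptional set of the resolution (cf.\ \cite{Na:ADHM}). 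Establishing this local description, and hence the connectedness and dimension of $\pi_\alpha^{-1}(p_i)$, is where the bulk of the work lies; once it is in hand, connectedness of $X^\alpha$, and therefore irreducibility, follows at once.
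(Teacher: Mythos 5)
Your reduction to connectedness is fine (smooth plus connected gives irreducible, and the topological fact about proper surjections with connected fibres onto a connected base is correct), but the argument has a genuine gap exactly where you locate it: you never prove that the fibres $\pi_\alpha^{-1}(p_i)$ are connected and at most $1$-dimensional. That statement is essentially equivalent in strength to the structure results the paper only establishes \emph{later} (Theorem \ref{thm:RDP-desing} and Lemma \ref{lem:0-stable:exceptional}, which describe $Z_i^\alpha=\cup_j C_{ij}^\alpha$), and the appeal to a quiver-variety description of the central fibre is not something you can import for free in this global, twisted setting. There is also a circularity risk in your second step: the identification $X^0\cong Y$ is Proposition \ref{prop:Y=X^0}, whose proof goes through Proposition \ref{prop:lci-stack}, and that proposition explicitly uses the irreducibility of the moduli stack of $\alpha$-stable objects, i.e.\ the very lemma you are proving. (You can salvage connectedness of $X^0$ more cheaply from the surjectivity of $\varphi:X\to X^0$, but that does not repair the fibre analysis.)

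The paper avoids all of this with a short Fourier--Mukai argument that you should compare with. Let $X'$ be any connected component of $X^\alpha$; by Proposition \ref{prop:0-dim:smooth} it is a smooth projective surface carrying a universal family ${\cal E}$, so $\Phi_{X\to X'}^{{\cal E}^\vee}$ is an equivalence ${\bf D}(X)\to{\bf D}(X')$. For an arbitrary $E\in X^\alpha$ one has $\chi({\cal E}_{|X\times\{x'\}},E)=0$, and the equivalence forces some ${\bf R}\Hom({\cal E}_{|X\times\{x'\}},E)$ to be nonzero; combined with Serre duality and $\chi=0$ this yields a nonzero morphism in degree $0$ between $E$ and some ${\cal E}_{|X\times\{x'\}}$, and stability with equal Mukai vector $\varrho_X$ makes it an isomorphism, so $E\in X'$. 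This is the same mechanism as Lemma \ref{lem:simple-generator} and sidesteps any analysis of $\pi_\alpha$ or of the exceptional fibres. If you want to keep your geometric route, you would first have to prove the fibre description independently, which is considerably more work than the numerical argument.
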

\begin{proof} 
Let $X'$ be a connected component of
$X^{\alpha}$.
Then we have an equivalence
$\Phi_{X \to X'}^{{\cal E}^{\vee}}:{\bf D}(X) \to
{\bf D}(X')$, where ${\cal E}$ is the universal family.
By the same argument as in the proof of Lemma \ref{lem:simple-generator},
we see that every $E \in X^{\alpha}$ belongs to $X'$.
\end{proof}

\begin{prop}\label{prop:lci-stack}
Let ${\cal X}^0$ be the moduli stack of 0-semi-stable objects $E$ 
with $v(E)=\rho_X$. Then 
${\cal X}^0$ is a locally complete intersection stack of dimension 1
and irreducible. In particlar ${\cal X}^0$ is a reduced stack.   
\end{prop}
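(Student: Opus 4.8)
The plan is to analyze ${\cal X}^0$ through the deformation theory of its objects and, at the points supported on the exceptional fibres, through a local moment-map model coming from the symplectic structure of the resolution. Fix $E \in {\cal X}^0$. Since $v(E)=\varrho_X$ and $\langle \varrho_X,\varrho_X\rangle=0$, the Riemann--Roch formula $\chi(E,E)=-\langle v(E),v(E)\rangle$ gives $\chi(E,E)=0$; combined with $E\otimes K_X\cong E$ and Serre duality (Proposition \ref{prop:0-dim:smooth}) this yields, with $h:=\dim\Hom(E,E)$, the equalities $\dim\Ext^2(E,E)=h$ and $\dim\Ext^1(E,E)=2h$. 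By the construction in subsection \ref{subsect:A-module} the stack ${\cal X}^0$ is a global quotient $[R/GL(V)]$ with $R$ locally closed in a Quot-scheme, and \'etale-locally near $E$ it is isomorphic to $[\Def(E)/\Aut(E)]$, where $\Aut(E)$ is open in $\Hom(E,E)$ (so $\dim\Aut(E)=h$) and $\Def(E)$ is the germ at $0$ of the zero locus of the Kuranishi map $\kappa\colon\Ext^1(E,E)\to\Ext^2(E,E)$. By Lemma \ref{lem:obstruction} the target may be replaced by a proper subspace $T(E)\subsetneq\Ext^2(E,E)$, so $\kappa$ involves at most $h-1$ equations.

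First I would extract the lower bound. As $\Def(E)$ is cut out by at most $h-1$ equations inside the smooth germ $\Ext^1(E,E)$ of dimension $2h$, every component of $\Def(E)$ through the origin has dimension at least $2h-(h-1)=h+1$; dividing by the $h$-dimensional group $\Aut(E)$ gives $\dim_E{\cal X}^0\ge 1$. Thus ${\cal X}^0$ has dimension $\ge 1$ everywhere, and it remains to show that equality holds and that $\Def(E)$ is a complete intersection.

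The heart of the argument, and the step I expect to be the main obstacle, is to pin down $\Def(E)$ exactly. Because $E$ is $0$-dimensional, $\Supp(E)$ meets only finitely many fibres of $\pi$, and all the groups $\Ext^\bullet(E,E)$ together with $\kappa$ may be computed in an analytic neighbourhood of $\Supp(E)$. Over the points of $Y_\pi$ this neighbourhood is the minimal resolution of a rational double point, which is crepant, so $K_X$ is locally trivial there and the neighbourhood carries a holomorphic symplectic form. In this local Calabi--Yau surface situation the self-dualities $\Ext^1(E,E)\cong\Ext^1(E,E)^{\vee}$ and $\Ext^2(E,E)\cong\Hom(E,E)^{\vee}$ identify $\kappa$ with the moment map $\mu\colon\Ext^1(E,E)\to\Hom(E,E)^{\vee}$ for the $\Aut(E)$-action on the symplectic space $\Ext^1(E,E)$, that is, with the relations of the preprojective algebra of the $\Ext$-quiver of $E$. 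The image of $\mu$ lies in the hyperplane defined by the single overall trace relation, so $T(E)$ has dimension exactly $h-1$; and the flatness of the quiver moment map (applicable since the relevant dimension vector is the minimal imaginary root $\delta$ corresponding to $\varrho_X$) shows that these $h-1$ relations form a regular sequence and that $\Def(E)=\mu^{-1}(0)$ is a complete intersection of dimension exactly $h+1$. Hence $R$ is a local complete intersection with $\dim R=\dim GL(V)+1$, and ${\cal X}^0=[R/GL(V)]$ is a locally complete intersection stack of pure dimension $1$. For $E={\Bbb C}_x$ with $x\in X\setminus\pi^{-1}(Y_\pi)$ one has $\Ext^2(E,E)={\Bbb C}$ and $T(E)=0$, so such $E$ is a smooth point of dimension $1$ and no quiver model is needed.

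Finally, for irreducibility I would show that the open substack of simple objects is dense. By Lemma \ref{lem:tilting:irreducible} the simple objects with Mukai vector $\varrho_X$ are precisely the ${\Bbb C}_x$, $x\in X\setminus\pi^{-1}(Y_\pi)$; they form a ${\Bbb G}_m$-gerbe over $X\setminus\pi^{-1}(Y_\pi)$, hence an irreducible smooth substack of dimension $1$. Its closure contains each polystable object supported on a fibre $\pi^{-1}(p_i)$, and the irreducibility of the representation variety $\mu^{-1}(0)$ for the dimension vector $\delta$ shows that the whole substack of objects supported on $\pi^{-1}(p_i)$, which also has dimension $1$, lies in that closure. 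Since ${\cal X}^0$ has pure dimension $1$ and contains a dense irreducible open substack, it is irreducible. Being a local complete intersection it is Cohen--Macaulay, hence satisfies Serre's condition $S_1$; being generically smooth it satisfies $R_0$; therefore ${\cal X}^0$ is reduced, as asserted.
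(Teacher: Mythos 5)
Your lower bound is exactly the paper's: Lemma \ref{lem:obstruction} gives at most $h-1$ obstruction equations (with $h=\dim\Hom(E,E)$) cutting out the deformation space inside the $2h$-dimensional space $\Ext^1(E,E)$, whence every component of ${\cal X}^0$ has dimension at least $-\chi(E,E)+1=1$; the reducedness deduction (lci implies $S_1$, generic smoothness gives $R_0$) also matches. The divergence, and the gap, is in how you force the dimension to be \emph{exactly} $1$, which is the entire content of the local complete intersection claim. You do this by identifying the Kuranishi map with the moment map for the preprojective algebra of the $\Ext$-quiver of $E$ and invoking flatness and irreducibility of $\mu^{-1}(0)$ at the minimal imaginary root. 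That identification is asserted, not proved: it requires a formality statement for the derived endomorphism algebra of $E$ (or an explicit analytic-local derived McKay correspondence matching the chosen heart with modules over the preprojective algebra), and nothing in the paper's setting --- where ${\cal C}={\cal C}_G$ is an arbitrary tilting with local projective generator $G$ and the $E_{yj}$ are whatever its irreducible objects happen to be --- supplies this. Moreover, even granting the symplectic self-dualities, your trace argument only shows $\dim T(E)\le h-1$, not $=h-1$, and without the exact count the ``regular sequence'' conclusion does not follow from the moment-map picture alone. Your irreducibility argument leans on the same unestablished local model, so it inherits the gap.

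The paper closes this step differently and more cheaply: it stratifies the parameter scheme $Q$ by $\alpha$-stability for a general $\alpha$, shows that the locus $Q^u$ of non-$\alpha$-semistable points satisfies $\dim [Q^u/GL(N)]=0$ (by the estimate of \cite[Prop.~2.16]{O-Y:1}), and observes that $[(Q\setminus Q^u)/GL(N)]$ is the smooth irreducible $1$-dimensional stack of $\alpha$-stable objects. Since every component has dimension at least $1$, no component can lie in $Q^u$; hence ${\cal X}^0$ is irreducible of dimension exactly $1$, and this exact dimension retroactively forces the at most $h-1$ defining equations to be a regular sequence, giving the lci property without ever computing $T(E)$ precisely. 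To salvage your route you would need either to prove the formality/quiver identification in this generality or to replace it with such a dimension bound on the properly semistable locus.
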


\begin{proof}
Let $Q$ be an open subscheme
of a perverse quot-scheme such that
$X^0$ is a GIT-quotient of a suitable
$GL(N)$-action.
Then ${\cal X}^0$ is the quotient stack
$[Q/GL(N)]$.
Let ${\cal E}$ be the family of 0-dimensional
objects of ${\cal C}$ on $Q \times X$.
For any point $q \in Q$,
we set $n_1:=\dim \Hom({\cal K}_q,{\cal E}_q)$
and $n_2:=\dim T({\cal E}_q)$, where ${\cal K}$ is the
universal subobject on $Q \times X$.
Then an analytic neighborhood of 
$Q$ is an intersection of $n_2$ hypersurfaces in ${\Bbb C}^{n_1}$.
Hence $\dim Q \geq n_1-n_2$ and
$\dim [Q/GL(N)] \geq -\chi({\cal E}_q,{\cal E}_q)+1=1$.   
We take a general $\alpha$ and set 
$Q^u:=\{ q \in Q| \text{ ${\cal E}_q$ is not
$\alpha$-semi-stable } \}$.
By the proof of \cite[Prop. 2.16]{O-Y:1},
we see that $\dim [Q^u/GL(N)]=0$.
Since $[(Q \setminus Q^u)/GL(N)]$ is the moduli stack of
$\alpha$-stable objects,
it is a smooth and irreducible stack
of dimension 1. 
Hence $[Q/GL(N)]$ is a locally complete intersection stack of dimension 1
and irreducible. In particlar $[Q/GL(N)]$ is a reduced stack.   
\end{proof}

\begin{lem}\label{lem:one-point}
Let $E$ be a 0-semi-stable object with $v(E)=\varrho_X$.
Then $\Supp(\pi_*(G^{\vee} \otimes E))$ is a point of $Y$.
\end{lem}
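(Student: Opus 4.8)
The plan is to transport the problem to the $\mathcal{A}$-module side via the Morita equivalence and then to decompose by support. Since $v(E)=\varrho_X=(0,0,1)$ is a $0$-dimensional Mukai vector, the $G$-twisted Hilbert polynomial $\chi(G,E(n))=\rk G$ is constant, so $E$ is a $0$-dimensional object of $\mathcal{C}$ in the sense of Definition \ref{defn:0-dim}. Hence $F:={\bf R}\pi_*(G^{\vee}\otimes E)$ is concentrated in degree $0$ and $F=\pi_*(G^{\vee}\otimes E)$ is a $0$-dimensional $\mathcal{A}$-module on $Y$; thus $\Supp(\pi_*(G^{\vee}\otimes E))=\Supp(F)$ is a finite set of points, and we must show it is a single point.

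First I would split $F$ according to its support. Writing $\Supp(F)=\{y_1,\dots,y_t\}$, a $0$-dimensional $\mathcal{A}$-module decomposes as $F=\bigoplus_k F_{y_k}$, where $F_{y_k}$ is the part supported at $y_k$; this is a direct sum of $\mathcal{A}$-modules because the $y_k$ are closed and pairwise disjoint. Applying the inverse of the equivalence of Proposition \ref{prop:Morita} gives a decomposition $E=\bigoplus_k E_{y_k}$ in $\mathcal{C}$, where each $E_{y_k}\ne 0$ is a $0$-dimensional object with $\pi(\Supp H^i(E_{y_k}))=\{y_k\}$ and $\pi_*(G^{\vee}\otimes E_{y_k})=F_{y_k}$.

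The heart of the argument is a numerical estimate on each $E_{y_k}$. Since $\rk E_{y_k}=0$ and the cohomology sheaves of $E_{y_k}$ are supported on $\pi^{-1}(y_k)$, the class $c_1(E_{y_k})$ lies in the span of the exceptional curves over $y_k$. The classes of exceptional curves over distinct singular points are linearly independent in $\NS(X)$, so $\sum_k c_1(E_{y_k})=c_1(E)=0$ forces $c_1(E_{y_k})=0$ for every $k$. Once this holds, Riemann-Roch reduces to $\chi(G,E_{y_k})=\rk G\cdot\ch_2(E_{y_k})$ and $\chi(E_{y_k})=\ch_2(E_{y_k})$, with no correction terms. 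On the other hand $\chi(G,E_{y_k})=\chi(F_{y_k})$ is the length of the nonzero $0$-dimensional sheaf $F_{y_k}$, hence strictly positive; together with the integrality of $\chi(E_{y_k})$ this yields $\ch_2(E_{y_k})=\chi(E_{y_k})\ge 1$.

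Finally I would sum over $k$: from $v(E)=\varrho_X$ we have $\chi(E)=\ch_2(E)=1$, so $\sum_k\chi(E_{y_k})=1$ while each summand is at least $1$. Therefore $t=1$ and the support is a single point. The only delicate step is the vanishing $c_1(E_{y_k})=0$, which rests on the linear independence of the exceptional classes over distinct rational double points; everything else is formal once the support-decomposition and the Riemann-Roch computation are in place. I do not expect to use the semistability hypothesis beyond its guaranteeing that $E$ is $0$-dimensional.
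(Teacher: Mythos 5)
Your proof is correct and follows essentially the same route as the paper: decompose $E$ according to the finite support of $\pi_*(G^{\vee}\otimes E)$, show each summand $E_{y_k}$ has $c_1(E_{y_k})=0$ using the negative definiteness of the exceptional lattice, and then count using the positivity of $\chi(G,E_{y_k})$ together with $\chi(E)=1$. The only cosmetic difference is that you obtain $c_1(E_{y_k})=0$ from the linear independence of the exceptional classes and $c_1(E)=0$, whereas the paper writes $v(E_i)=(0,D_i,a_i)$ and deduces $(D_i^2)=0$ for all $i$ from $0=\langle v(E)^2\rangle=\sum_i(D_i^2)$ and negative definiteness; both arguments are sound.
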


\begin{proof}
For $E$, we have a decomposition
$E=\oplus_{i=1}^t E_i$ such that $\Supp(\pi_*(G^{\vee} \otimes E_i))$,
$i=1,...,t$ are distinct $t$ points of $Y$.
We set $v(E_i)=(0,D_i,a_i)$.
Since $D_i$ are contained in the exceptional loci,
$0=\langle v(E)^2 \rangle=\sum_i (D_i^2)$ implies that
$(D_i^2)=0$ for all $i$. Thus 
we have $v(E_i)=a_i \varrho_X$ for all $i$, which implies that
$\varrho_X=(\sum_i a_i)\varrho_X$.
Since $\chi(G,E_i)>0$, we have $a_i>0$. 
Therefore $t=1$.
\end{proof}
By Lemma \ref{lem:tilting:irreducible}, we get the following. 

\begin{lem}\label{lem:contraction-varphi}
\begin{enumerate}
\item[(1)]
${\Bbb C}_x \in {\cal C}$ for all $x \in X$.
In particular,
we have a morphism $\varphi:X \to X^0$ by sending $x \in X$ to 
the $S$-equivalence class of ${\Bbb C}_x$.
\item[(2)]
$\varphi(Z_i)$ is a point.
\end{enumerate}
\end{lem}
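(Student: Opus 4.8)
The statement to prove is Lemma~\ref{lem:contraction-varphi}, which I shall establish by directly invoking Lemma~\ref{lem:tilting:irreducible} and then analyzing the behavior on the exceptional fibers. The plan is to treat the two assertions separately, as part (2) will build on part (1).

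\textbf{Part (1).} First I would recall that $G$ is a tilting generator of ${\cal C}={\cal C}_G$, so by Lemma~\ref{lem:tilting:irreducible}~(1) we have ${\Bbb C}_x \in {\cal C}_G={\cal C}$ for every $x \in X$; this is immediate since ${\bf R}\pi_*(G^{\vee} \otimes {\Bbb C}_x)=\pi_*(G^{\vee} \otimes {\Bbb C}_x)$ is a coherent sheaf supported at $\pi(x)$. Next I must check that each ${\Bbb C}_x$ is a $0$-semi-stable object, so that it defines a point of $X^0=\overline{M}_{{\cal O}_X(1)}^{G,0}(\varrho_X)$. By the remark following Definition~\ref{defn:0-dim} (or directly from the observation at the start of subsection~\ref{subsect:stability} that every $0$-dimensional object is $G$-twisted semi-stable, hence $0$-semi-stable), ${\Bbb C}_x$ is $0$-semi-stable with $v({\Bbb C}_x)=\varrho_X$. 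Sending $x$ to the $S$-equivalence class of ${\Bbb C}_x$ then gives a set-theoretic map $X \to X^0$; to see it is a morphism I would use that $X$ itself is a flat family of $0$-dimensional objects on $X\times X$, namely the structure sheaf ${\cal O}_{\Delta}$ of the diagonal, whose fibers ${\Bbb C}_x$ lie in ${\cal C}$ and are $0$-semi-stable, so the universal property of the coarse moduli scheme $X^0$ yields the classifying morphism $\varphi:X \to X^0$.

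\textbf{Part (2).} For the exceptional fiber $Z_i=\pi^{-1}(p_i)$, I would show that $\varphi$ is constant on $Z_i$ by proving that all the points ${\Bbb C}_x$, $x \in Z_i$, are $S$-equivalent in ${\cal C}$. This is exactly the content of Lemma~\ref{lem:tilting:irreducible}~(2): the Jordan–H\"older decomposition of ${\Bbb C}_x$ in ${\cal C}_G$ does not depend on the choice of $x \in \pi^{-1}(y)$. Since $0$-semi-stability here places no further constraint (every $0$-dimensional object is $0$-semi-stable, so the $S$-equivalence class in the moduli-theoretic sense coincides with the Jordan–H\"older class in ${\cal C}$), the point $\varphi(x) \in X^0$ is literally the $S$-equivalence class of ${\Bbb C}_x$, which is constant as $x$ ranges over $Z_i$. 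Hence $\varphi(Z_i)$ is a single point.

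\textbf{Main obstacle.} The genuinely substantive input is Lemma~\ref{lem:tilting:irreducible}~(2), already proved in the excerpt, so I do not expect a real obstacle in the mathematics; the one point requiring care is the \emph{functoriality/representability} step in part (1)---verifying that the pointwise assignment $x \mapsto [{\Bbb C}_x]$ is induced by the diagonal family ${\cal O}_{\Delta}$ on $X \times X$ and therefore defines an honest morphism of schemes rather than merely a map on closed points. I would address this by checking that ${\cal O}_{\Delta}$, viewed via $p_1:X\times X \to X$, is a family of objects of ${\cal C}_s$ in the sense of subsection~\ref{subsect:family} (flatness over the second factor and fiberwise membership in ${\cal C}$), so that Proposition~\ref{prop:Morita-family} and the coarse-moduli property of $\overline{M}_{{\cal O}_X(1)}^{G,0}(\varrho_X)$ together produce $\varphi$. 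Everything else is a direct citation of the already-established structure of irreducible objects and their Jordan–H\"older decompositions along the fibers of $\pi$.
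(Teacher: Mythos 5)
Your proof is correct and follows essentially the same route as the paper: the paper derives both parts directly from Lemma \ref{lem:tilting:irreducible}, using part (1) of that lemma (via ${\bf R}\pi_*(G^{\vee}\otimes{\Bbb C}_x)=\pi_*(G^{\vee}\otimes{\Bbb C}_x)$) for membership in ${\cal C}$, and part (2) (constancy of the Jordan--H\"older decomposition of ${\Bbb C}_x$ on $\pi^{-1}(y)$) to conclude that $\varphi(Z_i)$ is a point. Your additional remarks on $0$-semi-stability of $0$-dimensional objects and on obtaining $\varphi$ as a morphism from the diagonal family ${\cal O}_{\Delta}$ are consistent with the surrounding text and fill in details the paper leaves implicit.
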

\begin{NB}
If there is an $\alpha$ such that $X \cong X^\alpha$,
$\varphi=\pi_\alpha$. 
\end{NB}

\begin{NB}
\begin{proof}
(1)
We note that ${\bf R}\pi_*(G^{\vee} \otimes {\Bbb C}_x)=
 \pi_*(G^{\vee} \otimes {\Bbb C}_x)$. Hence ${\Bbb C}_x \in {\cal C}$.
(2)
Since the restriction map
$R^1 \pi_*(G^{\vee} \otimes G) \to
R^1 \pi_*(G^{\vee} \otimes G_{|C_{ij}})$ is surjective,
$R^1 \pi_*(G^{\vee} \otimes G_{|C_{ij}})=0$.
Thus we can write 
$G_{|C_{ij}} \cong {\cal O}_{C_{ij}}(d_{ij})^{\oplus r_{ij}}
\oplus {\cal O}_{C_{ij}}(d_{ij}+1)^{\oplus r_{ij}'}$.
Since $R^1 \pi_*(G^{\vee} \otimes {\cal O}_{C_{ij}}(d_{ij}))=0$
and $\pi_*(G^{\vee} \otimes {\cal O}_{C_{ij}}(d_{ij}-1))=0$,
${\cal O}_{C_{ij}}(d_{ij}),{\cal O}_{C_{ij}}(d_{ij}-1)[1] \in
{\cal C}$.
For $x \in C_{ij}$, we have an exact sequence in ${\cal C}$
\begin{equation}
0 \to {\cal O}_{C_{ij}}(d_{ij}) \to {\Bbb C}_x \to
{\cal O}_{C_{ij}}(d_{ij}-1)[1] \to 0.
\end{equation} 
Hence ${\Bbb C}_x$ is $S$-equivalent to
${\cal O}_{C_{ij}}(d_{ij}) \oplus {\cal O}_{C_{ij}}(d_{ij}-1)[1]$,
which implies that $\varphi(C_{ij})$ is a point.
Since $Z_i$ is connected, we see that $\varphi(Z_i)$ is a point.
\end{proof}
\end{NB}

If ${\Bbb C}_x$ is properly 0-semi-stable, then
${\Bbb C}_x$ is $S$-equivalent to
$\oplus_j E_{ij}^{\oplus a_{ij}'}$ for an $i$.

\begin{prop}\label{prop:Y=X^0}
There is an isomorphism $\psi:X^0 \to Y$ such that
$\psi \circ \varphi:X \to Y$ coincides with $\pi$.  
In particular, $X^0$ is a normal projective surface.
\end{prop}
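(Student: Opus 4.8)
The plan is to construct $\psi$ directly from the Morita equivalence of Proposition \ref{prop:Morita}, identify it with the contraction of the exceptional curves via the relation $\psi\circ\varphi=\pi$, and then upgrade the resulting bijection to an isomorphism using the normality of $Y$. First I would define $\psi$ on points: given the $S$-equivalence class of a $0$-semi-stable object $E$ with $v(E)=\varrho_X$, the ${\cal A}$-module $\pi_*(G^{\vee}\otimes E)$ is $0$-dimensional and supported at a single point $y\in Y$ by Lemma \ref{lem:one-point}. Since $S$-equivalent objects have isomorphic associated graded objects, this support depends only on the class, so $\psi([E]):=y$ is well defined. To see that $\psi$ is a morphism I would work through Morita equivalence: $X^0$ is the moduli space of the $0$-dimensional semi-stable ${\cal A}$-modules $F=\pi_*(G^{\vee}\otimes E)$, and taking the scheme-theoretic support of the universal family of such ${\cal A}$-modules (available on the stable locus and extended by properness) yields $\psi:X^0\to Y$, using that each fibrewise support is a single point.

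The relation $\psi\circ\varphi=\pi$ is then immediate: for $x\in X$ the class $\varphi(x)$ is that of ${\Bbb C}_x$ (Lemma \ref{lem:contraction-varphi}), and $\pi_*(G^{\vee}\otimes {\Bbb C}_x)$ is supported at $\pi(x)$, so $\psi(\varphi(x))=\pi(x)$. As $\pi$ and $\varphi$ are surjective, $\psi$ is surjective.

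Next I would prove $\psi$ is bijective by analysing the fibres. Over $U:=Y\setminus\{p_1,\dots,p_n\}$ we have $({\cal C})_{|\pi^{-1}(U)}=\Coh(\pi^{-1}(U))$, so the objects with $v=\varrho_X$ supported there are exactly the ${\Bbb C}_x$, which are stable and pairwise non-isomorphic; hence $\psi^{-1}(U)\to U$ is an isomorphism, realised as the fine moduli of these skyscrapers, $\pi^{-1}(U)\cong U$. Over a point $p_i$, let $\oplus_{j} E_{ij}^{\oplus a_{ij}}$ be the Jordan--H\"older decomposition of ${\Bbb C}_x$, $x\in Z_i$, which is independent of $x$ by Lemma \ref{lem:tilting:irreducible}. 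Any $0$-semi-stable $E$ with $v(E)=\varrho_X$ supported at $p_i$ satisfies $v(E)=\sum_j b_{ij}v(E_{ij})$ with $b_{ij}\ge 0$; reading off the $\varrho_X$-component forces $b_{i0}=1$ because $\chi(E_{i0})=1$ while $\chi(E_{ij})=0$ for $j>0$, and reading off the $c_1$-component, using that the classes $c_1(E_{ij})$, $j>0$, are linearly independent (the intersection matrix of the $C_{ij}$ is negative definite), forces $b_{ij}=a_{ij}$ for all $j$. Thus the Jordan--H\"older factors of $E$ agree with those of ${\Bbb C}_x$, so $\psi^{-1}(p_i)=\{\varphi(Z_i)\}$ is a single point.

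Finally I would conclude that $\psi$ is an isomorphism. By Proposition \ref{prop:lci-stack} the stack ${\cal X}^0$ is reduced and irreducible, so $X^0$ is an integral projective surface. The morphism $\psi$ is proper with finite fibres, hence finite, and it is birational by the computation over $U$; since ${\cal O}_Y\hookrightarrow \psi_*{\cal O}_{X^0}$ is then a finite extension of integral domains with the same fraction field and $Y$ is normal, we get ${\cal O}_Y=\psi_*{\cal O}_{X^0}$ and $\psi$ is an isomorphism. Consequently $X^0\cong Y$ is a normal projective surface and $\psi\circ\varphi=\pi$ by construction. The main obstacle is precisely this last step: establishing that $\psi$ is genuinely \emph{birational} (an isomorphism over $U$, not merely a set-theoretic bijection) together with the reducedness of $X^0$, after which the normality of $Y$ forces the conclusion.
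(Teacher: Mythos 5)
Your overall architecture matches the paper's: define $\psi$ by the support of $\pi_*(G^{\vee}\otimes E)$, check $\psi\circ\varphi=\pi$, show $\psi$ is a bijective (finite, birational) morphism onto the normal surface $Y$, and conclude by Zariski. The final step and the fibre analysis are fine (modulo a small slip: for general ${\cal C}$ it is not true that $\chi(E_{i0})=1$ and $\chi(E_{ij})=0$ for $j>0$; what you actually need, and what does hold, is that the $c_1(E_{ij})$ satisfy the single relation $\sum_j a_{ij}c_1(E_{ij})=0$ and $\sum_j a_{ij}\chi(E_{ij})=1$, which still forces $b_{ij}=a_{ij}$). The paper gets injectivity more cheaply from surjectivity of $\varphi$ together with Lemma \ref{lem:contraction-varphi}, but your route through Jordan--H\"older factors is also valid.

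The genuine gap is in the construction of $\psi$ as a \emph{morphism of schemes}. "Taking the scheme-theoretic support of the universal family" does not by itself produce a morphism $X^{0}\to Y$: the fibrewise support of a flat family of sheaves is not a flat family of points, and is not functorial in the base. Likewise "extend by properness" from the stable locus is not a valid extension principle --- $X^{0}$ is not yet known to be normal (indeed its normality is part of the conclusion), and even for a normal source a morphism off a finite set of points need not extend. The paper's actual mechanism is the one you are missing: since ${\cal F}:=\pi_*(G^{\vee}\otimes{\cal E})$ is a $Q$-flat family of $0$-dimensional sheaves of length $r=\rk G$ on $Y$ and the characteristic is zero, Fogarty's construction gives a morphism $Q\to S^{r}Y$; the set-theoretic image lies in the small diagonal $Y\subset S^{r}Y$ by Lemma \ref{lem:one-point}, and because $Q$ is \emph{reduced} (this is exactly where Proposition \ref{prop:lci-stack} is used --- not merely to make $X^{0}$ integral) the morphism factors scheme-theoretically through $Y$, then descends to $\psi:X^{0}\to Y$. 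Without this, or some equivalent device (e.g.\ the length-one case $\rk G=1$, where flatness directly gives a section of $Y\times Q\to Q$, as in the remark following Lemma \ref{lem:alpha=0}), your $\psi$ is only a map of sets and the rest of the argument cannot get started.
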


\begin{proof}
We keep the notation in the proof of Proposition
\ref{prop:lci-stack}.
By Lemma \ref{lem:one-point},
${\cal F}:=\pi_*(G^{\vee} \otimes {\cal E})$ is a flat family  
of coherent sheaves on $Y$ such that
$\Supp({\cal F}_q)$ is a point for every $q \in Q$. 
Since the characteristic of the base field is zero,
we have a morphism $Q \to S^r Y$, where $r=\rk G$
(cf. \cite{F:1}, \cite{F:2}).
Since the image is contained in the diagonal $Y$,
we have a morphism $Q \to Y$.
\begin{NB}
For the diagonal embedding $Y \to Y^r$,
we have an embedding $Y \cong Y/{\frak S}_r \to Y^r/{\frak S}_r$.
\end{NB}
Hence we have a morphsim $\psi:X^0 \to Y$.
By the construction of $\varphi$ and $\psi$,
$\pi=\psi \circ \varphi$. 
Since $\varphi$ and $\psi$ are projective birational morphisms
between irreducible surfaces, 
$\varphi$ and $\psi$ are contractions.
By using Lemma \ref{lem:contraction-varphi}, we see that
$\psi$ is injective. Hence $\psi$ is a finite morphism.
Since $Y$ is normal, $\psi$ is an isomorphism.
\end{proof}

\begin{lem}\label{lem:crepant}
Assume that $\alpha \in \NS(X) \otimes {\Bbb Q}$ satisfies
\eqref{eq:weakly-general}.
Then 
$K_{X^{\alpha}}$ is the pull-back of a line bundle on $X^0$.
\end{lem}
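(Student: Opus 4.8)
The plan is to realize $X^{\alpha}$ as a fine moduli space carrying a universal family and then to read off $K_{X^{\alpha}}$ from relative Grothendieck--Serre duality for that family, identifying $X^0$ with $Y$ throughout via Proposition \ref{prop:Y=X^0}. First I would set up the geometry: since $\alpha$ satisfies \eqref{eq:weakly-general}, Lemma \ref{lem:crepant1} gives $X^{\alpha}=M_{{\cal O}_X(1)}^{G,\alpha}(\varrho_X)$, so every parametrized object is $G$-twisted stable and there is a universal family ${\cal E}\in{\bf D}(X^{\alpha}\times X)$ with ${\cal E}_{x'}:={\cal E}_{|\{x'\}\times X}$ a $0$-dimensional stable object of ${\cal C}$. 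By Proposition \ref{prop:0-dim:smooth} the space $X^{\alpha}$ is smooth of dimension $\langle \varrho_X^2\rangle+2=2$, and since over $Y\setminus\{p_i\}$ the only such stable object is ${\Bbb C}_x$, the contraction $\pi_{\alpha}:X^{\alpha}\to X^0=Y$ is an isomorphism there; hence $X^{\alpha}$ is an irreducible smooth projective surface and $\pi_{\alpha}$ is a resolution. Write $p,q$ for the projections of $X^{\alpha}\times X$ onto $X^{\alpha}$ and $X$.

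The first real step is to pin down the relevant twist. By Proposition \ref{prop:0-dim:smooth}(1) we have ${\cal E}_{x'}\otimes K_X\cong{\cal E}_{x'}$ for every $x'$, and I would upgrade this fibrewise statement to an isomorphism of families. Since each ${\cal E}_{x'}$ is supported (in the sense of $\pi(\Supp H^i(-))$) over the single point $\pi_{\alpha}(x')$, the complex ${\cal E}$ is supported on $X^{\alpha}\times_Y X$, on which $q^*\pi^*K_Y$ and $p^*\pi_{\alpha}^*K_Y$ agree; as $K_X=\pi^*K_Y$ this gives
$$ {\cal E}\otimes q^*K_X\cong{\cal E}\otimes p^*M,\qquad M:=\pi_{\alpha}^*K_Y. $$
Next I would analyse the relative $\Ext$-sheaves ${\cal F}^i:={\cal E}xt^i_p({\cal E},{\cal E})$. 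Their fibrewise dimensions are $1,2,1$ (stability together with Proposition \ref{prop:0-dim:smooth}(1)), so by cohomology and base change they are locally free and base-change compatible; moreover ${\cal F}^0={\cal O}_{X^{\alpha}}$ via the identity section and ${\cal F}^1\cong T_{X^{\alpha}}$ by Kodaira--Spencer. Grothendieck--Serre duality for the smooth proper map $p$ (relative dualizing sheaf $q^*K_X$), combined with the twist isomorphism and the projection formula, then yields, term by term, ${\cal F}^i\otimes M\cong ({\cal F}^{2-i})^{\vee}$; the degree-one instance is $T_{X^{\alpha}}\otimes\pi_{\alpha}^*K_Y\cong\Omega^1_{X^{\alpha}}$.

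To conclude I would take determinants of $T_{X^{\alpha}}\otimes\pi_{\alpha}^*K_Y\cong\Omega^1_{X^{\alpha}}$, which (the bundles having rank $2$) gives $2K_{X^{\alpha}}=2\pi_{\alpha}^*K_Y$ in $\Pic(X^{\alpha})$. Thus $\tau:=K_{X^{\alpha}}\otimes\pi_{\alpha}^*K_Y^{\vee}$ is a $2$-torsion line bundle. Because $\pi_{\alpha}$ is an isomorphism over $Y\setminus\{p_i\}$, the class $c_1(\tau)$ is supported on the exceptional curves of $\pi_{\alpha}$, and since the intersection form of the exceptional configuration of a rational double point is negative definite, a torsion exceptional class must vanish. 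Hence $\tau={\cal O}_{X^{\alpha}}$ and $K_{X^{\alpha}}=\pi_{\alpha}^*K_Y$, which is the pull-back of a line bundle on $X^0$.

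The main obstacle is the second step: I must check that the fibrewise isomorphism ${\cal E}_{x'}\otimes K_X\cong{\cal E}_{x'}$ genuinely propagates to the global isomorphism of complexes displayed above (either scheme-theoretically from the support on $X^{\alpha}\times_Y X$, or via uniqueness of the universal family up to a factor in $p^*\Pic(X^{\alpha})$), and that the local freeness of the ${\cal F}^i$ makes the hyper-$\Ext$ spectral sequence degenerate so that Grothendieck duality really splits into the three isomorphisms ${\cal F}^i\otimes M\cong({\cal F}^{2-i})^{\vee}$. Once these are in place, the determinant computation and the torsion-killing via negative-definiteness are routine.
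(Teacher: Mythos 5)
Your overall strategy is sound and genuinely different in execution from the paper's: the paper localizes over $Y$, choosing for each $y\in Y$ an open $U\ni y$ on which $K_Y$ is trivialized by a difference $D_1-D_2$ of divisors missing $\psi^{-1}(y)$, and then uses relative Serre duality to produce a relative symplectic form on $\Ext^1_{p_V}({\cal E},{\cal E})=T_V$ together with $\Ext^2_{p_V}({\cal E},{\cal E})\cong{\cal O}_V$, which trivializes $K_V=\det T_V^{\vee}$ directly on $V=\pi_\alpha^{-1}(\psi^{-1}(U))$; no determinant or torsion argument is needed. Your global version (relative duality $\Rightarrow T_{X^\alpha}\otimes M\cong\Omega^1_{X^\alpha}$, take determinants, kill the resulting $2$-torsion by negative definiteness of the exceptional intersection form) is a legitimate alternative, and the final torsion-killing step is correct as written.

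The genuine gap is exactly where you suspected it: the identification of the twisting line bundle. The set-theoretic containment of $\Supp H^i({\cal E})$ in $X^\alpha\times_Y X$ does not give ${\cal E}\otimes q^*K_X\cong{\cal E}\otimes p^*\pi_\alpha^*K_Y$, because the two pullbacks of $K_Y$ only agree on the scheme-theoretic fiber product, and over the exceptional locus the cohomology sheaves of ${\cal E}$ can have nilpotents transverse to it. Your fallback via uniqueness of the universal family is also insufficient on its own: it yields ${\cal E}\otimes q^*K_X\cong{\cal E}\otimes p^*M$ for some $M\in\Pic(X^\alpha)$ which agrees with $\pi_\alpha^*K_Y$ only on $X^\alpha\setminus Z^\alpha$, hence $M=\pi_\alpha^*K_Y\otimes{\cal O}(\sum r_{ij}C^\alpha_{ij})$ with unknown $r_{ij}$. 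Your determinant computation then gives $2K_{X^\alpha}=2M$, and since $K_{X^\alpha}-\pi_\alpha^*K_Y$ is itself a priori an exceptional divisor $\sum s_{ij}C^\alpha_{ij}$, you only learn $s_{ij}=r_{ij}$, not $s_{ij}=0$ — the argument becomes circular. To close it you must pin down $M\cong\pi_\alpha^*K_Y$ exactly; the paper's Lemma \ref{lem:univ-characterize} and Lemma \ref{lem:compatible2} do precisely this (normalize ${\cal E}$ over $X^\alpha\setminus Z^\alpha$ and match $\det\Lambda^\alpha(G(D))$ with $\det(\Lambda^\alpha(G)(D))$ for $D$ a pullback from $Y$ avoiding the singular points, writing $K_X=\widetilde D_1-\widetilde D_2$). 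Equivalently, you can avoid the issue altogether by working over the opens $U$ as in \eqref{eq:canonical-bundle}, where the twisting divisors literally miss the supports of the parametrized objects. With that ingredient supplied, the rest of your proof goes through.
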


\begin{proof}
Let ${\cal E}$ be the universal family on $X^{\alpha} \times X$.
Let $p_S:S \times X \to S$ be the projection.
Since $X^{\alpha}$ is smooth, the base change theorem 
implies that $\Ext^i_{p_{X^{\alpha}}}({\cal E},{\cal E})$,
$i=0,1,2$ are locally free sheaves on $X^{\alpha}$ and
compatible with base changes.
Since $\Ext^1_{p_{X^{\alpha}}}({\cal E},{\cal E})$ is the tangent bundle
of $X^{\alpha}$,
we show that there is a symplectic form on
$\Ext^1_{p_{X^{\alpha}}}({\cal E},{\cal E})$. 
For any point $y \in Y$, we take a very ample divisor $D_2$ on $Y$
such that $y \not \in D_2$, 
$|K_Y+D_2|$ contains a divisor $D_1$
with $y \not \in D_1$.
We set $U:=Y \setminus (D_1 \cup D_2)$.
Then $U$ is an open neighborhood of $y$ such that
$K_Y$ is trivial over $U$.
Let $\widetilde{D}_i$ be the pull-back of $D_i$ to
$X$.
Then we have $K_X={\cal O}_X(\widetilde{D}_1 -\widetilde{D}_2)$.
We set $V:=\pi_{\alpha}^{-1}(\psi^{-1}(U))$. We shall prove that
(i) the alternating pairing
\begin{equation}
\Ext^1_{p_V}({\cal E},{\cal E}) \times 
\Ext^1_{p_V}({\cal E},{\cal E}) \to 
\Ext^2_{p_V}({\cal E},{\cal E}) 
\end{equation}
is non-degenerate and (ii) 
$\Ext^2_{p_V}({\cal E},{\cal E})\cong {\cal O}_V$. 
Since $\Ext^1_{p_{X^{\alpha}}}({\cal E},{\cal E})$ is the tangent bundle,
this means that $K_V \cong {\cal O}_V$. Thus the claim holds. 

We first note that there are isomorphisms 
\begin{equation}\label{eq:canonical-bundle}
\Ext^i_{p_V}({\cal E},{\cal E}) \cong 
\Ext^i_{p_V}({\cal E},{\cal E}(\widetilde{D}_1)) \cong 
\Ext^i_{p_V}({\cal E},{\cal E}(\widetilde{D}_1-\widetilde{D}_2)),\; i=0,1,2,
\end{equation}
which is compatible with the base change.
By the Serre duality, the trace map 
$\tr:\Ext^2({\cal E}_y,{\cal E}_y(K_X)) \to H^2(X,K_X)$
is an isomorphism for $y \in V$. Hence (ii) holds.
By the Serre duality, the pairing 
$\Ext^1({\cal E}_y,{\cal E}_y) \times \Ext^1({\cal E}_y,{\cal E}_y(K_X)) 
\to \Ext^2({\cal E}_y,{\cal E}_y(K_X)) \cong
H^2(X,K_X)$
is non-degenerate.
Combining this with \eqref{eq:canonical-bundle},
we get (i).
\end{proof}

\begin{lem}\label{lem:alpha=0}
Assume that $\alpha=0$.
\begin{enumerate}
\item[(1)]
Assume that $p_i \in Y$ corresponds to 
$\bigoplus_{j=0}^{s_i}E_{ij}^{\oplus a_{ij}}$ via
$\psi$, where $E_{ij}$ are 0-stable
objects.
Then ${\Bbb C}_x, x \in Z_i$ are $S$-equivalent to
$\bigoplus_{j=0}^{s_i}E_{ij}^{\oplus a_{ij}}$.
\item[(2)]
Let $E \in {\cal C}$ be a 0-twisted stable object.
Then $E$ is one of the following:
\begin{equation}\label{eq:0-stable-object}
{\Bbb C}_x\; ( x \in X \setminus Z),\quad
E_{ij},\; (1 \leq i \leq n,0 \leq j \leq s_i).
\end{equation}
\item[(3)]
Every $0$-dimensional object is generated by \eqref{eq:0-stable-object}. 
\end{enumerate}
\end{lem}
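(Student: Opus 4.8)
The plan is to reduce the whole statement to the classification of irreducible objects in Lemma~\ref{lem:tilting:irreducible} together with the description of the contraction $\varphi$ and the isomorphism $\psi$. The starting point is to unwind what $0$-twisted stability means. By definition, $\alpha$-twisted stability is $(G,A)$-twisted stability for a class $A \in K(Y) \otimes {\Bbb Q}$ with $\gamma(A) = (0,\alpha,0)$; for $\alpha = 0$ we may take $A = 0$, so that $\tau(A) = 0$ and hence $\chi(A,F) = 0$ for every object $F$. Consequently the twisted slope $\chi(A,F)/\chi(G,F)$ vanishes identically, so the defining inequality for $(G,A)$-twisted stability can never be strict. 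First I would record the resulting dichotomy: a $0$-dimensional object $E \in {\cal C}$ is $0$-twisted semi-stable (automatically), and it is $0$-twisted stable if and only if it admits no proper non-zero subobject in ${\cal C}$, i.e. if and only if it is irreducible in the sense of Definition~\ref{defn:0-dim}.

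With this identification, parts (2) and (3) are immediate. For (2), the irreducible objects of ${\cal C} = {\cal C}_G$ are classified in Lemma~\ref{lem:tilting:irreducible}(3): they are precisely ${\Bbb C}_x$ for $x \in X \setminus \pi^{-1}(Y_\pi) = X \setminus Z$ and the Jordan-H\"{o}lder factors $E_{ij}$ of ${\Bbb C}_x$ for $x \in Z_i$, $0 \leq j \leq s_i$. Since the proof of that lemma also shows that every irreducible object is $0$-dimensional, these are exactly the $0$-twisted stable objects, which is the list \eqref{eq:0-stable-object}. For (3), any $0$-dimensional object $E$ carries a Jordan-H\"{o}lder filtration (Definition~\ref{defn:0-dim}(3)) whose successive quotients are irreducible, hence lie in \eqref{eq:0-stable-object}; thus $E$ is generated by these objects, as already recorded in the last assertion of Lemma~\ref{lem:tilting:irreducible}(3).

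For part (1), I would use the two morphisms constructed earlier. By Lemma~\ref{lem:contraction-varphi} the morphism $\varphi \colon X \to X^0$ sends $x$ to the $S$-equivalence class of ${\Bbb C}_x$, and by Proposition~\ref{prop:Y=X^0} the isomorphism $\psi \colon X^0 \to Y$ satisfies $\psi \circ \varphi = \pi$. Hence for $x \in Z_i = \pi^{-1}(p_i)$ we have $\psi(\varphi(x)) = \pi(x) = p_i$, so $\varphi(x) = \psi^{-1}(p_i)$. By hypothesis the $S$-equivalence class $\psi^{-1}(p_i)$ has standard representative $\bigoplus_{j} E_{ij}^{\oplus a_{ij}}$, while $\varphi(x)$ is the $S$-equivalence class of ${\Bbb C}_x$. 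Comparing the two, ${\Bbb C}_x$ is $S$-equivalent to $\bigoplus_{j} E_{ij}^{\oplus a_{ij}}$; since the standard representative of an $S$-equivalence class is its Jordan-H\"{o}lder decomposition, the $0$-stable constituents $E_{ij}$ appearing here agree with the Jordan-H\"{o}lder factors of ${\Bbb C}_x$ furnished by Lemma~\ref{lem:tilting:irreducible}.

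The argument is essentially formal, so the main obstacle is only bookkeeping: one must check that the two a priori different meanings of the symbols $E_{ij}$ --- the irreducible summands attached to $p_i$ via $\psi$ in the statement of (1), and the Jordan-H\"{o}lder factors of ${\Bbb C}_x$ supplied by Lemma~\ref{lem:tilting:irreducible} --- coincide. This follows from the identification of $0$-stability with irreducibility established in the first paragraph, together with the fact that the Jordan-H\"{o}lder decomposition computes the standard representative of the $S$-equivalence class, so that no genuinely new input is required beyond the cited results.
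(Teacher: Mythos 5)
Your proof is correct and follows essentially the same route as the paper: part (1) via $\psi\circ\varphi=\pi$ from Proposition \ref{prop:Y=X^0}, and parts (2)--(3) via the identification of $0$-twisted stability with irreducibility and the classification of irreducible objects. The only cosmetic difference is that for (2) the paper's primary argument applies Lemma \ref{lem:simple-generator} to ${\cal E}={\cal O}_{\Delta}$ and deduces (2) from (1), whereas you invoke Lemma \ref{lem:tilting:irreducible} (3) directly --- an alternative the paper itself records.
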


\begin{proof}
By Proposition \ref{prop:Y=X^0}, (1) holds.
We shall apply Lemma \ref{lem:simple-generator}
to ${\cal E}={\cal O}_{\Delta} \in {\bf D}(X \times X)$. 
Then (2) is a consequence of (1).
It also follows from Lemma \ref{lem:tilting:irreducible} (3).
(3) follows from (2). 
\end{proof}

\begin{rem}
If ${\bf b}={\bf b}_0$, then $\pi_*({\cal E})$
is a flat family of coherent sheaves on $Y$
such that $\pi({\cal E})_q$ is a point sheaf.
Then we have a morphism $Q \to Y$. 
Thus we do not need the reducedness of $Q$ in this case.

\begin{NB}
We take an analytic neighborhood $U$ of $Z$ such that
there are line bundles $L_i$ such that
$\deg(L_{i|C_j})=\delta_{ij}$.
Let $Q_U$ be an analytic open subset of $Q$
which is  the pull-back of $X^0 \setminus (X \setminus U)$.
Here we identified ${M}_{{\cal O}_X(1)}^0(\varrho_X)$ with
$X \setminus Z$.

For a general $\alpha$, we have a morphism 
$X^{\alpha} \to X^0$. Thus we get a morphism 
$X \to Y \to X^0$.
We shall show that there is a morphism
$Q_U \to Y$ which is the lift of $Q \to X^0$.
********
\end{NB}
\end{rem}

\begin{defn}
We set $Z^{\alpha}_i:=
\pi_\alpha^{-1}(\bigoplus_j E_{ij}^{\oplus a_{ij}})=
\pi^{-1}_\alpha \circ \psi^{-1}(p_i)$
and $Z^{\alpha}:=\cup_i Z_i^{\alpha}$.
\end{defn}

\begin{lem}(cf. \cite[Lem. 2.4]{O-Y:1})\label{lem:0-stable:key}
Let $E_{ij}$ be 0-stable objects in Lemma \ref{lem:alpha=0}.
Assume that $-(\alpha,c_1(E_{ij}))>0$ for all $j>0$.
\begin{NB}
Since the first Chern classes of 
$\{ E_{ij} \}$ are linearly independent,
we can choose such an $\alpha$.
\end{NB}
Let $F$ be a 0-semi-stable object such that 
$v(F)=v(E_{i0} \oplus \bigoplus_{j>0}E_{ij}^{\oplus b_j})$,
$0 \leq b_j \leq a_{ij}$.
\begin{enumerate}
\item[(1)]
If $v(F) \ne \varrho_X$, then $F$ is
$S$-equivalent to $E_{i0} \oplus \bigoplus_{j>0}E_{ij}^{\oplus b_j}$
with respect to $0$-stability.
\item[(2)]
Assume that $F$ is $S$-equivalent to
$E_{i0} \oplus \bigoplus_{j>0}E_{ij}^{\oplus b_j}$.
Then the following conditions are equivalent. 
\begin{enumerate}
\item
$F$ is $\alpha$-stable 
\item
$F$ is $\alpha$-semi-stable 
\item
$\Hom(E_{ij},F)=0$ for all $j>0$.
\end{enumerate}
\item[(3)]
Assume that $F$ is $\alpha$-stable.
For a non-zero homomorphism $\phi:F \to E_{ij}$, $j>0$,
$\phi$ is surjective and $F':=\ker \phi$ is an $\alpha$-stable object.
\item[(4)]
If there is a non-trivial extension
\begin{equation}
0 \to F \to F'' \to E_{ij} \to 0
\end{equation} 
and $b_k+\delta_{jk} \leq a_{ik}$, then
$F''$ is an $\alpha$-stable object, where $\delta_{jk}=0,1$
according as $j \ne k$, $j=k$.
\end{enumerate}
\end{lem}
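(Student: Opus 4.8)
The plan is to reduce the whole statement to two pieces of input about the Mukai vectors $v(E_{ij})$ together with one slope computation. First I would record the lattice data. Since ${\Bbb C}_x$ ($x\in Z_i$) is $S$-equivalent to $\bigoplus_{j=0}^{s_i}E_{ij}^{\oplus a_{ij}}$ and $v({\Bbb C}_x)=\varrho_X$, additivity of $v$ gives $\sum_{j}a_{ij}v(E_{ij})=\varrho_X$, whence $\sum_j a_{ij}c_1(E_{ij})=0$. The classes $c_1(E_{ij})$, $j>0$, lie in the exceptional locus over $p_i$ and are linearly independent (the intersection form there is negative definite), and $a_{i0}=1$ (forced by $\chi(G,{\Bbb C}_x)=\rk G$, equivalently by matching the $v(E_{i0})$-coefficient in $v(F)=\varrho_X$ at $b_j=a_{ij}$). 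Hence $c_1(E_{i0})=-\sum_{j>0}a_{ij}c_1(E_{ij})$, so writing $d_j:=-(\alpha,c_1(E_{ij}))>0$ for $j>0$ we get $-(\alpha,c_1(E_{i0}))=-\sum_{j>0}a_{ij}d_j<0$ and, more generally,
\[
-(\alpha,c_1(F))=-\sum_{j>0}(a_{ij}-b_j)d_j\le 0,\qquad\text{equality}\iff v(F)=\varrho_X .
\]
A direct argument (apply $c_1$ and use the independence of $\{c_1(E_{ij})\}_{j>0}$ together with $\varrho_X\neq0$) shows $\{v(E_{ij})\}_{j=0}^{s_i}$ is linearly independent. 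Finally every nonzero $0$-dimensional object $H$ has $\chi(G,H)>0$, and $\chi(G,-)$ and $(\alpha,c_1(-))$ are additive.

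For (1) I would decompose $F=\bigoplus_y F_y$ over the finitely many points $y$ in the support of $\pi_*(G^\vee\otimes F)$ (via the Morita equivalence of Proposition~\ref{prop:Morita}); each $F_y$ is $0$-semistable and supported over $y$. Over a smooth point the only irreducible object is ${\Bbb C}_x$, so there $v(F_y)\in{\Bbb Z}_{\ge0}\varrho_X$; over a singular point $p_k$ with $k\neq i$ the Jordan--H\"older factors are the $E_{kl}$, and since $c_1(F)$ is supported over $p_i$, the relation $\sum_l a_{kl}c_1(E_{kl})=0$ and linear independence again force $v(F_{p_k})\in{\Bbb Z}_{\ge0}\varrho_X$. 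Writing $v(F)=\sum_l m_l v(E_{il})+M\varrho_X$ with $M\ge0$ the total contribution away from $p_i$ and using $\varrho_X=\sum_l a_{il}v(E_{il})$, $a_{i0}=1$ and independence of $\{v(E_{il})\}$, I obtain $m_0+M=1$ and $m_j+Ma_{ij}=b_j$; the case $M=1$ forces $b_j=a_{ij}$, i.e. $v(F)=\varrho_X$, which is excluded, so $M=0$, $m_0=1$, $m_j=b_j$, giving the asserted $S$-equivalence class.

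For (2), (a)$\Rightarrow$(b) is trivial. For (b)$\Rightarrow$(c): a nonzero $E_{ij}\to F$ ($j>0$) is injective as $E_{ij}$ is simple, so $E_{ij}$ is a subobject, and $\alpha$-semistability would give $d_j/\chi(G,E_{ij})\le -(\alpha,c_1(F))/\chi(G,F)\le0$, contradicting $d_j>0$. For (c)$\Rightarrow$(a): by uniqueness of Jordan--H\"older factors any proper nonzero subobject $F'$ has $v(F')=\varepsilon v(E_{i0})+\sum_{j>0}b_j'v(E_{ij})$ with $\varepsilon\in\{0,1\}$, $0\le b_j'\le b_j$; condition (c) rules out $\varepsilon=0$ (otherwise the socle of $F'$ yields $\Hom(E_{ij},F)\neq0$), so $\varepsilon=1$. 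Setting $\Delta N=\sum_{j>0}(b_j-b_j')d_j$, $\Delta D=\sum_{j>0}(b_j-b_j')\chi(G,E_{ij})$, $N'=-(\alpha,c_1(F'))\le0$, $D'=\chi(G,F')>0$, the required strict inequality of slopes is $N'\Delta D-\Delta N D'<0$, which holds since both terms are $\le0$ and $\Delta N>0$ (because $F'\neq F$) makes the second strictly negative. This single computation is the main obstacle: keeping the signs straight is exactly where $a_{i0}=1$ and $b_j\le a_{ij}$ are needed, to guarantee $N'\le0$.

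Parts (3) and (4) then reduce to (2). In (3), $\phi\colon F\to E_{ij}$ is surjective because $E_{ij}$ is simple; applying $\Hom(E_{ik},-)$ to $0\to F'\to F\to E_{ij}\to0$ gives $\Hom(E_{ik},F')\hookrightarrow\Hom(E_{ik},F)=0$ for $k>0$ (the vanishing by (a)$\Rightarrow$(c) applied to the $\alpha$-stable $F$), and $v(F')=v(F)-v(E_{ij})$ fixes its $S$-class, so $F'$ is $\alpha$-stable by (c)$\Rightarrow$(a). In (4), applying $\Hom(E_{ik},-)$ to $0\to F\to F''\to E_{ij}\to0$ gives $\Hom(E_{ik},F'')\hookrightarrow\Hom(E_{ik},E_{ij})$ once $\Hom(E_{ik},F)=0$; this is $0$ for $k\neq j$ since the $E_{i\bullet}$ are non-isomorphic $0$-stable objects of the same $0$-slope, while for $k=j$ the connecting map $\Hom(E_{ij},E_{ij})\to\Ext^1(E_{ij},F)$ sends $\id$ to the extension class, nonzero by non-triviality, so $\Hom(E_{ij},F'')=0$ as well. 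The hypothesis $b_k+\delta_{jk}\le a_{ik}$ keeps $v(F'')$ in the range where (2) applies, and (c)$\Rightarrow$(a) concludes.
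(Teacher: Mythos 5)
Your proof is correct, and parts (2)--(4) follow essentially the same path as the paper's: the socle/Jordan--H\"older argument combined with the slope comparison for (2), and reduction of (3) and (4) to the criterion (2)(c) by applying $\Hom(E_{ik},-)$ to the relevant short exact sequence (your connecting-map computation showing $\Hom(E_{ij},F'')=0$ in (4) is just a cleaner way of phrasing the paper's ``a nonzero map would split the extension''). Where you genuinely diverge is (1). The paper does not decompose $F$ over its support at all: it forms $E:=F\oplus\bigoplus_{j>0}E_{ij}^{\oplus(a_{ij}-b_j)}$, notes $v(E)=\varrho_X$ (this is where $a_{i0}=1$ enters), and invokes Lemma \ref{lem:one-point} and Proposition \ref{prop:Y=X^0}: a $0$-semi-stable object with Mukai vector $\varrho_X$ is supported over a single point of $Y$ and its $S$-equivalence class is determined by that point, which must be $p_i$ since $E$ contains an $E_{ij}$ summand; uniqueness of Jordan--H\"older factors then strips off the added summands. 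That buys a two-line argument with no lattice computation. Your route instead decomposes $F$ over support points and solves for the multiplicities using linear independence of $\{v(E_{ij})\}_{j\ge 0}$; this is also valid (independence does follow from the affine-type Gram matrix together with $\varrho_X\neq 0$, both available before this lemma), and it has the advantage of making explicit exactly which lattice facts are used, at the cost of being longer and leaning on the classification of irreducible objects at more points. One small caution: your parenthetical claim that $a_{i0}=1$ is ``forced'' by $\chi(G,{\Bbb C}_x)=\rk G$ is not a proof; in the paper this is a standing assumption arranged in Theorem \ref{thm:RDP-desing}(3), and you should cite it as such.
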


\begin{proof}
(1) 
%
Since $E:=F \oplus \bigoplus_{j>0}E_{ij}^{\oplus (a_{ij}-b_j)}$
is a 0-semi-stable object with $v(E)=\varrho_X$ and
$\Supp(\pi_*(G^{\vee} \otimes E))=
\Supp(\pi_*(G^{\vee} \otimes F)) \cup \{ p_i \}$,
Lemma \ref{lem:one-point} and Proposition \ref{prop:Y=X^0}
imply that
the $S$-equivalence class of $E$ corresponds to 
$p_i \in Y$.
Hence $E$ is $S$-equivalent to $\bigoplus_{j \geq 0}E_{ij}^{\oplus a_{ij}}$,
which implies that $F$ is $S$-equivalent to 
$E_{i0} \oplus \bigoplus_{j>0}E_{ij}^{\oplus b_j}$.

(2) It is sufficient to prove 
that (c) implies (a). 
Let $\psi:F \to I$ be a quotient of $F$.
Since $I$ and $\ker \psi$ are
0-dimensional objects,
they are 0-semi-stable.
Since $\Hom(E_{ij},\ker \psi)=0$ for $j>0$,
(1) implies that $E_{i0}$ is a subobject of $\ker \psi$.
Hence $v(I)=\sum_{j>0}b_j' v_{ij}$, which implies that
$F$ is $\alpha$-stable.

(3)
Since $E_{ij}$ is irreducible, $\phi$ is surjective.
By (1), $\ker \phi$ also satisfies the assumption of (2).
Let $\psi:\ker \phi \to I$ be a quotient object.
Since $\Hom(E_{ik},F)=0$ for $k>0$,
(2) implies that $\ker \phi$ is $\alpha$-stable.

(4)
Since $v(F) \ne \varrho_X$, (1) implies that
$F''$ satisfies the assumption of (2). 
If $\Hom(E_{ik},F'') \ne 0$, then 
$\Hom(E_{ik},F)=0$ implies that $k=j$ and 
we have a splitting of the exact sequence.
Hence $\Hom(E_{ik},F'')=0$ for $k>0$.
Then (2) implies the claim.
\end{proof}

\begin{cor}\label{cor:0-dim:reflection}
Assume that $-(\alpha,c_1(E_{ij}))>0$ for all $j>0$.
We set $v:=v(E_{i0} \oplus \oplus_{j>0}E_{ij}^{\oplus b_j})$,
$0 \leq b_j \leq a_{ij}$ with $\langle v^2 \rangle=-2$.
\begin{enumerate}
\item[(1)]
$\dim \Hom(E,E_{ij})=\max\{-\langle v,v(E_{ij}) \rangle,0 \}$.
\item[(2)]
If $-\langle v,v(E_{ij}) \rangle>0$, then 
$M_{{\cal O}_X(1)}^{G,\alpha}(v) \cong 
M_{{\cal O}_X(1)}^{G,\alpha}(w)$,
where $w= v+\langle v,v(E_{ij}) \rangle v(E_{ij})$.
\end{enumerate}
\begin{NB}
For $v:=v(E_{i0} \oplus \oplus_{j>0}E_{ij}^{\oplus b_j})$,
$0 \leq b_j \leq a_{ij}$ with $\langle v^2 \rangle=-2$,
$-1 \leq \langle v,v(E_{ij}) \rangle \leq 2$.
If $\langle v,v(E_{ij})=\pm 1$, then
$M_{{\cal O}_X(1)}^{G,\alpha}(v) \cong 
M_{{\cal O}_X(1)}^{G,\alpha}(w)$,
if $\langle v,v(E_{ij}) \rangle<0$.
where $w= v+\langle v,v(E_{ij}) \rangle v(E_{ij})$.
\end{NB}
\end{cor}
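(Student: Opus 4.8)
The plan is to treat $E$, the unique $\alpha$-stable object with $v(E)=v$, as a rigid object: since $\langle v^2\rangle=-2$, Proposition \ref{prop:0-dim:smooth} gives $\Ext^1(E,E)=0$, and the same applies to each $E_{ij}$ ($j>0$), which is rigid with $\langle v(E_{ij})^2\rangle=-2$. First I would record the two vanishings that reduce (1) to a dichotomy. By Lemma \ref{lem:0-stable:key}(1) the object $E$ is $S$-equivalent to $E_{i0}\oplus\bigoplus_{j>0}E_{ij}^{\oplus b_j}$, so Lemma \ref{lem:0-stable:key}(2) gives $\Hom(E_{ij},E)=0$ for all $j>0$; since $E\otimes K_X\cong E$ and $E_{ij}\otimes K_X\cong E_{ij}$ by Proposition \ref{prop:0-dim:smooth}(1), Serre duality yields $\Ext^2(E,E_{ij})\cong\Hom(E_{ij},E)^{\vee}=0$. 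Using $\chi(E,E_{ij})=-\langle v(E),v(E_{ij})\rangle=-\langle v,v(E_{ij})\rangle=:m$ and the symmetry of the Mukai pairing, I then have $\dim\Hom(E,E_{ij})-\dim\Ext^1(E,E_{ij})=m$, so it remains to show that at most one of these groups is nonzero.

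For $m\le 0$ I would prove $\Hom(E,E_{ij})=0$ directly. Any nonzero morphism $E\to E_{ij}$ is surjective with $\alpha$-stable kernel $K$ by Lemma \ref{lem:0-stable:key}(3), and $v(K)=v-v(E_{ij})$ gives $\langle v(K)^2\rangle=\langle v^2\rangle-2\langle v,v(E_{ij})\rangle+\langle v(E_{ij})^2\rangle=2m-4$. But an $\alpha$-stable object satisfies $\langle v(K)^2\rangle\ge -2$ (the dimension $\langle v(K)^2\rangle+2$ in Proposition \ref{prop:0-dim:smooth}(2) is nonnegative), forcing $m\ge 1$, a contradiction. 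This settles (1) for $m\le 0$ and reduces everything to the regime $m>0$.

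For $m>0$ the plan is to construct the reflected object of (2) and read off the remaining vanishing. Let $h:=\dim\Hom(E,E_{ij})$ and form the canonical evaluation $\mathrm{ev}\colon E\to \Hom(E,E_{ij})^{\vee}\otimes E_{ij}\cong E_{ij}^{\oplus h}$. Applying $\Hom(-,E_{ij})$ and using that the induced map $\Hom(\Hom(E,E_{ij})^{\vee}\otimes E_{ij},E_{ij})\to\Hom(E,E_{ij})$ is the identity (as $\End(E_{ij})={\Bbb C}$), I get $\Hom(\coker\mathrm{ev},E_{ij})=0$; since $\coker\mathrm{ev}$ is a quotient of $E_{ij}^{\oplus h}$ and $E_{ij}$ is irreducible in ${\cal C}$, this forces $\coker\mathrm{ev}=0$, so $\mathrm{ev}$ is surjective. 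Iterating Lemma \ref{lem:0-stable:key}(3) along the $h$ projections to $E_{ij}$ then shows $E':=\ker\mathrm{ev}$ is $\alpha$-stable with $v(E')=v-h\,v(E_{ij})$. Now $\Hom(E',E_{ij})=0$ and $\Ext^2(E',E_{ij})=0$ by stability and Serre duality, and the long exact sequence obtained by applying $\Hom(-,E_{ij})$ to $0\to E'\to E\to E_{ij}^{\oplus h}\to 0$ identifies, via Serre duality, its connecting map with the dual of the injection $\Hom(E_{ij},E_{ij}^{\oplus h})\hookrightarrow\Ext^1(E_{ij},E')$ coming from $\Hom(E_{ij},E)=0$. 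This pins $\Ext^1(E,E_{ij})=0$ and $h=m$, completing (1) for $m>0$ and giving $v(E')=v-m\,v(E_{ij})=w$.

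Finally, for (2): reflection preserves the Mukai form, so $\langle w^2\rangle=\langle v^2\rangle=-2$, and a $(-2)$-class carries at most one $\alpha$-stable object, since two such $E_1,E_2$ have $\chi(E_1,E_2)=2$, hence $\Hom(E_1,E_2)\neq 0$ or $\Hom(E_2,E_1)\neq 0$, and a nonzero map between $\alpha$-stable objects of equal slope is an isomorphism. Thus both $M_{{\cal O}_X(1)}^{G,\alpha}(v)$ and $M_{{\cal O}_X(1)}^{G,\alpha}(w)$ are single reduced points once nonempty (Proposition \ref{prop:0-dim:smooth}); the object $E'$ produced above lies in $M_{{\cal O}_X(1)}^{G,\alpha}(w)$, so both are nonempty. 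Performing the same universal extension relative to the universal family, as in the reflection-functor arguments of \cite{Y:action} and \cite{O-Y:1}, gives the isomorphism, whose inverse is the opposite reflection (the universal extension by $E_{ij}$). I expect the main obstacle to be the $m>0$ case of (1): establishing surjectivity of $\mathrm{ev}$ together with the $\alpha$-stability of $E'=\ker\mathrm{ev}$, and then extracting the exact value $h=m$ from the long exact sequence; once Lemma \ref{lem:0-stable:key} is available, the remainder is formal.
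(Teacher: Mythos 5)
Your overall route is the same as the paper's: part (1) via the evaluation map $E\to E_{ij}^{\oplus h}$ with $\alpha$-stable kernel and a numerical constraint, part (2) via the quotient construction and the universal extension as inverse. The auxiliary vanishings ($\Hom(E_{ij},E)=0$ from Lemma \ref{lem:0-stable:key}, $\Ext^2(E,E_{ij})\cong\Hom(E_{ij},E)^{\vee}=0$ using $E\otimes K_X\cong E$), the $m\le 0$ case, and the observation that both moduli spaces are single reduced points are all fine.

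One step does not work as stated. In the case $m:=-\langle v,v(E_{ij})\rangle>0$ you claim that the long exact sequence of $\Hom(-,E_{ij})$ applied to $0\to E'\to E\to E_{ij}^{\oplus h}\to 0$ ``pins'' $\Ext^1(E,E_{ij})=0$ and $h=m$. It cannot: with $\Ext^1(E_{ij},E_{ij})=0$, $\Hom(E',E_{ij})=\Ext^2(E',E_{ij})=0$ and $\Ext^2(E,E_{ij})=0$, that sequence only returns
$\dim\Ext^1(E,E_{ij})=\dim\Ext^1(E',E_{ij})-h=(2h-m)-h=h-m$,
which is exactly what $\chi(E,E_{ij})=m$ already gives; it places no upper bound on $h$, so the computation is circular. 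The missing input is the inequality you already invoke in the $m\le 0$ case: since $E'$ is $\alpha$-stable, $-2\le\langle v(E')^2\rangle=-2-2h(h-m)$, hence $h\le m$ when $h>0$; combined with $h-\dim\Ext^1(E,E_{ij})=m$ this forces $h=m$ and $\Ext^1(E,E_{ij})=0$. This is precisely the paper's one-line argument, so the repair is immediate, but as written the $m>0$ step of (1) has a gap.
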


\begin{proof}
(1) 
For $E \in M_{{\cal O}_X(1)}^{G,\alpha}(v)$, 
we set $n:=\dim \Hom(E,E_{ij})$.
Then we have a surjective morphism $\phi:E \to E_{ij}^{\oplus n}$.
Then $F:=\ker \phi$ is $\alpha$-stable.
Since $-2 \leq \langle v(F)^2 \rangle=\langle v(E)^2 \rangle
-2n(n+\langle v,v(E_{ij}) \rangle)$,
$n=-\langle v,v(E_{ij}) \rangle$ or $n=0$.

(2)
If $-\langle v,v(E_{ij}) \rangle>0$, then
$\dim \Hom(E,E_{ij})=-\langle v,v(E_{ij}) \rangle$, 
$\Ext^p(E,E_{ij})=0$, $p>0$,
and we have a morphism
$\sigma:M_{{\cal O}_X(1)}^{G,\alpha}(v) \to 
M_{{\cal O}_X(1)}^{G,\alpha}(w)$.
Conversely for $F \in M_{{\cal O}_X(1)}^{G,\alpha}(w)$,
$\langle v(F),v(E_{ij}) \rangle
=-\langle v,v(E_{ij}) \rangle>0$.
Hence $\Hom(F,E_{ij})=0$, which implies that
$\dim \Ext^1(E_{ij},F)=\langle v(F),v(E_{ij}) \rangle$ 
and the universal extension
gives an $\alpha$-stable object $E$ with $v(E)=v$.
Therefore we also have the inverse of $\sigma$.
\end{proof}
We come to the main result of this subsection.
\begin{thm}(cf. \cite[Thm. 0.1]{O-Y:1})\label{thm:RDP-desing}
\begin{enumerate}
\item[(1)]
$X^0 \cong Y$ and the singular points 
$p_1,p_2,\dots,p_n$ of $X^0$ correspond to
the $S$-equivalence classes of properly $0$-twisted semi-stable objects.
\item[(2)]  
Assume that $\alpha$ satisfies that
$(\alpha, D) \ne 0$ for all $D \in \NS(X)$ with $(D^2)=-2$ and
$(c_1({\cal O}_X(1)),D)=0$.
Then $X^{\alpha}=
M_{{\cal O}_X(1)}^{G,\alpha}(\varrho_X)$. In particular
$\pi_\alpha:X^{\alpha} \to X^0$ 
is the minimal resolution of the singularities.
\item [(3)]
Let $\oplus_{j=0}^{s_i} E_{ij}^{\oplus a_{ij}}$ be 
the $S$-equivalence class corresponding to $p_i$.
Then the matrix $(-\langle v(E_{ij}),v(E_{ik}) \rangle)_{j,k \geq 0}$ 
is of affine
type $\tilde{A},\tilde{D}, \tilde{E}$.
Assume that $a_{i0}=1$. Then 
the singularity of $X^0$ at 
$p_i$ is a
rational double point of type $A,D,E$ according as the type of 
the matrix $(-\langle v(E_{ij}),v(E_{ik}) \rangle)_{j,k \geq 1}$.
\end{enumerate}
\end{thm}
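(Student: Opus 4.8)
For part (1), the isomorphism $X^0\cong Y$ with $\psi\circ\varphi=\pi$ is Proposition \ref{prop:Y=X^0}. Over $Y\setminus\{p_1,\dots,p_n\}\cong X\setminus Z$ the only $0$-dimensional objects are the ${\Bbb C}_x$, which are irreducible and hence $0$-stable, so these points carry a single stable object; over $p_i$ the class $\varphi(Z_i)$ is represented by ${\Bbb C}_x$ with $x\in Z_i$, which by Lemma \ref{lem:alpha=0} is $S$-equivalent to $\bigoplus_j E_{ij}^{\oplus a_{ij}}$ with more than one factor, hence is properly $0$-semi-stable. Thus the $p_i$ are exactly the properly semi-stable classes, giving (1). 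For part (2), the hypothesis is precisely \eqref{eq:weakly-general}, so Lemma \ref{lem:crepant1} gives $X^\alpha=M_{{\cal O}_X(1)}^{G,\alpha}(\varrho_X)$ and Proposition \ref{prop:0-dim:smooth} makes it smooth of dimension $\langle\varrho_X^2\rangle+2=2$. The morphism $\pi_\alpha\colon X^\alpha\to X^0=Y$ is an isomorphism over $Y\setminus\{p_1,\dots,p_n\}$, since there the ${\Bbb C}_x$ are the only $\alpha$-stable objects, so $\pi_\alpha$ is projective and birational; as $X^\alpha$ is smooth of pure dimension $2$ and $Y$ is connected, no further component can map into the finite set $\{p_i\}$, so $X^\alpha$ is irreducible (compare Lemma \ref{lem:0-dim:irreducible}). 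Finally Lemma \ref{lem:crepant} shows $K_{X^\alpha}$ is pulled back from $Y$, so $\pi_\alpha$ is crepant, and a crepant resolution of a surface with rational double points is its minimal resolution, proving (2).

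Part (3) is the substantive point. Each $E_{ij}$ has rank $0$ with support in $Z_i$, so $v(E_{ij})=(0,c_1(E_{ij}),\chi(E_{ij}))$ and $\langle v(E_{ij}),v(E_{ik})\rangle=(c_1(E_{ij})\cdot c_1(E_{ik}))$; thus the matrix in question is $C=\bigl(-(c_1(E_{ij})\cdot c_1(E_{ik}))\bigr)$, the negated Gram matrix of the classes $c_1(E_{ij})$ for the intersection form. First I would check that $C$ is a symmetric generalized Cartan matrix. Each $E_{ij}$ is $0$-stable, so $\Hom(E_{ij},E_{ij})={\Bbb C}$ and $\Ext^2(E_{ij},E_{ij})={\Bbb C}$ by Proposition \ref{prop:0-dim:smooth}, whence $\langle v(E_{ij})^2\rangle=\dim\Ext^1(E_{ij},E_{ij})-2\ge -2$; on the other hand $\langle v(E_{ij})^2\rangle=(c_1(E_{ij})^2)$ is a negative even integer because the exceptional lattice of a rational double point is negative definite and even, forcing $\langle v(E_{ij})^2\rangle=-2$ and $C_{jj}=2$. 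For $j\ne k$ distinct stable objects give $\Hom=\Ext^2=0$, hence $C_{jk}=\chi(E_{ij},E_{ik})=-\dim\Ext^1(E_{ij},E_{ik})\le 0$.

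It remains to identify the type. From ${\Bbb C}_x$ being $S$-equivalent to $\bigoplus_j E_{ij}^{\oplus a_{ij}}$ and $c_1({\Bbb C}_x)=0$ one gets $\sum_j a_{ij}c_1(E_{ij})=0$, so for every integer vector $x$ one has $x^{T}Cx=-\bigl(\sum_j x_j c_1(E_{ij})\bigr)^2\ge 0$, i.e. $C$ is positive semi-definite, and the strictly positive vector $\delta:=(a_{ij})_j$ lies in its radical. The associated graph of $C$ is connected because ${\Bbb C}_x$ is an indecomposable object having all the $E_{ij}$ as composition factors. A connected symmetric generalized Cartan matrix that is positive semi-definite and singular is, by the classification of generalized Cartan matrices, of affine type $\tilde A,\tilde D,\tilde E$, with one-dimensional radical spanned by $\delta$; this is the first assertion of (3). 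When $a_{i0}=1$ the vertex $0$ is a mark-one vertex of $\delta$, so deleting it from the affine diagram produces the finite diagram whose Cartan matrix is $(-\langle v(E_{ij}),v(E_{ik})\rangle)_{j,k\ge 1}$. This submatrix is exactly the negated intersection matrix of the $(-2)$-curves $C_{ij}$, $j\ge1$, so by Du Val's classification the singularity of $Y=X^0$ at $p_i$ is the rational double point of the corresponding type $A,D,E$.

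The main obstacle is the pair of facts in the third paragraph: the rigidity forcing $\langle v(E_{ij})^2\rangle=-2$, and the semi-definiteness of $C$ together with the positivity of the radical vector $\delta$. These are exactly where the negative definiteness of the exceptional lattice and the fundamental-cycle relation $\sum_j a_{ij}c_1(E_{ij})=0$ enter, after which the appeals to the classification of affine Cartan matrices and to Du Val's list are formal.
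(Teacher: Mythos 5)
Parts (1) and (2) of your argument follow the paper's own route (Proposition \ref{prop:Y=X^0}, Lemma \ref{lem:contraction-varphi}, Proposition \ref{prop:0-dim:smooth}, Lemmas \ref{lem:crepant1} and \ref{lem:crepant}), and the first half of (3) — rigidity forcing $\langle v(E_{ij})^2\rangle=-2$, positive semi-definiteness from $\sum_j a_{ij}c_1(E_{ij})=0$, connectedness, and the classification of affine Cartan matrices — is a reasonable unpacking of the paper's appeal to Lemma \ref{lem:appendix:lattice}. (One small point there: you assert $(c_1(E_{ij})^2)<0$ without ruling out $c_1(E_{ij})=0$, i.e.\ $v(E_{ij})\in{\Bbb Z}\varrho_X$; this is excluded because such a factor would be $\chi$-orthogonal, hence $\Ext$-orthogonal, to all the others, splitting the indecomposable object ${\Bbb C}_x$ — the connectedness argument you state later, which should come first.)

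The genuine gap is in the last step of (3). You write that $(-\langle v(E_{ij}),v(E_{ik})\rangle)_{j,k\ge 1}$ ``is exactly the negated intersection matrix of the $(-2)$-curves $C_{ij}$, $j\ge 1$,'' and then invoke Du Val's classification. This identification is not justified: the classes $c_1(E_{ij})$, $j\ge 1$, are merely $s_i$ linearly independent roots in the negative definite exceptional lattice $T_i=\bigoplus_j{\Bbb Z}C_{ij}$ of $p_i\in Y$, and nothing in your argument forces them to be the fundamental classes of the irreducible exceptional curves (up to sign and permutation). A priori they could form a simple-root basis of a proper root subsystem of a different $ADE$ type — for instance a $D_8$ root basis inside $E_8$, which extends to an affine $\tilde{D}_8$ configuration of genuine $E_8$-roots by adjoining minus the $D_8$-highest root — in which case your matrix would have type $D_8$ while the singularity is $E_8$. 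Since $X^0\cong Y$, the content of the final assertion is precisely that this mismatch cannot occur, so it cannot be obtained by pure lattice theory. The paper closes this gap geometrically: choosing $\alpha$ with $-\langle\alpha,v(E_{ij})\rangle>0$ for $j>0$, Lemma \ref{lem:0-stable:exceptional} (see also Lemma \ref{lem:Lambda(E)} and Corollary \ref{cor:0-dim:NC}) shows that the loci $C_{ij}^\alpha=\{x^\alpha\mid\Hom({\cal E}_{|X\times\{x^\alpha\}},E_{ij})\ne 0\}$ are smooth rational curves which are exactly the irreducible components of the fibre $Z_i^\alpha$ of the minimal resolution $\pi_\alpha\colon X^\alpha\to X^0$ over $p_i$, meet in a simple normal crossing configuration, and satisfy $(C_{ij}^\alpha,C_{ik}^\alpha)=\langle v(E_{ij}),v(E_{ik})\rangle$. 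Only after the $v(E_{ij})$, $j\ge 1$, are realized as the classes of the actual exceptional curves of a minimal resolution can the Du Val type be read off from the matrix. Your proof needs this (or an equivalent) geometric input to be complete.
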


\begin{proof}
(1) By Proposition \ref{prop:Y=X^0},
$X^0 \cong Y$. 
Since $\varphi:X \to X^0$ is surjective,
$y \in Y$ corresponds to the $S$-equivalence class of
${\Bbb C}_x$, $x \in \pi^{-1}(y)$.
By Lemma \ref{lem:contraction-varphi},
${\Bbb C}_x$, $x \in \pi^{-1}(p_i)$ is not irreducible.
Hence $p_i$ corresponds to a properly 0-semi-stable objects.
For a smooth point $y \in Y$,
${\Bbb C}_x$, $x \in \pi^{-1}(y)$ is irreducible.
Therefore the second claim also holds.
(2) is a consequence of Proposition \ref{prop:0-dim:smooth}
and Lemma \ref{lem:crepant}.

(3) We note that 
\begin{equation}
\begin{split}
& \langle \varrho_X,v(E_{ij}) \rangle=0,\\
& \langle v(E_{ij}),v(E_{ij}) \rangle =-2,\\
& \langle v(E_{ij}),v(E_{kl}) \rangle \geq 0,
\;(E_{ij} \ne E_{kl}).
\end{split}
\end{equation} 
As we see in Example \ref{ex:1} in appendix,
we can apply Lemma \ref{lem:appendix:lattice} (1) to our
situation.
Hence the matrix $(-\langle v(E_{ij}),v(E_{ik}) \rangle)_{j,k \geq 0}$ 
is of affine
type $\tilde{A},\tilde{D}, \tilde{E}$.
Then we may assume that $a_{i0}=1$ for all $i$.
By Lemma \ref{lem:appendix:lattice} (2),
we can choose an $\alpha$ with $-\langle v(E_{ij}),\alpha \rangle>0$
for all $j>0$.
Let ${\cal E}^\alpha$ be the universal family on 
$X \times X^\alpha$. 
(3) is a consequence of the following lemma.
\end{proof}

\begin{lem}\label{lem:0-stable:exceptional}
Assume that $\alpha$ satisfies 
$-\langle v(E_{ij}),\alpha \rangle>0$
for all $j>0$.
\begin{enumerate}
\item[(1)]
We set 
\begin{equation}
C_{ij}^\alpha:=\{x^\alpha \in X^\alpha| 
\Hom({\cal E}_{|X \times \{x^\alpha \} },E_{ij}) \ne 0 \}, j>0.
\end{equation}
Then 
$C_{ij}^\alpha$ is a smooth rational curve.
\item[(2)]
\begin{equation}
\begin{split}
Z_i^\alpha=&
\{ x^\alpha \in X^\alpha | \Hom(E_{i0},
{\cal E}_{|X \times \{x^\alpha \}}) \ne 0 \}
= \cup_j C_{ij}^\alpha.
\end{split}
\end{equation}
\item[(3)]
$\cup_j C_{ij}^\alpha$ is simple normal crossing
and $(C_{ij}^\alpha,C_{ik}^{\alpha})=\langle v(E_{ij}),v(E_{ik}) \rangle$.
\end{enumerate}
\end{lem}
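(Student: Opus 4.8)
The plan is to reduce everything to two structural results already established: the classification of $\alpha$-(semi)stable objects whose class lies between $v(E_{i0})$ and $\varrho_X$ in Lemma~\ref{lem:0-stable:key}, and the reflection isomorphisms of Corollary~\ref{cor:0-dim:reflection}. Throughout I abbreviate $v_{ij}:=v(E_{ij})$ and recall $\langle \varrho_X, v_{ij}\rangle=0$ and $\langle v_{ij},v_{ij}\rangle=-2$, so that $w_{ij}:=\varrho_X-v_{ij}$ satisfies $\langle w_{ij}^2\rangle=-2$ and is again of the form $v(E_{i0}\oplus\bigoplus_{k>0}E_{ik}^{\oplus b_k})$ with $0\le b_k\le a_{ik}$ (using $a_{i0}=1$).

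For (1) I would first single out a distinguished object $F_{ij}$ of class $w_{ij}$: applying Corollary~\ref{cor:0-dim:reflection} repeatedly lowers the height of the positive root $w_{ij}$ until it reaches a simple root $v_{ik}$, whose moduli space $M_{{\cal O}_X(1)}^{G,\alpha}(v_{ik})=\{E_{ik}\}$ is a single reduced point (the irreducible object $E_{ik}$ is $\alpha$-stable for every $\alpha$). Since each reflection is an isomorphism of moduli spaces, $M_{{\cal O}_X(1)}^{G,\alpha}(w_{ij})$ is a single reduced point $\{F_{ij}\}$ with $F_{ij}$ $\alpha$-stable. Now for $E\in X^\alpha$ with $\Hom(E,E_{ij})\ne 0$ the irreducibility of $E_{ij}$ forces a surjection $E\to E_{ij}$ in ${\cal C}$, and by Lemma~\ref{lem:0-stable:key}(3) its kernel is $\alpha$-stable of class $w_{ij}$, hence isomorphic to $F_{ij}$. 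Conversely, by Lemma~\ref{lem:0-stable:key}(4) every non-split extension of $E_{ij}$ by $F_{ij}$ is $\alpha$-stable of class $\varrho_X$ and lies in $C_{ij}^\alpha$, while the split one does not. Thus $C_{ij}^\alpha\cong {\Bbb P}(\Ext^1(E_{ij},F_{ij}))$; since $\Hom(E_{ij},F_{ij})=0$ by Lemma~\ref{lem:0-stable:key}(2) and $\Ext^2(E_{ij},F_{ij})\cong \Hom(F_{ij},E_{ij})^\vee=0$ by $\alpha$-stability (using $E\otimes K_X\cong E$ from Proposition~\ref{prop:0-dim:smooth}(1), which also gives $\dim\Hom(E,E_{ij})=1$ on $C_{ij}^\alpha$), one gets $\dim\Ext^1(E_{ij},F_{ij})=-\chi(E_{ij},F_{ij})=\langle v_{ij},w_{ij}\rangle=2$, so $C_{ij}^\alpha\cong{\Bbb P}^1$.

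For (2) the key point is that, since $v(E)=\varrho_X$ lies over a singular point, every $E\in Z_i^\alpha$ is $\alpha$-stable with $0$-Jordan--H\"older factors exactly $E_{i0},E_{i1},\dots,E_{is_i}$ (multiplicities $a_{ij}$). By Lemma~\ref{lem:0-stable:key}(2) $\alpha$-stability is equivalent to $\Hom(E_{ij},E)=0$ for all $j>0$, so no $E_{ij}$ with $j>0$ lies in the socle; as $a_{i0}=1$, the socle is $E_{i0}$ and hence $\Hom(E_{i0},E)\ne 0$. Conversely, if $\Hom(E_{i0},E)\ne0$ then $E_{i0}\hookrightarrow E$, and Lemma~\ref{lem:one-point} together with Proposition~\ref{prop:Y=X^0} forces $E$ to lie over $p_i$, giving the first equality. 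For the second, a socle/cosocle duality argument (again via $E\otimes K_X\cong E$) shows the cosocle of such an $E$ cannot be $E_{i0}$ alone, so some $E_{ij}$ with $j>0$ is a quotient, i.e.\ $\Hom(E,E_{ij})\ne0$; thus $Z_i^\alpha\subseteq\bigcup_j C_{ij}^\alpha$, and the reverse inclusion is immediate since a quotient $E_{ij}$ ($j>0$) is supported on $Z_i$ and Lemma~\ref{lem:one-point} once more pins the support to $p_i$.

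Finally, (3) is where the real work lies. By Theorem~\ref{thm:RDP-desing}(2) and Lemma~\ref{lem:crepant}, $\pi_\alpha$ is a crepant (minimal) resolution, so each smooth rational curve $C_{ij}^\alpha$ contracted by $\pi_\alpha$ has $(C_{ij}^\alpha)^2=-2$ by adjunction, matching $\langle v_{ij},v_{ij}\rangle$. For $j\ne k$ I would compute the intersection through the universal family: a point of $C_{ij}^\alpha\cap C_{ik}^\alpha$ is an $E$ surjecting onto both $E_{ij}$ and $E_{ik}$, and such objects are governed by $\Ext^1(E_{ij},E_{ik})$, whose dimension is $-\chi(E_{ij},E_{ik})=\langle v_{ij},v_{ik}\rangle$ (using $\Hom=\Ext^2=0$ between distinct irreducibles). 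I would make this quantitative by restricting ${\cal E}$ to $C_{ij}^\alpha\times X$, identifying it with the universal extension of $E_{ij}$ by $F_{ij}$ over ${\Bbb P}^1$, and then expressing the intersection number via the determinantal class attached to $v_{ik}$; transversality, hence the simple normal crossing property, follows from $\Ext^1(E_{ij},E_{ik})$ being at most one-dimensional. The main obstacle is precisely this last computation---converting the lattice pairing $\langle v_{ij},v_{ik}\rangle$ into the geometric intersection number on $X^\alpha$---which I expect to settle either by the explicit extension geometry over the two ${\Bbb P}^1$'s, or by invoking the known $\tilde A,\tilde D,\tilde E$ configuration of the resolution fiber from Theorem~\ref{thm:RDP-desing}(3) to reduce the problem to deciding which pairs of components actually meet.
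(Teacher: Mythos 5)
Your treatment of (1) and (2) is correct and is essentially the paper's argument: the paper also identifies $C_{ij}^\alpha$ with ${\Bbb P}(\Ext^1(E_{ij},F_{ij}))\cong{\Bbb P}^1$, where $F_{ij}$ is the unique $\alpha$-stable object of class $\varrho_X-v_{ij}$ (it phrases this through the moduli space $N(\varrho_X,v(E_{ij}))$ of coherent systems, which also gives the injectivity on tangent spaces needed to conclude that ${\Bbb P}^1\to X^\alpha$ is a closed immersion -- a point you should make explicit), and (2) is handled exactly by the socle/cosocle analysis via Lemma \ref{lem:0-stable:key}(2) together with Lemma \ref{lem:one-point}.

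The gap is in (3), and it is the one you flag yourself: you never actually compute $(C_{ij}^\alpha,C_{ik}^\alpha)=\langle v_{ij},v_{ik}\rangle$ for $j\ne k$, nor establish transversality. Knowing $\dim\Ext^1(E_{ij},E_{ik})=\langle v_{ij},v_{ik}\rangle\le 1$ does not by itself identify the scheme-theoretic intersection $C_{ij}^\alpha\cap C_{ik}^\alpha$ with this $\Ext^1$; that identification is precisely the missing content. Worse, your proposed fallback -- ``invoking the known $\tilde A,\tilde D,\tilde E$ configuration of the resolution fiber from Theorem \ref{thm:RDP-desing}(3)'' -- is circular: in the paper, the statement that the exceptional fiber realizes that configuration (as opposed to the purely lattice-theoretic statement that the matrix $(-\langle v_{ij},v_{ik}\rangle)$ is of affine type, which comes from Lemma \ref{lem:appendix:lattice}) is deduced \emph{from} part (3) of this lemma. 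Even granting the abstract Dynkin type of the singularity, one still cannot read off which component $C_{ij}^\alpha$ sits at which node, so the intersection numbers do not follow.

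The paper closes this gap in two ways. The proof as written reduces (3) to the argument of \cite[Prop.~2.9]{O-Y:1}, after using Corollary \ref{cor:0-dim:reflection} to produce the needed $\alpha$-stable objects for every $(-2)$-class between $v_{i0}$ and $\varrho_X$. The cleaner alternative (Corollary \ref{cor:0-dim:NC}) goes through the Fourier--Mukai transform: one first shows (Lemma \ref{lem:Lambda(E)}) that $\Lambda^\alpha(E_{ij})[-1]$ is a line bundle on $C_{ij}^\alpha$, and then, since $\Lambda^\alpha$ is an isometry for the Mukai pairing and $\langle v({\cal O}_C(b)),v({\cal O}_{C'}(b'))\rangle=(C,C')$ for curves on a surface, one gets $(C_{ij}^\alpha,C_{ik}^\alpha)=\langle v(\Lambda^\alpha(E_{ij})),v(\Lambda^\alpha(E_{ik}))\rangle=\langle v_{ij},v_{ik}\rangle$ with no extension-geometry computation at all; simple normal crossing then follows since each pairwise intersection number is $0$ or $1$ and no three components can share a point without forcing a cycle in the dual graph. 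If you want to complete your write-up, this isometry argument is the route to take.
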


\begin{proof}
(1) By our choice of $\alpha$, 
$\Hom(E_{ij},{\cal E}_{|X \times \{x^\alpha \} })=0$ for all 
$x^\alpha \in X^\alpha$.
If $C_{ij}^\alpha=\emptyset$, 
then $\chi(E_{ij},{\cal E}_{|X \times \{x^\alpha \} })=0$
implies that
$\Hom({\cal E}_{|X \times \{x^\alpha \}},E_{ij})=
\Ext^1({\cal E}_{|X \times \{x^\alpha \}},E_{ij})=0$.
Then $\Phi_{X \to X^\alpha}^{{\cal E}^{\vee}}(E_{ij})=0$, 
which is a contradiction.
Therefore $C_{ij}^\alpha \ne \emptyset$.
In order to prove the smoothness,
we consider the moduli space of coherent systems
\begin{equation}
N(\varrho_X,v(E_{ij})):=
\{(E,V)|E \in X^{\alpha}, V \subset \Hom(E,E_{ij}), \dim_{\Bbb C} V=1 \}.
\end{equation}
We have a natural projection $\iota:N(\varrho_X,v(E_{ij})) \to X^\alpha$
whose image is $C_{ij}^\alpha$.
For $(E,V) \in N(\varrho_X,v(E_{ij}))$, 
we have a homomorphism $\xi:E \to E_{ij} \otimes V^{\vee}$.
The Zariski tangent space at $(E,V)$ is
$\Hom(E,E \to E_{ij} \otimes V^{\vee})$.
By Lemma \ref{lem:0-stable:key} (3),
$\xi$ is surjective and 
$\ker \xi \in M_{{\cal O}_X(1)}^{G,\alpha}(\varrho_X-v(E_{ij}))$.
In particular 
$\Hom(E,E \to E_{ij} \otimes V^{\vee}) \cong \Ext^1(E,\ker \xi)$.
\begin{NB}
Since $E \to E_{ij} \otimes V^{\vee}=\ker \xi[1]$,
$\Hom(E,E \to E_{ij} \otimes V^{\vee}) \cong 
\Hom(E,\ker \xi[1])=\Ext^1(E,\ker \xi)$.
\end{NB}
Conversely for $F \in M_{{\cal O}_X(1)}^{G,\alpha}(\varrho_X-v(E_{ij}))$
and a non-trivial extension
\begin{equation}
0 \to F \to E \to E_{ij} \to 0,
\end{equation}
Lemma \ref{lem:0-stable:key} (4) implies that $E \in X^\alpha$
and $E \to E_{ij}$ defines an element of $N(\varrho_X,v(E_{ij}))$.
By Corollary \ref{cor:0-dim:reflection} (1) and our choice of $\alpha$,
$\Hom(F,E_{ij})=\Hom(E_{ij},F)=0$.
Hence $\dim \Ext^1(E_{ij},F)=2$.
Since $M_{{\cal O}_X(1)}^{G,\alpha}(\varrho_X-v(E_{ij}))$ 
is a reduced one point, we see that 
$N(\varrho_X,v(E_{ij}))$ is isomorphic
to ${\Bbb P}^1$.
We show that $\iota:N(\varrho_X,v(E_{ij})) \to X^\alpha$
is a closed immersion.
For $(E,V) \in N(\varrho_X,v(E_{ij}))$,
$\dim \Hom(E,E_{ij})=\dim \Hom(\ker \xi,E_{ij})+1=1$.
Hence $\iota$ is injective.
We also see that $\iota_*:\Ext^1(E,\ker \xi) \to \Ext^1(E,E)$
is injective. Therefore $\iota$ is a closed immersion.
\begin{NB}
 and the obstructions 
for infinitesimal deformations belong to 
\begin{equation}
\ker(\Ext^1(E,E \to E_{ij} \otimes V^{\vee}) \to
\Ext^2(E,E)). 
\end{equation}
By Lemma \ref{lem:0-stable:key} (3),
$\xi$ is surjective and $\ker \xi$ is an $\alpha$-stable
object. In particular
$\Ext^1(E,E \to E_{ij} \otimes V^{\vee}) \cong 
\Ext^2(E,\ker \xi) \cong \Hom(\ker \xi,E)^{\vee}$.
In order to prove the smoothness of 
$N(\varrho_X,v(E_{ij}))$, it is sufficient to prove
$\Hom(E,E) \to \Hom(\ker \xi,E)$ is surjective.
Since $\langle v(\ker \xi),v(E_{ij})\rangle =2$,
Corollary \ref{cor:0-dim:reflection} (1) implies that
$\Hom(\ker \xi,E_{ij})=0$.
Then
we see that $\Hom(\ker \xi,E) \cong \Hom(\ker \xi,\ker \xi) \cong {\Bbb C}$.
Therefore $\Ext^2(E,\ker \xi) \to \Ext^2(E,E)$ is injective.
$N(\varrho_X,v(E_{ij}))$ is isomorphic to ${\Bbb P}^1$ and 
$N(\varrho_X,v(E_{ij})) \to C_{ij}^\alpha \subset X^{\alpha}$ 
is a closed immersion.
\end{NB}

(2) By our choice of $\alpha$,
$\Hom(E_{i0},{\cal E}_{|X \times \{x^\alpha \} }) \ne 0$ for
 $x^\alpha \in Z_i^\alpha$.
Conversely if $\Hom(E_{i0},{\cal E}_{|X \times \{x^\alpha \}}) \ne 0$, then
Lemma \ref{lem:one-point} implies that
$\Supp(\pi_*(G^{\vee} \otimes {\cal E}_{|X \times \{x^\alpha \}}))=\{p_i \}$.
Since $\Supp(\pi_*(G^{\vee} \otimes {\cal E}_{|X \times \{x^\alpha \}}))$ 
depends only on the $S$-equivalence class of 
${\cal E}_{|X \times \{x^\alpha \}}$,
we have $\psi(\pi_\alpha(x^\alpha))=p_i$. Thus
$x^\alpha \in Z_i^\alpha$.
Therefore we have the first equality.
By the choice of $\alpha$, 
we also get $Z_i^\alpha \subset 
\cup_j C_{ij}^\alpha$.
If $\Hom({\cal E}_{|X \times \{x^\alpha \}},E_{ij}) \ne 0$, $j>0$,
then we see that 
$\Supp(\pi_*(G^{\vee} \otimes {\cal E}_{|X \times \{x^\alpha \}}))=\{p_i \}$,
which implies that $x^\alpha \in Z_i^\alpha$.
Thus the second claim also holds.
\begin{NB}
Old version:
In the same way, 
by the choice of $\alpha$, 
we get $Z_i^\alpha \subset 
\cup_j C_{ij}^\alpha$ and by Lemma \ref{lem:0-stable:key} (1) and (3), 
we get
$\cup_j C_{ij}^\alpha \subset Z_i^\alpha$.
\end{NB}

(3)
Since $(-\langle v(E_{ij}),v(E_{ik}) \rangle)_{j,k \geq 1}$
is of $ADE$-type,
by using Corollary \ref{cor:0-dim:reflection}, we can show that
$M_{{\cal O}_X(1)}^{G,\alpha}(v) \cong 
M_{{\cal O}_X(1)}^{G,\alpha}(v(E_{i0}))$ for
$v=v(E_{i0} \oplus \oplus_{j>0}E_{ij}^{\oplus b_j})$,
$0 \leq b_j \leq a_{ij}$ with $\langle v^2 \rangle=-2$.
In particular, they are non-empty. 
Then by a similar arguments in \cite[Prop. 2.9]{O-Y:1},
we can also show that $\cup_j C_{ij}^\alpha$ is simple
normal crossing
and $(C_{ij}^\alpha,C_{ik}^{\alpha})=\langle v(E_{ij}),v(E_{ik}) \rangle$.
For another proof, see Corollary \ref{cor:0-dim:NC}. 
\end{proof}

\begin{NB}
For a general case, we use the covering trick.
Let $D$ be a very ample divisor on $Y$ such that
there is a smooth curve $B \in |2D|$ with
$B \cap \pi(\cup_i Z_i) =\emptyset$ and
$|K_Y+D|$ contains a curve $C$ with $C \cap \pi(\cup_i Z_i) =\emptyset$.
Since $\pi$ is an isomorphic over
$Y \setminus \pi(\cup_i Z_i)$, we regards $B$ and $C$ as divisors 
on $X$. Let $\phi:\widetilde{Y} \to Y$ be the double covering branced along
$B$ and set $\widetilde{X}=X \times_Y \widetilde{Y}$.
We also denote $\widetilde{X} \to X$ by $\phi$.
Then $|K_{\widetilde{X}}|=|\phi^*(K_X+D)|$ contains
$\pi^*(C)$.
Since $\phi$ is \'{e}tale over $\cup_i Z_i$, we have a decomposition
$\pi^*(E)=E_1 \oplus E_2$ and 
the deformation theory of
$E$ and $E_1$ are the same. Therefore
we also get
$\dim \Def_E \geq \dim \Ext^1(E,E)-(\dim \Ext^2(E,E)-1)$.
\end{NB}

\begin{NB}
Let $D$ be an effective divisor on $X \setminus \cup_i Z_i$. 
For a 0-dimensional object $E$ on $\cup_i Z_i$,
$\Ext^2(E,E) \to \Ext^2(E,E(-D))$ is an isomorphism. Hence 
we have a homomorphism
\begin{equation}
\tau:\Ext^2(E,E) \to \Ext^2(E,E(-D)) \overset{\tr}{\to} 
H^2(X,{\cal O}_X(-D)),
\end{equation}
where $\tr$ is the trace map.
The obstruction for infinitesimal deformations of $E$ lives in 
$\ker \tau$.
If $C \in |K_X+D|$ does not intersect $\cup_i Z_i$, then
$\dim \ker \tau \leq \dim \Ext^2(E,E)-1$. 
\end{NB}

\begin{NB}
$H^2(X,{\cal O}_X(-D))$ is the space of obstructions for
infinitesimal deformations of line bundle $L$ with an isomorphism
$L_{|D} \to F$, where $F$ is a fixed line bundle on $D$.  
In our case, $L=\det E$ and $F={\cal O}_D$.
Since $\det E$ is trivial on $D$, every 
\end{NB}

\subsection{Fourier-Mukai transforms on $X$.}
\label{subsect:0-dim:FM}

We keep the notations in subsection \ref{subsect:wall-chamber}.
Assume that $X^\alpha$ consists of $\alpha$-stable objects.
Let ${\cal E}^{\alpha}$ be a universal family
on $X \times X^{\alpha}$.
We have an equivalence
$\Phi_{X \to X^{\alpha}}^{({\cal E}^{\alpha})^{\vee}}:
{\bf D}(X) \to {\bf D}(X^{\alpha})$.
If ${\cal F}^{\alpha}$ be another universal family, then
we see that 
\begin{equation}\label{eq:universal-family}
\Phi_{X \to X^{\alpha}}^{({\cal E}^{\alpha})^{\vee}} \circ
\Phi_{X^{\alpha} \to X}^{{\cal F}^{\alpha}}=
\Phi_{X^{\alpha} \to X^{\alpha}}^{{\cal O}_{\Delta}(L)}[-2],
L \in \Pic(X^{\alpha}).
\end{equation}
Let $\Gamma^{\alpha}$ be the closure of the graph of
the rational map $\pi_\alpha^{-1} \circ \pi$:
\begin{equation}
\begin{CD}
\Gamma^\alpha @>>> X^\alpha \\
@VVV @VV{\pi_\alpha}V\\
X @>>{\pi}> Y.
\end{CD}
\end{equation}

\begin{lem}\label{lem:univ-characterize}
\begin{enumerate}
\item[(1)]
We may assume that 
${\cal E}^{\alpha}_{|X \times (X^{\alpha} \setminus Z^\alpha)} \cong 
{\cal O}_{{\Gamma^\alpha}|X \times (X^{\alpha} \setminus Z^\alpha)}$.
\item[(2)]
${\cal E}^{\alpha}$ is characterized
by 
${\cal E}^{\alpha}_{|X \times (X^{\alpha} \setminus Z^\alpha)}$ and
$\det \Phi_{X \to X^{\alpha}}^{({\cal E}^{\alpha})^{\vee}}(G)$.
\end{enumerate}
\end{lem}

\begin{proof}
(1)
We note that
${\cal E}^{\alpha}_{|X \times (X^{\alpha} \setminus Z^\alpha)} \cong 
({\cal O}_{{\Gamma^\alpha}}\otimes p_{X^{\alpha}}^*(L))
_{|X \times (X^{\alpha} \setminus Z^\alpha)} $,
where $L \in \Pic(X^{\alpha} \setminus Z^\alpha)$.
We also denote an extension of $L$ to $X^{\alpha}$
by $L$. Then
${\cal E}^{\alpha}\otimes p_{X^{\alpha}}^*(L^{\vee})$
is a desired universal family. 

(2)
Assume that
${\cal E}^{\alpha}_{|X \times (X^{\alpha} \setminus Z^\alpha)} \cong 
({\cal E}^{\alpha}\otimes p_{X^{\alpha}}^*(L))
_{|X \times (X^{\alpha} \setminus Z^\alpha)}$ and
$\det \Phi_{X \to X^{\alpha}}^{({\cal E}^{\alpha})^{\vee}}(G)
\cong \det 
\Phi_{X \to X^{\alpha}}^{({\cal E}^{\alpha}
\otimes p_{X^{\alpha}}^*(L))^{\vee}}(G)$. 
Then $L_{|X^{\alpha} \setminus Z^\alpha} \cong 
{\cal O}_{X^{\alpha} \setminus Z^\alpha}$
and $L^{\otimes \rk G} \cong {\cal O}_{X^{\alpha}}$.
In order to prove $L \cong {\cal O}_{X^{\alpha}}$, it is
sufficient to prove the injectivity of
the restriction map
\begin{equation}
r:\Pic(X^{\alpha}) \to \Pic(X^{\alpha} \setminus Z^{\alpha}) 
\times \prod_{i,j} \Pic(C_{ij}^\alpha).
\end{equation}
If $L_{|X^{\alpha} \setminus Z^{\alpha}} 
\cong {\cal O}_{X^{\alpha} \setminus Z^{\alpha}}$,
then we can write $L={\cal O}_X(\sum_{i,j} r_{ij} C_{ij}^\alpha)$.
Since the intersection matrix $((C_{ij}^\alpha,C_{ik}^\alpha))_{j,k}$ 
is negative definite,
$\deg(L_{|C_{ij}^\alpha})=
\sum_k r_{ik} (C_{ik}^\alpha,C_{ij}^\alpha)=0$ for all $i,j$ 
implies that
$r_{ij}=0$ for all $i,j$.
Thus $r$ is injective.
\end{proof}
%
%

\begin{defn}
We set $\Lambda^{\alpha}:=
\Phi_{X \to X^{\alpha}}^{({\cal E}^{\alpha})^{\vee}}[2]$.
\end{defn}

\begin{lem}\label{lem:compatible2}
${\cal O}_X(n) \otimes \underline{\;\;}$ and 
$\Lambda^{\alpha}$ are commutative. 
\end{lem}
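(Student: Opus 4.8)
The plan is to prove that for every $E \in {\bf D}(X)$ there is a natural isomorphism
\[
\Lambda^{\alpha}({\cal O}_X(n) \otimes E) \cong {\cal O}_{X^{\alpha}}(n) \otimes \Lambda^{\alpha}(E),
\]
where, identifying $X^0 \cong Y$ via Proposition \ref{prop:Y=X^0}, we write $\bar{\pi}_{\alpha} := \psi \circ \pi_{\alpha} : X^{\alpha} \to Y$ and set ${\cal O}_{X^{\alpha}}(n) := \bar{\pi}_{\alpha}^{*}({\cal O}_Y(n))$, so that ${\cal O}_X(n) = \pi^{*}({\cal O}_Y(n))$ and ${\cal O}_{X^{\alpha}}(n)$ are both pulled back from $Y$. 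Since an integral functor is determined by its kernel, and tensoring by a line bundle is an exact auto-equivalence, the whole statement reduces to a single isomorphism of kernels on $X \times X^{\alpha}$, after which the projection formula finishes the computation. The shift $[2]$ in the definition of $\Lambda^{\alpha}$ commutes with all the operations involved and may be ignored throughout.

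The geometric heart of the argument is that the kernel $({\cal E}^{\alpha})^{\vee}$ is supported over the diagonal of $Y$. Concretely, write $p_X : X \times X^{\alpha} \to X$ and $p_{X^{\alpha}} : X \times X^{\alpha} \to X^{\alpha}$ for the projections, and $W := X \times_Y X^{\alpha}$ for the fibre product over $Y$ taken with respect to $\pi$ and $\bar{\pi}_{\alpha}$; let $q : W \to Y$ be the structure map. For each $x^{\alpha} \in X^{\alpha}$ the object ${\cal E}^{\alpha}_{|X \times \{x^{\alpha}\}}$ is a $0$-dimensional object of ${\cal C}$ with $v = \varrho_X$, and by Lemma \ref{lem:one-point} together with Proposition \ref{prop:Y=X^0} its image $\pi_{*}(G^{\vee} \otimes {\cal E}^{\alpha}_{|X \times \{x^{\alpha}\}})$ is supported at the single point $\bar{\pi}_{\alpha}(x^{\alpha}) \in Y$; hence the cohomology sheaves of ${\cal E}^{\alpha}_{|X \times \{x^{\alpha}\}}$ lie on $\pi^{-1}(\bar{\pi}_{\alpha}(x^{\alpha}))$. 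Propagating this fibrewise statement to the family shows $\Supp({\cal E}^{\alpha}) \subseteq W$, and the same support bound holds for $({\cal E}^{\alpha})^{\vee}$. On $W$ the two maps $\pi \circ p_X$ and $\bar{\pi}_{\alpha}\circ p_{X^{\alpha}}$ agree with $q$, so
\[
p_X^{*}({\cal O}_X(n))_{|W} \cong q^{*}({\cal O}_Y(n)) \cong p_{X^{\alpha}}^{*}({\cal O}_{X^{\alpha}}(n))_{|W},
\]
which yields the desired kernel identity $({\cal E}^{\alpha})^{\vee} \otimes p_X^{*}({\cal O}_X(n)) \cong ({\cal E}^{\alpha})^{\vee} \otimes p_{X^{\alpha}}^{*}({\cal O}_{X^{\alpha}}(n))$.

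With this in hand the projection formula gives
\[
\Lambda^{\alpha}({\cal O}_X(n) \otimes E)
= {\bf R}p_{X^{\alpha}*}\!\big(({\cal E}^{\alpha})^{\vee} \otimes p_X^{*}({\cal O}_X(n)) \overset{{\bf L}}{\otimes} p_X^{*}(E)\big)[2]
\cong {\bf R}p_{X^{\alpha}*}\!\big(p_{X^{\alpha}}^{*}({\cal O}_{X^{\alpha}}(n)) \otimes ({\cal E}^{\alpha})^{\vee} \overset{{\bf L}}{\otimes} p_X^{*}(E)\big)[2],
\]
and pulling ${\cal O}_{X^{\alpha}}(n)$ out through ${\bf R}p_{X^{\alpha}*}$ identifies this with ${\cal O}_{X^{\alpha}}(n) \otimes \Lambda^{\alpha}(E)$. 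I expect the main obstacle to be the support step rather than the formal manipulation: making $\Supp(({\cal E}^{\alpha})^{\vee}) \subseteq W$ precise requires passing from the fibrewise vanishing to a statement about the cohomology sheaves of the family, which I would handle by descending induction on the top nonvanishing cohomology sheaf using derived restriction to the fibres $X \times \{x^{\alpha}\}$ (the top cohomology sheaf commutes with restriction, and lower ones follow after truncation). One must also check that the kernel isomorphism is not obstructed by the difference $p_X^{*}({\cal O}_X(n)) \otimes p_{X^{\alpha}}^{*}({\cal O}_{X^{\alpha}}(-n))$ being nontrivial away from $W$; this is where the containment in $W$, rather than mere equality on a dense open set, is essential, since it lets us restrict the cohomology sheaves of the kernel to the (scheme-theoretic) support on which the line bundle twist is canonically trivial.
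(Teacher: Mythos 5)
Your formal reduction to an isomorphism of kernels and the concluding projection-formula step are fine, and the observation that both ${\cal O}_X(n)$ and ${\cal O}_{X^{\alpha}}(n)$ are pulled back from $Y$ is the right starting point. The gap is in the middle step. From Lemma \ref{lem:one-point} you only get that the kernel is supported on $W=X\times_Y X^{\alpha}$ \emph{set-theoretically}, and set-theoretic support in $W$ together with triviality of $L:=p_X^*{\cal O}_X(n)\otimes p_{X^{\alpha}}^*{\cal O}_{X^{\alpha}}(-n)$ on $W$ does not give $K\otimes L\cong K$: one needs the cohomology sheaves of the kernel to be ${\cal O}_W$-modules, or at least to live on a thickening of $W$ on which $L$ is still trivial. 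This is not automatic from fibrewise information — a family flat over the base whose fibres are supported at single points of $Y$ need not be scheme-theoretically supported on the graph (already ${\cal O}_{{\Bbb A}^1\times{\Bbb A}^1}/(x-t)^2$, flat over the $t$-line with point-supported fibres, is not an ${\cal O}_{\Gamma}$-module). You flag exactly this point as "where the containment in $W$ is essential," but you never establish the scheme-theoretic containment, and over the two-dimensional strata $\pi^{-1}(p_i)\times Z_i^{\alpha}$ of $W$, where $\Pic$ of a thickening can differ from $\Pic$ of the reduction, there is no cheap way to obtain it.

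The paper takes a different route that avoids the issue: represent $nH$ by an effective divisor $D$ with $D\cap Z=\emptyset$, note that over $X^{\alpha}\setminus Z^{\alpha}$ the kernel is literally ${\cal O}_{\Gamma^{\alpha}}$ (so the twist is visibly harmless there), and then invoke Lemma \ref{lem:univ-characterize}(2): the universal family is determined by its restriction to $X\times(X^{\alpha}\setminus Z^{\alpha})$ together with $\det\Lambda^{\alpha}(G)$. The entire problem over $Z^{\alpha}$ then collapses to the single identity $\det\Lambda^{\alpha}(G(D))\cong\det(\Lambda^{\alpha}(G)(D))$, checked via the exact triangle induced by $0\to{\cal O}_X\to{\cal O}_X(D)\to{\cal O}_D(D)\to 0$. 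If you want to keep your kernel-level strategy, the fix is to replace the support claim about $W$ by the explicit description of the kernel: cover $\Supp(({\cal E}^{\alpha})^{\vee})$ by $X\times(X^{\alpha}\setminus Z^{\alpha})$, where the kernel is (a line-bundle twist from $X^{\alpha}$ of) the reduced graph and the canonical rational section of $L$ restricts to a trivialization on $\Gamma^{\alpha}$, and by $(X\setminus D)\times(X^{\alpha}\setminus D^{\alpha})$, where that same section trivializes $L$ outright; the two trivializations agree on the overlap, so they glue. Either way, the essential input is Lemma \ref{lem:univ-characterize} rather than any support statement about the fibre product.
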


\begin{proof}
Let $D$ be an effective divisor on $X$
such that $D \cap Z =\emptyset$.
It is sufficient to prove that
\begin{equation}
{\cal E}^{\alpha} \otimes 
({\cal O}_X(-D) \boxtimes {\cal O}_{X^{\alpha}}(D))
\cong {\cal E}^{\alpha}.
\end{equation}
We note that ${\cal E}^{\alpha} \cong {\cal O}_{\Gamma^\alpha}$
over $X^{\alpha} \setminus Z^\alpha$.
Obviously the claim holds over $X^{\alpha} \setminus Z^\alpha$.
By Lemma \ref{lem:univ-characterize},
we shall show that
$\det \Lambda^{\alpha}(G(D)) \cong 
\det (\Lambda^{\alpha}(G)(D))$.
We have an exact triangle
\begin{equation}
({\cal E}^{\alpha})^{\vee} \to ({\cal E}^{\alpha})^{\vee}(D) \to
({\cal E}^{\alpha})^{\vee}_{|D}(D) \to ({\cal E}^{\alpha})^{\vee}[1].
\end{equation} 
Since $({\cal E}^{\alpha})^{\vee}_{|D}(D) \cong 
{\cal O}_{\Delta|D}(D)[-2]$,
we have an exact triangle
\begin{equation}
\Lambda^{\alpha}(G) \to 
\Lambda^{\alpha}(G(D)) \to
G_{|D}(D) \to \Lambda^{\alpha}(G)[1].
\end{equation}
Hence we get 
$\det \Lambda^{\alpha}(G(D)) \cong 
(\det \Lambda^{\alpha}(G))( (\rk G) D) \cong
\det (\Lambda^{\alpha}(G)(D))$.
\end{proof}

\begin{prop}\label{prop:Phi-alpha}
\begin{enumerate}
\item[(1)]
$G^\alpha:=\Lambda^{\alpha}(G)$ is a locally free sheaf and
${\bf R}\pi_{\alpha*}({G^\alpha}^{\vee} \otimes G^\alpha)=
\pi_{\alpha*}({G^\alpha}^{\vee} \otimes G^\alpha)$.
\item[(2)]
$\Lambda^{\alpha}(E_{ij})[j]$ is a sheaf, where
$j=-1$ or 0 according as $(\alpha,c_1(E_{ij}))<0$ or $(\alpha,c_1(E_{ij}))>0$.
\item[(3)]
We set ${\cal A}^\alpha:=\pi_{\alpha*}({G^\alpha}^{\vee} \otimes G^\alpha)$.
Then ${\cal A}^\alpha$ is a reflexive sheaf on $Y$.
Under the identification $X^{\alpha} \setminus Z^{\alpha}
\cong X \setminus Z$, 
$G_{|X^{\alpha} \setminus Z^{\alpha}}^\alpha$ corresponds to
$G_{|X \setminus Z}$.
Hence we have an isomorphism ${\cal A} \cong {\cal A}^\alpha$.
\item[(4)]
We identify $\Coh_{{\cal A}}(Y)$ with $\Coh_{{\cal A}^\alpha}(Y)$
via $ {\cal A} \cong {\cal A}^\alpha$.
Then we have a commutative diagram
\begin{equation}
\begin{CD}
{\cal C} @>{\Lambda^{\alpha}}>> \Lambda^{\alpha}({\cal C})\\
@V{{\bf R}\pi_*{\cal H}om(G,\;\;)}VV 
@VV{{\bf R}\pi_{\alpha*}{\cal H}om(G^\alpha,\;\;)}V\\
\Coh_{{\cal A}}(Y) @= \Coh_{{\cal A}^\alpha}(Y) 
\end{CD}
\end{equation}
In particular $G^\alpha$ gives a local projective generator of 
$\Lambda^{\alpha}({\cal C})$.
\item[(5)]
We set
\begin{equation}
\begin{split}
S^{\alpha}:=&\{ \Lambda^{\alpha}(E_{ij})[-1]| i,j\} 
\cap \Coh(X^{\alpha}),\\
{\cal T}^{\alpha}:=& \{E \in  \Coh(X^{\alpha})|
\Hom(E,c)=0, c \in S^{\alpha} \},\\
{\cal S}^{\alpha}:=& \{E \in  \Coh(X^{\alpha})|
\text{ $E$ is a successive extension of subsheaves of 
$c \in S^{\alpha}$ }\}.
\end{split}
\end{equation}
Then $({\cal T}^{\alpha},{\cal S}^{\alpha})$ is a torsion pair of
$\Coh(X^{\alpha})$ and
$\Lambda^{\alpha}({\cal C})$ is the tilting of
$\Coh(X^{\alpha})$ with respect to $({\cal T}^{\alpha},{\cal S}^{\alpha})$.
\item[(6)]
Let $G'$ be a local projective generator of ${\cal C}$.
Then $\Lambda^\alpha$ induces an isomorphism
${\cal M}_H^{G'}(v)^{ss} \to 
{\cal M}_H^{\Lambda^\alpha(G')}(\Lambda^\alpha(v))^{ss}$.
\end{enumerate}
\end{prop}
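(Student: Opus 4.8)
The plan is to prove Proposition \ref{prop:Phi-alpha} one item at a time, since the later parts depend on the earlier ones. The unifying principle is that $\Lambda^\alpha = \Phi_{X \to X^\alpha}^{(\mathcal{E}^\alpha)^\vee}[2]$ is an equivalence that sends the irreducible objects $E_{ij}$ and the skyscrapers $\mathbb{C}_x$ ($x \notin Z$) to the corresponding irreducible objects of $X^\alpha$, and that $G^\alpha := \Lambda^\alpha(G)$ should play on $X^\alpha$ exactly the role that $G$ plays on $X$. First I would establish (2), the behaviour on irreducible objects, because it controls everything else. For $x \in X \setminus Z$ we have $\Lambda^\alpha(\mathbb{C}_x) = \mathbb{C}_{x^\alpha}$ under the identification $X^\alpha \setminus Z^\alpha \cong X \setminus Z$. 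For the exceptional $E_{ij}$, the vanishing $\Hom(\mathcal{E}^\alpha_{|X\times\{x^\alpha\}}, E_{ij}[p]) = 0$ for $p \neq j$ (coming from $\alpha$-stability, as in the proof of Lemma \ref{lem:0-stable:exceptional}) forces $\Lambda^\alpha(E_{ij})$ to be concentrated in a single degree, namely the sheaf $\Lambda^\alpha(E_{ij})[j]$ with $j = 0$ or $-1$ according to the sign of $(\alpha, c_1(E_{ij}))$.

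Next I would treat (1). To see that $G^\alpha$ is a genuine locally free sheaf rather than a complex, I would use Remark \ref{rem:tilting:generator} together with condition (a) of \eqref{eq:tilting:generator}: since $\Hom(G, E_{yj}[p]) = 0$ for $p \neq 0$ on $X$ and $\Lambda^\alpha$ is an equivalence, the transformed object $G^\alpha$ satisfies the analogous $\Hom$-vanishing against the irreducibles of $X^\alpha$, and Lemma \ref{lem:tilting:freeness} then gives local freeness. The vanishing of $R^1\pi_{\alpha*}$ of ${G^\alpha}^\vee \otimes G^\alpha$ reduces, via Lemma \ref{lem:tilting:TFF} (3), to a fibrewise statement over each $Z_i^\alpha$, which follows because $\chi$ and the $\Hom$-groups are preserved by the equivalence and $R^1\pi_*(G^\vee\otimes G) = 0$ on $X$. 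For (3), reflexivity of $\mathcal{A}^\alpha$ on the normal surface $Y$ is automatic once $\mathcal{A}^\alpha = \pi_{\alpha*}({G^\alpha}^\vee \otimes G^\alpha)$ is torsion-free and agrees with $\mathcal{A}$ away from the finite singular set; the identification $G^\alpha_{|X^\alpha\setminus Z^\alpha} \cong G_{|X\setminus Z}$ comes directly from Lemma \ref{lem:univ-characterize} (1), which says $\mathcal{E}^\alpha$ restricts to the structure sheaf of the graph $\Gamma^\alpha$.

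For (4), the commutative square is a formal consequence of the fact that $\Lambda^\alpha$ intertwines $\mathbf{R}\pi_*\mathbf{R}\mathcal{H}om(G, \underline{\ })$ and $\mathbf{R}\pi_{\alpha*}\mathbf{R}\mathcal{H}om(G^\alpha, \underline{\ })$: both compute $\mathbf{R}\Hom$ from the (transformed) local projective generator, and Lemma \ref{lem:compatible2} guarantees compatibility with twisting by $\mathcal{O}_X(n)$, which is what is needed to identify the two Morita functors on $\Coh_{\mathcal A}(Y) = \Coh_{\mathcal A^\alpha}(Y)$. That $G^\alpha$ is a local projective generator of $\Lambda^\alpha(\mathcal{C})$ then follows from Proposition \ref{prop:Morita} applied on the $X^\alpha$ side. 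Part (5) identifies $\Lambda^\alpha(\mathcal{C})$ as a tilt of $\Coh(X^\alpha)$: using that $\Lambda^\alpha$ carries the irreducibles $E_{ij}$ to the $\Lambda^\alpha(E_{ij})$ and invoking Proposition \ref{prop:tilting:S-T}, which characterises a tilted category by the set $\Sigma = \{\Lambda^\alpha(E_{ij})[-1]\} \cap \Coh(X^\alpha) = S^\alpha$, gives the torsion pair $(\mathcal{T}^\alpha, \mathcal{S}^\alpha)$ directly. Finally (6), preservation of the moduli stacks, follows because $\Lambda^\alpha$ is an equivalence of abelian categories $\mathcal{C} \to \Lambda^\alpha(\mathcal{C})$ preserving the Hilbert polynomial computed against $G'$ versus $\Lambda^\alpha(G')$ (this is where part (4) and the compatibility of \eqref{defn:Simpson-stability} are used), so it matches semistable objects with semistable objects and induces the claimed isomorphism of stacks.

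The main obstacle I anticipate is part (1), specifically pinning down that $G^\alpha$ is locally free and $\pi_\alpha$-acyclic in the sense $\mathbf{R}\pi_{\alpha*} = \pi_{\alpha*}$ for ${G^\alpha}^\vee \otimes G^\alpha$. The equivalence $\Lambda^\alpha$ only tells us about $\Hom$-groups and Euler characteristics abstractly; translating these into the concrete sheaf-theoretic statements requires Lemma \ref{lem:tilting:freeness} (to rule out an $H^{-1}$ term) and a careful fibrewise analysis over the exceptional curves $C_{ij}^\alpha$, exactly as in the proof of Proposition \ref{prop:tilting:generator} (2). Everything downstream — reflexivity of $\mathcal{A}^\alpha$, the Morita square, and the moduli isomorphism — is comparatively formal once $G^\alpha$ is known to be a local projective generator of the correct tilted category.
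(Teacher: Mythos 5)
Your overall architecture matches the paper's: (2) via $\alpha$-stability and Serre duality to kill all but one cohomology of $\Lambda^\alpha(E_{ij})$, (3) via reflexivity plus the graph description of ${\cal E}^\alpha$ off the exceptional locus, (4) as a Morita-type diagram extended from $Y\setminus\{p_1,\dots,p_n\}$, (5) via Proposition \ref{prop:tilting:S-T}, and (6) from preservation of twisted Hilbert polynomials. The one place where your route genuinely diverges, and where I think it breaks down as written, is the second half of (1), namely $R^i\pi_{\alpha*}((G^\alpha)^{\vee}\otimes G^\alpha)=0$ for $i\neq 0$. You propose to reduce this to a fibrewise statement over $Z_i^\alpha$ via Lemma \ref{lem:tilting:TFF} (3) and then conclude ``because the equivalence preserves $\Hom$-groups.'' But the fibrewise statement concerns $G^\alpha_{|\pi_\alpha^{-1}(y)}$, which is not the image under $\Lambda^\alpha$ of anything you control; to run the argument of Proposition \ref{prop:tilting:generator} (2) on $X^\alpha$ you would need to know that $G^\alpha_{|Z_i^\alpha}$ is a successive extension of quotients of the $\Lambda^\alpha(E_{ij})$, i.e.\ you would need precisely the finite-length and torsion-pair structure on $X^\alpha$ that is only established in parts (4) and (5), which in your ordering depend on (1). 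The same circularity affects your appeal to Remark \ref{rem:tilting:generator} for local freeness, since that remark presupposes the category on the target.

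The paper avoids this entirely by a direct computation. Serre duality gives $\Hom({\cal E}^\alpha_{|X\times\{x^\alpha\}},G[i])\cong \Hom(G,{\cal E}^\alpha_{|X\times\{x^\alpha\}}[2-i])^{\vee}=0$ for $i\neq 2$, because the right-hand side is a higher cohomology on $Y$ of a $0$-dimensional ${\cal A}$-module; base change then makes $G^\alpha$ a locally free sheaf with no appeal to Lemma \ref{lem:tilting:freeness}. For the acyclicity, one computes for $n\gg 0$ and $i\neq 0$ that $H^0(Y,R^i\pi_{\alpha*}((G^\alpha)^{\vee}\otimes G^\alpha)(n))=\Hom(\Lambda^\alpha(G),\Lambda^\alpha(G)(n)[i])=\Hom(\Lambda^\alpha(G),\Lambda^\alpha(G(n))[i])=\Hom(G,G(n)[i])=0$, where the middle equality is exactly Lemma \ref{lem:compatible2}; since $R^i\pi_{\alpha*}$ is supported on points for $i>0$, this forces the vanishing. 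You invoke Lemma \ref{lem:compatible2} only in part (4); moving it to the front and using this global-sections trick is what makes (1), and hence the rest of your plan, go through in the order you intend.
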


\begin{proof}
(1)
We note that $\Hom({\cal E}_{|X \times \{x^\alpha \} }^{\alpha},G[i])
\cong \Hom(G,{\cal E}_{|X \times \{x^\alpha \}}^{\alpha}[2-i])^{\vee}=0$
for $i \ne 2$ and $x^\alpha \in X^{\alpha}$.
By the base change theorem, $G^\alpha$ is a locally free sheaf.
By using Lemma \ref{lem:compatible2} and the ampleness of 
${\cal O}_Y(1)$, we have 
\begin{equation}
\begin{split}
H^0(Y,R^i \pi_{\alpha*}({G^\alpha}^{\vee} \otimes G^\alpha)(n))=& 
\Hom(\Lambda^{\alpha}(G),\Lambda^{\alpha}(G)(n)[i])\\
=& \Hom(\Lambda^{\alpha}(G),\Lambda^{\alpha}(G(n))[i])\\
=& 
\Hom(G,G(n)[i])=
H^0(Y,R^i \pi_{*}({G}^{\vee} \otimes G)(n))=0
\end{split}
\end{equation}
for $n \gg 0$ and $i \ne 0$.
Therefore $R^i \pi_*({G^\alpha}^{\vee} \otimes G^\alpha)=0$, $i \ne 0$ and
the claim holds.

(2)
If $(\alpha,c_1(E_{ij}))<0$, then
$\Hom({\cal E}_{|X \times \{x^\alpha \} }^{\alpha},E_{ij}[2])
\cong 
\Hom(E_{ij},{\cal E}_{|X \times \{x^\alpha \}}^{\alpha})^{\vee}=0$
for $x^\alpha \in X^{\alpha}$.
Since $\Hom({\cal E}_{X \times |\{x^\alpha \} }^{\alpha},E_{ij})
=0$ if $x^\alpha \not \in Z_i^\alpha$,
we see that $\Lambda^{\alpha}(E_{ij})[-1]$ is 
a torsion sheaf whose support is contained in $Z_i^{\alpha}$.

If $(\alpha,c_1(E_{ij}))>0$, then
$\Hom({\cal E}_{|X \times \{x^\alpha \} }^{\alpha},E_{ij})=0$
for $x^\alpha \in X^{\alpha}$.
Since $\Hom({\cal E}_{|X \times \{x^\alpha \}}^{\alpha},E_{ij}[2])
=0$ if $x^\alpha \not \in Z_i^\alpha$,
we see that $\Lambda^{\alpha}(E_{ij})$ is 
a torsion sheaf whose support is contained in $Z_i^{\alpha}$.

(3)
By the claim (1) and \cite[Lem. 2.1]{E:1},
${\cal A}^\alpha$ is a reflexive sheaf. 
Since ${\cal E}^{\alpha}$ is isomorphic
to ${\cal O}_{\Gamma^\alpha}$
over $X^{\alpha} \setminus Z^{\alpha}$, we get
$\Lambda^{\alpha}(G)_{|X^\alpha \setminus Z^\alpha} 
\cong \pi_\alpha^{-1} \circ \pi(G_{|X \setminus Z})$. 
Hence the second claim also follows.

(4)
For $E \in {\cal C}$,
we first prove that ${\bf R}\pi_*({G^\alpha}^{\vee} 
\otimes \Lambda^{\alpha}(E))
\in \Coh_{{\cal A}^\alpha}(Y)$.
As in the proof of (1), we have
\begin{equation}
\begin{split}
H^i(Y,{\bf R}\pi_*({G^\alpha}^{\vee} \otimes \Lambda^{\alpha}(E))(n))
=&
\Hom(G^\alpha,\Lambda^{\alpha}(E)(n)[i])\\
=&\Hom(G,E(n)[i])=0
\end{split}
\end{equation}
 for
$i \ne 0$, $n \gg 0$.
Therefore $H^i({\bf R}\pi_*({G^\alpha}^{\vee} \otimes \Lambda^{\alpha}(E)))=0$
for $i \ne 0$.
For $E \in {\cal C}$,
we take an exact sequence
\begin{equation}
G(-m)^{\oplus M} \to G(-n)^{\oplus N} \to E \to 0
\end{equation}
Then we have a diagram
\begin{equation}\label{eq:A-A'}
\begin{CD}
{\cal A}(-m)^{\oplus M} @>>> {\cal A}(-n)^{\oplus N} @>>> 
\pi_*(G^{\vee} \otimes E) @>>> 0 \\
@V{\phi}VV @VV{\psi}V @. @.\\
{\cal A}^\alpha(-m)^{\oplus M} @>>> {\cal A}^\alpha(-n)^{\oplus N} @>>> 
\pi_*({G^\alpha}^{\vee} \otimes \Lambda^{\alpha}(E)) @>>> 0
\end{CD}
\end{equation}
which is commutative over $Y^*:=Y \setminus \{p_1,p_2,\dots,p_n\}$,
where $\phi$ and $\psi$ are the isomorphisms induced by
${\cal A} \cong {\cal A}^\alpha$.
Let $j:Y^* \hookrightarrow Y$ be the inclusion.
Since ${\cal H}om({\cal A},{\cal A}^\alpha) \to
j_* j^*{\cal H}om({\cal A},{\cal A}^\alpha)$ is an isomorphism,
\eqref{eq:A-A'} is commutative,
which induces an isomorphism
$\xi:\pi_*(G^{\vee} \otimes E) \to 
 \pi_*({G^\alpha}^{\vee} \otimes \Lambda^{\alpha}(E))$.
It is easy to see that 
the construction of $\xi$ is functorial and defines an isomorphism
${\bf R}\pi_*{\cal H}om(G,\;\;) \cong 
{\bf R}\pi_*{\cal H}om(G^\alpha,\;\;) \circ \Lambda^{\alpha}$.

(5)
Since $\Lambda^\alpha$ is an equivalence,
$\Lambda^\alpha(E_{ij})$ are irreducible objects of 
$\Lambda^\alpha({\cal C})$.
By Lemma \ref{lem:tilting:C} and Proposition \ref{prop:tilting:S-T},
we get the claim.
\begin{NB}
For $E \in \Coh(X^{\alpha})$, we consider
$\phi:G^\alpha \otimes \pi^*(\pi_*({G^\alpha}^{\vee} \otimes E)) \to E$.
We set $E_1:=\im \phi$ and $E_2:=\coker \phi$.
Since $\Hom(G^\alpha,\Phi^{\alpha}(F)[-1])=
\Hom(G,F[-1])=0$ for $F \in {\cal C}$,
$G^\alpha \in {\cal T}^{\alpha}$.
Hence $E_1 \in {\cal T}^{\alpha}$.
We shall show that
$E_2 \in {\cal S}^{\alpha}$.
We note that ${\bf R}\pi_*({G^\alpha}^{\vee} \otimes E_1)=
\pi_*({G^\alpha}^{\vee} \otimes E_1)$ and
${\bf R}\pi_*({G^\alpha}^{\vee} \otimes E_2)=
R^1 \pi_*({G^\alpha}^{\vee} \otimes E_1)[-1]$. 
Then $E_1, E_2[1] \in \Lambda^{\alpha}({\cal C})$.
Since $\Supp(E_2) \subset Z^{\alpha}$,
$E_2[1]$ is generated by $\Lambda^{\alpha}(E_{ij})$.
Hence if $E_2 \ne 0$, then $\Hom(E_2[1],c[1])\ne 0$ for an object
$c \in S^{\alpha}$.
By the induction on the support of $E_2$,
we see that $E_2 \in {\cal S}^{\alpha}$.
Therefore $({\cal T}^{\alpha},{\cal S}^{\alpha})$ is a torsion pair
of $\Coh(X^{\alpha})$.
We also see that 
\begin{equation}
\begin{split}
{\cal T}^{\alpha}&=\{E \in \Coh(X^{\alpha})|
R^1 \pi_*({G^\alpha}^{\vee} \otimes E)=0 \},\\
{\cal S}^{\alpha}&=\{E \in \Coh(X^{\alpha})|
\pi_*({G^\alpha}^{\vee} \otimes E)=0 \}
\end{split}
\end{equation} 
and $\Lambda^{\alpha}({\cal C})$ is the tilting of 
$\Coh(X^{\alpha})$.
\end{NB}

(6) We note that the proof of (1) implies that
$\Lambda^\alpha(G')$ is a local projective generator of
$\Lambda^\alpha({\cal C})$.
By Lemma \ref{lem:compatible2}, 
$\chi(G',E(n))=\chi(\Lambda^\alpha(G'),\Lambda^\alpha(E)(n))$.
Hence the claim holds.
\end{proof}

\begin{NB}
\begin{lem}\label{lem:reflexive}
Let $E$ be a locally free sheaf on $X$ such that
$R^1 \pi_*(E)=R^1 \pi_*(E^{\vee})=0$.
Then $\pi_*(E)$ is a reflexive sheaf on $Y$. 
\end{lem}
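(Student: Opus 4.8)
The plan is to identify $\pi_*(E)$ with the $\mathcal{O}_Y$-dual of another coherent sheaf and then invoke the fact that, since $Y$ is normal, the dual of any coherent sheaf is reflexive. Because $Y$ has only rational double points, it is normal and Gorenstein, so $\omega_Y$ is an invertible sheaf, and the minimal resolution $\pi$ is crepant: $K_X=\pi^*(K_Y)$, equivalently $\omega_X\cong \pi^*(\omega_Y)$. As $\dim X=\dim Y=2$, the relative dualizing complex carries no shift and one has $\pi^{!}(\omega_Y)\cong \omega_X$.

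First I would apply Grothendieck duality to the proper morphism $\pi$, the locally free sheaf $E^{\vee}$, and the dualizing sheaf $\omega_Y$:
\begin{equation}
{\bf R}\pi_* {\bf R}{\cal H}om_{{\cal O}_X}(E^{\vee},\omega_X) \cong
{\bf R}{\cal H}om_{{\cal O}_Y}({\bf R}\pi_* E^{\vee},\omega_Y).
\end{equation}
On the left, since $E$ is locally free we have ${\bf R}{\cal H}om_{{\cal O}_X}(E^{\vee},\omega_X)\cong E\otimes\omega_X\cong E\otimes \pi^*(\omega_Y)$; the projection formula together with $R^1\pi_*(E)=0$ (and $R^{\geq 2}\pi_*=0$, as the fibres have dimension $\leq 1$) then gives ${\bf R}\pi_*(E\otimes \pi^*(\omega_Y))\cong \pi_*(E)\otimes \omega_Y$, a sheaf placed in degree $0$. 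On the right, the hypothesis $R^1\pi_*(E^{\vee})=0$ yields ${\bf R}\pi_* E^{\vee}\cong \pi_*(E^{\vee})$, so the right-hand side is ${\bf R}{\cal H}om_{{\cal O}_Y}(\pi_*(E^{\vee}),\omega_Y)$.

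Comparing the two sides, which now form an isomorphism in ${\bf D}(Y)$ whose left-hand term is concentrated in degree $0$, I would read off ${\cal E}xt^i_{{\cal O}_Y}(\pi_*(E^{\vee}),\omega_Y)=0$ for $i>0$ and, taking ${\cal H}^0$,
\begin{equation}
{\cal H}om_{{\cal O}_Y}(\pi_*(E^{\vee}),\omega_Y) \cong \pi_*(E) \otimes \omega_Y.
\end{equation}
Since $\omega_Y$ is invertible, twisting by $\omega_Y^{\vee}$ gives $\pi_*(E)\cong {\cal H}om_{{\cal O}_Y}(\pi_*(E^{\vee}),{\cal O}_Y)=(\pi_*(E^{\vee}))^{\vee}$. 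As $Y$ is normal, the $\mathcal{O}_Y$-dual of the coherent sheaf $\pi_*(E^{\vee})$ is reflexive, and hence so is $\pi_*(E)$; in particular torsion-freeness is subsumed in this conclusion.

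The formal manipulations of duality and the vanishing of $R^{\geq 2}\pi_*$ are routine. The step needing the most care is the correct identification $\pi^{!}(\omega_Y)\cong \omega_X$, i.e. that no cohomological shift or discrepancy term intervenes; this is precisely where the Gorenstein property of $Y$ and the crepancy of the minimal resolution are used. Once that is settled, the crux is the symmetric role of the two hypotheses $R^1\pi_*(E)=R^1\pi_*(E^{\vee})=0$: the first forces the left-hand side of the duality isomorphism to be a genuine sheaf, and the second guarantees that the right-hand side is the derived dual of a genuine sheaf, so that passing to ${\cal H}^0$ produces the desired double-dual description of $\pi_*(E)$.
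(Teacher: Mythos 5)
Your argument is correct, but it follows a genuinely different route from the paper's. The paper's proof is an elementary-modification argument along the exceptional curves: since $R^1\pi_*(E)=0$ forces $E_{|C_i}$ to split into line bundles of degree $\geq -1$, one gets $\Hom(E,{\cal O}_{C_i}(-2))=0$, hence a surjection $E \to {\cal O}_{C_i}(-1)\otimes \Hom(E,{\cal O}_{C_i}(-1))^{\vee}$ whose kernel $E'$ still satisfies both vanishing hypotheses and has the same direct image; iterating yields a subsheaf $E''\subset E$ with $\pi_*(E'')\cong\pi_*(E)$, $R^1\pi_*({E''}^{\vee})=0$ and $\Hom(E'',{\cal O}_{C_i}(-1))=0$ for all $i$, i.e. a full sheaf in the sense of Esnault and Ishii ([E], [Is1]), and direct images of full sheaves are exactly the reflexive modules on $Y$. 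You instead exhibit $\pi_*(E)$ directly as ${\cal H}om_{{\cal O}_Y}(\pi_*(E^{\vee}),{\cal O}_Y)$ via Grothendieck duality, using that rational double points are Gorenstein and that the minimal resolution is crepant so that $\pi^{!}(\omega_Y)\cong\omega_X\cong\pi^*(\omega_Y)$; the two hypotheses $R^1\pi_*(E)=R^1\pi_*(E^{\vee})=0$ enter exactly where you say they do, and the reflexivity of duals on a normal surface finishes the proof. Both arguments are valid in the setting at hand. Yours is shorter and gives more: the identity $\pi_*(E)\cong(\pi_*(E^{\vee}))^{\vee}$ and the vanishing of ${\cal E}xt^{i}_{{\cal O}_Y}(\pi_*(E^{\vee}),\omega_Y)$ for $i>0$. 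The paper's stays within the elementary circle of ideas (restriction to exceptional curves, elementary transformations, full sheaves) that it uses repeatedly in this part of the text, and does not invoke the Gorenstein/crepant structure.
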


\begin{proof}
We note that 
$\Hom(E,{\cal O}_{C_i}(-2))=0$.
Hence we have an exact sequence
\begin{equation}
0 \to E' \to E \to {\cal O}_{C_i}(-1) \otimes 
\Hom(E,{\cal O}_{C_i}(-1))^{\vee} \to 0.
\end{equation}
Then $E'$ also satisfies the assumptions.
Applying the same procedure, we get a subsheaf $E''$ of $E$ 
such that $\pi_*(E'') \cong \pi_*(E)$,
$R^1 \pi_*({E''}^{\vee})=0$ and
$\Hom(E'',{\cal O}_{C_i}(-1))=0$ for all $i$.
Then $E''$ is a reflexive sheaf.
\end{proof}
\end{NB}

\begin{rem}
If ${\cal C}=^{-1} \Per(X/Y)$, then 
${\cal O}_X \in ^{-1} \Per(X/Y)$ and
$\Lambda^{\alpha}({\cal O}_X)$ is a line bundle on
$X^{\alpha}$.
Hence we may assume that
$\Lambda^{\alpha}({\cal O}_X)
 \cong {\cal O}_{X^{\alpha}}$.
Then $\Hom({\cal O}_{X^{\alpha}},
\Lambda^{\alpha}({\cal O}_{C_{ij}}(-1))[n])=0$ 
for all $n$.
Thus $\Lambda^{\alpha}({\cal O}_{C_{ij}}(-1))[n]$ is a successive
extensions of ${\cal O}_{C_{ik}}(-1)$.
We also get
$\Hom({\cal O}_{X^{\alpha}},\Lambda^{\alpha}({\cal O}_{Z_i}))={\Bbb C}$ and
$\Hom({\cal O}_{X^{\alpha}},\Lambda^{\alpha}({\cal O}_{Z_i})[n])=0$
for $n \ne 0$. 
\begin{NB}
If $-(\alpha,c_1(E_{ij}))>0$ unless $j \ne j_i$, $j_i >0$,
then $\Lambda^\alpha(E_{i0})[-1]$ is a line bundle on the support.
Thus $\Lambda^\alpha(E_{i0})={\cal O}_{C_{ik}}(-2)[1]$.
\end{NB}
\end{rem} 

Since $\Lambda^\alpha$ is an equivalence with
$\Lambda^\alpha(\varrho_X)=\varrho_{X^\alpha}$,
we have the following corollary.
\begin{cor}\label{cor:Phi-alpha}
For a general $\alpha$,
the equivalence 
$$
\Lambda^\alpha:{\cal C}
\to \Lambda^\alpha({\cal C})
$$
induces an isomorphism:
$$
\Lambda^\alpha:{\cal M}_{{\cal O}_X(1)}^{G,\beta}(\varrho_X)^{ss} \to
{\cal M}_{{\cal O}_{X^\alpha}(1)}^{G^\alpha,\Lambda^\alpha(\beta)}
(\varrho_{X^\alpha})^{ss},
$$ 
where 
$\beta \in \varrho_X^{\perp}$.
\end{cor}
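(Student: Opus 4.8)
The plan is to obtain the corollary as the $0$-dimensional, $\beta$-twisted manifestation of Proposition \ref{prop:Phi-alpha}(6), so that essentially all the work has already been done. First I would record the structural facts about $\Lambda^\alpha$ that I need. By Proposition \ref{prop:Phi-alpha}(4) and (5) the functor $\Lambda^\alpha$ restricts to an exact equivalence of the abelian categories ${\cal C}$ and $\Lambda^\alpha({\cal C})$, compatible with the Morita identifications $\Coh_{{\cal A}}(Y) \cong \Coh_{{\cal A}^\alpha}(Y)$. Being induced by the Fourier--Mukai kernel $({\cal E}^\alpha)^\vee$ (up to the shift $[2]$), it preserves the Euler form, $\chi(E,F) = \chi(\Lambda^\alpha(E), \Lambda^\alpha(F))$, and hence induces an isometry of Mukai lattices via $\chi(E,F) = -\langle v(E),v(F)\rangle$. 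Since each fiber ${\cal E}^\alpha_{|X \times \{x^\alpha\}}$ is a $0$-dimensional object of ${\cal C}$ with Mukai vector $\varrho_X$, this isometry sends $\varrho_X$ to $\varrho_{X^\alpha}$, and consequently $\langle \Lambda^\alpha(\beta), \varrho_{X^\alpha}\rangle = \langle \beta,\varrho_X\rangle = 0$, so that $\Lambda^\alpha(\beta) \in \varrho_{X^\alpha}^\perp$ and the target moduli stack is well-defined.

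Next I would match the stability conditions. Because $v = \varrho_X$ and $\rk \beta = 0$, Remark \ref{rem:G-indep} reduces $(G,\beta)$-twisted semistability to the single $G$-independent sign condition $\chi(\beta, E') \le 0$ for every subobject $E' \subset E$ in ${\cal C}$, and the analogous criterion $\chi(\Lambda^\alpha(\beta), F') \le 0$ governs semistability on $X^\alpha$. Since $\Lambda^\alpha$ is an equivalence of abelian categories it identifies the poset of subobjects of $E$ with that of $\Lambda^\alpha(E)$, while $\chi(\beta,E') = \chi(\Lambda^\alpha(\beta),\Lambda^\alpha(E'))$ by preservation of the Euler form; thus the sign condition holds for all $E'$ if and only if it holds for all subobjects of $\Lambda^\alpha(E)$, giving a bijection of semistable objects on closed points. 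To promote this to an isomorphism of stacks I would run the relative version exactly as in the proof of Proposition \ref{prop:Phi-alpha}(6): the relative Fourier--Mukai transform with kernel $({\cal E}^\alpha)^\vee$ carries a flat family of $(G,\beta)$-twisted semistable objects of ${\cal C}$ to a flat family of $(G^\alpha,\Lambda^\alpha(\beta))$-twisted semistable objects of $\Lambda^\alpha({\cal C})$, using Lemma \ref{lem:compatible2} to see that twisting by ${\cal O}_X(n)$ is respected so the numerical data are preserved in families, and the inverse transform furnishes the inverse morphism of stacks.

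The main obstacle, and really the only point that is not formal, is confirming that $\Lambda^\alpha$ preserves the semistability \emph{inequality} and not merely the underlying objects. This is precisely where the $G$-independence of Remark \ref{rem:G-indep} is decisive: it lets me avoid any direct comparison of the twisted Hilbert polynomials $\chi(G,E(n))$ and $\chi(G^\alpha,\Lambda^\alpha(E)(n))$ and collapse the whole condition to the preserved sign of $\chi(\beta, E')$ on subobjects. Everything else is a routine consequence of $\Lambda^\alpha$ being a Fourier--Mukai equivalence compatible with the Morita pictures on both $X$ and $X^\alpha$.
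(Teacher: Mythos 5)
Your proposal is correct and follows essentially the same route as the paper: the paper derives the corollary in one line from Proposition \ref{prop:Phi-alpha}(6) together with the observation that $\Lambda^\alpha$ is an equivalence sending $\varrho_X$ to $\varrho_{X^\alpha}$, which is exactly the combination of facts (preservation of the Euler pairing, of subobject lattices, and the $G$-independence of Remark \ref{rem:G-indep} for $0$-dimensional vectors with $\rk\beta=0$) that you spell out. Your additional remarks on the relative/family version and on $\Lambda^\alpha(\beta)\in\varrho_{X^\alpha}^{\perp}$ are consistent with what the paper leaves implicit.
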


\subsubsection{Wall and chambers}

For the $0$-stable objects $E_{ij}$ in Theorem \ref{thm:RDP-desing}, 
we set $v_{ij}:=v(E_{ij})$.
By Lemma \ref{lem:simple-generator},
$\{E_{ij}\}$ is the set of
irreducible objects $E$ with $\Supp(E) \subset \cup_i Z_i$. 
Let ${\frak g}_i$ be the finite Lie algebra whose Cartan matrix is
$(-\langle v_{ij},v_{ik} \rangle_{j,k \geq 1})$ and
\begin{equation}
R_i:=\left\{\left. u=\sum_{j>0} n_{ij}' v_{ij} \right| 
\langle u^2 \rangle=-2, n_{ij}' \geq 0 \right \}. 
\end{equation}
Then $R_i$ is identified with the set of positive roots of
${\frak g}_i$. In particular, $R_i$ is a finite set.

\begin{defn}
For $u \in \cup_i R_i$,
we define the wall as
\begin{equation}
W_u:=\left\{\alpha \in \NS(X) \otimes {\Bbb R} \left|
\frac{\langle u,\alpha \rangle}{\langle u,v(G) \rangle}
=\frac{\langle v,\alpha \rangle}{\langle v,v(G) \rangle}
\right. \right\}.
\end{equation}
A connected component of 
$\NS(X) \otimes {\Bbb R} \setminus \cup_u W_u$ is called a chamber.
\end{defn}

\begin{rem}
If $v=\varrho_X$, then $W_u=u^{\perp}$.
\end{rem}

\begin{lem}\label{lem:Hodge}
Let $v$ be the Mukai vector of a 0-dimensional object $E$, 
which is primitive.
\begin{enumerate}
\item[(1)]
$\overline{M}_{{\cal O}_X(1)}^{G,\alpha}(v)$ consists of
$\alpha$-twisted stable objects
if and only if $\alpha \not \in \cup_u W_u$.
We say that $\alpha$
is general with respect to $v$. 
\item[(2)]
If $\alpha$ is general with respect to $v$,
then the virtual Hodge number of
$M_{{\cal O}_X(1)}^{G,\alpha}(v)$ does not depend on the choice
of $\alpha$. 
In particular, the non-emptyness of 
$M_{{\cal O}_X(1)}^{G,\alpha}(v)$ 
does not depend on the choice
of $\alpha$.
\end{enumerate} 
\end{lem}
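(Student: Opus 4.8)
The plan is to translate $\alpha$-twisted (semi)stability of $0$-dimensional objects into a condition on the Mukai lattice, and then to read off both statements, the second one largely from the dimension formula of Proposition \ref{prop:0-dim:smooth}. First I would record the slope dictionary. With $A_\alpha:=\gamma^{-1}((0,\alpha,0))$, so that $v(A_\alpha)=(0,\alpha,0)$, Remark \ref{rem:G-indep} together with $\chi(E,F)=-\langle v(E),v(F)\rangle$ (cf. \eqref{eq:RR}) gives, for every subobject $E'\subset E$ in ${\cal C}$,
\[
\frac{\chi(A_\alpha,E')}{\chi(G,E')}=\frac{\langle(0,\alpha,0),v(E')\rangle}{\langle v(G),v(E')\rangle}=\frac{(\alpha,c_1(E'))}{\langle v(G),v(E')\rangle}=:\mu_\alpha(v(E')).
\]
Hence $E$ is $\alpha$-semistable (resp. stable) iff $\mu_\alpha(v(E'))\le\mu_\alpha(v)$ (resp. $<$) for all proper subobjects, and the borderline equality $\mu_\alpha(v(E'))=\mu_\alpha(v)$ is precisely $\alpha\in W_{v(E')}$: using symmetry of the Mukai pairing one rewrites $W_u$ as the hyperplane $\{\alpha:(\alpha,\langle v,v(G)\rangle c_1(u)-\langle u,v(G)\rangle c_1(v))=0\}$.

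For (1) I would argue that $\overline{M}^{G,\alpha}_{{\cal O}_X(1)}(v)$ fails to consist of stable objects iff some properly $\alpha$-semistable $E$ with $v(E)=v$ exists, equivalently iff $E$ has an $\alpha$-stable subobject $E'$ (a Jordan--H\"older factor) of equal slope, i.e. iff $\alpha$ lies on some wall $W_{v'}$ with $v'=v(E')$. It then remains to identify $\{W_{v'}\}$ with $\{W_u:u\in\cup_iR_i\}$. Writing $v'=t\,v+u'$ with $t=\langle v',v(G)\rangle/\langle v,v(G)\rangle$, one gets $\langle u',v(G)\rangle=0$ and $\operatorname{rk}u'=0$, so $c_1(u')$ lies in the negative-definite exceptional lattice; primitivity of $v$ forces $u'\ne0$, and the self-intersection bounds $\langle v'^2\rangle,\langle(v-v')^2\rangle\ge-2$ (Proposition \ref{prop:0-dim:smooth}, using $E'\otimes K_X\cong E'$) combined with the equal-slope relation pin $u'$ to a rational multiple of a positive root of some $R_i$, by Theorem \ref{thm:RDP-desing}(3). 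This gives $W_{v'}=W_u$. Conversely, on each $W_u$ I would produce a properly semistable object as a nonsplit extension of the $\alpha_0$-stable graded pieces furnished by Corollary \ref{cor:0-dim:reflection}, so every root wall is genuinely attained.

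For (2) the decisive observation is that $\dim M^{G,\alpha}_{{\cal O}_X(1)}(v)=\langle v^2\rangle+2$ with $\langle v^2\rangle=(c_1(v)^2)\le0$, equality holding iff $c_1(v)=0$, which splits the problem into three cases. If $\langle v^2\rangle<-2$ then $M=\emptyset$ for every general $\alpha$ by Proposition \ref{prop:0-dim:smooth}, so the invariant is trivially constant. If $\langle v^2\rangle=0$ then $c_1(v)=0$ and primitivity forces $v=\varrho_X$, whence $M=X^\alpha$ is, by Theorem \ref{thm:RDP-desing}(2), the minimal resolution of $Y$; uniqueness of the minimal resolution yields $X^\alpha\cong X$ for every general $\alpha$, so the Hodge numbers coincide. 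If $\langle v^2\rangle=-2$ then $M$ is a smooth projective scheme of dimension $0$ which, by uniqueness of a stable object with a $(-2)$-vector, is a single reduced point or empty; here it suffices to prove that non-emptiness is independent of $\alpha$. Joining two general $\alpha,\alpha'$ by a path crossing the finitely many walls one at a time, at each crossing of $W_u$ the unique $\alpha_+$-stable object of class $v$ (if present) is exchanged for an $\alpha_-$-stable one by the elementary modification of Corollary \ref{cor:0-dim:reflection}(3),(4), so ``nonempty'' is preserved.

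The main obstacle is the identification in (1) of the destabilizing walls $\{W_{v'}\}$ with the root walls $\{W_u\}$: one must rule out spurious walls coming from objects whose support meets several fibers $Z_i$ and show that the orthogonal component $u'$ is always proportional to an honest positive root. I expect this to be the only genuine computation, requiring the negative-definiteness of the exceptional intersection form, the $(-2)/0$ dichotomy for self-intersections of $0$-dimensional Mukai vectors, and Theorem \ref{thm:RDP-desing}(3); everything else then follows formally from Proposition \ref{prop:0-dim:smooth}, Corollary \ref{cor:0-dim:reflection}, and the uniqueness of the minimal resolution.
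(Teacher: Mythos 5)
Your treatment of (1) is, at its core, the same as the paper's: the paper takes the $S$-equivalence decomposition $E\sim\oplus_i E_i$, observes that if every factor had $\langle v(E_i)^2\rangle=0$ then each $v(E_i)\in{\Bbb Z}_{>0}\varrho_X$ and $v$ would not be primitive, and so finds a factor $E_1$ with $\langle v(E_1)^2\rangle=-2$ supported on a single $Z_i$; since $E_1$ is generated by the irreducible objects $E_{ij}$, one gets $v(E_1)\in\oplus_{j\ge 0}{\Bbb Z}_{\ge 0}v_{ij}$, and affine root-system theory gives $v(E_1)\in\pm R_i+{\Bbb Z}\varrho_X$. Your orthogonal decomposition $v'=tv+u'$ is a reasonable repackaging, but the phrase ``$u'$ is a rational multiple of a positive root'' cannot be literally correct: a positive root $u\in R_i$ has $\langle u,v(G)\rangle=-\chi(G,u)<0$, whereas your $u'$ satisfies $\langle u',v(G)\rangle=0$ by construction, so $u'$ is proportional not to $u$ but to $u-\tfrac{\langle u,v(G)\rangle}{\langle v,v(G)\rangle}v$ modulo ${\Bbb Q}\varrho_X$. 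The clean statement is the paper's: $v(E_1)$ itself lies in $\pm R_i+{\Bbb Z}\varrho_X$. Also note that your converse direction (realizing each wall by a nonsplit extension) is absent from the paper's proof, and if you feed it the existence of stable objects with Mukai vector in $R_i$ you must avoid circularity, since the lemma immediately following this one derives that existence from part (2).

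For (2) you genuinely diverge from the paper, which simply refers to the virtual-Hodge-number wall-crossing computation of \cite[Prop.~2.6]{Y:11}. Your case analysis via $\langle v^2\rangle\in\{-2,0\}$ (or empty) is attractive and more self-contained: the $\langle v^2\rangle=0$ case reduces to $v=\varrho_X$ and is settled by Theorem \ref{thm:RDP-desing} together with uniqueness of the minimal resolution, and the $\langle v^2\rangle<-2$ case is vacuous by Proposition \ref{prop:0-dim:smooth}. The gap is in the $\langle v^2\rangle=-2$ case: Corollary \ref{cor:0-dim:reflection} has no parts (3) and (4); the statements you want are Lemma \ref{lem:0-stable:key}(3),(4), and those are proved only for Mukai vectors of the special form $v(E_{i0}\oplus\bigoplus_{j>0}E_{ij}^{\oplus b_j})$ with $0\le b_j\le a_{ij}$ and for $\alpha$ in a particular chamber, not for an arbitrary primitive $(-2)$-vector crossing an arbitrary wall. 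What you actually need is the reflection isomorphism of \cite[Prop.~1.12]{O-Y:1} (the analogue of Proposition \ref{prop:0-dim:O-Y} for general $v$), applied to a spherical object $F$ defining the wall --- whose existence is itself a nonemptiness statement that has to be supplied without circularity. So the skeleton of your (2) is sound and arguably more informative than the paper's external citation, but the single-wall-crossing step is not yet justified by the results you invoke.
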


\begin{proof}
(1)
For $E \in \overline{M}_{{\cal O}_X(1)}^{G,\alpha}(v)$,
we assume that $E$ is $S$-equivalent to
$\oplus_{i=1}^n E_i$.
If $\langle v(E_i)^2 \rangle=0$ for all $i$, then
$v(E_i) \in {\Bbb Z}_{>0}\varrho_X$. Hence
$v=\sum_{i=1}^n v(E_i)$ is not primitive.
Therefore we may assume that $\langle v(E_1)^2 \rangle=-2$. 
By the $\alpha$-stability of $E_1$,
$\Supp(E_1) \subset Z_i$ for an $i$.
Since $E_1$ is generated by $\{E_{ij}|0 \leq j \leq s_i \}$,
$v(E_1) \in \oplus_{j=0}^{s_i} {\Bbb Z}_{\geq 0}v_{ij}$.
Then we see that 
$v(E_1) \in \pm R_i +{\Bbb Z}\varrho_X$.
Therefore the claim holds.
(2)
The proof is similar to that of \cite[Prop. 2.6]{Y:11}.
\end{proof}

\begin{lem}
\begin{enumerate}
\item[(1)]
Let $w_1:=v_{i0}+\sum_{j=1}^{s_i} n_{ij} v_{ij}$, $n_{ij} \geq 0$  
be a Mukai vector with
$\langle w_1^2 \rangle \geq -2$.
Then there is an $\alpha$-twisted stable object $E$ with
$v(E)=w_1$ for a general $\alpha$.
\item[(2)]
Let $w_2 \in R_i$
be a non-zero Mukai vector.
Then there is an $\alpha$-twisted stable object $E$ with
$v(E)=w_2$ for a general $\alpha$.
\end{enumerate}
\end{lem}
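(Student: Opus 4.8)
The plan is to reduce both assertions to the non-emptiness of the relevant moduli spaces. By Lemma \ref{lem:Hodge} (2) the non-emptiness of $M_{{\cal O}_X(1)}^{G,\alpha}(v)$ for a \emph{primitive} $0$-dimensional $v$ is independent of the general $\alpha$ chosen, so I am free to pick a favorable $\alpha$ in each step. The strategy is then to exhibit stable objects for a short list of \emph{seed} Mukai vectors and to propagate existence to all required $v$ using the reflection isomorphisms of Corollary \ref{cor:0-dim:reflection}. The seeds are the irreducible objects $E_{ij}$, $0\le j\le s_i$, together with $\varrho_X$. Each $E_{ij}$ is an irreducible object of ${\cal C}$, hence has no proper nonzero subobject and is therefore $\alpha$-twisted stable for \emph{every} $\alpha$; thus $M_{{\cal O}_X(1)}^{G,\alpha}(v_{ij})\neq\emptyset$ for all $j$ and all $\alpha$. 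For the vector $\varrho_X$, Theorem \ref{thm:RDP-desing} identifies $M_{{\cal O}_X(1)}^{G,\alpha}(\varrho_X)=X^{\alpha}$ with the minimal resolution, which is non-empty for general $\alpha$.

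For part (2), let $w_2=\sum_{j>0}n_{ij}'v_{ij}\in R_i$ be a positive root of the finite root system ${\frak g}_i$, and argue by induction on the height $\sum_{j>0}n_{ij}'$. If the height is $1$ then $w_2=v_{ij}$ is a simple root and $E_{ij}$ is the desired stable object. If $w_2$ is not simple, then since $(-\langle v_{ij},v_{ik}\rangle)_{j,k\ge1}$ is an ADE Cartan matrix there is a simple root $v_{ij}$ with $\langle w_2,v_{ij}\rangle<0$, so that $w_2':=w_2+\langle w_2,v_{ij}\rangle v_{ij}$ is again a positive root of strictly smaller height. Choosing $\alpha$ with $-\langle v_{ij},\alpha\rangle>0$ and applying the reflection isomorphism (the finite-root analogue of Corollary \ref{cor:0-dim:reflection}, built from the universal extension in Lemma \ref{lem:0-stable:key} as in \cite[Sect.~4]{Y:action}), I get $M_{{\cal O}_X(1)}^{G,\alpha}(w_2)\cong M_{{\cal O}_X(1)}^{G,\alpha}(w_2')$. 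By the induction hypothesis the right-hand side is non-empty, hence so is the left-hand side.

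For part (1), write $w_1=v_{i0}+\sum_{j\ge1}n_{ij}v_{ij}$. Using $a_{i0}=1$ we have $v_{i0}=\varrho_X-\sum_{j\ge1}a_{ij}v_{ij}$, so $w_1=\varrho_X+\gamma$ with $\gamma:=\sum_{j\ge1}(n_{ij}-a_{ij})v_{ij}$ lying in the negative-definite finite root lattice. Since $\langle\varrho_X,v_{ij}\rangle=0$ and $\langle\varrho_X^2\rangle=0$, we get $\langle w_1^2\rangle=\langle\gamma^2\rangle$, an even integer $\le 0$. Thus the hypothesis $\langle w_1^2\rangle\ge-2$ leaves exactly two cases. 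If $\langle w_1^2\rangle=0$ then $\gamma=0$, i.e.\ $w_1=\varrho_X$, which is the seed handled above. If $\langle w_1^2\rangle=-2$ then $w_1$ is a real root of the affine system of Theorem \ref{thm:RDP-desing} (3) at level one; reducing its height by the reflections $w_1\mapsto w_1+\langle w_1,v_{ij}\rangle v_{ij}$ (keeping coefficients nonnegative, as for positive real roots) brings $w_1$ to one of the simple roots $v_{i0}$ or $v_{ij}$, and Corollary \ref{cor:0-dim:reflection} transports non-emptiness back along this chain. Since the seed objects $E_{i0},E_{ij}$ exist, $M_{{\cal O}_X(1)}^{G,\alpha}(w_1)\neq\emptyset$ for general $\alpha$.

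The main obstacle is the bookkeeping at each reflection: one must check at every step that the chosen $\alpha$ lies in the chamber where the relevant pairing inequality $\langle v,v_{ij}\rangle<0$ (equivalently $-\langle v_{ij},\alpha\rangle>0$) holds, so that the reflection functor really is an \emph{isomorphism of moduli preserving $\alpha$-stability}, and that the height-reduction keeps the vectors positive and terminates at an honest seed rather than wandering to a vector supporting no object. This is exactly the content of Lemma \ref{lem:0-stable:key} (3), (4) and Corollary \ref{cor:0-dim:reflection}, combined with the $\alpha$-independence of non-emptiness from Lemma \ref{lem:Hodge}; the remaining verifications are the standard root-theoretic facts that every positive real root is obtained from a simple root by positivity-preserving reflections, so I would only cite \cite{Y:action} for the analytic part and the finite/affine Weyl group transitivity for the combinatorial part.
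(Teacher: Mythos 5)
Your route for part (1) is essentially the alternative the paper itself flags in its proof (``instead of using Lemma \ref{lem:alpha>0}, we can also use Corollary \ref{cor:0-dim:reflection}''); the paper's primary argument for (1) instead transports ${\cal C}$ to $\Per(X/Y,{\bf b}_1,\dots,{\bf b}_n)$ via Proposition \ref{prop:0-dim:equivalence} and exhibits the stable objects explicitly as the sheaves $A_{p_i}\otimes {\cal O}_C$ with $\chi({\cal O}_C)=1$ of Lemma \ref{lem:alpha>0}, then invokes Lemma \ref{lem:Hodge} (2). For part (2), however, the paper does something genuinely different and shorter: it sets $w_1:=\sum_j a_{ij}v_{ij}-w_2=\varrho_X-w_2$, which is a vector of type (1); it chooses a general $\alpha$ with $\langle \alpha,w_1\rangle=0$ and a stable $E$ with $v(E)=w_1$; perturbing to $\alpha'$ with $\langle\alpha',w_1\rangle>0$ and using that $\Lambda^{\alpha'}$ is an equivalence, it produces a morphism $\phi\colon E\to {\cal E}^{\alpha'}_{|\{y\}\times X}$ into a point-object, and $\coker\phi$ is then $\alpha$-stable with $v(\coker\phi)=\varrho_X-w_1=w_2$. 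This avoids reflection functors on the finite root system altogether.

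That is exactly where your argument has a gap. Lemma \ref{lem:0-stable:key} and Corollary \ref{cor:0-dim:reflection} are proved only for Mukai vectors of the form $v_{i0}+\sum_{j>0}b_jv_{ij}$ with $0\leq b_j\leq a_{ij}$, and for $\alpha$ in the chamber $-\langle\alpha,v_{ij}\rangle>0$, $j>0$. Your induction in (2) runs over vectors $\sum_{j>0}n'_{ij}v_{ij}$ with no $v_{i0}$-component, for which the stability criterion underpinning the reflection isomorphism (Lemma \ref{lem:0-stable:key} (2): stable $\Leftrightarrow$ $\Hom(E_{ij},F)=0$ for $j>0$) is not available: its proof uses that $E_{i0}$ occurs exactly once among the Jordan--H\"{o}lder factors, and for a vector supported on $\{v_{ij}\}_{j>0}$ the condition $-\langle\alpha,v_{ij}\rangle>0$ does not decide stability. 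Concretely, for $w_2=v_{i1}+v_{i2}$ the subobject $E_{i2}$ destabilizes or not according to the sign of $\langle\alpha,v_{i1}\rangle\chi(G,v_{i2})-\langle\alpha,v_{i2}\rangle\chi(G,v_{i1})$, which is not fixed by your chamber choice. Similarly, in (1) the hypothesis allows $n_{ij}>a_{ij}$ (i.e. $w_1-\varrho_X$ a positive finite root), and your reflection chain then leaves the range $b_j\leq a_{ij}$, where even Lemma \ref{lem:0-stable:key} (1) fails: the $S$-equivalence class of a $0$-semi-stable object is no longer determined by its Mukai vector, because $\varrho_X=v_{k0}+\sum_l a_{kl}v_{kl}$ can be re-expressed through a different singular point $p_k$. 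The analogues you need are true (they are essentially \cite[Lem.~4.1, 4.2]{Y:action}), but they do not follow from the lemmas you cite; either prove them or, for (2), use the paper's cokernel construction.
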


\begin{proof}
(1)
By Proposition \ref{prop:0-dim:equivalence} below,
we may assume that 
${\cal C}=\Per(X'/Y,{\bf b}_1,...,{\bf b}_n)$.
\begin{NB}
$\alpha$ is changed to $\Lambda_\beta(\alpha)$, ($\beta=\alpha$).
\end{NB}
The claim follows from Lemme \ref{lem:alpha>0} below
and Lemma \ref{lem:Hodge} (2).
Instead of using Lemma \ref{lem:alpha>0},
we can also use Corollary \ref{cor:0-dim:reflection}
to show the claim for a special $\alpha$.

(2)
We set $w_1:=\sum_{j=0}^{s_i} a_{ij} v_{ij}-w_2$.
Then $w_1$ is the Mukai vector in (1).
We can take a general element $\alpha \in \NS(X) \otimes {\Bbb Q}$ 
such that $\langle \alpha,w_1 \rangle=0$.
Then $\alpha$ is general with respect to $w_1$ and
we have a $\alpha$-twisted stable object $E$
with $v(E)=w_1$.
We consider $X^{\alpha'}$ such that $\alpha'$ is sufficiently close to
$\alpha$ and 
$\langle \alpha',v(E) \rangle>0$.
Since $\Lambda^{\alpha'}$ is an equivalence,
there is a morphism
$\phi:E \to {\cal E}_{|\{y \} \times X}^{\alpha'}$,
where $y \in X^{\alpha'}$. 
By our choice of $\alpha$, $\coker \phi$ is an $\alpha$-twisted
stable object with $v(\coker \phi)=w_2$. 
Then the claim follows from Lemma \ref{lem:Hodge} (2).
\end{proof}

\subsubsection{A special chamber}

We take $\alpha \in \varrho_X^{\perp}$
with $-\langle v(E_{ij}),\alpha \rangle>0$, $j>0$. 
\begin{lem}\label{lem:Lambda(E)}
$\Lambda^{\alpha}(E_{ij})[-1]$, $j>0$ is a line bundle on $C_{ij}^\alpha$.
We set $\Lambda^{\alpha}(E_{ij}):={\cal O}_{C_{ij}^\alpha}(b_{ij}^\alpha)[1]$.
\end{lem}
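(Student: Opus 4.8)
The plan is to show that $\Lambda^{\alpha}(E_{ij})[-1]$ is a line bundle supported on $C_{ij}^{\alpha}$ by combining the structural results already established for the Fourier-Mukai transform $\Lambda^{\alpha}$ with the geometry of the exceptional curves $C_{ij}^{\alpha}$. By Proposition \ref{prop:Phi-alpha} (2), under the hypothesis $-\langle v(E_{ij}),\alpha\rangle = -(\alpha,c_1(E_{ij}))>0$ (i.e. $(\alpha,c_1(E_{ij}))<0$), the object $\Lambda^{\alpha}(E_{ij})[-1]$ is already known to be a torsion sheaf whose support is contained in $Z_i^{\alpha}$. So the real content is to pin down this torsion sheaf as a \emph{line bundle on the single curve} $C_{ij}^{\alpha}$.

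First I would identify the support precisely. For a point $x^{\alpha}\in X^{\alpha}$, whether $x^{\alpha}$ lies in the support of $\Lambda^{\alpha}(E_{ij})[-1]$ is governed by $\Hom({\cal E}^{\alpha}_{|X\times\{x^{\alpha}\}},E_{ij})\neq 0$, which by definition (Lemma \ref{lem:0-stable:exceptional} (1)) is exactly the condition defining $C_{ij}^{\alpha}$. Thus the set-theoretic support is $C_{ij}^{\alpha}$, which by Lemma \ref{lem:0-stable:exceptional} (1) is a smooth rational curve. Next I would compute the rank of this sheaf along $C_{ij}^{\alpha}$ by base change: for $x^{\alpha}\in C_{ij}^{\alpha}$, the fibre dimension is controlled by $\dim\Hom({\cal E}^{\alpha}_{|X\times\{x^{\alpha}\}},E_{ij})$, and one knows from Corollary \ref{cor:0-dim:reflection} (1) together with the coherent-systems analysis in the proof of Lemma \ref{lem:0-stable:exceptional} that this Hom-space is one-dimensional at a generic (indeed every) point of $C_{ij}^{\alpha}$. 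This forces $\Lambda^{\alpha}(E_{ij})[-1]$ to be a rank-one sheaf on the smooth curve $C_{ij}^{\alpha}$.

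Then I would upgrade ``rank-one torsion sheaf on a smooth rational curve'' to ``line bundle''. The cleanest route is to use that $\Lambda^{\alpha}$ is an equivalence, so $\Ext$-groups are preserved: $\Ext^{k}(\Lambda^{\alpha}(E_{ij}),\Lambda^{\alpha}(E_{ij})) = \Ext^{k}(E_{ij},E_{ij})$, and since $E_{ij}$ is a $0$-stable (hence simple) irreducible object we get $\Hom = {\Bbb C}$ and $\Ext^{2}={\Bbb C}$ by Serre duality (using $E_{ij}\otimes K_X\cong E_{ij}$ from Proposition \ref{prop:0-dim:smooth} (1)). A simple sheaf of rank one on a smooth curve, with no zero-dimensional subsheaf (purity, which follows from simplicity), is automatically an invertible sheaf on that curve; equivalently one checks that $\Lambda^{\alpha}(E_{ij})[-1]$ has no $0$-dimensional torsion and restricts to a line bundle, so it is ${\cal O}_{C_{ij}^{\alpha}}(b_{ij}^{\alpha})$ for some integer $b_{ij}^{\alpha}$, since $\Pic(C_{ij}^{\alpha})\cong {\Bbb Z}$ as $C_{ij}^{\alpha}\cong {\Bbb P}^1$.

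The main obstacle I expect is the purity/no-embedded-points step: ruling out a $0$-dimensional torsion subsheaf and confirming the sheaf is scheme-theoretically supported on the \emph{reduced} curve $C_{ij}^{\alpha}$ rather than on a thickening. I would handle this by leaning on simplicity of $\Lambda^{\alpha}(E_{ij})[-1]$ (from the equivalence) together with the fact that $C_{ij}^{\alpha}$ is smooth and irreducible: any embedded or fattened structure would produce either extra endomorphisms or a non-trivial subobject, contradicting $\Hom(\Lambda^{\alpha}(E_{ij}),\Lambda^{\alpha}(E_{ij}))={\Bbb C}$ and the rank-one computation at every point of $C_{ij}^{\alpha}$. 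Once purity and generic rank one are in hand, the conclusion that it is a line bundle ${\cal O}_{C_{ij}^{\alpha}}(b_{ij}^{\alpha})$ is routine, and I would simply set $\Lambda^{\alpha}(E_{ij}) = {\cal O}_{C_{ij}^{\alpha}}(b_{ij}^{\alpha})[1]$.
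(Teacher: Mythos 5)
Your proposal is correct in substance but closes the argument by a genuinely different route than the paper. The paper's proof first re-derives sheaf-ness directly: it computes $\Lambda^{\alpha}(E_{ij})\overset{{\bf L}}{\otimes}{\Bbb C}_{x^\alpha}={\bf R}\Hom({\cal E}^{\alpha}_{|X\times\{x^\alpha\}},E_{ij}[2])$, concludes cohomology can live only in degrees $-1,-2$, and kills $H^{-2}$ because it would be a locally free sheaf supported on the curve $C_{ij}^\alpha$; you instead simply cite Proposition \ref{prop:Phi-alpha} (2), which is legitimate and shorter. For purity the paper uses $\Hom({\Bbb C}_{x^\alpha},\Lambda^{\alpha}(E_{ij})[-1])=\Hom({\cal E}^{\alpha}_{|X\times\{x^\alpha\}},E_{ij}[-1])=0$ directly from the equivalence, which is cleaner than your ``purity follows from simplicity'' (that implication is true for sheaves with positive-dimensional support, but needs the socle argument spelled out). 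The real divergence is the last step: the paper sets $C:=\Div(\Lambda^{\alpha}(E_{ij})[-1])$ and uses that $\Lambda^\alpha$ is a Mukai-lattice isometry, so $(C^2)=\langle v(E_{ij})^2\rangle=-2$, which forces $C=C_{ij}^\alpha$ with multiplicity one and hence invertibility -- a one-line numerical argument. You instead argue via simplicity and the fibre-rank computation; this works, but to rule out a thickening $mC_{ij}^\alpha$ you must actually compute $\End\cong H^0({\cal O}_{mC_{ij}^\alpha})$, whose dimension exceeds $1$ for $m\geq 2$ only because $(C_{ij}^{\alpha})^2<0$, a fact you never invoke explicitly. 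Two small imprecisions worth fixing: the fibre of the sheaf $\Lambda^{\alpha}(E_{ij})[-1]$ at $x^\alpha$ is $\Ext^1({\cal E}^{\alpha}_{|X\times\{x^\alpha\}},E_{ij})$ rather than $\Hom$ (they happen to have the same dimension here since $\chi=0$ and $\Ext^2=0$); and Corollary \ref{cor:0-dim:reflection} (1) does not literally apply to $v=\varrho_X$ since $\langle\varrho_X^2\rangle=0$, so you must lean on the coherent-systems computation in the proof of Lemma \ref{lem:0-stable:exceptional}, as you indicate. What your approach buys is independence from the $\Div$/Mukai-vector formalism for torsion sheaves; what the paper's buys is a shorter and more robust multiplicity-one step.
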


\begin{proof}
We note that
$\Lambda^{\alpha}(E_{ij}) \overset{{\bf L}}{\otimes} {\Bbb C}_{x^\alpha}
={\bf R}\Hom({\cal E}^{\alpha}_{|X \times \{ x^\alpha \}},E_{ij}[2])$.
Then $H^k(\Lambda(E_{ij}) \overset{{\bf L}}{\otimes} {\Bbb C}_{x^\alpha})=0$
for $k \not =-1,-2$. Hence $H^k(\Lambda^{\alpha}(E_{ij}))=0$
for $k \not =-1,-2$ and $H^{-2}(\Lambda^{\alpha}(E_{ij}))$ is a locally free
sheaf.  
By the proof of Theorem \ref{thm:RDP-desing} (3),
$\Supp(H^k(\Lambda^{\alpha}(E_{ij}))) \subset C_{ij}^\alpha$ for all $k$.
Hence $H^{-2}(\Lambda^{\alpha}(E_{ij}))=0$, which implies that
$\Lambda^{\alpha}(E_{ij})[-1] \in \Coh(X^{\alpha})$.
Since $\Hom({\Bbb C}_{x^\alpha},\Lambda^{\alpha}(E_{ij})[-1])=
\Hom({\cal E}^{\alpha}_{|X \times \{ x^\alpha \}},E_{ij}[-1])=0$,
$\Lambda^{\alpha}(E_{ij})[-1]$ is purely 1-dimensional.
We set $C:=\Div(\Lambda^{\alpha}(E_{ij})[-1])$. Then
$(C^2)=\langle v(\Lambda^{\alpha}(E_{ij})[-1])^2 \rangle
=\langle v(E_{ij})^2 \rangle=-2$, which implies that $C=C_{ij}^\alpha$.
Therefore $\Lambda^{\alpha}(E_{ij})[-1]$ is a 
line bundle on $C_{ij}^\alpha$.
\end{proof}

\begin{cor}\label{cor:0-dim:NC}
\begin{enumerate}
\item[(1)]
$(C_{ij}^\alpha,C_{i'j'}^\alpha)=\langle v(E_{ij}),v(E_{i'j'}) \rangle$.
\item[(2)]
$\{C_{ij}^\alpha \}$ is a simple normal crossing divisor. 
\end{enumerate}
\end{cor}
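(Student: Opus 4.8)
The plan is to deduce both statements from Lemma~\ref{lem:Lambda(E)} together with the fact that the equivalence $\Lambda^\alpha$ preserves the Euler pairing. For (1), first I would observe that since $\Lambda^\alpha = \Phi_{X \to X^\alpha}^{({\cal E}^\alpha)^\vee}[2]$ is an equivalence of derived categories, it induces isomorphisms $\Hom(E_{ij},E_{i'j'}[k]) \cong \Hom(\Lambda^\alpha(E_{ij}),\Lambda^\alpha(E_{i'j'})[k])$ for all $k$, whence $\chi(E_{ij},E_{i'j'}) = \chi(\Lambda^\alpha(E_{ij}),\Lambda^\alpha(E_{i'j'}))$. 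By Lemma~\ref{lem:Lambda(E)} we have $\Lambda^\alpha(E_{ij}) = {\cal O}_{C_{ij}^\alpha}(b_{ij}^\alpha)[1]$, and shifting both arguments by $[1]$ leaves $\chi$ unchanged, so $\chi(E_{ij},E_{i'j'}) = \chi({\cal O}_{C_{ij}^\alpha}(b_{ij}^\alpha),{\cal O}_{C_{i'j'}^\alpha}(b_{i'j'}^\alpha))$.

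Next I would evaluate both sides through the identity $\chi(E,F) = -\langle v(E),v(F) \rangle$, which is available because all the sheaves involved have rank $0$. For two rank-$0$ classes one has $\langle (0,D_1,a_1),(0,D_2,a_2) \rangle = (D_1,D_2)$, so the right-hand side equals $-(C_{ij}^\alpha,C_{i'j'}^\alpha)$; a direct Hirzebruch--Riemann--Roch check confirms that the Todd corrections drop out for $1$-dimensional sheaves, so no hypothesis on $K_{X^\alpha}$ is needed. Comparing with $\chi(E_{ij},E_{i'j'}) = -\langle v(E_{ij}),v(E_{i'j'}) \rangle$ gives $(C_{ij}^\alpha,C_{i'j'}^\alpha) = \langle v(E_{ij}),v(E_{i'j'}) \rangle$, which is (1); in particular the diagonal yields $(C_{ij}^\alpha)^2 = -2$.

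For (2) I would combine (1) with the structural facts already established. Each $C_{ij}^\alpha$ is a smooth rational curve by Lemma~\ref{lem:0-stable:exceptional}~(1), and for $(i,j) \ne (i',j')$ we have $(C_{ij}^\alpha,C_{i'j'}^\alpha) = \langle v(E_{ij}),v(E_{i'j'}) \rangle \in \{0,1\}$: it is $0$ when $i \ne i'$ because the fibres $Z_i^\alpha$, $Z_{i'}^\alpha$ are disjoint, and it lies in $\{0,1\}$ when $i=i'$ by the $ADE$ structure of Theorem~\ref{thm:RDP-desing}~(3) (equivalently, an intersection number $\ge 2$ would force $(C_{ij}^\alpha+C_{ik}^\alpha)^2 = -4 + 2(C_{ij}^\alpha,C_{ik}^\alpha) \ge 0$, contradicting negative-definiteness). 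Hence any two distinct components either are disjoint or meet in a single point, transversally since the intersection number is $1$ and both curves are smooth.

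The one genuine issue, which I expect to be the crux, is to exclude triple points. I would argue as follows: if a point $x$ lay on three distinct components $C_1,C_2,C_3$, these must all lie over the same $p_i$, and each pair would then meet at $x$, forcing $(C_a,C_b)=1$ for all three pairs; but then $D := C_1+C_2+C_3$ is an effective cycle contracted by $\pi_\alpha$ with $(D^2) = 3(-2) + 2\cdot 3 = 0$, contradicting the negative-definiteness of the intersection form on the exceptional locus of $\pi_\alpha$. Therefore at most two components pass through any point, and $\cup_{i,j} C_{ij}^\alpha$ is simple normal crossing.
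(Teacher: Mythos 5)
Your proof is correct and takes essentially the same route as the paper: part (1) is exactly the paper's argument (Lemma \ref{lem:Lambda(E)} identifies $\Lambda^\alpha(E_{ij})[-1]$ as a line bundle on $C_{ij}^\alpha$, the equivalence preserves the Mukai pairing, and for rank-zero classes the pairing reduces to the intersection number of first Chern classes). For part (2), which the paper dispatches with ``Then (2) also follows,'' your completion — pairwise intersection numbers forced into $\{0,1\}$ by negative definiteness, transversality from smoothness plus intersection number one, and exclusion of triple points via $(C_1+C_2+C_3)^2=0$ contradicting negative definiteness — is the standard argument the paper leaves implicit, and it is sound.
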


\begin{proof}
(1)
By Lemma \ref{lem:Lambda(E)},
$(C_{ij}^\alpha,C_{i'j'}^\alpha)=
\langle v(\Lambda^\alpha(E_{ij})),v(\Lambda^\alpha(E_{i'j'})) \rangle=
\langle v(E_{ij}),v(E_{i'j'}) \rangle$.
Then (2) also follows.
\end{proof}

$E_{i0}$ is a subobject of ${\cal E}_{|X \times \{x^\alpha \}}$
for $x^\alpha \in Z_i^\alpha$ 
and we have an exact sequence
\begin{equation}
0 \to E_{i0} \to {\cal E}_{|X \times \{x^\alpha \} } \to F \to 0,\;
 x^\alpha \in Z_i^\alpha
\end{equation}
where $F$ is a $0$-semi-stable object with 
$\gr(F)=\oplus_{j= 1}^{s_i} E_{ij}^{\oplus a_{ij}}$.
Then we get an exact sequence
\begin{equation}\label{eq:0-dim:Z}
0 \to \Lambda^\alpha(F)[-1] \to \Lambda^\alpha(E_{i0})
 \to {\Bbb C}_{x^\alpha} \to 0
\end{equation}
in $\Coh(X^\alpha)$.
Thus $\Lambda^\alpha(E_{i0}) \in \Coh(X^\alpha)$.

\begin{defn}
We set 
$A_{i0}^\alpha:=\Lambda^\alpha(E_{i0})$ and
$A_{ij}^\alpha:=\Lambda^\alpha(E_{ij})=
{\cal O}_{C_{ij}^\alpha}(b_{ij}^\alpha)[1]$ for $j>0$.
\end{defn}

\begin{lem}\label{lem:A_alpha}
\begin{enumerate}
\item[(1)]
$\Hom(A_{i0}^\alpha,A_{ij}^\alpha[-1])=
\Ext^1(A_{i0}^\alpha,A_{ij}^\alpha[-1])=0$.
\item[(2)]
We set ${\bf b}_i^\alpha:=
(b_{i1}^\alpha,b_{i2}^\alpha,\dots,b_{i s_i}^\alpha)$.
Then $A_{i0}^\alpha \cong A_0({\bf b}_i^\alpha)$.
In particular,
$\Hom(A_{i0}^\alpha,{\Bbb C}_{x^\alpha})=
{\Bbb C}$ for $x^\alpha \in Z_i^\alpha$. 
\end{enumerate}
\end{lem}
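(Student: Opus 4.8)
The plan is to prove the two statements of Lemma \ref{lem:A_alpha} by transporting, through the equivalence $\Lambda^\alpha$, the characterization of the irreducible object $A_0({\bf b}_i^\alpha)$ already established in Lemma \ref{lem:G-1per:characterize} (equivalently the abstract version Lemma \ref{lem:A_0}). The key observation is that all $\Ext$-vanishings and all Mukai-vector identities in the source category ${\cal C}$ are preserved because $\Lambda^\alpha$ is an equivalence, so one reduces everything to facts about $E_{i0}$ and $E_{ij}$ in ${\cal C}$.

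First I would prove (1). Since $\Lambda^\alpha$ is an equivalence, we have $\Ext^k(A_{i0}^\alpha,A_{ij}^\alpha[-1])\cong\Ext^k(\Lambda^\alpha(E_{i0}),\Lambda^\alpha(E_{ij})[-1])\cong\Hom(E_{i0},E_{ij}[k-1])$ for every $k$. By Theorem \ref{thm:RDP-desing} the $E_{ij}$, $j\ge 0$, are mutually non-isomorphic $0$-stable (hence irreducible) objects of ${\cal C}$ with $E_{ij}\otimes K_X\cong E_{ij}$ (Proposition \ref{prop:0-dim:smooth}(1)); so $\Hom(E_{i0},E_{ij})=0$ for $j>0$ because a nonzero map between distinct irreducibles is impossible, $\Ext^2(E_{i0},E_{ij})\cong\Hom(E_{ij},E_{i0})^\vee=0$ likewise, and $\Ext^1(E_{i0},E_{ij})$ is controlled by $\langle v_{i0},v_{ij}\rangle$ via $\chi(E_{i0},E_{ij})=-\langle v_{i0},v_{ij}\rangle$. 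Setting $k=0,1$ gives the claim: $\Hom(A_{i0}^\alpha,A_{ij}^\alpha[-1])\cong\Hom(E_{i0},E_{ij}[-1])=0$ since $E_{ij}[-1]$ lies in the shifted part and both are in the heart, and $\Ext^1(A_{i0}^\alpha,A_{ij}^\alpha[-1])\cong\Hom(E_{i0},E_{ij})=0$.

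For (2) the plan is to verify that $A_{i0}^\alpha$ satisfies conditions (i) and (ii) of Lemma \ref{lem:G-1per:characterize} (read with $E_{\bf b}$ replaced by the generator $G^\alpha$ on $X^\alpha$ and $C_{ij}$ replaced by $C_{ij}^\alpha$), whence $A_{i0}^\alpha\cong A_0({\bf b}_i^\alpha)$. Condition (i), namely $\Hom(A_{i0}^\alpha,A_{ij}^\alpha[-1])=\Ext^1(A_{i0}^\alpha,A_{ij}^\alpha[-1])=0$ with $A_{ij}^\alpha[-1]={\cal O}_{C_{ij}^\alpha}(b_{ij}^\alpha)$, is precisely part (1). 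Condition (ii), the existence of an exact sequence $0\to F^\alpha\to A_{i0}^\alpha\to{\Bbb C}_{x^\alpha}\to 0$ with $F^\alpha$ a $G^\alpha$-twisted semistable $1$-dimensional sheaf of Euler characteristic $0$, is exactly the exact sequence \eqref{eq:0-dim:Z} obtained by applying $\Lambda^\alpha$ to $0\to E_{i0}\to{\cal E}^\alpha_{|X\times\{x^\alpha\}}\to F\to 0$, together with the identification $\Lambda^\alpha(F)[-1]$ as a successive extension of the $A_{ij}^\alpha[-1]={\cal O}_{C_{ij}^\alpha}(b_{ij}^\alpha)$ (Lemma \ref{lem:Lambda(E)}), which by Lemma \ref{lem:tilting:G-1Rpi*E=0} is the semistability/$\chi=0$ condition. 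The final assertion $\Hom(A_{i0}^\alpha,{\Bbb C}_{x^\alpha})={\Bbb C}$ then follows from Lemma \ref{lem:A_0}(1) (or directly from the exact sequence \eqref{eq:0-dim:Z}, since $A_{i0}^\alpha$ is irreducible and maps onto ${\Bbb C}_{x^\alpha}$).

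The main obstacle I anticipate is bookkeeping the shift conventions and checking that $\Lambda^\alpha(F)[-1]$ really is an honest $1$-dimensional sheaf supported on $\cup_j C_{ij}^\alpha$ with the correct Euler characteristic, so that the transported sequence genuinely matches the characterizing data of $A_0({\bf b}_i^\alpha)$; this rests on Lemma \ref{lem:Lambda(E)} and on knowing that $F$ is generated by the $E_{ij}$ with $j>0$, each of which $\Lambda^\alpha$ sends into the shifted torsion part. Once the support, purity, and $\chi=0$ are confirmed, the identification $A_{i0}^\alpha\cong A_0({\bf b}_i^\alpha)$ is immediate from the uniqueness in Lemma \ref{lem:G-1per:characterize}, and everything else is formal transport along the equivalence.
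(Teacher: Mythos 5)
Your proposal is correct and follows essentially the same route as the paper: part (1) is obtained by transporting $\Hom(E_{i0},E_{ij}[k])=0$ for $k=-1,0$ through the equivalence $\Lambda^\alpha$, and part (2) applies Lemma \ref{lem:G-1per:characterize} using the exact sequence \eqref{eq:0-dim:Z} together with (1). The extra detail you supply (identifying $\Lambda^\alpha(F)[-1]$ as a successive extension of the ${\cal O}_{C_{ij}^\alpha}(b_{ij}^\alpha)$ via Lemma \ref{lem:Lambda(E)} and Lemma \ref{lem:tilting:G-1Rpi*E=0}) is exactly what the paper leaves implicit.
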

\begin{proof}
(1)
We have
\begin{equation}
\begin{split}
\Hom(A_{i0}^\alpha,A_{ij}^\alpha[k])&=
\Hom(\Lambda^\alpha(E_{i0}),\Lambda^\alpha(E_{ij})[k])\\
&=\Hom(E_{i0},E_{ij}[k])=0
\end{split}
\end{equation} 
for $k=-1,0$.

(2)
By \eqref{eq:0-dim:Z} and (1), 
we can apply Lemma \ref{lem:G-1per:characterize}
and get
$A_{i0}^\alpha=A_0({\bf b}_i^\alpha)=A_{p_i}$.
\end{proof}

%
%
%
%

\begin{rem}
Assume that $\alpha \in v_0^{\perp}$ satisfies 
$-\langle v(E_{ij}),\alpha \rangle<0$, $j>0$.
Then $\Phi(E_{ij})[2]={\cal O}_{C_{ij}^\alpha}(b_{ij}'')$, $j>0$
and $\Phi(E_{i0})[2]=A_0({\bf b}_i'')[1]$ belong to
$\Per(X^\alpha/Y,{\bf b}_1'',...,{\bf b}_n'')^*$,
where ${\bf b}_i'':=(b_{i1}'',...,b_{i s_i}'')$. 
\end{rem}

By Proposition \ref{prop:Phi-alpha}, we have the following result.

\begin{prop}\label{prop:0-dim:equivalence}
If $-\langle \alpha,v(E_{ij}) \rangle >0$ for all $j>0$, then
$\Lambda^\alpha$ induces an equivalence
 $$
{\cal C} \to
\Per(X^{\alpha}/Y,{\bf b}_1^\alpha,...,{\bf b}_n^\alpha),
$$ 
where ${\bf b}_i^\alpha=(b_{i1}^\alpha,...,b_{is_i}^\alpha)$.
\end{prop}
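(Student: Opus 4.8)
The plan is to apply Proposition \ref{prop:Phi-alpha} directly, which already establishes almost everything, and then to identify the torsion pair on $X^\alpha$ produced there with the one defining $\Per(X^\alpha/Y,{\bf b}_1^\alpha,\dots,{\bf b}_n^\alpha)$. First I would recall from Proposition \ref{prop:Phi-alpha} (4) that $\Lambda^\alpha$ is an equivalence ${\cal C} \to \Lambda^\alpha({\cal C})$ under which $G^\alpha:=\Lambda^\alpha(G)$ is a local projective generator of the image category, and from part (5) that $\Lambda^\alpha({\cal C})$ is the tilting of $\Coh(X^\alpha)$ with respect to the torsion pair $({\cal T}^\alpha,{\cal S}^\alpha)$ characterized by the set $S^\alpha = \{\Lambda^\alpha(E_{ij})[-1] \mid i,j\} \cap \Coh(X^\alpha)$. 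So the entire task reduces to showing that $\Lambda^\alpha({\cal C})$ equals $\Per(X^\alpha/Y,{\bf b}_1^\alpha,\dots,{\bf b}_n^\alpha)$ as subcategories of ${\bf D}(X^\alpha)$.

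Next I would compute $S^\alpha$ explicitly under the hypothesis $-\langle \alpha, v(E_{ij})\rangle > 0$ for all $j>0$. By Lemma \ref{lem:Lambda(E)}, for $j>0$ we have $\Lambda^\alpha(E_{ij})[-1] \cong {\cal O}_{C_{ij}^\alpha}(b_{ij}^\alpha)$, which is an honest coherent sheaf on $X^\alpha$; while by Proposition \ref{prop:Phi-alpha} (2) together with Lemma \ref{lem:A_alpha} (2), $\Lambda^\alpha(E_{i0}) = A_{i0}^\alpha \cong A_0({\bf b}_i^\alpha)$ lies in $\Coh(X^\alpha)$, so $\Lambda^\alpha(E_{i0})[-1]$ is not a sheaf and contributes nothing to $S^\alpha$. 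Therefore $S^\alpha = \{{\cal O}_{C_{ij}^\alpha}(b_{ij}^\alpha) \mid i,\; 1\le j \le s_i\}$. By Proposition \ref{prop:tilting:S-T}, the tilted category $\Lambda^\alpha({\cal C})$ is completely characterized by this set $\Sigma = S^\alpha$: its torsion pair is exactly $({\cal T}^\alpha,{\cal S}^\alpha)$ with ${\cal T}^\alpha = \{E \mid \Hom(E,{\cal O}_{C_{ij}^\alpha}(b_{ij}^\alpha))=0,\ \forall i,j\}$ and ${\cal S}^\alpha$ the sheaves generated by subsheaves of the ${\cal O}_{C_{ij}^\alpha}(b_{ij}^\alpha)$.

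Finally I would match this against the definition of $\Per(X^\alpha/Y,{\bf b}_1^\alpha,\dots,{\bf b}_n^\alpha)$. By Proposition \ref{prop:C(G)} (2) applied to $X^\alpha \to Y$ with the line bundles $L_{ij} = {\cal O}_{C_{ij}^\alpha}(b_{ij}^\alpha) = A_{p_i} \otimes {\cal O}_{C_{ij}^\alpha}(-1)$ (the relation $A_{i0}^\alpha = A_{p_i}$ being recorded in Lemma \ref{lem:A_alpha} (2)), the category $\Per(X^\alpha/Y,\{L_{ij}\}) = {\cal C}(G^\alpha)$ is precisely the tilting of $\Coh(X^\alpha)$ with respect to the torsion pair $(S,T)$ where $S$ consists of sheaves generated by subsheaves of the $L_{ij}$ and $T = \{E \mid \Hom(E,L_{ij})=0\}$. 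This is verbatim the pair $({\cal S}^\alpha,{\cal T}^\alpha)$ found above, so the two tilted categories coincide, giving $\Lambda^\alpha({\cal C}) = \Per(X^\alpha/Y,{\bf b}_1^\alpha,\dots,{\bf b}_n^\alpha)$ and hence the desired equivalence. The only real subtlety — and the step I would check most carefully — is the bookkeeping of cohomological degrees: verifying that $\Lambda^\alpha(E_{i0})$ genuinely sits in degree $0$ (so its shift drops out of $S^\alpha$) while the $\Lambda^\alpha(E_{ij})$, $j>0$, sit in degree $-1$ (so their shifts land in $\Coh(X^\alpha)$), which is exactly the content of Proposition \ref{prop:Phi-alpha} (2) and Lemma \ref{lem:Lambda(E)} under the sign condition $-\langle \alpha, v(E_{ij})\rangle>0$. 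Once these signs are pinned down, the identification of the two torsion pairs via Proposition \ref{prop:tilting:S-T} and Proposition \ref{prop:C(G)} is immediate.
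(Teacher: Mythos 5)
Your proof is correct and follows essentially the same route as the paper, whose entire argument is the single sentence ``By Proposition \ref{prop:Phi-alpha}, we have the following result,'' relying implicitly on exactly the computations you spell out (Lemma \ref{lem:Lambda(E)}, Lemma \ref{lem:A_alpha}, and the torsion-pair characterizations in Proposition \ref{prop:tilting:S-T} and Proposition \ref{prop:C(G)}). Your version simply makes the degree bookkeeping and the identification of the two torsion pairs explicit.
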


\begin{prop}\label{prop:0-dim:duality}
Assume that there is a $\beta \in \varrho_X^{\perp}$
such that ${\Bbb C}_x$ are $\beta$-stable for all $x \in X$.
\begin{enumerate}
\item[(1)]
We set ${\cal F}:={{\cal E}^{\alpha}}^{\vee}[2]$. 
Then we have an isomorphism 
\begin{equation}
\begin{matrix}
X & \to & 
M_{{\cal O}_{X^\alpha}(1)}^{G^\alpha,\Lambda^\alpha(\beta)}
(\varrho_{X^\alpha})=(X^\alpha)^{\Lambda^\alpha(\beta)}\\
x & \mapsto & {\cal F} 
\overset{{\bf L}}{\otimes} {\Bbb C}_x. 
\end{matrix}
\end{equation}
Since $\Phi_{X^\alpha \to X}^{{\cal F}^{\vee}[2]}=
\Phi_{X^\alpha \to X}^{{\cal E}^\alpha}$,
we have ${\cal C}=
\Phi_{X^\alpha \to X}^{{\cal F}^{\vee}[2]}
(\Per(X^{\alpha}/Y,{\bf b}_1^\alpha,...,{\bf b}_n^\alpha))$.
\item[(2)]
We also have an isomorphism
\begin{equation}
\begin{matrix}
X & \to & 
M_{{\cal O}_{X^\alpha}(1)}^{(G^\alpha)^{\vee},
-D_{X^\alpha} \circ \Lambda^\alpha(\beta)}
(\varrho_{X^\alpha})\\
x & \mapsto & {\cal E}^\alpha \overset{{\bf L}}{\otimes} {\Bbb C}_x,  
\end{matrix}
\end{equation}
where $M_{{\cal O}_{X^\alpha}(1)}^{(G^\alpha)^{\vee},
-D_{X^\alpha} \circ \Lambda^\alpha(\beta)}
(\varrho_{X^\alpha})$ is the moduli 
of stable objects of $\Lambda^\alpha({\cal C})^D$.
\end{enumerate}
Thus $X$ and $X^\alpha$ are Fourier-Mukai dual.
\end{prop}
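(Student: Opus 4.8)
The plan is to prove Proposition \ref{prop:0-dim:duality} by exhibiting $X$ itself as a moduli space of stable objects on $X^\alpha$, using the Fourier-Mukai machinery already established. The essential point is that the equivalence $\Lambda^\alpha=\Phi_{X\to X^\alpha}^{({\cal E}^\alpha)^\vee}[2]$ sends the family of skyscraper sheaves $\{{\Bbb C}_x\}_{x\in X}$, which are $\beta$-stable $0$-dimensional objects of ${\cal C}$ with Mukai vector $\varrho_X$, to a family of $\Lambda^\alpha(\beta)$-stable objects on $X^\alpha$ with Mukai vector $\varrho_{X^\alpha}$. First I would verify that the transformed stability condition on $X^\alpha$ is again a valid genericity assumption: since $\Lambda^\alpha$ is an equivalence preserving the Mukai pairing with $\Lambda^\alpha(\varrho_X)=\varrho_{X^\alpha}$, the walls $W_u$ for $X^\alpha$ are the images under $\Lambda^\alpha$ of the walls for $X$, so $\Lambda^\alpha(\beta)$ avoids all of them and the moduli space $(X^\alpha)^{\Lambda^\alpha(\beta)}=M_{{\cal O}_{X^\alpha}(1)}^{G^\alpha,\Lambda^\alpha(\beta)}(\varrho_{X^\alpha})$ consists entirely of stable objects (cf. Lemma \ref{lem:crepant1} and Corollary \ref{cor:Phi-alpha}).

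For part (1), I would define the map $x\mapsto {\cal F}\overset{{\bf L}}{\otimes}{\Bbb C}_x$ with ${\cal F}=({\cal E}^\alpha)^\vee[2]$ and observe that $\Phi_{X^\alpha\to X}^{{\cal F}^\vee[2]}=\Phi_{X^\alpha\to X}^{{\cal E}^\alpha}$ is the inverse (up to shift) of $\Lambda^\alpha$, as recorded by the general relation \eqref{eq:universal-family}. Concretely, ${\cal F}\overset{{\bf L}}{\otimes}{\Bbb C}_x=\Lambda^\alpha({\Bbb C}_x)$, so the image objects are precisely the $\Lambda^\alpha(\beta)$-stable transforms of the ${\Bbb C}_x$. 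The bijectivity of the resulting morphism $X\to (X^\alpha)^{\Lambda^\alpha(\beta)}$ then follows because $\Lambda^\alpha$ is an equivalence inducing an isomorphism of the relevant moduli stacks (Corollary \ref{cor:Phi-alpha}), so the universal family ${\cal E}^\alpha$, viewed the other way, realizes $X$ as the fine moduli space. The identity ${\cal C}=\Phi_{X^\alpha\to X}^{{\cal F}^\vee[2]}(\Per(X^\alpha/Y,{\bf b}_1^\alpha,\dots,{\bf b}_n^\alpha))$ is then immediate from Proposition \ref{prop:0-dim:equivalence}, since $\Lambda^\alpha$ identifies ${\cal C}$ with that perverse category and $\Phi_{X^\alpha\to X}^{{\cal F}^\vee[2]}$ is its inverse up to the shift already built into $\Lambda^\alpha$.

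For part (2), the plan is to apply the duality functor $D_{X^\alpha}$ to the situation of part (1). I would use Remark \ref{rem:tilting:dual} and Lemma \ref{lem:tilting:dual} to pass from the local projective generator $G^\alpha$ of $\Lambda^\alpha({\cal C})$ to the generator $(G^\alpha)^\vee$ of the dual category $\Lambda^\alpha({\cal C})^D$, and to see that applying $D_{X^\alpha}$ to the stability condition $\Lambda^\alpha(\beta)$ produces $-D_{X^\alpha}\circ\Lambda^\alpha(\beta)$. Since $({\cal E}^\alpha\overset{{\bf L}}{\otimes}{\Bbb C}_x)=D_{X^\alpha}({\cal F}\overset{{\bf L}}{\otimes}{\Bbb C}_x)$ up to the appropriate twist and shift (here one uses $E\otimes K_X\cong E$ for $0$-dimensional objects from Proposition \ref{prop:0-dim:smooth}, so the canonical bundle twist is harmless on the exceptional locus), the map $x\mapsto {\cal E}^\alpha\overset{{\bf L}}{\otimes}{\Bbb C}_x$ lands in the moduli of stable objects of $\Lambda^\alpha({\cal C})^D$ and is again an isomorphism.

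The main obstacle I anticipate is the bookkeeping in part (2): verifying precisely that dualizing transforms $G^\alpha$-twisted $\Lambda^\alpha(\beta)$-stability into $(G^\alpha)^\vee$-twisted $(-D_{X^\alpha}\circ\Lambda^\alpha(\beta))$-stability, with all signs and shifts correct. This requires carefully tracking how $D_{X^\alpha}$ acts on the Mukai vectors, on the twisting class $\beta\in\varrho_X^\perp$, and on the irreducible objects $A_{ij}^\alpha$ and their duals, ensuring that the orthogonality condition and the wall-avoidance are preserved. The transform of objects and the compatibility of $\Lambda^\alpha$ with ${\cal O}_X(n)\otimes-$ (Lemma \ref{lem:compatible2}) should make the Hilbert polynomials match up, so that stability is genuinely preserved rather than merely semistability; but confirming the exact form $-D_{X^\alpha}\circ\Lambda^\alpha(\beta)$ of the dual polarization is the delicate step that I expect to occupy most of the verification.
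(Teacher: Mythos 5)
Your proposal is correct and follows essentially the same route as the paper: part (1) is exactly the application of Corollary \ref{cor:Phi-alpha} to the family $\{{\Bbb C}_x\}$ of $\beta$-stable objects (together with Proposition \ref{prop:0-dim:equivalence} for the identification of the perverse category), and part (2) is obtained from (1) by the duality isomorphism $E\mapsto D_{X^\alpha}(E)[2]$ between the moduli stacks for $\Lambda^\alpha({\cal C})$ and $\Lambda^\alpha({\cal C})^D$. The extra wall-avoidance check you anticipate in (1) and the sign bookkeeping you worry about in (2) are already absorbed into Corollary \ref{cor:Phi-alpha} and the stated dualization map, so no additional verification is needed.
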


\begin{proof}
(1) is a consequence of Corollary \ref{cor:Phi-alpha}.
(2) is a consequence of (1) and the isomorphism
${\cal M}_{{\cal O}_{X^\alpha}(1)}^{G^\alpha,\gamma}
(\varrho_{X^\alpha})^{ss} \to
{\cal M}_{{\cal O}_{X^\alpha}(1)}^{(G^\alpha)^{\vee},
-D_{X^\alpha}(\gamma)}
(\varrho_{X^\alpha})^{ss}$
defined by $E \mapsto D_{X^\alpha}(E)[2]$.
\end{proof}

\begin{NB}
$\Lambda^{\Lambda^\alpha(\beta)}(\Lambda^\alpha(\beta))
=(\Lambda^\alpha)^{-1}(\Lambda^\alpha(\beta))=\beta$.
\end{NB}

The following proposition explains the
condition of the stability of ${\Bbb C}_x$. 
\begin{prop}\label{prop:0-dim:duality2}
${\cal C}=\Lambda^\gamma(\Per(X'/Y,{\bf b}_1,...,{\bf b}_n))$
with $X=(X')^\gamma$ if and only if
there is a $\beta \in \varrho_X^{\perp}$
such that
${\Bbb C}_x$ are $\beta$-stable for all $x \in X$.
\end{prop}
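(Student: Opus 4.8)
The plan is to prove the two implications separately, treating the backward direction as a repackaging of Proposition~\ref{prop:0-dim:duality} and the forward direction as its mirror. For $(B)\Rightarrow(A)$ I would fix a $\beta\in\varrho_X^{\perp}$ for which every ${\Bbb C}_x$ is $\beta$-stable, and choose $\alpha\in\NS(X)\otimes{\Bbb Q}$ general with $-\langle\alpha,v(E_{ij})\rangle>0$ for all $j>0$ (possible by Lemma~\ref{lem:appendix:lattice}, as in the proof of Theorem~\ref{thm:RDP-desing}). Then $\Lambda^{\alpha}$ is an equivalence ${\cal C}\to\Per(X^{\alpha}/Y,{\bf b}_1^{\alpha},\dots,{\bf b}_n^{\alpha})$ by Proposition~\ref{prop:0-dim:equivalence}, and Proposition~\ref{prop:0-dim:duality}(1) supplies $X\cong(X^{\alpha})^{\Lambda^{\alpha}(\beta)}$ together with ${\cal C}=\Phi_{X^{\alpha}\to X}^{{\cal F}^{\vee}[2]}(\Per(X^{\alpha}/Y,\dots))$, where ${\cal F}=({\cal E}^{\alpha})^{\vee}[2]$. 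Setting $X':=X^{\alpha}$, ${\bf b}_i:={\bf b}_i^{\alpha}$ and $\gamma:=\Lambda^{\alpha}(\beta)$, and observing that $\Phi_{X^{\alpha}\to X}^{{\cal F}^{\vee}[2]}=\Phi_{X^{\alpha}\to X}^{{\cal E}^{\alpha}}$ is precisely the functor $\Lambda^{\gamma}$ attached to the universal family on $X'\times(X')^{\gamma}=X^{\alpha}\times X$, this reads ${\cal C}=\Lambda^{\gamma}(\Per(X'/Y,{\bf b}_1,\dots,{\bf b}_n))$ with $X=(X')^{\gamma}$, which is $(A)$.

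For the converse, assume $(A)$. The existence of the equivalence $\Lambda^{\gamma}=\Phi_{X'\to X}^{{\cal E}^{\vee}}[2]$ forces $X=(X')^{\gamma}$ to be a fine moduli space of $\gamma$-stable objects; in particular $\gamma$ is general and each $x\in X$ is represented by a $\gamma$-stable object $P_x:={\cal E}|_{X'\times\{x\}}\in\Per(X'/Y,{\bf b}_1,\dots,{\bf b}_n)$ with $v(P_x)=\varrho_{X'}$, where ${\cal E}$ is the universal family on $X'\times X$. The key step I would establish first is the identification $\Lambda^{\gamma}(P_x)\cong{\Bbb C}_x$. Since $\Phi_{X\to X'}^{{\cal E}}({\Bbb C}_x)={\cal E}|_{X'\times\{x\}}=P_x$, and since for the fine two-dimensional moduli space $X$ the inversion relation \eqref{eq:universal-family} shows $\Lambda^{\gamma}$ to be quasi-inverse to $\Phi_{X\to X'}^{{\cal E}}$ up to tensoring by a line bundle $L\in\Pic(X)$ and a shift, I get $\Lambda^{\gamma}(P_x)\cong{\Bbb C}_x\otimes L[k]\cong{\Bbb C}_x[k]$. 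As $\Lambda^{\gamma}(P_x)$ lies in the heart ${\cal C}=\Lambda^{\gamma}(\Per(X'/Y,\dots))$ while ${\Bbb C}_x\in{\cal C}$ by Lemma~\ref{lem:tilting:irreducible}(1), necessarily $k=0$, whence $\Lambda^{\gamma}(P_x)\cong{\Bbb C}_x$.

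The final step transports stability across $\Lambda^{\gamma}$. Viewed in the Mukai lattice the parameter $(0,\gamma,0)$ lies in $\varrho_{X'}^{\perp}$, and $\Lambda^{\gamma}$ is a Hodge isometry with $\Lambda^{\gamma}(\varrho_{X'})=\varrho_X$, so $\beta:=\Lambda^{\gamma}(\gamma)\in\varrho_X^{\perp}$. Because $\Per(X'/Y,{\bf b}_1,\dots,{\bf b}_n)$ carries a local projective generator $G'$ by Proposition~\ref{prop:C(G)}, Corollary~\ref{cor:Phi-alpha} applies verbatim to $\Lambda^{\gamma}$ and yields an isomorphism of moduli stacks ${\cal M}_{{\cal O}_{X'}(1)}^{G',\gamma}(\varrho_{X'})^{ss}\to{\cal M}_{{\cal O}_X(1)}^{\Lambda^{\gamma}(G'),\beta}(\varrho_X)^{ss}$ induced by $\Lambda^{\gamma}$, carrying the $\gamma$-stable locus onto the $\beta$-stable locus. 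Since each $P_x$ is $\gamma$-stable, its image ${\Bbb C}_x=\Lambda^{\gamma}(P_x)$ is $\beta$-stable, and this holds for every $x\in X$, which is exactly $(B)$.

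The main obstacle will be the identification $\Lambda^{\gamma}(P_x)\cong{\Bbb C}_x$ together with the uniform control of stability over the exceptional fibres. Away from the singularities of $Y$ the skyscrapers are the unique point objects and are automatically stable, whereas over an exceptional curve ${\Bbb C}_x$ is only $0$-semistable, being $S$-equivalent to $\bigoplus_j E_{ij}^{\oplus a_{ij}}$ (Theorem~\ref{thm:RDP-desing}); the substance of the statement is therefore that the chamber singled out by $\beta=\Lambda^{\gamma}(\gamma)$ is one in which all skyscrapers become stable. The inversion formula \eqref{eq:universal-family}, combined with the vanishing of the shift forced by membership in the heart ${\cal C}$, is what pins $\Lambda^{\gamma}(P_x)$ down as the honest skyscraper rather than a twist or shift of it, and Corollary~\ref{cor:Phi-alpha} is what propagates $\gamma$-stability of $P_x$ to $\beta$-stability of ${\Bbb C}_x$ simultaneously for all $x$.
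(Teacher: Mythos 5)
Your proof is correct and follows essentially the same route as the paper: the backward implication is exactly the paper's appeal to Proposition~\ref{prop:0-dim:duality}(1) with $X'=X^{\alpha}$ and $\gamma=\Lambda^{\alpha}(\beta)$, and the forward implication is the paper's combination of the identification $\Lambda^{\gamma}({\cal E}^{\gamma}|_{X'\times\{x\}})\cong{\Bbb C}_x$ with Corollary~\ref{cor:Phi-alpha} to carry $\gamma$-stability of the universal fibres to $\beta$-stability of the skyscrapers, $\beta=\Lambda^{\gamma}(\gamma)$. Your version merely makes explicit the details (the inversion relation \eqref{eq:universal-family} pinning down the shift, and the verification that $\Phi_{X^{\alpha}\to X}^{{\cal F}^{\vee}[2]}=\Phi_{X^{\alpha}\to X}^{{\cal E}^{\alpha}}$ is the functor $\Lambda^{\gamma}$) that the paper leaves implicit.
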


\begin{proof}
For $X=(X')^\gamma$,
$\gamma$-stability of 
${\cal E}^\gamma_{|X' \times \{x \}}$
and Corollary \ref{cor:Phi-alpha} imply 
the $\beta$-stability of
${\Bbb C}_x$, where
$\beta:=\Lambda^\gamma(\gamma)$.
\begin{NB}
Since 
$\Lambda({\cal E}^\gamma_{|X' \times \{ x \}})={\Bbb C}_x$,
${\Bbb C}_x$ is $\Lambda^\gamma(\gamma)$-stable.
\end{NB} 
Conversely if
${\Bbb C}_x$ are $\beta$-stable for all $x \in X$,
then Proposition \ref{prop:0-dim:duality} (1) implies the claim,
where $X':=X^\alpha$ and $\gamma:=\Lambda^\alpha(\beta)$.
\end{proof}

We give two examples of ${\cal C}$ satisfying the stability
condition of ${\Bbb C}_x$. 
\begin{lem}\label{lem:alpha>0}
\begin{enumerate}
\item[(1)]
Assume that ${\cal C}=
\Per(X/Y,{\bf b}_1,...,{\bf b}_n)$.
If $-\langle \alpha,v({\cal O}_{C_{ij}}(b_{ij})[1]) \rangle>0$ 
for all $j>0$, then
$X \cong X^{\alpha}$ by sending $x \in X$ to 
${\Bbb C}_x \in X^{\alpha}$.
Moreover $A_{p_i} \otimes {\cal O}_C$ such that
${\cal O}_C$ is a purely 1-dimensional
${\cal O}_{Z_i}$-module with $\chi({\cal O}_C)=1$ 
are $\alpha$-stable.
\item[(2)]
Assume that ${\cal C}=
\Per(X/Y,{\bf b}_1,...,{\bf b}_n)^*$.
If $-\langle \alpha,v({\cal O}_{C_{ij}}(b_{ij})) \rangle<0$ for all $j>0$, 
then
$X \cong X^{\alpha}$ by sending $x \in X$ to 
${\Bbb C}_x \in X^{\alpha}$.
\begin{NB}
Moreover $A_{p_i} \otimes \omega_C[1]$ such that
${\cal O}_C$ is a purely 1-dimensional
${\cal O}_{Z_i}$-module with $\chi({\cal O}_C)=1$ 
are $\alpha$-stable.
\end{NB}
\end{enumerate}
\end{lem}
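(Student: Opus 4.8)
The plan is to reduce both assertions to two steps: (a) show that ${\Bbb C}_x$ is $\alpha$-stable for every $x\in X$, and (b) show that the induced classifying morphism $X\to X^{\alpha}$ is an isomorphism. Step (b) is identical in the two cases, so I carry it out once. First I would note that the hypothesis forces $\alpha$ to satisfy \eqref{eq:weakly-general}: in both cases the condition is equivalent to $(\alpha,C_{ij})>0$ for all $j>0$, since $\langle\alpha,v({\cal O}_{C_{ij}}(b_{ij}))\rangle=(\alpha,[C_{ij}])$ and $v({\cal O}_{C_{ij}}(b_{ij})[1])=-v({\cal O}_{C_{ij}}(b_{ij}))$; and any class $D$ with $(D^2)=-2$, $(c_1({\cal O}_X(1)),D)=0$ is $\pm$ a positive root $\sum_{j>0}d_jC_{ij}$ ($d_j\ge 0$) supported on a single $Z_i$, so $(\alpha,D)=\pm\sum_jd_j(\alpha,C_{ij})\ne 0$. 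Hence by Lemma \ref{lem:crepant1} and Theorem \ref{thm:RDP-desing} (2), $X^{\alpha}=M_{{\cal O}_X(1)}^{G,\alpha}(\varrho_X)$ is smooth and irreducible (Proposition \ref{prop:0-dim:smooth}, Lemma \ref{lem:0-dim:irreducible}) and $\pi_{\alpha}\colon X^{\alpha}\to X^0\cong Y$ is the minimal resolution.

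In case (1) the irreducible objects of ${\cal C}=\Per(X/Y,{\bf b}_1,\dots,{\bf b}_n)$ supported on $Z_i$ are $E_{i0}=A_{p_i}$ and $E_{ij}={\cal O}_{C_{ij}}(b_{ij})[1]$ for $j>0$, with $a_{i0}=1$ (Proposition \ref{prop:tilting:G-1Per-irred}, Theorem \ref{thm:RDP-desing}); moreover ${\Bbb C}_x$ is $S$-equivalent to $\bigoplus_jE_{ij}^{\oplus a_{ij}}$ with $v({\Bbb C}_x)=\varrho_X$ (Lemma \ref{lem:alpha=0}). Since each $E_{ij}$ ($j>0$) sits in degree $-1$, $\Hom(E_{ij},{\Bbb C}_x)=0$, and the hypothesis is precisely $-\langle\alpha,v(E_{ij})\rangle>0$; thus Lemma \ref{lem:0-stable:key} (2) applies and ${\Bbb C}_x$ is $\alpha$-stable. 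The same lemma yields the ``Moreover'': for a purely $1$-dimensional ${\cal O}_{Z_i}$-module ${\cal O}_C$ with $\chi({\cal O}_C)=1$, the sheaf $A_{p_i}\otimes{\cal O}_C$ lies in ${\cal C}$ and is $G$-twisted stable by Lemma \ref{lem:tilting:chi=rkG}, with $v(A_{p_i}\otimes{\cal O}_C)=v(E_{i0})+\sum_{j>0}b_jv_{ij}$, $0\le b_j\le a_{ij}$; if $C=Z_i$ it is the irreducible object $A_{p_i}$, and otherwise $\Hom(E_{ij},A_{p_i}\otimes{\cal O}_C)=0$ forces $\alpha$-stability through Lemma \ref{lem:0-stable:key} (1) and (2).

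Case (2) is the genuinely new work, and I expect it to be the main obstacle, because the roles of sheaf and shifted object are reversed and so Lemma \ref{lem:0-stable:key} cannot be quoted with this $\alpha$ (its sign hypothesis becomes opposite). Here the irreducible objects of ${\cal C}=\Per(X/Y,{\bf b}_1,\dots,{\bf b}_n)^{*}$ supported on $Z_i$ are $E_{i0}^{*}=A_{p_i}\otimes\omega_{Z_i}[1]$ and $E_{ij}^{*}={\cal O}_{C_{ij}}(b_{ij})$ for $j>0$ (Proposition \ref{prop:tilting:G0-Per-irred}, Lemma \ref{lem:A_0}). Now $\Hom({\Bbb C}_x,E_{ij}^{*})=\Hom({\Bbb C}_x,{\cal O}_{C_{ij}}(b_{ij}))=0$, so no $E_{ij}^{*}$ with $j>0$ is a quotient of ${\Bbb C}_x$; combined with $\Hom(E_{i0}^{*},{\Bbb C}_x)=\Hom(A_{p_i}\otimes\omega_{Z_i},{\Bbb C}_x[-1])=0$ and $a_{i0}=1$, this shows that every proper quotient ${\Bbb C}_x\to{\Bbb C}_x/E'$ carries the unique Jordan--Hölder occurrence of $E_{i0}^{*}$. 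Hence each proper nonzero subobject $E'$ is a successive extension of the sheaves $E_{ij}^{*}$ ($j>0$) alone, giving $c_1(E')=\sum_{j>0}c_j[C_{ij}]$ with $c_j\ge 0$ not all zero. By Remark \ref{rem:G-indep}, for $v({\Bbb C}_x)=\varrho_X$ the $\alpha$-stability amounts to $(\alpha,c_1(E'))>0$ for all such $E'$, and $(\alpha,c_1(E'))=\sum_{j>0}c_j(\alpha,C_{ij})>0$ by hypothesis. (Alternatively, one could derive case (2) from case (1) applied to ${\cal C}(E^{\vee})$ through the dualizing anti-equivalence $D_X(\,\cdot\,)[2]$, which fixes ${\Bbb C}_x$ and sends ${\cal C}(E^{\vee})^{D}$ to $\Per(X/Y,{\bf b}_1,\dots,{\bf b}_n)^{*}$, provided one tracks the sign change $\gamma\mapsto-D_X(\gamma)$.)

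Finally, in both cases the $\alpha$-stability of all ${\Bbb C}_x$ means that ${\cal O}_{\Delta}$ on $X\times X$ is a family of $\alpha$-stable objects, inducing $\iota\colon X\to X^{\alpha}$, $x\mapsto{\Bbb C}_x$. It is injective on points (a stable object is determined by its moduli point and $\Supp{\Bbb C}_x=\{x\}$), and by Lemma \ref{lem:contraction-varphi} and Proposition \ref{prop:Y=X^0} it satisfies $\psi\circ\pi_{\alpha}\circ\iota=\pi$, so $\iota$ is a morphism over $Y$ restricting to the identity over $Y\setminus\{p_1,\dots,p_n\}$; in particular $\iota$ is birational, and being proper with dense image in the irreducible surface $X^{\alpha}$ it is surjective. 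A proper birational morphism that is bijective onto a smooth, hence normal, surface is an isomorphism by Zariski's main theorem, so $\iota\colon X\cong X^{\alpha}$ is the asserted isomorphism. Everything outside the case (2) subobject analysis is sign bookkeeping and an appeal to the already established structure of $X^{\alpha}$.
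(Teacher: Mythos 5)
Your proof is correct, and for part (1) it is essentially the paper's argument: the paper also rules out a destabilizing subobject $E_1\subset{\Bbb C}_x$ by the same bookkeeping — a factor $A_0({\bf b}_i)$ would force $-\langle\alpha,v(E_1)\rangle<0$ because $\langle\alpha,v(A_0({\bf b}_i))\rangle=-\sum_{j>0}a_{ij}\langle\alpha,v(E_{ij})\rangle$, and a sum of the ${\cal O}_{C_{ij}}(b_{ij})[1]$ alone cannot inject into ${\Bbb C}_x$ since $\Hom({\cal O}_{C_{ij}}(b_{ij})[1],{\Bbb C}_x)=0$ — which is exactly the computation packaged in Lemma \ref{lem:0-stable:key}(2) that you invoke. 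The genuine divergences are two. First, for surjectivity of $x\mapsto{\Bbb C}_x$ the paper uses the Fourier--Mukai transform $\Phi_{X\to X}^{{\cal O}_{\Delta}^{\vee}}$ (the orthogonality argument of Lemma \ref{lem:simple-generator}: $\chi({\Bbb C}_x,E)=0$ plus the equivalence forces a nonzero Hom, hence an isomorphism), then concludes from smoothness of both sides; your route via properness over $Y$, birationality and Zariski's main theorem is equally valid but does not reuse the mechanism the paper relies on elsewhere. Second, the paper only writes out part (1), so your dual argument for part (2) is the real content you supply; it is sound, but note that the fact doing the work there is $\Hom({\Bbb C}_x,{\cal O}_{C_{ij}}(b_{ij}))=0$ together with the existence of a simple quotient of ${\Bbb C}_x/E'$ (forcing that quotient to be $E_{i0}^{*}$ and, by $a_{i0}=1$, excluding $E_{i0}^{*}$ from the Jordan--H\"{o}lder factors of $E'$); the vanishing $\Hom(E_{i0}^{*},{\Bbb C}_x)=0$ that you also cite only excludes $E_{i0}^{*}$ as a subobject, not as a composition factor of a subobject, so it is not the statement you should lean on. Finally, your preliminary claim that the hypothesis implies \eqref{eq:weakly-general} for \emph{all} $(-2)$-classes in $H^{\perp}$ uses effectivity of such classes, which is automatic on a $K3$ but not on a general surface; this is harmless because the classes that actually occur as $c_1$ of $0$-dimensional objects lie in $\oplus_{i,j}{\Bbb Z}C_{ij}$, which is all that Lemma \ref{lem:crepant1} and Lemma \ref{lem:Hodge} require.
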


\begin{proof}
We only prove (1).
Since ${\Bbb C}_x$, $x \in  X \setminus \cup_{i=1}^n Z_i$ is irreducible,
it is $\alpha$-twisted stable for any $\alpha$. 
For $x \in Z_i$,
assume that there is an exact sequence
\begin{equation}
0 \to E_1 \to {\Bbb C}_x \to E_2 \to 0
\end{equation}
such that $E_1 \ne 0$, $E_2 \ne 0$ and
$-\langle \alpha, v(E_1) \rangle=\chi(v^{-1}(\alpha),E_1) \geq 0$.
We note that $-\langle \alpha,v(E_{ij}) \rangle >0$ for all $j>0$.
Since $\langle \alpha,\varrho_X \rangle =0$,
$\langle \alpha, v(A_0({\bf b}_i)) \rangle=
-\sum_{j>0} a_{ij}  \langle \alpha,v(E_{ij}) \rangle$.
As a 0-semi-stable object, $E_1$ is $S$-equivalent to
$\oplus_{j>0} {\cal O}_{C_{ij}}(b_{ij})[1]^{\oplus a_{ij}'}$, 
$a_{ij}' \leq a_{ij}$.
Since $\Hom({\cal O}_{C_{ij}}(b_{ij})[1],{\Bbb C}_x)=0$,
this is impossible.
Therefore ${\Bbb C}_x$ is $\alpha$-twisted stable. 
Then we have an injective morphism
$\phi:X \to X^{\alpha}$ by sending $x \in X$ to ${\Bbb C}_x$.   
By using the Fourier-Mukai transform
$\Phi_{X \to X}^{{\cal O}_{\Delta}^{\vee}}:{\bf D}(X)
\to {\bf D}(X)$, we see that
$\phi$ is surjective.
Since both spaces are smooth, $\phi$ is an isomorphism. 
The last claim also follows by a similar argument.
\end{proof}

\subsubsection{Relation with the twist functor \cite{S-T:1}.}

Let $F$ be a spherical object of ${\bf D}(X)$ and set 
\begin{equation}
{\cal E}:=\mathrm{Cone}(F^{\vee} \boxtimes F \to {\cal O}_{\Delta})[1].
\end{equation}
Then 
$T_F:=\Phi_{X \to X}^{{\cal E}}$
is an autoequivalence of ${\bf D}(X)$.  

\begin{lem}\label{lem:reflection-Pi}
Let $\Pi:{\bf D}(X) \to {\bf D}(Y)$ be a Fourier-Mukai transform.
Then
\begin{equation}
\Pi \circ T_F \cong T_{\Pi(F)} \circ \Pi.
\end{equation}
\end{lem}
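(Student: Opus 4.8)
The plan is to reduce the statement to the defining triangle of the spherical twist and then upgrade an object-wise isomorphism to an isomorphism of functors, with the compatibility of the evaluation morphisms under $\Pi$ as the crux. First I would record that, since an equivalence preserves spherical objects, $\Pi(F)$ is spherical in ${\bf D}(Y)$, so that $T_{\Pi(F)}$ is defined; and that applying $\Phi_{X \to X}^{-}$ to the defining triangle $F^{\vee}\boxtimes F \to {\cal O}_{\Delta} \to {\cal E}$ yields, for every $E \in {\bf D}(X)$, the functorial exact triangle
\[
{\bf R}\Hom_X(F,E)\otimes F \xrightarrow{\ \mathrm{ev}_E\ } E \to T_F(E) \to {\bf R}\Hom_X(F,E)\otimes F[1],
\]
with $\mathrm{ev}_E$ the evaluation morphism, and similarly for $T_{\Pi(F)}$.

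Applying the triangulated, ${\bf D}(\mathrm{pt})$-linear functor $\Pi$ to this triangle and using that $\Pi$ commutes with tensoring by the complex of vector spaces ${\bf R}\Hom_X(F,E)$ gives
\[
{\bf R}\Hom_X(F,E)\otimes \Pi(F) \xrightarrow{\ \Pi(\mathrm{ev}_E)\ } \Pi(E) \to \Pi(T_F(E)) \to {\bf R}\Hom_X(F,E)\otimes \Pi(F)[1].
\]
Full faithfulness of $\Pi$ supplies a canonical identification ${\bf R}\Hom_X(F,E)\cong {\bf R}\Hom_Y(\Pi(F),\Pi(E))$, under which the first term becomes ${\bf R}\Hom_Y(\Pi(F),\Pi(E))\otimes \Pi(F)$, i.e.\ the first term of the triangle defining $T_{\Pi(F)}(\Pi(E))$. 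Thus the whole problem reduces to showing that $\Pi(\mathrm{ev}_E)$ is identified with the evaluation morphism for $\Pi(F)$; at the level of objects this is nothing but functoriality of $\Pi$ on $\Hom$-spaces.

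The main obstacle is that cones in a triangulated category are not functorial, so this last identification cannot be made naively object-wise and must be carried out at the level of kernels. Writing $\Pi=\Phi_{X\to Y}^{\cal P}$, I would convolve $\cal P$ with the two defining triangles: convolving the triangle of $\cal E$ gives
\[
F^{\vee}\boxtimes \Pi(F) \xrightarrow{\ a\ } {\cal P} \to {\cal P}\circ {\cal E},
\]
using ${\cal P}\circ(F^{\vee}\boxtimes F)\cong F^{\vee}\boxtimes\Pi(F)$ and ${\cal P}\circ{\cal O}_{\Delta_X}\cong{\cal P}$, while convolving the triangle defining ${\cal Q}:=\mathrm{Cone}(\Pi(F)^{\vee}\boxtimes\Pi(F)\to{\cal O}_{\Delta_Y})[1]$ gives
\[
F^{\vee}\boxtimes \Pi(F) \xrightarrow{\ b\ } {\cal P} \to {\cal Q}\circ {\cal P},
\]
now using $(\Pi(F)^{\vee}\boxtimes\Pi(F))\circ{\cal P}\cong F^{\vee}\boxtimes\Pi(F)$ — which is precisely the full faithfulness identification realized on kernels — and ${\cal O}_{\Delta_Y}\circ{\cal P}\cong{\cal P}$. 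Both triangles have the same first two terms, and both maps $a,b$ represent the same natural transformation ${\bf R}\Hom_X(F,-)\otimes\Pi(F)\Rightarrow\Pi$, so $a=b$; hence the cones ${\cal P}\circ{\cal E}$ and ${\cal Q}\circ{\cal P}$ agree, giving an isomorphism of kernels and therefore $\Pi\circ T_F\cong T_{\Pi(F)}\circ\Pi$. The one point requiring genuine work is the equality $a=b$, i.e.\ that the Fourier-Mukai equivalence $\Pi$ transports the evaluation counit $F^{\vee}\boxtimes F\to{\cal O}_{\Delta_X}$ to the corresponding counit for $\Pi(F)$; I expect to settle this by tracing the trace morphism through the convolution and invoking naturality of evaluation, which is the heart of the argument.
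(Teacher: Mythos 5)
Your proposal takes essentially the same route as the paper: both reduce the statement to an isomorphism of Fourier--Mukai kernels by exhibiting $\Pi({\cal E})$ and $T_{\Pi(F)}({\bf E})$ as cones of two morphisms with identified source $F^{\vee}\boxtimes\Pi(F)$ and target the kernel ${\bf E}$ of $\Pi$, and both locate the crux in showing that $\Pi$ carries the evaluation/counit for $F$ to that for $\Pi(F)$ at the kernel level. The paper settles this by writing an explicit commutative square in ${\bf D}(Y\times X_1)$ whose left vertical arrow $\gamma$ is an isomorphism because $\Pi$ is an equivalence, which is exactly the verification you defer to the end. One caution: your intermediate justification ``both maps $a,b$ represent the same natural transformation, so $a=b$'' is not valid as stated, since the map from morphisms of kernels to natural transformations of the associated integral functors is not injective in general; the equality (or at least a commutative square with isomorphic first terms, which is all one needs to compare the cones) must be established directly on kernels, as you indeed acknowledge in your closing sentence and as the paper's diagram does.
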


\begin{proof}
Let ${\bf E} \in {\bf D}(X \times Y)$ be an object
such that $\Pi=\Phi_{X \to Y}^{{\bf E}}$. 
It is sufficient to prove 
$\Pi({\cal E}) \cong T_{\Pi(F)}({\bf E})$.
We set $X_i:=X$, $i=1,2$.
We note that $F^{\vee} \cong 
\Hom_{p}( {\cal O}_{X_1}\boxtimes F,{\cal O}_{\Delta})$,
where $p:X_1 \times X_2 \to X_1$ is the projection and
$\Delta \subset X_1 \times X_2$ the diagonal.
Then
\begin{equation}
{\cal E} \cong 
\mathrm{Cone}(
 \Hom_p({\cal O}_{X_1} \boxtimes F,{\cal O}_{\Delta})\boxtimes F
\to {\cal O}_{\Delta})[1].
\end{equation}
Let $p_{X_2}:Y \times X_2 \to X_2$,
$p_Y:Y \times X_2 \to Y$ and
$q:X_1 \times Y  \to X_1$  
be the projections.
We have a morphism
\begin{multline}
\Hom_p({\cal O}_{X_1} \boxtimes F,{\cal O}_{\Delta})
\to
\Hom_{q'}({\cal O}_{X_1} \boxtimes ({\bf E} \otimes p_{X_2}^*(F)),
({\cal O}_{X_1}\boxtimes {\bf E})_{|\Delta'})\\
\to
\Hom_{q}({\cal O}_{X_1} \boxtimes 
{\bf R}p_{Y*}({\bf E} \otimes p_{X_2}^*(F)),{\bf E}),
\end{multline}
where $\Delta'=\Delta \times Y$
and 
$q':X_1 \times Y \times X_2 \to X_1$ is the projection.   
We also have a commutative diagram in ${\bf D}(Y \times X_1)$:
\begin{equation}
\begin{CD}
\Hom_p({\cal O}_{X_1} \boxtimes F,{\cal O}_{\Delta})\boxtimes \Pi(F) 
@>{\alpha}>> {\bf E}\\
@V{\gamma}VV @|\\
\Hom_{q}({\cal O}_{X_1} \boxtimes \Phi_{X \to Y}^{{\bf E}}(F),
{\bf E})\boxtimes \Pi(F)
@>{\beta}>> {\bf E}.
\end{CD}
\end{equation}
Since $\Pi$ is an equivalence,
$\gamma$ is an isomorphism.
Since $\Pi({\cal E}) \cong \mathrm{Cone}(\alpha)[1]$ and
$T_{\Pi(F)}({\bf E})\cong \mathrm{Cone}(\beta)[1]$, we get
$\Pi({\cal E}) \cong T_{\Pi(F)}({\bf E})$.
\end{proof}

\begin{NB}
\begin{lem}
Let $F$ be a spherical object
such that $\Supp(F) \subset Z$.
Let $D$ be 
a divisor on $X$ such that
${\cal O}_{|U}(D) \cong {\cal O}_U$ on a neighborhood $U$ of $Z$.
We set
${\cal E}:=\mathrm{Cone}(F \boxtimes F^{\vee} \to {\cal O}_{\Delta})$.
Then
$({\cal O}_X(D) \boxtimes {\cal O}_X(-D)) \otimes {\cal E} \cong {\cal E}$.
\end{lem}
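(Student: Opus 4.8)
The plan is to tensor the defining triangle of ${\cal E}$ by the line bundle $L:={\cal O}_X(D) \boxtimes {\cal O}_X(-D)$ on $X \times X$ and to match it, term by term, with the original triangle. Writing the defining triangle as
\[
F \boxtimes F^{\vee} \overset{\mathrm{ev}}{\longrightarrow} {\cal O}_{\Delta}
\longrightarrow {\cal E} \longrightarrow (F \boxtimes F^{\vee})[1],
\]
and noting that $L \otimes \underline{\ }$ is an autoequivalence of ${\bf D}(X \times X)$ (so it carries cones to cones), it suffices to construct isomorphisms of the first two terms that are compatible with $\mathrm{ev}$ and then to invoke the axiom (TR3) together with the five lemma for triangulated categories. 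I note that sphericity of $F$ plays no role in this argument; only $\Supp F \subset Z$ is used.

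First I would identify the two terms. Along the diagonal the canonical pairing ${\cal O}_X(D) \otimes {\cal O}_X(-D) \cong {\cal O}_X$ gives an isomorphism $\psi: L \otimes {\cal O}_{\Delta} \cong {\cal O}_{\Delta}$. For the other term,
\[
L \otimes (F \boxtimes F^{\vee}) \cong ({\cal O}_X(D) \otimes F) \boxtimes ({\cal O}_X(-D) \otimes F^{\vee}),
\]
and since $\Supp F = \Supp F^{\vee} \subset Z \subset U$ while ${\cal O}_X(\pm D)_{|U} \cong {\cal O}_U$, a trivialization $s:{\cal O}_X(D)_{|U} \cong {\cal O}_U$ identifies each factor with $F$, resp.\ $F^{\vee}$, giving $\phi: L \otimes (F \boxtimes F^{\vee}) \cong F \boxtimes F^{\vee}$. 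Both $\phi$ and $\psi$ are thus built from the single trivialization $s$ and its dual, cancelling the two twists against one another.

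The key step, which I expect to be the main obstacle, is to check that the square
\begin{equation*}
\begin{CD}
L \otimes (F \boxtimes F^{\vee}) @>{L \otimes \mathrm{ev}}>> L \otimes {\cal O}_{\Delta}\\
@V{\phi}VV @VV{\psi}V\\
F \boxtimes F^{\vee} @>{\mathrm{ev}}>> {\cal O}_{\Delta}
\end{CD}
\end{equation*}
commutes. Here I would use that the source $F \boxtimes F^{\vee}$ is supported on $Z \times Z \subset U \times U$, so that every relevant morphism group is computed on the open subset $U \times U$: for $A$ supported on a closed set $W$ and an open $V \supseteq W$, the restriction $\Hom(A,B) \to \Hom(A_{|V}, B_{|V})$ is an isomorphism, because ${\bf R}{\cal H}om(A,B)$ is supported on $W$. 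On $U \times U$ the bundle $L$ is trivial, and I would choose the trivialization of ${\cal O}_X(-D)_{|U}$ dual to $s$ so that $\phi$ and $\psi$ restrict to the same trivialization along $\Delta \cap (U \times U)$, i.e.\ to the canonical pairing. Since $\mathrm{ev}$ is ${\cal O}$-linear with image in ${\cal O}_{\Delta}$, the two composites then coincide in $\Hom_{U \times U}$, hence in $\Hom_{X \times X}$.

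Finally, given a commutative square whose vertical arrows $\phi,\psi$ are isomorphisms, TR3 yields a morphism of triangles and in particular a morphism $L \otimes {\cal E} = \mathrm{Cone}(L \otimes \mathrm{ev}) \to \mathrm{Cone}(\mathrm{ev}) = {\cal E}$; the five lemma then forces this morphism to be an isomorphism, which is the assertion $L \otimes {\cal E} \cong {\cal E}$. As a closing remark I would observe that, via the projection formula, this is exactly the statement that the twist functor $T_F$ commutes with ${\cal O}_X(D) \otimes \underline{\ }$, the analogue for $T_F$ of Lemma \ref{lem:compatible2}.
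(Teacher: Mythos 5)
Your proof is correct and follows essentially the same route as the paper: both arguments tensor the defining triangle of ${\cal E}$ by ${\cal O}_X(D)\boxtimes{\cal O}_X(-D)$, identify the first two terms with the untwisted ones using that $F$ and $F^{\vee}$ are supported where ${\cal O}_X(\pm D)$ is trivial, and conclude by comparing cones. The only real difference is how compatibility with the evaluation map is verified — the paper reduces to $D$ effective and uses the natural inclusions $V^{\bullet}(-D)\to V^{\bullet}$ of complexes of locally free sheaves, whereas you localize the relevant $\Hom$-groups to $U\times U$ where the line bundle is trivialized — and your variant has the minor advantage of avoiding the reduction to an effective divisor.
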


\begin{proof}
We may assume that $D$ is effective.
Let $V^{\bullet}$ be a bounded complex of locally free sheaves
such that $\Supp(H^i(V^{\bullet})) \subset Z$.
Then $V^{\bullet}(D)\cong V^{\bullet}$.
We have an exact triangle
\begin{equation}
V^{\bullet} \to V^{\bullet}(D) \to V^{\bullet}_{|D}(D)
\to V^{\bullet}[1].
\end{equation}
By our assumption, 
$V^{\bullet}_{|D}(D)$ is quasi-isomorphic to 0.
Hence we get our claim.
Since we have the commutative diagram
\begin{equation}
\begin{CD}
V^{\bullet}(-D) \boxtimes (V^{\bullet})^{\vee} @>>> 
V^{\bullet} \boxtimes (V^{\bullet})^{\vee}  \\
@VVV @VVV\\
V^{\bullet}(-D) \boxtimes (V^{\bullet}(-D))^{\vee}
@>>> {\cal O}_{\Delta},
\end{CD}
\end{equation}
we get
$\mathrm{Cone}(V^{\bullet} \boxtimes (V^{\bullet})^{\vee} 
\to {\cal O}_{\Delta}) \cong
\mathrm{Cone}(V^{\bullet}(-D) \boxtimes (V^{\bullet}(-D))^{\vee}
\to {\cal O}_{\Delta}) \cong
\mathrm{Cone}(V^{\bullet} \boxtimes (V^{\bullet})^{\vee} 
\to {\cal O}_{\Delta}) \otimes 
({\cal O}_X(-D) \boxtimes {\cal O}_X(D))$.
\end{proof}

$\Hom(F \boxtimes F^{\vee} \to {\cal O}_{\Delta})=
\Hom(F \boxtimes {\cal O}_X,{\cal O}_{\Delta} \otimes F)$.

Let $V^{\bullet}$ be a bounded complex of locally free sheaves
on $X$ representing $F$.
Then
$\Hom(F \boxtimes (V^{\bullet})^{\vee} \to {\cal O}_{\Delta})=
\Hom(F \boxtimes {\cal O}_X,{\cal O}_{\Delta} \otimes V^{\bullet})$.

\end{NB}

\begin{cor}\label{cor:compatible}
Assume that $\Supp(H^i(F)) \subset Z$ for all $i$.
Let $D$ be the pull-back of a divisor on $Y$.
Then 
$T_F(E(D)) \cong T_F(E)(D)$.
\end{cor}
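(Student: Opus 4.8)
The plan is to realize both sides as integral functors with a twisted kernel and reduce the statement to an isomorphism of kernels on $X\times X$. Recall that $T_F=\Phi_{X\to X}^{{\cal E}}$ with ${\cal E}=\mathrm{Cone}(F^{\vee}\boxtimes F\to {\cal O}_{\Delta})[1]$. Writing $p_1,p_2:X\times X\to X$ for the two projections, for any kernel ${\cal K}\in{\bf D}(X\times X)$ and any line bundle $L$ on $X$ the projection formula gives $\Phi_{X\to X}^{{\cal K}}(E\otimes L)\cong \Phi_{X\to X}^{{\cal K}\otimes p_1^*L}(E)$ and $\Phi_{X\to X}^{{\cal K}}(E)\otimes L\cong \Phi_{X\to X}^{{\cal K}\otimes p_2^*L}(E)$. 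Taking $L={\cal O}_X(D)$, it therefore suffices to prove that the two kernels ${\cal E}\otimes p_1^*{\cal O}_X(D)$ and ${\cal E}\otimes p_2^*{\cal O}_X(D)$ are isomorphic, i.e. that ${\cal E}\otimes M\cong{\cal E}$ where $M:=p_1^*{\cal O}_X(D)\otimes p_2^*{\cal O}_X(-D)={\cal O}_X(D)\boxtimes {\cal O}_X(-D)$.

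Next I would record two local facts and tensor the defining triangle $F^{\vee}\boxtimes F\to {\cal O}_{\Delta}\to {\cal E}[-1]\to$ by $M$. Since $D=\pi^*(\bar D)$ for a divisor $\bar D$ on $Y$ and invertible sheaves on the normal surface $Y$ are locally trivial, there is an open neighbourhood $U$ of $Z$ with ${\cal O}_X(D)_{|U}\cong {\cal O}_U$. For the diagonal term, the restriction $M_{|\Delta}$ is \emph{canonically} trivial, as the two pullbacks cancel along $\Delta$; thus ${\cal O}_{\Delta}\otimes M\cong {\cal O}_{\Delta}$ by a canonical isomorphism $b$. For the term $F^{\vee}\boxtimes F$, note that $\Supp H^i(F^{\vee})\subset Z$ as well (dualizing preserves the support), and that $F\otimes {\cal O}_X(\pm D)\cong F$: writing ${\cal O}_X(D)\cong {\cal O}_X(D_1-D_2)$ with $D_1,D_2$ effective and disjoint from $Z$ (obtained from a general base-point-free twist), the vanishing triangles $F\to F(D_1)\to F\overset{{\bf L}}{\otimes}{\cal O}_{D_1}(D_1)\to$ and $F(-D_2)\to F\to F\overset{{\bf L}}{\otimes}{\cal O}_{D_2}\to$ have zero third term and hence give natural isomorphisms $F(D_1)\cong F\cong F(-D_2)$. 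Consequently $(F^{\vee}\boxtimes F)\otimes M=F^{\vee}(D)\boxtimes F(-D)\cong F^{\vee}\boxtimes F$ by an isomorphism $a$.

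The hard part will be assembling $a$ and $b$ into a morphism of triangles, that is, showing that the square formed by $a$, $b$, and the two evaluation maps commutes, so that the cones ${\cal E}\otimes M$ and ${\cal E}$ are identified. The compatibility is the real content. I would resolve it by fixing a single trivialization $\tau:{\cal O}_U\to {\cal O}_X(D)_{|U}$ and building $a$ from $\tau$ on the first box factor and $\tau^{-1}$ on the second. Then $\tau\boxtimes\tau^{-1}$ restricts on $\Delta$ to $\tau\otimes\tau^{-1}=\mathrm{id}$, which is exactly the canonical trivialization $b$ of $M_{|\Delta}$. Since the evaluation map $F^{\vee}\boxtimes F\to {\cal O}_{\Delta}$ is supported on $\Delta\cap(Z\times Z)\subset U\times U$, the two composites in the square agree there and hence agree, so the square commutes and ${\cal E}\otimes M\cong {\cal E}$, which completes the proof.
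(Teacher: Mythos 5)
Your proof is correct, but it takes a different route from the paper. The paper deduces the corollary in two lines from Lemma \ref{lem:reflection-Pi}: taking $\Pi=\Phi_{X\to X}^{{\cal O}_{\Delta}(D)}$ (i.e.\ $E\mapsto E(D)$), the lemma gives $T_F(E)(D)\cong T_{\Pi(F)}(E(D))$, and $\Pi(F)=F(D)\cong F$ because ${\cal O}_X(D)$ is trivial near $Z\supset\Supp H^i(F)$. You instead bypass the lemma entirely and argue at the level of kernels, reducing to ${\cal E}\otimes M\cong{\cal E}$ for $M={\cal O}_X(D)\boxtimes{\cal O}_X(-D)$ and then tensoring the defining triangle of ${\cal E}$ by $M$. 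The only genuinely delicate point in that route is the one you correctly isolate: the isomorphisms $a:(F^{\vee}\boxtimes F)\otimes M\to F^{\vee}\boxtimes F$ and $b:{\cal O}_{\Delta}\otimes M\to{\cal O}_{\Delta}$ must be chosen compatibly with the evaluation map, and your device of building $a$ from a single trivialization $\tau$ of ${\cal O}_X(D)$ near $Z$ (so that $\tau\otimes\tau^{-1}$ restricts on $\Delta$ to the canonical pairing $b$) handles this; combined with the fact that morphisms out of an object supported in $Z\times Z$ are determined by their restriction to a neighbourhood, the square commutes and the cones are identified. What each approach buys: the paper's proof is essentially free once the general intertwining $\Pi\circ T_F\cong T_{\Pi(F)}\circ\Pi$ is available (and that lemma is needed elsewhere anyway, e.g.\ in Corollary \ref{cor:0-dim:O-Y}), whereas your argument is self-contained, makes the compatibility-of-trivializations issue explicit rather than hiding it inside the naturality of the lemma, and in effect reproves the relevant special case of Lemma \ref{lem:reflection-Pi} by hand. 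Note also that the step $F(D)\cong F$ (and $F^{\vee}(D)\cong F^{\vee}$) is unavoidable in either route; your verification of it via the two triangles with vanishing third term is fine.
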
  

\begin{proof}
We apply Lemma \ref{lem:reflection-Pi} to
$\Pi=\Phi_{X \to X}^{{\cal O}_{\Delta}(D)}$.
Since $\Pi(F) \cong F$, we get our claim.
\end{proof}

\begin{prop}\label{prop:A-A'}
Assume that $G^{\vee} \otimes G$ 
satisfies $R^1\pi_*(G^{\vee} \otimes G)=0$.
Assume that $G':=T_F(G)$ is a locally free sheaf up to shift.
\begin{enumerate}
\item[(1)]
${\bf R}^1 \pi_*({G'}^{\vee} \otimes G')=0$
and $\pi_*({G'}^{\vee} \otimes G')
\cong \pi_*(G^{\vee} \otimes G)$.
\item[(2)]
We set ${\cal A}':=\pi_*({G'}^{\vee} \otimes G')$.
We identify $\Coh_{{\cal A}}(Y)$ with $\Coh_{{\cal A}'}(Y)$
via $ {\cal A} \cong {\cal A}'$.
Then we have a commutative diagram
\begin{equation}
\begin{CD}
\Per(X/Y,{\bf b}_1,...,{\bf b}_n) @>{T_F}>> 
T_F(\Per(X/Y,{\bf b}_1,...,{\bf b}_n))\\
@V{{\bf R}\pi_*{\cal H}om(G,\;\;)}VV @VV{{\bf R}\pi_*{\cal H}om(G',\;\;)}V\\
\Coh_{{\cal A}}(Y) @= \Coh_{{\cal A}'}(Y) 
\end{CD}
\end{equation}
\end{enumerate}
\end{prop}

\begin{proof}
The proof is almost the same as that of Proposition \ref{prop:Phi-alpha}.
\end{proof}

\begin{defn}
For an $\alpha \in H^{\perp} \otimes {\Bbb Q}$, 
${\cal X}^\alpha$ denotes the moduli stack
of $\alpha$-semi-stable objects $E$ of ${\cal C}$
such that $v(E)=\varrho_X$.
\end{defn}
For an $\alpha \in H^{\perp} \otimes {\Bbb Q}$, 
let $F$ be an $\alpha$-stable object such that
(i) $\langle v(F)^2 \rangle=-2$ and (ii)
$\langle \alpha, v(F)\rangle=0$.
By (i), $F$ is a spherical object. 
By the same proof of \cite[Prop. 1.12]{O-Y:1}, we have the following result.
\begin{prop}\label{prop:0-dim:O-Y}
We set $\alpha^{\pm}:=\pm \epsilon v(F)+\alpha$, where
$0<\epsilon \ll 1$.
Then $T_F$ induces an isomorphism
\begin{equation}
\begin{matrix}
{\cal X}^{\alpha^-} &\to &
{\cal X}^{\alpha^+}\\
E & \mapsto & T_F(E)
\end{matrix}
\end{equation}
which preserves the $S$-equivalence classes.
Hence we have an isomorphism
\begin{equation}
X^{\alpha^-} \to
X^{\alpha^+}.
\end{equation}
\end{prop}

Combining Proposition \ref{prop:0-dim:O-Y} with Lemma \ref{lem:reflection-Pi},
we get the following corollary.
\begin{cor}\label{cor:0-dim:O-Y}
Assume that $\alpha$ belongs to exactly one wall defined by
$F$. Then $T_F$ induces an isomorphism
$X^{\alpha^-} \to X^{\alpha^+}$.
Under this isomorphism, we have 
\begin{equation}
\Phi_{X^{\alpha^-} \to X}^{{\cal E}^{\alpha^+}} \cong 
T_F \circ \Phi_{X^{\alpha^-} \to X}^{{\cal E}^{\alpha^-}}
\cong \Phi_{X^{\alpha^-} \to X}^{{\cal E}^{\alpha^-}} \circ T_{A},
\end{equation}
where $A:=\Phi_{X \to X^{\alpha^-}}^{({\cal E}^{\alpha^-})^{\vee}[2]}(F)$.
\end{cor}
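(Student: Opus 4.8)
The plan is to establish the two displayed isomorphisms separately, reading them as the two inputs being combined. Throughout, write $\Pi := \Phi_{X^{\alpha^-} \to X}^{{\cal E}^{\alpha^-}}$ for the integral functor ${\bf D}(X^{\alpha^-}) \to {\bf D}(X)$; by \eqref{eq:universal-family} it is a quasi-inverse of $\Lambda^{\alpha^-} = \Phi_{X \to X^{\alpha^-}}^{({\cal E}^{\alpha^-})^\vee}[2]$ up to tensoring by a line bundle on $X^{\alpha^-}$ and a shift. Since $F$ is $\alpha$-stable with $\langle v(F)^2\rangle = -2$ and is supported on the exceptional locus, it is spherical and $A = \Lambda^{\alpha^-}(F)$ is a spherical object of ${\bf D}(X^{\alpha^-})$ supported on $Z^{\alpha^-}$; by the defining formula $A = \Phi_{X \to X^{\alpha^-}}^{({\cal E}^{\alpha^-})^\vee[2]}(F)$ together with the quasi-inverse relation, $\Pi(A) \cong F$.

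First I would treat the right-hand isomorphism $T_F \circ \Pi \cong \Pi \circ T_A$. This is exactly Lemma \ref{lem:reflection-Pi} applied to the Fourier--Mukai equivalence $\Pi : {\bf D}(X^{\alpha^-}) \to {\bf D}(X)$ and the spherical object $A$ on its source: the lemma yields $\Pi \circ T_A \cong T_{\Pi(A)} \circ \Pi$, and $\Pi(A) \cong F$ turns the right-hand side into $T_F \circ \Pi$. The point needing attention is the line-bundle/shift ambiguity hidden in $\Pi(A) \cong F$. One absorbs the shift using $T_{F[n]} = T_F$, which holds because the kernel ${\rm Cone}(F^\vee \boxtimes F \to {\cal O}_\Delta)[1]$ of $T_F$ is invariant under shifting $F$; one absorbs the residual twist by a line bundle pulled back from $Y$ using Corollary \ref{cor:compatible} (and Lemma \ref{lem:compatible2} on the $\Lambda$-side), since both $F$ and $A$ are supported on the exceptional fibres, which is precisely the situation in which $T_F$ commutes with such twists.

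Next I would treat the left-hand isomorphism $\Phi_{X^{\alpha^-} \to X}^{{\cal E}^{\alpha^+}} \cong T_F \circ \Pi$, which is where Proposition \ref{prop:0-dim:O-Y} enters. That proposition provides the isomorphism $\iota : X^{\alpha^-} \to X^{\alpha^+}$, $E \mapsto T_F(E)$, so on closed points $T_F({\cal E}^{\alpha^-}_{|X \times \{x^-\}}) \cong {\cal E}^{\alpha^+}_{|X \times \{\iota(x^-)\}}$. I would upgrade this pointwise statement to a kernel-level statement: applying $T_F$ fibrewise along the $X$-factor to the flat family ${\cal E}^{\alpha^-}$ produces a family of $\alpha^+$-stable objects on $X$ parametrised by $X^{\alpha^-}$ whose closed-point restrictions coincide with those of $(\mathrm{id} \times \iota)^*{\cal E}^{\alpha^+}$. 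Because these are families of stable, hence simple, objects over the same base, they differ at most by tensoring with a line bundle pulled back from $X^{\alpha^-}$, which can be normalised away by Lemma \ref{lem:univ-characterize}. The relative twist is realised by the composite integral functor $T_F \circ \Pi$, whose kernel is the $T_F$-image of ${\cal E}^{\alpha^-}$; identifying that kernel with $(\mathrm{id}\times\iota)^*{\cal E}^{\alpha^+}$ gives the claimed isomorphism.

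The main obstacle will be this last upgrade from the pointwise isomorphism furnished by Proposition \ref{prop:0-dim:O-Y} to an isomorphism of integral kernels. It requires that the spherical twist commute with base change and with restriction to fibres, so that $T_F \circ \Pi$ is genuinely computed by applying $T_F$ fibrewise, together with the flatness and simplicity bookkeeping needed to conclude that two families of stable objects with isomorphic fibres differ only by a line bundle on the base. The normalisation of the universal families via Lemma \ref{lem:univ-characterize} and the compatibility of $T_F$ with pull-back twists (Corollary \ref{cor:compatible}, Lemma \ref{lem:compatible2}) are what make the residual line-bundle and shift ambiguities disappear, so that the three functors coincide on the nose rather than merely up to an autoequivalence.
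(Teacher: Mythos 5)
Your proposal is correct and follows essentially the same route as the paper, which derives the corollary in one line by combining Proposition \ref{prop:0-dim:O-Y} (giving the isomorphism $X^{\alpha^-}\to X^{\alpha^+}$ and the fibrewise identification of the kernels) with Lemma \ref{lem:reflection-Pi} (giving $\Pi\circ T_A\cong T_{\Pi(A)}\circ\Pi$ with $\Pi(A)\cong F$). Your extra care about shift and line-bundle ambiguities is harmless but not needed, since $\Phi_{X\to X^{\alpha^-}}^{({\cal E}^{\alpha^-})^{\vee}}[2]$ is an honest quasi-inverse of $\Phi_{X^{\alpha^-}\to X}^{{\cal E}^{\alpha^-}}$ for the same kernel, and the universal family ${\cal E}^{\alpha^+}$ is in any case only determined up to a line bundle on the base.
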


\subsection{Construction of a local projective generator.}

We return to the general situation in section \ref{subsect:Per(X/Y)}.
We shall construct local projective generators 
for $\Per(X/Y,\{L_{ij} \})$.
\begin{prop}\label{prop:generator-exist}
Let $\beta$ be a 2-cocycle of ${\cal O}_X^{\times}$
defining a torsion element of $H^2(X,{\cal O}_X^{\times})$.
Assume that $E \in K^{\beta}(X)$ satisfies
\begin{equation}\label{eq:local-projective:condition}
\begin{split}
0 \leq -\chi(E,L_{ij}),\;1 \leq j \leq s_i,\\
-\sum_j a_{ij}\chi(E,L_{ij}) \leq r
\end{split}
\end{equation}
for all $i$.
\begin{enumerate}
\item[(1)]
There is a locally free $\beta$-twisted sheaf
$G$ on $X$ 
such that $R^1 \pi_*(G^{\vee} \otimes G)=0$,
${\bf R}\pi_*(G^{\vee} \otimes F) \in \Coh(Y)$ for
$F \in \Per(X/Y,\{L_{ij} \})$,
$G$ is $\mu$-stable and
$\tau(G)=\tau(E)-n \tau({\Bbb C}_x)$, $n \gg 0$.
\item[(2)]
There is a locally free $\beta$-twisted sheaf
$G$ on $X$ 
such that $R^1 \pi_*(G^{\vee} \otimes G)=0$,
${\bf R}\pi_*(G^{\vee} \otimes F) \in \Coh(Y)$ for
$F \in \Per(X/Y,\{L_{ij} \})$ and
$\tau(G)=2\tau(E)$.
\item[(3)]
Moreover if the inequalities in 
\eqref{eq:local-projective:condition} are strict, then
$G$ in (1) and (2) are local projective generators of
$\Per(X/Y,\{L_{ij}\})$.
\end{enumerate}

\end{prop}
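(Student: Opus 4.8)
The plan is to verify, for a suitable $G$, the two conditions \eqref{eq:tilting:generator} of Proposition \ref{prop:tilting:generator} against the irreducible objects of $\Per(X/Y,\{L_{ij}\})$. By Proposition \ref{prop:C(G)} and Proposition \ref{prop:tilting:G-1Per-irred}, the irreducible objects supported on the exceptional locus are $E_{i0}=A_{p_i}$ and $E_{ij}=L_{ij}[1]$ for $j>0$, where $L_{ij}=A_{p_i}\otimes{\cal O}_{C_{ij}}(-1)$. Since $L_{ij}$ and $A_{p_i}$ are $1$-dimensional and $G$ is locally free, $\Ext^k(G,L_{ij})$ and $\Ext^k(G,A_{p_i})$ vanish for $k\geq 2$, so condition \eqref{eq:tilting:generator}(a) for these objects reduces to $\Hom(G,L_{ij})=0$ ($j>0$) and $\Ext^1(G,A_{p_i})=0$. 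Because $\tau({\Bbb C}_x)$ pairs trivially with $\tau(L_{ij})$, the number $\chi(G,L_{ij})$ depends only on $(c_1(G),C_{ij})$; for $\tau(G)=\tau(E)-n\tau({\Bbb C}_x)$ one finds $\chi(G,L_{ij})=\chi(E,L_{ij})$ and $\chi(G,A_{p_i})=\rk G+\sum_j a_{ij}\chi(G,L_{ij})$, so the inequalities \eqref{eq:local-projective:condition} assert exactly that $\chi(G,E_{ij})\geq 0$ for all $i,j$, strictly so when \eqref{eq:local-projective:condition} is strict.

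First I would construct $G$ with prescribed balanced restrictions, following the proof of Proposition \ref{prop:C(G)}. Applying Lemma \ref{lem:generic-vanishing} to a class with the appropriate Chern character, I obtain a torsion free $\beta$-twisted sheaf with $\Ext^2(\,\cdot\,,\cdot\,(K_X+C_{ij}))_0=0$, hence $\Ext^2(\,\cdot\,,\cdot\,(-C_{ij}))_0=0$, for all $i,j$; this makes $\Def(X,G)$ smooth and the simultaneous restriction map $\Def(X,G)\to\prod_{i,j}\Def(C_{ij},G_{|C_{ij}})$ submersive. By Lemma \ref{lem:RDP:irred-C} the generic deformation has $G_{|C_{ij}}\cong{\cal O}_{C_{ij}}(d)^{\oplus a}\oplus{\cal O}_{C_{ij}}(d+1)^{\oplus b}$ for every $i,j$, the total degree being fixed by $\tau(G)$, and passing to the double dual with Lemma \ref{lem:tilting:pull-back} I may take $G$ locally free. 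Balancedness makes every summand of $(G^{\vee}\otimes G)_{|C_{ij}}$ of degree $\geq -1$, whence $R^1\pi_*(G^{\vee}\otimes G)=0$ by Lemma \ref{lem:tilting:TFF}(3), as in Lemma \ref{lem:tilting:irreducible}. For part (1) the locus of $\mu$-stable sheaves is open, so $G$ can be kept $\mu$-stable. For part (2) the same construction is run for the class $2\tau(E)$ (realized by applying Lemma \ref{lem:generic-vanishing} to a class in $2\tau(E)+n\tau({\Bbb C}_x)$), dropping the stability requirement.

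Next I would deduce condition (a) from the balanced restriction together with \eqref{eq:local-projective:condition}. Writing $\ell$ for the degree of $\varphi(L_{ij})$ under the equivalence \eqref{eq:twisted-equiv} and $D_{ij}$ for the degree of $\varphi(G_{|C_{ij}})$, one has $\chi(G,L_{ij})=(\rk G)(\ell+1)-D_{ij}$. Then $\chi(G,L_{ij})\leq 0$ forces every balanced summand degree to be $\geq\ell+1$, giving $\Hom(G,L_{ij})=0$; and since each $-\chi(G,L_{ij})\geq 0$ with $a_{ij}\geq 1$, the second inequality gives $-\chi(G,L_{ij})\leq\rk G$, which forces every summand degree to be $\leq\ell+2$, so that $(G^{\vee}\otimes A_{p_i})_{|C_{ij}}$ has all summands of degree in $\{-1,0\}$ and $\Ext^1(G,A_{p_i})=0$ follows from the fibrewise vanishing over $Z_i$ via Lemma \ref{lem:tilting:TFF}(3). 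By Remark \ref{rem:tilting:generator} these vanishings yield $R^1\pi_*(G^{\vee}\otimes G)=0$ and ${\bf R}\pi_*(G^{\vee}\otimes F)\in\Coh(Y)$ for all $F\in\Per(X/Y,\{L_{ij}\})$, which proves (1) and (2). When the inequalities are strict, $\chi(G,E_{ij})>0$ gives \eqref{eq:tilting:generator}(b), and Proposition \ref{prop:tilting:generator} shows that $G$ is a local projective generator, proving (3); alternatively, with $R^1\pi_*(G^{\vee}\otimes G)=0$ already in hand, Lemma \ref{lem:tilting:R1=0} produces (a) directly from the strict $\chi>0$ conditions.

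The main obstacle is the deformation-theoretic step: arranging balanced restrictions on all exceptional curves $C_{ij}$ simultaneously, with a nonzero prescribed degree on each, while fixing $\tau(G)$ and (for (1)) preserving $\mu$-stability. This rests on the joint unobstructedness and submersivity furnished by Lemma \ref{lem:generic-vanishing} and the irreducibility of Lemma \ref{lem:RDP:irred-C}, exactly as in Proposition \ref{prop:C(G)}. A secondary technical point is the passage from the component-wise degree estimates to $\Ext^1(G,A_{p_i})=0$ on the reducible fibre $Z_i$; this is where the second inequality of \eqref{eq:local-projective:condition} is essential and where the formal-functions reduction of Lemma \ref{lem:tilting:TFF}(3) must be invoked carefully.
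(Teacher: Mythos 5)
Your overall strategy (deform a generic torsion free sheaf so that its restriction to the exceptional locus is controlled, then verify the hypotheses of Proposition \ref{prop:tilting:generator} / Remark \ref{rem:tilting:generator} against the irreducible objects $L_{ij}[1]$ and $A_{p_i}$) is the same as the paper's, and your numerical bookkeeping relating \eqref{eq:local-projective:condition} to $\chi(G,E_{ij})\geq 0$ is fine. But there is a genuine gap at the step where you deduce $\Ext^1(G,A_{p_i})=0$. You only arrange the splitting type of $G$ on each irreducible component $C_{ij}$, and then claim that ``all summands of $(G^{\vee}\otimes A_{p_i})_{|C_{ij}}$ of degree in $\{-1,0\}$'' forces $H^1(Z_i,G^{\vee}\otimes A_{p_i})=0$. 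This does not follow: $Z_i=\sum_j a_{ij}C_{ij}$ is a non-reduced, reducible curve, and ${\cal O}_{Z_i}$ has a filtration whose graded pieces are line bundles on the $C_{ij}$ twisted by negative multiples of the other components. Already for $A_2$ a bundle $V$ on $Z=C_1+C_2$ with $V_{|C_1}\cong{\cal O}(-1)\oplus{\cal O}(1)$ and $V_{|C_2}\cong{\cal O}^{\oplus 2}$ sits in $0\to V_{|C_1}(-C_2)\to V\to V_{|C_2}\to 0$ with $H^1(C_1,V_{|C_1}(-C_2))\neq 0$, and whether this class survives to $H^1(Z,V)$ depends on the gluing of $V$ along $C_1\cap C_2$, not on the splitting types. (Your appeal to Lemma \ref{lem:tilting:TFF}(3) is also misplaced: that lemma passes from $Y$ to the scheme-theoretic fibre, not from the fibre $Z_i$ to its components; since $A_{p_i}$ is already a sheaf on $Z_i$ no formal-functions step is needed, and the whole difficulty is the components-to-cycle passage.)

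The paper closes exactly this gap by working with the deformation space $\Def(Z,E_{|Z})$ of the restriction to the \emph{full} cycle rather than with the components separately: Lemma \ref{lem:RDP:irred-C} (applied to the effective divisor $Z_i$ itself, which is where its generality matters) says the moduli of locally free sheaves on $Z_i$ with fixed rank and determinant is irreducible, and Lemma \ref{lem:local-deform} exhibits an explicit point of it satisfying both vanishings, namely $F=K^{\vee}\oplus{\cal O}_{Z_i}^{\oplus(r-\rk K)}$ with $K=\ker\bigl(H^0({\cal O}_{Z_i\cap D})\otimes{\cal O}_{Z_i}\to{\cal O}_{Z_i\cap D}\bigr)$ for an effective divisor $D$ on $Z_i$ with $(D,C_{ij})=\deg(E_{|C_{ij}})$; the second inequality of \eqref{eq:local-projective:condition} is used precisely to guarantee $\rk K=\deg(E_{|Z_i})\leq r$ so that $F$ exists. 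Since both vanishings are open conditions, the generic bundle on $Z_i$ satisfies them, and the submersivity of $\Def(X,E)\to\Def(Z,E_{|Z})\times\Def(H,E_{|H})$ transports this to a global $\mu$-stable deformation. To repair your argument you would need to replace the component-wise analysis by this (or an equivalent) genericity-plus-nonemptiness argument on the moduli of bundles on $Z_i$. A secondary point: for part (2) the paper does not rerun the construction for $2\tau(E)$ but takes $G=E_1(nH)\oplus E_2(-nH)$ with $\tau(E_i)=\tau(E)-n_i\tau({\Bbb C}_x)$ and $n_1+n_2=n^2(H^2)\rk E$, which hits $2\tau(E)$ exactly; your version leaves the precise value of $\tau(G)$ uncontrolled because Lemma \ref{lem:generic-vanishing} only produces the class $\tau(E)-n\tau({\Bbb C}_x)$ for some unspecified large $n$.
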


\begin{cor}
Assume that $(r,\xi) \in {\Bbb Z}_{>0} \oplus \NS(X)$
satisfies 
\begin{equation}
\begin{split}
& 0<(\xi,C_{ij})-r(b_{ij}+1),\;1 \leq j \leq s_i,\\
& \sum_j a_{ij}(\xi,C_{ij})-r \sum_j a_{ij}(b_{ij}+1)<r,
\end{split}
\end{equation}
for all $i$.
\begin{enumerate}
\item[(1)]
For any sufficiently large $n$,
there is a local projective generator
$G$ of $\Per(X/Y,{\bf b}_1,...,{\bf b}_n)$
such that $G$ is a $\mu$-stable sheaf with respect to $H$ and
$(\rk G,c_1(G),c_2(G))=(r,\xi,c_2)$.
\item[(2)]
For any ${\bf e} \in K(X)_{\mathrm{top}}$ with
$(\rk {\bf e},c_1({\bf e}))=(r,\xi)$,
there is a local projective generator
$G$ such that $\tau(G)=2{\bf e}$. 
\end{enumerate}
\end{cor}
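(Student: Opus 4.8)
My plan is to reduce the statement to the two conditions on the irreducible objects of $\Per(X/Y,\{L_{ij}\})$ that already appear in Proposition~\ref{prop:tilting:generator}, namely (a) $\Hom(G,E_{ij}[p])=0$ for $p\neq 0$ and (b) $\chi(G,E_{ij})>0$, where by Proposition~\ref{prop:tilting:G-1Per-irred} the relevant irreducible objects are $E_{i0}=A_{p_i}$ and $E_{ij}=L_{ij}[1]$ for $1\le j\le s_i$ (recall $L_{ij}=A_{p_i}\otimes\mathcal{O}_{C_{ij}}(-1)$). Indeed, by Remark~\ref{rem:tilting:generator} condition (a) alone forces $G$ to be locally free with $R^1\pi_*(G^\vee\otimes G)=0$ and $\mathbf{R}\pi_*(G^\vee\otimes F)\in\Coh(Y)$ for all $F\in\Per(X/Y,\{L_{ij}\})$, which is precisely the content of (1) and (2); if in addition (b) holds, Proposition~\ref{prop:tilting:generator} upgrades this to ``$G$ is a local projective generator'', which is (3). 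The Corollary is then the strict case applied to an $E$ with $\tau(E)=(r,\xi,\ast)$, using $\chi(E,\mathcal{O}_{C_{ij}}(b_{ij}))=r(b_{ij}+1)-(\xi,C_{ij})$.

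\textbf{Numerics.} Condition (b) is purely topological. Since $\mathbb{C}_x$ with $x\in Z_i$ is $S$-equivalent to $E_{i0}\oplus\bigoplus_{j>0}E_{ij}^{\oplus a_{ij}}$ with $a_{i0}=1$ (Theorem~\ref{thm:RDP-desing}), and $\chi(G,\mathbb{C}_x)=r$, one gets $\chi(G,A_{p_i})=r+\sum_{j=1}^{s_i}a_{ij}\chi(G,L_{ij})$; combined with $\chi(G,E_{ij})=-\chi(G,L_{ij})$ for $j>0$, condition (b) becomes $\chi(G,L_{ij})<0$ and $r+\sum_j a_{ij}\chi(G,L_{ij})>0$, i.e. the strict form of \eqref{eq:local-projective:condition}. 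Because the points subtracted in (1) lie off the exceptional locus, $\chi(G,L_{ij})=\chi(E,L_{ij})$ and $\chi(G,A_{p_i})=\chi(E,A_{p_i})$, so the hypotheses translate verbatim into the constraints on $\chi(G,\cdot)$ needed for (a) and (b).

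\textbf{Construction.} The real work is building a locally free $G$ realising the prescribed homology on the exceptional curves, and I would follow the deformation argument of Proposition~\ref{prop:C(G)}(1), only changing the target restriction. For each $C_{ij}\cong\mathbb{P}^1$ I prescribe $G_{|C_{ij}}$ to be the balanced $\beta$-twisted bundle whose untwist \eqref{eq:twisted-equiv} is $\mathcal{O}(e_{ij}+1)^{\oplus(r-r_{ij})}\oplus\mathcal{O}(e_{ij}+2)^{\oplus r_{ij}}$, where $e_{ij}=\deg\varphi(L_{ij})$ and $r_{ij}=-\chi(E,L_{ij})\ge 0$ (doubled for (2)); all summand degrees then exceed $e_{ij}$, and the prescribed $\chi(G,L_{ij})=\chi(E,L_{ij})$ is attained. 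Starting from a twisted sheaf of class $\tau(E)$ (resp. $2\tau(E)$) and invoking Lemma~\ref{lem:generic-vanishing}, I obtain a torsion-free $G_0$ with $\tau(G_0)=\tau(E)-n\tau(\mathbb{C}_x)$ ($n\gg0$, and $\mu$-stable for (1)) and $\Ext^2(G_0,G_0(K_X+C_{ij}))_0=0$, so that $\Def(X,G_0)$ is smooth and restricts submersively onto $\prod_{ij}\Def(C_{ij},G_{0|C_{ij}})$. As the moduli of bundles on $C_{ij}$ with fixed rank and determinant is irreducible (Lemma~\ref{lem:RDP:irred-C}), a general deformation realises the chosen restrictions simultaneously; taking the reflexive hull and applying Lemma~\ref{lem:tilting:pull-back} yields the locally free $G$. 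The $\mu$-stability in (1) is secured by the large-$n$ point subtraction, and the rank doubling in (2) provides the freedom to hit $\tau(G)=2\tau(E)$ exactly without a stability hypothesis.

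\textbf{Verification and main obstacle.} Once the restrictions are fixed, (a) is a curve computation: $\Ext^2(G,L_{ij})=H^2(C_{ij},G^\vee_{|C_{ij}}\otimes L_{ij})=0$ and $\Ext^2(G,A_{p_i})=H^2(Z_i,G^\vee_{|Z_i}\otimes A_{p_i})=0$ because the supports are curves, while $\Hom(G,L_{ij})=H^0(C_{ij},G^\vee_{|C_{ij}}\otimes L_{ij})=0$ since every summand degree of $\varphi(G_{|C_{ij}})$ exceeds $e_{ij}$; this gives (a) for $j>0$. For $j=0$ it remains to check $\Ext^1(G,A_{p_i})=H^1(Z_i,G^\vee_{|Z_i}\otimes A_{p_i})=0$, equivalently $R^1\pi_*(G^\vee\otimes G)=0$ near $p_i$ via Lemma~\ref{lem:tilting:TFF}(3): the restriction of $G^\vee_{|Z_i}\otimes A_{p_i}$ to each $(-2)$-curve is a sum of copies of $\mathcal{O}$ and $\mathcal{O}(-1)$, both without higher cohomology, and $\chi(G,A_{p_i})\ge 0$ by the second inequality of \eqref{eq:local-projective:condition}, so the tree structure of $Z_i$ forces $H^1(Z_i,\cdot)=0$. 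I expect the main obstacle to be the simultaneous deformation-theoretic realisation of all prescribed restrictions together with $\mu$-stability, that is, the submersivity of the restriction map and the attendant trace-free $\Ext^2$-vanishing from Lemma~\ref{lem:generic-vanishing}, exactly as in Proposition~\ref{prop:C(G)}(1); the cohomological vanishings are routine once the restrictions are in place.
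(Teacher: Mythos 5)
Your reduction and the Riemann--Roch translation $\chi(E,{\cal O}_{C_{ij}}(b_{ij}))=r(b_{ij}+1)-(\xi,C_{ij})$ are correct, and indeed the corollary is meant to follow from Proposition \ref{prop:generator-exist} by exactly that translation. The gap is in your re-proof of the underlying construction, at the step $\Ext^1(G,A_{p_i})=H^1(Z_i,G^{\vee}\otimes A_{p_i})=0$. You deduce this from the two facts that $(G^{\vee}\otimes A_{p_i})_{|C_{ij}}$ is a sum of copies of ${\cal O}$ and ${\cal O}(-1)$ and that $\chi(G,A_{p_i})\geq 0$, but these do not imply the vanishing. For an $A_2$ configuration $Z_i=C_{i1}+C_{i2}$ take $V=M\oplus {\cal O}_{Z_i}^{\oplus 2}$ with $M$ a line bundle of degree $-1$ on both components: then $V_{|C_{ij}}\cong {\cal O}^{\oplus 2}\oplus{\cal O}(-1)$ is exactly your prescription for $r=3$, $r_{i1}=r_{i2}=1$ (so the strict inequality $\sum_j a_{ij}r_{ij}<r$ holds and $\chi(Z_i,V)=1>0$), yet $H^1(Z_i,V)\supset H^1(Z_i,M)\neq 0$ because $\chi(Z_i,M)=-1$ and $H^0(Z_i,M)=0$. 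So a bundle with your prescribed component-wise restrictions need not satisfy the required vanishing, and your deformation argument --- whose target is $\prod_{j}\Def(C_{ij},\cdot)$ and hence only controls the restrictions to the reduced irreducible components --- cannot rule such bundles out; note also that $G_{|Z_i}$ is not determined by the $G_{|C_{ij}}$ when $Z_i$ has several components or is non-reduced.

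The paper's proof of Proposition \ref{prop:generator-exist} avoids this by deforming the restriction to the whole fundamental cycle: the target of the restriction map is $\Def(Z,E_{|Z})\times\Def(H,E_{|H})$, Lemma \ref{lem:RDP:irred-C} is applied to the effective (possibly non-reduced) divisor $Z_i$ itself, and Lemma \ref{lem:local-deform} exhibits one explicit bundle $F=K^{\vee}\oplus{\cal O}_{Z_i}^{\oplus(r-\rk K)}$ on $Z_i$, with $K=\ker(H^0({\cal O}_{Z_i\cap D})\otimes{\cal O}_{Z_i}\to{\cal O}_{Z_i\cap D})$ satisfying $H^1(Z_i,K)=0$ and $H^0(C_{ij},K_{|C_{ij}}(-1))=0$, which realizes both vanishings; here the second numerical hypothesis enters as $\rk K=\deg(E_{|Z_i})\leq r$, which is what allows $K^{\vee}$ to be padded to rank $r$ by trivial summands. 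Irreducibility of the moduli of bundles on $Z_i$ with fixed rank and determinant, plus openness of the vanishing conditions and submersivity of the restriction of deformation spaces, then give the conclusion for a general deformation. If you replace $\prod_j\Def(C_{ij},\cdot)$ by $\Def(Z_i,\cdot)$ and supply such a good bundle on $Z_i$, the rest of your proposal (the numerics, the $j>0$ vanishings, the $\mu$-stability via the restriction to $H$, and the doubling trick $E_1(nH)\oplus E_2(-nH)$ for part (2)) goes through.
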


{\it Proof of Proposition \ref{prop:generator-exist}.}

(1) We assume that $H$ is represented by a smooth connected curve
with $Z \cap H=\emptyset$, where $Z=\sum_{i=1}^n Z_i$.
We take a torsion free sheaf $E$ 
such that $\Ext^2(E,E(-Z-H))_0=0$.
By the construction of $E$,
we may assume that $E$ is locally free on $Z \cup H$.
We consider the restriction morphism
of the local deformation spaces
\begin{equation}
\phi:\Def(X,E) \to \Def(Z,E_{|Z}) \times \Def(H,E_{|H}).
\end{equation}
Then $\Def(X,E)$ and $\Def(Z,E_{|Z}) \times \Def(H,E_{|H})$
are smooth, and $\phi$ is submersive. 
\begin{NB}
Since $E_{|Z}$ and $E_{|H}$ are locally free sheaves on curves,
$\Def(Z,E_{|Z}) \times \Def(H,E_{|H})$ is smooth. 
By $\Ext^2(E,E(-Z-H))_0=0$, we also get 
$\Ext^2(E,E)_0=0$. Thus $\Def(X,E)$ is also smooth.
We consider the homomorphism of the tangent spaces 
\begin{equation}
\Ext^1(E,E)_0 \to \Ext^1(E_{|Z},E_{|Z})_0 
\oplus \Ext^1(E_{|H},E_{|H})_0  \to \Ext^2(E,E(-Z-H))_0=0.
\end{equation}
Hence $\phi$ is submersive.
\end{NB}
In particular, by using Lemma \ref{lem:local-deform}
below,
we see that $E$ deforms to a locally free $\beta$-twisted sheaf
$G$ such that $G$ is $\mu$-stable with respect to $H$ and
$\Hom(G,L_{ij})=\Ext^1(G,A_{p_i})=0$
for all $i,j$.
By Remark \ref{rem:tilting:generator},
 Proposition \ref{prop:generator-exist} (1) holds.

(2) By (1), we have locally free sheaves
$E_i$, $i=1,2$ such that $R^1 \pi_*(E_i^{\vee} \otimes E_i)=0$,
${\bf R}\pi_*(G_i^{\vee} \otimes F) \in \Coh(Y)$ for
$F \in \Per(X/Y,\{L_{ij} \})$,
$\tau(E_i)=\tau(E)-n_i \tau({\Bbb C}_x)$ and
$n_1+n_2=n^2(H^2)\rk E$.
Then $G=E_1(nH) \oplus E_2(-nH)$ 
satisfies the claim.

(3) 
The claim follows from Proposition \ref{prop:tilting:generator}. 
\qed

\begin{lem}\label{lem:local-deform}
\begin{enumerate}
\item[(1)]
$E_{|Z}$ deforms to a locally free $\beta$-twisted sheaf such that
\begin{equation}\label{eq:local-deform}
H^0(C_{ij},E^{\vee} \otimes L_{ij})=
H^1(Z_i,E^{\vee} \otimes A_{p_i})=0
\end{equation}
for all $i,j$.
\item[(2)]
$E_{|H}$ deforms to a $\mu$-stable locally free $\beta$-twisted sheaf on $H$.
\end{enumerate}
\end{lem}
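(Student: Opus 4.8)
The fibres $Z_1,\dots,Z_n$ are disjoint, so I would treat a single $Z_i$; write $r:=\rk E$ and recall that each $C_{ij}\cong{\Bbb P}^1$. Since $E^{\vee}$ is $\beta^{-1}$-twisted while $L_{ij}$ and $A_{p_i}$ are $\beta$-twisted, the sheaves $E^{\vee}\otimes L_{ij}$ and $E^{\vee}\otimes A_{p_i}$ are \emph{untwisted}, so the groups in \eqref{eq:local-deform} are ordinary cohomology groups of honest sheaves on $C_{ij}$ and on $Z_i$. The plan is to show that a \emph{generic} locally free $\beta$-twisted sheaf on $Z_i$ of rank $r$ and determinant $\det(E_{|Z_i})$ satisfies \eqref{eq:local-deform}; since $E_{|Z_i}$ is one member of this family, it then deforms to such a sheaf. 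By Lemma \ref{lem:RDP:irred-C} (applied after the equivalence \eqref{eq:twisted-equiv}, which turns $\beta$-twisted sheaves on the curve $Z_i$ into ordinary ones) the stack of such sheaves is irreducible, and both conditions in \eqref{eq:local-deform} are open by upper semicontinuity of $h^0$ and $h^1$. Hence (1) reduces to exhibiting a single locally free $\beta$-twisted $W$ of rank $r$ with $\det W=\det(E_{|Z_i})$ satisfying \eqref{eq:local-deform}.

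It will be convenient to pass to ${\cal F}:=W^{\vee}\otimes A_{p_i}$, an untwisted rank-$r$ bundle on $Z_i$, from which $W={\cal F}^{\vee}\otimes A_{p_i}$ is recovered. Because $A_{p_i|C_{ij}}=L_{ij}(1)$, one has $W^{\vee}\otimes L_{ij}={\cal F}(-1)_{|C_{ij}}$ on $C_{ij}$ and $W^{\vee}\otimes A_{p_i}={\cal F}$ on $Z_i$, so \eqref{eq:local-deform} becomes (a) every summand of ${\cal F}_{|C_{ij}}$ has degree $\le 0$, and (b) $H^1(Z_i,{\cal F})=0$; the determinant condition becomes $\det{\cal F}=(\det E)^{\vee}\otimes A_{p_i}^{\otimes r}$, a line bundle whose multidegree on $C_{ij}$ is $\chi(E,L_{ij})$. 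Setting $m_{ij}:=-\chi(E,L_{ij})$, the hypotheses \eqref{eq:local-projective:condition} give exactly the numerics needed: $m_{ij}\ge 0$; from $a_{ij}m_{ij}\le\sum_k a_{ik}m_{ik}\le r$ we get $m_{ij}\le r$; and, using $\chi(E,{\Bbb C}_x)=r$ together with the exact sequence $0\to K\to A_{p_i}\to{\Bbb C}_x\to 0$ of Lemma \ref{lem:G-1per:characterize} in which $K$ is a successive extension of the $L_{ij}$ with multiplicities $a_{ij}$ (these being read off from $c_1(K)=[Z_i]=\sum_j a_{ij}[C_{ij}]$, cf. Lemma \ref{lem:tilting:G-1Rpi*E=0}), one finds $\chi(Z_i,{\cal F})=\chi(E,A_{p_i})=r+\sum_j a_{ij}\chi(E,L_{ij})\ge 0$.

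The heart of the argument, and the step I expect to be the main obstacle, is the construction of ${\cal F}$ by elementary modifications. I would choose $M\in\Pic(Z_i)$ with $\deg(M_{|C_{ij}})\gg 0$ for all $j$, so that $M$ is globally generated and $H^1(Z_i,M)=0$, and start from ${\cal F}_0:=M^{\oplus r}$. Then I would perform downward elementary modifications $0\to{\cal F}'\to{\cal F}\to{\Bbb C}_x\to 0$ at general points $x$: whenever ${\cal F}$ is globally generated with $H^1(Z_i,{\cal F})=0$, the surjection $H^0(Z_i,{\cal F})\to{\Bbb C}_x$ forces $H^1(Z_i,{\cal F}')=0$, and a general $x$ keeps the sheaf globally generated off a finite set. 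Doing $\deg(M_{|C_{ij}})+m_{ij}$ such modifications at general points of each $C_{ij}$ lands on a bundle ${\cal F}$ of multidegree $(-m_{ij})_j$ whose restriction to $C_{ij}$ is the balanced bundle ${\cal O}^{\oplus(r-m_{ij})}\oplus{\cal O}(-1)^{\oplus m_{ij}}$ (here $m_{ij}\le r$ is used), giving (a), while (b) holds by construction. Since $\Pic(Z_i)\to\prod_j\Pic(C_{ij})$ is bijective, the multidegree pins down $\det{\cal F}=(\det E)^{\vee}\otimes A_{p_i}^{\otimes r}$, whence $W={\cal F}^{\vee}\otimes A_{p_i}$ has $\det W=\det(E_{|Z_i})$. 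The delicate point is that this process can be driven all the way down to per-component degree $\le 0$ while keeping $H^1=0$ precisely because $\chi(Z_i,{\cal F})=\chi(E,A_{p_i})\ge 0$ (the second inequality of \eqref{eq:local-projective:condition}); if $\chi$ were negative, global generation — and with it the vanishing of $H^1$ — would break down before the target determinant is reached.

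This would prove (1). For (2), after replacing ${\cal O}_Y(1)$ by a sufficiently high power I may assume that $H$ is a smooth connected curve of genus $\ge 2$ disjoint from $Z$, and via \eqref{eq:twisted-equiv} I may work with ordinary bundles on $H$. The stack of locally free sheaves on $H$ of rank $r$ and determinant $\det(E_{|H})$ is irreducible with dense open $\mu$-stable locus, so $E_{|H}$ deforms to a $\mu$-stable locally free $\beta$-twisted sheaf on $H$; this part is standard and poses no real difficulty.
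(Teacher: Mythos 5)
Your reduction steps are all sound and agree with the paper's: the $Z_i$ are disjoint, the equivalence \eqref{eq:twisted-equiv} untwists everything, Lemma \ref{lem:RDP:irred-C} gives irreducibility of the relevant stack, the two conditions in \eqref{eq:local-deform} are open, and your numerics ($0\le m_{ij}\le r$, $\chi(Z_i,{\cal F})=r-\sum_j a_{ij}m_{ij}\ge 0$) are exactly the consequences of \eqref{eq:local-projective:condition} that the argument needs. Part (2) is also fine. The gap is in the step you yourself flagged as the heart of the matter: the construction of the witness bundle by elementary modifications $0\to{\cal F}'\to{\cal F}\to{\Bbb C}_x\to 0$ at general \emph{points} of $C_{ij}$. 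The fundamental cycle $Z_i=\sum_j a_{ij}C_{ij}$ is non-reduced whenever some $a_{ij}\ge 2$ (every $D$ and $E$ type singularity), and a closed point $x$ of a component with $a_{ij}\ge 2$ is not a Cartier divisor on $Z_i$: locally ${\cal O}_{Z_i,x}\cong k[t,\epsilon]/(\epsilon^2)$ localized at $(t,\epsilon)$, whose maximal ideal needs two generators. Hence $\ker({\cal F}\to{\Bbb C}_x)\cong{\frak m}_x\oplus{\cal O}^{\oplus(r-1)}$ locally is \emph{not} locally free, so after the first such modification you have left the stack of locally free sheaves, and the determinant/multidegree bookkeeping ($\det{\cal F}'=\det{\cal F}(-x)$) also breaks down. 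A secondary worry, even on reduced cycles, is the last few steps: your target restriction ${\cal O}^{\oplus(r-m_{ij})}\oplus{\cal O}(-1)^{\oplus m_{ij}}$ is not globally generated, so the mechanism "$H^0({\cal F})\to{\Bbb C}_x$ surjective, hence $H^1({\cal F}')=0$" needs a separate argument once generation fails; the inequality $\chi\ge 0$ alone does not supply it.

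The paper sidesteps both problems by performing a single modification along the structure sheaf of a genuine Cartier divisor: it chooses an effective Cartier divisor $D$ on $Z_i$ with $(D,C_{ij})=m_{ij}$ and sets $K:=\ker\bigl(H^0({\cal O}_{Z_i\cap D})\otimes{\cal O}_{Z_i}\to{\cal O}_{Z_i\cap D}\bigr)$, which is locally free of rank $\sum_j a_{ij}m_{ij}\le r$ with $H^1(Z_i,K)=0$ and $H^0(C_{ij},K(-1)_{|C_{ij}})=0$; then $K^{\vee}\oplus{\cal O}_{Z_i}^{\oplus(r-\rk K)}$ has the required rank and determinant and satisfies \eqref{eq:local-deform} in the normalized coordinates $L_{ij}={\cal O}_{C_{ij}}(-1)$, $A_{p_i}={\cal O}_{Z_i}$. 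If you want to keep the spirit of your argument, replace skyscraper quotients by quotients of the form ${\cal O}_{D'\cap Z_i}$ for Cartier divisors $D'$ (e.g.\ restricted from curves on $X$ meeting the $C_{ij}$ transversally away from intersection points); at that point you have essentially reproduced the paper's construction.
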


\begin{proof}
(1)
Since $E_{|Z}=\oplus_{i=1}^n E_{|Z_i}$, we shall prove the claims
for each $E_{|Z_i}$.
Since $H^2(Z,{\cal O}_Z^{\times})=\{ 1\}$,
there is a $\beta$-twisted line bundle ${\cal L}$ on $Z_i$
which induces an equivalence
$\varphi:\Coh^{\beta}(Z) \cong \Coh(Z)$ in \eqref{eq:twisted-equiv}. 
Since $\Pic(Z_i) \to {\Bbb Z}^{s_i}$ 
($L \mapsto \prod_{j=1}^{s_i}\deg(L_{|C_{ij}})$)
is an isomorphism,
we may assume that $\varphi(L_{ij})={\cal O}_{C_{ij}}(-1)$.
Thus we may assume that $\beta$ is trivial and
$L_{ij}={\cal O}_{C_{ij}}(-1)$. 
In this case, we have $A_{p_i}={\cal O}_{Z_i}$.
Then
we have $\deg(E_{|C_{ij}}) \geq 0$ for all $j>0$ and
$\deg(E_{|Z_i})  \leq r$.
Let $D$ be an effective Cartier divisor on $Z_i$ such that
$(D,C_{ij})=\deg(E_{|C_{ij}})$.
\begin{NB}
This means that $c_1(E_{|Z_i})=D$.
\end{NB}
Then 
\begin{equation}
K:=\ker(H^0({\cal O}_{Z_i \cap D}) \otimes {\cal O}_{Z_i} \to
{\cal O}_{Z_i \cap D})
\end{equation}
is a locally free sheaf on $Z_i$ such that
$H^1(Z_i,K)=0$ and $H^0(C_{ij},K_{|C_{ij}}(-1))=0$.
Since $\rk K=\dim H^0({\cal O}_{Z_i \cap D})=\deg_{Z_i}(D)=
\deg(E_{|Z_i}) \leq r$, 
we set 
$F:=K^{\vee} \oplus {\cal O}_{Z_i}^{\oplus (\rk E-\rk K)}$.
Since $F$ is a locally free sheaf with
$(\rk F,\det(F^{\vee}))=(\rk E_{|Z_i},\det(E_{|Z_i}))$,
we get the claim by Lemma \ref{lem:RDP:irred-C}
and the openness of the condition \eqref{eq:local-deform}.
\begin{NB}
$F$ and $E_{|Z_i}$ are parametrized by a suitable
affine space (cf. \cite[Lem. 4.2]{I:1}). 
By the upper semi-continuity of the dimension
of the cohomology groups,
$E_{|Z_i}$ deformes to
a locally free sheaf with desired properties.
\end{NB}
\begin{NB}
Let $F$ be the full sheaf in a formal neighborhood of $Z_i$
such that $\rk F=r$ and $\deg(F_{|C_{ij}})=\deg(E_{|C_{ij}})$ for all $j>0$.  
For a sufficiently ample divisor $D$ on $Z_i$,
we have exact sequences
\begin{equation*}
\begin{split}
0 \to {\cal O}_{Z_i}(-D)^{\oplus (r-1)} \to F \to L \to 0\\
0 \to {\cal O}_{Z_i}(-D)^{\oplus (r-1)} \to E_{|Z_i} \to L \to 0,
\end{split}
\end{equation*} 
where $L \in \Pic(Z_i)$.
Hence $F$ and $E_{Z_i}$ are parametrized by 
$\Ext^1_{{\cal O}_{Z_i}}(L,{\cal O}_{Z_i}(-D)^{\oplus (r-1)})$.
For a locally free sheaf ${\cal E}$ on $S \times Z_i$,
${\cal E}^{\vee}$ and ${\cal E}^{\vee}_{|C_{ij}}$ are 
locally free sheaves on $S \times Z$ and $S \times C_{ij}$
respectively. Since projections are flat morphisms,
they are flat over $S$.
By the base change theorem,
the required conditions are open condition.
Theefore a small deformation of $E_{Z_i}$ satisfies the claim.  
\end{NB}

(2) is well-known.
\end{proof}

\begin{cor}\label{cor:generator-exist}
Assume that $\pi$ is the minimal resolution
of rational double points $p_1,...,p_n$. 
Let ${\cal C}$ be the category in Lemma \ref{lem:tilting}
and $E_{ij}$, $1 \leq i \leq n$, $0 \leq j \leq s_i$
the 0-stable objects in Lemma \ref{lem:alpha=0} (2).
For an element $E \in K(X)$
satisfying $\chi(E,E_{ij})>0$ for all $i,j$,
there is a local projective generator $G$ of ${\cal C}$
such that $\tau(G)=2 \tau(E)$.
\end{cor}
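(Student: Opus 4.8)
The plan is to reduce the statement to the construction already carried out for the categories $\Per(X'/Y,\dots)$, transporting everything through the Fourier--Mukai equivalence $\Lambda^\alpha$. First I would fix a class $\alpha\in\NS(X)\otimes{\Bbb Q}$ that is general in the sense of \eqref{eq:weakly-general} and also satisfies $-\langle\alpha,v(E_{ij})\rangle>0$ for all $i$ and all $j>0$. Such an $\alpha$ exists: by Theorem \ref{thm:RDP-desing} (3) the vectors $v(E_{ij})$ with $j>0$ span a negative definite root lattice, so the sign conditions cut out a nonempty open cone (Lemma \ref{lem:appendix:lattice} (2)), from which a general $\alpha$ may be chosen. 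For this $\alpha$, Proposition \ref{prop:0-dim:equivalence} provides an equivalence $\Lambda^\alpha:{\cal C}\to\Per(X^{\alpha}/Y,{\bf b}_1^\alpha,\dots,{\bf b}_n^\alpha)$, and Lemma \ref{lem:Lambda(E)} and Lemma \ref{lem:A_alpha} identify the images of the irreducible objects as $\Lambda^\alpha(E_{ij})={\cal O}_{C_{ij}^\alpha}(b_{ij}^\alpha)[1]=L_{ij}^\alpha[1]$ for $j>0$ and $\Lambda^\alpha(E_{i0})=A_{p_i}^\alpha$.

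Next I would transport the hypothesis $\chi(E,E_{ij})>0$ to $X^\alpha$. Setting $E':=\Lambda^\alpha(E)\in K(X^\alpha)$ and using that $\Lambda^\alpha$ preserves the Euler pairing, one has $\chi(E',L_{ij}^\alpha[1])=-\chi(E',L_{ij}^\alpha)=\chi(E,E_{ij})>0$ for $j>0$, and $\chi(E',A_{p_i}^\alpha)=\chi(E,E_{i0})>0$. From the defining exact sequence of $A_{p_i}^\alpha$ (Lemma \ref{lem:A_alpha}, cf.\ Lemma \ref{lem:tilting:G-1Rpi*E=0}) one gets in $K(X^\alpha)$ the relation $[A_{p_i}^\alpha]=[{\Bbb C}_x]+\sum_j a_{ij}[L_{ij}^\alpha]$, whence $\chi(E',A_{p_i}^\alpha)=\rk E'+\sum_j a_{ij}\chi(E',L_{ij}^\alpha)$, using $\chi(E',{\Bbb C}_x)=\rk E'$. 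Therefore the inequalities become $0<-\chi(E',L_{ij}^\alpha)$ for $j>0$ and $-\sum_j a_{ij}\chi(E',L_{ij}^\alpha)<\rk E'$; in particular $\rk E'>0$. These are precisely the strict form of \eqref{eq:local-projective:condition} for $E'$ on $X^\alpha$.

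With this in hand I would invoke Proposition \ref{prop:generator-exist} (2), (3), which is stated for $\beta$-twisted sheaves and hence available in our setting: it yields a local projective generator $G^\alpha$ of $\Per(X^{\alpha}/Y,{\bf b}_1^\alpha,\dots,{\bf b}_n^\alpha)$ with $\tau(G^\alpha)=2\tau(E')$. I then set $G:=(\Lambda^\alpha)^{-1}(G^\alpha)$, which is an object of ${\cal C}$, and check the criterion of Proposition \ref{prop:tilting:generator}. Since $\Lambda^\alpha$ is an equivalence, $\Hom(G,E_{ij}[p])\cong\Hom(G^\alpha,\Lambda^\alpha(E_{ij})[p])=0$ for $p\neq0$ and $\chi(G,E_{ij})=\chi(G^\alpha,\Lambda^\alpha(E_{ij}))>0$; moreover $H^q(G)=0$ for $q\neq-1,0$ because $G\in{\cal C}$ and ${\cal C}$ is a tilting of $\Coh(X)$ (Lemma \ref{lem:tilting:C}). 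Hence $G$ is a locally free sheaf and a local projective generator of ${\cal C}$, and since $\Lambda^\alpha$ induces an isomorphism on $K(X)_{\mathrm{top}}$ we obtain $\tau(G)=2\tau(E)$.

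The conceptual content is light; the part demanding care is the translation in the second paragraph, namely matching the single inequality $\chi(E,E_{i0})>0$ with the summed bound $-\sum_j a_{ij}\chi(E',L_{ij}^\alpha)<\rk E'$ via the $K$-theory relation for $A_{p_i}^\alpha$, and ensuring that one and the same $\alpha$ can be taken general enough for $X^\alpha$ and $\Lambda^\alpha$ to be defined while still meeting the sign conditions of Proposition \ref{prop:0-dim:equivalence}. I expect no further obstacles.
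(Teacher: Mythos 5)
Your proposal follows the paper's own argument: reduce via the equivalence $\Lambda^\alpha$ of Proposition \ref{prop:0-dim:equivalence} to the category $\Per(X^\alpha/Y,{\bf b}_1^\alpha,\dots,{\bf b}_n^\alpha)$, apply Proposition \ref{prop:generator-exist} there to produce $G^\alpha$ with $\tau(G^\alpha)=2\tau(\Lambda^\alpha(E))$, and pull back by $(\Lambda^\alpha)^{-1}$. The only (harmless) divergence is that you certify local freeness and the generator property via Proposition \ref{prop:tilting:generator}, whereas the paper checks $H^k(G\overset{{\bf L}}{\otimes}{\Bbb C}_x)=0$ for $k\ne 0$ directly; your explicit translation of $\chi(E,E_{ij})>0$ into the strict form of \eqref{eq:local-projective:condition} is a welcome elaboration of a step the paper leaves implicit.
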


\begin{proof}
We consider the equivalence $\Lambda^\alpha$
in Proposition \ref{prop:0-dim:equivalence}.
Then since $\chi(\Lambda^\alpha(E),\Lambda^\alpha(E_{ij}))>0$
for all $i,j$, Proposition \ref{prop:generator-exist}
implies that 
there is a local projective generator
$G^\alpha$ of $\Lambda^\alpha({\cal C})$ such that
$\tau(G^\alpha)=2\tau(\Lambda^\alpha(E))$.
We set $G:=(\Lambda^{\alpha})^{-1}(G^\alpha) \in {\cal C}$.
Then 
\begin{equation}
\begin{split}
H^0(X,H^k(G \overset{{\bf L}}{\otimes}{\Bbb C}_x))=&
H^k(X,G \overset{{\bf L}}{\otimes}{\Bbb C}_x)\\
=&\Hom({\Bbb C}_x,G[k+2])\\
=&\Hom(\Lambda^\alpha({\Bbb C}_x),G^\alpha[k+2])\\
=& \Hom(G^\alpha,\Lambda^\alpha({\Bbb C}_x)[-k])=0
\end{split}
\end{equation}
 for all $x \in X$ and $k \ne 0$.
Therefore $G$ is a locally free sheaf on $X$.
Since $G^\alpha$ is a local projective generator of 
$\Lambda^\alpha({\cal C})$ and 
$\Lambda^\alpha$ is an equivalence,
$G$ is a local projective generator of ${\cal C}$.
\end{proof}

\begin{NB}
For $\Per(X/Y,\{L_{ij}\})^*$, 
the condition in Corollary \ref{cor:generator-exist} is
\begin{equation}
\begin{split}
0 < \chi(E,L_{ij}),\;1 \leq j \leq s_i,\\
\sum_j a_{ij}\chi(E,L_{ij}) < r
\end{split}
\end{equation}
for all $i$.
\end{NB}

\subsubsection{More results on the structure of ${\cal C}$.}
Let ${\cal C}$ be the category of perverse coherent sheaves 
in Lemma \ref{lem:tilting}.
Assume that there is $\beta \in \NS(X) \otimes {\Bbb Q}$
such that 
${\Bbb C}_x$ is $\beta$-stable for all $x \in X$.
By Proposition 
\ref{prop:0-dim:duality2},
${\cal C}=\Lambda^\alpha(\Per(X'/Y,{\bf b}_1,...,{\bf b}_n))$.
So we first assume that
${\cal C}=\Per(X/Y,{\bf b}_1,...,{\bf b}_n)$
and set 
\begin{equation}
E_{ij}:=
\begin{cases}
{\cal O}_{C_{ij}}(b_{ij})[1], & j>0,\\
A_0({\bf b}),& j=0.
\end{cases}
\end{equation}
We set
$v_{ij}:=v(E_{ij})$.
Let $u_0$ be an isotropic Mukai vector such that
$r_0:=\rk u_0>0$, $\langle u_0, v_{ij} \rangle=0$ for all
$i,j$.
We set
\begin{equation}
L:={\Bbb Z}u_0+\sum_{i=1}^n \sum_{j=0}^{s_i}{\Bbb Z}v_{ij}.
\end{equation}
Then $L$ is a sublattice of $H^*(X,{\Bbb Z})$ 
and we have a decomposition
\begin{equation} 
L
=({\Bbb Z}u_0 \oplus {\Bbb Z}\varrho_X) \perp
(\oplus_{i=1}^n \oplus_{j=1}^{s_i}{\Bbb Z}v_{ij}).
\end{equation} 
\begin{NB}
$v_{i0}+\sum_{j=1}^{s_i} a_{ij} v_{ij}=\varrho_X$.
\end{NB}
We set 
\begin{equation}
\begin{split}
T_i:=& \oplus_{j=1}^{s_i} {\Bbb Z}C_{ij},\\
T:=& \oplus_{i=1}^n T_i. 
\end{split}
\end{equation}
Then we have an isometry
\begin{equation}
\begin{matrix}
\psi:& \oplus_{i=1}^n \oplus_{j=1}^{s_i} {\Bbb Z}v_{ij} & \to &
T\\
& v & \mapsto & c_1(v).
\end{matrix}
\end{equation}
Combining the isometry
${\Bbb Z}u_0 \oplus {\Bbb Z}\varrho_X \to 
{\Bbb Z}r_0 \oplus {\Bbb Z}\varrho_X$
($x u_0+z\varrho_X \mapsto x r_0+z\varrho_X$), we also have an isometry 
\begin{equation}
\widetilde{\psi}:({\Bbb Z}u_0 \oplus {\Bbb Z}\varrho_X) \perp
(\oplus_{i=1}^n \oplus_{j=1}^{s_i} {\Bbb Z}v_{ij}) \to 
({\Bbb Z}r_0 \oplus {\Bbb Z}\varrho_X) \perp T
\end{equation}
Let ${\frak g}_i$ (resp. $\widehat{\frak g}_i$)
be the finite Lie algebra (resp. affine Lie algebra)
associated to the lattice $\oplus_{j=1}^{s_i}{\Bbb Z}v_{ij}$
(resp. $\oplus_{j=0}^{s_i}{\Bbb Z}v_{ij}$).
Let ${\frak g}$ (resp. $\widehat{\frak g}$)
be the Lie algebra associated to 
$\oplus_{i=1}^n \oplus_{j=1}^{s_i}{\Bbb Z}v_{ij}$
(resp. $\oplus_{i=1}^n \oplus_{j=0}^{s_i}{\Bbb Z}v_{ij}$).
 
Let $W({\frak g}_i)$ (resp.  $W({\frak g})$) 
be the Weyl group of ${\frak g}_i$ (resp. ${\frak g}$)
and ${\cal W}_i$ (resp. ${\cal W}$)
the set of Weyl chambers of $W({\frak g}_i)$ 
(resp. $W({\frak g})$).
Since ${\frak g}=\oplus_{i=1}^n {\frak g}_i$,
$W({\frak g})=\prod_{i=1}^n W({\frak g}_i)$ and
${\cal W}=\prod_{i=1}^n {\cal W}_i$.
By the action of $W({\frak g})$, 
${\Bbb Q}u_0+{\Bbb Q}\varrho_X$ is fixed.
Let $W(\widehat{\frak g}_i)$ (resp.
$W(\widehat{\frak g})$) be the Weyl group of
$\widehat{\frak g}_i$ (resp. $\widehat{\frak g}$). 
We have the following decompositions
\begin{equation}
\begin{split}
W(\widehat{\frak g}_i)= & T_i \rtimes W({\frak g}_i),\\
W(\widehat{\frak g})= & T \rtimes W({\frak g}),
\end{split}
\end{equation}
and the action of $D \in T$ on $L$ is the multiplication 
by $e^D$.
Indeed 
$$
T_{{\cal O}_{C_{ij}}(b_{ij}+1)} \circ
T_{{\cal O}_{C_{ij}}(b_{ij})[1]}=e^{-C_{ij}} 
$$ 
as an isometry of $L$.
\begin{NB}
\begin{proof}
For $v \in u_0^{\perp} \cap \varrho_X^{\perp}$ with $\langle v^2 \rangle=2$,
we shall prove that
$T_{\varrho_X-v} \circ T_v=e^v$:

For $x \frac{u_0}{\rk u_0}+y+z \varrho_X, y \in \oplus_{i,j>0} {\Bbb Q}v_{ij}$,
\begin{equation}
\begin{split}
T_{\varrho_X-v} \circ T_v(x \frac{u_0}{\rk u_0}+y+z \varrho_X)=&
T_{\varrho_X-v}(x \frac{u_0}{\rk u_0}+y+z \varrho_X+(v,y)v)\\
=& x \frac{u_0}{\rk u_0}+(xv+y)+
(z+(y,v)+\frac{\langle v^2 \rangle}{2}) \varrho_X \\
=& e^v (x \frac{u_0}{\rk u_0}+y+z \varrho_X).
\end{split}
\end{equation}
\end{proof}
\end{NB} 

\begin{NB}
$(T_{{\cal O}_{C_{ij}}(b_{ij})[1]}) \circ e^D \circ 
(T_{{\cal O}_{C_{ij}}(b_{ij})[1]})=e^{D+(D,C_{ij})C_{ij}}$.
We write $D=\sum_{i,j}n_{ij}v_{ij}+a v({\Bbb C}_x)=
\sum_{i,j}n_{ij}v_{ij}-\frac{(D,c_1(u_0))}{\rk u_0}\varrho_X$
and 
$$
u_0 e^D=u_0 e^{\sum_{i,j}n_{ij}v_{ij}}-a \rk u_0
=u_0+\rk u_0 \sum_{i,j}n_{ij}v_{ij}+b v({\Bbb C}_x).
$$
Since $0=\langle u_0 e^D,u_0 e^D \rangle=
\langle (\rk u_0 \sum_{i,j}n_{ij}v_{ij})^2 \rangle -2\rk u_0 b$,
$$
u_0 e^D=u_0+\rk u_0 \sum_{i,j}n_{ij}v_{ij}+
\frac{\rk u_0}{2}\langle ( \sum_{i,j}n_{ij}v_{ij})^2 \rangle 
\rho_X.
$$
We also have
$$
v_{kl}e^D=v_{kl}+\langle v_{kl},\sum_{i,j}n_{ij}v_{ij} \rangle \varrho_X.
$$
\end{NB}
We shall study the category $\Lambda^\alpha({\cal C})$.
We may assume that $\alpha \in \NS(X) \otimes {\Bbb Q}$ is
$\alpha=\sum_i \alpha_i$ with $\alpha_i \in T_i \otimes {\Bbb Q}$.
Via the identification $\psi$,
we have an action of $W$ on $T \otimes {\Bbb Q}$.
We set
\begin{equation}
\begin{split}
C_i^{\mathrm{fund}}:=&
\{ \alpha \in T_i \otimes {\Bbb R}|(\alpha,C_{ij})>0, 
1 \leq j \leq s_i \},\\
C^{\mathrm{fund}}:=& \prod_{i=1}^n C_i^{\mathrm{fund}}.
\end{split}
\end{equation}
$C^{\mathrm{fund}}$ is the fundamental Weyl chamber.
If $\alpha \in C^{\mathrm{fund}}$, then
Lemma \ref{lem:alpha>0} implies that ${\Bbb C}_x$ is $\alpha$-stable
for all $x \in X$.
By the action of $W({\frak g}_i)$,
we have ${\cal W}_i=W({\frak g}_i)C_i^{\mathrm{fund}}$.
We also set
\begin{equation}
\begin{split}
C_{\mathrm{alcove}}^{\mathrm{fund}}:=&
\{ \alpha \in T \otimes {\Bbb R}|(\alpha,C_{ij})>0, 
1 \leq j \leq s_i,\; (\alpha,Z_i)<1 \}.
\end{split}
\end{equation}
By the isometry $\widetilde{\psi}^{-1}$, we have
\begin{equation}
\begin{split}
(\alpha,C_{ij})=& -\langle \psi^{-1}(\alpha),v_{ij} \rangle\\
=&
-\langle (\frac{u_0}{\rk u_0}+\psi^{-1}(\alpha)+
\frac{(\alpha^2)}{2}\varrho_X),v_{ij} \rangle
=-\langle e^{\frac{c_1(u_0)}{\rk u_0}+\alpha},v_{ij} \rangle
\end{split}
\end{equation}
for $j>0$ and $1-(\alpha,Z_i)=
1+\sum_{j=1}^{s_i} a_{ij}
\langle e^{\frac{c_1(u_0)}{\rk u_0}+\alpha},v_{ij} \rangle
=-\langle e^{\frac{c_1(u_0)}{\rk u_0}+\alpha},v_{i0} \rangle$.
Hence we have
 \begin{equation}
\begin{split}
C_{\mathrm{alcove}}^{\mathrm{fund}}
=& \{ \alpha \in T \otimes {\Bbb R}|
-\langle e^{\frac{c_1(u_0)}{\rk u_0}+\alpha},v_{ij} \rangle>0 \}.
\end{split}
\end{equation}

Applying Corollary \ref{cor:0-dim:O-Y} successively,
we get the following result.
\begin{prop}
If $\alpha \in T \otimes {\Bbb Q}$ belongs to a chamber
$C=\prod_{i=1}^n C_i$, $C_i \subset T_i \otimes {\Bbb Q}$,
 then
there are rigid objects $F_1,...,F_n \in {\cal C}$ such that
$X^\alpha \cong X$ and
$\Phi_{X \to X}^{{\cal E}^\alpha}=T_{F_n} \circ T_{F_{n-1}} \circ
\cdots \circ T_{F_1}$.
Thus $\Lambda^\alpha=(\Phi_{X \to X}^{{\cal E}^\alpha})^{-1}$ 
induces an isometry 
$w(\alpha)$ of
$L$.
\end{prop}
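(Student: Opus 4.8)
The plan is to connect the chamber $C$ to the fundamental Weyl chamber $C^{\mathrm{fund}}$ by a generic path and to realise each wall that is crossed as a spherical twist, by iterating Corollary \ref{cor:0-dim:O-Y}. Since $v=\varrho_X$, the Remark following the definition of the walls gives $W_u=u^{\perp}$, and for $u\in\cup_i R_i$ these are exactly the root hyperplanes of $W({\frak g})=\prod_i W({\frak g}_i)$ under the isometry $\psi$. Hence the chambers in $T\otimes{\Bbb R}$ are the Weyl chambers and $C=w\,C^{\mathrm{fund}}$ for a unique $w\in W({\frak g})$. Because each $R_i$ is finite, the arrangement $\{W_u\}$ is finite, so I can fix $\alpha_0\in C^{\mathrm{fund}}$ and join it to the given $\alpha\in C$ by a piecewise-linear path meeting the walls transversally, one at a time, and avoiding every intersection $W_u\cap W_{u'}$. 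Let $\alpha_0,\alpha_1,\dots,\alpha_N=\alpha$ lie in the successive chambers traversed, the segment from $\alpha_{k-1}$ to $\alpha_k$ crossing a single wall $W_{u_k}=u_k^{\perp}$ with $u_k\in\cup_i R_i$.

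At the $k$-th crossing I would produce the twisting object as follows. Choosing a point $\beta_k$ of $W_{u_k}$ interior to the crossed face, the existence result for $\alpha$-twisted stable objects with Mukai vector in $R_i$ yields a $\beta_k$-twisted stable object $F_k$ with $v(F_k)=u_k$; since $\langle v(F_k)^2\rangle=-2$ it is spherical (hence rigid) with $\Supp(F_k)\subset Z$ and $\langle\beta_k,v(F_k)\rangle=0$. Writing $\alpha_{k-1},\alpha_k$ as $\beta_k\mp\epsilon\,v(F_k)$, transversality ensures $\beta_k$ lies on $W_{u_k}$ only, so Corollary \ref{cor:0-dim:O-Y} applies and gives an isomorphism $X^{\alpha_{k-1}}\overset{\sim}{\to}X^{\alpha_k}$ together with $\Phi^{{\cal E}^{\alpha_k}}\cong T_{F_k}\circ\Phi^{{\cal E}^{\alpha_{k-1}}}$ as functors out of $X^{\alpha_{k-1}}$. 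Identifying all $X^{\alpha_k}$ with $X^{\alpha_0}$ through these isomorphisms and writing $\Psi_k:=\Phi_{X^{\alpha_0}\to X}^{{\cal E}^{\alpha_k}}$, this reads
\begin{equation}
\Psi_k\cong T_{F_k}\circ\Psi_{k-1},\qquad 1\le k\le N.
\end{equation}
At the starting chamber Lemma \ref{lem:alpha>0} gives $X^{\alpha_0}\cong X$ via $x\mapsto{\Bbb C}_x$, so after normalising the universal family by Lemma \ref{lem:univ-characterize} we may take ${\cal E}^{\alpha_0}\cong{\cal O}_\Delta$ and $\Psi_0\cong\id$. Composing the $N$ relations yields $X^{\alpha}\cong X$ and
\begin{equation}
\Phi_{X\to X}^{{\cal E}^{\alpha}}\cong T_{F_N}\circ T_{F_{N-1}}\circ\cdots\circ T_{F_1},
\end{equation}
which is the asserted factorisation into spherical twists, the number of factors being the number of walls crossed along the path.

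For the final assertion I would use that any Fourier--Mukai autoequivalence of ${\bf D}(X)$ preserves the pairing through the Riemann--Roch formula $\chi(E,F)=-\langle v(E),v(F)\rangle$, so both $\Phi_{X\to X}^{{\cal E}^{\alpha}}$ and its inverse $\Lambda^{\alpha}$ act on $H^{ev}(X,{\Bbb Z})$ by isometries. Each $F_k$ is spherical with $v(F_k)=u_k\in\oplus_{i,j}{\Bbb Z}v_{ij}\subset L$, and $T_{F_k}$ acts on cohomology by the reflection $s_{u_k}\colon x\mapsto x+\langle x,u_k\rangle u_k$; since $\langle x,u_k\rangle\in{\Bbb Z}$ and $u_k\in L$, this reflection preserves $L$. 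Hence the composite preserves $L$, and its restriction is the required isometry $w(\alpha)$, which is precisely the Weyl element $w\in W({\frak g})$ with $C=w\,C^{\mathrm{fund}}$.

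The step I expect to be the main obstacle is the bookkeeping in the middle paragraph: verifying at every crossing that the hypotheses of Corollary \ref{cor:0-dim:O-Y} genuinely hold --- a single stable spherical object $F_k$ controlling the wall and $\beta_k$ lying on exactly one wall --- and that the chain of isomorphisms $X^{\alpha_{k-1}}\cong X^{\alpha_k}$, together with the normalisation of universal families from Lemma \ref{lem:univ-characterize}, composes consistently, so that the terminal identification $X^{\alpha}\cong X$ is the one under which ${\cal E}^{\alpha}$ is viewed on $X\times X$ and the displayed factorisation holds on the nose.
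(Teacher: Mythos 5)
Your proposal is correct and follows exactly the route the paper intends: the paper states this proposition with only the remark "Applying Corollary \ref{cor:0-dim:O-Y} successively, we get the following result," and your argument is precisely that iteration, made explicit by choosing a generic path from the fundamental chamber, producing a stable spherical object on each wall crossed, and composing the resulting twist functors (with the cohomological action of each $T_{F_k}$ being the reflection $s_{u_k}$ preserving $L$).
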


Then we have a map
\begin{equation}
\begin{matrix}
\phi:& {\cal W} & \to & W(\widehat{\frak g})/T\\
& C(\alpha) & \mapsto & [w(\alpha) \mod T],
\end{matrix}
\end{equation}
where $C(\alpha)$ is the chamber containing $\alpha$.

\begin{lem}\label{lem:weyl-chamber}
$\phi:{\cal W} \to W(\widehat{\frak g})/T \cong W({\frak g})$ is bijective.
\end{lem}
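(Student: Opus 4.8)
The claim to prove is that the map $\phi:{\cal W} \to W(\widehat{\frak g})/T \cong W({\frak g})$ is bijective. The plan is to exploit the two decompositions established above: on the geometric side ${\cal W} = \prod_i {\cal W}_i$ with ${\cal W}_i = W({\frak g}_i)C_i^{\mathrm{fund}}$, and on the algebraic side $W({\frak g}) = \prod_i W({\frak g}_i)$. Because everything factors over the rational double points $p_i$, I would first reduce to the case $n=1$, i.e.\ to a single singularity, and then assemble the product. So the core is to show that for each $i$ the induced map ${\cal W}_i \to W({\frak g}_i)$ is bijective.

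The key structural input is Corollary \ref{cor:0-dim:O-Y} together with Proposition \ref{prop:0-dim:O-Y}: crossing a single wall $W_F$ of the wall-and-chamber structure applies the spherical twist $T_F$, and under the identification $X^{\alpha^-} \cong X^{\alpha^+}$ the Fourier--Mukai kernel changes by composition with $T_A$, where $A$ is the image of the spherical object $F$. The isometry $w(\alpha)$ of $L$ attached to a chamber is thus a product of reflections, one for each wall separating $C(\alpha)$ from the fundamental chamber $C^{\mathrm{fund}}$. First I would check that $w(\cdot)$ is well-defined on chambers (independent of the path of wall-crossings used to reach $\alpha$), which follows because the spherical twists realize the simple reflections $T_{{\cal O}_{C_{ij}}(b_{ij}+1)} \circ T_{{\cal O}_{C_{ij}}(b_{ij})[1]} = e^{-C_{ij}}$ recorded above, and composites of reflections along the boundary between two chambers are governed by the braid/Coxeter relations already encoded in $W(\widehat{\frak g})$. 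Since by Lemma \ref{lem:weyl-chamber}'s ambient setup $w(\alpha)$ lands in $W(\widehat{\frak g})$ and acts on $L$ fixing ${\Bbb Q}u_0 + {\Bbb Q}\varrho_X$, its class modulo $T$ lies in $W({\frak g})$, so $\phi$ is at least a well-defined map.

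For \emph{surjectivity}, I would use that $W({\frak g}_i)$ acts simply transitively on the set ${\cal W}_i$ of Weyl chambers of the finite root system: given any target element $w \in W({\frak g})$, the chamber $w\,C^{\mathrm{fund}}$ maps to $[w]$ under $\phi$ because wall-crossing from $C^{\mathrm{fund}}$ to $w\,C^{\mathrm{fund}}$ realizes $w$ as the corresponding product of simple reflections. For \emph{injectivity}, suppose $\phi(C(\alpha)) = \phi(C(\alpha'))$; then $w(\alpha)$ and $w(\alpha')$ differ by an element of $T$, i.e.\ by a translation $e^D$. I would argue that a translation cannot identify two distinct finite Weyl chambers $C_i \subset T_i \otimes {\Bbb Q}$ unless $D=0$: the finite Weyl group $W({\frak g})$ already acts simply transitively on ${\cal W}$, and the translation part $T$ acts trivially on the quotient $W(\widehat{\frak g})/T$, so two chambers with the same image are related by an element of $W({\frak g})$ fixing a chamber, hence equal.

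The main obstacle I anticipate is the well-definedness and path-independence of $w(\alpha)$, i.e.\ verifying that the product of spherical twists obtained by crossing walls depends only on the endpoint chamber and not on the chosen generic path. This is where the Coxeter relations of $W(\widehat{\frak g})$ must be matched against the relations among the spherical twists $T_{E_{ij}}$ in the autoequivalence group of ${\bf D}(X)$; the translation lattice $T$ arises precisely from the loops (closed paths crossing back) and corresponds to the identity $T_{{\cal O}_{C_{ij}}(b_{ij}+1)} \circ T_{{\cal O}_{C_{ij}}(b_{ij})[1]} = e^{-C_{ij}}$. Once the homomorphism from the groupoid of chamber-paths into $W(\widehat{\frak g})$ is seen to descend to chambers, matching the $\tilde A,\tilde D,\tilde E$ affine structure of $(-\langle v_{ij},v_{ik}\rangle)_{j,k\ge 0}$ from Theorem \ref{thm:RDP-desing}(3), the bijectivity reduces to the standard fact that the finite Weyl group acts simply transitively on its chambers.
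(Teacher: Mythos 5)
Your argument is correct and follows essentially the same route as the paper: both rest on the fact that $w(\alpha)$ carries $C(\alpha)$ to the fundamental chamber together with the simple transitivity of $W({\frak g}_i)$ on ${\cal W}_i$, the only difference being that the paper deduces surjectivity from the counting $\# {\cal W}_i = \# W({\frak g}_i)$ once injectivity is known, rather than from your gallery argument. The path-independence issue you flag as the main obstacle does not actually arise in the paper's setup, since $w(\alpha)$ is defined directly as the isometry induced by $\Lambda^\alpha=(\Phi_{X \to X}^{{\cal E}^\alpha})^{-1}$, and the kernel ${\cal E}^\alpha$ (normalized as in Lemma \ref{lem:univ-characterize}) depends only on the chamber containing $\alpha$, not on a chosen path of wall-crossings.
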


\begin{proof}
There is an element $\alpha_0$ in the fundamental Weyl chamber
such that
$\alpha=\Phi_{X \to X}^{{\cal E}^\alpha}(\alpha_0)$. 
Hence $w(\alpha)(C(\alpha))=C(\alpha_0)$.
Thus $\phi$ is injective.
Since $\# {\cal W}_i=\# W({\frak g}_i)$,
$\phi$ is bijective.
\end{proof}

We set 
\begin{equation}
T^*:=\{D \in T \otimes {\Bbb Q}| (D,C_{ij}) \in {\Bbb Z} \}.
\end{equation}
Then $\widetilde{W}:=T^* \rtimes W({\frak g})$ is the extended Weyl group.
\begin{NB}
We define $C_{kl}^* \in T^*$ by
$(C_{kl}^*,C_{ij})=\delta_{ik}\delta_{jl}$.
Then $T^* =\oplus_{i,j}{\Bbb Z}C_{ij}^*$.
\end{NB}
By the action of $\widetilde{W}$,
we can change $({\bf b}_1,...,{\bf b}_n)$ to any sequence
$({\bf b}_1',...,{\bf b}_n')$.

\begin{prop}\label{prop:perverse=-1}
Let ${\cal C}$ be the category in Lemma \ref{lem:tilting} and assume that
there is $\beta \in \NS(X) \otimes {\Bbb Q}$
such that ${\Bbb C}_x$ is $\beta$-stable for all $x \in X$.
Then ${\cal C}$ is equivalent to $^{-1} \Per(X/Y)$.
In particular, 
$\Per(X/Y,{\bf b}_1,...,{\bf b}_n) \cong {^{-1} \Per(X/Y)}$.
\end{prop}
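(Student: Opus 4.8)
The plan is to reduce the statement to its concrete special case $\Per(X/Y,{\bf b}_1,\dots,{\bf b}_n)\cong {^{-1}\Per(X/Y)}$ and to prove the latter by normalizing the sequence $({\bf b}_1,\dots,{\bf b}_n)$ through the wall-crossing equivalences $\Lambda^\alpha$. First I would apply Proposition \ref{prop:0-dim:duality2}: the hypothesis that ${\Bbb C}_x$ is $\beta$-stable for all $x\in X$ (with $\beta\in\varrho_X^{\perp}$) is exactly the condition guaranteeing ${\cal C}=\Lambda^\gamma(\Per(X'/Y,{\bf b}_1,\dots,{\bf b}_n))$ for some integer sequence $({\bf b}_1,\dots,{\bf b}_n)$, where $X=(X')^\gamma$ and $\Lambda^\gamma$ is a Fourier-Mukai equivalence. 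Since $\pi':X'\to Y$ and $\pi:X\to Y$ are both the minimal resolution of the same surface with rational double points, they are isomorphic over $Y$; identifying them, I obtain a Fourier-Mukai equivalence ${\cal C}\cong\Per(X/Y,{\bf b}_1,\dots,{\bf b}_n)$. Thus it suffices to treat the case ${\cal C}=\Per(X/Y,{\bf b}_1,\dots,{\bf b}_n)$ for an arbitrary sequence.

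For this case I would use the action of the extended Weyl group $\widetilde{W}=T^*\rtimes W({\frak g})$ realized by the equivalences $\Lambda^\alpha$ of Proposition \ref{prop:0-dim:equivalence}. Each $\Lambda^\alpha$ carries $\Per(X/Y,{\bf b}_1,\dots,{\bf b}_n)$ to a perverse category $\Per(X^\alpha/Y,{\bf b}_1^\alpha,\dots,{\bf b}_n^\alpha)$ with $X^\alpha\cong X$ over $Y$, and the induced isometry $w(\alpha)$ of the lattice $L$ records the change ${\bf b}\mapsto{\bf b}^\alpha$ of line-bundle degrees. By Lemma \ref{lem:weyl-chamber} these equivalences realize all of $W(\widehat{\frak g})/T\cong W({\frak g})$, while the translation lattice $T^*$ shifts the degrees: the generator dual to $C_{ij}$ changes $b_{ij}$ by $1$, as encoded in the relation $T_{{\cal O}_{C_{ij}}(b_{ij}+1)}\circ T_{{\cal O}_{C_{ij}}(b_{ij})[1]}=e^{-C_{ij}}$. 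Hence $\widetilde{W}$ acts transitively on sequences, and choosing the element sending $({\bf b}_1,\dots,{\bf b}_n)$ to the standard sequence with all $b_{ij}=-1$ yields an equivalence $\Per(X/Y,{\bf b}_1,\dots,{\bf b}_n)\cong\Per(X/Y,{\bf b}_1(0),\dots,{\bf b}_n(0))={^{-1}\Per(X/Y)}$, the last identification being the Remark following the definition of these categories.

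This simultaneously proves the ``in particular'' clause; its consistency with the main statement is seen from Lemma \ref{lem:alpha>0}, which shows that for $\alpha$ in the fundamental chamber the assignment $x\mapsto{\Bbb C}_x$ identifies $X$ with $X^\alpha$, so that ${\Bbb C}_x$ is indeed $\alpha$-stable for every $x$ and $\Per(X/Y,{\bf b}_1,\dots,{\bf b}_n)$ falls under the hypothesis. The main obstacle is the bookkeeping in the middle paragraph: one must verify that the composite of the wall-crossing functors genuinely realizes the chosen translation in $T^*$ and that it sends the irreducible objects $E_{ij}$ of ${\cal C}$ to precisely the standard irreducibles ${\cal O}_{C_{ij}}(-1)[1]$ and $A_0({\bf b}_i(0))={\cal O}_{Z_i}$ of $^{-1}\Per(X/Y)$ with the correct degrees. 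This amounts to tracking the action of $w(\alpha)$ on the classes $v_{ij}$ and matching it with the reflection-functor description above, but each individual equivalence is already supplied by the results of subsection \ref{subsect:0-dim:FM}.
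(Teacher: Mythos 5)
Your overall strategy is the paper's: reduce to ${\cal C}=\Per(X/Y,{\bf b}_1,\dots,{\bf b}_n)$ via Proposition \ref{prop:0-dim:duality2}, then normalize the sequence $({\bf b}_1,\dots,{\bf b}_n)$ to $(-1,\dots,-1)$ using the Weyl-group action realized by the wall-crossing equivalences $\Lambda^\alpha$ and line-bundle twists. But the key middle step has a genuine gap. You invoke transitivity of the \emph{extended} affine Weyl group $\widetilde{W}=T^*\rtimes W({\frak g})$ on sequences, justified by saying that ``the generator dual to $C_{ij}$ changes $b_{ij}$ by $1$, as encoded in $T_{{\cal O}_{C_{ij}}(b_{ij}+1)}\circ T_{{\cal O}_{C_{ij}}(b_{ij})[1]}=e^{-C_{ij}}$.'' That relation realizes translation by $-C_{ij}\in T$, which changes $b_{ij}$ by $-(C_{ij},C_{ij})=2$ and also moves the degrees on adjacent curves; it does \emph{not} realize the dual generator $C_{ij}^*\in T^*$. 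The dual generators are in general non-integral rational combinations of the $C_{kl}$, so they are not given by tensoring with a line bundle, and no equivalence in your toolkit realizes them. The group actually realized by $\Lambda^\alpha$ and $\otimes\,{\cal O}_X(D)$, $D\in T$, is only $W(\widehat{\frak g})=T\rtimes W({\frak g})$ (Lemma \ref{lem:weyl-chamber}), so transitivity of $\widetilde{W}$ — which is trivially true via $T^*$-translations — does not by itself produce a realizable equivalence taking ${\bf b}$ to ${\bf b}(0)$. You flag this yourself as ``the main obstacle,'' but it cannot be resolved in the form you state it.

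The fix is the paper's alcove argument, which avoids $T^*$ entirely: both $\{v_{ij}\}$ (from ${\bf b}$) and $\{u_{ij}\}=\{v({\cal O}_{C_{ij}}(-1)[1]),\,v({\cal O}_{Z_i})\}$ are systems of simple affine roots and hence bound alcoves, and $W(\widehat{\frak g})$ acts simply transitively on alcoves, so the normalizing element $w$ can be chosen inside the realizable subgroup and decomposed as $w=e^D\phi(\alpha)$ with $D\in T$. A second point you defer as ``bookkeeping'' is also substantive: having matched the Mukai vectors $w(v_{ij})=u_{ij}$, one must still show the equivalence $(\otimes\,{\cal O}_X(D))\circ\Lambda^\alpha$ carries the irreducible \emph{objects} $E_{ij}$ to ${\cal O}_{C_{ij}}(-1)[1]$ and ${\cal O}_{Z_i}$; the paper does this by analyzing which $c_1(w(v_{ij}))$ is effective and applying Lemmas \ref{lem:0-stable:exceptional} and \ref{lem:Lambda(E)} to identify $\Lambda^\alpha(E_{ij})$ as line bundles on the exceptional curves, before concluding with Proposition \ref{prop:Phi-alpha} (5).
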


\begin{proof}
We may assume that
${\cal C}=\Per(X/Y,{\bf b}_1,...,{\bf b}_n)$.
We set 
\begin{equation}
u_{ij}:=
\begin{cases}
v({\cal O}_{C_{ij}}(-1)[1]), &  j>0,\\
v({\cal O}_{Z_i}), & j=0.
\end{cases}
\end{equation} 
By the theory of affine Lie algebras,
there is an element $w \in W(\widehat{\frak g})$ such that 
\begin{equation}
\begin{split}
& w(\{\beta \in T \otimes {\Bbb R}| -\langle e^\beta,v_{ij} \rangle>0,
i,j \geq 0\})\\
=&\{\beta \in T \otimes {\Bbb R}| -\langle e^\beta,u_{ij} \rangle>0,
i,j \geq 0 \}.
\end{split}
\end{equation}
Then we have
$$
\{ w(v_{ij})|0 \leq j \leq s_i \}
=\{u_{ij} |0 \leq j \leq s_i \}
$$
for all $i$.
\begin{NB}
Since $w(v_{ij})$ is a $(-2)$-vector,
the half plane $P:=\{\gamma|-\langle e^\gamma,w(v_{ij}) \rangle \geq 0 \}$
determine $w(v_{ij})$:
We set $w(v_{ij})=u+a \varrho_X$, $u \in T \otimes {\Bbb Q}$.
Then $(u,\gamma) \leq a$ for $\gamma \in P$.
Since the boundary is a hyperplane,
${\Bbb Q}u$ is determined by $P$.
Since $\langle u^2 \rangle=-2$,
$\pm u$ is determined by $P$.
Since $(-u,\gamma) \leq a$ implies $(u,\gamma) \geq a$,
$u$ itself is determined by $P$. 
\end{NB}

\begin{NB}
\begin{equation}
\begin{split}
w(e^\beta)=& w(1+\beta+\frac{(\beta^2)}{2}\varrho_X)\\
=& 1+w(\beta)+\frac{(w(\beta)^2)}{2}\varrho_X
=e^{w(\beta)}.
\end{split}
\end{equation}
\end{NB}
For each $i$, there is an integer $j_i$ such that
(1) $c_1(w(v_{i j_i}))$ is effective and 
(2) $-c_1(w(v_{i j}))$, $j \ne j_i$ are effective.
By Lemma \ref{lem:weyl-chamber},
we have $w=e^D \phi(\alpha)$, $D, \alpha \in T$.
Since $v(\Lambda^\alpha(E_{ij})\otimes{\cal O}_X(D))=
e^D v(\Lambda^\alpha(E_{ij}))=e^D \phi(\alpha)(v_{ij})$,
Proposition \ref{prop:Phi-alpha} (2) implies that
$-(\alpha,c_1(E_{ij}))>0$ unless $j =j_i$.
By Lemma \ref{lem:0-stable:exceptional} and
Lemma \ref{lem:Lambda(E)},
$\Lambda^\alpha(E_{ij})[-1]$, $j \ne j_i$ 
is a line bundle on a smooth rational curve
and $\Lambda^\alpha(E_{ij_i})$ is a line bundle on $Z_i$.
Thus 
\begin{equation}
\begin{split}
\{ \Lambda^\alpha(E_{ij})\otimes{\cal O}_X(D)|j \ne j_i \}
=& \{{\cal O}_{C_{ij}}(-1)[1] |0< j \leq s_i \},\\
\Lambda^\alpha(E_{i j_i})\otimes{\cal O}_X(D)= & {\cal O}_{Z_i}.
\end{split}
\end{equation}
By Proposition \ref{prop:Phi-alpha} (5), we get 
$\Lambda^\alpha({\cal C})\otimes{\cal O}_X(D) \cong ^{-1} \Per(X/Y)$. 
\end{proof}

\begin{NB}
Let $D$ be an effective divisor on $X$ such that
$\pi(D)$ is a point of $Y$.
Then $K_{X|D} \cong {\cal O}_D$, which implies that
$\omega_D \cong K_X(D)_{|D} \cong {\cal O}_D(D)$..
Since $R^1 \pi_*({\cal O}_X)=0$,
we have 
$H^0(\omega_D) \cong H^1({\cal O}_D)^{\vee}=0$.
Therefore $H^0({\cal O}_X) \to H^0({\cal O}_X(D))$
is an isomorphism. 
\end{NB}

\begin{rem}
For the derived category of coherent twisted sheaves,
we also see that the equivalence classes of
$\Per(X/Y,\{L_{ij} \})$ does not depend on the choice of $\{L_{ij} \}$.
\end{rem}

\begin{prop}\label{prop:category-generator-exist}
We set $v=(r,\xi,a) \in H^{ev}(X,{\Bbb Z})_{\alg}$, $r>0$.
Assume that $(\xi,D) \not \in r{\Bbb Z}$
for all $D \in T$ with $(D^2)=-2$.
Then there is a category of perverse coherent sheaves
${\cal C}_v$ and a locally free sheaf $G$ on $X$ such that
$G$ is a local projective generator of ${\cal C}_v$
with $v(G)=2v$. 
We also have a local projective generator $G'$ of ${\cal C}_v$
such that $G'$ is $\mu$-stable with respect to $H$ and
$v(G')=v-b \varrho_X$, $b \gg 0$.
\end{prop}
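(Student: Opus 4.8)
The plan is to produce the two generators in a definite order: first a $\mu$-stable locally free sheaf $G'$ of rank $r$, which will \emph{define} the category $\mathcal{C}_v$, and then, with the category already in hand, a local projective generator $G$ of rank $2r$ obtained from Corollary \ref{cor:generator-exist}. The role of the numerical hypothesis is to supply condition (ii) of Corollary \ref{cor:characterize}: for a locally free sheaf of rank $r$ and $c_1=\xi$, the equivalent form of (ii) there is exactly $r\nmid(\xi,D)$ for every $D$ with $(D,H)=0$ and $(D^2)=-2$, i.e. for every $(-2)$-class $D\in T$ (since $(D,H)=0$ forces $D$ to lie in the span of the exceptional curves). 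So the only genuinely analytic input is to build a sheaf with the prescribed invariants satisfying $R^1\pi_*(G'^\vee\otimes G')=0$ together with $\mu$-stability; Corollary \ref{cor:characterize} then turns it into a tilting generator for free, and I set $\mathcal{C}_v:=\mathcal{C}_{G'}$.

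To build $G'$ I would fix $b\gg 0$ and start from a locally free sheaf of rank $r$, $c_1=\xi$ and $\chi=a-b+r$, so that its Mukai vector is $v-b\varrho_X$. Following the deformation argument already used in Proposition \ref{prop:C(G)}(1) and Proposition \ref{prop:generator-exist}(1), I would first invoke Lemma \ref{lem:generic-vanishing} to pass to a torsion-free $E$ with $\Ext^2(E,E(-Z-H))_0=0$, so that the restriction map $\Def(X,E)\to\Def(Z,E_{|Z})\times\Def(H,E_{|H})$ is submersive. Then Lemma \ref{lem:local-deform}, together with Lemma \ref{lem:RDP:irred-C}, lets me deform $E_{|Z}$ so that each $E_{|C_{ij}}$ becomes balanced, namely $\mathcal{O}_{C_{ij}}(d_{ij})^{\oplus}\oplus\mathcal{O}_{C_{ij}}(d_{ij}+1)^{\oplus}$, and simultaneously deform $E_{|H}$ to a $\mu$-stable bundle; taking the double dual produces a locally free $G'$. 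Balancedness forces $(G'^\vee\otimes G')_{|C_{ij}}$ to be a sum of copies of $\mathcal{O}(-1),\mathcal{O},\mathcal{O}(1)$, whence $R^1\pi_*(G'^\vee\otimes G')=0$ by Lemma \ref{lem:tilting:TFF} (as in the proof of Lemma \ref{lem:tilting:pull-back}), while $\mu$-stability survives because it is an open condition and $c_2$ is large. By the hypothesis, condition (ii) of Corollary \ref{cor:characterize} holds, so $G'$ is a tilting generator of $\mathcal{C}_v:=\mathcal{C}_{G'}$, and by construction it is $\mu$-stable with $v(G')=v-b\varrho_X$.

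For the rank-$2r$ generator I would run Corollary \ref{cor:generator-exist} on $\mathcal{C}_v$. Since $G'$ is a local projective generator, Lemma \ref{lem:tilting:chi(G,E)} gives $\chi(G',E_{ij})>0$ for every irreducible object $E_{ij}$ ($1\le i\le n$, $0\le j\le s_i$) of $\mathcal{C}_v$. Choosing $E\in K(X)$ with $v(E)=v$, the difference $v(E)-v(G')=b\varrho_X$ pairs trivially with each $v(E_{ij})$, because every such $E_{ij}$ is supported on a fibre and hence has $\rk v(E_{ij})=0$; thus $\chi(E,E_{ij})=\chi(G',E_{ij})>0$ for all $i,j$. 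Corollary \ref{cor:generator-exist} then yields a local projective generator $G$ of the \emph{same} category $\mathcal{C}_v$ with $\tau(G)=2\tau(E)$, and computing $\gamma$ gives $\tau(G)=(2r,2\xi,2a+2r)$, i.e. $v(G)=2v(E)=2v$, as required.

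The main obstacle is the construction in the second paragraph: arranging at once $\mu$-stability with respect to $H$ and the balanced restriction to every exceptional curve (equivalently $R^1\pi_*(G'^\vee\otimes G')=0$), while keeping $\rk$, $c_1$ and $\chi$ prescribed. This is precisely where the three deformation lemmas must be combined; once $G'$ exists, the identification of $\mathcal{C}_v$ through Corollary \ref{cor:characterize} and the passage to rank $2r$ through Corollary \ref{cor:generator-exist} are formal. I would also note that the hypothesis is used only through condition (ii): it guarantees that $(\xi,C_{ij})/r\notin\mathbb{Z}$, so that the balanced restriction $G'_{|C_{ij}}$ is genuinely split between two consecutive degrees, and, more globally, that no $(-2)$-root of $T$ produces a properly semistable obstruction.
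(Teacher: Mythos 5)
Your overall architecture (build one generator, read off the category from it via Corollary \ref{cor:characterize}, then get the rank-$2r$ generator from Corollary \ref{cor:generator-exist}) is reasonable, and your last paragraph — the reduction of $\chi(E,E_{ij})$ to $\chi(G',E_{ij})$ using $\rk E_{ij}=0$ — is fine. But the construction of $G'$ in your second paragraph has a genuine gap, and it is exactly the point the paper's proof is designed to circumvent.

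The gap is the implication ``each $G'_{|C_{ij}}$ is balanced between two consecutive degrees $\Rightarrow R^1\pi_*(G'^{\vee}\otimes G')=0$.'' By Lemma \ref{lem:tilting:TFF}~(3) the vanishing of $R^1\pi_*$ is equivalent to $H^1$-vanishing on the whole scheme-theoretic fibre $\pi^{-1}(y)$, not on its irreducible components separately. When the fibre is reducible, $(G'^{\vee}\otimes G')_{|C_{ij}}$ contains $\mathcal{O}_{C_{ij}}(-1)$-summands on each component, and these can glue to produce a nonzero $H^1$ on the fundamental cycle; componentwise balancedness says nothing about this. This is precisely why Lemma \ref{lem:local-deform}~(1) demands the \emph{global} condition $H^1(Z_i,E^{\vee}\otimes A_{p_i})=0$ and achieves it only through the explicit construction of $K$ with $\rk K=\deg(E_{|Z_i})\le r$ — i.e.\ only under the alcove inequalities \eqref{eq:local-projective:condition}. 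For a general $v$ satisfying the hypothesis of the proposition these inequalities fail for every choice of the $d_{ij}$ (the residues $(\xi,C_{ij})\bmod r$ need not sum, with multiplicities $a_{ij}$, to at most $r$), so Proposition \ref{prop:generator-exist} and the deformation lemmas simply do not apply to your rank-$r$ sheaf, and you have no argument that a locally free sheaf with the prescribed $(r,\xi)$ and $R^1\pi_*(\mathcal{E}nd)=0$ exists.

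The paper's route is different and supplies exactly the missing step: the hypothesis $(\xi,D)\notin r\mathbb{Z}$ for all $(-2)$-classes $D\in T$ guarantees that $v/r$ lies in the interior of some alcove for the affine Weyl group action of Section \ref{subsect:0-dim:FM}; one then writes $v=e^{D}\phi(\alpha)(v_f)$ with $v_f/r$ in the \emph{fundamental} alcove, where \eqref{eq:local-projective:condition} holds with strict inequalities and Proposition \ref{prop:generator-exist} produces $G_f$ with $v(G_f)=2v_f$ (resp.\ a $\mu$-stable $G_f'$). The category and the generators are then transported back by the autoequivalence $(\;\cdot\;)\otimes\mathcal{O}_X(D)\circ\Lambda^{\alpha}$, which preserves local projectivity by Proposition \ref{prop:Phi-alpha}. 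Note that the resulting $\mathcal{C}_v=\Lambda^{\alpha}(\mathcal{C})\otimes\mathcal{O}_X(D)$ is in general not of the form $\Per(X/Y,\{L_{ij}\})$: its irreducible objects $\Lambda^{\alpha}(E_{ij})(D)$ may be higher-rank sheaves on the exceptional curves, so there is no choice of line bundles $L_{ij}$ for which your direct construction could be expected to work. If you want to keep a direct construction, you would have to prove the $H^1$-vanishing on the full fundamental cycle for some bundle of rank $r$ and determinant $\xi_{|Z_i}$ and then invoke the irreducibility of Lemma \ref{lem:RDP:irred-C}; that existence statement is essentially equivalent to the Weyl-group reduction.
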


\begin{proof}
We set ${\cal C}=\Per(X/Y,{\bf b}_1,...,{\bf b}_n)$ 
and keep the notation as above.
By our assumption,
$\langle v,u \rangle \not \in r {\Bbb Z}$
for all $(-2)$-vectors $u \in L$.
Then there is $w \in W$ such that
$v=w(v_f)$ and 
$v_f/r$ belongs to the fundamental alcove, that is,
$-\langle v_f/r,v_{ij} \rangle>0$ for all $i,j$. 
\begin{NB}
We write $\frac{v_f}{\rk v_f}=\frac{u_0}{\rk u_0}+u+b \varrho_X$,
$u \in \oplus_{i,j >0}{\Bbb Q}v_{ij}$.
Then $-\langle u,v_{ij} \rangle>0,
\sum_{j=1}^{s_i}-a_{ij} \langle u,v_{ij} \rangle<1$. 
\end{NB}
By Lemma \ref{lem:weyl-chamber}, we have an element $\alpha$ such that
$w=e^{D}\phi(\alpha)$, $D \in T$.
By Proposition \ref{prop:generator-exist},
there is a local projective generator $G_f$ of
${\cal C}$
such that $v(G_f)=2v_f$. 
We set ${\cal C}_v:=
\Lambda^\alpha({\cal C}) 
\otimes {\cal O}_X(D)$.
Then $G^\alpha:=\Lambda^\alpha(G_f)$ is a local
projective generator of ${\cal C}_v \otimes {\cal O}_X(-D)$.
Hence $G:=G^\alpha(D)$ is a local projective generator of ${\cal C}_v$
such that $v(G)=2v$.
\end{proof}

\begin{NB}
\begin{rem}
Assume that there is a $\beta$ such that ${\Bbb C}_x$ is $\beta$-stable
for every $x \in X$. Then we can choose ${\cal E}^\alpha$
so that $c_1(\Phi_{X \to X}^{{\cal E}^\alpha}(v))=c_1(v)$.
\end{rem}
\end{NB}

\subsection{Deformation of a local projective generator.}

Let $f:({\cal X},{\cal L}) \to S$ be a flat family of
polarized surfaces over $S$.
For a point $s_0 \in S$, we set $X:={\cal X}_{s_0}$.
Let ${\cal H}$ be a relative Cartier divisor on $X$
such that $H:={\cal H}_{s_0}$ gives a contraction
$f:X \to Y$ to a normal surface $Y$ with
${\bf R} \pi_*({\cal O}_X)={\cal O}_Y$.
We shall construct a family of contractions
$f:{\cal X} \to {\cal Y}$ over a neighborhood of $s_0$.

Replacing $H$ by $mH$, we may assume that
$H^i(X,{\cal O}_X(mH))=H^i(Y,{\cal O}_Y(mH))=0$
for $m> 0$.
We shall find an open neighborhood $S_0$ of $s_0$ such that
$R^i f_*({\cal O}_{{\cal X}_{S_0}}(m{\cal H}))=0$, $i>0,m>0$
and $f_*({\cal O}_{{\cal X}_{S_0}}(m{\cal H}))$ is locally free:
We consider the exact sequence  
\begin{equation}
0 \to {\cal O}_{{\cal X}}(m{\cal H}) \to
{\cal O}_{{\cal X}}((m+1){\cal H}) \to {\cal O}_{{\cal H}}((m+1){\cal H})
\to 0.
\end{equation}
Since ${\cal H} \to S$ is a flat morphism,
the base change theorem implies that 
$R^i f_*({\cal O}_{{\cal X}}(m{\cal H})) \to
R^i f_*({\cal O}_{{\cal X}}((m+1){\cal H}))$ is surjective,
if $(m+1)(H^2)>(H^2)+(H,K_X)$.
We take an open neighborhood $S_0$ of $s_0$ such that
$R^i f_*({\cal O}_{{\cal X}_{S_0}}(m{\cal H}))=0$, 
$i>0, (H,K_X)/(H^2) \geq m>0$.
Then the claim holds. 
\begin{NB}
Let $R$ be the stalk of ${\cal O}_S$ at $s_0$
and set ${\cal X}_R:={\cal X} \times_S \Spec(R)$.
By the base change theorem,
$R^i f_*({\cal O}_{{\cal X}_R}(m{\cal H}))=0$, $i>0,m>0$
and $f_*({\cal O}_{{\cal X}_R}(m{\cal H}))$ is a free $R$-module
with $\phi_*({\cal O}_{{\cal X}_R}(m{\cal H})) \otimes_R k(s_0)
\cong H^0(X,{\cal O}_X(mH))$.
\end{NB}
We replace $S$ by $S_0$ and set
${\cal Y}:=\Proj(\oplus_m f_*({\cal O}_{{\cal X}}(m{\cal H})))$.
Then ${\cal Y}$ is flat over $S$ and
${\cal Y}_{s_0} \cong Y$.
By the construction,
${\cal Y} \to S$ is a flat family of normal surfaces.

Let ${\cal Z}:=\{x \in {\cal X}|\dim \pi^{-1}( \pi(x)) \geq 1 \}$ 
be the exceptional locus.
Then $\{({\cal Z}_s,{\cal L}_s)| s \in S \}$ is a bounded set.
Hence ${\cal D}:=\{ D \in \NS({\cal X}_s)| s \in S, (D,{\cal H}_s)=0 \}$
is a finite set.
Replacing $S$ by an open neighborhood of $s_0$,
we may assume that $D \in {\cal D}$ is a deformation of
$D_0 \in \NS(X)$ (i.e., $D$ belongs to $\NS(X)$
via the identification $H^2({\cal X}_s,{\Bbb Z}) \cong
H^2(X,{\Bbb Z})$).

\begin{NB}
Let ${\frak H}$ be a connected component
of $\Hilb_{{\cal X}/S}^d:=\{D \subset {\cal X}_s|(D,{\cal L}_s)=d \}$.
Then the image of ${\frak H} \to S$ is a closed subset of $S$.
So we remove all closed subset of $S$ which do not
contain $s_0$. 
\end{NB}

\begin{lem}
Assume that there is a locally free sheaf $G$ on ${\cal X}$
such that $R^1 \pi_*(G^{\vee} \otimes G)=0$ and 
$\rk G \nmid (c_1(G)_{s_0},D)$ for all
$(-2)$-curves with $(D,{\cal H}_{s_0})=0$.
Then replacing $S$ by an open neighborhood of $s_0$,
we may assume that $\rk G \nmid (c_1(G)_{s},D)$ for all
$(-2)$-curves with $(D,{\cal H}_s)=0$.   
Thus $G$ is a family of tilting generators.
\end{lem}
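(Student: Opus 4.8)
The plan is to reduce the statement to Corollary \ref{cor:characterize} applied fibrewise, verifying its two hypotheses (i) and (ii) for every fibre ${\cal X}_s \to {\cal Y}_s$ with $s$ near $s_0$. The essential point is that the integer $(c_1(G)_s,D)$ entering the divisibility condition is a topological invariant, hence locally constant in the family, so that the hypothesis at $s_0$ spreads out to a neighbourhood. After possibly shrinking $S$ I may assume, by the construction of ${\cal X}\to{\cal Y}$, that each ${\cal X}_s\to{\cal Y}_s$ is the minimal resolution of a normal surface with rational double points, so that Corollary \ref{cor:characterize} is indeed applicable on each fibre.

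For condition (i), I would show $R^1\pi_{s*}(G_s^{\vee}\otimes G_s)=0$ for all $s$ in a neighbourhood of $s_0$. Since $\pi:{\cal X}\to{\cal Y}$ has fibres of dimension $\le 1$ we have $R^2\pi_*=0$, so ${\bf R}\pi_*(G^{\vee}\otimes G)$ is the sheaf $\pi_*(G^{\vee}\otimes G)$ placed in degree $0$. As $G^{\vee}\otimes G$ is locally free, hence flat over $S$, the base-change square for ${\cal Y}_s\hookrightarrow{\cal Y}$ is Tor-independent, and derived base change gives ${\bf R}\pi_{s*}(G_s^{\vee}\otimes G_s)\cong{\bf L}i^*(\pi_*(G^{\vee}\otimes G))$, a complex with cohomology in non-positive degrees; its degree-$1$ cohomology therefore vanishes, i.e. $R^1\pi_{s*}(G_s^{\vee}\otimes G_s)=0$. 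Thus (i) holds on every fibre near $s_0$.

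For condition (ii), I would use the finiteness of ${\cal D}$ together with the topological identification already set up. By the equivalent form of (ii) in Corollary \ref{cor:characterize}, I must check $\rk G \nmid (c_1(G)_s,D)$ for every class $D\in\NS({\cal X}_s)$ with $(D,{\cal H}_s)=0$ and $(D^2)=-2$; these classes are finitely many and all lie in ${\cal D}$. After the shrinking performed above, every element of ${\cal D}$ is the image of a fixed class $D_0\in\NS(X)$ under the parallel-transport isomorphism $H^2({\cal X}_s,{\Bbb Z})\cong H^2(X,{\Bbb Z})$, which preserves the cup-product pairing. Since $c_1(G)\in H^2({\cal X},{\Bbb Z})$ restricts to $c_1(G)_s$ on each fibre, this gives $(c_1(G)_s,D)=(c_1(G)_{s_0},D_0)$, while $(D,{\cal H}_s)=0$ and $(D^2)=-2$ correspond to $(D_0,H)=0$ and $(D_0^2)=-2$. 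Hence $\rk G \nmid (c_1(G)_s,D)$ holds for all such $D$ precisely because it holds at $s_0$, which is the hypothesis. Combining (i) and (ii), Corollary \ref{cor:characterize} shows that $G_s$ is a tilting generator of ${\cal C}_{G_s}$ for all $s$ near $s_0$, i.e. $G$ is a family of tilting generators.

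The main obstacle is the control of condition (ii): one must rule out the appearance, in fibres near $s_0$, of a $(-2)$-curve orthogonal to ${\cal H}_s$ whose class is not a deformation of one already present on $X$. This is exactly what the finiteness of ${\cal D}$ and the preliminary identification of each of its elements with a class in $\NS(X)$ are designed to secure; granting that, the topological invariance of the intersection numbers reduces the whole matter to the single divisibility assumed at $s_0$.
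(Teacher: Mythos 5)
Your proof is correct and follows exactly the route the paper intends: the lemma is stated without proof, its justification being the immediately preceding paragraph (finiteness of ${\cal D}$ and, after shrinking $S$, the identification of each of its elements with a class $D_0 \in \NS(X)$ compatibly with the intersection pairing), combined with Corollary \ref{cor:characterize} applied fibrewise. The only detail you supply beyond what the paper makes explicit is the derived base-change argument giving $R^1\pi_{s*}(G_s^{\vee}\otimes G_s)=0$ on each fibre from the relative vanishing on the total space, and that argument is valid since ${\cal X}$ and ${\cal Y}$ are flat over $S$ and $\dim\pi^{-1}(y)\le 1$.
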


As an example, we consider a family of $K3$ surfaces.
Let $X$ be a $K3$ surface and $\pi:X \to Y$ a contraction.
Let $p_i$, $i=1,2,...,n$ be the singular points and
$Z_i:=\sum_j a_{ij} C_{ij}$ their fundamental cycles. 
Let $H$ be the pull-back of an ample divisor on $Y$.
Assume that $(r,\xi) \in {\Bbb Z}_{>0} \times \NS(X)$
satisfies $r \nmid (\xi,D)$ for all $(-2)$-curves $D$ with
$(D,H)=0$.
By Proposition \ref{prop:category-generator-exist},
 there is a category of perverse coherent sheaves ${\cal C}$
and a local projective generator $G$ of ${\cal C}$
such that $G$ is $\mu$-stable with respect to $H$ and
$(\rk G,c_1(G))=(r,\xi)$. 
\begin{NB}
Old version:
We fix coherent sheaves
${\cal O}_{C_{ij}}(b_{ij})$ on $X$.
We consider the tilting with respect to these sheaves.
Let $v=(r,\xi,a)$ be a Mukai vector such that 
\begin{equation}\label{eq:generator-cond}
\langle v^2 \rangle \geq -2,\;
0<(\xi,C_{ij})-r(b_{ij}+1),\;
\sum_j a_{ij}(\xi,C_{ij})-r \sum_j a_{ij}(b_{ij}+1)<r.
\end{equation}
Then there is a local projective generator $G$ of 
the tilted category $\Per(X/Y,\{{\cal O}_{C_{ij}}(b_{ij})\})$
such that $\rk G=r$ and
$c_1(G)=\xi$.
We may assume that $G$ is $\mu$-stable with respect to $H$.
\end{NB}
Replacing $G$ by $G \otimes L^{\otimes m}$,
$L \in \Pic(X)$
and ${\cal C}$ by 
${\cal C} \otimes L^{\otimes m}$,
we assume that $\xi$ is ample.
If $({\Bbb Q}\xi+{\Bbb Q}H) \cap H^{\perp}$
does not contain a $(-2)$-curve, then
we have a deformation 
$({\cal X},{\cal L}) \to S$ 
of $(X,\xi)$ such that
${\cal H}_s$ is ample for a general $s \in S$.
Since $G$ is simple, replacing $S$ by a smooth covering $S' \to S$,
we also have a deformation ${\cal G}$ of $G$ over $S$.
By shrinking $S$, we may assume that
${\cal G}$ is a family of tilting generators.  
Then we can construct a family of moduli spaces
$f:\overline{M}_{({\cal X},{\cal H})/S}^{\cal G}(v) \to S$
of ${\cal G}_s$-twisted semi-stable objects on ${\cal X}_s$, $s \in S$
(for the twisted cases, see Step 3, 4 of the proof of 
\cite[Thm. 3.16]{Y:twisted}).
By our assumption, a general fiber of $f$ is the moduli space
of ${\cal G}_s$-twisted semi-stable sheaves,
which is non-empty by Lemma \ref{lem:appendix:existence}.
Hence we get the following lemma.
\begin{lem}\label{lem:surjective}
Assume that $v$ is primitive and $\langle v^2 \rangle \geq -2$.
Then $f$ is surjective.
In particular,
$\overline{M}_{({\cal X},{\cal H})/S}^{\cal G}(v)_{s_0} \ne \emptyset$. 
\end{lem}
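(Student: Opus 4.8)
The statement to prove is Lemma \ref{lem:surjective}: that when $v$ is primitive with $\langle v^2 \rangle \geq -2$, the family morphism $f:\overline{M}_{({\cal X},{\cal H})/S}^{\cal G}(v) \to S$ is surjective, and in particular the central fiber over $s_0$ is non-empty.

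The plan is to argue by properness together with non-emptiness of the generic fiber. First I would recall that $f$ is a projective morphism: the relative moduli space $\overline{M}_{({\cal X},{\cal H})/S}^{\cal G}(v)$ was constructed as a projective scheme over $S$ via Simpson's GIT construction (the relative version in subsection \ref{subsect:family}, using the Morita equivalence to reduce to moduli of ${\cal A}$-modules, and Langton-type properness). Hence the image $f(\overline{M}_{({\cal X},{\cal H})/S}^{\cal G}(v))$ is a closed subset of $S$. Therefore to prove surjectivity onto a neighborhood of $s_0$ it suffices to show that $f$ is dominant, i.e.\ that the image contains the generic point, and then use closedness to conclude the image is all of $S$ (after possibly shrinking $S$ to an irreducible neighborhood of $s_0$). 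In particular closedness forces $s_0$ itself to be in the image, giving the final ``in particular'' clause.

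Next I would identify the generic fiber. By the deformation construction preceding the lemma, for a general $s \in S$ the polarization ${\cal H}_s$ is ample and the category ${\cal C}_s$ of perverse coherent sheaves coincides with $\Coh({\cal X}_s)$, so that ${\cal G}_s$-twisted semistable objects are exactly ${\cal G}_s$-twisted semistable sheaves in the usual sense on the $K3$ surface ${\cal X}_s$. Thus the generic fiber of $f$ is the moduli space of ${\cal G}_s$-twisted semistable sheaves with Mukai vector $v$. The key input is then Lemma \ref{lem:appendix:existence} (existence of twisted semistable sheaves), which guarantees this moduli space is non-empty precisely because $v$ is primitive with $\langle v^2 \rangle \geq -2$. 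This shows $f$ is dominant.

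The main obstacle, and the step requiring the most care, is ensuring that the generic fiber really is a moduli of honest sheaves rather than of genuinely perverse objects, and that the twisted existence result applies in the family. Concretely I must verify that for general $s$ the hypotheses making ${\cal C}_s = \Coh({\cal X}_s)$ hold — this is where the assumption that $({\Bbb Q}\xi + {\Bbb Q}H) \cap H^{\perp}$ contains no $(-2)$-curve enters, since it forces ${\cal H}_s$ to be ample away from $s_0$ and hence the exceptional locus to disappear generically. I would also need to confirm that ${\cal G}$ remains a family of tilting generators over the shrunken base (already arranged before the lemma) so that the relative moduli space is well-defined and its fibers are the expected ones. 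Granting these, surjectivity follows formally from properness plus generic non-emptiness, and the emptiness question at $s_0$ is resolved by the closedness of the image.
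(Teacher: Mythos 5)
Your argument is correct and is essentially the paper's own: the relative moduli space is projective over $S$ (so $f$ has closed image), and for general $s$ the polarization ${\cal H}_s$ is ample so the fiber is the ordinary moduli of ${\cal G}_s$-twisted semi-stable sheaves, which is non-empty by Lemma \ref{lem:appendix:existence} since $v$ is primitive with $\langle v^2 \rangle \geq -2$. Closedness of the image then forces surjectivity and in particular non-emptiness of the fiber over $s_0$, exactly as in the paper.
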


\begin{rem}\label{rem:assumption-surj}
We note that
$R:=\{C \in \NS(X)|(C,H)=0, (C^2)=-2 \}$ is a finite set.
If $\rho(X) \geq 3$, then
$\cup_{C \in R} ({\Bbb Q}H+{\Bbb Q}C)$ is a proper subset
of $\NS(X) \otimes {\Bbb Q}$.
Hence $({\Bbb Q}\xi+{\Bbb Q}H) \cap R =\emptyset$
for a general $\xi$.
In general, we have a deformation
$({\cal X},{\cal L}) \to S$ of $(X,\xi)$
such that
${\cal G}$ is a family of tilting generators and 
$\rho({\cal X}_s) \geq 3$ for infinitely many points $s \in S$.
\end{rem}

\begin{rem}
By the usual deformation theory of objects, we note that 
$M_{({\cal X},{\cal H})/S}^{\cal G}(v) \to S$ is a smooth morphism.
If $\overline{M}_{({\cal X},{\cal H})/S}^{\cal G}(v)_{s_0}=
M_{({\cal X},{\cal H})/S}^{\cal G}(v)_{s_0}$, then
we have a smooth deformation
$\overline{M}_{({\cal X},{\cal H})/S}^{\cal G}(v) \to S$
of $\overline{M}_{({\cal X},{\cal H})/S}^{\cal G}(v)_{s_0}$.
In particular, $\overline{M}_{({\cal X},{\cal H})/S}^{\cal G}(v)_{s_0}$
deforms to a usual moduli of semi-stable sheaves. 
\end{rem}

\begin{cor}\label{cor:K3-non-empty}
Let $v_0=(r,\xi,a)$ be a primitive isotropic Mukai vector
such that $r \not|(\xi,D)$ for all $(-2)$-curves $D$
with $(D,H)=0$.
Let ${\cal C}$ be the category 
in Proposition \ref{prop:category-generator-exist}.
Then $M_H^{v_0}(v_0) \ne \emptyset$.
\end{cor}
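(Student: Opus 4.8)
The plan is to deduce this from the family/deformation construction summarized in Lemma \ref{lem:surjective}, combined with the fact that a primitive isotropic Mukai vector admits no proper destabilizing decomposition under the genericity hypothesis $r\nmid(\xi,D)$. First I would invoke Proposition \ref{prop:category-generator-exist}: since $v_0=(r,\xi,a)$ is primitive with $r\nmid(\xi,D)$ for every $D\in T:=\bigoplus_{i,j}{\Bbb Z}C_{ij}$ having $(D^2)=-2$ and $(D,H)=0$, there is a category ${\cal C}={\cal C}_{v_0}$ of perverse coherent sheaves on $X$ together with a local projective generator $G$ of ${\cal C}$ which is $\mu$-stable with respect to $H$ and satisfies $(\rk G,c_1(G))=(r,\xi)$; one may also take $\tau(G)\in{\Bbb Z}v_0+{\Bbb Z}\varrho_X$ so that $G$-twisted stability is the $v_0$-twisted stability defining $M_H^{v_0}(v_0)$. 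Note that $r\nmid(\xi,D)$ is precisely the tilting-generator criterion (ii) of Corollary \ref{cor:characterize}. With $G$ fixed, $M_H^{v_0}(v_0)$ is the open stable locus inside $\overline{M}_H^G(v_0)$, the moduli of $G$-twisted semi-stable objects $E\in{\cal C}$ with $\tau(E)=v_0$, so it suffices to prove (i) $\overline{M}_H^G(v_0)\neq\emptyset$ and (ii) $\overline{M}_H^G(v_0)=M_H^{v_0}(v_0)$.

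For (i) I would run the deformation of $(X,\xi)$ and of $G$ described in the paragraph preceding Lemma \ref{lem:surjective}. After twisting $G$ so that $\xi$ is ample, and using Remark \ref{rem:assumption-surj} (passing if necessary to a preliminary deformation with $\rho\geq 3$ so that $({\Bbb Q}\xi+{\Bbb Q}H)\cap R=\emptyset$, where $R$ is the finite set of $(-2)$-classes orthogonal to $H$), one obtains a family $({\cal X},{\cal L})\to S$ with $X={\cal X}_{s_0}$, a family ${\cal G}$ of tilting generators deforming $G$, and a relative moduli space $f:\overline{M}_{({\cal X},{\cal H})/S}^{\cal G}(v_0)\to S$ whose fiber over $s_0$ is exactly $\overline{M}_H^G(v_0)$. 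For general $s\in S$ the divisor ${\cal H}_s$ is ample, hence ${\cal C}_s=\Coh({\cal X}_s)$ and the fiber is the ordinary moduli of ${\cal G}_s$-twisted semi-stable sheaves, which is non-empty by Lemma \ref{lem:appendix:existence}. Since $v_0$ is primitive and $\langle v_0^2\rangle=0\geq -2$, Lemma \ref{lem:surjective} applies and shows $f$ is surjective; in particular $\overline{M}_H^G(v_0)=\overline{M}_{({\cal X},{\cal H})/S}^{\cal G}(v_0)_{s_0}\neq\emptyset$.

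For (ii) I would show that no object with $\tau(E)=v_0$ is properly semi-stable. Suppose $E$ is $S$-equivalent to $\bigoplus_i E_i$ with the $E_i$ being $G$-twisted stable with the same reduced twisted Hilbert polynomial. Equality of the reduced Hilbert polynomials together with $\tau(G)\in{\Bbb Z}v_0+{\Bbb Z}\varrho_X$ confines each $\tau(E_i)$ to the rank-two lattice spanned by $v_0$ and $\varrho_X$ modulo exceptional-curve contributions; the genericity $r\nmid(\xi,D)$ (equivalently, that $G$ is general, so $0\notin\cup_{E'}W_{E'}$) then excludes those exceptional contributions, forcing each $\tau(E_i)$ to be proportional to $v_0$. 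Primitivity of $v_0$ leaves only a single summand, so $E$ is stable and $\overline{M}_H^G(v_0)=M_H^{v_0}(v_0)$.

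I expect the genuine work to lie in step (ii) and in the bookkeeping of which generator to use: step (i) is essentially Lemma \ref{lem:surjective}, but reconciling the $\mu$-stable generator used in the family with the self-twisting $\tau(G)\in{\Bbb Z}v_0$ defining $M_H^{v_0}$, and then converting the arithmetic hypothesis $r\nmid(\xi,D)$ into the genuine absence of proper decompositions of the primitive isotropic $v_0$ via the wall/chamber description of $\overline{M}_H^G$, is the step where care is required. The hard part is thus the semi-stable $=$ stable identification, not the existence of a single semi-stable object.
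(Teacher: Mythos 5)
Your step (i) is essentially the paper's first step: Lemma \ref{lem:surjective} together with Remark \ref{rem:assumption-surj} gives $\overline{M}_H^{v_0}(v_0)\ne\emptyset$. The problem is step (ii). The hypothesis $r\nmid(\xi,D)$ for $(-2)$-curves $D$ with $(D,H)=0$ is exactly the tilting-generator criterion of Corollary \ref{cor:characterize}(ii): it rules out \emph{rank-zero} destabilizers ($G$-twisted stable sheaves $E$ with $\rk E=0$, $\chi(G,E)=0$, $(c_1(E),H)=0$, $(c_1(E)^2)=-2$), which is what is needed for ${\cal C}$ and $G$ to exist. It says nothing about the walls relevant to $\overline{M}_H^{v_0}(v_0)$, which are cut out by $(-2)$-vectors $u\in v_0^{\perp}\cap\delta(H)^{\perp}$ with $0<\rk u<r$. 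Such $u$ generically exist, and by Proposition \ref{prop:K3-exceptional-exist} each numerically irreducible one is realized by a $v_0$-twisted stable object; decompositions $v_0=\sum_j a_j v(E_{ij})$ with $\rk E_{ij}>0$ then produce genuinely properly $v_0$-twisted semi-stable objects. Indeed Theorem \ref{thm:K3-desing} is entirely about the singular points of $\overline{M}_H^{v_0}(v_0)$ arising this way, so your claimed identity $\overline{M}_H^{G}(v_0)=M_H^{v_0}(v_0)$ is false in general, and the "confinement to the rank-two lattice spanned by $v_0$ and $\varrho_X$" does not follow: equality of reduced twisted Hilbert polynomials only forces $v(E_i)\in v_0^{\perp}\cap\delta(H)^{\perp}$, a negative-definite-modulo-$v_0$ lattice of rank $\rho(X)$ that can contain many positive-rank $(-2)$-vectors.

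The missing idea is how to pass from a semi-stable object to a stable one when properly semi-stable objects may exist. The paper's route is: from $\overline{M}_H^{v_0}(v_0)\ne\emptyset$ deduce $\overline{M}_H^{v_0+\alpha}(v_0)\ne\emptyset$ for a general perturbation $\alpha$ (the argument of \cite[Lem. 2.17]{O-Y:1}); for such $\alpha$ the semi-stable and stable loci coincide and one gets a $K3$ surface $M_H^{v_0+\alpha}(v_0)$; finally, the argument of \cite[Prop. 2.11]{O-Y:1} shows that the properly $v_0$-semi-stable locus cannot exhaust this two-dimensional moduli space (it is the exceptional locus of the contraction $M_H^{v_0+\alpha}(v_0)\to\overline{M}_H^{v_0}(v_0)$, a finite union of configurations of rational curves), so a $v_0$-twisted \emph{stable} object exists. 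You need to replace your step (ii) with an argument of this kind; as written, the proof does not establish the corollary.
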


\begin{proof}
By Lemma \ref{lem:surjective} and Remark \ref{rem:assumption-surj},
we see that 
$\overline{M}_H^{v_0}(v_0) \ne \emptyset$.
By the same proof of \cite[Lem. 2.17]{O-Y:1},
we see that 
$\overline{M}_H^{v_0+\alpha}(v_0) \ne \emptyset$
for a general $\alpha$.
Then $\overline{M}_H^{v_0+\alpha}(v_0)$ is a $K3$ surface.
In the same way as in the proof of \cite[Prop. 2.11]{O-Y:1},
we see that $M_H^{v_0}(v_0) \ne \emptyset$.  
\end{proof}

\section{Fourier-Mukai transform on a $K3$ surface.}\label{sect:K3}

\subsection{Basic results on the moduli spaces of dimension 2.}

\begin{NB}
Let $E$ be a local projective generator
of $\Per(X/Y,{\bf b})$
which is $\mu$-stable.
We set 
\begin{equation}
\begin{split}
E_0:=&\mathrm{Cone}(E \otimes \Hom(E,A_0({\bf b})) \to
A_0({\bf b}))[1],\\ 
E_i:=&\mathrm{Cone}(E \otimes \Hom(E,{\cal O}_{C_i}(b_i)[1]) \to
{\cal O}_{C_i}(b_i))[1],\;i>0. 
\end{split}
\end{equation}
We note that $A_0({\bf b})$ and ${\cal O}_{C_i}(b_i)[1]$
are 0-dimensional.
Since $E$ is a local projective generator,
$E_i$, $i \geq 0$ are objects of $\Per(X/Y,{\bf b})$.
Then
$E_i$ are $E$-twisted stable objects.

Let $E'$ be a $E$-twisted stable subobject of 
$E_0$ such that
$\deg_E(E')=0$
Then we have a filtration
$0 \subset F_1 \subset F_2 \subset \cdots \subset F_t=E'$
such that $A_j:=F_j/F_{j-1}$ are subobject of $E$ with
$\deg_{E}(A_j)=0$.
We set $B_j:=E/A_j$.
Then $B_i$ is 0-dimensional and 
$\chi(E,A_j)= \chi(E,E)-\chi(E,B_j) \geq \chi(E,E)$.
Since $\Hom(E,B_j[1])=H^1(Y,\pi_*(E^{\vee} \otimes B_j))=0$,
we have an exact and commutative diagram:
\begin{equation}
\begin{CD}
0 @>>> A_1 @>>> E @>>> B_1 @>>> 0\\
@. @VVV @VVV @VVV @.\\
0 @>>> E_0 @>>> E \otimes \Hom(E,A_0({\bf b})) @>>> A_0({\bf b}) @>>> 0 
\end{CD}
\end{equation}
Since $\Hom(E,E_0)=0$, $E \to A_0({\bf b})$ is not zero.
Hence $\Hom(B_1,A_0({\bf b})) \ne 0$.
Since $A_0({\bf b})$ is a simple object,
we have a surjective morphism
$B_1 \to A_0({\bf b})$, which implies that
$\chi(E,B_1) \geq \chi(E, A_0({\bf b}))$.
Therefore $\chi(E,E') \leq t\chi(E,E)-\chi(E, A_0({\bf b}))$.
Then 
\begin{equation}
\begin{split}
\chi(E,E')/\rk E' \leq & (t\chi(E,E)-\chi(E,A_0({\bf b})))/t\rk E\\
 < & \chi(E,E)/\rk E-\chi(E,A_0({\bf b}))/\rk E_0=\chi(E,E_0)/\rk E_0
\end{split}
\end{equation}
if $t \rk E<\rk E_0$.
Hence $E_0$ is $E$-twisted stable.

Let $E'$ be a $E$-twisted stable subobject of 
$E_i$ such that
$\deg_E(E')=0$
Then we have a filtration
$0 \subset F_1 \subset F_2 \subset \cdots \subset F_t=E'$
such that $A_j:=F_j/F_{j-1}$ are subobject of $E$ with
$\deg_{E}(A_j)=0$.
We set $B_j:=E/A_j$.
Then $B_i$ is 0-dimensional and 
$\chi(E,A_j)= \chi(E,E)-\chi(E,B_j) \geq \chi(E,E)$.
Since $\Hom(E,B_j[1])=H^1(Y,\pi_*(E^{\vee} \otimes B_j))=0$,
we have an exact and commutative diagram:
\begin{equation}
\begin{CD}
0 @>>> A_1 @>>> E @>>> B_1 @>>> 0\\
@. @VVV @VVV @VVV @.\\
0 @>>> E_0 @>>> E \otimes \Hom(E,{\cal O}_{C_i}(b_i)[1]) 
@>>>{\cal O}_{C_i}(b_i)[1] @>>> 0 
\end{CD}
\end{equation}
Since $\Hom(E,E_i)=0$, $E \to {\cal O}_{C_i}(b_i)[1]$ is not zero.
Hence $\Hom(B_1,{\cal O}_{C_i}(b_i)[1]) \ne 0$.
Since ${\cal O}_{C_i}(b_i)[1]$ is a simple object,
we have a surjective morphism
$B_1 \to {\cal O}_{C_i}(b_i)[1]$, which implies that
$\chi(E,B_1) \geq \chi(E, {\cal O}_{C_i}(b_i)[1])$.
Therefore $\chi(E,E') \leq t\chi(E,E)-\chi(E, {\cal O}_{C_i}(b_i)[1])$.
Then 
\begin{equation}
\begin{split}
\chi(E,E')/\rk E' \leq & (t\chi(E,E)-\chi(E,{\cal O}_{C_i}(b_i)[1]))/t\rk E\\
 < & \chi(E,E)/\rk E-\chi(E,{\cal O}_{C_i}(b_i)[1])/\rk E_0
=\chi(E,E_i)/\rk E_i
\end{split}
\end{equation}
if $t \rk E<\rk E_i$.
Hence $E_i$ is $E$-twisted stable.

\end{NB}

Let $Y$ be a normal $K3$ surface and
$\pi:X \to Y$ the minimal resolution.
Let $p_1,p_2,\dots,p_n$ be the singular points of $Y$
and $Z_i:=\pi^{-1}(p_i)=\sum_{j=0}^{s_i}a_{ij}C_{ij}$
the fundamental cycle, where $C_{ij}$ are smooth rational curves
on $X$
and $a_{ij} \in {\Bbb Z}_{>0}$.
We shall study moduli of stable objects in the category ${\cal C}$
in Lemma \ref{lem:tilting} satisfying the following assumption.

\begin{assume}\label{ass:stability}
There is a $\beta \in \varrho_X^{\perp} \otimes {\Bbb Q}$
such that ${\Bbb C}_x$ is $\beta$-stable for all $x \in X$.
\end{assume}
By Proposition \ref{prop:0-dim:duality2},
there are 
${\bf b}_i:=(b_{i1},b_{i2},\dots,b_{is_i}) \in {\Bbb Z}^{\oplus s_i}$ 
and an autoequivalence
$\Phi_{X \to X}^{{\cal F}^{\vee}[2]}:{\bf D}(X) \to {\bf D}(X)$
such that
$\Phi_{X \to X}^{{\cal F}^{\vee}[2]}
(\Per(X/Y))={\cal C}$,
where $\Per(X/Y):=\Per(X/Y,{\bf b}_1,\dots,{\bf b}_n)$ 
and
${\cal F}$ is the family of
$\Phi_{X \to X}^{\cal F}(\beta)$-stable objects of $\Per(X/Y)$
in Proposition \ref{prop:0-dim:duality2}.  
We set

\begin{equation}
A_{ij}:=
\begin{cases}
\Phi_{X \to X}^{{\cal F}^{\vee}[2]}(A_0({\bf b}_i)), & j=0,\\
\Phi_{X \to X}^{{\cal F}^{\vee}[2]}({\cal O}_{C_{ij}}(b_{ij})[1]),& j>0.
\end{cases}
\end{equation}
Throughout this section,
we assume the following:

\begin{assume}\label{ass:v_0}
$v_0:=r_0+\xi_0+a_0 \varrho_X$,
$r_0>0,\xi_0 \in \NS(X)$ is a primitive isotropic Mukai vector
such that
$\langle v_0,v(A_{ij}) \rangle<0$ for all $i,j$.
\end{assume}
By Corollary \ref{cor:generator-exist}, we have the following.
\begin{lem}\label{lem:assumption}
There is a local projective generator $G$ of
${\cal C}$ whose Mukai vector is $2v_0$.
More generally, for a sufficiently small $\alpha \in 
(v_0^{\perp} \cap \varrho_X^{\perp}) \otimes {\Bbb Q}$,
there is a local projective generator $G$ of
${\cal C}$ such that $v(G) \in {\Bbb Q}_{>0}(v_0+\alpha)$.
\end{lem}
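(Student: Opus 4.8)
The plan is to deduce the lemma directly from Corollary \ref{cor:generator-exist}, whose only hypothesis is the positivity $\chi(E,E_{ij})>0$ against the finitely many $0$-stable objects $E_{ij}$ supported on the exceptional fibres of $\pi$. The first step is to identify these $E_{ij}$ with the objects $A_{ij}$. By Proposition \ref{prop:0-dim:duality2} the functor $\Phi_{X \to X}^{{\cal F}^{\vee}[2]}$ is an equivalence carrying $\Per(X/Y,{\bf b}_1,\dots,{\bf b}_n)$ onto ${\cal C}$, and by Proposition \ref{prop:tilting:G-1Per-irred} the irreducible objects of the former supported on $Z_i$ are exactly $A_0({\bf b}_i)$ and the ${\cal O}_{C_{ij}}(b_{ij})[1]$. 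Since an equivalence preserves irreducibility, and irreducible objects of ${\cal C}$ are precisely the $0$-stable objects by Lemma \ref{lem:alpha=0} (2), their images $A_{ij}$ are exactly the $E_{ij}$ appearing in Corollary \ref{cor:generator-exist}.

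Next I would translate Assumption \ref{ass:v_0} into the required positivity. Using the Riemann--Roch identity $\chi(E,F)=-\langle v(E),v(F) \rangle$ from the notation, any $E \in K(X)$ with $v(E)=v_0$ satisfies $\chi(E,A_{ij})=-\langle v_0,v(A_{ij}) \rangle>0$ for all $i,j$, precisely because $\langle v_0,v(A_{ij}) \rangle<0$. Applying Corollary \ref{cor:generator-exist} to such an $E$ then produces a local projective generator $G$ of ${\cal C}$ with $\tau(G)=2\tau(E)$; since on a $K3$ surface the assignment $\gamma(E)=(\rk E,c_1(E),\chi(E))$ determines $v(E)=(\rk E,c_1(E),\chi(E)-\rk E)$ by an invertible linear map, the relation $\tau(G)=2\tau(E)$ forces $v(G)=2v(E)=2v_0$, giving the first assertion.

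For the ``more generally'' part I would argue by continuity: the pairings $\langle\,\cdot\,,v(A_{ij}) \rangle$ are linear functionals and there are only finitely many indices $(i,j)$ with $1 \leq i \leq n$, $0 \leq j \leq s_i$, so the strict inequalities $\langle v_0+\alpha,v(A_{ij}) \rangle<0$ persist for every $\alpha$ in a sufficiently small neighbourhood of $0$ in $(v_0^{\perp} \cap \varrho_X^{\perp}) \otimes {\Bbb Q}$. Given such a rational $\alpha$, I would clear denominators, choosing $N \in {\Bbb Z}_{>0}$ so that $N(v_0+\alpha) \in H^{ev}(X,{\Bbb Z})_{\alg}$; on a $K3$ surface, $H^2(X,{\Bbb Z})$ is even, so every algebraic vector is realised as $v(E)$ for some $E \in K(X)$. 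Then $\chi(E,A_{ij})=-N\langle v_0+\alpha,v(A_{ij}) \rangle>0$, and Corollary \ref{cor:generator-exist} again yields a local projective generator $G$ with $v(G)=2N(v_0+\alpha) \in {\Bbb Q}_{>0}(v_0+\alpha)$.

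Since the whole argument reduces to a single citation of Corollary \ref{cor:generator-exist}, I do not anticipate a serious obstacle. The only points requiring a little care are the bookkeeping identification of $\{E_{ij}\}$ with $\{A_{ij}\}$ and the verification that the admissible neighbourhood of $\alpha$ may be taken independently of the clearing denominator $N$; both become immediate once the finiteness of the index set and the linearity of the Mukai pairing are invoked.
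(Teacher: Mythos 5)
Your proof is correct and follows essentially the same route as the paper, which derives the lemma directly from Corollary \ref{cor:generator-exist}; you have merely made explicit the identification of the $A_{ij}$ with the $0$-stable objects $E_{ij}$, the translation of Assumption \ref{ass:v_0} into the positivity $\chi(E,E_{ij})>0$ via $\chi(E,F)=-\langle v(E),v(F)\rangle$, and the finiteness argument allowing the perturbation by a small $\alpha$. No gaps.
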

Let $H$ be the pull-back of an ample divisor on $Y$.
For a sufficiently small $\alpha \in 
(v_0^{\perp} \cap \varrho_X^{\perp}) \otimes {\Bbb Q}$,
we take a local projective generator $G$ of ${\cal C}$ with
$v(G) \in {\Bbb Q}_{>0}(v_0+\alpha)$.
We define $v_0+\alpha$-twisted semi-stability in a usual way.
Since it is equivalent to the $G$-twisted semi-stability, 
we have the moduli space $\overline{M}_H^{v_0+\alpha}(v_0)$.
Let $M_H^{v_0+\alpha}(v_0)$ be the moduli space
of $v_0+\alpha$-stable objects.
By Corollary \ref{cor:K3-non-empty}, 
\begin{NB}
or applying it to $\Per(X/Y)$ and use
Proposition \ref{prop:Phi-alpha} (6)
\end{NB}
$M_H^{v_0}(v_0) \ne \emptyset$. Hence we see that 
$M_H^{v_0+\alpha}(v_0)$ is also non-empty.
Then we have the following which is well-known for the moduli
of stable sheaves on $K3$ surfaces.

\begin{prop}\label{prop:K3:smooth}
\begin{enumerate}
\item[(1)]
$M_H^{v_0+\alpha}(v_0)$ is a smooth surface.
If $\alpha$ is general, then 
$\overline{M}_H^{v_0+\alpha}(v_0)=M_H^{v_0+\alpha}(v_0)$
is projective.
\item[(2)]
If $\overline{M}_H^{v_0+\alpha}(v_0)=M_H^{v_0+\alpha}(v_0)$, then
it is a $K3$ surface.
\end{enumerate}
\end{prop}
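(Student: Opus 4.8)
\textbf{Proof strategy for Proposition \ref{prop:K3:smooth}.}
The plan is to reduce the statement to the known theory of two-dimensional moduli spaces of stable sheaves on $K3$ surfaces, transporting everything through the equivalence and the deformation machinery already built up in the excerpt. The essential point is that a $v_0+\alpha$-twisted stable object $E$ with primitive isotropic $v_0$ behaves, as far as its deformation theory and its tangent/obstruction spaces are concerned, exactly like a stable sheaf on a $K3$ surface: its only invariants are $\Ext^0$, $\Ext^1$, $\Ext^2$, and these are governed by the Mukai pairing.

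\emph{Smoothness and dimension (part (1), first half).} First I would fix a local projective generator $G$ with $v(G)\in{\Bbb Q}_{>0}(v_0+\alpha)$, whose existence is Lemma \ref{lem:assumption}, so that $v_0+\alpha$-twisted (semi-)stability is the same as $G$-twisted (semi-)stability. For a $v_0+\alpha$-stable object $E$ one has $\Hom(E,E)={\Bbb C}$. Because $X$ is a $K3$ surface, $K_X\cong{\cal O}_X$, so Serre duality gives $\Ext^2(E,E)\cong\Hom(E,E)^{\vee}={\Bbb C}$, and the trace map $\Ext^2(E,E)\to H^2(X,{\cal O}_X)\cong{\Bbb C}$ is then an isomorphism; hence $\Ext^2(E,E)_0=0$ and the deformation functor of $E$ is unobstructed. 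The Riemann–Roch formula \eqref{eq:RR}, together with $\chi(E,E)=-\langle v(E),v(E)\rangle=-\langle v_0^2\rangle=0$ (since $v_0$ is isotropic), yields $\dim\Ext^1(E,E)=2\dim\Hom(E,E)-\chi(E,E)=2$. Therefore $M_H^{v_0+\alpha}(v_0)$ is smooth of dimension $2$ at every point, i.e.\ a smooth surface. Non-emptiness was already recorded just before the statement via Corollary \ref{cor:K3-non-empty}.

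\emph{Projectivity for general $\alpha$ (part (1), second half).} Here the point is to ensure no properly semi-stable objects occur. Since $v_0$ is primitive, a wall-and-chamber argument as in subsection \ref{subsect:wall-chamber}---the $S$-equivalence class of a strictly semi-stable $E$ decomposes as $\oplus_i E_i$ with each $\langle v(E_i)^2\rangle\geq -2$ and $\sum_{i\neq j}\langle v(E_i),v(E_j)\rangle=\langle v_0^2\rangle=0$---forces a $(-2)$-summand to lie on a wall, contradicting generality of $\alpha$ (cf.\ the proof of Lemma \ref{lem:crepant1}). For $\alpha$ off all such finitely many walls we get $\overline{M}_H^{v_0+\alpha}(v_0)=M_H^{v_0+\alpha}(v_0)$, which is then proper by the general construction of $\overline{M}_{X/S}^{{\cal C},P}$ as a projective scheme (the moduli of $G$-twisted semi-stable objects), hence projective.

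\emph{The $K3$ conclusion (part (2)).} Assuming $\overline{M}=M$ is projective and smooth of dimension $2$, I must identify it as a $K3$ surface, i.e.\ show $K_M\cong{\cal O}_M$ and $H^1(M,{\cal O}_M)=0$. The standard route is via the universal family ${\cal E}$ on $M\times X$: the two-form on $M$ comes from the symplectic pairing on $\Ext^1_{p_M}({\cal E},{\cal E})$ built from the Serre-duality pairing $\Ext^1(E,E)\times\Ext^1(E,E)\to\Ext^2(E,E)\cong H^2(X,{\cal O}_X)$, exactly the mechanism used in Lemma \ref{lem:crepant}; this produces a nowhere-degenerate holomorphic two-form, trivializing $K_M$. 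The vanishing $H^1(M,{\cal O}_M)=0$ and the identification of the Mukai vector lattice then pin down the $K3$ type. \textbf{The main obstacle} I anticipate is precisely this last identification: establishing that the symplectic form extends to and is non-degenerate on all of $M$ (not merely generically) and that $M$ is simply connected rather than an abelian surface. I expect to handle non-degeneracy by the base-change-compatible computation of $\Ext^i_{p_M}({\cal E},{\cal E})$ exactly as in Lemma \ref{lem:crepant}, and to rule out the abelian case by the isotropy and primitivity of $v_0$ together with a Fourier–Mukai comparison of Hodge/Betti numbers, paralleling the classical argument of Mukai for stable sheaves.
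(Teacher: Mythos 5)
Your proof is correct and is essentially the argument the paper has in mind: the paper gives no proof at all, introducing the proposition with the remark that it is ``well-known for the moduli of stable sheaves on $K3$ surfaces,'' and what you have written is exactly the standard Mukai-style argument (unobstructedness via the trace map and $\Ext^2(E,E)_0=0$, dimension $2$ from $\chi(E,E)=-\langle v_0^2\rangle=0$, absence of strictly semistable objects for general $\alpha$ by the $(-2)$-wall analysis as in Lemma \ref{lem:crepant1}, and the $K3$ identification via the relative $\Ext^1$ symplectic pairing of Lemma \ref{lem:crepant} together with the Fourier--Mukai equivalence) transported to the twisted/perverse setting. The one point worth keeping in mind when you write this up is that the universal family may only exist as a twisted object, as the paper notes, but all the relative $\Ext$ and base-change arguments you invoke go through unchanged in that case.
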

For the structure of $\overline{M}_H^{v_0}(v_0)$,
as in \cite{O-Y:1}, we have the following.

\begin{thm}\label{thm:K3-desing}(cf. \cite[Thm. 0.1]{O-Y:1})
\begin{enumerate}
\item[(1)]
$\overline{M}_H^{v_0}(v_0)$ is normal
and the singular points 
$q_1,q_2,\dots,q_m$ of $\overline{M}_H^{v_0}(v_0)$ correspond to
the $S$-equivalence classes of properly $v_0$-twisted semi-stable objects.
\item[(2)]  
For a suitable choice of $\alpha$ with $|\langle \alpha^2 \rangle| \ll 1$,
there is a surjective morphism
$\pi:\overline{M}_H^{v_0+\alpha}(v_0)=
M_H^{v_0+\alpha}(v_0) \to \overline{M}_H^{v_0}(v_0)$ 
which becomes a minimal resolution of the singularities.
\item [(3)]
Let $\bigoplus_{j \geq 0} E_{ij}^{\oplus a_{ij}'}$ be 
the $S$-equivalence class corresponding to $q_i$, where
$E_{ij}$ are $v_0$-twisted stable objects. 
\begin{enumerate}
\item
Then the matrix $(-\langle v(E_{ij}),v(E_{ik}) \rangle)_{j,k \geq 0}$ 
is of affine
type $\tilde{A},\tilde{D}, \tilde{E}$.
\item
Assume that $a_{i0}'=1$. Then 
the singularity of $\overline{M}_H^{v_0}(v_0)$ at 
$q_i$ is a
rational double point of type $A,D,E$ according as the type of 
the matrix $(-\langle v(E_{ij}),v(E_{ik}) \rangle)_{j,k \geq 1}$.
\end{enumerate}
\end{enumerate}
\end{thm}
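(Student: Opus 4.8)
The plan is to transport the structure theorem for 0-dimensional objects, namely Theorem \ref{thm:RDP-desing}, from the target $X^0 \cong Y$ to the moduli space $\overline{M}_H^{v_0}(v_0)$ by exploiting the fact that $\overline{M}_H^{v_0+\alpha}(v_0)$ is itself a $K3$ surface (Proposition \ref{prop:K3:smooth}) carrying its own category of perverse coherent sheaves. The key point is that, under Assumption \ref{ass:stability} and Assumption \ref{ass:v_0}, the universal family ${\cal E}$ on $X \times M_H^{v_0+\alpha}(v_0)$ induces a Fourier-Mukai equivalence $\Phi := \Phi_{X \to M_H^{v_0+\alpha}(v_0)}^{{\cal E}^\vee}$, and this equivalence sends $v_0$ to a primitive isotropic Mukai vector $\varrho$-type class on the dual side. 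Thus the whole analysis reduces to the 0-dimensional case on a new surface, which is precisely what Theorem \ref{thm:RDP-desing} handles.

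\textbf{Steps.} First I would establish that $M_H^{v_0+\alpha}(v_0)$ is a smooth projective $K3$ surface for general $\alpha$ (Proposition \ref{prop:K3:smooth}) and fix a universal family ${\cal E}$, so that $\Phi$ is an equivalence ${\bf D}(X) \to {\bf D}(M')$, where I write $M' := M_H^{v_0+\alpha}(v_0)$. Next I would compute the image $\Phi(v_0)$: since $v_0$ is the isotropic vector whose general members are the parametrized objects, $\Phi$ takes $v_0$ to $\pm\varrho_{M'}$ (up to sign and shift), exactly as in the duality of Proposition \ref{prop:0-dim:duality}. Then I would observe that $v_0$-twisted semistability on $X$ corresponds under $\Phi$ to $\alpha'$-semistability of $0$-dimensional objects on $M'$ for a suitable $\alpha'$, so that $\overline{M}_H^{v_0}(v_0)$ is identified with the space $(M')^0$ of Theorem \ref{thm:RDP-desing} applied to a contraction $M' \to \overline{M}_H^{v_0}(v_0)$. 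At this stage parts (1) and (2) follow directly: $(M')^0$ is normal projective, its singular points are the $S$-equivalence classes of properly semistable objects, and $\pi : M' \to (M')^0$ is the minimal resolution with the exceptional configuration described by Lemma \ref{lem:0-stable:exceptional}. For part (3), I would transport the numerics: the $S$-equivalence class at each singular point $q_i$ decomposes as $\bigoplus_j E_{ij}^{\oplus a'_{ij}}$ with $\langle \varrho,v(E_{ij})\rangle = 0$, $\langle v(E_{ij})^2\rangle = -2$, and $\langle v(E_{ij}),v(E_{ik})\rangle \ge 0$ for $j \ne k$, so Lemma \ref{lem:appendix:lattice} forces the affine $\tilde{A},\tilde{D},\tilde{E}$ classification and, after normalizing $a'_{i0}=1$, identifies the singularity type with the finite $A,D,E$ type of the submatrix indexed by $j,k \ge 1$.

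\textbf{Main obstacle.} The hard part will be verifying that the correspondence between $v_0$-twisted semistability on $X$ and semistability of $0$-dimensional objects on $M'$ is precise enough to match the polarizations, so that $S$-equivalence classes correspond on the nose rather than merely set-theoretically. Concretely, I must check that the local projective generator $G$ with $v(G) \in {\Bbb Q}_{>0}(v_0 + \alpha)$ transforms under $\Phi$ into a local projective generator $G'$ of the tilted category on $M'$ inducing the same stability chamber structure; this is the content of Proposition \ref{prop:Phi-alpha} (6) together with Lemma \ref{lem:compatible2}, but applying it requires that $\Phi$ commute with the twist by ${\cal O}(1)$ and preserve the relevant Mukai pairing, which in turn rests on Assumption \ref{ass:stability}. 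Once this compatibility is in place, the smoothness of $M'$ (Proposition \ref{prop:0-dim:smooth}) guarantees that $\pi$ is crepant and the resolution is minimal, and the remaining assertions are formal consequences of the $0$-dimensional theory already developed.
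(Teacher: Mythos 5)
Your overall idea---reduce to the $0$-dimensional theory on the dual surface via the Fourier--Mukai transform of the universal family---is indeed how the paper handles the normality assertion in (1) (Proposition \ref{prop:K3-normal}(3) identifies $\overline{M}_H^{v_0}(v_0)$ with $(X')^0$ and invokes Proposition \ref{prop:Y=X^0}). But there is a genuine gap: you treat the statement ``$v_0$-twisted semistability on $X$ corresponds under $\Phi$ to semistability of $0$-dimensional objects in a perverse heart on $M'$'' as a compatibility check, when it is in fact the main content of the proof and cannot be obtained by citing Proposition \ref{prop:Phi-alpha}(6), which concerns the autoequivalence $\Lambda^\alpha$ between $X$ and $X^\alpha$ (two crepant resolutions of the \emph{same} $Y$), not the transform to the two-dimensional moduli space $M_H^{v_0+\alpha}(v_0)$. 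To apply Theorem \ref{thm:RDP-desing} on $M'$ you must first exhibit a category $\Per(X'/Y')$ with a local projective generator whose $0$-dimensional objects are exactly the images of the $v_0$-semistable objects, and establishing this is where all the work lies: one needs the lattice analysis for $v_0^\perp\cap\widehat{H}^\perp$ (Lemma \ref{lem:K3:lattice}), the $v_0$-analogue of Lemma \ref{lem:0-stable:key} (Lemma \ref{lem:K3:key}), the identification of the exceptional loci $C_{ij}'$ via moduli of coherent systems (Lemma \ref{lem:K3:exceptional}), the fact that $\Phi^\alpha(E_{ij})[1]$ is a line bundle on $C_{ij}'$ (Lemma \ref{lem:K3:Phi(E)}), and the recognition $A_{i0}'\cong A_0({\bf b}_i')$ (Lemma \ref{lem:A_*}). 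These are direct geometric arguments on $X$ and $M'$; without them your reduction is circular, since Theorem \ref{thm:RDP-desing} is being applied to a heart whose existence and irreducible objects are precisely what is at stake.

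Two smaller points. First, parts (2), (3) and the identification of singular points with $S$-equivalence classes are proved in the paper \emph{before} and independently of the transform (Proposition \ref{prop:K3-desing}, by mimicking the proofs of Theorem \ref{thm:RDP-desing} and Lemma \ref{lem:0-stable:exceptional} with $\varrho_X$ replaced by $v_0$); only the normality in (1) is deduced afterwards by the reduction you propose. Second, for (3)(a) the affine $\tilde{A},\tilde{D},\tilde{E}$ classification is most naturally obtained by applying Lemma \ref{lem:appendix:lattice} directly to the vectors $v(E_{ij})\in v_0^\perp\cap\widehat{H}^\perp$ on $X$ (as in Lemma \ref{lem:K3:lattice}); transporting to $M'$ buys nothing here and presupposes the very equivalence discussed above.
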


\begin{rem}\label{rem:K3-desing}
A $(-2)$-vector $u \in L:=v_0^{\perp} \cap \widehat{H}^{\perp}
\cap H^*(X,{\Bbb Z})_{\alg}$ is {\it numerically irreducible}, if
there is no decomposition
$u=\sum_i b_i u_i$ such that $u_i \in L$,
$\langle u_i^2 \rangle=-2$, $\rk u> \rk u_i>0$, $b_i \in {\Bbb Z}_{>0}$.  
If $u$ is numerically irreducible,
as we shall see in 
Proposition \ref{prop:K3-exceptional-exist},
there is a $v_0$-twisted stable object $E$ with $v(E)=u$.
In particular, if there is a decomposition
$v_0=\sum_{i \geq 0} a_i u_i$
such that $u_i \in L$ are numerically irreducible,
$\langle u_i^2 \rangle=-2$, 
$\rk u_i>0$ and $a_i \in {\Bbb Z}_{>0}$, then
there are $v_0$-stable objects $E_i$ such that
$v(E_i)=u_i$, and hence 
$v_0=v(\oplus_i E_i^{\oplus a_i})$. 
Thus the types of the singularities are determined by the sublattice
$L$ of $H^*(X,{\Bbb Z})$.
\end{rem}
We shall give a proof of this theorem
in subsection \ref{subsect:K3:proof}.
We assume that 
$\alpha \in (v_0^{\perp} \cap \varrho_X^{\perp}) \otimes {\Bbb Q}$ 
is general and set $X':=M_H^{v_0+\alpha}(v_0)$.
$X'$ is a $K3$ surface.
We have a morphism $\phi:X' \to \overline{M}_H^{v_0}(v_0)$.
%
%
We shall explain some cohomological properties of
the Fourier-Mukai transform associated to $X'$. 
Let ${\cal E}$ be a universal family as a twisted object on
$X' \times X$.
For simplicity, we assume that
${\cal E}$ is an untwisted object on
$X' \times X$.
But all results hold even if ${\cal E}$ is a twisted object.
%
%
We set 
\begin{equation}\label{eq:G_i}
\begin{split}
G_1:=&{\cal E}_{|\{ x' \} \times X} \in K(X),\\
G_2:=&{\cal E}_{|X' \times \{x \}}^{\vee} \in K(X'),\\
G_3:=&{\cal E}_{|X' \times \{x \}} \in K(X')
\end{split}
\end{equation}
for some $x \in X$ and $x' \in X'$.
We also set 
\begin{equation}\label{eq:w_0}
w_0:=v({\cal E}_{|X' \times \{x \}}^{\vee})=
r_0+\widetilde{\xi}_0+\widetilde{a}_0 \varrho_{X'}, 
\widetilde{\xi}_0 \in \NS(X').
\end{equation}
%
%
We set $\Phi^{\alpha}:=\Phi_{X \to X'}^{{\cal E}^{\vee}}$ and
$\widehat{\Phi}^{\alpha}:=\Phi_{X' \to X}^{{\cal E}}$.
Thus
\begin{equation}
\Phi^{\alpha}(x):={\bf R}\Hom_{p_{X'}}({\cal E}, p_X^*(x)),
x \in {\bf D}(X),
\end{equation}
and
$\widehat{\Phi}^{\alpha}:{\bf D}(X') \to {\bf D}(X)$ by
\begin{equation}
\widehat{\Phi}^{\alpha}(y):={\bf R}\Hom_{p_{X}}({\cal E}^{\vee}, p_{X'}^*(y)),
y \in {\bf D}(X'),
\end{equation}
where $\Hom_{p_{Z}}(-,-)=p_{Z*}{\cal H}om_{{\cal O}_{X' \times X}}(-,-)$,
$Z=X,X'$
are the sheaves of relative homomorphisms. 
\begin{thm}[\cite{Br:2}, \cite{Or:1}]
$\Phi^{\alpha}$ is an equivalence of
categories and the inverse is given by $\widehat{\Phi}^{\alpha}[2]$.
\end{thm}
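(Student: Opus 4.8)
The plan is to deduce the statement from Bridgeland's fully faithfulness criterion, applied not to $\Phi^{\alpha}$ directly but to the transform in the opposite direction, where the test objects are the stable objects actually parametrized by $X'$. Concretely, I would work with $\widehat{\Phi}^{\alpha}=\Phi_{X'\to X}^{{\cal E}}$ and observe that for a closed point $x'\in X'$ one has $\widehat{\Phi}^{\alpha}({\Bbb C}_{x'})={\cal E}_{|\{x'\}\times X}=G_1$, which by the very construction $X'=M_H^{v_0+\alpha}(v_0)$ (Proposition \ref{prop:K3:smooth}) is a $(v_0+\alpha)$-stable object of ${\cal C}$ with Mukai vector $v_0$. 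Since $\{{\Bbb C}_{x'}\}_{x'\in X'}$ is a spanning class of ${\bf D}(X')$, it suffices to verify the orthonormality conditions of Bridgeland's criterion for the family $\{\widehat{\Phi}^{\alpha}({\Bbb C}_{x'})\}$.

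First I would record the elementary $\Hom$-computations, writing ${\cal E}_{x'}:={\cal E}_{|\{x'\}\times X}$. For $x_1'\neq x_2'$ the objects ${\cal E}_{x_1'}$ and ${\cal E}_{x_2'}$ are non-isomorphic $(v_0+\alpha)$-stable objects of ${\cal C}$ with the same reduced twisted Hilbert polynomial, so $\Hom({\cal E}_{x_1'},{\cal E}_{x_2'})=0$; as $X$ is a $K3$ surface ($K_X={\cal O}_X$), Serre duality gives $\Ext^2({\cal E}_{x_1'},{\cal E}_{x_2'})\cong\Hom({\cal E}_{x_2'},{\cal E}_{x_1'})^{\vee}=0$, and since $v_0$ is isotropic, $\chi({\cal E}_{x_1'},{\cal E}_{x_2'})=-\langle v_0,v_0\rangle=0$ forces $\Ext^1({\cal E}_{x_1'},{\cal E}_{x_2'})=0$; the remaining degrees vanish because ${\cal C}$ is the heart of a bounded $t$-structure on the derived category of a surface. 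For $x_1'=x_2'=x'$, stability gives $\Hom({\cal E}_{x'},{\cal E}_{x'})={\Bbb C}$, while $\Hom({\cal E}_{x'},{\cal E}_{x'}[i])$ vanishes for $i<0$ and $i>2$. These are exactly Bridgeland's conditions, so $\widehat{\Phi}^{\alpha}$ is fully faithful; because $K_X={\cal O}_X$ we have $\widehat{\Phi}^{\alpha}({\Bbb C}_{x'})\otimes K_X\cong\widehat{\Phi}^{\alpha}({\Bbb C}_{x'})$ and $X$ is connected, whence $\widehat{\Phi}^{\alpha}$ is an equivalence.

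Finally I would identify $\Phi^{\alpha}$ with a shift of the inverse of $\widehat{\Phi}^{\alpha}$ via the adjunction formula for integral functors. The right adjoint of $\widehat{\Phi}^{\alpha}=\Phi_{X'\to X}^{{\cal E}}$ has kernel ${\cal E}^{\vee}\otimes p_X^{*}K_X[\dim X]={\cal E}^{\vee}[2]$, that is, it is $\Phi_{X\to X'}^{{\cal E}^{\vee}}[2]=\Phi^{\alpha}[2]$. Since $\widehat{\Phi}^{\alpha}$ is an equivalence, its right adjoint is its inverse, so $\Phi^{\alpha}[2]=(\widehat{\Phi}^{\alpha})^{-1}$. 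Therefore $\Phi^{\alpha}$ is itself an equivalence, and taking inverses gives $(\Phi^{\alpha})^{-1}=\widehat{\Phi}^{\alpha}[2]$, which is the assertion.

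The step I expect to carry the real weight is the orthogonality for $x_1'\neq x_2'$: it rests on the stability of the objects ${\cal E}_{|\{x'\}\times X}$ together with the isotropy of $v_0$, the former guaranteed by the construction of $X'$ as $M_H^{v_0+\alpha}(v_0)$ and the latter by Assumption \ref{ass:v_0}. I would also keep track of the untwisted-versus-twisted distinction: the excerpt assumes ${\cal E}$ untwisted for simplicity, and I would note that the same Serre duality and $\chi$-computation go through verbatim for a twisted universal family, so the conclusion is unaffected. Working with $\widehat{\Phi}^{\alpha}$ rather than $\Phi^{\alpha}$ is precisely what keeps the argument non-circular, since on the $X'$-side the images of skyscrapers are manifestly the stable objects we began with, whereas on the $X$-side the images ${\cal E}_{|X'\times\{x\}}^{\vee}$ are only seen to be stable \emph{a posteriori} (this being the content of the Fourier--Mukai duality, Theorem \ref{thm:duality}).
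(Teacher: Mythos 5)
Your proof is correct and is essentially the argument behind the result as cited: the paper itself gives no proof of this theorem, deferring to \cite{Br:2} and \cite{Or:1}, and those references proceed exactly as you do — Bridgeland's pointwise criterion applied to the kernel whose fibres over the moduli side are the parametrized stable objects, with orthogonality coming from stability, Serre duality, and the isotropy of $v_0$, and the identification of $\Phi^{\alpha}[2]$ as the adjoint (hence inverse) of $\widehat{\Phi}^{\alpha}$. Your added care — checking the $\Ext$-vanishing for objects of the tilted heart ${\cal C}$ rather than for sheaves, and noting that the computation is unaffected in the twisted case — is precisely what justifies invoking the classical statement in the paper's generalized setting.
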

For $D \in H^2(X,{\Bbb Q})$,
we set 
\begin{equation}\label{eq:mu-map}
\begin{split}
\widehat{D}:
=& -\left[\Phi^{\alpha}\left(D+\frac{(D,\xi_0)}{r_0}\varrho_X \right)
\right]_1\\ 
=& \left[
p_{X'*} \left(\left(c_2({\cal E})-\frac{r_0-1}{2r_0}(c_1({\cal E})^2)\right) 
\cup p_X^*(D) \right) \right]_1
\in H^2(X',{\Bbb Q}),
\end{split}
\end{equation}
where $[\;\;]_1$ means the projection to
$H^2(X',{\Bbb Q})$.

\begin{lem}(cf. \cite[Lem. 1.4]{Y:Stability})
$r_0 \widehat{H}$ is a nef and big divisor on $X'$
which defines a contraction
$\pi':X' \to Y'$ of $X'$ to a normal surface $Y'$.
There is a morphism $\psi:Y' \to \overline{M}_H^{v_0}(v_0)$
such that $\phi=\psi \circ \pi'$.
\end{lem}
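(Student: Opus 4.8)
The plan is to identify $r_0\widehat{H}$ with the pullback under $\phi$ of the natural ample polarization on the coarse moduli space $\overline{M}_H^{v_0}(v_0)$, and then to read off all three assertions from standard properties of pullbacks of ample divisors under birational contractions. Recall that $\phi:X'=M_H^{v_0+\alpha}(v_0)\to \overline{M}_H^{v_0}(v_0)$ is the minimal resolution produced by Theorem \ref{thm:K3-desing}, so it is proper and birational onto a normal projective surface with at worst rational double points.

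First I would compare the $\mu$-map with a determinant line bundle. Set $\zeta\in K(X)$ with $\ch(\zeta)=r_0 H+(\xi_0,H)\varrho_X$, exactly as in Lemma \ref{lem:det-bdle}. A Grothendieck--Riemann--Roch computation with the universal family ${\cal E}$ on $X'\times X$ shows that $c_1(\det p_{X'!}({\cal E}\otimes\zeta^{\vee}))=\pm r_0\widehat{H}$, matching the right-hand side of \eqref{eq:mu-map} term by term; this is precisely the routine calculation underlying the definition of $\widehat{D}$. Since $v_0$ is primitive and isotropic we have $\chi(v_0,v_0)=-\langle v_0^2\rangle=0$, and on the $K3$ surface $X$ every $E$ satisfies $E\cong E\otimes K_X$; moreover by Lemma \ref{lem:assumption} there is a local projective generator $G$ with $v(G)=2v_0$, so $\tau(G)\in{\Bbb Z}{\bf e}$ with ${\bf e}=\tau(v_0)$. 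Hence the hypotheses of Lemma \ref{lem:det-bdle} hold, and the determinant bundle descends to an \emph{ample} line bundle ${\cal L}(\zeta)$ on $\overline{M}_H^{v_0}(v_0)$. By compatibility of determinant bundles with the universal family, $\det p_{X'!}({\cal E}\otimes\zeta^{\vee})=\phi^*{\cal L}(\zeta)$, so after fixing the sign dictated by \eqref{eq:mu-map} we obtain $r_0\widehat{H}=\phi^*c_1({\cal L}(\zeta))$.

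Given this identification the first two assertions are immediate: $\phi^*{\cal L}(\zeta)$ is nef because nefness is preserved by pullback, and it is big because $\phi$ is birational, whence $(\,(r_0\widehat{H})^2)=(c_1({\cal L}(\zeta))^2)>0$. Thus $r_0\widehat{H}$ is nef and big. Being the pullback of an ample divisor it is semiample: for $m\gg 0$ the projection formula together with $\phi_*{\cal O}_{X'}={\cal O}_{\overline{M}_H^{v_0}(v_0)}$ (valid since the target is normal and $\phi$ is a resolution with rational singularities by Theorem \ref{thm:K3-desing}) gives $H^0(X',mr_0\widehat{H})\cong H^0(\overline{M}_H^{v_0}(v_0),{\cal L}(\zeta)^{\otimes m})$. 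Consequently $\bigoplus_m H^0(X',mr_0\widehat{H})$ is the homogeneous coordinate ring of $(\overline{M}_H^{v_0}(v_0),{\cal L}(\zeta))$, so $r_0\widehat{H}$ defines a birational contraction $\pi':X'\to Y'$ with $Y'=\Proj\bigoplus_m H^0(X',mr_0\widehat{H})\cong\overline{M}_H^{v_0}(v_0)$, a normal projective surface. Taking $\psi$ to be this isomorphism yields $\phi=\psi\circ\pi'$.

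The step I expect to be the main obstacle is the comparison in \eqref{eq:mu-map}: verifying that the $\mu$-map class $r_0\widehat{H}$ really equals $c_1$ of the determinant bundle attached to $\zeta$, with the correct sign so that the result is effective rather than its negative. Everything else is formal once this is in place; in particular the identification $Y'\cong\overline{M}_H^{v_0}(v_0)$ and the existence of $\psi$ are forced by semiampleness and the normality of the coarse moduli space. If the determinant comparison proves awkward, an alternative route to nefness is to check $(\widehat{H},C)\ge 0$ for every curve $C\subset X'$ by transporting the intersection through the equivalence $\Phi^{\alpha}$ to an $H$-degree on $X$, where $H$ is manifestly nef, observing that the $\phi$-exceptional curves $C_{ij}^{\alpha}$ correspond to the rigid objects $A_{ij}$ supported on the exceptional locus $Z$ of $\pi$, so that $(\widehat{H},C_{ij}^{\alpha})=0$; this version, however, requires the extra bookkeeping of identifying the contracted locus by hand.
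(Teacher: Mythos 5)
Your proposal follows the same route as the paper: the paper's proof likewise takes a local projective generator $G$ with $\tau(G)=2\tau(G_1)$ from Lemma \ref{lem:assumption}, invokes Lemma \ref{lem:det-bdle} to obtain the ample line bundle ${\cal L}(\zeta)$ on $\overline{M}_H^{v_0}(v_0)$, and concludes from $c_1(\phi^*({\cal L}(\zeta)))=r_0\widehat{H}$. You simply spell out in more detail the verification of the hypotheses of Lemma \ref{lem:det-bdle} and the standard passage from "pullback of an ample divisor under a birational morphism" to the contraction $\pi'$ and factorization $\phi=\psi\circ\pi'$, which the paper leaves implicit.
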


\begin{proof}
Let $G$ be a local projective generator
of ${\cal C}$ such that $\tau(G)=2\tau(G_1)$
(Lemma \ref{lem:assumption}).
Applying Lemma \ref{lem:det-bdle}, we have an ample line bundle
${\cal L}(\zeta)$ on $\overline{M}_H^G(v_0)=\overline{M}_H^{v_0}(v_0)$.
By the definition of $\widehat{H}$,
$c_1(\phi^*({\cal L}(\zeta)))=r_0 \widehat{H}$.
Hence our claim holds.
\end{proof}

\begin{prop}(cf. \cite[Prop. 1.5]{Y:Stability})\label{prop:deg-preserve}
\begin{enumerate}
\item[(1)]
Every element $v \in H^*(X,{\Bbb Z})$ 
can be uniquely written as 
\begin{equation*}
 v=l v_0 +a \varrho_X+
d \left(H+\frac{1}{r_0}(H,\xi_0)\varrho_X \right)+
\left(D
 +\frac{1}{r_0}(D,\xi_0)\varrho_X \right),
\end{equation*}
where 
\begin{equation}\label{eq:deg}
\begin{split}
l=&\frac{\rk v}{\rk v_0}=-\frac{\langle v,\varrho_X \rangle}{\rk v_0}
\in \frac{1}{r_0}{\Bbb Z},\\
a=&-\frac{\langle v, v_0 \rangle}{\rk v_0} \in
\frac{1}{r_0}{\Bbb Z},\\ 
d=&\frac{\deg_{G_1}(v)}{\rk v_0(H^2)}
\in \frac{1}{r_0 (H^2)}{\Bbb Z}
\end{split}
\end{equation}
and $D \in H^2(X,{\Bbb Q})\cap H^{\perp}$.
Moreover $v \in v({\bf D}(X))$ if and only if
$D \in \NS(X)\otimes {\Bbb Q} \cap H^{\perp}$.
\item[(2)]
\begin{equation}\label{eq:deg-preserve}
\begin{split}
& \Phi^{\alpha} \left(l v_0 +a \varrho_X+
\left(dH+D+\frac{1}{r_0}(dH+D,\xi_0)\varrho_X \right)\right)\\
=&
l \varrho_{X'}+a w_0-
\left(d\widehat{H}+\widehat{D}+
\frac{1}{r_0}(d\widehat{H}+\widehat{D},\widetilde{\xi}_0)\varrho_{X'}
\right)
\end{split}
\end{equation}
where $D \in H^2(X,{\Bbb Q}) \cap H^{\perp}$.
\item[(3)]
\begin{equation*}
\deg_{G_1}(v)=-\deg_{G_2}(\Phi^{\alpha}(v)).
\end{equation*}
In particular, $\deg_{G_2}(w) \in {\Bbb Z}$ for
$w \in H^*(X',{\Bbb Z})$ and
\begin{equation*}
\min\{\deg_{G_1}(E)>0| E \in K(X)\}
=\min\{\deg_{G_2}(F)>0| F \in K(X')\}.
\end{equation*}
\end{enumerate}
\end{prop}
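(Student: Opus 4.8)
The plan is to prove the three parts in order, treating (1) as a linear-algebra decomposition statement, (2) as the explicit computation of $\Phi^\alpha$ on that decomposition, and (3) as a corollary obtained by pairing with the Mukai form. First I would establish (1). The claim is that the sublattice spanned by $v_0$, $\varrho_X$, and $H+\frac{1}{r_0}(H,\xi_0)\varrho_X$, together with $H^\perp$, spans $H^*(X,{\mathbb Q})$, and that the stated coefficients are forced. The key computational inputs are that $v_0$ is isotropic of rank $r_0>0$ and that $\langle v_0,\varrho_X\rangle=-r_0$. From these I would verify the three coefficient formulas in \eqref{eq:deg} by pairing $v$ against $\varrho_X$, against $v_0$, and against $G_1$ (equivalently, reading off $\deg_{G_1}$); each pairing isolates exactly one of $l$, $a$, $d$ because of the orthogonality relations $\langle v_0,\varrho_X\rangle=-r_0$, $\langle v_0^2\rangle=\langle\varrho_X^2\rangle=0$, and $(H,D)=0$ for $D\in H^\perp$. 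The integrality statements ($l,a\in\frac1{r_0}{\mathbb Z}$, $d\in\frac{1}{r_0(H^2)}{\mathbb Z}$) follow from $v_0$ being primitive together with the fact that pairings of integral classes are integers. The characterization of when $v\in v({\mathbf D}(X))$ reduces to the Hodge-theoretic condition that $v$ be algebraic, i.e. $D\in\NS(X)\otimes{\mathbb Q}$.

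Next I would prove (2), the intertwining formula \eqref{eq:deg-preserve}. The guiding principle is that $\Phi^\alpha$ is an isometry of Mukai lattices (up to sign, since $\widehat\Phi^\alpha[2]$ is its inverse), so it suffices to compute the image of each of the four basis directions. By definition $\Phi^\alpha(G_1)=\Phi^\alpha({\cal E}_{|\{x'\}\times X})$ is a skyscraper-type class, giving $\Phi^\alpha(v_0)\in{\mathbb Z}\varrho_{X'}$ after normalization; dually $\Phi^\alpha(\varrho_X)$ is governed by $w_0=v({\cal E}_{|X'\times\{x\}}^\vee)$. The cross terms $\Phi^\alpha(H+\cdots)$ and $\Phi^\alpha(D+\cdots)$ are exactly what the definition \eqref{eq:mu-map} of $\widehat D$ (and $\widehat H$) is built to encode — the projection $[\Phi^\alpha(D+\frac{(D,\xi_0)}{r_0}\varrho_X)]_1=-\widehat D$ — so I would simply assemble these four computations, using the isometry property to pin down the $\varrho_{X'}$-components via the requirement that the Mukai pairing be preserved. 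The only care needed is tracking the signs and the $\frac{1}{r_0}(\;\cdot\;,\widetilde\xi_0)$ correction terms, which are forced by demanding that the right-hand side be orthogonal to $w_0$ in the appropriate way.

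Finally, (3) is a direct consequence. Since $\deg_{G_1}(v)=r_0(H^2)d$ by \eqref{eq:deg} and the $\widehat H$-component of $\Phi^\alpha(v)$ is $-d$ by \eqref{eq:deg-preserve}, pairing $\Phi^\alpha(v)$ against $G_2$ (whose Mukai vector is $w_0$) recovers $-\deg_{G_1}(v)$; the sign flip is precisely the $-\widehat H$ in the formula. The statements that $\deg_{G_2}(w)\in{\mathbb Z}$ for integral $w$ and that the minimal positive degrees on $X$ and $X'$ agree then follow because $\Phi^\alpha$ is a lattice isometry carrying $v({\mathbf D}(X))$ onto $v({\mathbf D}(X'))$, so the degree functionals $\deg_{G_1}$ and $-\deg_{G_2}\circ\Phi^\alpha$ have the same image in ${\mathbb Z}$.

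I expect the main obstacle to be part (2): bookkeeping the exact $\varrho_{X'}$- and $\widetilde a_0$-coefficients and the $\frac{1}{r_0}(\;\cdot\;,\widetilde\xi_0)$ correction terms while keeping all signs consistent with the conventions in \eqref{eq:mu-map} and \eqref{eq:w_0}. The conceptual content — that $\Phi^\alpha$ is an isometry sending the rank direction to the $\varrho$ direction and preserving degree up to sign — is standard (it mirrors \cite[Prop. 1.5]{Y:Stability}), but verifying that the stated closed form is the correct one requires carefully chasing how $\Phi^\alpha$ acts on $H$ and on $H^\perp$ through the Chern-character formula for $\widehat D$.
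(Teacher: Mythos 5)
Your proposal is correct and follows the standard argument; the paper itself gives no proof of this proposition, simply citing \cite[Prop.~1.5]{Y:Stability}, and your route (isolating $l$, $a$, $d$ by pairing against $\varrho_X$, $v_0$, and $H$; computing $\Phi^\alpha$ on the four basis directions using $\Phi^\alpha(v_0)=\varrho_{X'}$, $\Phi^\alpha(\varrho_X)=w_0$ and the definition of $\widehat{D}$; then deducing (3) from the isometry property) is exactly the argument of the cited reference adapted to the twisted/perverse setting. The only point worth tightening is that $\Phi^\alpha$ induces a genuine Hodge isometry of Mukai lattices (no sign ambiguity), so $\Phi^\alpha(v_0)=\varrho_{X'}$ on the nose rather than merely up to normalization.
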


\subsection{Proof of Theorem \ref{thm:K3-desing}}\label{subsect:K3:proof}

We shall choose a special $\alpha$ and study the structure of the moduli
spaces.

We first prove the following.
The normalness of $\overline{M}_H^{v_0}(v_0)$ will be proved 
in Proposition \ref{prop:K3-normal}.
\begin{prop}\label{prop:K3-desing}
\begin{enumerate}
\item[(1)]
$\psi:Y' \to \overline{M}_H^{v_0}(v_0)$ is bijective.
\item[(2)]
The singular points of $Y'$ correspond to properly
$v_0$-twisted semi-stable objects.
\item[(3)]
Let $\bigoplus_{j \geq 0} E_{ij}^{\oplus a_{ij}'}$ be 
the $S$-equivalence class of a properly $v_0$-twisted semi-stable
object, where $E_{ij}$ are $v_0$-twisted stable.
Then the matrix $(-\langle v(E_{ij}),v(E_{ik}) \rangle)_{j,k \geq 0}$ 
is of affine
type $\tilde{A},\tilde{D}, \tilde{E}$.
We assume that $a_{i0}=1$.
Then $\psi^{-1}(\bigoplus_{j \geq 0} E_{ij}^{\oplus a_{ij}'})$ 
is a rational double point of type $A,D,E$ according as the type
of the matrix $(-\langle v(E_{ij}),v(E_{ik}) \rangle)_{j,k \geq 1}$.
\end{enumerate}
\end{prop}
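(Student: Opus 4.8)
The plan is to transport the two-dimensional moduli problem on $X$ to a zero-dimensional moduli problem on $X'$ by means of the Fourier--Mukai transform $\Phi^{\alpha}=\Phi_{X\to X'}^{{\cal E}^{\vee}}$, and then to invoke the already established Theorem \ref{thm:RDP-desing} for the contraction $\pi':X'\to Y'$. The starting point is the equality $\Phi^{\alpha}(v_0)=\varrho_{X'}$, read off from \eqref{eq:deg-preserve} with $l=1$, $a=0$, $d=0$, $D=0$; since $\Phi^{\alpha}$ is an equivalence, an object $E\in{\cal C}$ with $v(E)=v_0$ is carried (up to a shift) to an object on $X'$ of Mukai vector $\varrho_{X'}$, i.e. to a candidate $0$-dimensional object. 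For the generic $\alpha$ defining $X'=M_H^{v_0+\alpha}(v_0)$ the universal family ${\cal E}$ satisfies $\Phi^{\alpha}({\cal E}_{|\{x'\}\times X})\cong{\Bbb C}_{x'}$ up to shift, so the $v_0+\alpha$-stable objects on $X$ correspond exactly to the skyscraper objects, and $X'$ itself is recovered as a moduli of $0$-dimensional stable objects.

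First I would upgrade this set-theoretic correspondence to an identification of moduli spaces. Proposition \ref{prop:deg-preserve}(3) gives $\deg_{G_1}(v)=-\deg_{G_2}(\Phi^{\alpha}(v))$, so the slope inequalities defining $v_0$-twisted (semi)stability for objects of Mukai vector $v_0$ match, under $\Phi^{\alpha}$, the slope inequalities defining $0$-dimensional (semi)stability on $X'$ in the sense of \ref{subsect:wall-chamber}; here the small parameter $\alpha\in (v_0^{\perp}\cap\varrho_X^{\perp})\otimes{\Bbb Q}$ transforms into a small parameter $\alpha'\in(\varrho_{X'}^{\perp}\cap w_0^{\perp})\otimes{\Bbb Q}$, with $w_0$ as in \eqref{eq:w_0}. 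Consequently $\Phi^{\alpha}$ should induce isomorphisms $\overline{M}_H^{v_0}(v_0)\cong (X')^{0}$ and $X'\cong (X')^{\alpha'}$, under which $\phi:X'\to\overline{M}_H^{v_0}(v_0)$ becomes the contraction $\pi_{\alpha'}:(X')^{\alpha'}\to(X')^{0}$. To run this I must exhibit $\Phi^{\alpha}({\cal C})$ as a category of perverse coherent sheaves for $\pi':X'\to Y'$ satisfying Assumption \ref{ass:stability}; this is where Propositions \ref{prop:0-dim:duality2} and \ref{prop:0-dim:equivalence} enter, identifying $\Phi^{\alpha}({\cal C})$ with some $\Per(X'/Y',{\bf b}_1',\dots,{\bf b}_m')$ and guaranteeing that ${\Bbb C}_{x'}$ is stable on $X'$.

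Granting this identification, the three assertions follow from Theorem \ref{thm:RDP-desing} applied to $\pi':X'\to Y'$. That theorem gives $(X')^{0}\cong Y'$ and shows that $\pi_{\alpha'}$ is the minimal resolution; composing with the isomorphism $\overline{M}_H^{v_0}(v_0)\cong(X')^{0}$ and comparing with $\phi=\psi\circ\pi'$ forces $\psi:Y'\to\overline{M}_H^{v_0}(v_0)$ to be bijective, which is (1). Part (2) is then immediate, since the singular points of $Y'\cong(X')^{0}$ are exactly the images of the properly $0$-semistable objects, and these pull back under $\Phi^{\alpha}$ to the properly $v_0$-semistable objects of Mukai vector $v_0$. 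For (3) I use that $\Phi^{\alpha}$ preserves the Mukai pairing, so that if $\bigoplus_j E_{ij}^{\oplus a_{ij}'}$ is an $S$-equivalence class then $\langle v(E_{ij}),v(E_{ik})\rangle=\langle v(\Phi^{\alpha}E_{ij}),v(\Phi^{\alpha}E_{ik})\rangle$; hence the Cartan-type matrix on $X$ agrees with the one on $X'$, and the affine $\tilde{A},\tilde{D},\tilde{E}$ classification together with the rational-double-point statement is precisely Theorem \ref{thm:RDP-desing}(3), the lattice input being that $\langle v_0,v(E_{ij})\rangle=0$, $\langle v(E_{ij})^2\rangle=-2$ and $\langle v(E_{ij}),v(E_{ik})\rangle\ge 0$ for $j\neq k$, as in Example \ref{ex:1}.

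The main obstacle is exactly the middle step: verifying that $\Phi^{\alpha}({\cal C})$ is a perverse category for $\pi':X'\to Y'$ satisfying Assumption \ref{ass:stability}, and that $r_0\widehat{H}$ is (proportional to) the polarization on $Y'\cong(X')^{0}$ furnished by Lemma \ref{lem:det-bdle}, so that $\pi'$ really coincides with $\pi_{\alpha'}$. The existence of enough $v_0$-twisted stable objects realizing the numerically irreducible $(-2)$-classes, which is what pins down the singularity types and is supplied by the forthcoming Proposition \ref{prop:K3-exceptional-exist}, is the other input I would have to secure. Once these are in place, the normalness of $\overline{M}_H^{v_0}(v_0)$ asserted in Theorem \ref{thm:K3-desing} follows from the normalness of $Y'$ together with the bijectivity of $\psi$, to be recorded separately in Proposition \ref{prop:K3-normal}.
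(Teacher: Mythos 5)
Your overall philosophy — transport the problem to $X'$ via $\Phi^{\alpha}$ and read everything off from the $0$-dimensional picture — is the right one, and it is essentially how the paper later proves normality of $\overline{M}_H^{v_0}(v_0)$ (Proposition \ref{prop:K3-normal}). The lattice input you cite for the affine classification is also correct and is exactly Lemma \ref{lem:K3:lattice} in the paper, which needs no Fourier--Mukai transport at all. But as a proof of Proposition \ref{prop:K3-desing} your plan is circular at the step you yourself flag as "the main obstacle," and that obstacle is not a verification to be secured later — it \emph{is} the content of the proposition. To identify $\Phi^{\alpha}({\cal C})$ (suitably shifted) with a category $\Per(X'/Y',{\bf b}_1',\dots,{\bf b}_m')$ satisfying Assumption \ref{ass:stability}, you must first exhibit its irreducible $0$-dimensional objects, i.e.\ show that $\Phi^{\alpha}(E_{ij})[1]$, $j>0$, are line bundles on smooth rational $(-2)$-curves $C_{ij}'$ contracted by $\pi'$ and that $\Phi^{\alpha}(E_{i0})[2]\cong A_0({\bf b}_i')$. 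Likewise, Propositions \ref{prop:0-dim:duality2} and \ref{prop:0-dim:equivalence} and Theorem \ref{thm:RDP-desing} are only available for $\pi'\colon X'\to Y'$ once one knows the exceptional fibers of $\pi'$ and has a local projective generator on $X'$ (Proposition \ref{prop:choice-of-alpha}, Lemma \ref{lem:assumption-G_2}); all of this presupposes precisely the description of $\psi^{-1}(\bigoplus_j E_{ij}^{\oplus a_{ij}'})$ that assertions (1)--(3) claim.

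The paper breaks this circle by working directly on $X'$ as a moduli space of objects on $X$, before any perverse category on $X'$ is in place: Lemma \ref{lem:K3:key} (the analogue of Lemma \ref{lem:0-stable:key}, proved by the lattice argument of Lemma \ref{lem:K3:lattice} rather than by transport) controls the $S$-equivalence classes and the elementary modifications $E\to E_{ij}$; the coherent-systems argument of Lemma \ref{lem:0-stable:exceptional} then shows each $C_{ij}'=\{x'\mid \Hom({\cal E}_{|\{x'\}\times X},E_{ij})\ne 0\}$ is a smooth rational curve and that $\phi^{-1}$ of the $S$-equivalence class equals $\cup_j C_{ij}'$ (giving (1) and (2)); and only then is $\Phi^{\alpha}$ used, in Lemma \ref{lem:K3:Phi(E)}, to compute $(C_{ij}',C_{kl}')=\langle v(E_{ij}),v(E_{kl})\rangle$ and pin down the singularity type in (3). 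If you want to keep your architecture, you must first reprove these three lemmas in the $v_0$-setting; once they are in hand, the perverse-category identification you propose becomes available, but by then the proposition is already proved.
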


\subsubsection{Proof of Proposition \ref{prop:K3-desing}.}
We note that $M_H^{v_0}(v_0)$ is smooth and
$\phi, \psi$ are isomorphic over $M_H^{v_0}(v_0)$.
Hence the singular points of $Y'$ are in the inverse image of
$\overline{M}_H^{v_0}(v_0) \setminus M_H^{v_0}(v_0)$. 
Thus we may concentrate on the locus of properly
$v_0$-twisted semi-stable objects.
The first claim of Proposition \ref{prop:K3-desing} (3) follows
from the following.
\begin{lem}\label{lem:K3:lattice}
Assume that $E$ is $S$-equivalent to 
$\bigoplus_{j \geq 0} E_{ij}^{\oplus a_{ij}'}$, where
$E_{ij}$ are $v_0$-twisted stable objects.
Then the matrix 
$(-\langle v(E_{ij}),v(E_{ik}) \rangle)_{j,k \geq 0}$ is of type 
$\widetilde{A},\widetilde{D}, \widetilde{E}$.
Moreover $\langle v(E_{ij}),v(E_{kl}) \rangle =0$, 
if $\bigoplus_{j \geq 0} E_{ij}^{\oplus a_{ij}'} \not \cong
\bigoplus_{l \geq 0} E_{kl}^{\oplus a_{kl}'}$.
\end{lem}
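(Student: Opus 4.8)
The plan is to reduce the assertion to three numerical properties of the Mukai vectors $v_{ij}:=v(E_{ij})$ and then feed them into the lattice-theoretic Lemma \ref{lem:appendix:lattice} (1), exactly as in the proof of Theorem \ref{thm:RDP-desing} (3). Writing $v_0=\sum_j a_{ij}'v_{ij}$ with $a_{ij}'>0$, I would verify: (a) $\langle v_0,v_{ij}\rangle=0$ for all $j$; (b) $\langle v_{ij}^2\rangle=-2$; and (c) $\langle v_{ij},v_{ik}\rangle\ge 0$ for $j\ne k$. Together with $\langle v_0^2\rangle=0$ (Assumption \ref{ass:v_0}), these are precisely the hypotheses isolated in Example \ref{ex:1}, so Lemma \ref{lem:appendix:lattice} (1) gives that $(-\langle v_{ij},v_{ik}\rangle)_{j,k\ge0}$ is a generalized Cartan matrix of affine type $\widetilde A,\widetilde D,\widetilde E$.

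Properties (c) and the inequality $\langle v_{ij}^2\rangle\ge -2$ I would obtain from stability and Serre duality, using $K_X\cong{\cal O}_X$. The $E_{ij}$ are $v_0$-twisted stable of the same reduced Hilbert polynomial, so for $j\ne k$ one has $\Hom(E_{ij},E_{ik})=\Hom(E_{ik},E_{ij})=0$, whence $\Ext^2(E_{ij},E_{ik})\cong\Hom(E_{ik},E_{ij})^{\vee}=0$ and therefore $-\langle v_{ij},v_{ik}\rangle=\chi(E_{ij},E_{ik})=-\dim\Ext^1(E_{ij},E_{ik})\le0$, which is (c). Likewise $\chi(E_{ij},E_{ij})=2-\dim\Ext^1(E_{ij},E_{ij})\le2$ gives $\langle v_{ij}^2\rangle\ge-2$.

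For (a) I would exploit that $v_0$ is isotropic. Taking a local projective generator $G$ with $v(G)=2v_0$ (Lemma \ref{lem:assumption}), the Hilbert polynomial of an object $E$ with $v(E)=v_0$ is $\chi(G,E(nH))$, whose constant term is $\chi(G,E)=-\langle 2v_0,v_0\rangle=0$; since the Jordan--H\"older factors share the reduced Hilbert polynomial of $E$, each satisfies $\chi(G,E_{ij})=-2\langle v_0,v_{ij}\rangle=0$, which is (a). The equality of the degree terms of the reduced Hilbert polynomials moreover gives $\langle v_{ij},\zeta\rangle=0$ for $\zeta:=H+\tfrac{(H,\xi_0)}{r_0}\varrho_X$, and one checks $\langle v_0,\zeta\rangle=0$, $\langle\zeta^2\rangle=(H^2)>0$. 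The reverse inequality in (b) then comes from the Hodge index theorem on the algebraic Mukai lattice (signature $(2,\rho(X))$): $\zeta^{\perp}$ has signature $(1,\rho(X))$, $v_0\in\zeta^{\perp}$ is isotropic, so $(v_0^{\perp}\cap\zeta^{\perp})_{\Bbb Q}/{\Bbb Q}v_0$ is negative definite. As $v_{ij}\in v_0^{\perp}\cap\zeta^{\perp}$ lies outside ${\Bbb Q}v_0$ (otherwise $E$ would be $v_0$-twisted stable, contrary to it being properly semistable), we get $\langle v_{ij}^2\rangle<0$; being even and $\ge-2$, it equals $-2$.

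Finally, for the orthogonality $\langle v_{ij},v_{kl}\rangle=0$ when the two $S$-equivalence classes differ, I would argue that such classes correspond to distinct points $q_i\ne q_k$ of $\overline{M}_H^{v_0}(v_0)\cong Y'$, so under the Fourier--Mukai equivalence $\widehat{\Phi}^{\alpha}$ to $X'$ the spherical objects $E_{ij},E_{kl}$ go to objects supported on disjoint exceptional fibers of $\pi':X'\to Y'$; disjointness of supports forces all $\Ext$-groups to vanish, hence $\langle v_{ij},v_{kl}\rangle=-\chi(E_{ij},E_{kl})=0$. I expect this last orthogonality to be the main obstacle: it requires identifying the twisted stable factors at a singular point with a configuration localized over one fiber of $\pi'$ and controlling the vanishing of cross $\Ext^1$'s, which is exactly where the geometry of the resolution $X'\to Y'$ and the Fourier--Mukai picture of the preceding subsection must be used carefully.
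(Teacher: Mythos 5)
Your reduction of the first claim to Lemma \ref{lem:appendix:lattice} (1) is exactly the paper's route: the paper likewise notes that $\deg_{G_1}(E_{ij})=\chi(G_1,E_{ij})=0$ places $v(E_{ij})$ in $v_0^{\perp}\cap\bigl(H+\tfrac{(H,\xi_0)}{r_0}\varrho_X\bigr)^{\perp}$, uses negative definiteness of that perpendicular modulo ${\Bbb Z}v_0$, and applies the lattice lemma; you merely make explicit the verification of the hypotheses $\langle v_{ij}^2\rangle=-2$ and $\langle v_{ij},v_{ik}\rangle\ge 0$, which the paper leaves implicit and which you carry out correctly. One small slip: the reason $v_{ij}\notin{\Bbb Q}v_0$ is not that ``$E$ would otherwise be stable'' but that $v_0$ is primitive and $0<\rk E_{ij}<\rk v_0$, since a properly semistable $E$ has at least two Jordan--H\"{o}lder factors, each of positive rank.

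The genuine gap is in the orthogonality statement. You route it through the geometry of $\pi':X'\to Y'$, wanting $\Phi^{\alpha}(E_{ij})$ and $\Phi^{\alpha}(E_{kl})$ to be supported on disjoint exceptional fibers over $q_i\ne q_k$. But the localization of the factors over a single singular point --- Lemma \ref{lem:K3:key}, Lemma \ref{lem:K3:exceptional} and Lemma \ref{lem:K3:Phi(E)} --- is \emph{deduced from} Lemma \ref{lem:K3:lattice}: the affine-type structure and the orthogonality of distinct configurations are needed to choose $\alpha$ and to run the $S$-equivalence bookkeeping in Lemma \ref{lem:K3:key} (1). So as written your argument is circular (and no direct support argument is available on $X$ itself, where the $E_{ij}$ have positive rank and full support). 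The paper closes this step with a second purely lattice-theoretic input, Lemma \ref{lem:appendix:lattice} (2): two decompositions of the isotropic class $v_0$ into positive integral combinations of $(-2)$-classes, with all pairwise products among the union nonnegative, either coincide as sets or span mutually orthogonal sublattices. The only geometric fact needed is one you already established, namely that $\chi(E_{ij},E_{kl})>0$ forces $E_{ij}\cong E_{kl}$; this gives $\langle v(E_{ij}),v(E_{kl})\rangle\ge 0$ for non-isomorphic factors and also shows that equal Mukai vectors force isomorphic objects, so distinct $S$-equivalence classes really do give distinct sets of Mukai vectors. Replacing your last paragraph by this appeal to Lemma \ref{lem:appendix:lattice} (2) repairs the proof.
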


\begin{proof}
Since $\deg_{G_1}(E)=\chi(G_1,E)=0$,
$\deg_{G_1}(E_{ij})=\chi(G_1,E_{ij})=0$, which implies
that $v(E_{ij}) \in v_0^{\perp} \cap \widehat{H}^\perp$.
Since $(v_0^{\perp} \cap \widehat{H}^\perp)/{\Bbb Z}v_0$ is negative
definite, applying Lemma \ref{lem:appendix:lattice} (1),
we see that the matrix is of type 
$\widetilde{A},\widetilde{D}, \widetilde{E}$.
We note that 
$\bigoplus_{j \geq 0} E_{ij}^{\oplus a_{ij}'} \not \cong
\bigoplus_{l \geq 0} E_{kl}^{\oplus a_{kl}'}$ implies that
$\{ E_{i0},E_{i1},...,E_{i s_i'} \}\ne
\{ E_{k0},E_{k1},...,E_{k s_k'} \}$. 
Since $\chi(E_{ij},E_{kl})>0$ implies that
$E_{ij} \cong E_{kl}$, 
$\{ v(E_{i0}),v(E_{i1}),...,v(E_{i s_i'}) \}\ne
\{ v(E_{k0}),v(E_{k1}),...,v(E_{k s_k'}) \}$. 
Then the second claim follows from Lemma \ref{lem:appendix:lattice} (2).
\end{proof}

By this lemma,
we may assume that $a_{i0}'=1$ for all $i$. 
THen we can choose a sufficiently small $\alpha \in v_0^{\perp}$ 
such that
$-\langle \alpha,v(E_{ij}) \rangle>0$ for all $j>0$.
We have the following.
\begin{lem}\label{lem:K3:key}
Lemma \ref{lem:0-stable:key} holds, if we replace 
$\varrho_X$ by $v_0$ and the $\alpha$-stability 
by the $v_0 +\alpha$-twisted stability.
\end{lem}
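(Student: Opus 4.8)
The plan is to import the entire structural analysis of Lemma \ref{lem:0-stable:key} and transport it along the Fourier-Mukai equivalence $\Phi^{\alpha}$, exploiting the formal analogy between the local picture at a rational double point of $Y$ (where $\varrho_X$ plays the role of the point sheaf and $\{E_{ij}\}$ are the $0$-stable constituents) and the local picture at a singular point of $\overline{M}_H^{v_0}(v_0)$ (where $v_0$ plays the role of $\varrho_X$ and $\{E_{ij}\}$ are now the $v_0$-twisted stable constituents of the $S$-equivalence class). The key observation driving the whole argument is that for objects whose Mukai vectors lie in the sublattice generated by $v_0$ and the $v(E_{ij})$, the numerical bookkeeping is identical: $\langle v_0, v(E_{ij})\rangle = 0$ plays the role of $\langle \varrho_X, v(E_{ij})\rangle = 0$, and the intersection form on $v_0^{\perp}\cap\widehat{H}^{\perp}/{\Bbb Z}v_0$ is negative definite exactly as the exceptional intersection matrix on $X$ is, by Lemma \ref{lem:appendix:lattice} and Lemma \ref{lem:K3:lattice}.

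First I would set up the precise dictionary: replace $\varrho_X$ by $v_0$ throughout, replace $0$-stability (equivalently $0$-twisted semistability, i.e.\ semistability with respect to the trivial twist) by $v_0+\alpha$-twisted stability, and let the role of ``$\alpha$-stability'' in the original lemma be played by the perturbed $v_0+\alpha$-twisted stability with $\alpha$ chosen small and satisfying $-\langle\alpha,v(E_{ij})\rangle>0$ for all $j>0$ as arranged just before the statement. For part (1), given a $v_0$-twisted semistable $F$ with $v(F)=v(E_{i0}\oplus\bigoplus_{j>0}E_{ij}^{\oplus b_j})$, I would form the object $E:=F\oplus\bigoplus_{j>0}E_{ij}^{\oplus(a_{ij}-b_j)}$, which has Mukai vector $v_0$ and is $v_0$-twisted semistable; the analogue of Lemma \ref{lem:one-point} together with the fact that its $S$-equivalence class maps to the singular point $q_i$ of $\overline{M}_H^{v_0}(v_0)$ forces $F$ to be $S$-equivalent to the asserted sum. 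Here the needed replacement for Lemma \ref{lem:one-point} is that a $v_0$-twisted semistable object with Mukai vector $v_0$ corresponds to a single point of $\overline{M}_H^{v_0}(v_0)$, which is immediate since $v_0$ is the Mukai vector being parametrized.

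For parts (2), (3) and (4) I would repeat the original arguments essentially verbatim, since they use only: the irreducibility of each $E_{ij}$ as a $v_0$-twisted stable object (so nonzero maps to or from $E_{ij}$ are epi/mono respectively), the vanishing $\Hom(E_{ij},\ker\psi)=0$ coming from the sign condition on $\alpha$, and the numerical constraint $\langle v(E_{ij}),v(E_{ij})\rangle=-2$. The inequality $-2\le\langle v(F)^2\rangle$ that bounds multiplicities, the equivalence of $\alpha$-stability, $\alpha$-semistability and the Hom-vanishing condition (c), and the stability of kernels and universal extensions all transfer because they are statements purely about Mukai vectors and Hom/Ext groups in the abelian category ${\cal C}$, both of which are preserved by the equivalence $\Phi_{X\to X}^{{\cal F}^{\vee}[2]}$ and by the Fourier-Mukai transform to $X'$.

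The main obstacle I anticipate is not any single calculation but verifying that the \emph{global} geometric inputs of the original lemma have correct analogues, specifically the substitute for Lemma \ref{lem:one-point} and Proposition \ref{prop:Y=X^0}. In the original setting $\pi_*(G^{\vee}\otimes\;\cdot\;)$ identifies the support with a point of $Y$, whereas here I must argue via the polarization ${\cal L}(\zeta)$ of Lemma \ref{lem:det-bdle} and the morphism $\phi:X'\to\overline{M}_H^{v_0}(v_0)$ that the $S$-equivalence class is genuinely a single point of the moduli space and that distinct singular points give orthogonal lattices (this is exactly Lemma \ref{lem:K3:lattice}). Once that connectedness-and-orthogonality input is in place, the combinatorial and homological steps are routine transcriptions, so I would state the lemma's proof as ``the proof of Lemma \ref{lem:0-stable:key} applies verbatim after substituting $v_0$ for $\varrho_X$, using Lemma \ref{lem:K3:lattice} in place of the computation of the exceptional intersection matrix and the nonemptiness results of Proposition \ref{prop:K3-desing} in place of Proposition \ref{prop:Y=X^0}.''
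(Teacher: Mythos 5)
Your proposal is correct and takes essentially the same route as the paper: parts (2)--(4) are handled there by repeating the arguments of Lemma \ref{lem:0-stable:key} verbatim, and part (1) is proved by replacing the support argument (Lemma \ref{lem:one-point} and Proposition \ref{prop:Y=X^0}) with exactly the lattice input you identify, namely applying Lemma \ref{lem:K3:lattice} to the two decompositions $\gr(F)\oplus\bigoplus_{j}E_{ij}^{\oplus(a_{ij}-b_{ij})}$ and $\bigoplus_{j}E_{ij}^{\oplus a_{ij}}$ of $v_0$ and using the coincide-or-orthogonal dichotomy (orthogonality being excluded by the shared $(-2)$-constituent when $v(F)\ne v_0$). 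The only cosmetic difference is that the paper does not route through identifying the $S$-equivalence class with the singular point $q_i$ via the morphism $\phi$; it invokes Lemma \ref{lem:K3:lattice} directly, which is the precise content that your appeal to ``the analogue of Lemma \ref{lem:one-point}'' must carry, as your trivial reformulation of that analogue does not by itself pin down the class.
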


\begin{proof}
(1) Assume that $F$ is $S$-equivalent to
$\bigoplus_{j \geq 0}F_{ij}^{\oplus c_{ij}}$, where
$F_{ij}$ are $v_0$-twisted stable objects.
If $v(F)=v(\oplus_{j \geq 0} E_{ij}^{\oplus b_{ij}})$, $b_{i0}=1$, then
applying Lemma \ref{lem:K3:lattice}
to $\bigoplus_{j \geq 0}F_{ij}^{\oplus c_{ij}}
 \oplus \bigoplus_{j \geq 0}E_{ij}^{\oplus (a_{ij}-b_{ij})}$ and 
$\bigoplus_{j \geq 0}E_{ij}^{\oplus a_{ij}}$, we get
$ \bigoplus_{j \geq 0}F_{ij}^{\oplus c_{ij}}
\oplus \bigoplus_{j>0}E_{ij}^{\oplus (a_{ij}-b_{ij})}
\cong \bigoplus_{j \geq 0}E_{ij}^{\oplus a_{ij}}$, which implies the
claim.
Then the proofs of (2), (3) and (4) are the same. 
\end{proof}

\begin{lem}\label{lem:K3:exceptional}
\begin{enumerate}
\item[(1)]
We set 
\begin{equation}
C_{ij}':=\{x' \in X'| 
\Hom({\cal E}_{|\{x' \} \times X},E_{ij}) \ne 0 \}, j>0.
\end{equation}
Then 
$C_{ij}'$ is a smooth rational curve.
\item[(2)]
\begin{equation}
\begin{split}
\phi^{-1}(\bigoplus_{j \geq 0} E_{ij}^{\oplus a_{ij}'})=&
\{ x' \in X'| \Hom(E_{i0},{\cal E}_{|\{x' \} \times X}) \ne 0 \}
= \cup_j C_{ij}'.
\end{split}
\end{equation}
In particular, $\phi$ and $\psi$ are surjective.
\end{enumerate}
\end{lem}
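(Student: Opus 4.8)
The plan is to prove Lemma \ref{lem:K3:exceptional} by transporting the arguments of Lemma \ref{lem:0-stable:exceptional} to the $K3$ setting, using the Fourier-Mukai equivalence $\Phi^{\alpha}=\Phi_{X\to X'}^{{\cal E}^{\vee}}$ in place of the direct moduli description. The conceptual dictionary is: the role of the structure sheaves ${\Bbb C}_x$ and the $0$-dimensional objects $E_{ij}$ on $X$ (in the local resolution picture) is now played by the $v_0$-twisted stable objects $E_{ij}$ on $X$ whose Mukai vectors are $(-2)$-classes in $v_0^\perp\cap\widehat H^\perp$, while the points $x'\in X'$ index the universal family members ${\cal E}_{|\{x'\}\times X}$, which are themselves $v_0+\alpha$-stable of Mukai vector $v_0$. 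I would first record the analogues of the numerical facts: for the chosen $\alpha$ we have $-\langle v(E_{ij}),\alpha\rangle>0$ for $j>0$, and $\langle v_0,v(E_{ij})\rangle=0$ since $\deg_{G_1}(E_{ij})=\chi(G_1,E_{ij})=0$ (this is exactly Lemma \ref{lem:K3:lattice}). These give the vanishing of the appropriate $\Hom$ and $\Ext^2$ groups by Serre duality on the $K3$ surface.

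For part (1), first I would show $C_{ij}'\neq\emptyset$: if it were empty, then by our choice of $\alpha$ and Serre duality both $\Hom({\cal E}_{|\{x'\}\times X},E_{ij})$ and $\Hom(E_{ij},{\cal E}_{|\{x'\}\times X})^\vee\cong\Ext^2({\cal E}_{|\{x'\}\times X},E_{ij})$ vanish for all $x'$, and since $\chi({\cal E}_{|\{x'\}\times X},E_{ij})=-\langle v_0,v(E_{ij})\rangle=0$ the middle $\Ext^1$ would vanish too, forcing $\Phi^{\alpha}(E_{ij})=0$, contradicting that $\Phi^{\alpha}$ is an equivalence and $E_{ij}\neq0$. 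To get smoothness and the ${\Bbb P}^1$-structure I would run the coherent-systems argument verbatim from Lemma \ref{lem:0-stable:exceptional}(1): form the moduli space $N(v_0,v(E_{ij}))$ of pairs $(E,V)$ with $E\in M_H^{v_0+\alpha}(v_0)$ and $V\subset\Hom(E,E_{ij})$ a line, use Lemma \ref{lem:K3:key}(3) to see the evaluation map $\xi\colon E\to E_{ij}\otimes V^\vee$ is surjective with $\ker\xi\in M_H^{v_0+\alpha}(v_0-v(E_{ij}))$, identify the Zariski tangent space with $\Ext^1(E,\ker\xi)$, and use Lemma \ref{lem:K3:key}(4) for the converse extension direction. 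Since $v_0-v(E_{ij})$ is a $(-2)$-vector, $M_H^{v_0+\alpha}(v_0-v(E_{ij}))$ is a reduced point (its tangent space $\Ext^1=0$), so $N(v_0,v(E_{ij}))\cong{\Bbb P}^1$, and the computation $\dim\Hom(E,E_{ij})=1$ together with injectivity of $\iota_*\colon\Ext^1(E,\ker\xi)\to\Ext^1(E,E)$ shows the projection $\iota\colon N\to X'$ is a closed immersion with image $C_{ij}'$.

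For part (2), I would argue exactly as in Lemma \ref{lem:0-stable:exceptional}(2): by the choice of $\alpha$, $\Hom(E_{i0},{\cal E}_{|\{x'\}\times X})\neq0$ whenever $x'\in\phi^{-1}(\bigoplus_j E_{ij}^{\oplus a_{ij}'})$; conversely a nonzero such $\Hom$ forces the $S$-equivalence class of ${\cal E}_{|\{x'\}\times X}$ to contain $E_{i0}$, hence $\phi(x')$ equals the given point. The second equality $\{x'\mid\Hom(E_{i0},{\cal E}_{|\{x'\}\times X})\neq0\}=\cup_j C_{ij}'$ follows because both sides are exactly the fibre over the point, using that $\Hom({\cal E}_{|\{x'\}\times X},E_{ij})\neq0$ for $j>0$ also forces $\phi(x')$ to be that point. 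Surjectivity of $\phi$ and $\psi$ then follows from the fibrewise description together with non-emptiness of the $C_{ij}'$. I expect the main obstacle to be the careful bookkeeping in the coherent-systems deformation argument, specifically verifying the surjectivity and dimension counts so that $N(v_0,v(E_{ij}))$ is genuinely a reduced ${\Bbb P}^1$ and $\iota$ is a closed immersion; this rests on the reducedness of the smaller moduli space $M_H^{v_0+\alpha}(v_0-v(E_{ij}))$, which in turn needs $\Ext^1=0$ there, guaranteed by $\langle(v_0-v(E_{ij}))^2\rangle=-2$ and smoothness (Proposition \ref{prop:K3:smooth}). The rest is a faithful translation of the resolution-of-singularities computation, and I would cite Lemma \ref{lem:K3:key} and Lemma \ref{lem:K3:lattice} wherever the analogous $0$-dimensional statements were used.
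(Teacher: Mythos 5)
Your proposal is correct and follows essentially the same route as the paper: the paper's own proof of Lemma \ref{lem:K3:exceptional} simply states that the argument is the same as that of Lemma \ref{lem:0-stable:exceptional}, with Lemma \ref{lem:K3:key} supplying the needed analogues of Lemma \ref{lem:0-stable:key}, which is precisely the transport you carry out (non-emptiness of $C_{ij}'$ via $\chi({\cal E}_{|\{x'\}\times X},E_{ij})=0$ and the equivalence $\Phi^{\alpha}$, the coherent-systems argument identifying $N(v_0,v(E_{ij}))$ with ${\Bbb P}^1$ over the reduced point $M_H^{v_0+\alpha}(v_0-v(E_{ij}))$, and the fibrewise description in part (2)). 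Your explicit verification that $\langle (v_0-v(E_{ij}))^2\rangle=-2$ and the appeal to Lemma \ref{lem:K3:lattice} for $\langle v_0,v(E_{ij})\rangle=0$ correctly fill in the details the paper leaves implicit.
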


\begin{proof}
The proof is the same as in Lemma \ref{lem:0-stable:exceptional}.
\begin{NB}(1) By our choice of $\alpha$, 
$\Hom(E_{ij},{\cal E}_{|\{x' \} \times X})=0$ for all $x' \in X'$.
If $C_{ij}'=\emptyset$, then $\chi(E_{ij},{\cal E}_{|\{x' \} \times X})=0$
implies that
$\Ext^1({\cal E}_{|\{x' \} \times X},E_{ij})=0$.
Then $\Phi(E_{ij})=0$, which is a contradiction.
Therefore $C_{ij}' \ne \emptyset$.
The smoothness follows from the same arguments in the proof of 
Theorem \ref{thm:RDP-desing}.

(2) By our choice of $\alpha$,
$\Hom(E_{i0},{\cal E}_{|\{x' \} \times X}) \ne 0$ for
 $x' \in \phi^{-1}(\bigoplus_{j \geq 0} E_{ij}^{\oplus a_{ij}'})$.
Conversely if $\Hom(E_{i0},{\cal E}_{|\{x' \} \times X}) \ne 0$, then
Lemma \ref{lem:K3:key} 
implies that ${\cal E}_{|\{x' \} \times X}$ is $S$-equivalent
to $\bigoplus_{j \geq 0} E_{ij}^{\oplus a_{ij}'}$.
Therefore we have the first equality.
In the same way, 
by the choice of $\alpha$, 
we get $\phi^{-1}(\bigoplus_{j \geq 0} E_{ij}^{\oplus a_{ij}'}) 
\subset \cup_j C_{ij}'$ and by Lemma \ref{lem:K3:key}, we get
$\cup_j C_{ij}' \subset 
\phi^{-1}(\bigoplus_{j \geq 0} E_{ij}^{\oplus a_{ij}'})$. 
\end{NB}
\end{proof}

We also have the following lemma whose proof is the same as of 
Lemma \ref{lem:Lambda(E)}.

\begin{lem}\label{lem:K3:Phi(E)}
$\Phi^\alpha(E_{ij})[1]$ is a line bundle on $C_{ij}'$.
In particular, $\langle v(E_{ij}),v(E_{kl}) \rangle=
(C_{ij}',C_{kl}')$. 
We define $b_{ij}'$ by
$\Phi^\alpha(E_{ij})={\cal O}_{C_{ij}'}(b_{ij}')[-1]$. 
\end{lem}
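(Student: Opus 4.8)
The plan is to follow the proof of Lemma \ref{lem:Lambda(E)} almost verbatim, adjusting only for the fact that here $\Phi^\alpha=\Phi_{X\to X'}^{{\cal E}^\vee}$ carries no cohomological shift, whereas $\Lambda^\alpha=\Phi^{({\cal E}^\alpha)^\vee}[2]$ did; this shift of $2$ is what relocates the relevant cohomological degrees. First I would compute the derived fibre. By base change,
\[
\Phi^\alpha(E_{ij})\overset{{\bf L}}{\otimes}{\Bbb C}_{x'}\cong {\bf R}\Hom_X({\cal E}_{|\{x'\}\times X},E_{ij}),
\]
whose $k$-th cohomology is $\Ext^k({\cal E}_{|\{x'\}\times X},E_{ij})$. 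Since $X$ and $X'$ are $K3$ surfaces, Serre duality gives $\Ext^2({\cal E}_{|\{x'\}\times X},E_{ij})\cong\Hom(E_{ij},{\cal E}_{|\{x'\}\times X})^\vee$, which vanishes by the choice of $\alpha$ (exactly as used in the proof of Lemma \ref{lem:K3:exceptional}). Hence the fibre has cohomology concentrated in degrees $0$ and $1$ for every $x'$.

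Next, as in the cited proof, the spectral sequence $E_2^{p,q}=\Tor_{-p}(H^q(\Phi^\alpha(E_{ij})),{\Bbb C}_{x'})\Rightarrow H^{p+q}(\Phi^\alpha(E_{ij})\overset{{\bf L}}{\otimes}{\Bbb C}_{x'})$ forces $H^k(\Phi^\alpha(E_{ij}))=0$ for $k\ne 0,1$. Reading off the corner terms, the vanishing of the fibre in degrees $-2$ and $-1$ yields $\Tor_2(H^0(\Phi^\alpha(E_{ij})),{\Bbb C}_{x'})=\Tor_1(H^0(\Phi^\alpha(E_{ij})),{\Bbb C}_{x'})=0$ for all $x'$, so the bottom cohomology sheaf $H^0(\Phi^\alpha(E_{ij}))$ is locally free. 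By Lemma \ref{lem:K3:exceptional} the support of each $H^k(\Phi^\alpha(E_{ij}))$ is contained in the curve $C_{ij}'$; a locally free sheaf supported on $C_{ij}'$ must be zero, so $H^0(\Phi^\alpha(E_{ij}))=0$ and $\Phi^\alpha(E_{ij})[1]$ is a genuine coherent sheaf. To see it is purely $1$-dimensional I would use that $\Phi^\alpha$ is an equivalence with inverse $\widehat{\Phi}^\alpha[2]$: since $(\Phi^\alpha)^{-1}({\Bbb C}_{x'})\cong{\cal E}_{|\{x'\}\times X}[2]$,
\[
\Hom({\Bbb C}_{x'},\Phi^\alpha(E_{ij})[1])\cong\Hom({\cal E}_{|\{x'\}\times X},E_{ij}[-1])=0,
\]
so $\Phi^\alpha(E_{ij})[1]$ has no $0$-dimensional subsheaf.

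Finally, setting $C:=\Div(\Phi^\alpha(E_{ij})[1])$, preservation of the Mukai pairing under $\Phi^\alpha$ gives $(c_1^2)=\langle v(\Phi^\alpha(E_{ij})[1])^2\rangle=\langle v(E_{ij})^2\rangle=-2$; since $\Supp\subset C_{ij}'$ and $(C_{ij}'^2)=-2$, this forces $C=C_{ij}'$ and the sheaf to be a rank-one torsion-free sheaf on the smooth rational curve $C_{ij}'$, i.e.\ a line bundle ${\cal O}_{C_{ij}'}(b_{ij}')$, giving $\Phi^\alpha(E_{ij})={\cal O}_{C_{ij}'}(b_{ij}')[-1]$. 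The ``in particular'' then follows because $\Phi^\alpha$ is an isometry of Mukai lattices and the two shifts by $[-1]$ cancel:
\[
\langle v(E_{ij}),v(E_{kl})\rangle=\langle v({\cal O}_{C_{ij}'}(b_{ij}')),v({\cal O}_{C_{kl}'}(b_{kl}'))\rangle=(C_{ij}',C_{kl}').
\]
The main obstacle will be the careful bookkeeping in the second step: because $\Phi^\alpha$ lacks the shift present in $\Lambda^\alpha$, one must re-verify which cohomological degree plays the role of the ``locally free'' sheaf and confirm the $\Tor_1$- and $\Tor_2$-vanishing that upgrades bottom cohomology to local freeness, rather than simply transcribing the degrees from Lemma \ref{lem:Lambda(E)}.
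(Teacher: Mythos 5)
Your proof is correct and follows exactly the route the paper takes: the paper's own proof of this lemma is just the remark that it is identical to that of Lemma \ref{lem:Lambda(E)}, and your argument is that proof with the cohomological degrees shifted from $[-2,-1]$ to $[0,1]$ to account for the absence of the shift $[2]$ in $\Phi^\alpha$. Each step (base change plus Serre duality and the choice of $\alpha$ to kill $\Ext^2$, local freeness of the bottom cohomology sheaf forcing it to vanish on the curve $C_{ij}'$, purity via the inverse functor, and the $\Div$ computation with $\langle v^2\rangle=-2$) matches the paper's argument.
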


This lemma shows that the configuration of $\{C_{ij}'|j>0\}$ is of type
$A,D,E$.
Since $(\widehat{H},C_{ij}')=0$,
$\cup_j C_{ij}'$ is contracted to a rational double point
of $Y'$. Hence Proposition \ref{prop:K3-desing} (2) and (3) hold.
Since $\psi^{-1}(\bigoplus_{j \geq 0} E_{ij}^{\oplus a_{ij}'})$ is a point,
$\psi$ is injective. Thus Proposition \ref{prop:K3-desing} (1) also holds.

\begin{NB}
In particular, all fibers of $\pi$ are connected.
Since $Y \to \overline{M}_H^{v_0}(v_0)$ is finite,
$Y \to  \overline{M}_H^{v_0}(v_0)$ is set-theoretically injective and
the image of $\cup_j C_{ij}'$ in $Y$ is a rational double point.
\end{NB}
We shall prove the normality in Proposition \ref{prop:K3-normal}.

\subsubsection{Perverse coherent sheaves on $X'$ and the normality of
$\overline{M}_H^{v_0}(v_0)$.}
We set $Z_i':=\pi^{-1}(q_i)=\sum_{j=1}^{s_i'}a_{ij}' C_{ij}'$.
Then $E_{i0}$ is a subobject of ${\cal E}_{|\{x' \} \times X}$
for $x' \in Z_i'$ 
and we have an exact sequence
\begin{equation}
0 \to E_{i0} \to {\cal E}_{|\{x' \} \times X} \to F \to 0,\;
 x' \in Z_i'
\end{equation}
where $F$ is a $v_0$-twisted semi-stable object with 
$\gr(F)=\oplus_{j= 1}^{s_i'} E_{ij}^{\oplus a_{ij}'}$.
Then we get an exact sequence
\begin{equation}\label{eq:y}
0 \to \Phi^\alpha(F)[1] \to \Phi^\alpha(E_{i0})[2] \to {\Bbb C}_{x'} \to 0
\end{equation}
in $\Coh(X')$.
Thus $\WIT_2$ holds for $E_{i0}$ with respect to
$\Phi^\alpha$.

\begin{defn}
We set 
$A_{i0}':=\Phi^\alpha(E_{i0})[2]$ and
$A_{ij}':=\Phi^\alpha(E_{ij})[2]={\cal O}_{C_{ij}'}(b_{ij}')[1]$ for $j>0$.
\end{defn}

\begin{lem}\label{lem:A_*}
\begin{enumerate}
\item[(1)]
$\Hom(A_{i0}',A_{ij}'[-1])=\Ext^1(A_{i0}',A_{ij}'[-1])=0$.
\item[(2)]
We set ${\bf b}_i':=(b_{i1}',b_{i2}',\dots,b_{i s_i'}')$.
Then $A_{i0}' \cong A_0({\bf b}_i')$.
In particular,
$\Hom(A_{i0}',{\Bbb C}_{x'})={\Bbb C}$ for $x' \in Z_i'$. 
\item[(3)]
Irreducible objects of $\Per(X'/Y',{\bf b}_1',...,{\bf b}_m')$
are 
\begin{equation}
A_{ij}' \;(1 \leq i \leq m,0 \leq j \leq s_i'),\; {\Bbb C}_{x'} \;
(x' \in X' \setminus \cup_i Z_i').
\end{equation}
\end{enumerate}
\end{lem}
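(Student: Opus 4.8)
The plan is to transport the structural results already established on the resolution side through the Fourier-Mukai equivalence $\Phi^\alpha = \Phi_{X \to X'}^{{\cal E}^\vee}$, exactly as was done in Lemma \ref{lem:A_alpha} for $\Lambda^\alpha$. The guiding principle is that $\Phi^\alpha$ carries the $v_0$-twisted stable objects $E_{ij}$ on $X$ to the irreducible objects $A_{ij}'$ of a category $\Per(X'/Y',{\bf b}_1',\dots,{\bf b}_m')$, and that this category is precisely of the type studied in subsection \ref{subsect:Per(X/Y)}. The three claims are essentially formal once we know that $\WIT_2$ holds for $E_{i0}$ (established in \eqref{eq:y}) and that the $A_{ij}'$ for $j>0$ are line bundles on the rational curves $C_{ij}'$ (established in Lemma \ref{lem:K3:Phi(E)}).

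For part (1), I would use that $\Phi^\alpha$ is an equivalence and hence preserves $\Hom$ and $\Ext^1$ up to the shift already recorded. Concretely,
\begin{equation}
\Hom(A_{i0}',A_{ij}'[k]) = \Hom(\Phi^\alpha(E_{i0})[2],\Phi^\alpha(E_{ij})[2+k]) = \Hom(E_{i0},E_{ij}[k])
\end{equation}
for $k=-1,0$. Since $E_{i0}$ and $E_{ij}$ are non-isomorphic $v_0$-twisted stable objects of the same (zero) slope, $\Hom(E_{i0},E_{ij})=0$; and $\Hom(E_{i0},E_{ij}[-1])=0$ holds because both lie in the heart ${\cal C}$ of a bounded $t$-structure, so negative $\Hom$'s vanish. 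This gives (1) directly.

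For part (2), the strategy is to verify that $A_{i0}'$ satisfies the two characterizing conditions of Lemma \ref{lem:G-1per:characterize} (conditions (i) and (ii) there), whence $A_{i0}' \cong A_0({\bf b}_i')$. Condition (i) is exactly part (1) combined with the corresponding $\Hom$-vanishing, and condition (ii) is provided by the exact sequence \eqref{eq:y}: there $\Phi^\alpha(F)[1]$ is the subobject $F'$, which is a $G$-twisted semistable $1$-dimensional sheaf supported on $Z_i'$ with $\chi(G,F')=0$ because $F$ is built from the $E_{ij}$, $j>0$, whose images are the ${\cal O}_{C_{ij}'}(b_{ij}')[1]$. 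The consequence $\Hom(A_{i0}',{\Bbb C}_{x'})={\Bbb C}$ then follows from Lemma \ref{lem:A_0} (or directly from the long exact sequence of \eqref{eq:y}). Part (3) I would obtain from Proposition \ref{prop:tilting:G-1Per-irred} applied to the category $\Per(X'/Y',{\bf b}_1',\dots,{\bf b}_m')$: its irreducible objects are precisely the $A_0({\bf b}_i')=A_{i0}'$, the $A_{ij}'={\cal O}_{C_{ij}'}(b_{ij}')[1]$ for $j>0$, and the skyscrapers ${\Bbb C}_{x'}$ off the exceptional locus.

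The main obstacle is bookkeeping rather than conceptual: I must make sure that the twisted-semistability data on $X$ translate into the correct line-bundle degrees $b_{ij}'$ and the correct fundamental-cycle structure on $X'$, i.e. that the category $\Per(X'/Y',{\bf b}_1',\dots,{\bf b}_m')$ is genuinely the image $\Phi^\alpha({\cal C})$ and that $\pi':X'\to Y'$ really contracts exactly the $\cup_j C_{ij}'$ to the rational double points $q_i$. This is where I would lean on Lemma \ref{lem:K3:exceptional}, Lemma \ref{lem:K3:Phi(E)}, and Proposition \ref{prop:Phi-alpha} (the analogue identifying the tilted category as a $\Per$-category); the negative-definiteness of the intersection matrix $(C_{ij}',C_{kl}')=\langle v(E_{ij}),v(E_{kl})\rangle$ guarantees the contraction exists, and then the identification of $A_{i0}'$ with $A_0({\bf b}_i')$ is forced by the uniqueness in Lemma \ref{lem:G-1per:characterize}.
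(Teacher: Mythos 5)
Your proposal is correct and follows essentially the same route as the paper: part (1) by transporting $\Hom(E_{i0},E_{ij}[k])$ for $k=-1,0$ through the equivalence $\Phi^\alpha$, part (2) by feeding the exact sequence \eqref{eq:y} together with (1) into the characterization of $A_0({\bf b}_i')$ (the paper cites Lemma \ref{lem:A_0}, which is just the packaged form of Lemma \ref{lem:G-1per:characterize} that you invoke), and part (3) by Proposition \ref{prop:tilting:G-1Per-irred}. The extra justifications you supply (stability forcing $\Hom(E_{i0},E_{ij})=0$, vanishing of negative $\Hom$'s in a heart, and the geometric input from Lemmas \ref{lem:K3:exceptional} and \ref{lem:K3:Phi(E)}) are exactly the facts the paper relies on implicitly.
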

\begin{proof}
(1)
We have
\begin{equation}
\begin{split}
\Hom(A_{i0}',A_{ij}'[k])&=
\Hom(\Phi^\alpha(E_{i0})[2],\Phi^\alpha(E_{ij})[2+k])\\
&=\Hom(E_{i0},E_{ij}[k])=0
\end{split}
\end{equation} 
for $k=-1,0$.

(2)
By \eqref{eq:y} and (1), 
we can apply Lemma \ref{lem:A_0}
to prove 
$A_{i0}'=A_0({\bf b}_i')=A_{q_i}$. 
(3) is a consequence of (2) and
Proposition \ref{prop:tilting:G-1Per-irred}.
\end{proof}

\begin{defn}\label{defn:K3:Per^D}
We set
\begin{equation}
\begin{split}
\Per(X'/Y'):=&\Per(X'/Y',{\bf b}_1',\dots,{\bf b}_m'),\\
\Per(X'/Y')^D:=&
\Per(X'/Y',-{\bf b}_1'+2{\bf b}_0,\dots,-{\bf b}_m'+2{\bf b}_0)^*,
{\bf b}_0:=(-1,-1,...,-1).
\end{split}
\end{equation}
\end{defn}

\begin{rem}
Assume that $\alpha \in v_0^{\perp}$ satisfies 
$-\langle v(E_{ij}),\alpha \rangle<0$, $j>0$.
Then $\Phi(E_{ij})[2]={\cal O}_{C_{ij}'}(b_{ij}'')$, $j>0$
and $\Phi(E_{i0})[2]=A_0({\bf b}_i'')[1]$ belong to
$\Per(X'/Y',{\bf b}_1'',\dots,{\bf b}_m'')^*$, where
${\bf b}_i''=(b_{i0}'',...,b_{i s_i'}'')$. 
\end{rem}

\begin{lem}\label{lem:assumption-G_2}
There is a local projecive generator $G$ of $\Per(X'/Y')$
such that $\tau(G)=2 \tau(G_2)$.  Moreover
$G^{\vee}$ is a local projective generator of $\Per(X'/Y')^D$.
\end{lem}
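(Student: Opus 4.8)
The plan is to establish Lemma \ref{lem:assumption-G_2} by transporting, via the Fourier-Mukai equivalence $\Phi^\alpha$, a local projective generator that already exists on the source side $X$. Recall that by Assumption \ref{ass:v_0} we have $\langle v_0, v(A_{ij})\rangle<0$ for all $i,j$, so Lemma \ref{lem:assumption} (equivalently Corollary \ref{cor:generator-exist}) produces a local projective generator $G_0$ of the category ${\cal C}$ on $X$ with $v(G_0)=2v_0$. Since $G_2=\mathcal{E}^\vee_{|X'\times\{x\}}$ satisfies $w_0=v(G_2)$ with $\Phi^\alpha(v_0)=\varrho_{X'}$ under the degree-preservation formula \eqref{eq:deg-preserve}, the first task is to check that $\Phi^\alpha$ (or rather the appropriately shifted version $\Lambda^\alpha$-type functor) carries ${\cal C}$ to $\Per(X'/Y')$ and sends $G_0$ to a locally free sheaf that generates the target category.

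First I would verify that $\Phi^\alpha$ restricts to an equivalence ${\cal C}\to\Per(X'/Y')$. By construction $\Per(X'/Y')=\Per(X'/Y',{\bf b}_1',\dots,{\bf b}_m')$ is characterized by its irreducible objects $A_{ij}'=\Phi^\alpha(A_{ij})[\,\cdot\,]$ (Lemma \ref{lem:A_*}(3)), which are precisely the images of the irreducible objects $A_{ij}$ of ${\cal C}$ together with the skyscrapers ${\Bbb C}_{x'}$ away from the exceptional loci. This is the exact situation handled by Proposition \ref{prop:0-dim:equivalence} and Proposition \ref{prop:tilting:S-T}: the torsion pair is characterized by the set $\Sigma$ of irreducible objects, and an equivalence of derived categories matching these generators matches the hearts. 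Then I would set $G:=\Phi^\alpha(G_0)$ (up to the appropriate shift) and argue, exactly as in the proof of Proposition \ref{prop:Phi-alpha}(1) and Corollary \ref{cor:generator-exist}, that $G$ is locally free: the key computation is $H^0(X',H^k(G\otimes^{\bf L}{\Bbb C}_{x'}))=\Hom({\Bbb C}_{x'},\Phi^\alpha(G_0)[k+\cdot])=\Hom(G_0,(\Phi^\alpha)^{-1}({\Bbb C}_{x'})[-k])=0$ for $k\neq0$ and all $x'$, using that $(\Phi^\alpha)^{-1}({\Bbb C}_{x'})\in{\cal C}$ and $G_0$ is a local projective generator. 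Since $\Phi^\alpha$ is an equivalence and $G_0$ generates ${\cal C}$, the image $G$ is a local projective generator of $\Per(X'/Y')$, and $\tau(G)=2\tau(G_2)$ follows from $\tau(G_0)=2\tau(G_1)$ together with $\Phi^\alpha(v_0)=\varrho_{X'}$ and $v(G_2)=w_0$ via \eqref{eq:deg-preserve}.

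For the second assertion, that $G^\vee$ is a local projective generator of $\Per(X'/Y')^D$, I would invoke Lemma \ref{lem:tilting:dual} together with Proposition \ref{prop:tilting:S-T}(2) and the duality description in Definition \ref{defn:K3:Per^D}. Indeed Lemma \ref{lem:tilting:dual} shows that if $G$ is a locally free local projective generator of a tilted category ${\cal C}_G$, then $G^\vee$ is a local projective generator of the dual tilted category ${\cal C}_G^D$; and by Proposition \ref{prop:tilting:S-T}(2) the dual category ${\cal C}_G^D$ is characterized by the dualized irreducible objects $D_{X'}(A_{ij}')\otimes K_{X'}[n-1]$, which on the $K3$ surface $X'$ (where $K_{X'}\cong{\cal O}_{X'}$, $n=2$) are exactly the generators of $\Per(X'/Y')^D=\Per(X'/Y',-{\bf b}'+2{\bf b}_0)^*$. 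The main obstacle I anticipate is bookkeeping the shifts and the precise identification $\Per(X'/Y')^D=\Per(X'/Y')^*$ with the shifted line-bundle data; concretely one must check the relations $D({\cal O}_{C_{ij}'}(b_{ij}')[1])[2]={\cal O}_{C_{ij}'}(-b_{ij}'-2)$ and $D(A_0({\bf b}_i'))[2]=A_0(-{\bf b}_i'+2{\bf b}_0)^*[1]$, which is precisely the content of the computation appearing in the proof of Lemma \ref{lem:Per-generator*} in the excerpt, so I would cite that relation rather than redo it. Everything else is a routine transport of structure along the equivalence.
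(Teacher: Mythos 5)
Your strategy for the first assertion --- transporting a local projective generator $G_0$ of ${\cal C}$ with $v(G_0)=2v_0$ across $\Phi^\alpha$ --- does not work, and the failure is structural rather than a matter of bookkeeping. The functor $\Phi^\alpha$ is \emph{not} an equivalence of hearts ${\cal C}\to\Per(X'/Y')$. What the paper establishes is only an equivalence of the subcategories ${\cal C}_{v_0}\to\Per(X'/Y')_0$ (Proposition \ref{prop:K3-normal}) and of the tilted categories ${\frak A}_1\to{\frak A}_2^{\mu}[-1]$ (Theorem \ref{thm:equiv-Phi}). You conflate the irreducible $0$-dimensional objects $A_{ij}$ of ${\cal C}$ with the positive-rank stable factors $E_{ij}$ of the fiber sheaves ${\cal E}_{|\{x'\}\times X}$: it is the latter, not the former, whose shifted images $\Phi^\alpha(E_{ij})[2]=A_{ij}'$ are the irreducible objects of $\Per(X'/Y')$. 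A skyscraper ${\Bbb C}_x\in{\cal C}$ is sent to ${\cal E}^{\vee}_{|X'\times\{x\}}$, a rank-$r_0$ object living in $\Per(X'/Y')^D$ (Theorem \ref{thm:duality}), not in $\Per(X'/Y')$; so Proposition \ref{prop:tilting:S-T} cannot be invoked to match the two hearts.

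Your local-freeness computation fails for the same reason. You need $\Hom(G_0,(\Phi^\alpha)^{-1}({\Bbb C}_{x'})[-k])=0$ for $k\ne 0$, but $(\Phi^\alpha)^{-1}({\Bbb C}_{x'})={\cal E}_{|\{x'\}\times X}[2]$ is a shifted \emph{positive-rank} sheaf, not a $0$-dimensional object of ${\cal C}$, so being a local projective generator gives $G_0$ no Ext-vanishing against it. (Contrast the proof of Corollary \ref{cor:generator-exist}, where $\Lambda^\alpha({\Bbb C}_x)$ \emph{is} $0$-dimensional and the analogous vanishing does hold.) Worse, the conclusion is actually false: since $v_0$ is isotropic, $\chi(G_0,{\cal E}_{|\{x'\}\times X})=-\langle 2v_0,v_0\rangle=0$, while ${\bf R}\Hom(G_0,{\cal E}_{|\{x'\}\times X})$ cannot vanish for all $x'$ (else $\Phi^\alpha(G_0)=0$, contradicting that $\Phi^\alpha$ is an equivalence). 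A nonzero complex with zero Euler characteristic has cohomology in at least two degrees, so $\Phi^\alpha(G_0)\overset{{\bf L}}{\otimes}{\Bbb C}_{x'}$ is not concentrated in one degree for some $x'$, and $\Phi^\alpha(G_0)$ is not a locally free sheaf. This is exactly why Proposition \ref{prop:choice-of-alpha} has to impose the extra hypothesis $\Hom(G,E[k])=0$ for $k\ne 2$ before concluding that $\Phi^\alpha(G)$ is locally free.

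The paper's argument is much shorter and purely numerical on the target side: using the cohomological Fourier--Mukai identification one computes $\chi(G_2,A_{ij}')=\chi({\Bbb C}_x,E_{ij})=\rk E_{ij}>0$ for all $i,j$, and then Proposition \ref{prop:generator-exist} (the deformation-theoretic existence result, applied directly on $X'$) produces a local projective generator $G$ of $\Per(X'/Y')$ with $\tau(G)=2\tau(G_2)$. Your treatment of the second assertion --- Lemma \ref{lem:tilting:dual} together with the definition of $\Per(X'/Y')^D$ and the shift relations $D({\cal O}_{C_{ij}'}(b_{ij}')[1])[2]={\cal O}_{C_{ij}'}(-b_{ij}'-2)$ --- is correct and agrees with the paper.
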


\begin{proof}
Since
$\chi(G_2,A_{ij})=\chi({\Bbb C}_x,E_{ij})=\rk E_{ij}>0$,
we get our claim by Proposition \ref{prop:generator-exist}.
The second claim follows from the definition
of $\Per(X'/Y')^D$ and Lemma \ref{lem:tilting:dual}.
\end{proof}

\begin{lem}\label{lem:semi-stable-objects}
Let $E$ be an object of ${\cal C}$ such that
$E$ is $G_1$-twisted stable and $\deg_{G_1}(E)=\chi(G_1,E)=0$.
Then $E \cong E_{ij}$ or $E \cong {\cal E}_{|\{ x' \} \times X}$,
$x' \in X' \setminus \cup_i Z_i'$. 
\end{lem}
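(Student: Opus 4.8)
The plan is to exploit the Fourier--Mukai equivalence $\Phi^{\alpha}=\Phi_{X\to X'}^{{\cal E}^{\vee}}$ to transport the stability condition on $E$ into a recognizable condition on $X'$. First I would observe that by Proposition \ref{prop:deg-preserve} (3) the condition $\deg_{G_1}(E)=0$ is equivalent to $\deg_{G_2}(\Phi^{\alpha}(E))=0$, and that $\chi(G_1,E)=0$ means $\langle v(G_1),v(E)\rangle=0$, i.e. $\langle v_0,v(E)\rangle=0$. Combined with $\deg_{G_1}(E)=0$ this forces $v(E)\in v_0^{\perp}\cap\widehat H^{\perp}$. Since $(v_0^{\perp}\cap\widehat H^{\perp})/{\Bbb Z}v_0$ is negative definite, the $G_1$-twisted stability of $E$ together with $\chi(E,E)\leq 0$ pins down $\langle v(E)^2\rangle$ to be either $0$ or $-2$.

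Next I would split into the two cases according to $\langle v(E)^2\rangle$. If $\langle v(E)^2\rangle=-2$, then $E$ is a rigid $v_0$-twisted stable object with $v(E)\in v_0^{\perp}\cap\widehat H^{\perp}$, and by the classification of such objects via Lemma \ref{lem:K3:lattice} and Lemma \ref{lem:K3:exceptional} (the irreducible objects supported on the exceptional fibers), $E$ must be one of the $E_{ij}$. If $\langle v(E)^2\rangle=0$, then I would argue that $v(E)$ is a positive multiple of $v_0$ inside the rank-one quotient lattice; primitivity of $v_0$ (Assumption \ref{ass:v_0}) and $\chi(G_1,E)=0$ would then give $v(E)=v_0$ after checking the rank matches, so $E$ is a $v_0$-twisted stable object with $v(E)=v_0$. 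Such an object is precisely a point of $M_H^{v_0}(v_0)$, and under the identification of the smooth locus with $X'\setminus\cup_i Z_i'$ via $\phi$, it is represented by ${\cal E}_{|\{x'\}\times X}$ for $x'\in X'\setminus\cup_i Z_i'$.

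To make the second case precise I would use the equivalence directly: apply $\Phi^{\alpha}$ to $E$ and analyze $\Phi^{\alpha}(E)\in{\bf D}(X')$. Because $\deg_{G_2}(\Phi^{\alpha}(E))=0$ and $\Phi^{\alpha}$ is an equivalence preserving the relevant numerical data (Proposition \ref{prop:deg-preserve} (2)), $\Phi^{\alpha}(E)$ should be a shift of a skyscraper sheaf ${\Bbb C}_{x'}$ on $X'$; inverting via $\widehat\Phi^{\alpha}[2]$ recovers $E\cong{\cal E}_{|\{x'\}\times X}$. The point $x'$ lies off $\cup_i Z_i'$ exactly when $E$ is genuinely $v_0$-twisted stable rather than $S$-equivalent to a sum $\bigoplus_j E_{ij}^{\oplus a_{ij}'}$, which is the content of Lemma \ref{lem:K3:exceptional} (2).

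The main obstacle will be the case $\langle v(E)^2\rangle=0$: I need to rule out that $E$ could be $v_0$-twisted stable with $v(E)$ a Mukai vector that is isotropic but not equal to $v_0$ itself, and to identify it cleanly with a universal family member. The cleanest route is to transport everything through $\Phi^{\alpha}$ so that $E$ becomes a point object on $X'$, where stability and the isotropic condition leave no freedom; the negative-definiteness of the ambient lattice modulo $v_0$ (via Lemma \ref{lem:appendix:lattice}, as used in Lemma \ref{lem:K3:lattice}) is what guarantees there are no other isotropic classes in $v_0^{\perp}\cap\widehat H^{\perp}$ realized by stable objects with $\chi(G_1,E)=0$. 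Verifying that the shifted image is concentrated in a single degree, so that $E$ is honestly a sheaf-theoretic point object rather than a more complicated complex, is the step requiring the most care.
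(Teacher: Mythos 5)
Your lattice-theoretic reduction to $\langle v(E)^2\rangle\in\{0,-2\}$ is fine, but both branches of your case analysis stop short of identifying the \emph{object}, and the step you yourself flag as "requiring the most care" is exactly where the argument is missing. In the isotropic case, primitivity of $v_0$ only gives $v(E)=nv_0$ for some $n\in{\Bbb Z}_{>0}$; nothing in the numerics ("checking the rank matches") forces $n=1$, and even granting $v(E)=v_0$, knowing that $v(\Phi^{\alpha}(E))=\varrho_{X'}$ does not make $\Phi^{\alpha}(E)$ a shifted skyscraper — you need to produce an actual morphism between $E$ and a fiber of ${\cal E}$. Similarly, in the $(-2)$ case, Lemma \ref{lem:K3:lattice} and Proposition \ref{prop:K3-exceptional-exist} only classify the admissible Mukai vectors $u=\sum_j b_jv(E_{ij})$; they do not rule out a $G_1$-twisted stable object with $\sum_j b_j\geq 2$, which would not be isomorphic to any single $E_{ij}$. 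To exclude that you again need a Hom argument (e.g.\ $\sum_j b_j\langle v(E_{ij}),u\rangle=-2<0$ forces $\chi(E_{ij},E)>0$ for some $j$, hence a nonzero map between stable objects of equal slope, hence an isomorphism).

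The paper's proof dispenses with the lattice entirely and is essentially the missing step itself: since $\chi(G_1,E)=-\langle v_0,v(E)\rangle=0$ and $v({\cal E}_{|\{x'\}\times X})=v_0$ for every $x'$, we have $\chi({\cal E}_{|\{x'\}\times X},E)=0$ for all $x'$; because $\Phi^{\alpha}$ is an equivalence, $\Phi^{\alpha}(E)\neq 0$, so for some $x'$ either $\Hom({\cal E}_{|\{x'\}\times X},E)\neq 0$ or (by Serre duality from $\Ext^2$) $\Hom(E,{\cal E}_{|\{x'\}\times X})\neq 0$. As $E$ is $G_1$-twisted stable and ${\cal E}_{|\{x'\}\times X}$ is $G_1$-twisted semi-stable with the same (vanishing) reduced invariants, such a morphism exhibits $E$ as a quotient object or a subobject of ${\cal E}_{|\{x'\}\times X}$, i.e.\ as one of its Jordan--H\"older factors — which are precisely the $E_{ij}$ for $x'\in Z_i'$ and ${\cal E}_{|\{x'\}\times X}$ itself for $x'\notin\cup_iZ_i'$. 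I would recommend replacing your case analysis by this argument; if you want to keep your structure, you must insert this Hom/Serre-duality step into both cases, at which point the lattice computation becomes redundant.
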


\begin{proof}
Since $\chi(G_1,E)=0$, there is a point $x' \in X'$ such that
$\Hom({\cal E}_{|\{x' \} \times X},E) \ne 0$ or
$\Hom(E,{\cal E}_{|\{x' \} \times X}) \ne 0$.
Then $E$ is a quotient object or a subobject of
${\cal E}_{|\{x' \} \times X}$, which implies the claim.
\end{proof}

\begin{defn}
\begin{enumerate}
\item[(1)]
Let
${\cal C}_{v_0}$ be the full subcategory of
${\cal C}$ generated by
$E_{ij}$ and ${\cal E}_{|\{x' \} \times X}$, $x' \in X'$.
That is ${\cal C}_{v_0}$ consists of $v_0$-twisted semi-stable
objects $E$ with $\deg_{G_1}(E)=\chi(G_1,E)=0$.
\item[(2)]
Let $\Per(X'/Y')_0$ be the full subcategory of 
$\Per(X'/Y')$ consisting of 0-dimensional objects.
\end{enumerate}
\end{defn}

\begin{prop}\label{prop:K3-normal}
\begin{enumerate}
\item[(1)]
$\Phi^\alpha[2]$ induces an equivalence
${\cal C}_{v_0} \to \Per(X'/Y')_0$.
\item[(2)]
Moreover $\Phi^\alpha[2]$ induces an isomorphism
${\cal M}_H^{v_0+\beta}(v_0)^{ss} \cong 
{\cal M}_{\widehat{H}}^{G,\Phi^\alpha(\beta)}(\varrho_{X'})^{ss}$,
where $\beta \in (v_0^{\perp} \cap \varrho_X^{\perp}) \otimes {\Bbb Q}$
is sufficiently small and $G$ an arbitrary projective generator of 
$\Per(X'/Y')$.
\item[(3)]
$\overline{M}_H^{v_0+\beta}(v_0) \cong
\overline{M}_{\widehat{H}}^{G,\Phi^\alpha(\beta)}(\varrho_{X'})$.
In particular, $\overline{M}_H^{v_0}(v_0)$
is a normal surface.
\end{enumerate}
\end{prop}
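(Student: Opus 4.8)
The plan is to establish Proposition \ref{prop:K3-normal} by transporting the entire picture on $X$ (the category ${\cal C}$, its $0$-dimensional objects, and the moduli functor) across the Fourier-Mukai equivalence $\Phi^\alpha=\Phi_{X\to X'}^{{\cal E}^\vee}$ to the side of $X'$, where it becomes the theory of perverse coherent sheaves $\Per(X'/Y')$ already developed in section \ref{sect:RDP}. Since $\Phi^\alpha[2]$ is an equivalence and $\widehat{\Phi}^\alpha[2]$ is its inverse, the main task is identifying images of the relevant objects and checking that the defining numerical conditions of the two moduli problems correspond.

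For (1), I would first recall from Lemma \ref{lem:K3:Phi(E)} that $\Phi^\alpha(E_{ij})[2]=A_{ij}'$ is a line bundle shift on $C_{ij}'$ for $j>0$, and from Lemma \ref{lem:A_*} that $A_{i0}'=\Phi^\alpha(E_{i0})[2]=A_0({\bf b}_i')$, so that $\Phi^\alpha[2]$ carries each irreducible object $E_{ij}$ of ${\cal C}$ supported on $\cup_i Z_i$ to an irreducible object $A_{ij}'$ of $\Per(X'/Y')$ supported on $\cup_i Z_i'$, and carries ${\cal E}_{|\{x'\}\times X}$ with $x'\notin\cup_iZ_i'$ to ${\Bbb C}_{x'}$. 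By Lemma \ref{lem:A_*}(3) these are exactly the irreducible objects of $\Per(X'/Y')_0$. Since ${\cal C}_{v_0}$ is by definition generated by the $E_{ij}$ and the fibres ${\cal E}_{|\{x'\}\times X}$, and $\Phi^\alpha[2]$ is an exact equivalence of triangulated categories preserving these generators and their extensions, the image of ${\cal C}_{v_0}$ is the subcategory of $\Per(X'/Y')$ generated by the $A_{ij}'$ and the skyscrapers, i.e. $\Per(X'/Y')_0$. Here I would use Lemma \ref{lem:semi-stable-objects} to verify that every $G_1$-twisted stable object of degree $0$ lands among these generators, so that ${\cal C}_{v_0}$ is precisely the category of $v_0$-twisted semistable objects of slope $0$.

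For (2), the key point is the compatibility of the stability data. By Proposition \ref{prop:deg-preserve}(3) we have $\deg_{G_1}(v)=-\deg_{G_2}(\Phi^\alpha(v))$, and by \eqref{eq:deg-preserve} the primitive isotropic class $v_0$ maps to $\varrho_{X'}$ while a small $\beta\in(v_0^\perp\cap\varrho_X^\perp)\otimes{\Bbb Q}$ maps to $\Phi^\alpha(\beta)\in\varrho_{X'}^\perp$. Thus the $(v_0+\beta)$-twisted stability on ${\cal C}$ translates into the $\Phi^\alpha(\beta)$-twisted stability on $\Per(X'/Y')$ for the Mukai vector $\varrho_{X'}$. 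I would invoke Lemma \ref{lem:assumption-G_2} to supply a local projective generator $G$ of $\Per(X'/Y')$ with $\tau(G)=2\tau(G_2)$, so that the $G$-twisted Hilbert polynomials on the two sides agree under $\Phi^\alpha[2]$ up to the degree reversal already accounted for; since the construction of the moduli space via Morita equivalence (Proposition \ref{prop:Morita-family}) depends only on the associated ${\cal A}$-module data, the equivalence of categories upgrades to an isomorphism of moduli stacks ${\cal M}_H^{v_0+\beta}(v_0)^{ss}\cong{\cal M}_{\widehat H}^{G,\Phi^\alpha(\beta)}(\varrho_{X'})^{ss}$. Statement (3) then follows by passing to coarse moduli spaces, and the normality of $\overline{M}_H^{v_0}(v_0)$ is inherited from Proposition \ref{prop:Y=X^0}, which identifies the moduli space of $0$-dimensional perverse objects on $X'$ with the normal surface $Y'$.

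The main obstacle I anticipate is the careful bookkeeping in (2): one must confirm that the degree reversal in Proposition \ref{prop:deg-preserve}(3) interacts correctly with the direction of the slope inequalities defining stability, so that subobjects on the $X$ side correspond to subobjects (not quotients) of the right slope on the $X'$ side. Because $\Phi^\alpha[2]$ sends ${\cal C}_{v_0}$ to $\Per(X'/Y')_0$ as an exact equivalence, short exact sequences are preserved, but the numerical normalization $\chi(G_1,E)=0\leftrightarrow\chi(G_2,\Phi^\alpha(E))=0$ must be tracked so that the twisted stability functions are genuinely proportional rather than merely sign-related; once this identification is pinned down the isomorphism of moduli stacks is formal from the construction in subsection \ref{subsect:A-module}.
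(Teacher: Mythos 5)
Your treatment of (1) and (3) is essentially the paper's argument: (1) is immediate from $\Phi^\alpha(E_{ij})[2]=A_{ij}'$ and $\Phi^\alpha({\cal E}_{|\{x'\}\times X})[2]={\Bbb C}_{x'}$, and (3) reduces, once (2) is known, to the identification $\overline{M}_{\widehat{H}}^{G,0}(\varrho_{X'})\cong (X')^0\cong Y'$ from Proposition \ref{prop:Y=X^0}. For (2), however, your argument has a genuine gap, and the obstacle you flag at the end is precisely where it lies --- but the resolution is not the one you are reaching for.

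First, there is no ``degree reversal versus slope inequality'' issue to track, because the image objects $\Phi^\alpha(E)[2]$ have Mukai vector $\varrho_{X'}$ and are therefore $0$-dimensional objects of $\Per(X'/Y')$: their $G$-twisted Hilbert polynomial is a constant, and the relevant stability is the refined $(G,A)$-twisted stability for $0$-dimensional objects, not a slope or Gieseker condition. The step you are missing on the $X$ side is the reduction: for $E\in{\cal M}_H^{v_0}(v_0)^{ss}$ and $\beta$ sufficiently small, $(v_0+\beta)$-twisted semistability needs to be tested only against subobjects $F$ with $\deg_{G_1}(F)=\chi(G_1,F)=0$, where it becomes the condition $\chi(\beta,F)\leq 0$. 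Those subobjects are exactly the subobjects of $E$ inside ${\cal C}_{v_0}$, so by (1) they correspond bijectively to subobjects of $\Phi^\alpha(E)[2]$ in $\Per(X'/Y')_0$, and since $\chi(\Phi^\alpha(\beta),\Phi^\alpha(F))=\chi(\beta,F)$ the condition transfers verbatim. Your appeal to matching Hilbert polynomials via Lemma \ref{lem:assumption-G_2} and the Morita construction does not substitute for this reduction, since the two Hilbert polynomials have different degrees and are not proportional.

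Second, you only produce the isomorphism for the particular generator $G$ with $\tau(G)=2\tau(G_2)$, whereas the statement asserts it for an arbitrary local projective generator $G$ of $\Per(X'/Y')$. The paper disposes of this with Remark \ref{rem:G-indep}: for a $0$-dimensional object with $v(E)=\varrho_{X'}$ and a twisting class $A=\Phi^\alpha(\beta)$ of rank $0$, the $(G,A)$-twisted semistability is equivalent to $\chi(A,E')\leq 0$ for all subobjects $E'$, hence independent of $G$. Without this (or an equivalent argument) your proof of (2) establishes a weaker statement than claimed.
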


\begin{proof}
(1) We note that $\Phi^\alpha(E_{ij})[2]=A_{ij}'$ and
$\Phi^\alpha({\cal E}_{|\{x' \} \times X})[2]={\Bbb C}_{x'}$, $x' \in X'$.
Hence the claim holds.
(2) We note that
$E \in {\cal M}_H^{v_0}(v_0)^{ss}$ is 
$v_0+\beta$-twisted semi-stable, if 
$\chi(\beta,F)=\chi(v_0+\beta,F) \leq 0$ for all subsheaf $F$ of $E$ with
$\deg_{G_1}(F)=\chi(G_1,F)=0$.
Since $\chi(\Phi^\alpha(\beta),\Phi^\alpha(F))=\chi(\beta,F)$,
$\Phi^\alpha(E)[2]$ is $(G_2,\Phi^\alpha(\beta))$-twisted semi-stable.
Then Remark \ref{rem:G-indep} implies that 
$\Phi^\alpha(E)[2]$ is $(G,\Phi^\alpha(\beta))$-twisted semi-stable
for any $G$.
The first claim of (3) follows from (2).  
In the notation of subsection \ref{subsect:wall-chamber},
$\overline{M}_{\widehat{H}}^{G,0}(\varrho_{X'}) 
\cong (X')^0$.
Hence the second claim of (3) follows from 
Proposition \ref{prop:Y=X^0}.
\end{proof}

\begin{prop}\label{prop:K3-exceptional-exist}
Let $u \in H^{ev}(X,{\Bbb Z})_{\alg}$ be a Mukai vector such that 
$u \in v_0^{\perp} \cap \widehat{H}^{\perp}$, $0<\rk u<\rk v_0$
and $\langle u^2 \rangle=-2$.
Then $u=\sum_j b_j v(E_{ij})$, $0 \leq b_j \leq a_{ij}$.
In particular, $\overline{M}^{v_0}_H(u) \not =\emptyset$.
\end{prop}

\begin{proof}
Since $u \in v_0^{\perp} \cap \widehat{H}^{\perp}$,
$\Phi^\alpha(u)=(0,D,b)$, $D \in \NS(X')$, $b \in {\Bbb Z}$ and
$(D,\widehat{H})=0$.
Since $(D^2)=-2$,
$D$ or $-D$ is an effective divisor supported on an exceptional locus
$Z_i'$.
Hence $\Phi^\alpha(u) \in 
\oplus_{j=0}^{s_i'} {\Bbb Z}\Phi^\alpha(E_{ij})
=\oplus_{j=1}^{s_i'}{\Bbb Z}C_{ij} \oplus {\Bbb Z}\varrho_X$.
By the basic properties of the root systems of affine Lie algebra, 
$\Phi^\alpha(u)=c\Phi^\alpha(v_0)\pm \sum_{j>0} c_j \Phi^\alpha(E_{ij})$, 
$0 \leq c_j \leq a_{ij}$.
Then $\rk u=cr \pm \sum_{j>0} c_j \rk E_{ij}$.
Since $\sum_{j>0} c_j \rk E_{ij} \leq \sum_{j>0} a_{ij} \rk E_{ij}<r$,
we get
$u=\sum_{j>0} c_j v(E_{ij})$ or 
$u=v_0-\sum_{j>0} c_j v(E_{ij})$.
Therefore the claim holds. 
\end{proof}

\subsection{Walls and chambers for the moduli spaces of dimension 2.}

We shall study the dependence of $\overline{M}_H^w(v_0)$ on $w$.
We set
\begin{equation}
\begin{matrix}
\delta: & \NS(X) \otimes {\Bbb Q}& \to & H^*(X,{\Bbb Q}) \\
& D & \mapsto & D+\frac{(D,\xi_0)}{r_0}\varrho_X.
\end{matrix}
\end{equation}
We may assume that $w=v_0+\alpha, \alpha \in \delta(H^{\perp})$
(cf. \cite[sect. 1.1]{O-Y:1}).
We set 
\begin{equation}
 {\cal U}:=\left\{u \in v({\bf D}(X)) \left|
\begin{split}
&\langle u^2 \rangle=-2, \langle v_0,u \rangle \leq 0,
\langle \delta(H),u \rangle=0,\\
&0< \rk u <\rk v_0 
\end{split}
\right. \right\}.
\end{equation}
For a fixed $v_0$ and $H$, ${\cal U}$ is a  finite set.
For $u \in {\cal U}$,
we define a wall $W_{u} \subset 
\delta(H^{\perp}) \otimes_{\Bbb Q} {\Bbb R}$ 
with respect to $v$
by 
\begin{equation}
W_u:=\{\alpha \in \delta(H^{\perp}) \otimes {\Bbb R}|\;
\langle v_0+\alpha,u \rangle=0 \}.
\end{equation}
A connected component of 
$\delta(H^{\perp}) \otimes_{\Bbb Q} {\Bbb R} 
\setminus \cup_{u \in {\cal U}} W_u$ is said to be a chamber.

\begin{lem}
If $\alpha$ does not lie on any wall $W_u$, $u \in {\cal U}$,
then $\overline{M}_H^{v_0+\alpha}(v_0)=M_H^{v_0+\alpha}(v_0)$.
In particular, $\overline{M}_H^{v_0+\alpha}(v_0)$ is a K3 surface.
\end{lem}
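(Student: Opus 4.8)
The plan is to argue by contradiction: assuming $\overline{M}_H^{v_0+\alpha}(v_0) \neq M_H^{v_0+\alpha}(v_0)$, there is a properly $v_0+\alpha$-twisted semi-stable object $E$ with $v(E)=v_0$, and I would show that its mere existence forces $\alpha$ onto one of the walls $W_u$, $u\in{\cal U}$. First I would record that since $\dim E=2>\dim\pi(Z)=0$, the remark following Definition \ref{defn:Simpson-stability} gives $H^{-1}(E)=0$, so $E$ is an honest $2$-dimensional sheaf; consequently every Jordan--H\"older factor has dimension $2$ and therefore positive rank. Writing $E$ as $S$-equivalent to $\bigoplus_i E_i^{\oplus m_i}$ with the $E_i$ pairwise non-isomorphic $v_0+\alpha$-twisted stable objects sharing the reduced twisted Hilbert polynomial of $E$, properness means the filtration has length $\geq 2$, and $\sum_i m_i v(E_i)=v_0$ with $0<\rk v(E_i)<r_0$.

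Next I would translate ``same reduced twisted Hilbert polynomial'' into lattice identities. Using $\langle v_0^2\rangle=0$ together with the elementary computations $\langle\delta(H),v_0\rangle=0$ and $\langle\alpha,v_0\rangle=0$ (which hold because $v_0$ is isotropic and $\alpha\in\delta(H^\perp)$), equality of the leading and subleading coefficients yields $\langle\delta(H),v(E_i)\rangle=0$ and $\langle v_0+\alpha,v(E_i)\rangle=0$ for every $i$. On the $K3$ surface, stability supplies the two numerical facts that drive everything: $\langle v(E_i)^2\rangle\geq -2$ (from $\Hom(E_i,E_i)\cong\Ext^2(E_i,E_i)^{\vee}\cong{\Bbb C}$), and $\langle v(E_i),v(E_j)\rangle\geq 0$ for $i\neq j$ (from $\Hom(E_i,E_j)\cong\Ext^2(E_i,E_j)^{\vee}=0$ for distinct stable factors of equal slope).

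From these I would extract a vector of ${\cal U}$ supported on the wall through $\alpha$. Expanding $\langle v(E_i),v_0\rangle=m_i\langle v(E_i)^2\rangle+\sum_{j\neq i}m_j\langle v(E_i),v(E_j)\rangle$ and using that the cross terms are $\geq 0$ gives $\langle v(E_i),v_0\rangle\geq m_i\langle v(E_i)^2\rangle$; hence any factor with $\langle v_0,v(E_i)\rangle<0$ automatically satisfies $\langle v(E_i)^2\rangle=-2$. Since $\sum_i m_i\langle v_0,v(E_i)\rangle=\langle v_0^2\rangle=0$, either some factor has $\langle v_0,v(E_i)\rangle<0$ --- then $u:=v(E_i)\in{\cal U}$ and $\langle v_0+\alpha,u\rangle=0$ put $\alpha\in W_u$ --- or all $\langle v_0,v(E_i)\rangle=0$, in which case each $v(E_i)$ lies in $v_0^\perp\cap\delta(H)^\perp$, a lattice that is negative definite modulo ${\Bbb Q}v_0$; then $\langle v(E_i)^2\rangle\in\{-2,0\}$, the value $0$ forcing $v(E_i)\in{\Bbb Q}v_0$, hence (by primitivity of $v_0$) $\rk v(E_i)\geq r_0$, contradicting properness, while the value $-2$ again exhibits $v(E_i)\in{\cal U}$ with $\alpha\in W_{v(E_i)}$. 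Either way $\alpha$ lies on a wall, contradicting the hypothesis, so $\overline{M}_H^{v_0+\alpha}(v_0)=M_H^{v_0+\alpha}(v_0)$, and Proposition \ref{prop:K3:smooth} then gives that it is a $K3$ surface. The hard part will be the bookkeeping in this last step, namely guaranteeing that the offending $(-2)$-vector carries the correct sign $\langle v_0,u\rangle\leq 0$ so that it genuinely belongs to ${\cal U}$; the inequality $\langle v(E_i),v_0\rangle\geq m_i\langle v(E_i)^2\rangle$ is precisely what makes this sign automatic.
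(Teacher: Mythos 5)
Your proof is correct, and it is essentially the argument the paper itself uses for the analogous statements (the paper states this lemma without proof, but the same reasoning appears in the proofs of Lemma \ref{lem:crepant1}, Lemma \ref{lem:Hodge} and Lemma \ref{lem:K3:lattice}): decompose a properly semi-stable $E$ into stable factors $E_i$ of positive rank $<r_0$ lying in $v_0^{\perp}\cap\delta(H)^{\perp}$ relative to the pairing with $v_0+\alpha$, and use $\langle v(E_i)^2\rangle\ge -2$, $\langle v(E_i),v(E_j)\rangle\ge 0$ and the negative definiteness of $(v_0^{\perp}\cap\delta(H)^{\perp})/{\Bbb Z}v_0$ together with primitivity of $v_0$ to produce a $(-2)$-vector defining a wall through $\alpha$. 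The one point where you go beyond the paper's template --- verifying via the inequality $\langle v(E_i),v_0\rangle\ge m_i\langle v(E_i)^2\rangle$ that the destabilizing class can be taken with $\langle v_0,u\rangle\le 0$, hence genuinely in ${\cal U}$ --- is handled correctly.
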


We are interested in the $v_0+\alpha$-twisted stability with 
a sufficiently small $|\langle \alpha^2 \rangle|$.
So we may assume that
\begin{equation}
u \in {\cal U}':=\{u \in {\cal U}| \langle v_0,u \rangle=0 \}.
\end{equation}
For an $\alpha \in \delta(H^{\perp})$ with 
$|\langle \alpha^2 \rangle| \ll 1$, 
let $F$ be a $v_0+\alpha$-twisted stable torsion free object such that
\begin{enumerate}
\item
$\langle v(F)^2 \rangle=-2$,
\item
$\langle v(F),\delta(H) \rangle/\rk F
=(c_1(F),H)/\rk F-(\xi_0,H)/r_0=0$ and 
\item
$\langle v_0,v(F) \rangle=\langle \alpha, v(F)\rangle=0$.
\end{enumerate}
By (i), $F$ is a rigid torsion free object.

\begin{prop}(\cite[Prop. 1.12]{O-Y:1})\label{prop:O-Y}
We set $\alpha^{\pm}:=\pm \epsilon v(F)+\alpha$, where
$0<\epsilon \ll 1$.
Then $T_F$ induces an isomorphism
\begin{equation}
\begin{matrix}
{\cal M}_H^{v+\alpha^-}(v)^{ss} &\to &
{\cal M}_H^{v+\alpha^+}(v)^{ss}\\
E & \mapsto & T_F(E)
\end{matrix}
\end{equation}
which preserves the $S$-equivalence classes.
Hence we have an isomorphism
\begin{equation}
\overline{M}_H^{v+\alpha^-}(v) \to
\overline{M}_H^{v+\alpha^+}(v).
\end{equation}
\end{prop}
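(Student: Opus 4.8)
The plan is to realize the wall-crossing at $W_{v(F)}$ as the spherical twist $T_F$, exactly in the spirit of Proposition \ref{prop:0-dim:O-Y} and \cite[Prop. 1.12]{O-Y:1}. Since $\langle v(F)^2 \rangle = -2$ and $F$ is $v+\alpha$-twisted stable, $F$ is a rigid simple object, hence a spherical object of ${\bf D}(X)$; by Seidel--Thomas \cite{S-T:1}, $T_F$ is an autoequivalence. By the choice $\alpha^{\pm} = \pm\epsilon v(F) + \alpha$ with $0 < \epsilon \ll 1$ together with conditions (ii) and (iii), the only wall separating the chambers of $\alpha^-$ and $\alpha^+$ is the one defined by $F$; thus an object can change its twisted (semi)stability only through a sub- or quotient object built from $F$.

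First I would record the numerics: since $\langle v(F), v \rangle = 0$ (as $v$ lies on $W_{v(F)}$), we have $\chi(F,E) = -\langle v(F), v \rangle = 0$, and by Serre duality on the $K3$ surface $X$ one has $\Ext^2(F,E) \cong \Hom(E,F)^{\vee}$. Combined with the $v+\alpha$-twisted stability of $F$ and the $v+\alpha^-$-twisted stability of $E$, this forces $\Hom(F,E)$ and $\Hom(E,F)$ to be tightly controlled: for a $v+\alpha^-$-twisted stable $E$ exactly one of them vanishes, and the non-vanishing one measures the failure of $v+\alpha^+$-twisted stability. I would then define $T_F(E)$ through its defining exact triangle ${\bf R}\Hom(F,E)\otimes F \to E \to T_F(E)$ and show that $T_F(E)$ again lies in the heart ${\cal C}$: this is where one uses $F \in {\cal C}$ together with the vanishing statements, so that the cone is concentrated in degree $0$ of the $t$-structure. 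Finally I would verify that $T_F(E)$ is $v+\alpha^+$-twisted semistable by testing the slope inequality on subobjects, using that crossing $W_{v(F)}$ only reverses the sign of the contribution of the $F$-components.

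For the $S$-equivalence statement I would argue on Jordan--H\"older factors: a strictly $v+\alpha^-$-semistable object has a filtration whose stable factors of the critical twisted slope are either copies of $F$ or objects $E'$ with $\langle v(E'), v(F) \rangle = 0$; on the latter $T_F$ acts as the identity up to orthogonality, while it reshuffles the $F$-factors, so $T_F$ preserves the associated graded object up to this correspondence, hence the $S$-equivalence class. Since $T_F$ is an equivalence whose inverse $T_F^{-1}$ implements the reverse crossing from $\alpha^+$ to $\alpha^-$, the resulting bijection is an isomorphism of moduli stacks ${\cal M}_H^{v+\alpha^-}(v)^{ss} \to {\cal M}_H^{v+\alpha^+}(v)^{ss}$, which descends to the claimed isomorphism $\overline{M}_H^{v+\alpha^-}(v) \to \overline{M}_H^{v+\alpha^+}(v)$ of coarse moduli spaces. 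Compatibility of $T_F$ with the relevant ample class, via Lemma \ref{lem:reflection-Pi} applied to the associated Fourier--Mukai transform, guarantees that the two sides carry matching polarizations.

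The hard part will be the middle step: showing that $T_F(E)$ stays inside the abelian category ${\cal C}$ and is genuinely $v+\alpha^+$-twisted semistable, rather than merely an object of ${\bf D}(X)$. This requires the careful $\Hom$/$\Ext$ bookkeeping between $F$ and $v+\alpha^{\mp}$-semistable objects --- precisely the analysis carried out in \cite[Prop. 1.12]{O-Y:1} --- together with the observation that, because $\epsilon$ is infinitesimal, no further wall intervenes, so a single application of $T_F$ suffices.
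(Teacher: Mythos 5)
Your overall route --- realizing the crossing of the wall $W_{v(F)}$ by the Seidel--Thomas twist $T_F$ --- is the one intended here: the paper gives no proof of its own but cites \cite[Prop.~1.12]{O-Y:1}, and your sketch reconstructs that argument (compare also Proposition \ref{prop:0-dim:O-Y}, which the paper says is proved "by the same proof"). Two of your intermediate claims, however, are false as stated and would need to be repaired before the sketch becomes a proof.

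First, the $\Hom$--bookkeeping. It is not true that for a $v+\alpha^-$-twisted stable $E$ "exactly one of $\Hom(F,E)$, $\Hom(E,F)$ vanishes." What the stabilities actually give is the following: since $\chi(v+\alpha^-,F)=\epsilon\langle v(F)^2\rangle=-2\epsilon<0$ while $\chi(v+\alpha^-,E)=0$ (and $F$, $E$ have the same reduced degree by condition (ii)), the object $F$ cannot occur as a quotient of any $v+\alpha^-$-\emph{semistable} $E$; hence $\Hom(E,F)=0$ always, and by Serre duality $\Ext^2(F,E)=0$. Because $\chi(F,E)=-\langle v(F),v\rangle=0$, this forces $\dim\Hom(F,E)=\dim\Ext^1(F,E)$, and both may perfectly well be zero --- that is the generic case, where $T_F(E)\cong E$. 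It is this vanishing of $\Ext^2(F,E)$, together with the injectivity of the evaluation map $\Hom(F,E)\otimes F\to E$ (which uses that $F$ is stable, hence simple), that puts the cone of ${\bf R}\Hom(F,E)\otimes F\to E$ back in the heart; no dichotomy is available or needed.

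Second, the $S$-equivalence step rests on a false premise. The Jordan--H\"{o}lder factors of a strictly $v+\alpha^-$-semistable $E$ are $v+\alpha^-$-stable objects with the same reduced twisted Hilbert polynomial as $E$; since $\chi(v+\alpha^-,F)=-2\epsilon\neq 0$, the object $F$ is \emph{never} among them, and there is no reason whatsoever for the remaining factors $E'$ to satisfy $\langle v(E'),v(F)\rangle=0$ --- the whole point of the wall is that they need not. The correct argument is more direct: one shows that $T_F$ sends each $v+\alpha^-$-stable object of the critical reduced Hilbert polynomial to a $v+\alpha^+$-stable one, and that applying $T_F$ to the JH filtration of $E$ yields a filtration of $T_F(E)$ with graded pieces $T_F(E_i)$ (automatic from the long exact sequence of perverse cohomologies once each $T_F(E_i)$ lies in ${\cal C}$); then $\gr(T_F(E))\cong T_F(\gr(E))$ and $S$-equivalence is preserved. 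Finally, you should not build the proof on the assumption that $W_{v(F)}$ is the only wall through $\alpha$: the proposition does not require it (that hypothesis appears only in Remark \ref{rem:comm}, to guarantee the existence of a stable $F$ with $v(F)=u$), and the argument for a fixed stable spherical $F$ goes through without it.
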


\begin{rem}
In \cite{O-Y:1}, we considered the functor $T_F[-1]$.  
\end{rem}

Combining Proposition \ref{prop:O-Y} with Lemma \ref{lem:reflection-Pi},
we get the following Corollary.
\begin{cor}\label{cor:O-Y}
\begin{equation}
\Phi_{X' \to X}^{{\cal E}^{v_0+\alpha^+}} \cong 
T_F \circ \Phi_{X' \to X}^{{\cal E}^{v_0+\alpha^-}}
\cong \Phi_{X' \to X}^{{\cal E}^{v_0+\alpha^-}} \circ T_{A},
\end{equation}
where $A:=\Phi_{X \to X'}^{({\cal E}^{v_0+\alpha^-})^{\vee}[2]}(F)$.
\end{cor}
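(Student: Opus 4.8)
The final statement to prove is Corollary~\ref{cor:O-Y}, which asserts a factorization of the Fourier-Mukai functor $\Phi_{X' \to X}^{{\cal E}^{v_0+\alpha^+}}$ across a single wall-crossing in terms of the spherical twist $T_F$ and the transformed twist $T_A$. The plan is to deduce this directly from Proposition~\ref{prop:O-Y} (the wall-crossing isomorphism realized by $T_F$) together with the intertwining formula of Lemma~\ref{lem:reflection-Pi}. The key observation is that both $\alpha^+$ and $\alpha^-$ live on opposite sides of exactly one wall $W_{v(F)}$, so the universal families ${\cal E}^{v_0+\alpha^{\pm}}$ on the two moduli spaces are related in a controlled way.

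First I would recall that Proposition~\ref{prop:O-Y} gives an isomorphism of moduli spaces $\overline{M}_H^{v_0+\alpha^-}(v_0) \to \overline{M}_H^{v_0+\alpha^+}(v_0)$ induced by $E \mapsto T_F(E)$, which preserves $S$-equivalence classes. Since both moduli spaces are identified with $X'$ (the K3 surface $M_H^{v_0+\alpha}(v_0)$ away from walls), this means that for a point $x' \in X'$, the objects ${\cal E}^{v_0+\alpha^-}_{|\{x'\} \times X}$ and ${\cal E}^{v_0+\alpha^+}_{|\{x'\} \times X}$ are related by $T_F$. The first equivalence $\Phi_{X' \to X}^{{\cal E}^{v_0+\alpha^+}} \cong T_F \circ \Phi_{X' \to X}^{{\cal E}^{v_0+\alpha^-}}$ should then follow by comparing the two integral kernels: applying the autoequivalence $T_F$ fiberwise to the universal family ${\cal E}^{v_0+\alpha^-}$ produces a family of $(v_0+\alpha^+)$-twisted stable objects, hence (up to the usual ambiguity by a line bundle pulled back from $X'$, which can be normalized away as in Lemma~\ref{lem:univ-characterize}) a universal family for $\overline{M}_H^{v_0+\alpha^+}(v_0)$. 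Concretely, $T_F$ acts by convolution with the kernel ${\cal E}_{T_F} := \mathrm{Cone}(F^{\vee} \boxtimes F \to {\cal O}_{\Delta})[1]$, and the composition of integral transforms on $X$ corresponds to convolving ${\cal E}^{v_0+\alpha^-}$ with ${\cal E}_{T_F}$, which I would check yields a kernel isomorphic to ${\cal E}^{v_0+\alpha^+}$.

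For the second equivalence $T_F \circ \Phi_{X' \to X}^{{\cal E}^{v_0+\alpha^-}} \cong \Phi_{X' \to X}^{{\cal E}^{v_0+\alpha^-}} \circ T_A$, I would invoke Lemma~\ref{lem:reflection-Pi} with $\Pi := \Phi_{X' \to X}^{{\cal E}^{v_0+\alpha^-}}$ regarded as a Fourier-Mukai transform and with the spherical object being $A$ on $X'$. The lemma states $\Pi \circ T_A \cong T_{\Pi(A)} \circ \Pi$, so I must verify that $\Pi(A) \cong F$, i.e.\ that $\widehat{\Phi}$ carries $A = \Phi_{X \to X'}^{({\cal E}^{v_0+\alpha^-})^{\vee}[2]}(F)$ back to $F$. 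This is exactly the statement that $\Phi_{X \to X'}^{({\cal E}^{v_0+\alpha^-})^{\vee}[2]}$ and $\Phi_{X' \to X}^{{\cal E}^{v_0+\alpha^-}}$ are inverse equivalences up to the shift conventions recorded in \eqref{eq:universal-family} and the theorem of Bridgeland-Orlov; since $A$ is defined precisely as the image of $F$ under the inverse transform, applying $\Pi$ recovers $F$ by definition.

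The main obstacle I anticipate is bookkeeping of shifts and of the line-bundle ambiguity in the universal family, rather than any deep structural difficulty. Specifically, matching the normalization of ${\cal E}^{v_0+\alpha^+}$ against the convolution $T_F \ast {\cal E}^{v_0+\alpha^-}$ requires the uniqueness statement of Lemma~\ref{lem:univ-characterize}, and I would need to confirm that $F$ satisfies the spherical hypotheses of Proposition~\ref{prop:O-Y} and of the twist-functor setup (conditions (i)--(iii) preceding Proposition~\ref{prop:O-Y}, namely $\langle v(F)^2\rangle = -2$, the degree-zero condition, and $\langle v_0, v(F)\rangle = \langle \alpha, v(F)\rangle = 0$). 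Once $\Pi(A) \cong F$ is established and the kernel comparison is carried out, the two displayed isomorphisms assemble into the single chain stated in the corollary, completing the proof.
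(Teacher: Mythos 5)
Your proposal is correct and follows exactly the route the paper intends: the paper derives this corollary by simply "combining Proposition \ref{prop:O-Y} with Lemma \ref{lem:reflection-Pi}," which is precisely your two steps — the wall-crossing isomorphism $E\mapsto T_F(E)$ identifies $T_F\ast{\cal E}^{v_0+\alpha^-}$ with a universal family for $\alpha^+$ (up to the line-bundle normalization of Lemma \ref{lem:univ-characterize}), and the intertwining $\Pi\circ T_A\cong T_{\Pi(A)}\circ\Pi$ together with $\Pi(A)\cong F$ (since $\Phi_{X\to X'}^{({\cal E}^{v_0+\alpha^-})^{\vee}[2]}$ is the inverse of $\Phi_{X'\to X}^{{\cal E}^{v_0+\alpha^-}}$) gives the second isomorphism. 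Your attention to the shift conventions and the normalization ambiguity fills in exactly the bookkeeping the paper leaves implicit.
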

Assume that ${\cal E}^{v_0+\alpha}_{|\{x' \} \times X}$ is
$S$-equivalent to $\oplus_i {E_i'}^{\oplus a_i'}$.
Then $\alpha \in (\sum_i {\Bbb Q}v(E_i'))^{\perp}$.

\begin{rem}\label{rem:comm}
If $\alpha$ belongs to exactly one wall $W_u$, $u \in {\cal U}$, then
there is a $v+\alpha$-twisted stable object $F$ with $v(F)=u$. 
So we can apply Propositions \ref{prop:O-Y}.
Moreover $A={\cal O}_{C}(b)$, where $C$ is a smooth rational curve
defined by 
\begin{equation}
C:=\{x' \in X'| \Ext^2({\cal E}^{v_0+\alpha^-}_{|\{x' \} \times X},F) \ne 0\}.
\end{equation}

\end{rem}


\begin{prop}\label{prop:choice-of-alpha}
Let $G$ be an object of ${\bf D}(X)$ such that
$\chi(G,E_{ij})> 0$ for all $i,j$ and
\begin{equation}
\begin{split}
\Hom(G,E_{ij}[k])=&\Hom(G,E[k])=0, k \ne 2
\end{split}
\end{equation}
for all $E \in M_H^{G_1}(v_0)$ and $i,j$.
Assume that $\alpha \in 
\delta(H^{\perp}) \setminus \cup_{u \in {\cal U}'}W_u$ is sufficiently small.
\begin{enumerate}
\item[(1)]
$G^{\alpha}:=\Phi^{\alpha}(G)$ is a locally free sheaf on $X'$
and ${\cal A}':=\pi_*((G^{\alpha})^{\vee} \otimes G^{\alpha})$
is a reflexive sheaf on $Y'$ which
is independent of the choice of $\alpha$. 
\item[(2)]
${\bf R}\pi_*((G^{\alpha})^{\vee} \otimes \underline{\;\;})
\circ \Phi^{\alpha}:
{\bf D}(X) \to {\bf D}_{{\cal A}'}(Y')$
is independent of the choice of $\alpha$.
\end{enumerate}
\end{prop}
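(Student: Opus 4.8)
The plan is to transport the hypotheses on $G$ to $X'$ through the equivalence $\Phi^{\alpha}=\Phi_{X\to X'}^{{\cal E}^{\vee}}$ and then feed them into the structure theory of $\Per(X'/Y')$. First I would record how $\Phi^{\alpha}$ acts on the irreducible objects: by Lemma \ref{lem:A_*} we have $A_{ij}'=\Phi^{\alpha}(E_{ij})[2]$, and ${\Bbb C}_{x'}=\Phi^{\alpha}({\cal E}_{|\{x'\}\times X})[2]$ for $x'\notin\cup_i Z_i'$. Since $\Phi^{\alpha}$ is an equivalence, $\Hom(G,E_{ij}[k])=0$ ($k\ne2$) and $\chi(G,E_{ij})>0$ become $\Hom(G^{\alpha},A_{ij}'[p])=\Hom(G,E_{ij}[2+p])=0$ ($p\ne0$) and $\chi(G^{\alpha},A_{ij}')=\chi(G,E_{ij})>0$; the vanishing $\Hom(G,E[k])=0$ ($k\ne2$) for $E\in M_H^{G_1}(v_0)$ gives the same for ${\Bbb C}_{x'}$, and $\chi(G^{\alpha},{\Bbb C}_{x'})=\rk G^{\alpha}=-\langle v(G),v_0\rangle=\sum_j a_{ij}'\chi(G,E_{ij})>0$ because $a_{i0}'=1$. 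Thus $G^{\alpha}$ satisfies conditions (a),(b) of \eqref{eq:tilting:generator} for every irreducible object of $\Per(X'/Y')$.

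Next I would check that $G^{\alpha}$ is locally free. By base change the derived fibre is $G^{\alpha}\overset{{\bf L}}{\otimes}{\Bbb C}_{x'}\cong{\bf R}\Hom({\cal E}_{|\{x'\}\times X},G)$, and since $K_X={\cal O}_X$, Serre duality gives ${\bf R}\Hom({\cal E}_{|\{x'\}\times X},G)\cong{\bf R}\Hom(G,{\cal E}_{|\{x'\}\times X})^{\vee}[-2]$. As ${\cal E}_{|\{x'\}\times X}$ is a successive extension of the $E_{ij}$ for $x'\in\cup_i Z_i'$ and lies in $M_H^{G_1}(v_0)$ otherwise, the hypotheses force ${\bf R}\Hom(G,{\cal E}_{|\{x'\}\times X})$ into degree $2$, so the fibre sits in degree $0$ for all $x'$ and $G^{\alpha}$ is a locally free sheaf. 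Now $H^i(G^{\alpha})=0$ for $i\ne0$, so Proposition \ref{prop:tilting:generator} applies and shows that $G^{\alpha}$ is a local projective generator of $\Per(X'/Y')$ with $R^1\pi'_*((G^{\alpha})^{\vee}\otimes G^{\alpha})=0$. Reflexivity of ${\cal A}'=\pi'_*((G^{\alpha})^{\vee}\otimes G^{\alpha})$ then follows from \cite[Lem. 2.1]{E:1} exactly as in Proposition \ref{prop:Phi-alpha}(3), and Proposition \ref{prop:Morita} identifies ${\bf R}\pi'_*((G^{\alpha})^{\vee}\otimes\underline{\;\;})\circ\Phi^{\alpha}$ as a functor ${\bf D}(X)\to{\bf D}_{{\cal A}'}(Y')$.

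For the independence of $\alpha$ I would argue by wall-crossing. Within a single chamber of $\delta(H^{\perp})\setminus\cup_{u\in{\cal U}'}W_u$ the stability, the moduli space and its universal family are constant, and replacing ${\cal E}$ by ${\cal E}\otimes p_{X'}^*L$ changes $G^{\alpha}$ by $\otimes L^{\vee}$ while leaving both ${\cal A}'$ and the composite functor unchanged; so it suffices to compare two chambers meeting along a wall $W_u$, $u\in{\cal U}'$. By Corollary \ref{cor:O-Y} one has $\Phi^{\alpha^+}\cong T_A^{-1}\circ\Phi^{\alpha^-}$ with $A={\cal O}_C(b)$ (Remark \ref{rem:comm}) supported on an exceptional curve $C\subset\pi'^{-1}(\{q_i\})$, whence $G^{\alpha^+}=T_A^{-1}(G^{\alpha^-})$; the surface $X'$ and the base $Y'\cong\overline{M}_H^{v_0}(v_0)$ are the same for all these $\alpha$. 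Over $Y'_0:=Y'\setminus\{q_1,\dots,q_m\}$, on which $\pi'$ is an isomorphism, $T_A$ is the identity because $\Supp A$ is contracted; hence $G^{\alpha^+}$ and $G^{\alpha^-}$ agree on $\pi'^{-1}(Y'_0)$, so ${\cal A}'^{\alpha^+}|_{Y'_0}={\cal A}'^{\alpha^-}|_{Y'_0}$, and since both are reflexive on the normal surface $Y'$ with $Y'_0$ of complement codimension $2$, they are canonically isomorphic. For the functor I would match graded Serre modules: by Corollary \ref{cor:compatible} $T_A^{-1}$ commutes with $\otimes\pi'^*{\cal O}_{Y'}(n)$, so for all $n,i$ the groups $\Hom(G^{\alpha^+},\Phi^{\alpha^+}(E)\otimes\pi'^*{\cal O}_{Y'}(n)[i])$ and $\Hom(G^{\alpha^-},\Phi^{\alpha^-}(E)\otimes\pi'^*{\cal O}_{Y'}(n)[i])$ coincide, and ${\cal O}_{Y'}(1)$ being ample these determine the two objects of ${\bf D}_{{\cal A}'}(Y')$.

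The hard part will be this last step: upgrading the equality of hypercohomology of all twists to a \emph{canonical} isomorphism of the functors over all of $Y'$, rather than merely over $Y'_0$ where $T_A$ is invisible. I expect to do this by exhibiting an explicit natural transformation — e.g. descending the identity $\Phi^{\alpha^+}\cong T_A^{-1}\circ\Phi^{\alpha^-}$ through the kernel of the composite on $X\times Y'$ — and verifying it realises the Serre-module isomorphisms above, so that projectivity of $Y'$ makes it an isomorphism. The reflexive-sheaf argument settles ${\cal A}'$ cleanly, but it does not apply to the complexes $\Psi^{\alpha}(E)$ themselves, and that is precisely what makes the functorial half of the independence statement delicate.
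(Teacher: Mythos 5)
Your treatment of part (1) is essentially sound: transporting the hypotheses through $\Phi^{\alpha}$, checking local freeness by base change and Serre duality, invoking Proposition \ref{prop:tilting:generator}, and getting reflexivity of ${\cal A}'$ from \cite[Lem.~2.1]{E:1} all match the intended argument, and your identification of ${\cal A}'^{\alpha^+}$ with ${\cal A}'^{\alpha^-}$ by restricting to $Y'_0$ and extending by reflexivity is the same mechanism the paper uses in Proposition \ref{prop:Phi-alpha}(3).

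But the gap you flag at the end is real, and it is exactly the content of part (2); a proof that ends by saying it "expects" to exhibit the natural transformation has not proved the statement. The paper closes this differently and more economically: rather than crossing walls one at a time with spherical twists $T_A$ (which, by Remark \ref{rem:comm}, you may only do when $\alpha$ lies on exactly one wall, so your reduction to a single $T_A$ already needs care), it uses Proposition \ref{prop:K3-normal}(2) to identify $X'':=M_H^{v_0+\alpha'}(v_0)$ with $M_{\widehat{H}}^{G^{\alpha},\Phi^{\alpha}(\alpha')}(\varrho_{X'})$, a moduli space of $0$-dimensional objects of $\Per(X'/Y')$, with universal family ${\cal F}:=\Phi_{X\to X'}^{({\cal E}^{\alpha})^{\vee}[2]}({\cal E}^{\alpha'})$. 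Then $\Phi^{\alpha'}=\Phi_{X'\to X''}^{{\cal F}^{\vee}[2]}\circ\Phi^{\alpha}$, and both halves of the independence statement become instances of Proposition \ref{prop:Phi-alpha}, which is already proved. The missing natural transformation is constructed there as follows: one first shows ${\bf R}\pi_*((G^{\alpha'})^{\vee}\otimes\Phi^{\alpha'}(E))$ is a sheaf by the vanishing of $\Hom(G^{\alpha'},\Phi^{\alpha'}(E)(n)[i])=\Hom(G^{\alpha},\Phi^{\alpha}(E)(n)[i])=0$ for $i\ne 0$, $n\gg 0$; one then takes a presentation $G^{\alpha}(-m)^{\oplus M}\to G^{\alpha}(-n)^{\oplus N}\to\Phi^{\alpha}(E)\to 0$ and compares the two induced presentations of ${\cal A}$-modules in a square that commutes over the complement of the finitely many singular points of $Y'$; since ${\cal H}om({\cal A},{\cal A}')\to j_*j^*{\cal H}om({\cal A},{\cal A}')$ is an isomorphism (reflexivity again), the square commutes globally and induces a functorial isomorphism $\xi$. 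That presentation-plus-reflexivity device is precisely the step your proposal is missing, and without it part (2) is not established.
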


\begin{proof}
We take a small $\alpha \in 
\delta(H^{\perp})$
with $-\langle \alpha,v(E_{ij}) \rangle>0$, $j>0$.
By the base change theorem,
$G^{\alpha}$ is a locally free sheaf on $X'$.
Let $A_{ij}'$ be objects of $\Per(X'/Y')$ 
in subsection \ref{subsect:K3:proof}.
Then we have $\Hom(G^\alpha,A_{ij}'[k])=0$ for $k \ne 0$
and $\Hom(G^\alpha,A_{ij}')\ne 0$. 
Assume that
$\alpha' \in \delta(H^{\perp})$
belongs to another chamber.
We set $X'':=M_H^{v_0+\alpha'}(v_0)$.
By Proposition \ref{prop:K3-normal} (2),
$X'' \cong 
M_{\widehat{H}}^{G^\alpha, \Phi^\alpha(\alpha')}(\varrho_{X'})$
and
${\cal F}:=\Phi_{X \to X'}^{({\cal E}^\alpha)^{\vee}[2]}({\cal E}^{\alpha'})$
is the universal family of $\Phi^\alpha(\alpha')$-twisted stable objects,
where ${\cal E}^{\alpha'}$ is the universal family
associated to $\alpha'$.
We have $\Phi^{\alpha'}=
\Phi_{X' \to X''}^{{\cal F}^{\vee}[2]} \circ \Phi^\alpha$.
In particular, $G^{\alpha'}=\Phi_{X' \to X''}^{{\cal F}^{\vee}[2]}(G^\alpha)$.
Then the claim follows from Proposition \ref{prop:Phi-alpha}. 
\end{proof}

\subsection{A tilting appeared in \cite{Br:3} and its generalizations.}
From now on, we assume that $\alpha$ satisfies
$-\langle \alpha,v(E_{ij}) \rangle>0$ for all $j>0$
and set
\begin{equation}
\Phi:=\Phi^\alpha,\;\widehat{\Phi}:=\widehat{\Phi}^\alpha.
\end{equation}
By Proposition \ref{prop:choice-of-alpha},
the assumption is not essential.

\begin{defn}
We set
\begin{equation}
{\frak C}_i:=
\begin{cases}
{\cal C},& i=1,\\
\Per(X'/Y'), & i=2,\\
\Per(X'/Y')^D,& i=3.
\end{cases}
\end{equation}
For an object $E \in {\frak C}_i$, we define the $G_i$-twisted Hilbert
polynomial by
\begin{equation}
\chi(G_i,E(n)):=\sum_j (-1)^j \dim \Hom(G_i,E(n)[j]),
\end{equation} 
where $E(n):=E(nH)$, $i=1$ and $E(n):=E(n\widehat{H})$, $i=2,3$.
\end{defn}
Then Lemma \ref{lem:assumption} and Lemma \ref{lem:assumption-G_2}
imply the following.
\begin{lem}\label{lem:Hilbert-Poly}
$\chi(G_i,E(n)) > 0$ for $ E \ne 0$ and $n \gg 0$,
that is,
(i) $\rk E>0$ or (ii) $\rk E=0, \deg_{G_i}(E)>0$ or
(iii) $\rk E=\deg_{G_i}(E)=0, \chi(G_i,E)>0$.
\end{lem}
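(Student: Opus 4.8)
**The plan is to prove Lemma \ref{lem:Hilbert-Poly} by reducing the positivity of the leading coefficient of the $G_i$-twisted Hilbert polynomial to the existence of the local projective generators already supplied in Lemma \ref{lem:assumption} and Lemma \ref{lem:assumption-G_2}.** The essential point is that for $E \in {\frak C}_i$, the definition $\chi(G_i,E(n)) = \sum_j (-1)^j \dim \Hom(G_i,E(n)[j])$ coincides, by the Morita equivalence of Proposition \ref{prop:Morita}, with $\chi\big({\bf R}\pi_*(G_i^\vee \otimes E)(n)\big)$, the Euler characteristic of a genuine coherent ${\cal A}$-module (or equivalently a coherent sheaf on $Y$ resp. $Y'$) twisted by a relatively ample line bundle. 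Once I know that ${\bf R}\pi_*(G_i^\vee \otimes E)$ lies in $\Coh(Y)$ and is nonzero whenever $E \ne 0$, classical Hilbert-polynomial asymptotics for coherent sheaves on a projective scheme give $\chi(G_i,E(n)) > 0$ for $n \gg 0$.

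First I would recall that each $G_i$ is a local projective generator of ${\frak C}_i$: for $i=1$ this is Lemma \ref{lem:assumption}, and for $i=2$ it is Lemma \ref{lem:assumption-G_2}, which also gives that $G_2^\vee$ is a local projective generator of $\Per(X'/Y')^D = {\frak C}_3$. Thus for any $E \in {\frak C}_i$ and $n$ we have ${\bf R}\pi_*(G_i^\vee \otimes E(n)) \in \Coh(Y)$ (or $\Coh(Y')$), so the higher $\Hom$'s stabilize and $\chi(G_i,E(n))$ is indeed the Euler characteristic of a coherent sheaf, computed after the projective direct image. Since $E(n)$ differs from $E$ only by tensoring with the pullback of an ample line bundle on the base, and since the equivalence ${\bf R}\pi_*(G_i^\vee \otimes \underline{\;\;})$ intertwines this twist with the twist by ${\cal O}_Y(n)$ (resp. ${\cal O}_{Y'}(n)$), the quantity $\chi(G_i,E(n))$ is the Hilbert polynomial of the ${\cal A}$-module ${\bf R}\pi_*(G_i^\vee \otimes E)$ with respect to the chosen relatively ample polarization.

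Next I would invoke the defining property (b) of a local projective generator: ${\bf R}\pi_*(G_i^\vee \otimes E) = 0$ if and only if $E = 0$. Hence for $E \ne 0$ the associated ${\cal A}$-module is a nonzero coherent sheaf, so its Hilbert polynomial has strictly positive leading coefficient, which yields $\chi(G_i,E(n)) > 0$ for $n \gg 0$. This directly gives the trichotomy stated in the lemma: writing $d = \dim E$ as the degree of the polynomial, case (i) $\rk E > 0$ corresponds to $d = 2$, case (ii) $\rk E = 0$ with $\deg_{G_i}(E) > 0$ to $d = 1$ with positive leading term $a_1(E) = \deg_{G_i}(E)/(\text{const})$, and case (iii) $\rk E = \deg_{G_i}(E) = 0$ with $\chi(G_i,E) > 0$ to the $0$-dimensional case where $\chi(G_i,E(n))$ is the constant $\chi(G_i,E)$.

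**The main obstacle will be the bookkeeping in the $0$-dimensional case**, where one must confirm that $\chi(G_i,E)$ itself is strictly positive rather than merely nonnegative; this follows because a nonzero $0$-dimensional ${\cal A}$-module has positive length, but it requires checking that the Euler characteristic genuinely computes the length and is not cancelled by torsion phenomena in the twisted setting. I expect the untwisted argument to be routine, and the twisted case to go through verbatim once one replaces $\chi$ by the twisted Euler form as set up in subsection \ref{subsect:twisted}. The remaining work is purely a matter of translating the three ranges of $d = \dim E$ into the three displayed cases, using the numerical identifications of $\deg_{G_i}$ and $\chi(G_i,\underline{\;\;})$ established in the notation section.
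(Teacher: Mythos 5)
Your overall strategy is the paper's: reduce $\chi(G_i,E(n))$ to the Hilbert polynomial of a nonzero coherent ${\cal A}$-module on the base via the Morita equivalence, and use ampleness of ${\cal O}_Y(1)$. But there is a concrete misstep at the start. You assert that each $G_i$ \emph{is} a local projective generator of ${\frak C}_i$, citing Lemmas \ref{lem:assumption} and \ref{lem:assumption-G_2}. That is not what those lemmas say, and it is not true in general: by \eqref{eq:G_i} the $G_i$ are merely classes in $K(X)$ resp.\ $K(X')$ (the fibres ${\cal E}_{|\{x'\}\times X}$ and ${\cal E}_{|X'\times\{x\}}^{\vee}$), and ${\cal E}_{|\{x'\}\times X}$ is only known to be a local projective generator under extra hypotheses (cf.\ Lemma \ref{lem:generic-mu-stable} (4)). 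Consequently the expression ${\bf R}\pi_*(G_i^{\vee}\otimes E)$, on which your whole argument rests, is not a priori a coherent sheaf, and property (b) of a local projective generator cannot be invoked for $G_i$ directly.

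The repair is short and is exactly what the paper intends by "Lemma \ref{lem:assumption} and Lemma \ref{lem:assumption-G_2} imply the following": those lemmas produce an honest local projective generator $G$ of ${\frak C}_i$ with $\tau(G)=2\tau(G_i)$ (for $i=3$ one uses that $G^{\vee}$ generates $\Per(X'/Y')^D$ and $\tau(G^{\vee})=2\tau(G_3)$). Since the Euler pairing factors through $K$-theory, $\chi(G_i,E(n))=\tfrac{1}{2}\chi(G,E(n))$, and your argument — ${\bf R}\pi_*(G^{\vee}\otimes E)$ is a nonzero coherent sheaf on $Y$ (resp.\ $Y'$) whenever $E\ne 0$, so its Hilbert polynomial with respect to the ample ${\cal O}_Y(1)$ is eventually positive — applies verbatim to $G$ and transfers to $G_i$. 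With that substitution the rest of your write-up, including the trichotomy by $\dim E$ and the positivity of the length in the $0$-dimensional case, is correct.
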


\begin{defn}
Let $E \ne 0$ be an object of ${\frak C}_i$. 
\begin{enumerate}
\item[(1)]
There is a (unique) filtration
\begin{equation}\label{eq:HNF}
0 \subset F_1 \subset F_2 \subset \cdots \subset F_s=E
\end{equation}
such that each $E_j:=F_j/F_{j-1}$ is a torsion object or a torsion free 
$G$-twisted semi-stable object and
\begin{equation}
(\rk E_{j+1})\chi(G_i,E_j(n))>(\rk E_j) \chi(G_i,E_{j+1}(n)), n \gg 0.
\end{equation}
We call it the {\it Harder-Narasimhan filtration} of $E$.  
\item[(2)]
In the notation of (1), we set
\begin{equation}
\begin{split}
\mu_{\max,G_i}(E):=
&\begin{cases}
\mu_{G_i}(E_1),&\rk E_1>0\\
\infty,&\rk E_1=0,
\end{cases}
\\
\mu_{\min,G_i}(E):=
&\begin{cases}
\mu_{G_i}(E_s),&\rk E_s>0\\
\infty,&\rk E_s=0.
\end{cases}
\end{split}
\end{equation}
\end{enumerate}
\end{defn}

\begin{rem}
An object $E \ne 0$ has a torsion if and only if
$\mu_{\max,G_i}(E)=\infty$ and
$E$ is a torsion object if and only if
$\mu_{\min,G_i}(E)=\infty$.
\end{rem}

We define several torsion pairs of ${\frak C}_i$.
\begin{defn}
\begin{enumerate}
\item[(1)]
Let ${\frak T}_i^{\mu}$ (resp. $\overline{\frak T}_i^{\mu}$) 
be the full subcategory of ${\frak C}_i$
such that $E \in {\frak C}_i$ belongs to ${\frak T}_i^{\mu}$
(resp. $\overline{\frak T}_i^{\mu}$) 
if (i) $E$ is a torsion object or (ii) 
$\mu_{\min,G_i}(E) > 0$ (resp. $\mu_{\min,G_i}(E) \geq 0$).
\item[(2)]
Let ${\frak F}_i^{\mu}$ (resp. $\overline{\frak F}_i^{\mu}$)
be the full subcategory of ${\frak C}_i$
such that $E \in {\frak C}_i$ belongs to ${\frak T}_i^{\mu}$
(resp. $\overline{\frak F}_i^{\mu}$)
if $E=0$ or $E$ is a torsion free object 
with $\mu_{\max,G_i}(E) \leq 0$ (resp. $\mu_{\max,G_i}(E) < 0$).
\end{enumerate}
\end{defn}

\begin{defn}
\begin{enumerate}
\item[(1)]
Let ${\frak T}_i$ (resp. $\overline{\frak T}_i$) 
be the full subcategory of ${\frak C}_i$
such that $E \in {\frak C}_i$ belongs to ${\frak T}_i$
(resp. $\overline{\frak T}_i$) 
if (i) $E$ is a torsion object or (ii) 
for the Harder-Narasimhan filtration \eqref{eq:HNF} of $E$,
$E_s$ satisfies
$\mu_{G_i}(E_s)>0$ or $\mu_{G_i}(E_s)=0$ and $\chi(G_i,E_s)>0$
(resp. $\mu_{G_i}(E_s)=0$ and $\chi(G_i,E_s) \geq 0$).
\item[(2)]
Let ${\frak F}_i$ (resp. $\overline{\frak F}_i$) 
be the full subcategory of ${\frak C}_i$
such that $E \in {\frak C}_i$ belongs to 
${\frak F}_i$ (resp. $\overline{\frak F}_i$)
if $E$ is a torsion free object and   
for the Harder-Narasimhan filtration \eqref{eq:HNF} of $E$,
$E_1$ satisfies
$\mu_{G_i}(E_1)<0$ or $\mu_{G_i}(E_1)=0$ and $\chi(G_i,E_1) \leq 0$
(resp. $\mu_{G_i}(E_1)=0$ and $\chi(G_i,E_1) < 0$).
\end{enumerate}
\end{defn}

\begin{defn}\label{defn:category}
$({\frak T}_i^{\mu},{\frak F}_i^{\mu})$,
$(\overline{\frak T}_i^{\mu},\overline{\frak F}_i^{\mu})$,
$({\frak T}_i,{\frak F}_i)$ 
 and $(\overline{\frak T}_i,\overline{\frak F}_i)$ 
are torsion pairs of ${\frak C}_i$.
We denote the tiltings of ${\frak C}_i$ by
${\frak A}_i^{\mu}$, $\overline{\frak A}_i^{\mu}$,
${\frak A}_i$ and 
$\overline{\frak A}_i$ respectively. 
\end{defn}

We note that ${\frak T}_1^{\mu} \subset {\frak T}_1$.
We shall study the condition ${\frak T}_1^{\mu}= {\frak T}_1$.
We start with the following lemma.
\begin{lem}\label{lem:maximal-object}
Let $E$ be a local projective generator of ${\frak C}_i$.
Then $\Ext^1(E,F)=0$ for all 0-dimensional objects $F$ of 
${\frak C}_i$.
In particular, if $E$ is a subobject of a torsion free object
$E'$ such that $E'/E$ is 0-dimensional, then $E'=E$.
\end{lem}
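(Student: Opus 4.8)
The plan is to push everything down to $Y$ via $\pi$ and use the two defining features of a local projective generator: that ${\bf R}\pi_*{\bf R}{\cal H}om(E,-)$ has no higher cohomology on ${\frak C}_i$, and that $E$ induces the Morita equivalence of Proposition \ref{prop:Morita}.

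For the vanishing $\Ext^1(E,F)=0$, first I would rewrite the group downstairs. Since $E$ is locally free, ${\bf R}{\cal H}om(E,F)=E^\vee\otimes F$, and for any $F\in{\frak C}_i$
\[
\Ext^1(E,F)=H^1(X,{\bf R}{\cal H}om(E,F))=H^1(Y,{\bf R}\pi_*(E^\vee\otimes F)).
\]
Because $E$ is a local projective object, ${\bf R}\pi_*(E^\vee\otimes F)$ is a sheaf concentrated in degree $0$. If $F$ is $0$-dimensional then, taking $E$ itself as the generator in Definition \ref{defn:0-dim}, this sheaf is a $0$-dimensional coherent sheaf on $Y$; a $0$-dimensional sheaf has vanishing $H^1$, whence $\Ext^1(E,F)=0$.

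The ``in particular'' does not follow formally from this, since the extension class of $0\to E\to E'\to E'/E\to 0$ lives in $\Ext^1(E'/E,E)$, i.e.\ in the other variable; this is the step I expect to require real work. My plan is to pass to ${\cal A}$-modules. Set ${\cal A}:=\pi_*(E^\vee\otimes E)$; since $E$ is a local projective generator we have $R^1\pi_*(E^\vee\otimes E)=0$, so ${\cal A}$ is reflexive (as in Proposition \ref{prop:Phi-alpha}), and by Proposition \ref{prop:Morita} the functor $\Phi:={\bf R}\pi_*(E^\vee\otimes-):{\frak C}_i\to\Coh_{{\cal A}}(Y)$ is an equivalence with $\Phi(E)={\cal A}$. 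Writing $T:=E'/E$ and applying $\Phi$ gives an exact sequence $0\to{\cal A}\to\Phi(E')\to\Phi(T)\to 0$ in which $\Phi(T)$ has $0$-dimensional support. Because $E'$ is torsion free it contains no $0$-dimensional subobject, so $\Phi(E')$ has no finite-length ${\cal O}_Y$-submodule and hence injects into its reflexive hull $\Phi(E')^{\vee\vee}$. As the inclusion ${\cal A}\hookrightarrow\Phi(E')$ is an isomorphism away from the finite set $\Supp\Phi(T)$, taking double duals yields $\Phi(E')^{\vee\vee}={\cal A}^{\vee\vee}={\cal A}$; combining ${\cal A}\subseteq\Phi(E')\subseteq\Phi(E')^{\vee\vee}={\cal A}$ forces $\Phi(E')={\cal A}$, so $\Phi(T)=0$ and therefore $E'=E$.

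The main obstacle, as indicated, is this second statement: I must verify that the finite-length torsion part of $\Phi(E')$ is an ${\cal A}$-submodule (so that ``$E'$ torsion free'' genuinely rules it out, via the equivalence $\Phi$), and that ${\cal A}$ is saturated, i.e.\ equal to its reflexive hull, which is exactly where the hypothesis $R^1\pi_*(E^\vee\otimes E)=0$ enters through the reflexivity of ${\cal A}$.
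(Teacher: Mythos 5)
Your computation of $\Ext^1(E,F)$ is exactly the paper's proof: push down to $Y$, note that ${\bf R}\pi_*(E^{\vee}\otimes F)=\pi_*(E^{\vee}\otimes F)$ is a $0$-dimensional sheaf, and conclude $H^1=0$. Where you differ is the ``in particular'' clause. You claim it does not follow formally because the extension class of $0\to E\to E'\to T\to 0$ (with $T=E'/E$) lives in $\Ext^1(T,E)$; but the two Ext groups are Serre dual here. All the categories ${\frak C}_i$ live on $K3$ surfaces, and in any case $T\otimes K_X\cong T$ for a $0$-dimensional object (cf.\ Proposition \ref{prop:0-dim:smooth}), so $\Ext^1(T,E)\cong\Ext^1(E,T\otimes K_X)^{\vee}=\Ext^1(E,T)^{\vee}=0$. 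The sequence therefore splits, $T$ is a $0$-dimensional (hence torsion) direct summand of the torsion free $E'$, and $T=0$ --- this one-line deduction is what ``in particular'' is pointing at, and it is why the paper records only the $\Ext^1$ computation. Your substitute argument --- transporting the sequence through the Morita equivalence of Proposition \ref{prop:Morita} to $0\to{\cal A}\to\Phi(E')\to\Phi(T)\to 0$ and sandwiching ${\cal A}\subseteq\Phi(E')\subseteq\Phi(E')^{\vee\vee}\cong{\cal A}^{\vee\vee}={\cal A}$ --- is nevertheless correct: ${\cal A}=\pi_*(E^{\vee}\otimes E)$ is reflexive because $R^1\pi_*(E^{\vee}\otimes E)=0$ (as in Proposition \ref{prop:Phi-alpha}); the ${\cal O}_Y$-torsion of $\Phi(E')$ meets the torsion free ${\cal A}$ trivially, hence embeds in the finite-length $\Phi(T)$ and is itself of finite length; and that finite-length torsion is automatically an ${\cal A}$-submodule since the ${\cal A}$-action is ${\cal O}_Y$-linear, so under the equivalence it is a $0$-dimensional subobject of $E'$, which torsion freeness excludes. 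So both of the worries you flag are harmless. The trade-off: your route is longer and makes no use of the first assertion, but it avoids Serre duality altogether and would work for an arbitrary birational contraction of surfaces; the paper's route is immediate and ties the two halves of the lemma together.
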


\begin{proof}
We only treat the case where $i=1$.
Then ${\bf R}\pi_*(E^{\vee} \otimes F)=\pi_*(E^{\vee} \otimes F)$
is a 0-dimensional sheaf on $Y$.
Hence we get $\Ext^1(E,F)=H^1(Y,\pi_*(E^{\vee} \otimes F))=0$. 
\end{proof}

\begin{lem}\label{lem:generic-mu-stable}
Assume that ${\cal E}_{|\{x' \} \times X}$ is a $\mu$-stable 
local projective generator of ${\cal C}$ for a general $x' \in X'$.
\begin{enumerate}
\item[(1)]
 ${\frak T}_1={\frak T}_1^{\mu}$.
\item[(2)]
Every $\mu$-semi-stable object $E \in {\cal C}$ with 
$\deg_{G_1}(E)=\chi(G_1,E)=0$
is $G_1$-twisted semi-stable.
Moreover if $E$ is $G_1$-twisted stable, then
it is $\mu$-stable.
\item[(3)]
Let $E$ be a $\mu$-semi-stable object $E \in {\cal C}$ with 
$\rk E>0$, $\deg_{G_1}(E)=\chi(G_1,E)=0$. Then
 $\Ext^i(E,S)=0$, $i \ne 0$ for any irreducible object $S \in {\cal C}$. 
\item[(4)]
${\cal E}_{|\{x' \} \times X}$ is a local projective generator
of ${\cal C}$ for any $x' \in X'$.
\end{enumerate}
\end{lem}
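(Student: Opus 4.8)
The plan is to transport the whole question to $X'$ through the Fourier--Mukai equivalence $\Phi=\Phi^\alpha$ and to read off the relevant numerical data from Proposition \ref{prop:deg-preserve} and Proposition \ref{prop:K3-normal}. Recall that $v(G_1)=v_0$ is primitive isotropic, that $\Phi(v_0)=\varrho_{X'}$ so that $\Phi(G_1)\cong{\Bbb C}_{x_0'}$ up to shift, and that Proposition \ref{prop:deg-preserve} gives $\deg_{G_1}(E)=-\deg_{G_2}(\Phi(E))$ together with $\chi(G_1,E)=-\langle v_0,v(E)\rangle=r_0a$, where $a=-\langle v(E),v_0\rangle/r_0$. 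Thus the objects $E$ with $\deg_{G_1}(E)=\chi(G_1,E)=0$ are exactly those with $v(E)\in v_0^{\perp}\cap\widehat H^{\perp}$ and $\rk E>0$, and by Lemma \ref{lem:semi-stable-objects} the $G_1$-twisted stable slope--$0$ building blocks are completely known: each is one of the irreducible $E_{ij}$ or a fibre ${\cal E}_{|\{x'\}\times X}$ with $x'\notin\cup_iZ_i'$. The $\mu$-stability of $G_1$ enters as the assertion that $G_1$ itself is such a minimal slope--$0$ object. Throughout I will use Serre duality on the $K3$ surface in the form $\Ext^i(E,S)\cong\Ext^{2-i}(S,E)^{\vee}$.

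I would prove (2) first. Since $E$ is $\mu$-semi-stable of slope $0$, the argument of the Remark following Definition \ref{defn:Simpson-stability} gives $H^{-1}(E)=0$, so $E$ is a genuine torsion free sheaf in ${\cal C}$. For $G_1$-twisted semi-stability it suffices to control subobjects $F\subset E$ with $\mu_{G_1}(F)=0$, those of negative slope being harmless; for these I must show $\chi(G_1,F)\le0$. The key step is that a slope--$0$ subsheaf with $\chi(G_1,F)>0$ would under $\Phi$ produce a sheaf of positive rank $\chi(G_1,F)$ on $X'$ violating the minimality forced by the $\mu$-stability of $G_1={\cal E}_{|\{x_0'\}\times X}$; concretely I would argue by Jordan--H\"older, using that the $\mu$-stable slope--$0$ factors are forced to lie among $\{E_{ij}\}\cup\{{\cal E}_{|\{x'\}\times X}\}$ by Lemma \ref{lem:semi-stable-objects}, and that a nonzero $G_1\to F$ together with Serre duality contradicts the $\mu$-stability of $G_1$. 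The converse, that a $G_1$-twisted stable slope--$0$ object is $\mu$-stable, follows because a destabilising $\mu$-subobject of equal slope would, after comparing reduced Hilbert polynomials, give a proper $G_1$-twisted subobject of equal reduced polynomial.

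With (2) in hand, (3) is an $\Ext$ computation. For an irreducible $S$, Serre duality reduces $\Ext^2(E,S)=0$ to $\Hom(S,E)=0$ and $\Ext^1(E,S)=0$ to $\Ext^1(S,E)=0$. When $S={\Bbb C}_x$ with $x\notin\cup_iZ_i$ the first is immediate from torsion freeness of $E$; when $S=E_{ij}$ I would use that both $E$ and $E_{ij}$ are now $G_1$-twisted semi-stable of slope $0$, so $\Hom(E_{ij},E)$ and $\Ext^1(E_{ij},E)$ are constrained by the pairing $\langle v(E),v(E_{ij})\rangle$, which I compute via Proposition \ref{prop:deg-preserve} and the negativity $\langle v_0,v(A_{ij})\rangle<0$ of Assumption \ref{ass:v_0}; the rigidity $\langle v(S)^2\rangle=-2$ then kills the middle $\Ext$ once the outer two vanish, through $\chi(E,S)=-\langle v(E),v(S)\rangle$.

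Finally I would deduce (1) and (4). For (1) the inclusion ${\frak T}_1^{\mu}\subseteq{\frak T}_1$ is formal, and the reverse inclusion amounts to ruling out a torsion free $G_1$-twisted semi-stable Harder--Narasimhan factor of slope $0$ with $\chi(G_1,\cdot)>0$; part (2) shows no such factor exists, since a slope--$0$ $G_1$-twisted semi-stable object satisfies $\chi(G_1,\cdot)\le0$. For (4), the numerical criterion of Corollary \ref{cor:characterize} shows that being a tilting generator depends only on $\tau({\cal E}_{|\{x'\}\times X})$, which is constant in $x'$; the remaining points are local freeness and the vanishing $R^1\pi_*(G^{\vee}\otimes G)=0$ at the special $x'$ lying over the exceptional locus or on walls. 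Here I would feed in (3): the vanishing $\Ext^1({\cal E}_{|\{x'\}\times X},{\Bbb C}_x)=0$ for all $x$ gives local freeness by Lemma \ref{lem:tilting:freeness}, and the vanishing of $\Ext^{\ne0}$ against the irreducible objects yields ${\bf R}\pi_*(G^{\vee}\otimes F)\in\Coh(Y)$ and $R^1\pi_*(G^{\vee}\otimes G)=0$ via Lemma \ref{lem:tilting:R1=0} and Proposition \ref{prop:tilting:generator}. \textbf{The main obstacle} I anticipate is the slope--$0$ comparison inside (2): showing that $\mu$-semi-stability forces $\chi(G_1,F)\le0$ on equal-slope subsheaves is exactly where the $\mu$-stability of $G_1$ must be used essentially rather than merely numerically, and controlling this uniformly, including at the special fibres relevant to (4), is the delicate part.
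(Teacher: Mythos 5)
Your overall strategy for (1)--(2) is close to the paper's, but two of your steps do not go through as written. First, in (2) you propose to classify the $\mu$-stable slope-$0$ Jordan--H\"older factors by Lemma \ref{lem:semi-stable-objects}; that lemma only classifies $G_1$-twisted \emph{stable} objects that already satisfy $\chi(G_1,E)=0$, so invoking it here is circular — the whole content of (1)/(2) is to show that a $\mu$-stable slope-$0$ object has $\chi(G_1,\cdot)\le 0$. Your other route is the right one, but it is incomplete as stated: a nonzero map $G_1\to F$ between $\mu$-stable objects of slope $0$ does not by itself contradict $\mu$-stability of $G_1$; it forces $G_1\hookrightarrow F$ with torsion quotient, and you then need the \emph{local projective generator} property of $G_1$ (Lemma \ref{lem:maximal-object}) to conclude $F\cong G_1$, hence $\chi(G_1,F)=0$, the desired contradiction. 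This is exactly how the paper argues (1): if $E$ is $\mu$-stable of slope $0$ with $\chi(G_1,E)>0$, then $\Hom(E,{\cal E}_{|\{x'\}\times X})=0$ forces $\Hom({\cal E}_{|\{x'\}\times X},E)\ne 0$, and $\mu$-stability plus Lemma \ref{lem:maximal-object} give $E\cong{\cal E}_{|\{x'\}\times X}$, contradicting $\chi=0$ for the fibre. No transport to $X'$ is needed.

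The more serious gap is in (3). The irreducible objects $S$ of ${\cal C}$ are the torsion objects $A_{ij}$ and ${\Bbb C}_x$ (not the positive-rank $E_{ij}$ of Lemma \ref{lem:semi-stable-objects}), and $\Hom(E,S)$ does \emph{not} vanish in general — for instance $E=G_1$, $S=A_{ij}$ gives $\hom(G_1,A_{ij})=\chi(G_1,A_{ij})>0$ by Assumption \ref{ass:v_0}. So your plan to "kill the middle $\Ext$ once the outer two vanish" via $\langle v(S)^2\rangle=-2$ collapses: with only $\Ext^2(E,S)=\Hom(S,E)^\vee=0$ (which does hold, since $S$ is torsion and $E$ torsion free) you get $\ext^1(E,S)=\hom(E,S)-\chi(E,S)$, and there is no a priori reason for this to vanish. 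The paper's argument is different and you should adopt it: a nonzero class in $\Ext^1(E,S)\cong\Ext^1(S,E)^\vee$ yields a nontrivial extension $0\to E\to E'\to S\to 0$ in which $E'$ is again $\mu$-semi-stable of slope $0$ but has $\chi(G_1,E')=\chi(G_1,S)>0$, contradicting (1). Your outline of (4) is then essentially the paper's (apply (3) to ${\cal E}_{|\{x'\}\times X}$, use $\chi({\cal E}_{|\{x'\}\times X},S)=\chi(G_1,S)>0$ and Proposition \ref{prop:tilting:generator}), but it depends on having (3) correct.
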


\begin{proof}
(1)
Let $E$ be a $\mu$-stable object of ${\cal C}$ with
$\deg_{G_1}(E)=0$ and $\chi(G_1,E)>0$.
Since $\Hom(E,{\cal E}_{|\{x' \} \times X})=0$
for all $x' \in X'$,
$\Hom({\cal E}_{|\{x' \} \times X},E) \ne 0$
for all $x' \in X'$.
Assume that ${\cal E}_{|\{x' \} \times X}$ is a 
$\mu$-stable local projective generator.
By Lemma \ref{lem:maximal-object} and
$\Hom({\cal E}_{|\{x' \} \times X},E) \ne 0$, we get
$E \cong {\cal E}_{|\{x' \} \times X}$.
Therefore $\chi(G_1,E) \leq 0$ for 
all $\mu$-stable object $E \in {\cal C}$ with
$\deg_{G_1}(E)=0$.
Hence we get
${\frak T}_1={\frak T}_1^{\mu}$.

(2)
Let $E'$ be a subobject of $E$ with $\deg_{G_1}(E)=0$.
Then (1) implies that $\chi(G_1,E') \leq 0$.
Hence
$E$ is $G_1$-twisted semi-stable.
If $E/E'$ is torsion free, then
we also have $\chi(G_1,E/E') \leq 0$, which implies that
$\chi(G_1,E')=\chi(G_1,E/E')=0$.
Thus $E$ is properly $G_1$-twisted semi-stable.
Therefore the second claim also holds.
\begin{NB}
For $E$, we take the Harder-Narasimhan filtration
$$
0 \subset F_1 \subset F_2 \subset \cdots \subset F_s=E.
$$
Then $F_i/F_{i-1}$ are $G_1$-twisted semi-stable objects with
$\deg_{G_1}(F_i/F_{i-1})=0$.
By (1), we have $\chi(G_1,F_i/F_{i-1}) \leq 0$.
Hence $\chi(G_1,E) \leq 0$.
\end{NB}

(3)
If $\Ext^1(S,E)=\Ext^1(E,S)^{\vee} \ne 0$, then a non-trivial extension
\begin{equation}
0 \to E \to E' \to S \to 0
\end{equation}
gives a $\mu$-semi-stable object $E'$ with
$\chi(G_1,E')=\chi(G_1,S)>0$.
On the other hand, (1) implies that $\chi(G_1,E') \leq 0$.
Therefore $\Ext^1(E,S)=0$.
Since $S$ is a torsion object,
$\Ext^2(E,S)\cong \Hom(S,E)^{\vee}=0$.

(4)
Since ${\cal E}_{|\{x' \} \times X}$ is a 
$\mu$-semi-stable object with 
$\deg_{G_1}({\cal E}_{|\{x' \} \times X})=
\chi(G_1,{\cal E}_{|\{x' \} \times X})=0$,
${\cal E}_{|\{x' \} \times X} \in {\cal C}$ and 
satisfies the assertion of (3).
By Lemma \ref{lem:Hilbert-Poly},
$\chi({\cal E}_{|\{x' \} \times X},S)=\chi(G_1,S)>0$ for
any irreducible object $S$.
Then ${\cal E}_{|\{x' \} \times X}$ is locally free
and is a local projective generator
by Proposition \ref{prop:tilting:generator}.
\end{proof}

\begin{rem}\label{rem:generic-mu-stable}
By the proof of Lemma \ref{lem:generic-mu-stable},
${\cal E}_{|\{x' \} \times X}$, $x' \in X'$ 
is a local projective generator
of ${\cal C}$ if ${\frak T}_1={\frak T}_1^{\mu}$.
Indeed if ${\frak T}_1={\frak T}_1^{\mu}$, then
the same proofs of (2), (3) and (4) work. 
\end{rem}

\subsection{Equivalence between ${\frak A}_1$ and ${\frak A}_2^{\mu}$.}

\begin{lem}\label{lem:vanish-Phi}
\begin{enumerate}
\item[(1)]
If $E \in {\frak T}_1$, then $\Hom(E,E_{ij})=
\Hom(E,{\cal E}_{|\{x' \} \times X})=0$
for all $i,j$ and $x' \in X'$. In particular,
$H^2(\Phi(E))=0$.
\item[(2)]
If $E \in {\frak F}_1$, then $\Hom({\cal E}_{|\{x' \} \times X},E)=0$
for a general $x' \in X'$. In particular,
$H^0(\Phi(E))=0$.
\end{enumerate}
\end{lem}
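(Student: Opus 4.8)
The plan is to deduce both vanishings from the torsion pair $({\frak T}_1,{\frak F}_1)$ together with the $\mu$-stability of the generic member of the universal family, and then to read off the cohomological consequences by base change.

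For (1) I would first observe that every target $S\in\{E_{ij},{\cal E}_{|\{x'\}\times X}\}$ is torsion free of positive rank. Indeed each such $S$ is $G_1$-twisted semistable with $\deg_{G_1}(S)=0$ and $\chi(G_1,S)=0$, since $v(E_{ij})\in v_0^{\perp}\cap\widehat H^{\perp}$ and $v({\cal E}_{|\{x'\}\times X})=v_0$; were $\rk S=0$, then $S$ would be a nonzero object with $\rk S=\deg_{G_1}(S)=0$, forcing $\chi(G_1,S)>0$ by Lemma \ref{lem:Hilbert-Poly}, a contradiction. Hence $\rk S>0$, and by the remark following Definition \ref{defn:Simpson-stability} such an $S$ is an honest torsion-free sheaf, $\mu_{G_1}$-semistable with $\mu_{G_1}(S)=0$. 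Its $\mu$-Harder–Narasimhan filtration is then trivial, so $E_1=S$ satisfies $\mu_{G_1}(E_1)=0$ and $\chi(G_1,E_1)=0\le 0$; thus $S\in{\frak F}_1$. Since $({\frak T}_1,{\frak F}_1)$ is a torsion pair, $\Hom({\frak T}_1,{\frak F}_1)=0$, which gives $\Hom(E,S)=0$ for every $E\in{\frak T}_1$. For the statement about $\Phi$, I would use that ${\cal E}_{|\{x'\}\times X}$ is locally free, so base change in top relative degree gives $H^2(\Phi(E))\otimes k(x')\cong\Ext^2({\cal E}_{|\{x'\}\times X},E)$, which Serre duality on the $K3$ surface $X$ identifies with $\Hom(E,{\cal E}_{|\{x'\}\times X})^{\vee}=0$; Nakayama then yields $H^2(\Phi(E))=0$.

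For (2) I would argue with slopes, using that for general $x'$ the object ${\cal E}_{|\{x'\}\times X}$ is $\mu$-stable of slope $0$ (Lemma \ref{lem:generic-mu-stable}). Given a nonzero $\phi:{\cal E}_{|\{x'\}\times X}\to E$ with $E\in{\frak F}_1$, set $I:=\im\phi$; then $\mu_{\min}(I)\ge 0$ as a quotient of a $\mu$-semistable slope-$0$ object, while $\mu_{\max}(I)\le\mu_{\max}(E)\le 0$ as a subobject of $E$. If $\mu_{\max}(E)<0$ this is already impossible, so $\Hom=0$ for all $x'$; if $\mu_{\max}(E)=0$ then $I$ is slope-$0$ and $\mu$-semistable, and $\mu$-stability of ${\cal E}_{|\{x'\}\times X}$ forbids a proper slope-$0$ quotient, so $\phi$ is injective and factors through the maximal destabilizing subobject $E_1\subset E$, whence $\Hom({\cal E}_{|\{x'\}\times X},E)=\Hom({\cal E}_{|\{x'\}\times X},E_1)$. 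The key finiteness input is then that for $N$ distinct general points $x_1',\dots,x_N'$ the objects ${\cal E}_{|\{x_i'\}\times X}$ are pairwise non-isomorphic simple objects of slope $0$, hence the simple factors of the semisimple object $\bigoplus_i{\cal E}_{|\{x_i'\}\times X}$ in the abelian category of slope-$0$ $\mu$-semistable objects; if all $\phi_i$ were nonzero, the combined map $\bigoplus_i{\cal E}_{|\{x_i'\}\times X}\to E_1$ could have no full summand in its kernel and so would be injective, forcing $Nr_0\le\rk E_1$. Thus only finitely many general $x'$ admit a nonzero map, i.e. $\Hom({\cal E}_{|\{x'\}\times X},E)=0$ for general $x'$.

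Finally, for the consequence $H^0(\Phi(E))=0$ I would invoke base change for the lowest cohomology sheaf: its fibre at a general $x'$ is $\Hom({\cal E}_{|\{x'\}\times X},E)=0$, so $H^0(\Phi(E))$ has generic rank $0$ and is a torsion sheaf, which I would then kill via the adjunction $\Hom({\cal E}_{|\{x'\}\times X},E)\cong\Hom_{X'}(\Phi(E),{\Bbb C}_{x'})^{\vee}$ coming from $\Phi({\cal E}_{|\{x'\}\times X})={\Bbb C}_{x'}[-2]$ (Proposition \ref{prop:K3-normal}) and Serre duality. The main obstacle I expect is exactly part (2): making the reduction to general $x'$ rigorous—controlling proper quotients through $\mu$-stability and bounding the family of stable subobjects of the fixed object $E_1$—and then upgrading the generic vanishing of $\Hom$ to the sheaf-level statement $H^0(\Phi(E))=0$, which demands a careful base-change and support analysis rather than the formal torsion-pair orthogonality that suffices for part (1).
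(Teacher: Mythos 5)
Part (1) of your proposal is fine and is essentially the paper's argument: the targets are $G_1$-twisted semistable of positive rank with $\deg_{G_1}=\chi(G_1,\cdot)=0$, hence lie in ${\frak F}_1$, and the orthogonality $\Hom({\frak T}_1,{\frak F}_1)=0$ plus Serre duality and base change in top degree give the claim. Part (2), however, has two genuine gaps.

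First, you ground the injectivity of $\phi:{\cal E}_{|\{x'\}\times X}\to E$ and the ``simple factors'' counting argument on the $\mu$-stability of the generic fibre via Lemma \ref{lem:generic-mu-stable}. That lemma is conditional: it \emph{assumes} ${\cal E}_{|\{x'\}\times X}$ is a $\mu$-stable local projective generator for general $x'$, and this hypothesis is not part of Lemma \ref{lem:vanish-Phi} (the lemma is used later, e.g.\ in Theorem \ref{thm:equiv-Phi}, without it; in general the fibres may be only strictly $\mu$-semistable). The paper avoids this entirely: ${\cal E}_{|\{x'\}\times X}$ is $G_1$-twisted (Gieseker) stable with $\deg_{G_1}=\chi(G_1,\cdot)=0$, so any proper quotient has reduced $G_1$-twisted Hilbert polynomial strictly larger than that of ${\cal E}_{|\{x'\}\times X}$, while any subobject of $E\in{\frak F}_1$ has reduced polynomial at most that value; hence $\phi$ is injective, $\coker\phi\in{\frak F}_1$, and one concludes by induction on $\rk E$. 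You should replace the slope argument by this one.

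Second, your deduction of $H^0(\Phi(E))=0$ does not close. Generic vanishing of $\Hom({\cal E}_{|\{x'\}\times X},E)$ only tells you that $H^0(\Phi(E))$ is a torsion sheaf; to kill it you invoke the adjunction $\Hom({\cal E}_{|\{x'\}\times X},E)\cong\Hom(\Phi(E),{\Bbb C}_{x'})^{\vee}$ at points $x'$ in the support of that torsion sheaf — but those points are exactly the non-general ones, where the first claim of (2) gives you nothing, so there is no contradiction to be had. The missing ingredient is that $H^0(\Phi(E))$ is automatically torsion free: by Lemma \ref{lem:Perverse-1} (2), $\Phi(E)$ is represented by a three-term complex $W_0\to W_1\to W_2$ of locally free sheaves, so $H^0(\Phi(E))=\ker(W_0\to W_1)$ is a subsheaf of a locally free sheaf (cf.\ Lemma \ref{lem:Perverse-1} (3)). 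A torsion-free sheaf of generic rank $0$ on a surface vanishes, and this is what ``the base change theorem'' accomplishes in the paper's one-line conclusion.
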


\begin{proof}
(1) The first claim is obvious.
The second claim is a consequence of the Serre duality and 
the base change theorem
(see the proof of Lemma \ref{lem:Perverse-1} (2)).

(2)
If there is a non-zero morphism $\phi:{\cal E}_{|\{ x' \} \times X}
\to E$, we see that $\phi$ is injective and
$\coker \phi \in {\frak F}_1$.
By the induction on $\rk E$,
we get the first claim. The second claim follows by
the base change theorem. 
\end{proof}

\begin{lem}\label{lem:Perverse-1}
Let $E$ be an object of ${\cal C}$.
\begin{enumerate}
\item[(1)]
Assume that $\Hom(E_{ij},E[q])=
\Hom({\cal E}_{|\{x' \} \times X},E[q])=0$
for all $i,j$, $x' \in X'$ and $q>0$.
Then $\Phi(E) \in \Per(X'/Y')$.
\item[(2)]
There is a complex 
\begin{equation}
0 \to W_0 \to W_1 \to W_2 \to 0
\end{equation}
such that $W_i$ are local projective objects of
$\Per(X'/Y')$ and
$\Phi(E)$ is quasi-isomorphic to this complex.  
\item[(3)]
$H^0(^p H^2(\Phi(E)))=H^2(\Phi(E))$ and
$^p H^0(\Phi(E)) \subset H^0(\Phi(E))$.
In particular, $^p H^0(\Phi(E))$ is torsion free. 
\item[(4)]
If $\Hom(E,E_{ij})=0$ for all $i,j$
and $\Hom(E,{\cal E}_{|\{x' \} \times X})=0$ for all $x' \in X'$, then
$^p H^2(\Phi(E))=0$.
In particular, if $E \in {\frak T}_1$,
then $^p H^2(\Phi(E))=0$.
\item[(5)]
If $E \in {\frak F}_1$, then
$^p H^0(\Phi(E))=0$.
 
\end{enumerate}
\end{lem}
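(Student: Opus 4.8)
The plan is to deduce all five assertions from the structural statement (2), which I would establish first. Write $\Phi=\Phi^{\alpha}$, with quasi-inverse $(\Phi)^{-1}=\widehat{\Phi}[2]$, and recall from the definition of $A_{ij}'$ and from Proposition \ref{prop:K3-normal}(1) that $\Phi(E_{ij})=A_{ij}'[-2]$ and $\Phi({\cal E}_{|\{x'\}\times X})={\Bbb C}_{x'}[-2]$; by Lemma \ref{lem:A_*}(3) the objects $A_{ij}'$ and ${\Bbb C}_{x'}$ are exactly the irreducible objects of $\Per(X'/Y')$. The key preliminary step is that $\Phi$ carries every local projective object $P$ of ${\cal C}$ to a $[-2]$-shift of a local projective object of $\Per(X'/Y')$. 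To see this I would compute, for each irreducible $S'$, $\Hom(\Phi(P),S'[j])=\Hom(P,\Phi^{-1}(S')[j])$; since $\Phi^{-1}(A_{ij}')=E_{ij}[2]$ and $\Phi^{-1}({\Bbb C}_{x'})={\cal E}_{|\{x'\}\times X}[2]$ are objects of the heart ${\cal C}$ shifted by $[2]$, and $P$ is projective in ${\cal C}$, this group vanishes unless $j=-2$. Setting $W:=\Phi(P)[2]$, the vanishing $\Hom(W,S'[k])=0$ for $k>0$ gives projectivity, while Serre duality on the K3 surface $X'$ (so $K_{X'}\cong{\cal O}_{X'}$) turns it into $\Hom(S',W[k])=\Hom(W,S'[2-k])^{\vee}=0$ for $k<0$; together these put $W$ in the heart $\Per(X'/Y')$ as a local projective object, which is locally free by Lemma \ref{lem:projective}(2).

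For (2) I would then resolve $E$ in ${\cal C}$. Because $X$ is a K3 surface, Serre duality gives $\Ext^{i}_{{\cal C}}(\,\cdot,\cdot\,)\cong\Ext^{2-i}_{{\cal C}}(\,\cdot,\cdot\,)^{\vee}$, so ${\cal C}\cong\Coh_{{\cal A}}(Y)$ has homological dimension $2$; hence, via Lemma \ref{lem:decomposition}(1), $E$ admits a resolution $0\to P_2\to P_1\to P_0\to E\to 0$ by local projective objects. Applying $\Phi$ and the preliminary step, $\Phi(E)$ is represented by $[\Phi(P_2)\to\Phi(P_1)\to\Phi(P_0)]$, that is, by a complex $0\to W_0\to W_1\to W_2\to 0$ of local projective (hence locally free) objects of $\Per(X'/Y')$ sitting in degrees $0,1,2$. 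This is (2), and since the $W_i$ are projective the same complex computes ${}^{p}H^{\bullet}(\Phi(E))$, which therefore vanishes outside $[0,2]$.

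Now (1) follows by transporting its hypothesis through $\Phi$: one gets $\Hom(S',\Phi(E)[p])=0$ for $p>2$ and every irreducible $S'$. Feeding this into the spectral sequence $\Ext^{s}_{\Per}(S',{}^{p}H^{t}(\Phi(E)))\Rightarrow\Hom(S',\Phi(E)[s+t])$ and reading the top corner: at $p=4$ the only contribution is $\Ext^{2}(S',{}^{p}H^{2}(\Phi(E)))$, so it vanishes, whence $\Hom({}^{p}H^{2}(\Phi(E)),S')=0$ for all $S'$ by Serre duality in $\Per(X'/Y')$, forcing ${}^{p}H^{2}(\Phi(E))=0$ (a nonzero fibre-supported object has a simple quotient); repeating at $p=3$ kills ${}^{p}H^{1}$, and with (2) this yields $\Phi(E)={}^{p}H^{0}(\Phi(E))\in\Per(X'/Y')$.

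Finally, (3)--(5) I would obtain by comparing the two $t$-structures on the explicit complex $[W_0\to W_1\to W_2]$. Since $\Per(X'/Y')$ is the tilt of $\Coh(X')$ at the torsion pair $(T,S)$ (Lemma \ref{lem:tilting:C}, Proposition \ref{prop:C(G)}), there is an exact sequence $0\to(H^{i-1}(\Phi(E)))_S[1]\to{}^{p}H^{i}(\Phi(E))\to(H^{i}(\Phi(E)))_T\to 0$. As $H^{2}(\Phi(E))=\coker(W_1\to W_2)$ is a quotient of $W_2\in T$ it lies in $T$ (which is closed under quotients), giving $H^{0}({}^{p}H^{2}(\Phi(E)))=H^{2}(\Phi(E))$; and since $H^{-1}(\Phi(E))=0$, the bottom sequence gives ${}^{p}H^{0}(\Phi(E))=(H^{0}(\Phi(E)))_T\subset H^{0}(\Phi(E))$, which is torsion free because $H^{0}(\Phi(E))\subset W_0$. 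Statement (4) is the $q=0$ Serre-dual of the ${}^{p}H^{2}$-step of (1): by Lemma \ref{lem:vanish-Phi}(1) the hypothesis holds when $E\in{\frak T}_1$, and it yields $\Hom(S',\Phi(E)[4])=0$, hence ${}^{p}H^{2}(\Phi(E))=0$ as above; for (5), Lemma \ref{lem:vanish-Phi}(2) gives $H^{0}(\Phi(E))=0$ for $E\in{\frak F}_1$, so ${}^{p}H^{0}(\Phi(E))\subset H^{0}(\Phi(E))=0$ by (3). The main obstacle is the preliminary step of the first paragraph, since it demands keeping the perverse and standard $t$-structures and the cohomological shift straight at once, and it is precisely there that the Calabi--Yau property of the K3 surface is indispensable, both to bound the homological dimension in (2) and to upgrade one-sided $\Ext$-vanishing into membership in the heart.
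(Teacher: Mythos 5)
Your route from (2) to (1) and (3)--(5) is sound and close to the paper's own (the paper gets (1) directly from the adjunction $\Hom(\Phi(E)[q],A_{ij}')\cong\Hom(E_{ij},E[q])^{\vee}$ and the criterion that an object of $\Per(X'/Y')$ with no maps to irreducibles is zero, and your torsion-pair computation for (3) is the right justification of what the paper calls ``obvious''). The problem is the foundation: your ``preliminary step'' that $\Phi$ sends \emph{every} local projective object $P$ of ${\cal C}$ to a $[-2]$-shift of a local projective object of $\Per(X'/Y')$ is false, and the argument you give for it conflates local projectivity with projectivity in the abelian category. Local projectivity means ${\bf R}\pi_*{\bf R}{\cal H}om(P,F)\in\Coh(Y)$; it kills the relative $\Ext$'s, but $\Ext^k(P,F)=H^k(Y,\pi_*{\cal H}om(P,F))$ lives on the surface $Y$ and can be nonzero for $k=1,2$. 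No nonzero $P\in{\cal C}$ satisfies $\Ext^k(P,F)=0$ for all $F\in{\cal C}$ and $k\ne 0$: Serre duality on the K3 surface already gives $\Ext^2(P,P)\cong\Hom(P,P)^{\vee}\ne 0$. Concretely, $P=G(m)$ with $m\gg 0$ is a local projective generator, yet $\Hom(\Phi(P)[2],{\Bbb C}_{x'}[2])=\Ext^2(G(m),{\cal E}_{|\{x'\}\times X})\cong\Hom({\cal E}_{|\{x'\}\times X},G(m))^{\vee}\ne 0$, which is impossible for a locally free sheaf $W=\Phi(P)[2]$ since then $\Ext^2(W,{\Bbb C}_{x'})\cong\Hom({\Bbb C}_{x'},W)^{\vee}=0$; in fact $\Phi(G(m))$ is concentrated in degree $0$, not $-2$. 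Since (2) carries the whole lemma, this is a genuine gap.

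The repair is exactly the point your argument skips: the resolution must be taken by \emph{sufficiently negative} twists of a local projective generator, not by arbitrary local projectives. With $V_{-k}=G(-n_k)^{\oplus N_k}$, $n_k\gg 0$, Serre vanishing on $Y$ gives $\Ext^q(G,F(n_k))=H^q(Y,\pi_*(G^{\vee}\otimes F)(n_k))=0$ for $q>0$ and all $F\in{\cal C}$, whence by Serre duality $\Hom(F,V_{-k}[q])=0$ for $q\ne 2$ when $F$ is $E_{ij}$ or ${\cal E}_{|\{x'\}\times X}$; the long exact sequences then give the same vanishing for the second syzygy $V_{-2}$, and base change plus Lemma \ref{lem:tilting:freeness} shows each $W_{2-k}=\Phi(V_{-k})[2]$ is a locally free sheaf that is local projective in $\Per(X'/Y')$. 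It is the ampleness of the twist, not projectivity in ${\cal C}$, that concentrates $\Phi(V_{-k})$ in a single perverse degree. (A minor further point: Lemma \ref{lem:decomposition}(1) requires $r\ge\dim X=2$, i.e.\ three projective terms before the kernel becomes projective, so it does not by itself hand you a length-three local projective resolution; the paper instead verifies the required $\Hom$-vanishing for $V_{-2}$ directly.)
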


\begin{proof}
(1)
We note that
$F \in \Per(X'/Y')$ is 0 if and only if  
$\Hom(F, A_{ij}')=\Hom(F,A_{i0}')=\Hom(F,{\Bbb C}_{x'})=0$
for all $i$, $j>0$ and $x' \in X'$. 
Since
\begin{equation}
\begin{split}
\Hom(\Phi(E)[q],\Phi(E_{ij})[2])& \cong \Hom(E[q],E_{ij}[2]) \cong
\Hom(E_{ij},E[q])^{\vee},\\
\Hom(\Phi(E)[q],\Phi({\cal E}_{|\{x' \} \times X})[2])
& \cong \Hom(E[q],{\cal E}_{|\{x' \} \times X}[2])
 \cong \Hom({\cal E}_{|\{x' \} \times X},E[q])^{\vee},
\end{split}
\end{equation}
we have ${^p H}^q(\Phi(E))=0$ for $q>0$, which implies that
$\Phi(E) \in \Per(X'/Y')$.
Thus the claim (1) holds.

(2)
\begin{NB}
Old (wrong) version:
We take a resolution of ${\cal O}_X$
\begin{equation}
0 \to V_{-2} \to V_{-1} \to V_0 \to {\cal O}_X \to 0
\end{equation}
such that $V_{-i}:={\cal O}_X(-n_i)^{\oplus N_i}$, $n_i \gg 0$
for $i=0,1$.
Then $V_{-i}^{\vee} \otimes E$, $i=0,1$ satisfy the conditions
in (1). It is easy to see that 
$V_{-2}^{\vee} \otimes E$ also satisfies the conditions
in (1). 
We set $W_i:=\Phi(V_{-i}^{\vee} \otimes E)$.
Then $W_i$ defines the required complex.  
\end{NB}

We take a resolution of $E$
\begin{equation}
0 \to V_{-2} \to V_{-1} \to V_0 \to E \to 0
\end{equation}
such that $V_{-k}=G(-n_k)^{\oplus N_k}$, $n_k \gg 0$
for $k=0,1$, where
$G$ is a local projective generator of ${\cal C}$.
By using the Serre duality, our choice of $n_k$ implies that
 $\Hom({\cal E}_{|\{x' \} \times X},V_{-k}[q])=
\Hom(E_{ij},V_{-k}[q])=0$
for $q \ne 2$ and 
$k=0,1$. 
Then we also have
$\Hom({\cal E}_{|\{x' \} \times X},V_{-2}[q])=\Hom(E_{ij},V_{-2}[q])=0$
for $q \ne 2$. 
Hence $\Phi(V_{-k})[2]$, $k=0,1,2 $ are
locally free sheaves on $X'$.
Since $\Hom(\Phi(V_{-k})[2],A_{ij}'[q])=
\Hom(\Phi(V_{-k})[2],\Phi(E_{ij})[2+q])=
\Hom(V_{-k},E_{ij}[q])=0$, $q>0$,
$W_{2-k}:=\Phi(V_{-k})[2]$, $k=0,1,2 $
are local projective objects of $\Per(X'/Y')$ and
the associated complex
$W_{\bullet}$ defines the required complex.  

(3) is obvious.
(4) follows from the proof of (1) and Lemma \ref{lem:vanish-Phi} (1). 
(5) follows from (3) and Lemma \ref{lem:vanish-Phi} (2).
\end{proof}

\begin{defn}
\begin{enumerate}
\item[(1)]
We set $\Phi^i(E):= {^p H^i}(\Phi(E)) \in \Per(X'/Y')$ and
$\widehat{\Phi}^i(E):={^p H^i}(\widehat{\Phi}(E)) \in {\cal C}$. 
\item[(2)]
We say that
$\WIT_i$ holds for $E \in {\cal C}$ (resp. $F \in \Per(X'/Y')$) 
with respect to $\Phi$ (resp. $\widehat{\Phi}$), if
$\Phi^j(E)=0$ (resp. $\widehat{\Phi}^j(F))=0$) for $j \ne i$.
\end{enumerate}
\end{defn}

\begin{lem}\label{lem:slope}
Let $E$ be an object of ${\cal C}$.
\begin{enumerate}
\item[(1)]
If $\WIT_0$ holds for $E$ with respect to $\Phi$,
then $E \in {\frak T}_1$.
\item[(2)]
If $\WIT_2$ holds for $E$ with respect to $\Phi$,
then $E \in {\frak F}_1$.
In particular, $E$ is torsion free.
Moreover if $\Phi^2(E)$ does not contain a 0-dimensional object,
then $E \in \overline{\frak F}_1^{\mu}$.
\end{enumerate}
\end{lem}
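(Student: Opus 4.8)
The plan is to read off membership in the torsion pair $({\frak T}_1,{\frak F}_1)$ of ${\cal C}$ (Definition \ref{defn:category}) from the perverse cohomology of $\Phi(E)$, using the torsion decomposition together with the long exact sequence of perverse cohomology. Write the canonical sequence $0 \to E_T \to E \to E_F \to 0$ with $E_T \in {\frak T}_1$ and $E_F \in {\frak F}_1$. Applying the equivalence $\Phi=\Phi^\alpha$ and taking perverse cohomology ${}^pH^i(\,\cdot\,)=\Phi^i(\,\cdot\,)$ in $\Per(X'/Y')$ gives a long exact sequence relating $\Phi^i(E_T)$, $\Phi^i(E)$, $\Phi^i(E_F)$, all of which lie in perverse degrees $0,1,2$ by Lemma \ref{lem:Perverse-1}(2). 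The two inputs I would feed in are Lemma \ref{lem:Perverse-1}(4), which gives $\Phi^2(E_T)=0$, and Lemma \ref{lem:Perverse-1}(5), which gives $\Phi^0(E_F)=0$.

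For (1), assume $\WIT_0$, so $\Phi^1(E)=\Phi^2(E)=0$ and $\Phi(E)=\Phi^0(E)$. Chasing the long exact sequence with $\Phi^0(E_F)=0$ and $\Phi^2(E_T)=0$ forces $\Phi^0(E_F)=\Phi^1(E_F)=\Phi^2(E_F)=0$, i.e. $\Phi(E_F)=0$; since $\Phi$ is an equivalence, $E_F=0$ and $E=E_T \in {\frak T}_1$. Symmetrically, for the first assertion of (2), assume $\WIT_2$, so $\Phi^0(E)=\Phi^1(E)=0$; the same chase now kills $\Phi^0(E_T)$, $\Phi^1(E_T)$, $\Phi^2(E_T)$, whence $E_T=0$ and $E=E_F \in {\frak F}_1$. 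Torsion-freeness is then part of the definition of ${\frak F}_1$. These two steps are routine diagram chases once the vanishing of $\Phi^2(E_T)$ and $\Phi^0(E_F)$ is in hand.

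The substance is the ``moreover'' clause, which I would prove in contrapositive form: if $E$ is $\WIT_2$ and $\mu_{\max,G_1}(E)=0$, then $\Phi^2(E)$ contains a $0$-dimensional object. First I would translate ``$\Phi^2(E)$ contains a $0$-dimensional object'' into a $\Hom$ statement on $X$. By Lemma \ref{lem:A_*}(3) the irreducible $0$-dimensional objects of $\Per(X'/Y')$ are exactly the $A_{ij}'$ and the ${\Bbb C}_{x'}$; being simple, a $0$-dimensional subobject of $\Phi^2(E)$ is the same as a nonzero map from some $A_{ij}'$ or ${\Bbb C}_{x'}$ into $\Phi^2(E)$. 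Using $\Phi(E_{ij})=A_{ij}'[-2]$, $\Phi({\cal E}_{|\{x'\}\times X})={\Bbb C}_{x'}[-2]$ and $\WIT_2$ for $E$, this is equivalent to $\Hom(S,E)\ne 0$ for some $S$ in ${\cal C}_{v_0}$ (Proposition \ref{prop:K3-normal} identifies ${\cal C}_{v_0}$ with $\Per(X'/Y')_0$). Thus it suffices to produce a slope-$0$, $G_1$-twisted stable subobject $S\subseteq E$ with $\chi(G_1,S)=0$.

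To produce it, let $N_1=F_1$ be the first Harder-Narasimhan factor of $E$; since $E$ is torsion free and $\mu_{\max,G_1}(E)=0$, it is a $G_1$-twisted semistable object of slope $0$ and positive rank, and $E \in {\frak F}_1$ forces $\chi(G_1,N_1)\le 0$. The key point is that $N_1$ is again $\WIT_2$: both $N_1 \in {\frak F}_1$ and $E/N_1 \in {\frak F}_1$ give $\Phi^0(N_1)=\Phi^0(E/N_1)=0$ by Lemma \ref{lem:Perverse-1}(5), and the long exact sequence for $0\to N_1\to E\to E/N_1\to 0$ together with $\WIT_2$ for $E$ then forces $\Phi^1(N_1)=0$. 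Now $\Phi^2(N_1)\in\Per(X'/Y')$ has nonnegative rank, while, since $\Phi$ is an equivalence with $\Phi(G_1)={\Bbb C}_{x'}[-2]$, one computes $\chi(G_1,N_1)=\chi({\Bbb C}_{x'},\Phi^2(N_1))=\rk\Phi^2(N_1)\ge 0$ (the rank/degree dictionary of Proposition \ref{prop:deg-preserve}, with $v(G_1)=v_0 \mapsto \varrho_{X'}$, underlies this). Hence $\chi(G_1,N_1)=0$, so $N_1\in{\cal C}_{v_0}$, and any $G_1$-twisted stable subobject $S\subseteq N_1\subseteq E$ lies in ${\cal C}_{v_0}$ and gives $\Hom(S,E)\ne 0$. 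I expect this last paragraph --- in particular the rank-positivity argument ruling out the borderline case $\chi(G_1,N_1)<0$, and the verification that the top Harder-Narasimhan factor inherits $\WIT_2$ --- to be the main obstacle; the earlier parts are formal once Lemmas \ref{lem:vanish-Phi} and \ref{lem:Perverse-1} are available.
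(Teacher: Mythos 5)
Your proof is correct, and parts (1) together with the first assertion of (2) coincide with the paper's argument (torsion decomposition, the long exact sequence of perverse cohomologies, and Lemma \ref{lem:Perverse-1} (4), (5)). For the ``moreover'' clause the two arguments share the same key computation but finish differently. The paper takes the decomposition $0\to E_2'\to E\to E_2''\to 0$ with $E_2'$ the maximal $\mu$-semi-stable slope-$0$ subobject and $E_2''\in\overline{\frak F}_1^{\mu}$, shows $\WIT_2$ holds for $E_2'$ with $\Phi^2(E_2')$ $0$-dimensional via $\rk\Phi^2(E_2')=\chi(G_1,E_2')\le 0$, and then uses the hypothesis to get $\Phi^1(E_2'')\cong\Phi^2(E_2')$ and the spectral sequence of Lemma \ref{lem:spectral} to conclude $E_2'=0$ directly. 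You instead argue by contrapositive with the first Harder-Narasimhan factor $N_1$: the same rank identity $\chi(G_1,N_1)=\rk\Phi^2(N_1)\ge 0$ pinned against $\chi(G_1,N_1)\le 0$ places $N_1$ in ${\cal C}_{v_0}$, and the equivalence ${\cal C}_{v_0}\cong\Per(X'/Y')_0$ of Proposition \ref{prop:K3-normal} then exhibits a nonzero $0$-dimensional subobject of $\Phi^2(E)$ (the image of $\Phi^2(S)\to\Phi^2(E)$ for a stable $S\subseteq N_1$, which is nonzero because $\Hom(S,E)\ne 0$ and $0$-dimensional because quotients of $0$-dimensional objects are $0$-dimensional). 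Your route trades the appendix spectral-sequence lemma for the earlier classification of $0$-dimensional objects; both are legitimate, and since Proposition \ref{prop:K3-normal} precedes this lemma in the paper there is no circularity.
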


\begin{proof}
For an object $E \in {\cal C}$,
there is an exact sequence
\begin{equation}
0 \to E_1 \to E \to E_2 \to 0
\end{equation}
such that $E_1 \in {\frak T}_1$ and
$E_2 \in {\frak F}_1$.
Applying $\Phi$ to this exact sequence, 
we get a long exact sequence
\begin{equation}\label{eq:FM-Phi}
\begin{CD}
0 @>>> \Phi^0(E_1) @>>> 
\Phi^0(E) @>>> 
\Phi^0(E_2)\\
@>>>
\Phi^1(E_1) @>>> 
\Phi^1(E) @>>> 
\Phi^1(E_2)\\
@>>>
\Phi^2(E_1) @>>> 
\Phi^2(E) @>>> 
\Phi^2(E_2)@>>> 0.
\end{CD}
\end{equation}
\begin{NB}
Old version:
By our choice of $E_1, E_2$ and the $G_1$-twisted stability of 
${\cal E}_{|\{x' \} \times X}$, we see that 
\end{NB}
By Lemma \ref{lem:Perverse-1} (4),(5),
$\Phi^0(E_2)=\Phi^2(E_1)=0$.
If $\WIT_0$ holds for $E$, then we get $\Phi(E_2)=0$.
Hence (1) holds.
If $\WIT_2$ holds for $E$, 
then we get $\Phi(E_1)=0$.
Thus the first part of (2) holds.
Assume that there is an exact sequence
\begin{equation}
0 \to E_2' \to E \to E_2'' \to 0
\end{equation}
such that $E_2'$ is a $\mu$-semi-stable object with $\deg_{G_1}(E_2')=0$
and $E_2'' \in \overline{\frak F}_1^{\mu}$.
By the first part of (2), we get 
$\chi(G_1,E_2') \leq 0$.
By Lemma \ref{lem:vanish-Phi} (2),
$\Phi^0(E_2'')=0$.
Then we see that $\WIT_2$ holds for $E_2'$ and
$\deg_{G_2}(\Phi^2(E_2'))=\deg_{G_1}(E_2')=0$.
Since $\rk \Phi^2(E_2')=\chi(G_1,E_2') \leq 0$,
$\Phi^2(E_2')$ is a 0-dimensional object.
By our assumption, we get that
$\Phi^1(E_2'') \to \Phi^2(E_2')$ is an isomorphism.
By Lemma \ref{lem:spectral} in the appendix,
we have $\widehat{\Phi}^0(\Phi^1(E_2''))=0$, 
which implies that $E_2' \cong \widehat{\Phi}^0(\Phi^2(E_2'))=0$. 
\end{proof}


\begin{lem}\label{lem:deg-Phi}
For an object $E \in {\cal C}$,
$\deg_{G_2}(\Phi^0(E)) \leq 0$ and 
$\deg_{G_2}(\Phi^2(E)) \geq 0$.
\end{lem}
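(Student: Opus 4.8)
The plan is to reduce to the two extreme perverse cohomologies on the subcategories ${\frak T}_1$ and ${\frak F}_1$, and then to exclude the wrong sign of the degree by producing, from a hypothetical destabilizing quotient or subobject, a nonzero morphism that the $t$-structures forbid.

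First I would split $E$ by the torsion pair $({\frak T}_1,{\frak F}_1)$: choose an exact sequence $0\to E_1\to E\to E_2\to 0$ with $E_1\in{\frak T}_1$ and $E_2\in{\frak F}_1$. Feeding this into the long exact sequence \eqref{eq:FM-Phi} and using $\Phi^0(E_2)=\Phi^2(E_1)=0$ (Lemma \ref{lem:Perverse-1} (4),(5)), one gets $\Phi^0(E)\cong\Phi^0(E_1)$ and $\Phi^2(E)\cong\Phi^2(E_2)$. Hence it suffices to prove $\deg_{G_2}(\Phi^0(E_1))\le 0$ for $E_1\in{\frak T}_1$ and $\deg_{G_2}(\Phi^2(E_2))\ge 0$ for $E_2\in{\frak F}_1$.

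For the second inequality I would argue by contradiction. If $\deg_{G_2}(\Phi^2(E_2))<0$, the Harder--Narasimhan filtration of $\Phi^2(E_2)$ in $\Per(X'/Y')$ with respect to $\mu_{G_2}$ produces a quotient object $Q$ that is $\mu_{G_2}$-semistable with $\rk Q>0$ and $\mu_{G_2}(Q)<0$. Since $\Phi^2={}^pH^2$ is the top perverse cohomology, the truncation morphism $\Phi(E_2)\to\Phi^2(E_2)[-2]$ followed by the surjection $\Phi^2(E_2)\to Q$ gives a nonzero morphism $\Phi(E_2)\to Q[-2]$. Applying the inverse equivalence $\widehat{\Phi}[2]$ yields a nonzero morphism $E_2\to\widehat{\Phi}(Q)$. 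Now $Q$ is $\mu_{G_2}$-semistable of negative slope, so by the analogue of Lemma \ref{lem:slope} for $\widehat{\Phi}$ (obtained by interchanging the roles of $X$ and $X'$ and of $\Phi$ and $\widehat{\Phi}$), $\WIT_2$ holds for $Q$ with respect to $\widehat{\Phi}$; thus $\widehat{\Phi}(Q)=\widehat{\Phi}^2(Q)[-2]$ is concentrated in degree $2$ of the heart ${\cal C}$. But then $\Hom(E_2,\widehat{\Phi}(Q))=\Hom_{\cal C}(E_2,\widehat{\Phi}^2(Q)[-2])=0$, since $E_2$ and $\widehat{\Phi}^2(Q)$ both lie in ${\cal C}$, contradicting the nonvanishing of the transported morphism.

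The first inequality, $\deg_{G_2}(\Phi^0(E_1))\le 0$, I would obtain by the symmetric argument: a destabilizing subobject of $\Phi^0(E_1)$ of positive slope is $\WIT_0$ for $\widehat{\Phi}$, so its $\widehat{\Phi}$-image sits in degree $0$ of ${\cal C}$, and the morphism produced from the bottom truncation $\Phi^0(E_1)\to\Phi(E_1)$ again lands in a forbidden $\Hom$; alternatively this case follows from the second by the duality $D_X$ together with $G_2^\vee=G_3$, which swaps $\Phi^0$ and $\Phi^2$, exchanges $\Per(X'/Y')$ and $\Per(X'/Y')^D$, and negates $\deg_{G_2}$. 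The degree-preservation formula $\deg_{G_1}(v)=-\deg_{G_2}(\Phi^{\alpha}(v))$ of Proposition \ref{prop:deg-preserve} (3), combined with $\Phi^0(E_2)=\Phi^2(E_1)=0$, records the numerical consistency of the splitting. The main obstacle will be the slope--$\WIT$ dictionary for the inverse transform, i.e. verifying that a $\mu_{G_2}$-semistable perverse object of negative (resp. positive) slope satisfies $\WIT_2$ (resp. $\WIT_0$) for $\widehat{\Phi}$; this is the analogue of Lemma \ref{lem:slope} and Lemma \ref{lem:vanish-Phi} with $X$ and $X'$ interchanged, and care is needed at the borderline slope-zero pieces, where the refinement by $\chi(G_2,\underline{\;\;})$ built into the torsion pairs of Definition \ref{defn:category} must be invoked.
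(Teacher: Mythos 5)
Your reduction to $\Phi^0(E_1)$ with $E_1\in{\frak T}_1$ and $\Phi^2(E_2)$ with $E_2\in{\frak F}_1$, and the idea of transporting a destabilizing piece of $\Phi^i(E)$ back through the equivalence and contradicting the orthogonality $\Hom({\bf D}^{\leq 0},{\bf D}^{\geq 1})=0$, are sound in outline. But the step you yourself identify as the main obstacle is both misstated and circular. Misstated: Lemma \ref{lem:slope} (and any analogue obtained by interchanging $\Phi$ and $\widehat{\Phi}$) goes in the direction ``$\WIT$ implies a slope bound''; you need the converse, ``$\mu_{G_2}$-semistable of negative slope implies $\WIT_2$ for $\widehat{\Phi}$'', and that converse is false in general --- such an object can perfectly well have $\widehat{\Phi}^1\neq 0$. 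Your contradiction does survive with the weaker true statement $\widehat{\Phi}^0(Q)=0$ (then $\widehat{\Phi}(Q)\in{\bf D}^{\geq 1}$ and the transported nonzero morphism $E_2\to\widehat{\Phi}(Q)$ cannot exist), so this part is repairable. Circular: that weaker statement is precisely Lemma \ref{lem:widehat-Phi} (3) (and, for the $\Phi^0$ half, Lemma \ref{lem:widehat-Phi} (1)), and in the paper Lemma \ref{lem:widehat-Phi} is proved \emph{using} Lemma \ref{lem:deg-Phi}. You cannot get it ``by symmetry'' from Lemma \ref{lem:vanish-Phi} either: that lemma rests on the fact that the kernels ${\cal E}_{|\{x'\}\times X}$ are the stable objects of ${\cal C}$ parametrized by $X'$, and the mirror statement --- that the ${\cal E}_{|X'\times\{x\}}$ are well-behaved objects of a perverse heart on $X'$ --- is the Fourier--Mukai duality (Theorem \ref{thm:duality}), proved later and itself resting on this chain of lemmas.

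The paper sidesteps the $X'$ side entirely. By Lemma \ref{lem:spectral}, $\widehat{\Phi}(\Phi^0(E))$ is concentrated in perverse degree $2$ and $\widehat{\Phi}(\Phi^2(E))$ in degree $0$; hence $\widehat{\Phi}^2(\Phi^0(E))$ satisfies $\WIT_0$ and $\widehat{\Phi}^0(\Phi^2(E))$ satisfies $\WIT_2$ with respect to $\Phi$, so Lemma \ref{lem:slope} --- used in the direction that is actually proved, and applied on $X$ where the $\mu$-theory is available --- places them in ${\frak T}_1$ and ${\frak F}_1$ respectively; the sign reversal $\deg_{G_1}(v)=-\deg_{G_2}(\Phi^{\alpha}(v))$ of Proposition \ref{prop:deg-preserve} (3) then gives both inequalities at once, with no case division and no contradiction argument. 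To salvage your proof you would have to replace the appeal to a slope--$\WIT$ dictionary on $X'$ by exactly this pullback to $X$, at which point you have reproduced the paper's argument.
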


\begin{proof}
We note that
\begin{equation}
\widehat{\Phi}(\Phi^0(E))=\widehat{\Phi}^2(\Phi^0(E))[-2],\;
\widehat{\Phi}(\Phi^2(E))=\widehat{\Phi}^0(\Phi^2(E))\;
\end{equation}
and
\begin{equation}
\deg_{G_2}(\Phi^0(E))=-\deg_{G_1}(\widehat{\Phi}^2(\Phi^0(E))), \;
\deg_{G_2}(\Phi^2(E))=-\deg_{G_1}(\widehat{\Phi}^0(\Phi^2(E))).
\end{equation} 
Since $\widehat{\Phi}^2(\Phi^0(E))$ satisfies $\WIT_0$ 
with respect to $\Phi$, $\widehat{\Phi}^2(\Phi^0(E)) \in {\frak T}_1$,
which implies that $\deg_{G_1}(\widehat{\Phi}^2(\Phi^0(E)))\geq 0$.
Since $\widehat{\Phi}^0(\Phi^2(E))$ satisfies $\WIT_2$ 
with respect to $\Phi$, $\widehat{\Phi}^0(\Phi^2(E)) \in {\frak F}_1$,
which implies that $\deg_{G_1}(\widehat{\Phi}^0(\Phi^2(E)))\leq 0$.
Therefore our claims hold.
\end{proof}

\begin{lem}\label{lem:widehat-Phi}
\begin{enumerate}
\item[(1)]
If $F \in {\frak T}_2^{\mu}$, then 
$\widehat{\Phi}^2(F)=0$.
\item[(2)]
If $\WIT_0$ holds for $F \in \Per(X'/Y')$ with respect to $\widehat{\Phi}$,
then $F \in {\frak T}_2^{\mu}$.
\item[(3)]
If $F \in {\frak F}_2^{\mu}$, then 
$\widehat{\Phi}^0(F)=0$.
\item[(4)]
If $\WIT_2$ holds for $F \in \Per(X'/Y')$ with respect to $\widehat{\Phi}$,
then $F \in {\frak F}_2^{\mu}$.
\end{enumerate}
\end{lem}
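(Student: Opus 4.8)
The plan is to reduce parts (2) and (4) to parts (1) and (3), and to treat the latter two as the $\widehat{\Phi}$-analogues of Lemma \ref{lem:vanish-Phi} and Lemma \ref{lem:Perverse-1}(4),(5). The guiding observation is that $X'$ is again a $K3$ surface and $\widehat{\Phi}[2]=\Phi^{-1}$, so every formal device already used for $\Phi$ is available for $\widehat{\Phi}$: the base change theorem, Serre duality (with $K_{X'}\cong{\cal O}_{X'}$), the adjunction $\Hom_X(\widehat{\Phi}(\;\;),N)\cong\Hom_{X'}(\;\;,\Phi(N)[2])$, the degree reversal $\deg_{G_2}(w)=-\deg_{G_1}(\widehat{\Phi}(w))$ coming from Proposition \ref{prop:deg-preserve}(3), and the spectral sequence of Lemma \ref{lem:spectral}. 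I will also use that $\widehat{\Phi}(A_{ij}')=E_{ij}$ and $\widehat{\Phi}({\Bbb C}_{x'})={\cal E}_{|\{x'\}\times X}$ lie in ${\cal C}$ in perverse degree $0$, which is just the inverse of the identities $\Phi(E_{ij})[2]=A_{ij}'$ and $\Phi({\cal E}_{|\{x'\}\times X})[2]={\Bbb C}_{x'}$ established in the proof of Proposition \ref{prop:K3-normal}.

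For part (1), let $F\in{\frak T}_2^{\mu}$. First I would kill the ordinary top cohomology sheaf: by base change and Serre duality on $X'$ the fibre of $H^2(\widehat{\Phi}(F))$ is $\Ext_{X'}^2(G_2,F)\cong\Hom_{X'}(F,G_2)^{\vee}$, and this vanishes because $G_2={\cal E}_{|X'\times\{x\}}^{\vee}$ is a torsion free generator of slope $0$ (Lemma \ref{lem:assumption-G_2}), while $F$ is either a torsion object or satisfies $\mu_{\min,G_2}(F)>0$; in both cases the slope comparison forces $\Hom_{X'}(F,G_2)=0$. Hence $H^0(\widehat{\Phi}^2(F))=H^2(\widehat{\Phi}(F))=0$ by the analogue of Lemma \ref{lem:Perverse-1}(3), so $\widehat{\Phi}^2(F)$ lies in the torsion class of ${\cal C}$, i.e.\ is a successive extension of the exceptional objects $E_{ij}[-1]$, $j>0$. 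To remove this residual part I would use the adjunction computation $\Hom_X(\widehat{\Phi}(F),E_{ij}[-2])\cong\Hom_{X'}(F,\Phi(E_{ij}))=\Hom_{X'}(F,A_{ij}'[-2])=0$, and then transfer it into the vanishing of $\Hom(\widehat{\Phi}^2(F),E_{ij})$ for all $j>0$ by means of the spectral sequence of Lemma \ref{lem:spectral}, which separates the perverse cohomology degrees exactly as in the proof of Lemma \ref{lem:slope}. Since an object of the torsion class is detected by its quotients onto the $E_{ij}$, this yields $\widehat{\Phi}^2(F)=0$. Part (3) is the Serre dual assertion: for $F\in{\frak F}_2^{\mu}$ the bottom sheaf $H^0(\widehat{\Phi}(F))\cong\Hom_{X'}(G_2,F)$ vanishes from $\mu_{\max,G_2}(F)\le 0$, and the residual part of $\widehat{\Phi}^0(F)$ is removed by the dual adjunction computation together with Lemma \ref{lem:spectral}.

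With (1) and (3) in hand, (2) and (4) follow formally. Given $F$, decompose it by the torsion pair $({\frak T}_2^{\mu},{\frak F}_2^{\mu})$ as $0\to F_1\to F\to F_2\to 0$ with $F_1\in{\frak T}_2^{\mu}$ and $F_2\in{\frak F}_2^{\mu}$, apply $\widehat{\Phi}$, and take the long exact sequence of perverse cohomology objects in ${\cal C}$. Parts (1) and (3) give $\widehat{\Phi}^2(F_1)=0$ and $\widehat{\Phi}^0(F_2)=0$. If $\WIT_0$ holds for $F$, so that $\widehat{\Phi}^1(F)=\widehat{\Phi}^2(F)=0$, chasing the sequence forces $\widehat{\Phi}^0(F_2)=\widehat{\Phi}^1(F_2)=\widehat{\Phi}^2(F_2)=0$, hence $\widehat{\Phi}(F_2)=0$ and $F_2=0$ since $\widehat{\Phi}[2]$ is an equivalence; thus $F=F_1\in{\frak T}_2^{\mu}$, which is (2). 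Symmetrically, if $\WIT_2$ holds for $F$, the same chase forces $\widehat{\Phi}^0(F_1)=\widehat{\Phi}^1(F_1)=\widehat{\Phi}^2(F_1)=0$, so $F_1=0$ and $F=F_2\in{\frak F}_2^{\mu}$, which is (4).

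The main obstacle is concentrated in (1) and (3): the passage from the vanishing of the extreme \emph{ordinary} cohomology sheaf to the vanishing of the extreme \emph{perverse} cohomology object, and the behaviour of the slope inequalities at the boundary slope $0$. After Serre duality reduces matters to $\Hom$-vanishing against $G_2$ and the exceptional objects $A_{ij}'$, one must still rule out a contribution supported on the exceptional curves of $X'\to Y'$, and this is precisely where the spectral sequence of Lemma \ref{lem:spectral} is indispensable, just as in Lemma \ref{lem:slope}. I would treat torsion $F$ separately, where $\mu_{\min,G_2}(F)=\infty$ and the needed vanishing is immediate from torsion freeness of $G_2$; and I expect the delicate point to be the slope-$0$ case in (3), where the $\mu$-condition $\mu_{\max,G_2}(F)\le 0$ is weaker than the twisted condition, so that controlling $\widehat{\Phi}^0(F)$ will require the degree reversal of Proposition \ref{prop:deg-preserve} together with the refinement underlying Lemma \ref{lem:Perverse-1}(5), rather than a naive slope estimate alone.
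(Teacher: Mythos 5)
There is a genuine gap, and it sits exactly where you rely on a diagram chase in parts (2) and (4). Write the torsion-pair decomposition $0\to F_1\to F\to F_2\to 0$ and take the long exact sequence of perverse cohomologies. In case (2), with $\widehat{\Phi}^1(F)=\widehat{\Phi}^2(F)=0$ and $\widehat{\Phi}^2(F_1)=0$ from part (1), the sequence gives $\widehat{\Phi}^1(F_2)=\widehat{\Phi}^2(F_2)=0$, but at the spot $\widehat{\Phi}^0(F)\to\widehat{\Phi}^0(F_2)\to\widehat{\Phi}^1(F_1)\to\widehat{\Phi}^1(F)=0$ exactness only tells you that $\widehat{\Phi}^0(F_2)$ \emph{surjects} onto $\widehat{\Phi}^1(F_1)$; it does not force $\widehat{\Phi}^0(F_2)=0$. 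The same failure occurs symmetrically in (4): you obtain $\widehat{\Phi}^0(F_1)=\widehat{\Phi}^1(F_1)=0$ but not $\widehat{\Phi}^2(F_1)=0$. So the chase only establishes that $\WIT_0$ holds for $F_2$ (resp.\ $\WIT_2$ for $F_1$), and a nonzero $F_2$ with $\WIT_0$ is not yet excluded. The paper closes this by a degree--rank argument: since $\WIT_2$ holds for $\widehat{\Phi}^0(F_2)$ with respect to $\Phi$, Lemma \ref{lem:slope} puts $\widehat{\Phi}^0(F_2)\in{\frak F}_1$, so $\deg_{G_1}(\widehat{\Phi}^0(F_2))\le 0$; on the other hand $\deg_{G_1}(\widehat{\Phi}^0(F_2))=-\deg_{G_2}(F_2)\ge 0$ because $F_2\in{\frak F}_2^{\mu}$. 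Equality then forces $\chi(G_1,\widehat{\Phi}^0(F_2))\le 0$, while $\chi(G_1,\widehat{\Phi}^0(F_2))=\rk F_2$, so $\rk F_2=0$ and $F_2=0$ since ${\frak F}_2^{\mu}$ contains no nonzero torsion object. This numerical step is the actual content of (2) and (4) and is entirely absent from your argument; you need to supply it (or an equivalent) rather than appeal to the equivalence $\widehat{\Phi}[2]=\Phi^{-1}$.

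For (1) and (3) your route is genuinely different from the paper's. The paper avoids Serre duality and base change altogether: from Lemma \ref{lem:spectral} it extracts the exact sequences $F\to\Phi^0(\widehat{\Phi}^2(F))\overset{\phi}{\to}\Phi^2(\widehat{\Phi}^1(F))\to 0$ and $0\to\Phi^0(\widehat{\Phi}^1(F))\to\Phi^2(\widehat{\Phi}^0(F))\to F$, then uses Lemma \ref{lem:deg-Phi} and the torsion-freeness of $\Phi^0(\;\;)$ (Lemma \ref{lem:Perverse-1}(3)) to show that $\ker\phi$ is a torsion-free quotient of $F$ of nonpositive degree, hence zero when $F\in{\frak T}_2^{\mu}$; the middle terms then satisfy both $\WIT_0$ and $\WIT_2$ and vanish. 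Your Serre-duality approach could perhaps be made to work, but the pivotal vanishing $\Hom_{X'}(F,{\cal E}^{\vee}_{|X'\times\{x\}})=0$ for $F\in{\frak T}_2^{\mu}$ is not a naive slope comparison: the degree of a subobject of ${\cal E}^{\vee}_{|X'\times\{x\}}$ is not bounded above by $0$ without further input (this is essentially the content of Lemma \ref{lem:vanish2}, which in the paper depends on Theorem \ref{thm:duality}, proved later). I would recommend replacing your (1),(3) by the spectral-sequence argument, which uses only material already available at this point.
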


\begin{proof}
(1)
By Lemma \ref{lem:spectral} in the appendix,
we have an exact sequence
\begin{equation}
F \to \Phi^0(\widehat{\Phi}^2(F)) \overset{\phi}{\to}
 \Phi^2(\widehat{\Phi}^1(F))
\to 0.
\end{equation}
By Lemma \ref{lem:deg-Phi},
$\deg_{G_2}(\ker \phi) \leq 0$.
Since $\Phi^0(\widehat{\Phi}^2(F))$ is torsion free,
$\ker \phi$ is also torsion free.
By our assumption of $F$,
we have $\ker \phi=0$.
Then $\Phi^0(\widehat{\Phi}^2(F)) \cong
 \Phi^2(\widehat{\Phi}^1(F))$ satisfies $\WIT_0$ and
$\WIT_2$, which implies that
$\Phi^0(\widehat{\Phi}^2(F)) \cong
 \Phi^2(\widehat{\Phi}^1(F)) \cong 0$.
Therefore $\widehat{\Phi}^2(F)=0$.

(2)
Assume that there is an exact sequence
\begin{equation}
0 \to F_1 \to F \to F_2 \to 0
\end{equation}
such that $F_1 \in {\frak T}_2^{\mu}$ 
and $F_2 \in {\frak F}_2^{\mu}$.
By (1), we have $\widehat{\Phi}^2(F_1)=0$.
By a similar exact sequence to \eqref{eq:FM-Phi},
we see that $\WIT_0$ holds for $F_2$ and
$\deg_{G_1}(\widehat{\Phi}^0(F_2))=-\deg_{G_2}(F_2) \geq 0$.
On the other hand,
since $\WIT_2$ holds for $\widehat{\Phi}^0(F_2)$,
Lemma \ref{lem:slope} implies that 
$\widehat{\Phi}^0(F_2) \in {\frak F}_1$.
Hence $\deg_{G_1}(\widehat{\Phi}^0(F_2))=0$ and
$\chi(G_1,\widehat{\Phi}^0(F_2)) \leq 0$.
Since $\chi(G_1,\widehat{\Phi}^0(F_2))=\rk F_2$,
we have $\rk F_2=0$.
Since ${\frak F}_2^{\mu}$ contains no torsion object except
$0$, we conclude that $F_2=0$.  

(3)
By Lemma \ref{lem:spectral},
we have an exact sequence
\begin{equation}
0 \to \Phi^0(\widehat{\Phi}^1(F)) \overset{\psi}{\to}
\Phi^2(\widehat{\Phi}^0(F)) \to F.
\end{equation}
By (2), $\Phi^2(\widehat{\Phi}^0(F)) \in {\frak T}_2^{\mu}$, 
which implies that $\coker \psi=0$.
Then $\Phi^0(\widehat{\Phi}^1(F)) \cong
\Phi^2(\widehat{\Phi}^0(F))$ satisfies $\WIT_0$ and
$\WIT_2$, which implies that
$\Phi^0(\widehat{\Phi}^1(F)) \cong
\Phi^2(\widehat{\Phi}^0(F)) \cong 0$.
Therefore $\widehat{\Phi}^0(F)=0$.

(4)
Assume that there is an exact sequence
\begin{equation}
0 \to F_1 \to F \to F_2 \to 0
\end{equation}
such that $0 \ne F_1 \in {\frak T}_2^{\mu}$ 
and $F_2 \in {\frak F}_2^{\mu}$.
By (3), $\widehat{\Phi}^0(F_2)=0$.
By a similar exact sequence to \eqref{eq:FM-Phi},
we see that $\WIT_2$ holds for $F_1$ and
$\deg_{G_1}(\widehat{\Phi}^2(F_1))=-\deg_{G_2}(F_1) \leq 0$.
Moreover if $\rk F_1>0$, then 
$\deg_{G_1}(\widehat{\Phi}^2(F_1))<0$.
On the other hand,
since $\WIT_0$ holds for $\widehat{\Phi}^2(F_1)$,
Lemma \ref{lem:slope} implies that 
$\widehat{\Phi}^2(F_1) \in {\frak T}_1$.
Hence $\rk F_1=0$ and $\deg_{G_1}(\widehat{\Phi}^2(F_1))=0$.
Then 
$\widehat{\Phi}^2(F_1) \in {\frak T}_1$
implies that $0<\chi(G_1,\widehat{\Phi}^2(F_1))=\rk F_1$,
which is a contradiction.
Therefore $F_1=0$.
\end{proof}

\begin{lem}\label{lem:equiv-Phi}
\begin{enumerate}
\item[(1)]
Assume that $E \in {\frak T}_1$.
Then
\begin{enumerate}
\item
$\Phi^0(E)\in {\frak F}_2^{\mu}$.
\item
$\Phi^1(E)\in {\frak T}_2^{\mu}$.
\item
$\Phi^2(E)=0$. 
\end{enumerate}
\item[(2)]
Assume that $E \in {\frak F}_1$.
Then
\begin{enumerate}
\item
$\Phi^0(E)=0$.
\item
$\Phi^1(E)\in {\frak F}_2^{\mu}$. 
\item
$\Phi^2(E)\in {\frak T}_2^{\mu}$.
\end{enumerate}
\end{enumerate}
\end{lem}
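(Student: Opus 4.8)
The plan is to read each of the six assertions as a statement about the perverse cohomology objects $\Phi^i(E)={}^{p}H^{i}(\Phi(E))$, and to convert membership in ${\frak T}_2^{\mu}$ or ${\frak F}_2^{\mu}$ into a $\WIT$-vanishing for the inverse transform $\widehat{\Phi}$ through Lemma \ref{lem:widehat-Phi}. Two assertions are immediate: $\Phi^2(E)=0$ for $E\in{\frak T}_1$ is the last sentence of Lemma \ref{lem:Perverse-1}(4), and $\Phi^0(E)=0$ for $E\in{\frak F}_1$ is Lemma \ref{lem:Perverse-1}(5). For the remaining four I would exploit the identity $\widehat{\Phi}\circ\Phi=[-2]$ and the spectral sequence of Lemma \ref{lem:spectral},
\begin{equation*}
E_2^{p,q}=\widehat{\Phi}^{p}(\Phi^{q}(E))\Longrightarrow {}^{p}H^{p+q}(E[-2]),
\end{equation*}
whose abutment is $E$ concentrated in total degree $2$ since $E$ lies in the heart ${\cal C}$. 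Because $\Phi^{q}(E)=0$ for $q\notin[0,2]$ and one of $\Phi^0,\Phi^2$ already vanishes on ${\frak T}_1$ resp. ${\frak F}_1$, only two rows of the $E_2$-page survive and the single possible differential $d_2$ is tightly constrained.

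First I would settle the outer cohomologies. For $E\in{\frak T}_1$ (so $\Phi^2(E)=0$) the terms of total degree $0$ and $1$ force $\widehat{\Phi}^0(\Phi^0(E))=\widehat{\Phi}^1(\Phi^0(E))=0$, i.e. $\WIT_2$ holds for $\Phi^0(E)$, and Lemma \ref{lem:widehat-Phi}(4) gives $\Phi^0(E)\in{\frak F}_2^{\mu}$; this is (1)(a). Dually, for $E\in{\frak F}_1$ (so $\Phi^0(E)=0$) the terms of total degree $3$ and $4$ force $\widehat{\Phi}^1(\Phi^2(E))=\widehat{\Phi}^2(\Phi^2(E))=0$, i.e. $\WIT_0$ for $\Phi^2(E)$, and Lemma \ref{lem:widehat-Phi}(2) gives $\Phi^2(E)\in{\frak T}_2^{\mu}$; this is (2)(c). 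The torsion-freeness from Lemma \ref{lem:Perverse-1}(3) and the degree bounds of Lemma \ref{lem:deg-Phi} are available as corroboration here, but the $\WIT$-vanishings already suffice.

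The middle cohomology $\Phi^1(E)$ is the genuine difficulty. The spectral sequence now yields only one of the two vanishings: for $E\in{\frak T}_1$ the degree-$3$ term gives $\widehat{\Phi}^2(\Phi^1(E))=0$, while for $E\in{\frak F}_1$ the degree-$1$ term gives $\widehat{\Phi}^0(\Phi^1(E))=0$. In each case $\Phi^1(E)$ is only seen to be killed in one perverse degree, so Lemma \ref{lem:widehat-Phi} does not apply directly. My plan is to split $\Phi^1(E)$ along the torsion pair, $0\to A\to\Phi^1(E)\to B\to 0$ with $A\in{\frak T}_2^{\mu}$ and $B\in{\frak F}_2^{\mu}$, and to prove that the component in the wrong class is zero. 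For (1)(b) the long exact sequence for $\widehat{\Phi}$ gives, from $\widehat{\Phi}^2(\Phi^1(E))=0$ together with $\widehat{\Phi}^2(A)=0$ (Lemma \ref{lem:widehat-Phi}(1)), that $\widehat{\Phi}^2(B)=0$; and $B\in{\frak F}_2^{\mu}$ gives $\widehat{\Phi}^0(B)=0$ (Lemma \ref{lem:widehat-Phi}(3)). For (2)(b) the dual chase, starting from $\widehat{\Phi}^0(\Phi^1(E))=0$, yields a subobject $A\in{\frak T}_2^{\mu}$ with $\widehat{\Phi}^0(A)=\widehat{\Phi}^2(A)=0$.

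Thus everything reduces to the purely "inverse" statement that an object of ${\frak F}_2^{\mu}$ (resp. ${\frak T}_2^{\mu}$) annihilated by $\widehat{\Phi}$ in perverse degrees $0$ and $2$ must vanish, and this I expect to be the main obstacle. The plan for it is to reconstruct such an object as $\Phi^1(V)$ for a suitable $V\in{\cal C}$ with $\Phi^0(V)=\Phi^2(V)=0$ (using that $\Phi$ and $\widehat{\Phi}$ are mutually inverse up to the shift $[2]$), to decompose $V$ by the torsion pair $({\frak T}_1,{\frak F}_1)$, and to derive a contradiction with the slope-and-character conditions defining ${\frak F}_2^{\mu}$ resp. ${\frak T}_2^{\mu}$ by feeding the outer results of the previous paragraph into Lemma \ref{lem:slope} and the degree inequalities of Lemma \ref{lem:deg-Phi}. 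In effect this upgrades Lemma \ref{lem:widehat-Phi} to the converse characterizations "$\widehat{\Phi}^2=0\Rightarrow{\frak T}_2^{\mu}$" and "$\widehat{\Phi}^0=0\Rightarrow{\frak F}_2^{\mu}$"; once that vanishing is established, (1)(b) and (2)(b) follow at once and all six assertions are assembled.
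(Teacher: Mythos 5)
Your treatment of the four outer assertions is correct and coincides with the paper's: (1)(c) and (2)(a) are Lemma~\ref{lem:Perverse-1}(4),(5), and (1)(a), (2)(c) follow by reading off $\WIT_2$ for $\Phi^0(E)$ (resp.\ $\WIT_0$ for $\Phi^2(E)$) from Lemma~\ref{lem:spectral} and invoking Lemma~\ref{lem:widehat-Phi}(4) (resp.\ (2)). The decomposition $0\to F_1\to\Phi^1(E)\to F_2\to 0$ along $({\frak T}_2^{\mu},{\frak F}_2^{\mu})$ is also exactly the paper's starting point for the middle cohomology.

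The gap is in your final reduction. The target statement --- that a nonzero object of ${\frak F}_2^{\mu}$ (resp.\ ${\frak T}_2^{\mu}$) cannot be annihilated by $\widehat{\Phi}$ in perverse degrees $0$ and $2$ --- is false, so no amount of work will establish it. The objects $F$ with $\widehat{\Phi}^0(F)=\widehat{\Phi}^2(F)=0$ are precisely the $\Phi^1(V)$ for $V\in{\cal C}$ with $\Phi^0(V)=\Phi^2(V)=0$; if both halves of your target held, then running both of your chases on such a $V$ would kill $F_2\in{\frak F}_2^{\mu}$ and $F_1\in{\frak T}_2^{\mu}$ simultaneously and force $\Phi^1(V)=0$, i.e.\ $\Phi$ would admit no nonzero $\WIT_1$ object at all --- contradicting, e.g., Proposition~\ref{prop:stability-deg=0} and Proposition~\ref{prop:K3:minimal}, where such objects are the whole point. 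The structural reason your reduction cannot work is that it discards the hypothesis $E\in{\frak T}_1$ (resp.\ $E\in{\frak F}_1$): that hypothesis is the only thing distinguishing the two cases, which are supposed to yield opposite conclusions for $\Phi^1(E)$, yet your target statement makes no reference to it.

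The repair is to retain the link between the offending piece and $E$. In case (1), $\Phi^2(E)=0$ and Lemma~\ref{lem:spectral} give $\widehat{\Phi}^2(\Phi^1(E))=0$ together with a surjection $E\to\widehat{\Phi}^1(\Phi^1(E))$; since $\widehat{\Phi}^2(F_1)=0$, the object $\widehat{\Phi}^1(F_2)$ is then a \emph{quotient object of} $E$. Now $E\in{\frak T}_1$ forces $\deg_{G_1}(\widehat{\Phi}^1(F_2))\geq 0$, while $\deg_{G_1}(\widehat{\Phi}^1(F_2))=\deg_{G_2}(F_2)\leq 0$, so the degree is zero; then $\chi(G_1,\widehat{\Phi}^1(F_2))=-\rk F_2<0$ is incompatible with $E\in{\frak T}_1$ if $\rk\widehat{\Phi}^1(F_2)>0$, and incompatible with Lemma~\ref{lem:Hilbert-Poly} if $\rk\widehat{\Phi}^1(F_2)=0$; hence $F_2=0$. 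In case (2) one dually obtains an injection $\widehat{\Phi}^1(F_1)\hookrightarrow E$ from $\Phi^0(E)=0$ and $\widehat{\Phi}^0(F_2)=0$, and kills $F_1$ by the same bookkeeping: if $F_1$ has positive dimension then $\deg_{G_1}(\widehat{\Phi}^1(F_1))=\deg_{G_2}(F_1)>0$ contradicts $E\in{\frak F}_1$, and if $F_1$ is $0$-dimensional then $\rk\widehat{\Phi}^1(F_1)=-\chi(G_2,F_1)<0$ is absurd. With your last step replaced by this argument the proof is complete.
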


\begin{proof}
We take a decomposition
\begin{equation}
0 \to F_1 \to \Phi^1(E) \to F_2 \to 0
\end{equation}
with
$F_1 \in {\frak T}_2^{\mu}$ and
$F_2 \in {\frak F}_2^{\mu}$.
Applying $\widehat{\Phi}$, we have an exact sequence
\begin{equation}\label{eq:FM-FM}
\begin{CD}
0 @>>> \widehat{\Phi}^0(F_1) @>>> 
\widehat{\Phi}^0(\Phi^1(E)) @>>> 
\widehat{\Phi}^0(F_2)\\
@>>>\widehat{\Phi}^1(F_1) @>>> 
\widehat{\Phi}^1(\Phi^1(E)) @>>> 
\widehat{\Phi}^1(F_2)\\
@>>> \widehat{\Phi}^2(F_1) @>>> 
\widehat{\Phi}^2(\Phi^1(E)) @>>> 
\widehat{\Phi}^2(F_2) @>>> 0.
\end{CD}
\end{equation}
By Lemma \ref{lem:widehat-Phi},
we have $\widehat{\Phi}^0(F_2)=\widehat{\Phi}^2(F_1)=0$.

(1) Assume that $E \in {\frak T}_1$.
Then (a) follows from Lemma \ref{lem:widehat-Phi} (4), and (c)
follows from Lemma \ref{lem:Perverse-1} (4).
We prove (b).
We assume that $F_2 \ne 0$.
By Lemma \ref{lem:spectral} and (c),
we have $\widehat{\Phi}^2(\Phi^1(E))=0$.
Then $\WIT_1$ holds for $F_2$ and
$\deg_{G_1}(\widehat{\Phi}^1(F_2))=\deg_{G_2}(F_2) \leq 0$.
By Lemma \ref{lem:spectral}, we have a surjective homomorphism
\begin{equation}
E \to \widehat{\Phi}^1(\Phi^1(E)).
\end{equation} 
Hence $\widehat{\Phi}^1(F_2)$ is a quotient object of $E$.
Since $E \in {\frak T}_1$, we see that
$\deg_{G_1}(\widehat{\Phi}^1(F_2)) \geq 0$.
Hence $\deg_{G_1}(\widehat{\Phi}^1(F_2))=0$.
If $\rk \widehat{\Phi}^1(F_2)>0$, 
then 
since $\chi(G_1,\widehat{\Phi}^1(F_2))=-\rk F_2<0$,
we get $E \not \in {\frak T}_1$.
Hence $\rk \widehat{\Phi}^1(F_2)=0$. Then
$\chi(G_1,\widehat{\Phi}^1(F_2))=-\rk F_2<0$ implies that
the $G_1$-twisted Hilbert polynomial of $\widehat{\Phi}^1(F_2)$
is not positive.
By Lemma \ref{lem:Hilbert-Poly}, this is impossible.
Therefore $F_2=0$.

(2)
Assume that $E \in {\frak F}_1$.
By Lemma \ref{lem:Perverse-1} and Lemma \ref{lem:widehat-Phi}, (a) and 
(c) hold. We prove (b).
Assume that $F_1 \ne 0$.
By $\Phi^0(E)=0$ and Lemma \ref{lem:spectral},
we have $\widehat{\Phi}^0(\Phi^1(E))=0$.
Then $\WIT_1$ holds for $F_1$
and we have an injective morphism
$\widehat{\Phi}^1(F_1) \to \widehat{\Phi}^1(\Phi^1(E)) \to E$.
Assume that $\dim F_1 \geq 1$. 
Since $\deg_{G_1}(\widehat{\Phi}^1(F_1))=\deg_{G_2}(F_1)> 0$, 
this is impossible.
Assume that $\dim F_1 =0$.
Then $\chi(G_2,F_1)>0$, which implies that
$\rk \widehat{\Phi}^1(F_1)=-\chi(G_2,F_1)<0$.
This is a contradiction.
Therefore $F_1=0$.
\end{proof}

The following is a generalization of
a result in \cite{H:category} (see Remark \ref{rem:equiv-Phi} below).

\begin{thm}\label{thm:equiv-Phi}
$\Phi$ induces an equivalence
${\frak A}_1 \to {\frak A}_2^{\mu}[-1]$.
Moreover $\widehat{\Phi}^0(F) \in \overline{\frak F}_1^{\mu}$
if $F \in {\frak T}_2^{\mu}$ does not contain
a 0-dimensional object.
\end{thm}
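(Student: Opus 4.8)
The plan is to reduce everything to the behaviour of $\Phi$ on the torsion and torsion-free classes, which is already recorded in Lemma~\ref{lem:equiv-Phi}, and then to run the standard ``tilting under an equivalence that is $t$-exact up to shift'' bookkeeping. First I would take $E\in{\frak A}_1$ and write it via the torsion pair $({\frak T}_1,{\frak F}_1)$: there is an exact triangle $B[1]\to E\to A\to B[2]$ in ${\bf D}(X)$ with $A\in{\frak T}_1$ and $B\in{\frak F}_1$ (here $A$ and $B$ are the ${\cal C}$-cohomology objects of $E$ in degrees $0$ and $-1$; this triangle is the defining property of the tilt ${\frak A}_1$). Applying the equivalence $\Phi$ and passing to the perverse cohomology functors $\Phi^i={}^pH^i\circ\Phi$ on $\Per(X'/Y')$, Lemma~\ref{lem:equiv-Phi} tells me that $\Phi^0(A)\in{\frak F}_2^\mu$, $\Phi^1(A)\in{\frak T}_2^\mu$, $\Phi^2(A)=0$, while $\Phi^0(B)=0$, $\Phi^1(B)\in{\frak F}_2^\mu$, $\Phi^2(B)\in{\frak T}_2^\mu$.

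Then I would chase the long exact sequence of perverse cohomology attached to the triangle $\Phi(B)[1]\to\Phi(E)\to\Phi(A)\to\Phi(B)[2]$. Using ${}^pH^i(\Phi(B)[1])=\Phi^{i+1}(B)$, the vanishings above force ${}^pH^q(\Phi(E))=0$ for $q\neq 0,1$ and leave the exact sequence
$$0\to\Phi^1(B)\to{}^pH^0(\Phi(E))\to\Phi^0(A)\overset{\delta}{\to}\Phi^2(B)\to{}^pH^1(\Phi(E))\to\Phi^1(A)\to 0.$$
Since ${\frak F}_2^\mu$ is closed under subobjects and extensions, ${}^pH^0(\Phi(E))$, being an extension of $\ker\delta\subseteq\Phi^0(A)$ by $\Phi^1(B)$, lies in ${\frak F}_2^\mu$; since ${\frak T}_2^\mu$ is closed under quotients and extensions, ${}^pH^1(\Phi(E))$, an extension of $\Phi^1(A)$ by $\coker\delta$, lies in ${\frak T}_2^\mu$. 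Reading this off for $\Phi(E)[1]$ gives ${}^pH^{-1}(\Phi(E)[1])\in{\frak F}_2^\mu$ and ${}^pH^0(\Phi(E)[1])\in{\frak T}_2^\mu$ with all other perverse cohomology vanishing, which is exactly the condition $\Phi(E)[1]\in{\frak A}_2^\mu$, i.e. $\Phi(E)\in{\frak A}_2^\mu[-1]$. To upgrade the inclusion $\Phi({\frak A}_1)\subseteq{\frak A}_2^\mu[-1]$ to an equivalence I would invoke that $\Phi$ is an exact equivalence of triangulated categories, so $\Phi({\frak A}_1)$ is itself the heart of a bounded $t$-structure on ${\bf D}(X')$; a heart contained in another heart of a bounded $t$-structure must coincide with it, so $\Phi({\frak A}_1)={\frak A}_2^\mu[-1]$ and $\Phi$ restricts to an equivalence of abelian categories ${\frak A}_1\to{\frak A}_2^\mu[-1]$.

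For the final assertion, let $F\in{\frak T}_2^\mu$ have no nonzero $0$-dimensional subobject. By Lemma~\ref{lem:widehat-Phi}(1), $\widehat{\Phi}^2(F)=0$, so $\widehat{\Phi}(F)$ has ${\cal C}$-cohomology only in degrees $0$ and $1$. Feeding $\widehat{\Phi}^0(F)$ into the spectral sequence of Lemma~\ref{lem:spectral} for $\Phi\widehat{\Phi}=[-2]$ (whose abutment is $F$ in total degree $2$), the low-degree exact sequence gives the vanishings $\Phi^0(\widehat{\Phi}^0(F))=\Phi^1(\widehat{\Phi}^0(F))=0$, so $\WIT_2$ holds for $\widehat{\Phi}^0(F)$, together with an exact sequence
$$0\to\Phi^0(\widehat{\Phi}^1(F))\to\Phi^2(\widehat{\Phi}^0(F))\to F.$$
Here $\Phi^0(\widehat{\Phi}^1(F))$ is torsion free by Lemma~\ref{lem:Perverse-1}(3), and $F$ has no $0$-dimensional subobject; hence any $0$-dimensional subobject of $\Phi^2(\widehat{\Phi}^0(F))$ maps to $0$ in $F$, lands in the torsion-free object $\Phi^0(\widehat{\Phi}^1(F))$, and is therefore $0$. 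Thus $\Phi^2(\widehat{\Phi}^0(F))$ contains no $0$-dimensional object, and Lemma~\ref{lem:slope}(2) yields $\widehat{\Phi}^0(F)\in\overline{\frak F}_1^\mu$, as required.

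The main obstacle is this last clause: it is the only place where one must combine the $\WIT_2$ property, the precise shape of the spectral sequence of Lemma~\ref{lem:spectral}, and the torsion-freeness of $\Phi^0$, and it is where the $0$-dimensional hypothesis on $F$ is genuinely used. Everything before it is formal torsion-pair bookkeeping, the only delicate point being the correct tracking of the perverse-cohomology degree shifts; the equivalence itself is then purely categorical once $\Phi({\frak A}_1)\subseteq{\frak A}_2^\mu[-1]$ is in hand.
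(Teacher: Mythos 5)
Your proof is correct and follows essentially the same route as the paper: the same decomposition of $E\in{\frak A}_1$ via the torsion pair, the same long exact sequence of perverse cohomologies combined with Lemma \ref{lem:equiv-Phi} to get $\Phi({\frak A}_1)\subseteq{\frak A}_2^{\mu}[-1]$, and the same spectral-sequence argument (the exact sequence $0\to\Phi^0(\widehat{\Phi}^1(F))\to\Phi^2(\widehat{\Phi}^0(F))\to F$ plus Lemma \ref{lem:Perverse-1} (3) and Lemma \ref{lem:slope} (2)) for the final clause. The only cosmetic difference is that you upgrade the inclusion to an equality by the abstract fact that a heart of a bounded $t$-structure contained in another such heart must coincide with it, whereas the paper checks directly that $\widehat{\Phi}(F)[1]\in{\frak A}_1$ for $F\in{\frak A}_2^{\mu}$ via a Hom-vanishing computation; both are standard and valid.
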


\begin{proof}
For $E \in {\frak A}_1$, we have an exact sequence in
${\frak A}_1$
\begin{equation}
0 \to H^{-1}(E)[1] \to E \to H^0(E) \to 0.
\end{equation}
Then we have an exact triangle
\begin{equation}
 \Phi(H^{-1}(E))[2] \to \Phi(E[1]) \to \Phi(H^0(E))[1]
\to \Phi(H^{-1}(E))[3].
\end{equation} 
Hence
$\Phi^i(E[1])=0$ for $i \ne -1,0$ and we have an exact sequence
\begin{equation}
\begin{CD}
0 @>>> \Phi^1(H^{-1}(E)) @>>> \Phi^{-1}(E[1]) @>>> \Phi^0(H^0(E))\\
@>>>  \Phi^2(H^{-1}(E)) @>>> \Phi^0(E[1]) @>>> \Phi^1(H^0(E)) @>>> 0.
\end{CD}
\end{equation}
By Lemme \ref{lem:equiv-Phi}, $\Phi^{-1}(E[1]) \in {\frak F}_2^{\mu}$ 
and $\Phi^0(E[1]) \in {\frak T}_2^{\mu}$. Therefore
$\Phi(E[1]) \in {\frak A}_2^{\mu}$.

Conversely for $F \in {\frak A}_2^{\mu}$ and
$E_1 \in {\frak A}_1$,
$\Phi(E_1)[1] \in {\frak A}_2^{\mu}$ implies that
\begin{equation}
\begin{split}
\Hom(\widehat{\Phi}(F)[1],E_1[p])&=
\Hom(F,(\Phi(E_1)[1])[p])=0,\;p<0,\\
\Hom(E_1[p],\widehat{\Phi}(F)[1])&=
\Hom((\Phi(E_1)[1])[p],F)=0,\;p>0.
\end{split}
\end{equation}
Hence $\widehat{\Phi}(F)[1] \in {\frak A}_1$.
Therefore the first claim holds.

For the last claim, we note that there is an exact sequence
\begin{equation}
0 \to \Phi^0(\widehat{\Phi}^1(F)) \to 
\Phi^2(\widehat{\Phi}^0(F)) \to F
\end{equation} 
by Lemma \ref{lem:spectral}.
By Lemma \ref{lem:Perverse-1} (3),
$\Phi^0(\widehat{\Phi}^1(F))$ is torsion free.
Hence $\Phi^2(\widehat{\Phi}^0(F))$ does not contain a 0-dimensional
object. 
Then Lemma \ref{lem:slope} (2) implies the
claim.
\end{proof}

\begin{rem}\label{rem:equiv-Phi}
In \cite{Y:Stability},
we gave a different proof of \cite[Prop. 4.2]{H:category}.
Since we used different notations in \cite{Y:Stability},
we explain the correspendence of the terminologies: 
$\Phi$ corresponds to ${\cal F}_{\cal E}$ in \cite{Y:Stability},
${\frak A}_2^{\mu}$ corresponds to ${\frak A}_1$ in
\cite[Thm. 2.1]{Y:Stability} and
${\frak A}_1$ corresponds to ${\frak A}_2$ or
${\frak A}_2'$ in \cite[Thm. 2.1, Prop. 2.7]{Y:Stability}.
\end{rem}

\subsection{Fourier-Mukai duality for a $K3$ surface.}

In this subsection, we shall prove a kind of duality property
between $(X,H)$ and $(X',\widehat{H})$.
In other words, we show that $X$ is the moduli space of some objects on $X'$
and $H$ is the natural determinant line bundle on the moduli space. 
\begin{thm}\label{thm:duality}
Assume that ${\Bbb C}_x$ is $\beta$-stable for all
$x \in X$.
\begin{enumerate}
\item[(1)]
${\cal E}_{|X' \times \{x \}} \in \Per(X'/Y')^D$ 
is $G_3-\Phi(\beta)^{\vee}$-twisted stable 
for all $x \in X$ and 
we have an isomorphism
$\phi:X \to M_{\widehat{H}}^{G_3-\Phi(\beta)^{\vee}}(w_0^{\vee})$ 
by sending $x \in X$
to ${\cal E}_{|X' \times \{x \}} \in 
M_{\widehat{H}}^{G_3-\Phi(\beta)^{\vee}}(w_0^{\vee})$.
Moreover we have $H=\widehat{(\widehat{H})}$ under this isomorphism.
\item[(2)]
Assume that ${\cal E}_{|\{x' \} \times X}$ is a $\mu$-stable 
local projective generator of ${\cal C}$ for a general $x' \in X'$.
Then ${\cal E}_{|X' \times \{x \}}$ is a $\mu$-stable local projective
generator of $\Per(X'/Y')^D$ for 
$x \in X \setminus \cup_i Z_i$.
\end{enumerate}
\end{thm}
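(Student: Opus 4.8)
The plan is to transport the assumed $\beta$-stability of the point objects ${\Bbb C}_x$ of ${\cal C}$ across the equivalence $\Phi=\Phi^\alpha$ and then to apply the duality functor $D_{X'}$. The starting observation is the identity $\Phi({\Bbb C}_x)={\cal E}^\vee_{|X'\times\{x\}}$, whence ${\cal E}_{|X'\times\{x\}}=\Phi({\Bbb C}_x)^\vee$, so that the family $x\mapsto {\cal E}_{|X'\times\{x\}}$ is the image under $D_{X'}$ of the family $x\mapsto \Phi({\Bbb C}_x)$. Their Mukai vectors are $w_0$ and $w_0^\vee$, and $w_0$ is primitive isotropic because $\Phi$ is an equivalence and $v({\Bbb C}_x)=\varrho_X$.

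First I would establish the primal statement: $\Phi({\Bbb C}_x)$ is a $\Phi(\beta)$-twisted stable object of $\Per(X'/Y')$ with Mukai vector $w_0$, and $x\mapsto\Phi({\Bbb C}_x)$ gives an isomorphism $X\to M_{\widehat H}^{G_2,\Phi(\beta)}(w_0)$. By Assumption \ref{ass:stability} together with Proposition \ref{prop:0-dim:duality2}, $X$ is the moduli space of $\beta$-stable objects of ${\cal C}$ of Mukai vector $\varrho_X$ via $x\mapsto{\Bbb C}_x$. Since $\Phi$ preserves the Mukai pairing and, by Proposition \ref{prop:deg-preserve}, satisfies $\deg_{G_1}=-\deg_{G_2}\circ\Phi$, the $\beta$-(semi)stability of an object $E$ of ${\cal C}$ with $v(E)=\varrho_X$ translates into $\Phi(\beta)$-(semi)stability in $\Per(X'/Y')$ of $\Phi(E)$ with $v=w_0$; the compatibility of the two abelian structures needed to compare destabilizing subobjects is furnished by Theorem \ref{thm:equiv-Phi}. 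Because $w_0$ is primitive isotropic, $M_{\widehat H}^{G_2,\Phi(\beta)}(w_0)$ is a smooth projective $K3$ surface (Proposition \ref{prop:K3:smooth}), and the equivalence $\Phi$ identifies the two moduli functors, yielding the isomorphism.

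Next I would apply $D_{X'}$. By Lemma \ref{lem:tilting:dual}, Definition \ref{defn:K3:Per^D} and Lemma \ref{lem:assumption-G_2} (which exhibits $G^\vee$ as a local projective generator of $\Per(X'/Y')^D$), the functor $E\mapsto D_{X'}(E)$ is an anti-equivalence $\Per(X'/Y')\to\Per(X'/Y')^D$ sending $G_2$-twisted stability to $G_2^\vee=G_3$-twisted stability, the perturbation $\Phi(\beta)$ to $-\Phi(\beta)^\vee$, and $w_0$ to $w_0^\vee$. Exactly as in the proof of Proposition \ref{prop:0-dim:duality}(2), this gives an isomorphism $M_{\widehat H}^{G_2,\Phi(\beta)}(w_0)\cong M_{\widehat H}^{G_3-\Phi(\beta)^\vee}(w_0^\vee)$; composing with the primal isomorphism and using ${\cal E}_{|X'\times\{x\}}=\Phi({\Bbb C}_x)^\vee$ produces the asserted $\phi\colon X\to M_{\widehat H}^{G_3-\Phi(\beta)^\vee}(w_0^\vee)$. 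For $H=\widehat{(\widehat H)}$ I would argue that $r_0\widehat H$ is the first Chern class of the determinant polarization on $X'=\overline{M}(v_0)$, while $r_0\widehat{(\widehat H)}$ is the corresponding determinant polarization on $X=\overline{M}(w_0^\vee)$; by the self-duality of the construction the class entering \eqref{eq:mu-map} for the second transform is the one corresponding to $\widehat H$, so the resulting line bundle is $H$ itself. Equivalently one checks from \eqref{eq:mu-map} that performing the transform with kernel ${\cal E}^\vee$ and then with kernel ${\cal E}$ is the identity on $\NS(X)\cap H^\perp$ and fixes $H$.

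For part (2), the extra hypothesis is precisely the one governing Lemma \ref{lem:generic-mu-stable}; applying that lemma with the roles of $X$ and $X'$ interchanged (its hypothesis on the $X'$-side being the dual of the given one under the isomorphism of part (1)) shows that $\Phi({\Bbb C}_x)={\cal E}^\vee_{|X'\times\{x\}}$ is a $\mu$-stable local projective generator of $\Per(X'/Y')$ for $x\in X\setminus\cup_i Z_i$, these being exactly the $x$ for which ${\Bbb C}_x$ is an irreducible sheaf off the exceptional locus. Dualizing, Remark \ref{rem:tilting:dual} shows that $\Phi({\Bbb C}_x)^\vee={\cal E}_{|X'\times\{x\}}$ is a local projective object of $\Per(X'/Y')^D$, and since $D_{X'}$ is an anti-equivalence it carries both the generating property and the $\mu$-stability across, giving the claim. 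The main obstacle is the primal stability statement: transporting a zero-dimensional (Simpson) stability on $X$ to a two-dimensional Gieseker-type twisted stability on $X'$ is not formal, but is precisely the preservation-of-stability problem for the Fourier-Mukai transform, requiring both the numerical compatibility of Proposition \ref{prop:deg-preserve} and the tilting equivalence of Theorem \ref{thm:equiv-Phi} to control destabilizing subobjects. The second delicate point is matching the twisting data under $D_{X'}$ — identifying $G_3$ with $G_2^\vee$ and the perturbation with $-\Phi(\beta)^\vee$, and tracking the Serre-functor shift in the heart-level duality $\Per(X'/Y')\to\Per(X'/Y')^D$ so that ${\cal E}_{|X'\times\{x\}}$, and not a shift of it, lands in the dual heart.
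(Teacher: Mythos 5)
There is a genuine gap at the heart of your argument. Your ``primal statement'' --- that $\Phi({\Bbb C}_x)$ is a $\Phi(\beta)$-twisted stable object of $\Per(X'/Y')$ of Mukai vector $w_0$ --- is not a reduction of the theorem but a restatement of it: applying $D_{X'}$ to it gives back exactly the claim about ${\cal E}_{|X'\times\{x\}}$ in $\Per(X'/Y')^D$. Worse, as formulated it is not even well-posed. Since ${\Bbb C}_x\in{\frak T}_1$, Theorem \ref{thm:equiv-Phi} only places $\Phi({\Bbb C}_x)[1]$ in the tilted heart ${\frak A}_2^{\mu}$, i.e.\ $\Phi({\Bbb C}_x)=({\cal E}_{|X'\times\{x\}})^{\vee}$ a priori has two nonvanishing perverse cohomologies ($\Phi^0$ and $\Phi^1$, and indeed $\Phi^0({\Bbb C}_x)\ne 0$ cannot be excluded formally since $\Hom({\cal E}_{|\{x'\}\times X},{\Bbb C}_x)\ne 0$ for many $x'$), so it is not an object of the abelian category $\Per(X'/Y')$ up to shift until one proves a WIT-type vanishing. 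That vanishing, and the subsequent semistability, is precisely where the paper does its work: it introduces the contravariant functor $\Psi=D_{X'}\circ\Phi[-2]$ and proves in Lemma \ref{lem:WIT-simple} that $\Psi^0(E)=\Psi^1(E)=0$ for every $0$-dimensional $E\in{\cal C}$ (by chasing the spectral sequence of Lemma \ref{lem:spectral2} against the degree estimates of Lemmas \ref{lem:slope3} and \ref{lem:E=0}), and then that $\Psi^2(E)$ is $G_3$-twisted semistable via the long exact sequence \eqref{eq:Huybrechts}; the refinement from semistable to $G_3-\Psi(\beta)$-stable is then extracted from the $\beta$-stability of ${\Bbb C}_x$. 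Neither Proposition \ref{prop:deg-preserve} (a purely cohomological identity) nor Theorem \ref{thm:equiv-Phi} (an equivalence of the tilted hearts ${\frak A}_i$, not of the perverse hearts in which the Simpson-type twisted stability is defined) supplies any of this, so the step you flag as ``the main obstacle'' is in fact the entire content of part (1) and is missing.

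Your argument for part (2) is circular. Applying Lemma \ref{lem:generic-mu-stable} ``with the roles of $X$ and $X'$ interchanged'' requires as hypothesis that ${\cal E}_{|X'\times\{x\}}$ is a $\mu$-stable local projective generator of $\Per(X'/Y')^D$ for a \emph{general} $x\in X$ --- which is, up to the exceptional locus, the conclusion you are trying to reach. The stated hypothesis concerns the slices ${\cal E}_{|\{x'\}\times X}$ over general $x'\in X'$, and there is no formal duality converting information about one family of slices of the kernel into information about the other. The paper instead argues directly: it takes a destabilizing sequence $0\to F_1\to{\cal E}_{|X'\times\{x\}}\to F_2\to 0$ as in \eqref{eq:K3:destabilize}, transports it back to $X$ by $\widehat{\Psi}$, and uses Lemma \ref{lem:generic-mu-stable} on the $X$-side (where the hypothesis actually lives) together with Lemma \ref{lem:free} to force $\rk F_2\in\{0,\rk{\cal E}_{|X'\times\{x\}}\}$. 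A correct write-up must follow some version of this route rather than a role swap.
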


The proof is similar to
that in \cite[Thm. 2.2]{Y:Stability}.
In particular, if ${\cal E}_{|\{ x' \} \times X}$ is a
$\mu$-stable locally free sheaf for a general $x' \in X'$, 
then the same proof in \cite{Y:Stability} works.
However if ${\cal E}_{|\{ x' \} \times X}$ is not
a $\mu$-stable locally free sheaf for any $x' \in X'$, 
then we need to introduce a (contravariant) Fourier-Mukai 
transforms and study their properties.
We set
\begin{equation}
\begin{split}
\Psi(E):=&{\bf R}\Hom_{p_{X'}}(p_X^*(E),{\cal E})
=\Phi(E)^{\vee}[-2],\; E \in {\bf D}(X),\\
\widehat{\Psi}(F):=&{\bf R}\Hom_{p_X}(p_{X'}^*(F),{\cal E}),\;
F \in {\bf D}(X').
\end{split}
\end{equation}
We shall first study the properties of
$\Psi$ and $\widehat{\Psi}$ which are similar to
those of $\Phi$ and $\widehat{\Phi}$.

We set
\begin{equation}
\begin{split}
\Psi(E_{ij})[2]& =B_{ij}',\; j>0\\
\Psi(E_{i0})[2]& =B_{i0}'.
\end{split}
\end{equation}
Then the following claims follow from 
Definition \ref{defn:K3:Per^D} and Lemma \ref{lem:A_*}.
\begin{lem}\label{lem:K3:irreducible-objD}
\begin{enumerate}
\item[(1)]
 $B_{ij}'={\cal O}_{C_{ij}'}(-b_{ij}'-2) \in \Per(X'/Y')^D$
and $B_{i0}' =A_0(-{\bf b}'+2{\bf b}_0)^*[1]\in \Per(X'/Y')^D$.
\item[(2)]
Irreducible objects of $\Per(X'/Y')^D$ are
\begin{equation}
B_{ij}' \;(1 \leq i \leq m,0 \leq j \leq s_i'),\;
{\Bbb C}_{x'} (x' \in X \setminus \cup_i Z_i').
\end{equation}
\end{enumerate}
\end{lem}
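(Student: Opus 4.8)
The plan is to reduce statement (1) to a dualization of the explicit irreducible objects of $\Per(X'/Y')$ computed in Lemma \ref{lem:A_*}, and then to match the outcome with the irreducible objects of the starred category $\Per(X'/Y')^D$ furnished by Definition \ref{defn:K3:Per^D} and Proposition \ref{prop:tilting:G0-Per-irred}. First I would settle the shift bookkeeping. Since $\Psi=\Phi^{\vee}[-2]$ with $\Phi=\Phi^\alpha$, and $A_{ij}'=\Phi^\alpha(E_{ij})[2]$ (so that $\Phi(E_{ij})=A_{ij}'[-2]$), one computes
\[
B_{ij}'=\Psi(E_{ij})[2]=\bigl(\Phi(E_{ij})\bigr)^{\vee}[-2][2]=\bigl(A_{ij}'[-2]\bigr)^{\vee}=(A_{ij}')^{\vee}[2].
\]
Thus (1) is entirely a matter of dualizing the objects $A_{ij}'$ listed in Lemma \ref{lem:A_*}.

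For $j>0$ I would use $A_{ij}'={\cal O}_{C_{ij}'}(b_{ij}')[1]$. As $C_{ij}'$ is a smooth rational $(-2)$-curve on the $K3$ surface $X'$, relative duality for the embedding $C_{ij}'\hookrightarrow X'$, together with $\omega_{X'}\cong{\cal O}_{X'}$ and $\omega_{C_{ij}'}\cong{\cal O}_{C_{ij}'}(-2)$, gives ${\cal O}_{C_{ij}'}(b_{ij}')^{\vee}\cong{\cal O}_{C_{ij}'}(-b_{ij}'-2)[-1]$, hence $B_{ij}'=(A_{ij}')^{\vee}[2]={\cal O}_{C_{ij}'}(-b_{ij}'-2)$. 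For $j=0$ I would use $A_{i0}'=A_0({\bf b}_i')$ together with the duality relation $A_0({\bf b})^{\vee}[1]\cong A_0(-{\bf b}+2{\bf b}_0)^*$ (equivalently, the characterization of the dual category in Proposition \ref{prop:tilting:S-T}(2) combined with Lemma \ref{lem:tilting:dual}, applied with $n=2$ and $K_{X'}\cong{\cal O}_{X'}$), which yields $(A_{i0}')^{\vee}=A_0(-{\bf b}_i'+2{\bf b}_0)^*[-1]$ and so $B_{i0}'=A_0(-{\bf b}_i'+2{\bf b}_0)^*[1]$.

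It then remains to identify these objects inside $\Per(X'/Y')^D$ and to list all irreducibles. By Definition \ref{defn:K3:Per^D}, $\Per(X'/Y')^D$ is the starred perverse category attached to the line bundles ${\cal O}_{C_{ij}'}(-b_{ij}'-2)$ on the $C_{ij}'$; by Proposition \ref{prop:C(G)}(3) and Proposition \ref{prop:tilting:G0-Per-irred} its irreducible objects are exactly the distinguished objects $E_{ij}^*$ of that category, namely ${\cal O}_{C_{ij}'}(-b_{ij}'-2)$ for $j>0$ (since the corresponding $E_{ij}^*$ is the line bundle on $Z_i'$ whose restriction to $C_{ij}'$ equals ${\cal O}_{C_{ij}'}(-b_{ij}'-2)$), $A_0(-{\bf b}_i'+2{\bf b}_0)^*[1]$ for $j=0$, and ${\Bbb C}_{x'}$ for $x'\in X'\setminus\cup_iZ_i'$. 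Comparing with the computation above proves both the membership assertion in (1) and the full list in (2). As a cross-check, (2) can alternatively be read off from Lemma \ref{lem:A_*}(3): the irreducibles $A_{ij}', {\Bbb C}_{x'}$ of $\Per(X'/Y')$ dualize to those of $\Per(X'/Y')^D$, with ${\Bbb C}_{x'}$ self-dual up to the shift built into $D_{X'}$.

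The main obstacle I anticipate is purely the shift and twist bookkeeping: tracking the $[-2]$ in $\Psi=\Phi^{\vee}[-2]$, the single shift in $A_0({\bf b})^{\vee}[1]\cong A_0(-{\bf b}+2{\bf b}_0)^*$, and the normal-bundle twist in ${\cal O}_{C_{ij}'}(b)^{\vee}\cong{\cal O}_{C_{ij}'}(-b-2)[-1]$, so that the final shifts land exactly as stated. The genuinely structural content—that these dualized objects are the irreducible objects of the starred category—is already supplied by Lemma \ref{lem:A_*}, Definition \ref{defn:K3:Per^D}, and Proposition \ref{prop:tilting:G0-Per-irred}, so no new input beyond careful computation should be required.
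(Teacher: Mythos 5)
Your argument is correct and follows the same route as the paper: the paper disposes of this lemma by citing Definition \ref{defn:K3:Per^D} and Lemma \ref{lem:A_*}, and your write-up simply makes explicit the dualization $B_{ij}'=\Psi(E_{ij})[2]=\Phi(E_{ij})^{\vee}=(A_{ij}')^{\vee}[2]$ together with the identifications ${\cal O}_{C}(b)^{\vee}\cong{\cal O}_{C}(-b-2)[-1]$ and $A_0({\bf b})^{\vee}[1]\cong A_0(-{\bf b}+2{\bf b}_0)^*$ that those citations encode. The shift bookkeeping all checks out; the only nitpick is that for $j>0$ the irreducible $E_{ij}^*=A_y\otimes{\cal O}_{C_{ij}'}(-1)$ is a line bundle on the single curve $C_{ij}'$ (it is $A_y$ that lives on $Z_i'$), but this does not affect the identification.
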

\begin{NB}
Not correct:
Since
\begin{equation}
\Hom({\cal E}_{|X' \times \{x \}},A_0(-{\bf b}'+2{\bf b}_0)^*)
=\Hom({\cal E}_{|X' \times \{x \}},B_{i0}'[-1])
=\Hom(E_{i0},{\Bbb C}_x[-1])=0,
\end{equation}
we have ${\cal E}_{|X' \times \{x \}} \in \Per(X'/Y')^D$.

If ${\cal E}_{|X' \times \{ x\}} \in \Coh(X')$, then
the argument is OK, but is not trivial. 
\end{NB}

\begin{lem}\label{lem:vanish}
\begin{enumerate}
\item[(1)]
Assume that $E \in \overline{\frak T}_1$.
Then $\Hom(E,{\cal E}_{|\{x' \} \times X})=0$
for a general $x' \in X'$.
\item[(2)]
Assume that $E \in \overline{\frak F}_1$.
Then $\Hom({\cal E}_{|\{x' \} \times X},E)=0$
for all $x' \in X'$.
\end{enumerate}
\end{lem}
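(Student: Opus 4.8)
The plan is to prove both vanishings in exact parallel with Lemma \ref{lem:vanish-Phi}, replacing the role of the $G_1$-twisted Harder--Narasimhan data of ${\frak T}_1,{\frak F}_1$ by that of the boundary categories $\overline{\frak T}_1,\overline{\frak F}_1$ of Definition \ref{defn:category}. In both parts I would assume a nonzero morphism $\phi$, form its image $I=\operatorname{im}\phi$ in ${\cal C}$, and derive a contradiction by confronting the constraints coming from $I$ being a quotient of one object with those coming from $I$ being a subobject of the other. The two standing inputs are: (i) ${\cal E}_{|\{x'\}\times X}$ lies in $M_H^{v_0+\alpha}(v_0)$, hence is $v_0+\alpha$-twisted semistable and therefore $\mu_{G_1}$-semistable of slope $0$ with $\deg_{G_1}({\cal E}_{|\{x'\}\times X})=\chi(G_1,{\cal E}_{|\{x'\}\times X})=0$; and (ii) since $X$ is a $K3$ surface, $K_X={\cal O}_X$ and Serre duality on $X$ is available. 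I would also use that $\mu_{G_1}$ and $\deg_{G_1}$ are insensitive to the twist $\alpha$ (because $c_1(\alpha)\in H^{\perp}$), so that the slope filtrations appearing below do not see $\alpha$.

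For part (2), let $E\in\overline{\frak F}_1$ and suppose $\phi\colon{\cal E}_{|\{x'\}\times X}\to E$ is nonzero. As a quotient of the slope-$0$ $\mu_{G_1}$-semistable object ${\cal E}_{|\{x'\}\times X}$, the image $I$ satisfies $\mu_{\min,G_1}(I)\ge 0$; as a subobject of $E\in\overline{\frak F}_1$ it satisfies $\mu_{\max,G_1}(I)\le 0$. Hence $I$ is a slope-$0$ $\mu_{G_1}$-semistable object, so it injects into the maximal (slope-$0$) Harder--Narasimhan factor $E_1$ of $E$, which by definition of $\overline{\frak F}_1$ has reduced $\chi(G_1,E_1)<0$; semistability of $E_1$ then forces $\chi(G_1,I)<0$. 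On the other hand, $I$ is a quotient of the twisted-semistable object ${\cal E}_{|\{x'\}\times X}$, which gives the opposite inequality for the reduced Euler characteristic of $I$. Comparing the two yields a contradiction for every $x'$, so $\Hom({\cal E}_{|\{x'\}\times X},E)=0$ for all $x'$.

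For part (1), let $E\in\overline{\frak T}_1$ and suppose $\phi\colon E\to{\cal E}_{|\{x'\}\times X}$ is nonzero. Now $I$ is a quotient of $E\in\overline{\frak T}_1$, so its minimal slope-$0$ part has reduced $\chi(G_1,\cdot)\ge 0$, while $I$ is a subobject of ${\cal E}_{|\{x'\}\times X}$, which for general $x'$ is $v_0+\alpha$-twisted \emph{stable}; a proper slope-$0$ subobject of a twisted-stable object has strictly smaller reduced Euler characteristic. The only way to avoid a contradiction is $I=0$ or $I={\cal E}_{|\{x'\}\times X}$, i.e.\ $\phi$ is surjective in ${\cal C}$ and ${\cal E}_{|\{x'\}\times X}$ is a quotient of $E$. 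It remains to exclude this last possibility for general $x'$: a twisted-stable slope-$0$ quotient of $E$ must be a Jordan--H\"older factor of the fixed, finite-length slope-$0$ semistable subquotient of $E$, so it occurs only for finitely many $x'$. Removing these, together with the exceptional loci $\cup_iZ_i'$ and the finitely many walls where ${\cal E}_{|\{x'\}\times X}$ fails to be stable, gives the desired general $x'$. Equivalently, one may argue cohomologically via $\Psi(E)=\Phi(E)^{\vee}[-2]$ and base change, reducing the vanishing of $\Hom(E,{\cal E}_{|\{x'\}\times X})=H^0(\Psi(E)_{|\{x'\}})$ to the statement that $H^2(\Phi(E))$ is supported in dimension $0$ for $E\in\overline{\frak T}_1$.

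I expect the main obstacle to be twofold. First, reconciling the $v_0$-twisted $\chi$-conditions that define $\overline{\frak T}_1$ and $\overline{\frak F}_1$ with the $v_0+\alpha$-twisted stability satisfied by ${\cal E}_{|\{x'\}\times X}$, since the two reduced Euler characteristics differ by $\langle\alpha,v(\cdot)\rangle$ and this term must be controlled on the slope-$0$ image $I$; here the equivalence ${\cal C}_{v_0}\cong\Per(X'/Y')_0$ of Proposition \ref{prop:K3-normal}, under which ${\cal E}_{|\{x'\}\times X}\mapsto{\Bbb C}_{x'}$ and $E_{ij}\mapsto A_{ij}'$, is the natural device for pinning down the Jordan--H\"older factors of $I$ and for turning the $\chi$-bookkeeping into statements about $0$-dimensional objects on $X'$. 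Second, making the genericity in part (1) precise, for which the same translation to $0$-dimensional objects on $X'$ gives the cleanest finiteness statement.
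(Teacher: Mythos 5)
Your argument is essentially the paper's: both come down to comparing reduced $G_1$-twisted Hilbert polynomials on the image of a hypothetical nonzero map, using that ${\cal E}_{|\{x' \} \times X}$ is $v_0$-twisted semistable with $\deg_{G_1}=\chi(G_1,\cdot)=0$ and, for $x' \notin \cup_i Z_i'$, actually $v_0$-twisted \emph{stable}. The obstacle you flag about reconciling the $v_0$- and $(v_0+\alpha)$-twisted Euler characteristics is a non-issue once you use that last fact: for general $x'$ the whole comparison lives in the single twist $G_1=v_0$, which is exactly how the paper concludes that a nonzero map $E \to {\cal E}_{|\{x' \} \times X}$ from a $G_1$-twisted stable $E$ with $\deg_{G_1}(E)=\chi(G_1,E)=0$ must be an isomorphism and hence occurs for only finitely many $x'$ (and, in (2), the phrase ``$I$ injects into $E_1$'' should be replaced by the standard inequality $p_{\max}(I) \le p(E_1)$, which is all you need).
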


\begin{proof}
We only prove (1).
Let $E$ be a $G_1$-twisted stable object of ${\cal C}$.
If $\deg_{G_1}(E)>0$ or $\deg_{G_1}(E)=0$ and
$\chi(G_1,E)>0$, then 
$\Hom(E,{\cal E}_{|\{x' \} \times X})=0$ for all $x' \in X'$.
\begin{NB}
If $E$ is a torsion object, then
$\deg_{G_1}(E)>0$ or $\deg_{G_1}(E)=0$ and
$\chi(G_1,E)>0$.
\end{NB}
Assume that $\deg_{G_1}(E)=0$ and
$\chi(G_1,E)=0$.
Then a non-zero homomorphism
$E \to {\cal E}_{|\{x' \} \times X}$ is an isomorphism if 
$x' \not \in \cup_i Z_i'$. 
Therefore $\Hom(E,{\cal E}_{|\{x' \} \times X})=0$ for a general 
$x' \in X'$.
\end{proof}

\begin{lem}\label{lem:Perverse}
Let $E$ be an object of ${\cal C}$.
\begin{enumerate}
\item[(1)]
$^p H^i(\Psi(E))=0$ for $i \geq 3$.
\item[(2)]
$H^0(^p H^2(\Psi(E)))=H^2(\Psi(E))$.
\item[(3)]
$^p H^0(\Psi(E)) \subset H^0(\Psi(E))$.
In particular, $^p H^0(\Psi(E))$ is torsion free. 
\item[(4)]
If $\Hom(E,E_{ij}[2])=0$ for all $i,j$
and $\Hom(E,{\cal E}_{|\{x' \} \times X}[2])=0$ for all $x' \in X'$, 
then
$^p H^2(\Psi(E))=0$.
In particular, if $E \in \overline{\frak F}_1$,
then $^p H^2(\Psi(E))=0$.
\item[(5)]
If $E$ satisfies $E \in \overline{\frak T}_1$, then
$^p H^0(\Psi(E))=0$.
 
\end{enumerate}
\end{lem}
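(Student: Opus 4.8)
Lemma \ref{lem:Perverse} is the contravariant analogue of Lemma \ref{lem:Perverse-1}, with $\Phi$ replaced by $\Psi(E)=\Phi(E)^\vee[-2]$ and the categories $({\frak T}_1,{\frak F}_1)$ replaced by their closures $(\overline{\frak T}_1,\overline{\frak F}_1)$. The plan is to transport everything through the identity $\Psi(E)=\Phi(E)^\vee[-2]$, using the duality functor $D_{X'}$ together with the already-established facts about $\Phi$ in Lemma \ref{lem:Perverse-1}. The key structural input is that $D_{X'}=\mathbf{R}\mathcal{H}om(-,\mathcal{O}_{X'})$ (composed with a shift) exchanges the perverse heart $\Per(X'/Y')$ with its dual $\Per(X'/Y')^D$, and that $\Psi = D_{X'}\circ\Phi$ up to the shift $[-2]$, since $\Phi(E)^\vee = D_{X'}(\Phi(E))$. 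On a surface $D_{X'}$ is a contravariant $t$-exact equivalence between the two perverse $t$-structures (this is exactly the content of Lemma \ref{lem:tilting:dual} and Proposition \ref{prop:tilting:S-T}(2)), so perverse cohomology objects are carried to perverse cohomology objects with reversed degree.

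\textbf{Main steps.} First I would record the precise relation ${}^pH^i(\Psi(E)) = D_{X'}\bigl({}^pH^{\,2-i}(\Phi(E))\bigr)$ coming from $\Psi(E)=D_{X'}(\Phi(E))[-2]$ and the degree-reversing $t$-exactness of $D_{X'}$ between $\Per(X'/Y')$ and $\Per(X'/Y')^D$. Granting this, claim (1) (vanishing of ${}^pH^i(\Psi(E))$ for $i\ge 3$) becomes the vanishing of ${}^pH^{2-i}(\Phi(E))$ for $2-i\le -1$, i.e. ${}^pH^j(\Phi(E))=0$ for $j<0$, which holds because $\Phi(E)\in\Per(X'/Y')$ has cohomology concentrated in degrees $0,1,2$ by Lemma \ref{lem:Perverse-1}(1)–(2). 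For claims (2) and (3), I would note that $H^0(\Psi(E))$ and $H^2(\Psi(E))$ relate to the ordinary dual via $H^i(E^\vee[-2])=\mathcal{E}xt^{i+2}$ sheaves; the assertions $H^0({}^pH^2(\Psi(E)))=H^2(\Psi(E))$ and ${}^pH^0(\Psi(E))\subset H^0(\Psi(E))$ (hence torsion-free) should follow by dualizing the corresponding statements in Lemma \ref{lem:Perverse-1}(3), using that $D_{X'}$ turns the subobject relation ${}^pH^0(\Phi)\subset H^0(\Phi)$ into the analogous torsion-free inclusion after shift. Claim (4) is the translation of Lemma \ref{lem:Perverse-1}(4): the hypotheses $\Hom(E,E_{ij}[2])=\Hom(E,\mathcal{E}_{|\{x'\}\times X}[2])=0$ give, via $\Hom(E,F[2])\cong\Hom(\Psi(F),\Psi(E))$-type adjunctions and the characterization of irreducible objects of $\Per(X'/Y')^D$ in Lemma \ref{lem:K3:irreducible-objD}, that the top perverse cohomology ${}^pH^2(\Psi(E))$ has no quotient onto any irreducible object, hence vanishes; the special case $E\in\overline{\frak F}_1$ follows from Lemma \ref{lem:vanish}(2). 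Claim (5) dualizes Lemma \ref{lem:Perverse-1}(5): for $E\in\overline{\frak T}_1$ one uses Lemma \ref{lem:vanish}(1) to kill the relevant $\Hom$ groups and combines with (3) to force ${}^pH^0(\Psi(E))=0$.

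\textbf{The main obstacle.} The delicate point is establishing the degree-reversing compatibility ${}^pH^i(\Psi(E))=D_{X'}({}^pH^{2-i}(\Phi(E)))$ rigorously, i.e. verifying that the duality $D_{X'}$ is genuinely $t$-exact (up to sign of degree) for the pair $(\Per(X'/Y'),\Per(X'/Y')^D)$. This requires knowing that $D_{X'}$ sends the torsion pair defining $\Per(X'/Y')$ to the one defining $\Per(X'/Y')^D$, which is exactly Lemma \ref{lem:tilting:dual} together with the identification of irreducible objects $B_{ij}'=D_{X'}$-type duals of $A_{ij}'$. One must also take care that $\mathcal{E}_{|\{x'\}\times X}$ need not be locally free, so the ordinary sheaf duality $E^\vee$ can have higher cohomology sheaves; this is precisely why the statement is phrased with the closed categories $\overline{\frak T}_1,\overline{\frak F}_1$ rather than ${\frak T}_1,{\frak F}_1$, and why claims (2)–(3) are inclusions rather than equalities. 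Once the $t$-exactness dictionary is set up cleanly, each of (1)–(5) is a short formal consequence of the corresponding part of Lemma \ref{lem:Perverse-1}, so I expect the bulk of the work to be in that single structural lemma rather than in the five individual claims.
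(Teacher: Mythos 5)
Your reduction of parts (4) and (5) to the adjunction isomorphisms, Lemma \ref{lem:K3:irreducible-objD} and Lemma \ref{lem:vanish} is sound and is essentially what the paper does. The gap is in your ``single structural lemma'': the claimed degree-reversing $t$-exactness of $D_{X'}$ between $\Per(X'/Y')$ and $\Per(X'/Y')^D$, i.e.\ the formula ${}^p H^i(\Psi(E))=D_{X'}({}^p H^{2-i}(\Phi(E)))$, is false, and neither Lemma \ref{lem:tilting:dual} nor Proposition \ref{prop:tilting:S-T}~(2) asserts it. Those results say that $(T^D,S^D)$ is a torsion pair with local projective generator $G^\vee$ and identify the \emph{irreducible} (fiber-supported) objects of the dual heart; they do not say that $D_{X'}$ carries arbitrary objects of $\Per(X'/Y')$ to objects of $\Per(X'/Y')^D$ up to a fixed shift. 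It cannot: $D_{X'}({\Bbb C}_{x'})={\Bbb C}_{x'}[-2]$ sits in standard degree $2$, while $D_{X'}(V)=V^{\vee}$ for $V$ locally free sits in degree $0$, so no single shift normalizes both; and for a torsion-free, non-locally-free $F$ in the heart, $D_{X'}(F)$ has the two nonzero cohomology sheaves ${\cal H}om(F,{\cal O}_{X'})$ and ${\cal E}xt^1(F,{\cal O}_{X'})$, hence two nonzero perverse cohomologies. So (1)--(3) do not follow from your dictionary as stated.

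The repair is small and is the route the paper takes: instead of dualizing at the level of perverse cohomology objects, dualize the resolution. Lemma \ref{lem:Perverse-1}~(2) represents $\Phi(E)$ by a three-term complex $W_0\to W_1\to W_2$ of \emph{local projective} (in particular locally free) objects of $\Per(X'/Y')$ in degrees $0,1,2$; by Remark \ref{rem:tilting:dual} each $W_i^{\vee}$ is a local projective object of $\Per(X'/Y')^D$, and $\Psi(E)=\Phi(E)^{\vee}[-2]$ is represented by $W_2^{\vee}\to W_1^{\vee}\to W_0^{\vee}$ in degrees $0,1,2$. Since $(-)^{\vee}$ is exact on locally free sheaves, this immediately gives ${}^p H^i(\Psi(E))=0$ for $i\notin[0,2]$ and the inclusions/equalities in (2) and (3) by the standard comparison of standard and perverse truncations of a complex of objects of a tilted heart. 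Incidentally, the reason the hypotheses use $\overline{\frak T}_1,\overline{\frak F}_1$ is not the failure of local freeness of ${\cal E}_{|\{x'\}\times X}$, but that the contravariant functor interchanges which $\Hom$-vanishings hold \emph{for all} $x'$ versus \emph{for general} $x'$ (compare Lemma \ref{lem:vanish} with Lemma \ref{lem:vanish-Phi}), which is exactly what (4) and (5) consume.
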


\begin{proof}
Let $W_{\bullet}$ be the complex in Lemma \ref{lem:Perverse-1} (2).
By Remark \ref{rem:tilting:dual},
$W_i^{\vee}$ are local projective objects of
$\Per(X'/Y')^D$.
Since $\Psi(E)$ is represented by the
complex $W_{\bullet}^{\vee}[-2]$,
 (1), (2) and (3) follow.

By Lemma \ref{lem:K3:irreducible-objD},
$F \in \Per(X'/Y')^D$ is 0 if and only if  
$\Hom(F, B_{ij}')=\Hom(F,{\Bbb C}_{x'})=0$
for all $i$, $j$ and $x' \in X'$. 

Since
\begin{equation}
\begin{split}
\Hom(E,E_{ij}[2-p])^{\vee} & \cong \Hom(\Psi(E)[2-p],\Psi(E_{ij})[2]),\\
\Hom(E,{\cal E}_{|\{x' \} \times X}[2-p])^{\vee}
& \cong \Hom(\Psi(E)[2-p],\Psi({\cal E}_{|\{x' \} \times X})[2]),
\end{split}
\end{equation}
we have (4).
(5) follows from (3) and Lemma \ref{lem:vanish} (1).
\end{proof}

\begin{defn}
We set $\Psi^i(E):= {^p H^i}(\Psi(E)) \in \Per(X'/Y')^D$ and
$\widehat{\Psi}^i(E):={^p H^i}(\widehat{\Psi}(E)) \in {\cal C}$. 
\end{defn}

\begin{lem}\label{lem:slope3}
Let $E$ be an object of ${\cal C}$.
\begin{enumerate}
\item[(1)]
If $\WIT_0$ holds for $E$ with respect to $\Psi$,
then $E \in \overline{\frak F}_1$.
\item[(2)]
If $\WIT_2$ holds for $E$ with respect to $\Psi$,
then $E \in \overline{\frak T}_1$.
If $\Psi^2(E)$ does not contain a $0$-dimensional object, then
$E \in {\frak T}_1$.
\end{enumerate}
\end{lem}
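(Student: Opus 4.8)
The plan is to mirror the proof of Lemma \ref{lem:slope} almost verbatim, replacing the covariant transform $\Phi$ by the contravariant transform $\Psi$, the category $\Per(X'/Y')$ by its dual $\Per(X'/Y')^D$, and the torsion pair $({\frak T}_1,{\frak F}_1)$ by $(\overline{\frak T}_1,\overline{\frak F}_1)$; the contravariant analogues of the two vanishing inputs are already recorded in Lemma \ref{lem:Perverse}. First I would fix $E\in{\cal C}$ and take the decomposition
\begin{equation}
0 \to E_1 \to E \to E_2 \to 0,\quad E_1\in\overline{\frak T}_1,\ E_2\in\overline{\frak F}_1
\end{equation}
relative to this torsion pair. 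Since $\Psi$ is contravariant, this becomes the triangle $\Psi(E_2)\to\Psi(E)\to\Psi(E_1)\to\Psi(E_2)[1]$, whose long exact sequence of perverse cohomology objects in $\Per(X'/Y')^D$ reads
\begin{equation}
0 \to \Psi^0(E_2) \to \Psi^0(E) \to \Psi^0(E_1) \to \Psi^1(E_2) \to \Psi^1(E) \to \Psi^1(E_1) \to \Psi^2(E_2) \to \Psi^2(E) \to \Psi^2(E_1) \to 0,
\end{equation}
the perverse cohomology being concentrated in degrees $0,1,2$ by Lemma \ref{lem:Perverse} (1).

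By Lemma \ref{lem:Perverse} (4) and (5), the hypotheses $E_2\in\overline{\frak F}_1$ and $E_1\in\overline{\frak T}_1$ give $\Psi^2(E_2)=0$ and $\Psi^0(E_1)=0$. Substituting these into the long exact sequence yields $\Psi^0(E_2)\cong\Psi^0(E)$, an exact sequence $0\to\Psi^1(E_2)\to\Psi^1(E)\to\Psi^1(E_1)\to0$, and $\Psi^2(E)\cong\Psi^2(E_1)$. If $\WIT_0$ holds for $E$, then $\Psi^1(E)=\Psi^2(E)=0$, and a short diagram chase forces $\Psi(E_1)=0$; since $\Psi(\,\cdot\,)=\Phi(\,\cdot\,)^{\vee}[-2]$ is a composite of the equivalence $\Phi$ with the dualizing contravariant equivalence, it is itself a contravariant equivalence, so $E_1=0$ and $E=E_2\in\overline{\frak F}_1$, which is (1). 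Symmetrically, if $\WIT_2$ holds for $E$, then $\Psi^0(E)=\Psi^1(E)=0$ forces $\Psi(E_2)=0$, whence $E_2=0$ and $E=E_1\in\overline{\frak T}_1$; this is the first assertion of (2).

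For the refinement in (2) I would assume $\WIT_2$ for $E$ together with the hypothesis that $\Psi^2(E)$ contains no $0$-dimensional object, and prove $E\in{\frak T}_1$. Decomposing $E$ with respect to $({\frak T}_1,{\frak F}_1)$ gives $0\to E'\to E\to E''\to0$ with $E'\in{\frak T}_1$ and $E''\in{\frak F}_1$; as $\overline{\frak T}_1$ is closed under quotients, $E''\in\overline{\frak T}_1\cap{\frak F}_1$, which means $E''$ is $\mu$-semistable of slope $0$ with $\deg_{G_1}(E'')=\chi(G_1,E'')=0$. Applying $\Psi$ to this sequence, using $\Psi^0(E)=\Psi^1(E)=0$ and $\Psi^0(E')=0$ (Lemma \ref{lem:Perverse} (5)), I would first deduce $\Psi^0(E'')=\Psi^1(E'')=0$, i.e. $\WIT_2$ holds for $E''$; a Chern character computation through Proposition \ref{prop:deg-preserve} then shows $\Psi^2(E'')$ has rank and degree $0$, hence is $0$-dimensional. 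In the long exact sequence the image of $\Psi^2(E'')\to\Psi^2(E)$ is a $0$-dimensional subobject of $\Psi^2(E)$, so it vanishes by hypothesis, while $\Psi^1(E)=0$ makes the connecting map $\Psi^1(E')\to\Psi^2(E'')$ injective; together these give an isomorphism $\Psi^1(E')\cong\Psi^2(E'')$. Finally, the contravariant analogue of the spectral sequence of Lemma \ref{lem:spectral} (obtained by the same appendix argument, or by dualizing it through $(\,\cdot\,)^{\vee}$) yields $\widehat{\Psi}^{\,p}(\Psi^1(E'))=0$ in the relevant degree $p$, and $\WIT_2$ for $E''$ identifies $E''$ with $\widehat{\Psi}^{\,p}(\Psi^2(E''))$; transporting along $\Psi^1(E')\cong\Psi^2(E'')$ forces $E''=0$, so $E=E'\in{\frak T}_1$.

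The diagram chasing in the two long exact sequences is routine, being formally identical to the proof of Lemma \ref{lem:slope}. The step I expect to be the main obstacle is the endgame of the refinement: I must make precise the contravariant spectral sequence relating $\widehat{\Psi}$ and $\Psi$, pin down the degree $p$ in which the off-diagonal term $\widehat{\Psi}^{\,p}(\Psi^1(E'))$ vanishes, and fix the exact shift in the reconstruction isomorphism $\widehat{\Psi}\circ\Psi\cong\mathrm{id}[\,\cdot\,]$ that recovers $E''$ from $\Psi^2(E'')$. Once this bookkeeping, which is where the passage from the covariant picture to the dualized one is least automatic, is settled, the argument closes exactly as in the covariant case.
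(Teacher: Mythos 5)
Your proof is correct and follows essentially the same route as the paper: the same two torsion-pair decompositions, the same vanishings from Lemma \ref{lem:Perverse}, the same identification of $\Psi^2(E'')$ as $0$-dimensional, and the same endgame via the isomorphism $\Psi^1(E')\cong\Psi^2(E'')$. The ``contravariant spectral sequence'' you flag as the remaining bookkeeping is exactly Lemma \ref{lem:spectral2} of the appendix, whose item (4) (the surjection $\widehat{\Psi}^0(\Psi^0(E'))\to\widehat{\Psi}^2(\Psi^1(E'))$) gives the vanishing in degree $p=2$ and, combined with $E''\cong\widehat{\Psi}^2(\Psi^2(E''))$, closes the argument just as you anticipated.
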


\begin{proof}
For an object $E$ of ${\cal C}$, 
there is an exact sequence
\begin{equation}
0 \to E_1 \to E \to E_2 \to 0
\end{equation}
such that $E_1 \in \overline{\frak T}_1$ and
$E_2 \in \overline{\frak F}_1$.
Applying $\Psi$ to this exact sequence, 
we get a long exact sequence
\begin{equation}\label{eq:FM-Psi}
\begin{CD}
0 @>>> \Psi^0(E_2) @>>> 
\Psi^0(E) @>>> 
\Psi^0(E_1)\\
@>>>
\Psi^1(E_2) @>>> 
\Psi^1(E) @>>> 
\Psi^1(E_1)\\
@>>>
\Psi^2(E_2) @>>> 
\Psi^2(E) @>>> 
\Psi^2(E_1)@>>> 0
\end{CD}
\end{equation}
By Lemma \ref{lem:Perverse},
we have
$\Psi^0(E_1)=\Psi^2(E_2)=0$.
If $\WIT_0$ holds for $E$, then we get $\Psi(E_1)=0$.
Hence (1) holds.
If $\WIT_2$ holds for $E$, 
then we get $\Psi(E_2)=0$.
Thus the first part of (2) holds.
Assume that $\Psi^2(E)$ does not have a non-zero 
0-dimensional subobject.
We take a decomposition
\begin{equation}
0 \to E_1 \to E \to E_2 \to 0
\end{equation}
such that $E_1 \in {\frak T}_1$ and
$E_2$ is a $G_1$-twisted semi-stable object with
$\deg_{G_1}(E_2)=\chi(G_1,E_2)=0$.
Then $\Psi^0(E_1)=\Psi^0(E_2)=\Psi^1(E_2)=0$.
In particular, $\WIT_2$ holds for $E_2$ with respect to
$\Psi$. Then 
$\Psi^2(E_2)$ is a torsion object with 
$\deg_{G_3}(\Psi^2(E_2))=0$, which implies that
$\Psi^2(E_2)$ is 0-dimensional.
Our assumption implies that
$\Psi^1(E_1) \cong \Psi^2(E_2)$.
By Lemma \ref{lem:spectral2} and
$\widehat{\Psi}^0(\Psi^0(E_1))=0$, we get
$E_2=\widehat{\Psi}^2(\Psi^2(E_2))=\widehat{\Psi}^2(\Psi^1(E_1))=0$.
\end{proof}

\begin{lem}\label{lem:E=0}
Let $E$ be a $\mu$-semi-stable object with
$\deg_{G_1}(E)=0$.
If $\WIT_0$ holds for $E$, then $E=0$.  
\end{lem}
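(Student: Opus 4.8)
The plan is to derive a contradiction from two incompatible estimates on $\chi(G_1,E)$: one coming from membership in the category $\overline{\frak F}_1$, and one from the torsion-freeness of $\Psi(E)$ that $\WIT_0$ forces. Assume $E\neq 0$ throughout. Since $\WIT_0$ holds for $E$ with respect to $\Psi$, Lemma \ref{lem:slope3} (1) gives $E\in\overline{\frak F}_1$; in particular $E$ is torsion free, so it is not a torsion object and $\rk E>0$. As $E$ is $\mu$-semi-stable with $\deg_{G_1}(E)=0$, the semistability forces $\mu_{\max,G_1}(E)=\mu_{\min,G_1}(E)=0$, so every factor $E_j$ in the Harder--Narasimhan filtration \eqref{eq:HNF} of $E$ satisfies $\mu_{G_1}(E_j)=0$, and these factors are ordered by decreasing $\chi(G_1,E_j)/\rk E_j$.

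The first thing I would extract is the sign of $\chi(G_1,E)$. The defining condition of $\overline{\frak F}_1$ says that $E_1$ satisfies $\mu_{G_1}(E_1)=0$ and $\chi(G_1,E_1)<0$; note $\rk E_1>0$ since $\mu_{G_1}(E_1)=0$ rules out $E_1$ being torsion. Hence $\chi(G_1,E_1)/\rk E_1<0$, and because this quotient is the maximal one among all Harder--Narasimhan factors, each factor has $\chi(G_1,E_j)<0$. Summing, $\chi(G_1,E)=\sum_j\chi(G_1,E_j)<0$.

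On the other hand, $\WIT_0$ for $E$ means $\Psi(E)=\Psi^0(E)$. By Lemma \ref{lem:Perverse} (3) we have $\Psi^0(E)\subset H^0(\Psi(E))$ and $\Psi^0(E)$ is torsion free; thus $H^{-1}(\Psi(E))=0$ and $\Psi(E)$ is an honest torsion-free sheaf on $X'$. Since $\Psi=D_{X'}(\Phi(-))[-2]$ is a (contravariant) equivalence and $E\neq 0$, this sheaf is nonzero, so $\rk\Psi(E)>0$. To identify the rank I would use $\Psi(E)=\Phi(E)^{\vee}[-2]$, which gives $\rk\Psi(E)=\rk\Phi(E)$; then, since $\Phi^{\alpha}$ is an isometry with $\Phi^{\alpha}(v_0)=\varrho_{X'}$ (Proposition \ref{prop:deg-preserve}) and $v(G_1)=v_0$, one computes $\rk\Phi(E)=-\langle\Phi^{\alpha}(v(E)),\varrho_{X'}\rangle=-\langle v(E),v_0\rangle=\chi(G_1,E)$. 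Therefore $\chi(G_1,E)=\rk\Psi(E)>0$, contradicting the strict inequality $\chi(G_1,E)<0$ of the previous paragraph. Hence $E=0$.

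The routine bookkeeping is the Harder--Narasimhan argument giving $\chi(G_1,E)<0$. The step that deserves care, and which I expect to be the crux, is the chain $\WIT_0\Rightarrow\Psi(E)$ is a genuine (nonzero) torsion-free sheaf $\Rightarrow\rk\Psi(E)>0$, together with the identity $\rk\Psi(E)=\chi(G_1,E)$ coming from $\Psi(E)=\Phi(E)^{\vee}[-2]$ and the isometry property of $\Phi^{\alpha}$; everything hinges on correctly tracking the rank through dualization, the shift, and the cohomological Fourier--Mukai transform.
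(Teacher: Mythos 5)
Your proof is correct and follows essentially the same route as the paper's: the paper likewise plays off $\chi(G_1,E)=\rk\Psi(E)\geq 0$ (forced by $\WIT_0$) against $\chi(G_1,E)<0$ (from Lemma \ref{lem:slope3} together with $\mu$-semistability and $\deg_{G_1}(E)=0$). You have merely filled in the rank computation via the isometry $\Phi^{\alpha}(v_0)=\varrho_{X'}$ and the Harder--Narasimhan bookkeeping that the paper leaves implicit.
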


\begin{proof}
If $\WIT_0$ holds for $E \ne 0$, then 
$\chi(G_1,E)=\rk \Psi(E) \geq 0$.
On the other hand,
Lemma \ref{lem:slope3} implies that $\chi(G_1,E)<0$.
Therefore $E=0$.
\end{proof}

\begin{lem}\label{lem:slope4}
If $\WIT_0$ holds for $E$ with respect to $\Psi$,
then $E \in \overline{\frak F}_1^{\mu}$.
\end{lem}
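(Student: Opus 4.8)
The claim is that if $\WIT_0$ holds for $E$ with respect to $\Psi$, then $E \in \overline{\frak F}_1^{\mu}$.

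The plan is to reduce to the structure already established in Lemma \ref{lem:slope3} (1), which gives $E \in \overline{\frak F}_1$, and then to upgrade the conclusion from $\overline{\frak F}_1$ to the smaller subcategory $\overline{\frak F}_1^{\mu}$. Recall that $\overline{\frak F}_1^{\mu}$ consists of torsion free objects $E$ with $\mu_{\max,G_1}(E)<0$, whereas $\overline{\frak F}_1$ only requires that, in the Harder-Narasimhan filtration, the top quotient $E_1$ satisfies either $\mu_{G_1}(E_1)<0$, or $\mu_{G_1}(E_1)=0$ with $\chi(G_1,E_1)<0$. So the difference is precisely the possible presence of a maximal destabilizing subobject of slope $0$. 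The strategy is therefore to show that no such slope-$0$ piece can survive under the hypothesis $\WIT_0$.

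First I would apply Lemma \ref{lem:slope3} (1) to conclude $E \in \overline{\frak F}_1$; in particular $E$ is torsion free and $\mu_{\max,G_1}(E) \leq 0$. Suppose for contradiction that $\mu_{\max,G_1}(E)=0$. Then there is a subobject $E' \subset E$ that is $\mu$-semi-stable with $\deg_{G_1}(E')=0$ and $\rk E'>0$ (the maximal destabilizing subobject of slope $0$, or its semistable graded pieces). Since $\WIT_0$ holds for $E$ and $\Psi$ is exact on the perverse hearts in the appropriate sense, $\WIT_0$ is inherited by the subobject $E'$: from the long exact sequence \eqref{eq:FM-Psi} applied to $0 \to E' \to E \to E/E' \to 0$, together with Lemma \ref{lem:Perverse}, one sees that $\Psi^j(E')=0$ for $j \neq 0$. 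Here I would argue that $\Psi^2(E')$ injects into $\Psi^2(E)=0$ and that $\Psi^0(E')$ is controlled by $\Psi^0(E)$, forcing the vanishing of all cohomology of $\Psi(E')$ except possibly in degree $0$.

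Having established that $\WIT_0$ holds for the $\mu$-semi-stable object $E'$ with $\deg_{G_1}(E')=0$, I would invoke Lemma \ref{lem:E=0} directly to conclude $E'=0$, contradicting $\rk E'>0$. Therefore $\mu_{\max,G_1}(E)<0$, and since $E$ is already known to be torsion free, this gives exactly $E \in \overline{\frak F}_1^{\mu}$. The main obstacle I anticipate is the bookkeeping in the first step: one must check carefully that the hypothesis $\WIT_0$ for $E$ transfers to the slope-$0$ subobject $E'$, since subobjects in the perverse category ${\cal C}$ need not behave like subsheaves and the long exact sequence \eqref{eq:FM-Psi} must be combined with the torsion-freeness statements of Lemma \ref{lem:Perverse} (3), (4) to pin down the vanishing of $\Psi^0(E')$ and $\Psi^2(E')$. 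Once that transfer is secured, the contradiction via Lemma \ref{lem:E=0} is immediate.
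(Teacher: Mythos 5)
Your proposal is correct and follows essentially the same route as the paper: isolate the maximal $\mu$-semi-stable subobject $E_1$ of slope $0$ (so $E/E_1\in\overline{\frak F}_1^{\mu}$), use $\Psi^2(E/E_1)=0$ together with the long exact sequence \eqref{eq:FM-Psi} and $\WIT_0$ for $E$ to transfer $\WIT_0$ to $E_1$, and conclude $E_1=0$ by Lemma \ref{lem:E=0}. One tiny slip: since $\Psi$ is contravariant, $\Psi^2(E_1)$ is a quotient of $\Psi^2(E)=0$ rather than a subobject, but the vanishing holds either way.
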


\begin{proof}
Assume that there is an exact sequence
\begin{equation}
0 \to E_1 \to E \to E_2 \to 0
\end{equation}
such that $E_1$ is a $\mu$-semi-stable object
with $\deg_{G_1}(E_1)=0$ and $E_2 \in \overline{\frak F}_1^{\mu}$.
Then we have $\Psi^2(E_2)=0$.
By the exact sequence \eqref{eq:FM-Psi},
$\WIT_0$ holds for $E_1$.
Then Lemma \ref{lem:E=0} implies that 
$E_1=0$. 
\end{proof}

\begin{lem}\label{lem:Psi}
If $E \in \overline{\frak T}_1^{\mu}$,
then $\Psi^0(E)=0$.
\end{lem}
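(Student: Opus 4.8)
The plan is to reduce to a single boundary case by a torsion-pair decomposition, and then to kill that case by the same inverse-transform mechanism already used in Lemmas \ref{lem:E=0}, \ref{lem:slope4} and \ref{lem:widehat-Phi}. First I would dispose of torsion objects: a torsion object lies in $\overline{\frak T}_1$ by definition, so $\Psi^0(E)=0$ is already Lemma \ref{lem:Perverse} (5). Hence I may assume $E$ is torsion free with $\mu_{\min,G_1}(E)\ge 0$. Using the torsion pair $(\overline{\frak T}_1,\overline{\frak F}_1)$ of Definition \ref{defn:category}, I write an exact sequence $0\to E_1\to E\to E_2\to 0$ with $E_1\in\overline{\frak T}_1$ and $E_2\in\overline{\frak F}_1$. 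Applying the contravariant functor $\Psi$ gives the long exact sequence \eqref{eq:FM-Psi}, whose first terms are $0\to\Psi^0(E_2)\to\Psi^0(E)\to\Psi^0(E_1)$; since $E_1\in\overline{\frak T}_1$, Lemma \ref{lem:Perverse} (5) gives $\Psi^0(E_1)=0$, so $\Psi^0(E)\cong\Psi^0(E_2)$.

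The gain is that $E_2$ is now extremely special. Because $\overline{\frak T}_1^{\mu}$ is a torsion class it is closed under quotients, so the quotient $E_2$ of $E\in\overline{\frak T}_1^{\mu}$ lies in $\overline{\frak T}_1^{\mu}$; combined with $E_2\in\overline{\frak F}_1$ and torsion freeness this forces $\mu_{\min,G_1}(E_2)\ge 0$ and $\mu_{\max,G_1}(E_2)\le 0$, so $E_2$ is $\mu$-semistable of slope $0$, and the defining inequality of $\overline{\frak F}_1$ then sharpens to $\chi(G_1,E_2)<0$. Everything thus reduces to the \emph{main step}: if $E_2$ is torsion free, $\mu$-semistable with $\deg_{G_1}(E_2)=0$ and $\chi(G_1,E_2)<0$, then $\Psi^0(E_2)=0$.

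For the main step I would argue as follows. Since $E_2\in\overline{\frak F}_1$, Lemma \ref{lem:Perverse} (4) gives $\Psi^2(E_2)=0$, so $\Psi(E_2)$ has perverse cohomology only in degrees $0,1$ and $\rk\Psi(E_2)=\chi(G_1,E_2)<0$ (as in the proof of Lemma \ref{lem:E=0}). Set $F:=\Psi^0(E_2)$, which is torsion free in $\Per(X'/Y')^D$ by Lemma \ref{lem:Perverse} (3); it suffices to show $F=0$. I would apply $\widehat{\Psi}$ and feed $\Psi^2(E_2)=0$ into the spectral sequence of Lemma \ref{lem:spectral2} with abutment $E_2$ and $E_2$-term $\widehat{\Psi}^{p}(\Psi^{q}(E_2))$: the corner terms $E_{\infty}^{0,0}$ and $E_{\infty}^{1,0}$ compute parts of $H^0=H^1=0$, giving $\widehat{\Psi}^0(F)=\widehat{\Psi}^1(F)=0$, i.e. $\WIT_2$ holds for $F$ with respect to $\widehat{\Psi}$, and presenting $\widehat{\Psi}^2(F)$ (modulo the $d_2$-image $\widehat{\Psi}^0(\Psi^1(E_2))\to\widehat{\Psi}^2(F)$) as a subobject of $E_2$. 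Now $\widehat{\Psi}^2(F)$ satisfies $\WIT_0$ with respect to $\Psi$, so Lemma \ref{lem:slope4} places it in $\overline{\frak F}_1^{\mu}$, hence all its slopes are strictly negative. Confronting this strict negativity with the fact that the relevant subquotient embeds into the slope-$0$ $\mu$-semistable object $E_2$, and balancing degrees and ranks through $\rk\Psi(E_2)=\chi(G_1,E_2)<0$ together with the torsion freeness of $F$, forces $\widehat{\Psi}^2(F)=0$, whence $F=\Psi^0(E_2)=0$.

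The main obstacle is precisely this last bookkeeping in the main step: separating the genuine subobject of $E_2$ coming from $\widehat{\Psi}^2(F)$ from the $d_2$-image, and converting the slope/degree inequalities into the equality $\rk F=0$. This is the analogue of the endgames in Lemmas \ref{lem:E=0} and \ref{lem:widehat-Phi}, and the only delicate point is tracking the differentials of the spectral sequence carefully enough that the strictly negative slope of $\widehat{\Psi}^2(F)$ genuinely contradicts the slope-$0$ semistability of $E_2$ rather than being absorbed by the intermediate term $\widehat{\Psi}^1(\Psi^1(E_2))$.
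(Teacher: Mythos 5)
Your overall strategy (reduce to a slope-zero $\mu$-semi-stable piece and then kill $\Psi^0$ by playing the two spectral sequences of Lemma \ref{lem:spectral2} against each other) is the right one, and the preliminary reductions are sound: torsion objects are handled by Lemma \ref{lem:Perverse} (5), and the passage from $E$ to the quotient $E_2\in\overline{\frak F}_1\cap\overline{\frak T}_1^{\mu}$ via the long exact sequence \eqref{eq:elliptic-FM-Psi}-type sequence \eqref{eq:FM-Psi} is correct (the paper itself reduces more directly to ``$E$ is $\mu$-semi-stable or torsion'' and disposes of positive degree by base change, but your route reaches an equivalent special case). The problem is that in your main step the functors point the wrong way, and the contradiction you aim for does not materialize.

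Concretely, for $F:=\Psi^0(E_2)$ the vanishings that come for free from Lemma \ref{lem:spectral2} are $\widehat{\Psi}^1(F)=\widehat{\Psi}^2(F)=0$ (the analogue of item (2) of that lemma with the roles of $\Psi$ and $\widehat{\Psi}$ exchanged), i.e.\ $\WIT_0$ holds for $F$ with respect to $\widehat{\Psi}$ --- not $\widehat{\Psi}^0(F)=\widehat{\Psi}^1(F)=0$ as you assert. (If your claimed vanishing were formal, $F=0$ would follow at once and there would be nothing left to prove.) Consequently the term you must control is $\widehat{\Psi}^0(F)$, not $\widehat{\Psi}^2(F)$; moreover an object of the form $\widehat{\Psi}^2(\,\cdot\,)$ satisfies $\WIT_2$ with respect to $\Psi$ by Lemma \ref{lem:spectral2} (1), so Lemma \ref{lem:slope3} (2) places it in $\overline{\frak T}_1$, not in $\overline{\frak F}_1^{\mu}$ as you claim via Lemma \ref{lem:slope4}; such an object has no strictly negative slopes, and its being a subobject of the slope-zero semistable $E_2$ yields no contradiction. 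The argument that actually closes the case is the paper's: $\widehat{\Psi}^0(F)=\widehat{\Psi}^0(\Psi^0(E_2))$ satisfies $\WIT_0$ with respect to $\Psi$, hence lies in $\overline{\frak F}_1^{\mu}$ by Lemma \ref{lem:slope4}, so $\mu_{\max,G_1}(\widehat{\Psi}^0(F))<0$; the edge exact sequence $E_2\to\widehat{\Psi}^0(\Psi^0(E_2))\to\widehat{\Psi}^2(\Psi^1(E_2))\to 0$ of Lemma \ref{lem:spectral2} then forces the first map to vanish because $E_2$ is $\mu$-semi-stable of slope zero, whence $\widehat{\Psi}^0(F)\cong\widehat{\Psi}^2(\Psi^1(E_2))$ satisfies both $\WIT_0$ and $\WIT_2$ and is therefore zero. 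Combined with $\widehat{\Psi}^1(F)=\widehat{\Psi}^2(F)=0$ this gives $\widehat{\Psi}(F)=0$, hence $F=0$. So the missing ingredient is not a finer bookkeeping of differentials but the identification of the correct surviving term and the correct direction of the edge map; once the indices are straightened out your proof collapses onto the paper's.
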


\begin{proof}
We may assume that $E$ is a $\mu$-semi-stable object or a torsion object.
If $\deg_{G_1}(E)>0$, then the claim holds by the base change theorem.
Assume that $\deg_{G_1}(E)=0$.
By Lemma \ref{lem:spectral2},
we have an exact sequence
\begin{equation}
E \to \widehat{\Psi}^0(\Psi^0(E)) \to \widehat{\Psi}^2(\Psi^1(E))
\to 0.
\end{equation}
By Lemma \ref{lem:slope4},
$\widehat{\Psi}^0(\Psi^0(E)) \in \overline{\frak F}_1^{\mu}$.
Since $E$ is a $\mu$-semi-stable object with $\deg_{G_1}(E)=0$,
$E \to \widehat{\Psi}^0(\Psi^0(E))$ is a zero map.
Then $\widehat{\Psi}^0(\Psi^0(E)) \cong \widehat{\Psi}^2(\Psi^1(E))$
satisfies $\WIT_0$ and $\WIT_2$,
which implies that 
$\widehat{\Psi}^0(\Psi^0(E)) \cong \widehat{\Psi}^2(\Psi^1(E)) \cong 0$.
Therefore $\Psi^0(E)=0$.
\end{proof}

\begin{lem}\label{lem:deg-Psi}
\begin{equation}
\deg_{G_3}(\Psi^0(E)) \leq 0,\,
\deg_{G_3}(\Psi^2(E)) \geq 0.
\end{equation}
\end{lem}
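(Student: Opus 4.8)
The plan is to transcribe the proof of Lemma~\ref{lem:deg-Phi} verbatim, replacing the covariant pair $(\Phi,\widehat{\Phi})$ by the contravariant pair $(\Psi,\widehat{\Psi})$ and using Lemma~\ref{lem:slope3} where that proof used Lemma~\ref{lem:slope}. The formal input is the pair of identities $\Psi(E)=\Phi(E)^{\vee}[-2]$ (the defining relation) and $\widehat{\Psi}(F)=\widehat{\Phi}(F^{\vee})$ (a direct computation from the definitions). Together with the theorem that $\widehat{\Phi}[2]$ inverts $\Phi$, these give $\widehat{\Psi}\circ\Psi\cong\mathrm{id}$ on ${\bf D}(X)$, with \emph{no} shift this time. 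Moreover, since $G_3=G_2^{\vee}$ in $K$-theory, Proposition~\ref{prop:deg-preserve}(3) applied to $\widehat{\Phi}$ yields the degree compatibility $\deg_{G_1}(\widehat{\Psi}(F))=\deg_{G_3}(F)$ for all $F\in{\bf D}(X')$.

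First I would show that $\Psi^0(E)$ satisfies $\WIT_0$ and $\Psi^2(E)$ satisfies $\WIT_2$ with respect to $\widehat{\Psi}$, i.e. $\widehat{\Psi}(\Psi^0(E))=\widehat{\Psi}^0(\Psi^0(E))$ and $\widehat{\Psi}(\Psi^2(E))=\widehat{\Psi}^2(\Psi^2(E))[-2]$. This is the exact analogue of the two displayed identities at the start of the proof of Lemma~\ref{lem:deg-Phi}, and it comes from the spectral sequence of Lemma~\ref{lem:spectral2} for the composition $\widehat{\Psi}\circ\Psi\cong\mathrm{id}$: since $\Psi$ carries ${\cal C}$ into perverse degrees $0,1,2$ (Lemma~\ref{lem:Perverse}(1)), inspection of the corner terms forces the two extremal cohomology objects $\Psi^0(E)$ and $\Psi^2(E)$ to become concentrated after applying $\widehat{\Psi}$, with the torsion-freeness of Lemma~\ref{lem:Perverse}(3) controlling the edge maps.

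Applying $\Psi$ back to these identities (a contravariant functor, so it reverses shifts) and using $\Psi\circ\widehat{\Psi}\cong\mathrm{id}$ then shows that $\widehat{\Psi}^0(\Psi^0(E))$ is $\WIT_0$ and $\widehat{\Psi}^2(\Psi^2(E))$ is $\WIT_2$ with respect to $\Psi$. By Lemma~\ref{lem:slope3}(1) the former lies in $\overline{\frak F}_1$, whence $\deg_{G_1}(\widehat{\Psi}^0(\Psi^0(E)))\le 0$; by Lemma~\ref{lem:slope3}(2) the latter lies in $\overline{\frak T}_1$, whence $\deg_{G_1}(\widehat{\Psi}^2(\Psi^2(E)))\ge 0$. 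Feeding this into the degree compatibility of the first paragraph, and using that an even shift preserves degrees, I obtain $\deg_{G_3}(\Psi^0(E))=\deg_{G_1}(\widehat{\Psi}^0(\Psi^0(E)))\le 0$ and $\deg_{G_3}(\Psi^2(E))=\deg_{G_1}(\widehat{\Psi}^2(\Psi^2(E)))\ge 0$, which is the assertion.

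The step I expect to be the main obstacle is the concentration statement in the second paragraph: because $\Psi$ and $\widehat{\Psi}$ are contravariant, the indexing of both the spectral sequence and the $\WIT$ conditions runs opposite to the covariant case, so one must check carefully that it is precisely the extremal terms $\Psi^0$ and $\Psi^2$ (and not some intermediate combination) that survive, and that the composition carries no shift, unlike $\widehat{\Phi}\circ\Phi\cong\mathrm{id}[-2]$. Once the bookkeeping of perverse degrees and of the sign conventions in $\deg_{G_1}(\widehat{\Psi}(F))=\deg_{G_3}(F)$ is pinned down, the remainder is a formal repetition of Lemma~\ref{lem:deg-Phi}.
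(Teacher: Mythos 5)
Your proposal is correct and is essentially the paper's own argument: the paper likewise deduces $\deg_{G_3}(\Psi^i(E))=\deg_{G_1}(\widehat{\Psi}^i(\Psi^i(E)))$ for $i=0,2$ from Lemma \ref{lem:spectral2} (the $\WIT_0$/$\WIT_2$ concentration of $\Psi^0(E)$ and $\Psi^2(E)$ under $\widehat{\Psi}$, with the even shift harmless for degrees) and then concludes via Lemma \ref{lem:slope3}, exactly mirroring the proof of Lemma \ref{lem:deg-Phi}. Your bookkeeping of the sign-free degree compatibility and the shift-free inversion $\widehat{\Psi}\circ\Psi\cong\mathrm{id}$ is accurate.
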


\begin{proof}
We note that
\begin{equation}
\deg_{G_3}(\Psi^i(E))=\deg_{G_1}(\widehat{\Psi}^i(\Psi^i(E)))
\end{equation}
for $i=0,2$ by Lemma \ref{lem:spectral2}.
Then the claim follows from Lemma \ref{lem:slope3}.
\end{proof}

\begin{NB}
\begin{lem}
If $\WIT_0$ holds for $E$ with respect to $\Psi$,
then $E \in \overline{\frak F}_1^{\mu}$.

If $E \in \overline{\frak T}_1^{\mu}$, then 
$\Psi^0(E)=0$.
\end{lem}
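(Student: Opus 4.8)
The plan is to prove the two assertions in turn, using the first in the proof of the second. For the first, assume $\WIT_0$ holds for $E$ with respect to $\Psi$, so $\Psi^1(E)=\Psi^2(E)=0$. By Lemma \ref{lem:slope3}(1) we already know $E \in \overline{\frak F}_1$; in particular $E$ is torsion free and, since the slopes along its Harder--Narasimhan filtration are strictly decreasing and the first (hence every) factor has $G_1$-slope $\le 0$, at most one factor has slope exactly $0$. I would therefore split off this slope-$0$ part as an exact sequence $0 \to E_1 \to E \to E_2 \to 0$ in ${\cal C}$, with $E_1$ a $\mu$-semistable torsion free object satisfying $\deg_{G_1}(E_1)=0$ and with $E_2 \in \overline{\frak F}_1^{\mu}$. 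Feeding this into the contravariant long exact sequence \eqref{eq:FM-Psi}, Lemma \ref{lem:Perverse}(4) gives $\Psi^2(E_2)=0$ (as $E_2 \in \overline{\frak F}_1^{\mu} \subset \overline{\frak F}_1$), and together with $\Psi^1(E)=\Psi^2(E)=0$ the tail of \eqref{eq:FM-Psi} forces $\Psi^1(E_1)=\Psi^2(E_1)=0$, i.e. $\WIT_0$ holds for $E_1$. Since $E_1$ is $\mu$-semistable with $\deg_{G_1}(E_1)=0$, Lemma \ref{lem:E=0} yields $E_1=0$, whence $E \cong E_2 \in \overline{\frak F}_1^{\mu}$, which is the first assertion.

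For the second assertion, let $E \in \overline{\frak T}_1^{\mu}$. The functor $\Psi^0$ is left exact in the contravariant sense visible in \eqref{eq:FM-Psi}, namely $\Psi^0(E_2) \hookrightarrow \Psi^0(E) \to \Psi^0(E_1)$ for $0 \to E_1 \to E \to E_2 \to 0$, so by induction along the Harder--Narasimhan filtration of $E$ it suffices to treat the case where $E$ is torsion or $\mu$-semistable with $\deg_{G_1}(E) \ge 0$. When $\deg_{G_1}(E)>0$, a slope computation shows $\Hom(E, {\cal E}_{|\{x'\} \times X})=0$ for every $x' \in X'$: a nonzero map from a positive-slope (or positive-degree torsion) object to the torsion free slope-$0$ semistable fibre ${\cal E}_{|\{x'\} \times X}$ is impossible, since its image would be a torsion free subobject of slope $\le 0$ that is also a quotient of $E$ of slope $>0$. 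Hence $H^0(\Psi(E))=0$ by base change, and Lemma \ref{lem:Perverse}(3) gives $\Psi^0(E) \subset H^0(\Psi(E))=0$.

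The remaining, genuinely nontrivial case is $\deg_{G_1}(E)=0$ with $E$ $\mu$-semistable. Here I would apply the double-transform spectral sequence (Lemma \ref{lem:spectral2}) to produce the exact sequence $E \to \widehat{\Psi}^0(\Psi^0(E)) \to \widehat{\Psi}^2(\Psi^1(E)) \to 0$. The same spectral sequence certifies that $\WIT_0$ holds for $\widehat{\Psi}^0(\Psi^0(E))$ with respect to $\Psi$, so the first assertion just established places $\widehat{\Psi}^0(\Psi^0(E))$ in $\overline{\frak F}_1^{\mu}$. Because $E$ is $\mu$-semistable of slope $0$ while the target has all Harder--Narasimhan slopes negative, the map $E \to \widehat{\Psi}^0(\Psi^0(E))$ must vanish, forcing $\widehat{\Psi}^0(\Psi^0(E)) \cong \widehat{\Psi}^2(\Psi^1(E))$. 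This common object satisfies both $\WIT_0$ and $\WIT_2$, hence is $0$; then $\widehat{\Psi}^0(\Psi^0(E))=0$, combined with the fact that $\Psi^0(E)$ is a $\WIT_0$ object for $\widehat{\Psi}$ and the invertibility of the transform, forces $\Psi^0(E)=0$.

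The main obstacle I anticipate is precisely this slope-$0$ case, where the two assertions become entangled: the argument routes through the first assertion applied to $\widehat{\Psi}^0(\Psi^0(E))$ and through the appendix spectral sequence Lemma \ref{lem:spectral2}, used both to build the defining exact sequence and to supply the $\WIT_0$ property that legitimizes that application. The delicate points are the careful bookkeeping of the $\WIT_i$ conditions — in particular the vanishing of any object that is simultaneously $\WIT_0$ and $\WIT_2$ — and the verification, purely from slope comparison, that the canonical map $E \to \widehat{\Psi}^0(\Psi^0(E))$ is zero. By contrast, the positive-degree and torsion cases, handled by base change together with Lemma \ref{lem:Perverse}(3), are routine.
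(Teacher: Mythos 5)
Your proof is correct and follows essentially the same route as the paper: the first assertion is exactly the paper's proof of Lemma \ref{lem:slope4} (split off the slope-zero part, use the long exact sequence \eqref{eq:FM-Psi} with $\Psi^2(E_2)=0$ to get $\WIT_0$ for $E_1$, then kill $E_1$ by Lemma \ref{lem:E=0}), and the second is exactly the paper's proof of Lemma \ref{lem:Psi} (reduce to the semistable case, dispose of $\deg_{G_1}>0$ by base change, and in the degree-zero case use the exact sequence $E \to \widehat{\Psi}^0(\Psi^0(E)) \to \widehat{\Psi}^2(\Psi^1(E)) \to 0$ from Lemma \ref{lem:spectral2}, the vanishing of the first map by slope comparison, and the fact that an object satisfying both $\WIT_0$ and $\WIT_2$ is zero). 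Your additional details --- deriving the decomposition from Lemma \ref{lem:slope3}(1), spelling out the fibrewise $\Hom$-vanishing behind the base-change step, and the final passage from $\widehat{\Psi}^0(\Psi^0(E))=0$ to $\Psi^0(E)=0$ via invertibility --- merely make explicit what the paper leaves implicit.
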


\begin{proof}

We take a decomposition
\begin{equation}
0 \to E_1 \to E \to E_2 \to 0
\end{equation}
such that $E_1 \in \overline{\frak T}_1^{\mu}$ and
$E_2 \in \overline{\frak F}_1^{\mu}$.
By Lemma \ref{lem:Perverse},
$\Psi^2(E_2)=0$.
Hence $\WIT_0$ holds for $E_1$.
Then $E_1 \in \overline{\frak F}_1 \cap 
\overline{\frak T}_1^{\mu}$ implies that
$\deg_{G_1}(E_1)=0$ and $\chi(G_1,E_1) \leq 0$.
Since $\Psi^0(E_1)$ is torsion free, $\Psi^0(E_1)=0$.
Therefore $E_1=0$.

By Lemma \ref{lem:spectral2},
we have an exact sequence
\begin{equation}
E \to \widehat{\Psi}^0(\Psi^0(E)) \to  \widehat{\Psi}^2(\Psi^1(E))
\to 0. 
\end{equation}
Since $\WIT_0$ holds for $\widehat{\Psi}^0(\Psi^0(E))$,
$\widehat{\Psi}^0(\Psi^0(E)) \in \overline{\frak F}_1^{\mu}$,
which implies that 
$\widehat{\Psi}^0(\Psi^0(E)) \to  \widehat{\Psi}^2(\Psi^1(E))$ is an
isomorphism.
Then we have 
$\widehat{\Psi}^0(\Psi^0(E)) \cong  \widehat{\Psi}^2(\Psi^1(E))
\cong 0$, which implies that $\Psi^0(E)=0$.
\end{proof}
\end{NB}
%


{\it Proof of Theorem \ref{thm:duality}.}

(1) We first prove the $G_3$-twisted semi-stability
of ${\cal E}_{|X' \times \{ x \}}$ for all $x \in X$.
It is sufficient to prove the following lemma.
\begin{lem}\label{lem:WIT-simple}
Let $E$ be a 0-dimensional object of ${\cal C}$.
Then $\WIT_2$ holds for $E$ with respect to $\Psi$
and $\Psi^2(E)$ is a $G_3$-twisted semi-stable object
such that $\deg_{G_3}(\Psi^2(E))=
\chi(G_3,\Psi^2(E))=0$.
Moreover if $E$ is irreducible, then $\Psi^2(E)$ is $G_3$-twisted stable.
\end{lem}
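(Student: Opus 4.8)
\textit{Setup and first reductions.} First I would record the formal properties that drive the argument. Since $\Psi(E)=\Phi(E)^{\vee}[-2]$ and $\widehat\Psi(F)=\widehat\Phi(F^{\vee})$, the equivalence $\widehat\Phi^\alpha[2]\circ\Phi^\alpha\cong\id$ yields both $\widehat\Psi\circ\Psi\cong\id$ and $\Psi\circ\widehat\Psi\cong\id$. A $0$-dimensional object $E$ is torsion, hence lies in $\overline{\frak T}_1$, so Lemma~\ref{lem:Perverse}(5) gives $\Psi^0(E)=0$, while Lemma~\ref{lem:Perverse}(1) gives $\Psi^i(E)=0$ for $i\ge 3$. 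Thus $\Psi(E)$ is concentrated in perverse degrees $1$ and $2$, and the assertion $\WIT_2$ is exactly the vanishing $\Psi^1(E)=0$.

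\textit{A rank identity.} Next I would isolate one numerical input. For any $0$-dimensional object $W$ one has $\rk\Psi(W)=\rk\Phi(W)=\chi(G_1,W)$: indeed $\rk W=0$, so in the notation of Proposition~\ref{prop:deg-preserve} the coefficient $l$ vanishes and the rank of $\Phi^\alpha(v(W))$ equals $-\langle v(W),v_0\rangle=\chi(G_1,W)$. Moreover $\chi(G_1,W)>0$ whenever $W\neq 0$: by Lemma~\ref{lem:tilting:irreducible} such $W$ is a successive extension of the irreducible objects $A_{ij}$ and ${\Bbb C}_x$ ($x\notin\cup_iZ_i$), and $\chi(G_1,A_{ij})=-\langle v_0,v(A_{ij})\rangle>0$ by Assumption~\ref{ass:v_0}, while $\chi(G_1,{\Bbb C}_x)=r_0>0$.

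\textit{The crux: $\Psi^1(E)=0$.} Feeding $\Psi^0(E)=0$ into the inversion spectral sequence $\widehat\Psi^p(\Psi^q(E))$ of Lemma~\ref{lem:spectral2} (which converges to $E$), only the rows $q=1,2$ survive, and it yields $\widehat\Psi^2(\Psi^1(E))=0$, $\widehat\Psi^0(\Psi^2(E))=\widehat\Psi^1(\Psi^2(E))=0$, an injection $\widehat\Psi^0(\Psi^1(E))\hookrightarrow\widehat\Psi^2(\Psi^2(E))$ whose cokernel is a subobject of $E$ (hence $0$-dimensional), and a surjection $E\twoheadrightarrow\widehat\Psi^1(\Psi^1(E))$. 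Now $\widehat\Psi^0(\Psi^1(E))\in\overline{\frak F}_1^{\mu}$ (as a lowest perverse cohomology of $\widehat\Psi$ it satisfies $\WIT_0$ with respect to $\Psi$, so Lemma~\ref{lem:slope4} applies), whereas $\widehat\Psi^2(\Psi^2(E))\in\overline{\frak T}_1$ (as a top cohomology it satisfies $\WIT_2$, so Lemma~\ref{lem:slope3}(2) applies). Since the injection has $0$-dimensional cokernel it preserves rank and degree, so comparing $\mu_{\max,G_1}<0$ against $\mu_{\min,G_1}\ge 0$ forces $\widehat\Psi^0(\Psi^1(E))=0$. Hence $\Psi^1(E)$ satisfies $\WIT_1$ with respect to $\widehat\Psi$, i.e.\ $\widehat\Psi(\Psi^1(E))=W[-1]$ with $W:=\widehat\Psi^1(\Psi^1(E))$ a $0$-dimensional object. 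Applying $\Psi$ and using $\Psi\widehat\Psi\cong\id$ gives $\Psi^1(E)=\Psi(W)[1]$, so $\Psi(W)=\Psi^1(W)[-1]$ and $\Psi^1(E)=\Psi^1(W)$. If $\Psi^1(E)\neq 0$ then $W\neq 0$, and the rank identity gives $\rk\Psi^1(E)=\rk\Psi^1(W)=-\chi(G_1,W)<0$, contradicting the positivity of the $G_3$-twisted Hilbert polynomial in Lemma~\ref{lem:Hilbert-Poly}. Therefore $\Psi^1(E)=0$, establishing $\WIT_2$. This third step is the decisive and most delicate part: ruling out a nonzero lowest perverse cohomology is precisely where the spectral sequence, the torsion-pair slope estimates, and the Hilbert-polynomial positivity must all be combined.

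\textit{Invariants and stability.} With $\WIT_2$ in hand, $\Psi(E)=\Psi^2(E)[-2]$, so $\deg_{G_3}(\Psi^2(E))=\deg_{G_3}(\Psi(E))=-\deg_{G_2}(\Phi(E))=\deg_{G_1}(E)=0$ by Proposition~\ref{prop:deg-preserve}(3) together with $\deg_{G_1}(E)=0$; and, using that $\langle\,,\,\rangle$ is invariant under $x\mapsto x^{\vee}$ and that $w_0=\Phi^\alpha(\varrho_X)$, one gets $\chi(G_3,\Psi^2(E))=-\langle v(G_3),v(\Psi(E))\rangle=-\langle w_0,\Phi^\alpha(v(E))\rangle=-\langle\varrho_X,v(E)\rangle=0$. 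For semi-stability I would transport the (automatic) $G_1$-twisted semi-stability of the $0$-dimensional $E$ across the contravariant equivalence $\Psi[2]$: a subobject $F\subset\Psi^2(E)$ in $\Per(X'/Y')^D$ corresponds, via $\widehat\Psi[2]$ and the $\WIT_2$ property just proved, to a quotient of $E$, and the comparison of $G_3$-twisted reduced Hilbert polynomials then reduces to values of $\chi(G_1,-)$, which are monotone along quotients of $E$. Finally, when $E$ is irreducible it has no proper subobject, so $\Psi^2(E)$ admits no proper destabilising subobject and is $G_3$-twisted stable.
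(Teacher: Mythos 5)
Your treatment of $\WIT_2$ is correct, and it actually improves on the paper's route at one point: the paper reduces to the case of an irreducible $E$ and then exploits the dichotomy ``$\widehat{\Psi}^2(\Psi^2(E))=0$ or $\widehat{\Psi}^1(\Psi^1(E))=0$'', whereas your rank argument --- identifying $\Psi^1(E)$ with $\Psi^1(W)$ for the $0$-dimensional quotient $W=\widehat{\Psi}^1(\Psi^1(E))$ of $E$ and deriving $\rk\Psi^1(W)=-\chi(G_1,W)<0$ against the positivity in Lemma \ref{lem:Hilbert-Poly} --- disposes of an arbitrary $0$-dimensional $E$ in one stroke. The step killing $\widehat{\Psi}^0(\Psi^1(E))$ is essentially the paper's (Lemma \ref{lem:slope4} is Lemma \ref{lem:E=0} in disguise), and your computations of $\deg_{G_3}(\Psi^2(E))$ and $\chi(G_3,\Psi^2(E))$ are fine.

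The semi-stability and stability claims, however, rest on a step that fails. A subobject $F_1\subset\Psi^2(E)$ in $\Per(X'/Y')^D$ does \emph{not} correspond to a quotient of $E$: neither $F_1$ nor $F_2:=\Psi^2(E)/F_1$ need satisfy any $\WIT$ condition, so $\widehat{\Psi}(F_1)$ and $\widehat{\Psi}(F_2)$ are genuine complexes and all one has is the long exact sequence \eqref{eq:Huybrechts} tying the various $\widehat{\Psi}^i(F_j)$ to $E$. Moreover, the reduced $G_3$-twisted Hilbert polynomial of $F_1$ is governed first by $\deg_{G_3}(F_1)/\rk F_1$ and then by $\chi(G_3,F_1)/\rk F_1$, and under the transform these invariants correspond (up to sign) to $\deg_{G_1}$ and to $\rk$ of $\widehat{\Psi}(F_1)$, \emph{not} to $\chi(G_1,-)$ of a quotient of $E$; so ``monotonicity of $\chi(G_1,-)$ along quotients'' addresses the wrong inequality, and the trivial semi-stability of a $0$-dimensional $E$ carries no information here. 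What is actually needed is to pin down the sign of $\deg_{G_1}$ of each piece separately: $\widehat{\Psi}^0(F_1)\cong\widehat{\Psi}^1(F_2)$ satisfies $\WIT_0$, hence lies in $\overline{\frak F}_1$, while $\widehat{\Psi}^2(F_2)$ satisfies $\WIT_2$, hence lies in $\overline{\frak T}_1$ (Lemma \ref{lem:slope3}); combining these bounds with $\deg_{G_1}(\widehat{\Psi}(F_2))=\deg_{G_3}(F_2)\le 0$ and Lemma \ref{lem:E=0} forces $\widehat{\Psi}^1(F_2)=0$ and $\deg_{G_3}(F_2)=0$, and then $\rk\widehat{\Psi}^2(F_2)=\chi(G_3,F_2)$ forces $\chi(G_3,F_2)=0$ whenever $\chi(G_3,F_2)\le 0$. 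The stability assertion for irreducible $E$ likewise needs a further argument (that $\widehat{\Psi}^1(F_1)$ and $\widehat{\Psi}^2(F_2)$ are $0$-dimensional, so that irreducibility applied to the surjection $E\to\widehat{\Psi}^2(F_1)$ kills $F_1$ or $F_2$), which your ``no proper subobject'' shortcut does not supply.
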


\begin{proof}
We first prove that $E$ satisfies $\WIT_2$ with respect to
$\Psi$. We may assume that $E$ is irreducible.
Then we get $\Hom(E,{\cal E}_{|\{x' \} \times X})=0$ for all $x'$.
Hence $\Psi^0(E)=0$.
We shall prove that $\Psi^1(E)=0$ by showing
$\widehat{\Psi}^i(\Psi^1(E))=0$
for $i=0,1,2$. 
By Lemma \ref{lem:spectral2},
$\widehat{\Psi}^2(\Psi^1(E))=0$ and
we have an exact sequence
\begin{equation}
0 \to \widehat{\Psi}^0(\Psi^1(E)) \to
\widehat{\Psi}^2(\Psi^2(E)) \to E \to \widehat{\Psi}^1(\Psi^1(E)) \to 0.
\end{equation}
By Lemma \ref{lem:slope3} and Lemma \ref{lem:spectral2},
$\widehat{\Psi}^0(\Psi^1(E)) \in \overline{\frak F}_1$ and
$\widehat{\Psi}^2(\Psi^2(E)) \in \overline{\frak T}_1$.
Since $E$ is 0-dimensional,
$\widehat{\Psi}^0(\Psi^1(E))$ is $\mu$-semi-stable and
$\deg_{G_1}(\widehat{\Psi}^0(\Psi^1(E)))=
\deg_{G_1}(\widehat{\Psi}^2(\Psi^2(E)))=0$.
By Lemma \ref{lem:E=0},
$\widehat{\Psi}^0(\Psi^1(E))=0$.
Since $E$ is an irreducible object,
$\widehat{\Psi}^2(\Psi^2(E))=0$ or 
$\widehat{\Psi}^1(\Psi^1(E))=0$.
If $\widehat{\Psi}^2(\Psi^2(E))=0$, then 
$\Psi^2(E)=0$. Since $\chi(G_1,E)>0$, we get a contradiction.
Hence we also have
$\widehat{\Psi}^1(\Psi^1(E))=0$, which implies that
$\Psi^1(E)=0$. Therefore $\WIT_2$ holds for $E$ 
with respect to $\Psi$.

We next prove that $\Psi^2(E)$ is $G_3$-twisted semi-stable.
Assume that there is an exact sequence
\begin{equation}\label{eq:K3:destabilize}
0 \to F_1 \to \Psi^2(E) \to F_2 \to 0
\end{equation}
such that $F_1 \in \Per(X'/Y')^D$,
$\deg_{G_3}(F_1) \geq 0$ and
$F_2 \in \Per(X'/Y')^D$.
\begin{NB}
$\rk F_1$ may be 0.
\end{NB}
Applying $\widehat{\Psi}$ to this exact sequence, 
we get a long exact sequence
\begin{equation}\label{eq:Huybrechts}
\begin{CD}
0 @>>> \widehat{\Psi}^0(F_2) @>>> 
0 @>>> 
\widehat{\Psi}^0(F_1)\\
@>>>\widehat{\Psi}^1(F_2) @>>> 
0 @>>> 
\widehat{\Psi}^1(F_1)\\
@>>> \widehat{\Psi}^2(F_2) @>>> 
E @>>> 
\widehat{\Psi}^2(F_1) @>>> 0.
\end{CD}
\end{equation}
By Lemma \ref{lem:spectral2},
$\WIT_2$ holds for $\widehat{\Psi}^2(F_2)$.
Hence $\widehat{\Psi}^2(F_2) \in \overline{\frak T}_1$, 
in particular, we have 
$\deg_{G_1}(\widehat{\Psi}^2(F_2)) \geq 0$.
By Lemma \ref{lem:spectral2},
$\WIT_0$ holds for $\widehat{\Psi}^1(F_2) \cong 
\widehat{\Psi}^0(F_1)$. Hence 
$\widehat{\Psi}^1(F_2) \in \overline{\frak F}_1$,
which implies that 
$\deg_{G_1}(\widehat{\Psi}^1(F_2)) \leq 0$.
Therefore 
$\deg_{G_1}(\widehat{\Psi}(F_2))\geq 0$.
On the other hand, $\deg_{G_1}(\widehat{\Psi}(F_2))
=\deg_{G_3}(F_2) \leq 0$.
Hence $\widehat{\Psi}^1(F_2)$ is a $\mu$-semi-stable object
with $\deg_{G_1}(\widehat{\Psi}^1(F_2))=0$
and
$\deg_{G_3}(F_2)=0$.
Then Lemma \ref{lem:E=0} implies that
$\widehat{\Psi}^1(F_2)=0$.
\begin{NB}
Lemma \ref{lem:slope4} implies that 
$\deg_{G_1}(\widehat{\Psi}^1(F_2))<0$ or $\widehat{\Psi}^1(F_2)=0$.
Therefore 
$\deg_{G_1}(\widehat{\Psi}(F_2))\geq 0$.
On the other hand, $\deg_{G_1}(\widehat{\Psi}(F_2))
=\deg_{G_3}(F_2) \leq 0$.
Hence $\widehat{\Psi}^1(F_2)=0$
and $\deg_{G_1}(\widehat{\Psi}^2(F_2))=\deg_{G_3}(F_2)=0$.
\end{NB}
If $\chi(G_3,F_2) \leq 0$, then 
 $\rk \widehat{\Psi}^2(F_2)=
\chi(G_3,F_2)$ implies that
$\chi(G_3,F_2)=0$ and 
$\widehat{\Psi}^2(F_2)$ is a torsion object.
This in particular means that
$\Psi^2(E)$ is $G_2$-twisted semi-stable.
We further assume that $E$ is irreducible. 
Since $\deg_{G_1}(\widehat{\Psi}^2(F_2))=0$,
$\widehat{\Psi}^2(F_2)$ is a 0-dimensional object.
Then $\WIT_2$ holds for $\widehat{\Psi}^1(F_1)$,
$\widehat{\Psi}^2(F_1)$ and 
$\widehat{\Psi}^2(F_2)$ with respect to
$\Psi$.
Since $\Psi^2(\widehat{\Psi}^1(F_1))=0$,
$\widehat{\Psi}^1(F_1)=0$.
\begin{NB}
Use $\widehat{\Psi}^0(F_1)=\widehat{\Psi}^1(F_2)=0$
and the spectral sequence to show
$\Psi^2(\widehat{\Psi}^1(F_1))=0$.
\end{NB}
Then $\widehat{\Psi}^2(F_2)=0$ or $\widehat{\Psi}^2(F_1)=0$,
which implies that $F_1=0$ or $F_2=0$.
Therefore $\Psi^2(E)$ is $G_3$-twisted stable.
\end{proof}
We continue the proof of (1).
Assume that 
there is an exact sequence in $\Per(X'/Y')^D$
\begin{equation}
0 \to F_1 \to {\cal E}_{|X' \times \{x\}} \to F_2 \to 0
\end{equation}
such that $\deg_{G_3}(F_1)=\chi(G_3,F_1)=0$.
By the proof of Lemma \ref{lem:WIT-simple},
$\WIT_2$ holds for $F_1$ and $F_2$.
Thus we get an exact sequence
\begin{equation}
0 \to 
\widehat{\Psi}^2(F_2) \to 
{\Bbb C}_x \to
\widehat{\Psi}^2(F_1) \to 0 
\end{equation}
\begin{NB}
$\rk F_1$ may be 0.
But since there is a local projective generator of
${\frak C}_3$ whose Mukai vector is $2v(G_3)$,
it is impossible.
\end{NB}
Since ${\Bbb C}_x$ is $\beta$-stable,
$\chi(\beta,\widehat{\Psi}^2(F_2))<0$,
which implies that $\chi(-\Psi(\beta),F_2)>0$.
\begin{NB}
Since $\Hom(A_{ij},{\Bbb C}_x)=0$,
$\Hom(A_{ij},\widehat{\Psi}^2(F_2))=0$ for all $i$ and $j>0$.
Thus $\Hom(A_{i0},\widehat{\Psi}^2(F_2)) \ne 0$.
Then
$v(\widehat{\Psi}^2(F_1))=\sum_{j>0} n_j v(A_{ij})$,
$n_j \geq 0$.
Hence we have $\chi(\alpha',F_1)<0$.
\end{NB}
Therefore 
${\cal E}_{|X' \times \{x \}}$ is $G_3-\Psi(\beta)$-twisted stable.
Then we have an injective morphism 
$\phi:X \to \overline{M}_{\widehat{H}}^{G_3+\alpha'}(w_0^{\vee})$
by sending $x \in X$ to
${\cal E}_{|X' \times \{x \}}$,
where $\alpha'=-\Psi(\beta)$.
By a standard argument, we see that $\phi$ is an isomorphism. 
We note that
$[\widehat{\Psi}(\widehat{H}+(\widehat{H},
\widetilde{\xi}_0)/r_0 \varrho_{X'})]_1$ is the pull-back of 
the canonical polarization
on $\overline{M}_{\widehat{H}}^{G_3}(w_0^{\vee})$.
Hence under the identification 
$M_{\widehat{H}}^{G_3 +\alpha'}(w_0^{\vee}) \cong X$,
$\widehat{(\widehat{H})}=H$.

(2)
Assume that ${\cal E}_{|\{x' \} \times X}$ is a $\mu$-stable
local projective generator for a general $x' \in X'$.
By Lemma \ref{lem:free} (2) below, we only need to prove the 
$\mu$-stability of
${\cal E}_{|X' \times \{x \}}$ for
 $x  \in X \setminus \cup_i Z_i$.
We shall study the exact sequence \eqref{eq:K3:destabilize}
in Lemma \ref{lem:WIT-simple}, where $E={\Bbb C}_x$.
We may assume that $F_2$ 
satisfies $\deg_{G_3}(F_2)=0$ and
$\chi(G_3,F_2)>0$. 
Then $\WIT_2$ holds for $F_2$ by the proof of Lemma 
\ref{lem:WIT-simple}. 
We shall first prove that $\widehat{\Psi}^1(F_1)$ does not contain
a 0-dimensional object.
Let $T_1$ be the 0-dimensional subobject of
$\widehat{\Psi}^1(F_1)$.
Then we have a surjective morphism
$\Psi^2(\widehat{\Psi}^1(F_1)) \to \Psi^2(T_1)$.
Since $\WIT_2$ holds for $T_1$ with respect to 
$\Psi$ and 
$\Psi^0(\widehat{\Psi}^0(F_1)) \to \Psi^2(\widehat{\Psi}^1(F_1))$ is
surjective, we get $T_1=0$.
\begin{NB}
Use
$\widehat{\Psi}^0(F_1)=\widehat{\Psi}^1(F_2)=0$.
\end{NB}
\begin{NB} 
Since ${\Bbb C}_x$ is a simple object,
$\widehat{\Psi}^2(F_2) \to {\Bbb C}_x$ is surjective.
Then $\widehat{\Psi}^2(F_1)=0$.
By Lemma \ref{lem:spectral2},
$\Psi^0(\widehat{\Psi}^1(F_1))=0$.
Thus $\WIT_1$ holds for $F_1$.
\end{NB}
By Lemma \ref{lem:slope3}, 
$\widehat{\Psi}^2(F_2) \in \overline{\frak T}_1$.
Then Lemma \ref{lem:generic-mu-stable} 
and $\deg_{G_1}(\widehat{\Psi}^2(F_2))=0$ imply that
$\widehat{\Psi}^2(F_2)$ 
is an extension of a 
$G_1$-semi-stable object $E_1$ with $\deg_{G_1}(E_1)=\chi(G_1,E_1)=0$ by
a 0-dimensional object $T$.
Since $T \cap \widehat{\Psi}^1(F_1)=0$,
$T={\Bbb C}_x$ or $0$.
By our assumption, $\Psi^2(E_1)$ is a torsion object.
By the exact sequence
\begin{equation}
\Psi^2(E_1) \to F_2 \to \Psi^2(T) \to 0,
\end{equation}
we have 
$\rk F_2=(\rk {\cal E}_{|X' \times \{x \}})\dim T$, which implies that
$\rk F_2=\rk {\cal E}_{|X' \times \{x \}}$ or
$\rk F_2=0$.
Therefore ${\cal E}_{|X' \times \{x \}}$ is $\mu$-stable.
\qed

\begin{lem}\label{lem:dual-generator}
If ${\cal E}_{|\{x' \} \times X}, x' \in X'$ and 
$E_{ij}$ are locally free
on an open subset $X^0$ of $X$, then
${\cal E}_{|X' \times \{x \}}$ is a local projective generator
of $\Per(X'/Y')^D$ for $x \in X^0$.
\end{lem}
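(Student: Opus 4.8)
The plan is to verify the two hypotheses of Proposition~\ref{prop:tilting:generator} for the object $G_3={\cal E}_{|X' \times \{x\}}$, viewed inside $\Per(X'/Y')^D$. By Lemma~\ref{lem:K3:irreducible-objD} the irreducible objects of this category are the $B_{ij}'$ $(0\le j\le s_i')$ together with the skyscrapers ${\Bbb C}_{x'}$ $(x'\in X'\setminus\cup_i Z_i')$, and by Theorem~\ref{thm:duality}~(1) we already know $G_3\in\Per(X'/Y')^D$, so $H^i(G_3)=0$ for $i\ne -1,0$. Thus it remains to check, for every irreducible object $S$, that
\[
\Hom(G_3,S[p])=0\ (p\ne 0),\qquad \chi(G_3,S)>0 .
\]

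First I would record the contravariant duality satisfied by $\Psi$. Since $\Psi(E)=\Phi(E)^{\vee}[-2]$ is the composite of the equivalence $\Phi$ with the duality $D_{X'}$ up to a shift, it reverses homomorphisms: for all $A,B\in{\bf D}(X)$ and $p\in{\Bbb Z}$,
\[
\Hom_{X'}(\Psi(A),\Psi(B)[p])\cong\Hom_X(B,A[p]).
\]
Next I would identify the three relevant objects as $\Psi$-images. Lemma~\ref{lem:WIT-simple}, applied to the $0$-dimensional object ${\Bbb C}_x$, gives $G_3=\Psi^2({\Bbb C}_x)=\Psi({\Bbb C}_x)[2]$, while $B_{ij}'=\Psi(E_{ij})[2]$ by definition. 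For the skyscrapers on $X'$, using $\widehat{\Phi}({\Bbb C}_{x'})={\cal E}_{|\{x'\}\times X}$, the relation $\Phi\circ\widehat{\Phi}=\id[-2]$, and ${\Bbb C}_{x'}^{\vee}={\Bbb C}_{x'}[-2]$, a short computation gives $\Psi({\cal E}_{|\{x'\}\times X})={\Bbb C}_{x'}[-2]$, that is ${\Bbb C}_{x'}=\Psi({\cal E}_{|\{x'\}\times X})[2]$. Substituting these into the duality isomorphism yields
\[
\Hom_{X'}(G_3,B_{ij}'[p])\cong\Hom_X(E_{ij},{\Bbb C}_x[p]),\qquad
\Hom_{X'}(G_3,{\Bbb C}_{x'}[p])\cong\Hom_X({\cal E}_{|\{x'\}\times X},{\Bbb C}_x[p]).
\]

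Finally I would feed in the local freeness hypothesis. Because $x\in X^0$ and both $E_{ij}$ and ${\cal E}_{|\{x'\}\times X}$ are locally free in a neighbourhood of $x$, each right-hand side above vanishes for $p\ne 0$ and has dimension equal to the corresponding rank for $p=0$. This gives condition (a) for every irreducible object, and condition (b) reads $\chi(G_3,B_{ij}')=\chi(E_{ij},{\Bbb C}_x)=\rk E_{ij}>0$ and $\chi(G_3,{\Bbb C}_{x'})=\rk {\cal E}_{|\{x'\}\times X}=r_0>0$; here I use $0<\rk E_{ij}<\rk v_0$ and $\rk {\cal E}_{|\{x'\}\times X}=r_0$. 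Proposition~\ref{prop:tilting:generator} then concludes that $G_3$ is a locally free sheaf and a local projective generator of $\Per(X'/Y')^D$. I expect the main technical point to be the bookkeeping of shifts in the contravariant duality and in the identification $\Psi({\cal E}_{|\{x'\}\times X})={\Bbb C}_{x'}[-2]$, together with the observation that the hypothesis is applied only in a neighbourhood of the single point $x\in X^0$, so that the computation is uniform in $x'$ and covers the points $x'\in Z_i'$ as well.
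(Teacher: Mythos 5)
Your proposal is correct and follows essentially the same route as the paper: the paper's proof likewise writes $\Hom({\cal E}_{|X'\times\{x\}},{\Bbb C}_{x'}[k])=\Hom(\Psi({\Bbb C}_x),\Psi({\cal E}_{|\{x'\}\times X})[k])=\Hom({\cal E}_{|\{x'\}\times X},{\Bbb C}_x[k])$ and $\Hom({\cal E}_{|X'\times\{x\}},B_{ij}'[k])=\Hom(E_{ij},{\Bbb C}_x[k])$, and concludes from local freeness near $x$. Your only additions are to make explicit the positivity check $\chi(G_3,S)>0$ and the appeal to Proposition~\ref{prop:tilting:generator}, which the paper leaves implicit.
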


\begin{proof}
We first note that ${\cal E}_{|X' \times \{ x \}}
\in \Coh(X')$ by Theorem \ref{thm:duality}.
The claim follows from the following equalities:
\begin{equation}
\begin{split}
\Hom({\cal E}_{|X' \times \{x \}},{\Bbb C}_{x'}[k])
&=\Hom(\Psi({\Bbb C}_x),\Psi({\cal E}_{|\{x' \} \times X})[k])=
\Hom({\cal E}_{|\{x' \} \times X},{\Bbb C}_x[k])=0,\\
\Hom({\cal E}_{|X' \times \{x \}},B_{ij}'[k])
&=\Hom(\Psi({\Bbb C}_x),\Psi(E_{ij})[k])=
\Hom(E_{ij},{\Bbb C}_x[k])=0
\end{split}
\end{equation}
for $x \in X^0$, $x' \in X'$ and
$k \ne 0$.
\end{proof}

\begin{lem}\label{lem:free}
\begin{enumerate}
\item[(1)]
If $X=Y$ and $Y'$ is not smooth, then 
${\cal E}_{|X' \times \{ x \}}$
is a local projective generator of $\Per(X'/Y')^D$ 
for all $x \in X$.
\item[(2)]
If ${\cal E}_{|\{ x' \} \times X}$ is a 
$\mu$-stable local projective object of 
${\cal C}$ for a general $x' \in X'$,
then ${\cal E}_{|X' \times \{x \}}$ is a local projective generator
of $\Per(X'/Y')^D$ for all $x \in X$.
\end{enumerate}
\end{lem}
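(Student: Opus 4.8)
The plan is to treat the two parts separately, reducing (2) to Lemma \ref{lem:dual-generator} and (1) to Theorem \ref{thm:duality} (2). For (2), the hypothesis is precisely the standing assumption of Lemma \ref{lem:generic-mu-stable}, so part (4) of that lemma shows that ${\cal E}_{|\{x'\} \times X}$ is a local projective generator of ${\cal C}$ — hence a locally free sheaf — for \emph{every} $x' \in X'$, not merely the general one. It then remains to prove that each building block $E_{ij}$ is locally free on all of $X$, for then Lemma \ref{lem:dual-generator} applies with $X^0 = X$ and yields the conclusion for every $x \in X$.

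To see that $E_{ij}$ is locally free I would first note $\rk E_{ij} > 0$: since $\deg_{G_1}(E_{ij}) = \chi(G_1, E_{ij}) = 0$, a vanishing rank would force $E_{ij} = 0$ by Lemma \ref{lem:Hilbert-Poly}, as a nonzero object with $\rk = \deg_{G_1} = 0$ has $\chi(G_1, \cdot) > 0$. Being $G_1$-twisted stable with $\deg_{G_1}(E_{ij}) = \chi(G_1, E_{ij}) = 0$, Lemma \ref{lem:generic-mu-stable} (2) makes $E_{ij}$ $\mu$-stable, and then Lemma \ref{lem:generic-mu-stable} (3) gives $\Ext^i(E_{ij}, S) = 0$ for $i \neq 0$ and every irreducible object $S$. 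Writing ${\Bbb C}_x$, for $x \in \cup_i Z_i$, as a successive extension of irreducible objects via Lemma \ref{lem:tilting:irreducible} (3), the vanishing propagates along the filtration to $\Ext^1(E_{ij}, {\Bbb C}_x) = 0$ for every $x \in X$; Lemma \ref{lem:tilting:freeness} then shows $E_{ij}$ is locally free. This settles (2).

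For (1), the decisive simplification is that $X = Y$ forces $\pi$ to be an isomorphism, so ${\cal C} = \Coh(X)$ and $\cup_i Z_i = \emptyset$: the exceptional locus on the $X$-side disappears. Hence Theorem \ref{thm:duality} (2), whose conclusion is stated over $X \setminus \cup_i Z_i$, already yields the desired statement over all of $X$, provided its hypothesis holds. Since $\pi = \id$, a $\mu$-stable local projective generator of ${\cal C}$ is nothing but a $\mu$-stable locally free sheaf, so it suffices to check that ${\cal E}_{|\{x'\} \times X}$ is such for a general $x' \in X'$. This is the standard genericity statement for moduli on a smooth $K3$ surface: $v_0$ being primitive isotropic, the locus of non-$\mu$-stable or non-locally-free members is a proper closed subset of the irreducible surface $X'$, so the generic ${\cal E}_{|\{x'\} \times X}$ is $\mu$-stable and locally free. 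The hypothesis that $Y'$ is not smooth places us in the genuinely singular regime and gives $r_0 = \rk v_0 \geq 2$; indeed for $r_0 = 1$ the moduli space $X'$ parametrizes rank-one torsion-free sheaves, is a smooth fine moduli space, and would have $Y' = X'$ smooth. Theorem \ref{thm:duality} (2) then finishes (1).

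The point requiring most care is the passage from "general $x'$" to "every $x'$", and from $X \setminus \cup_i Z_i$ to all of $X$ — exactly the gap left open by Theorem \ref{thm:duality} (2) over the exceptional fibres. In (2) this is bridged by Lemma \ref{lem:generic-mu-stable} (4) together with the local freeness of the $E_{ij}$, the only subtlety being the confirmation that $\rk E_{ij} > 0$ so that Lemma \ref{lem:generic-mu-stable} (3) is applicable; in (1) it is bridged by the vanishing of $\cup_i Z_i$ and the genericity argument. I expect the verification that the irreducible building blocks $E_{ij}$ are honest locally free sheaves, rather than shifted or non-reflexive objects, to be the step most easily mishandled, since it is precisely there that the perverse structure departs from the classical sheaf-theoretic picture.
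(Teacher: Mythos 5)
Your part (2) is correct and is essentially the paper's own argument: Lemma \ref{lem:generic-mu-stable} (4) promotes the hypothesis from a general $x'$ to every $x'$, and your verification that each $E_{ij}$ is locally free (positive rank via Lemma \ref{lem:Hilbert-Poly}, $\mu$-semistability as a consequence of twisted stability, then Lemma \ref{lem:generic-mu-stable} (2),(3) and Lemma \ref{lem:tilting:freeness}) supplies exactly the input needed to run the proof of Lemma \ref{lem:dual-generator} with $X^0=X$. The paper cites the same chain of lemmas, only more tersely.

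Part (1) has a genuine gap. You reduce it to Theorem \ref{thm:duality} (2), whose hypothesis is that ${\cal E}_{|\{x'\}\times X}$ is a $\mu$-stable local projective generator of ${\cal C}$ for general $x'$, and you discharge that hypothesis by asserting that the non-$\mu$-stable and non-locally-free loci are proper closed subsets of $X'$. Neither assertion is justified, and both are exactly the point at issue. Since $Y'$ is singular, $H$ is not general with respect to $v_0$: there exist $v_0$-twisted stable sheaves $E_{ij}$ with $0<\rk E_{ij}<r_0$ and $\mu_{G_1}(E_{ij})=\mu_{G_1}(v_0)=0$, so a $v_0+\alpha$-twisted stable member can be only properly $\mu$-semistable, and nothing you say prevents this from happening generically. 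Likewise, the non-locally-free locus of a two-dimensional family of sheaves on a surface need not be a proper subset; a priori it could be all of $X'$. Indeed, if your genericity claim held, part (1) would be subsumed by part (2), and your route would even prove the stronger (and unclaimed) statement that ${\cal E}_{|X'\times\{x\}}$ is $\mu$-stable for every $x$; the very existence of (1) as a separate item with a separate proof signals that generic $\mu$-stability is precisely what cannot be assumed here.

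What the paper actually does for (1) avoids $\mu$-stability altogether. Since $X=Y$, the $E_{ij}$ are rigid stable sheaves in $\Coh(X)={\cal C}$, hence locally free, so ${\cal E}_{|\{x'\}\times X}$ is locally free for $x'$ in the exceptional locus $Z'$ (being an iterated extension of the $E_{ij}$). If ${\cal E}_{|\{x'\}\times X}$ failed to be locally free at some point, an open subscheme of $\Quot^{n}_{E^{\vee\vee}/X/{\Bbb C}}$ would map dominantly to the surface $X'$, forcing every member to be non-locally-free, which contradicts the behaviour over $Z'$; hence all members are locally free and Lemma \ref{lem:dual-generator} applies with $X^0=X$. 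You should also note a structural wrinkle: in the paper, Theorem \ref{thm:duality} (2) is itself proved \emph{using} Lemma \ref{lem:free} (2), so while invoking it for part (1) is not strictly circular, it inverts the intended logical order and leans on a statement whose hypotheses are strictly stronger than those of (1).
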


\begin{proof}
(1)
We first note that $E_{ij} \in \Coh(X)={\cal C}$
are locally free sheaves for all $i,j$. 
Assume that $E:={\cal E}_{|\{x' \} \times X}$ is not locally free
for a point $x' \in X'$.
Then we have a morphism from an open subscheme $Q$ of 
$\Quot_{E^{\vee \vee}/X/{\Bbb C}}^n$ to $X'$,
where $n=\dim(E^{\vee \vee}/E)$.
Since $\dim X' =2$, this morphism is dominant.
Hence ${\cal E}_{|\{x' \} \times X}$ is non-locally free for all
$x' \in X'$.
Since ${\cal E}_{|\{x' \} \times X}$ is locally free 
if $x'$ belongs to the exceptional locus, 
${\cal E}_{|\{x' \} \times X}$ is locally free for any $x' \in X'$.
Then the claim follows from Lemma \ref{lem:dual-generator}.

(2)
The claim follows from Lemma \ref{lem:generic-mu-stable} (3), (4)
and the proof of Lemma \ref{lem:dual-generator}.
\end{proof}

In the remaining of this subsection, we shall prove the following result.
\begin{prop}\label{prop:equiv-Psi}
$\Psi:{\bf D}(X) \to {\bf D}(X')_{op}$ induces an equivalence
$\overline{\frak A}_1^{\mu}[-2] \to (\overline{\frak A}_3)_{op}$.
\end{prop}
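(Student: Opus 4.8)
The plan is to mirror the proof of Theorem \ref{thm:equiv-Phi}, replacing the covariant transform $\Phi$ by the contravariant transform $\Psi$ together with its partner $\widehat{\Psi}$, and to feed in the $\Psi$-analogues of the structural lemmas already prepared in this subsection. The conceptual backbone is the identity $\Psi(E)=\Phi(E)^{\vee}[-2]$, i.e. $\Psi=\mathbb{D}\circ\Phi$ with $\mathbb{D}(F):=F^{\vee}[-2]$. After the shift built into Definition \ref{defn:K3:Per^D}, $\mathbb{D}$ restricts to a contravariant $t$-exact equivalence $\Per(X'/Y')\to(\Per(X'/Y')^D)_{op}$ carrying the irreducible objects $A_{ij}'$ to $B_{ij}'$ (Lemma \ref{lem:K3:irreducible-objD}, via Lemma \ref{lem:tilting:dual}, Remark \ref{rem:tilting:dual} and Proposition \ref{prop:tilting:S-T}), hence sending the heart $\Per(X'/Y')$ to the heart $(\Per(X'/Y')^D)_{op}$. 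This is what makes $\Psi^i(E)={^p}H^i(\Psi(E))$ well defined in ${\frak C}_3$ and explains both the shift $[-2]$ and the passage to the opposite category in the statement; schematically $\Psi^i(E)\cong\mathbb{D}(\Phi^{2-i}(E))$ up to the identification of the two hearts.

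First I would establish the $\Psi$-analogue of Lemma \ref{lem:equiv-Phi}: for $E\in\overline{\frak T}_1^{\mu}$ one has $\Psi^0(E)=0$, $\Psi^1(E)\in\overline{\frak F}_3$ and $\Psi^2(E)\in\overline{\frak T}_3$; while for $E\in\overline{\frak F}_1^{\mu}$ one has $\Psi^0(E)\in\overline{\frak F}_3$, $\Psi^1(E)\in\overline{\frak T}_3$ and $\Psi^2(E)=0$. The vanishings are already available: $\Psi^0(E)=0$ for $E\in\overline{\frak T}_1^{\mu}$ is Lemma \ref{lem:Psi}, and $\Psi^2(E)=0$ for $E\in\overline{\frak F}_1^{\mu}\subset\overline{\frak F}_1$ is Lemma \ref{lem:Perverse}(4). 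The membership of $\Psi^0(E)$ in $\overline{\frak F}_3$ and of $\Psi^2(E)$ in $\overline{\frak T}_3$ is forced at the level of degree by the estimates $\deg_{G_3}(\Psi^0(E))\leq 0\leq\deg_{G_3}(\Psi^2(E))$ of Lemma \ref{lem:deg-Psi}, together with the torsion-freeness of $\Psi^0(E)$ (Lemma \ref{lem:Perverse}(3)); the boundary case $\deg_{G_3}=0$, where the sign of $\chi(G_3,-)$ decides membership in the Gieseker classes $\overline{\frak T}_3,\overline{\frak F}_3$, is settled by applying $\widehat{\Psi}$ and the spectral sequence Lemma \ref{lem:spectral2}, and invoking Lemma \ref{lem:E=0} to annihilate the spurious $\mu$-semi-stable slope-zero pieces, exactly as in Lemma \ref{lem:widehat-Phi}. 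This step also requires the $\widehat{\Psi}$-versions of Lemma \ref{lem:widehat-Phi} adapted to $\overline{\frak T}_3,\overline{\frak F}_3$, which I would prove by the same $\WIT$-and-degree bookkeeping.

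With that lemma in hand, the equivalence follows as in the proof of Theorem \ref{thm:equiv-Phi}. For $E\in\overline{\frak A}_1^{\mu}$ I use the canonical triangle $H^{-1}(E)[1]\to E\to H^0(E)\to H^{-1}(E)[2]$ with $H^0(E)\in\overline{\frak T}_1^{\mu}$ and $H^{-1}(E)\in\overline{\frak F}_1^{\mu}$, apply $\Psi$, and read off from the long exact sequence of the $\Psi^i$ that $\Psi(E)[2]$ has its ${\frak C}_3$-perverse cohomology concentrated in the two degrees cut out by the torsion pair $(\overline{\frak T}_3,\overline{\frak F}_3)$, hence lies in $(\overline{\frak A}_3)_{op}$. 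Conversely, the same computation applied to $\widehat{\Psi}$ shows $\widehat{\Psi}(F)[2]\in\overline{\frak A}_1^{\mu}$ for $F\in\overline{\frak A}_3$, and the spectral sequences of Lemma \ref{lem:spectral2} show that $\Psi$ and $\widehat{\Psi}$ are mutually quasi-inverse on these hearts; full faithfulness is then the $\Hom$-vanishing $\Hom(\widehat{\Psi}(F)[2],E_1[p])\cong\Hom(\Psi(E_1)[2],F[-p])$ read off from $\Psi(E_1)[2]\in(\overline{\frak A}_3)_{op}$, exactly as in the last paragraph of the proof of Theorem \ref{thm:equiv-Phi}.

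The main obstacle is the correct matching of the stability conventions across the duality: because $\mathbb{D}$ reverses the sign of $\chi(G,-)$, a strict inequality on one side becomes a non-strict one on the other, and a $\mu$-type condition on the source corresponds to a Gieseker-type ($\chi$-refined) condition on the target. This is precisely why the source must be the $\mu$-tilting $\overline{\frak A}_1^{\mu}$ while the target is the Gieseker tilting $\overline{\frak A}_3$, and it is the reason the naive heart-level relation $\Psi^i\cong\mathbb{D}(\Phi^{2-i})$ cannot simply be quoted from Theorem \ref{thm:equiv-Phi}: that relation only tracks the $\mu$-classes, whereas the finer Gieseker membership on the $X'$ side must be extracted directly from Lemma \ref{lem:deg-Psi} plus the $\chi$-refinement via $\widehat{\Psi}$. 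Pinning down these barred-versus-unbarred and $\mu$-versus-Gieseker dichotomies, and checking that $\Psi^0(E)$ never acquires a $0$-dimensional subobject (compare the end of the proof of Theorem \ref{thm:duality}(2)), is where the careful bookkeeping has to be assembled consistently.
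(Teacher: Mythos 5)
Your proposal is correct and follows essentially the same route as the paper: the intermediate statement you propose to prove (the behaviour of $\Psi^0,\Psi^1,\Psi^2$ on $\overline{\frak T}_1^{\mu}$ and $\overline{\frak F}_1^{\mu}$) is exactly Lemma \ref{lem:equiv}, which the paper establishes from the same ingredients (Lemmas \ref{lem:vanish2}, \ref{lem:slope5}, \ref{lem:Psi}, \ref{lem:Perverse}, \ref{lem:spectral2}) and then feeds into the canonical sequence $0\to H^{-1}(E)[1]\to E\to H^0(E)\to 0$ precisely as you do. The only difference is cosmetic: the paper stops after showing $\Psi(E[-2])\in(\overline{\frak A}_3)_{op}$, leaving the converse inclusion implicit, whereas you spell out the $\Hom$-vanishing argument from the proof of Theorem \ref{thm:equiv-Phi}.
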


We first note that the following two lemmas 
hold thanks to
Theorem \ref{thm:duality}.

\begin{lem}[cf. Lem. \ref{lem:vanish}]\label{lem:vanish2}
\begin{enumerate}
\item[(1)]
Assume that $F \in \overline{\frak T}_3$.
Then $\Hom(F,{\cal E}_{| X' \times \{x\}})=0$
for a general $x \in X$.
In particular, 
$\widehat{\Psi}^0(F)=0$.
\item[(2)]
Assume that $F \in \overline{\frak F}_3$.
Then $\Hom({\cal E}_{|X' \times \{x\}},F)=0$ for all
$x \in X$.
In particular, $\widehat{\Psi}^2(F)=0$.
\end{enumerate}
\end{lem}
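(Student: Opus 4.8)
The plan is to read Lemma \ref{lem:vanish2} as the image of Lemma \ref{lem:vanish} under the duality of Theorem \ref{thm:duality}, with the covariant pair $(\Phi,\widehat{\Phi})$ on $({\cal C},\Per(X'/Y'))$ replaced by the contravariant pair $(\Psi,\widehat{\Psi})$ on $({\cal C},\Per(X'/Y')^D)$. Theorem \ref{thm:duality}(1) supplies a family $\{{\cal E}_{|X' \times \{x\}}\}_{x \in X}$ of $G_3-\Phi(\beta)^{\vee}$-twisted stable objects of $\Per(X'/Y')^D$, parametrized by the isomorphism $\phi:X \to M_{\widehat{H}}^{G_3-\Phi(\beta)^{\vee}}(w_0^{\vee})$; this family plays exactly the role that $\{{\cal E}_{|\{x'\} \times X}\}_{x' \in X'}$ played for ${\cal C}$ in Lemma \ref{lem:vanish}. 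First I would record the numerics: since $w_0^{\vee}=v(\Psi({\Bbb C}_x))$ is the image of the isotropic vector $\varrho_X$ under the Fourier--Mukai isometry (up to dualization), $w_0$ is isotropic, so $\langle w_0^2 \rangle=\langle \varrho_X^2 \rangle=0$; as $G_3$ has the same Mukai vector $w_0^{\vee}$ as every ${\cal E}_{|X' \times \{x\}}$, each such object has $\deg_{G_3}=0$ and $\chi(G_3,{\cal E}_{|X' \times \{x\}})=0$.

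With these invariants in hand, the hom-vanishing claims are proved verbatim as in Lemma \ref{lem:vanish}, now inside $\Per(X'/Y')^D$. For (1) I reduce $F \in \overline{\frak T}_3$ to a $G_3$-twisted stable (or torsion) object via its Harder--Narasimhan filtration; if $\deg_{G_3}(F)>0$, or $\deg_{G_3}(F)=0$ and $\chi(G_3,F)>0$, then a nonzero map $F \to {\cal E}_{|X' \times \{x\}}$ is impossible for any $x$ by comparing $G_3$-twisted reduced Hilbert polynomials, while in the remaining case $\deg_{G_3}(F)=\chi(G_3,F)=0$ such a map is an isomorphism unless $\phi(x)$ lies over a singular point, so the vanishing holds for general $x$. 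Part (2) is the dual slope comparison, giving $\Hom({\cal E}_{|X' \times \{x\}},F)=0$ for all $x$ when $F \in \overline{\frak F}_3$. The only delicacy is that the torsion pair uses $G_3$-twisted stability whereas the family is $G_3-\Phi(\beta)^{\vee}$-twisted stable; since $\Phi(\beta)^{\vee}$ has rank zero it does not alter the slope $\mu_{G_3}$, so the strict-inequality cases are unaffected and only the equal-reduced-polynomial case is sensitive, which is precisely what the ``general $x$'' escape absorbs, exactly as in Lemma \ref{lem:vanish}.

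For the ``in particular'' statements I would pass from the hom-vanishing to the vanishing of perverse cohomology using the adjunction $\Hom_{X'}(F,\Psi(E)) \cong \Hom_X(E,\widehat{\Psi}(F))$ together with $\Psi({\Bbb C}_x)={\cal E}_{|X' \times \{x\}}$ and $\Psi(E_{ij})=B_{ij}'[-2]$. The $E_{ij}$-contributions vanish automatically because $F$ and $B_{ij}'$ both lie in the heart $\Per(X'/Y')^D$, while the ${\Bbb C}_x$-contributions are controlled by parts (1) and (2); since $\widehat{\Psi}^i(F)$ is perversely concentrated in $[0,2]$ (the contravariant analogue of Lemma \ref{lem:Perverse}(1)), homs from, respectively to, the irreducible objects of ${\cal C}$ detect $\widehat{\Psi}^0(F)$, respectively $\widehat{\Psi}^2(F)$. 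The main obstacle is that $\widehat{\Psi}^0(F)$ is torsion free (the analogue of Lemma \ref{lem:Perverse}(3)), so the bare hom-vanishing does not yet kill it; I expect to close this gap as in the proofs of Lemmas \ref{lem:Psi} and \ref{lem:widehat-Phi}, feeding the hom-vanishing into the contravariant spectral sequence of Lemma \ref{lem:spectral2} and the $\WIT$/slope dictionary of Lemma \ref{lem:slope3} to identify $\widehat{\Psi}^0(F)$ (resp. $\widehat{\Psi}^2(F)$) with a subquotient of $F$ that the hypothesis $F \in \overline{\frak T}_3$ (resp. $\overline{\frak F}_3$) forces to vanish. This spectral-sequence bookkeeping, rather than the transported stability argument, is where the real work lies.
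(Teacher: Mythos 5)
Your first two paragraphs match what the paper actually does: the paper gives no independent argument for this lemma, remarking only that it "holds thanks to Theorem \ref{thm:duality}", and the intended content is exactly your transport of Lemma \ref{lem:vanish} to the family $\{{\cal E}_{|X'\times\{x\}}\}_{x\in X}$ of $G_3-\Phi(\beta)^{\vee}$-twisted stable objects of $\Per(X'/Y')^D$ with $\deg_{G_3}=\chi(G_3,\cdot)=0$, including your observation that the rank-zero perturbation $\Phi(\beta)^{\vee}$ does not affect the slope comparison and only the equal-reduced-Hilbert-polynomial case needs the "general $x$" escape.

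The last paragraph, however, has the mechanism backwards, and this is where your proof is incomplete. You call the torsion-freeness of $\widehat{\Psi}^0(F)$ "the main obstacle"; it is in fact the entire point. By Theorem \ref{thm:duality} the analogue of Lemma \ref{lem:Perverse-1}(2)--(3) holds for $\widehat{\Psi}$: $\widehat{\Psi}(F)$ is represented by a three-term complex $W_0\to W_1\to W_2$ of locally free sheaves on $X$, so ${^p H^0}(\widehat{\Psi}(F))\subset H^0(\widehat{\Psi}(F))=\ker(W_0\to W_1)$ is torsion free, while base change gives an injection of the fibre $H^0(\widehat{\Psi}(F))\otimes k(x)\hookrightarrow \Hom(F,{\cal E}_{|X'\times\{x\}})$ at a general point $x$. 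Part (1) therefore forces $H^0(\widehat{\Psi}(F))$ to vanish generically, hence to be torsion, hence to be zero — a one-line finish identical to how Lemma \ref{lem:vanish-Phi}(2) and Lemma \ref{lem:Perverse}(5) conclude. Likewise $\widehat{\Psi}^2(F)=0$ follows from part (2) (which holds for \emph{all} $x$) together with Serre duality and the criterion that an object of ${\cal C}$ vanishes iff it admits no nonzero map to the irreducible objects ${\Bbb C}_x$ and $A_{ij}$ (the $A_{ij}$-contributions vanish because $F$ is torsion free and $\widehat{\Psi}(B_{ij}')[2]=E_{ij}$); this is the $\widehat{\Psi}$-analogue of Lemma \ref{lem:Perverse}(2),(4). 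The detour through the spectral sequence of Lemma \ref{lem:spectral2} and the $\WIT$/slope dictionary, which you invoke but do not execute, is the machinery needed for genuinely harder statements such as Lemma \ref{lem:Psi} (where the hypothesis is only $\mu$-semistability and generic vanishing plus torsion-freeness is not available in the same form); here it is unnecessary, and leaving it as "I expect to close this gap" means the "in particular" claims are not actually proved in your write-up.
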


\begin{NB}
\begin{proof}
We only prove (1).
Let $F$ be a $G_1$-twisted stable object of ${\cal C}$.
If $F \in {\frak T}_2$, then
$\Hom(F,{\cal E}_{|X' \times \{x \}})=0$ for all $x \in X$.
Assume that $\deg_{G_1}(F)=0$ and
$\chi(G_1,F)=0$.
Then $F \to {\cal E}_{|X' \times \{x \}}$ is an isomorphism. 
Therefore $\Hom(F,{\cal E}_{|X' \times \{x \}})=0$ for a general $x \in X$.
\end{proof}
\end{NB}
\begin{lem}[cf. Lem. \ref{lem:slope3}, Lem. \ref{lem:slope4}] 
\label{lem:slope5}
Let $F$ be an object of $\Per(X'/Y')^D$.
\begin{enumerate}
\item[(1)]
If $\WIT_0$ holds for $F$ with respect to $\widehat{\Psi}$,
then $F \in \overline{\frak F}_3^{\mu} (\subset \overline{\frak F}_3)$.
\item[(2)]
If $\WIT_2$ holds for $F$ with respect to $\widehat{\Psi}$,
then $F \in \overline{\frak T}_3$.
If $\widehat{\Psi}^2(F)$ does not contain a 0-dimensional subobject, then
$F \in {\frak T}_3$.
\end{enumerate}
\end{lem}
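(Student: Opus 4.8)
The final statement is Lemma \ref{lem:slope5}, which is flagged as a direct analogue of Lemmas \ref{lem:slope3} and \ref{lem:slope4} for the contravariant transform $\widehat{\Psi}:{\bf D}(X') \to {\bf D}(X)$. So the plan is to mirror the proofs of those two lemmas, replacing $\Psi$ by $\widehat{\Psi}$ and using the duality between $X$ and $X'$ established in Theorem \ref{thm:duality}, together with the vanishing results of Lemma \ref{lem:vanish2} which are the $\widehat{\Psi}$-analogues of Lemma \ref{lem:vanish}.

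First I would set up the decomposition argument exactly as in Lemma \ref{lem:slope3}. For $F \in \Per(X'/Y')^D$, take the torsion pair decomposition
\begin{equation}
0 \to F_1 \to F \to F_2 \to 0
\end{equation}
with $F_1 \in \overline{\frak T}_3$ and $F_2 \in \overline{\frak F}_3$, apply $\widehat{\Psi}$, and read off the long exact sequence of perverse cohomology objects $\widehat{\Psi}^i$. Lemma \ref{lem:vanish2} gives $\widehat{\Psi}^0(F_1)=0$ (from $F_1 \in \overline{\frak T}_3$) and $\widehat{\Psi}^2(F_2)=0$ (from $F_2 \in \overline{\frak F}_3$), which are precisely the two terms that must vanish to run the argument. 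If $\WIT_0$ holds for $F$, the long exact sequence forces $\widehat{\Psi}(F_2)=0$, giving $F_1 \cong F$, hence $F \in \overline{\frak T}_3 \cap \ker(\WIT_{\ne 0})$; I would then promote this to $F \in \overline{\frak F}_3^{\mu}$ by repeating the slope refinement of Lemma \ref{lem:slope4}, i.e. extracting a $\mu$-semistable piece of degree zero, observing it satisfies $\WIT_0$, and invoking the analogue of Lemma \ref{lem:E=0} to kill it. Symmetrically, if $\WIT_2$ holds for $F$ then $\widehat{\Psi}(F_1)=0$, so $F \cong F_2 \in \overline{\frak F}_3$; wait --- one must be careful about which of $\overline{\frak T}_3, \overline{\frak F}_3$ the $\WIT_2$ case lands in, and by matching the degree signs coming from $\deg_{G_1}(\widehat{\Psi}^i(\,\cdot\,)) = \pm \deg_{G_3}(\,\cdot\,)$ (as in Lemma \ref{lem:deg-Psi} and Lemma \ref{lem:spectral2}) the $\WIT_2$ case will give $F \in \overline{\frak T}_3$ as stated.

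For the second sentence of (2), the refinement to $F \in {\frak T}_3$ under the hypothesis that $\widehat{\Psi}^2(F)$ has no $0$-dimensional subobject, I would follow the corresponding refinement in Lemma \ref{lem:slope3} (2): decompose $F$ through a $G_3$-twisted semistable object of degree and Euler characteristic zero, show $\WIT_2$ holds for that piece, note its transform is a torsion object of degree zero hence $0$-dimensional, and use the no-$0$-dimensional-subobject hypothesis together with the spectral sequence Lemma \ref{lem:spectral2} and the vanishing $\widehat{\Psi}^0$ to conclude the offending piece is zero.

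The main obstacle I expect is bookkeeping rather than conceptual: keeping the sign conventions straight for the contravariant functor $\widehat{\Psi}$, since passing through $\Psi = \Phi^{\vee}[-2]$ and its partner $\widehat{\Psi}$ flips the roles of $\overline{\frak T}$ and $\overline{\frak F}$ and of $\WIT_0$ versus $\WIT_2$ relative to the covariant $\Phi, \widehat{\Phi}$ case. I would need to verify that Lemma \ref{lem:spectral2} provides exactly the spectral sequence relating $\widehat{\Psi}^i \circ \Psi^j$ (the dual of Lemma \ref{lem:spectral}) that the degree-comparison and the $\WIT_0 \wedge \WIT_2 \Rightarrow 0$ arguments rely on; granting that, every step is a faithful transcription of the already-proved Lemmas \ref{lem:slope3} and \ref{lem:slope4} with $X$ and $X'$ interchanged, so no genuinely new estimate is required.
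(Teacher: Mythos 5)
Your strategy coincides with the paper's: the paper gives no separate proof of this lemma, asserting only that it holds ``thanks to Theorem \ref{thm:duality}'', i.e.\ by transcribing the proofs of Lemmas \ref{lem:slope3} and \ref{lem:slope4} with $\Psi$ replaced by $\widehat{\Psi}$ and the vanishing input Lemma \ref{lem:vanish} replaced by Lemma \ref{lem:vanish2}, and your outline --- including the $\mu$-refinement via the analogue of Lemma \ref{lem:E=0} and the $0$-dimensional-subobject refinement via Lemma \ref{lem:spectral2} --- is that transcription.

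One concrete correction to the bookkeeping you flag yourself: with $F_1 \in \overline{\frak T}_3$ and $F_2 \in \overline{\frak F}_3$, Lemma \ref{lem:vanish2} gives $\widehat{\Psi}^0(F_1)=\widehat{\Psi}^2(F_2)=0$, and then in the $\WIT_0$ case the long exact sequence kills $\widehat{\Psi}(F_1)$, so $F_1=0$ and $F\cong F_2\in\overline{\frak F}_3$ directly; you wrote the transposed statement ($\widehat{\Psi}(F_2)=0$, hence $F\in\overline{\frak T}_3$), which would make the subsequent promotion to $\overline{\frak F}_3^{\mu}$ incoherent, since $\overline{\frak T}_3\cap\overline{\frak F}_3^{\mu}=0$. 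It is the $\WIT_2$ case that kills $\widehat{\Psi}(F_2)$ and lands in $\overline{\frak T}_3$, as you do state correctly at the end; with the two cases restored to their proper places every remaining step goes through as you describe.
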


\begin{NB}
\begin{proof}
For an object $F$ of $\Per(X'/Y')^*$, 
there is an exact sequence
\begin{equation}
0 \to F_1 \to F \to F_2 \to 0
\end{equation}
such that $F_1 \in \overline{\frak T}_3$ and
$F_2 \in \overline{\frak F}_3$.
Applying $\widehat{\Psi}$ to this exact sequence, 
we get a long exact sequence
\begin{equation}\label{eq:FM2}
\begin{CD}
0 @>>> \widehat{\Psi}^0(F_2) @>>> 
\widehat{\Psi}^0(F) @>>> 
\widehat{\Psi}^0(F_1)\\
@>>>
\widehat{\Psi}^1(F_2) @>>> 
\widehat{\Psi}^1(F) @>>> 
\widehat{\Psi}^1(F_1)\\
@>>>
\widehat{\Psi}^2(F_2) @>>> 
\widehat{\Psi}^2(F) @>>> 
\widehat{\Psi}^2(F_1)@>>> 0
\end{CD}
\end{equation}
By Lemma \ref{lem:vanish2},
we have
$\widehat{\Psi}^0(F_1)=\widehat{\Psi}^2(F_2)=0$.
If $\WIT_0$ holds for $F$, 
then we get $\widehat{\Psi}(F_1)=0$.
Hence (1) holds.
If $\WIT_2$ holds for $F$, 
then we get $\widehat{\Psi}(F_2)=0$.
Thus the first part of (2) holds.

We take a decomposition
\begin{equation}
0 \to F_1' \to F \to F_1'' \to 0
\end{equation}
such that $F_1' \in {\frak T}_3$ and
$F_1''$ is a $G_3$-twisted semi-stable
object with $\chi(G_3,F_1'')=0$.
Then $\widehat{\Psi}^0(F_1')=0$ and $\WIT_2$ holds for $F_1''$.
By our assumption,
$\widehat{\Psi}^2(F_1'')$ is a torsion object.
Since $\deg_{G_1}(\widehat{\Psi}^2(F_1''))=0$,
$\widehat{\Psi}^1(F_1') \to \widehat{\Psi}^2(F_1'')$ is an
isomorphism.
Then we see that $F_1''=0$ by using 
$\Psi^2(\widehat{\Psi}^1(F_1'))=0$.
\end{proof}
\end{NB}

\begin{NB}
\begin{lem}\label{lem:F=0}
Let $F$ be a $\mu$-semi-stable object of $\Per(X'/Y')^D$ with
$\deg_{G_3}(F)=0$.
If $\WIT_0$ holds for $F$, then $F=0$.  
\end{lem}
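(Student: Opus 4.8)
The plan is to prove the assertion as the exact mirror of Lemma \ref{lem:E=0}, interchanging the roles of $\Psi$ and $\widehat\Psi$, of $G_1$ and $G_3$, and of the categories ${\cal C}$ and $\Per(X'/Y')^D={\frak C}_3$, with Lemma \ref{lem:slope3} replaced by its dual Lemma \ref{lem:slope5}. I would argue by contradiction, assuming $F\neq 0$, and extract two incompatible estimates on the integer $\chi(G_3,F)$.

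First I would compute $\chi(G_3,F)$ from the hypothesis that $\WIT_0$ holds for $F$. Then $\widehat\Psi(F)=\widehat\Psi^0(F)$ is an honest object of ${\cal C}$, and its rank is the fibrewise Euler characteristic: for $x\in X$ one has $\widehat\Psi(F)\overset{{\bf L}}{\otimes}{\Bbb C}_x={\bf R}\Hom(F,{\cal E}_{|X'\times\{x\}})={\bf R}\Hom(F,G_3)$, so that $\rk\widehat\Psi^0(F)=\chi(F,G_3)=\chi(G_3,F)$, the last equality by the symmetry of the Euler pairing on the $K3$ surface $X'$. Since $\widehat\Psi^0(F)$ is the $\WIT_0$-image it is torsion free (the dual of Lemma \ref{lem:Perverse} (3)), whence $\rk\widehat\Psi^0(F)\geq 0$ and therefore $\chi(G_3,F)\geq 0$.

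Next I would read off the opposite inequality from the slope filtration. By Lemma \ref{lem:slope5} (1), $\WIT_0$ forces $F\in\overline{\frak F}_3^{\mu}\subset\overline{\frak F}_3$; in particular $F$ is torsion free, which already disposes of the case $\rk F=0$. Since $F$ is $\mu$-semi-stable with $\deg_{G_3}(F)=0$, its Harder--Narasimhan filtration is trivial, so $F_1=F$ and $\mu_{G_3}(F)=0$; membership of $F$ in $\overline{\frak F}_3$ then forces $\chi(G_3,F)<0$. Combining this with $\chi(G_3,F)\geq 0$ yields a contradiction, so $F=0$.

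The only genuinely non-routine points are the sign bookkeeping in the second paragraph and one consistency check. I must pin down the sign in $\rk\widehat\Psi^0(F)=\chi(G_3,F)$ and confirm that the $\WIT_0$-image is torsion free, i.e. that the expected dual of Lemma \ref{lem:Perverse} (3) holds for $\widehat\Psi$ (by dualising a locally free presentation as in Lemma \ref{lem:Perverse-1} (2)); this is the step I expect to cost the most care. I should also verify that invoking Lemma \ref{lem:slope5} is not circular: the conclusion $F\in\overline{\frak F}_3$ that I actually use is the dual of Lemma \ref{lem:slope3} (1), whose proof rests only on Lemma \ref{lem:Perverse} and the torsion pair $(\overline{\frak T}_3,\overline{\frak F}_3)$, so the argument is safe.
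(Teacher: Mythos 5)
Your proof is correct and is essentially the paper's own argument: the paper likewise derives $\chi(G_3,F)=\rk \widehat{\Psi}(F)\geq 0$ from $\WIT_0$ (the $\WIT_0$-image being an honest torsion free sheaf) and $\chi(G_3,F)<0$ from Lemma \ref{lem:slope5}. The only quibble is that $\mu$-semi-stability does not make the Gieseker--Harder--Narasimhan filtration trivial; but since all factors then have slope $0$ and the first has negative $\chi(G_3,\;\;)$ by the definition of $\overline{\frak F}_3$ (or, more directly, $F \in \overline{\frak F}_3^{\mu}$ forces $\mu_{\max,G_3}(F)<0$, which is impossible for a nonzero $\mu$-semi-stable $F$ of degree $0$), the conclusion stands.
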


\begin{proof}
If $\WIT_0$ holds for $F \ne 0$, then 
$\chi(G_3,F)=\rk \widehat{\Psi}(F) \geq 0$.
On the other hand,
Lemma \ref{lem:slope5} implies that $\chi(G_3,F)<0$.
Therefore $F=0$.
\end{proof}
\end{NB}

\begin{NB}
\begin{lem}\label{lem:slope6}
If $\WIT_0$ holds for $F$ with respect to $\widehat{\Psi}$,
then $F \in \overline{\frak F}_3^{\mu}$.
\end{lem}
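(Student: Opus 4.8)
The plan is to mirror the proof of Lemma \ref{lem:slope4}, transporting every step to the category ${\frak C}_3=\Per(X'/Y')^D$ and replacing the contravariant transform $\Psi$ by $\widehat{\Psi}$. First I would invoke Lemma \ref{lem:slope5} to record the weaker conclusion: since $\WIT_0$ holds for $F$ with respect to $\widehat{\Psi}$, we already know $F\in\overline{\frak F}_3$, so $F$ is torsion free with $\mu_{\max,G_3}(F)\leq 0$. The entire content of the lemma is then to upgrade this to strict negativity of the maximal slope, i.e. to exclude a slope-$0$ piece.

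To do this I would use the torsion pair $(\overline{\frak T}_3^{\mu},\overline{\frak F}_3^{\mu})$ of Definition \ref{defn:category} and write the canonical decomposition $0\to F_1\to F\to F_2\to 0$ with $F_1\in\overline{\frak T}_3^{\mu}$ and $F_2\in\overline{\frak F}_3^{\mu}$. Since $F$ is torsion free, so is its subobject $F_1$, whence $F_1\in\overline{\frak T}_3^{\mu}$ forces $\mu_{\min,G_3}(F_1)\geq 0$; combined with $\mu_{\max,G_3}(F_1)\leq\mu_{\max,G_3}(F)\leq 0$ (monotonicity of $\mu_{\max}$ under passing to subobjects) this shows $F_1$ is $\mu$-semi-stable with $\deg_{G_3}(F_1)=0$. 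The goal is then to prove $F_1=0$, which yields $F\cong F_2\in\overline{\frak F}_3^{\mu}$ as desired.

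The key step is to transfer the $\WIT_0$ property from $F$ to $F_1$. Since $F_2\in\overline{\frak F}_3^{\mu}\subset\overline{\frak F}_3$, Lemma \ref{lem:vanish2} gives $\widehat{\Psi}^2(F_2)=0$. Applying the contravariant functor $\widehat{\Psi}$ to the short exact sequence produces a long exact sequence of perverse cohomology objects (the analogue of \eqref{eq:FM-Psi}, with the roles of $F_1$ and $F_2$ interchanged by contravariance); feeding in the hypothesis $\widehat{\Psi}^1(F)=\widehat{\Psi}^2(F)=0$ together with $\widehat{\Psi}^2(F_2)=0$ squeezes $\widehat{\Psi}^1(F_1)$ and $\widehat{\Psi}^2(F_1)$ between zeros, so $\WIT_0$ holds for $F_1$ as well. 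Finally, $F_1$ is a $\mu$-semi-stable object with $\deg_{G_3}(F_1)=0$ satisfying $\WIT_0$, so Lemma \ref{lem:F=0} forces $F_1=0$; concretely, $\WIT_0$ gives $\chi(G_3,F_1)=\rk\widehat{\Psi}^0(F_1)\geq 0$, while membership of $F_1$ in $\overline{\frak F}_3$ (again via Lemma \ref{lem:slope5}) forces $\chi(G_3,F_1)<0$ for a slope-$0$ object, a contradiction unless $F_1=0$.

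The argument is essentially bookkeeping with the cited lemmas, so I do not expect a genuine obstacle. The one point demanding care is the slope computation showing that $F_1$ is $\mu$-semi-stable of slope exactly $0$ (resting on monotonicity of $\mu_{\max}$ under subobjects and on torsion-freeness of $F$), together with keeping track of the reversal of indices caused by the contravariance of $\widehat{\Psi}$ in the long exact sequence.
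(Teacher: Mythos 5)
Your proposal is correct and follows essentially the same route as the paper: decompose $F$ against the torsion pair so that $F_1$ is a $\mu$-semi-stable subobject with $\deg_{G_3}(F_1)=0$ and $F_2\in\overline{\frak F}_3^{\mu}$, kill $\widehat{\Psi}^2(F_2)$ via Lemma \ref{lem:vanish2}, transfer $\WIT_0$ to $F_1$ through the long exact sequence, and conclude $F_1=0$ from the contradiction $\chi(G_3,F_1)=\rk\widehat{\Psi}^0(F_1)\geq 0$ versus $\chi(G_3,F_1)<0$, which is exactly the paper's appeal to Lemma \ref{lem:F=0}. The only difference is cosmetic: you derive explicitly from the weaker membership $F\in\overline{\frak F}_3$ that the $\overline{\frak T}_3^{\mu}$-part is $\mu$-semi-stable of slope zero, a step the paper leaves implicit.
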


\begin{proof}
Assume that there is an exact sequence
\begin{equation}
0 \to F_1 \to F \to F_2 \to 0
\end{equation}
such that $F_1$ is a $\mu$-semi-stable sheaf
with $\deg_{G_3}(F_1)=0$ and $F_2 \in \overline{\frak F}_3^{\mu}$.
Then we have $\widehat{\Psi}^2(F_2)=0$.
By the exact sequence \eqref{eq:FM2},
$\WIT_0$ holds for $F_1$.
Then Lemma \ref{lem:F=0} implies that 
$F_1=0$. 
\end{proof}

\end{NB}

\begin{NB}
\begin{lem}\label{lem:Psihat}
If $F \in ({\frak T}_2^{\mu})^*$,
then $\widehat{\Psi}^0(F)=0$.
\end{lem}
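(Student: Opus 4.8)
The plan is to prove this as the contravariant counterpart of Lemma \ref{lem:Psi}, with the roles of $\Psi$ and $\widehat{\Psi}$ exchanged and $G_1$ replaced by $G_3$; here I read $({\frak T}_2^{\mu})^*$ as the torsion class $\overline{\frak T}_3^{\mu}$ of $\Per(X'/Y')^D = {\frak C}_3$, so that the statement is exactly the analogue of ``$E \in \overline{\frak T}_1^{\mu} \Rightarrow \Psi^0(E)=0$''. First I would reduce to the case where $F$ is either a torsion object or a torsion-free $\mu$-semi-stable object: since $\widehat{\Psi}^0(-) \subset H^0(\widehat{\Psi}(-))$ by Lemma \ref{lem:Perverse} (3), the functor $\widehat{\Psi}^0$ is left exact, so running over the Harder--Narasimhan filtration of $F$ it suffices to kill $\widehat{\Psi}^0$ on each semi-stable (or torsion) subquotient, and each of these again lies in $\overline{\frak T}_3^{\mu}$.

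Next I would split according to $\deg_{G_3}(F)$. If $\deg_{G_3}(F) > 0$ (in particular if $F$ is a torsion object of positive degree), then $F \in {\frak T}_3^{\mu} \subset \overline{\frak T}_3$, and Lemma \ref{lem:vanish2} (1) gives $\Hom(F,{\cal E}_{|X' \times \{x\}}) = 0$ for a general $x \in X$, whence $\widehat{\Psi}^0(F) = 0$ directly from the base change theorem. The substantive case is $\deg_{G_3}(F) = 0$, where $F$ need not lie in $\overline{\frak T}_3$ (its last Harder--Narasimhan piece may have $\chi < 0$) and the naive vanishing via Lemma \ref{lem:vanish2} fails.

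For that case I would feed $F$ into the $\Psi \circ \widehat{\Psi}$ spectral sequence of Lemma \ref{lem:spectral2}, which should yield an exact sequence
\begin{equation*}
F \to \Psi^0(\widehat{\Psi}^0(F)) \to \Psi^2(\widehat{\Psi}^1(F)) \to 0
\end{equation*}
mirroring the one used in the proof of Lemma \ref{lem:Psi}. Since $\widehat{\Psi}^0(F)$ satisfies $\WIT_0$ with respect to $\widehat{\Psi}$, the object $\Psi^0(\widehat{\Psi}^0(F))$ satisfies $\WIT_0$ with respect to $\widehat{\Psi}$, so Lemma \ref{lem:slope6} places it in $\overline{\frak F}_3^{\mu}$; that is, it is torsion free with $\mu_{\max,G_3} < 0$. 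Because $F$ is $\mu$-semi-stable with $\deg_{G_3}(F) = 0$, every morphism from $F$ to such an object is zero, so the displayed sequence forces an isomorphism $\Psi^0(\widehat{\Psi}^0(F)) \cong \Psi^2(\widehat{\Psi}^1(F))$. This common object then satisfies both $\WIT_0$ and $\WIT_2$ with respect to $\widehat{\Psi}$, hence vanishes, and therefore $\widehat{\Psi}^0(F) = 0$.

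The main obstacle I anticipate is the bookkeeping around the two composed functors: verifying that Lemma \ref{lem:spectral2} produces the three-term exact sequence in exactly this form for the contravariant pair $(\Psi,\widehat{\Psi})$, and confirming the $\WIT$ compatibilities, namely that $\Psi^0(\widehat{\Psi}^0(F))$ inherits $\WIT_0$ for $\widehat{\Psi}$ (so that Lemma \ref{lem:slope6} applies) and that simultaneous $\WIT_0$ and $\WIT_2$ forces vanishing. By contrast, the slope estimate showing that $F \to \Psi^0(\widehat{\Psi}^0(F))$ is zero is routine once one knows that $\overline{\frak F}_3^{\mu}$ consists of torsion-free objects of strictly negative maximal slope, exactly paralleling the corresponding step in Lemma \ref{lem:Psi}.
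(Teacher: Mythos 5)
Your proof is correct and is essentially the paper's own argument: in the key case $\deg_{G_3}(F)=0$ you use exactly the paper's exact sequence $F \to \Psi^0(\widehat{\Psi}^0(F)) \to \Psi^2(\widehat{\Psi}^1(F)) \to 0$ from Lemma \ref{lem:spectral2}, then Lemma \ref{lem:slope6} to get $\mu_{\max,G_3}(\Psi^0(\widehat{\Psi}^0(F)))<0$ so that the first map vanishes, and finally the $\WIT_0$/$\WIT_2$ clash to conclude. Your preliminary Harder--Narasimhan reduction and the positive-degree/torsion case via Lemma \ref{lem:vanish2} are left implicit in the paper's proof (they mirror the reduction made explicitly in Lemma \ref{lem:Psi}), so this is the same route, just with the bookkeeping written out.
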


\begin{proof}
By Lemma \ref{lem:spectral2},
we have an exact sequence
\begin{equation}
F \to \Psi^0(\widehat{\Psi}^0(F)) \to \Psi^2(\widehat{\Psi}^1(F))
\to 0.
\end{equation}
By Lemma \ref{lem:slope6},
$\mu_{\max,G_3}(\Psi^0(\widehat{\Psi}^0(F)))<0$.
Since $F$ is a $\mu$-semi-stable object of $\deg_{G_3}(F)=0$,
$F \to \Psi^0(\widehat{\Psi}^0(F))$ is a zero map.
Then $\Psi^0(\widehat{\Psi}^0(F)) \to \Psi^2(\widehat{\Psi}^1(F))$
satisfies $\WIT_0$ and $\WIT_2$,
which implies that 
$\Psi^0(\widehat{\Psi}^0(F)) \cong \Psi^2(\widehat{\Psi}^1(F))\cong 0$.
Therefore $\widehat{\Psi}^0(F)=0$.
\end{proof}
\end{NB}

%

\begin{lem}\label{lem:equiv}
\begin{enumerate}
\item[(1)]
Assume that $E \in \overline{\frak T}_1^{\mu}$.
Then
\begin{enumerate}
\item
$\Psi^0(E)=0$.
\item
$\Psi^1(E) \in \overline{\frak F}_3$.
\item
$\Psi^2(E) \in \overline{\frak T}_3$. 
Moreover if $E$ does not contain a non-trivial
0-dimensional subobject, then $\Psi^2(E) \in {\frak T}_3$.
\end{enumerate}
\item[(2)]
Assume that $E \in \overline{\frak F}_1^{\mu}$.
Then
\begin{enumerate}
\item
$\Psi^0(E) \in \overline{\frak F}_3$.
\item
$\Psi^1(E) \in \overline{\frak T}_3$. 
\item
$\Psi^2(E)=0$.
\end{enumerate}
\end{enumerate}
\end{lem}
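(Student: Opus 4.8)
The plan is to run the proof of Lemma~\ref{lem:equiv-Phi} again, but with the contravariant pair $(\Psi,\widehat\Psi)$ in place of the covariant pair $(\Phi,\widehat\Phi)$; the present statement is exactly the dual of Lemma~\ref{lem:equiv-Phi} under $E\mapsto E^\vee$, so the cohomological degrees $0$ and $2$ trade places and every ``sub'' becomes a ``quotient''. The tools I would invoke are the dual vanishing Lemma~\ref{lem:vanish2}, the dual slope dichotomy Lemma~\ref{lem:slope5}, the vanishing Lemma~\ref{lem:Psi}, parts (3)--(5) of Lemma~\ref{lem:Perverse}, the rigidity Lemma~\ref{lem:E=0}, the positivity of twisted Hilbert polynomials Lemma~\ref{lem:Hilbert-Poly}, and, as the engine, the spectral sequence Lemma~\ref{lem:spectral2} relating $\widehat\Psi^p(\Psi^q(-))$ to the identity.

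First I would dispose of the two ``edge'' assertions in each part. In (1), since $\overline{\frak T}_1^\mu\subset\overline{\frak T}_1$, statement (a) $\Psi^0(E)=0$ is immediate from Lemma~\ref{lem:Psi} (equivalently Lemma~\ref{lem:Perverse}(5)); for its partner (c) I would note that $\Psi^2(E)$ is a boundary term of the spectral sequence of Lemma~\ref{lem:spectral2}, hence satisfies $\WIT_2$ with respect to $\widehat\Psi$ (this is the relation already used in the proof of Lemma~\ref{lem:deg-Psi}), so Lemma~\ref{lem:slope5}(2) gives $\Psi^2(E)\in\overline{\frak T}_3$, and the refinement to ${\frak T}_3$ follows by feeding the hypothesis that $E$ has no nonzero $0$-dimensional subobject through the $4$-term exact sequence attached to $\Psi^0(E)=0$ (as in the proof of Lemma~\ref{lem:WIT-simple}), whose only correction term $\widehat\Psi^0(\Psi^1(E))$ is torsion free. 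Part (2) is symmetric: (c) $\Psi^2(E)=0$ is Lemma~\ref{lem:Perverse}(4) using $\overline{\frak F}_1^\mu\subset\overline{\frak F}_1$, and (a) $\Psi^0(E)\in\overline{\frak F}_3$ follows because $\Psi^0(E)$ satisfies $\WIT_0$ with respect to $\widehat\Psi$, so Lemma~\ref{lem:slope5}(1) applies.

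The heart of the matter, and the step I expect to be the main obstacle, is part (b) of each case: that the middle cohomology $\Psi^1(E)$ lies wholly in one half of the torsion pair $(\overline{\frak T}_3,\overline{\frak F}_3)$. I would start from the canonical decomposition
\begin{equation}
0 \to F_1 \to \Psi^1(E) \to F_2 \to 0,\qquad F_1\in\overline{\frak T}_3,\ F_2\in\overline{\frak F}_3,
\end{equation}
apply the contravariant $\widehat\Psi$ to obtain a nine-term long exact sequence, and kill the corner terms $\widehat\Psi^0(F_1)$ and $\widehat\Psi^2(F_2)$ by Lemma~\ref{lem:vanish2}. In case (1) I must force $F_1=0$: using $\Psi^0(E)=0$, Lemma~\ref{lem:spectral2} gives $\widehat\Psi^2(\Psi^1(E))=0$ and presents $\widehat\Psi^1(\Psi^1(E))$ as a quotient of $E$. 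A chase of the long exact sequence then shows $\widehat\Psi^0(F_1)=\widehat\Psi^2(F_1)=0$, so $F_1$ is $\WIT_1$ for $\widehat\Psi$, and the surjection $\widehat\Psi^1(\Psi^1(E))\twoheadrightarrow\widehat\Psi^1(F_1)$ realizes $\widehat\Psi^1(F_1)$ as a quotient object of $E$. Since $\overline{\frak T}_1$ is closed under quotients, $\widehat\Psi^1(F_1)\in\overline{\frak T}_1$, so $\deg_{G_1}(\widehat\Psi^1(F_1))\ge 0$; the degree-reversal for $\widehat\Psi$ gives the opposite sign to $\deg_{G_3}(F_1)\ge 0$, forcing all degrees to vanish, after which the rank/Euler-characteristic dichotomy together with Lemma~\ref{lem:Hilbert-Poly} (and Lemma~\ref{lem:E=0} for the degree-zero semistable case) forces $\widehat\Psi^1(F_1)=0$, exactly as in Lemma~\ref{lem:equiv-Phi}(1)(b); hence $F_1=0$. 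Case (2) is the mirror image: I would use $\Psi^2(E)=0$ and Lemma~\ref{lem:spectral2} to obtain $\widehat\Psi^0(\Psi^1(E))=0$, deduce $\WIT_1$ for $F_2$, realize $\widehat\Psi^1(F_2)$ as a subobject of $E$, and contradict $E\in\overline{\frak F}_1^\mu$ by the same dichotomy.

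The genuine difficulty is not the diagram chase itself but keeping the bookkeeping straight under the dualization built into $\Psi$: which boundary terms of the spectral sequence Lemma~\ref{lem:spectral2} vanish, the correct sign in the degree-reversal formula for $\widehat\Psi$, and the precise interchange of rank and Euler characteristic. These differ from the covariant computation of Lemma~\ref{lem:equiv-Phi} by exactly one sign at each stage, and that is where I would concentrate the care.
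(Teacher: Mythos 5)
Your proposal is correct and follows essentially the same route as the paper's proof: the edge cases via Lemma \ref{lem:Psi}, Lemma \ref{lem:Perverse} and Lemma \ref{lem:slope5} (with the refinement to ${\frak T}_3$ coming from the torsion-freeness of $\widehat{\Psi}^0(\Psi^1(E))$ in the exact sequence of Lemma \ref{lem:spectral2}), and the middle cohomology handled by the decomposition $0\to F_1\to\Psi^1(E)\to F_2\to 0$, the nine-term sequence with corners killed by Lemma \ref{lem:vanish2}, and the degree/rank/Hilbert-polynomial chase forcing $F_1=0$ (resp. $F_2=0$). The only cosmetic deviation is that the paper kills $\widehat{\Psi}^1(F_1)$ directly by positivity of the twisted Hilbert polynomial (Lemma \ref{lem:Hilbert-Poly}) rather than via Lemma \ref{lem:E=0}, but this does not change the argument.
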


\begin{proof}
We take a decomposition
\begin{equation}
0 \to F_1 \to \Psi^1(E) \to F_2 \to 0
\end{equation}
with
$F_1 \in \overline{\frak T}_3$ and
$F_2 \in \overline{\frak F}_3$.
Applying $\widehat{\Psi}$, we have an exact sequence
\begin{equation}\label{eq:FM-FM-Psi}
\begin{CD}
0 @>>> \widehat{\Psi}^0(F_2) @>>> 
\widehat{\Psi}^0(\Psi^1(E)) @>>> 
\widehat{\Psi}^0(F_1)\\
@>>>\widehat{\Psi}^1(F_2) @>>> 
\widehat{\Psi}^1(\Psi^1(E)) @>>> 
\widehat{\Psi}^1(F_1)\\
@>>> \widehat{\Psi}^2(F_2) @>>> 
\widehat{\Psi}^2(\Psi^1(E)) @>>> 
\widehat{\Psi}^2(F_1) @>>> 0.
\end{CD}
\end{equation}
By Lemma \ref{lem:vanish2},
we have $\widehat{\Psi}^0(F_1)=\widehat{\Psi}^2(F_2)=0$.

(1) Assume that $\deg_{\min, G_1}(E) \geq 0$.
By Lemma \ref{lem:slope5} (2) and Lemma \ref{lem:Psi},
(a) and the first claim of (c) hold. 
For the second claim of (c), by Lemma \ref{lem:slope5} (2),
it is sufficient to prove that
$\widehat{\Psi}^2(\Psi^2(E))$ does not contain a non-trivial
0-dimensional subobject.
By the exact sequence
\begin{equation}
0 \to \widehat{\Psi}^0(\Psi^1(E)) \to \widehat{\Psi}^2(\Psi^2(E))
\to E
\end{equation}
and the torsion-freeness of $\widehat{\Psi}^0(\Psi^1(E))$,
we get our claim.

We prove (b).
By Lemma \ref{lem:spectral2} and (a),
we have $\widehat{\Psi}^2(\Psi^1(E))=0$.
Then $\WIT_1$ holds for $F_1$.
We have a surjective homomorphism
\begin{equation}
E \to \widehat{\Psi}^1(\Psi^1(E)).
\end{equation} 
Hence $E$ has a quotient sheaf $\widehat{\Psi}^1(F_1)$
with $\deg_{G_1}(\widehat{\Psi}^1(F_1))=
-\deg_{G_3}(F_1) \leq 0$.
If $\deg_{G_1}(\widehat{\Psi}^1(F_1))<0$, then
we see that $\rk \widehat{\Psi}^1(F_1)>0$ and 
$E \not \in \overline{\frak T}_1^{\mu}$.
Hence $\deg_{G_1}(\widehat{\Psi}^1(F_1))=
-\deg_{G_3}(F_1)=0$.
Then $F_1 \in \overline{\frak T}_3$ implies that
$\rk \widehat{\Psi}^1(F_1)=-\chi(G_3,F_1) \leq 0$.
Since $\chi(G_1,\widehat{\Psi}^1(F_1))=-\rk F_1 \leq 0$,
the $G_1$-twisted Hilbert polynomial of $\widehat{\Psi}^1(F_1)$
is 0. Therefore $F_1=0$.

(2)
Assume that $\deg_{\max, G_1}(E) < 0$.
By Lemma \ref{lem:Perverse} and Lemma \ref{lem:slope5}, (a) and 
(c) hold. We prove (b).
Since $\Psi^2(E)=0$,
Lemma \ref{lem:spectral2} implies that 
$\widehat{\Psi}^0 \Psi^1(E)=0$.
Hence $\WIT_1$ holds for $F_2$
and we have an injective morphism
$\widehat{\Psi}^1(F_2) \to \widehat{\Psi}^1(\Psi^1(E)) \to E$.
Since $\deg_{G_1}(\widehat{\Psi}^1(F_2)) \geq 0$, we have 
$\widehat{\Psi}^1(F_2)=0$, which implies that $F_2=0$.
\end{proof}

{\it Proof of Proposition \ref{prop:equiv-Psi}}.

For $E \in \overline{\frak A}_1^{\mu}$, we have an exact sequence in
$\overline{\frak A}_1^{\mu}$
\begin{equation}
0 \to H^{-1}(E)[1] \to E \to H^0(E) \to 0.
\end{equation}
Then we have an exact triangle
\begin{equation}
\Psi(H^0(E))[2] \to \Psi(E[-2]) \to \Psi(H^{-1}(E))[1]
\to \Psi(H^0(E))[3].
\end{equation} 
Hence
$\Psi^i(E[-2])=0$ for $i \ne -1,0$ and we have an exact sequence
\begin{equation}
\begin{CD}
0 @>>> \Psi^1(H^0(E)) @>>> \Psi^{-1}(E[-2]) @>>> \Psi^0(H^{-1}(E))\\
@>>>  \Psi^2(H^0(E)) @>>> \Psi^0(E[-2]) @>>> \Psi^1(H^{-1}(E)) @>>> 0.
\end{CD}
\end{equation}
By Lemme \ref{lem:equiv}, $\Psi^{-1}(E[-2]) \in \overline{\frak F}_3$ 
and $\Psi^0(E[-2]) \in \overline{\frak T}_3$. Therefore
$\Psi(E[-2]) \in (\overline{\frak A}_3)_{op}$.
\qed

\begin{defn}
\begin{enumerate}
\item[(1)]
Let
$\Per(X'/Y')_{w_0^{\vee}}^D$ be the full subcategory of
$\Per(X'/Y')^D$ consisting of $G_3$-twisted semi-stable
objects $E$ with $\deg_{G_3}(E)=\chi(G_3,E)=0$.
\item[(2)]
Let ${\cal C}_0$ (resp. $\Per(X'/Y')_0^D$) 
be the full subcategory of 
${\cal C}$ (resp. $\Per(X'/Y')^D$)
consisting of 0-dimensional objects.
\end{enumerate}
\end{defn}

\begin{prop}\label{prop:K3:equiv-Psi}
$\Psi$ induces the following correspondences:
\begin{equation}
\begin{split}
{\cal C}_0 \cong & (\Per(X'/Y')^D_{w_0^{\vee}})_{op},\\
{\cal C}_{v_0} \cong & (\Per(X'/Y')^D_0)_{op}.
\end{split}
\end{equation}
\end{prop}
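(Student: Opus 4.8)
The plan is to deduce both isomorphisms from the fact that the contravariant transforms $\Psi$ and $\widehat{\Psi}$ are mutually quasi-inverse anti-equivalences of the whole derived categories. From $\Psi(E)=\Phi(E)^{\vee}[-2]$ and $\widehat{\Psi}(F)=\widehat{\Phi}(F^{\vee})$, together with the Bridgeland--Orlov fact that $\widehat{\Phi}[2]$ is quasi-inverse to $\Phi$, one checks directly that $\widehat{\Psi}\circ\Psi\cong\id_{{\bf D}(X)}$ and $\Psi\circ\widehat{\Psi}\cong\id_{{\bf D}(X')}$. Consequently, once I know that every object $E$ of a given source category satisfies $\WIT_2$ with respect to $\Psi$, the identity $\widehat{\Psi}^2(\Psi^2(E))\cong E$ is automatic, and so is full faithfulness via $\Hom(E_1,E_2)\cong\Hom(\Psi^2(E_2),\Psi^2(E_1))$; it therefore remains only to verify, for each of the two statements, that $\Psi^2$ lands in the asserted target and is essentially surjective onto it.

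I would treat the second isomorphism first, since its target has explicitly described simple objects. The simple objects of ${\cal C}_{v_0}$ are the $E_{ij}$ and the ${\cal E}_{|\{x'\}\times X}$ with $x'\notin\cup_i Z_i'$ (Lemma \ref{lem:semi-stable-objects}). On these one has $\Psi(E_{ij})[2]=B_{ij}'$ by construction, and a direct Fourier--Mukai computation gives $\Psi({\cal E}_{|\{x'\}\times X})[2]={\Bbb C}_{x'}$ (using ${\cal E}_{|\{x'\}\times X}=\widehat{\Phi}({\Bbb C}_{x'})$); both are $0$-dimensional objects of $\Per(X'/Y')^D$ for which $\WIT_2$ holds trivially. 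Since the $0$-dimensional objects form an abelian subcategory closed under extensions and every object of ${\cal C}_{v_0}$ is an iterated extension of its simples, the long exact sequence of the $\Psi^i$ propagates $\WIT_2$ and keeps $\Psi^2$ inside $\Per(X'/Y')^D_0$. By Lemma \ref{lem:K3:irreducible-objD} the simple objects of $\Per(X'/Y')^D_0$ are exactly $B_{ij}'$ and ${\Bbb C}_{x'}$, so $\Psi^2$ is a bijection on simples and hence essentially surjective; this yields ${\cal C}_{v_0}\cong(\Per(X'/Y')^D_0)_{op}$.

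For the first isomorphism the forward direction is supplied by Lemma \ref{lem:WIT-simple}: every $0$-dimensional $E\in{\cal C}_0$ satisfies $\WIT_2$, $\Psi^2(E)$ is $G_3$-twisted semi-stable with $\deg_{G_3}(\Psi^2(E))=\chi(G_3,\Psi^2(E))=0$, that is $\Psi^2(E)\in\Per(X'/Y')^D_{w_0^{\vee}}$, and irreducible $E$ are carried to $G_3$-twisted stable objects. For essential surjectivity I would exploit the symmetry furnished by Theorem \ref{thm:duality}, which realizes $X$ itself as a moduli space of objects of $\Per(X'/Y')^D$ and makes the whole configuration symmetric under the exchange $X\leftrightarrow X'$, $\Psi\leftrightarrow\widehat{\Psi}$. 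Running Lemma \ref{lem:WIT-simple} in the $X'\to X$ direction then shows that every $G_3$-twisted stable object $F$ with $\deg_{G_3}(F)=\chi(G_3,F)=0$ satisfies $\WIT_2$ for $\widehat{\Psi}$ with $\widehat{\Psi}^2(F)$ an irreducible $0$-dimensional object of ${\cal C}$; thus $\Psi^2$ reaches every simple of $\Per(X'/Y')^D_{w_0^{\vee}}$, and propagation through extensions yields the equivalence.

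The main obstacle I anticipate is precisely the essential surjectivity of the first isomorphism: unlike in the second case, the simple objects of $\Per(X'/Y')^D_{w_0^{\vee}}$ are not enumerated a priori, and identifying them requires the dual of Lemma \ref{lem:WIT-simple}. Establishing that dual amounts to checking that the vanishing and slope package for the contravariant transform (Lemmas \ref{lem:vanish2} and \ref{lem:slope5}), together with the spectral sequence comparing $\Psi$ and $\widehat{\Psi}$, are genuinely symmetric under $\Psi\leftrightarrow\widehat{\Psi}$, which is where the force of Theorem \ref{thm:duality} is needed. Once this symmetry is in hand, the remaining verifications --- closure of the two target subcategories under subobjects, quotients and extensions, and the propagation of $\WIT_2$ through Jordan--H\"older filtrations --- are routine.
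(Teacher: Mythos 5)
Your proposal is correct, and for the first isomorphism it follows the paper's line: Lemma \ref{lem:WIT-simple} gives that $\Psi^2$ carries ${\cal C}_0$ into $\Per(X'/Y')^D_{w_0^{\vee}}$, and essential surjectivity comes from identifying the simple objects of the target with the images $\Psi^2(A_{ij})$, $\Psi^2({\Bbb C}_x)$ — the paper dismisses this as "easy to see," while you make the underlying mechanism explicit (the dual of Lemma \ref{lem:WIT-simple}, available because Theorem \ref{thm:duality} symmetrizes the whole package under $X\leftrightarrow X'$, $\Psi\leftrightarrow\widehat{\Psi}$); that is the same idea, just spelled out. Where you genuinely diverge is the second isomorphism. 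The paper does not touch $\Psi$ on simples at all there: it factors the claim as the already-proved covariant equivalence ${\cal C}_{v_0}\cong\Per(X'/Y')_0$ of Proposition \ref{prop:K3-normal} (1) composed with the local duality anti-equivalence $E\mapsto {\bf R}{\cal H}om_{{\cal O}_{X'}}(E,{\cal O}_{X'})[2]$ from $\Per(X'/Y')_0$ to $(\Per(X'/Y')^D_0)_{op}$, using $\Psi=D_{X'}\circ\Phi[-2]$. You instead verify directly that $\Psi[2]$ is a bijection on the simples ($E_{ij}\mapsto B_{ij}'$, ${\cal E}_{|\{x'\}\times X}\mapsto{\Bbb C}_{x'}$) and propagate $\WIT_2$ through Jordan--H\"older filtrations. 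Both are sound; the paper's route is shorter because it recycles Proposition \ref{prop:K3-normal} and the trivial observation that duality is an anti-equivalence on $0$-dimensional objects, while yours is self-contained and does not presuppose that proposition, at the cost of redoing the extension bookkeeping. One small economy you could import from the paper: once $\Psi$ is written as duality composed with $\Phi[-2]$, the $\WIT_2$ statements and the matching of simples for the second isomorphism become formal consequences of the covariant case, so no separate spectral-sequence argument is needed there.
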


\begin{proof}
By Lemma \ref{lem:WIT-simple},
$\Psi^2({\cal C}_0)$ is contained in 
$(\Per(X'/Y')^D_{w_0^{\vee}})_{op}$.
It is easy to see that 
$\Per(X'/Y')^D_{w_0^{\vee}}$ is generated by
$\Psi^2(A_{ij}), i,j \geq 0$ and
$\Psi^2({\Bbb C}_x)$, $x \in X \setminus \cup_i Z_i$.
Thus the first claim holds.

We have an equivalence
\begin{equation}
\begin{matrix}
\Per(X'/Y')_0 & \to & (\Per(X'/Y')_0^D)_{op}\\
E & \mapsto & {\bf R}{\cal H}om_{{\cal O}_X}(E,{\cal O}_X)[2].
\end{matrix}
\end{equation}
Then the second claim is a consequence of Proposition
\ref{prop:K3-normal} (1).
\end{proof}

\begin{NB}
\begin{equation}
\{  E\in {\cal C}| \dim E=0 \}
\longleftrightarrow
\left\{ E \in \Per(X'/Y')^D \left|
\begin{aligned}
\text{$E$ is $G_3$-twisted semi-stable,}\\
\text{$\deg_{G_3}(E)=\chi(G_3,E)=0$}
\end{aligned}
\right. \right\}
 \end{equation}

 \begin{equation}
\left\{ E \in {\cal C} \left|
\begin{aligned}
\text{$E$ is $G_1$-twisted semi-stable,}\\
\text{$\deg_{G_1}(E)=\chi(G_1,E)=0$}
\end{aligned}
\right. \right\}
\longleftrightarrow
\{  E\in \Per(X'/Y')^D| \dim E=0 \}
 \end{equation} 
\end{NB}

\begin{NB}
\begin{prop}\label{prop:duality}
Let ${\cal C}$ be the category in Lemma \ref{lem:tilting}.
Assume that there is a $\beta \in \varrho_X^{\perp}$
such that ${\Bbb C}_x$ are $\beta$-stable for all $x \in X$.
Let $v_0$ be a primitive isotropic Mukai vector of $X$ and
assume that there is a local projective generator $G$ of ${\cal C}$
such that $v(G)=2v_0$.
Then the same assertion of Theorem \ref{thm:duality} holds.
\end{prop}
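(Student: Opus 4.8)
The plan is to reduce Proposition~\ref{prop:duality} to the already established Theorem~\ref{thm:duality}. That theorem was proved throughout this section under two standing hypotheses: Assumption~\ref{ass:stability}, that every ${\Bbb C}_x$ is $\beta$-stable, and Assumption~\ref{ass:v_0}, that $v_0$ is a primitive isotropic Mukai vector with $\rk v_0>0$ and $\langle v_0,v(A_{ij})\rangle<0$ for all $i,j$. So I would first check that the hypotheses of the proposition imply both of these, and then simply invoke the theorem; once the assumptions hold, the dual surface $X'=M_H^{v_0+\alpha}(v_0)$, the universal family ${\cal E}$, the transforms $\Phi,\widehat{\Phi},\Psi,\widehat{\Psi}$, the classes $G_1,G_2,G_3,w_0$, and the categories $\Per(X'/Y')$ and $\Per(X'/Y')^D$ are all available exactly as before, and conclusions (1) and (2) follow verbatim.

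The first hypothesis of the proposition is literally Assumption~\ref{ass:stability}; through Proposition~\ref{prop:0-dim:duality2} it produces the equivalence ${\cal C}=\Phi_{X\to X}^{{\cal F}^{\vee}[2]}(\Per(X/Y,{\bf b}_1,\dots,{\bf b}_n))$, which is what defines the irreducible objects $A_{ij}$. It remains to recover Assumption~\ref{ass:v_0}. By hypothesis $v_0$ is primitive and isotropic, and since a local projective generator is a locally free sheaf of positive rank, $v(G)=2v_0$ forces $\rk v_0=\tfrac12\rk G>0$. The one substantive point is the inequality $\langle v_0,v(A_{ij})\rangle<0$. Here I would use that, by Lemma~\ref{lem:tilting:irreducible}, each $A_{ij}$ is a $0$-dimensional irreducible object of ${\cal C}$; hence ${\bf R}\pi_*(G^{\vee}\otimes A_{ij})$ lies in $\Coh(Y)$ and is a nonzero sheaf supported at $p_i$ (nonzero by property (b) of a local projective generator). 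Its Euler characteristic is therefore strictly positive, and by the Leray spectral sequence together with Riemann--Roch on the $K3$ surface
\[
\chi(G,A_{ij})=\chi\bigl(Y,{\bf R}\pi_*(G^{\vee}\otimes A_{ij})\bigr)=-\langle v(G),v(A_{ij})\rangle=-2\langle v_0,v(A_{ij})\rangle>0,
\]
so $\langle v_0,v(A_{ij})\rangle<0$ for all $i,j$, which is exactly Assumption~\ref{ass:v_0}.

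With both standing assumptions verified, Theorem~\ref{thm:duality} applies directly and yields the $G_3-\Phi(\beta)^{\vee}$-twisted stability of ${\cal E}_{|X'\times\{x\}}\in\Per(X'/Y')^D$ for all $x\in X$, the isomorphism $\phi:X\to M_{\widehat{H}}^{G_3-\Phi(\beta)^{\vee}}(w_0^{\vee})$ together with the identity $H=\widehat{(\widehat{H})}$, and, under the extra $\mu$-stability hypothesis, statement (2). This finishes the argument.

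The only place requiring any thought is the equivalence between the two forms of the hypothesis on $v_0$: the proposition replaces the numerical condition of Assumption~\ref{ass:v_0} by the bare existence of a local projective generator of Mukai vector $2v_0$. Lemma~\ref{lem:assumption} and Corollary~\ref{cor:generator-exist} go from the inequality to such a generator, whereas the computation above runs the implication backwards, reading the inequality off the positivity of $\chi(G,A_{ij})$. I expect this to be the main (and only mild) obstacle; no genericity of $v_0$ or of the polarization $H$ is lost in the passage, since these are precisely the data entering Theorem~\ref{thm:duality}.
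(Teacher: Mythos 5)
Your proof is correct, but it takes a genuinely different route from the paper's own argument. You reduce the proposition to Theorem \ref{thm:duality} by checking that its hypotheses reproduce the two standing assumptions of the $K3$ section: the first hypothesis is Assumption \ref{ass:stability} verbatim (and, as you note, it is what makes the objects $A_{ij}$ available in the first place, via Proposition \ref{prop:0-dim:duality2}), while for the second you observe that each $A_{ij}$ is a nonzero $0$-dimensional object of ${\cal C}$, so for any locally free local projective generator $G$ one has $\chi(G,A_{ij})=\chi\bigl(Y,{\bf R}\pi_*(G^{\vee}\otimes A_{ij})\bigr)>0$; with $v(G)=2v_0$ this yields $\rk v_0>0$ and $\langle v_0,v(A_{ij})\rangle<0$, i.e.\ Assumption \ref{ass:v_0}. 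This is exactly the converse of Lemma \ref{lem:assumption}, and Theorem \ref{thm:duality} then applies as stated. The paper instead conjugates ${\cal C}$ by the equivalence $\Lambda^\alpha$ into a normalized heart $\Per(X/Y,{\bf b}_1,\dots,{\bf b}_n)$ using Proposition \ref{prop:0-dim:equivalence}, identifies $X'=M_H^{G+\gamma}(v_0)$ with the corresponding moduli space for the transported data via Proposition \ref{prop:Phi-alpha} (6), establishes the factorization $\Phi_{X^\alpha\to X'}^{{\cal E}^\alpha}\circ D_{X^\alpha}\circ\Lambda^\alpha=\Phi_{X\to X'}^{{\cal E}}\circ D_X$, and then invokes Proposition \ref{prop:K3:equiv-Psi} to obtain the twisted stability of $\Phi_{X\to X'}^{{\cal E}}({\Bbb C}_x)$ for all $x$. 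Your route is shorter and exposes the proposition as Theorem \ref{thm:duality} with its numerical hypothesis repackaged; the paper's route buys an explicit description of the duality functor relative to the standard heart $\Per(X/Y,{\bf b})$, which is convenient for subsequent computations with $\Lambda^\alpha$, and relies only on the special case recorded in Proposition \ref{prop:K3:equiv-Psi} rather than the full strength of the theorem. Both arguments are sound.
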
 

\begin{proof}
For a suitable $\alpha$,
there are ${\bf b}_1,...,{\bf b}_n$ and
we have an equivalence 
$\Lambda^\alpha:{\cal C} \to \Per(X/Y,{\bf b}_1,...,{\bf b}_n)$
by Proposition \ref{prop:0-dim:equivalence}.
By Proposition \ref{prop:Phi-alpha} (6), we have an isomorphism
$$
M_H^{G+\gamma}(v_0) \to 
M_H^{G^\alpha+\Lambda^\alpha(\gamma)}(\Lambda^\alpha(v_0)),
$$
where $\gamma \in \delta(H^{\perp})$ is a sufficiently small
general element.
By Proposition \ref{prop:K3:smooth}, they are $K3$ surfaces.
Let ${\cal E}$ be the universal family on $X \times X'$,
where $X':=M_H^{G+\gamma}(v_0)$.
We shall prove that ${\cal E}_{|\{x \} \times X'}$ is
$G_3+\gamma'$-twisted stable for all
$x \in X$, where $\gamma':=-\Psi(\Lambda^\alpha(\beta))$.
 
We first note that ${\cal E}^\alpha:=\Lambda^\alpha({\cal E})$
is the universal family on
$X^\alpha \times X'$, where
$X'$ is identified with 
$M_H^{G^\alpha+\Lambda^\alpha(\gamma)}(\Lambda^\alpha(v_0))$.
For this ${\cal E}^\alpha$, we consider the equivalence
$\Psi:=\Phi_{X^\alpha \to X'}^{{\cal E}^\alpha} \circ D_{X^\alpha}$.
We note that
\begin{equation}
\Phi_{X^\alpha \to X'}^{{\cal E}^\alpha} \circ D_{X^\alpha} 
\circ \Lambda^\alpha
=D_{X'} \circ \Phi_{X^\alpha \to X'}^{({\cal E}^{\alpha})^{\vee}[2]}
\circ \Lambda^\alpha
=D_{X'} \circ \Phi_{X \to X'}^{{\cal E}^{\vee}[2]}
=\Phi_{X \to X'}^{{\cal E}} \circ D_X.
\end{equation}
Since $\Lambda^\alpha({\Bbb C}_x)$, $x \in X$ are 
$\Lambda^\alpha(\beta)$-stable for all $x \in X$,
Proposition \ref{prop:K3:equiv-Psi} implies that
$\Psi(\Lambda^\alpha({\Bbb C}_x))[2]=
\Phi_{X \to X'}^{{\cal E}}({\Bbb C}_x)$ are
$G_3 +\gamma'$-stable for all $x \in X$.
\end{proof}
\end{NB}

\subsection{Preservation of Gieseker stability conditions.}

\begin{prop}\label{prop:stability-deg=0}
Let $E$ be a $G_1$-twisted semi-stable object with
$\deg_{G_1}(E)=0$ and $\chi(G_1,E)<0$.
Then $\WIT_1$ holds for $E$ and
$\Psi^1(E)$ is $G_3$-twisted semi-stable.
In particular, we have an isomorphism 
\begin{equation}
{\cal M}_H^{G_1}(v)^{ss} \to 
{\cal M}_{\widehat{H}}^{G_3}(-\Psi(v))^{ss}
\end{equation}
which preserves the $S$-equivalence classes, 
where $v=l v_0+a\varrho_X+(D+(D/r_0,\xi_0)\varrho_X)$, $l>0$, $a<0$.
\end{prop}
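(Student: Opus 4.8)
The plan is to prove $\WIT_1$ first, then deduce the $G_3$-twisted semistability of $\Psi^1(E)$, and finally upgrade the resulting bijection to an isomorphism of moduli schemes. Since $E$ is $G_1$-twisted semistable with $\deg_{G_1}(E)=0$ and $\rk E=lr_0>0$, it is pure of dimension $2$ (Definition \ref{defn:Simpson-stability}), hence torsion free, and $\mu$-semistable of slope $0$; in particular $\mu_{\min,G_1}(E)=0$, so $E\in\overline{\frak T}_1^{\mu}$. Lemma \ref{lem:Psi} then gives $\Psi^0(E)=0$, and Lemma \ref{lem:equiv} (1) gives $\Psi^1(E)\in\overline{\frak F}_3$ and $\Psi^2(E)\in\overline{\frak T}_3$. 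Thus only $\Psi^2(E)=0$ remains for $\WIT_1$.

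The crux is precisely $\Psi^2(E)=0$, and this is where the hypothesis $\chi(G_1,E)<0$ enters. I would invoke Lemma \ref{lem:Perverse} (4): it suffices to show $\Hom(E,E_{ij}[2])=0$ and $\Hom(E,{\cal E}_{|\{x'\}\times X}[2])=0$ for all $i,j$ and all $x'$. Since $K_X\cong{\cal O}_X$, Serre duality identifies these spaces with $\Hom(E_{ij},E)^{\vee}$ and $\Hom({\cal E}_{|\{x'\}\times X},E)^{\vee}$. Now $E_{ij}$ and ${\cal E}_{|\{x'\}\times X}$ are $G_1$-twisted semistable with $\deg_{G_1}=\chi(G_1,\cdot)=0$ (Lemma \ref{lem:semi-stable-objects}), so their reduced twisted Hilbert polynomials vanish, whereas that of $E$ is strictly negative because $\chi(G_1,E)<0$. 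A nonzero map from such an object into $E$ would have image a subobject $I\subset E$ with $\deg_{G_1}(I)=0$ and $\chi(G_1,I)\ge0$ (being a quotient of a slope-$0$, $\chi=0$ twisted semistable object), while the semistability of $E$ forces $\chi(G_1,I)\le(\chi(G_1,E)/\rk E)\rk I<0$ when $\rk I>0$, and $I=0$ when $\rk I=0$ by torsion freeness. This contradiction yields the two vanishings, hence $\Psi^2(E)=0$ and $\WIT_1$ holds. I expect this semistability bookkeeping to be the main obstacle, since $E$ is only semistable, so one must argue carefully that the image $I$ of a map out of a slope-$0$ factor really has nonnegative reduced Euler characteristic.

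For the semistability of $F:=\Psi^1(E)$ I would argue by contradiction using a maximal $G_3$-twisted destabilizing subobject $F_1\subset F$ in $\Per(X'/Y')^D$. First $\deg_{G_3}(F)=0$: combine Lemma \ref{lem:deg-Psi} with $\Psi^1(E)\in\overline{\frak F}_3$ and the degree identity $\deg_{G_3}(\Psi^1(E))=\deg_{G_1}(E)=0$ coming from Proposition \ref{prop:deg-preserve}. Hence $F_1$ has $\deg_{G_3}(F_1)=0$ and strictly larger reduced Euler characteristic. Applying $\widehat\Psi$ to $0\to F_1\to F\to F_2\to0$, and using $\widehat\Psi^1(F)\cong E$ together with the spectral sequence Lemma \ref{lem:spectral2}, the $\WIT$ statements of Lemma \ref{lem:slope5} and the vanishings of Lemma \ref{lem:vanish2}, I would extract a subobject or quotient of $E$ whose reduced twisted Hilbert polynomial violates the $G_1$-twisted semistability of $E$; the degree and Euler-characteristic preservation match the two reduced polynomials up to sign.

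Finally, since $\WIT_1$ holds, $v(\Psi^1(E))=-\Psi(v)$, so $E\mapsto\Psi^1(E)$ sends ${\cal M}_H^{G_1}(v)^{ss}$ into ${\cal M}_{\widehat H}^{G_3}(-\Psi(v))^{ss}$, with inverse induced by $\widehat\Psi$. To obtain a morphism of schemes I would run the transform in families over an arbitrary base, using that $\WIT_1$ is an open condition and base change to see that the transformed complex is represented by an $X'$-flat family of objects of $\Per(X'/Y')^D$. Preservation of $S$-equivalence follows because $\Psi$ restricts to a contravariant exact equivalence from the slope-$0$ $G_1$-twisted semistable category onto the slope-$0$ $G_3$-twisted semistable category (the relevant equivalence of tilted hearts is Proposition \ref{prop:equiv-Psi}, with the $0$-dimensional, $\chi=0$ case recorded in Proposition \ref{prop:K3:equiv-Psi}): a Jordan--H\"older filtration of $E$ is carried to one of $\Psi^1(E)$ in reverse order, so the multiset of stable factors, and hence the $S$-equivalence class, is preserved.
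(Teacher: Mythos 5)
Your proof is correct and follows essentially the same route as the paper: the paper likewise observes $E\in\overline{\frak F}_1\cap\overline{\frak T}_1^{\mu}$ to get $\WIT_1$ from Lemma \ref{lem:Perverse} and Lemma \ref{lem:equiv}, then tests a destabilizing $G_3$-twisted stable subobject $F_1\subset\Psi^1(E)$ against the semistability of $E$ via the exact sequence $0\to\widehat{\Psi}^1(F_2)\to E\to\widehat{\Psi}^1(F_1)\to 0$ and the reciprocal relation between the reduced Euler characteristics. The only cosmetic difference is that you re-derive the Hom-vanishing hypothesis of Lemma \ref{lem:Perverse} (4) by hand rather than quoting its "in particular" clause for $E\in\overline{\frak F}_1$.
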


\begin{proof}
We note that $E \in \overline{\frak F}_1 \cap \overline{\frak T}_1^{\mu}$.
By Lemma \ref{lem:Perverse} and Lemma \ref{lem:equiv},
$\WIT_1$ holds for $E$ and
$\Psi^1(E) \in \overline{\frak F}_3$.
Assume that $\Psi^1(E)$ is not $G_3$-twisted stable. Then
there is an exact sequence in $\Per(X'/Y')^D$
\begin{equation}
0 \to F_1 \to \Psi^1(E) \to F_2 \to 0
\end{equation}
such that $F_1$ is a $G_3$-twisted stable object with
$\deg_{G_3}(F_1)=0$ and 
\begin{equation}
0 > \frac{\chi(G_3,F_1)}{\rk F_1}
 \geq 
\frac{\chi(G_3,\Psi^1(E))}{\rk \Psi^1(E)},
\end{equation}
and $F_2 \in \overline{\frak F}_3$.
Then we have an exact sequence
\begin{equation}
0 \to \widehat{\Psi}^1(F_2) \to E \to \widehat{\Psi}^1(F_1) \to 0.
\end{equation}
Since 
\begin{equation}
\begin{split}
\frac{\chi(G_1,\widehat{\Psi}^1(F_1))}{\rk(\widehat{\Psi}^1(F_1))}
&=\frac{ \rk F_1}{\chi(G_3,F_1)}\\
& \leq \frac{\rk \Psi^1(E)}{\chi(G_3,\Psi^1(E))}=
\frac{\chi(G_1,E)}{\rk E},
\end{split}
\end{equation}
we have
\begin{equation}
\frac{\chi(G_3,F_1)}{\rk F_1}
= \frac{\chi(G_3,\Psi^1(E))}{\rk \Psi^1(E)}.
\end{equation}
Hence $\Psi^1(E)$ is $G_3$-twisted semi-stable.
Thus we have a morphism
${\cal M}_H^{G_1}(v)^{ss} \to 
{\cal M}_{\widehat{H}}^{G_3}(-\Psi(v))^{ss}$.
It is easy to see that this morphism preserves the $S$-equivalence classes.
By the symmetry of the conditions,
we have the inverse morphism, which shows the second claim.
  \end{proof}

The following is a generalization of \cite[Thm. 1.7]{Y:Stability}.
\begin{prop}\label{prop:stability-asymptotic}
Let $w \in v({\bf D}(X'))$ be a Mukai vector such that
$\langle w^2 \rangle  \geq -2$ and
\begin{equation}
w=l w_0+a \varrho_{X'}+
\left(d\widehat{H}+\widehat{D}+
\frac{1}{r_0}(d\widehat{H}+\widehat{D},\xi_0)\varrho_{X'} \right),
\end{equation}
where $l \geq 0$, $a>0$ and 
$D \in \NS(X) \otimes {\Bbb Q} \cap H^{\perp}$.
Assume that
\begin{equation}\label{eq:degree-condition}
\begin{split}
d>\max \{(4l^2r_0^3+1/(H^2)),
2r_0^2 l(\langle w^2 \rangle-(D^2))\}, \text{ if $l>0$},\\
a > \max \{(2r_0+1),(\langle w^2 \rangle-(D^2))/2+1\},
\text{ if $l=0$}.
\end{split}
\end{equation} 
\begin{NB}
The condition does not depend on the structure of $\NS(X)$,
and depends only on $w,H,w_0$.

\end{NB}
Then
\begin{enumerate}
\item[(1)]
${\cal M}_H^{G_1}(\widehat{\Phi}(w))^{ss} \cong
{\cal M}_{\widehat{H}}^{G_2}(w)^{ss}$.
\item[(2)]
${\cal M}_H^{G_1}(\widehat{\Phi}(w))^{ss}$ consists of 
local projective generators.
\item[(3)]
If 
$(\widehat{H},G_2)$ is general with respect to
$w$, then
${\cal M}_H^{G_1}(\widehat{\Phi}(w))^{ss} \cong
{\cal M}_{H+\epsilon}^{G_1}(\widehat{\Phi}(w))^{ss}$
for a sufficiently small relatively ample divisor $\epsilon$. 
\end{enumerate}
\end{prop}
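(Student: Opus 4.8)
The plan is to prove (1) by transporting $G_2$-twisted semistability along the quasi-inverse $\widehat{\Phi}$ of $\Phi$ and invoking Theorem \ref{thm:equiv-Phi}, whose equivalence of tilted categories already furnishes a canonical identification; what remains is to check that Gieseker-type (twisted Hilbert polynomial) stability, which is finer than the tilting, is preserved in the asymptotic regime. By Proposition \ref{prop:deg-preserve} the hypotheses on $w$ translate into precise numerics for $\widehat{\Phi}(w)$: when $l>0$ the object $F$ has positive rank $lr_0$ and large $G_2$-slope $\sim d$, while $\widehat{\Phi}(w)$ has rank $ar_0>0$; when $l=0$, $F$ is a torsion object and the role of the large parameter is played by $a$. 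First I would establish that $\WIT$ holds in a single degree for $F$ with respect to $\widehat{\Phi}$. Since the degree hypothesis \eqref{eq:degree-condition} forces $\mu_{\min,G_2}(F)>0$, we get $F\in{\frak T}_2^{\mu}$, whence $\widehat{\Phi}^2(F)=0$ by Lemma \ref{lem:widehat-Phi}. To kill $\widehat{\Phi}^0(F)$ I would use the spectral sequence of Lemma \ref{lem:spectral}: a nonzero $\widehat{\Phi}^0(F)$ lies in ${\frak F}_1$ (Lemma \ref{lem:slope}) and produces a subobject $\Phi^2(\widehat{\Phi}^0(F))\subset F$ whose reduced twisted Hilbert polynomial is bounded independently of $d$, contradicting the semistability of $F$ once $d$ (or $a$) exceeds the stated threshold. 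Thus $E:=\widehat{\Phi}^1(F)$ is an honest object of ${\cal C}$ with $v(E)=\widehat{\Phi}(w)$.

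The heart of the matter, and the step I expect to be the main obstacle, is preservation of semistability itself. Here I would argue by contradiction: a destabilizing subobject $E'\subset E$ in ${\cal C}$ corresponds under $\Phi$ (which is exact on the relevant tilted categories by Theorem \ref{thm:equiv-Phi}) to a subobject or quotient of $F$, and the violated inequality $\mu_{G_1}(E')>\mu_{G_1}(E)$ translates, via the identity $\deg_{G_1}=-\deg_{G_2}\circ\Phi$ of Proposition \ref{prop:deg-preserve}, into an inequality for the associated sub/quotient of $F$ that differs from the would-be destabilizing inequality for $F$ only by an error term controlled by the discriminant $\langle w^2\rangle-(D^2)$. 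The explicit constants $4l^2r_0^3$, $2r_0^2l(\langle w^2\rangle-(D^2))$ (and their $l=0$ analogues $2r_0+1$, $(\langle w^2\rangle-(D^2))/2+1$) are exactly what is needed to absorb this error: boundedness of the finitely many Mukai vectors of potential destabilizing sub/quotients, combined with $d$ (resp. $a$) beyond the thresholds, makes the leading coefficients dominate and forces the translated inequality to contradict the semistability of $F$. This matching of leading and subleading coefficients of the twisted Hilbert polynomials across $\Phi$, with the $r_0$-dependent rescaling of degrees, is the delicate computation; I expect it to parallel \cite[Thm. 1.7]{Y:Stability}, with Proposition \ref{prop:stability-deg=0} and Proposition \ref{prop:O-Y} covering the complementary degree-$0$ and wall-crossing regimes. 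Granting the estimate, $\widehat{\Phi}^1$ and $\Phi^1$ give mutually inverse bijections preserving $S$-equivalence, hence the isomorphism ${\cal M}_{\widehat{H}}^{G_2}(w)^{ss}\cong{\cal M}_H^{G_1}(\widehat{\Phi}(w))^{ss}$ of (1).

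For part (2) I would verify the criterion of Proposition \ref{prop:tilting:generator} for $E=\widehat{\Phi}^1(F)$. The vanishing $\Hom(E,E_{ij}[p])=0$ for $p\ne0$ and all irreducible objects $E_{ij}$ of ${\cal C}$ follows from the single-degree concentration of $E$ together with the adjunction vanishings already used in the $\WIT$ step, and then Lemma \ref{lem:tilting:freeness} yields local freeness. For the positivity $\chi(E,E_{ij})=-\langle\widehat{\Phi}(w),v(E_{ij})\rangle>0$ I would combine Assumption \ref{ass:v_0} (which gives $\langle v_0,v(A_{ij})\rangle<0$) with the explicit large-$d$ shape of $\widehat{\Phi}(w)$, the leading term of which is a positive multiple of $v_0$; the degree bound again guarantees the sign. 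Hence every point of the moduli space is a local projective generator.

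Finally, for (3) I would use that genericity of $(\widehat{H},G_2)$ with respect to $w$ means $w$ lies on no wall, so $\overline{\cal M}_{\widehat{H}}^{G_2}(w)={\cal M}_{\widehat{H}}^{G_2}(w)$ and, through the isomorphism of (1), the corresponding $G_1$-data are likewise generic. Since the set of polarizations defining one and the same moduli space of $G_1$-twisted semistable objects is open, and the walls in the ample cone are locally finite, a sufficiently small ample perturbation $H+\epsilon$ stays in the chamber of $H$ and therefore defines the same moduli space, giving ${\cal M}_H^{G_1}(\widehat{\Phi}(w))^{ss}\cong{\cal M}_{H+\epsilon}^{G_1}(\widehat{\Phi}(w))^{ss}$.
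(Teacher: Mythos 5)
Your treatment of parts (1) and (2) follows the paper's route: the paper proves (1) by transporting the proof of \cite[Thm.~1.7]{Y:Stability} through Theorems \ref{thm:equiv-Phi} and \ref{thm:duality}, and proves (2) by checking the criterion of Proposition \ref{prop:tilting:generator}, so your outline is consistent. In (2), however, you gloss over the step that carries the weight: the vanishing $\Ext^1(S,E)=0$ for an irreducible object $S$ is not an ``adjunction vanishing'' but comes from the extension trick --- a nontrivial extension $0\to E\to E'\to S\to 0$ would produce a $\mu$-semistable object with $\chi(G_1,E')>\chi(G_1,E)$, contradicting the asymptotic bounds --- and the positivity $\chi(S,E)>0$ is the explicit inequality $a\chi(S,G_1)+(c_1(S),D)>0$, reduced via the Hodge index bound $|(c_1(S),D)|\le\sqrt{-2(D^2)}$ to $a>\sqrt{-2(D^2)}$, which is exactly what the thresholds in \eqref{eq:degree-condition} encode. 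Note also that the ``leading term'' of $\widehat{\Phi}(w)$ along $v_0$ has coefficient $a$, not anything involving $d$; when $l>0$ the condition on $d$ serves only to force $a$ to be large.

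Part (3) has a genuine gap. You read it as a wall-and-chamber statement inside the ample cone, but $H$ is only nef and big (it is pulled back from $Y$ and kills the exceptional curves), and the two sides of the asserted isomorphism a priori live in different abelian categories: the left-hand side parametrizes semistable objects of the perverse heart ${\cal C}$ with respect to the degenerate polarization $H$, while the right-hand side is the ordinary Gieseker moduli of sheaves for the ample divisor $H+\epsilon$. The content of (3) is therefore not openness of a chamber but the assertion that every $\mu$-semistable sheaf $E\in\Coh(X)$ with $v(E)=\widehat{\Phi}(w)$ already lies in ${\cal C}$. The paper proves this by assuming $\Hom(E,F)\neq 0$ for some $F\in\Sigma=\{A_{ij}[-1]\}\cap\Coh(X)$ and extracting a $\mu$-semistable subsheaf $E'\in{\cal C}\cap\Coh(X)$ with $\chi(G_1,E')>\chi(G_1,E)$, again contradicting the asymptotic bounds; only then does the genericity of $(\widehat{H},G_2)$ (which upgrades slope equality to proportionality of $c_1$) allow one to match the two semistability conditions. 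Your argument never establishes membership in ${\cal C}$, and it would fail for a Mukai vector not satisfying \eqref{eq:degree-condition}, where the two moduli spaces are genuinely different.
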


\begin{proof}
(1)
We first note that ${\cal F}_{\cal E}$ in \cite{Y:Stability} 
corresponds to
$\widehat{\Phi}$.
Since \cite[Thm. 2.1, Thm. 2.2]{Y:Stability} are replaced by 
Theorem \ref{thm:equiv-Phi}, \ref{thm:duality} and since
\cite[Prop. 2.8, Prop. 2.11]{Y:Stability} also hold for our case,
the same proof of \cite[Thm. 1.7]{Y:Stability}
works for our case.
More precisely, 
in order to show that $\Phi(F), F \in {\cal M}_H^{G_1}(\widehat{\Phi}(w))$
does not contain a 0-dimensional subobject, we use the fact that
$\WIT_0$ holds for 0-dimensional object $E \in \Per(X'/Y')$
(see Proposition \ref{prop:K3-normal} (1)).

(2)
The proof is the same as in the proof of \cite[Rem. 2.3]{Y:Stability}.
Let $E$ be a $\mu$-semi-stable object
of ${\cal C}$ such that $v(E)=\widehat{\Phi}(w)$.
If $\Ext^1(S,E) \ne 0$ for an irreducible object
$S$ of ${\cal C}$, then
a non-trivial extension
\begin{equation}
0 \to E \to E' \to S \to 0
\end{equation}
gives a $\mu$-semi-stable object $E'$ with
$\chi(G_1,E')>\chi(G_1,E)$.
By Proposition \cite[Prop. 2.8, Prop. 2.11]{Y:Stability},
we get a contradiction.
Hence $\Ext^1(E,S) \cong \Ext^1(S,E)^{\vee} = 0$ 
for any irreducible object
$S$ of ${\cal C}$.
Since $\Ext^2(E,S) \cong \Hom(S,E)^{\vee}=0$,
it is sufficient to prove that
$\chi(S,E)>0$.
We note that 
$\chi(S,E)=\chi(S,\widehat{\Phi}(w))=a\chi(S,G_1)+(c_1(S),D)$.
Since $(H,c_1(S))=0$,
we have $|(c_1(S),D)^2| \leq |(c_1(S)^2)(D^2)|=-2(D^2)$.
Since $\chi(S,G_1)>0$,
it is sufficient to prove that $a > \sqrt{-2(D^2)}$.

We first assume that $l>0$.
Then $d(H^2)-1>4l^2 r_0^3 (H^2)$ and
$d>2r_0^2 l(\langle w^2 \rangle-(D^2))=2r_0^2 l(d^2(H^2)-2la r_0)$. 
Hence 
\begin{equation}
a> \frac{d(d(H^2)-1/(2r_0^2 l))}{2r_0 l}
>\frac{d}{2lr_0} 4l^2 r_0^3 (H^2)=2dlr_0^2 (H^2).
\end{equation}
Hence 
$a>2(4l^2 r_0^3) lr_0^2 (H^2)=8r_0 (lr_0)^3 r_0(H^2) \geq 8$.
If $-(D^2) \leq 4$, then $a>3>\sqrt{-2(D^2)}$. 
If $-(D^2) >4$, then 
$\langle w^2 \rangle-(D^2) \geq -2-(D^2) >-(D^2)/2$.
Hence 
\begin{equation}
a>2dlr_0^2 (H^2)>
r_0(\langle w^2 \rangle-(D^2))4(l r_0)^2 r_0(H^2)
>\sqrt{-2(D^2)}.
\end{equation}

We next assume that $l=0$.
Then $a>2r_0+1$ and
$a>\langle w^2 \rangle/2+1-(D^2)/2 \geq -(D^2)/2$.
If $-(D^2) \geq 8$,
then $a>-(D^2)/2 \geq \sqrt{-2(D^2)}$.
If $-(D^2)< 8$, then
since $a \geq 2r_0+1+1/r_0$,
$\sqrt{-2(D^2)}<4 \leq a$.

Therefore $\chi(E,S)>0$ and $E$ is a local projective
generator of ${\cal C}$.

(3)
By our assumption, ${\cal M}_H^{G_1}(\widehat{\Phi}(w))^{ss}
={\cal M}_H^{G_1}(\widehat{\Phi}(w))^{\mu \text{-}ss}$
(\cite[Cor. 2.14]{Y:Stability}) and
$H$ is a general polarization.  
Hence for $E \in {\cal M}_H^{G_1}(\widehat{\Phi}(w))^{ss}$
and a subobject $E_1$ of $E$,
$\frac{(c_1(E),H)}{\rk E}=\frac{(c_1(E_1),H)}{\rk E_1}$
implies 
$\frac{c_1(E)}{\rk E}=\frac{c_1(E_1)}{\rk E_1}$.
Let $E$ be a $\mu$-semi-stable sheaf of $v(E)=\widehat{\Phi}(w)$
with respect to $H$.
We shall prove that $E \in {\cal C}$.
We set 
$$
\Sigma:=\{A_{ij}[-1]|i,j\} \cap \Coh(X)
$$
as in Proposition \ref{prop:tilting:S-T}.
We assume that $\Hom(E,F) \ne 0$ for $F \in \Sigma$.
Then there is a $\mu$-semi-stable sheaf $E' \in {\cal C} \cap \Coh(X)$ 
with respect to $H$
fitting in an exact sequence
\begin{equation}
0 \to E' \to E \to F' \to 0,
\end{equation}
where $F' \in {\cal C}[-1] \cap \Coh(X)$.
\begin{NB}
${\cal C}[-1] \cap \Coh(X)=S$ and ${\cal C} \cap \Coh(X)=T$.
\end{NB}
Then we see that $\chi(G_1,E')>\chi(G,E)$, which is a contradiction.
Therefore $E \in {\cal C}$.
Then we can easily see that $E$ is $\mu$-semi-stable in ${\cal C}$.
\begin{NB}
$E \in {\cal C}$ is $\mu$-semi-stable if and only if
$$
\frac{(c_1(F),H)}{\rk F} \leq \frac{(c_1(E),H)}{\rk E}, \rk F>0
$$
in $\Coh(X)$.
Indeed for a subsheaf $F \in \Coh(X)$ of $E$, 
there is an subsheaf 
$F' \in {\cal C}$ with $\dim (F/F')=0$ and
for any subobject $F$ of $E$, $H^0(F)$ is a subsheaf
of $E$ with $(c_1(H^0(F)),H)=(c_1(F),H)$. 
\end{NB}
\end{proof}

\begin{cor}\label{cor:reduction}
If $(G,H)$ is general with respect to $v$, then
$M_H^{G}(v)$ is isomorphic to the moduli space of usual
stable sheaves on a $K3$ surface.
\end{cor}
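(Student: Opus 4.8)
The plan is to push $M_H^G(v)$ through the Fourier--Mukai machinery developed above until it lands in the asymptotic regime of Proposition \ref{prop:stability-asymptotic}, where $G$-twisted stable perverse coherent sheaves are forced to be genuine locally free sheaves and perverse stability collapses to ordinary Gieseker stability. Since $(G,H)$ is general with respect to $v$, the moduli space $M_H^G(v)=\overline{M}_H^G(v)$ is a fine moduli space of $G$-twisted stable objects, and by Proposition \ref{prop:K3:smooth} it is a smooth projective $K3$ surface (or a deformation thereof); thus it suffices to exhibit an isomorphism onto a moduli of usual stable sheaves.

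First I would fix a Fourier--Mukai transform $\Phi=\Phi^\alpha$ attached to an auxiliary moduli space $X'=M_H^{v_0+\alpha}(v_0)$ whose generator $G_1={\cal E}_{|\{x'\}\times X}$ lies in the same chamber as $G$. Since $(G,H)$ is general, $M_H^G(v)$ depends only on this chamber by the wall-and-chamber description of twisted stability given above, so $M_H^G(v)\cong {\cal M}_H^{G_1}(v)^{ss}$ and I may replace $G$ by $G_1$. Setting $w:=\Phi(v)$, the relation $\deg_{G_1}(v)=-\deg_{G_2}(w)$ of Proposition \ref{prop:deg-preserve} shows that tensoring $v$ by a large multiple ${\cal O}_X(-nH)=\pi^*({\cal O}_Y(-n))$ --- an autoequivalence of ${\bf D}(X)$ that preserves ${\cal C}$ and induces an isomorphism of moduli spaces --- makes $\deg_{G_2}(w)$, and hence the coefficient $d$ (or $a$, when the rank of $w$ vanishes) appearing in the decomposition of $w$, as large as we wish. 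In this way I can arrange that $w$ satisfies the degree hypothesis \eqref{eq:degree-condition}.

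With $w$ in this range, Proposition \ref{prop:stability-asymptotic} supplies the chain of isomorphisms $M_H^G(v)\cong {\cal M}_H^{G_1}(\widehat{\Phi}(w))^{ss}\cong {\cal M}_{\widehat H}^{G_2}(w)^{ss}$, asserts that every member of this space is a local projective generator of the relevant category, i.e. a locally free sheaf, and identifies it with ${\cal M}_{H+\epsilon}^{G_1}(\widehat{\Phi}(w))^{ss}$ for a small relatively ample perturbation $\epsilon$. Since $H+\epsilon$ is then an honest ample divisor on the $K3$ surface $X$ and the objects involved are already sheaves, $G_1$-twisted perverse semistability with respect to $H+\epsilon$ coincides with ordinary $G_1$-twisted Gieseker semistability of coherent sheaves. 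Hence $M_H^G(v)$ is isomorphic to a moduli space of usual (twisted Gieseker) stable sheaves on the $K3$ surface $X$, as claimed.

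The main obstacle will be the bookkeeping that keeps the various genericity and positivity conditions mutually compatible: I must choose $v_0$, $\alpha$ and the twist ${\cal O}_X(-nH)$ so that $G_1$ stays in the chamber of $G$, so that $w$ remains general with respect to $(\widehat H,G_2)$ as required for part (3) of Proposition \ref{prop:stability-asymptotic}, and so that \eqref{eq:degree-condition} holds simultaneously; verifying that these finitely many open and off-wall conditions can all be met at once is the delicate point. A secondary technical issue is confirming that the final ample perturbation genuinely identifies the perverse moduli functor with the classical Gieseker moduli functor, which I would handle exactly as in \cite[Cor. 2.14]{Y:Stability}, using that the objects in question are already known to be locally free.
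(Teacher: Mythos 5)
Your argument points the Fourier--Mukai transform in the wrong direction, and this is a genuine gap rather than bookkeeping. In Proposition \ref{prop:stability-asymptotic} the degree hypothesis \eqref{eq:degree-condition} is imposed on the vector $w$ living on the moduli-space side, and its consequence (part (2)) is that the moduli space on the \emph{other} side, ${\cal M}_H^{G_1}(\widehat{\Phi}(w))^{ss}$, consists of local projective generators, hence of locally free sheaves. You identify $M_H^G(v)$ with precisely that side, i.e.\ you need $\widehat{\Phi}(w)$ to equal $v$ up to a twist by ${\cal O}_X(nH)$. If this were achievable you would conclude that every object of $M_H^G(v)$ is locally free, which is false for general $v$ (take $v$ of rank one, or any $v$ whose stable objects have nonzero $H^{-1}$); since tensoring by ${\cal O}_X(nH)$ preserves local freeness, no twist can repair this. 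The obstruction is also visible in the numerology: writing $w=\Phi(ve^{-nH})$, Proposition \ref{prop:deg-preserve} interchanges the roles of rank and Euler characteristic, so the coefficient $l=\rk w/r_0$ of $w$ is governed by $\langle ve^{-nH},v_0\rangle$ and grows like $n^2$ when $\rk v\neq 0$, while $d$ grows only linearly in $n$; the requirement $d>4l^2r_0^3+1/(H^2)$ in \eqref{eq:degree-condition} therefore fails for all large $n$, so the asymptotic regime is never reached on this side.

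The paper's proof avoids this by putting $M_H^G(v)$ on the \emph{target} side of Proposition \ref{prop:stability-asymptotic}. It first manufactures a primitive isotropic $u$ with $\rk u>0$ and $(\rk G)c_1(u)-(\rk u)c_1(G^{\vee})\in{\Bbb Z}H$ (by transporting $\varrho_X$ through a composition of two Fourier--Mukai transforms), so that $u^{\vee}$-twisted stability agrees with $G$-twisted stability --- a step you gloss over with ``lies in the same chamber as $G$'' but which requires an actual construction. Then $X_1:=M_H^{u+\alpha}(u)$ is a fine moduli space, and by Theorem \ref{thm:duality} the surface $X$ itself becomes a moduli space over $X_1$; in this setup $ve^{mH}$ plays the role of $w$, its coefficient $l$ is \emph{constant} in $m$ while $d$ grows linearly, so \eqref{eq:degree-condition} holds for $m\gg 0$, and the conclusion ``moduli of usual stable sheaves'' lands on the auxiliary $K3$ surface $X_1$ with Mukai vector $\widehat{\Phi}(ve^{mH})$ --- not on $X$ with vector $v$, as your argument would have it. (A smaller slip: $M_H^G(v)$ need not be a $K3$ surface for arbitrary $v$; Proposition \ref{prop:K3:smooth} concerns only the isotropic vector $v_0$.)
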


\begin{proof}
We first construct a primitive and isotropic Mukai vector
$u$ such that $\rk u>0$ and $(\rk G) c_1(u)-(\rk u) c_1(G^{\vee})
\in {\Bbb Z}H$:
We first take 
a primitive isotropic Mukai vector $t$
such that $t=lv(G^{\vee})+a\varrho_X$.
Then for a sufficiently small $\tau$, 
$T:=M_H^{G^{\vee}+\tau}(t)$ is a $K3$ surface.
Let ${\cal F}$ be the universal family on $T \times X$ 
as a twisted object.
Then we have an equivalence
$\Phi_{X \to T}^{{\cal F}^{\vee}}:{\bf D}(X) \to {\bf D}^{\beta}(T)$.
We consider $\Pi:=\Phi_{T \to X}^{{\cal F}(nD)}
\circ \Phi_{X \to T}^{{\cal F}^{\vee}}:
{\bf D}(X) \to {\bf D}(X)$, $n \gg 0$, where we set $D:=\widehat{H}$.
Then  
$\Pi$ also induces a Hodge isometry $\Pi:H^*(X,{\Bbb Z}) \to
H^*(X,{\Bbb Z})$.
By its construction, $\Pi$ preserves the subspace 
$({\Bbb Q}t+{\Bbb Q}H+{\Bbb Q}\varrho_X) \cap H^*(X,{\Bbb Z})$ and
$\rk \Pi(\varrho_X)>0$ for $n \gg 0$.   
Hence $u:=\Pi(\varrho_X)$ satisfies the claim.
Since $c_1(u)/\rk u-c_1(G^{\vee})/\rk G^{\vee} \in {\Bbb Q}H$,
$\chi(u,A_{ij}^{\vee}[2])/\rk u=\chi(G^{\vee},A_{ij}^{\vee}[2])/\rk G$. 
By Corollary
\ref{cor:generator-exist}, there is a local projective generator
$G_u$ of ${\cal C}^D$ with $v(G_u)=2u$.
Since $\langle \Pi({\cal O}_X),u \rangle=-1$,
$X_1:=M_H^{u+\alpha}(u)$ is a fine moduli space
of stable objects of ${\cal C}^D$.
Since ${\cal C}$ satisfies Assumption \ref{ass:stability},
${\cal C}^D$ also satisfies Assumption \ref{ass:stability}.
\begin{NB}
$A_{ij}^{\vee}[2]$ are irreducible objects of ${\cal C}^D$
and ${\Bbb C}_x \cong {\Bbb C}_x^{\vee}[2] \in {\cal C}^D$ are
$\beta$-stable.
\end{NB}
Let ${\cal E}$ be the universal family on
$X \times X_1$.
By Theorem \ref{thm:duality}, we can regard ${\cal E}$ as a universal family
of $v_0+\gamma$-twisted stable objects of $\Per(X_1/Y_1)^D$
with respect to $H_1$, where 
$Y_1:=\overline{M}_H^u(u)$,
 $H_1:=\widehat{H}$,
$v_0=v({\cal E}_{|\{x \} \times X_1})$
and $\gamma$ is determined by $\alpha$.
Then $(M_{H_1}^{v_0+\gamma}(v_0),\widehat{H}_1)=(X,H)$.
For $\widehat{\Phi}=\Phi_{X \to X_1}^{{\cal E}}$ and
${\cal M}_H^{u^{\vee}}(v e^{mH})^{ss}$, $m \gg 0$,
we shall apply Proposition \ref{prop:stability-asymptotic}.
Then ${\cal M}_H^{u^{\vee}}(v)^{ss}$ is isomorphic to a 
moduli stack of usual semi-stable sheaves on $X_1$.
Since ${\cal M}_H^{u^{\vee}}(v)^{ss}={\cal M}_H^{G}(v)^{ss}$,
we get our claim.
\end{proof}

\begin{NB}
Let $(X,H)$ be a pair of a $K3$ surface $X$
and a nef and big divisor $H$ on $X$.
Then $|nH|$, $n \gg 0$ gives a birational map $\pi:X \to Y$
to a normal surface $Y$ contracting $(-2)$-curves
in $H^{\perp}$.
We shall show that there is a polarized $K3$ surface $(X',H')$
and a Mukai vector $w$ such that 
$(X,H)=(M_{H'}^{w+\alpha}(w),\widehat{H'})$,
where $\alpha$ is sufficiently small.  
Let $v$ be a primitive isotropic Mukai vector
and $G$ a local projective generator of ${\cal C}$
such that $G$ is general with respect to $v$.
We may assume that $G$ is isotropic.
By the proof of Corollary \ref{cor:reduction},
 there is an equivalence $\Phi:{\bf D}(X) \to {\bf D}(X)$
such that $\Phi(v(G)) \in {\Bbb Z}\varrho_X$ and
${\cal M}_H^G(\Phi(v))^{ss}$ consists of $\mu$-stable
objects.
Then $X':=\overline{M}_H^u(u)$, $u=\Phi(v)$ is a fine moduli space
and $H':=\widehat{H}$ is an ample divisor on $X'$.
Let ${\cal E}$ be a universal family on $X \times X'$.
We set $w:={\cal E}_{|\{x \} \times X'}$.
Then by Theorem \ref{thm:duality}, we get our claim.
 
\end{NB}


Since \eqref{eq:degree-condition} is numerical, we can apply 
Proposition \ref{prop:stability-asymptotic} to a 
family of $K3$ surfaces. 
\begin{ex}
Let $f:({\cal X},{\cal H}) \to S$ be a family of polarized 
$K3$ surfaces over $S$.
Let $v_0:=(r,d{\cal H},a)$, $\gcd(r,a)=1$ be a family of isotropic Mukai
vectors.
We set ${\cal X}':=M_{{\cal X}/S}^{v_0}(v_0)$.
Then we have a family of polarizations ${\cal H}'$
on ${\cal X}'$.
Since $\gcd(r,a)=1$, there is a universal family
${\cal E}$ on ${\cal X}' \times_S {\cal X}$ and 
we have a family of Fourier-Mukai transforms
$\Phi_{{\cal X} \to {\cal X}'}^{\cal E}:{\bf D}({\cal X})
\to {\bf D}({\cal X}')$.
Then we can apply Proposition \ref{prop:stability-deg=0}
and Proposition \ref{prop:stability-asymptotic}
to families of moduli spaces over $S$.
\end{ex}

We also give a generalization of \cite[Thm. 7.6]{Y:7} based on
Theorem \ref{thm:equiv-Phi} and Proposition \ref{prop:equiv-Psi}.
We set
\begin{equation}
d_{\min}:=\min\{\deg_{G_1}(F)>0|F \in {\bf D}(X)\}.
\end{equation} 

\begin{prop}\label{prop:K3:minimal}
Assume that ${\frak T}_1={\frak T}_1^{\mu}$.
Let $v \in H^*(X,{\Bbb Z})$ be a Mukai vector
of a complex such that
$\deg_{G_1}(v)=d_{\min}$.
\begin{enumerate}
\item[(1)]
If $\rk \Phi(v) \leq 0$, then $\Phi$ induces an isomorphism
\begin{equation}
{\cal M}_H^{G_1}(v)^{ss} \to 
{\cal M}_{\widehat{H}}^{G_2}(-\Phi(v))^{ss}
\end{equation}
by sending $E$ to $\Phi^1(E)$.
\item[(2)]
If $\rk \Psi(v) \geq 0$, then $\Psi$ induces an isomorphism
\begin{equation}
{\cal M}_H^{G_1}(v)^{ss} \to 
{\cal M}_{\widehat{H}}^{G_3}(\Psi(v))^{ss}
\end{equation}
by sending $E$ to $\Psi^2(E)$.
\end{enumerate}
\end{prop}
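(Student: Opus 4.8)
The plan is to run, for each of the two statements, the same three-step strategy underlying Proposition \ref{prop:stability-deg=0} and Proposition \ref{prop:stability-asymptotic}: first establish a weak index theorem ($\WIT_1$ with respect to $\Phi$ for (1), $\WIT_2$ with respect to $\Psi$ for (2)) for every $E \in {\cal M}_H^{G_1}(v)^{ss}$; then show the resulting sheaf-level image is twisted semi-stable; and finally upgrade the induced bijection to an isomorphism of moduli schemes by running the construction in families. Throughout I would exploit that, by Proposition \ref{prop:deg-preserve}(3), the transforms reverse the degree and carry $d_{\min}$ to the minimal positive value of $\deg_{G_2}$ (resp. $\deg_{G_3}$), so an object of minimal positive degree is sent to one of minimal positive degree and the whole picture is symmetric under the inverse transforms $\widehat\Phi[2]$ (resp. $\widehat\Psi$).

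For the $\WIT$ step in (1), I would take the torsion decomposition $0 \to E_1 \to E \to E_2 \to 0$ relative to $({\frak T}_1,{\frak F}_1)$ and feed it into the long exact sequence \eqref{eq:FM-Phi}. By Lemma \ref{lem:equiv-Phi} one has $\Phi^0(E_2)=\Phi^2(E_1)=0$, so $\Phi^0(E)\cong\Phi^0(E_1)$ and $\Phi^2(E)\cong\Phi^2(E_2)$; hence $\WIT_1$ is equivalent to $\Phi^0(E_1)=0$ and $\Phi^2(E_2)=0$. Here the minimality of $\deg_{G_1}(v)=d_{\min}$ enters: since $E_1\in{\frak T}_1$ and $E_2\in{\frak F}_1$ we have $\deg_{G_1}(E_1)\ge d_{\min}$ and $\deg_{G_1}(E_2)\le 0$, and a nonzero $\Phi^0(E_1)$ would, via Lemma \ref{lem:deg-Phi} and the spectral sequence Lemma \ref{lem:spectral}, produce a subobject $\widehat\Phi^2(\Phi^0(E_1))\in{\frak T}_1$ of $E$ whose degree lies in the forbidden range $(0,d_{\min})$; combined with the hypothesis ${\frak T}_1={\frak T}_1^{\mu}$, which through Lemma \ref{lem:generic-mu-stable} excludes the residual degree-$0$ possibilities, this forces $\Phi^0(E_1)=0$. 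A symmetric argument using $\widehat\Phi$ kills $\Phi^2(E_2)$, so $\WIT_1$ holds.

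With $\WIT_1$ in hand, $\Phi^1(E)\in\Per(X'/Y')$ has Mukai vector $-\Phi(v)$ and $\deg_{G_2}(\Phi^1(E))=d_{\min}$, again minimal. To prove $G_2$-twisted semi-stability I would test a putative destabilizing sequence $0\to F_1\to\Phi^1(E)\to F_2\to 0$: by minimality each of $\deg_{G_2}(F_1),\deg_{G_2}(F_2)$ lies in $\{0,d_{\min}\}$, so the only dangerous case is $\deg_{G_2}(F_1)=d_{\min}$, $\deg_{G_2}(F_2)=0$. In that case $F_2$ is a degree-$0$ object, and Lemma \ref{lem:spectral} transports it to a sub/quotient relation on $E$; the $G_1$-twisted semi-stability of $E$ together with Lemma \ref{lem:generic-mu-stable}(2) (which again needs ${\frak T}_1={\frak T}_1^{\mu}$) forces the slope inequality to be an equality, whence $\Phi^1(E)$ is semi-stable and $S$-equivalence classes are preserved. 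Part (2) is entirely parallel, replacing $\Phi$ by the contravariant transform $\Psi$, Lemma \ref{lem:equiv-Phi} by Lemma \ref{lem:equiv}, $\Per(X'/Y')$ by $\Per(X'/Y')^D$, and $\WIT_1$ by $\WIT_2$, with the degree-$0$ bookkeeping carried out through Lemma \ref{lem:slope3}, Lemma \ref{lem:E=0}, and the spectral sequence Lemma \ref{lem:spectral2}.

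Finally, to promote the bijections to isomorphisms of the moduli schemes, I would note that $\Phi$ (resp. $\Psi$) is given by a universal family and hence acts on families of objects; the $\WIT$ property is open and holds on the whole moduli space by the computations above, so $E\mapsto\Phi^1(E)$ (resp. $E\mapsto\Psi^2(E)$) defines a morphism ${\cal M}_H^{G_1}(v)^{ss}\to{\cal M}_{\widehat H}^{G_2}(-\Phi(v))^{ss}$ (resp. to ${\cal M}_{\widehat H}^{G_3}(\Psi(v))^{ss}$) whose inverse is supplied by $\widehat\Phi$ (resp. $\widehat\Psi$). The step I expect to be the main obstacle is the degree-$0$ part of the semi-stability argument: degree alone cannot exclude a degree-$0$ sub/quotient of the transformed object, so one must descend to the finer invariant $\chi(G_i,-)$ and transfer it across the equivalence, and it is exactly there that the hypothesis ${\frak T}_1={\frak T}_1^{\mu}$ is indispensable.
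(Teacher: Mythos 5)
Your three-step plan (prove $\WIT$, prove semi-stability of the image, then pass to families) is the intended route -- the paper itself only records that the argument is an exercise deferred to \cite{MYY} -- but two of your key steps do not work as written. First, the mechanism you propose for killing $\Phi^0(E)$ is vacuous: the image of $\deg_{G_1}$ is a subgroup of ${\Bbb Z}$ generated by $d_{\min}$ (Proposition \ref{prop:deg-preserve} (3)), so there are no classes of degree in your ``forbidden range'' $(0,d_{\min})$ and no contradiction arises. Note also that for $E\in{\cal M}_H^{G_1}(v)^{ss}$ with $\deg_{G_1}(E)=d_{\min}>0$ one has $E\in{\frak T}_1$ outright, so your torsion piece $E_2$ vanishes and Lemma \ref{lem:equiv-Phi} already gives $\Phi^2(E)=0$, $\Phi^0(E)\in{\frak F}_2^{\mu}$, $\Phi^1(E)\in{\frak T}_2^{\mu}$; the real content is to exclude $\deg_{G_2}(\Phi^0(E))=-d_{\min}$ and then $\deg_{G_2}(\Phi^0(E))=0$ with $\Phi^0(E)\ne 0$. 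The first exclusion is exactly where the hypothesis $\rk\Phi(v)\le 0$ -- which your proposal never invokes, although it is the reason the proposition splits into two cases -- enters, via $\rk H^0(\Phi(E)[1])\ge\rk H^{-1}(\Phi(E)[1])$ together with the fact that an object of ${\frak T}_2^{\mu}$ of degree $0$ is torsion. The second exclusion needs the full chain: $\widehat{\Phi}^2(\Phi^0(E))$ is an object of ${\frak T}_1={\frak T}_1^{\mu}$ of degree $0$, hence $0$-dimensional; its map to the pure object $E$ vanishes; hence $\widehat{\Phi}^0(\Phi^1(E))\cong\widehat{\Phi}^2(\Phi^0(E))$ satisfies $\WIT_0$ and $\WIT_2$ simultaneously (Lemma \ref{lem:spectral}) and therefore vanishes, forcing $\Phi^0(E)=0$. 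A bare appeal to Lemma \ref{lem:generic-mu-stable} does not substitute for this.

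Second, your semi-stability analysis dismisses the genuinely dangerous subobjects. A nonzero $0$-dimensional subobject $T\subset\Phi^1(E)$ has $a_d(T)=0$ and $\chi(G_2,T)>0$, so it violates Definition \ref{defn:Simpson-stability} even though $\deg_{G_2}(T)=0$; ruling it out requires its own argument (e.g.\ applying $\widehat{\Phi}$ to $0\to T\to\Phi^1(E)\to\Phi^1(E)/T\to 0$ gives $\widehat{\Phi}^0(T)=0$, while $\WIT_0$ holds for $0$-dimensional objects of $\Per(X'/Y')$ by Proposition \ref{prop:K3-normal} (1), whence $T=0$). By contrast, the case you single out as dangerous, $\deg_{G_2}(F_1)=d_{\min}$ and $\deg_{G_2}(F_2)=0$, is comparatively harmless: $F_2$ is then a quotient in ${\frak T}_2^{\mu}$ of degree $0$, hence $0$-dimensional (this uses ${\frak T}_2={\frak T}_2^{\mu}$, which is not assumed and must be deduced from ${\frak T}_1={\frak T}_1^{\mu}$ via Theorem \ref{thm:duality} and Lemma \ref{lem:free}), and a $0$-dimensional quotient strictly lowers $\chi(G_2,F_1(n))$, so it cannot destabilize. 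The same corrections are needed in part (2), with $\Psi$, $\WIT_2$, Lemmas \ref{lem:equiv}, \ref{lem:slope3}, \ref{lem:spectral2} and the hypothesis $\rk\Psi(v)\ge 0$ playing the corresponding roles.
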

The proof is an easy exercise. 
We shall give a proof in \cite{MYY}, as an application of
Bridgeland's stability condition.

\begin{NB}

\begin{proof}
By Remark \ref{rem:generic-mu-stable},
${\cal E}_{|\{ x' \} \times X}$ is $\mu$-stable for $x' \in X' \setminus Z'$.
Then Theorem \ref{thm:duality} and Lemma \ref{lem:free} imply that
${\cal E}_{|X' \times \{ x \}}$ is a local projective generator of
$\Per(X'/Y')^D$ for all $x \in X$ and
${\cal E}_{|X' \times \{ x \}}$ is a $\mu$-stable object for 
$x \in X \setminus Z$.
Hence ${\cal E}_{|X' \times \{ x \}}^{\vee}$ 
is a local projective generator of
$\Per(X'/Y')$ for all $x \in X$ and
${\cal E}_{|X' \times \{ x \}}$ is a $\mu$-stable object for 
$x \in X \setminus Z$.
Then we also have ${\frak T}_2={\frak T}_2^{\mu}$.
We shall prove that $\Phi$ and $\Psi$ preserve the stability.
The proofs of the inverse directions are similar.
Let $E$ be an object of ${\cal M}_H^{G_1}(v)^{ss}$.

(1)
Since $E \in {\frak T}_1$, we have
$H^{-1}(\Phi(E)[1]) \in {\frak F}_2^{\mu}$ and
$H^0(\Phi(E)[1]) \in {\frak T}_2^{\mu}$.
Since 
\begin{equation}
d_{\min}=\deg_{G_2}(\Phi(E)[1])=
\deg_{G_2}(H^0(\Phi(E)[1]))-
\deg_{G_2}(H^{-1}(\Phi(E)[1]))
\end{equation}
 and
$\rk H^0(\Phi(E)[1]) \geq \rk H^{-1}(\Phi(E)[1])$,
we see that
$\deg_{G_2}(H^{-1}(\Phi(E)[1]))=0$ and
$\deg_{G_2}(H^0(\Phi(E)[1]))=d_{\min}$.
Hence $H^0(\Phi(E)[1])/T$ is $\mu$-stable, where 
$T$ is the 0-dimensional subobject of $H^0(\Phi(E)[1])$.
By the exact triangle
\begin{equation}
\Phi^0(E) \to \Phi(E) \to \Phi^1(E)[-1] \to \Phi^0(E)[1],
\end{equation}
we have an exact sequence
\begin{equation}
\begin{CD}
\widehat{\Phi}^0(\Phi^0(E)) @>>> 0 @>>> 0 @>>>\\
\widehat{\Phi}^1(\Phi^0(E)) @>>> 0 @>>>
\widehat{\Phi}^0(\Phi^1(E)) @>>> \\
\widehat{\Phi}^2(\Phi^0(E)) @>>> E @>>> 
\widehat{\Phi}^1(\Phi^1(E)) @>>>
\end{CD}
\end{equation}
Then $\WIT_2$ holds for $\Phi^0(E)$,
$\widehat{\Phi}^2(\Phi^0(E)) \in {\frak T}_1$
and $\deg_{G_1}(\widehat{\Phi}^2(\Phi^0(E)))=0$.
Since $\widehat{\Phi}^2(\Phi^0(E)) \in 
{\frak T}_1^{\mu}= {\frak T}_1$,
$\dim \widehat{\Phi}^2(\Phi^0(E))=0$. 
Since $E$ is torsion free or purely 1-dimensional,
$\widehat{\Phi}^0(\Phi^1(E)) \cong
\widehat{\Phi}^2(\Phi^0(E))$.
Then $\WIT_2$ and $\WIT_0$ hold for 
$\widehat{\Phi}^2(\Phi^0(E))$, which implies that
$\Phi^0(E)=0$.
We next prove that $T=0$.
We note that $\widehat{\Phi}(T)[1] \in {\frak A}_1$.
Applying $\widehat{\Phi}$ to the exact sequence 
\begin{equation}
0 \to T \to \Phi^1(E) \to \Phi^1(E)/T \to 0
\end{equation}
in $\Per(X'/Y')$,
we get $\widehat{\Phi}^0(T)=0$.
Hence $\widehat{\Phi}(T)[1]=\widehat{\Phi}^1(T) \in {\frak T}_1$.
Since $\deg_{G_1}\widehat{\Phi}^1(T)=\deg_{G_2}(T)=0$,
$\widehat{\Phi}^1(T)$ is a 0-dimensional object of ${\cal C}$. 
Then $\rk T=-\chi(G_1,\widehat{\Phi}^1(T))<0$,
which is a contradiction. 
Therefore $\Phi(E)$ is $\mu$-stable.

(2)
We have an exact sequence in $\overline{\frak A}_3$
\begin{equation}
0 \to \Psi^1(E)[1] \to \Psi(E)[2] \to \Psi^2(E) \to 0
\end{equation}
where $\Psi^1(E) \in \overline{\frak F}_3$ and
$\Psi^2(E)\in \overline{\frak T}_3$.
Since $E$ does not contain a non-trivial 0-dimensional subobject,
Lemma \ref{lem:equiv} (1)
implies that 
$\Psi^2(E)\in {\frak T}_3$.
Since $\rk \Psi^2(E) \geq \rk \Psi^1(E)$,
$\deg_{G_3}(\Psi(E))=d_{\min}$ and 
${\frak T}_3={\frak T}_3^{\mu}$, we have 
$\Psi^1(E)$ is a $\mu$-semi-stable object with
$\deg_{G_3}(\Psi^1(E))=0$ and
$\Psi^2(E)/T$ is a $\mu$-stable object with
$\deg_{G_3}(\Psi^2(E))=d_{\min}$, where
$T$ is a 0-dimensional subobject of
$\Psi^2(E)$.
Since ${\cal E}_{|X' \times \{ x \}}$ is $\mu$-stable
for a general $x \in X$,
$\widehat{\Psi}^0(\Psi^1(E))=0$.
Then $\WIT_1$ holds for $\Psi^1(E)$ and
we have an exact sequence
\begin{NB2}
By Lemma \ref{lem:Psihat}, $\widehat{\Psi}^1(\Psi^1(E))=0$.
\end{NB2}
\begin{equation}
0 
\to
\widehat{\Psi}^2(\Psi^2(E)) \to
E \to
\widehat{\Psi}^1(\Psi^1(E)) \to 0
\end{equation} 
Since 
$\deg_{G_1}(\widehat{\Psi}^1(\Psi^1(E)))=0$,
$\widehat{\Psi}^1(\Psi^1(E))$ is a 0-dimensional object.
Then $\chi(G_1,\widehat{\Psi}^1(\Psi^1(E)))=-\rk \Psi^1(E)\leq 0$,
which implies that $\Psi^1(E)=0$.
Finally we prove that $T=0$.
We note that $\widehat{\Psi}^0(T)=0$ and
we have an exact sequence
\begin{equation}
0 \to \widehat{\Psi}^1(T) \to \widehat{\Psi}^2(E/T) \to
E \to \widehat{\Psi}^2(T) \to 0.
\end{equation}
Since $0=\deg_{G_1}(\widehat{\Psi}(T))=\deg_{G_1}(\widehat{\Psi}^2(T))-
\deg_{G_1}(\widehat{\Psi}^1(T))$, we have
$\deg_{G_1}(\widehat{\Psi}^2(T))=\deg_{G_1}(\widehat{\Psi}^1(T))=0$.
By the stability of $E$,
we have $\rk \widehat{\Psi}^2(T)=0$, which mplies that
$\chi(G_3,T) \leq \rk \widehat{\Psi}^2(T)=0$.
Therefore $T=0$.
\end{proof}

\end{NB}

\begin{rem}
In \cite{Y:action},
we constructed actions of Lie algebras on the cohomology
groups of some moduli spaces of stable sheaves.
In particular, we constructed the action on the cohomology
groups of some moduli spaces of stable objects of $^{-1}\Per(X/Y)$
in \cite[Prop. 6.15]{Y:action}.
Then a generalization of \cite[Prop. 6.15]{Y:action}
to the objects in $\Per(X'/Y')$ corresponds to
the action in \cite[Example 3.1.1]{Y:action}
via Proposition \ref{prop:K3:minimal}. 
\end{rem}

\section{Fourier-Mukai transforms on elliptic surfaces.}
\label{sect:elliptic}

\subsection{Moduli of stable sheaves of dimension 2.}

Let $Y \to C$ be a morphism from a normal projective surface
to a smooth curve $C$ such that a general fiber is an elliptic curve.
Let $\pi:X \to Y$ be the minimal resolution.
Then ${\frak p}:X \to C$ is an elliptic surface over a curve $C$. 
We fix a divisor $H$ on $X$ which is the pull-back of an
ample divisor on $Y$.
As in section \ref{sect:K3}, 
let ${\cal C}$ be the category in Lemma \ref{lem:tilting}
satisfying Assumption \ref{ass:stability}.
We also use the notation $A_{ij}$ in section \ref{sect:K3}. 
Let $G_1$ be a locally free sheaf on $X$ which
is a local projective generator of ${\cal C}$. 
Let ${\bf e} \in K(X)_{\mathrm{top}}$ be the topological invariant of a 
locally free sheaf $E$ of rank $r$ and degree $d$ on a fiber of 
${\frak p}$.
Thus $\ch({\bf e})=(0,rf,d)$, where $f$ is a fiber of ${\frak p}$.  
Assume that ${\bf e}$ is primitive.
Then $\overline{M}^{G_1}_H({\bf e})$ consists of $G_1$-twisted stable
objects, if $G_1 \in K(X)_{\mathrm{top}} \otimes {\Bbb Q}$, 
$\rk G_1>0$ is general
with respect to ${\bf e}$ and $H$.
From now on, we assume that 
$\chi(G_1,{\bf e})=0$.
By \cite[sect. 1.1]{O-Y:1}, we do not lose 
generality. 
\begin{rem}
We have 
$\overline{M}^{G_1}_H({\bf e})=\overline{M}^{G_1}_{H+nf}({\bf e})$
for all $n$.
\end{rem}

\begin{lem}\label{lem:elliptic:lattice}
We set
\begin{equation}
{\bf e}^{\perp}:=
\{E \in K(X)_{\mathrm{top}} | \chi(E,{\bf e})=0 \}.
\end{equation}
\begin{enumerate}
\item[(1)]
$-\chi(\;\;,\;\;)$ is symmetric on ${\bf e}^{\perp}$.
\item[(2)]
$M:=({\Bbb Z}\tau(G_1) +{\Bbb Z}\tau({\Bbb C}_x)+ 
{\Bbb Z}{\bf e})^{\perp}/{\Bbb Z}{\bf e}$
is a negative definite even lattice of rank $\rho(X)-2$.
\end{enumerate}
\end{lem}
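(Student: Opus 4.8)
The plan is to reduce everything to the Riemann--Roch formula $\chi(E,F)=\int_X\ch(E)^{\vee}\ch(F)\td_X$ on $X$, combined with the geometry of the elliptic fibration ${\frak p}:X\to C$. Writing $f$ for the fiber class, I will use that $(f^2)=0$ and $(f,K_X)=0$ (adjunction on a genus one fiber) and that $K_X$ is numerically a rational multiple $\lambda f$ of $f$ (the canonical bundle formula for ${\frak p}$). From Riemann--Roch I would extract the relevant by-products, namely the antisymmetric part
\begin{equation*}
\chi(E,F)-\chi(F,E)=(\rk(F)\,c_1(E)-\rk(E)\,c_1(F),K_X),
\end{equation*}
and, since $\ch({\bf e})=(0,rf,d)$ and $(f,K_X)=0$,
\begin{equation*}
\chi(E,{\bf e})=\rk(E)\,d-r\,(c_1(E),f),\qquad \chi(E,{\Bbb C}_x)=\rk(E),\qquad \chi(E,E)=2\rk(E)\ch_2(E)-(c_1(E)^2)+\rk(E)^2\chi({\cal O}_X).
\end{equation*}

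For part (1), let $E,F\in{\bf e}^{\perp}$. The relations $\chi(E,{\bf e})=\chi(F,{\bf e})=0$ give $(c_1(E),f)=\rk(E)d/r$ and $(c_1(F),f)=\rk(F)d/r$, whence $\rk(F)c_1(E)-\rk(E)c_1(F)\in f^{\perp}$. As $K_X\equiv\lambda f$, the antisymmetric part above equals $\lambda(\rk(F)c_1(E)-\rk(E)c_1(F),f)=0$, so $\chi$ (hence $-\chi$) is symmetric on ${\bf e}^{\perp}$.

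For part (2), put $N:={\Bbb Z}\tau(G_1)+{\Bbb Z}\tau({\Bbb C}_x)+{\Bbb Z}{\bf e}$. I would first note that these three classes are independent (by their ranks and first Chern classes) and that $\chi$ is non-degenerate on $K(X)_{\mathrm{top}}$ (which is the numerical Grothendieck group, $\ker\gamma$ being exactly the radical of $\chi$), so $N^{\perp}$ is a saturated sublattice of rank $\rho(X)-1$; since ${\bf e}$ is primitive and lies in $N^{\perp}$ (it is isotropic and pairs trivially with $\tau({\Bbb C}_x)$ and, by part (1), with $G_1$), the quotient $M=N^{\perp}/{\Bbb Z}{\bf e}$ is free of rank $\rho(X)-2$. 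Next I would describe $N^{\perp}$ pointwise: $\chi(E,{\Bbb C}_x)=\rk(E)$ forces $\rk(E)=0$ for $E\in N^{\perp}$, and then $\chi(E,{\bf e})=0$ forces $(c_1(E),f)=0$. On this rank-zero locus the last Riemann--Roch by-product collapses to $-\chi(E,E)=(c_1(E)^2)$, which is even since $(D^2)\equiv(D,K_X)\pmod2$ for any $D$ and $(c_1(E),K_X)=\lambda(c_1(E),f)=0$. Thus $(M,-\chi)$ is even, with quadratic form the self-intersection of $c_1(E)\in f^{\perp}\subset\NS(X)$.

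It remains to prove negative-definiteness, which is where the Hodge index theorem and the two hypotheses enter. Since $\NS(X)$ has signature $(1,\rho(X)-1)$ and $f$ is isotropic, the intersection form on $f^{\perp}$ is negative semi-definite with radical ${\Bbb Q}f$. If some $[E]\neq0$ in $M$ had $(c_1(E)^2)\geq0$, then $(c_1(E)^2)=0$ and $c_1(E)\in{\Bbb Q}f$; solving $\chi(E,G_1)=0$ for $\chi(E)$ and feeding in the hypothesis $\chi(G_1,{\bf e})=0$, I would find that $E$ is a rational multiple of ${\bf e}$, so that primitivity of ${\bf e}$ yields $E\in{\Bbb Z}{\bf e}$ and $[E]=0$, a contradiction. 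Hence $-\chi$ is negative definite on $M$. I expect this last transfer of definiteness from $f^{\perp}/{\Bbb Q}f$ down to $M$ to be the main obstacle: it is precisely the step that forces one to combine $\chi(G_1,{\bf e})=0$ with the primitivity of ${\bf e}$ to exclude isotropic classes, and to invoke $K_X\equiv\lambda f$ rather than merely the vertical nature of $K_X$.
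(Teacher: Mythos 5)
Your proof is correct. For part (1) you follow essentially the paper's route: the antisymmetry defect $\chi(E,F)-\chi(F,E)=(\rk E\,c_1(F)-\rk F\,c_1(E),K_X)$, combined with the fact that on ${\bf e}^{\perp}$ this divisor class lies in $f^{\perp}$ while $K_X$ is numerically proportional to $f$. The paper encodes this in the decomposition ${\bf e}^{\perp}\otimes{\Bbb Q}=({\Bbb Q}\tau(G_1)+{\Bbb Q}\tau({\Bbb C}_x))+\nu(({\Bbb Q}f)^{\perp})$, whereas you derive $(c_1(E),f)=\rk (E)\,d/r$ directly; this is cosmetic. (Both arguments silently need $K_X\equiv\lambda f$ rather than merely "$K_X$ vertical''; you are the one who makes this explicit.)

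For part (2) the negative-definiteness step is genuinely different. The paper never looks at individual vectors: it reads off from the decomposition that ${\bf e}^{\perp}/{\Bbb Z}{\bf e}$ has signature $(1,\rho(X)-1)$, observes that the image of ${\Bbb Q}\tau(G_1)+{\Bbb Q}\tau({\Bbb C}_x)$ is a nondegenerate plane of signature $(1,1)$ (since $\tau({\Bbb C}_x)$ is isotropic and $\chi(G_1,{\Bbb C}_x)=\rk G_1\neq 0$), and concludes by Sylvester that the orthogonal complement $M\otimes{\Bbb Q}$ has signature $(0,\rho(X)-2)$; evenness is then a separate one-line remark about $({\Bbb Z}\tau({\Bbb C}_x)+{\Bbb Z}{\bf e})^{\perp}$. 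You instead describe $N^{\perp}$ element-wise (rank $0$, $c_1\in f^{\perp}$), identify $-\chi(E,E)$ with $(c_1(E)^2)$, apply the Hodge index theorem on $f^{\perp}$, and remove the radical by the explicit computation that a rank-$0$ class with $c_1\in{\Bbb Q}f$ and $\chi(G_1,E)=0$ is a rational, hence by primitivity integral, multiple of ${\bf e}$. Your version is longer but shows concretely where $\chi(G_1,{\bf e})=0$ and the primitivity of ${\bf e}$ are consumed, and it produces evenness for free from the same adjunction congruence $(D^2)\equiv(D,K_X)\bmod 2$; the paper's signature count is slicker and avoids having to classify the isotropic vectors at all. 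Both are valid.
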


\begin{proof}
(1)
For a divisor $D$,
we set
\begin{equation}
\nu(D):=\tau({\cal O}_X(D)-{\cal O}_X)-
\frac{\chi(G_1,{\cal O}_X(D)-{\cal O}_X)}{\rk G_1}\tau({\Bbb C}_x)
\in K(X)_{\mathrm{top}}\otimes{\Bbb Q}.
\end{equation}
Then $\nu$ induces a homomorphism
\begin{equation}
\NS(X) \otimes{\Bbb Q} \to K(X)_{\mathrm{top}}\otimes{\Bbb Q}
\end{equation}
such that $\rk(\nu(D))=0$, $c_1(\nu(D))=D$ and
$\chi(G_1,\nu(D))=0$.
For $E \in K(X) \otimes {\Bbb Q}$,
we have an expression
\begin{equation}
\tau(E)=l\tau(G_1)+a\tau({\Bbb C}_x)+\nu(D)
\end{equation}
where $l,a\in {\Bbb Q}$ and $D \in \NS(X) \otimes {\Bbb Q}$.
\begin{NB}
$\chi(G_1,E)=l\chi(G_1,G_1)+a\rk G_1$ and
$\chi({\Bbb C}_x,E)=l\rk G_1$.
\end{NB}
If $\chi(E,{\bf e})=0$, then 
$D$ satisfies $(D,f)=0$.
Hence we have a decomposition
\begin{equation}\label{eq:e-perp}
{\bf e}^{\perp} \otimes {\Bbb Q}
=({\Bbb Q}\tau(G_1)+{\Bbb Q}\tau({\Bbb C}_x)) +
\nu(({\Bbb Q}f)^{\perp}).
%
\end{equation}
For $E, F \in K(X)$, we have
\begin{equation}
\chi(E,F)-\chi(F,E)=(\rk E c_1(F)-\rk F c_1(E),K_X).
\end{equation}
Hence the claim (1) holds.

(2)
By \eqref{eq:e-perp}, 
the signature of ${\bf e}^{\perp}/{\Bbb Z}{\bf e}$
is $(1,\rho(X)-1)$.
We note that
 ${\Bbb Q}\tau(G_1) +{\Bbb Q}\tau({\Bbb C}_x) \to 
({\bf e}^{\perp}/{\Bbb Z}{\bf e})\otimes {\Bbb Q}$
is injective and defines a subspace 
of signature $(1,1)$. 
Hence 
$M$
is negative definite. 
Since $({\Bbb Z}\tau({\Bbb C}_x)  +{\Bbb Z}{\bf e})^{\perp}$
is an even lattice, 
we get our claim.
\end{proof}

\begin{lem}
\begin{enumerate}
\item[(1)]
Assume that $G_1$ is general with respect to
${\bf e}$ and $H$. Then 
$\overline{M}_H^{G_1}({\bf e})$ is a smooth elliptic surface over $C$ and
$E \otimes K_X \cong E$
for all $E \in \overline{M}_H^{G_1}({\bf e})$.
\item[(2)]
Let $E$ be a $G_1$-twisted stable object 
such that $\Supp(E) \subset {\frak p}^{-1}(c)$,
$c \in C$.
If $\chi(G_1,E)=0$ and $(c_1(E),H)< (c_1({\bf e}),H)$, 
then $\chi(E,E)=2$ and $E \otimes K_X \cong E$.
\end{enumerate} 
\end{lem}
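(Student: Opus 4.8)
The plan is to treat both parts through a single mechanism: verticality of $c_1(E)$ and of $K_X$ combined with Serre duality, exactly in the spirit of Theorem \ref{thm:RDP-desing} and Proposition \ref{prop:K3:smooth}. First I would record the preliminaries. Since $\pi:X\to Y$ is the minimal resolution of rational double points we have $K_X=\pi^*(K_Y)$, so $K_X$ is vertical for ${\frak p}:X\to C$, and $\otimes K_X=\otimes\pi^*(K_Y)$ preserves ${\cal C}$ and the $G_1$-twisted (semi)stability. For $E$ with $\Supp(E)\subset {\frak p}^{-1}(c)$ the class $c_1(E)$ is supported on a single fibre; as the intersection form on the components of a fibre is negative semi-definite with radical ${\Bbb Q}f$ and its components are $(-2)$-curves, $(c_1(E)^2)\le 0$ is even and $(c_1(E),K_X)=0$. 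Since $\rk E=0$ the Mukai vector is defined and $\chi(E,E)=-\langle v(E)^2\rangle=-(c_1(E)^2)\ge 0$ is even, while $\tau(E\otimes K_X)=\tau(E)$; hence $E\otimes K_X$ is again $G_1$-twisted stable with the same reduced Hilbert polynomial.

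For (2) I would first show $(c_1(E)^2)=-2$. Suppose $(c_1(E)^2)=0$. Then $c_1(E)$ lies in the radical ${\Bbb Q}f$, so $c_1(E)=\lambda f$ with $\lambda\in{\Bbb Q}_{>0}$ (note $c_1(E)\ne 0$, since a nonzero $0$-dimensional object of ${\cal C}$ would have $\chi(G_1,E)>0$). Combining $\chi(G_1,E)=0$ with $\chi(G_1,{\bf e})=0$ gives $\tau(E)=\tfrac{\lambda}{r}{\bf e}$ in $K(X)_{\mathrm{top}}\otimes{\Bbb Q}$; primitivity of ${\bf e}$ forces $\lambda/r\in{\Bbb Z}_{>0}$, whereas $(c_1(E),H)<(c_1({\bf e}),H)$ and $(f,H)>0$ force $\lambda<r$, a contradiction. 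Hence $(c_1(E)^2)\le -2$, i.e. $\chi(E,E)\ge 2$. Next, $E\cong E\otimes K_X$: otherwise the two non-isomorphic $G_1$-twisted stable objects with equal reduced Hilbert polynomial satisfy $\Hom(E,E\otimes K_X)=0$, so by Serre duality $\Ext^2(E,E)\cong\Hom(E,E\otimes K_X)^\vee=0$, whence $\chi(E,E)=\dim\Hom(E,E)-\dim\Ext^1(E,E)\le 1$, contradicting $\chi(E,E)\ge 2$. Finally, from $E\cong E\otimes K_X$ and stability we get $\dim\Hom(E,E)=\dim\Ext^2(E,E)=1$, so $\chi(E,E)=2-\dim\Ext^1(E,E)\le 2$; together with $\chi(E,E)\ge 2$ this yields $\chi(E,E)=2$ (and $\Ext^1(E,E)=0$).

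For (1), generality of $G_1$ gives $\overline{M}_H^{G_1}({\bf e})=M_H^{G_1}({\bf e})$. Assigning to a stable $E$ the point $c$ with $\Supp(E)={\frak p}^{-1}(c)$ (a $G_1$-twisted stable object with $c_1=rf$ cannot meet two distinct fibres, else it splits along them) defines a morphism $M_H^{G_1}({\bf e})\to C$; over the locus of smooth fibres its fibres are moduli of stable sheaves on elliptic curves, hence elliptic curves, so $M_H^{G_1}({\bf e})$ is an elliptic surface over $C$. Here $c_1(E)=rf$ gives $(c_1(E)^2)=0$ and $\chi(E,E)=0$, so the pure Serre argument of (2) is inconclusive; instead I would bound $\dim\Ext^1(E,E)\le 2$ (since $\dim\Ext^2(E,E)=\dim\Hom(E,E\otimes K_X)\le 1$ and $\chi(E,E)=0$), so every tangent space has dimension at most $2$. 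Granting that $M_H^{G_1}({\bf e})$ is equidimensional of dimension $2$ (see below), the tangent space has dimension exactly $2$ at each point, so $M_H^{G_1}({\bf e})$ is smooth of dimension $2$ and $\dim\Ext^2(E,E)=1$, whence $\Hom(E,E\otimes K_X)\ne 0$ and $E\cong E\otimes K_X$. Smoothness and the dimension $\langle v({\bf e})^2\rangle+2=2$ then follow exactly as in Proposition \ref{prop:0-dim:smooth}.

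The main obstacle is the behaviour over the non-reduced (multiple) and reducible fibres. There $K_X$ is not trivial near ${\frak p}^{-1}(c)$, so $E\cong E\otimes K_X$ is not geometrically evident and must be extracted from stability as above; and proving that $M_H^{G_1}({\bf e})\to C$ is equidimensional with one-dimensional fibres — equivalently that no extra two-dimensional component lies over a single point of $C$ — is the delicate step on which both the elliptic-surface structure and the vanishing of obstructions rest. I expect to control this exactly as in Theorem \ref{thm:RDP-desing}: the Jordan--H\"older factors of objects supported on a fixed fibre have Mukai vectors in the negative definite lattice $M$ of Lemma \ref{lem:elliptic:lattice}, whose $(-2)$-classes are precisely the objects furnished by part (2), and the resulting $\tilde A,\tilde D,\tilde E$ classification bounds each fibre to dimension one and pins down the singular fibres.
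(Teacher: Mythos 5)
Your proof of part (2) is correct and is essentially the paper's argument. The paper obtains $\chi(E,E)\ge 2$ by placing $\tau(E)$ in the negative definite \emph{even} lattice $M=({\Bbb Z}\tau(G_1)+{\Bbb Z}\tau({\Bbb C}_x)+{\Bbb Z}{\bf e})^{\perp}/{\Bbb Z}{\bf e}$ of Lemma \ref{lem:elliptic:lattice} and noting that $(c_1(E),H)<(c_1({\bf e}),H)$ forces the image of $\tau(E)$ there to be nonzero; you instead compute $\chi(E,E)=-(c_1(E)^2)$ and use Zariski's lemma plus the exclusion of $c_1(E)\in{\Bbb Q}f$. These are equivalent, and from that point on (Serre duality forcing $\Hom(E,E\otimes K_X)\ne 0$, stability of $E\otimes K_X$ with unchanged invariants giving $E\cong E\otimes K_X$, then $\chi(E,E)\le 2$) you coincide with the paper. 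Two small remarks: the evenness of $(c_1(E)^2)$, which you do need to pass from $<0$ to $\le -2$, follows from $(D^2)\equiv(D,K_X)\bmod 2$ together with $(c_1(E),K_X)=0$, not from the fibre components being $(-2)$-curves (an irreducible fibre is not one); and the invariance of stability under $\otimes K_X$ is best justified, as the paper does, by $K_X^{\otimes m}\in{\frak p}^*(\Pic(C))$ (the canonical bundle formula), since $K_X=\pi^*(K_Y)$ alone does not make $K_X$ numerically trivial on fibre components that are not $\pi$-exceptional.

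Part (1), however, has a genuine gap, exactly where you flag it. Everything you conclude there --- smoothness, $\dim\Ext^2(E,E)=1$, and hence $E\cong E\otimes K_X$ --- is conditional on the equidimensionality of $\overline{M}_H^{G_1}({\bf e})\to C$, i.e.\ on the absence of an extra component or of a two-dimensional fibre over a point of $C$ with degenerate fibre, and that is precisely the hard content of the statement. The mechanism you propose to supply it, the affine $ADE$ classification of Jordan--H\"older factors as in Theorem \ref{thm:RDP-desing}, is not the right tool: in this paper that classification governs the $S$-equivalence classes of \emph{properly semistable} objects (equivalently the singular fibres of the contraction to $\overline{M}^{G_1}_H({\bf e})$ in Corollary \ref{cor:desing}, which is proved \emph{after} and \emph{using} the present lemma), and it gives no bound on the dimension of the locus of stable objects supported on one fixed degenerate fibre. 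Note also that the obstruction-theoretic shortcut of Lemma \ref{lem:obstruction} does not apply verbatim here: its covering trick requires $\pi(\Supp(E))$ to miss an ample divisor, which is possible for $0$-dimensional objects but not for objects supported on an entire fibre. The paper's proof of (1) is a citation of Bridgeland's argument: one exhibits the two-dimensional fine component coming from (twisted) line bundles on smooth fibres, checks that the universal family over it induces a Fourier--Mukai equivalence, and then uses the equivalence (as in Lemma \ref{lem:simple-generator}) to show that every $G_1$-twisted stable object with invariant ${\bf e}$ already occurs in that component; this is what simultaneously yields irreducibility, smoothness, and $E\cong E\otimes K_X$ over the bad fibres. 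Without that step, or an equivalent one, your proof of (1) is incomplete.
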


\begin{proof}
(1)
In \cite[Thm. 1.2]{Br:1}, Bridgeland proved that
$\overline{M}_H^{G_1}({\bf e})$ is smooth and defines a Fourier-Mukai
transform ${\bf D}(\overline{M}_H^{G_1}({\bf e}) ) \to {\bf D}(X)$, if
$G_1={\cal O}_X$ is general with respect to
${\bf e}$ and $H$.
We can easily generalize the arguments in \cite[sect. 4]{Br:1}
to the moduli space $\overline{M}_H^{G_1}({\bf e})$ 
of $G_1$-twisted semi-stable objects,
if $G_1$ is general with respect to 
${\bf e}$ and $H$.
Then the claims follow.

(2) 
Since $\Supp(E) \subset {\frak p}^{-1}(c)$ and
$\chi(G_1,E)=0$, we have
$E \in ({\Bbb Z}\tau({\Bbb C}_x) 
+{\Bbb Z}\tau(G_1)+ {\Bbb Z}{\bf e})^{\perp}$.
\begin{NB}
By Lemma \ref{lem:elliptic:lattice},
$({\Bbb Z}\tau({\Bbb C}_x) 
+{\Bbb Z}\tau(G_1)+ {\Bbb Z}{\bf e})^{\perp}$ is
negative semi-definite.
\end{NB}
Since $(c_1(E),H)< (c_1({\bf e}),H)$, we get
\begin{equation}
2 \leq \chi(E,E)= 
\dim \Hom(E,E)+\dim \Hom(E,E \otimes K_X)-\dim \Ext^1(E,E).
\end{equation}
Hence $\Hom(E,E \otimes K_X) \ne 0$.
Since $K_X^{\otimes m} \in {\frak p}^*(\Pic(C))$ for an integer $m$, 
we see that $E \otimes K_X$ is a $G_1$-twisted stable object with
$\tau(E)=\tau(E \otimes K_X)$, which implies that  
$E \otimes K_X \cong E$ and 
$\chi(E,E)=2$. 
\end{proof}
In the same way as in the proof of Theorem \ref{thm:K3-desing},
we get the following results.

\begin{cor}\label{cor:desing}
\begin{enumerate}
\item[(1)]
$\overline{M}_H^{G_1}({\bf e})$ is a normal surface and 
the singular points $q_1,q_2,\dots,q_m$ of 
$\overline{M}_H^{G_1}({\bf e})$ correspond to the 
$S$-equivalence classes of properly $G_1$-twisted semi-stable
objects.
\item[(2)]
Let $\oplus_{j=0}^{s_i'} E_{ij}^{\oplus a_{ij}'}$ be the $S$-equivalence class 
corresponding to $q_i$.
Then the matrix $(\chi(E_{ij},E_{ik}))_{j,k \geq 0}$ is 
of affine type $\tilde{A},\tilde{D},\tilde{E}$.
We assume that $a_{i0}=1$ for all $i$.
Then $q_1,q_2,\dots,q_m$ are rational double points
of type $A,D,E$ according as the type of the matrices
 $(\chi(E_{ij},E_{ik}))_{j,k \geq 1}$. 
\item[(3)]
We take a sufficiently small general 
$\alpha \in K(X) \otimes {\Bbb Q}$ such that
$\chi(\alpha,{\bf e})=0$.
Then $\pi': \overline{M}_H^{G_1 +\alpha}({\bf e})
\to \overline{M}_H^{G_1}({\bf e})$ 
is the minimal resolution.
\item[(4)]
Assume that $a_{i0}'=1$ for all $i$ and 
$\chi(\alpha,E_{ij})<0$ for all $j>0$.
We set
\begin{equation}
C_{ij}':=\{ E \in M_H^{G_1+\alpha}({\bf e})|
\Hom(E_{ij},E) \ne 0 \}.
\end{equation}
Then $C_{ij}'$ is a smooth rational curve such that
$(C_{ij}',C_{i' j'}')=-\chi(E_{ij},E_{i' j'})$ and
${\pi'}^{-1}(q_i)=\sum_{j \geq 1}a_{ij}'C_{ij}'$. 
\end{enumerate}
\end{cor}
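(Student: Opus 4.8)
The plan is to run the proof of Theorem \ref{thm:K3-desing} essentially verbatim, with the isotropic Mukai vector $v_0$ replaced by the fiber class ${\bf e}$ and with the triviality of the canonical bundle of the K3 surface replaced by the weaker relation $E\otimes K_X\cong E$, which holds for the stable objects that matter here by the preceding lemma (and because $K_X^{\otimes m}\in{\frak p}^*(\Pic(C))$). First I would fix a sufficiently small general $\alpha\in K(X)\otimes{\Bbb Q}$ with $\chi(\alpha,{\bf e})=0$ and set $X':=M_H^{G_1+\alpha}({\bf e})$, a smooth elliptic surface over $C$; its universal family ${\cal E}$ on $X'\times X$ defines a Fourier--Mukai equivalence $\Phi$ as in \cite{Br:1}. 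Variation of the twisted stability gives a birational morphism $\psi:X'\to\overline{M}_H^{G_1}({\bf e})$, and the whole task is to describe its exceptional fibers through $\Phi$.

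For the ADE part I would invoke Lemma \ref{lem:elliptic:lattice}. The $S$-equivalence class corresponding to $q_i$ is $\oplus_j E_{ij}^{\oplus a_{ij}'}$ with $E_{ij}$ $G_1$-twisted stable, $\chi(G_1,E_{ij})=0$, and $\Supp(E_{ij})\subset{\frak p}^{-1}(c)$; hence the classes $\tau(E_{ij})$ lie in $({\Bbb Z}\tau(G_1)+{\Bbb Z}\tau({\Bbb C}_x)+{\Bbb Z}{\bf e})^{\perp}$, which by Lemma \ref{lem:elliptic:lattice} is negative definite modulo ${\Bbb Z}{\bf e}$. Since $E_{ij}\otimes K_X\cong E_{ij}$ forces $\chi(E_{ij},E_{ij})=2$, while $\chi(E_{ij},E_{ik})\le 0$ for $E_{ij}\not\cong E_{ik}$ and $\sum_j a_{ij}' v(E_{ij})={\bf e}$ is isotropic, the matrix $(\chi(E_{ij},E_{ik}))_{j,k\ge 0}$ is a generalized Cartan matrix whose radical is spanned by ${\bf e}$. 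Applying Lemma \ref{lem:appendix:lattice} exactly as in Lemma \ref{lem:K3:lattice} then shows it is of affine type $\tilde A,\tilde D,\tilde E$, and that the submatrix indexed by $j,k\ge 1$ is of the corresponding finite type; this proves the intersection-theoretic content of part (2).

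Next I would transport this through $\Phi$ following Lemma \ref{lem:K3:Phi(E)} and Lemma \ref{lem:K3:exceptional}. With $\alpha$ chosen so that $\chi(\alpha,E_{ij})<0$ for $j>0$ (possible since the $v(E_{ij})$ are linearly independent), I would show that $\Phi(E_{ij})$ is, up to shift, a line bundle on a smooth rational curve $C_{ij}'=\{E\in M_H^{G_1+\alpha}({\bf e})\mid\Hom(E_{ij},E)\neq0\}$, that $(C_{ij}',C_{i'j'}')=-\chi(E_{ij},E_{i'j'})$, and that ${\pi'}^{-1}(q_i)=\cup_{j\ge 1}C_{ij}'=\sum_j a_{ij}'C_{ij}'$; this is part (4). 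Since these curves form a negative definite ADE configuration of $(-2)$-curves, they contract to rational double points, which upgrades part (2) to the statement about the singularity types. Normality in (1), and the identification of $q_1,\dots,q_m$ with the properly semistable $S$-equivalence classes, would follow as in Proposition \ref{prop:K3-normal}: the determinant line bundle of Lemma \ref{lem:det-bdle} descends to an ample bundle on $\overline{M}_H^{G_1}({\bf e})$ defining $\psi$, whose fibers are exactly these connected ADE configurations and which is an isomorphism over the stable locus.

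For part (3) I would establish crepancy of $\pi'$: the relation $E\otimes K_X\cong E$ for every $E\in\overline{M}_H^{G_1+\alpha}({\bf e})$ makes the relative dualizing sheaf trivial along the exceptional fibers, by the same symplectic-pairing argument as in Lemma \ref{lem:crepant}, so $\pi'$ is crepant and hence the minimal resolution of the rational double points. The main obstacle I anticipate is not a new idea but the bookkeeping required to make the K3 proof run when $K_X$ is only relatively trivial: one must consistently restrict to objects supported on fibers (or of slope ${\bf e}$) so that $E\otimes K_X\cong E$ is available wherever the K3 argument used $K_X\cong{\cal O}$, and verify that Bridgeland's Fourier--Mukai formalism for elliptic surfaces yields the same smoothness and base-change properties for $G_1$-twisted semistable objects that the proof of Theorem \ref{thm:K3-desing} exploited.
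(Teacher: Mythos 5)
Your proposal is correct and follows essentially the route the paper intends: the paper's own proof of this corollary is literally the remark that it goes "in the same way as in the proof of Theorem \ref{thm:K3-desing}", relying on Lemma \ref{lem:elliptic:lattice} for negative definiteness, the relation $E\otimes K_X\cong E$ for objects on fibers in place of triviality of $K_X$, and the Fourier--Mukai/coherent-systems analysis of the exceptional curves, with the same sign convention $\chi(\alpha,E_{ij})<0$ and the definition of $C_{ij}'$ via $\Hom(E_{ij},E)$ that you adopt.
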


\begin{rem}
In Theorem \ref{thm:K3-desing}, we assume that $\chi(\alpha,E_{ij})>0$.
So the definition of $C_{ij}'$ is different from 
that in Lemma \ref{lem:K3:exceptional}.
For the smoothness of $C_{ij}'$, we use the moduli of
coherent systems $(E,V)$, where $E \in M_H^{G_1+\alpha}({\bf e})$
and 
$V$ is a 1-dimensional subspace of $\Hom(E_{ij},E)$.
\end{rem}

From now on, we take an $\alpha$ in Corollary \ref{cor:desing} (3)
and set $X':=\overline{M}_H^{G_1 +\alpha}({\bf e})$,
$Y':=\overline{M}_H^{G_1}({\bf e})$. 
Let ${\frak q}:X' \to C$ be the structure morphism of the 
elliptic fibration.


\subsection{Fourier-Mukai duality for an elliptic surface.}
\label{subsect:elliptic:FM-duality}

Let ${\cal E}$ be a universal family as a twisted sheaf on $X' \times X$.
For simplicity, we assume that it is an untwisted sheaf.
We set
\begin{equation}
\begin{split}
\Psi(E):=&{\bf R}\Hom_{p_{X'}}(p_X^*(E),{\cal E})
=\Phi(E)^{\vee}[-2],\; E \in {\bf D}(X),\\
\widehat{\Psi}(F):=&{\bf R}\Hom_{p_X}(p_{X'}^*(F),{\cal E}),\;
F \in {\bf D}(X').
\end{split}
\end{equation}

\begin{lem}
Replacing $G_1$ by $G_1-n {\Bbb C}_x$, $n \gg 0$, we can choose 
$\det \Psi(G_1)^{\vee}
\in \Pic(X')$ as the pull-back of an ample line bundle on $W$.
Let $\widehat{H}$ be a divisor with ${\cal O}_{X'}(\widehat{H})=
\det \Psi(G_1)^{\vee}$.
\end{lem}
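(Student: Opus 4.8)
The plan is to compute the first Chern class of $\Psi(G_1)^{\vee}$ by Grothendieck--Riemann--Roch and to read off $\widehat{H}$ as the image (up to sign) of $G_1^{\vee}$ under the determinant map attached to the universal family ${\cal E}$, in the spirit of the determinant line bundle constructions of Lemma \ref{lem:det-bdle} and Lemma \ref{lem:polarization}. The crucial structural input is that $X'=\overline{M}_H^{G_1+\alpha}({\bf e})$ carries the elliptic fibration ${\frak q}:X'\to C$: since ${\bf e}$ has $c_1=rf$ a fibre multiple, every $G_1$-twisted stable object parametrized by $X'$ is supported on a single fibre ${\frak p}^{-1}(c)$ of the elliptic surface $X$, and the assignment $E\mapsto c$ is ${\frak q}$. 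Consequently ${\cal E}$ is set-theoretically supported on the fibre product $X'\times_C X\subset X'\times X$, so $\Psi$ is a transform relative to $C$, and the restriction of $\det\Psi(G_1)^{\vee}$ to a fibre of ${\frak q}$ is governed by the fibrewise transform of $G_1$ on the single (possibly singular) elliptic curve ${\frak p}^{-1}(c)$.

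First I would establish the two numerical vanishings $\widehat{H}\cdot f'=0$ and $\widehat{H}\cdot C_{ij}'=0$, where $f'$ is a general fibre of ${\frak q}$ and the $C_{ij}'$ are the fibre components produced in Corollary \ref{cor:desing}. Both are instances of the same phenomenon: the determinant map sends $G_1^{\vee}$ to a class whose pairing with $[f']$ (resp. $[C_{ij}']$) computes $\pm\chi(G_1,E)$ for the relevant parametrized object $E$ (resp. $\pm\chi(G_1,E_{ij})$ for the stable factor $E_{ij}$ attached to $C_{ij}'$), and all these vanish because $\chi(G_1,E)=\chi(G_1,{\bf e})=0$ and the $E_{ij}$ are Jordan--H\"older factors of $\chi(G_1,\cdot)=0$ objects, so $\chi(G_1,E_{ij})=0$. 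Hence $\widehat{H}$ is orthogonal to every component of every fibre of ${\frak q}$; in particular its restriction to each fibre has degree zero.

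Next I would upgrade this numerical statement to the assertion that $\det\Psi(G_1)^{\vee}$ is genuinely the pull-back of a line bundle on $C$. Using the relative description of $\Psi$ over $C$, the restriction of $\det\Psi(G_1)^{\vee}$ to ${\frak q}^{-1}(c)$ is the determinant of the fibrewise transform of $G_1|_{{\frak p}^{-1}(c)}$, and for a numerical class with $\chi(G_1,{\bf e})=0$ this determinant is trivial on the dual fibre; thus $\det\Psi(G_1)^{\vee}$ is trivial on every fibre of ${\frak q}$ and, since ${\frak q}_*{\cal O}_{X'}={\cal O}_C$, it descends to a line bundle $L$ on $C$ with $\det\Psi(G_1)^{\vee}\cong{\frak q}^*(L)$. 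To make $L$ ample I would use the freedom of replacing $G_1$ by $G_1-n{\Bbb C}_x$: here $c_1(\Psi({\Bbb C}_x))=c_1({\cal E}_{|X'\times\{x\}})$, and for $x\in{\frak p}^{-1}(c)$ the dual object ${\cal E}_{|X'\times\{x\}}$ is supported on the fibre ${\frak q}^{-1}(c)$, so $c_1(\Psi({\Bbb C}_x))=k[f']$ is a positive multiple of the fibre class, i.e. itself a pull-back from $C$. Replacing $G_1$ by $G_1-n{\Bbb C}_x$ therefore shifts $\widehat{H}$ by $nk[f']$, and for $n\gg0$ the descended line bundle $L$ acquires positive degree, hence is ample on $C$.

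I expect the main obstacle to be the upgrade in the third step: passing from ``$\det\Psi(G_1)^{\vee}$ is numerically trivial on every fibre'' to ``$\det\Psi(G_1)^{\vee}$ is genuinely trivial on every fibre''. On the smooth fibres a degree-zero line bundle can still be a nontrivial point of the Jacobian varying with $c\in C$, so the numerical vanishing is not by itself sufficient; the argument must genuinely exploit that the restriction to ${\frak q}^{-1}(c)$ is the determinant of a fibrewise Fourier--Mukai transform of a fixed numerical class with $\chi=0$, which forces triviality fibre by fibre. Once this is in place, the descent to $C$ and the positivity adjustment by $-n{\Bbb C}_x$ are routine.
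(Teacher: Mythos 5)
There is a genuine error, and it is in the very first step. You claim $\widehat{H}\cdot f'=0$ for a general fibre $f'$ of ${\frak q}:X'\to C$, and from this you conclude that $\det\Psi(G_1)^{\vee}$ descends to the base curve $C$. Both are false, and the conclusion is incompatible with the statement being proved: $W$ here is $Y'$, the normal \emph{surface} obtained by contracting the curves $C_{ij}'$ in $X'$ (the proof of the lemma in the paper says ``pull-back of a polarization of $Y'$''), so $\widehat{H}$ must be nef and big with $(\widehat{H}^2)>0$ and $(\widehat{H},f')>0$; a pull-back from the curve $C$ is a multiple of the fibre class and has self-intersection zero. Concretely, $\Psi(G_1)[1]=L_2$ is a \emph{torsion sheaf supported on a multisection} of ${\frak q}$ (the locus $\{x'\mid \Ext^1(G_1,{\cal E}_{|\{x'\}\times X})\neq 0\}$, which meets every fibre ${\frak q}^{-1}(c)$ in the finitely many points of the dual fibre corresponding to $G_{1}|_{{\frak p}^{-1}(c)}$), so $\widehat{H}=c_1(L_2)$ is horizontal. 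The pairing of $\widehat{H}$ with $f'$ is controlled, via $\det\Psi({\Bbb C}_x)=rf$ and the invariance of $\chi$ under $\Psi$, by $\chi(G_1,{\Bbb C}_x)=\rk G_1\neq 0$ — not by $\chi(G_1,{\bf e})=0$ as you assert. Your parallel claim $(\widehat{H},C_{ij}')=0$ is the correct half of the argument (it follows from $\chi(G_1,E_{ij})=0$ and $c_1(\Psi(E_{ij}))=\pm C_{ij}'$), and it is exactly what lets $\widehat{H}$ descend to $Y'$; but it does not extend to the full fibre class. The obstacle you flag at the end (numerical vs.\ actual triviality on fibres) is therefore not the real issue — the numerical input is already wrong.

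The paper's proof goes a different and essentially formal route: by \eqref{eq:det-bdle} and Lemma \ref{lem:polarization}, the determinant line bundle $\det p_{X'!}({\cal E}\otimes p_X^*(\xi))$ of the class $\xi=mr\rk G_1(H,f)\bigl(-G_1^{\vee}+(\rk G_1)n(n+m)(H^2)/2\,\varrho_X\bigr)$ is the pull-back of an ample line bundle on $Y'=\overline{M}_H^{G_1}({\bf e})$ (the canonical GIT polarization), and $-\ch(\xi^{\vee})\equiv mr\rk G_1(H,f)\ch(G_1)\bmod{\Bbb Q}\varrho_X$; since $\det\Psi({\Bbb C}_x)=rf$ and $f$ is itself pulled back from $Y'$, the discrepancy between $\det\Psi(G_1)^{\vee}$ and that pull-back is a multiple of $f$, which is absorbed by replacing $G_1$ with $G_1-n{\Bbb C}_x$ for $n\gg 0$. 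If you want to salvage your approach, you should replace ``descent to $C$'' by ``descent to $Y'$'': the only curves on which $\widehat{H}$ has degree zero are the $C_{ij}'$, and positivity on everything else must come from identifying $\widehat{H}$ (up to fibre classes) with the GIT polarization, not from a fibrewise triviality argument.
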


\begin{proof}
We note that $\det \Psi({\Bbb C}_x)=rf$.
Hence $\det \Psi(G_1-n{\Bbb C}_x)^{\vee}=
\det \Psi(G_1)^{\vee}(nrf)$.
We set 
\begin{equation}
\xi:=
mr\rk G_1(H,f)(-G_1^{\vee}+(\rk G_1)n(n+m)(H^2)/2 \varrho_X).
\end{equation}
By \eqref{eq:det-bdle},
$\det p_{X' !}({\cal E} \otimes p_X^*(\xi))$ is 
the pull-back of a polarization of $Y'$ for
$m \gg n \gg 0$.
Since $\det \Psi(\xi^{\vee})=
\det p_{X' !}({\cal E} \otimes p_X^*(\xi))$
and $-\ch(\xi^{\vee}) \equiv mr\rk G_1 (H,f) \ch(G_1) \mod {\Bbb Q}\varrho_X$,
we get our claim.
\end{proof}

%

\begin{lem}\label{lem:elliptic:irreducible-obj}
We set $A_{ij}':=\Psi(E_{ij})[2]$.
\begin{enumerate}
\item[(1)]
There are ${\bf b}_i':=(b_{i1}',b_{i2}',\dots,b_{is_i'}')$,
$i=1,\dots,m$ such that
\begin{equation}
\begin{split}
A_{ij}' &={\cal O}_{C_{ij}'}(b_{ij}')[1],\; j>0\\
A_{i0}'& =A_0({\bf b}_i').
\end{split}
\end{equation}
\item[(2)]
Irreducible objects of $\Per(X'/Y',{\bf b}_1',...,{\bf b}_m')$
are
\begin{equation}
A_{ij}' (1 \leq i \leq m,0 \leq j \leq s_i'),\;
{\Bbb C}_{x'} (x' \in X' \setminus \cup_i Z_i'). 
\end{equation}
\end{enumerate}
\end{lem}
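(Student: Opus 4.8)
The plan is to transcribe, almost verbatim, the arguments of Section~\ref{sect:K3} that produced Lemma~\ref{lem:K3:Phi(E)} and Lemma~\ref{lem:A_*}, using the contravariant transform $\Psi$ of \S\ref{subsect:elliptic:FM-duality} in place of $\Phi^\alpha$ and invoking Corollary~\ref{cor:desing} in place of Theorem~\ref{thm:K3-desing}. The decisive structural feature is that each $E_{ij}$ is supported on a fiber of ${\frak p}$ and satisfies $E_{ij}\otimes K_X\cong E_{ij}$, so that $\Psi[2]$ carries these objects into the perverse category $\Per(X'/Y',\dots)$ itself rather than into its dual; this is what distinguishes the present statement from the $K3$ computation of Lemma~\ref{lem:K3:irreducible-objD}.

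First, for $j>0$, I would imitate the proof of Lemma~\ref{lem:Lambda(E)}: compute $A_{ij}'\overset{{\bf L}}{\otimes}{\Bbb C}_{x'}$ as a shifted, dualized local ${\bf R}\Hom$ of ${\cal E}_{|\{x'\}\times X}$ against $E_{ij}$, and use the choice $\chi(\alpha,E_{ij})<0$ together with the $(G_1+\alpha)$-twisted stability of the fibers of ${\cal E}$ (Corollary~\ref{cor:desing}(4)) to see that the cohomology sheaves of $A_{ij}'$ are concentrated in a single degree and supported on $C_{ij}'$. The vanishing of $\Hom({\Bbb C}_{x'},A_{ij}'[-1])$ forces purity of dimension one, and the identity $(C_{ij}',C_{ij}')=-\chi(E_{ij},E_{ij})=-2$ of Corollary~\ref{cor:desing}(4) identifies the support with $C_{ij}'$. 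Hence $A_{ij}'[-1]$ is a line bundle on $C_{ij}'$, and I define $b_{ij}'$ by $A_{ij}'={\cal O}_{C_{ij}'}(b_{ij}')[1]$.

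For $j=0$ I would start from the fact (furnished by Corollary~\ref{cor:desing}) that for $x'\in Z_i'$ the object $E_{i0}$ is a subobject of ${\cal E}_{|\{x'\}\times X}$ with quotient $F$ a $G_1$-twisted semistable object whose Jordan--H\"older factors lie among the $E_{ij}$, $j>0$; this is the elliptic analogue of the exact sequence preceding \eqref{eq:y}. Applying $\Psi[2]$ and using $\Psi({\cal E}_{|\{x'\}\times X})[2]\cong{\Bbb C}_{x'}$, I obtain a short exact sequence $0\to G_i\to A_{i0}'\to{\Bbb C}_{x'}\to 0$ in $\Coh(X')$ with $G_i$ a successive extension of the ${\cal O}_{C_{ij}'}(b_{ij}')$ by Step~1. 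Since $\Psi$ is an equivalence, the groups $\Hom(A_{i0}',A_{ij}'[k])$ are computed by the $\Hom$'s between $E_{i0}$ and $E_{ij}$ (up to Serre duality, using $E_{ij}\otimes K_X\cong E_{ij}$), and these vanish for the relevant $k$ because $E_{i0}$ and $E_{ij}$ are non-isomorphic $G_1$-twisted stable objects supported on the same fiber; this gives $\Hom(A_{i0}',{\cal O}_{C_{ij}'}(b_{ij}'))=\Ext^1(A_{i0}',{\cal O}_{C_{ij}'}(b_{ij}'))=0$. These are exactly the hypotheses of Lemma~\ref{lem:A_0}(1), so $A_{i0}'\cong A_0({\bf b}_i')$, which proves~(1).

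Finally, for~(2): with ${\bf b}_i'=(b_{i1}',\dots,b_{is_i'}')$ now determined, the category $\Per(X'/Y',{\bf b}_1',\dots,{\bf b}_m')$ is, by Proposition~\ref{prop:C(G)}, the category ${\cal C}(E)$ attached to the line bundles ${\cal O}_{C_{ij}'}(b_{ij}')$ on the $C_{ij}'$, and Proposition~\ref{prop:tilting:G-1Per-irred} then lists its irreducible objects as $A_{i0}'=A_0({\bf b}_i')$, the $A_{ij}'={\cal O}_{C_{ij}'}(b_{ij}')[1]$ for $j>0$, and ${\Bbb C}_{x'}$ for $x'\in X'\setminus\cup_iZ_i'$, which is the assertion. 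The main obstacle I anticipate is Step~1 — tracking the cohomological degrees and establishing purity for the relative transform $\Psi(E_{ij})[2]$, which is fiberwise rather than point-like as on the $K3$ side — but since each $E_{ij}$ is fiber-supported and $K_X$-invariant, the base-change and vanishing computations reduce to those already carried out in Lemma~\ref{lem:Lambda(E)} and Lemma~\ref{lem:WIT-simple}, so the work is transcription rather than new input.
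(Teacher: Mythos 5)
Your overall architecture is the paper's: for $j>0$ run the argument of Lemma \ref{lem:Lambda(E)} through the proof of Corollary \ref{cor:desing} (4) to get a line bundle on $C_{ij}'$, for $j=0$ produce a short exact sequence ending in ${\Bbb C}_{x'}$ and feed it to Lemma \ref{lem:A_0}, and deduce (2) from (1) via Proposition \ref{prop:tilting:G-1Per-irred}. Step 1 and Step 3 are fine.

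There is, however, a concrete error in Step 2. You assert that for $x'\in Z_i'$ the object $E_{i0}$ is a \emph{subobject} of ${\cal E}_{|\{x'\}\times X}$, calling this "the elliptic analogue of the exact sequence preceding \eqref{eq:y}". But the sub/quotient role of $E_{i0}$ is governed by the sign of $\chi(\alpha,E_{ij})$, and in Corollary \ref{cor:desing} the convention is $\chi(\alpha,E_{ij})<0$ for $j>0$ (hence $\chi(\alpha,E_{i0})>0$) — the \emph{opposite} of the $K3$ section. A $(G_1+\alpha)$-twisted stable deformation cannot contain a subobject $F$ of the same reduced $G_1$-Hilbert polynomial with $\chi(\alpha,F)\geq 0$, so $E_{i0}$ sits at the top of the $S$-filtration: the correct sequence is $0\to F_i\to{\cal E}_{|\{x'\}\times X}\to E_{i0}\to 0$ with $F_i$ built from the $E_{ij}$, $j>0$. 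This matters because $\Psi$ is contravariant: a surjection ${\cal E}_{|\{x'\}\times X}\twoheadrightarrow E_{i0}$ is exactly what produces a map $A_{i0}'=\Psi(E_{i0})[2]\to\Psi({\cal E}_{|\{x'\}\times X})[2]={\Bbb C}_{x'}$ and hence the sequence $0\to\Psi(F_i)[1]\to A_{i0}'\to{\Bbb C}_{x'}\to 0$ required by Lemma \ref{lem:A_0} (1). If you instead start from $0\to E_{i0}\to{\cal E}_{|\{x'\}\times X}\to F\to 0$ as written, the contravariant transform gives the triangle $\Psi(F)[2]\to{\Bbb C}_{x'}\to A_{i0}'$, so ${\Bbb C}_{x'}$ ends up mapping \emph{into} $A_{i0}'$ rather than being a quotient of it; that is the shape of the characterization of $A_0({\bf b})^*$ in Lemma \ref{lem:A_0} (2), not of $A_0({\bf b}_i')$, and your claimed sequence $0\to G_i\to A_{i0}'\to{\Bbb C}_{x'}\to 0$ does not follow from your stated input. (In the $K3$ section the two reversals — covariant $\Phi^\alpha$ together with $E_{i0}$ as a subobject — compose to give the same final sequence; here you must reverse both, not just one.) Once this is corrected the rest of your Step 2, including the vanishing $\Hom(A_{i0}',A_{ij}'[-1])=\Ext^1(A_{i0}',A_{ij}'[-1])=0$, which reduces by contravariance to $\Hom(E_{ij},E_{i0}[k])=0$ for $k=-1,0$ (no Serre duality needed), goes through as in the paper.
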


\begin{proof}
It is sufficient to prove (1) by
Proposition \ref{prop:tilting:G-1Per-irred}.
By the choice of $\alpha$,
we have 
\begin{equation}
\begin{split}
\Ext^2(E_{ij},{\cal E}_{|\{ x' \}\times X})=&0,\; j>0,\\
\Hom(E_{i0},{\cal E}_{|\{ x' \}\times X})=&0
\end{split}
\end{equation}
for all $x' \in X'$.
Then the claim for $j>0$ follow from the proof of 
Corollary \ref{cor:desing} (4).
For $x' \in {\pi'}^{-1}(q_i)$,
we have an exact sequence
\begin{equation}
0 \to F_i \to {\cal E}_{|\{ x' \}\times X} \to E_{i0} \to 0,
\end{equation}
where $F_i$ is a $G_1$-twisted semi-stable object which is
$S$-equivalent to $\oplus_{j>0} E_{ij}^{\oplus_j a_{ij}'}$.
Applying $\Psi$, we have an exact sequence
\begin{equation}
0 \to \Psi(F_i)[1] \to A_{i0}' \to {\Bbb C}_{x'} \to 0.
\end{equation}
It is easy to see that
\begin{equation}
\Hom(A_{i0}',A_{ij}'[-1])=\Ext^1(A_{i0}',A_{ij}'[-1])=0.
\end{equation}
By Lemma \ref{lem:A_0}, we get
 $A_{i0}'=A_0({\bf b}_i')$.
\end{proof}

We define $\Per(X'/Y')$ and $\Per(X'/Y')^D$ as in subsection 
\ref{subsect:K3:proof}.
Replacing $G_1$ by $G_1'$ with
$\tau(G_1')=\tau(G_1)-n\tau({\Bbb C}_x)$,
we may assume that $G_{1|{\frak p}^{-1}(t)}$, $t \in C$
is a stable vector bundle for a general $t \in C$.
Then $L_2'=\Psi(G_1)[1]$ is a torsion object of $\Per(X'/Y') \cap \Coh(X')$
such that $c_1(L_2)=\widehat{H}$.
Indeed $L_2'$ is a coherent torsion sheaf on $X'$. Since
$\Hom(L_2',A_{ij}'[-1])=\Hom(E_{ij},G_1)=0$, 
$L_2' \in \Per(X'/Y')$.

\begin{lem}
Let $L_1$ be a line bundle on a smooth curve $C \in |H|$ and set
$G_2:=\Psi(L_1)[1]$.
Then we have
\begin{equation}
\begin{split}
\Hom(G_2,{\Bbb C}_{x'}[k])&=0, \quad k \ne 0,\\
\Hom(G_2,A_{ij}'[k])& =0, \quad k \ne 0,\\
\dim \Hom(G_2,A_{ij}')& =
(c_1(E_{ij}),H).\\
\end{split}
\end{equation}
In particular $G_2$ is a local projective generator
of $\Per(X'/Y')$.
\begin{NB}
Since $E_{ij}$ are 1-dimensional objects,
$(c_1(E_{ij}),H)>0$.
\end{NB}
\end{lem}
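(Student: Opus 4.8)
The plan is to reduce every $\Hom$ appearing in the statement to an $\Ext$-group on $X$ between $L_1$ and a fibre-supported object, and then to exploit that $L_1$ lives on the "horizontal" curve $C\in|H|$ while the relevant objects are supported on fibres of ${\frak p}$. First I would record the formal properties of $\Psi$. Since $\Psi(E)=\Phi(E)^\vee[-2]$ with $\Phi$ an equivalence and $(-)^\vee$ a duality, $\Psi$ is a contravariant equivalence ${\bf D}(X)\to{\bf D}(X')$; in particular $\Psi(E[n])=\Psi(E)[-n]$ and $\Hom_{X'}(\Psi(A),\Psi(B))\cong\Hom_X(B,A)$ for all $A,B$. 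Using $A_{ij}'=\Psi(E_{ij})[2]$ and $G_2=\Psi(L_1)[1]$ this gives
$$\Hom(G_2,A_{ij}'[k])=\Hom(\Psi(L_1),\Psi(E_{ij})[1+k])=\Hom(\Psi(L_1),\Psi(E_{ij}[-1-k]))\cong\Ext^{1+k}(E_{ij},L_1).$$
For the skyscrapers I would instead use base change and adjunction directly: with $i_{x'}\colon\{x'\}\hookrightarrow X'$ one has ${\bf L}i_{x'}^*\Psi(L_1)={\bf R}\Hom_X(L_1,{\cal E}_{x'})$, where ${\cal E}_{x'}={\cal E}_{|\{x'\}\times X}$, together with $\Hom(F,{\Bbb C}_{x'}[j])=({\bf L}i_{x'}^*F)^\vee[j]$, whence $\Hom(G_2,{\Bbb C}_{x'}[k])\cong\Ext^{1-k}(L_1,{\cal E}_{x'})^\vee$. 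Thus, after applying Serre duality $\Ext^{1+k}(E_{ij},L_1)\cong\Ext^{1-k}(L_1,E_{ij}\otimes K_X)^\vee$ and the known relation $E_{ij}\otimes K_X\cong E_{ij}$, all three quantities are governed by ${\bf R}\Hom_X(L_1,M)$ for $M$ a fibre-supported sheaf.

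Second, I would prove the key concentration statement: if $M$ is a pure $1$-dimensional sheaf whose support lies in a fibre, then ${\bf R}\Hom_X(L_1,M)$ is concentrated in degree $1$, with $\dim\Ext^1(L_1,M)=(c_1(M),H)$. The cleanest route is Grothendieck duality for the closed immersion $i_C\colon C\hookrightarrow X$ of the smooth divisor $C$, using $\omega_{C/X}={\cal O}_C(C)$:
$$ {\bf R}\Hom_X(L_1,M)\cong{\bf R}\Gamma\bigl(C,\,L_1^\vee\otimes{\bf L}i_C^*M\otimes{\cal O}_C(C)\bigr)[-1]. $$
Because $C\in|H|$ is smooth and $(H,C_{ij})=0$, $C$ is disjoint from the exceptional locus, and being a multisection it meets the vertical support of $M$ in a finite set; hence $M$ and ${\cal O}_C$ are Tor-independent and ${\bf L}i_C^*M$ is a $0$-dimensional sheaf on $C$ of length $(C,c_1(M))=(H,c_1(M))$. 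Its ${\bf R}\Gamma$ lives in degree $0$, so the shift $[-1]$ places ${\bf R}\Hom_X(L_1,M)$ in degree $1$ with the asserted dimension. (Equivalently one checks ${\cal H}om_{{\cal O}_X}(L_1,M)=0$ by a transverse local computation and uses that $L_1$ has projective dimension $1$.)

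Finally, combining the reductions with the concentration statement yields $\Ext^{1-k}(L_1,E_{ij})=0=\Ext^{1-k}(L_1,{\cal E}_{x'})$ for $k\neq0$, which are exactly the first two vanishing assertions, and $\dim\Hom(G_2,A_{ij}')=\dim\Ext^1(L_1,E_{ij})=(c_1(E_{ij}),H)$, the third. The fibrewise concentration of $\Psi(L_1)$ in degree $1$ moreover shows, by cohomology and base change, that $G_2=\Psi(L_1)[1]$ is an honest sheaf, so $H^i(G_2)=0$ for $i\neq0$. Since the irreducible objects of $\Per(X'/Y')$ are precisely the $A_{ij}'$ and the ${\Bbb C}_{x'}$ with $x'$ off the exceptional locus (Lemma~\ref{lem:elliptic:irreducible-obj}), the two vanishing statements verify hypothesis (a) of Proposition~\ref{prop:tilting:generator}, while $\chi(G_2,A_{ij}')=(c_1(E_{ij}),H)>0$ and $\chi(G_2,{\Bbb C}_{x'})=(c_1({\bf e}),H)>0$ verify hypothesis (b); Proposition~\ref{prop:tilting:generator} then gives that $G_2$ is a local projective generator of $\Per(X'/Y')$.

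The main obstacle I anticipate is the purity input feeding the concentration step: I must know that the fibre-supported objects $E_{ij}$ and ${\cal E}_{x'}$ are genuine pure $1$-dimensional sheaves rather than complexes with nonzero $H^{-1}$, since such a term would contribute an extra $\Ext^0$ and destroy the degree-$1$ concentration. This, together with the positivity $(c_1(E_{ij}),H)>0$ — that is, that no $E_{ij}$ is supported entirely on the contracted locus — is what must be drawn from the stability of these objects and the earlier structural results.
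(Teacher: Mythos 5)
Your proof is correct and follows essentially the same route as the paper: the paper's entire argument is the reduction $\Hom(G_2,A_{ij}'[k])=\Hom(E_{ij},L_1[k+1])$ and $\Hom(G_2,{\Bbb C}_{x'}[k])=\Hom({\cal E}_{|\{x'\}\times X},L_1[k+1])$ via the contravariant equivalence $\Psi$, leaving the degree-$1$ concentration of ${\bf R}\Hom$ between $L_1$ and a pure fibre-supported sheaf, and the resulting dimension count, implicit. Your version supplies exactly those implicit steps (transversality of $C\in|H|$ with the fibres and with the exceptional locus, purity of $E_{ij}$ and ${\cal E}_{|\{x'\}\times X}$ from twisted stability), with only a harmless detour through base change and Serre duality in the skyscraper case where the paper applies $\Psi$ directly.
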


\begin{proof}
The claim follows from the following relations:
\begin{equation}
\begin{split}
\Hom(G_2,{\Bbb C}_{x'}[k])& =
\Hom(\Psi(L_1)[1],\Psi({\cal E}_{|\{ x' \} \times X})[2+k]) \\
& =\Hom({\cal E}_{|\{ x' \} \times X},L_1[k+1]),\\
\Hom(G_2,A_{ij}'[k])& =\Hom(\Psi(L_1)[1],\Psi(E_{ij})[2+k]) \\
& =\Hom(E_{ij},L_1[k+1]).
\end{split}
\end{equation}
\end{proof}

For a conveniense sake,
we summalize the image of
${\Bbb C}_x[-2],{\cal E}_{|\{x' \} \times X},G_1,L_1$
by $\Psi$:
\begin{equation}
\begin{split}
\Psi({\Bbb C}_x[-2]) &={\cal E}_{|X' \times \{ x \}},\\
\Psi({\cal E}_{|\{x' \} \times X}) &={\Bbb C}_{x'}[-2],\\
\Psi(G_1) &=L_2[-1],\\
\Psi(L_1) &=G_2[-1].
\end{split}
\end{equation}

\begin{defn}
We set $\Psi^i(E):= {^p H^i}(\Psi(E)) \in \Per(X'/Y')$ and
$\widehat{\Psi}^i(E):={^p H^i}(\widehat{\Psi}(E)) \in \Per(X/Y)$. 
\end{defn}

\begin{lem}\label{lem:elliptic-WIT-simple}
$\WIT_2$ with respect to $\Psi$
holds for all 0-dimensional objects $E$ of $\Per(X'/Y')$ and
$\Psi^2(E)$ is $G_2$-twisted semi-stable. 
Moreover if $E$ is an irreducible object, then 
$\Psi(E)[2]$ is a $G_2$-twisted stable object of $\Per(X'/Y')$.
\end{lem}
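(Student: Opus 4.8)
The plan is to mirror the proof of the $K3$ analogue (Lemma \ref{lem:WIT-simple}), reading $E$ as a $0$-dimensional object of ${\cal C}$ so that $\Psi(E)$ is defined, and replacing the pair $(\Per(X'/Y')^D,G_3)$ used there by $(\Per(X'/Y'),G_2)$ and the isotropic vector $v_0$ by the fibre data $({\bf e},f)$ with the normalization $\chi(G_1,{\bf e})=0$. To prove that $\WIT_2$ holds I would first reduce to $E$ irreducible: a $0$-dimensional object carries a Jordan--H\"older filtration by the irreducible objects of Lemma \ref{lem:tilting:irreducible}, and $\WIT_2$ is closed under extensions, so it suffices to treat the irreducible factors. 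Since a $0$-dimensional object is a torsion object, it lies in $\overline{\frak T}_1$, and the elliptic analogue of Lemma \ref{lem:Perverse}(5) immediately yields $\Psi^0(E)=0$.

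Next I would show $\Psi^1(E)=0$ by proving $\widehat{\Psi}^i(\Psi^1(E))=0$ for all $i$, following Lemma \ref{lem:WIT-simple}. The spectral sequence comparing $\Psi$ and $\widehat{\Psi}$ (Lemma \ref{lem:spectral2}) gives $\widehat{\Psi}^2(\Psi^1(E))=0$ together with the exact sequence
\begin{equation*}
0 \to \widehat{\Psi}^0(\Psi^1(E)) \to \widehat{\Psi}^2(\Psi^2(E)) \to E \to \widehat{\Psi}^1(\Psi^1(E)) \to 0.
\end{equation*}
The slope dichotomy (elliptic analogue of Lemma \ref{lem:slope3}) places $\widehat{\Psi}^0(\Psi^1(E))$ in $\overline{\frak F}_1$ and $\widehat{\Psi}^2(\Psi^2(E))$ in $\overline{\frak T}_1$. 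As $E$ is $0$-dimensional, $\widehat{\Psi}^0(\Psi^1(E))$ is a $\mu$-semistable object with $\deg_{G_1}=0$ satisfying $\WIT_0$, hence vanishes by the elliptic analogue of Lemma \ref{lem:E=0}. Irreducibility of $E$ then forces $\widehat{\Psi}^2(\Psi^2(E))=0$ or $\widehat{\Psi}^1(\Psi^1(E))=0$; the first would give $\Psi^2(E)=0$ and hence $E=0$, which is absurd, so the second holds and $\Psi^1(E)=0$. Thus $\WIT_2$ holds for every $0$-dimensional $E$.

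For the $G_2$-twisted semi-stability of $\Psi^2(E)$ I would argue directly for arbitrary $0$-dimensional $E$. Given an exact sequence $0 \to F_1 \to \Psi^2(E) \to F_2 \to 0$ in $\Per(X'/Y')$ testing stability, I apply $\widehat{\Psi}$ and invoke the spectral sequence of Lemma \ref{lem:spectral2}, noting that $\widehat{\Psi}$ recovers $E$ from $\Psi^2(E)$ since $\WIT_2$ holds. The slope dichotomy forces $\widehat{\Psi}^1(F_2)\cong\widehat{\Psi}^0(F_1)\in\overline{\frak F}_1$ and $\widehat{\Psi}^2(F_2)\in\overline{\frak T}_1$, so $\deg_{G_1}(\widehat{\Psi}(F_2))\ge0$; comparing with $\deg_{G_1}(\widehat{\Psi}(F_2))=-\deg_{G_2}(F_2)$ shows $\widehat{\Psi}^1(F_2)$ is $\mu$-semistable of degree $0$, hence vanishes by the vanishing lemma, and the residual inequality on $\chi(G_1,-)$ translates back to the $G_2$-twisted semi-stability inequality for $\Psi^2(E)$. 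When $E$ is irreducible the same chase, using that $E$ has no nontrivial subobject, forces $F_1=0$ or $F_2=0$, giving $G_2$-twisted stability of $\Psi^2(E)=\Psi(E)[2]$.

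The hard part will be supplying the elliptic analogues of the slope dichotomy (Lemma \ref{lem:slope3}) and the vanishing lemma (Lemma \ref{lem:E=0}) for the present functors $\Psi,\widehat{\Psi}$. On the $K3$ surface these rested on the isotropic class $v_0$ and its induced pairing; here the analogous role is played by ${\bf e}$, the fibre class $f$, and the negative-definite lattice $M$ of Lemma \ref{lem:elliptic:lattice}, so one must verify that $\deg_{G_1}$ (taken against $H$, with $\chi(G_1,{\bf e})=0$) is carried to $-\deg_{G_2}$ under $\Psi$ up to these normalizations, and that the Harder--Narasimhan and torsion-pair formalism behaves as in Definition \ref{defn:category}. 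Once this degree-preservation and the two auxiliary statements are established, the argument is formally identical to the $K3$ case; I expect the genuine work to lie in the bookkeeping of ranks and fibre degrees rather than in any new geometric input, together with checking that the identifications $A_{ij}'=\Psi(E_{ij})[2]$ of Lemma \ref{lem:elliptic:irreducible-obj} make the stable factors behave as required.
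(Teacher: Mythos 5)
Your reading of the statement (taking $E$ to be a $0$-dimensional object of ${\cal C}$, so that $\Psi(E)$ makes sense) is the intended one, and the stability half of your plan is essentially the paper's: one applies $\widehat{\Psi}$ to a sequence $0\to F_1\to\Psi^2(E)\to F_2\to 0$ with $\chi(G_2,F_2)\le 0$, gets $\widehat{\Psi}^0(F_1)=0$ from $F_1\subset\Psi^2(E)$, and closes with the identity $0\ge\chi(G_2,F_2)=\chi(\widehat{\Psi}(F_2),L_1[-1])=(H,c_1(\widehat{\Psi}^2(F_2)))\ge 0$; note the exchange here is $\chi(G_2,\cdot)\leftrightarrow(H,c_1(\cdot))$ via Lemma \ref{lem:elliptic-relation1}, not an inequality on $\chi(G_1,\cdot)$ as you wrote. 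For the $\WIT_2$ half the paper takes a much shorter route than your transplant of Lemma \ref{lem:WIT-simple}: since ${\cal E}_{|\{x'\}\times X}$ and the $E_{ij}$ are purely $1$-dimensional objects of ${\cal C}$, the vanishing $\Hom(E,{\cal E}_{|\{x'\}\times X})=\Hom(E,E_{ij})=0$ makes $\Psi^1(E)$ torsion free, while ${\bf R}\Hom(E,{\cal E}_{|\{x'\}\times X})=0$ whenever ${\frak p}^{-1}({\frak p}(x'))$ misses $\Supp(E)$ makes $\Psi^1(E)$ a torsion object supported on finitely many fibres; hence $\Psi^1(E)=0$ with no spectral sequence at all.

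The one step of your argument that does not survive the transplant is ``the first would give $\Psi^2(E)=0$ and hence $E=0$, which is absurd.'' In Lemma \ref{lem:WIT-simple} the absurdity comes from $\chi(G_1,E)>0$ together with the fact that on a $K3$ the objects ${\cal E}_{|\{x'\}\times X}$ have positive rank, so $\rk\Psi(E)=\chi(E,{\cal E}_{|\{x'\}\times X})>0$ is incompatible with $\Psi(E)$ being concentrated in degree $1$. Here ${\cal E}_{|\{x'\}\times X}$ is a sheaf on a fibre, so $\rk\Psi(E)=\chi(E,{\cal E}_{|\{x'\}\times X})=0$ for every $0$-dimensional $E$, the rank argument collapses, and $\Psi^2(E)=0$ formally gives only $E\cong\widehat{\Psi}^1(\Psi^1(E))$, not $E=0$. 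The contradiction can still be extracted, but from the $\widehat{H}$-degree rather than the rank: $\chi(G_1,E)=-\chi(\Psi(E)[2],L_2)=(c_1(\Psi(E)[2]),\widehat{H})$, and if $\Psi(E)=\Psi^1(E)[-1]$ this equals $-(c_1(\Psi^1(E)),\widehat{H})\le 0$ because every object of $\Per(X'/Y')$ has non-negative $\widehat{H}$-degree, contradicting $\chi(G_1,E)>0$ for a nonzero $0$-dimensional object. So your plan is repairable, but as written this step is a gap, and it lies exactly in the rank-versus-degree bookkeeping you deferred; the paper's torsion/torsion-free argument avoids it altogether.
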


\begin{proof}
It is sufficient to prove the claim for all
irreducible objects $E$ of ${\cal C}$.
Since ${\cal E}_{|\{ x' \} \times X}$ and
$E_{ij}$ are purely 1-dimensional objects of ${\cal C}$,
$\Hom(E,{\cal E}_{|\{ x' \} \times X})=
\Hom(E,E_{ij})=0$
for all $x' \in X'$ and $i,j$.
Hence $\Psi^1(E)$ is a torsion free object of
${\frak C}_2$.
Since $\Hom(E,{\cal E}_{|\{ x' \} \times X}[1])=0$
if $\Supp(E) \cap {\frak p}^{-1}({\frak p}(x')) =\emptyset$,
$\Psi^1(E)=0$. Therefore
$\WIT_2$ holds for all 0-dimensional objects of $\Per(X'/Y')$.

For the $G_2$-twisted stability of $\Psi(E)[2]$, 
we first note that
$\chi(G_2,\Psi(E)[2])=
\chi(\Psi(L_1)[1],\Psi(E)[2])=
\chi(E,L_1[1])=0$.
Assume that there is an exact sequence
\begin{equation}
0 \to F_1 \to \Psi^2(E) \to F_2 \to 0
\end{equation}
such that $0 \ne F_1 \in \Per(X'/Y')$ and
$F_2 \in \Per(X'/Y')$ with 
$\chi(G_2,F_2) \leq 0$.
Applying $\widehat{\Psi}$ to this exact sequence, 
we get a long exact sequence
\begin{equation}\label{eq:elliptic-Huybrechts}
\begin{CD}
0 @>>> \widehat{\Psi}^0(F_2) @>>> 
0 @>>> 
\widehat{\Psi}^0(F_1)\\
@>>>\widehat{\Psi}^1(F_2) @>>> 
0 @>>> 
\widehat{\Psi}^1(F_1)\\
@>>> \widehat{\Psi}^2(F_2) @>>> 
E @>>> 
\widehat{\Psi}^2(F_1) @>>> 0.
\end{CD}
\end{equation}
Since $\widehat{\Psi}^0(F_1)=0$, $\WIT_2$ holds for $F_2$.
Since 
$0 \geq \chi(G_2,F_2)=
\chi(\widehat{\Psi}(F_2),\widehat{\Psi}(G_2))=
\chi(\widehat{\Psi}(F_2),L_1[-1])=
(H,c_1(\widehat{\Psi}^2(F_2))) \geq 0$,
we get $\chi(G_2,F_2)=0$ and 
$\widehat{\Psi}^2(F_2)$ is a 0-dimensional object.
Then $\widehat{\Psi}^1(F_1)$ is also 0-dimensional.
Since $E$ is an irreducible object of ${\frak C}_1$,
we have (i) $\widehat{\Psi}^2(F_1)=0$ or
(ii) $\widehat{\Psi}^2(F_1) \cong E$.
Since $\WIT_2$ holds for $\widehat{\Psi}^1(F_1)$ with respect to
$\Psi$, the first case does not hold.
If $\widehat{\Psi}^2(F_1) \cong E$, then
$\widehat{\Psi}^1(F_1) \cong \widehat{\Psi}^2(F_2)$.
Since $\widehat{\Psi}^0(F_1)=0$,
Lemma \ref{lem:spectral2} implies that
$\Psi^2(\widehat{\Psi}^1(F_1))=0$, which implies that
$F_2=\Psi^2(\widehat{\Psi}^2(F_2))=0$.
Therefore $\Psi^2(E)$ is $G_2$-twisted stable.
\end{proof}

\begin{thm}\label{thm:elliptic-duality}
We set ${\bf f}:=\tau({\cal E}_{|X' \times \{x \}})$.
Then
${\cal E}_{|X' \times \{x \}}$ is $G_2-\Psi(\beta)$-twisted stable 
for all $x \in X$ and 
we have an isomorphism
$X \to M_{\widehat{H}}^{G_2-\Psi(\beta)}({\bf f})$ by sending $x \in X$
to ${\cal E}_{|X' \times \{x \}} \in 
M_{\widehat{H}}^{G_2-\Psi(\beta)}({\bf f})$.
\end{thm}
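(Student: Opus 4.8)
The plan is to transcribe the proof of the $K3$ Fourier-Mukai duality Theorem \ref{thm:duality} (1), replacing the category $\Per(X'/Y')^D$, the local projective generator $G_3$ and the Mukai vector $w_0^{\vee}$ used there by their elliptic analogues $\Per(X'/Y')$, $G_2$ and ${\bf f}$, which have already been constructed in subsection \ref{subsect:elliptic:FM-duality}. The two essential inputs are the contravariant functor $\Psi = \Phi(-)^{\vee}[-2]$ together with its partner $\widehat{\Psi}$, and Lemma \ref{lem:elliptic-WIT-simple}, which plays exactly the role of Lemma \ref{lem:WIT-simple} in the $K3$ argument. Throughout I would use the summary computation preceding Lemma \ref{lem:elliptic-WIT-simple}, in particular the identity $\Psi({\Bbb C}_x[-2]) = {\cal E}_{|X' \times \{x\}}$, and Lemma \ref{lem:elliptic:irreducible-obj} describing the irreducible objects $A_{ij}'$ of $\Per(X'/Y')$.

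First I would fix $x \in X$ and apply Lemma \ref{lem:elliptic-WIT-simple} to the point object ${\Bbb C}_x$, which lies in ${\cal C}$ by Assumption \ref{ass:stability} and Lemma \ref{lem:tilting:irreducible}: $\WIT_2$ holds for ${\Bbb C}_x$ with respect to $\Psi$, and since $\Psi^2({\Bbb C}_x) = {\cal E}_{|X' \times \{x\}}$ the object ${\cal E}_{|X' \times \{x\}}$ is a $G_2$-twisted semistable object of $\Per(X'/Y')$ with class ${\bf f}$. Next I would upgrade semistability to $G_2 - \Psi(\beta)$-twisted stability. Given a destabilizing exact sequence $0 \to F_1 \to {\cal E}_{|X' \times \{x\}} \to F_2 \to 0$ in $\Per(X'/Y')$ with $\deg_{G_2}(F_1) = \chi(G_2, F_1) = 0$, the proof of Lemma \ref{lem:elliptic-WIT-simple} shows that $\WIT_2$ holds for both $F_1$ and $F_2$, so applying $\widehat{\Psi}$ and using the Fourier-Mukai spectral sequence returns a short exact sequence $0 \to \widehat{\Psi}^2(F_2) \to {\Bbb C}_x \to \widehat{\Psi}^2(F_1) \to 0$. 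The $\beta$-stability of ${\Bbb C}_x$ then forces the slope of $F_1$ relative to the correction $\Psi(\beta)$ to be strict, exactly as in Theorem \ref{thm:duality} (1); concretely $\chi(-\Psi(\beta), F_1) < 0$.

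This gives $G_2 - \Psi(\beta)$-twisted stability of ${\cal E}_{|X' \times \{x\}}$ for every $x \in X$, hence a classifying morphism $\phi \colon X \to M_{\widehat{H}}^{G_2 - \Psi(\beta)}({\bf f})$ defined by the family ${\cal E}$ regarded as a family over $X$ of objects on $X'$. I would then conclude, by the same standard argument as for Theorem \ref{thm:duality} (1), that $\phi$ is an isomorphism: it is injective because $\Phi$ (equivalently $\widehat{\Phi}$) is an equivalence, and it is a bijective morphism between smooth projective surfaces of the same dimension, hence an isomorphism.

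The main obstacle I expect is the stability-upgrade step, where the $\beta$-stability of the skyscraper ${\Bbb C}_x$ must be translated into a strict numerical inequality for the twist $\Psi(\beta)$ after passage through the contravariant $\widehat{\Psi}$. Here the degree bookkeeping differs slightly from the $K3$ case, because the dual objects ${\cal E}_{|X' \times \{x\}}$ are $1$-dimensional (fiber-supported over ${\frak q} \colon X' \to C$) rather than $2$-dimensional; consequently I would need the elliptic analogue of Lemma \ref{lem:deg-Psi} to bound $\deg_{G_1}(\widehat{\Psi}^i(F_j))$, together with the negative (semi)definiteness of $({\Bbb Z}\tau({\Bbb C}_x) + {\Bbb Z}\tau(G_1) + {\Bbb Z}{\bf e})^{\perp}$ from Lemma \ref{lem:elliptic:lattice}, in order to control the classes of $\widehat{\Psi}^2(F_1)$ and $\widehat{\Psi}^2(F_2)$ and so pin down the subobject structure of ${\Bbb C}_x$. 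Once this inequality is secured, the construction of $\phi$ and its bijectivity are routine.
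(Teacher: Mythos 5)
Your proposal is correct and follows essentially the same route as the paper: Lemma \ref{lem:elliptic-WIT-simple} gives $G_2$-twisted semistability of ${\cal E}_{|X' \times \{x\}} = \Psi^2({\Bbb C}_x)$, a destabilizing subobject $F_1$ with $\chi(G_2,F_1)=0$ is sent by $\widehat{\Psi}$ to a quotient of ${\Bbb C}_x$, and the $\beta$-stability of ${\Bbb C}_x$ then yields the strict inequality $\chi(\Psi(\beta),F_1)>0$, exactly as in the paper's argument. The only cosmetic difference is that the paper pins down $F_1$ via its $S$-equivalence class $\oplus_j \Psi^2(A_{ij})^{\oplus b_{ij}}$ rather than via the $\WIT_2$ spectral-sequence bookkeeping you describe, and both treatments leave the surjectivity of $\phi$ to the "standard argument".
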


\begin{proof}
By Lemma \ref{lem:elliptic-WIT-simple},
${\cal E}_{|X' \times \{x\} }$ is $G_2$-twisted semi-stable.
If ${\cal E}_{|X' \times \{x\} }$ is not $G_2$-twisted stable,
then ${\cal E}_{|X' \times \{x\} }$ is $S$-equivalent to
$\oplus_j \Psi^2(A_{ij})^{\oplus a_{ij}}$.
Let $F_1 \ne 0$ be a $G_2$-twisted stable subobject of 
${\cal E}_{|X' \times \{x\} }$ such that
$\chi(G_2,F_1)=0$.
Then $F_1$ is $S$-equivalent to
$\oplus_j \Psi^2(A_{ij})^{\oplus b_{ij}}$ and
$\widehat{\Psi}(F_1)[2]$ is a quotient object of
${\Bbb C}_x$.
Since ${\Bbb C}_x$ is $\beta$-stable,
$0<\chi(\beta,\widehat{\Psi}(F_1))=
\chi(\Psi(\beta),F_1)$.
Therefore
${\cal E}_{|X' \times \{x\} }$ is $G_2-\Psi(\beta)$-twisted stable.
Then we have an injective morphism 
$\phi:X \to \overline{M}_{\widehat{H}}^{G_2-\Psi(\beta)}({\bf f})$
by sending $x \in X$ to
${\cal E}_{|X' \times \{x \}}$.
By a standard argument, we see that $\phi$ is an isomorphism. 
\end{proof}

\subsection{Tiltings of ${\cal C}$, $\Per(X'/Y')$ and their
equivalence.}

We set ${\frak C}_1:={\cal C}$ and ${\frak C}_2:=\Per(X'/Y')$.
In this subsection,
we define tiltings $\overline{\frak A}_1$, $\widehat{\frak A}_2$
of ${\frak C}_1$, ${\frak C}_2$ and show that
$\Psi$ induces a (contravariant) equivalence between them.
We first define the relative twisted degree of $E \in {\frak C}_i$ by
$\deg_{G_i}(E):=(c_1(G_i^{\vee} \otimes E),f)$, 
and define
$\mu_{\max,G_i}(E)$, $\mu_{\min,G_i}(E)$ in a similar way.
\begin{defn}
\begin{enumerate}
\item[(1)]
Let $\overline{\frak T}_i$ be the full subcategory of ${\frak C}_i$ 
consisting of objects $E$ such that
(i) $E$ is a torsion object or 
(ii) 
$E$ is torsion free and 
$\mu_{\min,G_i}(E) \geq 0$.
\item[(2)]
Let $\overline{\frak F}_i$ be the full subcategory of ${\frak C}_i$ 
consisting of objects $E$ such that
(i) $E=0$ or 
(ii) 
$E$ is torsion free and $\mu_{\max,G_i}(E) < 0$.
\end{enumerate}
\end{defn}

\begin{defn}
\begin{enumerate}
\item[(1)]
Let $\widehat{\frak T}_i$ be the full subcategory of ${\frak C}_i$ 
consisting of objects $E$ such that
$\Supp(E)$ is contained in fibers and
there is no quotient object $E \to E'$ with
$\chi(G_i,E')<0$.
\item[(2)]
We set 
\begin{equation}
\begin{split}
\widehat{\frak F}_i:= &(\widehat{\frak T}_i)^{\perp}\\
=& \{E \in {\frak C}_i| \Hom(E',E)=0, E' \in \widehat{\frak T}_i \}.
\end{split}
\end{equation}
\end{enumerate}
\end{defn}

\begin{rem}
We have $\widehat{\frak F}_i \supset \overline{\frak F}_i$
and $\widehat{\frak T}_i \subset \overline{\frak T}_i$.
\end{rem}

\begin{defn}
$(\overline{\frak T}_i,\overline{\frak F}_i)$ and 
$(\widehat{\frak T}_i,\widehat{\frak F}_i)$
are torsion pairs of ${\frak C}_i$.
We denote the tiltings by
$\overline{\frak A}_i$ and $\widehat{\frak A}_i$ respectively. 
\end{defn}
Then we have the following equivalence:
\begin{prop}\label{prop:elliptic-equiv}
$\Psi$ induces an equivalence
$\overline{\frak A}_1[-2] \to (\widehat{\frak A}_2)_{op}$.
\end{prop}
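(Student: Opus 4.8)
The plan is to follow the template of the $K3$ case, Proposition \ref{prop:equiv-Psi}, replacing the $\mu$-stability torsion pairs there by the fiber-degree pair $(\overline{\frak T}_1,\overline{\frak F}_1)$ on $X$ and the support-plus-$\chi$ pair $(\widehat{\frak T}_2,\widehat{\frak F}_2)$ on $X'$. The contravariant transform $\Psi=\Phi^{\vee}[-2]$ is an anti-equivalence of the bounded derived categories with partner $\widehat{\Psi}$, so the whole argument reduces to one bookkeeping lemma describing how the cohomology objects $\Psi^i(E)\in\Per(X'/Y')$ distribute over $\widehat{\frak T}_2$ and $\widehat{\frak F}_2$ according to whether $E$ lies in $\overline{\frak T}_1$ or $\overline{\frak F}_1$. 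Once that lemma is in place, the formal part is immediate: for $E\in\overline{\frak A}_1$ I would apply $\Psi$ to the defining sequence $0\to H^{-1}(E)[1]\to E\to H^0(E)\to 0$ (with $H^{-1}(E)\in\overline{\frak F}_1$, $H^0(E)\in\overline{\frak T}_1$), obtain the exact triangle $\Psi(H^0(E))[2]\to\Psi(E[-2])\to\Psi(H^{-1}(E))[1]\to\Psi(H^0(E))[3]$, and read off from the associated long exact sequence that $\Psi^{-1}(E[-2])\in\widehat{\frak F}_2$ and $\Psi^0(E[-2])\in\widehat{\frak T}_2$, i.e. $\Psi(E[-2])\in(\widehat{\frak A}_2)_{op}$; the reverse direction follows by the same computation applied to $\widehat{\Psi}$, exactly as in the proof of Proposition \ref{prop:equiv-Psi}.

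The bookkeeping lemma is the elliptic analogue of Lemma \ref{lem:equiv}: I would prove that $E\in\overline{\frak T}_1$ forces $\Psi^0(E)=0$, $\Psi^1(E)\in\widehat{\frak F}_2$, $\Psi^2(E)\in\widehat{\frak T}_2$, while $E\in\overline{\frak F}_1$ forces $\Psi^2(E)=0$, $\Psi^1(E)\in\widehat{\frak T}_2$, $\Psi^0(E)\in\widehat{\frak F}_2$. The tools are: (i) vanishing of $\Hom(E,{\cal E}_{|\{x'\}\times X})$ for $E$ with $\mu_{\min,G_1}(E)\ge 0$ and of $\Hom({\cal E}_{|\{x'\}\times X},E)$ for $E$ with $\mu_{\max,G_1}(E)<0$ (the elliptic analogues of Lemma \ref{lem:vanish} and Lemma \ref{lem:vanish2}), which kill $\Psi^0$ resp. $\Psi^2$ through the base-change description of $H^0(\Psi(E))$ and $H^2(\Psi(E))$ as in Lemma \ref{lem:Perverse}; (ii) the $\WIT_2$ property of fiber-supported objects established in Lemma \ref{lem:elliptic-WIT-simple}; and (iii) the degree/rank interchange under $\Psi$, namely $\deg_{G_1}(E)=-\deg_{G_2}(\Psi(E))$ together with $\chi(G_2,\Psi(E)[2])=\chi(E,L_1[1])$, in the spirit of Proposition \ref{prop:deg-preserve}. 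Membership of $\Psi^2(E)$ in $\widehat{\frak T}_2$ I would verify by pushing a hypothetical quotient $\Psi^2(E)\to F$ of negative $\chi$ back through $\widehat{\Psi}$ and deriving a contradiction with $E\in\overline{\frak T}_1$ via the spectral sequence Lemma \ref{lem:spectral2}; the complementary statements for $\widehat{\frak F}_2=(\widehat{\frak T}_2)^{\perp}$ then follow by adjunction from the $\widehat{\frak T}_2$-membership just proved.

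The main obstacle will be matching the transform to the \emph{non-slope} torsion pair $(\widehat{\frak T}_2,\widehat{\frak F}_2)$: unlike the $K3$ situation, where both source and target classes were defined by $\mu$-stability and Lemma \ref{lem:equiv} could be argued purely by slope inequalities, here $\widehat{\frak T}_2$ is defined by the condition that $E$ be supported on fibers with no quotient of negative $G_2$-twisted Euler characteristic, and $\widehat{\frak F}_2$ is merely its right-orthogonal. The delicate point is thus to show that the fiber-degree condition $\mu_{\min,G_1}(E)\ge 0$ on $X$ is converted by $\Psi$ into exactly this support-plus-$\chi$ condition on $X'$. I expect this to require treating separately the torsion part of $E$ supported in fibers (which transforms cleanly by the $\WIT_2$ statement of Lemma \ref{lem:elliptic-WIT-simple} and lands in $\widehat{\frak T}_2$) and the torsion-free part of nonnegative minimal fiber-slope, then checking that no quotient of $\Psi^2(E)$ can acquire negative $\chi$ without producing, through $\widehat{\Psi}$ and Lemma \ref{lem:spectral2}, a fiber-supported subobject of $E$ of negative degree, contradicting $E\in\overline{\frak T}_1$. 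Verifying that the orthogonality $\widehat{\frak F}_2=(\widehat{\frak T}_2)^{\perp}$ is compatible with the images of $\overline{\frak F}_1$ under $\Psi^0,\Psi^1$ is the step where the elliptic structure, in particular the stability of $G_{1|{\frak p}^{-1}(t)}$ on the generic fiber arranged just before Lemma \ref{lem:elliptic-WIT-simple}, must be used most carefully.
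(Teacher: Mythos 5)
Your proposal is correct and follows the same route as the paper: the paper reduces the statement to showing $\Psi(\overline{\frak T}_1[-2]),\Psi(\overline{\frak F}_1[-1])\subset(\widehat{\frak A}_2)_{op}$ and proves exactly the bookkeeping lemma you describe (Lemma \ref{lem:elliptic-equiv}), with the same distribution of the $\Psi^i(E)$ over $\widehat{\frak T}_2$ and $\widehat{\frak F}_2$, the same vanishing inputs, and the same strategy of pushing a putative destabilizing quotient of $\Psi^1(E)$ back through $\widehat{\Psi}$ and contradicting the degree conditions via Lemma \ref{lem:spectral2}.
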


For the proof of this proposition,
we need the following properties.
\begin{lem}\label{lem:elliptic-vanish}
\begin{enumerate}
\item[(1)]
Assume that $E \in \overline{\frak T}_1$.
Then $\Hom(E,{\cal E}_{|\{x' \} \times X})=0$
for a general $x' \in X'$.
\item[(2)]
Assume that $E \in \widehat{\frak F}_1$.
Then $\Hom({\cal E}_{|\{x' \} \times X},E)=
\Hom(E_{ij},E)=0$
for all $x' \in X'$.
In particular if $E \in  \overline{\frak F}_1$, then
$\Hom({\cal E}_{|\{x' \} \times X},E)=\Hom(E_{ij},E)=0$
for all $x' \in X'$.
\end{enumerate}
\end{lem}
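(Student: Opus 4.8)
The two assertions are the elliptic analogues of Lemmas \ref{lem:vanish} and \ref{lem:vanish2}, and I would prove them in the same spirit, the only new feature being that the relevant degree is now the fibre degree $\deg_{G_1}(\;\cdot\;)=(c_1(G_1^\vee\otimes\;\cdot\;),f)$ and that the distinguished object ${\cal E}_{|\{x'\}\times X}$ is a $G_1+\alpha$-twisted stable, hence $G_1$-twisted semistable, pure $1$-dimensional sheaf supported on the fibre ${\frak p}^{-1}({\frak p}(x'))$. First I would record the basic facts about this object: its sheafness and purity follow from the remark after Definition \ref{defn:Simpson-stability}, since $\dim{\cal E}_{|\{x'\}\times X}=1>0=\dim\pi(Z)$ forces $H^{-1}=0$ and a $G_1$-twisted stable $1$-dimensional sheaf is pure; moreover $\chi(G_1,{\cal E}_{|\{x'\}\times X})=\chi(G_1,{\bf e})=0$ and $\deg_{G_1}({\cal E}_{|\{x'\}\times X})=0$, so its reduced $G_1$-twisted Hilbert polynomial (well defined by Lemma \ref{lem:Hilbert-Poly}) equals $n$.

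For (1) the plan is to reduce to $G_1$-twisted stable pieces. Using the torsion pair of ${\cal C}$ together with the Harder--Narasimhan and Jordan--H\"older filtrations for $G_1$-twisted stability, and the left exactness of $\Hom(\;\cdot\;,{\cal E}_{|\{x'\}\times X})$ in the first variable, it suffices to treat a $G_1$-twisted stable subquotient $E'$ of $E$. If $\rk E'>0$, then the reduced $G_1$-twisted Hilbert polynomial of $E'$ has degree $2$ and therefore strictly dominates that of the $1$-dimensional sheaf ${\cal E}_{|\{x'\}\times X}$; comparing reduced Hilbert polynomials of a stable and a semistable object then forces every morphism $E'\to{\cal E}_{|\{x'\}\times X}$ to vanish, for all $x'$. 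If $E'$ is $1$-dimensional, then its vertical support lies over finitely many points of $C$, while its horizontal support can only map to the pure object ${\cal E}_{|\{x'\}\times X}$ with $0$-dimensional image, which must then be zero; hence $\Hom(E',{\cal E}_{|\{x'\}\times X})\neq 0$ forces ${\frak p}(x')$ to lie in the finite image in $C$ of the vertical support of $E'$. Since $E$ has only finitely many such stable pieces, $\Hom(E,{\cal E}_{|\{x'\}\times X})=0$ for $x'$ outside a proper closed subset of $X'$, that is, for a general $x'$.

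For (2) I would show that both ${\cal E}_{|\{x'\}\times X}$ and the objects $E_{ij}$ of Corollary \ref{cor:desing} lie in $\widehat{\frak T}_1$. Each is supported on fibres of ${\frak p}$: the $E_{ij}$ are supported on the contracted curves inside $Z_i$, which are vertical, and ${\cal E}_{|\{x'\}\times X}$ is supported on a single fibre. Each is $G_1$-twisted semistable with $\chi(G_1,\;\cdot\;)=0$ (for the $E_{ij}$ this is because they are Jordan--H\"older factors of a properly $G_1$-twisted semistable object of invariant ${\bf e}$, hence share its reduced Hilbert polynomial $n$), so by semistability every proper quotient has $\chi(G_1,\;\cdot\;)\ge 0$ while the whole object has $\chi(G_1,\;\cdot\;)=0$; thus neither admits a quotient with $\chi(G_1,\;\cdot\;)<0$. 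Since $\widehat{\frak F}_1=(\widehat{\frak T}_1)^{\perp}$ by definition, $E\in\widehat{\frak F}_1$ gives $\Hom({\cal E}_{|\{x'\}\times X},E)=\Hom(E_{ij},E)=0$ for all $x'$ at once. The concluding ``in particular'' statement is then immediate from the inclusion $\overline{\frak F}_1\subset\widehat{\frak F}_1$ recorded before the lemma.

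The main obstacle I expect is the bookkeeping in (1): the reduction to stable pieces must be carried out inside the tilted category ${\cal C}$, where objects are genuine complexes and both the ``torsion'' decomposition and the dimension filtration have to be read off from the $G_1$-twisted Hilbert polynomial rather than from naive sheaf torsion, and one must check that the $1$-dimensional stable pieces are indeed supported over finitely many points of $C$. Once the pieces are correctly separated into positive-rank objects (degree-$2$ Hilbert polynomial) and purely $1$-dimensional objects with vertical support over a finite subset of $C$, both the vanishing and the ``general $x'$'' clause follow from the two mechanisms above. Part (2), by contrast, is formal once membership in $\widehat{\frak T}_1$ is established.
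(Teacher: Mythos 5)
Part (2) of your proposal is correct and is essentially what the paper intends (the paper writes ``We only prove (1)'' and treats (2) as formal): once ${\cal E}_{|\{x'\}\times X}$ and the $E_{ij}$ are shown to lie in $\widehat{\frak T}_1$ --- fibre support together with $G_1$-twisted semistability and $\chi(G_1,\cdot)=0$ --- the vanishing is literally the definition of $\widehat{\frak F}_1=(\widehat{\frak T}_1)^{\perp}$, and the ``in particular'' clause is the inclusion $\overline{\frak F}_1\subset\widehat{\frak F}_1$ recorded in the remark before the lemma.

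Part (1), however, has a genuine gap in the positive-rank case. You claim that for a $G_1$-twisted stable piece $E'$ with $\rk E'>0$ one has $\Hom(E',{\cal E}_{|\{x'\}\times X})=0$ \emph{for all} $x'$ because the reduced Hilbert polynomial of the $2$-dimensional object dominates that of the $1$-dimensional one. That comparison only kills homomorphisms between semistable objects of the \emph{same} dimension; from a higher-dimensional object to a lower-dimensional one it says nothing (consider ${\cal O}_X\twoheadrightarrow{\cal O}_f$). Concretely, for ${\bf e}=[{\cal O}_f]$ and $G_1={\cal O}_X$, the point $x'$ with ${\cal E}_{|\{x'\}\times X}\cong{\cal O}_f$ has $\Hom({\cal O}_X,{\cal E}_{|\{x'\}\times X})\ne 0$, so the vanishing is only generic in $x'$ --- which is exactly why the lemma is stated for a general $x'$. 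Note also that your rank-positive argument never invokes the hypothesis $E\in\overline{\frak T}_1$, i.e.\ $\mu_{\min,G_1}(E)\ge 0$ for the \emph{fibre} degree, and the statement fails without it (a line bundle of negative fibre degree admits nonzero maps to ${\cal E}_{|\{x'\}\times X}$ for generic $x'$). The paper's mechanism is restriction to the fibre $f={\frak p}^{-1}({\frak q}(x'))$: since $\chi({\cal O}_f)=0$ one has $\mu_{G_1}(E)=0$ iff $\chi(G_1,E_{|f})=0$; pieces of positive fibre slope restrict on a general $f$ to semistable bundles of positive slope and cannot map to the slope-$0$ semistable sheaf ${\cal E}_{|\{x'\}\times X}$, while for the slope-$0$ part a nonzero map $\varphi:E\to{\cal E}_{|\{x'\}\times X}$ is surjective and exhibits ${\cal E}_{|\{x'\}\times X}$ as a Jordan--H\"older factor of $E_{|f}$, which confines $x'$ to finitely many points of each fibre of ${\frak q}:X'\to C$. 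Your treatment of the purely $1$-dimensional pieces is fine, but the reduction for rank $>0$ must be redone along these lines.
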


\begin{proof}
We only prove (1).
If $\rk E=0$, then obviously the claim holds.
Let $E$ be a torsion free object on $X$
such that $E_{|f}$ is a semi-stable locally free sheaf 
with $\chi(G_1,E_{|f})=0$ for a general $f$.
Then if 
there is a non-zero homomorphism
$\varphi:E \to {\cal E}_{|\{x' \} \times X}$, then
$\varphi$ is surjective and 
$E_{|f}$
is $S$-equivalent to 
${\cal E}_{|\{x' \} \times X} \oplus \ker \varphi$, where
$f={\frak p}^{-1}({\frak q}(x'))$.
Therefore $\Hom(E,{\cal E}_{|\{x' \} \times X})=0$ for a general 
$x' \in {\frak q}^{-1}({\frak p}(f)) \subset Y$.
\end{proof}

\begin{lem}\label{lem:elliptic-Perverse}
Let $E$ be an object of ${\cal C}={\frak C}_1$.
\begin{enumerate}
\item[(1)]
$^p H^i(\Psi(E))=0$ for $i \geq 3$.
\item[(2)]
$H^0(^p H^2(\Psi(E)))=H^2(\Psi(E))$.
\item[(3)]
$^p H^0(\Psi(E)) \subset H^0(\Psi(E))$.
In particular, $^p H^0(\Psi(E))$ is torsion free. 
\item[(4)]
If $\Hom(E,E_{ij}[2])=0$ for all $i,j$
and $\Hom(E,{\cal E}_{|\{x' \} \times X}[2])=0$ for all $x' \in X'$, then
$^p H^2(\Psi(E))=0$.
In particular, if $E \in \widehat{\frak F}_1$,
then $^p H^2(\Psi(E))=0$.
\item[(5)]
If $E$ satisfies $E \in \overline{\frak T}_1$, then
$^p H^0(\Psi(E))=0$.
 
\end{enumerate}
\end{lem}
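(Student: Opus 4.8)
The plan is to run the proof of the $K3$ analogue, Lemma~\ref{lem:Perverse}, which itself rests on the complex representation of Lemma~\ref{lem:Perverse-1}. Since $\Psi(E)=\Phi(E)^{\vee}[-2]$, I would first produce a three term complex for the covariant transform $\Phi=\Phi_{X\to X'}^{{\cal E}^{\vee}}$ and then dualize. Concretely, I resolve $E\in{\cal C}$ in $\Per(X/Y)$ by
\begin{equation}
0\to V_{-2}\to V_{-1}\to V_0\to E\to 0,\quad V_{-k}=G_1(-n_kH)^{\oplus N_k}\ (k=0,1),
\end{equation}
with $n_k\gg 0$ and $V_{-2}$ the resulting kernel, where $G_1$ is the local projective generator of ${\cal C}$. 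Applying $\Phi$ and using the covariant equivalence $\Hom(\Phi(A),\Phi(B))\cong\Hom(A,B)$, I would check that $W_{2-k}:=\Phi(V_{-k})[2]$ are local projective objects of $\Per(X'/Y')^D$: the relevant vanishings $\Hom(V_{-k},E_{ij}[q])=\Ext^q(G_1,E_{ij}(n_k))=0$ and $\Hom(V_{-k},{\cal E}_{|\{x'\}\times X}[q])=0$ for $q\ne 0$ hold for $n_k\gg 0$ because $E_{ij}$ and ${\cal E}_{|\{x'\}\times X}$ are supported in fibres of ${\frak p}$, so their images under $\pi$ are at most one dimensional on $Y$ and Serre vanishing applies; the kernel $V_{-2}$ inherits the same property from the length of the resolution. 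Here $\Phi(E_{ij})[2]$ and $\Phi({\cal E}_{|\{x'\}\times X})[2]$ are the duals of $A_{ij}'$ and ${\Bbb C}_{x'}$, i.e. the irreducible objects of $\Per(X'/Y')^D$. This gives the elliptic analogue of Lemma~\ref{lem:Perverse-1}: $\Phi(E)$ is quasi-isomorphic to a complex $W_\bullet$ of local projective objects of $\Per(X'/Y')^D$ concentrated in perverse degrees $0,1,2$.

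Dualizing, $\Psi(E)=\Phi(E)^{\vee}[-2]$ is represented by $W_\bullet^{\vee}[-2]$, and by Remark~\ref{rem:tilting:dual} each $W_i^{\vee}$ is a local projective object of $(\Per(X'/Y')^D)^D=\Per(X'/Y')$. From this representation parts (1), (2) and (3) follow exactly as in Lemma~\ref{lem:Perverse}: the complex lives in degrees $0$ to $2$, so ${}^pH^i(\Psi(E))=0$ for $i\ge 3$; the top perverse cohomology is a quotient computing $H^2(\Psi(E))$, and the bottom one is a subobject of the torsion free (indeed locally free) object $W_0^{\vee}$, hence is contained in $H^0(\Psi(E))$ and is torsion free.

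For (4) and (5) I would use the identifications $\Psi(E_{ij})[2]=A_{ij}'$ and $\Psi({\cal E}_{|\{x'\}\times X})[2]={\Bbb C}_{x'}$, together with Lemma~\ref{lem:elliptic:irreducible-obj}, by which an object of $\Per(X'/Y')$ that admits no nonzero morphism to any irreducible object $A_{ij}'$ or ${\Bbb C}_{x'}$ must vanish. Since $\Psi(E)$ has perverse amplitude $[0,2]$, the $t$-structure gives $\Hom({}^pH^2(\Psi(E)),A_{ij}')\cong\Hom(\Psi(E),A_{ij}'[-2])$, and the anti-equivalence property $\Hom(\Psi(A),\Psi(B))\cong\Hom(B,A)$ together with Serre duality and $E_{ij}\otimes K_X\cong E_{ij}$ identifies this with $\Hom(E_{ij},E)\cong\Hom(E,E_{ij}[2])^{\vee}$; the pairing against ${\Bbb C}_{x'}$ likewise computes $\Hom({\cal E}_{|\{x'\}\times X},E)$. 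Under the hypothesis of (4) these vanish, so ${}^pH^2(\Psi(E))=0$; and for $E\in\widehat{\frak F}_1$ the required vanishing $\Hom(E_{ij},E)=\Hom({\cal E}_{|\{x'\}\times X},E)=0$ is exactly Lemma~\ref{lem:elliptic-vanish}~(2). Finally (5) follows from (3) and Lemma~\ref{lem:elliptic-vanish}~(1), as in the $K3$ case.

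The main obstacle will be the Serre duality bookkeeping. Unlike on a $K3$ surface, $K_X$ and $K_{X'}$ are nontrivial pull-backs from $C$, so the clean $\Hom$-identities driving Lemma~\ref{lem:Perverse} are not automatic. The point to be careful about is that every auxiliary object entering the argument, namely the irreducibles $E_{ij}$ and the fibre family ${\cal E}_{|\{x'\}\times X}$, is supported in fibres of ${\frak p}$, on which $K_X$ restricts trivially, so $E_{ij}\otimes K_X\cong E_{ij}$ and ${\cal E}_{|\{x'\}\times X}\otimes K_X\cong{\cal E}_{|\{x'\}\times X}$, and the twists drop out of both the Serre pairings and the higher direct image estimates. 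A secondary nuisance is that the elliptic Fourier--Mukai transform interchanges the fibre and base directions, so $\Psi$ lands in $\Per(X'/Y')$ while its covariant partner $\Phi$ lands in the dual category $\Per(X'/Y')^D$; keeping this duality straight (via Remark~\ref{rem:tilting:dual}) when passing from the complex for $\Phi$ to the one for $\Psi$ is where most of the care is needed.
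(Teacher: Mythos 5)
Your argument is correct, but for parts (1)--(3) it takes a genuinely different route from the paper. The paper's own proof in the elliptic setting never builds the three-term complex of local projectives: it deduces (1), (2) and (4) directly from the contravariant Hom-identities
$\Hom(E,E_{ij}[p])\cong\Hom(\Psi(E)[p],\Psi(E_{ij})[2])^{\vee}$ and
$\Hom(E,{\cal E}_{|\{x'\}\times X}[p])\cong\Hom(\Psi(E)[p],{\Bbb C}_{x'})^{\vee}$,
combined with Lemma \ref{lem:elliptic:irreducible-obj} (an object of $\Per(X'/Y')$ vanishes if and only if it admits no nonzero morphism to any $A_{ij}'$ or ${\Bbb C}_{x'}$), declares (3) obvious, and gets (5) from (3) and Lemma \ref{lem:elliptic-vanish} (1) exactly as you do. Your transplant of Lemma \ref{lem:Perverse-1} (2) does work: the vanishings $\Ext^q(G_1,E_{ij}(n_kH))=0$ for $q\ne 0$, $n_k\gg 0$ reduce via $R^1\pi_*(G_1^{\vee}\otimes E_{ij})=0$ to Serre vanishing on $Y$ (this is the precise form of your ``images under $\pi$'' remark, and it is needed because $H$ is only nef on $X$), and $\Psi(V_{-k})=(\Phi(V_{-k})[2])^{\vee}$ is indeed a locally free local projective object of $\Per(X'/Y')$. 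What your route costs is the introduction of $\Per(X'/Y')^D$ in the elliptic setting, which the paper avoids entirely; what it buys is an honest derivation of (3), since ${}^pH^0(\Psi(E))$ becomes a subobject of the locally free sheaf $\Psi(V_0)$ in $\Per(X'/Y')$ and hence a torsion-free sheaf, whereas the paper leaves this implicit. One small imprecision: $K_X$ does not restrict trivially to the fibres of ${\frak p}$ (it is only a torsion line bundle there); the isomorphism $E_{ij}\otimes K_X\cong E_{ij}$ comes from stability together with $\tau(E_{ij}\otimes K_X)=\tau(E_{ij})$, as in the paper, and that is all your Serre-duality steps actually require.
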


\begin{proof}
By Lemma \ref{lem:elliptic:irreducible-obj},
$E \in \Per(X'/Y')$ is 0 if and only if  
$\Hom(E, A_{ij}')=\Hom(E,{\Bbb C}_{x'})=0$
for all $i$, $j$ and $x' \in X'$. 
Since
\begin{equation}
\begin{split}
\Hom(E,E_{ij}[p])& \cong \Hom(\Psi(E)[p],\Psi(E_{ij})(K_{X'})[2])^{\vee}
\cong \Hom(\Psi(E)[p],\Psi(E_{ij})[2])^{\vee}
,\\
\Hom(E,{\cal E}_{|\{x' \} \times X}[p])
& \cong \Hom(\Psi(E)[p],\Psi({\cal E}_{|\{x' \} \times X})(K_{X'})[2])^{\vee}
\cong \Hom(\Psi(E)[p],\Psi({\cal E}_{|\{x' \} \times X})[2])^{\vee},
\end{split}
\end{equation}
we have (1), (2) and (4).
(3) is obvious.
(5) follows from (3) and Lemma \ref{lem:elliptic-vanish} (1).
\end{proof}

\begin{NB}
\begin{equation}
\begin{split}
\Hom(\Psi(E)[2-p],\Psi(E_{ij})[2])=&
\Hom(\Psi(E)[-p],\Psi(E_{ij}))\\
=& \Hom(\Phi(E)^{\vee}[-p],\Phi(E_{ij})^{\vee})\\
=& \Hom(\Phi(E_{ij})[-p],\Phi(E))\\
=& \Hom(E_{ij}[-p],E).
\end{split}
\end{equation}
\end{NB}

\begin{cor}\label{cor:elliptic-WIT1}
If $E \in \overline{\frak T}_1 \cap \widehat{\frak F}_1$, then
${^p H^i}(\Psi(E))=0$ for $i \ne 1$.
\end{cor}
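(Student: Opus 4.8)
The statement to prove is Corollary \ref{cor:elliptic-WIT1}: if $E \in \overline{\frak T}_1 \cap \widehat{\frak F}_1$, then ${^p H^i}(\Psi(E))=0$ for $i \ne 1$, i.e. $\WIT_1$ holds for $E$ with respect to $\Psi$.

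The plan is to read off the two vanishings $\Psi^0(E)=0$ and $\Psi^2(E)=0$ directly from Lemma \ref{lem:elliptic-Perverse}, using the two hypotheses separately. First I would use the membership $E \in \overline{\frak T}_1$: by Lemma \ref{lem:elliptic-Perverse} (5), this immediately gives $^p H^0(\Psi(E))=0$, so $\Psi^0(E)=0$. Next I would use the membership $E \in \widehat{\frak F}_1$: by Lemma \ref{lem:elliptic-Perverse} (4), the vanishing $^p H^2(\Psi(E))=0$ holds for any $E \in \widehat{\frak F}_1$, so $\Psi^2(E)=0$.

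It remains to handle the cohomology objects $^p H^i(\Psi(E))$ for $i \notin \{0,1,2\}$. For $i \geq 3$ these vanish by Lemma \ref{lem:elliptic-Perverse} (1). For $i \leq -1$, I would invoke the general fact that $\Psi$ carries objects of ${\frak C}_1$ into a range of perverse cohomological degrees bounded below by $0$; concretely, since $\Psi(E)=\Phi(E)^{\vee}[-2]$ and $\Phi(E)$ is represented (as in the proof of Lemma \ref{lem:Perverse}, using the complex $W_\bullet$ of Lemma \ref{lem:Perverse-1} (2)) by a two-term complex of local projective objects of $\Per(X'/Y')$ placed in degrees forcing $\Psi(E)$ to have cohomology only in perverse degrees $0,1,2$, one gets $^p H^i(\Psi(E))=0$ for $i<0$ as well. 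Thus the only possibly nonzero object is $^p H^1(\Psi(E))$, and $\WIT_1$ holds.

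I do not expect any genuine obstacle here: the corollary is a direct bookkeeping consequence of Lemma \ref{lem:elliptic-Perverse}, exactly parallel to how Corollary \ref{cor:reduction}-type $\WIT$ statements are assembled elsewhere in the paper (compare the deduction in Lemma \ref{lem:WIT-simple}). The only point requiring a moment's care is confirming that the perverse cohomology of $\Psi(E)$ is concentrated in degrees $0,1,2$ for \emph{all} $E$, so that ruling out degrees $0$ and $2$ (together with degrees $\geq 3$ and $<0$) leaves only degree $1$; this is precisely what parts (1)--(3) of Lemma \ref{lem:elliptic-Perverse} provide via the explicit two-term representative, so the argument closes with no extra input.
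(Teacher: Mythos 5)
Your proposal is correct and is exactly the deduction the paper intends: the corollary is an immediate combination of Lemma \ref{lem:elliptic-Perverse} (5) (giving $\Psi^0(E)=0$ from $E\in\overline{\frak T}_1$), the ``in particular'' clause of (4) (giving $\Psi^2(E)=0$ from $E\in\widehat{\frak F}_1$), and (1) for $i\geq 3$, with the degrees $i<0$ excluded because $\Psi(E)=\Phi(E)^{\vee}[-2]$ is represented by a complex of local projective objects concentrated in degrees $0,1,2$. The only nit is that this representative is a three-term complex $W_0\to W_1\to W_2$, not two-term, but this does not affect the argument.
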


\begin{lem}\label{lem:elliptic-slope3}
Let $E$ be an object of ${\cal C}$.
\begin{enumerate}
\item[(1)]
If $\WIT_0$ holds for $E$ with respect to $\Psi$,
then $E \in \overline{\frak F}_1$.
\item[(2)]
If $\WIT_2$ holds for $E$ with respect to $\Psi$,
then $E \in \widehat{\frak T}_1$.
\end{enumerate}
\end{lem}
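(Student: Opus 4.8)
The plan is to imitate the proof of the $K3$ analogue, Lemma \ref{lem:slope3}, replacing the numerical estimates used there by the pure vanishing statements of Lemma \ref{lem:elliptic-Perverse}. The mechanism in both cases is the same: write $E$ as an extension arising from a torsion pair, apply the contravariant functor $\Psi$, and read off the perverse cohomology objects from the resulting long exact sequence, using that $\Psi(F)=\Phi(F)^{\vee}[-2]$ is a contravariant equivalence (being the composite of the Fourier--Mukai equivalence $\Phi$ and dualization on ${\bf D}(X')$), so that $\Psi(F)=0$ forces $F=0$. Note that by Lemma \ref{lem:elliptic-Perverse} (1) the functor $\Psi$ has perverse amplitude $[0,2]$, so each short exact sequence yields a nine-term long exact sequence, and by contravariance the terms of the sub-object appear in the \emph{highest} perverse degrees.

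For (1), I would take the decomposition $0 \to E_1 \to E \to E_2 \to 0$ with $E_1 \in \overline{\frak T}_1$ and $E_2 \in \overline{\frak F}_1$ coming from the torsion pair $(\overline{\frak T}_1,\overline{\frak F}_1)$. Applying $\Psi$ gives a long exact sequence with the $\Psi^i(E_2)$, $\Psi^i(E)$, $\Psi^i(E_1)$ appearing in that order. By Lemma \ref{lem:elliptic-Perverse} (5), $\Psi^0(E_1)=0$; and since $\overline{\frak F}_1 \subset \widehat{\frak F}_1$, Lemma \ref{lem:elliptic-Perverse} (4) gives $\Psi^2(E_2)=0$. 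Feeding in the hypothesis $\Psi^1(E)=\Psi^2(E)=0$ (i.e. $\WIT_0$), the long exact sequence immediately forces $\Psi^0(E_1)=\Psi^1(E_1)=\Psi^2(E_1)=0$, that is $\Psi(E_1)=0$; hence $E_1=0$ and $E=E_2 \in \overline{\frak F}_1$.

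For (2) the one genuine subtlety is that the conclusion must land in the \emph{finer} torsion class $\widehat{\frak T}_1$ rather than in $\overline{\frak T}_1$. Accordingly I would decompose with respect to the torsion pair $(\widehat{\frak T}_1,\widehat{\frak F}_1)$, writing $0 \to E_1' \to E \to E_2' \to 0$. The key point is the inclusion $\widehat{\frak T}_1 \subset \overline{\frak T}_1$ recorded in the Remark preceding the definition of these categories: it allows me to apply Lemma \ref{lem:elliptic-Perverse} (5) to $E_1' \in \widehat{\frak T}_1 \subset \overline{\frak T}_1$ to obtain $\Psi^0(E_1')=0$, while $E_2' \in \widehat{\frak F}_1$ gives $\Psi^2(E_2')=0$ by Lemma \ref{lem:elliptic-Perverse} (4). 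Combining these with $\WIT_2$, i.e. $\Psi^0(E)=\Psi^1(E)=0$, the long exact sequence forces $\Psi^0(E_2')=\Psi^1(E_2')=\Psi^2(E_2')=0$, so $\Psi(E_2')=0$ and therefore $E_2'=0$, giving $E=E_1' \in \widehat{\frak T}_1$.

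Once Lemma \ref{lem:elliptic-Perverse} is available, both arguments are essentially formal diagram chases, so I do not expect a serious obstacle. The only steps requiring care are the index bookkeeping in the long exact sequence induced by the contravariant $\Psi$, and, in part (2), the recognition that one must pass to the finer torsion pair and invoke $\widehat{\frak T}_1 \subset \overline{\frak T}_1$ in order for the vanishing in Lemma \ref{lem:elliptic-Perverse} (5) still to apply to the torsion part $E_1'$; this is the conceptual analogue of the refinement ``$\Psi^2(E)$ contains no $0$-dimensional object'' that is needed in the $K3$ case.
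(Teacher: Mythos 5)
Your proof is correct and follows essentially the same route as the paper: decompose $E$ via the relevant torsion pair, apply the contravariant $\Psi$, and chase the nine-term long exact sequence using the vanishing statements of Lemma \ref{lem:elliptic-Perverse}. The only (harmless) difference is in part (2): the paper first shows $E\in\overline{\frak T}_1$ via the coarse pair and then applies Corollary \ref{cor:elliptic-WIT1} to $E_2'\in\overline{\frak T}_1\cap\widehat{\frak F}_1$ to get $\Psi^2(E_2')=0$, whereas you obtain the same vanishing directly from Lemma \ref{lem:elliptic-Perverse} (4) since $E_2'\in\widehat{\frak F}_1$, which slightly shortens the argument.
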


\begin{proof}
For an object $E$ of ${\cal C}$, 
there is an exact sequence
\begin{equation}
0 \to E_1 \to E \to E_2 \to 0
\end{equation}
such that $E_1 \in \overline{\frak T}_1$ and
$E_2 \in \overline{\frak F}_1$.
Applying $\Psi$ to this exact sequence, 
we get a long exact sequence
\begin{equation}\label{eq:elliptic-FM-Psi}
\begin{CD}
0 @>>> \Psi^0(E_2) @>>> 
\Psi^0(E) @>>> 
\Psi^0(E_1)\\
@>>>
\Psi^1(E_2) @>>> 
\Psi^1(E) @>>> 
\Psi^1(E_1)\\
@>>>
\Psi^2(E_2) @>>> 
\Psi^2(E) @>>> 
\Psi^2(E_1)@>>> 0.
\end{CD}
\end{equation}
By Lemma \ref{lem:elliptic-Perverse},
we have
$\Psi^0(E_1)=\Psi^2(E_2)=0$.
If $\WIT_0$ holds for $E$, then we get $\Psi(E_1)=0$.
Hence (1) holds.
If $\WIT_2$ holds for $E$, 
then we get $\Psi(E_2)=0$.
Thus $E \in \overline{\frak T}_1$.
We take a decomposition
\begin{equation}
0 \to E_1' \to E \to E_2' \to 0
\end{equation}
such that $E_1' \in \widehat{\frak T}_1$ and
$E_2' \in \widehat{\frak F}_1 \cap \overline{\frak T}_1$.
Then $\Psi^i(E_2')=0$ for $i \ne 1$ by Corollary \ref{cor:elliptic-WIT1}.
Since $\Psi^0(E_1')=0$, we also get
$\Psi^1(E_2')=0$.
Therefore $E_2'=0$.
\end{proof}

\begin{lem}\label{lem:elliptic-equiv}
\begin{enumerate}
\item[(1)]
If $E \in \overline{\frak T}_1$, then
(1a) $\Psi^0(E)=0$, (1b) $\Psi^1(E) \in \widehat{\frak F}_2$ and
(1c) $\Psi^2(E) \in \widehat{\frak T}_2$. 
\item[(2)]
If $E \in \overline{\frak F}_1$, then
(2a) $\Psi^0(E) \in \widehat{\frak F}_2$, 
(2b) $\Psi^1(E) \in \widehat{\frak T}_2$
and (2c) $\Psi^2(E)=0$.
\end{enumerate}
\end{lem}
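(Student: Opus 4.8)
The plan is to run the same argument that proves the $K3$-surface analogue Lemma \ref{lem:equiv}, with the slope torsion pair $(\overline{\frak T}_3,\overline{\frak F}_3)$ used there replaced by the fibre-support torsion pair $(\widehat{\frak T}_2,\widehat{\frak F}_2)$ on $X'$ here. The inputs are the elliptic auxiliaries already established: the Hom-vanishings of Lemma \ref{lem:elliptic-vanish}, the perversity bounds of Lemma \ref{lem:elliptic-Perverse}, Corollary \ref{cor:elliptic-WIT1}, and the $\WIT$-criteria of Lemma \ref{lem:elliptic-slope3}, together with the spectral sequence Lemma \ref{lem:spectral2} computing $\widehat{\Psi}\circ\Psi$. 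Since the two halves of the statement are interchanged by $\Psi \leftrightarrow \widehat{\Psi}$ and $\overline{\frak T}_1 \leftrightarrow \overline{\frak F}_1$ with the cohomological degree shifted by one, I would prove (1) in full and obtain (2) by the symmetric bookkeeping.

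For part (1), fix $E \in \overline{\frak T}_1$. Assertion (1a), $\Psi^0(E)=0$, is immediate from Lemma \ref{lem:elliptic-Perverse}(5). For (1b) and (1c) I would take the canonical decomposition
\[
0 \to F_1 \to \Psi^1(E) \to F_2 \to 0
\]
with $F_1 \in \widehat{\frak T}_2$ and $F_2 \in \widehat{\frak F}_2$, apply $\widehat{\Psi}$, and analyse the long exact sequence of the $\widehat{\Psi}^i(F_j)$. The $X'$-side Hom-vanishing (the analogue of Lemma \ref{lem:vanish2}, which in the $K3$ case is extracted from the duality Theorem \ref{thm:duality}) kills $\widehat{\Psi}^0(F_1)$; combining (1a) with Lemma \ref{lem:spectral2} gives $\widehat{\Psi}^2(\Psi^1(E))=0$ and a surjection $E \twoheadrightarrow \widehat{\Psi}^1(\Psi^1(E))$. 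Then $\WIT_1$ holds for $F_1$, so $\widehat{\Psi}^1(F_1)$ is a fibre-supported quotient object of $E$, and an Euler-characteristic comparison that uses $F_1 \in \widehat{\frak T}_2$ forces $F_1=0$; this gives $\Psi^1(E) \in \widehat{\frak F}_2$, which is (1b). For (1c) I would feed $\Psi^2(E)$ into the $X'$-analogue of Lemma \ref{lem:elliptic-slope3}(2): the spectral sequence concentrates $\widehat{\Psi}(\Psi^2(E))$ in degree $0$, and chasing $\widehat{\Psi}^0(\Psi^2(E))$ back through Corollary \ref{cor:elliptic-WIT1} shows $\Psi^2(E)$ is fibre-supported with no destabilising quotient, hence lies in $\widehat{\frak T}_2$.

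Part (2), with $E \in \overline{\frak F}_1 \subset \widehat{\frak F}_1$, is handled by raising all degrees by one: (2c) $\Psi^2(E)=0$ is Lemma \ref{lem:elliptic-Perverse}(4), while (2a) and (2b) follow by decomposing $\Psi^0(E)$ along $(\widehat{\frak T}_2,\widehat{\frak F}_2)$ and repeating the argument of part (1), now using that $\WIT_0$ forces membership in $\overline{\frak F}_1$ via Lemma \ref{lem:elliptic-slope3}(1) to annihilate the offending summand.

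The step I expect to be the main obstacle is supplying, on the $X'$-side, the vanishing and $\WIT$-statements analogous to Lemma \ref{lem:elliptic-vanish} and Lemma \ref{lem:elliptic-slope3}: these are what control the $\widehat{\Psi}^i(F_j)$ and let one conclude $F_1=0$ (resp. $F_2=0$) in the long exact sequences. The delicate point is that the pair $(\widehat{\frak T}_2,\widehat{\frak F}_2)$ is cut out by the fibre-support condition and the functional $\chi(G_2,-)$, whereas $(\overline{\frak T}_1,\overline{\frak F}_1)$ is a slope pair, so the bookkeeping must convert relative degrees on $X$ into Euler characteristics along fibres on $X'$ and back; I expect to obtain the required $X'$-side statements from Theorem \ref{thm:elliptic-duality} exactly as the $K3$ section derives Lemma \ref{lem:vanish2} from Theorem \ref{thm:duality}.
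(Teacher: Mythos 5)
Your proposal reproduces the paper's proof in all essentials: (1a) and (2c) from Lemma \ref{lem:elliptic-Perverse}, (1c) from the spectral sequence of Lemma \ref{lem:spectral2} together with the $X'$-side analogue of Lemma \ref{lem:elliptic-slope3}(2), and (1b)/(2b) by decomposing $\Psi^1(E)$ along $(\widehat{\frak T}_2,\widehat{\frak F}_2)$, applying $\widehat{\Psi}$, and killing the offending piece in the resulting long exact sequence exactly as the paper does. Two small corrections to your bookkeeping: in (1c) the spectral sequence concentrates $\widehat{\Psi}(\Psi^2(E))$ in degree $2$ (i.e.\ $\WIT_2$ holds for $\Psi^2(E)$ with respect to $\widehat{\Psi}$), not degree $0$; and in (2b) the paper does not repeat the Euler-characteristic comparison of (1b) but instead uses that $\mu_{\max,G_1}(E)<0$ forces $\Psi^1(E)$ to vanish on a generic fibre, so that $\widehat{\Psi}^1(\Psi^1(E))$ is a torsion subobject of the torsion-free $E$ and hence zero, whence $\widehat{\Psi}^1(F_2)=0$ and $F_2=0$.
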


\begin{proof}
(1a) and (2c) follow from Lemma \ref{lem:elliptic-Perverse}.
(2a) is easy. 
(1c) By Lemma \ref{lem:spectral2}, $\WIT_2$
holds for $\Psi^2(E)$ with respect to
$\widehat{\Psi}$.
By a similar claim of Lemma \ref{lem:elliptic-slope3} (2),
we get $\Psi^2(E) \in \widehat{\frak T}_2$.

We next study $\Psi^1(E)$ for $E \in {\cal C}$.
Assume that there is an exact sequence
\begin{equation}
0 \to F_1 \to \Psi^1(E) \to F_2 \to 0
\end{equation}
such that
$F_1 \in \widehat{\frak T}_2$ 
\begin{NB}
this means that $\Supp(F_1)$ belongs to fibers. 
\end{NB}
and $F_2 \in \widehat{\frak F}_2$.
Applying $\widehat{\Psi}$, we have a long exact sequence
\begin{equation}
\begin{CD}
0 @>>> \widehat{\Psi}^0(F_2) @>>> 
\widehat{\Psi}^0(\Psi^1(E)) @>>> 
\widehat{\Psi}^0(F_1)\\
@>>>\widehat{\Psi}^1(F_2) @>>> 
\widehat{\Psi}^1(\Psi^1(E)) @>>> 
\widehat{\Psi}^1(F_1)\\
@>>> \widehat{\Psi}^2(F_2) @>>> 
\widehat{\Psi}^2(\Psi^1(E)) @>>> 
\widehat{\Psi}^2(F_1) @>>> 0.
\end{CD}
\end{equation}
By Theorem \ref{thm:elliptic-duality}, we have similar claims
to Lemma \ref{lem:elliptic-Perverse}. Thus
we have $\widehat{\Psi}^0(F_1)=\widehat{\Psi}^2(F_2)=0$.

Assume that $E \in \overline{\frak T}_1$.
Since $\Psi^0(E)=0$, Lemma \ref{lem:spectral2} implies that
$\widehat{\Psi}^2(\Psi^1(E))=0$.
Hence $\WIT_1$ holds for $F_1$.
Since $0 \leq \chi(G_2,F_1)=\chi(\widehat{\Psi}^1(F_1),L_1)
=-(H,c_1(\widehat{\Psi}^1(F_1))) \leq 0$,
$\widehat{\Psi}^1(F_1)$ is a 0-dimensional object.
If $F_1 \ne 0$, then since
$\widehat{\Psi}^1(F_1) \ne 0$,
we see that $0<\chi(G_1,\widehat{\Psi}^1(F_1))=
\chi(F_1,L_2)=-(\widehat{H},c_1(F_1)) \leq 0$,
which is a contradiction.
Therefore $F_1=0$.

Assume that $E \in \overline{\frak F}_1$.
Since $\Psi^2(E)=0$,
Lemma \ref{lem:spectral2} implies that
$\widehat{\Psi}^0(\Psi^1(E))=0$.
Hence $\WIT_1$ holds for $F_2$.
We have an injection
$\widehat{\Psi}^1(\Psi^1(E)) \to
E$.
Since $\mu_{\max,G_1}(E)<0$,
$\Psi^1(E)$ is zero on a generic fiber of ${\frak p}$.
Hence $\widehat{\Psi}^1(\Psi^1(E))$ is a torsion object.
Since $E$ is torsion free,
$\widehat{\Psi}^1(\Psi^1(E))=0$.
Since $\widehat{\Psi}^0(F_1)=0$, we get 
$\widehat{\Psi}^1(F_2)=0$, which implies that $F_2=0$.
\end{proof}

{\it Proof of Proposition \ref{prop:elliptic-equiv}}.

It is sufficient to prove that
$\Psi(\overline{\frak T}_1[-2]),\Psi(\overline{\frak F}_1[-1])
\subset (\widehat{\frak A}_2)_{op}$.
Then the claims follow from Lemma \ref{lem:elliptic-equiv}.
\qed

\subsection{Preservation of Gieseker stability conditions.}

We give a generalization of \cite[Thm. 3.15]{Y:7}.
We first recall the following well-known fact.
\begin{lem}\label{lem:elliptic-stability}
\begin{enumerate}
\item[(1)]
Let $E$ be a torsion free object of ${\cal C}$.
Then $E$ is $G_1$-twisted semi-stable with respect to
$H+nf$, $n \gg 0$ if and only if
for every proper object $E'$ of $E$,
one of the following conditions holds:
\begin{enumerate}
\item
\begin{equation}
\frac{(c_1(E),f)}{\rk E}>\frac{(c_1(E'),f)}{\rk E'},
\end{equation}
\item
\begin{equation}
\frac{(c_1(E),f)}{\rk E}=\frac{(c_1(E'),f)}{\rk E'},\;
\frac{(c_1(E),H)}{\rk E}>\frac{(c_1(E'),H)}{\rk E'},
\end{equation}
\item
\begin{equation}
\frac{(c_1(E),f)}{\rk E}=\frac{(c_1(E'),f)}{\rk E'},\;
\frac{(c_1(E),H)}{\rk E}=\frac{(c_1(E'),H)}{\rk E'},\;
\frac{\chi(G_1,E)}{\rk E} \geq \frac{\chi(G_1,E')}{\rk E'}.
\end{equation}
\end{enumerate}
\item[(2)]
Let $F$ be a 1-dimensional object of $\Per(X'/Y')$
with $(c_1(F),f) \ne 0$.
Then $F$ is $G_2$-twisted semi-stable with respect to
$\widehat{H}+nf$, $n \gg 0$ if and only if
for every proper subobject $F'$ of $F$, one of the following
conditions holds:
\begin{enumerate}
\item
\begin{equation}
(c_1(F'),f)\frac{\chi(G_2,F)}{(c_1(F),f)}>\chi(G_2,F')
\end{equation}
\item
\begin{equation}
(c_1(F'),f)\frac{\chi(G_2,F)}{(c_1(F),f)}=\chi(G_2,F'),\;
(c_1(F'),\widehat{H})\frac{\chi(G_2,F)}{(c_1(F),\widehat{H})}>\chi(G_2,F').
\end{equation}
\end{enumerate}
\end{enumerate}
\end{lem}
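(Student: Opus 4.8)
The plan is to reduce both parts to the standard asymptotic analysis of the twisted Hilbert polynomial with respect to the moving polarization $H+nf$ (respectively $\widehat{H}+nf$), exactly as in the comparison between Gieseker and slope stability. Since ${\cal C}$ and $\Per(X'/Y')$ are tiltings of $\Coh(X)$ and $\Coh(X')$ carrying the Simpson/twisted stability of Definition \ref{defn:Simpson-stability}, the twisted Hilbert polynomial $\chi(G_i,E(m(H+nf)))$ is a genuine polynomial in $m$ whose coefficients are themselves polynomials in $n$, and twisted semistability with respect to $H+nf$ means precisely that the reduced twisted Hilbert polynomial of every proper subobject does not exceed that of $E$ for $m\gg 0$. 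The whole argument is then a lexicographic reading of these coefficients as $n\to\infty$.

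For part (1), first I would expand, for a torsion free $E$ of rank $r>0$, the polynomial $\chi(G_1,E(mL))$ with $L=H+nf$. By Riemann--Roch (cf. the degree computation underlying \eqref{eq:det-bdle}) its $m^2$-coefficient is $\tfrac{(\rk G_1)\,r}{2}(L^2)$, its $m^1$-coefficient is $(L,\,(\rk G_1)c_1(E)-r\,c_1(G_1)-\tfrac{(\rk G_1)r}{2}K_X)$, and its constant term is $\chi(G_1,E)$. Dividing by the leading coefficient to form the reduced polynomial makes the $m^2$-coefficient independent of $E$, so the comparison with a subobject $E'$ is governed lexicographically by the lower coefficients. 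The reduced $m^1$-coefficient equals $\tfrac{2}{(L^2)}\big[\tfrac{(L,c_1(E))}{r}-\tfrac{(L,c_1(G_1))}{\rk G_1}-\tfrac{(L,K_X)}{2}\big]$, and since the last two terms are the same for $E$ and $E'$, comparing it reduces to comparing the slopes $(L,c_1(E))/r$. Writing $(L,c_1(E))=(H,c_1(E))+n(f,c_1(E))$ and reading off powers of $n$ gives the dictionary: the $n$-leading term yields the fiber-slope comparison, i.e.\ case (a); if the fiber slopes agree then the $n$-independent part yields the $H$-slope comparison, i.e.\ case (b); and if both the $m^2$- and $m^1$-reduced coefficients agree, one is reduced to comparing the constant terms $\chi(G_1,E)/r$, which is case (c).

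Part (2) is the entirely parallel computation for a $1$-dimensional object $F$ of $\Per(X'/Y')$ with $(c_1(F),f)\neq 0$: now $\chi(G_2,F(m(\widehat{H}+nf)))$ is linear in $m$, with leading coefficient proportional to $(c_1(F),\widehat{H}+nf)$ and constant term $\chi(G_2,F)$, so semistability means $\chi(G_2,F')/(c_1(F'),\widehat{H}+nf)\le \chi(G_2,F)/(c_1(F),\widehat{H}+nf)$ for every proper subobject $F'$. Clearing denominators and expanding $(c_1,\widehat{H}+nf)=(c_1,\widehat{H})+n(c_1,f)$, the $n$-leading inequality is $\chi(G_2,F')(c_1(F),f)\le \chi(G_2,F)(c_1(F'),f)$, whose strict form is exactly (2a); when it is an equality, the subleading term in $n$ gives the $\widehat{H}$-weighted comparison (2b).

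The hard part in both parts is not the coefficient bookkeeping, which is routine given \eqref{eq:RR} and \eqref{eq:det-bdle}, but the uniformity in $n$: a priori the threshold beyond which ``$n\gg0$'' works could depend on the destabilizing subobject. Here I would invoke boundedness — the potentially destabilizing subobjects of a fixed $E$ have bounded rank and bounded fiber- and $H$-slopes, hence form a bounded family — so that a single $n$ controls all of them simultaneously, which is the precise meaning of the statement. Once boundedness is in place, the lexicographic equivalence between ``twisted semistable for $H+nf$ with $n\gg0$'' and the listed tiered conditions is immediate, and the same remark applies verbatim to part (2).
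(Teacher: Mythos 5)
Your proposal is correct and is exactly the standard argument: the paper states this lemma without proof (``we first recall the following well-known fact''), and the implicit proof is precisely your lexicographic comparison of the coefficients of $\chi(G_i,\,\cdot\,(m(H+nf)))$ in $m$ and then in $n$, with boundedness of the saturated destabilizing subobjects (reduced to subsheaves of $H^0(E)$ via the remark following Definition~\ref{defn:Simpson-stability}) guaranteeing a uniform choice of $n$. Your coefficient bookkeeping agrees with the Riemann--Roch expansion recorded in the paper's computation underlying \eqref{eq:det-bdle}, so there is nothing to add.
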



\begin{lem}\label{lem:elliptic-vanish:pure}
Let $F$ be a purely 1-dimensional 
$G_2$-twisted semi-stable object such that
$(c_1(F),f)>0$ and $\chi(G_2,F)<0$. Then
$\WIT_1$ holds for $F$ with respect to $\widehat{\Psi}$
and $\widehat{\Psi}^1(F)$ is torsion free.
\end{lem}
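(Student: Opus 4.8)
The plan is to run the whole argument through the contravariant pair $\Psi,\widehat{\Psi}$ exactly as in the proof of Lemma \ref{lem:elliptic-WIT-simple}, using that Theorem \ref{thm:elliptic-duality} interchanges the roles of $(X,G_1)$ and $(X',G_2)$, so that the $\widehat{\Psi}$-analogues of Lemma \ref{lem:elliptic-Perverse}, Lemma \ref{lem:elliptic-slope3} and Lemma \ref{lem:elliptic-equiv} are available (these are already invoked inside the proof of Lemma \ref{lem:elliptic-equiv}). First I would note that a purely $1$-dimensional object has rank $0$, hence is a torsion object, so $F\in\overline{\frak T}_2$. The $\widehat{\Psi}$-analogue of Lemma \ref{lem:elliptic-equiv} (1) then gives $\widehat{\Psi}^0(F)=0$, $\widehat{\Psi}^1(F)\in\widehat{\frak F}_1$ and $\widehat{\Psi}^2(F)\in\widehat{\frak T}_1$. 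Thus $\WIT_1$ is reduced to the single vanishing $\widehat{\Psi}^2(F)=0$, and the support of $\widehat{\Psi}^2(F)$ lies in fibers of ${\frak p}$, so $\widehat{\Psi}^2(F)$ has rank $0$ and $(c_1(\widehat{\Psi}^2(F)),f)=0$.

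The heart of the proof is to show $\widehat{\Psi}^2(F)=0$. Here I would feed $\widehat{\Psi}^2(F)\in\widehat{\frak T}_1\subset\overline{\frak T}_1$ into Lemma \ref{lem:elliptic-equiv} (1), obtaining $\Psi^0(\widehat{\Psi}^2(F))=0$ and $\Psi^2(\widehat{\Psi}^2(F))\in\widehat{\frak T}_2$, and then apply the double-transform spectral sequence of Lemma \ref{lem:spectral2} to $F$. Because $\widehat{\Psi}^0(F)=0$ and $\Psi^0(\widehat{\Psi}^2(F))=0$, the spectral sequence collapses (just as in Lemma \ref{lem:elliptic-WIT-simple}) to give $\Psi^1(\widehat{\Psi}^2(F))=\Psi^2(\widehat{\Psi}^1(F))=0$, an injection $\Psi^0(\widehat{\Psi}^1(F))\hookrightarrow\Psi^2(\widehat{\Psi}^2(F))$, and an exhibition of $F$ as built from $\Psi^1(\widehat{\Psi}^1(F))$ and the cokernel of that injection. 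The surviving contribution $\Psi^2(\widehat{\Psi}^2(F))\in\widehat{\frak T}_2$ is therefore a fiber-supported sub- or quotient-object $Q$ of $F$, i.e.\ an object with $(c_1(Q),f)=0$. Now the $G_2$-twisted semistability of $F$ together with $(c_1(F),f)>0$ and the criterion of Lemma \ref{lem:elliptic-stability} (2) forces $Q=0$: a fiber-supported piece must satisfy $\chi(G_2,\cdot)\le 0$ by semistability, while membership in $\widehat{\frak T}_2$ forces $\chi(G_2,\cdot)\ge 0$, and the hypothesis $\chi(G_2,F)<0$ eliminates the borderline case $\chi(G_2,Q)=0$. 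Hence $\Psi^2(\widehat{\Psi}^2(F))=0$, so $\widehat{\Psi}^2(F)=0$ since $\Psi$ is faithful, proving $\WIT_1$.

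For the torsion-freeness of $\widehat{\Psi}^1(F)$ I would first observe that $\widehat{\Psi}^1(F)\in\widehat{\frak F}_1=(\widehat{\frak T}_1)^{\perp}$ already excludes $0$-dimensional subobjects, since ${\Bbb C}_x\in\widehat{\frak T}_1$ for every $x$. A $1$-dimensional (necessarily fiber-supported) torsion subsheaf $T\subset\widehat{\Psi}^1(F)$ would, after applying $\Psi$ and using $\WIT_1$ for $F$ through Lemma \ref{lem:spectral2}, produce a fiber-supported subquotient of $F$ incompatible with the semistability of $F$ and the sign $\chi(G_2,F)<0$; this is the same torsion-removal mechanism already used in Lemma \ref{lem:elliptic-equiv} and Proposition \ref{prop:stability-deg=0}.

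I expect the main obstacle to be precisely the second step: organising the double-transform spectral sequence for the \emph{contravariant} pair $\Psi,\widehat{\Psi}$ and then ruling out the fiber-supported piece $\Psi^2(\widehat{\Psi}^2(F))$. The difficulty is that two different degree functionals are in play — the relative degree over $C$ through $(c_1(\cdot),f)$, and the $\chi(G_i,\cdot)$-functional computed via $\widehat{\Psi}(G_2)=L_1[-1]$ and $\chi(G_2,F)=\chi(\widehat{\Psi}(F),L_1[-1])$ — and one must combine them with the relative semistability criterion of Lemma \ref{lem:elliptic-stability} to conclude that no nonzero vertical object survives. The two hypotheses $(c_1(F),f)>0$ and $\chi(G_2,F)<0$ are used exactly to close off the borderline cases in this comparison.
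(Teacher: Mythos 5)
Your argument for $\WIT_1$ is correct but takes a longer route than the paper's. The paper first checks directly, using Lemma \ref{lem:elliptic-stability} (2) together with the two sign hypotheses $(c_1(F),f)>0$ and $\chi(G_2,F)<0$, that $F\in\widehat{\frak F}_2$: a nonzero map $E'\to F$ with $E'\in\widehat{\frak T}_2$ has image a fiber-supported subobject of $F$ with $\chi(G_2,\cdot)\geq 0$, which the semistability criterion forbids. Since $F$ is a torsion object it also lies in $\overline{\frak T}_2$, and the $\widehat{\Psi}$-analogue of Corollary \ref{cor:elliptic-WIT1} then gives $\WIT_1$ in one stroke. Your spectral-sequence argument amounts to verifying the $\widehat{\frak F}_2$-condition only against the single subobject $\im\bigl(\Psi^2(\widehat{\Psi}^2(F))\to F\bigr)$ and then invoking the ``$\WIT_0$ and $\WIT_2$ simultaneously'' trick; it works, but it re-derives a special case of what the paper establishes up front, and the direct verification of $F\in\widehat{\frak F}_2$ is both shorter and reusable.

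The torsion-freeness part has a genuine gap. You assert that a $1$-dimensional torsion subsheaf of $\widehat{\Psi}^1(F)$ is ``necessarily fiber-supported,'' but nothing you have proved implies this. Membership of $\widehat{\Psi}^1(F)$ in $\widehat{\frak F}_1=(\widehat{\frak T}_1)^{\perp}$ only kills subobjects lying in $\widehat{\frak T}_1$, and those are fiber-supported \emph{by definition}; a torsion subsheaf supported on a horizontal curve (one dominating $C$) is invisible to this orthogonality, and it is exactly the case that must be excluded to conclude torsion-freeness. The paper supplies the missing ingredient: for a general fiber $f$ of ${\frak p}$, the restriction $F_{|f}$ is a $0$-dimensional sheaf of positive length on the elliptic curve $f$, so $\widehat{\Psi}^1(F)_{|f}$ is a semistable locally free sheaf with $\deg(G_1^{\vee}\otimes\,\cdot\,)=0$; this is what forces $\Supp(E_1)$ of the torsion part $E_1$ into finitely many fibers. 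Your treatment of the remaining fiber-supported case is also only a sketch; to close it one writes $0\to E_1\to\widehat{\Psi}^1(F)\to E_2\to 0$ with $E_1\in\overline{\frak T}_1$ and $E_2\in\widehat{\frak F}_1$, deduces $\WIT_1$ for both pieces and hence a quotient $F\to\Psi^1(E_1)$, and then needs \emph{both} numerical identities $\chi(G_2,\Psi^1(E_1))=\chi(E_1,L_1)=-(H,c_1(E_1))$ and $\chi(G_1,E_1)=\chi(\Psi^1(E_1),L_2)=-(\widehat H,c_1(\Psi^1(E_1)))$: the first, combined with semistability of $F$, only forces $(H,c_1(E_1))=0$, i.e.\ $E_1$ is $0$-dimensional in the perverse sense (possibly supported on exceptional curves of $\pi$), and the second then yields the contradiction $0<\chi(G_1,E_1)\leq 0$.
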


\begin{proof}
By Lemma \ref{lem:elliptic-stability} (2),
$F \in \widehat{\frak F}_2$.
By Theorem \ref{thm:elliptic-duality},
similar claims to Lemma \ref{lem:elliptic-Perverse},
Corollary \ref{cor:elliptic-WIT1} and Lemma \ref{lem:elliptic-slope3}
hold for $\widehat{\Psi}$.
Hence $\WIT_1$ holds for $F$.
Assume that there is an exact sequence
\begin{equation}
0 \to E_1 \to \widehat{\Psi}^1(F) \to E_2 \to 0
\end{equation}
such that $E_1$ is the torsion object of $\widehat{\Psi}^1(F)$.
Since $\widehat{\Psi}^1(F)_{|f}$ is a semi-stable
vector bundle of $\deg (G_1^{\vee} \otimes \widehat{\Psi}^1(F)_{|f})=0$
for a general fiber $f$ of ${\frak p}$,
$\Supp(E_1)$ is contained in fibers.
Since $E_1 \in \overline{\frak T}_1$ and
$E_2 \in \widehat{\frak F}_1$, $\WIT_1$ holds for $E_1$,
$E_2$ and we have a quotient
$F \to \Psi^1(E_1)$.
By our assumption on $F$,
we get $\chi(G_2, \Psi^1(E_1)) \geq 0$.
On the other hand, $\chi(G_2, \Psi^1(E_1))=\chi(E_1,L_1)
=-(H,c_1(E_1)) \leq 0$. Hence $E_1$ is a 0-dimensional object.
Then we get
$0<\chi(G_1,E_1)=\chi(\Psi^1(E_1),L_2)=-(\widehat{H},c_1(\Psi^1(E_1)))
\leq 0$, which is a contradiction.
\begin{NB}
Assume that $\Supp(F)$ contains a fiber $f$.
Let $F''$ be the purely 1-dimensional quotient object
of $F_{|f}$.
Then we have
$\Hom(F'',{\cal E}_{|X' \times \{ x \}}) \cong 
\Hom(F,{\cal E}_{|X' \times \{ x \}})$.
By the $G_2$-twisted semi-stability of $F$,
we see that $\chi(G_2,F''') \geq 0$ for any quotient $F'''$ of $F''$.
Then we see that 
$\Hom(F'',{\cal E}_{|X' \times \{ x \}})=0$ except finite set of points
of $Y$. 
In particular, $\widehat{\Psi}^0(F)=0$
and $\widehat{\Psi}^1(F)$ is torsion free.
Assume that $\chi(G_2,F)<0$. Then
$\Ext^2(F,{\cal E}_{|X' \times \{ x \}})=
\Hom({\cal E}_{|X' \times \{ x \}},F)^{\vee}=0$.
Therefore 
$\WIT_1$ holds for $F$ with respect to $\widehat{\Psi}$.
\end{NB}
\end{proof}

\begin{lem}\label{lem:elliptic-relation1}
Let $F$ be a 1-dimensional object of $\Per(X'/Y')$.
Then
\begin{equation}\label{eq:elliptic-relation1}
\begin{split}
(c_1(F),f)&=\rk (\widehat{\Psi}(F)[1]),\\
(c_1(F),\widehat{H})&=-\chi(F,L_2)=-\chi(G_1,\widehat{\Psi}(F)[1]),\\
\chi(G_2,F)&=\chi(\widehat{\Psi}(F)[1],L_1)=
-(c_1(\widehat{\Psi}(F)[1]),H)+
\rk(\widehat{\Psi}(F)[1])\chi(L_1).
\end{split}
\end{equation}
\end{lem}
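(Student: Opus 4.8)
The plan is to establish the three identities in Lemma \ref{lem:elliptic-relation1} by computing both sides as numerical invariants attached to $\widehat{\Psi}(F)[1]$, exploiting the four basic correspondences under $\Psi$ (equivalently $\widehat{\Psi}$) that were recorded just before the statement:
\begin{equation*}
\Psi({\Bbb C}_x[-2])={\cal E}_{|X' \times \{x\}},\quad
\Psi({\cal E}_{|\{x'\} \times X})={\Bbb C}_{x'}[-2],\quad
\Psi(G_1)=L_2[-1],\quad
\Psi(L_1)=G_2[-1].
\end{equation*}
Since $F$ is a $1$-dimensional object of $\Per(X'/Y')=\mathfrak{C}_2$, the object $\widehat{\Psi}(F)[1]$ lies in ${\cal C}=\mathfrak{C}_1$ (it is purely the $\WIT_1$-transform, up to the usual Perverse cohomology), and each of the three claimed equalities is of the shape ``a numerical invariant of $F$ equals the corresponding invariant of $\widehat{\Psi}(F)[1]$.'' My strategy is to reduce all three to the single compatibility $\chi(A,B)=\chi(\widehat{\Psi}(A)?,\widehat{\Psi}(B)?)$ coming from the fact that $\Psi$ (resp.\ $\widehat{\Psi}$) is a Fourier-Mukai equivalence and hence preserves the Euler pairing up to the dualities and shifts built into the definition of $\Psi(E)=\Phi(E)^{\vee}[-2]$.

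First I would treat the third identity, $\chi(G_2,F)=\chi(\widehat{\Psi}(F)[1],L_1)$. Using $\Psi(L_1)=G_2[-1]$ and that $\Psi$ intertwines $\Hom$-spaces into $\Hom$-spaces of the opposite category with a shift (exactly as exploited in Lemma \ref{lem:elliptic-Perverse} and Lemma \ref{lem:elliptic-vanish:pure}), the pairing $\chi(\widehat{\Psi}(F)[1],L_1)$ transforms into $\chi(F,\Psi(L_1)[\,\cdot\,])=\chi(F,G_2[-1])$ up to sign and shift, which is $\pm\chi(G_2,F)$; matching the shifts gives the stated equality. The second expression for $\chi(G_2,F)$ in terms of $c_1(\widehat{\Psi}(F)[1])$ and $\rk$ is then just the Riemann-Roch expansion $\chi(G_1,E)=-(c_1(E),H)+\rk(E)\chi(L_1)$ applied to $E=\widehat{\Psi}(F)[1]$, using that $L_1$ is a line bundle on a curve $C\in|H|$ so that its Mukai vector contributes $(0,f_{\,\cdot},\ldots)$ against $H$; this is the routine computation I would not grind through.

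For the first two identities I would proceed analogously: $(c_1(F),f)=\rk(\widehat{\Psi}(F)[1])$ comes from pairing $F$ against ${\Bbb C}_{x}$-type classes, i.e.\ from $\Psi({\Bbb C}_x[-2])={\cal E}_{|X'\times\{x\}}$ together with $\rk(\widehat{\Psi}(F)[1])=\chi({\Bbb C}_x,\widehat{\Psi}(F)[1])=\pm\chi(F,\Psi({\Bbb C}_x[-2]))=\pm(c_1(F),f)$, since ${\cal E}_{|X'\times\{x\}}$ has Mukai vector ${\bf f}$ with $(c_1({\bf f}),\,\cdot\,)$ reading off the fiber degree; and $(c_1(F),\widehat{H})=-\chi(F,L_2)$ uses $\Psi(G_1)=L_2[-1]$ in the same way, with $\widehat{H}$ defined precisely so that $\chi(G_1,\widehat{\Psi}(F)[1])=-\chi(F,L_2)$ equals $-(c_1(F),\widehat{H})$ via $c_1(\det\Psi(G_1)^\vee)=\widehat{H}$. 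The main obstacle I anticipate is bookkeeping rather than conceptual: keeping track of the degree shifts and the duality $\Psi(E)=\Phi(E)^{\vee}[-2]$ so that every sign in the three equalities comes out correctly, and verifying that $\widehat{\Psi}(F)[1]$ genuinely is a single sheaf/object (so that $\rk$, $c_1$, $\chi$ are the honest invariants rather than alternating sums over Perverse cohomologies)—but the latter is exactly the $\WIT_1$ statement in Lemma \ref{lem:elliptic-vanish:pure}, which I may invoke, so no extra work is needed there.
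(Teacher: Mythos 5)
The paper states this lemma with no proof at all, so there is nothing to compare against; your strategy --- convert each invariant of $\widehat{\Psi}(F)[1]$ into an Euler pairing against one of ${\Bbb C}_x$, $G_1$, $L_1$, transport it through the transform via the four recorded correspondences, and finish with Riemann--Roch --- is surely the intended computation, and it does work as written for the first two identities. Two small remarks there: the identities only concern $\rk$, $c_1$, $\chi$ as additive invariants of the class of the complex $\widehat{\Psi}(F)[1]$ in $K(X)$, so the appeal to $\WIT_1$ and Lemma \ref{lem:elliptic-vanish:pure} is unnecessary; and the only genuine content of the first identity is the equality $(c_1(F),c_1({\cal E}_{|X'\times\{x\}}))=(c_1(F),f)$, i.e.\ the identification of $c_1({\cal E}_{|X'\times\{x\}})$ with the class called $f$, which you assert (``reading off the fiber degree'') rather than verify --- this is where the one line of actual geometry sits.

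The gap is in the third identity, and it is exactly the ``sign and bookkeeping'' step you defer. The adjunction underlying your manipulation is $\chi(E,\widehat{\Psi}(F))=\chi(F,\Psi(E))$; taking $E=L_1$ it yields $\chi(L_1,\widehat{\Psi}(F)[1])=\chi(F,G_2)$, which is the asserted identity with the arguments of \emph{both} Euler pairings transposed. Section \ref{sect:elliptic} concerns arbitrary elliptic surfaces, so $\chi$ is not symmetric: $\chi(A,B)-\chi(B,A)=(\rk A\,c_1(B)-\rk B\,c_1(A),K)$, and the transposition costs $\rk(G_2)\,(c_1(F),K_{X'})$ on one side and $\rk(\widehat{\Psi}(F)[1])\,(H,K_X)$ on the other; neither vanishes in general (e.g.\ $(H,K_X)$ is a nonzero multiple of $(H,f)$ unless $K_X$ is numerically trivial, and $c_1(F)$ need not be vertical --- take $F=L_2$), and the fact that the two correction terms agree is not obvious. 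The clean way to obtain the identity as stated is not the adjunction but the contravariant full faithfulness of $\Psi$: writing $F=\Psi(\widehat{\Psi}(F))$ one gets $\chi(G_2,F)=-\chi(\Psi(L_1),\Psi(\widehat{\Psi}(F)))=-\chi(\widehat{\Psi}(F),L_1)=\chi(\widehat{\Psi}(F)[1],L_1)$, after which the Riemann--Roch expansion you quote finishes the proof. So the statement is correct and your first two computations stand, but the third needs this replacement (or an explicit check that the two $K$-terms coincide).
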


\begin{prop}
Let $w \in K(X')_{\mathrm{top}}$ 
be a topological invariant of a 1-dimensional object.
Assume that $\chi(G_2,w)<0$.
Then for $n \gg 0$, we have an isomorphism
\begin{equation}
{\cal M}_{H+nf}^{G_1}(\widehat{\Psi}(-w))^{ss} \to
{\cal M}_{\widehat{H}+nf}^{G_2}(w)^{ss}, 
\end{equation} 
which preserves the $S$-equivalence classes.
\end{prop}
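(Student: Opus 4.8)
The plan is to prove the statement exactly as the elliptic-surface counterpart of Propositions \ref{prop:stability-deg=0} and \ref{prop:stability-asymptotic}, realizing the isomorphism by the contravariant transforms $F \mapsto \widehat{\Psi}^1(F)$ from the $X'$-side to the $X$-side and $E \mapsto \Psi^1(E)$ back. Throughout I may assume $(c_1(w),f)>0$, since otherwise $\widehat{\Psi}(-w)$ has rank $0$ and $F$ is supported on fibres, a degenerate case treated separately; under this assumption $\widehat{\Psi}(-w)$ has positive rank $(c_1(w),f)$ by Lemma \ref{lem:elliptic-relation1}.

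First I would set up the bijection of objects together with $\WIT_1$. For $F \in {\cal M}_{\widehat{H}+nf}^{G_2}(w)^{ss}$ the object $F$ is purely $1$-dimensional with $(c_1(F),f)=(c_1(w),f)>0$ and $\chi(G_2,F)=\chi(G_2,w)<0$, so Lemma \ref{lem:elliptic-vanish:pure} shows that $\WIT_1$ holds for $F$ with respect to $\widehat{\Psi}$ and that $\widehat{\Psi}^1(F)\in{\cal C}$ is torsion free; Lemma \ref{lem:elliptic-relation1} identifies $\tau(\widehat{\Psi}^1(F))=\widehat{\Psi}(-w)$, in particular $(c_1(\widehat{\Psi}^1(F)),f)=0$ as fibres are transformed to fibres. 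Conversely, for $E\in{\cal M}_{H+nf}^{G_1}(\widehat{\Psi}(-w))^{ss}$ I would verify $E\in\overline{\frak T}_1\cap\widehat{\frak F}_1$, using the asymptotic $G_1$-semistability together with the sign condition $\chi(G_2,w)<0$ (read through Lemma \ref{lem:elliptic-relation1}) to rule out fibre-supported quotients of nonpositive $\chi(G_1,\cdot)$. Corollary \ref{cor:elliptic-WIT1} then gives $\WIT_1$ for $E$ with respect to $\Psi$, and $\Psi^1(E)$ is $1$-dimensional with $\tau=w$. The spectral-sequence identity (Lemma \ref{lem:spectral2}) shows $\Psi^1$ and $\widehat{\Psi}^1$ are mutually inverse on these objects.

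The heart of the argument is the preservation of stability. Given a destabilizing subobject on one side I would apply the transform to the defining short exact sequence; since $\widehat{\Psi}$ is contravariant it turns subobjects into quotients, and because $\WIT_1$ also holds for the sub- and quotient terms (again by Lemma \ref{lem:elliptic-vanish:pure} and its $\Psi$-analogue) the sequence transports to a genuine sub/quotient relation on the other side. Then the numerical dictionary of Lemma \ref{lem:elliptic-relation1}, namely $(c_1(F),f)\leftrightarrow\rk$, $(c_1(F),\widehat{H})\leftrightarrow-\chi(G_1,\cdot)$ and $\chi(G_2,F)\leftrightarrow-(c_1(\cdot),H)+\rk\,\chi(L_1)$, translates the comparison of Lemma \ref{lem:elliptic-stability} (2) on $X'$ into that of Lemma \ref{lem:elliptic-stability} (1) on $X$. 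The main obstacle, and where the care is needed, is matching the orderings: all objects in sight have $f$-slope $0$, so the first tier $(c_1,f)/\rk$ of the $X$-criterion is constant and non-discriminating, while the first tier $\chi(G_2,\cdot)/(c_1(\cdot),f)$ on $X'$ corresponds instead to the second tier $(c_1,H)/\rk$ on $X$; one must check that the remaining tier ($\chi(G_1,\cdot)$ versus $(c_1,\widehat{H})$) lines up and that the inequalities can only degenerate to the equalities permitted in the definition of semistability, exactly as in the slope computation of Proposition \ref{prop:stability-deg=0}.

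Finally, collecting the two directions yields mutually inverse morphisms of the moduli functors and hence the asserted isomorphism of coarse moduli spaces; fixing $n\gg 0$ uniformly over the bounded families, as in Proposition \ref{prop:stability-asymptotic}, makes one choice of $n$ work for all members at once. Since $\Psi^1$ and $\widehat{\Psi}^1$ are exact on the relevant abelian subcategories and carry Jordan-H\"{o}lder filtrations to Jordan-H\"{o}lder filtrations, they send $S$-equivalent objects to $S$-equivalent objects, which gives the last assertion.
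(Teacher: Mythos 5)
Your proposal follows essentially the same route as the paper: establish $\WIT_1$ in both directions (Lemma \ref{lem:elliptic-vanish:pure} for $F\mapsto\widehat{\Psi}^1(F)$, membership in $\overline{\frak T}_1\cap\widehat{\frak F}_1$ plus Corollary \ref{cor:elliptic-WIT1} for $E\mapsto\Psi^1(E)$), and then translate the two-tier stability criteria of Lemma \ref{lem:elliptic-stability} through the numerical dictionary of Lemma \ref{lem:elliptic-relation1}, with the tier shift you correctly identify.

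One step is stated too optimistically and would need repair in execution: you claim that $\WIT_1$ holds for both the sub- and quotient terms of a destabilizing sequence, so that the transform yields a genuine sub/quotient relation on the other side. In the direction starting from a subobject $F_1\subset\Psi^1(E)$, this is false for the quotient $F_2$ in general: $\widehat{\Psi}^2(F_2)$ need not vanish, and applying $\widehat{\Psi}$ produces a four-term exact sequence
\begin{equation*}
0 \to \widehat{\Psi}^1(F_2) \to E \overset{\varphi}{\to} \widehat{\Psi}^1(F_1) \to \widehat{\Psi}^2(F_2) \to 0,
\end{equation*}
so $\widehat{\Psi}^1(F_1)$ is only the target of a map from $E$, not a quotient. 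The paper closes this by noting $\widehat{\Psi}^2(F_2)\in\widehat{\frak T}_1$ is supported on fibres, hence has rank zero, so $\rk\widehat{\Psi}^1(F_1)=\rk\im\varphi$, and the semistability of $E$ applied to the honest quotient $\im\varphi$ still forces the required inequalities for $\widehat{\Psi}^1(F_1)$ (and ultimately the surjectivity of $\varphi$). With that correction your plan coincides with the paper's proof; the converse direction, where both $E_1$ and $E_2$ lie in $\overline{\frak T}_1\cap\widehat{\frak F}_1$ and genuinely satisfy $\WIT_1$, works as you describe.
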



 \begin{proof}
Let $E$ be a $G_1$-twisted semi-stable object with 
$\tau(E)=\widehat{\Psi}(-w)$. 
Then since $E_{|f}$ is a semi-stable locally free sheaf with
$d\rk E-r\deg(E_{|f})=0$ for a general fiber,
we have 
$E \in \overline{\frak T}_1 \cap \widehat{\frak F}_1$.
By Corollary \ref{cor:elliptic-WIT1},
$\WIT_1$ holds for $E$ with respect to $\Psi$.
Assume that there is an exact sequence
\begin{equation}
0 \to F_1 \to \Psi^1(E) \to F_2 \to 0.
\end{equation}
By Lemma \ref{lem:elliptic-equiv},
$\Psi^1(E) \in \widehat{\frak F}_2$,
which implies that
$F_1 \in \widehat{\frak F}_2$.
Since $\rk \Psi^1(E)=0$, $F_1, F_2 \in \overline{\frak T}_2$.
In particular, $F_1 \in \overline{\frak T}_2 \cap \widehat{\frak F}_2$.
Then similar claim to Corollary \ref{cor:elliptic-WIT1} implies that
$\WIT_1$ holds for $F_1$.
Hence we get an exact sequence
\begin{equation}
0 \to \widehat{\Psi}^1(F_2) \to E \overset{\varphi}{\to}
 \widehat{\Psi}^1(F_1) \to \widehat{\Psi}^2(F_2) \to 0.
\end{equation} 
By Lemma \ref{lem:elliptic-equiv},
$\widehat{\Psi}^2(F_2) \in \widehat{\frak T}_1$.
Hence $\rk \widehat{\Psi}^1(F_1)=\rk \im \varphi$.
By \eqref{eq:elliptic-relation1}, we have the following equivalences.
\begin{equation}\label{eq:elliptic-1}
(c_1(F_1),f)\frac{\chi(G_2,\Psi^1(E))}{(c_1(F),f)} 
\leq \chi(G_2,F_1) \Longleftrightarrow
\rk \widehat{\Psi}^1(F_1)\frac{(c_1(E),H)}{\rk E} 
\geq (c_1(\widehat{\Psi}^1(F_1)),H),
\end{equation}
\begin{equation}\label{eq:elliptic0}
(c_1(F_1),\widehat{H})
\frac{\chi(G_2,{\Psi}^1(E))}{(c_1({\Psi}^1(E)),\widehat{H})} 
\leq \chi(G_2,F_1)
\Longleftrightarrow 
-\chi(G_1,\widehat{\Psi}^1(F_1))
\frac{\chi(G_2,{\Psi}^1(E))}{-\chi(G_1,E)} 
\leq \chi(G_2,F_1).
\end{equation}
If the equality holds in \eqref{eq:elliptic-1}, then
$\chi(G_2,\Psi^1(E))<0$ implies that
\eqref{eq:elliptic0} is equivalent to
\begin{equation}
\frac{\chi(G_1,\widehat{\Psi}^1(F_1))}{\chi(G_1,E)}
\geq \frac{ \rk \widehat{\Psi}^1(F_1)}{\rk E}
\end{equation}
which is equivalent to
\begin{equation}
\frac{\chi(G_1,\widehat{\Psi}^1(F_1))}{ \rk \widehat{\Psi}^1(F_1)}
\leq \frac{\chi(G_1,E)}{\rk E}
\end{equation}
by $-\chi(G_1,E)>0$.
Since
\begin{equation}
\frac{\chi(G_1,\im \varphi(nH))}{\rk \im \varphi}
\leq 
\frac{\chi(G_1,\widehat{\Psi}^1(F_1)(nH))}
{\rk \widehat{\Psi}^1(F_1)},\;n \gg 0,
\end{equation}
we see that $\varphi$ is surjective and the equalities hold
for \eqref{eq:elliptic-1}, \eqref{eq:elliptic0}.
\begin{NB}
we see that
$(c_1(\widehat{\Psi}^2(F_2)),H)=0$ and $\chi(G_1,\widehat{\Psi}^2(F_2))=0$,
which implies that $\widehat{\Psi}^2(F_2)=0$.
\end{NB}
Therefore $\Psi^1(E)$ is $G_2$-twisted semi-stable.

Conversely let $F$ be a $G_2$-twisted semi-stable object
with $\tau(F)=w$.
By Lemma \ref{lem:elliptic-vanish:pure},
$\WIT_1$ holds for $F$ with respect to
$\widehat{\Psi}$ and $\widehat{\Psi}^1(F)$ is a torsion free
object whose restriction to a general fiber is stable.
If $\widehat{\Psi}^1(E)$ is not $G_1$-twisted semi-stable,
then we have an exact sequence
\begin{equation}
0 \to E_1 \to \widehat{\Psi}^1(F) \to E_2 \to 0
\end{equation}
such that $E_i \in \overline{\frak T}_1 \cap \widehat{\frak F}_1$.
By using Lemme \ref{lem:elliptic-relation1},
we get the following equivalences:
\begin{equation}\label{eq:elliptic1}
\frac{(c_1(\widehat{\Psi}^1(F)),{H})}{\rk \widehat{\Psi}^1(F)} \leq
\frac{(c_1(E_1),{H})}{\rk E_1}
\Longleftrightarrow 
\frac{\chi(G_2,F)}{(c_1(F),f)} \geq 
\frac{\chi(G_2,{\Psi}^1(E_1))}
{(c_1({\Psi}^1(E_1)),f)},
\end{equation}

\begin{equation}\label{eq:elliptic2}
\frac{\chi(G_1,\widehat{\Psi}^1(F))}{\rk \widehat{\Psi}^1(F)} \leq
\frac{\chi(G_1,E_1)}{\rk E_1}
\Longleftrightarrow 
\frac{(c_1(F),\widehat{H})}{(c_1(F),f)} \geq 
\frac{(c_1({\Psi}^1(E_1)),\widehat{H})}
{(c_1({\Psi}^1(E_1)),f)}.
\end{equation}
If the equality holds in \eqref{eq:elliptic1}, then
\eqref{eq:elliptic2} is equivalent to
\begin{equation}
\frac{\chi(G_2,F)}{(c_1(F),\widehat{H})} \geq 
\frac{\chi(G_2,{\Psi}^1(E_1))}{(c_1({\Psi}^1(E_1)),\widehat{H})}
\end{equation}
by $\chi(G_2,F)<0$. 
Therefore $\widehat{\Psi}^1(F)$ is $G_1$-twisted semi-stable.
\end{proof}

\begin{NB}

\begin{lem}
Let $E$ be a purely 1-dimensional 
$G_1$-twisted semi-stable object of ${\cal C}$.
If $\chi(G_1,E)<0$, then
$\WIT_1$ holds for $E$ and $\Psi^1(E)$ is a 
$G_2$-twisted semi-stable object of $\Per(X'/Y')$.
\end{lem}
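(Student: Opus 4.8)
The plan is to establish the two assertions in turn, mirroring the pattern of Lemma \ref{lem:elliptic-vanish:pure} and the forward direction of the preceding proposition, but carried out on the $X$-side with $\Psi$ in place of the $X'$-side with $\widehat{\Psi}$. First I would prove that $\WIT_1$ holds for $E$ with respect to $\Psi$, and then I would deduce $G_2$-twisted semistability of $\Psi^1(E)$ by transporting a hypothetical destabilizing sequence back through $\widehat{\Psi}$. Throughout, the symmetric vanishing inputs provided by Theorem \ref{thm:elliptic-duality} and Lemma \ref{lem:elliptic-vanish} are what make $\widehat{\Psi}$ behave as the inverse of $\Psi$ up to shift.

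For $\WIT_1$, the cleanest route is to check $E \in \overline{\frak T}_1 \cap \widehat{\frak F}_1$ and invoke Corollary \ref{cor:elliptic-WIT1}. Since $E$ is purely $1$-dimensional it is a torsion object, hence automatically lies in $\overline{\frak T}_1$. The substantive point is $E \in \widehat{\frak F}_1 = (\widehat{\frak T}_1)^{\perp}$: given any $E' \in \widehat{\frak T}_1$ and a nonzero $\phi \colon E' \to E$, I would examine $\im \phi$, which is both a quotient of $E'$ and a subobject of $E$ in ${\cal C}$. As a quotient of $E' \in \widehat{\frak T}_1$ it satisfies $\chi(G_1, \im\phi) \geq 0$ and has support contained in fibers, so $(c_1(\im\phi), f) = 0$. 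If $(c_1(\im\phi), H) > 0$, then $\im\phi$ is a genuine $1$-dimensional subobject and comparing reduced $G_1$-twisted Hilbert polynomials against the $G_1$-semistable $E$, whose reduced constant term $\chi(G_1,E)/(c_1(E),H)$ is negative because $\chi(G_1,E) < 0$, forces $\chi(G_1,\im\phi)/(c_1(\im\phi),H) < 0$, contradicting $\chi(G_1,\im\phi)\ge 0$. If instead $(c_1(\im\phi), H) = 0$, then $\im\phi$ is a $0$-dimensional subobject of the purely $1$-dimensional $E$, hence $0$. Either way $\phi = 0$, so $E \in \widehat{\frak F}_1$ and Corollary \ref{cor:elliptic-WIT1} gives $\Psi^i(E) = 0$ for $i \neq 1$.

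For the semistability of $\Psi^1(E)$ I would argue by contradiction. Suppose $0 \to F_1 \to \Psi^1(E) \to F_2 \to 0$ in $\Per(X'/Y')$ violates $G_2$-twisted semistability. By Lemma \ref{lem:elliptic-equiv} (applied to $E \in \overline{\frak T}_1$) we have $\Psi^1(E) \in \widehat{\frak F}_2$, and since $\widehat{\frak F}_2$ is closed under subobjects, $F_1 \in \widehat{\frak F}_2$. Using $\widehat{\Psi}^1(\Psi^1(E)) \cong E$, which follows from $\WIT_1$ together with the inversion of $\Psi$ via the spectral sequence of Lemma \ref{lem:spectral2}, I would produce an exact sequence $0 \to \widehat{\Psi}^1(F_2) \to E \to \widehat{\Psi}^1(F_1) \to \widehat{\Psi}^2(F_2) \to 0$ exactly as in the preceding proposition, with $\widehat{\Psi}^2(F_2) \in \widehat{\frak T}_1$. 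The numerical translations of Lemma \ref{lem:elliptic-relation1} then convert the putative $G_2$-slope inequality among $F_1, \Psi^1(E)$ into inequalities among the $G_1$-twisted invariants of the sub- and quotient objects of $E$, and the $G_1$-semistability of $E$ forces all of these to be equalities; hence $\Psi^1(E)$ is $G_2$-twisted semistable.

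I expect the main obstacle to be the second step, and specifically the numerical bookkeeping. Unlike in Lemma \ref{lem:elliptic-vanish:pure} and the preceding proposition, the image $\Psi^1(E)$ changes dimension with $(c_1(E), f)$: a Riemann--Roch pairing against ${\cal E}_{|X' \times \{x\}}$ gives $\rk \Psi^1(E) = r\,(f, c_1(E))$, so $\Psi^1(E)$ is torsion free of positive rank when $E$ meets a generic fiber and is fiber-supported otherwise. One must therefore phrase the comparison of reduced Hilbert polynomials with respect to $\widehat{H} + nf$, $n \gg 0$, in the regime appropriate to $E$, and confirm that the term $\widehat{\Psi}^2(F_2) \in \widehat{\frak T}_1$ does not disturb the rank balance; this is the genuine preservation-of-stability content, and organizing the two fibral regimes is where the care is needed.
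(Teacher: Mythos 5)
Your first half is correct and coincides with the paper's argument: you verify $E\in\overline{\frak T}_1\cap\widehat{\frak F}_1$ and invoke Corollary \ref{cor:elliptic-WIT1}, while the paper checks the vanishings $\Hom(E,E_{ij}[2])=\Hom(E,{\cal E}_{|\{x'\}\times X}[2])=0$ directly from $\chi(G_1,E)<0$ and semistability; since that corollary is proved from exactly those vanishings (via Lemma \ref{lem:elliptic-Perverse} and Lemma \ref{lem:elliptic-vanish}), the two routes are the same. Your handling of the case $(c_1(\im\phi),H)=0$ — such an image is $0$-dimensional in the twisted sense and is killed by purity of $E$ — is the right way to deal with the curves contracted by $\pi$.

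The second half, however, stops exactly where the content of the lemma begins, and the step you defer is not mere bookkeeping: it is where the hypothesis $\chi(G_1,E)<0$ must be used a second time, and your sketch never uses it after the $\WIT_1$ step, so as written it cannot close. The analogue of Lemma \ref{lem:elliptic-relation1} for $\Psi$ gives, for $1$-dimensional $E$, the dictionary $\rk\Psi^1(E)=(c_1(E),f)$, $\chi(G_2,\Psi^1(E))=-(c_1(E),H)$ and $(c_1(\Psi^1(E)),\widehat H)=-\chi(G_1,E)+\rk\Psi^1(E)\,\chi(L_2)$, and the same identities for a subobject $F_1\subset\Psi^1(E)$ in terms of the quotient $Q:=\widehat{\Psi}^1(F_1)$ of $E$. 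Under this dictionary the first ($\mu_{\widehat H}$) destabilizing inequality for $F_1$ becomes $\chi(G_1,Q)/(c_1(Q),f)\le\chi(G_1,E)/(c_1(E),f)$, which is handled by the lexicographic description of semistability for $H+nf$, $n\gg 0$, in Lemma \ref{lem:elliptic-stability}; in the case of equality, the second inequality $\chi(G_2,F_1)/\rk F_1\ge\chi(G_2,\Psi^1(E))/\rk\Psi^1(E)$ becomes $(c_1(Q),H)/(c_1(Q),f)\le(c_1(E),H)/(c_1(E),f)$, and converting this into the genuine $G_1$-Gieseker comparison $\chi(G_1,\cdot)/(c_1(\cdot),H)$ requires dividing by $\chi(G_1,E)/(c_1(E),f)$, which reverses the inequality precisely because $\chi(G_1,E)<0$. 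You must also dispose of a possible torsion subobject $F_1\subset\Psi^1(E)$ by a separate small argument: it corresponds to a fiber-supported quotient $Q$ of $E$ with $\chi(G_1,Q)=-(c_1(F_1),\widehat H)\le 0$, semistability forces $Q$ to be $0$-dimensional, and then $\chi(G_2,F_1)=-(c_1(Q),H)>0$ is a contradiction. Until these translations and the sign flip are written out, the semistability assertion is unproved.
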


\begin{proof}
Since $\chi(G_1,E)<0$, we get that
$\Hom(E,E_{ij}[2])=\Hom(E,{\cal E}_{|\{ x' \} \times X}[2])=0$
for all $i,j$ and $x' \in X'$.
Lemma \ref{lem:elliptic-Perverse} implies that
$\Psi^i(E)=0$ for $i \ne 1$.
Assume that there is an exact sequence
\begin{equation}
0 \to F_1 \to \Psi^1(E) \to F_2 \to 0
\end{equation}
such that $\mu_{G_2}(F_2)=0$ and $F_2$ is purely 2-dimensional.
By Lemma \ref{lem:elliptic-vanish},
$\WIT_1$ holds for $F_1$
and we get an exact sequence
\begin{equation}
0 \to \widehat{\Psi}^1(F_2) \to E \to
 \widehat{\Psi}^1(F_1) \to 
0.
\end{equation} 
If $\rk F_1=0$, then
$(c_1(F_1),f)=0$ and
$0 \leq (c_1(F_1),\widehat{H})=-\chi(G_1,\widehat{\Psi}^1(F_1))$.
By the semi-stability of $E$,
we get $(c_1(F_1),\widehat{H})=-\chi(G_1,\widehat{\Psi}^1(F_1))=0$.
Then $F_1$ is 0-dimensional, which implies that
$-(c_1(\widehat{\Psi}^1(F_1)),H)=\chi(G_2,F_1)>0$, 
which is a contradiction. 
Therefore $\Psi^1(E)$ is purely 2-dimensional.
By using similar claims to Lemme \ref{lem:elliptic-relation1},
we get the following equivalences:
\begin{equation}\label{eq:elliptic3}
\frac{(c_1(\Psi^1(E)),\widehat{H})}{\rk \Psi^1(E)} \leq
\frac{(c_1(F_1),\widehat{H})}{\rk F_1}
\Longleftrightarrow 
\frac{\chi(G_1,E)}{(c_1(E),f)} \geq 
\frac{\chi(G_1,\widehat{\Psi}^1(F_1))}
{(c_1(\widehat{\Psi}^1(F_1)),f)}.
\end{equation}

\begin{equation}\label{eq:elliptic4}
\frac{\chi(G_2,\Psi^1(E))}{\rk \Psi^1(E)} \leq
\frac{\chi(G_2,F_1)}{\rk F_1}
\Longleftrightarrow 
\frac{(c_1(E),H)}{(c_1(E),f)} \geq 
\frac{(c_1(\widehat{\Psi}^1(F_1)),H)}{(c_1(\widehat{\Psi}^1(F_1)),f)}.
\end{equation}
If the equality holds in \eqref{eq:elliptic3}, then
\eqref{eq:elliptic4} is equivalent to
\begin{equation}
\frac{\chi(G_1,E)}{(c_1(E),H)} \geq 
\frac{\chi(G_1,\widehat{\Psi}^1(F_1))}{(c_1(\widehat{\Psi}^1(F_1)),H)}
\end{equation}
by $\chi(G_1,E)<0$. 
Therefore $\Psi^1(E)$ is $G_2$-twisted semi-stable.
\end{proof}

\end{NB}

\section{A category of equivariant coherent sheaves.}
\label{sect:equivariant}

\subsection{Morita equivalence for $G$-sheaves.}

Let $X$ be a smooth projective surface
and $G$ a finite group acting on $X$.
Assume that $G \to \Aut(X)$ is injective and 
$\Stab(x)$, $x \in X$ acts trivally on $(K_X)_{|\{x \}}$,
that is, $K_X$ is the pull-back of a line bundle on
$Y:=X/G$. By our assumption, all elements of $G$ have at most
isolated fixed points sets.
\begin{NB}
$K_X$ is equivariantly locally trivial:
Since $X$ is projective, for an orbit $Gx$,
there is an affine open subset
$U$ containing $Gx$.
Since $H^0(U,K_U) \to H^0((K_U)_{|Gx})$ is surjective,
there is a homomorphism $f:{\cal O}_U \to K_U$ which is injective 
at $Gx$ and $f_{|Gx}$ is $G$-invariant. 
Since $\cap_{g \in G} g(U)$ is a $G$-invariant subscheme of
$X$, we may assume that $U$ is $G$-invariant.
Replacing $f$ by $\sum_{g \in G} g^*(f)/\# G$, we may assume that
$f$ is $G$-invariant.
Replacing $U$ by an open subscheme again,
we may assume that $f$ is isomorphic. Thus we get a desired 
trivialization. 
\end{NB}
Let $R(G)$ be the representation ring of $G$ and
$(\quad,\quad)$ the natural inner product.
Let $K_G(X)$ be the Grothendieck group of $G$-sheaves
and $K_G(X)_{\mathrm{top}}$ its image to the 
Grothendieck group of topological $G$-vector bundles.
\begin{defn}
For $G$-sheaves $E$ and $F$ on $X$,
\begin{enumerate}
\item[(1)]
 $\GExt^i(E,F)$ is the $G$-invariant part of
$\Ext^i(E,F)$. 
\item[(2)]
$\Gchi(E,F):=\sum_i (-1)^i \dim \GExt^i(E,F)$ is the Euler characteristic
of the $G$-invariant cohomology groups of $E, F$.
We also set $\Gchi(E):=\Gchi({\cal O}_X,E)$.
\end{enumerate}
\end{defn}
\begin{rem}
If $K_X \cong {\cal O}_X$ in $\Coh_G(X)$, then
$\Gchi(\;\;,\;\; )$ is symmetric.
\end{rem}
Let $\varpi:X \to Y$ be the quotient map.
We set
\begin{equation}
\begin{split}
\varpi_*({\cal O}_X)[G]:
=&\left\{\left.
\sum_{g \in G} f_g(x)g \right| f_g(x) \in \varpi_*({\cal O}_X) \right\}.
\end{split}
\end{equation}
$\varpi_*({\cal O}_X)[G]$ is an ${\cal O}_Y$-algebra
whose multiplication is defined by
\begin{equation}
(\sum_{g \in G} f_g(x)g)\cdot(\sum_{g' \in G} f_{g'}'(x)g'):=
\sum_{g,g' \in G}f_g(x)f_{g'}'(g^{-1}x)gg'.
\end{equation}

We note that $\epsilon:=\frac{1}{\# G}\sum_{g \in G}g$ satisfies
$g \epsilon=\epsilon$ for all $g \in G$. 
By the injective homomorphism
\begin{equation}
\varpi_*({\cal O}_X) \to \varpi_*({\cal O}_X)\epsilon \;
(\subset\varpi_*({\cal O}_X)[G]),
\end{equation}
we have an action of $\varpi_*({\cal O}_X)[G]$ 
on $\varpi_*({\cal O}_X)$:
\begin{equation}
(\sum_{g \in G} f_g(x)g)\cdot f(x):=\sum_{g \in G} f_g(x)f(g^{-1}x). 
\end{equation} 
Thus we have a homomorphism
\begin{equation}
\varpi_*({\cal O}_X)[G] \to 
\Hom_{{\cal O}_Y}(\varpi_*({\cal O}_X),\varpi_*({\cal O}_X)).
\end{equation}

\begin{lem}
$\varpi_*({\cal O}_X)[G] \cong 
\Hom_{{\cal O}_Y}(\varpi_*({\cal O}_X),\varpi_*({\cal O}_X))$.
\end{lem}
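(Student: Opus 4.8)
The plan is to show that the natural ${\cal O}_Y$-algebra homomorphism
\[
\Theta:\varpi_*({\cal O}_X)[G] \to \End_{{\cal O}_Y}(\varpi_*({\cal O}_X))
\]
is an isomorphism by checking it over a dense open subset and then invoking reflexivity. First I would record that both sides are reflexive ${\cal O}_Y$-modules on the (normal) surface $Y=X/G$. On the left, the decomposition $\varpi_*({\cal O}_X)[G]=\bigoplus_{g\in G}\varpi_*({\cal O}_X)\,g$ identifies it with $\varpi_*({\cal O}_X)^{\oplus \#G}$ as an ${\cal O}_Y$-module, and $\varpi_*({\cal O}_X)$ is reflexive since it is the direct image of a locally free sheaf under the finite morphism $\varpi$ from the smooth surface $X$. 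On the right, $\End_{{\cal O}_Y}(\varpi_*({\cal O}_X))=\Hom_{{\cal O}_Y}(\varpi_*({\cal O}_X),\varpi_*({\cal O}_X))$ is a sheaf of homomorphisms into a reflexive sheaf, hence reflexive.

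Next I would isolate the locus over which $\varpi$ is well behaved. By hypothesis every nontrivial $g\in G$ has isolated fixed points, so the set $B\subset Y$ of images of points with nontrivial stabilizer is finite; thus $U:=Y\setminus B$ satisfies $\codim_Y(Y\setminus U)\geq 2$, and $\varpi$ is finite \'{e}tale Galois with group $G$ over $U$.

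The heart of the argument is to show that $\Theta|_U$ is an isomorphism. Over the \'{e}tale Galois locus this is the classical identification of a skew group algebra with an endomorphism algebra: one first checks injectivity via Dedekind's lemma on the linear independence of the automorphisms $g\in G$ (equivalently, that $\sum_{g} f_g(x)f(g^{-1}x)=0$ for all local sections $f$ forces every $f_g$ to vanish at the generic point), and then compares generic ranks, both sides having rank $(\#G)^2$ over ${\cal O}_Y$ because $\varpi_*({\cal O}_X)$ has generic rank $\#G$. More intrinsically, over a finite \'{e}tale $G$-cover one has the descent isomorphism $\varpi_*({\cal O}_X)\otimes_{{\cal O}_U}\varpi_*({\cal O}_X)\cong\prod_{g\in G}\varpi_*({\cal O}_X)$, which translates directly into the bijectivity of $\Theta|_U$.

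Finally, since $\Theta$ is a morphism of reflexive sheaves on the normal surface $Y$ that is an isomorphism over the open set $U$ whose complement has codimension at least $2$, it extends uniquely to an isomorphism on all of $Y$. The main obstacle I expect is the \'{e}tale-local Galois-descent step establishing bijectivity of $\Theta|_U$; once that is in place, the extension across the finite branch locus $B$ by reflexivity is routine.
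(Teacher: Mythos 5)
Your proposal is correct and follows essentially the same route as the paper: the paper also verifies the map is an isomorphism over the locus where $\varpi$ is an unramified $\#G$-to-one cover (by an explicit fiberwise computation with characteristic functions identifying the image with the full matrix algebra) and then concludes by observing that both sides are reflexive sheaves on $Y$, so an isomorphism off a codimension-two set extends. Your substitution of the Galois-descent/Dedekind argument for the paper's explicit fiber computation is an equivalent verification of the same key step, so no further comment is needed.
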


\begin{proof}
We first prove the claim over
the smooth locus $Y^{\mathrm{sm}}$ of $Y$.
We note that $\# \varpi^{-1}(y)=\#G$, $y \in Y^{\mathrm{sm}}$.
We take a point $z \in  \varpi^{-1}(y)$. 
Then $\varpi_*({\cal O}_X)_{|y}={\cal O}_{\varpi^{-1}(y)}$ is identified 
with $\oplus_{g \in G}{\Bbb C}_{gz}$ as ${\Bbb C}[G]$-modules.
Let $\chi_u(x)$ be the characteristic function of a point $u \in X$.
Then $\{ \chi_{gz}|g \in G \}$ is the base of
$\oplus_{g \in G}{\Bbb C}_{gz}$ and 
$f(x) \in {\cal O}_{\varpi^{-1}(y)}$ is decomposed into
$f(x)=\sum_{g \in G}f(gz)\chi_{gz}(x)$.
Since
\begin{equation}
(\chi_{g'z}(x)(g'g^{-1}))\cdot(\sum_{h \in G}f(hz)\chi_{hz}(x))
=f(gz)\chi_{g'z}(x),
\end{equation}
we see that
\begin{equation}
(\varpi_*({\cal O}_X)[G])_{|y} \to 
\Hom(\varpi_*({\cal O}_X)_{|y},\varpi_*({\cal O}_X)_{|y})
\end{equation}
is an isomorphism.
Since $\varpi_*({\cal O}_X)[G]$ and 
$\Hom_{{\cal O}_Y}(\varpi_*({\cal O}_X),\varpi_*({\cal O}_X))$ 
are reflexive sheaves
on $Y$, we get the claim.
\end{proof} 

We set ${\cal A}:=\varpi_*({\cal O}_X)[G] \cong
\Hom_{{\cal O}_Y}(\varpi_*({\cal O}_X),\varpi_*({\cal O}_X))$.

\begin{lem}\label{lem:G-equivalence}
We have an equivalence
\begin{equation}
\begin{matrix}
\varpi_*: &\Coh_G(X) & \cong & \Coh_{{\cal A}}(Y)\\
& E & \mapsto & \varpi_*(E)
\end{matrix}
\end{equation}
whose inverse is $\varpi^{-1}:\Coh_{{\cal A}}(Y)
\to \Coh_G(X)$.
In particular, we have an isomorphism
\begin{equation}
\Hom_G(E_1,E_2)=\Hom_{{\cal A}}(\varpi_*(E_1),\varpi_*(E_2)),\;
E_1,E_2 \in \Coh_G(X).
\end{equation}
\end{lem}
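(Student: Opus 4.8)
The plan is to realize the asserted equivalence as the combination of two standard facts: the equivalence between coherent sheaves on $X$ and $\varpi_*({\cal O}_X)$-modules on $Y$ coming from the affineness of $\varpi$, and the reinterpretation of a $G$-equivariant structure as a module structure over the crossed product algebra $\varpi_*({\cal O}_X)[G]={\cal A}$. Throughout I will use that the identification ${\cal A}=\varpi_*({\cal O}_X)[G]\cong {\cal H}om_{{\cal O}_Y}(\varpi_*({\cal O}_X),\varpi_*({\cal O}_X))$ has already been established in the preceding lemma, so that the two descriptions of ${\cal A}$ (and hence of $\Coh_{{\cal A}}(Y)$) coincide.

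First I would record that $\varpi\colon X\to Y$ is finite, hence affine, so that $X=\underline{\Spec}_Y(\varpi_*({\cal O}_X))$ and $\varpi_*$ is exact and induces an equivalence between $\Coh(X)$ and the category of coherent $\varpi_*({\cal O}_X)$-modules on $Y$, with quasi-inverse sending a $\varpi_*({\cal O}_X)$-module $F$ to the associated sheaf $\varpi^{-1}(F)$ on $X$. This handles all the $\mathcal{O}_Y$-linear data; what remains is to track the group action.

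Next I would upgrade this to the equivariant setting. For a $G$-sheaf $E$ the equivariant structure gives isomorphisms $g\colon E\xrightarrow{\sim}g^*E$, and pushing forward these combine with the $\varpi_*({\cal O}_X)$-module structure to make $\varpi_*(E)$ an ${\cal A}$-module: the element $\sum_g f_g\,g$ acts on a local section $s$ by $\sum_g f_g\,(g\cdot s)$. The key verification is compatibility with the twisted multiplication $(f_g\,g)(f'_{g'}\,g')=f_g\,f'_{g'}(g^{-1}x)\,gg'$; this follows from the identity $g\cdot(fs)=f(g^{-1}x)\,(g\cdot s)$, which gives exactly $a\cdot(b\cdot s)=(ab)\cdot s$ for $a=f_g\,g$ and $b=f'_{g'}\,g'$. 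Conversely, an ${\cal A}$-module $F$ restricts along ${\cal O}_Y\to\varpi_*({\cal O}_X)\hookrightarrow{\cal A}$ to a $\varpi_*({\cal O}_X)$-module, hence via the first step to a sheaf $\varpi^{-1}(F)$ on $X$, and the action of the group elements $g\in{\cal A}$ endows it with a $G$-equivariant structure. These two assignments are mutually inverse, since they already are so on the underlying $\varpi_*({\cal O}_X)$-modules and the group actions correspond by construction; this yields the equivalence $\varpi_*\colon\Coh_G(X)\to\Coh_{{\cal A}}(Y)$ with quasi-inverse $\varpi^{-1}$.

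Finally, the displayed $\Hom$-identity follows formally: a morphism in $\Coh_G(X)$ is an ${\cal O}_X$-linear map commuting with the $G$-action, i.e. a $G$-invariant element of $\Hom_{{\cal O}_X}(E_1,E_2)$, and under $\varpi_*$ the ${\cal O}_X$-linearity becomes $\varpi_*({\cal O}_X)$-linearity while $G$-equivariance becomes compatibility with the group part of ${\cal A}$, so that such maps are precisely the ${\cal A}$-module homomorphisms $\varpi_*(E_1)\to\varpi_*(E_2)$. The main obstacle is the bookkeeping in the middle step — making the matching of the equivariant cocycle with the crossed-product multiplication precise and checking functoriality in both directions — but no new geometric input is required at the singular points of $Y$, since the reflexivity argument of the previous lemma already guarantees that the structures determined on the smooth locus extend uniquely over all of $Y$.
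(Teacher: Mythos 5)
Your proof is correct and follows essentially the same route as the paper's: both identify ${\cal A}$-modules with $G$-equivariant $\varpi_*({\cal O}_X)$-modules via the crossed-product multiplication, the paper doing this after reducing to the affine case and you doing it globally via the affineness of the finite map $\varpi$. One small remark: your closing appeal to the reflexivity argument of the preceding lemma is unnecessary, since the equivalence uses only the crossed-product description ${\cal A}=\varpi_*({\cal O}_X)[G]$, which is a definition valid over all of $Y$, and not the identification with ${\cal H}om_{{\cal O}_Y}(\varpi_*({\cal O}_X),\varpi_*({\cal O}_X))$.
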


\begin{proof}
Since the problem is local, 
we may assume that $Y$ is affine.
Then $X$ is also affine. 
For $F \in \Coh_{{\cal A}}(Y)$,
$H^0(Y,F)$ is a $H^0(Y,\varpi_*({\cal O}_X))[G]$-module.
Hence $H^0(X,\varpi^{-1}(F))=H^0(Y,F)$ is a 
$H^0(X,{\cal O}_X)[G]$-module, which implies that
$\varpi^{-1}(F) \in \Coh_G(X)$. 
Then it is easy to see that
$\varpi^{-1}$ is the inverse of $\varpi_*$.
\end{proof}

By Lemma \ref{lem:G-equivalence}, we have an equivalence
$\varpi_*:{\bf D}_G(X) \to {\bf D}_{{\cal A}}(Y)$.
In particular, 
\begin{equation}
\chi_G(E_1,E_2)=
\sum_i (-1)^i \dim \Hom_{{\cal A}}(\varpi_*(E_1),\varpi_*(E_2)[i]),\;
E_1,E_2 \in \Coh_G(X).
\end{equation}

For a representation $\rho:G \to GL(V_{\rho})$ of $G$,
we define a $G$-linearization on
${\cal O}_X \otimes V_{\rho}$ in a usual way.
Thus we define the action of $G$ on 
$\varpi_*({\cal O}_X \otimes V_{\rho})$ as 
\begin{equation}
g \cdot (f(x) \otimes v):=f(g^{-1}x) \otimes gv,\; 
g \in G, f(x)\in \varpi_*({\cal O}_X), v \in V_{\rho}.
\end{equation}
Then ${\cal O}_X \otimes {\Bbb C}[G]$ is a $G$-sheaf
such that $\varpi_*({\cal O}_X \otimes {\Bbb C}[G])={\cal A}$
and we have a decomposition  
\begin{equation}
{\cal O}_X \otimes {\Bbb C}[G]=
\bigoplus_i ({\cal O}_X \otimes V_{\rho_i})^{\oplus \dim \rho_i},
\end{equation}
where $\rho_i$ are irreducible representations of $G$.
\begin{defn}
For a $G$-sheaf $E$ and a representation 
$\rho:G \to GL(V_{\rho})$,
$E \otimes \rho$ denotes the $G$-sheaf 
$E \otimes_{{\cal O}_X}({\cal O}_X \otimes V_{\rho})$.
\end{defn}
Since $\varpi_*({\cal O}_X \otimes \rho_i)$ 
are direct summands of 
${\cal A}$,
we get the following lemma.
\begin{lem}
\begin{enumerate}
\item[(1)]
${\cal A}_i:=\varpi_*({\cal O}_X \otimes \rho_i)$ are local projective objects 
of $\Coh_{{\cal A}}(Y)$.
\item[(2)]
$\bigoplus_i \varpi_*({\cal O}_X \otimes \rho_i)^{\oplus r_i}$
is a local projective generator of $\Coh_{{\cal A}}(Y)$ if and only
if $r_i>0$ for all $i$.
\end{enumerate}
\end{lem}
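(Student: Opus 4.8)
The plan is to read off both statements from the decomposition of the free rank-one ${\cal A}$-module into the summands ${\cal A}_i$, together with the fact that ${\cal A}$ itself is a local projective generator of $\Coh_{{\cal A}}(Y)$. First I would record the module-theoretic decomposition: the regular representation splits as ${\Bbb C}[G]=\bigoplus_i V_{\rho_i}^{\oplus \dim \rho_i}$, so the $G$-sheaf ${\cal O}_X \otimes {\Bbb C}[G]$ decomposes as $\bigoplus_i ({\cal O}_X \otimes V_{\rho_i})^{\oplus \dim \rho_i}$, and applying the equivalence $\varpi_*$ of Lemma \ref{lem:G-equivalence} gives ${\cal A}=\bigoplus_i {\cal A}_i^{\oplus \dim \rho_i}$ as left ${\cal A}$-modules. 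Now ${\cal A}$ is the free rank-one ${\cal A}$-module, for which ${\cal H}om_{{\cal A}}({\cal A},F)=F$ is exact and coherent with no higher $\Ext$; thus ${\cal A}$ is a local projective generator, i.e. ${\bf R}{\cal H}om_{{\cal A}}({\cal A},F)=F\in\Coh(Y)$ for all $F$ and vanishes only for $F=0$.

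Part (1) is then immediate: the property ${\bf R}{\cal H}om_{{\cal A}}(-,F)={\cal H}om_{{\cal A}}(-,F)\in\Coh(Y)$ passes to direct summands, so each ${\cal A}_i$, being a summand of ${\cal A}$, is a local projective object. For part (2), the implication ``$r_i>0$ for all $i$'' $\Rightarrow$ generator is a formal consequence of the decomposition: if every ${\cal A}_i$ occurs in $P:=\bigoplus_i {\cal A}_i^{\oplus r_i}$, then ${\cal A}$ is a direct summand of a finite direct sum of copies of $P$, and since ${\cal H}om_{{\cal A}}(-,F)$ turns a finite $\oplus$ in the first variable into a product, the vanishing ${\bf R}{\cal H}om_{{\cal A}}(P,F)=0$ forces ${\cal H}om_{{\cal A}}({\cal A}_i,F)=0$ for all $i$, hence ${\cal H}om_{{\cal A}}({\cal A},F)=F=0$. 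Together with part (1) (no higher $\Ext$) this shows $P$ is a local projective generator.

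The converse is where the real work lies. The hard part will be producing, whenever some $r_j=0$, a nonzero object $F\in\Coh_{{\cal A}}(Y)$ with ${\bf R}{\cal H}om_{{\cal A}}(\bigoplus_{i\ne j}{\cal A}_i^{\oplus r_i},F)=0$, thereby contradicting property (b) of a local projective generator. My approach would be to exhibit a simple ${\cal A}$-module $S_j$ whose projective cover is ${\cal A}_j$ and which satisfies ${\cal H}om_{{\cal A}}({\cal A}_i,S_j)=0$ for $i\ne j$; then $F=S_j$ does the job. To construct $S_j$ and verify the orthogonality I would pass to a fiber: over a point $y\in Y$ with maximal stabilizer $H=\Stab(x)$, the fiber algebra ${\cal A}\otimes_{{\cal O}_Y}k(y)$ is a matrix algebra over the group algebra ${\Bbb C}[H]$, and via the equivalence ${\cal A}_i\otimes_{{\cal O}_Y}k(y)$ has head governed by the constituents of $\rho_i|_H$. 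The crux is to check that the ${\cal A}_i$ are pairwise non-isomorphic indecomposable projectives with distinct heads, so that dropping ${\cal A}_j$ genuinely loses a simple module; this is the standard fact that a progenerator over (the Morita reduction of) a skew group algebra must contain every indecomposable projective summand.

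I expect this fiberwise representation-theoretic step to be the only genuine obstacle: the direct implications are purely formal manipulations of the summand decomposition and of the generator property inherited from the free module ${\cal A}$, whereas the converse requires identifying the ${\cal A}_i$ with the distinct indecomposable projective ${\cal A}$-modules and locating a point whose stabilizer realizes the missing constituent $\rho_j$. I would carry out the analysis at a point of maximal stabilizer and then globalize by Nakayama's lemma, which suffices to detect any nonzero coherent ${\cal A}$-module.
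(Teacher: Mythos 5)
Your treatment of part (1) and of the implication ``$r_i>0$ for all $i$ $\Rightarrow$ generator'' in (2) is correct and coincides with the paper's argument: the paper deduces the whole lemma in a single line from the decomposition ${\cal A}=\varpi_*({\cal O}_X\otimes{\Bbb C}[G])=\bigoplus_i {\cal A}_i^{\oplus \dim\rho_i}$, so that each ${\cal A}_i$ is a direct summand of the free module ${\cal A}$ and any sum containing every ${\cal A}_i$ admits ${\cal A}$ as a direct summand of a multiple. Nothing beyond your formal manipulations is needed there.

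The genuine gap is in the converse, and it is not just an unfinished verification: the key fact you invoke fails at the level of the sheaf of algebras ${\cal A}=\varpi_*({\cal O}_X)[G]$. By the equivalence $\Coh_{{\cal A}}(Y)\cong\Coh_G(X)$ and the paper's description of irreducible objects, a simple ${\cal A}$-module supported over $y=\varpi(x)$ is $\varpi_*(E)$ where $E$ is the irreducible $G$-sheaf on the orbit $Gx$ attached to an irreducible representation $\sigma$ of $H=\Stab(x)$, with $H^0(X,E)\cong{\Bbb C}[G]\otimes_{{\Bbb C}[H]}V_\sigma$; hence
\begin{equation*}
\Hom_{{\cal A}}({\cal A}_i,\varpi_*(E))=\Hom_G({\cal O}_X\otimes\rho_i,E)
=\Hom_G(V_{\rho_i},{\Bbb C}[G]\otimes_{{\Bbb C}[H]}V_\sigma)=\Hom_H(\rho_{i}|_H,\sigma)
\end{equation*}
by Frobenius reciprocity. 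So the simple $S_j$ you need exists only if some occurring stabilizer $H$ admits an irreducible $\sigma$ contained in $\rho_j|_H$ but in no $\rho_i|_H$ with $i\ne j$. This is automatic when some point has stabilizer equal to $G$ (take $\sigma=\rho_j$), but it is false in general: for a free action every simple has $H^0$ a multiple of the regular representation, so every single ${\cal A}_i$ already detects all simples and is by itself a local projective generator, contradicting the ``only if''; the same failure occurs whenever the restrictions $\rho_i|_H$ to the stabilizers that actually occur do not separate the $\rho_i$. The slogan ``a progenerator over a skew group algebra contains every indecomposable projective'' is a statement about ${\Bbb C}[G]$ acting on a point, not about ${\cal A}$, whose indecomposable projectives ${\cal A}_i$ need not have distinct heads at any point of $Y$. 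Since the paper offers no proof of this direction, you should either add the hypothesis that for each $j$ some $\Stab(x)$ distinguishes $\rho_j$ in the above sense (e.g.\ that a point with full stabilizer exists), or restrict the ``only if'' accordingly.
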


For a local projective generator ${\cal B}$ of $\Coh_{{\cal A}}(Y)$,
we set ${\cal A}':=
{\cal H}om_{{\cal A}}({\cal B},{\cal B})$.
Then we have an equivalence
\begin{equation}
\begin{matrix}
\Coh_{{\cal A}}(Y)& \to & \Coh_{{\cal A}'}(Y)\\
E & \mapsto & {\cal H}om_{\cal A}({\cal B},E).
\end{matrix}
\end{equation}

\subsection{Stability for $G$-sheaves.}

Let $\alpha$ be an element of $R(G) \otimes {\Bbb Q}$.

\begin{defn}
Let ${\cal O}_X(1)$ be the pull-back of an ample
line bundle on $Y$.
A coherent $G$-sheaf $E$ is $\alpha$-stable, if
$E$ is purely $d$-dimensional and
\begin{equation}
\frac{\Gchi(F(n) \otimes \alpha^{\vee})}{a_d(F)} <
\frac{\Gchi(E(n) \otimes \alpha^{\vee})}{a_d(E)},\;n \gg 0
\end{equation}
for all proper subsheaf $F \ne 0$,
where $a_d(*)$ is the coefficient of $n^d$ of 
the Hilbert polynomial $\chi_G(*(n))$.
We also define the $\alpha$-semi-stability as usual.
\end{defn}

\begin{rem}
Assume that $\alpha=\sum_i r_i \rho_i$, $r_i>0$.
We set ${\cal B}:=\oplus_i {\cal A}_i^{\oplus r_i}$ and
${\cal A}':={\cal H}om_{{\cal A}}({\cal B},{\cal B})$.
Under the equivalence
\begin{equation}
\begin{matrix}
\Coh_G(X) & \to & \Coh_{{\cal A}'}(Y)\\
 E & \mapsto & {\cal H}om_{{\cal A}}({\cal B},\varpi_*(E)),
\end{matrix}
\end{equation}
\begin{equation}
\chi_G(E(n) \otimes \alpha^{\vee})=
\chi({\cal H}om_{{\cal A}}
({\cal B},\varpi_*(E))(n))
\end{equation}
implies that 
$\alpha$-twisted stability of $E$ corresponds to the stability
of ${\cal A}'$-module ${\cal H}om_{{\cal A}}
({\cal B},\varpi_*(E))$.
\end{rem}

For a coherent $G$-sheaf $E$ of dimension 0,
we also have a refined notion of stability,
which also comes from the stability of 0-dimensional
objects in $\Coh_{{\cal A}}(Y)$.
\begin{defn}
Let $\rho_{\mathrm{reg}}$ 
be the regular representation of $G$.
A coherent $G$-sheaf $E$ of dimension 0 is
$(\rho_{\mathrm{reg}},\alpha)$-stable, if
\begin{equation}
\frac{\Gchi(F \otimes \alpha^{\vee})}
{\Gchi(F \otimes \rho_{\mathrm{reg}}^{\vee})}
<
\frac{\Gchi(E \otimes \alpha^{\vee})}
{\Gchi(E \otimes \rho_{\mathrm{reg}}^{\vee})}
\end{equation}
for a proper subsheaf $F \ne 0$.
\end{defn}
By \cite[Thm. 4.7]{S:1} and 
Proposition \ref{prop:A-module}, we get the following theorem.
\begin{thm}
\begin{enumerate}
\item[(1)]
Assume that $n\alpha$ contains every irreducible representation
for a sufficiently large $n$. Then
there is a coarse moduli space
$\overline{M}_H^{\alpha}(v)$ of
$\alpha$-semi-stable $G$-sheaves $E$ with
$v(E)=v$.
$\overline{M}_H^{\alpha}(v)$ is a projective scheme.
We denote 
the open subscheme consisting of $\alpha$-stable $G$-sheaves
by ${M}_H^{\alpha}(v)$.
\item[(2)]
Assume that $v$ is a 0-dimensional vector.
Then there is a coarse moduli space
$\overline{M}_H^{\rho_{\mathrm{reg}},\alpha}(v)$ of
$(\rho_{\mathrm{reg}},\alpha)$-semi-stable $G$-sheaves $E$ with
$v(E)=v$.
$\overline{M}_H^{\rho_{\mathrm{reg}},\alpha}(v)$ is a projective scheme.
We denote 
the open subscheme consisting of 
$(\rho_{\mathrm{reg}},\alpha)$-stable $G$-sheaves
by ${M}_H^{\rho_{\mathrm{reg}},\alpha}(v)$.
\item[(3)]
If $K_X \cong {\cal O}_X$ in $\Coh_G(X)$, then 
${M}_H^{\alpha}(v)$ and 
$M_H^{\rho_{\mathrm{reg}},\alpha}(v)$ are smooth of dimension
$-\chi_G( v,v)+2$ with holomorphic
symplectic structures.
\end{enumerate} 
\end{thm}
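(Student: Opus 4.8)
The plan is to transport both moduli problems through the Morita equivalence $\varpi_*:\Coh_G(X)\cong\Coh_{\cal A}(Y)$ of Lemma \ref{lem:G-equivalence}, so that they become moduli problems for sheaves of modules over a sheaf of algebras on $Y$, to which Simpson's construction \cite[Thm.~4.7]{S:1} and Proposition \ref{prop:A-module} apply directly. First I would fix notation: writing $\alpha=\sum_i r_i\rho_i$ with $\rho_i$ the irreducible representations, the hypothesis that $n\alpha$ contains every irreducible representation for $n\gg0$ is exactly the condition $r_i>0$ for all $i$, so that ${\cal B}:=\bigoplus_i{\cal A}_i^{\oplus r_i}$, with ${\cal A}_i=\varpi_*({\cal O}_X\otimes\rho_i)$, is a local projective generator of $\Coh_{\cal A}(Y)$. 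Setting ${\cal A}':={\cal H}om_{\cal A}({\cal B},{\cal B})$, the functor $E\mapsto{\cal H}om_{\cal A}({\cal B},\varpi_*(E))$ is an equivalence $\Coh_G(X)\cong\Coh_{{\cal A}'}(Y)$, and by the identity $\chi_G(E(n)\otimes\alpha^{\vee})=\chi({\cal H}om_{\cal A}({\cal B},\varpi_*(E))(n))$ recorded just above, $\alpha$-(semi)stability of a $G$-sheaf $E$ is equivalent to Simpson (semi)stability of the corresponding ${\cal A}'$-module.

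For part (1) I would then invoke \cite[Thm.~4.7]{S:1}: taking $\Lambda_0={\cal O}_Y$ and $\Lambda_k={\cal A}'$ for $k\ge1$ realizes a sheaf of ${\cal A}'$-modules as a $\Lambda$-module in Simpson's sense, so that semistable ${\cal A}'$-modules with fixed Hilbert polynomial form a projective GIT quotient $Q^{ss}\dslash GL(V)$, exactly as in the construction of $\overline M_{X/S}^{{\cal C},P}$ in subsection \ref{subsect:stability}. Projectivity over the point follows from Langton's valuative argument, and pulling the resulting space back through the equivalence yields the coarse moduli scheme $\overline M_H^{\alpha}(v)$ together with its open locus $M_H^\alpha(v)$ of stable objects. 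Part (2) is the same reduction, except that the objects are now $0$-dimensional and the refined $(\rho_{\mathrm{reg}},\alpha)$-stability translates under $\varpi_*$ into the $({\cal A},{\cal B})$-twisted stability of Proposition \ref{prop:A-module}, whose proof already produces the projective coarse moduli scheme; here no genericity of $\alpha$ is required, since $\rho_{\mathrm{reg}}$ plays the role of the fixed generator.

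For part (3) the input is the hypothesis $K_X\cong{\cal O}_X$ in $\Coh_G(X)$, i.e.\ a nowhere-vanishing $G$-invariant holomorphic $2$-form; this makes $\chi_G(\,\cdot\,,\,\cdot\,)$ symmetric and, via $G$-equivariant Serre duality, gives $\GExt^2(E,E)\cong\GHom(E,E\otimes K_X)^{\vee}\cong\GHom(E,E)^{\vee}$ for every $G$-sheaf $E$. For a stable $E$ one has $\GHom(E,E)={\Bbb C}$, so the trace map $\GExt^2(E,E)\to H^2(X,{\cal O}_X)^G$ is an isomorphism onto the one-dimensional invariant part and the reduced obstruction space $\GExt^2(E,E)_0$ vanishes; hence the moduli space is smooth at $[E]$ of dimension $\dim\GExt^1(E,E)=-\chi_G(v,v)+2$. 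The same invariant $2$-form identifies $\GExt^2(E,E)\cong{\Bbb C}$ and turns the Yoneda pairing $\GExt^1(E,E)\times\GExt^1(E,E)\to\GExt^2(E,E)$ into a non-degenerate alternating form, which, using the base-change compatibility of the relative $\GExt$-sheaves over the moduli space, globalizes to a holomorphic symplectic form exactly as in the proof of Lemma \ref{lem:crepant}.

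I expect the main obstacle to lie in the smoothness statement rather than in the bare existence of the moduli spaces: one must verify that the $G$-invariant deformation theory of $E$ is genuinely governed by $\GExt^i(E,E)$ (equivalently, that it matches the deformation theory of the associated ${\cal A}'$-module), that the trace map surjects onto $H^2(X,{\cal O}_X)^G$ and no further, and that the invariant symplectic pairing is compatible with base change so as to descend to the coarse space. The existence and projectivity in (1) and (2) are, by contrast, essentially formal consequences of the Morita dictionary combined with the already established Simpson and Proposition \ref{prop:A-module} constructions.
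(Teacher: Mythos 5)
Your proposal follows exactly the route the paper takes: the paper derives this theorem in one line from Simpson's \cite[Thm.~4.7]{S:1} and Proposition \ref{prop:A-module} via the Morita equivalence $\Coh_G(X)\cong\Coh_{{\cal A}'}(Y)$ set up in the preceding remarks, which is precisely your reduction for (1) and (2). Your part (3) correctly supplies the standard equivariant Serre-duality/trace-map argument (in the style of Proposition \ref{prop:0-dim:smooth} and Lemma \ref{lem:crepant}) that the paper leaves implicit, so there is nothing to object to.
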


\begin{rem}
There is another construction due to Inaba \cite{Inaba}.
\end{rem}

For a smooth point $y$ of $Y$, let
$v_0$ be the topological invariant of ${\cal O}_{\varpi^{-1}(y)}$.
%
%
\begin{lem}
A 0-dimensional $G$-sheaf $E$ is $v_0$-twisted stable
if and only if $E$ is an irreducible object of $\Coh_G(X)$.
\end{lem}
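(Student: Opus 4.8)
The plan is to transport the statement into the language of ${\cal A}$-modules through the Morita equivalence $\varpi_*\colon\Coh_G(X)\cong\Coh_{{\cal A}}(Y)$ of Lemma \ref{lem:G-equivalence} and then to quote the structure theory of $0$-dimensional objects developed in subsection \ref{subsect:wall-chamber}, with ${\cal O}_{\varpi^{-1}(y)}$ and $v_0$ playing the roles of ${\Bbb C}_x$ and $\varrho_X$. First I would record the local geometry at a smooth point. Since every $g\in G$ has at most isolated fixed points and $\Stab(x)$ acts on $(K_X)_{|\{x\}}$ trivially, $\Stab(x)$ is (conjugate to) a finite subgroup of $SL_2$; the quotient singularity ${\Bbb C}^2/\Stab(x)$ is smooth exactly when $\Stab(x)$ is trivial, so a smooth point $y$ of $Y$ is precisely the image of a free orbit. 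Hence $\varpi^{-1}(y)\cong\coprod_{z\in\varpi^{-1}(y)}\{z\}$ is a single free orbit of $\#G$ points and ${\cal O}_{\varpi^{-1}(y)}\cong\bigoplus_{z}{\Bbb C}_z$ is permuted freely, so it is the regular representation $\rho_{\mathrm{reg}}$ as a $G$-module; in particular $v_0\in K_G(X)_{\mathrm{top}}$ is the class of $\rho_{\mathrm{reg}}$ and $\chi_G(\,\cdot\,,{\cal O}_X\otimes{\Bbb C}[G])$, respectively $\Gchi(\,\cdot\,\otimes\rho_{\mathrm{reg}}^{\vee})$, computes the ordinary length $\dim_{\Bbb C}H^0(X,\,\cdot\,)$ of a $0$-dimensional $G$-sheaf. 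Because a free orbit is a single $G$-orbit, ${\cal O}_{\varpi^{-1}(y)}$ has no proper nonzero $G$-subsheaf, i.e.\ it is an irreducible object of $\Coh_G(X)$.

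The easy implication, that an irreducible $E$ is $v_0$-twisted stable, is then immediate: an irreducible object admits no proper nonzero subobject, so the defining inequality of $v_0$-twisted stability holds vacuously (one should only also note that $E$ is purely $0$-dimensional, so that it is at least semistable of the right type). For the converse I would argue by contradiction. Suppose $E$ is a $0$-dimensional $v_0$-twisted stable $G$-sheaf that is not irreducible, and choose a proper subobject $0\neq F\subsetneq E$ in $\Coh_G(X)$. Applying $\varpi_*$ turns this into a short exact sequence of $0$-dimensional ${\cal A}$-modules, and $v_0$-twisted stability becomes the refined (King/Simpson-type) stability of $0$-dimensional ${\cal A}$-modules of Proposition \ref{prop:A-module}; the numerical datum is the pair consisting of the length $\dim_{\Bbb C}H^0(\,\cdot\,)$ and the $G$-character $[H^0(\,\cdot\,)]\in R(G)$ measured against the reference $v_0$. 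Since the length functional is strictly positive and additive on nonzero $0$-dimensional objects, the reduced $v_0$-twisted invariant of $E$ is a convex combination of those of $F$ and of $E/F$; combined with the Hilbert--Mumford description of stable points (Lemma \ref{lem:Hilbert-Mumford}) and the fact that $v_0$ is the class of the \emph{simple} object ${\cal O}_{\varpi^{-1}(y)}$, this forces the Jordan--H\"older filtration of $E$ (Definition \ref{defn:0-dim}) to have length one. Thus $E$ is irreducible, which is exactly the equivariant translation of the identification of $0$-twisted stable objects with irreducible objects in Lemma \ref{lem:tilting:irreducible} and Lemma \ref{lem:alpha=0}.

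The main obstacle I anticipate is precisely the $v_0$-twisted stable $\Rightarrow$ irreducible direction, namely making rigorous that this particular reference class $v_0$ renders the stability \emph{sharp}, so that strict stability is equivalent to simplicity rather than merely to semisimplicity of the $S$-equivalence class. Concretely, one must rule out genuinely reducible $E$ (for instance nonsplit extensions of simple $G$-sheaves, or sheaves whose scheme-theoretic support thickens a free orbit) being $v_0$-twisted stable, and must check that objects supported on several orbits decompose compatibly with $\varpi_*$. I expect this to come down to the weighted-average argument above together with the minimality of $v_0$ among effective $0$-dimensional classes, exactly as in the proof that $X^0\cong Y$ identifies smooth points with irreducible ${\Bbb C}_x$ and singular points with properly semistable objects in Theorem \ref{thm:RDP-desing}; the remaining bookkeeping is the careful matching of $G$-subsheaves of $E$ with ${\cal A}$-submodules of $\varpi_*(E)$, which is furnished by Lemma \ref{lem:G-equivalence}.
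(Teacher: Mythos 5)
Your first paragraph already contains the entire proof, and it is exactly the paper's proof: since a smooth point $y\in Y$ is the image of a free orbit, $H^0(X,{\cal O}_{\varpi^{-1}(y)})\cong{\Bbb C}[G]$, so $v_0$ acts on $0$-dimensional $G$-sheaves as the regular representation and therefore
$\Gchi(F\otimes v_0^{\vee})=({\Bbb C}[G],H^0(X,F))=\dim_{\Bbb C}H^0(X,F)=\Gchi(F\otimes \rho_{\mathrm{reg}}^{\vee})$
for every nonzero $0$-dimensional $G$-sheaf $F$. Hence the slope entering the definition of $v_0$-twisted stability is identically equal to $1$, the strict inequality $\mathrm{slope}(F)<\mathrm{slope}(E)$ can never be satisfied by a proper nonzero subobject, and so $E$ is stable if and only if it has no proper nonzero subobject, i.e.\ if and only if it is irreducible. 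The paper's proof is precisely this one line.

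Where your write-up goes astray is the converse direction. Having observed that numerator and denominator both compute the ordinary length, you should simply conclude as above; instead you pass through Morita equivalence, a ``convex combination'' of slopes, Lemma \ref{lem:Hilbert-Mumford}, and the Jordan--H\"older filtration. A convex combination of constants carries no information, and the inference ``convexity $+$ Hilbert--Mumford $+$ $v_0$ is the class of a simple object $\Rightarrow$ the Jordan--H\"older filtration has length one'' is not a valid deduction as stated (many classes are classes of simple objects, and Lemma \ref{lem:Hilbert-Mumford} by itself only characterizes (semi)stable points of the Grassmannian). The ``main obstacle'' you flag at the end --- ruling out reducible stable objects --- is in fact a non-issue: any proper nonzero subobject whatsoever destabilizes, because the required strict inequality would read $1<1$. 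So delete the second half and let the constancy of the slope do the work.
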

\begin{proof}
Let $E$ be a $G$-sheaf of dimension 0.
Then 
$\Gchi(E \otimes v_0^{\vee})/\Gchi(E \otimes \rho_{reg}^{\vee})=1$.
Hence the claim holds.
\end{proof}

\begin{defn}
Let $\GHilb_X^{\rho}$ be the $G$-Hilbert scheme
parametrizing 0-dimensional subschemes $Z$ of $X$ such that
$H^0(X,{\cal O}_Z) \cong V_{\rho}$.
\end{defn}
Let $\rho_0,\rho_1,\dots,\rho_n$ be the irreducible representations of $G$.
Assume that $\rho_0$ is trivial.
We take an $\alpha$ such that $(\alpha,v_0)=0$
and $(\alpha,\rho_i)<0$ for $i>0$.
\begin{lem}
$M_H^{\rho_{\mathrm{reg}},\alpha}(v_0)
=\GHilb_X^{\rho_{\mathrm{reg}}}$.
In particular, $M_H^{\rho_{\mathrm{reg}},\alpha}(v_0) \ne \emptyset$.
\end{lem}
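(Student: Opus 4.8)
The plan is to reduce the $(\rho_{\mathrm{reg}},\alpha)$-stability condition to a sign condition on representation multiplicities, and then to identify both moduli problems with the locus of structure sheaves of $G$-clusters. First I would record that for a $0$-dimensional $G$-sheaf $E$ one has $\Gchi(E\otimes\rho_i^{\vee})=\dim\GHom(\rho_i,H^0(E))=m_i(E)$, the multiplicity of the irreducible $\rho_i$ in $H^0(E)$, since $H^j(X,E\otimes\rho_i^{\vee})=0$ for $j>0$. Because $\Gchi(\,\cdot\otimes\rho_i^{\vee})$ factors through $K_G(X)_{\mathrm{top}}$, the hypothesis $v(E)=v_0$ forces $m_i(E)=\Gchi(\mathcal O_{\varpi^{-1}(y)}\otimes\rho_i^{\vee})=\dim\rho_i$ for every $i$; that is $H^0(E)\cong\mathbb C[G]$ as a $G$-module, so $m_0(E)=1$ and $\Gchi(E\otimes\rho_{\mathrm{reg}}^{\vee})=\#G$. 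Writing $\alpha=\sum_i c_i\rho_i$, the conditions $(\alpha,v_0)=0$ and $(\alpha,\rho_i)<0$ for $i>0$ read $\sum_i c_i\dim\rho_i=0$ and $c_i<0$ ($i>0$), whence $c_0>0$; moreover $\Gchi(E\otimes\alpha^{\vee})=\sum_i c_i m_i(E)=0$, so the reduced slope of $E$ vanishes. Thus, exactly as in Remark \ref{rem:G-indep}, $(\rho_{\mathrm{reg}},\alpha)$-stability of such an $E$ is equivalent to $\Gchi(F\otimes\alpha^{\vee})=\sum_i c_i m_i(F)<0$ for every proper nonzero subsheaf $F$, or dually $\Gchi(Q\otimes\alpha^{\vee})>0$ for every proper nonzero quotient $Q$.

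Next I would prove the two inclusions at the level of points. For $\GHilb_X^{\rho_{\mathrm{reg}}}\subseteq M_H^{\rho_{\mathrm{reg}},\alpha}(v_0)$: a $G$-cluster $Z$ gives $E=\mathcal O_Z$ with $v(\mathcal O_Z)=v_0$, and any proper $G$-subsheaf $F$ is a proper $G$-invariant ideal, hence misses the cyclic generator $1_Z$ spanning the one-dimensional trivial isotypic component of $H^0(\mathcal O_Z)$, so $m_0(F)=0$ and $\Gchi(F\otimes\alpha^{\vee})=\sum_{i>0}c_i m_i(F)<0$ because $F\neq0$. Conversely, for $E\in M_H^{\rho_{\mathrm{reg}},\alpha}(v_0)$ the equality $m_0(E)=1$ provides a nonzero $G$-equivariant map $\phi\colon\mathcal O_X\to E$ arising from the trivial vector; its image $\mathcal O_Z$ satisfies $m_0(\mathcal O_Z)\geq1$, so if $Q=\coker\phi\neq0$ then $m_0(Q)=m_0(E)-m_0(\mathcal O_Z)\leq0$, contradicting $\Gchi(Q\otimes\alpha^{\vee})>0$. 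Hence $\phi$ is surjective, $E\cong\mathcal O_Z$ with $H^0(\mathcal O_Z)\cong\mathbb C[G]$, i.e. $Z\in\GHilb_X^{\rho_{\mathrm{reg}}}$. Non-emptiness follows at once, since a free orbit $\varpi^{-1}(y)$ is a $G$-cluster.

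Finally, to promote this bijection to an isomorphism of schemes I would compare the two moduli functors: on any base, a family of $(\rho_{\mathrm{reg}},\alpha)$-stable sheaves of type $v_0$ carries, by the constancy $m_0\equiv1$ together with cohomology-and-base-change, a canonical surjection from $\mathcal O_X$ unique up to a scalar, defining a morphism to the $G$-Hilbert functor, while a flat family of $G$-clusters is tautologically such a family; these are mutually inverse natural transformations. Alternatively one can transport everything through the Morita equivalence $\varpi_*\colon\Coh_G(X)\cong\Coh_{\mathcal A}(Y)$ of Lemma \ref{lem:G-equivalence} and match the GIT polarizations via Lemma \ref{lem:polarization}. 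I expect this functorial upgrade to be the main obstacle: one must check that the evaluation map at the trivial generator is genuinely uniquely determined and flat in families, so that the universal quotient on $\GHilb_X^{\rho_{\mathrm{reg}}}$ and the universal stable sheaf induce the same scheme structure. The point-level identification and the stability reduction are, by contrast, routine once the multiplicity computation above is in place.
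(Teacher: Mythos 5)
Your proof is correct and follows essentially the same route as the paper: you reduce $(\rho_{\mathrm{reg}},\alpha)$-stability of a sheaf with invariant $v_0$ to the condition that no proper subsheaf (equivalently, every proper quotient fails to) contain the trivial representation, and then use the evaluation map from the trivial isotypic vector to identify stable sheaves with structure sheaves of $G$-clusters. Your explicit multiplicity computation and the closing remark on upgrading the pointwise bijection to an isomorphism of moduli functors merely spell out details the paper leaves implicit.
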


\begin{proof}
Let $E$ be a $G$-sheaf with $v(E)=v_0$.
Since $\Gchi({\cal O}_X \otimes \rho_0,E)=1$, we have a homomorphism
$\phi:{\cal O}_X \otimes \rho_0 \to E$.
Then $H^0(\im \phi)$ contains a trivial representation,
which implies that $\Gchi({\cal O}_X \otimes \rho_0,\im \phi) \geq 1$.
We note that $E$ belongs to $M_H^{\rho_{\mathrm{reg}},\alpha}(v_0)$
if and only if $E$ does not contain a proper subsheaf $F$ with
$\Gchi({\cal O}_X \otimes \rho_0,F) \geq 1$.
Hence if $E \in M_H^{\rho_{\mathrm{reg}},\alpha}(v_0)$, then 
$\im \phi=E$, which implies that $E \in \GHilb_X^{\rho_{\mathrm{reg}}}$.
Conversely, if $E \in \GHilb_X^{\rho_{reg}}$, then
for a subsheaf $F$
with $\Gchi({\cal O}_X \otimes \rho_0,F) \geq 1$,
$\GHom({\cal O}_X \otimes \rho_0,F) \to 
\GHom({\cal O}_X \otimes \rho_0,E)$ is isomorphic.
Hence $\phi$ factors through $F$.
Since $E$ is generated by the image of $\phi$,
$F=E$. Thus $E$ is stable. 
\end{proof}
We set $X':=M_H^{\rho_{\mathrm{reg}},\alpha}(v_0)$.
Let $Y'$ be the normalization of 
$\overline{M}_H^{\rho_{\mathrm{reg}},0}(v_0)$.
Then we have a morphism
$\pi:X' \to Y'$.

\begin{prop}
\begin{enumerate}
\item[(1)]
$Y' \to \overline{M}_H^{\rho_{\mathrm{reg}},0}(v_0)$ is a bijective morphism.
\item[(2)]
Let $\{p_1,p_2,\dots,p_l \}$ be the set of singular points of $Y'$.
Then each $p_i$ corresponds to $S$-equivalence classes of
properly $v_0$-twisted semi-stable $G$-sheaves.
Let $\oplus_{j=0}^{s_i} E_{ij}^{\oplus a_{ij}}$ be the $S$-equivalence class
corresponding to $p_i$.
Then the matrix $(\chi_G(E_{ij},E_{ij'}))_{j,j' \geq 0}$ is of affine type
$\tilde{A},\tilde{D},\tilde{E}$.
\item[(3)]
We can assume that $a_{i0}=1$ for all $i$.
Then $p_i$ is a rational double point of
type $A,D,E$ according as the type of 
the matrix
$(\chi_G(E_{ij},E_{ij'}))_{j,j' \geq 1}$.
\item[(4)]
We assume that $a_{i0}=1$ for all $i$.
For $j \ne 0$,
\begin{equation}
C_{ij}:=\{x' \in X'| 
\GHom(E_{ij},{\cal E}_{|\{x' \} \times X}) \ne 0\}
\end{equation}
is a smooth rational curve and
$\pi^{-1}(p_i)=\sum_{j>0} a_{ij}C_{ij}$.

\end{enumerate}
\end{prop}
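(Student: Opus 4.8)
The plan is to run the arguments of Theorem \ref{thm:RDP-desing} and Proposition \ref{prop:K3-desing} in the present equivariant setting, using the Morita equivalence $\varpi_*:\Coh_G(X)\cong\Coh_{\cal A}(Y)$ of Lemma \ref{lem:G-equivalence} to transport the moduli problem to the module category over ${\cal A}=\varpi_*({\cal O}_X)[G]$ on $Y=X/G$. First I would record the two structural facts that make the sheaf arguments carry over: for a $0$-dimensional $G$-sheaf $E$ whose support lies over a single point of $Y$, the hypothesis that $K_X$ is the pull-back of a line bundle on $Y$ forces $E\otimes K_X\cong E$ in $\Coh_G(X)$, so $\chi_G(\;,\;)$ is symmetric on such classes and $\GExt^2(E,F)^{\vee}\cong\GHom(F,E)$; and consequently $M_H^{\rho_{\mathrm{reg}},\alpha}(v_0)$ is smooth of dimension $-\chi_G(v_0,v_0)+2=2$. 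Thus $X'=M_H^{\rho_{\mathrm{reg}},\alpha}(v_0)=\GHilb_X^{\rho_{\mathrm{reg}}}$ is a smooth surface, non-empty by the preceding lemma, and the universal family ${\cal E}$ on $X'\times X$ defines a Fourier--Mukai equivalence $\Phi:{\bf D}_G(X)\to{\bf D}(X')$ exactly as in Section \ref{sect:K3}.

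For part (1) I would use the support map. Via $\varpi_*$ a $(\rho_{\mathrm{reg}},0)$-semistable $0$-dimensional $G$-sheaf $E$ with $v(E)=v_0$ becomes a $0$-dimensional ${\cal A}$-module whose scheme-theoretic support is a single point of $Y$, and conversely each orbit $\varpi^{-1}(y)$ yields the semistable sheaf ${\cal O}_{\varpi^{-1}(y)}$; this is the equivariant analogue of Proposition \ref{prop:Y=X^0} and produces a bijective morphism $\overline{M}_H^{\rho_{\mathrm{reg}},0}(v_0)\to Y$. Since $Y=X/G$ is normal, the composite of the normalization morphism $Y'\to\overline{M}_H^{\rho_{\mathrm{reg}},0}(v_0)$ with this bijection is a finite birational morphism onto a normal surface, hence an isomorphism $Y'\cong Y$; in particular the normalization morphism $Y'\to\overline{M}_H^{\rho_{\mathrm{reg}},0}(v_0)$ is bijective.

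For parts (2) and (3) the singular points $p_i$ are precisely the images of the orbits with nontrivial stabilizer, over which ${\cal O}_{\varpi^{-1}(y)}$ is reducible and hence only properly $v_0$-twisted semistable. Writing its $S$-equivalence class as $\oplus_{j}E_{ij}^{\oplus a_{ij}}$ with each $E_{ij}$ irreducible (equivalently $v_0$-twisted stable), I would feed the relations $\chi_G(v_0,v(E_{ij}))=0$, $\chi_G(E_{ij},E_{ij'})\le0$ for $j\ne j'$ (from stability together with $\GExt^2(E_{ij},E_{ij'})^{\vee}\cong\GHom(E_{ij'},E_{ij})=0$), $\sum_j a_{ij}v(E_{ij})=v_0$, and the negative semidefiniteness of $-\chi_G$ on the $v_0$-perpendicular supported over $p_i$, into Lemma \ref{lem:appendix:lattice}; this forces each $v(E_{ij})$ to be a $(-2)$-class and identifies $(\chi_G(E_{ij},E_{ij'}))_{j,j'\ge0}$ with an affine $\tilde A,\tilde D,\tilde E$ generalized Cartan matrix. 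Normalizing $a_{i0}=1$ and passing to the sublattice $j,j'\ge1$ then yields the rational double point type exactly as in Theorem \ref{thm:RDP-desing}(3) and Proposition \ref{prop:K3-desing}(3). For part (4) I would choose $\alpha$ with $(\alpha,\rho_i)<0$ for $i>0$ and transport the $E_{ij}$ through $\Phi$: as in Lemmas \ref{lem:K3:exceptional} and \ref{lem:K3:Phi(E)} the images $\Phi(E_{ij})$ are, up to shift, line bundles on smooth rational curves $C_{ij}$ for $j>0$ and an $A_0({\bf b})$-type object for $j=0$, with $(C_{ij},C_{ij'})=\chi_G(E_{ij},E_{ij'})$, so $\cup_{j>0}C_{ij}$ is an $ADE$ configuration and $\pi^{-1}(p_i)=\sum_{j>0}a_{ij}C_{ij}$; the contraction structure of $\pi:X'\to Y'$ is supplied as in Proposition \ref{prop:K3-normal}.

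The main obstacle I expect is not the lattice combinatorics, which is already packaged in Lemma \ref{lem:appendix:lattice}, but the bookkeeping of Step 0 that makes the $K3$ lemmas applicable verbatim: one must verify that $\Phi$ carries $\Coh_G(X)$ onto a category $\Per(X'/Y')$ of perverse coherent sheaves on the resolution $X'\to Y'$, so that the irreducible $G$-sheaves $E_{ij}$ go to the standard irreducible objects and the curve $C_{ij}$ is genuinely smooth rational with the asserted self-intersection. The purely equivariant inputs — symmetry of $\chi_G$ on the relevant $0$-dimensional classes and the identification of the stabilizer orbits with the properly semistable locus — are exactly where the hypothesis on $K_X$ and the finiteness of $G$ enter, and these are precisely the conditions already imposed.
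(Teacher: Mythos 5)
Your proposal is correct and follows essentially the same route as the paper, which after one preliminary observation simply reduces everything to the arguments of Theorem \ref{thm:RDP-desing} and Lemma \ref{lem:0-stable:exceptional}. The only ingredient you treat differently is the normalization $a_{i0}=1$ in parts (3)--(4): you extract it from the marks of the affine $ADE$ diagram, whereas the paper derives it directly from $H^0(X,{\cal O}_{{\cal Z}_{x'}})\cong{\Bbb C}[G]$, which gives $\sum_j a_{ij}\,\Gchi({\cal O}_X\otimes\rho_0,E_{ij})=1$ and hence a unique summand $E_{i0}$ of multiplicity one carrying the trivial representation, together with the inequality $\chi_G(E_{ij}\otimes\alpha^{\vee})<0$ for $j>0$ that your choice of $\alpha$ in part (4) relies on.
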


\begin{proof}
Since $H^0(X,{\cal O}_{{\cal Z}_{x'}}) \cong
{\Bbb C}[G]$, $x' \in X'$,
we have
\begin{equation}
\sum_j a_{ij}\Gchi({\cal O}_X \otimes \rho_0,E_{ij})=
\Gchi({\cal O}_X \otimes\rho_0, \oplus_j E_{ij}^{\oplus a_{ij}})=1.
\end{equation}
Hence we may assume that $a_{i0}=1$
and $H^0(X,E_{ij})$ does not contain a trivial representation, 
if $j \ne 0$. 
In particular, $\chi_G(E_{ij} \otimes \alpha^{\vee})<0$ for
$j>0$.
Then the proof is similar to the proof of
Theorem \ref{thm:RDP-desing} and Lemma \ref{lem:0-stable:exceptional}.
\end{proof}


\begin{NB}
By the symplectic structure,
$\langle v(E_{ij})^2 \rangle$ is even.
Since $\pi$ is proper, there is a stable $G$-sheaf $E_i$ which
is $S$-equivalent to $\oplus_j E_{ij}^{\oplus a_{ij}}$.
Then we see that the graph associated to
$\{ v(E_{ij}) \}_{j \geq 0}$ is connected.
In particular there is $j'$ with
$\langle v(E_{ij}),v(E_{ij'}) \rangle>0$.
\end{NB}

\subsection{Fourier-Mukai transforms for $G$-sheaves}
Let ${\cal E}:={\cal O}_{\cal Z}$ 
be the universal family and we consider the Fourier-Mukai
transform:
\begin{equation}
\begin{matrix}
\Phi:& {\bf D}_G(X) & \to & {\bf D}(X')\\
& E & \mapsto & {\bf R}\pi_{X'*}({\cal E} \otimes \pi_X^*(E))^G.
\end{matrix}
\end{equation} 
Then
\begin{equation}
\begin{matrix}
\widehat{\Phi}:& {\bf D}(X') & \to & {\bf D}_G(X)\\
& F & \mapsto & {\bf R}\pi_{X*}({\cal E}^{\vee}[2] \otimes \pi_{X'}^*(F))
\end{matrix}
\end{equation} 
is the quasi-inverse of $\Phi$.

We note that
$p_{X'*}({\cal O}_{\cal Z})$ is a locally free sheaf on $X'$
with a $G$-action. 
We have a decomposition of $p_{X'*}({\cal O}_{\cal Z})$ 
as $G$-sheaves:
\begin{equation}\label{eq:regular}
p_{X'*}({\cal O}_{\cal Z})=
\oplus_i \Phi({\cal O}_X \otimes \rho_i) \otimes \rho_i^{\vee}.  
\end{equation}

For a $G$-sheaf $E$ of dimension 0,
$E^{\vee}={\cal E}xt^2(E,{\cal O}_X)[-2]$.
Hence $E$ is an irreducible object
if and only if $E^{\vee}[2]$ 
is an irreducible object.

\begin{NB}
Since $\Gchi({\cal O}_X \otimes \alpha,E)=
\Gchi(E,{\cal O}_X \otimes \alpha)=
\Gchi({\cal O}_X \otimes \alpha^{\vee},E^{\vee}[2])$,
we see that
$M_H^{v_0+\alpha}(v_0) \cong
M_H^{v_0^{\vee}-\alpha^{\vee}}(v_0)$.
\end{NB}  

\begin{lem}\label{lem:equiv-full}
We set $F_{ij}:=E_{ij}^{\vee}[2] \in \Coh_G(X)$.
\begin{enumerate}
\item[(1)]
\begin{equation}
\Phi(F_{ij})=
\begin{cases}
{\cal O}_{C_{ij}}(-1)[1],\; j>0,\\
{\cal O}_{Z_i},\;j=0,
\end{cases}
\end{equation}
where $Z_i:=\sum_j a_{ij}C_{ij}$ is the fundamental cycle of $p_i$.
\item[(2)]
$\Phi({\cal O}_X \otimes \rho_i)$ is a locally free sheaf of rank
$\dim \rho_i$ on $X'$.
In particular,
$\Phi({\cal O}_X \otimes \rho_0)={\cal O}_{X'}$.
\item[(3)]
$\Phi({\cal O}_X \otimes \rho_i)$ is a full sheaf (\cite{E:1}).
\end{enumerate}
\end{lem}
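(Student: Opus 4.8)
The plan is to compute $\Phi(F_{ij})$ fibrewise and read off its cohomology, following the template of Lemma \ref{lem:Lambda(E)} and Lemma \ref{lem:K3:Phi(E)}. Since ${\cal E}={\cal O}_{\cal Z}$ is flat over $X'$ (the universal $G$-cluster is finite flat over $X'$), base change gives, for every $x'\in X'$,
\[
\Phi(F_{ij})\overset{{\bf L}}{\otimes}{\Bbb C}_{x'}={\bf R}\Gamma\bigl(X,{\cal E}_{|\{x'\}\times X}\overset{{\bf L}}{\otimes}E_{ij}^{\vee}[2]\bigr)^{G}={\bf R}\GHom(E_{ij},{\cal E}_{|\{x'\}\times X})[2],
\]
the last identity using that $E_{ij}$ is perfect, so ${\cal E}_{x'}\overset{{\bf L}}{\otimes}E_{ij}^{\vee}={\bf R}{\cal H}om(E_{ij},{\cal E}_{x'})$. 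As $E_{ij}$ and ${\cal E}_{x'}$ are $0$-dimensional $G$-sheaves, $\GExt^{k}(E_{ij},{\cal E}_{x'})=0$ for $k\notin\{0,1,2\}$, so $\Phi(F_{ij})$ has cohomology only in degrees $[-2,0]$ and its lowest cohomology sheaf ${\cal H}^{-2}(\Phi(F_{ij}))$ is locally free. Because ${\cal E}_{x'}$ is supported on the orbit $\varpi^{-1}(\varpi(x'))$, the formula forces $\Supp\Phi(F_{ij})$ into the exceptional fibre $\pi^{-1}(p_i)$, a curve; a locally free sheaf supported in a curve is zero, so ${\cal H}^{-2}(\Phi(F_{ij}))=0$ and $\Phi(F_{ij})$ has amplitude $[-1,0]$. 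I will also use $\widehat\Phi({\Bbb C}_{x'})={\cal E}_{x'}^{\vee}[2]$ together with the duality $D_X$ on $\Coh_G(X)$ (valid since $K_X$ is trivial in $\Coh_G(X)$ along the fibres, so $\Gchi$ is symmetric on $0$-dimensional objects), which yields the two identities $\Hom({\Bbb C}_{x'},\Phi(F_{ij})[k])=\GExt^{k}(E_{ij},{\cal E}_{x'})$ and $\Hom(\Phi(F_{ij}),{\Bbb C}_{x'})=\GHom({\cal E}_{x'},E_{ij})$.

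For $j>0$ I would finish as follows. The choice of $\alpha$ fixed before the lemma (with $\Gchi(E_{ij}\otimes\alpha^{\vee})<0$ and $H^{0}(E_{ij})$ carrying no trivial representation) makes $\GHom({\cal E}_{x'},E_{ij})=0$ for all $x'$, by the $(\rho_{\mathrm{reg}},\alpha)$-stability of ${\cal E}_{x'}$ and the irreducibility of $E_{ij}$ (the argument of Lemma \ref{lem:0-stable:key}); hence ${\cal H}^{0}(\Phi(F_{ij}))=0$ and $P:=\Phi(F_{ij})[-1]\in\Coh(X')$. Purity of $P$ follows from $\Hom({\Bbb C}_{x'},P)=\GExt^{-1}(E_{ij},{\cal E}_{x'})=0$. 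Serre duality gives $\GExt^{2}(E_{ij},{\cal E}_{x'})\cong\GHom({\cal E}_{x'},E_{ij})^{\vee}=0$, and $\Gchi(E_{ij},{\cal E}_{x'})=0$ because $v(E_{ij})\in v_0^{\perp}$, so $\dim\GExt^{1}=\dim\GHom(E_{ij},{\cal E}_{x'})$; thus $P\otimes{\Bbb C}_{x'}\ne0$ exactly on $C_{ij}=\{x':\GHom(E_{ij},{\cal E}_{x'})\ne0\}$, i.e. $\Supp P=C_{ij}$. Since $\Phi$ is an isometry and $\Gchi(F_{ij},F_{ij})=\Gchi(E_{ij},E_{ij})=2$, one gets $(\Div P)^{2}=-2=(C_{ij}^{2})$, so $P$ is a line bundle on the smooth rational curve $C_{ij}$. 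Finally $\chi({\cal O}_{X'},\Phi(F_{ij}))=\Gchi(\widehat\Phi({\cal O}_{X'}),F_{ij})=\Gchi(F_{ij})=\Gchi(E_{ij})=0$ (using part (2), $\widehat\Phi({\cal O}_{X'})={\cal O}_X$, and the absence of a trivial summand in $E_{ij}$); comparing with $\chi({\cal O}_{X'},P[1])=-(\deg P+1)$ forces $\deg P=-1$, so $\Phi(F_{ij})={\cal O}_{C_{ij}}(-1)[1]$.

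The case $j=0$ is the main obstacle, since here ${\cal H}^{0}\ne0$ and ${\cal H}^{-1}$ need not be locally free, so the support trick no longer isolates the answer. I would first argue that $\Phi(F_{i0})$ is a genuine sheaf: as $F_{i0}\in\Coh_G(X)$ and $\Phi$ restricts to the McKay equivalence $\Coh_G(X)\cong{}^{-1}\Per(X'/Y')$ of \cite{VB}, we have $\Phi(F_{i0})\in{}^{-1}\Per(X'/Y')$, which combined with the amplitude $[-1,0]$ and the irreducibility of $F_{i0}$ (dual of the irreducible $E_{i0}$) shows $\Phi(F_{i0})$ is one of the irreducible objects of Proposition \ref{prop:tilting:-1Per-irred}. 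Its class is then pinned down by $\Supp\Phi(F_{i0})\subset Z_i$ and $\chi({\cal O}_{X'},\Phi(F_{i0}))=\Gchi(E_{i0})=1$ (now $\dim H^{0}(E_{i0})^{G}=1$), together with the uniqueness of a rigid object in a given $(-2)$-class; the only such irreducible object that is an honest sheaf supported on $Z_i$ is ${\cal O}_{Z_i}$, whose stability and $\chi({\cal O}_{Z_i})=1$ are recorded in Lemma \ref{lem:tilting:chi=1}. Hence $\Phi(F_{i0})={\cal O}_{Z_i}$.

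Part (2) is short: base change gives $\Phi({\cal O}_X\otimes\rho_i)\overset{{\bf L}}{\otimes}{\Bbb C}_{x'}=(H^{0}({\cal O}_{{\cal Z}_{x'}})\otimes V_{\rho_i})^{G}=(\rho_{\mathrm{reg}}\otimes\rho_i)^{G}$, concentrated in degree $0$ of constant dimension $\dim\rho_i$; a complex whose derived restriction to every point is a vector space in degree $0$ of constant rank is locally free of that rank, giving the first assertion, and for $\rho_0$ trivial \eqref{eq:regular} identifies the $\rho_0$-summand of $p_{X'*}({\cal O}_{\cal Z})$ with $\Phi({\cal O}_X)=(p_{X'*}{\cal O}_{\cal Z})^{G}={\cal O}_{X'}$. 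For part (3), by part (2) each $\Phi({\cal O}_X\otimes\rho_i)$ is locally free and is the tautological bundle of the McKay correspondence; its pushforward $\pi_*\Phi({\cal O}_X\otimes\rho_i)$ is the reflexive $\rho_i$-isotypic module of $\varpi_*{\cal O}_X$ (reflexivity as in Proposition \ref{prop:Phi-alpha}(3)), so $\Phi({\cal O}_X\otimes\rho_i)$ is a full sheaf in the sense of \cite{E:1}.
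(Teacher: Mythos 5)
Your treatment of part (2), and of part (1) for $j>0$, is essentially the paper's intended argument: the author disposes of (1) with the remark that it is ``similar to the Fourier--Mukai transform on a $K3$ surface,'' and your fibrewise computation, the local freeness and support argument killing ${\cal H}^{-2}$, the purity check, the $(-2)$-class computation and the Euler-characteristic normalization of the degree reproduce Lemma \ref{lem:Lambda(E)} and Lemma \ref{lem:K3:Phi(E)} faithfully. Your direct proof that $(p_{X'*}{\cal O}_{\cal Z})^G={\cal O}_{X'}$ is even a little cleaner than the paper's route through Grauert--Riemenschneider applied to $\widehat\Phi({\cal O}_{X'})$.

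The genuine gap is the case $j=0$. You pin down $\Phi(F_{i0})$ by asserting that $\Phi$ restricts to the McKay equivalence $\Coh_G(X)\cong{}^{-1}\Per(X'/Y')$ and then classifying irreducible objects of ${}^{-1}\Per(X'/Y')$. But in this paper that equivalence is Proposition \ref{prop:equiv-equivalence}, which is proved \emph{after} the present lemma and whose proof uses precisely the formulas $\Phi(F_{ij})={\cal O}_{C_{ij}}(-1)[1]$ and $\Phi(F_{i0})={\cal O}_{Z_i}$ to verify perversity; and the identification of $Y'$ with $X/G$ is itself established only in the subsequent lemma. So your appeal is circular unless you import the global equivariant McKay correspondence wholesale, which is exactly what this section is re-deriving. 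The intended argument mirrors \eqref{eq:y} and Lemma \ref{lem:A_*}: for $x'$ in the exceptional fibre the $(\rho_{\mathrm{reg}},\alpha)$-stability of ${\cal E}_{x'}$ gives an exact sequence $0\to F\to{\cal E}_{x'}\to E_{i0}\to 0$ with $F$ an iterated extension of the $E_{ij}$, $j>0$; dualizing and applying $\Phi$ yields $0\to\Phi(F^{\vee}[2])[-1]\to\Phi(F_{i0})\to{\Bbb C}_{x'}\to 0$ with the kernel an iterated extension of the ${\cal O}_{C_{ij}}(-1)$ already computed, and the vanishings $\Hom(\Phi(F_{i0}),{\cal O}_{C_{ij}}(-1))=\GHom(F_{i0},F_{ij}[-1])=0$ and $\Ext^1(\Phi(F_{i0}),{\cal O}_{C_{ij}}(-1))=\GHom(E_{ij},E_{i0})=0$ let you invoke the characterization of $A_0({\bf b})$ in Lemma \ref{lem:A_0} (equivalently Lemma \ref{lem:G-1per:characterize}), which for ${\bf b}=(-1,\dots,-1)$ gives ${\cal O}_{Z_i}$. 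A secondary, smaller gap: in part (3) ``locally free with reflexive pushforward equal to the isotypic module'' does not by itself characterize a full sheaf; you should instead verify $\Hom(\Phi({\cal O}_X\otimes\rho_i),{\cal O}_{C_{jk}}(-1))=\GHom({\cal O}_X\otimes\rho_i,F_{jk}[-1])=0$ and $\Ext^1(\Phi({\cal O}_X\otimes\rho_i),{\cal O}_{Z_j})=\GExt^1({\cal O}_X\otimes\rho_i,F_{j0})=0$, which follow at once from your part (1) by adjunction and are exactly the paper's two-line proof. (Also note that Proposition \ref{prop:tilting:-1Per-irred}, which you cite, sits inside a suppressed NB block; the compiled statement is Proposition \ref{prop:tilting:G-1Per-irred}.)
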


\begin{proof}
Let $U$ be a $G$-invariant open subscheme of $X$.
Then $D:=\Supp(p_{Y*}({\cal Z} \cap (Y \times (X \setminus U))))$ 
is a proper closed subset of $Y$ and
${\cal Z}_y \subset U$ if and only if $y \in Y \setminus D$.
If $K_U={\cal O}_U$ as a $G$-sheaf, then
we see that $K_{Y \setminus D}$ is trivial.
Since $X$ has an open covering of these properties,
by the Grauert-Riemenschneider vanishing theorem,
${\bf R}\pi_*({\cal O}_{X'})={\cal O}_{Y'}$.
Outside of the fixed point loci of the $G$-action,
$\widehat{\Phi}({\cal O}_{X'})$ coincides with 
${\cal O}_X \otimes \rho_0$.
Hence  
$\widehat{\Phi}({\cal O}_{X'})={\cal O}_X \otimes \rho_0$.
Therefore $\Phi({\cal O}_X \otimes \rho_0)={\cal O}_{X'}$.
(2) is a consequence of \eqref{eq:regular}.
Then the proof of (1) is similar to the
Fourier-Mukai transform on a $K3$ surface.
(3)  
We note that
\begin{equation}
\begin{split}
\Hom(\Phi({\cal O}_X \otimes \rho_i),{\cal O}_{C_{jk}}(-1))&=
\Hom(\Phi({\cal O}_X \otimes \rho_i),\Phi(F_{jk})[-1])\\
&=\GHom({\cal O}_X \otimes \rho_i,F_{jk}[-1])=0,\\
\Ext^1(\Phi({\cal O}_X \otimes \rho_i),{\cal O}_{Z_j})&=
\Ext^1(\Phi({\cal O}_X \otimes \rho_i),\Phi(F_{j0}))\\
&=\GExt^1({\cal O}_X \otimes \rho_i,F_{j0})=0.
\end{split}
\end{equation}
Hence $\Phi({\cal O}_X \otimes \rho_i)$ is a full sheaf.
\end{proof}

We have
\begin{equation}
\Phi({\cal O}_X \otimes \rho_i)_{|C_{jk}} \cong
{\cal O}_{C_{jk}}^{\oplus (\dim \rho_i-k_{ijk})} \oplus
{\cal O}_{C_{jk}}(1)^{\oplus k_{ijk}},
\end{equation}
where
\begin{equation}
\begin{split}
k_{ijk}:=&(c_1(\Phi({\cal O}_X \otimes \rho_i)),C_{jk})\\
=&\dim \Ext^1(\Phi({\cal O}_X \otimes \rho_i),\Phi(F_{jk}))\\
=&\dim \GHom({\cal O}_X \otimes \rho_i,F_{jk}).
\end{split}
\end{equation}

\begin{prop}\label{prop:equiv-equivalence}
$\Phi$ induces an equivalence
\begin{equation}
\Coh_G(X) \to {^{-1}\Per}(X'/Y').
\end{equation}
\end{prop}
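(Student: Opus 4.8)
The plan is to establish the equivalence by exhibiting a local projective generator on the $X'$-side and applying the Morita-type machinery of Section~\ref{sect:Morita}. First I would recall that by Lemma~\ref{lem:G-equivalence} we have an equivalence $\varpi_*:\Coh_G(X)\to\Coh_{\cal A}(Y')$-type description, and more precisely that $\Coh_G(X)$ is Morita equivalent to $\Coh_{\cal A}(Y)$ where ${\cal A}=\varpi_*({\cal O}_X)[G]$. The strategy is to transport this through the Fourier-Mukai transform $\Phi$ and match it with the category $^{-1}\Per(X'/Y')$. The central object is the locally free sheaf $\bigoplus_i \Phi({\cal O}_X\otimes\rho_i)$ on $X'$: by Lemma~\ref{lem:equiv-full}(2),(3) each $\Phi({\cal O}_X\otimes\rho_i)$ is a full locally free sheaf, and $\Phi({\cal O}_X\otimes\rho_0)={\cal O}_{X'}$.

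The key steps, in order, are as follows. First I would verify that $\Phi$ carries the irreducible objects of $\Coh_G(X)$ supported on fibers to the irreducible objects of $^{-1}\Per(X'/Y')$. By Lemma~\ref{lem:equiv-full}(1), the dual objects $F_{ij}=E_{ij}^\vee[2]$ satisfy $\Phi(F_{ij})={\cal O}_{C_{ij}}(-1)[1]$ for $j>0$ and $\Phi(F_{i0})={\cal O}_{Z_i}$; comparing with Proposition~\ref{prop:tilting:-1Per-irred} (the $^{-1}\Per$ case, ${\bf b}_i=(-1,\dots,-1)$), these are exactly the irreducible objects of $^{-1}\Per(X'/Y')$. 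Second, I would identify the appropriate torsion-free generators: since $\rho_0$ is trivial and $G={\cal O}_X\otimes\rho_0$ is a local projective generator of $\Coh_G(X)$ (as an object of $\Coh_{\cal A}(Y)$), I would show that $\Phi$ applied to a suitable generator gives a local projective generator of $^{-1}\Per(X'/Y')$. Concretely, the plan is to check the hypotheses of Proposition~\ref{prop:tilting:generator} (or its reformulation via Lemma~\ref{lem:tilting:R1=0}) for the sheaf $\Phi$ of the regular-representation generator, using the Ext-computations $\Hom(\Phi({\cal O}_X\otimes\rho_i),\Phi(F_{jk})[p])=\GHom({\cal O}_X\otimes\rho_i,F_{jk}[p])$ that appear in the proof of Lemma~\ref{lem:equiv-full}(3).

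Third, I would assemble these facts into the equivalence. Since $\Phi$ is already an equivalence ${\bf D}_G(X)\to{\bf D}(X')$ of derived categories, it suffices to show that the $t$-structure whose heart is $\Coh_G(X)$ is carried to the $t$-structure whose heart is $^{-1}\Per(X'/Y')$. This reduces to showing that $\Phi(\Coh_G(X))\subset {^{-1}\Per}(X'/Y')$ and $\widehat{\Phi}(^{-1}\Per(X'/Y'))\subset \Coh_G(X)$, where the second inclusion follows from the first by adjunction once one knows both are hearts of bounded $t$-structures. For the first inclusion I would use Lemma~\ref{lem:Perverse-1}-style arguments: an object $E\in\Coh_G(X)$ satisfies $\Hom_G(E,S[p])$-vanishing for $p<0$ against the irreducible generators, and via the identity $\Hom(\Phi(E),\Phi(S)[p])=\Hom_G(E,S[p])$ together with the characterization of $^{-1}\Per(X'/Y')$ by the $\Sigma$-data in Proposition~\ref{prop:tilting:S-T}, deduce $\Phi(E)\in{^{-1}\Per}(X'/Y')$.

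The main obstacle I expect is the verification that $\Phi$ is exact for these two hearts, i.e.\ that $\Phi(\Coh_G(X))$ lands in $^{-1}\Per(X'/Y')$ with no higher or lower perverse cohomology. The clean way to overcome this is to avoid checking exactness directly and instead argue that $G_1:=\widehat{\Phi}$ of a local projective generator of $^{-1}\Per(X'/Y')$ is, after the identifications above, a local projective generator of $\Coh_G(X)$ via the Morita equivalence ${\cal A}\cong{\cal H}om(G_1,G_1)$-type isomorphism, so that both categories are identified with $\Coh_{\cal A}(Y')$ for the \emph{same} sheaf of algebras ${\cal A}$ on $Y'\cong Y$. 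Matching the two algebra structures—showing that $\pi_*(G_1^\vee\otimes G_1)$ on $Y'$ agrees with $\varpi_*({\cal O}_X)[G]$ on $Y$ under the birational identification $Y'\cong Y$—is where the real content lies, and I would handle it by comparing both as reflexive sheaves that coincide over the smooth locus (using that $\Phi$ is an isomorphism away from the exceptional fibers), invoking the reflexivity argument of Proposition~\ref{prop:Phi-alpha}(3).
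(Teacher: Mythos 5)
Your reduction to the single inclusion $\Phi(\Coh_G(X))\subset{}^{-1}\Per(X'/Y')$ is the right move, and the computational ingredients you list (the images of the $F_{ij}$ from Lemma \ref{lem:equiv-full}, the adjunction identities $\Hom(\Phi(E),\Phi(S)[p])=\GHom(E,S[p])$, the fullness of the $\Phi({\cal O}_X\otimes\rho_i)$) are exactly the ones the paper uses. But your main line of argument has a gap at the decisive step: the $\Sigma$-data of Proposition \ref{prop:tilting:S-T} characterizes a torsion pair on $\Coh(X')$, so before you can apply it you must already know that $\Phi(E)$ is a two-term complex with $H^i(\Phi(E))=0$ for $i\ne -1,0$ and with $H^{-1}(\Phi(E))$ supported on the exceptional locus. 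Vanishing of $\GHom(E,S[p])$ for $p<0$ against the finitely many exceptional simples does not give this: those simples do not generate the heart, and detecting the cohomological amplitude of $\Phi(E)$ by maps to or from simple objects runs into the usual asymmetry (every nonzero object of the Noetherian heart has a simple quotient, but not a simple subobject --- ${\cal O}_{X'}$ has none), so the lower bound on the amplitude is not obtained this way. The paper closes exactly this gap with one short device you do not mention: take an equivariant locally free resolution $0\to V_{-2}\to V_{-1}\to V_0\to E\to 0$ (length two because $\dim X=2$), observe that each $\Phi(V_i)$ is a locally free sheaf on $X'$ (base change, since the fibers of ${\cal E}$ are $0$-dimensional), and note that the resulting three-term complex computing $\Phi(E)$ is exact at the left away from $\cup_i Z_i$; hence $H^{-2}(\Phi(E))=0$ and $\Supp H^{-1}(\Phi(E))\subset\cup_i Z_i$. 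After that, the two $\Hom$-vanishings you write down do finish the proof.

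Your fallback via Morita equivalence (identify both hearts with modules over the same reflexive algebra on $Y\cong Y'$) can be made to work and is close in spirit to Proposition \ref{prop:Phi-alpha}(4), but it is not free either: you need $P=\Phi({\cal O}_X\otimes{\Bbb C}[G])$ to be a local projective generator of ${}^{-1}\Per(X'/Y')$ (your second step does supply this via Proposition \ref{prop:tilting:generator} and Lemma \ref{lem:equiv-full}(3)), and you also need the identification $Y'\cong X/G$ together with the compatibility $\Phi(E(n))\cong\Phi(E)(n)$ for twists pulled back from $Y$, so that the sheaf-level statement ${\bf R}\pi_*{\cal H}om(P,\Phi(E))\in\Coh(Y')$ can be read off from $H^i(X,E(n))=0$ for $i\ne 0$ and $n\gg 0$. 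Neither of these is available at the point where the proposition is proved (the isomorphism $Y'\cong X/G$ is established only afterwards), so if you take this route you must prove them first; the paper's direct argument avoids all of it.
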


\begin{proof}
It is sufficient to prove
$\Phi(E) \in {^{-1}\Per}(X'/Y')$ for $E \in \Coh_G(X)$.
We first prove that $H^i(\Phi(E))=0$ for $i \ne -1,0$. 
Let $E$ be a $G$-sheaf on $X$. Then
there is an equivariant locally free resolution of $E$:
\begin{equation}
0 \to V_{-2} \to V_{-1} \to V_0 \to E \to 0.
\end{equation}
Since $\Phi(V_i)$ are locally free sheaves on $X'$
and 
\begin{equation}
0 \to \Phi(V_{-2}) \to \Phi(V_{-1}) \to \Phi(V_0)
\end{equation}
is exact on $X' \setminus \cup_i Z_i$,
we get $H^i(\Phi(E))=0$ for $i \ne -1,0$ and 
$\Supp(H^{-1}(\Phi(E))) \subset \cup_{i} Z_i$.
Then we have
\begin{equation}
\begin{split}
\Hom(H^0(\Phi(E)),{\cal O}_{C_{ij}}(-1))&=
\Hom(\Phi(E),\Phi(F_{ij})[-1])\\
&=\GHom(E,F_{ij}[-1])=0,\; j>0,\\
\Hom({\cal O}_{Z_i},H^{-1}(\Phi(E)))&=
\Hom(\Phi(F_{i0}),\Phi(E)[-1])\\
&=\GHom(F_{i0},E[-1])=0.
\end{split}
\end{equation}
Hence $\Phi(E) \in {^{-1}\Per}(X'/Y')$.
\begin{NB}
\begin{equation}
\begin{split}
\Hom(E[i],\widehat{\Phi}(F))=\Hom(\Phi(E)[i],F)=0, i>0\\
\Hom(\widehat{\Phi}(F),E[i])=\Hom(F,\Phi(E)[i])=0, i<0.
\end{split}
\end{equation}
Hence $\widehat{\Phi}(F) \in \Coh_G(X)$.

Conversely for $F \in \Per(X'/Y')$,
since $\Phi(H^i(\widehat{\Phi}(F))) \in \Per(X'/Y')$,
we see that
$\GHom(H^i(\widehat{\Phi}(F))[-i],\widehat{\Phi}(F))=
\Hom(\Phi(H^i(\widehat{\Phi}(F)))[-i],F)=0$
for $i<0$ and
$\GHom(\widehat{\Phi}(F),H^i(\widehat{\Phi}(F))[-i])=
\Hom(F,\Phi(H^i(\widehat{\Phi}(F))[-i]))=0$
for $i>0$.
Hence $\widehat{\Phi}(F) \in \Coh_G(X)$.
\end{NB}
\end{proof}

\begin{rem}
By the proof of Proposition \ref{prop:equiv-equivalence},
$H^{-1}(\Phi(E))=0$ if $E$ does not contain a non-zero 
0-dimensional sub $G$-sheaf.
\end{rem}

\begin{prop}\label{prop:equiv-stability}
For $\alpha=\sum_i r_i \rho_i$, $r_i>0$,
we set $P:=\bigoplus_i \Phi({\cal O}_X \otimes \rho_i)^{\oplus r_i}$.
\begin{enumerate}
\item[(1)]
$P$ is a local projective generator of ${^{-1}\Per}(X'/Y')$.
\item[(2)]
A $G$-sheaf $E$ is $\alpha$-twisted stable if and only if
$\Phi(E)$ is $P$-twisted stable.
\end{enumerate}
\end{prop}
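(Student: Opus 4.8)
The plan is to deduce part (1) from the criterion for a local projective generator in Proposition~\ref{prop:tilting:generator}, and part (2) from the fact that $\Phi$ is an equivalence of abelian categories (Proposition~\ref{prop:equiv-equivalence}) which is compatible with the relevant Hilbert polynomials. Throughout I write $P=\bigoplus_k \Phi({\cal O}_X\otimes\rho_k)^{\oplus r_k}$, reindexing to avoid clashing with the indices of $F_{ij}$.

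First I would treat (1). By Lemma~\ref{lem:equiv-full}, $P$ is a locally free sheaf on $X'$, and the irreducible objects of ${^{-1}\Per}(X'/Y')$ are ${\Bbb C}_{x'}$ (for $x'$ outside the exceptional locus) together with the objects $\Phi(F_{ij})$, where $F_{ij}:=E_{ij}^{\vee}[2]$, so that $\Phi(F_{i0})={\cal O}_{Z_i}$ and $\Phi(F_{ij})={\cal O}_{C_{ij}}(-1)[1]$ for $j>0$. Since each $E_{ij}$ is a $0$-dimensional $G$-sheaf and $\dim X=2$, $F_{ij}={\cal E}xt^2(E_{ij},{\cal O}_X)$ is again a $0$-dimensional $G$-sheaf. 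Because $\Phi$ preserves the equivariant $\Hom$-groups,
\[
\Hom(P,\Phi(F_{ij})[p])=\bigoplus_k \GExt^p({\cal O}_X\otimes\rho_k,F_{ij})^{\oplus r_k},
\]
and each summand is the $G$-invariant part of $H^p(X,\rho_k^{\vee}\otimes F_{ij})$, which vanishes for $p\neq 0$ as $\rho_k$ is locally free and $F_{ij}$ is $0$-dimensional; the same vanishing is immediate for the skyscrapers ${\Bbb C}_{x'}$. This verifies condition (a) of \eqref{eq:tilting:generator}. For (b), $\chi(P,\Phi(F_{ij}))=\sum_k r_k\dim\GHom({\cal O}_X\otimes\rho_k,F_{ij})>0$, since $H^0(X,F_{ij})$ is a nonzero $G$-representation and every $r_k>0$, while $\chi(P,{\Bbb C}_{x'})=\rk P>0$. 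Proposition~\ref{prop:tilting:generator} then shows $P$ is a local projective generator of ${^{-1}\Per}(X'/Y')$.

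For (2), the key input is that the polarizations descend from the common base: ${\cal O}_X(n)$ is pulled back from ${\cal O}_Y(n)$ and ${\cal O}_{X'}(n)$ from ${\cal O}_{Y'}(n)$, and $Y'=Y$ under the McKay identification, so the two ample classes correspond. By the projection formula, as in Lemma~\ref{lem:compatible2}, $\Phi$ commutes with these twists, giving $\Phi(F(n))\cong\Phi(F)(n)$. Combined with the compatibility of $\Hom$-groups this yields, for $F\in\Coh_G(X)$,
\[
\chi_G(F(n)\otimes\alpha^{\vee})=\sum_k r_k\,\chi_G({\cal O}_X\otimes\rho_k,F(n))=\sum_k r_k\,\chi(\Phi({\cal O}_X\otimes\rho_k),\Phi(F)(n))=\chi(P,\Phi(F)(n)).
\]
Thus the $\alpha$-twisted Hilbert polynomial of $F$ coincides with the $P$-twisted Hilbert polynomial of $\Phi(F)$; in particular the dimension and leading coefficient $a_d$ agree. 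Since $\Phi$ is an equivalence $\Coh_G(X)\to{^{-1}\Per}(X'/Y')$, it induces a bijection between subobjects of $E$ and subobjects of $\Phi(E)$, so the defining inequalities for $\alpha$-stability of $E$ are exactly those for $P$-stability of $\Phi(E)$, and the two conditions are equivalent.

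The main obstacle I expect is the compatibility of the twists in (2): one must justify $\Phi(F(n))\cong\Phi(F)(n)$, which requires identifying $Y'$ with $Y$ so that the ample classes correspond and then running the projection-formula argument of Lemma~\ref{lem:compatible2} in the equivariant setting. Once this identification and commutation are established, both parts reduce to routine bookkeeping with the already-proven equivalence and the generator criterion of Proposition~\ref{prop:tilting:generator}.
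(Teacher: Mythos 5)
Your proof is correct and follows essentially the same route as the paper: part (1) is exactly the verification of conditions (a) and (b) of Proposition~\ref{prop:tilting:generator}, which the paper carries out via the positivity of $\chi(P,\Phi(F_{jk}))$ together with the full-sheaf property of Lemma~\ref{lem:equiv-full}~(3), and part (2) is the transport of the twisted Hilbert polynomial through the equivalence $\Phi$ (the paper simply declares this ``obvious''). The only difference is that you spell out the commutation $\Phi(F(n))\cong\Phi(F)(n)$ and the identity $\chi_G(F(n)\otimes\alpha^{\vee})=\chi(P,\Phi(F)(n))$ explicitly, which is a correct and welcome elaboration rather than a different argument.
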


\begin{proof}
Since 
\begin{equation}
\chi(P,\Phi(F_{jk}))=
\sum_i r_i \chi_G({\cal O}_X \otimes \rho_i, F_{jk})=
\sum_i r_i (\rho_i,H^0(X,F_{jk}))>0
\end{equation}
for all $j,k$, (1) holds by Lemma \ref{lem:equiv-full} (3).
(2) is obvious.
\end{proof}

\begin{lem}
$\overline{M}_H^{v_0}(v_0) \cong Y' \cong X/G$. 
In particular, 
$\overline{M}_H^{v_0}(v_0)$ is a normal surface with rational double
points.
\end{lem}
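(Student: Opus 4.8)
The plan is to prove the two isomorphisms $\overline{M}_H^{v_0}(v_0) \cong Y'$ and $Y' \cong X/G$ separately, and then read off normality and the type of the singularities from $X/G$. I would obtain the first isomorphism by transporting the moduli problem through the Fourier--Mukai equivalence $\Phi$ to the $\Per$-side, where it is already computed by Proposition \ref{prop:Y=X^0}; the second by the Hilbert--Chow support morphism of the $G$-Hilbert scheme $X' = \GHilb_X^{\rho_{\mathrm{reg}}}$.

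For $\overline{M}_H^{v_0}(v_0) \cong Y'$, I would first observe that $\Phi$ restricts to an equivalence between the full subcategory of $0$-dimensional $G$-sheaves in $\Coh_G(X)$ and the full subcategory of $0$-dimensional objects of ${}^{-1}\Per(X'/Y')$ (Proposition \ref{prop:equiv-equivalence}): since $\Coh_G(X) \cong \Coh_{\cal A}(Y) \cong \Coh_{{\cal A}'}(Y')$ is a Morita equivalence it preserves the dimension of the support, and $\Phi$ implements the same identification. Evaluating invariants against $\mathcal{O}_X \otimes \rho_i$ and using Lemma \ref{lem:equiv-full}~(2), that $\rk \Phi(\mathcal{O}_X \otimes \rho_i) = \dim \rho_i$, shows that $\Phi$ sends $v_0 = v(\mathcal{O}_{\varpi^{-1}(y)})$ to $\varrho_{X'}$; and, by the characterization of $v_0$-twisted stable $0$-dimensional $G$-sheaves as the irreducible objects together with Proposition \ref{prop:equiv-stability}, that $\Phi$ carries $v_0$-twisted stable objects to $0$-twisted stable ($=$ irreducible) objects, hence preserves Jordan--H\"older filtrations and $S$-equivalence classes. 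Working in families, this yields an isomorphism $\overline{M}_H^{v_0}(v_0) \cong \overline{M}_{\mathcal{O}_{X'}(1)}^{P,0}(\varrho_{X'}) = (X')^0$, where $P$ is a local projective generator of ${}^{-1}\Per(X'/Y')$. Since $\pi: X' \to Y'$ is the minimal resolution, Proposition \ref{prop:Y=X^0} then gives $(X')^0 \cong Y'$.

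For $Y' \cong X/G$, I would use the morphism $\sigma: X' \to X/G$ sending a $G$-cluster to the $G$-orbit supporting it, i.e. the restriction of the Hilbert--Chow morphism; it is projective, birational and an isomorphism over the free locus. The cluster $\mathcal{Z}_{x'}$ attached to $x' \in \pi^{-1}(p_i) = \bigcup_j C_{ij}$ is supported on the single special orbit over $p_i$, so $\sigma$ is constant on the fibres of $\pi$; since $Y'$ is normal with $\pi_*\mathcal{O}_{X'} = \mathcal{O}_{Y'}$, the rigidity lemma gives a factorization $\sigma = \bar\sigma \circ \pi$ with $\bar\sigma: Y' \to X/G$. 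As distinct $G$-orbits give distinct $S$-equivalence classes, $\bar\sigma$ is a bijective birational morphism of normal surfaces, hence an isomorphism by Zariski's main theorem. Finally, the standing hypothesis that $\Stab(x)$ acts trivially on $(K_X)_{|x}$ forces $\Stab(x) \subset SL(2)$, so $X/G$ has at worst rational double points; combined with the two isomorphisms, $\overline{M}_H^{v_0}(v_0)$ is a normal surface with rational double points.

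The main obstacle I expect is the upgrade from the abstract categorical equivalence $\Phi$ to a scheme-theoretic isomorphism of the moduli spaces: one must check that $\Phi$ and its kernel $\mathcal{O}_{\mathcal{Z}}$ carry flat families of $v_0$-twisted semistable $G$-sheaves to flat families of $0$-dimensional objects of ${}^{-1}\Per(X'/Y')$ with matching numerical invariants, and that the two notions of $S$-equivalence agree on the nose, so that the induced map of coarse moduli spaces is an isomorphism rather than merely a bijection on points. A secondary technical point is justifying the rigidity step, namely that $\sigma$ genuinely factors through the contraction $\pi$ as a morphism of schemes and not only through its underlying map of topological spaces.
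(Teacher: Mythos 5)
Your first isomorphism $\overline{M}_H^{v_0}(v_0)\cong Y'$ is argued exactly as in the paper: transport the moduli problem through $\Phi$ using Proposition \ref{prop:equiv-stability} to identify it with the moduli of $0$-dimensional objects of ${}^{-1}\Per(X'/Y')$ of class $\varrho_{X'}$, and then invoke the identification of $(X')^0$ with $Y'$ from Section \ref{sect:RDP}. Your worry about upgrading the categorical equivalence to an isomorphism of coarse moduli schemes is already absorbed there: Proposition \ref{prop:equiv-stability} rests on the relative Morita machinery of Proposition \ref{prop:Morita-family}, which handles flat families and $S$-equivalence uniformly, so no extra argument is needed.

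For the second isomorphism you diverge from the paper. You go \emph{down} from $X'$: take the Hilbert--Chow support morphism $\sigma\colon X'=\GHilb_X^{\rho_{\mathrm{reg}}}\to X/G$, observe it contracts the fibres of $\pi\colon X'\to Y'$, factor it through $\pi$ by rigidity (using ${\bf R}\pi_*{\cal O}_{X'}={\cal O}_{Y'}$), and conclude by Zariski's main theorem applied to the bijective birational morphism $\bar\sigma\colon Y'\to X/G$ of normal surfaces. The paper instead goes \emph{up} from $X$: the sheaf ${\cal G}=\bigoplus_{g\in G}{\cal O}_{(1\times g)^*\Delta}$ on $X\times X$ is an $X$-flat family of $G$-sheaves with ${\cal G}_{|\{x\}\times X}\cong{\cal O}_{Gx}$ of class $v_0$, so its classifying morphism $X\to\overline{M}_H^{v_0}(v_0)$ is $G$-invariant, descends to an injection $X/G\to\overline{M}_H^{v_0}(v_0)$, and is then checked to be an isomorphism. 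Both are correct. The paper's route is self-contained and does not use the $G$-Hilbert scheme interpretation of $X'$ or any factorization lemma, and it exhibits the universal family over $X/G$ explicitly (the orbit sheaves ${\cal O}_{Gx}$ realize every $S$-equivalence class, including the degenerate ones over the singular points); your route buys a clean geometric picture of how $X'\to Y'\to X/G$ sits over the resolution, at the cost of the two technical points you flag yourself (that $\sigma$ is a genuine morphism of schemes into $X/G\hookrightarrow S^{|G|}X$, and the scheme-theoretic rigidity factorization), both of which are standard but do need to be said. Your final observation that $\Stab(x)\subset SL(2)$ forces rational double points on $X/G$ is correct and matches the paper's standing hypothesis on $K_X$.
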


\begin{proof}
We shall first show that $\overline{M}_H^{v_0}(v_0) \cong Y'$.
By Proposition \ref{prop:equiv-stability},
$\overline{M}_H^{v_0}(v_0)$ is isomorphic to the moduli
of 0-dimensional objects $E$ of ${^{-1} \Per}(X'/Y')$
with $v(E)=v({\Bbb C}_x)$.
By Lemma \ref{lem:alpha=0}, we have the claim.
\begin{NB}
Let $E$ be a $v_0$-twisted semi-stable sheaf with $v(E)=v_0$.
Then $\Phi(E^{\vee}[2])$ is a perverse coherent sheaf with
$v(\Phi(E^{\vee}[2]))=(0,0,1)$:
If $E$ is properly semi-stable,
then $\Phi(E^{\vee}[2])$ is ${\Bbb C}_y$ 
or successive extensions of ${\cal O}_{C_{ij}}(-1)[1]$
and ${\cal O}_{Z_i}$. 

Hence $\pi_*(\Phi(E^{\vee}[2])) \cong {\Bbb C}_{y'}, y' \in Y'$.
By sending $E$ to $y'$, we have a morphism 
${\cal M}_H^{v_0}(v_0)^{ss} \to Y'$.
So we get a morphism $\overline{M}_H^{v_0}(v_0) \to Y'$.
Since $Y'$ is normal, $\overline{M}_H^{v_0}(v_0) \to Y'$ is 
an isomorphism.
\end{NB}

Let $\Delta \subset X \times X$ be the diagonal.
Then
${\cal G}:=
\oplus_{g \in G}{\cal O}_{(1 \times g)^*(\Delta)}$ is a $G$-equivariant
coherent sheaf on $X \times X$ which is flat over $X$.
Since $v({\cal G}_{\{ x \} \times X})=v_0$, we have a morphism
$\eta:X \to \overline{M}_H^{v_0}(v_0)$.
We note that 
${\cal G}_{|\{ x \} \times X} \cong {\cal G}_{|\{ g(x) \} \times X}$
for all $g \in G$ and
${\cal G}_{|\{ x \} \times X} \cong {\cal G}_{|\{ y \} \times X}$
if and only if $y \in Gx$.
Hence $\eta$ is $G$-invariant
and we get an injective morphism 
$X/G \to \overline{M}_H^{v_0}(v_0)$.
It is easy to see that $X/G \to \overline{M}_H^{v_0}(v_0)$ 
is an isomorphism.
\end{proof}

\begin{NB}
Let $X$ be a projective scheme. 
Let ${\cal E}$ be an $S$-flat family of coherent sheaves
on $X$ such that ${\cal E}_{|\{s\} \times X} \cong
{\Bbb C}_x$, $x \in X$ for all $s \in S$.
By the base change theorem,
$L:=p_{S*}({\cal E})$ is a line bundle on $S$ and
$p_{S}^*(L) \to {\cal E}$ is surjective.
Replacing ${\cal E}$ by
${\cal E} \otimes p_{S}^*(L)^{\vee}$, we may assume that
${\cal E}={\cal O}_{Z}$ for a subscheme $Z$ of $S \times X$
and $p_{S*}({\cal O}_{Z})={\cal O}_S$.
Then we see that $Z \to S$ is an isomorphism.
Therefore there is a morphism $f:S \to X$ such that
the graph of $f$ is $Z$.
\end{NB}

\begin{cor}
We set $P:=\Phi({\cal O}_X \otimes {\Bbb C}[G])$
and ${\cal A}':=\pi_*(P^{\vee} \otimes P)$.
Under the isomorphism $Y' \cong Y$, we have an isomorphism
$\pi_*(P) \cong \varpi_*({\cal O}_X)$.
Hence we have an isomorphism
 ${\cal A} \cong {\cal  A}'$ as ${\cal O}_{Y'}$-algebras and
we have the following commutative diagram.
\begin{equation}
\begin{CD}
\Coh_G(X) @>{\Phi}>> \Per(X'/Y')\\
@V{\varpi_*}VV @VV{{\bf R}\pi_*(P^{\vee} \otimes (\;\;))}V \\
\Coh_{{\cal A}}(Y) @= \Coh_{{\cal A}'}(Y)
\end{CD}
\end{equation}

\end{cor}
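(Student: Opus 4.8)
The plan is to follow the template of Proposition~\ref{prop:Phi-alpha}: realize $\Phi$ as a relative Fourier--Mukai transform over $Y$, so that it intertwines the two Morita pictures $\varpi_*\colon\Coh_G(X)\xrightarrow{\sim}\Coh_{\cal A}(Y)$ (Lemma~\ref{lem:G-equivalence}) and ${\bf R}\pi_*(P^\vee\otimes-)\colon{^{-1}\Per}(X'/Y')\xrightarrow{\sim}\Coh_{{\cal A}'}(Y')$ (Propositions~\ref{prop:equiv-equivalence},~\ref{prop:equiv-stability},~\ref{prop:Morita}). Two things must be checked: the ${\cal O}_Y$-module isomorphism $\pi_*(P)\cong\varpi_*({\cal O}_X)$, and that the composite equivalence carries the free ${\cal A}$-module ${\cal A}$ to the free ${\cal A}'$-module ${\cal A}'$ in a way that upgrades to an algebra isomorphism and makes the square commute. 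First I would compute $P$ as a plain sheaf: since ${\Bbb C}[G]=\bigoplus_i\rho_i^{\oplus\dim\rho_i}$, additivity of $\Phi$ and \eqref{eq:regular} give $P=\Phi({\cal O}_X\otimes{\Bbb C}[G])\cong p_{X'*}({\cal O}_{\cal Z})$ as ${\cal O}_{X'}$-modules; by Lemma~\ref{lem:equiv-full}~(2) it is locally free, and by Proposition~\ref{prop:equiv-stability} it is a local projective generator, so $R^1\pi_*(P^\vee\otimes P)=0$ and ${\cal A}'=\pi_*(P^\vee\otimes P)$ is reflexive by \cite[Lem.~2.1]{E:1}, exactly as in Proposition~\ref{prop:Phi-alpha}~(3).

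The geometric heart is that ${\cal Z}\subset X'\times_Y X$, i.e. $\pi\circ p_{X'}$ and $\varpi\circ p_X$ agree on ${\cal Z}$ after the identification $Y'\cong Y$. Indeed a point of $X'=\GHilb_X^{\rho_{\mathrm{reg}}}$ is a $G$-cluster $Z$ supported on a single $G$-orbit, and $X'\to Y'\cong X/G$ sends $Z$ to $\varpi(\Supp Z)$; hence for $(Z,x)\in{\cal Z}$ one has $\pi(p_{X'}(Z,x))=\varpi(\Supp Z)=\varpi(x)=\varpi(p_X(Z,x))$. Granting this, $\pi_*(P)=(\pi p_{X'})_*{\cal O}_{\cal Z}=(\varpi p_X)_*{\cal O}_{\cal Z}=\varpi_*(p_{X*}{\cal O}_{\cal Z})$, and since $p_X\colon{\cal Z}\to X$ is proper and birational with connected fibres onto the smooth (hence normal) surface $X$, Stein factorization gives $p_{X*}{\cal O}_{\cal Z}={\cal O}_X$, whence $\pi_*(P)\cong\varpi_*({\cal O}_X)$.

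For the algebra isomorphism and the commutativity I would mirror the proof of Proposition~\ref{prop:Phi-alpha}~(4). Over the smooth locus $Y^\ast$ (the complement of the finitely many images of the singular fibres) all of $\varpi$, $\pi$ and the incidence ${\cal Z}$ reduce to the free orbit, so $\Phi$ and the two Morita functors agree and induce an isomorphism ${\cal A}|_{Y^\ast}\cong{\cal A}'|_{Y^\ast}$ of ${\cal O}_{Y^\ast}$-algebras. Taking a $G$-equivariant presentation $({\cal O}_X\otimes{\Bbb C}[G])\otimes W_1\to({\cal O}_X\otimes{\Bbb C}[G])\otimes W_0\to E\to 0$ of an arbitrary $E\in\Coh_G(X)$ and applying both $\varpi_*(-)$ and $\pi_*(P^\vee\otimes\Phi(-))$ produces two presentations, of $\varpi_*(E)$ and of $\pi_*(P^\vee\otimes\Phi(E))$, by the projective modules ${\cal A}$ and ${\cal A}'$ respectively, related by maps that are isomorphisms over $Y^\ast$; since ${\cal H}om_{{\cal O}_Y}({\cal A},{\cal A}')\cong j_\ast j^\ast{\cal H}om_{{\cal O}_Y}({\cal A},{\cal A}')$ is reflexive (with $j\colon Y^\ast\hookrightarrow Y$ the inclusion), these maps extend over $Y$ and commute there, yielding a natural isomorphism $\varpi_*(E)\cong\pi_*(P^\vee\otimes\Phi(E))$. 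Setting $E={\cal O}_X\otimes{\Bbb C}[G]$ gives ${\cal A}\cong{\cal A}'$ as ${\cal O}_Y$-algebras, and naturality in $E$ is precisely the commutativity of the square.

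The main obstacle I anticipate is making precise that $\Phi$ is a transform over $Y$, i.e. the scheme-theoretic equality $\pi\circ p_{X'}=\varpi\circ p_X$ on the possibly non-reduced universal cluster ${\cal Z}$, together with the compatibility of the identification $Y'\cong Y$ with these maps. Relatedly, one must ensure that the isomorphism of Morita algebras obtained over $Y^\ast$ is one of \emph{algebras} (not merely of ${\cal O}_{Y^\ast}$-modules) before invoking reflexivity, so that the extension across the codimension-two singular points respects the multiplication; this should follow from the multiplicativity of the natural isomorphism $\varpi_*(E)\cong\pi_*(P^\vee\otimes\Phi(E))$ under composition, but it is the step where the bookkeeping is least automatic.
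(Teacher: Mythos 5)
Your overall architecture matches the paper's: identify $P\cong p_{X'*}({\cal O}_{\cal Z})$ from \eqref{eq:regular}, deduce $\pi_*(P)\cong\varpi_*({\cal O}_X)$, upgrade to an algebra isomorphism ${\cal A}\cong{\cal A}'$, and get the commutative square from equivariant resolutions by copies of ${\cal O}_X\otimes{\Bbb C}[G](-n)$. The one place you genuinely diverge is the middle step. The paper disposes of $\pi_*(P)\cong\varpi_*({\cal O}_X)$ in one line: $\pi_*(p_{X'*}({\cal O}_{\cal Z}))$ is reflexive, it agrees with $\varpi_*({\cal O}_X)$ over the smooth locus of $Y$, and two reflexive sheaves on a normal surface agreeing outside a finite set are isomorphic; the algebra isomorphism then comes for free from ${\cal A}'\cong\mathrm{End}_{{\cal O}_{Y'}}(\pi_*(P))\cong\mathrm{End}_{{\cal O}_Y}(\varpi_*({\cal O}_X))\cong{\cal A}$. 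You instead argue geometrically that ${\cal Z}\subset X'\times_Y X$ and that $p_{X*}{\cal O}_{\cal Z}={\cal O}_X$, so that $\pi_*(P)=\varpi_*(p_{X*}{\cal O}_{\cal Z})=\varpi_*({\cal O}_X)$ on the nose. That route is workable but, as you yourself flag, it needs the scheme-theoretic statements: you must know that ${\cal Z}$ is reduced (so that the two maps to $Y$, agreeing on a dense open set, agree as morphisms) and in fact integral (so that Zariski's main theorem gives $p_{X*}{\cal O}_{\cal Z}={\cal O}_X$; for a non-reduced or reducible ${\cal Z}$ the pushforward could be strictly larger). These facts do hold --- ${\cal Z}$ is finite flat over the smooth surface $X'$, hence Cohen--Macaulay, and generically reduced over the free locus, hence reduced; its unique component dominating $X'$ forces irreducibility --- but your proposal asserts ``proper and birational with connected fibres'' without this justification, and it is exactly the step the paper's reflexivity argument is designed to avoid. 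Your treatment of the algebra structure and of the commutativity of the square (presentations by ${\cal A}$ and ${\cal A}'$, extension of the comparison map across codimension two via $j_*j^*$, as in Proposition~\ref{prop:Phi-alpha}~(4)) is sound and matches the paper's intent; just note that the paper gets multiplicativity automatically by phrasing everything through $\mathrm{End}$ of the reflexive module, which is the cleaner way to settle the point you list as your second worry.
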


\begin{proof}
We set $R:={\cal O}_X \otimes {\Bbb C}[G]$.
Since
$\Phi({\cal O}_X \otimes {\Bbb C}[G]) \cong \oplus_i 
\Phi({\cal O}_X \otimes \rho_i)^{\oplus \dim \rho_i} \cong
p_{X' *}({\cal O}_{\cal Z})$,
$\pi_*(P) 
\cong \pi_*(p_{X' *}({\cal O}_{\cal Z}))$ 
is a reflexive sheaf. 
Since $\pi_*(p_{X' *}({\cal O}_{\cal Z}))=\varpi_*({\cal O}_X)$
on the smooth locus, we get an isomorphism
$\pi_*(P) \cong \varpi_*({\cal O}_X)$. 
Since 
${\cal A}'$ is a reflexive sheaf on $Y'$,
we have ${\cal A}' \cong \mathrm{End}_{{\cal O}_{Y'}}(\pi_*(P))$.
Therefore 
${\cal A}' \cong \mathrm{End}_{{\cal O}_{Y'}}(\pi_*(P))
\cong \mathrm{End}_{{\cal O}_{Y}}(\varpi_*({\cal O}_X)) \cong {\cal A}$.

Since $\varpi_*(R) ={\cal A}$
and 
every $G$-sheaf $E$ has a locally free resolution 
\begin{equation}
\cdots \to R(-n_{-2})^{\oplus N_{-2}} 
\to R(-n_{-1})^{\oplus N_{-1}} 
\to R(-n_{0})^{\oplus N_{0}} \to
E \to 0,
\end{equation}
we get the commutative diagram. 
\begin{NB}
For ${\cal O}_X[G]$, we have
$\varpi_*({\cal O}_X[G]) \cong \Hom_{{\cal O}_W}(\pi_*(\Phi({\cal O}_X)),
\pi_*(\Phi({\cal O}_X)))$
and
$\pi_*(\Phi({\cal O}_X[G])) \cong \varpi_*({\cal O}_X)$.
Hence $\varpi_*({\cal O}_X[G]) \cong
\pi_*(\Phi({\cal O}_X[G])) \otimes \pi_*(\Phi({\cal O}_X[G]))^{\vee}$.

For $E \in \Coh_G(X)$, we take a resolution by
locally free ${\cal O}_X[G]$-modules
\begin{equation}
\cdots \to V_{-2} \otimes {\cal O}_X[G](-n_{-2}) 
\to V_{-1} \otimes {\cal O}_X[G](-n_{-1}) 
\to V_{0} \otimes {\cal O}_X[G](-n_{0}) \to
E \to 0.
\end{equation}
Then we see that
$\varpi_*(E)$ is canonically isomorphic to
$\pi_*(\Phi(E))  \otimes \pi_*(\Phi({\cal O}_X[G]))^{\vee}$.
\end{NB}
\end{proof}

Assume that $X'$ is a $K3$ surface.
For a primitive isotropic Mukai vector $v_0$ on $X'$,  
we set $X'':=M_H^w(v_0)$, where $v_0:=(r,\xi,a)$
is a primitive isotropic Mukai vector
with $0<(\xi,C_{ij})$ and $(\xi,\sum_j a_{ij}C_{ij})<r$
for all $i,j$ and $w \in K(X')\otimes {\Bbb Q}$ is sufficiently
close to $v_0$.
Assume that there is a universal family
${\cal F}$ on
$X' \times X''$.
Then ${\cal E}':=\widehat{\Phi}({\cal F})$ is a flat family of
stable $G$-sheaves and defines an equivalence
$\Phi':{\bf D}^G(X) \to {\bf D}(X'')$ such that
$\Phi'=\Phi^{{\cal E}'}_{X' \to X''} \circ \Phi$.

\subsection{Irreducible objects of $\Coh_G(X)$}

We shall study irreducible objects of $\Coh_G(X)$.
Let $E$ be a $G$-sheaf of dimension 0.
We may assume that $\Supp(E)=Gx$.
Let $H$ be the stabilizer of $x$
and $E_x$ the submodule of $E$ whose support is $x$.
Then $E_x$ is a $H$-sheaf.
We have a decomposition
$H^0(X,E)=\oplus_{y \in Gx} H^0(X,E_y)$.
Since $gH^0(X,E_x)=H^0(X,E_{gx})$,
we have an isomorphism
\begin{equation}
H^0(X,E) \cong {\Bbb C}[G] \otimes_{{\Bbb C}[H]}H^0(X,E_x)
\end{equation}
as $G$-modules.
Then we have an equality of invariant subspaces:
\begin{equation}\label{eq:invariant-part}
H^0(X,E)^G=H^0(X,E_x)^H.
\end{equation}
We shall prove that there is a bijection between
\begin{itemize}
\item[(a)]
${\frak G}:=\{E \in \Coh_G(X)| \Supp(E)=Gx,\;\Stab(x)=H \}$ and 
\item[(b)]
${\frak H}:=\{F \in \Coh_H(X)| \Supp(F)=x\}$. 
\end{itemize}
We define $r:{\frak G} \to {\frak H}$ by
sending $E \in {\frak G}$ to $E_x \in {\frak H}$.
For $F \in {\frak H}$,
we set $K:=\ker(H^0(X,F)\otimes {\cal O}_X \to F)$.
Then 
\begin{equation}
s(F):=({\Bbb C}[G] \otimes_{{\Bbb C}[H]}H^0(X,F))
\otimes {\cal O}_X/
\sum_{g \in G}g(K)
\end{equation}
 is a $G$-sheaf such that
$s(F)_x=F$. Hence we have a map $s:{\frak H} \to {\frak G}$
with $r \circ s=\id_{\frak H}$.
For $E \in {\frak G}$,
we also see that $s(E_x) \cong E$, and hence $s \circ r=\id_{\frak G}$.
Therefore our claim holds.

If $H^0(X,F)$ is the regular representation of $H$, i.e.,
$H^0(X,F) \cong {\Bbb C}[H]$, then $H^0(X,E)$ is the regular
representation of $G$.
Then we see that $E$ is irreducible in $\Coh_G(X)$ 
if and only if $E_x$ is irreducible in
$\Coh_H(X)$.
Since $\Supp(E_x)$ is one point,
it means that 
$H^0(X,E_x)$ is an irreducible representation of $H$ and
$E_x \cong H^0(X,E_x) \otimes {\Bbb C}_x$.

\begin{lem}
Each singular point $\bigoplus_j E_{ij}^{\oplus a_{ij}} 
\in \overline{M}_H^{v_0}(v_0)$
corresponds to an orbit $Gx_i$ with $\Stab(x_i) \ne \{e \}$
and $(E_{ij})_{x_i}=\rho_{ij} \otimes {\Bbb C}_{x_i}$, where
$\rho_{ij}$ are irreducible representations of $\Stab(x_i)$.
Moreover 
\begin{equation}
\chi_G(E_{ij},E_{ij'})=
\chi_{\Stab(x_i)}(\rho_{ij} \otimes {\Bbb C}_x,
\rho_{ij'} \otimes {\Bbb C}_x).
\end{equation}
\end{lem}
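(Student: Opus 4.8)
The plan is to exploit the bijection between $G$-sheaves supported on a full orbit $Gx$ and $H$-sheaves supported on $x$ (where $H=\Stab(x)$) that has just been established, together with the invariance identity \eqref{eq:invariant-part}. First I would observe that each singular point of $\overline{M}_H^{v_0}(v_0) \cong Y' \cong X/G$ corresponds to a point $p_i$ of $Y$ lying under an orbit $Gx_i$ with non-trivial stabilizer: indeed the smooth points of $Y$ correspond to free orbits, where ${\Bbb C}_x$ (equivalently ${\cal O}_{\varpi^{-1}(y)}$) is irreducible, so the properly $v_0$-twisted semistable objects — the ones giving singular points — are exactly those supported on orbits with $\Stab(x_i)\neq\{e\}$. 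This identifies the $i$ indexing singular points with orbits $Gx_i$ having $H_i:=\Stab(x_i)\neq\{e\}$.

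Next I would decompose the $S$-equivalence class. Since each $E_{ij}$ is an irreducible object of $\Coh_G(X)$ supported on $Gx_i$, the bijection $r:{\frak G}\to{\frak H}$ sending $E$ to $E_{x_i}$ shows that $(E_{ij})_{x_i}$ is an irreducible object of $\Coh_{H_i}(X)$ supported at the single point $x_i$. As recorded in the discussion preceding the statement, an irreducible $H_i$-sheaf supported at one point is of the form $\rho_{ij}\otimes{\Bbb C}_{x_i}$ for an irreducible representation $\rho_{ij}$ of $H_i$; this gives the claimed description of $(E_{ij})_{x_i}$. The only point needing care here is that the regular-representation normalization carries over: because $H^0(X,{\cal O}_{\varpi^{-1}(y)})\cong{\Bbb C}[G]$ forces $H^0(X,(E_{ij})_{x_i})$ to assemble into the regular representation of $G$, the restriction to $x_i$ indeed lands among the irreducible $H_i$-representations, so $\{\rho_{ij}\}_j$ ranges over (a subset of) $\widehat{H_i}$.

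For the $\chi$-formula I would reduce the $G$-equivariant Euler characteristic on $X$ to an $H_i$-equivariant one at the point $x_i$. Writing $F=(E_{ij})_{x_i}$, $F'=(E_{ij'})_{x_i}$, the sheaves $\mathcal{E}xt^k_{{\cal O}_X}(E_{ij},E_{ij'})$ are supported on the orbit $Gx_i$ and decompose as a direct sum over the $\#(G/H_i)$ points of the orbit, with the summand at $x_i$ being the corresponding $H_i$-equivariant $\mathcal{E}xt^k$ of $F$ and $F'$; the $G$-action permutes these summands transitively. Taking global sections and then $G$-invariants, the same computation that yields \eqref{eq:invariant-part} gives $\Ext^k_G(E_{ij},E_{ij'})\cong \Ext^k_{H_i}(F,F')$, and summing the alternating trace produces
\begin{equation}
\chi_G(E_{ij},E_{ij'})=\chi_{H_i}(\rho_{ij}\otimes{\Bbb C}_{x_i},\rho_{ij'}\otimes{\Bbb C}_{x_i}).
\end{equation}
The main obstacle I anticipate is bookkeeping the local-to-global and invariants steps cleanly at the derived level: one must check that passing to $G$-invariants commutes with the decomposition of $\Ext$ over the orbit and that the identification $\Ext_G(E_{ij},E_{ij'})\cong\Ext_{H_i}(F,F')$ holds in every degree (not merely in degree $0$, which is \eqref{eq:invariant-part}). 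This is where the transitivity of the $G$-action on the orbit summands, combined with the isomorphism $H^0(X,E)\cong{\Bbb C}[G]\otimes_{{\Bbb C}[H_i]}H^0(X,E_{x_i})$, does the work, and I would spell that out using the equivariant local-global spectral sequence rather than a bare dimension count.
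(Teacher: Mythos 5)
Your proof is correct, and it reaches the same two reductions the paper makes, but by somewhat different routes, so a comparison is worth recording. For the first claim, the paper does not invoke the identification $\overline{M}_H^{v_0}(v_0)\cong X/G$ at all: it observes that $\chi_G(E_{ij},E_{i'j'})=0$ whenever the supports are disjoint, so the connectedness of the affine Dynkin diagram attached to $\{E_{ij}\}_j$ forces all the $E_{ij}$ for fixed $i$ to share a single orbit $Gx_i$; your argument instead reads the orbit off from the explicit isomorphism with $X/G$ and the irreducibility of ${\cal O}_{\varpi^{-1}(y)}$ over free orbits. Both are fine; the paper's version is self-contained at the level of the lattice data, while yours makes the geometric meaning of $x_i$ transparent (and, as you note, one should still record that the Jordan--H\"older factors of $\bigoplus_{g}{\Bbb C}_{gx_i}$ are supported on $Gx_i$, which is automatic for subquotients). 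For the $\chi$-formula, the paper works only at the level of Euler characteristics: it takes a $G$-equivariant locally free resolution $V_{\bullet}\to E_{ij}$, applies \eqref{eq:invariant-part} to the $0$-dimensional $G$-sheaves $V_{\bullet}^{\vee}\otimes E_{ij'}$ to get $\chi_G(E_{ij},E_{ij'})=\chi_{\Stab(x_i)}(E_{ij},(E_{ij'})_{x_i})$, and then discards the summands of $E_{ij}$ away from $x_i$ using the stated vanishing $\chi_{\Stab(x_i)}\bigl((\bigoplus_{g}\rho_{ij}\otimes{\Bbb C}_{gx_i})/\rho_{ij}\otimes{\Bbb C}_{x_i},\,\rho_{ij'}\otimes{\Bbb C}_{x_i}\bigr)=0$. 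You instead decompose the local sheaves ${\cal E}xt^k(E_{ij},E_{ij'})$ over the points of the orbit, use that $G$ permutes the summands transitively with stabilizer $\Stab(x_i)$ at $x_i$, and take invariants degree by degree; since the ${\cal E}xt^k$ are $0$-dimensional the local-to-global spectral sequence degenerates, so this yields the stronger statement $\GExt^k(E_{ij},E_{ij'})\cong\Ext^k_{\Stab(x_i)}(\rho_{ij}\otimes{\Bbb C}_{x_i},\rho_{ij'}\otimes{\Bbb C}_{x_i})$ for every $k$, of which the lemma's identity is the alternating sum. Your route buys a degree-by-degree isomorphism of Ext groups at the cost of the local-to-global bookkeeping you flag; the paper's resolution argument is shorter because it only ever manipulates Euler characteristics.
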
 

\begin{proof}
If $\Supp(E_{ij}) \ne \Supp(E_{i'j'})$, then
$\chi(E_{ij},E_{i'j'})=0$.
Hence $\Supp(E_{ij})= \Supp(E_{ij'})$ for all $j,j'$.
Hence there is a point $x_i$ such that
$Gx_i=\Supp(E_{ij})$ for all $j$.
Then the first part of the claim follows.

For the second claim,
we note that
$\chi_{\Stab(x_i)}((\oplus_{g \in G/\Stab(x_i)}
\rho_{ij} \otimes {\Bbb C}_{gx_i})/\rho_{ij} \otimes {\Bbb C}_{x_i},
\rho_{ij'} \otimes {\Bbb C}_{x_i})=0$.
By using an equivariant locally free resolution
of $E_{ij}$ and \eqref{eq:invariant-part}, we see that 
\begin{equation}
\begin{split}
\chi_G(E_{ij},E_{ij'})=&\chi_{\Stab(x_i)}(E_{ij},(E_{ij'})_{x_i})\\
=&\chi_{\Stab(x_i)}((E_{ij})_{x_i},(E_{ij'})_{x_i}).
\end{split}
\end{equation}
\end{proof}

\begin{NB}
For a locally free $G$-sheaf $E$ on $X$,
$\Phi(E) \in \pi^*(\Coh(W))$ if and only if
$\Stab(x)$ acts trivially on $E$ for all $x \in X$.
Indeed, $\pi^*(\pi_*(\Phi(E))) \to \Phi(E)$ is isomorphic
if and only if $\Ext^1(\Phi(E),{\cal O}_{C_{ij}}(-1))=0$ for all
$C_{ij}$.
Since $\Ext^1(\Phi(E),{\cal O}_{C_{ij}}(-1))=\GHom(E,F_{ij})=
\Hom(\rho_{ij},H^0(E^{\vee}_{|\{x \}}))$,
we get the claim. 
\end{NB}
\begin{ex}
Let $X$ be an abelian surface.
Then $G={\Bbb Z}_2$ acts on $X$ as the multiplication by $(-1)$.
Then the moduli of stable $G$-sheaves on $X$ is isomorphic
to the moduli space of stable objects of ${^{-1}\Per}(\mathrm{Km}(X)/Y)$,
where $Y=X/G$ and $\mathrm{Km}(X) \to Y$ is the Kummer surface
associated to $X$. 
\end{ex}

\section{Appendix}

\subsection{Elementary facts on lattices}

\begin{lem}\label{lem:appendix:lattice}
Assume that $L \cong {\Bbb Z}^n$ has an integral bilinear form
$(\;\;,\;\;)$.
Let $v$ be a primitive elenent of $L$ such that
$(v,v)=0$, $(v,w)=(w,v)$ for any $w$.
We set $v^{\perp}:=\{ x \in L| (v,x)=0 \}$.
Assume that
$(\;\;,\;\;)_{|v^{\perp}}$ is symmetric and
there is an element $u \in L \otimes {\Bbb Q}$ such that
$(u,v)=0$
and 
$(v^{\perp} \cap u^{\perp})/{\Bbb Z}v$ is negative definite.
\begin{enumerate}
\item[(1)]
If $v=\sum_{i=0}^s a_i v_i$, $a_i \in{\Bbb Z}_{>0}$ such that
$v_i \in v^{\perp} \cap u^{\perp}$, $i=0,1,...,s$,
$(v_i^2)=-2$ and $(v_i,v_j) \geq 0$ for $i \ne j$.
Then the matrix $(-(v_i,v_j)_{i,j})$ is of affine type
$\widetilde{A},\widetilde{D},\widetilde{E}$.
\item[(2)]
If $v$ has two expressions
\begin{equation}
\begin{split}
v=& \sum_{i=0}^s a_i v_i= \sum_{i=0}^t a_i' v_i',\; a_i,a_i' \in{\Bbb Z}_{>0}
\end{split}
\end{equation}
such that $v_i, v_i' \in v^{\perp}\cap u^{\perp}$,
$(v_i^2)=((v_i')^2)=-2$ and $(w_1,w_2) \geq 0$
for different $w_1,w_2 \in V_1 \cup V_2$, where
$V_1:=\{v_0,v_1,\dots,v_s\}$ and $V_2:=\{v_0',v_1',\dots,v_t' \}$.
Then
$V_1=V_2$ or
$\oplus_i {\Bbb Z} v_i \perp \oplus_i {\Bbb Z} v_i'$.
\end{enumerate}
\end{lem}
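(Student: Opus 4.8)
The plan is to translate everything into the language of symmetric generalized Cartan matrices and invoke the classification of the positive semi-definite ones. The starting observation is that, since $(v^{\perp}\cap u^{\perp})/{\Bbb Z}v$ is negative definite and $v$ lies in the radical of $(\;,\;)_{|v^{\perp}}$, the form $-(\;,\;)$ is positive semi-definite on $v^{\perp}\cap u^{\perp}$ with radical exactly ${\Bbb Q}v$ (for a positive semi-definite form the radical coincides with the set of isotropic vectors). Writing $\phi$ for the map $x\mapsto \sum_i x_i v_i$ from the ``root space'' ${\Bbb Q}^{V_1}$ into $(v^{\perp}\cap u^{\perp})\otimes{\Bbb Q}$, this makes $x\mapsto -(\phi(x),\phi(x))\ge 0$ the quadratic form attached to the matrix $C:=(-(v_i,v_j))_{i,j}$.

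For part (1) I would first record that $C$ is a symmetric generalized Cartan matrix: its entries are integers, $C_{ii}=-(v_i^2)=2$, and $C_{ij}=-(v_i,v_j)\le 0$ for $i\ne j$; and it is positive semi-definite by the previous paragraph. The relation $v=\sum_i a_i v_i$ shows that $a=(a_i)$ lies in $\ker C$, because $(Ca)_i=-(v_i,v)=0$, and $a$ is strictly positive. Now I would argue componentwise: each indecomposable block of $C$ is again symmetric, positive semi-definite, hence of finite or affine type by the classification of generalized Cartan matrices. A finite (positive definite) block has trivial kernel, which would force $a$ to vanish on its indices, contradicting $a_i>0$; so every block is affine, and being symmetric the only possibilities are the simply-laced diagrams $\widetilde A,\widetilde D,\widetilde E$. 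Connectedness of the diagram — which is what is used in the applications and which follows from primitivity of $v$ together with the fact that every affine $\widetilde A,\widetilde D,\widetilde E$ diagram has a vertex of mark $1$ (so each block forces a coefficient $c_K\ge 1$, and a split into $\ge 2$ blocks would exhibit $v$ as a sum of several positive multiples of $v$) — then produces a single affine type.

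For part (2), applying (1) to $V_1$ and to $V_2$ separately (each is a valid affine expression of $v$) gives that each of $V_1,V_2$ is a connected affine $\widetilde A,\widetilde D,\widetilde E$ diagram for $-(\;,\;)$. Then I would pass to the union $W:=V_1\cup V_2$ with its matrix $C'=(-(w,w'))_{w,w'\in W}$, again a symmetric positive semi-definite generalized Cartan matrix. Since $V_1$ and $V_2$ are each connected, each lies inside a single connected component of $W$, so exactly one of two situations occurs: either $V_1$ and $V_2$ sit in different components, or $W$ is itself connected. In the first case the vertices of $V_1$ and $V_2$ lie in distinct components, so $V_1\cap V_2=\emptyset$ and $(v_i,v_j')=0$ for all $i,j$, which is precisely the orthogonality $\bigoplus_i{\Bbb Z}v_i\perp\bigoplus_i{\Bbb Z}v_i'$. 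In the second case $W$ is a connected positive semi-definite diagram containing the affine diagram $V_1$; a finite-type diagram is positive definite and so cannot contain an affine (corank one) full subdiagram, hence $W$ is affine, and since every proper subdiagram of an affine Dynkin diagram is of finite type, the affine subdiagram $V_1$ cannot be proper, giving $V_1=W$; symmetrically $V_2=W$, whence $V_1=V_2$.

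The routine inputs are the two standard structural facts about affine generalized Cartan matrices (an indecomposable positive semi-definite one is of affine type, and every proper subdiagram of an affine diagram is of finite type), which I would simply quote from the theory of affine root systems. The main obstacle I anticipate is the connectedness claim used both to reduce (1) to a single affine type and to make the component analysis in (2) clean: one must rule out, using primitivity of $v$ and the effectivity built into the expressions $v=\sum a_i v_i$, the degenerate possibilities that an imaginary root of some component maps to $0$ (equivalently, a positive relation $\sum(\text{marks})\,v_i=0$) or that $v$ splits as a sum of two proportional imaginary roots supported on orthogonal affine blocks. Carrying out this bookkeeping carefully is where the argument needs the most attention.
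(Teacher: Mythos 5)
Your argument reaches the same destination by a genuinely different route, and where it is complete it is correct. The paper's proof of (1) runs: (i) assert that $v_0,\dots,v_s$ are linearly independent, (ii) deduce connectedness of the dual graph by splitting $0=(v^2)=(\sum_{i\in I_1}a_iv_i)^2+(\sum_{i\in I_2}a_iv_i)^2$ over any orthogonal partition of the indices, (iii) quote ``the standard arguments'' for the affine classification. You instead invoke the Vinberg--Kac trichotomy directly: the Gram matrix is a symmetric generalized Cartan matrix annihilated by the strictly positive vector $(a_i)$, so every indecomposable block is of affine type. This is a real shortcut: it delivers positive semi-definiteness and corank one of each block without ever needing linear independence of the $v_i$. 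Your treatment of (2) is also different and arguably cleaner. The paper sets $I=\{i\mid v_i'\in V_1\}$, shows the remaining $v_j'$ are orthogonal to $\bigoplus_i{\Bbb Z}v_i$, and splits $v$ accordingly; you instead run a full-subdiagram argument (an affine diagram cannot occur as a proper full subdiagram of a connected positive semi-definite one) inside $V_1\cup V_2$. Both versions of (2) work once (1) is in place.

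The step you flag as unresolved --- excluding several affine blocks, i.e.\ excluding $\sum_{i\in K}a_iv_i=n_Kv$ with $n_K\le 0$ for some block $K$ --- is a genuine gap, and it is exactly the point the paper itself leaves unproved: the asserted linear independence of the $v_i$ is equivalent to it, and neither follows from the hypotheses as stated. Indeed, take $L={\Bbb Z}v\oplus{\Bbb Z}e_1\oplus{\Bbb Z}e_2$ with $v$ in the radical of the form, $(e_i,e_j)=-2\delta_{ij}$ and $u=0$; then $v=(v+e_1)+(-e_1)+e_2+(-e_2)$ satisfies every hypothesis of (1), yet the Gram matrix is $\widetilde{A}_1\oplus\widetilde{A}_1$. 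What rescues the lemma in all of the paper's applications is an extra positivity: the $v_i$ are classes of nonzero objects of an abelian category, so there is a linear functional $\lambda$ (the rank, or $\chi(G,\cdot)$ for a local projective generator $G$) with $\lambda(v_i)>0$ and $\lambda(v)>0$. Then $n_K\lambda(v)=\sum_{i\in K}a_i\lambda(v_i)>0$ gives $n_K\ge 1$ for every block, and $\sum_K n_K=1$ forces a single block; linear independence and your component analysis in (2) follow at once. If you add such a hypothesis (or assume the $v_i$ linearly independent outright, as the paper implicitly does), your proof closes completely; without some such input, no proof can.
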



\begin{proof}
(1) 
We first note that $v_0,v_1,...,v_s$ are linearly independent 
over ${\Bbb Q}$. 
We shall show that the dual graph of
$\{v_0,v_1,...,v_s\}$ is connected.
If we have a decomposition 
$v=(\sum_{i \in I_1}a_i v_i)+(\sum_{i \in I_2}a_i v_i)$ such that
$(v_i,v_j)=0$ for $i \in  I_1$, $j \in I_2$, then
$0=(v^2)=(\sum_{i \in I_1}a_i v_i)^2+(\sum_{i \in I_2}a_i v_i)^2$.
Hence $\sum_{i \in I_1}a_i v_i, \sum_{i \in I_2}a_i v_i \in {\Bbb Z}v$,
which implies that the graph is connected.
Then the standard arguments show the claim.

(2) $I:=\{i|v_i' \in V_1 \}$ and
$J:=\{i|v_i' \not \in V_1 \}$.
Then $v=(\sum_{i \in I}a_i'v_i')+(\sum_{i \in J}a_i' v_i')$.
If $i \in J$, then
$0=(v_i,v)=\sum_j a_j (v_i',v_j) \geq 0$.
Hence $v_i' \in (\oplus_i {\Bbb Z}v_i)^{\perp}$.
Then $0=(v^2)=((\sum_{i \in I}a_i'v_i')^2)+
((\sum_{i \in J}a_i' v_i')^2)$.
Hence $\sum_{i \in I}a_i'v_i',\sum_{i \in J}a_i' v_i' \in {\Bbb Z}v$,
which implies that 
$I =\emptyset$ or $J=\emptyset$.
If $J=\emptyset$, then $V_2 \subset V_1$, and we see that
$V_1=V_2$. If $I=\emptyset$, then all $v_i'$ belong to \
$\oplus_i {\Bbb Z} v_i$. Thus 
$\oplus_i {\Bbb Z} v_i \perp \oplus_i {\Bbb Z} v_i'$.
\end{proof}

\begin{ex}\label{ex:1}
Let $X$ be s smooth projective surface and $H$ a divisor on $X$
with $(H^2)>0$.
We set $L:=\ch(K(X))$ and 
$(x,y):=-\int_X x^{\vee}y \td_X$, $x, y \in L$.  
Then $\varrho_X=\ch({\Bbb C}_x)$ is primitive in $L$.
Since ${\Bbb C}_x \otimes K_X \cong  {\Bbb C}_x$,
$(\varrho_X,x)=(x,\varrho_X)$.
Moreover $(\;\;,\;\;)_{|\varrho_X^{\perp}}$ is symmetric.
Since 
$(\varrho_X^{\perp} \cap \ch({\cal O}_H)^{\perp})/{\Bbb Z}\varrho_X 
\cong 
\{D \in \NS(X)_f|(H,D)=0\}$, it is negative definite, where
$\NS(X)_f$ is the torsion free quotient of $\NS(X)$.
\end{ex}


\subsection{Existence of twisted semi-stable sheaves}

Let $X$ be a smooth projective surface and $H$ an ample divisor
on $X$.
Let ${\bf e} \in K(X)_{\mathrm{top}}$ be a toplogical invariant
of a coherent sheaf on $X$.
\begin{defn}\label{defn:general}
A polarization $H$ on $X$ is general with respect to ${\bf e}$, if
for every $\mu$-semi-stable sheaf $E$ with $\tau(E)={\bf e}$ and a  
subsheaf $F \ne 0$ of $E$, 
\begin{equation}
\frac{(c_1(F),H)}{\rk F}=\frac{(c_1(E),H)}{\rk E}
\text{ if and only if }
\frac{c_1(F)}{\rk F}=\frac{c_1(E)}{\rk E}.
\end{equation}
\end{defn}
If $H$ is general with respect to ${\bf e}$, then
the $G$-twisted semi-stability does not depend on the
choice of $G$.
The following is \cite[Lem. 3.6]{M-W:1}. For convenience' sake,
we give a proof.
\begin{lem}\label{lem:appendix:twisted}
Assume that $H$ is not general with respect to ${\bf e}$
and let $\epsilon$ be a sufficiently small ${\Bbb Q}$-divisor
such that $H+\epsilon$ is general with respect to ${\bf e}$.
Then there is a locally free sheaf $G$ such that
${\cal M}_H^G({\bf e})^{ss}={\cal M}_{H+\epsilon}({\bf e})^{ss}$.
\end{lem}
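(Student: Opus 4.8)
The plan is to follow Matsuki--Wentworth \cite{M-W:1} and to realize the perturbation $H \rightsquigarrow H+\epsilon$ as a twist by a locally free sheaf $G$ whose normalized first Chern class $c_1(G)/\rk G$ points in the direction of $-\epsilon$. First I would record the identity
\begin{equation*}
\mu_G(E) = \frac{(c_1(E),H)}{\rk E} - \frac{(c_1(G),H)}{\rk G},
\end{equation*}
so that the $\mu_G$-slope differs from the ordinary $H$-slope by a constant independent of $E$. Consequently a $G$-twisted $H$-semistable sheaf is automatically $\mu$-semistable with respect to $H$, and, since $\epsilon$ is small, so is an $(H+\epsilon)$-semistable sheaf of class ${\bf e}$; this reduces the problem to comparing, for a fixed $\mu$-semistable $E$ with $\tau(E)={\bf e}$, the two semistability conditions on the saturated subsheaves $F$ with $(c_1(F),H)/\rk F = (c_1(E),H)/\rk E$. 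The family of such pairs $(E,F)$ is bounded, so only finitely many values of $\delta_F := c_1(F)/\rk F - c_1(E)/\rk E \in H^{\perp}\otimes{\Bbb Q}$ occur.

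Next I would isolate the term through which $c_1(G)$ enters the comparison. For such $F$ the leading two coefficients of the reduced $G$-twisted Hilbert polynomials of $F$ and $E$ already agree, so $G$-twisted semistability is decided by the constant term; a Riemann--Roch computation (as in \eqref{eq:det-bdle}) shows that the only $G$-dependent part of the difference of these constant terms is the cross term
\begin{equation*}
-\frac{2}{(H^2)}\left(\frac{c_1(G)}{\rk G},\,\delta_F\right),
\end{equation*}
the remaining contribution $\Delta(F,E)$ (built from $\ch_2$, $K_X$ and $\chi(\mathcal{O}_X)$) being independent of $G$ and bounded over the finite set of relevant $(E,F)$. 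On the other side, because $H+\epsilon$ is general with respect to ${\bf e}$ (Definition \ref{defn:general}), for $\delta_F \neq 0$ the $(H+\epsilon)$-comparison is decided, strictly, by the sign of $(\delta_F,\epsilon)$, while for $\delta_F = 0$ both conditions collapse to the same $\chi$-comparison governed exactly by $\Delta(F,E)$.

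The choice of $G$ is then forced: take $G$ locally free with $c_1(G)/\rk G = -s\,\epsilon$ for a large positive rational $s$ (cleared to an integral class, with $\rk G$ and $\ch_2(G)$ chosen freely, since $\ch_2(G)$ enters only through the $E$-independent constant $\ch_2(G)/\rk G$; existence of such a $G$ is elementary). Then the cross term becomes $\tfrac{2s}{(H^2)}(\epsilon,\delta_F)$, which for $s$ large dominates the bounded remainder $\Delta(F,E)$ and reproduces the sign of $(\epsilon,\delta_F)$, while for $\delta_F=0$ the two conditions agree automatically; hence $G$-twisted $H$-(semi)stability coincides with $(H+\epsilon)$-(semi)stability for every $E$ of class ${\bf e}$, giving the asserted equality of moduli. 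The main obstacle I anticipate is the uniformity built into the finiteness step: I must guarantee that boundedness of the $\mu$-semistable sheaves of class ${\bf e}$ and of their equal-slope saturated subsheaves yields only finitely many walls, so that a single $s$ works simultaneously for all $(E,F)$, and I must check the borderline $\delta_F=0$ cases carefully so that the strict inequalities and the $S$-equivalence classes match on both sides.
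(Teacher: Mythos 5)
Your proposal is correct and follows essentially the same route as the paper: bound the family of equal-$H$-slope destabilizing subsheaves, take $G$ with $c_1(G)/\rk G=-m\epsilon$ for $m$ large, and observe that the only $G$-dependent part of the difference of reduced twisted Hilbert polynomials is the cross term $m(\epsilon,\delta_F)$, which for large $m$ dominates the bounded $\chi$-discrepancy and reproduces the $(H+\epsilon)$-semistability condition (with the $\delta_F=0$ case reducing to the common $\chi$-comparison). The only differences are cosmetic: the paper normalizes by $\rk$ rather than by the leading Hilbert coefficient, and records the uniform bound as a single constant $B$ on $|\chi(E)/\rk E-\chi(F)/\rk F|$ rather than via finiteness of the set of classes $\delta_F$.
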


\begin{proof}
We set
\begin{equation}
{\cal F}({\bf e}):=
\left\{F \subset E \left|
\begin{aligned}
E \in {\cal M}_H({\bf e})^{\mu\text{-}ss},
\text{ $E/F$ is torsion free }\\
(c_1(F),H)/\rk F=(c_1(E),H)/\rk E\\
\end{aligned}
\right. \right\}.
\end{equation}
Since ${\cal F}({\bf e})$ is a bounded set,
we have
\begin{equation}
B:=\max \left\{ \left.
\left|\frac{\chi(E)}{\rk E}-\frac{\chi(F)}{\rk F} \right| \right|
(F \subset E) \in {\cal F}({\bf e})
\right\}<\infty.
\end{equation}
Assume that $N \epsilon \in \NS(X)$.
Let $G$ be a locally free sheaf such that $c_1(G)/\rk G=-m\epsilon$.
If $m \geq (\rk {\bf e})^2 N B$, then
for $(F \subset E) \in {\cal F}({\bf e})$,
\begin{equation}
\frac{\chi(G,E(nH))}{\rk E}-
\frac{\chi(G,F(nH))}{\rk F}=
m\left(\frac{c_1(E)}{\rk E}-\frac{c_1(F)}{\rk F}, 
\epsilon \right)
+\frac{\chi(E)}{\rk E}-\frac{\chi(F)}{\rk F} \geq 0
\end{equation}
if and only if 
\begin{enumerate}
\item[(1)]
\begin{equation}
\left(\frac{c_1(E)}{\rk E}-\frac{c_1(F)}{\rk F}, 
\epsilon \right) \geq 0
\end{equation}
or
\item[(2)]
\begin{equation}
\frac{c_1(E)}{\rk E}-\frac{c_1(F)}{\rk F}=0,\;
\frac{\chi(E)}{\rk E}-\frac{\chi(F)}{\rk F} \geq 0,
\end{equation}
\end{enumerate}
which is the semi-stability of $E$ with respect to
$H+\epsilon$.
Therefore the claim holds.
\end{proof}

\begin{lem}\label{lem:appendix:existence}
Let $(X,H)$ be a polarized 
$K3$ surface and $v=r+\xi+a\varrho_X$, $\xi \in \NS(X)$
a primitive Mukai vector with $\langle v^2 \rangle \geq -2$.
Then there is a $G$-twisted semi-stable sheaf $E$
with $v(E)=v$ for any $G$.
\end{lem}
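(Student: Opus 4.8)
The plan is to reduce the existence of a $G$-twisted semi-stable sheaf for an \emph{arbitrary} $G$ to the existence of an ordinary Gieseker semi-stable sheaf for a \emph{general} polarization, and then to establish the latter by Fourier--Mukai reduction of the rank.

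First I would observe that it suffices to produce a single $\mu$-stable sheaf $E$ with $v(E)=v$: since twisting by $G$ shifts every slope $\mu_G$ by the fixed amount $\deg(G)/\rk(G)$, a $\mu$-stable sheaf is automatically $G$-twisted stable, hence $G$-twisted semi-stable, for \emph{every} $G$ simultaneously. Thus the quantifier ``for any $G$'' is harmless once a $\mu$-stable representative is available. When such a representative cannot be guaranteed (the degenerate ranges $\langle v^2\rangle\in\{-2,0\}$, or $\rk v=0$), I would instead argue that the non-emptiness of $\overline{M}^G_H(v)$ is independent of the choice of stability data: by Lemma~\ref{lem:appendix:twisted} the $G$-twisted chamber is sandwiched between ordinary semi-stabilities for general small perturbations $H\pm\epsilon$ of the polarization, and crossing a wall relates the corresponding moduli spaces by elementary (Mukai-type) modifications preserving non-emptiness. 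Either way the problem becomes: for $H$ general with respect to $\tau(v)$ (Definition~\ref{defn:general}), find an $H$-semi-stable sheaf $E$ with $v(E)=v$.

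The engine for the general-polarization case is the Fourier--Mukai transform together with induction on $\rk v$. Given a primitive isotropic Mukai vector $w_0=(r_0,\xi_0,a_0)$ adapted to $v$ and a general polarization, the fine moduli space $X':=M_H(w_0)$ is again a $K3$ surface carrying a universal family, and the induced transform $\Phi:{\bf D}(X)\to{\bf D}(X')$ is a Hodge isometry for the Mukai pairing, so $\langle \Phi(v)^2\rangle=\langle v^2\rangle\ge-2$ and $\Phi(v)$ stays primitive. The plan is to choose $w_0$ (steering $\langle w_0,v\rangle$ via Mukai's reflections and the structure of the isometry group) so that $\Phi(v)$ has strictly smaller rank, or rank $0$ or $1$; by the equivalence a suitably stable object with invariant $\Phi(v)$ on $X'$ produces one with invariant $v$ on $X$, after a compatible adjustment of the polarization. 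Descending this induction reduces everything to the base cases. For $\rk v=1$ one twists $v$ by a line bundle to the form $(1,0,1-n)$ with $n=\tfrac12\langle v^2\rangle+1\ge0$; then $\Hilb^n(X)\neq\emptyset$ provides the ideal sheaf $I_Z$, which is $\mu$-stable. For $\rk v=0$, $v=(0,\xi,a)$ with $\xi$ the class of an effective curve, one realises $E$ as a pushforward of a line bundle of appropriate degree from a curve in $|\xi|$, whose existence follows from Riemann--Roch on that curve.

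The main obstacle I expect is bookkeeping the stability across these two operations. Concretely, one must ensure that the Fourier--Mukai image of a semi-stable sheaf is again an honest semi-stable sheaf (not merely a complex, nor only a perverse-coherent object) for an appropriately chosen polarization on $X'$, and that the degenerate numerical cases $\langle v^2\rangle=-2,0$ and $\rk v=0$ are handled without producing only $S$-equivalence classes of strictly semi-stable summands. This is precisely the technical heart of the existence theorem for $K3$ surfaces, and I would carry it out following the standard WIT and preservation-of-stability arguments (of the type developed for $\Phi,\widehat{\Phi}$ earlier in the paper and in Yoshioka's work), invoking the asymptotic preservation of Gieseker stability to force the transform to land among sheaves. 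Once the base cases and the inductive step are matched up, primitivity together with $\langle v^2\rangle\ge-2$ guarantees a stable representative (respectively the expected object in the isotropic and $(-2)$ cases), completing the proof.
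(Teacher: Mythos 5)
Your strategy is sound, but it takes a substantially longer and more self-contained route than the paper does. The paper's proof is short: for $H$ general with respect to $v$ it simply \emph{cites} the known existence theorem for stable sheaves on $K3$ surfaces (\cite[Thm. 8.1]{Y:7} together with \cite{Y:Stability}); it then uses Lemma \ref{lem:appendix:twisted} to produce one locally free $G_1$ with ${\cal M}_H^{G_1}(v)^{ss}={\cal M}_H^{G_1}(v)^{s}\ne\emptyset$, invokes \cite[Prop. 4.1]{Y:9} to transport non-emptiness to any $G$ whose chamber contains no strictly semi-stable sheaves, and for a $G$ lying on a wall it perturbs $G$ itself to a nearby $G'$ with ${\cal M}_H^{G'}(v)^{ss}={\cal M}_H^{G'}(v)^{s}\ne\emptyset$ and ${\cal M}_H^{G'}(v)^{ss}\subset{\cal M}_H^{G}(v)^{ss}$. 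Your Fourier--Mukai induction on rank (reducing to $\Hilb^n(X)$ and to pushforwards from curves) is essentially a re-derivation of the cited theorem, so it buys self-containedness at the cost of redoing the hardest technical work (WIT and preservation of stability under $\Phi$), which you correctly identify but defer. Your opening observation that a $\mu$-stable sheaf is $G$-twisted stable for every $G$ simultaneously is a genuine shortcut the paper does not use, and it cleanly disposes of the quantifier ``for any $G$'' whenever a $\mu$-stable representative exists.

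One step needs repair. You claim that Lemma \ref{lem:appendix:twisted} shows the $G$-twisted chamber for an \emph{arbitrary} $G$ is sandwiched between ordinary semi-stabilities for perturbed polarizations $H\pm\epsilon$. The lemma only goes the other way: given a perturbation $\epsilon$, it manufactures a \emph{specific} $G$ realizing $H+\epsilon$-semi-stability as $G$-twisted semi-stability. Since the parameter space of twists $c_1(G)/\rk G$ is larger than the space of polarization perturbations, an arbitrary $G$ need not arise this way, and your fallback for the degenerate cases $\langle v^2\rangle\in\{-2,0\}$ and $\rk v=0$ does not, as written, cover every $G$. The correct move is the paper's: perturb $G$ within $K(X)\otimes{\Bbb Q}$ to an adjacent chamber $G'$ where stable equals semi-stable, note that $G'$-semi-stable implies $G$-semi-stable when $G$ lies in the closure of the $G'$-chamber, and use the chamber-independence of non-emptiness there.
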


\begin{proof}
If $H$ is general with respect to $v$,
then there is a stable sheaf $E$ with $v(E)=v$
by \cite[Thm. 8.1]{Y:7}
and \cite{Y:Stability}.
By Lemma \ref{lem:appendix:twisted},
there is a locally free sheaf $G_1$ such that
${\cal M}_H^{G_1}(v)^{ss}={\cal M}_H^{G_1}(v)^{s} \ne \emptyset$.
For a $G$ with ${\cal M}_H^G(v)^{ss}={\cal M}_H^G(v)^{s}$,
 we use \cite[Prop. 4.1]{Y:9}.
If ${\cal M}_H^G(v)^{ss} \ne {\cal M}_H^G(v)^{s}$,
then we can find a $G'$ such that
$c_1(G')/\rk G'$ is sufficiently close to
$c_1(G)/\rk G$,
${\cal M}_H^{G'}(v)^{ss}={\cal M}_H^{G'}(v)^{s} \ne \emptyset$
and ${\cal M}_H^{G'}(v)^{ss} \subset {\cal M}_H^G(v)^{ss}$.
Thus the claim also holds.

\begin{NB}
Let $E$ be a usual stable sheaf of $v(E)=v$
with respect to $H+\epsilon$.  
We set
\begin{equation}
{\cal S}:=
\left\{F \left|
\begin{aligned}
F \subset E, \text{ $E/F$ is torsion free }\\
(c_1(F),H)/\rk F=(c_1(E),H)/\rk E\\
\end{aligned}
\right. \right\}.
\end{equation}
For $F \in {\cal S}$,
we have 
\begin{enumerate}
\item[(1)]
 $(c_1(F),\epsilon)/\rk F<(c_1(E),\epsilon)/\rk E$
or 
\item[(2)] $c_1(F)/\rk F=c_1(E)/\rk E$ and
$\chi(F)/\rk F<\chi(E)/\rk E$.
\end{enumerate}
Since ${\cal S}$ is a bounded set,
there is an integer $m$ such that
\begin{equation}
m\left(\frac{c_1(E)}{\rk E}-\frac{c_1(F)}{\rk F}, 
\epsilon \right)
+\frac{\chi(E)}{\rk E}-\frac{\chi(F)}{\rk F}>0.
\end{equation}
Let $G$ be a locally free sheaf such that $c_1(G)/\rk G=-m\epsilon$.
Since
\begin{equation}
\frac{\chi(G,E(nH))}{\rk E}-
\frac{\chi(G,F(nH))}{\rk F}=
m\left(\frac{c_1(E)}{\rk E}-\frac{c_1(F)}{\rk F}, 
\epsilon \right)
+\frac{\chi(E)}{\rk E}-\frac{\chi(F)}{\rk F}>0,
\end{equation}
$E$ is $G$-twisted stable.
For the general case, we use 
\end{NB}
\end{proof}

\subsection{Spectral sequences }

Since $\widehat{\Phi}[2]$ and $\widehat{\Psi}$ are the inverses of
$\Phi$ and $\Psi$ respectively, we get the followng.
\begin{lem}\label{lem:spectral}
We have spectral sequences
\begin{equation}
E_2^{p,q}=\Phi^p(\widehat{\Phi}^q(E)) 
\Rightarrow
E_{\infty}^{p+q}=
\begin{cases}
E, &p+q=2,\\
0, &p+q \ne 2,
\end{cases}
\;\; E \in \Per(X'/Y'),
\end{equation}
\begin{equation}
E_2^{p,q}=\widehat{\Phi}^p(\Phi^q(F)) 
\Rightarrow
E_{\infty}^{p+q}=
\begin{cases}
F, &p+q=2,\\
0, &p+q \ne 2,
\end{cases}
\;\; F \in {\cal C}.
\end{equation}
In particular,
\begin{enumerate}
\item
$\Phi^p(\widehat{\Phi}^q(E))=0$, $p=0,1$. 
\item
$\Phi^p(\widehat{\Phi}^q(E))=0$, $p=1,2$. 
\item
There is an injective homomorphism
$\Phi^0(\widehat{\Phi}^1(E)) \to
\Phi^2(\widehat{\Phi}^0(E))$.
\item
There is a surjective homomorphism
$\Phi^0(\widehat{\Phi}^2(E))\to
\Phi^2(\widehat{\Phi}^1(E))$.
\end{enumerate}
\end{lem}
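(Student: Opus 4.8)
The plan is to realize both spectral sequences as instances of the standard spectral sequence attached to an exact functor between triangulated categories equipped with bounded $t$-structures, and then to extract the four numbered consequences purely from its degeneration. Throughout I use that $\Phi$ and $\widehat{\Phi}$ are exact functors between ${\bf D}(X)$ and ${\bf D}(X')$, that ${\cal C}$ and $\Per(X'/Y')$ are the hearts of bounded $t$-structures, and that by the theorem of Bridgeland and Orlov the functor $\widehat{\Phi}[2]$ is quasi-inverse to $\Phi$, so that $\Phi \circ \widehat{\Phi} \cong \mathrm{id}[-2]$ and $\widehat{\Phi} \circ \Phi \cong \mathrm{id}[-2]$.

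First I would set up the general machine: for an exact functor $F\colon {\bf D}(X) \to {\bf D}(X')$ and an object $M \in {\bf D}(X)$, applying $F$ to the (finite, since the $t$-structure is bounded) Postnikov tower built from the perverse truncations $\tau^{\le q}M$ and then taking perverse cohomology produces a spectral sequence $E_2^{p,q} = {}^{p}H^{p}(F({}^{p}H^{q}(M))) \Rightarrow {}^{p}H^{p+q}(F(M))$. Taking $F = \Phi$ and $M = \widehat{\Phi}(E)$ gives $E_2^{p,q} = \Phi^p(\widehat{\Phi}^q(E))$ with abutment ${}^{p}H^{p+q}(\Phi(\widehat{\Phi}(E)))$; since $\Phi \circ \widehat{\Phi} \cong \mathrm{id}[-2]$, this abutment is ${}^{p}H^{p+q}(E[-2]) = {}^{p}H^{p+q-2}(E)$, which is $E$ when $p+q=2$ and $0$ otherwise because $E$ lies in the heart $\Per(X'/Y')$. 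The second spectral sequence is the identical construction with $F = \widehat{\Phi}$, $M = \Phi(F)$ for $F \in {\cal C}$, using $\widehat{\Phi} \circ \Phi \cong \mathrm{id}[-2]$.

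To read off the consequences I would invoke the cohomological amplitude bounds already available: $\Phi^p(-) = 0$ for $p \notin \{0,1,2\}$ (this is the content of Lemma \ref{lem:Perverse-1}, in particular parts (1) and (3)), and symmetrically $\widehat{\Phi}^q(-) = 0$ for $q \notin \{0,1,2\}$. Hence the $E_2$ page is concentrated in the square $0 \le p,q \le 2$, the only possibly nonzero differential is $d_2\colon E_2^{p,q} \to E_2^{p+2,q-1}$, and every $d_r$ with $r \ge 3$ vanishes because it moves $p$ out of the square; thus $E_\infty = E_3$. Since the abutment is concentrated in total degree $2$, every surviving term of total degree $\ne 2$ must vanish. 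The untouched corners of total degree $0,1,3,4$ give $\Phi^0(\widehat{\Phi}^0(E)) = \Phi^1(\widehat{\Phi}^0(E)) = 0$ and $\Phi^1(\widehat{\Phi}^2(E)) = \Phi^2(\widehat{\Phi}^2(E)) = 0$, which are conclusions (i) and (ii); vanishing of $E_\infty^{0,1}$ and of $E_\infty^{2,1}$ says exactly that $d_2\colon \Phi^0(\widehat{\Phi}^1(E)) \to \Phi^2(\widehat{\Phi}^0(E))$ is injective and that $d_2\colon \Phi^0(\widehat{\Phi}^2(E)) \to \Phi^2(\widehat{\Phi}^1(E))$ is surjective, which are (iii) and (iv).

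The only genuinely technical point is the first step, namely the construction of the generic functor spectral sequence and the identification of its $E_2$ page via the perverse Postnikov filtration; this is formal (an exact-couple argument), but it must be written with care. Once it is in place, together with the amplitude bounds from Lemma \ref{lem:Perverse-1} and the inverse equivalence $\widehat{\Phi}[2] = \Phi^{-1}$, the remaining work is the bookkeeping of a spectral sequence supported in a $3 \times 3$ square and abutting to a single diagonal, which I expect to be routine.
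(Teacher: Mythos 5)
Your argument is correct, and it reaches the same conclusions as the paper, but by a somewhat different route. The paper gives no separate proof of this lemma; it simply remarks that the statement follows because $\widehat{\Phi}[2]$ is quasi-inverse to $\Phi$, and instead writes out the proof of the parallel Lemma \ref{lem:spectral2}. There the argument is entirely hands-on: one splices the two truncation triangles $\Psi^{\leq 1}(E)[-1] \to \Psi(E) \to \Psi^2(E)[-2]$ and $\Psi^0(E) \to \Psi^{\leq 1}(E)[-1] \to \Psi^1(E)[-1]$, applies the quasi-inverse functor, and reads off four long exact sequences of perverse cohomology; these exact sequences \emph{are} the content of items (i)--(iv), and the paper only afterwards observes that they assemble into the stated spectral sequence. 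You instead build the full hypercohomology spectral sequence of an exact functor relative to bounded $t$-structures via a Postnikov tower, identify the abutment using $\Phi \circ \widehat{\Phi} \cong \mathrm{id}[-2]$, and extract (i)--(iv) from the degeneration $E_\infty = E_3$ of an $E_2$ page supported in a $3 \times 3$ square; your bookkeeping of the corners and of the two $d_2$ differentials is exactly right. The trade-off is that the paper's route avoids constructing the spectral sequence formally (which, as you note, requires some care with exact couples and finite Postnikov towers in a triangulated category), at the cost of being ad hoc, whereas yours is more systematic and makes the degeneration pattern transparent. Two small points to tighten: the cohomological amplitude $\Phi^p = 0$ for $p \notin \{0,1,2\}$ comes from Lemma \ref{lem:Perverse-1}(2) (the three-term resolution by local projectives $W_0 \to W_1 \to W_2$), not from parts (1) and (3); and the symmetric bound $\widehat{\Phi}^q(E) = 0$ for $q \notin \{0,1,2\}$, $E \in \Per(X'/Y')$, which you need to confine the $E_2$ page to the square, is not stated explicitly anywhere in the paper either --- it is implicitly assumed (as the truncations $\Psi^{\leq 1}$, $\Psi^2$ in the proof of Lemma \ref{lem:spectral2} show), so you should either prove it by the mirror of Lemma \ref{lem:Perverse-1}(2) or flag it as an input.
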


\begin{lem}\label{lem:spectral2}
We have spectral sequences
\begin{equation}
E_2^{p,q}=\Psi^p(\widehat{\Psi}^{-q}(E)) 
\Rightarrow
E_{\infty}^{p+q}=
\begin{cases}
E, &p-q=0,\\
0, &p-q \ne 0,
\end{cases}
\;\; E \in \Per(X'/Y')^D,
\end{equation}
\begin{equation}
E_2^{p,q}=\widehat{\Psi}^p(\Psi^{-q}(F)) 
\Rightarrow
E_{\infty}^{p+q}=
\begin{cases}
F, &p-q=0,\\
0, &p-q \ne 0,
\end{cases}
\;\; F \in {\cal C}.
\end{equation}
In particular,
\begin{enumerate}
\item
$\Psi^p(\widehat{\Psi}^2(E))=0$, $p=0,1$. 
\item
$\Psi^p(\widehat{\Psi}^0(E))=0$, $p=1,2$. 
\item
There is an injective homomorphism
$\Psi^0(\widehat{\Psi}^1(E)) \to
\Psi^2(\widehat{\Psi}^2(E))$.
\item
There is a surjective homomorphism
$\Psi^0(\widehat{\Psi}^0(E))\to
\Psi^2(\widehat{\Psi}^1(E))$.
\end{enumerate}
\end{lem}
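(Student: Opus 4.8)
The plan is to deduce both spectral sequences from the fact, recorded immediately before the statement, that $\widehat{\Psi}$ is a quasi-inverse of the contravariant functor $\Psi$. First I would make this precise. Using $\Psi(E)=\Phi(E)^{\vee}[-2]$ together with the identity $\widehat{\Psi}(F)\cong\widehat{\Phi}(F^{\vee})$, which holds because ${\bf R}{\cal H}om(p_{X'}^*(F),{\cal E})\cong p_{X'}^*(F^{\vee})\overset{{\bf L}}{\otimes}{\cal E}$ for $F$ perfect, one computes $\widehat{\Psi}(\Psi(E))\cong\widehat{\Phi}(\Phi(E)^{\vee\vee}[2])\cong\widehat{\Phi}(\Phi(E))[2]\cong E$, the last isomorphism because $\widehat{\Phi}[2]$ is the inverse of $\Phi$ and biduality is the identity on perfect complexes over the smooth surfaces $X,X'$; the composite $\Psi\circ\widehat{\Psi}$ is handled symmetrically. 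The point to emphasize is that, in contrast to the covariant case $\Phi\circ\widehat{\Phi}\cong(\;)[-2]$, there is now no net shift, which is why the abutment will be concentrated in total degree $0$ rather than $2$.

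Next I would record the cohomological amplitudes. By Lemma \ref{lem:Perverse} one has $\Psi^i(E)={}^p H^i(\Psi(E))=0$ for $i\notin[0,2]$, and the same bound for $\widehat{\Psi}^i$ follows from the evident dual of that lemma on $X'$ (the symmetry between $X$ and $X'$ being exactly Theorem \ref{thm:duality}). Thus $\Psi$ and $\widehat{\Psi}$ are contravariant exact functors of amplitude $[0,2]$ for the $t$-structures with hearts ${\cal C}$ and $\Per(X'/Y')^D$.

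The core step is the Grothendieck spectral sequence of a composite of two functors, adapted to the contravariant case. Applying the perverse-truncation filtration to $\widehat{\Psi}(E)$ and then the contravariant exact functor $\Psi$, the filtration direction is reversed; this reversal is precisely what produces the $-q$ in $E_2^{p,q}=\Psi^p(\widehat{\Psi}^{-q}(E))$, and the abutment is ${}^p H^{p+q}$ of $\Psi\circ\widehat{\Psi}(E)\cong E$, hence concentrated in total degree $0$. The second spectral sequence arises identically from $\widehat{\Psi}\circ\Psi\cong\id$. I expect the contravariant bookkeeping — the negated inner index and the resulting placement of the surviving abutment — to be the main obstacle; once the machine is set up correctly everything else is formal.

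Finally I would read off (1)--(4) from the $3\times3$ array of $E_2$-terms, supported in $p\in\{0,1,2\}$ and $-q\in\{0,1,2\}$, so that $d_r=0$ for $r\ge 3$ on support grounds and the sequence degenerates at $E_3$. The corner terms $\Psi^0(\widehat{\Psi}^2(E))$ and $\Psi^2(\widehat{\Psi}^0(E))$ sit alone in total degrees $-2$ and $2$, receive and emit no nonzero $d_2$, and therefore survive to $E_\infty$; since the abutment vanishes there, they are zero. In total degrees $-1$ and $1$ the abutment is a two-step filtration whose outer graded pieces $\Psi^1(\widehat{\Psi}^2(E))$ and $\Psi^1(\widehat{\Psi}^0(E))$ again survive and hence vanish, completing (1) and (2). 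The only nonzero $d_2$ differentials that remain are $\Psi^0(\widehat{\Psi}^1(E))\to\Psi^2(\widehat{\Psi}^2(E))$ and $\Psi^0(\widehat{\Psi}^0(E))\to\Psi^2(\widehat{\Psi}^1(E))$, and the vanishing of the abutment in total degrees $-1$ and $1$ forces the former to be injective (its kernel is $E_\infty$ in degree $-1$) and the latter to be surjective (its cokernel is $E_\infty$ in degree $1$), which are exactly (3) and (4).
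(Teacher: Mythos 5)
Your proof is correct and follows essentially the same route as the paper: both derive the spectral sequences from the perverse truncation filtration applied to $\Psi(E)$ (resp.\ $\widehat{\Psi}(E)$) together with the identification $\widehat{\Psi}\circ\Psi\cong\mathrm{id}$, the paper simply unwinding the two truncation triangles by hand where you invoke the composite-functor spectral sequence. Your explicit verification of the contravariant index bookkeeping and the degeneration argument for (1)--(4) fills in details the paper leaves implicit, but introduces no new idea.
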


For a convenience of the reader, we give a proof
of Lemma \ref{lem:spectral2}.
\begin{proof}
By the exact triangles
\begin{equation}
\Psi^{\leq 1}(E)[-1] \to \Psi(E) \to \Psi^2(E)[-2] \to
\Psi^{\leq 1}(E)
\end{equation}
and
\begin{equation}
\Psi^0(E) \to \Psi^{\leq 1}(E)[-1] \to \Psi^1(E)[-1] \to
\Psi^0(E)[1],
\end{equation}
we have exact triangles
\begin{equation}
\widehat{\Psi}(\Psi^{\leq 1}(E))[1] \leftarrow 
\widehat{\Psi}(\Psi(E)) 
\leftarrow \widehat{\Psi}(\Psi^2(E))[2] \leftarrow 
\widehat{\Psi}(\Psi^{\leq 1}(E))
\end{equation}
and
\begin{equation}
\widehat{\Psi}(\Psi^0(E)) \leftarrow
 \widehat{\Psi}(\Psi^{\leq 1}(E))[1]\leftarrow  
\widehat{\Psi}(\Psi^1(E))[1] \leftarrow
\widehat{\Psi}(\Psi^0(E))[-1].
\end{equation}
Since $\widehat{\Psi}(\Psi(E))=E$, we have exact sequences
\begin{equation}
\begin{split}
0 \leftarrow \widehat{\Psi}^1(\Psi^{\leq 1}(E)) \leftarrow 
E \leftarrow \widehat{\Psi}^2(\Psi^2(E)) \leftarrow 
\widehat{\Psi}^0(\Psi^{\leq 1}(E)) \leftarrow 0, \\ 
\widehat{\Psi}^2(\Psi^{\leq 1}(E))=
\widehat{\Psi}^1(\Psi^2(E))=\widehat{\Psi}^0(\Psi^2(E))=0,\\
0 \leftarrow \widehat{\Psi}^2(\Psi^1(E)) \leftarrow
\widehat{\Psi}^0(\Psi^0(E)) \leftarrow
 \widehat{\Psi}^1(\Psi^{\leq 1}(E))\leftarrow  
\widehat{\Psi}^1(\Psi^1(E)) \leftarrow 0,\\
\widehat{\Psi}^0(\Psi^{\leq 1}(E))\cong 
\widehat{\Psi}^0(\Psi^1(E)),\\
\widehat{\Psi}^1(\Psi^0(E))= \widehat{\Psi}^2(\Psi^0(E))=0. 
\end{split}
\end{equation}
These give the data of the spectral sequence.
\end{proof}

{\it Acknowledgement.}
I would like to thank 
Max Planck Institute f\"{u}r Mathematik
very much for the hospitarity and support. 
I would also like to thank Hiraku Nakajima for valuable discussions
on perverse coherent sheaves. 
Finally I would like to thank Shintaro Yanagida 
for reading this manuscript and giving some comments.

\begin{NB}

\subsection{A relatively ample object on $\Per(X/Y)$}

\begin{rem}
$E \in \Coh(X)$ belongs to $\Per(X/Y)$ if and only
if $\Hom(E,K)=0$ for all $K \in \Coh(X)$ with
$\pi_*(K)=0$. Indeed 
$\Per(X/Y)$ is the tilting of $\Coh(X)$ with respect to a torsion pair
whose torsion free part is 
$\{K \in \Coh(X)|\pi_*(K)=0 \}$.
\end{rem}

For $E \in \Per(X/Y)$, we have a decomposition
\begin{equation}
0 \to E_1[1] \to E \to E_2 \to 0
\end{equation}
where $E_i=H^{i+2}(E)$.
Let $G$ be a locally free sheaf on $X$ such that
\begin{enumerate}
\item[(1)]
 $G \in \Per(X/Y)$,
\item[(2)]
 $R^1 \pi_*(G^{\vee} \otimes E_1) \ne 0$,
\item[(3)] 
$\pi^*(\pi_*(G^{\vee} \otimes E_2)) \otimes G \to E_2$ 
is surjective and $R^1 \pi_*(G^{\vee}\otimes E_2) = 0$.
\end{enumerate} 
We may say that $G$ is relatively ample with respect to
$\pi:X \to Y$.

\begin{rem}
$\{G(-n)|n>0 \}$ may be ample sequence in \cite{Inaba}. 
\end{rem}

\begin{rem}
Since $\pi^*(\pi_*(G)) \to G$ is surjective,
$\pi_*(G^{\vee} \otimes E_1)$ is a subsheaf of 
$\pi_*({\cal H}om(\pi^*(\pi_*(G)),E_1))=
\pi_*({\cal H}om(\pi_*(G),\pi_*(E_1)))=0$.
Hence
${\bf R}\pi_*(G^{\vee} \otimes E_1[1])=R^1 \pi_*(G^{\vee} \otimes E_1)$. 
\end{rem}

\begin{lem}
Let $E$ be an object in $\Per(X/Y)$. Then
$\Hom(G,E(n)) \otimes G \to E(n)$ is surjective in $\Per(X/Y)$
for $n \gg 0$.
\end{lem}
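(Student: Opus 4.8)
The plan is to transport the statement across the Morita equivalence of Proposition~\ref{prop:Morita}, where it becomes the usual Serre surjectivity for coherent modules over a sheaf of algebras on the projective base $Y$. First I would observe that the three conditions imposed on $G$ are precisely those that make $G$ a locally free local projective generator of $\mathcal{C}=\Per(X/Y)$: writing $E$ as an extension $0\to E_1[1]\to E\to E_2\to 0$ with $E_1\in S$, $E_2\in T$, conditions (2) and (3) show that ${\bf R}\pi_*(G^\vee\otimes E)\in\Coh(Y)$ and that it vanishes only when $E_1=E_2=0$. Hence, setting $\mathcal{A}:=\pi_*(G^\vee\otimes G)$, Proposition~\ref{prop:Morita} gives an exact equivalence $\Lambda:={\bf R}\pi_*(G^\vee\otimes-)\colon\mathcal{C}\to\Coh_{\mathcal{A}}(Y)$. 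Since $\Lambda$ is an equivalence of abelian categories it is exact, so a morphism of $\mathcal{C}$ is surjective exactly when its image under $\Lambda$ is surjective in $\Coh_{\mathcal{A}}(Y)$; this reduces the problem to a surjectivity statement on $Y$.

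Next I would identify the image of the evaluation morphism. Because $\mathcal{O}_X(1)=\pi^*(\mathcal{O}_Y(1))$, the projection formula gives $\Lambda(E(n))=\Lambda(E)(n)=:F(n)$, and $\Lambda(G)=\pi_*(G^\vee\otimes G)=\mathcal{A}$ is the free rank-one $\mathcal{A}$-module. Moreover $\Hom_{\mathcal{C}}(G,E(n))=\Hom_{\mathcal{A}}(\mathcal{A},F(n))=H^0(Y,F(n))$. Applying $\Lambda$ to the evaluation map $\mathrm{ev}\colon \Hom(G,E(n))\otimes G\to E(n)$ therefore yields the canonical evaluation $H^0(Y,F(n))\otimes\mathcal{A}\to F(n)$ of $\mathcal{A}$-modules; untwisting by $\mathcal{O}_Y(-n)$ this is the morphism $H^0(Y,F(n))\otimes\mathcal{A}(-n)\to F$.

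Finally I would invoke Serre's theorem for coherent $\mathcal{A}$-modules recalled after Proposition~\ref{prop:Morita}: since $\mathcal{A}$ is coherent as an $\mathcal{O}_Y$-module and $\mathcal{O}_Y(1)$ is ample, for $n\gg0$ the canonical map $H^0(Y,F(n))\otimes\mathcal{A}(-n)\to F$ is surjective in $\Coh_{\mathcal{A}}(Y)$. Pulling this surjection back through $\Lambda^{-1}$ gives the surjectivity of $\mathrm{ev}$ in $\mathcal{C}$ for $n\gg0$, as required.

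The step I expect to be the main obstacle is not the surjectivity itself, which is standard, but the functorial bookkeeping in the middle paragraph: one must check carefully that $\Lambda$ carries the evaluation morphism on $X$ to the tautological evaluation of $\mathcal{A}$-modules on $Y$, i.e.\ the compatibility of the adjunction isomorphism $\Hom_{\mathcal{C}}(G,-)\cong H^0(Y,\Lambda(-))$ with the two evaluation maps, and that the various twists by $\mathcal{O}_X(n)$ and $\mathcal{O}_Y(n)$ are matched under the projection formula. A secondary point to verify is that Lemma~\ref{lem:projective}(1) already produces a surjection $\pi^*(V)\otimes G\to E(n)$ for some locally free $V$ on $Y$; comparing it with the evaluation map shows that the cokernel of $\mathrm{ev}$ is a quotient of a perverse sheaf whose image under $\Lambda$ has no global sections after enough twisting, which is an alternative route to the same conclusion should the direct identification of the two evaluation maps prove delicate.
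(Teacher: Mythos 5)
Your argument is correct, but it is not the route the paper takes, so a comparison is in order. The paper's proof works entirely on $X$: it splits $E$ into its cohomology pieces $E_2=H^0(E)\in T$ and $E_1[1]=H^{-1}(E)[1]$ with $E_1\in S$, treats each case by hand (for $E_2$, global generation of $\pi_*(G^{\vee}\otimes E_2)(n)$ combined with the surjectivity of $\pi^*(\pi_*(G^{\vee}\otimes E_2))\otimes G\to E_2$; for $E_1[1]$, the universal extension $0\to E_1(n)\to F_1\to G\otimes\Ext^1(G,E_1(n))\to 0$ together with a direct verification that $\Hom(F_1,K)=0$ for every $K$ with $\pi_*(K)=0$, so that $F_1\in\Per(X/Y)$), and then assembles the general case from a $3\times 3$ diagram. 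You instead push the whole statement through the Morita equivalence of Proposition~\ref{prop:Morita} and quote Serre's surjectivity $H^0(Y,F(n))\otimes{\cal A}(-n)\to F$, which the paper itself records immediately after that proposition. Both proofs are sound. Yours is shorter and makes transparent that the lemma is a purely formal consequence of $G$ being a locally free local projective generator plus ampleness of ${\cal O}_Y(1)$; the two points you flag as delicate are indeed the only ones that need care, and both go through: the evaluation map is natural in any $\mathbb{C}$-linear equivalence (it is the image of $\mathrm{id}$ under $\Hom(V,\Hom(G,E(n)))\cong\Hom(V\otimes G,E(n))$, so it is preserved by $\Lambda$ together with the isomorphism $\Hom(G,E(n))\cong\Hom_{\cal A}({\cal A},F(n))$), and the twists match by the projection formula since ${\cal O}_X(1)=\pi^*{\cal O}_Y(1)$. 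The one hypothesis you should make explicit rather than just assert is that the torsion pair cut out by $G$ via conditions (2) and (3) agrees with the one defining $\Per(X/Y)$, which is exactly Lemma~\ref{lem:tilting}(3); with that in place your identification of $G$ as a local projective generator is complete. What the paper's longer argument buys in exchange is explicit control of the kernel and cokernel of the evaluation map (e.g.\ that the kernel $F_2$ in the sheaf case satisfies $R^1\pi_*(G^{\vee}\otimes F_2)=0$ and again lies in $\Per(X/Y)$), which is the information actually used later when iterating the construction to build resolutions as in Lemma~\ref{lem:projective}; your proof yields the same consequence, but only after translating back from $\Coh_{\cal A}(Y)$.
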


\begin{proof}
(1) We first assume that $E=E_2$, where $E_2=H^0(E)$.
Assume that $\pi_*(G^{\vee} \otimes E_2(n))=\pi_*(G^{\vee} \otimes E_2)(n)$ 
is generated by global sections.
We shall prove that
there is an exact sequence in $\Per(X/Y)$:
\begin{equation}
0 \to F_2 \to \Hom(G,E_2(n)) \otimes G \to E_2(n) \to 0.
\end{equation}
Since 
$H^0(X,G^{\vee} \otimes E_2(n)) \otimes {\cal O}_Y
\overset{\lambda}{\to} \pi_*(G^{\vee} \otimes E_2(n))$ is surjective, we have 
a surjective homomorphism
\begin{equation}
H^0(X,G^{\vee} \otimes E_2(n)) \otimes G
\to \pi^*(\pi_*(G^{\vee} \otimes E_2(n))) \otimes G
\end{equation}
Since $\pi^*(\pi_*(G^{\vee} \otimes E_2)) \otimes G \to E_2$ is surjective,
$\Hom(G,E_2(n)) \otimes G \to E_2(n)$ is surjective.

\begin{equation}
0 \to \pi_*(G^{\vee} \otimes F_2) \to 
\Hom(G,E_2(n)) \otimes \pi_*(G^{\vee} \otimes G) \to 
\pi_*(G^{\vee} \otimes E_2(n)) \to 
 R^1\pi_*(G^{\vee} \otimes F_2) \to 0 
\end{equation}
Since 
\begin{equation}
\Hom(G,E_2(n)) \otimes {\cal O}_Y \to
\Hom(G,E_2(n)) \otimes \pi_*(G^{\vee} \otimes G) \to
\pi_*(G^{\vee} \otimes E_2(n))
\end{equation}
coincides with $\lambda$,
we have $R^1\pi_*(G^{\vee} \otimes F_2)=0$.
If there is a surjection $F_2 \to K$ with $\pi_*(K)=0$, then
we have a surjection $R^1\pi_*(G^{\vee} \otimes F_2) \to
R^1\pi_*(G^{\vee} \otimes K)$, which is a contradiction.
Therefore $F_2 \in \Per(X/Y)$. 

(2)
We next assume that $E=E_1[1]$, where $E_1=H^{-1}(E)$.
Then 
$\Hom(G,E_1(n)[1]) \otimes G \to E_1(n)[1]$ defines a universal extension
\begin{equation}
0 \to E_1(n) \to F_1 \to G \otimes \Ext^1(G,E_1(n)) \to 0.
\end{equation}
We shall prove that $\Hom(F_1,K)=0$ for $K \in \Coh(X)$ 
with $\pi_*(K)=0$.
If $\Hom(F_1,K) \ne 0$, then 
we may assume that there is a surjective homomorphism 
$\phi:F_1 \to K$ such that $\pi_*(K)=0$. 
For $\phi_{|E_1(n)}:E_1(n) \to K$,
we have a homomorphism
$\Ext^1(G,E_1(n)) \to \Ext^1(G,\phi(E_1(n))) \to \Ext^1(G,K)$.
Since $\Hom(G,E_1(n)[1]) \otimes G \to K/\phi(E_1(n))$ is surjective,
$K/\phi(E_1(n)) \in \Per(X/Y) \cap \Coh(X)$, which implies that
$R^1 \pi_*(G^{\vee} \otimes K/\phi(E_1(n)))=0$.
Hence $R^1 \pi_*(G^{\vee} \otimes E_1(n)) \to 
R^1 \pi_*(G^{\vee} \otimes K) \ne 0$ is surjective.
We may assume that
$R^1 \pi_*(G^{\vee} \otimes E_1(n))$ is generated by global sections.
We have a commutative diagram
\begin{equation}
\begin{CD}
\Ext^1(G,E_1(n))
@>>> H^0(Y,R^1 \pi_*(G^{\vee} \otimes E_1(n)))\\
@VVV @VVV \\
\Ext^1(G,K)
@>>> H^0(Y,R^1 \pi_*(G^{\vee} \otimes K))
\end{CD}
\end{equation}
whose holizontal maps are isomorphic.
Hence $\Ext^1(G,E_1(n)) \to \Ext^1(G,K)$ is not zero.
On the other hand, by our choice of $E_1 \to K$,
$\psi$ in the bottom diagram is zero, which is a contradiction.
\begin{equation}
\begin{CD}
\Hom(G,G) \otimes \Ext^1(G,E(n)) 
@>>> \Ext^1(G,E(n))\\
@| @VVV \\
\Hom(G,G) \otimes \Ext^1(G,E(n)) 
@>{\psi}>> \Ext^1(G,K)
\end{CD}
\end{equation}

Therefore $F_1 \in \Per(X/Y) \cap \Coh(X)$, which implies that
we have an exact sequence in $\Per(X/Y)$
\begin{equation}
0 \to F_1 \to \Hom(G,E_1[1]) \otimes G \to E_1[1] \to 0
\end{equation}


For the general case, we have the following diagram in
$\Per(X/Y)$:
\begin{equation}
\begin{CD}
@. 0 @. 0 @. 0 @. \\
@. @AAA @AAA @AAA @. \\
0 @>>> E_1(n)[1] @>>> E(n) @>>> E_2(n) @>>> 0 \\
@. @AAA @AAA @AAA @. \\
0 @>>> V_1 \otimes G @>>> V \otimes G @>>> V_2 \otimes G @>>> 0 \\
@. @AAA @AAA @AAA @. \\
0 @>>> F_1 @>>> F @>>> F_2 @>>> 0\\
@. @AAA @AAA @AAA @. \\
@. 0 @. 0 @. 0 @. 
\end{CD}
\end{equation} 
where $V_1=\Hom(G,E_1(n)[1]), V_2=\Hom(G,E_2(n)),
V=\Hom(G,E(n))$. 
\end{proof}

\begin{rem}
(1)
For the construction of the evaluation map, we used 
$\Hom(E^{\bullet},F^{\bullet})$ to be a finite dimensional vector space:
Let $E^{\bullet}$ and $F^{\bullet}$ be bounded complexes on $X$.
Assume that $\Ext^i(E^p,H^q(F^{\bullet}))=0$, $i>0$ for all $p,q$. 
Then $\Hom(E^{\bullet},F^{\bullet})$ is the cohomology
of the complex $V^{\bullet}$ with
$V^i=\oplus_{q-p=i}\Hom(E^p,F^q)$.
Since $\Hom(E^{\bullet},F^{\bullet})$ is a vector space,
there is a subspace $U$ of $V^0$ such that
$U \to \Hom(E^{\bullet},F^{\bullet})$ is an isomorphism.
Then we have the evaluation map
$U \otimes E^{\bullet} \to F^{\bullet}$, which induces
an isomorphism $U \to \Hom(E^{\bullet},F^{\bullet})$.  

(1.5)
We have a morphism $E^{\bullet} \otimes V^{\bullet} \to F^{\bullet}$:
\begin{equation}
\begin{CD}
(E^{\bullet} \otimes V^{\bullet})^q @>>>
\oplus_p E^p \otimes \Hom(E^p,F^q) @>>> F^q \\
@VVV @VVV @VVV \\
(E^{\bullet} \otimes V^{\bullet})^{q+1} @>>>
\oplus_p E^p \otimes \Hom(E^p,F^{q+1}) @>>> F^{q+1} 
\end{CD}
\end{equation} 
So if we replace $\Hom(E^{\bullet},F^{\bullet})$ by
the complex ${\bf R}\Hom(E^{\bullet},F^{\bullet})$,
we have the evaluation map.
If $V^i=0$ for $i<0$, then we have a morphism
$\Hom(E^{\bullet},F^{\bullet}) \to {\bf R}\Hom(E^{\bullet},F^{\bullet})$,
which implies that we get a morphism 
$E^{\bullet} \otimes \Hom(E^{\bullet},F^{\bullet}) \to F^{\bullet}$.

(2)
For a bounded complex $F^{\bullet}$ of coherent sheaves, 
we take a locally free resolution of ${\cal O}_X$
\begin{equation}
0 \to V_{-2} \to V_{-1} \to V_0 \to {\cal O}_X \to 0
\end{equation}
such that $\Ext^k(G(-n),V_i^{\vee} \otimes H^j(F^{\bullet}))=0$, $k>0$
for $i=-1,0$ and all $j$.
Since $G$ is locally free, we have such a resolution.
Then $\Ext^k(G(-n),V_{-2}^{\vee} \otimes H^j(F^{\bullet}))=0$, $k>0$
for all $j$.
Let $W^{\bullet}$ be a complex of vector spaces
such that $W^i:=\oplus_{-p+q=i} \Hom(G(-n),V_p^{\vee} \otimes F^q)$.
Then ${\bf R}\Hom(G(-n),F^{\bullet})$ is quasi-isomorphic
to $W^{\bullet}$.
Let $U$ be a subspace of $W^0$ which is isomorphic to
$\Hom(G(-n),F^{\bullet})$. Then 
we have a morphism $U \otimes G \to F^{\bullet}$
which induces an isomorphism $U \to \Hom(G(-n),F^{\bullet})$ is an isomorphism.
This construction also works for a family of complexes
$F^{\bullet}$ over $S$ such that 
$F^i$ are flat over $S$.
We set $W^i:=\oplus_{-p+q=i} 
\Hom_{p_S}(G(-n) \boxtimes {\cal O}_S,V_p^{\vee} \otimes F^q)$.
Then we have a morphism 
$G \otimes p_S^*(W^{\bullet}) \to (V_{\bullet}^{\vee} \otimes F^{\bullet})$.
Moreover if we ssume that 
$\Hom(G(-n),F^{\bullet}_s[i])=0$ for $i \ne 0$ and all $s \in S$.
Then the base change theorem implies that
$U:=\Hom_{p_S}(G(-n) \boxtimes {\cal O}_S,F^{\bullet})$ is a locally free 
sheaf on $S$ and  
$\Hom_{p_S}(G(-n) \boxtimes {\cal O}_S,F^{\bullet})_s \cong
\Hom(G,F^{\bullet}_s)$.
Hence $G(-n) \boxtimes W^{\bullet} \cong 
G(-n) \boxtimes U$. 
In particular, if $F^{\bullet}_s \in \Per(X/Y)$ for all
$s \in S$,
then we have a family of exact sequences
\begin{equation}
0 \to E^{\bullet} \to G(-n) \otimes U \to F^{\bullet} \to 0  
\end{equation}
in $\Per(X/Y)$.
Since $G \in \Coh(X)$,
we have $E^{\bullet} \in \Coh(X_S)$ which is
flat over $S$.
\end{rem}

\begin{rem}
(1)
$R^1 \pi_*(G^{\vee} \otimes E_2)=0$ for all $E_2 \in \Per(X/Y) \cap \Coh(X)$
if and only if $R^1 \pi_*(G^{\vee})=0$.

(2)
$\pi^*(\pi_*(G^{\vee} \otimes E_2)) \otimes G \to E_2$ 
is surjective for all  $E_2 \in \Per(X/Y) \cap \Coh(X)$
if and only if
$\pi^*(\pi_*(G^{\vee})) \otimes G \to {\cal O}_X$ is surjective. 
\end{rem}

\begin{ex}
Let $\pi:X \to Y$ be a blow-up of a point on a smooth surface $Y$.
Then $G={\cal O}_X^{\oplus m} \oplus {\cal O}_X(-C)^{\oplus n}$,
$m,n>0$ satisfies the conditions, where $C$ is the exceptional divisor.
\end{ex}

\begin{rem}
In order to consider the contraction of the moduli spaces,
we need to consider the case where $m=0$.
Indeed in this case, 
properly semi-stable sheaves are $S$-equivalent to
${\cal O}_C^{\oplus k} \oplus E'$, where 
$\Hom(E',{\cal O}_C)=\Hom({\cal O}_C,E')=0$.
\end{rem}

\begin{ex}
Let $\pi:X \to Y$ be the minimal resolution 
of a rational double point $0 \in Y$.
Let $C_1,C_2,\dots,C_s$ 
be the irreducible components of the exceptional locus
and $Z=\sum_i a_i C_i$ the fundamental cycle.
Then a locally free sheaf $G$ of rank $r$ satisfies the conditions
if and only if 
(i) $\Hom(G,{\cal O}_{C_i}(-1))=0$, $i>0$ and $\Ext^1(G,{\cal O}_Z)=0$,
and (ii) $b_i:=-\chi(G,{\cal O}_{C_i}(-1))>0$, $i>0$
and $\chi(G,{\cal O}_Z)>0$.
In particular, we need that 
$G_{|C_i} \cong {\cal O}_{C_i}^{\oplus (r-b_i)} \oplus
{\cal O}_{C_i}(1)^{\oplus b_i}$, $b_i>0$
for all $i$ with $r>\sum_i a_i b_i$:
We note that $\chi(G,{\Bbb C}_p)=\chi(G,{\cal O}_Z)-
\sum_i a_i \chi(G,{\cal O}_{C_i}(-1))$.
Thus $r-\sum_i a_i b_i>0$.


Let $Y_0$ be the analytic germ of $0 \in Y$
and $X_0:=X \times_Y Y_0$.
Then $G_{|X_0} \cong
\oplus_i {\cal M}_i^{\oplus b_i} \oplus 
{\cal M}_0^{\oplus (r-\sum_i a_i b_i)}$,
where 
${\cal M}_i$ are indecomposable locally free sheaves on 
$X_0$ with 
$(c_1({\cal M}_i),C_j)=\delta_{ij}$ and
${\cal M}_0$ is the trivial line bundle on $X_0$.
Note that $\rk {\cal M}_i=a_i$.
\end{ex}

\begin{ex}
Let $\pi:X \to Y$ be the contraction of the $0$-section
of the total space of ${\cal O}_{{\Bbb P}^1}(-1)^{\oplus n}$.
Let $L$ be the pull-back of ${\cal O}_{{\Bbb P}^1}(1)$.
Then $G={\cal O}_X^{\oplus m} \oplus L^{\oplus n}$,
$m,n>0$ satisfies the claims.
\end{ex}

\subsubsection{Morita equivalence.}
For a relatively ample object $G$ on $X$, 
we set ${\cal A}:=\pi_*(G^{\vee} \otimes G)$.
\begin{prop}\cite[Cor. 3.2.8]{VB}
We have an equivalence
\begin{equation}
\begin{matrix}
\Per(X/Y) & \to & \Coh({\cal A})\\
E & \mapsto & {\bf R}\pi_*(G^{\vee} \otimes E)
\end{matrix}
\end{equation}
such that the inverse is given by
$F \mapsto \pi^{-1}(F) \otimes_{\pi^{-1}({\cal A})} G$.
\end{prop}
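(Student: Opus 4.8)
The plan is to first observe that the relatively ample conditions on $G$ are precisely the defining conditions of a locally free local projective generator of $\Per(X/Y)={\cal C}$: condition (1) says $G\in{\cal C}$, while (2) and (3) together say that for every $E\in{\cal C}$ the complex ${\bf R}\pi_*(G^\vee\otimes E)$ is concentrated in degree $0$ and vanishes only when $E=0$, i.e. properties (a) and (b) of a local projective generator hold. Granting this identification, the statement is exactly Proposition \ref{prop:Morita}, so one route is simply to cite it. I would nevertheless record the self-contained argument, since the content is the interplay between the tilting description of Lemma \ref{lem:tilting:C} and the algebra ${\cal A}=\pi_*(G^\vee\otimes G)={\bf R}\pi_*{\bf R}{\cal H}om(G,G)$.

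First I would check that $\Phi:={\bf R}\pi_*(G^\vee\otimes-):{\cal C}\to\Coh_{\cal A}(Y)$ is well defined. By property (a) each $\Phi(E)$ is a genuine coherent sheaf on $Y$, and the multiplication ${\cal A}\otimes\Phi(E)\to\Phi(E)$ comes from composition, namely from the pairing $\pi_*(G^\vee\otimes G)\otimes\pi_*(G^\vee\otimes E)\to\pi_*(G^\vee\otimes E)$ induced by the evaluation $G\otimes G^\vee\to{\cal O}_X$; this makes $\Phi(E)$ a coherent ${\cal A}$-module and $\Phi$ a functor to $\Coh_{\cal A}(Y)$, extending to ${\bf D}(X)\to{\bf D}_{\cal A}(Y)$.

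Next I would build the candidate inverse $\Psi:F\mapsto\pi^{-1}(F)\overset{\bf L}{\otimes}_{\pi^{-1}({\cal A})}G$ and verify $\Phi\circ\Psi\cong\id$ and $\Psi\circ\Phi\cong\id$. The mechanism is the local model: whenever $V_{|U}\cong{\cal A}_U^{\oplus n}$ one has $(\pi^{-1}(V)\otimes_{\pi^{-1}({\cal A})}G)_{|\pi^{-1}(U)}\cong G_{|\pi^{-1}(U)}^{\oplus n}$, and $\Phi(G^{\oplus n})={\cal A}^{\oplus n}$; so $\Phi$ and $\Psi$ are mutually inverse on locally free ${\cal A}$-modules and on sums of copies of $G$. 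For a general $F$ I would choose a resolution $V^\bullet\to F$ by locally free ${\cal A}$-modules (using the surjection $H^0(Y,F(n))\otimes{\cal A}(-n)\to F$ for $n\gg0$), apply $\Psi$ termwise, and use Lemma \ref{lem:tilting:A-module} together with the vanishing in the relatively ample hypothesis to see that $\Psi(V^\bullet)$ computes $\pi^{-1}(F)\overset{\bf L}{\otimes}_{\pi^{-1}({\cal A})}G$ and lands in ${\cal C}$; dually, for $E\in{\cal C}$ Lemma \ref{lem:projective} provides a resolution by copies of $\pi^*(V_i)\otimes G$ to which the same bookkeeping applies.

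The main obstacle will be the homological bookkeeping in the derived tensor product $\pi^{-1}(F)\overset{\bf L}{\otimes}_{\pi^{-1}({\cal A})}G$: one must confirm that the higher $\Tor$-terms contribute nothing after applying $\Phi$ and that the result already lies in the heart ${\cal C}$ rather than in a shift of it. This is exactly where property (b) (nonvanishing of ${\bf R}\pi_*(G^\vee\otimes-)$ detects $E\ne 0$) and the degeneration of the spectral sequence $R^p\pi_*(G^\vee\otimes H^q(-))\Rightarrow R^{p+q}\pi_*(G^\vee\otimes-)$ from the proof of Lemma \ref{lem:tilting:C} enter, the degeneration being guaranteed by $\dim\pi^{-1}(y)\le 1$. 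Once the natural transformations $\id\Rightarrow\Psi\Phi$ and $\Phi\Psi\Rightarrow\id$ are shown to be isomorphisms on the generators $G$ and ${\cal A}$, exactness of both functors propagates the isomorphism to all objects, yielding the equivalence ${\cal C}\cong\Coh_{\cal A}(Y)$ and, by the same termwise argument, its derived enhancement ${\bf D}(X)\cong{\bf D}_{\cal A}(Y)$.
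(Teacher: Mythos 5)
Your proposal is correct and follows essentially the same route as the paper: the paper (in the discussion surrounding Proposition \ref{prop:Morita} and its proof of this statement) likewise resolves $F$ by locally free ${\cal A}$-modules of the form $V_i\otimes{\cal A}(-n_i)$, uses the local identification $(\pi^{-1}(V)\otimes_{\pi^{-1}({\cal A})}G)_{|\pi^{-1}(U)}\cong G^{\oplus n}_{|\pi^{-1}(U)}$, and establishes exactness of the termwise-replaced complex by applying the exact functor ${\bf R}\pi_*(G^{\vee}\otimes-):\Per(X/Y)\to\Coh_{\cal A}(Y)$ to the kernel/image sequences and invoking the detection property (b). Your extra remarks on the unit/counit and the spectral-sequence degeneration are consistent elaborations of the same argument rather than a different method.
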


\begin{proof}
For an ${\cal A}$-module $F$ on $Y$ such that
$F$ is locally free as an ${\cal A}$-module,
we set $F \otimes_{\cal A} G:=\pi^{-1}(F) \otimes_{\pi^{-1}({\cal A})} G$.  
For an ${\cal A}$-module $F$ on $Y$, we take a resolution
\begin{equation}
\cdots \to
V_{-2} \otimes {\cal A}(-n_{-2}) \to
V_{-1} \otimes {\cal A}(-n_{-1}) \to V_0 \otimes {\cal A}(-n_0) \to F \to 0 
\end{equation}
as ${\cal A}$-modules,
where $V_i$ are vector spaces over ${\Bbb C}$.
Then we define
$F \overset{{\bf L}}{\otimes}_{\cal A}G$
as the complex
\begin{equation}
\cdots \to
V_{-2} \otimes G(-n_{-2}) \to
V_{-1} \otimes G(-n_{-1}) \to V_0 \otimes G(-n_0) \to 0. 
\end{equation}
We shall prove that $F \overset{{\bf L}}{\otimes}_{\cal A}G \in
\Per(X/Y)$.

Let $U \overset{\phi}{\to} V \overset{\psi}{\to} W$ be an exact sequence
of locally free ${\cal A}$-modules on $Y$.
Then we shall prove that
$U \otimes_{\cal A} G \overset{\phi'}{\to} 
V \otimes_{\cal A} G \overset{\psi'}{\to} 
 W \otimes_{\cal A} G$ is exact:
We have exact sequences in $\Per(X/Y)$
\begin{equation}
\begin{split}
& 0 \to \im \psi' \to W\otimes_{\cal A} G \to \coker \psi' \to 0,\\
& 0 \to \ker \psi' \to V \otimes_{\cal A} G
\to \im \psi' \to 0,\\
& 0 \to \ker \phi' \to U \otimes_{\cal A} G
\to \im \phi' \to 0,\\
& 0 \to \im \phi' \to \ker \psi' \to \ker \psi'/\im \phi' \to 0. 
\end{split}
\end{equation}
Since ${\bf R}\pi_* :\Per(X/Y) \to \Coh({\cal A})$ is an exact functor,
we have exact sequences 
\begin{equation}
\begin{split}
& 0 \to {\bf R}\pi_*(G^{\vee} \otimes \im \psi') \to 
W \to {\bf R}\pi_*(G^{\vee} \otimes \coker \psi') \to 0,\\
& 0 \to {\bf R}\pi_*(G^{\vee} \otimes \ker \psi') \to 
V 
\to {\bf R}\pi_*(G^{\vee} \otimes \im \psi') \to 0,\\
& 0 \to {\bf R}\pi_*(G^{\vee} \otimes \ker \phi')
 \to U 
\to {\bf R}\pi_*(G^{\vee} \otimes \im \phi') \to 0.\\
&
0 \to {\bf R}\pi_*(G^{\vee} \otimes  \im \phi') \to 
{\bf R}\pi_*(G^{\vee} \otimes \ker \psi') \to 
{\bf R}\pi_*(G^{\vee} \otimes \ker \psi'/\im \phi') \to 0. 
\end{split}
\end{equation}
Then we have ${\bf R}\pi_*(G^{\vee} \otimes \ker \psi'/\im \phi')=0$,
which implies that $\ker \psi'/\im \phi'=0$.

\end{proof}

\subsubsection{Quot-schemes}

Let $X \to S$ and $Y \to S$ be flat family of projective varieties 
parametrized by a scheme $S$
and $\pi:X \to Y$ a $S$-morphism.
\begin{lem}
Let ${\cal A}$ be an ${\cal O}_{Y}$-algebras 
on $Y$ which is flat over $S$. 
Let $E$ be a coherent ${\cal A}$-module on $Y$
which is flat over $S$.
There is a closed subscheme $\Quot_{E/Y/S}^{{\cal A},P}$
of $Q:=\Quot_{E/Y/S}^P$ 
parametrizing all quotient
${\cal A}_s$-modules $F$ of $E_s$ with
$\chi(F(n))=P(n)$.
\end{lem}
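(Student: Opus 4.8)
The plan is to reproduce the construction used for the analogous statement proved earlier in this subsection, now with $E$ in place of $B$. First I would pass to the ordinary relative Quot-scheme $Q := \Quot_{E/Y/S}^P$, which exists by Grothendieck's theorem and carries a universal exact sequence
\begin{equation}
0 \to {\cal K} \to E \otimes_{{\cal O}_S} {\cal O}_Q \to {\cal Q} \to 0
\end{equation}
of coherent sheaves on $Y \times_S Q$, with ${\cal Q}$ flat over $Q$. The key observation is that a quotient of $E_s$ is a quotient of ${\cal A}_s$-modules exactly when its kernel is an ${\cal A}_s$-submodule; hence the locus I want to cut out is the locus over which ${\cal K}$ is stable under the ${\cal A}$-action.

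To turn this into a closed condition, I would form the composite
\begin{equation}
\theta: {\cal K} \otimes_{{\cal O}_S} {\cal A} \to
(E \otimes_{{\cal O}_S} {\cal O}_Q) \otimes_{{\cal O}_S} {\cal A}
\to E \otimes_{{\cal O}_S} {\cal O}_Q \to {\cal Q},
\end{equation}
where the middle arrow is induced by the ${\cal A}$-module multiplication on $E$ (pulled back to $Y \times_S Q$) and the last is the universal quotient map. By construction ${\cal K}$ is an ${\cal A}$-submodule precisely on the locus where $\theta$ vanishes. Since ${\cal Q}$ is flat over $Q$ and ${\cal K} \otimes_{{\cal O}_S} {\cal A}$ is coherent, this vanishing is represented by a closed subscheme $Z \subset Q$: over any $S$-scheme $T$ and any map $T \to Q$, the pulled-back homomorphism $\theta_T$ is zero if and only if $T \to Q$ factors through $Z$. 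I would then set $\Quot_{E/Y/S}^{{\cal A},P} := Z$.

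It remains to check the moduli interpretation: for an $S$-morphism $T \to Q$ the sheaf ${\cal K} \otimes_{{\cal O}_S} {\cal O}_T$ is an ${\cal A} \otimes_{{\cal O}_S} {\cal O}_T$-submodule of $E \otimes_{{\cal O}_S} {\cal O}_T$ if and only if $T \to Q$ factors through $Z$, which is exactly the assertion that $Z$ parametrizes quotient ${\cal A}_s$-modules of $E_s$ with Hilbert polynomial $P$. The polynomial $P$ is already imposed by working inside $Q$, and ${\cal A}_s$-module quotients form a subfamily of all quotient sheaves, so nothing further is needed there.

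The main obstacle, and the only nontrivial point, is the representability of the vanishing locus of $\theta$ by a closed subscheme together with its universal property under base change. I would handle this by the standard device: locally ${\cal K} \otimes_{{\cal O}_S} {\cal A}$ is generated by finitely many sections, and because the target ${\cal Q}$ is flat over $Q$ the vanishing of the images of these sections defines a coherent sheaf of ideals whose zero scheme has the required functorial behaviour; flatness of ${\cal Q}$ is precisely what guarantees that $\theta_T = 0$ is equivalent to factoring through $Z$ rather than merely a necessary condition. The flatness hypotheses on ${\cal A}$ and $E$ over $S$ ensure that the multiplication map and the formation of $\theta$ commute with arbitrary base change, so that the moduli description holds fibrewise over every $s \in S$.
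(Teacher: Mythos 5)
Your construction is exactly the one in the paper: form the universal subsheaf ${\cal K}$ on $Q$, compose ${\cal K}\otimes_{{\cal O}_S}{\cal A}\to E\otimes_{{\cal O}_S}{\cal O}_Q\to{\cal Q}$ via the multiplication map, and take the zero locus of this homomorphism, with flatness of ${\cal Q}$ over $Q$ giving the universal property under base change. The proposal is correct and follows the same route, so nothing further is needed.
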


\begin{proof}
Let ${\cal Q}$ and ${\cal K}$ be the universal quotient and 
the universal subsheaf of $E \otimes_{{\cal O}_S} {\cal O}_Q$:
\begin{equation}
0 \to {\cal K} \to E \otimes_{{\cal O}_S} {\cal O}_Q 
\to {\cal Q} \to 0.
\end{equation}
Then we have a homomorphism
\begin{equation}
{\cal K} \otimes_{{\cal O}_S} {\cal A} \to 
E \otimes_{{\cal O}_S} {\cal O}_Q \otimes {\cal A}_S
\to E_S \otimes_{{\cal O}_S} {\cal O}_Q \to {\cal Q}
\end{equation}
induced by the multiplication map
$E \otimes_{{\cal O}_S} {\cal O}_Q \otimes_{{\cal O}_S} {\cal A}
\to E \otimes_{{\cal O}_S} {\cal O}_Q$.
Let $Z=\Quot_{E/Y/S}^{{\cal A},P}$ be the zero locus of this homomorphism.
Then for a morphism $f:T \to Q$,
${\cal K} \otimes_{{\cal O}_S}{\cal O}_T$ is an 
${\cal A} \otimes_{{\cal O}_S} {\cal O}_T$-submodule
of $E \otimes_{{\cal O}_S}{\cal O}_T$ 
if and only if $T \to Q$ factors through $Z$.
\end{proof}

\begin{cor}
Let $\pi:X \to Y$ be a family of contractions such that
$X_s$ is flat over $S$.
Let $G$ be a locally free sheaf on $X$ such that
$G_s$ is a local projective generator of ${\cal C}_s$ for all
$s \in S$.
The perverse quot-scheme 
$\Quot_{G/X/S}^{\mathrm{per},P}$ is represented by 
a projective scheme over $S$, where $P$ is the 
$G_s$-twisted Hilbert-polynomial
of the quotient $G_s \to F_s, s \in S$.
\end{cor}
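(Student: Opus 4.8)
The plan is to transport the problem across the relative Morita equivalence and then invoke the Quot-scheme of ${\cal A}$-modules constructed in the preceding Lemma. First I would set ${\cal A}:=\pi_*(G^{\vee} \otimes G)$. Since each $G_s$ is a local projective generator of ${\cal C}_s$, we have $R^1\pi_*(G^{\vee} \otimes G)=0$ on every fiber, so ${\bf R}\pi_*(G^{\vee} \otimes G)={\cal A}$ is a coherent ${\cal O}_Y$-algebra whose formation commutes with base change; in particular ${\cal A}$ is flat over $S$ and is a flat family of ${\cal O}_{Y_s}$-algebras. This puts us in the situation of the Lemma constructing $\Quot_{B/Y/S}^{{\cal A},P}$, where I would take $B={\cal A}$ regarded as a free rank-one ${\cal A}$-module.

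The key step is to identify the two Quot functors. By the relative Morita equivalence (Proposition \ref{prop:Morita-family}, together with Proposition \ref{prop:Morita}), the functor $E \mapsto {\bf R}\pi_*(G^{\vee} \otimes E)$ gives, for every $S$-scheme $T$, an equivalence between families of objects of ${\cal C}_t$ flat over $T$ and flat families of ${\cal A}_T$-modules, with quasi-inverse $F \mapsto \pi_T^{-1}(F) \otimes_{\pi^{-1}({\cal A}_T)} G_T$. Under this equivalence $G$ itself corresponds to ${\cal A}$, the functor ${\bf R}\pi_*(G^{\vee}\otimes -)$ is exact on ${\cal C}_s$, and it is faithful by property (b) of $G$; hence a $T$-flat quotient $G_T \to {\cal F}$ with fibers in ${\cal C}_t$ corresponds bijectively and functorially to a $T$-flat quotient ${\cal A}_T \to {\bf R}\pi_{T*}(G_T^{\vee}\otimes {\cal F})$ of ${\cal A}_T$-modules. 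Moreover the numerical invariants match: for a quotient $G_s \to F$ one has $\chi(G_s,F(n))=\chi({\bf R}\pi_*(G_s^{\vee}\otimes F)(n))$, so the $G_s$-twisted Hilbert polynomial of $G_s \to F$ coincides with the Hilbert polynomial of the corresponding ${\cal A}_s$-module quotient with respect to ${\cal O}_Y(1)$. This yields a natural isomorphism of functors $\Quot_{G/X/S}^{\mathrm{per},P} \cong \Quot_{{\cal A}/Y/S}^{{\cal A},P}$.

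Finally, by the preceding Lemma, $\Quot_{{\cal A}/Y/S}^{{\cal A},P}$ is a closed subscheme of the ordinary Grothendieck Quot-scheme $\Quot_{{\cal A}/Y/S}^{P}$, which is projective over $S$; being closed in a projective $S$-scheme, it is itself projective over $S$. Transporting back through the isomorphism of functors shows that $\Quot_{G/X/S}^{\mathrm{per},P}$ is represented by a projective $S$-scheme, as claimed. I expect the main obstacle to be the careful verification that the Morita equivalence is genuinely an equivalence of the two \emph{moduli functors} over arbitrary $T/S$ — that is, that it preserves $T$-flatness and is compatible with base change, so that the set-theoretic bijection upgrades to an isomorphism of representable functors. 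This is exactly the content guaranteed by Proposition \ref{prop:Morita-family}(3) and the flatness statements of Lemma \ref{lem:flatness}, and the only real work is to check that quotients (as opposed to arbitrary objects) are preserved in families, which follows from the exactness of ${\bf R}\pi_*(G^{\vee}\otimes -)$ on ${\cal C}_s$ and the preservation of flatness under the correspondence.
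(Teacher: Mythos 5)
Your proposal is correct and follows the same route as the paper: the paper's proof also sets ${\cal A}:=\pi_*(G^{\vee}\otimes G)$, observes that it is a flat family of ${\cal O}_Y$-algebras, and invokes the relative Morita equivalence between $T$-flat ${\cal A}_T$-modules and families of objects of ${\cal C}_t$ to identify the perverse Quot functor with $\Quot_{{\cal A}/Y/S}^{{\cal A},P}$, which the preceding lemma realizes as a closed (hence projective) subscheme of the ordinary Quot-scheme. Your write-up merely makes explicit the functorial and numerical compatibilities that the paper leaves implicit.
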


\begin{proof}
We set ${\cal A}:=\pi_*(G^{\vee} \otimes_{{\cal O}_{X}} G)$.
Then ${\cal A}$ is a flat family of ${\cal O}_{Y}$-algebras on $Y$ and 
we have an equivalence
$\Coh_{{\cal A}}(Y/S) \cong {\cal C}_S$.
\end{proof}

\begin{prop}
There is a coarse moduli scheme $\overline{M}_{X_S/S}^{\mathrm{per},P} \to S$
of $G_s$-twisted semi-stable perverse coherent sheaves
$E$ on $X_s$ with the $G_s$-twisted Hilbert polynomial $P$.
$\overline{M}_{X_S/S}^{\mathrm{per},P}$ is a projective scheme over $S$.
\end{prop}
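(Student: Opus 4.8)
The plan is to reduce the construction to Simpson's moduli space of semistable $\Lambda$-modules, exactly as in the proof of the proposition constructing $\overline{M}_{X/S}^{{\cal C},P}$ in subsection \ref{subsect:stability}. First I would set ${\cal A}:=\pi_*(G^{\vee} \otimes G)$, which by construction is a sheaf of ${\cal O}_Y$-algebras, coherent as an ${\cal O}_Y$-module and flat over $S$. The relative Morita equivalence (Proposition \ref{prop:Morita-family}) identifies the category of families of $G_s$-twisted perverse coherent sheaves on $X_s$ with the category of coherent ${\cal A}_s$-modules on $Y_s$ that are flat over $S$, via $E \mapsto {\bf R}\pi_*(G^{\vee} \otimes E)$. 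Moreover this identification carries the $G_s$-twisted Hilbert polynomial of $E$ to the Hilbert polynomial of the corresponding ${\cal A}_s$-module, and the $G_s$-twisted (semi)stability to Simpson's (semi)stability of ${\cal A}_s$-modules.

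Next I would realize ${\cal A}$-modules as $\Lambda$-modules in the sense of \cite{S:1} by putting $\Lambda_0={\cal O}_Y$ and $\Lambda_k={\cal A}$ for $k \geq 1$; a sheaf of ${\cal A}$-modules is then a special case of a $\Lambda$-module, so Simpson's general machinery applies. Using boundedness of the family of semistable ${\cal A}_s$-modules with fixed Hilbert polynomial $P$, I would take $n \gg 0$ so that every such module $F$ is a quotient of ${\cal A}_s(-n) \otimes V$ with $V$ of dimension $P(n)$, and form the relative quot-scheme $\Quot_{{\cal A}(-n) \otimes V/Y/S}^{{\cal A},P}$. Let $Q^{ss}$ be the open subscheme parametrizing semistable ${\cal A}_s$-modules; then the desired moduli space is the GIT quotient $Q^{ss} \dslash GL(V)$, using the natural polarization coming from the Plücker embedding of the quot-scheme into a relative Grassmannian. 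Projectivity over $S$ follows from a Langton-type argument: the valuative criterion is verified by showing that a semistable ${\cal A}_s$-module over the generic point of a discrete valuation ring extends to a semistable ${\cal A}$-module over the closed point, the standard Langton modification adapted to the sheaf of algebras ${\cal A}$. Transporting back through the Morita equivalence yields an isomorphism $\overline{M}_{X_S/S}^{\mathrm{per},P} \cong \overline{M}_{Y/S}^{{\cal A},P}$, which is therefore a projective coarse moduli scheme over $S$.

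The main obstacle is not the GIT construction itself---which is Simpson's---but checking that all three dictionary entries (the object-to-module correspondence, the matching of Hilbert polynomials, and the matching of stability conditions) hold in the relative setting over an arbitrary base $S$, where one must control flatness over $S$ of the universal quotient and of its image under ${\bf R}\pi_*(G^{\vee} \otimes \underline{\;\;})$. Here I would lean on Proposition \ref{prop:Morita-family} and Lemma \ref{lem:flatness}, which already package the flatness and functoriality needed to make the family version of the Morita equivalence work, so that the cokernel-of-$d^1$ construction produces families of objects of ${\cal C}_s$ whose associated ${\cal A}$-modules are $S$-flat. Once these compatibilities are in place the argument is formally identical to the already-established case of $\overline{M}_{X/S}^{{\cal C},P}$, the only change being the replacement of the abstract category ${\cal C}_s$ by its concrete realization as the category of $G_s$-twisted perverse coherent sheaves.
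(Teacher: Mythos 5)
Your proposal is correct and follows essentially the same route as the paper: set ${\cal A}=\pi_*(G^{\vee}\otimes G)$, view ${\cal A}$-modules as Simpson $\Lambda$-modules with $\Lambda_0={\cal O}_Y$ and $\Lambda_k={\cal A}$ for $k\geq 1$, build the moduli space as the GIT quotient $Q^{ss}\dslash GL(V)$ of the open locus of the relative quot-scheme $\Quot_{{\cal A}(-n)\otimes V/Y/S}^{{\cal A},P}$ with the natural Grassmannian polarization, obtain projectivity by a Langton argument, and transport back through the relative Morita equivalence of Proposition \ref{prop:Morita-family}. The compatibilities you flag (object-to-module correspondence, Hilbert polynomials, stability) are exactly what that proposition packages, so nothing further is needed.
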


\begin{proof}
Simpson constructed the moduli space (\cite[Thm. 4.7]{S:1})
and the projectivity follows from 
a standard argument due to Langton. 
\end{proof}
\end{NB}

\end{document}